\newtheorem{theo}{Theorem}[chapter]
\newtheorem{lem}[theo]{Lemma}
\newtheorem{prop}[theo]{Proposition}
\newtheorem{cor}[theo]{Corollary}
\theoremstyle{definition}
\newtheorem{defi}[theo]{Definition}
\newtheorem{ex}[theo]{Example}
\theoremstyle{remark}
\newtheorem{rem}[theo]{Remark}
\numberwithin{section}{chapter}
\numberwithin{equation}{chapter}
\let\disp\displaystyle
\let\vfi\varphi
\let\eps\varepsilon
\newcommand{\IC}{\mathbb{C}}
\newcommand{\IN}{\mathbb{N}}
\newcommand{\IQ}{\mathbb{Q}}
\newcommand{\IR}{\mathbb{R}}
\newcommand{\IS}{\mathbb{S}}
\newcommand{\IZ}{\mathbb{Z}}
\newcommand{\CA}{{\mathcal A}}
\newcommand{\CB}{{\mathcal B}}
\newcommand{\CC}{{\mathcal C}}
\newcommand{\CD}{{\mathcal D}}
\newcommand{\CE}{{\mathcal E}}
\newcommand{\CI}{{\mathcal I}}
\newcommand{\CJ}{{\mathcal J}}
\newcommand{\CF}{{\mathcal F}}
\newcommand{\CN}{{\mathcal N}}
\newcommand{\CO}{{\mathcal O}}
\newcommand{\CP}{{\mathcal P}}
\newcommand{\CQ}{{\mathcal Q}}
\newcommand{\CU}{{\mathcal U}}
\newcommand{\CV}{{\mathcal V}}
\newcommand{\Int}[1]{{\rm Int}\left(#1\right)}
\newcommand{\diam}{{\rm diam}}
\newcommand{\doubleindice}[2]{\mbox{\scriptsize \begin{tabular}{c} 
$ #1 $\\ $ #2 $\end{tabular}}}
\newcommand{\LY}{{\rm LY}}
\renewcommand{\middle}{{\sf mid}}
\newcommand{\slope}{{\sf slope}}
\newcommand{\Id}{{\rm Id}}
\newcommand{\End}[1]{\partial #1}
\newcommand{\Bd}[1]{{\rm Bd}(#1)}
\let\Lbrack\llbracket  
\let\Rbrack\rrbracket
\newcommand{\labelarrow}[1]{\stackrel{#1}{\longrightarrow}}
\newcommand{\cover}[1]{\xrightarrow[#1]{}}
\begin{document}

\frontmatter

\begin{titlepage}

\begin{center}
{\LARGE{\bf

CHAOS ON THE INTERVAL

a survey of relationship between the various kinds of chaos for continuous interval maps}

\vspace{10mm}

Sylvie Ruette}

\end{center}

\vspace{20mm}
\centerline{\includegraphics{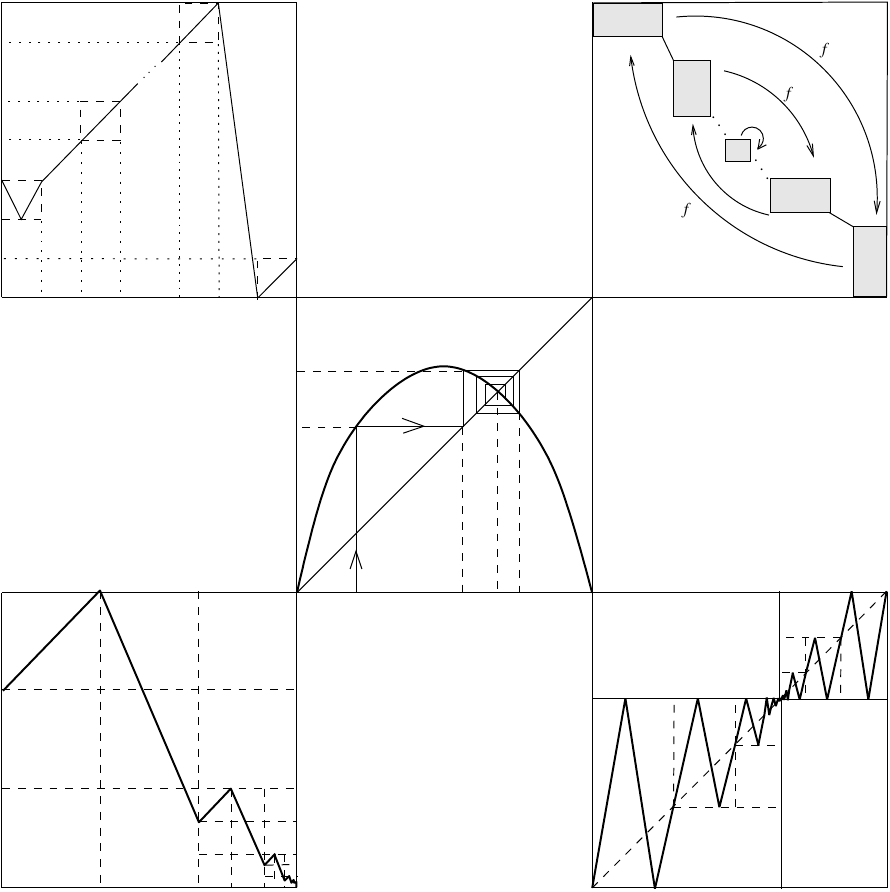}}

\end{titlepage}

\begin{titlepage}

\noindent
Author

\medskip
\noindent Sylvie Ruette\\
Laboratoire de Mathématiques d'Orsay, bâtiment 307\\
Université Paris-Sud 11,
91405 Orsay cedex, France\\
\texttt{sylvie.ruette@math.u-psud.fr}

\vspace{10mm}
\noindent
Mathematics Subject Classification (2010): 37E05

\vspace{10mm}
\noindent
Last revision: April 2018
\end{titlepage}
\tableofcontents

\chapter*{Introduction}

Generally speaking, a \emph{dynamical system} is a space in which the
points (which can be viewed as configurations) move along with time
according to a given rule, usually not depending on time. Time can be
either  continuous (the motion of planets, fluid mechanics, etc) or
discrete  (the number of bees each year, etc). In the discrete case,
the system is determined by a map $f\colon X\to X$, where $X$ is the
space, and the evolution is given by successive iterations of the
transformation:  starting from the point $x$ at time $0$, the point
$f(x)$  represents the new position at time $1$ and $f^n(x)=f\circ
f\circ \cdots\circ f (x)$ ($f$ iterated $n$ times) is the position at
time $n$.

A dynamical system ruled by a deterministic law can nevertheless be
unpredictable. In particular, in the early 1960's, Lorenz underlined
this phenomenon after realizing by chance that in his meteorological
model, two very close initial values may lead to totally different
results  \cite{Lor1, Lor2, Lor3}; he discovered the so called
``butterfly effect''.
This kind of behavior has also been exhibited in other dynamical
systems. One of the first to be studied, among the simplest, is given
by the map $f(x)=rx(1-x)$ acting on the interval $[0,1]$, and models
the evolution of a population. If the parameter $r$ is small enough,
then all the trajectories converge to a fixed point -- the population
stabilizes.  However, May showed that for larger values of $r$, the
dynamics may become very complicated \cite{May}.

This book focuses on dynamical systems given by the iteration of a 
continuous map on an interval.
These systems were broadly studied because they are simple but nevertheless
exhibit complex behaviors. They also allow numerical simulations using
a computer or a mere pocket  calculator, which enabled the discovery
of some chaotic phenomena.  Moreover, the ``most interesting'' part of
some higher-dimensional systems can be of lower dimension, which
allows, in some cases, to boil down to systems in dimension one. This
idea was used for instance to reduce the study of Lorenz flows in dimension 
3 to a class of maps on the interval.
However, continuous interval maps have many properties that are not
generally found in other spaces. As a consequence, the study of one-dimensional
dynamics is very rich but not representative of all systems.

In the 1960's, Sharkovsky began to study the structure of systems
given by a continuous map on an interval, in particular the
co-existence of periodic points of various periods, which is ruled by
\emph{Sharkovsky's order} \cite{Sha}. Non Russian-speaking scientists
were hardly aware of this striking result until a new proof of this
theorem was given in English by \v{S}tefan in 1976 in a preprint
\cite{Ste0} (published one year later in \cite{Ste}).  In
1975, in the paper ``Period three implies chaos'' \cite{LY}, Li and
Yorke proved that a continuous interval map with a periodic point of
period $3$ has periodic points of all periods -- which is actually a
part of Sharkovsky's Theorem eleven years earlier; they also proved
that, for such a map $f$, there exists an uncountable set such that,
if $x,y$ are two distinct points in this set, then $f^n(x)$ and
$f^n(y)$ are arbitrarily close for some $n$ and are further than some
fixed positive distance for other integers $n$ tending to infinity;
the term ``chaos'' was introduced in mathematics in this paper of Li
and Yorke, where it was used in reference to this behavior.

Afterwards, various definitions of chaos were proposed. They do not
coincide in general and none of them can be considered as the unique
``good'' definition of chaos. One may ask ``What is chaos then?''.  It
relies generally on the idea of unpredictability or instability, i.e.,
knowing the trajectory of one point is not enough to know  what
happens elsewhere.  The map $f\colon X\to X$ is said to be
\emph{sensitive to initial conditions} if near every point $x$ there
exists a point $y$ arbitrarily close to $x$ such that the distance
between $f^n(x)$ and $f^n(y)$ is greater than a given $\delta>0$ for
some $n$. \emph{Chaos  in the sense of Li-Yorke} (see above) asks for
more instability, but only on a subset.
For Devaney, chaos is seen as a mixing of unpredictability and regular
behavior: a system is \emph{chaotic in the sense of Devaney} if it is
transitive,
sensitive to initial conditions and has a dense set of periodic points
\cite{Dev}. Others put as a part of their definition that the entropy
should be positive, which means that the number of different
trajectories of length $n$, up to some approximation, grows
exponentially fast.

In order to obtain something uniform, the system is often assumed to
be transitive. Roughly speaking, this means that it cannot be
decomposed into two parts with nonempty interiors that do not
interact under the action of the transformation. This ``basic''
assumption actually has strong consequences for systems on
one-dimensional spaces.  For a continuous interval map, it implies
most of the other notions linked to chaos: sensitivity to initial
conditions, dense set of periodic points, positive entropy, chaos in
the sense of Li-Yorke, etc. This leads us to search for (partial)
converses: for instance, if the interval map $f$ is sensitive to
initial conditions then, for some integer $n$, the map $f^n$ is
transitive on a subinterval.

The study of periodic points has taken an important place in the works
on interval maps. For these systems, chaotic properties not only imply
existence of periodic points, but the possible periods also provide
some information about the system. For instance, for a transitive
interval map, there exist periodic points of all even periods, and an
interval map has positive entropy if and only if there exists a
periodic point whose period is not a power of $2$. This kind of
relationship is very typical of one-dimensional systems.

\medskip 
The aim of this book is not to collect all the results about continuous 
interval maps but to survey the relations 
between the various kinds of chaos and related notions for these systems.
The papers on this topic are numerous but very scattered in the
literature, sometimes little known or difficult to find, sometimes
originally published in Russian (or Ukrainian, or Chinese), and
sometimes without proof. Furthermore some results were  found twice
independently, which was often due to a lack of communication and
language barriers, leading research to develop separately in English
and Russian literature. This has complicated our task when
attributing authorship; we want to apologize for possible errors or
omissions when indicating who first proved the various results.

We adopt a topological point of view, i.e., we do not speak about
invariant measures or ergodic properties. Moreover, we are interested
in the set of continuous interval maps, not  in particular families
such as piecewise monotone, $C^\infty$ or unimodal maps.
We give complete proofs of the results concerning interval maps.

Many results for interval maps have been generalized to other
one-dimensional systems. We briefly describe them in paragraphs called
``Remarks on graph maps'' at the end of the concerned sections. We
indicate some main ideas and give the references. This subject is
still in evolution, and the most recent works and references may be
missing.

This book is addressed to both graduate students and researchers.  We have
tried to keep to the elementary level.  The prerequisites are
basic courses of real analysis and topology, and some linear algebra.

\section*{Contents of the book}

In the \textbf{first Chapter}, we define some elementary notions and
introduce some notation. Throughout this book, a continuous map
$f\colon I\to I$  on a non degenerate compact interval $I$ will be
called an  \emph{interval map}.  We also provide some basic results
about  $\omega$-limit sets and tools to find periodic points.

\medskip In \textbf{Chapter \ref{chap2}}, we study the links between
transitivity, topological mixing and sensitivity to initial
conditions. We first prove that a transitive
interval map has a dense set of periodic points.  Then we show that
transitivity is very close to the notion of topological mixing in the
sense that for a transitive  interval map $f\colon I\to I$, either $f$
is topologically mixing, or the interval $I$ is divided into two
subintervals $J,K$ which are swapped under the action of $f$ and such
that both $f^2|_J$ and $f^2|_K$ are topologically mixing.
Furthermore, the notions of topological mixing, topological weak
mixing and total transitivity are proved to be equivalent for interval
maps.

Next we show that a transitive interval map is sensitive to initial
conditions and, conversely, if the map is sensitive, then there exists
a subinterval $J$ such that $f^n|_J$ is transitive for some
positive integer~$n$.

\medskip \textbf{Chapter \ref{chap:periodic-points}} is devoted to periodic points.  First we
prove that topological mixing is equivalent to the specification
property, which roughly means that any collection of pieces of orbits can be
approximated by the orbit of a periodic point.

Next we show that, if the set of periodic points is dense for the interval
map $f$,  then there exists a non degenerate subinterval $J$  such
that either $f|_J$ or $f^2|_J$ is transitive provided  that $f^2$ is
not equal to the identity map.

Then we present Sharkovsky's Theorem, which says that there is a
total order on $\IN$ -- called \emph{Sharkovsky's order} -- such that,
if an interval map has a periodic point of period $n$, then it also
has periodic points of period $m$ for all integers $m$ greater than
$n$ with respect to this order. The  \emph{type} of a map $f$ is the
minimal integer $n$ for Sharkovsky's order such that $f$ has a
periodic point of period $n$; if there is no such integer $n$, then
the set of periods is exactly $\{2^n\mid n\ge 0\}$ and the type is
$2^{\infty}$.  We build an interval map of type $n$ for every $n\in
\IN\cup\{2^{\infty}\}$.

Next, we study the relation between the type of a map and the
existence of horseshoes. Finally, we compute the type of transitive
and  topologically mixing interval maps.

\medskip 
In \textbf{Chapter \ref{chap4}}, we are concerned with topological entropy. A 
\emph{horseshoe} for the interval map $f$ is a family of two or more closed  
subintervals $J_1,\ldots, J_p$ with disjoint interiors such that $f(J_i)\supset
J_1\cup\cdots\cup J_p$ for all $1\leq i\leq p$. We show that the
existence of a horseshoe implies that the topological entropy is
positive. Reciprocally, Misiurewicz's Theorem states that, if the
entropy of the interval map $f$ is positive, then $f^n$ has a
horseshoe for some positive integer $n$.

Next we show that an interval map has a homoclinic point if and only
if it has positive topological entropy. For an interval map $f$, $x$
is a \emph{homoclinic point} if there exists a periodic point $z$ different
from $x$ such that $x$ is in the unstable manifold of $z$ and $z$ is a limit
point of $(f^{np}(x))_{n\ge 0}$, where $p$ is the period of $z$.

We then give some upper and lower bounds on the entropy, focusing on
lower bounds for transitive and topologically  mixing maps and lower
bounds depending on the periods of periodic points (or, in other words,
on the type of the map for Sharkovsky's order). In particular, an
interval map has positive topological  entropy if and only if it has a
periodic point whose period  is not a power of $2$. The sharpness of
these bounds is illustrated by some examples.

To conclude this chapter, we show that a topologically mixing interval
map has a uniformly positive entropy; that is, every cover by two open
non dense sets has  positive topological entropy. Actually,   this
property is equivalent to topological mixing for interval maps.

\medskip \textbf{Chapter \ref{chap5}} is devoted to chaos in the sense of Li-Yorke.
Two points $x,y$ form a \emph{Li-Yorke pair of modulus $\delta$} for
the map $f$ if
$$
\limsup_{n\to+\infty}|f^n(x)-f^n(y)|\ge\delta\quad\text{and}\quad\liminf_{n\to+\infty}|f^n(x)-f^n(y)|=0.
$$ A \emph{$\delta$-scrambled set} is a set $S$ such that every pair
of distinct points in $S$ is a Li-Yorke pair of modulus $\delta$; the
set $S$ is \emph{scrambled} if for every $x,y\in S$, $x\ne y$,
$(x,y)$ is a Li-Yorke pair (of modulus $\delta$ for some $\delta>0$
depending on $x,y$).  The map is \emph{chaotic in the sense of
  Li-Yorke} if it has an uncountable scrambled set.  We prove that an
interval map of positive topological entropy admits a
$\delta$-scrambled Cantor set for some $\delta>0$, and is thus
chaotic in the sense of Li-Yorke. We also show that a topologically
mixing map has a  dense $\delta$-scrambled set which is a countable
union of Cantor sets.

Next, we study an equivalent condition for zero entropy interval maps
to be chaotic in the sense of Li-Yorke, which implies the existence of
a $\delta$-scrambled Cantor set as in the positive entropy case.  A
zero entropy interval map that is chaotic in the sense of Li-Yorke is
necessarily of type $2^{\infty}$ for Sharkovsky's order, but the
converse is not true; we build two maps of type $2^{\infty}$  having
an infinite $\omega$-limit set, one being chaotic in the sense of
Li-Yorke and the other not.

Then we state that the existence of one Li-Yorke pair for an interval
map is enough to imply chaos in the sense of Li-Yorke.

Finally, we show that an interval map is chaotic in the sense of
Li-Yorke if and only if it has positive topological sequence entropy.

\medskip In \textbf{Chapter \ref{chap6}}, we study some notions related to
Li-Yorke pairs.

\emph{Generic chaos} and \emph{dense chaos} are somehow
two-dimensional notions. A topological system $f\colon X\to X$ is
generically (resp. densely) chaotic if the set of Li-Yorke pairs  is
residual (resp. dense) in $X\times X$.  A transitive
interval map is generically chaotic; conversely, a  generically
chaotic interval map has exactly one or two transitive
subintervals. Dense chaos is strictly weaker than generic chaos: a
densely interval map may have no transitive subinterval, as
illustrated by an example. We show that,  if $f$ is a densely chaotic
interval map, then $f^2$ has a horseshoe, which implies that $f$ has a
periodic point  of period $6$ and the topological entropy of $f$ is at
least $\frac{\log 2}2$.

\emph{Distributional chaos} is based on a refinement of the conditions
defining Li-Yorke pairs. We show that, for interval maps,
distributional chaos is equivalent to positive topological entropy.

\medskip In \textbf{Chapter \ref{chap7}}, we focus on the existence of
some kinds of chaotic subsystems and we relate them to the previous
notions.

A system is said to be \emph{chaotic in the sense of Devaney} if it is
transitive, sensitive to initial conditions and has a dense set of
periodic points. For an interval map,  the existence of an invariant closed
subset that is chaotic in the sense of Devaney is equivalent
to positive topological entropy. We also show that an interval map has an
invariant uncountable closed subset $X$ on which $f^n$ is
topologically mixing for some $n\ge 1$ if and only if $f$ has positive
topological entropy.

Finally, we study the existence of an invariant closed subset on which
the map is transitive and sensitive to initial conditions.  We show
that this property is implied by  positive topological entropy and implies chaos
in the sense of Li-Yorke.  However these notions are distinct: there
exist zero entropy interval maps with a transitive sensitive subsystem
and interval maps with no transitive sensitive subsystem that are
chaotic in the sense of Li-Yorke.

\medskip The \textbf{last chapter} is an appendix
that recalls succinctly some background in topology.

\medskip
The relations between the main notions studied in this book are
summarized by the diagram in Figure~\ref{fig:diagram}.

\begin{figure}
\centerline{\includegraphics{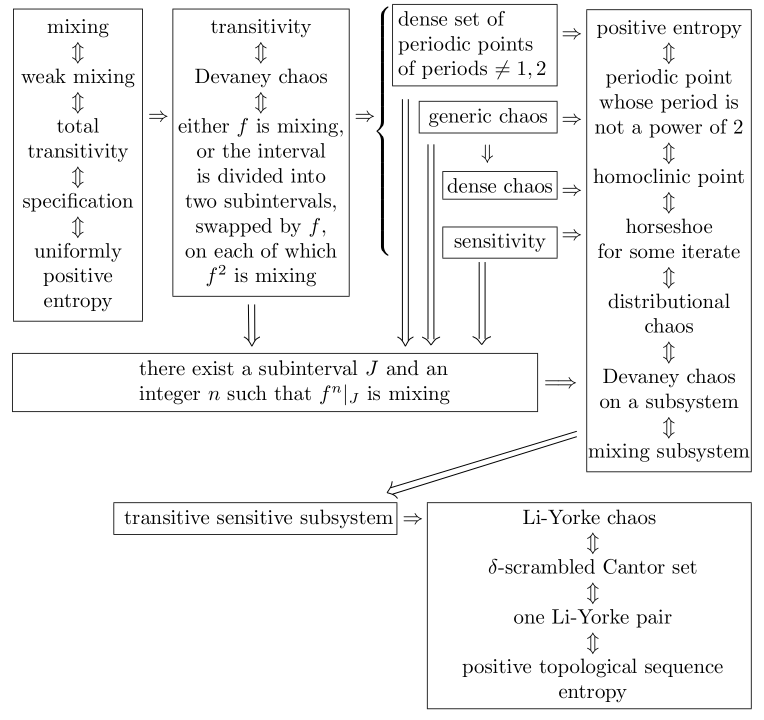}}
\caption{Diagram summarizing the relations between the main notions 
related to chaos for an interval map $f$. 
}
\label{fig:diagram}
\end{figure}

\bigskip
I thank all the people who have contributed to improve this book by
remarks or translations:  Jozef Bobok, Víctor Jiménez López, Sergiy
Kolyada,  Jian Li, Micha{\l} Misiurewicz, T. K. Subrahmonian Moothathu,
L$\!$'ubomír Snoha,  Émilie Tyberghein, Zheng Wei, and Dawei Yang.
I particularly thank Roman Hric who helped me to fill a gap in a proof.

I want to thank CNRS (Centre National de la Recherche  Scientifique)
for giving me time (two sabbatical semesters) to write this book,
which allowed me to finish this long-standing project.

\aufm{Sylvie Ruette}

\mainmatter

\chapter{Notation and basic tools}\label{chap1}

\section{General notation}

\subsection{Sets of numbers}

The set of natural numbers (that is, positive integers) is denoted by
$\IN$. The symbols $\IZ$, $\IQ$, $\IR$, $\IC$ denote respectively the
set of all integers, rational numbers, real numbers and complex numbers.
The non negative integers and non negative real numbers are denoted
respectively by $\IZ^+$ and $\IR^+$.\index{N@$\IN$}\index{Q@$\IQ$}\index{R@$\IR$}\index{C@$\IC$}
\label{notation:IN}
\label{notation:IQ}
\label{notation:IZ}
\label{notation:IR}
\label{notation:IC}

\subsection{Interval of integers}

The notation $\Lbrack n,m\Rbrack$\index{interval of integers} denotes an interval of integers, that~is,
\label{notation:intervalintegers}
$
\Lbrack n,m\Rbrack:=\{k\in\IZ\mid n\le k\le m\}.
$

\medskip
We shall often deal with sets $X_1,\ldots, X_n$ that are
cyclically permuted. The notation 
$X_{i+1\bmod n}$\index{mod n@$\bmod\ n$} 
\label{notation:integermodn}
means $X_{i+1}$ if
$i\in\Lbrack 1,n-1\Rbrack$ and $X_1$ if $i=n$. 
More generally, if the set of indices $\CI$ under consideration is 
$\Lbrack 1,n\Rbrack$ (resp. $\Lbrack 0,n-1\Rbrack$), then $i\bmod n$
denotes the integer $j\in\CI$ such that $j\equiv i\bmod n$.

\subsection{Cardinality of a set}

If $E$ is a finite set, $\#E$ denotes its cardinality, that is, the number
of elements in $E$.\index{cardinal of a set}

\label{notation:cardE}

A set is \emph{countable}\index{countable set} if it can be written as 
$\{x_n\mid n\in\IN\}$. A finite set is countable.

\subsection{Notation of topology}

The definitions of the topological notions used in this book are recalled in
the appendix. 
Here we only give some notation.

\medskip

Let $X$ be a metric space and let $Y$ be a subset of $X$. 
Then $\overline{Y}$,
$\Int{Y}$, $\Bd{Y}$ denote respectively the closure, the interior and
the boundary of $Y$.\index{closure of a set}\index{Int@$\Int{Y}$}\index{interior of a set}\index{boundary of a set}\index{Bd(Y)@$\Bd{Y}$}
\label{notation:closure1}
\label{notation:interior1}
\label{notation:boundary1}

\begin{rem}
When talking about topological notions (neighborhood, interior,
etc), we always refer to the induced topology on the ambient space $X$.  For
instance, in Example~\ref{ex:orbit} below, $[0,1/2)$ is an open set since
the ambient space is $[0,1]$.
\end{rem}

The distance on a metric space $X$ is denoted by $d$.\index{d(x,y)@$d(x,y)$}
\label{notation:distance1}
If $x\in X$ and $r>0$, the open ball of center
$x$ and radius $r$ is $B(x,r):=\{y\in X\mid d(x,y)<r\}$\index{B(x,r)@$B(x,r),\overline{B}(x,r)$}\index{open ball}, and the closed ball of center
$x$ and radius $r$ is $\overline{B}(x,r):=\{y\in X\mid d(x,y)\le r\}$\index{closed ball}.
\label{notation:ball1}

The diameter of a set $Y\subset X$ is
$\diam(Y):=\sup\{d(y,y')\mid y,y'\in Y\}$.\index{diam(Y)@$\diam(Y)$}\index{diameter of a set}
\label{notation:diam1}
If $Y$ is compact, then the supremum is reached.

\subsection{Restriction of a map}

Let $f\colon X\to X'$ be a map and $Y\subset X$. The restriction of $f$ to
$Y$, denoted by $f|_Y$\index{restriction of a map}, is the map $f|_Y\colon
\begin{array}[t]{ccc}Y&\to&X'\\x&\mapsto&f(x)\end{array}$.
\label{notation:restriction}

\section{Topological dynamical systems, orbits, $\omega$-limit sets}

Our purpose is to study dynamical systems on intervals. However we
prefer to give the notation in a broader context because most of the
definitions have a meaning for any dynamical system, and a
few properties will not be specific to the interval case.

\subsection{Topological dynamical systems, invariant set}
A \emph{topological dynamical system}\index{topological dynamical system}\index{dynamical system} $(X,f)$ is given by a continuous map $f\colon X\to X$,
where $X$ is a nonempty compact metric space.
The evolution of the system is given by the successive iterations of the
map. If $n\in\IN$, the $n$-th iterate of $f$ is denoted
by $f^n$, that is,
$$
f^n:=\underbrace{f\circ f\circ\cdots\circ f}_{n \text{ \scriptsize
times}}.
$$
By convention, $f^0$ is the identity map on $X$. We can think of $n$ as
time: starting from an initial position $x$ at time $0$,
the point $f^n(x)$ represents the new position at time $n$.

\begin{ex}\label{ex:orbit}
Let $f\colon [0,1]\to [0,1]$ be the map defined by $f(x)=3x(1-x)$.
The successive iterates of $x$ can be plotted on the graph of 
$f$, as illustrated in Figure~\ref{fig:orbit}; the diagonal $y=x$ is utilized to
re-use the result  of an iteration.
\begin{figure}[htb]
\centerline{\includegraphics{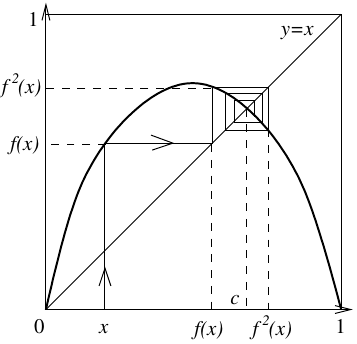}}
\caption{The first iterates of $x$ plotted on the graph of $f$.}
\label{fig:orbit}
\end{figure}
\end{ex}

Let $(X,f)$ be a topological dynamical system. An 
\emph{invariant}\index{invariant set} (or \emph{$f$-invariant}) set
is a  nonempty closed set $Y\subset X$ such that $f(Y)\subset Y$; it
is \emph{strongly   invariant}\index{strongly invariant set} if in addition
$f(Y)=Y$. If $Y$ is an
invariant set, let $f|_Y$ denote the map $f$  restricted to $Y$ and
\emph{arriving in $Y$}, that is, $f|_Y\colon Y\to Y$\index{restriction of a map}.
\label{notation:restrictionbis}
With this slight abuse of notation, $(Y, f|_Y)$ is a topological dynamical
system, called a \emph{subsystem}\index{subsystem} of $(X,f)$,
and we shall speak of the properties of $f|_Y$ (e.g., ``$f|_Y$
is transitive'').

\subsection{Trajectory, orbit, periodic point}

In the literature, the words \emph{trajectory} and \emph{orbit} often have 
the same meaning. However we prefer to follow the
terminology of Block-Coppel \cite{BCop} because it is convenient to make
a distinction between two notions. In this book,  
when $(X,f)$ is a topological dynamical system and $x$ is a point in $X$,
the \emph{trajectory}\index{trajectory} of $x$ is the infinite
sequence $(f^n(x))_{n\ge 0}$ (there may be repetitions in the sequence) and
the \emph{orbit}\index{orbit} of $x$ is the set
$\CO_f(x):=\{f^n(x)\mid n\ge 0\}$\index{Of(x)@$\CO_f(x), \CO_f(E)$}.  Similarly,
if $E$ is a subset of $X$, then $\CO_f(E):=\bigcup_{n\ge 0}f^n(E)$.
\label{notation:orbit}

A point $x$ is \emph{periodic}\index{periodic point} (for the map $f$) if there exists a positive
integer
$n$ such that $f^n(x)=x$. The \emph{period}\index{period of a periodic point/obit} of $x$
is the least positive integer $p$ such that $f^p(x)=x$. It is easy to
see that, if $x$ is periodic of period $p$ and $n\in\IN$, then $f^n(x)=x$ if and
only if $n$ is a multiple of $p$; moreover $\CO_f(x)$ is a finite set of
$p$ distinct points: $\CO_f(x)=\{x,f(x),\ldots, f^{p-1}(x)\}$.
If $x$ is a periodic point of period
$p$, then its orbit  is called a \emph{periodic orbit of period
$p$}\index{periodic orbit}. Each point of a periodic orbit is a
periodic point with the same period and the same orbit.  
If $f(x)=x$, then $x$ is called a
\emph{fixed point}\index{fixed point}.
Let \index{Pn(f)@$P_n(f)$}
$$
P_n(f):=\{x\in X\mid f^n(x)=x\};
$$
\label{notation:Pn}
this is the set of periodic points whose periods divide $n$.

A point $x$ is \emph{eventually periodic}\index{eventually periodic point} 
if there exists an  integer $n\ge 0$ such that $f^n(x)$ is periodic.

\subsection{Omega-limit set}
Let $(X,f)$ be a topological dynamical system.
The \emph{$\omega$-limit set}\index{$\alpha$ z-lp@$\omega$-limit set of a point}\index{$\alpha$ zxf@$\omega(x,f)$} 
\label{notation:omegax}
of a point $x\in X$, denoted by $\omega(x,f)$, is the set of all limit points of the trajectory of $x$, 
that is,
$$
\omega(x,f):=\bigcap_{n\ge 0}\overline{\{f^k(x)\mid k\ge n\}}.
$$
The \emph{$\omega$-limit set of the map $f$}\index{$\alpha$ z-lm@$\omega$-limit set of a map}\index{$\alpha$ zf@$\omega(f)$} is 
\label{notation:omegaf}
$$
\omega(f):=\bigcup_{x\in X}\omega(x,f).
$$

\begin{lem}\label{lem:omega-set}
Let $(X,f)$ be a topological dynamical system, $x\in X$ and $n\ge 1$. Then
\begin{enumerate}
\item $\omega(x,f)$ is a closed set, and it is strongly invariant,
\item $\omega(f^n(x),f)=\omega(x,f)$,
\item $\forall i\ge 0$, $\omega(f^i(x),f^n)=f^i(\omega(x,f^n))$,
\item $\disp\omega(x,f)=\bigcup_{i=0}^{n-1}\omega(f^i(x),f^n)$,
\item if $\omega(x,f)$ is infinite, then
$\omega(f^i(x),f^n)$ is infinite for all $i\ge 0$, 
\item $f(\omega(f))=\omega(f)$,
\item $\omega(f^n)=\omega(f)$.
\end{enumerate}
\end{lem}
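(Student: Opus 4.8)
The statement is Lemma~\ref{lem:omega-set}, a list of seven elementary properties of $\omega$-limit sets. The plan is to prove the items essentially in order, since several of the later items lean on the earlier ones. Items (i)--(v) are about $\omega(x,f)$, while (vi)--(vii) are about $\omega(f)$; the latter two follow from the former by a short union argument, so the real work is (i)--(v).

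For (i), I would note that $\omega(x,f)=\bigcap_{n\ge 0}\overline{\{f^k(x)\mid k\ge n\}}$ is an intersection of closed sets, hence closed; it is nonempty because $X$ is compact and the sets $\overline{\{f^k(x)\mid k\ge n\}}$ form a decreasing nested family of nonempty compact sets. For invariance, if $y\in\omega(x,f)$ then there is a sequence $n_j\to\infty$ with $f^{n_j}(x)\to y$; by continuity $f^{n_j+1}(x)\to f(y)$, so $f(y)\in\omega(x,f)$, giving $f(\omega(x,f))\subset\omega(x,f)$. For the reverse inclusion (strong invariance), given $y\in\omega(x,f)$ with $f^{n_j}(x)\to y$ and $n_j\ge 1$, pass to a subsequence so that $f^{n_j-1}(x)$ converges (compactness) to some $z\in\omega(x,f)$; then $f(z)=y$ by continuity. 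Item (ii) is immediate from the definition: the trajectory of $f^n(x)$ is a tail of the trajectory of $x$, and removing finitely many terms does not change the set of limit points. Item (iii) follows by the same continuity argument as invariance: $\omega(f^i(x),f^n)$ is the set of limit points of $(f^{nk+i}(x))_{k\ge0}=(f^i(f^{nk}(x)))_{k\ge0}$, and since $f^i$ is continuous on the compact space $X$ it is a closed map that commutes with taking limits along subsequences, so the limit-point set of the image is the image of the limit-point set, i.e. $f^i(\omega(x,f^n))$.

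Item (iv) is the key combinatorial step and probably the one needing the most care. The inclusion $\bigcup_{i=0}^{n-1}\omega(f^i(x),f^n)\subset\omega(x,f)$ is clear since each $(f^{nk+i}(x))_k$ is a subsequence of the trajectory of $x$. Conversely, if $y\in\omega(x,f)$ with $f^{m_j}(x)\to y$, then by pigeonhole infinitely many $m_j$ lie in a fixed residue class $i\bmod n$, and along that subsequence we see $y\in\omega(f^i(x),f^n)$. Item (v): by (iv), $\omega(x,f)=\bigcup_{i=0}^{n-1}\omega(f^i(x),f^n)$, a finite union; if $\omega(x,f)$ is infinite then at least one $\omega(f^{i_0}(x),f^n)$ is infinite, and then by (iii), $\omega(f^i(x),f^n)=f^{i-i_0}(\omega(f^{i_0}(x),f^n))$ for the appropriate representative — more precisely, using (ii) one gets $\omega(f^{i_0}(x),f^n)=\omega(f^{i_0+n}(x),f^n)$, so the orbit structure under $f$ lets one move between the indices; applying a suitable power of $f$ sends the infinite set $\omega(f^{i_0}(x),f^n)$ onto $\omega(f^i(x),f^n)$, and the image of an infinite set under any map is... not necessarily infinite, so here I must instead argue that $f$ restricted to $\omega(x,f^n)$-type sets is finite-to-one issues do not arise: the cleanest route is to observe $f^n$ maps $\omega(f^i(x),f^n)$ onto itself (strong invariance from (i) applied to $f^n$), and $f$ cyclically maps these $n$ sets to one another with $f^n$ returning; since their union is infinite and $f^n$-invariance plus the cyclic relation forces all of them to have the same cardinality class (finite or infinite), they are all infinite.

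Finally (vi): $f(\omega(f))=f(\bigcup_x\omega(x,f))=\bigcup_x f(\omega(x,f))=\bigcup_x\omega(x,f)=\omega(f)$ using strong invariance from (i) termwise. For (vii): $\omega(f^n)=\bigcup_x\omega(x,f^n)$; by (iv) each $\omega(x,f)=\bigcup_{i=0}^{n-1}\omega(f^i(x),f^n)\subset\omega(f^n)$, so $\omega(f)\subset\omega(f^n)$; conversely each $\omega(x,f^n)\subset\omega(x,f)\subset\omega(f)$ since a subsequence's limit points are limit points, giving $\omega(f^n)\subset\omega(f)$. The main obstacle, as flagged, is the cardinality bookkeeping in (v) — making sure one does not claim "image of an infinite set is infinite" but instead uses $f^n$-invariance and the cyclic permutation of the $n$ pieces to propagate infiniteness; everything else is routine compactness-and-continuity.
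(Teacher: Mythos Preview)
Your proposal is correct and follows essentially the same approach as the paper, which merely sketches the logical dependencies (items (i)--(iv) from the definition, (vi) from (i), (v) and (vii) from (iii)--(iv)) without giving details. Your self-correction in (v) lands on the right argument: the sets $A_i:=\omega(f^i(x),f^n)$ satisfy $f(A_i)=A_{i+1\bmod n}$ by (iii) and (ii), so if any $A_j$ were finite then $A_{j+1}=f(A_j)$ would be finite and inductively all would be, contradicting (iv) and the infiniteness of $\omega(x,f)$.
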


\begin{proof}
Assertions (i) to (iv) can be easily deduced from the definition.
Assertion (vi) follows from (i), assertions (v) and (vii)
follow from (iii)-(iv).
\end{proof}

\begin{lem}\label{lem:omega-finite}
Let $(X,f)$ be a topological dynamical system and $x\in X$. 
If $\omega(x,f)$ is finite, then it is a periodic orbit.
\end{lem}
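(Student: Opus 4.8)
The plan is to show that $\omega(x,f)$ coincides with the orbit of one of its points, and then to observe that this point is periodic. Write $\omega:=\omega(x,f)=\{y_1,\dots,y_k\}$. By Lemma~\ref{lem:omega-set}(i) this set is strongly invariant, so $f(\omega)=\omega$; being a surjective self-map of a finite set, $f|_\omega$ is a bijection, hence a permutation of $\{y_1,\dots,y_k\}$, and in particular every point of $\omega$ is periodic. What remains is to rule out the possibility that this permutation has more than one cycle, i.e. to prove that $\omega$ is a single periodic orbit.

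First I would record the standard fact that $d(f^n(x),\omega)\to 0$ as $n\to\infty$: otherwise some subsequence $f^{n_j}(x)$ stays at distance $\ge\eps_0$ from $\omega$ for some $\eps_0>0$, and by compactness of $X$ a further subsequence converges to a point which, by definition of the $\omega$-limit set, lies in $\omega$ — contradicting $d(\,\cdot\,,\omega)\ge\eps_0$. Next, set $3\eps:=\min_{i\ne j}d(y_i,y_j)>0$, so that the balls $B(y_i,\eps)$ are pairwise at distance $\ge\eps$; since $X$ is compact, $f$ is uniformly continuous, so there is $\delta\in(0,\eps)$ with $d(u,v)<\delta\Rightarrow d(f(u),f(v))<\eps$. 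Choose $N$ so that $d(f^n(x),\omega)<\delta$ for all $n\ge N$; for each such $n$ there is then a unique index $i(n)$ with $d(f^n(x),y_{i(n)})<\delta$.

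The key step is that $f(y_{i(n)})=y_{i(n+1)}$ for every $n\ge N$. Indeed $d(f^{n+1}(x),f(y_{i(n)}))<\eps$ by the choice of $\delta$, while $d(f^{n+1}(x),y_{i(n+1)})<\delta<\eps$; since $f(y_{i(n)})\in\omega$ by strong invariance, the triangle inequality gives $d(f(y_{i(n)}),y_{i(n+1)})<2\eps<3\eps$, which forces $f(y_{i(n)})=y_{i(n+1)}$. By induction, $y_{i(N+m)}=f^m(y_{i(N)})$ for all $m\ge 0$, so $\CO_f(y_{i(N)})\subset\omega$. Conversely, any $z\in\omega$ is the limit of some subsequence $f^{n_j}(x)$ with $n_j\to\infty$; for $j$ large we have $n_j\ge N$ and $f^{n_j}(x)$ lies within $\eps$ of both $z$ and $y_{i(n_j)}$, whence $y_{i(n_j)}=z$ by the separation of the balls, and therefore $z=f^{\,n_j-N}(y_{i(N)})\in\CO_f(y_{i(N)})$. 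Thus $\omega=\CO_f(y_{i(N)})$, the orbit of the (periodic) point $y_{i(N)}$, so $\omega$ is a periodic orbit.

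The only real obstacle is the bookkeeping in the third paragraph: choosing $\eps$ and $\delta$ so that the "nearest point of $\omega$" to $f^n(x)$ is well defined and is transported correctly by $f$. Everything else — the convergence $d(f^n(x),\omega)\to 0$, uniform continuity of $f$, and the finiteness argument showing each point of $\omega$ is periodic — is routine.
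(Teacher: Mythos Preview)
Your proof is correct, but it takes a different route from the paper. The paper argues by contradiction that $\omega(x,f)$ has no proper nonempty $f$-invariant subset: given such a subset $F$ with complement $F'$, it separates them by open neighborhoods $U,U'$ and observes that eventually the trajectory stays in $U\cup U'$ while visiting each infinitely often; picking a subsequence in $U$ whose successor lands in $U'$ and passing to a limit produces $y\in F$ with $f(y)\in F'$, so no such $F$ exists and $f$ must cycle through all of $\omega(x,f)$. Your approach is instead a shadowing argument: using uniform continuity you show the tail of the trajectory has a well-defined nearest neighbor in $\omega$, and that this nearest-neighbor sequence is itself a genuine $f$-orbit, which then must exhaust $\omega$. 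The paper's argument is a bit shorter and more purely topological (no $\eps$–$\delta$ bookkeeping); yours is more constructive, explicitly identifying the periodic point whose orbit is $\omega$. One small remark: your definition $3\eps=\min_{i\ne j}d(y_i,y_j)$ tacitly assumes $k\ge 2$; the case $k=1$ is of course trivial since strong invariance makes the single point fixed.
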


\begin{proof}
Let $F$ be a nonempty subset of $\omega(x,f)$ different from $\omega(x,f)$. 
We set
$F':=\omega(x,f) \setminus F$. Both $F,F'$ are finite and nonempty.
Let $U,U'$ be two open sets such that $F\subset U$, $F'\subset U'$,
$\overline{U}\cap F'=\emptyset$ and  $\overline{U'}\cap F=\emptyset$. 
Thus, for every large enough integer $n$, the point $f^n(x)$ belongs to 
$U\cup U'$. Moreover, there
are infinitely many integers $n$ such that $f^n(x)\in U$ and infinitely many
$n$ such that $f^n(x)\in U'$.  
Therefore, there exists an increasing sequence
$(n_i)_{i\ge 0}$ such that, $\forall i\ge 0$, $f^{n_i}(x)\in U$ and
$f^{n_i+1}(x)\in U'$. By compactness, the sequence $(f^{n_i}(x))_{i\ge 0}$ 
has a limit point  $y\in \overline{U}\cap \omega(x,f)=F$. Since $f$ is 
continuous, $f(y)$ is a limit point of $(f^{n_i+1}(x))_{i\ge 0}$, and
hence $f(y)\in \omega(x,f)\cap \overline{U'}=F'$.  Thus
$f(F)\cap F'\neq \emptyset$, and so $\omega(x,f)$ contains no
invariant subset except itself. 
This implies that $f$  acts as a cyclic permutation on
$\omega(x,f)$, that is, $\omega(x,f)$ is a periodic orbit.
\end{proof}

\subsection{Semi-conjugacy, conjugacy}
Let $(X,f)$ and $(Y,g)$ be two topological dynamical systems. The system 
$(Y,g)$ is said to be 
\emph{(topologically) semi-conjugate}\index{semi-conjugacy} 
to $(X,f)$ if there exists a continuous onto
map $\vfi\colon X\to Y$  such that $\vfi\circ f=g\circ \vfi$.
If in addition the map $\vfi$ is a homeomorphism, 
$(Y,g)$ is \emph{(topologically) conjugate}\index{conjugacy}\index{topological 
conjugacy} to $(X,f)$; conjugacy is an equivalence relation.
Two conjugate dynamical systems share the same dynamical properties as long as 
topology is concerned (differential properties may not be preserved if
$\vfi$ is only assumed to be continuous).

\section{Intervals, interval maps}

\subsection{Intervals, endpoints, length, non degenerate interval, inequalities between subsets of $\IR$}

The (real) intervals are exactly the connected sets of $\IR$. An interval $J$
is either the empty set or one of the following forms:
\begin{itemize}
\item $J=[a,b]$ with $a,b\in\IR$, $a\le b$ (if $a=b$, then $J=\{a\}$),
\item $J=(a,b)$ with $a\in\IR\cup\{-\infty\},b\in\IR\cup\{+\infty\}$, $a<b$,
\item $J=[a,b)$ with $a\in\IR,b\in\IR\cup\{+\infty\}$, $a<b$,
\item $J=(a,b]$ with $a\in\IR\cup\{-\infty\},b\in\IR$, $a<b$.
\end{itemize}
Suppose that $J$ is nonempty and bounded  (i.e., when
$a,b\in\IR$). The \emph{endpoints}\index{endpoints of an interval}  of
$J$ are $a$ and $b$; let $\End{J}$ denote the set $\{a,b\}$.
\label{notation:endpointsinterval}
The  \emph{length} of $J$, denoted by  $|J|$\index{length of an interval}, is equal to $b-a$.
\label{notation:lengthJ}

An interval is \emph{degenerate}\index{degenerate interval} 
if it is either empty or reduced to a single point, and it is
\emph{non degenerate}\index{non degenerate interval} otherwise.

If $a,b\in\IR$, let 
$\langle a,b\rangle$
denote the smallest interval containing $\{a,b\}$, that is,
$\langle a,b\rangle=[a,b]$ if $a\le b$ and $\langle a,b\rangle=[b,a]$ if 
$b\le a$. 
\label{notation:convexhull}

\medskip
If $X$ and $Y$ are two nonempty subsets of $\IR$, the notation $X<Y$
\label{notation:XlessY}
means that, $\forall x\in X,\forall y\in
Y$,  $x<y$ (in this case $X$ and $Y$ are disjoint) and  $X\le
Y$ means that, $\forall x\in X,\forall y\in
Y$, $x\le y$ ($X$ and $Y$ may have a common point, equal to $\max X=\min Y$). 
We may also say that $X$ is on the left of $Y$.

\begin{lem}\label{lem:open-finiteCC}
Every open set $U\subset \IR$ can be written as the union of
countably many disjoint open intervals.
\end{lem}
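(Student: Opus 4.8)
The plan is to use the connected components of $U$, together with the fact that $\IR$ is separable (it has the countable dense subset $\IQ$), to produce the required decomposition.

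First I would recall that every open subset $U$ of $\IR$ decomposes into its connected components, which partition $U$. Each component is connected, hence an interval; and each component is open in $U$, hence open in $\IR$ since $U$ is open; so each component is an open interval (possibly unbounded, possibly empty only if $U=\emptyset$, in which case the statement is trivial with the empty union). The components are pairwise disjoint by definition. So the only thing left to verify is that there are at most countably many of them.

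Next I would prove the countability. Since each component is a nonempty open interval, it contains at least one rational number; so I can pick, for each component, a rational point inside it. Because the components are pairwise disjoint, this assignment is injective from the set of components into $\IQ$. As $\IQ$ is countable, the set of components is countable (finite or countably infinite), which finishes the proof.

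The main obstacle is essentially bookkeeping rather than mathematics: one must justify that connected components of an open subset of $\IR$ are themselves open — this uses that $\IR$ is locally connected (around each point sits a small open interval, which is connected and hence lies in a single component) — and that a connected subset of $\IR$ is an interval, which is exactly the characterization stated at the start of this section. Both of these are standard facts about the topology of $\IR$, recalled in the appendix, so no real difficulty arises. If one prefers to avoid invoking ``connected components'' as a black box, an alternative is to define, for each $x\in U$, the interval $I_x$ to be the union of all open intervals containing $x$ and contained in $U$; one checks $I_x$ is an open interval, that $I_x=I_y$ or $I_x\cap I_y=\emptyset$ for any $x,y\in U$, and then applies the same rational-point argument. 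Either route reaches the conclusion quickly.
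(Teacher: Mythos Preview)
Your proof is correct and follows essentially the same approach as the paper: decompose $U$ into its connected components, observe these are disjoint open intervals, and use that each contains a rational to conclude countability. The paper's proof is simply a terser version of what you wrote.
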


\begin{proof}
The connected components of $U$ are disjoint nonempty
open intervals, and every
non degenerate interval contains a rational number, which implies that the
connected components of $U$ are countable.
\end{proof}

\subsection{Interval maps, monotonicity, critical points, piecewise monotone and piecewise linear maps}

We say that $f\colon I\to I$ is an  \emph{interval map}\index{interval
map} if  $I$ is a non degenerate compact interval and $f$ is a
continuous map.

When dealing with an interval map $f\colon I\to I$, we shall always refer 
to the ambient space. The topology is the induced topology on $I$;
points and sets are implicitly points in $I$ and subsets of $I$, 
and intervals are subintervals of $I$ (and hence are bounded intervals).

\begin{rem}
The fixed points of an interval map $f$ can be easily seen on the graph of $f$.
Indeed, $x$ is a fixed point if and only if $(x,x)$ is in
the intersection of the graph $y=f(x)$ with the diagonal $y=x$. E.g., in 
Example~\ref{ex:orbit}, the map has two fixed points, $0$ and $c$.
Similarly, the points of $P_n(f)$ correspond to 
the intersection of the graph of $f^n$ with $y=x$.
\end{rem}

Let $f\colon I\to \IR$ be a continuous map, where $I$ is an interval, and let 
$J$ be a non degenerate  subinterval of $I$.
\begin{itemize}
\item
The map $f$ is increasing\index{increasing} (resp. decreasing\index{decreasing}) on $J$ if
for all points $x,y\in J$, $x<y\Rightarrow f(x)<f(y)$ (resp. $f(x)> f(y)$).
\item 
The map $f$ is non decreasing\index{non decreasing} (resp. non increasing\index{non decreasing}) on $J$ if
for all $x,y\in J$, $x<y\Rightarrow f(x)\le f(y)$ (resp. $f(x)\ge f(y)$).
\item
The map $f$ is 
\emph{monotone}\index{monotone} (resp. \emph{strictly 
monotone}\index{strictly monotone}) on $J$ if
$f$ is either non decreasing or non increasing
(resp. either increasing or decreasing) on $J$.
\end{itemize}

A \emph{critical point of $f$}\index{critical point}\label{def:Cf} is a point $x\in I$ 
such that there exists no neighborhood of $x$ on which $f$ is strictly monotone.
Notice that if $f$ is differentiable, the set of critical points
is included in the set of zeros of $f'$. 

\medskip
The map $f$ is \emph{piecewise
monotone}\index{piecewise monotone map}\label{def:pm} if the interval $I$ can be
divided into finitely many subintervals on each of which $f$ is
monotone, that is, there exist points 
$a_0=\min I<a_1<\ldots a_{n-1}< a_n=\max I$
such that $f$ is monotone on $[a_i,a_{i+1}]$ for all 
$i\in\Lbrack 0, n-1\Rbrack$.
The set of critical points of $f$ is included in
$\{a_1,\ldots, a_{n-1}\}$. Conversely, if the set of critical points of 
$f$ is finite, then $f$ is piecewise monotone. 

\begin{rem}
The critical points are also called \emph{turning points},
\index{turning points} especially when the map $f$ is piecewise monotone.
\end{rem}

Let $f\colon I\to \IR$ be a continuous map, where $I:=[a,b]$, $a<b$.
The map $f$ is \emph{linear}\index{linear map}
if there exist $\alpha,\beta\in\IR$ such 
that $f(x)=\alpha x+\beta$ for all $x\in [a,b]$. The 
\emph{slope}\index{slope (of a linear map)}\index{slope(f)@$\slope(f)$}
\label{notation:slope}
of $f$ is $\slope(f):=\alpha$. One has $\slope(f)=\frac{f(b)-f(a)}{b-a}$ 
and $|\slope(f)|=\frac{|f(I)|}{|I|}$.

$f$ is \emph{piecewise linear}\index{piecewise linear map} 
if there exist $a_0=\min I<a_1<\ldots <a_{n-1}< a_n=\max I$
such that $f$ is linear on $[a_i,a_{i+1}]$ for all $i\in\Lbrack 0,n-1\Rbrack$.
In particular, a piecewise linear map is piecewise monotone.

Most of our examples will be piecewise linear.

\subsection{Rescaling}

If two interval maps $f$ and $g$ are conjugate by an
increasing linear homeomorphism, they have the same graph up to 
the action of a homothety or a translation. We call this action
a \emph{rescaling}\index{rescaling (of an interval map)}.
If $g$ is conjugate to $f$ by a decreasing linear homeomorphism, the graph 
of $g$ is obtained from the one of $f$ by a half-turn rotation and a rescaling.
Not only are the maps $f$ and $g$ conjugate, but they have exactly the 
same  properties (when the conjugacy is decreasing, it just reverses the 
order when order is involved in a property).
 
\begin{rem}
When dealing with interval maps, one may assume that the interval is
$[0,1]$. Indeed, if  $f\colon [a,b]\to [a,b]$ is an interval map,
let $\vfi\colon [0,1]\to [a,b]$ be the linear homeomorphism defined by
$\vfi(x):=a+(b-a)x$ and let $g:=\vfi^{-1}\circ f\circ \vfi$. 
The maps $f\colon [a,b]\to [a,b]$ and $g\colon [0,1]\to [0,1]$ are conjugate,
and $g$ is a mere rescaling of $f$.
\end{rem}

\subsection{Periodic intervals}
Let $f\colon I\to I$ be an interval map.
If $J_1,\ldots,J_p$ are disjoint non degenerate closed subintervals of
$I$ such that $f(J_i)=J_{i+1\bmod p}$ for all $i\in\Lbrack 1,p\Rbrack$, then
$(J_1,\ldots, J_p)$ (as well as the set $C:=J_1\cup\cdots\cup J_p$)  is called
a \emph{cycle of intervals of period $p$}\index{cycle of intervals}\index{period of a cycle of intervals}. Moreover, $J_1$  is
called a \emph{periodic interval of period $p$}\index{periodic interval}.

\subsection{Intermediate value theorem}
The intermediate value theorem is fundamental and we shall use it constantly.
For a convenience, we give several equivalent statements.

\begin{theo}[intermediate value theorem\label{theo:ivt}]\index{Intermediate value theorem}
Let $f\colon I\to \IR$ be a continuous map, where $I$ is a nonempty interval.
\begin{itemize}
\item
Let $J$ be a nonempty subinterval of $I$.
Then $f(J)$ is also a nonempty interval.
\item
Let $x_1,x_2\in I$ with $x_1\le x_2$.
Then $f([x_1,x_2])\supset \langle f(x_1),f(x_2)\rangle$.
In particular, for every $c$ between $f(x_1)$ and $f(x_2)$, there
exists $x\in [x_1,x_2]$ such that $f(x)=c$.
\end{itemize}
\end{theo}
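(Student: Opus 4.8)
The plan is to reduce both bullets to the classical one-variable form of the theorem — a continuous real function on a compact interval that changes sign has a zero — and to obtain that base case from the completeness (least-upper-bound property) of $\IR$.

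First I would establish the base case: if $g\colon[a,b]\to\IR$ is continuous with $g(a)\le 0\le g(b)$, then $g$ vanishes somewhere on $[a,b]$. I would set $x^{*}:=\sup\{x\in[a,b]\mid g(x)\le 0\}$, which is well defined since the set is nonempty (it contains $a$) and bounded above by $b$. Then I would argue that $g(x^{*})=0$: if $g(x^{*})>0$, continuity of $g$ at $x^{*}$ gives a left neighborhood on which $g>0$, so a smaller number is already an upper bound, a contradiction (here $x^{*}>a$ because $g(a)\le 0$); if $g(x^{*})<0$, continuity gives a neighborhood on which $g<0$, producing points to the right of $x^{*}$ in the set, again a contradiction (here $x^{*}<b$ because $g(b)\ge 0$). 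The symmetric hypothesis $g(a)\ge 0\ge g(b)$ follows by replacing $g$ with $-g$. This is the one genuinely analytic step, and the $\varepsilon$--$\delta$ bookkeeping at $x^{*}$ is the only place I expect to need care — everywhere else is formal.

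Next, the second bullet. Given $x_1\le x_2$ in $I$ and a value $c$ lying between $f(x_1)$ and $f(x_2)$, I would apply the base case to $g:=f-c$ on $[x_1,x_2]$, after swapping the roles of the endpoints if necessary so that the sign condition $g(x_1)\le 0\le g(x_2)$ holds; this produces $x\in[x_1,x_2]$ with $f(x)=c$. Letting $c$ range over all points between $f(x_1)$ and $f(x_2)$, and noting that $f(x_1),f(x_2)$ themselves obviously lie in $f([x_1,x_2])$, we get exactly $\langle f(x_1),f(x_2)\rangle\subset f([x_1,x_2])$. The degenerate case $x_1=x_2$ is trivial.

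Finally, the first bullet. If $J$ is a nonempty subinterval of $I$, then $f(J)\ne\emptyset$ since it contains $f(x)$ for any $x\in J$. To see $f(J)$ is an interval, I would use the characterization that a subset of $\IR$ is an interval if and only if it is convex (contains $\langle y,y'\rangle$ whenever it contains $y$ and $y'$). If $y=f(x_1)$ and $y'=f(x_2)$ with $x_1,x_2\in J$, then $\langle x_1,x_2\rangle\subset J$ because $J$ is an interval, whence $\langle y,y'\rangle\subset f(\langle x_1,x_2\rangle)\subset f(J)$ by the second bullet. So $f(J)$ is convex, hence an interval, completing the proof.
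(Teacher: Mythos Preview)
Your argument is correct, but the route differs from the paper's. The paper proves the first bullet first, in one line, by invoking the general topological fact (stated in the Appendix as Theorem~\ref{theo:f(connected)}) that the continuous image of a connected set is connected; since the connected subsets of $\IR$ are exactly the intervals, $f(J)$ is an interval. The second bullet is then read off as a special case with $J=[x_1,x_2]$. You go the opposite way: you establish the scalar zero-existence lemma directly from the least-upper-bound property, deduce the second bullet, and then use it to show $f(J)$ is convex, hence an interval. Your approach is more self-contained (no appeal to connectedness or to the Appendix) and makes explicit where completeness of $\IR$ enters; the paper's approach is shorter and leans on a general principle it will cite again elsewhere.
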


\begin{proof}
The first assertion follows from the fact that the image of a connected
set by a continuous map is connected (Theorem~\ref{theo:f(connected)} in 
Appendix)  and the image of a nonempty set is nonempty.
The second assertion is a straightforward consequence of the first one
with $J=[x_1,x_2]$. 
\end{proof}

\subsection*{Definition of graph maps}

A \emph{topological graph}\index{topological graph}\index{graph (topological)}
is a compact connected metric space $G$ containing a finite subset $V$
such that $G\setminus V$ has finitely many connected components
and every connected component of $G\setminus V$ is homeomorphic
to $(0,1)$. A topological graph is \emph{non degenerate}
if it contains more than one point.\index{non degenerate graph/tree} 
A \emph{subgraph}\index{subgraph (of a topological graph)}
of $G$ is a closed connected subset of $G$; a subgraph is a topological graph. 
A \emph{tree}\index{tree} is a topological graph containing
no subset homeomorphic to a circle.
A \emph{branching point}\index{branching point of a topological graph} is
a point having no neighborhood homeomorphic to a real interval. An
\emph{endpoint}\index{endpoint of a topological graph}
\label{notation:endpointsgraph}
is a point having a neighborhood homeomorphic to
the half-closed interval $[0,1)$. The sets of branching points and 
endpoints are finite (they are
included in $V$). If $H$ is a subgraph of $G$, the set of endpoints
of $H$ is denoted by $\End{H}$. A subset of $G$ is called
an \emph{interval}\index{interval in a topological graph} (resp.
a \emph{circle}\index{circle in a topological graph}) if it is
homeomorphic to an interval of the real line (resp. a circle of positive
radius). 

\begin{figure}[htb]
\centerline{\includegraphics{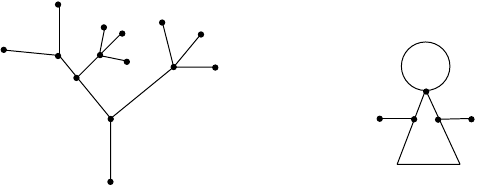}}
\caption{A tree (on the left) and a topological graph (on the right). The branching points and the endpoints are indicated by big dots.}
\end{figure}

A \emph{graph} (resp. \emph{tree}) 
\emph{map}\index{graph map}\index{tree map} is a continuous map 
$f\colon X\to X$, where $X$ is a non degenerate topological 
graph (resp. tree). If $G_1,\ldots, G_p$ are disjoint non degenerate 
subgraphs of $X$ such that $f(G_i)=G_{i+1\bmod p}$
for all $i\in\Lbrack 1,p\Rbrack$, then $(G_1,\ldots, G_p)$
is called a \emph{cycle of graphs of period p}\index{cycle of graphs}\index{period of a cycle of graphs}.

\begin{defi}
Let $f\colon G\to G$ be a graph map.
If $I\subset G$ is either a non degenerate interval or a circle, the map $f|_I$ is said to be
\emph{monotone}\index{monotone (for graph maps)} if it is locally monotone
at every point $x\in I$, that is,
there exists an open neighborhood $U$ of $x$ with respect to the topology of
$I$ such that:
\begin{itemize}
\item $U$ contains $K(x)$, where $K(x)\subset I$ is the largest subinterval
of $I$ containing $x$ on which $f$ is constant,
\item
$U$ and $f(U)$ are homeomorphic to intervals,
\item $f|_U\colon U\to f(U)$,
seen as a map between intervals, is monotone
(more precisely, there exist intervals $J,J'\subset \IR$ and homeomorphisms
$h\colon U\to J$, $h'\colon f(U)\to J'$ such that $h'\circ f|_U\circ h^{-1}
\colon J\to J'$ is monotone). 
\end{itemize} 
\end{defi}

Notice that, when $G$ is a tree, the fact that $f|_I$ is
monotone implies that $f(I)$ is necessarily
an interval, whereas in general $f(I)$ 
may not be an interval (in particular, $f(I)$ may wrap around circles).

\section{Chains of intervals and periodic points}

The next lemma is a basic tool to prove the existence of fixed
points. Below,  Lemma~\ref{lem:chain-of-intervals} states the existence
of periodic points when some intervals are nested under
the action of $f$.

\begin{lem}\label{lem:fixed-point}
Let $f\colon [a,b]\to \IR$ be a continuous map. If $f([a,b])\subset
[a,b]$ or $f([a,b])\supset [a,b]$, then $f$ has a fixed point.
\end{lem}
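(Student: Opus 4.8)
The plan is to treat the two hypotheses separately, in each case producing a point where the function $g(x) := f(x) - x$ changes sign (or vanishes), and then invoke the intermediate value theorem (Theorem~\ref{theo:ivt}) applied to the continuous map $g\colon [a,b]\to\IR$.

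First suppose $f([a,b])\subset [a,b]$. Then $f(a)\in [a,b]$, so $f(a)\ge a$, i.e.\ $g(a)\ge 0$; similarly $f(b)\le b$, i.e.\ $g(b)\le 0$. Since $g$ is continuous on $[a,b]$ and takes a non-negative value at $a$ and a non-positive value at $b$, the intermediate value theorem gives a point $x\in [a,b]$ with $g(x)=0$, that is, $f(x)=x$.

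Next suppose $f([a,b])\supset [a,b]$. Then there exist $u,v\in [a,b]$ with $f(u)=a$ and $f(v)=b$. We have $g(u)=a-u\le 0$ and $g(v)=b-v\ge 0$, because $u,v\in[a,b]$. Applying the intermediate value theorem to $g$ on the interval $\langle u,v\rangle\subset[a,b]$ (whose endpoints are $u$ and $v$ in some order), we obtain $x\in\langle u,v\rangle\subset [a,b]$ with $g(x)=0$, hence $f(x)=x$.

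There is no real obstacle here; the only point requiring a little care is the second case, where one must not assume any ordering of $u$ and $v$ and should phrase the application of the intermediate value theorem on the sub-interval with endpoints $u,v$ so that the conclusion lands inside $[a,b]$. Both cases reduce immediately to the sign-change form of Theorem~\ref{theo:ivt}.
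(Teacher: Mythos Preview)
Your proof is correct and follows essentially the same approach as the paper: define $g(x)=f(x)-x$, show it changes sign in each case, and apply the intermediate value theorem. The only cosmetic difference is that in the second case the paper picks points with $f(x)\le a$ and $f(y)\ge b$ rather than equalities, but the argument is otherwise identical.
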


\begin{proof}
Let $g(x):=f(x)-x$. If $f([a,b])\subset [a,b]$, then
$$
g(a)=f(a)-a\ge a-a=0\text{ and } g(b)=f(b)-b\le b-b=0.
$$
By the intermediate value theorem applied to $g$, there exists $c\in [a,b]$ with $g(c)=0$.  If
$f([a,b])\supset [a,b]$, there exist $x,y\in [a,b]$ such that
$f(x)\le a$ and $f(x)\ge b$. We then have
$$
g(x)=f(x)-x\le a-x\le 0\text{ and } g(y)=f(y)-y\ge b-y\ge 0.
$$
Thus there exists $c\in [x,y]$ with $g(c)=0$ by the intermediate value 
theorem. In both cases, $c$ is a fixed point of $f$.
\end{proof}

\begin{defi}[covering, chain of intervals]\label{def:covering}\index{covering}\index{chain of intervals}
\label{notation:coveringintervals}
Let $f$ be an interval map. 
\begin{itemize}
\item
Let $J,K$ be two nonempty closed intervals. Then
$J$ is said to \emph{cover} $K$ (for $f$)
if $K\subset f(J)$. This is denoted by 
$J\cover{f}K$, or simply $J\to K$ if there
is no ambiguity. 
If $k$ is a positive integer, $J$ 
\emph{covers  $K$ $k$ times} if $J$ contains $k$ closed subintervals with
disjoint interiors such that each one covers $K$.
\item
Let $J_0,\ldots, J_n$ be nonempty closed interval such that
$J_{i-1}$ covers $J_i$ for all $i\in\Lbrack 1,n\Rbrack$.
Then $(J_0,J_1,\ldots, J_n)$ is called a 
\emph{chain of intervals (for $f$)}. This is denoted by
$J_0\to J_1\to \ldots\to J_n$.
\end{itemize}
\end{defi}

\begin{lem}\label{lem:chain-of-intervals}
Let $f$ be an interval map and $n\ge 1$.
\begin{enumerate}
\item
Let $J_0,\ldots, J_n$ be nonempty intervals such that
$J_i\subset f(J_{i-1})$ for all $i$ in $\Lbrack 1,n\Rbrack$. Then 
there exists an interval $K\subset J_0$ such that 
$f^n(K)=J_n$, $f^n(\End{K})=\End{J_n}$
and $f^i(K)\subset J_i$
for all $i\in\Lbrack 0,n\Rbrack$. If in addition $J_0,\ldots, J_n$ are closed
(and so $(J_0,\ldots, J_n)$  is a chain of intervals), then $K$ can be
chosen to be closed.
\item 
Let $(J_0,\ldots, J_n)$ be a chain of intervals such that
$J_0\subset J_n$. Then there exists $x\in J_0$ such that
$f^n(x)=x$ and $f^i(x)\in J_i$ for all $i\in\Lbrack 0, n-1\Rbrack$.
\item
Suppose that, for every $i\in \Lbrack 1,p\Rbrack$, $(J_0^i, \ldots, J_n^i)$ is a
chain of intervals and, for every pair $(i,j)$ of distinct indices in 
$\Lbrack 1,p\Rbrack$, there exists $k\in \Lbrack 0,n\Rbrack$
such that $J_k^i$ and $J_k^j$ have disjoint interiors.  Then there
exist closed intervals $K_1,\ldots, K_p$ with pairwise disjoint
interiors such that
\begin{gather*}
\forall i\in\Lbrack 1,p\Rbrack,\ f^n(K_i)=J_n^i,\ 
f^n(\End{K_i})=\End{J_n^i}\\
\text{and}\quad\forall k\in\Lbrack 0,n\Rbrack,\ 
\forall i\in\Lbrack 1,p\Rbrack,\  f^k(K_i)\subset J_k^i.
\end{gather*}
\end{enumerate}
\end{lem}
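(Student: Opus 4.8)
The plan is to establish part (i) first by a backward induction on the intervals, then derive (ii) as an immediate consequence via Lemma~\ref{lem:fixed-point}, and finally obtain (iii) by running the construction of (i) simultaneously on the $p$ chains while keeping track of the disjointness. For part (i), I would argue by induction on $n$. For $n=1$ the claim is that, given $J_1\subset f(J_0)$, there is a subinterval $K\subset J_0$ with $f(K)=J_1$ and $f(\partial K)=\partial J_1$; this is exactly the kind of statement that follows from the intermediate value theorem (Theorem~\ref{theo:ivt}): writing $J_1=\langle a,b\rangle$ (or its half-open/closed variant), pick $u,v\in J_0$ with $f(u)=a$, $f(v)=b$, and then shrink the interval $\langle u,v\rangle$ to the minimal closed (or suitable) subinterval $K$ whose image still contains $J_1$; continuity forces $f(\partial K)\subset\partial J_1$, and since $f(K)\supset J_1$ it must be that $f(K)=J_1$ and the endpoints match. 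For the inductive step, apply the case $n=1$ to the pair $J_{n-1}\subset f(J_{n-2})$-style reasoning: more precisely, use the induction hypothesis on the chain $J_1\to\cdots\to J_n$ to get $K'\subset J_1$ with $f^{n-1}(K')=J_n$, $f^{n-1}(\partial K')=\partial J_n$, and $f^i(K')\subset J_{i+1}$; since $K'\subset J_1\subset f(J_0)$, apply the $n=1$ case to pull $K'$ back to some $K\subset J_0$ with $f(K)=K'$ and $f(\partial K)=\partial K'$; then $f^n(K)=f^{n-1}(K')=J_n$, $f^n(\partial K)=f^{n-1}(f(\partial K))=f^{n-1}(\partial K')=\partial J_n$, and $f^i(K)\subset J_i$ for all $i$. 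The ``closed'' refinement is tracked throughout by taking minimal closed subintervals at each step.

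For part (ii), since $(J_0,\ldots,J_n)$ is a chain of intervals with $J_0\subset J_n$, part (i) gives a closed interval $K\subset J_0$ with $f^n(K)=J_n\supset J_0\supset K$. Thus $f^n\colon K\to\mathbb{R}$ satisfies $f^n(K)\supset K$, so by Lemma~\ref{lem:fixed-point} there is $x\in K$ with $f^n(x)=x$; and $f^i(x)\in f^i(K)\subset J_i$ for all $i\in\Lbrack 0,n-1\Rbrack$, as required. (One could alternatively apply (i) to the chain $J_0\to\cdots\to J_n$ together with $J_0\subset J_n$ to directly produce a suitable $K$ and invoke the fixed-point lemma on that.)

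For part (iii), the idea is to run the backward construction of (i) in parallel for each index $i\in\Lbrack 1,p\Rbrack$, producing candidate intervals $K_1,\ldots,K_p\subset J_0^1,\ldots,J_0^p$ — but these need not have disjoint interiors a priori, so the disjointness has to be forced. The key observation is the hypothesis: for each pair $i\neq j$ there is a level $k=k(i,j)$ at which $J_k^i$ and $J_k^j$ have disjoint interiors. Since $f^k(K_i)\subset J_k^i$ and $f^k(K_j)\subset J_k^j$, the sets $f^k(K_i)$ and $f^k(K_j)$ have disjoint interiors; pulling back, I would intersect/shrink $K_i$ and $K_j$ — using that $f^k$ restricted to the relevant subinterval is being controlled by the chain — to arrange that $K_i$ and $K_j$ themselves have disjoint interiors, without destroying the surjectivity $f^n(K_i)=J_n^i$ or the containments $f^\ell(K_i)\subset J_\ell^i$. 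Concretely, one fixes an order and processes pairs one at a time: a non degenerate closed interval $K_i$ with $f^n(K_i)=J_n^i$ can always be split at a point where $f^k$ is injective enough to separate it from $K_j$ along the $J^\bullet_k$-separation, replacing $K_i$ by a suitable closed subinterval that still covers $J_n^i$ at the end (this is where the ``minimal closed subinterval'' formulation of (i), applied now to the chain starting from a sub-chain of $J^i$, does the work).

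\textbf{Main obstacle.} The routine cases are (i) and (ii); the genuinely delicate part is (iii), specifically the simultaneous separation: shrinking $K_i$ to make it disjoint from $K_j$ at level $0$ must not disturb a separation already achieved with some $K_\ell$, and must preserve $f^n(K_i)=J_n^i$. The clean way to handle this is probably \emph{not} to shrink the $K_i$ after the fact but to build them directly: choose pairwise-interior-disjoint closed subintervals $L_i\subset J_0^i$ at the outset (possible because one can first separate the $K_i$'s abstractly — or rather, carry the separation along the induction), and argue that the construction in (i) can be performed inside a prescribed subinterval as long as that subinterval still covers the next interval in the chain. I expect the bookkeeping — tracking which of the finitely many levels $k(i,j)$ is used for each pair and checking that the successive shrinkings remain compatible — to be the part that requires genuine care, though no single step is hard.
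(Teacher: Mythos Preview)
Your plan for (i) and (ii) is correct; the tail-first induction you propose (handle $J_1\to\cdots\to J_n$ by hypothesis, then pull back once into $J_0$) is a legitimate variant of the paper's argument, and (ii) follows from (i) and Lemma~\ref{lem:fixed-point} exactly as you say.

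For (iii), however, your proposal has a real gap, and the paper's route avoids precisely the bookkeeping you flag as the ``main obstacle''. The key idea you are missing is already implicit in your base case but you do not exploit it: when you take the \emph{minimal} subinterval $K\subset J_0$ with $f(K)=J_1$, you get more than $f(\partial K)=\partial J_1$ --- you get $f(\Int K)=\Int{J_1}$, equivalently: each endpoint of $J_1$ has a \emph{unique} preimage in $K$. The paper carries this property through a \emph{forward} induction, producing a nested sequence
\[
K_n\subset K_{n-1}\subset\cdots\subset K_1\subset J_0,\qquad f^k(K_k)=J_k,\quad f^k(\Int{K_k})=\Int{J_k}
\]
for every $k$. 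Running this construction on each chain gives nested families $K^i_n\subset\cdots\subset K^i_1\subset J_0^i$, and one sets $K_i:=K^i_n$. Now fix $i\neq j$ and the level $k$ at which $J^i_k,J^j_k$ have disjoint interiors; they share at most a single endpoint $b$. Then $f^k(K^i_k\cap K^j_k)\subset J^i_k\cap J^j_k\subset\{b\}$, and by the uniqueness of endpoint preimages in $K^i_k$ (and in $K^j_k$), the intersection $K^i_k\cap K^j_k$ is at most one point. Since $K_i\subset K^i_k$ and $K_j\subset K^j_k$, the intervals $K_i,K_j$ have disjoint interiors --- with no shrinking, no post-hoc separation, and no compatibility issues between pairs.

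Note that your tail-first induction for (i) does not naturally produce this nested family in $J_0$: unrolling it gives intervals $K,\,f(K),\,f^2(K),\ldots$ along the orbit, not a tower $K_n\subset\cdots\subset K_1$ inside $J_0$. That is why your attempt at (iii) runs into trouble: knowing that $f^k(K_i)$ and $f^k(K_j)$ have disjoint interiors says nothing about $K_i,K_j$ themselves without control on $f^k$ restricted to a suitable domain. Switch to the paper's forward construction and record the interior-to-interior property at each step; then (iii) is immediate.
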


\begin{proof}
We first prove by induction on $n$ the following:

\medskip\textsc{Fact 1.}
Let $J_0,\ldots, J_n$ be nonempty intervals such that 
$J_i\subset f(J_{i-1})$ for all $i\in\Lbrack 1,n\Rbrack$. Then
there exist intervals $K_n\subset K_{n-1}\subset\cdots\subset K_1
\subset J_0$ 
such that,
for all $k\in\Lbrack 1,n\Rbrack$ and all $i\in\Lbrack 0,k\Rbrack$,
$$
f^i(K_k)\subset J_i, f^k(K_k)=J_k,
f^k(\End{K_k})=\End{J_k} \text{ and }f^k(\Int{K_k})=\Int{J_k}.
$$
Moreover,
if $J_0,\ldots, J_n$ are closed, then $K_1,\ldots, K_n$ can be chosen to 
be closed too.

\medskip
\noindent$\bullet$ Case $n=1$. We write $\overline{J_1}=[a,b]$. There exist 
$x,y\in \overline{J_0}$ such that $f(x)=a$ and $f(y)=b$. 
If $a$ (resp. $b$) belongs to $f(J_0)$, we choose $x$ (resp. $y$) in $J_0$.
If $a$ (resp. $b$) does not belong to $f(J_0)$, then it does not
belong to $J_1$ either, and $x$ (resp. $y$) is necessarily 
an endpoint of $J_0$. 
With no loss of generality, we may suppose that $x\le y$ (the other case being
symmetric). We define
$$
y':=\min\{z\ge x\mid f(z)=b\},\quad x':=\max\{z\le y'\mid f(z)=a\}
$$
and $K_1':=[x',y']$. Then $f(K_1')=\overline{J_1}$, $f(\{x',y'\})=\{a,b\}$
and no other point in $K_1'$ is mapped to $a$ or $b$ by $f$. If $J_1$
is closed, then $K_1:=K_1'$ is suitable. Otherwise, it is easy to check that
$K_1$ can be chosen among $(x',y'),[x',y'),(x',y']$ in such a way that
$f(K_1)=J_1$ and $K_1\subset J_0$.

\medskip
\noindent$\bullet$ Suppose that Fact~1 holds for $n$ and consider nonempty
intervals $J_0,\ldots, J_{n}$, $J_{n+1}$ such that $J_i\subset f(J_{i-1})$
for all $i\in\Lbrack 1,n+1\Rbrack$. Let $K_1,\ldots, K_n$ be the 
intervals given by Fact~1 applied to $J_0,\ldots, J_{n}$. 
Since $f^{n+1}(K_n) =f(J_n) \supset J_{n+1}$, we can apply the case $n=1$ 
for the map $g:=f^{n+1}$ and the two intervals
$K_n,J_{n+1}$. We deduce that there exists an interval  $K_{n+1}\subset K_n$,
which is closed if $J_0,\ldots, J_{n+1}$ are closed, and such that
\begin{gather*}
f^{n+1}(K_{n+1})=J_{n+1},\\
f^{n+1}(\End{K_{n+1}})=\End{J_{n+1}}\text{ and }
f^{n+1}(\Int{K_{n+1}})=\Int{J_{n+1}}.
\end{gather*}
Moreover, $f^i(K_{n+1})\subset J_i$ for all $i\in\Lbrack 0,n\Rbrack$
because $K_{n+1}\subset K_n$. This ends the proof of Fact~1, which trivially
implies (i).

\medskip
Let $(J_0,\ldots, J_n)$ be a chain of intervals such that $J_0\supset J_n$.
Fact~1 implies that there exists a closed interval 
$K_n\subset J_0$ such that $f^n(K_n)=J_n$ and
$f^i(K_n)\subset J_i$ for all $i\in\Lbrack 0, n\Rbrack$. 
Thus $f^n(K_n)\supset K_n$ and it is sufficient to apply 
Lemma~\ref{lem:fixed-point} to $g:=f^n|_{K_n}$ in order
to find a point $x\in K_n$ such that $f^n(x)=x$. 
For all $i\in\Lbrack 0,n-1\Rbrack$, $f^i(x)$ obviously belongs to $J_i$. 
This proves (ii).

\medskip
Let $(J_0^i,\ldots, J_n^i)_{1\le i\le p}$ be
chains of intervals satisfying the assumptions of (iii). 
For every $i\in\Lbrack 1, p\Rbrack$, 
let $(K^i_1,\ldots, K^i_n)$ be the closed intervals
given by Fact~1 for $(J_0^i,\ldots, J_n^i)$, and set $K_i:=K^i_n$. 
We fix $i\neq j$ in $\Lbrack 1,p\Rbrack$. By assumption, there exists 
$k\in\Lbrack 0, n\Rbrack$ 
such that $J_k^i$ and $J_k^j$ have disjoint interiors. If
$k=0$, then $K_i$ and $K_j$ have trivially disjoint interiors because they
are respectively included in $J_0^i$ and $J_0^j$.
From now on, we assume that $k\ge 1$. Suppose that
$K_i^k\cap K_j^k\neq\emptyset$.
The set $f^k(K_i^k\cap K_j^k)$ is included in $J_k^i\cap J_k^j$ and,
by assumption,  $J_k^i$ and $J_k^j$ have disjoint interiors. Therefore
the intervals $J_k^i$ and $J_k^j$ have a common endpoint, say
$b$, and $f^k(K_i^k\cap K_j^k)=\{b\}$.
By definition of $K_i^k$, there is a unique point $z$ in $K_i^k$
such that $f^k(z)=b$, and the same holds for $K_j^k$.  Hence
$K_i^k\cap K_j^k$ contains at most one point. Since $K_i\subset K_i^k$
and $K_j\subset K_j^k$, the intervals $K_i$ and $K_j$ have disjoint
interiors. This concludes the proof of (iii).
\end{proof}

\subsection*{Definitions for graph maps}

The notion of covering extends to graph maps provided 
Definition~\ref{def:covering} is phrased differently.
A modification is needed for two reasons:
\begin{itemize}
\item
one may want to consider circles as ``intervals'' whose endpoints are equal,
\item
for a graph map $f$,
it may occur that a compact interval $I$ satisfies $f(I)\supset I$ but 
contains no fixed point, as
illustrated in Figure~\ref{fig:contrex-recouvrement}.
\end{itemize}

\begin{figure}[htb]
\centerline{\includegraphics{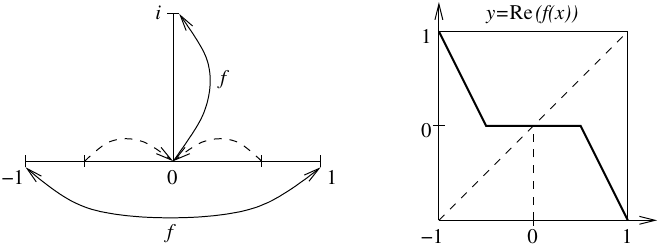}}
\caption{Let $f\colon X\to X$ be a continuous map, where $X$ is the tree 
$[-1,1]\cup i[0,1]\subset \IC$ (on the left), and $f$ is 
such that $f(-1)=1$, $f(1)=-1$,
$f(0)=i$ and  $f$ is one-to-one on $[-1,0]$ and $[0,1]$ (the definition
of $f$ on $i[0,1]$ does not matter). Set $I:=[-1,1]$. It is clear that
$f(I)\supset I$.  Nevertheless $f$ has no fixed point in $I$.  
On the right is represented the real part of $f|_I$; the constant 
interval corresponds to the points $x\in I$ such that $f(x)\in
i[0,1]$. 
}
\label{fig:contrex-recouvrement}
\end{figure}


\begin{defi}\label{def:coveringG}
Let $f\colon G\to G$ be a graph map and let $J,K$ be two non degenerate
intervals in $G$. Then $J$ is said to \emph{cover} $K$\index{covering}
if there exists a subinterval $J'\subset J$ such that $f(J')=K$
and $f(\End{J'})=\End{K}$. 
If $J_0,J_1,\ldots, J_n$ are  intervals in $X$ such
that $J_{i-1}$ covers $J_i$ for all $i\in\Lbrack 1,n\Rbrack$, then
$(\overline{J_0},\ldots, \overline{J_n})$ is a \emph{chain of intervals}\index{chain of intervals} (this is a slight abuse of notation since, if $\overline{J_i}$ is a circle,  it is necessary to remember the endpoint of $J_i$). 
\end{defi}

Using this definition, Lemma~\ref{lem:chain-of-intervals}(ii)-(iii) 
remains valid for graph maps. In particular, if 
$(\overline{J_0},\ldots, \overline{J_{n-1}}, \overline{J_0})$ is a chain of intervals for a graph map $f$,
then there exists a point $x\in \overline{J_0}$ such that $f^n(x)=x$ and
$f^i(x)\in \overline{J_i}$ for all $i\in\Lbrack 1,n-1\Rbrack$.

A variant, called \emph{positive covering}, has been introduced in
\cite{AR}. Positive covering does not imply covering, but implies
the same conclusions concerning periodic points. We do not state
the definition  because it will not be needed in this book. 
See \cite{AR, AR2} for the details.

\section{Directed graphs}\label{sec:1-directedgraphs}

A (finite) directed graph\index{directed graph}\index{graph (directed)} 
$G$ is a pair $(V,A)$ where $V,A$ are finite
sets and there exist two maps $i,f\colon A\to V$. 
The elements of $V$ are the \emph{vertices}\index{vertex, vertices (in a directed graph)}
of $G$ and the elements of $A$ are the \emph{arrows} of $G$. 
An arrow $a\in A $ goes from its \emph{initial vertex} $u=i(a)$ to its
\emph{final vertex} $v=f(a)$. The arrow $a$ is also denoted by
$u\labelarrow{a}v$\index{arrow in a directed graph}.
\label{notation:arrow}
A directed graph is often given by a picture, as in 
Example~\ref{ex:directed-graph}.
If $V=\{v_1,\ldots,v_p\}$, the \emph{adjacency matrix}
\index{adjacency matrix} of $G$ is the matrix $M=(m_{ij})_{1\le i,j\le p}$, 
where $m_{ij}$ is equal to the number of arrows from $v_i$ to $v_j$. 
Conversely,
if $M=(m_{ij})_{1\le i,j\le p}$ is a matrix such that $m_{ij}\in\IZ^+$
for all $i,j\in\Lbrack 1, p\Rbrack$,  
one can build a directed graph whose adjacency
matrix is $M$: it has $p$ vertices $\{v_1,\ldots,v_p\}$ and 
there are $m_{ij}$ arrows from $v_i$ to $v_j$
for all $i,j\in\Lbrack 1,p\Rbrack$.

A directed graph is \emph{simple}\index{simple directed graph} 
if, for every pair of vertices $(u,v)$,
there is at most one arrow from $u$ to $v$. In this case, an arrow
$u\labelarrow{a}v$ is simply denoted by $u\to v$ since there is no ambiguity.
A directed graph is simple if and only if all the coefficients of its adjacency
matrix belong to $\{0,1\}$.

There are several, equivalent norms for matrices. We shall use the following 
one: if $M=(m_{ij})_{1\le i,j\le p}$, we set $\|M\|:=\sum_{1\le i,j\le p} 
|m_{ij}|$.\index{norm of a matrix}
\label{notation:normmatrix}

\begin{ex}\label{ex:directed-graph}
Figure~\ref{fig:ex-oriented-graph} represents a directed graph with three
vertices $v_1$, $v_2$, $v_3$. Its adjacency matrix is 
$\left(\begin{array}{ccc}0&2&1\\ 1&1&1\\ 0&0&0\end{array}\right)$.
\begin{figure}[htb]
\centerline{\includegraphics{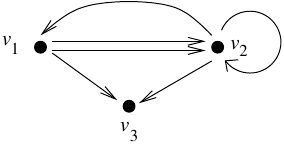}}
\caption{An example of a directed graph.}
\label{fig:ex-oriented-graph}
\end{figure}
\end{ex}

Let $G$ be a directed graph. A \emph{path of length $n$ from $u_0$ to $u_n$} 
\index{path in a directed graph} is a sequence 
$$u_0\labelarrow{a_1} u_1\labelarrow{a_2}u_2\labelarrow{a_3} 
\cdots u_{n-1}\labelarrow{a_n}  u_n,$$ where $u_0,\ldots, u_n$ are
vertices of $G$ and $u_i\labelarrow{a_i}  u_{i+1}$ is an
arrow in $G$ for all $i$ in $\Lbrack 0, n-1\Rbrack$. 
Such a path is called a \emph{cycle}
\index{cycle in a directed graph} if $u_0=u_n$.

If $\CA:=A_0\labelarrow{a_1} A_1\labelarrow{a_2}\cdots \labelarrow{a_n} A_n$ 
and $\CB:=B_0\labelarrow{b_1} B_1\labelarrow{b_2}\cdots
\labelarrow{b_m} B_m$ are two paths such that
$A_n=B_0$, the \emph{concatenation}\index{concatenation of paths in a graph} of $\CA$ and $\CB$, denoted by  $\CA\CB$, is 
the path
$$
A_0\labelarrow{a_1} A_1\labelarrow{a_2}\cdots \labelarrow{a_n} A_n
\labelarrow{b_1} B_1\labelarrow{b_2}\cdots
\labelarrow{b_m} B_m.
$$
If $\CA$, $\CB$ are of respective lengths $n$, $m$, then $\CA\CB$ is of length
$n+m$.

A cycle is \emph{primitive}\index{primitive cycle in a directed graph} 
if it is not the repetition of a shorter cycle, that is, it cannot be
written $\underbrace{\CA\CA\cdots\CA}_{n \text{ \scriptsize times}}$ where $\CA$
is a cycle and $n\ge 2$.

\medskip
A straightforward computation leads to the following result.

\begin{prop}\label{prop:Mn}
Let $G$ be a directed graph and let
$\{v_1,\ldots, v_p\}$ denote its set of vertices. 
Let $M$ be its adjacency matrix. For every
$n\in\IN$, let $M^n=(m_{ij}^n)_{1\le i,j\le p}$.
Then, $\forall n\ge 1$, $\forall i,j\in\Lbrack 1,p\Rbrack$, 
the number of paths of length $n$ from $v_i$ to $v_j$ is
equal to $m_{ij}^n$.
As a consequence, the number of paths of length $n$ in $G$ is equal to 
$$\|M^n\|=\sum_{1\le i,j\le p} 
m^n_{ij}.
$$
\end{prop}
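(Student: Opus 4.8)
The plan is to prove the entrywise assertion by induction on $n$, and then to read off the total count of paths as an immediate consequence. For the base case $n=1$, the definition of the adjacency matrix already gives that $m_{ij}^1=m_{ij}$ is the number of arrows from $v_i$ to $v_j$, which is precisely the number of paths of length $1$ from $v_i$ to $v_j$.

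For the inductive step, suppose that for some $n\ge 1$ and all $i,j\in\Lbrack 1,p\Rbrack$ the entry $m_{ij}^n$ counts the paths of length $n$ from $v_i$ to $v_j$. Using $M^{n+1}=M^nM$ and the definition of matrix multiplication,
$$
m_{ij}^{n+1}=\sum_{k=1}^p m_{ik}^n\,m_{kj}.
$$
On the combinatorial side I would set up the evident bijection: a path of length $n+1$ from $v_i$ to $v_j$, written $u_0\to u_1\to\cdots\to u_{n+1}$ with $u_0=v_i$ and $u_{n+1}=v_j$, is obtained in exactly one way by concatenating the path $u_0\to\cdots\to u_n$ of length $n$ (from $v_i$ to the vertex $v_k:=u_n$) with its final arrow $u_n\to u_{n+1}$. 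Conversely, every choice of an intermediate vertex $v_k$, a path of length $n$ from $v_i$ to $v_k$, and an arrow from $v_k$ to $v_j$ produces such a path, and distinct choices produce distinct paths. Hence the number of paths of length $n+1$ from $v_i$ to $v_j$ equals $\sum_{k=1}^p N_{ik}\,m_{kj}$, where $N_{ik}$ is the number of paths of length $n$ from $v_i$ to $v_k$; by the induction hypothesis $N_{ik}=m_{ik}^n$, which matches the displayed identity and closes the induction.

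Finally, summing over all ordered pairs $(i,j)$ shows that the number of paths of length $n$ in $G$ is $\sum_{1\le i,j\le p}m_{ij}^n$; since each $m_{ij}^n$ is a non-negative integer (a sum of products of non-negative integers), this equals $\sum_{1\le i,j\le p}|m_{ij}^n|=\|M^n\|$. There is no genuine obstacle here — this is the classical transfer-matrix identity — and the only point that deserves a moment's care is checking that ``removing the last arrow'' really is a bijection in the inductive step, so that the counts multiply term by term exactly as the matrix product prescribes.
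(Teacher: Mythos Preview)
Your proof is correct and is exactly the standard induction the paper has in mind; the paper itself does not spell out the argument and simply notes that ``a straightforward computation leads to the following result.'' Your treatment of the inductive step, in particular the verification that removing the last arrow gives a bijection, is precisely the computation being alluded to.
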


\chapter{Links between transitivity, mixing and sensitivity}\label{chap2}

\section{Transitivity and mixing}

We are going to see that, for interval maps, the properties of total
transitivity, topological weak mixing and topological mixing coincide,
contrary to the general case. Moreover the notions of
transitivity and topological mixing are very close. 
Indeed, if $f$ is a transitive interval map which is not topologically 
mixing, then the interval can be divided into two invariant subintervals 
and $f^2$ is topologically mixing on each of 
them. We shall
also give some properties equivalent to topological mixing for interval  maps.
The results of this section are classical (see, e.g., \cite{BCop}).

\subsection{Definitions}

\begin{defi}[transitivity, transitive set]\index{transitive map}
Let $(X,f)$ be a topological dynamical system.
The map  $f$ is \emph{transitive} if, for all
nonempty open sets $U,V$ in $X$, there exists $n\ge 0$ such that $f^n(U)\cap
V\neq\emptyset$ (or, equivalently, $U\cap f^{-n}(V)\neq\emptyset$).

A \emph{transitive set}\index{transitive set} is an invariant
set $E\subset X$  such that $f|_E\colon E\to E$ is transitive.
\end{defi}

\begin{defi}[total transitivity]\index{totally transitive}
Let $(X,f)$ be a topological dynamical system.
The map  $f$ is \emph{totally transitive} if $f^n$ is transitive
for all $n\ge 1$,
\end{defi}

The next result provides an
equivalent definition of transitivity when the space has no isolated
point (see e.g. \cite{DGS}).
Lemma~\ref{lem:transitivity-semi-open}
states two easy properties of transitive interval maps.

\begin{prop}\label{prop:transitive-dense-orbit}
Let $(X,f)$ be a topological dynamical system. 
\begin{enumerate}
\item
If $f$ is transitive, there exists a dense $G_\delta$-set
of points whose orbit is dense in $X$. If a point $x$ has a dense orbit, then
$\omega(x,f)=X$ and the orbit of $f^n(x)$ is dense
in $X$ for all $n\ge 0$. Moreover, either $X$ is finite, or 
$X$ has no isolated point.
\item
If there exists a point whose orbit is dense in $X$
and if $X$ has no isolated point, then $f$ is transitive.
\item
If there exists a point $x$ such that
$\omega(x,f)=X$, then $f$ is transitive. 
\end{enumerate}
In particular, if $X$ has no isolated point, then $f$ is
transitive iff there is a point of dense orbit iff there is a
point $x\in X$ such that $\omega(x,f)=X$.
\end{prop}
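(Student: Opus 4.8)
The plan is to establish the four assertions in the stated order, relying throughout on one elementary fact: \emph{if $f$ is transitive then $X$ has no closed $f$-invariant set $Z$ with $Z\neq X$ and $\mathrm{int}(Z)\neq\emptyset$}. Indeed, such a $Z$ would give nonempty open sets $U:=\mathrm{int}(Z)$ and $V:=X\setminus Z$ with $f^n(U)\subseteq Z$ for every $n\ge 0$ (including $n=0$), hence $f^n(U)\cap V=\emptyset$ for all $n$, contradicting transitivity. I would record this observation first.

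For (i), to produce the dense $G_\delta$ I would fix a countable base $(U_k)_{k\ge1}$ of nonempty open sets (a compact metric space is second countable), set $W_k:=\bigcup_{n\ge0}f^{-n}(U_k)$ — open, and dense by transitivity — and take $G:=\bigcap_k W_k$, which is a dense $G_\delta$ by the Baire category theorem; any $x\in G$ has $\CO_f(x)$ meeting every $U_k$, hence has dense orbit. Next, given $x$ with dense orbit and $n\ge0$, the identity $\CO_f(x)=\{x,\dots,f^{n-1}(x)\}\cup\CO_f(f^n(x))$ shows that $Z_n:=\overline{\CO_f(f^n(x))}$ is closed, $f$-invariant, and $X\setminus Z_n$ is finite; since finite sets are closed, $Z_n$ is open as well, so — being nonempty, closed and $f$-invariant — the observation forces $Z_n=X$. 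Thus $f^n(x)$ has dense orbit for every $n$, and by definition of the $\omega$-limit set $\omega(x,f)=\bigcap_{n\ge0}Z_n=X$. For the dichotomy: if $p$ is an isolated point then $\{p\}$ is open, so transitivity makes $\CO_f(p)$ dense, whence $\omega(p,f)=X\ni p$ by what precedes; as $p$ is isolated this forces $f^k(p)=p$ for infinitely many $k$, so $p$ is periodic and $X=\overline{\CO_f(p)}=\CO_f(p)$ is finite.

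For (ii) and (iii) I would use the following picking argument. If $x_0$ has dense orbit and $X$ has no isolated point, then for every $m$ the set $X\setminus\overline{\CO_f(f^m(x_0))}$ is finite and open, hence empty (a nonempty finite open set would contain an isolated point), so $\CO_f(f^m(x_0))$ stays dense; given nonempty open $U,V$, choose $i$ with $f^i(x_0)\in U$, then $j\ge i$ with $f^j(x_0)\in V$, so that $f^{j-i}(U)\cap V\neq\emptyset$ — this proves (ii). For (iii), if $\omega(x,f)=X$ then each nonempty open set is a neighbourhood of some point of $\omega(x,f)=X$, hence contains $f^k(x)$ for infinitely many $k$, and the same choice of indices gives transitivity. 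The final equivalence then follows by chaining: by (i), transitivity yields a point of dense orbit and also a point $x$ with $\omega(x,f)=X$, while (ii) and (iii) give the two reverse implications.

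The step I expect to be the main obstacle is the equality $\omega(x,f)=X$ in (i): the temptation is to split into ``$X$ finite'' and ``$X$ infinite'', but observing that $X\setminus Z_n$ is automatically clopen lets the transitivity obstruction cover both cases uniformly. The other point to watch is that the transitivity condition used here allows $n=0$, so each inclusion ``$f^n(U)\subseteq Z$'' must remain valid for $n=0$, which it is. Everything else reduces to bookkeeping with orbits, $\omega$-limit sets, and the Baire category theorem.
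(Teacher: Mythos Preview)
Your proof is correct. The overall strategy for (ii), (iii) and the final equivalence coincides with the paper's, but your treatment of (i) is organised differently. The paper proves the finite/no-isolated-point dichotomy first, by a somewhat ad hoc argument (taking an isolated point $x_0$, studying $f^{-1}(\{x_0\})$, and showing $x_0$ must be periodic), and only then establishes $\omega(x,f)=X$ by splitting into the two cases and invoking the argument of (ii) in the infinite case. You reverse the order: your observation that a transitive system admits no proper closed invariant set with nonempty interior lets you conclude $Z_n=\overline{\CO_f(f^n(x))}=X$ in one stroke (the complement is finite, hence $Z_n$ is clopen), and the dichotomy then falls out from $\omega(p,f)=X$ applied to an isolated $p$. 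This is cleaner and avoids the case split; the paper's route, on the other hand, has the minor advantage of making explicit that the claim ``dense orbit $\Rightarrow$ $\omega(x,f)=X$'' really uses transitivity only through the dichotomy (so it holds independently whenever $X$ has no isolated point).
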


\begin{proof}
Assume first that $f$ is transitive. Let $U$ be a nonempty open set. By
transitivity, for every nonempty open set $V$, there exists $n\ge 0$
such that $f^{-n}(U)\cap V\ne\emptyset$. In other words, 
$\bigcup_{n\ge 0}f^{-n}(U)$ is dense in $X$.
Since $X$ is a compact metric space, there exists
a countable basis of nonempty open sets, say $(U_k)_{k\ge 0}$. 
For all $k\ge 0$, the set
$
\bigcup_{n\ge 0}f^{-n}(U_k)
$
is a dense open set by transitivity.
Let
$$
G:=\bigcap_{k\ge 0}\bigcup_{n\ge 0}f^{-n}(U_k).
$$
Then $G$ is a dense $G_\delta$-set and, if
$x\in G$, then $f^n(x)$ enters any set $U_k$ for some $n$,
which means that $\CO_f(x)$ is dense in $X$.

Assume that $x_0$ is an isolated point and set $U:=\{x_0\}$. Since
$U$ is a nonempty open set,  $U\cap G\ne\emptyset$,
that is, $x_0$ has a dense orbit. We set
$V_0:=f^{-1}(U)$; this is an open set. Suppose that $V_0$ is empty.
Then $f^{-n}(U)=\emptyset$ for all $n\ge 1$. The space $X$ is not
reduced to $\{x_0\}$ (otherwise we would have $f(x_0)=x_0$ and
$V_0=\{x_0\}$), and thus there exists a nonempty open set $V$
not containing $x_0$. This implies that $f^{-n}(U)\cap V=\emptyset$
for $n=0$ and for all $n\ge 1$, which contradict the transitivity.
Therefore, $V_0$ is a nonempty open set. By transitivity, there exists
$n\ge 0$ such that $f^n(U)\cap V_0\ne\emptyset$. This implies
that $f^n(x_0)\in V_0$ and $f^{n+1}(x_0)=x_0$. Therefore, the point $x_0$
is periodic. Since $x_0$ has a dense orbit, this implies that $X$ is
finite and equal to $\CO_f(x_0)$. In this case, $f$ acts as a cyclic
permutation on $X$ and it is clear that for every point $x\in X$,
$X=\CO_f(x)=\omega(x_0,f)$.

By refutation, if $f$ is transitive and $X$ is infinite, then $X$ has no isolated point.

\medskip
Assume now that there exists a point $x$ whose orbit is dense 
in $X$ and that $X$ has no isolated point.
Let $U,V$ be two nonempty open sets in $X$. There exists 
an integer $n\ge 0$ such that $f^n(x)\in U$. The set $V\setminus\{x,f(x),
\ldots, f^n(x)\}$ is open, and it is nonempty because $X$ has no
isolated point. Thus there exists $m\ge 0$ such that $f^m(x)\in
V\setminus\{x,f(x),\ldots, f^n(x)\}$. It follows
that $m>n$ and $f^m(x)=f^{m-n}(f^n(x))\in f^{m-n}(U)\cap V$, and 
thus $f^{m-n}U\cap V\neq \emptyset$. We deduce that $f$ is transitive, 
which is (ii).
Moreover, we have proved that,
for all nonempty open sets $V$, for all $n\ge 0$, there exists $m>n$
such that $f^m(x)\in V$. This implies that the orbit of $f^n(x)$ is dense
for all $n\ge 0$, and hence $\omega(x,f)=X$. This ends the proof of (i).

\medskip
Finally, assume that $\omega(x,f)=X$ for some point $x\in X$.
This implies that every nonempty open set contains
some point $f^n(x)$ with $n$ arbitrarily large.
Let $U,V$ be two nonempty open sets in $X$. Then there exist
integers $n_2> n_1\ge 0$ such that $f^{n_1}(x)\in U$ and
$f^{n_2}(x)\in V$. Then $n_2-n_1>0$ and $f^{n_2-n_1}(U)$ contains
the point $f^{n_2-n_1}(f^{n_1}(x))=f^{n_2}(x)$. Thus 
$f^{n_2-n_1}(U)\cap V\ne\emptyset$. This implies that $f$ is transitive,
which is (iii).
\end{proof}

\begin{lem}\label{lem:transitivity-semi-open}
Let $f\colon I\to I$ be a transitive interval map. 
\begin{enumerate}
\item The image of a non degenerate interval is a non degenerate interval.
\item The map $f$ is onto.
\end{enumerate}
\end{lem}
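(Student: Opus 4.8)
The plan is to derive both assertions from a single ingredient: by Proposition~\ref{prop:transitive-dense-orbit}, transitivity supplies a point $x\in I$ whose orbit is dense, so that $\omega(x,f)=I$. Note also that $I$, being a non degenerate compact interval, is infinite (indeed uncountable); this will be used at the very end.

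For assertion (ii) I would observe that $\omega(x,f)$ is strongly invariant by Lemma~\ref{lem:omega-set}(i), hence
$f(I)=f(\omega(x,f))=\omega(x,f)=I$, so $f$ is onto. (A self-contained alternative, not using the $\omega$-limit machinery: $f(I)$ is a closed subinterval of $I$ by the intermediate value theorem, Theorem~\ref{theo:ivt}; it is non degenerate since a constant map on an infinite space cannot be transitive; and if it were a proper subinterval one could choose a nonempty open $V$ in the complement of $f(I)$ and a nonempty open $U$ disjoint from $V$, which contradicts transitivity because $f^n(U)\subset f^n(I)\subset f(I)$ for every $n\ge 1$.)

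For assertion (i), let $J$ be a non degenerate subinterval. By Theorem~\ref{theo:ivt}, $f(J)$ is an interval, so the only thing to rule out is that $f$ is constant on $J$. Suppose it is, with value $c$, and pick a non degenerate open subinterval $U\subset J$, on which $f\equiv c$. Since $\omega(x,f)=I$ meets the nonempty open set $U$, the trajectory of $x$ enters $U$ at infinitely many times $n_1<n_2<\cdots$, whence $f^{n_k+1}(x)=c$ for all $k$. Using $f^{n_1+1}(x)=c$ one gets, for $k\ge 2$, that $f^{n_k}(x)=f^{n_k-n_1-1}(c)$ lies in $U\subset J$, so $f^{n_k-n_1}(c)=c$ with $n_k-n_1\ge 1$; thus $c$ is periodic and $\CO_f(c)$ is finite. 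But then $\CO_f(x)\subset\{x,f(x),\dots,f^{n_1}(x)\}\cup\CO_f(c)$ is finite, contradicting $\overline{\CO_f(x)}\supset\omega(x,f)=I$.

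The one mildly delicate point is the bookkeeping in (i): once the trajectory of $x$ lands in the ``flat'' interval $J$ it is permanently confined to $\CO_f(c)$, which forces $c$ to be periodic and hence the whole orbit of $x$ to be finite. Everything else is immediate from the intermediate value theorem and the two cited results, so I do not expect a genuine obstacle.
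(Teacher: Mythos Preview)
Your proof is correct and follows essentially the same approach as the paper: both parts rest on a point with dense orbit supplied by Proposition~\ref{prop:transitive-dense-orbit}, and for (i) both reach a contradiction by showing the putative constant value is periodic. The paper's version of (i) is a touch shorter because it chooses the dense-orbit point \emph{inside} $J$ (using that such points form a dense $G_\delta$), so $y:=f(x)$ itself has a dense orbit and one return of $y$ to $J$ immediately gives $f^{n+1}(y)=y$; your bookkeeping with two entry times achieves the same thing from a single fixed $x$. For (ii) your use of the strong invariance of $\omega(x,f)$ from Lemma~\ref{lem:omega-set}(i) is a clean alternative to the paper's argument that $f(I)$ is closed and contains the dense tail $\{f^n(x)\mid n\ge 1\}$.
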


\begin{proof} Let $J$ be a non degenerate interval.
Since $J$ is connected, $f(J)$ is also connected, that is, it is an
interval.  Suppose that $f(J)$ is reduced to a single point; we
write $f(J)=\{y\}$. By
Proposition~\ref{prop:transitive-dense-orbit}, there exists 
a point $x\in J$ whose orbit is dense, and $y=f(x)$ also has a dense
orbit. Thus there exists $n\ge 0$ such that $f^n(y)\in J$. This implies 
that $y=f^{n+1}(y)$, that is, $y$ is a periodic point. But this is impossible 
because the orbit of $y$ is dense in $I$. We deduce that the interval 
$f(J)$ is not degenerate and thus (i) holds.

By Proposition~\ref{prop:transitive-dense-orbit}, there exists
a point $x$ such that $I=\overline{\{f^n(x)\mid n\ge 1\}}$.
Notice that $\{f^n(x)\mid n\ge 1\}\subset f(I)$.
Since $I$ is compact, $f(I)$ is compact too, and hence $I\subset f(I)$,
which implies that $f(I)=I$. This is (ii).
\end{proof}

\begin{defi}[mixing, weak mixing]\index{weak mixing}\index{topologically weakly mixing}\index{mixing}\index{topologically mixing}
Let $(X,f)$ be a topological dynamical system.
The map $f$ is \emph{topologically mixing} if,
for all nonempty open sets $U,V$ in $X$, there exists an integer $N\ge 0$ 
such that, $\forall n\ge N$, $f^{n}(U)\cap V\neq\emptyset$.
The map $f$ is \emph{topologically weakly mixing} if
$f\times f$  is transitive, where $f\times f$ is the map
$$
\begin{array}{ccl}X\times X&\to& X\times X\\
(x,y)&\mapsto&(f(x),f(y))\end{array}
$$
\end{defi}

It is well known that topological mixing implies topological weak mixing (see,
e.g., \cite{DGS}). Moreover, topological weak mixing implies total 
transitivity. This is a folklore result. It can be
proved using the following result, due to 
Furstenberg \cite{Fur1}.

\begin{prop}\label{prop:weakly-mixing-product}
Let $(X,f)$ be a topological dynamical system.
If $f$ is topologically weakly mixing, then the
product system $(X^n,\underbrace{f\times\cdots\times f}_{n\ \rm times})$  is
transitive for all integers $n\ge 1$.
\end{prop}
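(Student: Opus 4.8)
The plan is to reformulate everything in terms of ``hitting-time sets'' and then run an induction on $n$ whose inductive step glues two coordinates into one. For nonempty open sets $U,V\subset X$ write $N(U,V):=\{k\ge 0\mid f^k(U)\cap V\ne\emptyset\}$. Since open boxes $U_1\times\cdots\times U_n$ form a basis of $X^n$ and transitivity may be tested on basis elements, the product map $\underbrace{f\times\cdots\times f}_{n}$ is transitive if and only if for all nonempty open $U_1,\dots,U_n,V_1,\dots,V_n\subset X$ one has $\bigcap_{i=1}^n N(U_i,V_i)\ne\emptyset$ (because $(f^{\times n})^k$ sends the box $U_1\times\cdots\times U_n$ to a set meeting $V_1\times\cdots\times V_n$ exactly when $f^k(U_i)\cap V_i\ne\emptyset$ for every $i$). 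With this dictionary, the hypothesis ``$f$ topologically weakly mixing'', i.e.\ ``$f\times f$ transitive'', is precisely the $n=2$ case: $N(U_1,V_1)\cap N(U_2,V_2)\ne\emptyset$ for all choices. So it suffices to prove, by induction on $n\ge 1$, the statement $P(n)$: every $n$-fold intersection $\bigcap_{i=1}^n N(U_i,V_i)$ is nonempty.

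The base case $n=1$ (transitivity of $f$) follows from $P(2)$ applied with $(U_1,V_1)=(U_2,V_2)$, or from the fact that $f$ is a factor of $f\times f$ via a coordinate projection; and $P(2)$ is the hypothesis. For the inductive step I would assume $P(n)$ with $n\ge 2$ and, given nonempty open $U_1,\dots,U_{n+1},V_1,\dots,V_{n+1}$, glue the last two pairs. The key point where weak mixing is used in full strength: apply $P(2)$ to the pairs $(U_n,U_{n+1})$ and $(V_n,V_{n+1})$ to obtain a \emph{single} integer $s\in N(U_n,U_{n+1})\cap N(V_n,V_{n+1})$. Then set $U':=U_n\cap f^{-s}(U_{n+1})$ and $V':=V_n\cap f^{-s}(V_{n+1})$; both are nonempty (the first because $s\in N(U_n,U_{n+1})$ produces a point of $U_n$ sent into $U_{n+1}$ by $f^s$, similarly the second) and open. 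One checks that $N(U',V')\subset N(U_n,V_n)\cap N(U_{n+1},V_{n+1})$: if $x\in U'$ with $f^j(x)\in V'$, then $x\in U_n$ and $f^j(x)\in V_n$, while $f^s(x)\in U_{n+1}$ and $f^{s+j}(x)=f^j(f^s(x))\in V_{n+1}$, so $j$ lies in both $N(U_n,V_n)$ and $N(U_{n+1},V_{n+1})$. Now apply $P(n)$ to the $n$ pairs $(U_1,V_1),\dots,(U_{n-1},V_{n-1}),(U',V')$ to get $j$ in all of $N(U_1,V_1),\dots,N(U_{n-1},V_{n-1}),N(U',V')$; by the inclusion just proved, this $j$ lies in $\bigcap_{i=1}^{n+1}N(U_i,V_i)$, establishing $P(n+1)$.

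The only real obstacle is producing a single return time that works for all $n$ coordinates simultaneously: handling the pairs one at a time is useless, since the times realizing $f^{k}(U_i)\cap V_i\ne\emptyset$ for different $i$ need not coincide. The trick that resolves this is the gluing device above, and the one ingredient that makes it go through is that weak mixing supplies an $s$ lying in $N(U_n,U_{n+1})\cap N(V_n,V_{n+1})$ at once — this is exactly why mere transitivity of $f$ is not enough and the hypothesis on $f\times f$ is essential. The remaining steps (continuity giving openness of the $f^{-s}$-preimages, non-emptiness of $U',V'$, the elementary orbit chase for the inclusion, and the reduction of transitivity to basis elements) are routine.
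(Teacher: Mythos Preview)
Your proof is correct and follows essentially the same route as the paper: both recast transitivity of the product in terms of the hitting-time sets $N(U,V)$, use weak mixing to find a single $s\in N(U_n,U_{n+1})\cap N(V_n,V_{n+1})$, set $U'=U_n\cap f^{-s}(U_{n+1})$ and $V'=V_n\cap f^{-s}(V_{n+1})$, verify $N(U',V')\subset N(U_n,V_n)\cap N(U_{n+1},V_{n+1})$, and then induct. The only cosmetic difference is that the paper states the two-pair reduction as a standalone lemma before iterating, whereas you fold it directly into the inductive step.
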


\begin{proof}
For all open sets $U,V$ in $X$, we define
$$
N(U,V):=\{n\ge 0\mid U\cap f^{-n}(V)\neq\emptyset\}.
$$
Let $U_1, U_2, V_1, V_2$ be nonempty open sets in $X$. Since $f\times f$
is transitive, there exists an integer $n\ge 0$ such that $(U_1\times V_1)\cap
(f\times f)^{-n}(U_2\times V_2)\neq\emptyset$, that is,
$U_1\cap f^{-n}(U_2)\neq \emptyset$ and $V_1\cap f^{-n}(V_2)\neq \emptyset$.
We first remark that this implies
\begin{equation}\label{eq:weakmixing-transitive}
\forall\, U_1, U_2 \text{ nonempty open sets},\ N(U_1,U_2)\neq\emptyset.
\end{equation}
Now we are going to show that there exist nonempty open sets
$U,V$ such that $N(U,V)\subset N(U_1,V_1)\cap N(U_2,V_2)$. We
set $U:=U_1\cap f^{-n}(U_2)$ and $V:=V_1\cap f^{-n}(V_2)$. These sets are open,
and we have shown that they are not empty. Let $k\in N(U,V)$.
This integer exists by \eqref{eq:weakmixing-transitive} and satisfies
$U_1\cap f^{-n}(U_2)\cap f^{-k}(V_1)\cap f^{-n-k}(V_2)
\neq\emptyset$. This implies that $U_1\cap f^{-k}(V_1)\neq \emptyset$ and
$U_2\cap f^{-k}(V_2)\neq \emptyset$, and thus $N(U,V)\subset 
N(U_1,V_1)\cap N(U_2,V_2)$.
Then, by a straightforward induction, we see that, for all nonempty
open sets $U_1,\ldots, U_n,V_1\ldots, V_n$, there exist nonempty
open sets $U,V$ such that
$$
N(U,V)\subset N(U_1,V_1)\cap N(U_2,V_2)\cap\cdots \cap N(U_n,V_n).
$$
Combined with \eqref{eq:weakmixing-transitive}, this implies that
$(X^n,f\times\cdots\times f)$ is transitive.
\end{proof}

\begin{theo}\label{theo:mixing-weak-mixing}
Let $(X,f)$ be a topological dynamical system.
If $f$ is topologically mixing, then it is topologically weakly mixing. 
If $f$ is topologically weakly mixing, then
$f^n$ is topologically weakly mixing for all $n\ge 1$ and 
$f$ is totally transitive.
\end{theo}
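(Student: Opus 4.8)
The statement has three parts: (1) topological mixing implies topological weak mixing, (2) topological weak mixing implies $f^n$ is topologically weakly mixing for all $n\ge 1$, and (3) topological weak mixing implies total transitivity. Part (1) is stated in the text as well known, so I would either cite \cite{DGS} or give the quick direct argument: if $f$ is mixing and $U_1\times U_2$, $V_1\times V_2$ are basic nonempty open sets in $X\times X$, then $N(U_1,V_1)$ and $N(U_2,V_2)$ each contain all sufficiently large integers, hence their intersection is nonempty, giving transitivity of $f\times f$. Part (3) is immediate from Proposition~\ref{prop:weakly-mixing-product}: total transitivity means $f^n$ is transitive for every $n\ge 1$, and transitivity of $f^n$ on $X$ follows because a point witnessing transitivity for the $n$-fold product $f\times\cdots\times f$ acting on the "diagonal neighborhoods" yields the conclusion—more precisely, I would show directly that if $f$ is weakly mixing then $f^n$ is transitive, by applying Proposition~\ref{prop:weakly-mixing-product}: given nonempty open $U,V$, there is $k$ with $U\cap f^{-k}(V)\ne\emptyset$ AND (using the product trick with $n$ coordinates chosen as $U, f^{-1}(V),\dots,f^{-(n-1)}(V)$-style sets, or more cleanly) one arranges $k$ to be a multiple of $n$. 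The cleanest route is the standard one: weak mixing implies for any nonempty open $U,V$ the set $N(U,V)$ is "thick" enough, but I'll instead use Proposition~\ref{prop:weakly-mixing-product} applied with cleverly chosen open sets to force a return time divisible by $n$.

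More concretely for part (3): fix $n\ge1$ and nonempty open $U,V\subset X$. By Proposition~\ref{prop:weakly-mixing-product}, the system $(X^n, f\times\cdots\times f)$ is transitive. Apply transitivity to the open sets $U\times f^{-1}(U)\times\cdots$ — actually the slick argument is: by \eqref{eq:weakmixing-transitive}-style reasoning extended, for nonempty open $U,V$ there exist nonempty open $U',V'$ with $N(U',V')\subset N(U,V)\cap N(U,f^{-1}(V))\cap\cdots\cap N(U,f^{-(n-1)}(V))$; wait, that is not quite what's needed. The genuinely standard fact is: $f$ weakly mixing $\Rightarrow$ $f\times f$ weakly mixing $\Rightarrow$ (by Proposition~\ref{prop:weakly-mixing-product}) all finite powers of $f\times f$, i.e. $f^{(n)}:=f\times\cdots\times f$, are transitive. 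From transitivity of $f^{(2)}$ on $X\times X$ applied to sets of the form $U\times U$ and $V\times f^{-j}(V)$ one extracts a common return time; iterating/combining gives a return time of $U$ into $V$ that is $\equiv 0 \bmod n$. So $f^n$ is transitive, hence $f$ is totally transitive.

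For part (2), that $f^n$ is itself weakly mixing: $f$ weakly mixing means $f\times f$ transitive; I must show $(f^n)\times(f^n)=(f\times f)^n$ is transitive. But $f\times f$ is weakly mixing (part (1)-type reasoning, or directly: $(f\times f)\times(f\times f)\cong (f^{(4)})$ which is transitive by Proposition~\ref{prop:weakly-mixing-product}), and by part (3) applied to $f\times f$ in place of $f$, $(f\times f)$ is totally transitive, so $(f\times f)^n=(f^n)\times(f^n)$ is transitive, i.e. $f^n$ is weakly mixing. This is clean and reuses part (3).

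**Main obstacle.** The only real work is the number-theoretic bookkeeping in part (3): upgrading "there is some return time" to "there is a return time divisible by $n$". The key lemma behind this is that for a weakly mixing system, for any nonempty open $U,V$ and any $n$, the set $N(U,V)$ meets $n\IZ$; this is exactly what Proposition~\ref{prop:weakly-mixing-product} is designed to deliver once one feeds it the right product sets (take the product of $n$ copies and use open sets $U_i,V_i$ built so that a single product-return time $k$ forces $k,2k,\dots$ patterns aligning a multiple of $n$ — or more simply, apply transitivity of $(f\times f)$ to $U\times U$ and $(\,\text{finitely many }f^{-j}(V)\,)$ and use a pigeonhole on residues mod $n$). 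I would write this pigeonhole step carefully, as it is the one place the argument is not purely formal. Everything else is formal manipulation of the sets $N(U,V)$ and repeated use of Proposition~\ref{prop:weakly-mixing-product}.
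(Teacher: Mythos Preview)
Your proposal is correct, and all the pieces are present, but you second-guess yourself at precisely the moment you hit the right argument. When you write ``Apply transitivity to the open sets $U\times f^{-1}(U)\times\cdots$'' and then abandon it, you are walking away from exactly the paper's proof. The paper applies transitivity of the $2n$-fold product (Proposition~\ref{prop:weakly-mixing-product}) to
\[
W:=U\times f^{-1}(U)\times\cdots\times f^{-(n-1)}(U)\times V\times f^{-1}(V)\times\cdots\times f^{-(n-1)}(V),\qquad
W':=(U')^n\times (V')^n,
\]
obtains a single $k$ with $f^{-(k+i)}(U)\cap U'\ne\emptyset$ and $f^{-(k+i)}(V)\cap V'\ne\emptyset$ for every $i\in\Lbrack 0,n-1\Rbrack$, and then simply picks the $i$ with $n\mid k+i$. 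No separate pigeonhole lemma is needed: the $n$ shifted copies $f^{-i}(U)$ \emph{are} the pigeonhole. This yields $f^n$ weakly mixing directly, and hence $f^n$ transitive; so parts (2) and (3) come out in one stroke.

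Your alternative route for part (2)---first prove total transitivity, then observe that $f\times f$ is itself weakly mixing (since the $4$-fold product is transitive by Proposition~\ref{prop:weakly-mixing-product}), and finally apply total transitivity to $f\times f$ to conclude $(f\times f)^n=f^n\times f^n$ is transitive---is a perfectly valid bootstrapping. It is slightly less economical than the paper's unified $X^{2n}$ argument, but has the virtue of isolating total transitivity as a reusable lemma. Either way, the only genuine content is the ``window of $n$ consecutive return times'' trick, and you had it before you talked yourself out of it.
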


\begin{proof}
First we assume that $f$ is topologically mixing. Let $W_1,W_2$ be
two nonempty open sets in $X\times X$. There exist nonempty open sets
$U, U', V,V'$ in $X$ such that $U\times U'\subset W_1$ and $V\times V'\subset
W_2$. Since $f$ is topologically mixing, there exists $N\ge 0$ such that,
$\forall n\ge N$, $f^n(U)\cap V\neq\emptyset$ and
 $f^n(U')\cap V'\neq\emptyset$. Hence $f^N(W_1)\cap W_2\neq\emptyset$.
We deduce that $f$ is topologically weakly mixing.

From now on, we assume that $f$ is topologically weakly mixing and we fix
$n\ge 1$. Let $U,U', V,V'$ be nonempty open sets in $X$. We define
$$
W:=U\times f^{-1} (U)\times\cdots \times f^{-(n-1)}(U)\times
V\times f^{-1} (V)\times\cdots \times f^{-(n-1)}(V)
$$
and
$$
W':=\underbrace{U'\times\cdots\times U'}_{n\ \rm times}\times
\underbrace{V'\times\cdots\times V'}_{n\ \rm  times}.
$$
The sets $W,W'$ are open in $X^{2n}$. Moreover, 
$(X^{2n},f\times\cdots \times f)$ is transitive by 
Proposition~\ref{prop:weakly-mixing-product}. Thus there exists $k\ge 0$
such that $f^{-k}(W)\cap W'\neq\emptyset$. This implies that
$f^{-(k+i)}(U)\cap U'\neq\emptyset$ and 
$f^{-(k+i)}(V)\cap V'\neq\emptyset$ for all $i\in\Lbrack 0,n-1\Rbrack$.
We choose $i\in\Lbrack 0,n-1\Rbrack$ 
such that $k+i$ is a multiple of $n$; we write $k+i=np$. We deduce
that $(f\times f)^{-np}(U\times V)\cap (U'\times V')\neq\emptyset$.
Therefore, $f^n$ is topologically weakly mixing. This trivially implies that 
$f^n$ is transitive. 
\end{proof}

Here is an equivalent definition of mixing for interval maps.

\begin{prop}\label{prop:def-mixing}
An interval map $f\colon [a,b]\to [a,b]$ is topologically mixing if and only 
if for all $\eps>0$ and all non degenerate intervals $J\subset [a,b]$, 
there exists an integer $N$ such that
$f^n(J)\supset [a+\eps, b-\eps]$ for all $n\ge N$.
\end{prop}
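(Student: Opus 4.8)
First I would handle the easy direction (⇐). Assume the stated property: for every $\eps>0$ and every non degenerate interval $J$, there is $N$ with $f^n(J)\supset[a+\eps,b-\eps]$ for all $n\ge N$. Given two nonempty open sets $U,V$ in $[a,b]$, shrink $U$ to a non degenerate closed interval $J\subset U$ and pick $\eps>0$ small enough that $V\cap[a+\eps,b-\eps]\neq\emptyset$; this is possible since $V$ is a nonempty open subset of $[a,b]$, so it contains a point at distance $>\eps$ from both endpoints for $\eps$ small. Then for all $n\ge N$ we have $f^n(U)\supset f^n(J)\supset[a+\eps,b-\eps]\supset V\cap[a+\eps,b-\eps]\neq\emptyset$, whence $f^n(U)\cap V\neq\emptyset$. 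This is exactly topological mixing.

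For the hard direction (⇒), assume $f$ is topologically mixing. Fix $\eps>0$ and a non degenerate interval $J$. The goal is a single $N$ that works simultaneously for the whole interval $[a+\eps,b-\eps]$. I would proceed in two steps. Step 1: by compactness, cover $[a+\eps,b-\eps]$ by finitely many open intervals $V_1,\dots,V_m$, each of length $<\eps/2$ say, chosen so their union is all of $[a+\eps,b-\eps]$ and actually so that each $V_k$ together with its neighbours covers things with overlap. Since $f$ is topologically mixing, for each $k$ there is $N_k$ with $f^n(J)\cap V_k\neq\emptyset$ for all $n\ge N_k$; let $N_0:=\max_k N_k$, so for all $n\ge N_0$, $f^n(J)$ meets every $V_k$. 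Step 2: upgrade ``meets every $V_k$'' to ``contains $[a+\eps,b-\eps]$''. Here is the key point: $f^n(J)$ is an interval by the intermediate value theorem (Theorem~\ref{theo:ivt}), and if this interval meets each of the overlapping small intervals $V_1,\dots,V_m$ whose union is connected and covers $[a+\eps,b-\eps]$, then it must contain $[a+\eps,b-\eps]$ — because an interval meeting two sets must contain everything between them, and the chain of overlaps propagates this across the whole range. To make this airtight I would argue: $f^n(J)$ contains points $y_k\in V_k$ for each $k$; since $f^n(J)$ is an interval it contains $[\min_k y_k,\max_k y_k]$; and by the covering/overlap condition $\min_k y_k\le a+\eps$ and $\max_k y_k\ge b-\eps$ (after reindexing the $V_k$ from left to right and using that consecutive ones overlap, the leftmost $V_1$ can be taken to contain a point $\le a+\eps$ and the rightmost to contain a point $\ge b-\eps$, e.g. by including slightly enlarged end intervals). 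Then $f^n(J)\supset[a+\eps,b-\eps]$.

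The main obstacle is Step 2, the bookkeeping that turns the finitely many ``$f^n(J)$ meets $V_k$'' statements into the containment $f^n(J)\supset[a+\eps,b-\eps]$ uniformly in $n\ge N_0$. The cleanest way around it is to choose the finite cover carefully at the outset: pick points $a+\eps=t_0<t_1<\dots<t_m=b-\eps$ and let $V_k$ be a small open interval around $t_k$ contained in $[a,b]$, with $V_0$ chosen to contain a point $<a+\eps$ (or equal to $a$ if $a+\eps$... — simply shift so $V_0\ni a+\eps$ suffices since we only need $\ge$) and $V_m$ to contain a point $>b-\eps$ — actually it is enough to just take $V_0$ and $V_m$ to be small neighbourhoods of $a+\eps$ and $b-\eps$ respectively, so that $f^n(J)$ meeting $V_0$ and $V_m$ already forces $f^n(J)$ to contain a point $\le a+\eps$... no: to force containment of the endpoints one should let $V_0$ be a neighbourhood of $a$ (if $a\in f^n(J)$ is not guaranteed) — the robust fix is: since $f$ is mixing there is also $N'$ with $f^n(J)\ni$ points arbitrarily near $a$ and near $b$... but that over-reaches. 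The honest resolution: take the cover of the slightly larger interval $[a+\eps/2,b-\eps/2]$ by the $V_k$, apply mixing to get $f^n(J)$ meeting each $V_k$ for $n\ge N_0$; then $f^n(J)$, being an interval meeting a neighbourhood of $a+\eps/2$ and a neighbourhood of $b-\eps/2$ and everything in between, contains $[a+\eps/2,b-\eps/2]\supset[a+\eps,b-\eps]$. This is the step to get right; everything else is routine.
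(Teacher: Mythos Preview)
Your $(\Leftarrow)$ direction is correct and matches the paper's argument.

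For $(\Rightarrow)$, your approach works but is far more complicated than necessary, and your final sentence contains a small misstatement. The paper's proof uses only \emph{two} open sets, $U_1=(a,a+\eps)$ and $U_2=(b-\eps,b)$: by mixing there exist $N_1,N_2$ such that $f^n(\Int{J})$ meets $U_1$ for $n\ge N_1$ and meets $U_2$ for $n\ge N_2$; for $n\ge\max\{N_1,N_2\}$ the interval $f^n(J)$ then contains a point $<a+\eps$ and a point $>b-\eps$, hence contains $[a+\eps,b-\eps]$ by connectedness. The intermediate intervals $V_1,\dots,V_m$ in your cover are superfluous: once you know $f^n(J)$ is connected and reaches near both endpoints, you are done. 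This is exactly the insight you were circling around in Step~2, but you never quite stripped it down.

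Also note that your final claim ``$f^n(J)\supset[a+\eps/2,b-\eps/2]$'' is not justified: meeting a neighbourhood of $a+\eps/2$ only gives a point \emph{near} $a+\eps/2$, possibly above it. What your argument actually yields (provided the neighbourhoods have radius $<\eps/2$) is a point below $a+\eps$ and a point above $b-\eps$, hence $f^n(J)\supset[a+\eps,b-\eps]$ --- which is what you need, so the proof survives, but the intermediate statement is wrong as written.
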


\begin{proof}
Suppose first that $f$ is topologically mixing. Let $\eps>0$.
Let $U_1:=(a,a+\eps)$ and
$U_2:=(b-\eps,b)$. If $J$ is a nonempty open interval, there exists
$N_1$ such that $f^n(J)\cap U_1\neq\emptyset$ for all $n\ge N_1$
because $f$ is  topologically mixing. Similarly, there exists $N_2$ such that
$f^n(J)\cap U_2\neq\emptyset$ for all $n\ge N_2$. Therefore, for
all $n\ge \max\{N_1,N_2\}$, $f^n(J)$ meets both $U_1$ and $U_2$,
which implies that $f^n(J)\supset [a+\eps,b-\eps]$ by connectedness.
If $J$ is a non degenerate subinterval, the same result holds by
considering the nonempty open interval $\Int{J}$.

Suppose now that, for every $\eps>0$ and every non degenerate interval
$J\subset [a,b]$, there exists an integer $N$ such that
$f^n(J)\supset [a+\eps,b-\eps]$ for all $n\ge N$ 
Let $U,V$ be two nonempty open sets
in $[a,b]$. We choose two nonempty open subintervals  $J,K$  such that
$J\subset U$, $K\subset V$ and neither $a$ nor $b$ is an endpoint of
$K$.  There exists $\eps>0$ such that $K\subset [a+\eps, b-\eps]$. By
assumption, there exists $N$ such that
$f^n(J)\supset [a+\eps, b-\eps]\supset K$ for all $n\ge N$. 
This implies that $f^n(U)\cap V\neq\emptyset$ for all $n\ge N$. 
We conclude that $f$ is topologically mixing.
\end{proof}

\subsection{A basic example of mixing map}

In the sequel, we shall need to show that several interval maps are transitive
or mixing. In some simple cases, this can be done by using
Lemmas~\ref{lem:N-critical-points} and
\ref{lem:repulsive-fixed-point}, combined together.

Recall that the definition of critical points is given page~\pageref{def:Cf}.

\begin{defi}
Let $f$ be an interval map and $\lambda>1$. 
Suppose that $f$ has finitely or countably many critical points. 
The map $f$ is called \emph{$\lambda$-expanding}\index{expanding} if, 
for every subinterval $[x,y]$ on which $f$ is monotone,  
$|f(y)-f(x)|\ge \lambda |x-y|$.
\end{defi}

\begin{lem}\label{lem:N-critical-points}
Let $f\colon I\to I$ be a $\lambda$-expanding interval map with 
$\lambda>N$, where $N$ is a positive integer. Then, for every non
degenerate  subinterval $J$, there exists an integer $n\ge 0$ such
that $f^n(J)$  contains at least $N$ distinct critical points.
\end{lem}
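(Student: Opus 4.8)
The plan is to argue by contradiction. Suppose that for \emph{every} $n\ge 0$ the interval $f^n(J)$ contains at most $N-1$ critical points; I will show that $|f^n(J)|$ then grows geometrically, which is absurd since all these intervals are contained in the bounded interval $I$.

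The first step I would carry out is a purely structural observation: between two consecutive critical points, $f$ is monotone. Fix a non-degenerate interval $L$, and let $\alpha<\beta$ be two consecutive points of $\{\min\overline L,\ \max\overline L\}\cup\{\text{critical points of }f\text{ in }\overline L\}$, so that $(\alpha,\beta)$ contains no critical point. Every point of $(\alpha,\beta)$ then has a neighbourhood on which $f$ is strictly monotone; the set of points admitting a neighbourhood on which $f$ is increasing and the set of points admitting a neighbourhood on which $f$ is decreasing are open, disjoint (two such neighbourhoods would overlap in a non-degenerate interval on which $f$ is both), and they cover the connected set $(\alpha,\beta)$, so one of them is empty. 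Hence $f$ is (strictly) monotone on $(\alpha,\beta)$, and by continuity monotone on $[\alpha,\beta]$. Consequently, if $L$ contains at most $N-1$ critical points, then $\overline L$ is a union of at most $N$ closed subintervals with pairwise disjoint interiors, on each of which $f$ is monotone (only the critical points interior to $\overline L$ force a subdivision, so the number of pieces is at most $(N-1)+1=N$).

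Next I would set $J_n:=f^n(J)$ and $\ell_n:=|J_n|=|\overline{J_n}|$; by Theorem~\ref{theo:ivt}, $J_n$ is a non-degenerate interval, so $\ell_n>0$. Under the contradiction hypothesis, $\overline{J_n}$ is a union of at most $N$ closed intervals with disjoint interiors on each of which $f$ is monotone; pick a longest one, $P=[x,y]$, so that $|P|\ge \ell_n/N$. Since $\Int{P}\subset\Int{\overline{J_n}}\subset J_n$, we have $f(\Int{P})\subset f(J_n)=J_{n+1}$; and because $f$ is monotone on $[x,y]$, the interval $f(\Int{P})$ has the same closure as $f(P)=\langle f(x),f(y)\rangle$, whose length is $|f(y)-f(x)|\ge\lambda|x-y|=\lambda|P|$ by the $\lambda$-expanding property. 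Therefore $\ell_{n+1}\ge\lambda|P|\ge(\lambda/N)\,\ell_n$, and since $\lambda>N$ this gives $\ell_n\ge(\lambda/N)^n\ell_0\to+\infty$, contradicting $\ell_n\le|I|$. This forces some $f^n(J)$ to contain at least $N$ distinct critical points.

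The only genuinely delicate point is the structural step (monotonicity between consecutive critical points) together with the minor bookkeeping with closures and endpoints — one must make sure that at least the interior of the chosen piece $P$ lies inside $J_n$, so that its $f$-image lies inside $J_{n+1}$, and that $f^n(J)$ need not be closed does not affect the length estimates. Once this is set up, the geometric-growth contradiction is immediate from $\lambda>N$.
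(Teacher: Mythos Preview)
Your proof is correct and follows essentially the same route as the paper's: both argue by contradiction, use the pigeonhole principle to find a monotone subinterval of length at least $|f^n(J)|/N$, apply the $\lambda$-expanding property to get $|f^{n+1}(J)|\ge(\lambda/N)|f^n(J)|$, and derive the contradiction from geometric growth. The only difference is cosmetic: the paper removes the critical points and works with the open connected components of $J\setminus C_f$ (passing to an open subinterval of $J$ at the outset), whereas you partition $\overline{J_n}$ into closed monotone pieces and add the explicit connectedness argument for monotonicity between consecutive critical points and the closure/interior bookkeeping --- details the paper leaves implicit.
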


\begin{proof}
Let $C_f$ be the set of critical points of $f$. We set $\alpha:=\lambda/N>1$.
Consider a nonempty open subinterval $J$. If $J$ contains
exactly $k$ distinct critical points with $k\in\Lbrack 0,N-1\Rbrack$, then
$J\setminus C_f$ has $k+1$ connected components, say
$J_0,\ldots,J_k$, and $|J_0|+\cdots+|J_k|=|J|$. By the pigeonhole principle,
there exists
$i\in\Lbrack 0,k\Rbrack$ such that $|J_i|\ge \frac{|J|}{k+1}\ge \frac{|J|}{N}$.  Since
$J_i$ contains no critical point, the map $f|_{J_i}$ is
monotone. Hence  $|f(J_i)|\ge\lambda|J_i|$ and
\begin{equation}\label{eq:length}
|f(J)|\ge |f(J_i)|\ge \lambda|J_i|\ge \alpha|J|.
\end{equation}
Suppose that, for all
$n\ge 0$, $f^n(J)$ contains strictly less than $N$ distinct critical
points. Then $|f^n(J)|\ge\alpha^n|J|$ for all $n\ge 0$
by \eqref{eq:length}. But this is impossible
because $|f^n(J)|$ is bounded by $|I|$ whereas $\alpha^n|J|$ goes to infinity
when $n\to+\infty$. This is sufficient to conclude the proof because 
any non degenerate interval contains a nonempty open interval.
\end{proof}

\begin{lem}\label{lem:repulsive-fixed-point}
Let $f\colon I\to I$ be an interval map, $\lambda>1$ and $a,b \in I$ with 
$a<b$.  Suppose that $f(a)=a$ and 
$$\forall x\in [a,b],\ f(x)-f(a)\ge \lambda (x-a).$$
Then, for all $\eps>0$,
there exists $n\ge 0$ such that $f^n([a,a+\eps])\supset [a,b]$.
\end{lem}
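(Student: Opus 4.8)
The plan is to iterate the expansion estimate on the interval $[a,a+\eps]$, using the fixed point $a$ as an anchor, until the image fills up $[a,b]$. First I would fix $\eps>0$; without loss of generality $a+\eps\le b$, since otherwise I can shrink $\eps$ and prove a stronger statement. Set $x_0:=a+\eps$ and, as long as $f^{k}([a,x_0])\subset [a,b]$ (equivalently, as long as the right endpoint $f^k(x_0)$ of this image stays in $[a,b]$; note $f$ maps $a$ to $a$ and is increasing near $a$, so $[a,x_0]$ is carried to an interval with left endpoint $a$), apply the hypothesis at the point $x=f^k(x_0)$ to get
$$
f^{k+1}(x_0)-a = f(f^k(x_0))-f(a)\ge \lambda\bigl(f^k(x_0)-a\bigr).
$$
By an immediate induction this gives $f^{k}(x_0)-a\ge \lambda^{k}\eps$ for every $k$ for which all of $f^0(x_0),\dots,f^{k-1}(x_0)$ lie in $[a,b]$.

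Since $\lambda>1$, the quantity $\lambda^{k}\eps$ exceeds $b-a$ once $k$ is large enough, so there is a first index $n\ge 1$ at which $f^{n-1}(x_0)\in[a,b]$ but the estimate forces $f^{n}(x_0)$ to "want" to be $\ge a+\lambda^n\eps > b$. The key observation is then that at this step $f$ maps the interval $[a,f^{n-1}(x_0)]\subset[a,b]$ across all of $[a,b]$: indeed $f^{n}([a,x_0])=f([a,f^{n-1}(x_0)])$ is an interval (intermediate value theorem, Theorem~\ref{theo:ivt}) containing $f(a)=a$ and containing a point $f^{n}(x_0)$ with $f^n(x_0)-a\ge\lambda(f^{n-1}(x_0)-a)$; I must check this last value is $\ge b-a$. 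This is where a little care is needed: I want the first time $f^{k}(x_0)-a$ would exceed $b-a$, but I only control $f^{k}(x_0)$ via the estimate while the orbit stays in $[a,b]$. The clean way is to let $n$ be the least integer with $\lambda^{n}\eps\ge b-a$; if $f^{k}(x_0)\in[a,b]$ for all $k\le n-1$ then the estimate gives $f^{n}(x_0)-a\ge\lambda^n\eps\ge b-a$, whence $f^{n}(x_0)\ge b$ and $f^{n}([a,x_0])\supset[a,b]$ by connectedness and $a\in f^n([a,x_0])$. Otherwise some $f^{k}(x_0)$ with $k\le n-1$ fails to lie in $[a,b]$; taking the least such $k$, we have $f^{k-1}(x_0)\in[a,b]$ while $f^{k}(x_0)\notin[a,b]$, and since $f^{k}(x_0)=f(f^{k-1}(x_0))\ge a$ by the hypothesis (applied at $f^{k-1}(x_0)\in[a,b]$, using $f(x)\ge a+\lambda(x-a)\ge a$), we get $f^{k}(x_0)>b$, so again $f^{k}([a,x_0])=f([a,f^{k-1}(x_0)])$ is an interval containing $a$ and a point $>b$, hence contains $[a,b]$.

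The main obstacle, such as it is, is purely bookkeeping: one must phrase the induction so that the expansion estimate is only invoked at points already known to lie in $[a,b]$ (where the hypothesis applies), and one must extract, from "the estimate eventually overshoots," an actual iterate whose image covers $[a,b]$ — handled above by splitting into the two cases according to whether the orbit of $a+\eps$ stays in $[a,b]$ through step $n-1$. There is no serious analytic difficulty; the monotone growth $f^{k}(x_0)-a\ge\lambda^{k}\eps$ together with boundedness of $[a,b]$ does all the work, exactly as in the proof of Lemma~\ref{lem:N-critical-points}.
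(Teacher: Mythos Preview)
Your argument is correct and follows the same core idea as the paper: iterate the expansion bound $f(x)-a\ge\lambda(x-a)$ starting from $a+\eps$ until the image overshoots $b$, then conclude by connectedness. The paper phrases the induction at the level of intervals, showing directly that $f^n([a,a+\eps])\supset[a,a+\lambda^n\eps]$ whenever $a+\lambda^{n-1}\eps\le b$, which lets it pick $n$ with $a+\lambda^{n-1}\eps\le b<a+\lambda^n\eps$ and finish in one line; your pointwise tracking of $f^k(x_0)$ is equivalent but forces the extra case split (orbit stays in $[a,b]$ vs.\ escapes early), which the interval formulation absorbs automatically. One small remark: your parenthetical that $f^k(x_0)$ is ``the right endpoint'' of $f^k([a,x_0])$ is not needed and not quite justified (nothing says $f$ is monotone), but your actual argument only uses that $f^k([a,x_0])$ is an interval containing both $a$ and $f^k(x_0)$, which is fine.
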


\begin{proof}
Let $\eps\ge 0$. If $\eps> b-a$, then $f^0([a,a+\eps])\supset [a,b]$.
Suppose on the contrary that $a+\eps\le b$. Then  $f(a+\eps)-f(a)\ge
\lambda\eps$ by assumption, and hence  $f([a,a+\eps])\supset
[a,a+\lambda \eps]$ by the intermediate value theorem
(recall that $f(a)=a$). A straightforward induction on $n$
shows  that $f^n([a,a+\eps])\supset [a,a+\lambda^n\eps]$ as
long as $a+\lambda^{n-1}\eps\le b$. Since $\lambda>1$,
there exists an  integer $n\ge 1$ such that
$a+\lambda^{n-1}\eps\le b<a+\lambda^n\eps$. Hence
$f^n([a,a+\eps])\supset [a,b]$.
\end{proof}

\begin{rem}
We shall use Lemmas \ref{lem:N-critical-points} and 
\ref{lem:repulsive-fixed-point} for piecewise linear maps (as in
Example~\ref{ex:tent-map} below) or for maps $f\colon I\to I$ such that
the interval $I$ can be
divided into countably many subintervals on each of which $f$ is
linear. In these situations,  $f$ is $\lambda$-expanding
if and only if the absolute value of the slope of $f$ is greater than or equal to
$\lambda$ on each interval on which $f$ is linear.

In Lemma~\ref{lem:repulsive-fixed-point}, the 
assumption $f(x)-f(a)\ge \lambda (x-a)$ is verified as soon as
$f|_{[a,b]}$ is linear of slope greater than or equal to
$\lambda$.
\end{rem}

\begin{ex}\label{ex:tent-map}
We are going to exhibit a family of topologically mixing interval maps. 
These maps are piecewise linear, and the absolute value of their slope is 
constant. These maps are basic examples; they will be reused later to build 
other examples.

We fix an integer $p\ge 2$. We define the map $T_p\colon [0,1]\to[0,1]$
\index{Tp@$T_2$ (tent map), $T_p$} by:
\begin{eqnarray*}
\forall 0\le k\le \frac{p-1}{2},\ \forall x\in\left[\frac{2k}{p},\frac{2k+1}{p}\right],&& T_p(x) := px-2k,\\
\forall 0\le k \le \frac{p-2}{2},\ \forall x\in\left[\frac{2k+1}{p},
\frac{2k+2}p\right], && T_p(x) := -px+2k+2.
\end{eqnarray*}
\begin{figure}[htb]
\centerline{ \includegraphics{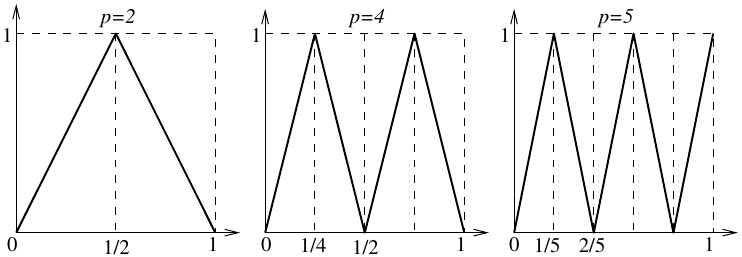}}
\caption{Mixing maps $T_p$ of slope $\pm p$, for $p=2$, $p=4$ and $p=5$.
The map $T_2$ (on the left) is called the tent map.}
\label{fig:ex-transitive}
\end{figure}
The slope of $T_p$ is either $p$ or $-p$ on each interval of
monotonicity. More precisely, starting from the fixed point $0$, the
slope is alternatively $p, -p, p,\ldots, (-1)^{p-1} p$, and the image of each
interval of monotonicity is $[0,1]$. 
See Figure~\ref{fig:ex-transitive} for the graph
of $T_p$. The map $T_2$ is the so called \emph{tent map}\index{tent map}.

Let $J$ be a non degenerate interval in $[0,1]$. The image of a non 
degenerate interval by $T_p$ is obviously non degenerate, so
$T_p^n(J)$ is a non degenerate interval for all $n\ge 0$. By
Lemma~\ref{lem:N-critical-points},  there exists $n$ such that
$T_p^n(J)$ contains $p-1$ distinct critical points. If $p\ge 3$,
$T_p^n(J)$ contains at least one critical point whose image is $0$,
and thus $0\in T^{n+2}(J)$ because $0$ is a fixed point. If
$p=2$, $T_p^n(J)$ contains the unique critical point $1/2$, and
$T_p^2(1/2)=T_p(1)=0$. In both cases, $T_p^{n+2}(J)$ is a non degenerate 
interval containing $0$. Applying
Lemma~\ref{lem:repulsive-fixed-point} with $a=0$ and $b=\frac 1p$
($T_p$ is of slope $\lambda=p$ on $[0,\frac1p]$), we
deduce that there exists an integer $m\ge 0$
such that $T_p^{n+m+2}(J)\supset \left[0,\frac{1}{p}\right]$, which implies 
that $T_p^{n+m+3}(J)\supset [0,1]$, and hence
$T_p^k(J)=[0,1]$ for all $k\ge n+m+3$. 
Conclusion: $T_p$ is topologically mixing.
\end{ex}

\subsection{Transitivity implies denseness of the set of periodic points}
Proposition~\ref{prop:transitivity-periodic-points} below states that
the set of periodic points of a transitive interval map is dense.
This is a consequence of a result of Sharkovsky \cite{Sha3}. 
We are going to follow the proof of \cite{BCop}; see also \cite{BM2} for a 
different proof.  
This result will be needed in the next section;
a stronger theorem will be given in Chapter~\ref{chap:periodic-points}.

\begin{lem}\label{lem:interval-without-periodic-point}
Let $f\colon I\to I$ be an interval map, $x,y\in I$
and $n,m\in\IN$. Let $J$ be a subinterval of $I$
containing no periodic point and suppose that $x$, $y$, $f^m(x)$, $f^n(y)$
belong to $J$. If $x<f^m(x)$ then $y<f^n(y)$.
\end{lem}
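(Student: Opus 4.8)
The plan is to argue by contradiction: assume $x<f^m(x)$ but $y\ge f^n(y)$, and deduce that $J$ contains a periodic point of $f$.

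First I would isolate the elementary fact that drives everything. If $L\subset J$ is any subinterval and $k\ge 1$, then $f^k-\Id$ is continuous on $L$ and vanishes nowhere there — a zero would be a point of $P_k(f)$, hence a periodic point lying in $J$ — so, $L$ being connected, $f^k-\Id$ has constant sign on $L$. Applied with $k=n$ and $L=\langle f^n(y),y\rangle\subset J$ this already rules out $f^n(y)=y$, so the contradiction hypothesis becomes $f^n(y)<y$. Applied on the convex hull of $\{x,f^m(x),y,f^n(y)\}$, and then on the largest subinterval $\hat J$ of $I$ with $J\subset\hat J$ that contains no periodic point of $f$ (the union of all periodic‑point‑free intervals containing $J$), it gives $f^m(t)>t$ and $f^n(t)<t$ for every $t\in\hat J$, since $x,f^m(x)\in J\subset\hat J$ with $f^m(x)>x$ and $y,f^n(y)\in\hat J$ with $f^n(y)<y$. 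By maximality of $\hat J$, any endpoint of $\hat J$ lying in the interior of $I$ is a periodic point of $f$.

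The core of the proof — and the step I expect to be the real obstacle — is to turn the two opposite drifts ``$f^m$ moves every point of $\hat J$ strictly to the right, $f^n$ strictly to the left'' into a periodic point of $f$ lying in $\hat J$. The useful input from the hypotheses is that $x,f^m(x)$ are two consecutive terms of the $f^m$-orbit of $x$ in $\hat J$, and $y,f^n(y)$ two consecutive terms of the $f^n$-orbit of $y$ in $\hat J$. Since $f^m>\Id$ on $\hat J$, the $f^m$-orbit of $x$ increases as long as it stays in $\hat J$; it therefore either converges monotonically to a fixed point of $f^m$ (which, being periodic, must equal $\sup\hat J$), or it first leaves $\hat J$ by exceeding $\sup\hat J$ at some finite stage, and then the intermediate value theorem produces $w\in\hat J$ with $f^m(w)=\sup\hat J$. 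Symmetrically on the left one finds $w'\in\hat J$ with $f^n(w')=\inf\hat J$. Using that $f^{mn}-\Id$ also has constant sign on $\hat J$, one reduces to one of two symmetric cases — if $f^{mn}\ge\Id$ on $\hat J$, it suffices to exhibit a point whose first $m$ iterates under $f^n$ remain in $\hat J$ (which then forces $f^{mn}(t)<t$, a contradiction); otherwise one argues with $f^m$ — and combines this with Lemma~\ref{lem:fixed-point} and Lemma~\ref{lem:chain-of-intervals} to assemble a chain of intervals contained in $\hat J$ whose last term contains the first, so that Lemma~\ref{lem:chain-of-intervals}(ii) yields a periodic point inside $\hat J$, contradicting the choice of $\hat J$. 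The delicate point throughout is exactly keeping the constructed orbit segment, or chain of intervals, inside the periodic‑point‑free region $\hat J$, since the iterates of an individual point of $J$ may well leave $J$.
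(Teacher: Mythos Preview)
Your proposal has a genuine gap precisely where you flag ``the delicate point.'' You correctly deduce that $f^m>\Id$ and $f^n<\Id$ on the maximal periodic-point-free interval $\hat J\supset J$, and that $f^{mn}-\Id$ has constant sign there; but to reach a contradiction you need a point of $\hat J$ whose first $m$ iterates under $f^n$ (or first $n$ under $f^m$) all remain in $\hat J$, and you never produce one. The gesture toward Lemmas~\ref{lem:fixed-point} and~\ref{lem:chain-of-intervals} does not close this: nothing you have written prevents $f^n$ from sending every point of $\hat J$ below $\inf\hat J$ in a single step (you control only one iterate, namely $f^n(y)\in J$), so there is no evident chain of intervals staying inside $\hat J$.

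The paper's proof sidesteps this difficulty with a direct argument that never enlarges $J$ and never needs iterates to stay in $J$. Setting $g=f^m$, it shows by induction that $g^k(x)>x$ for \emph{every} $k\ge 1$: if $g^i(x)>x$ for $1\le i<k$ but $g^k(x)\le x$, let $x_1=\min\{g^i(x):1\le i<k\}=g^j(x)$; the crucial observation is that $x_1\le g(x)=f^m(x)$, so $[x,x_1]\subset J$, and $g^{k-j}([x,x_1])\supset[g^k(x),g^{k-j}(x)]\supset[x,x_1]$, which by Lemma~\ref{lem:fixed-point} yields a periodic point in $J$. The symmetric argument gives $(f^n)^k(y)<y$ for all $k$. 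Then $f^{mn}(x)>x$ and $f^{mn}(y)<y$ with $x,y\in J$, and the intermediate value theorem produces a fixed point of $f^{mn}$ in $\langle x,y\rangle\subset J$. The insight you are missing is that only the \emph{smallest} positive $g$-iterate of $x$ matters, and it is automatically trapped in $[x,f^m(x)]\subset J$ regardless of where the other iterates wander.
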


\begin{proof}
We set $g:=f^m$. We first prove by induction on $k$ that 
$g^k(x)>x$ for all $k\ge 1$.  By assumption, the statement holds for $k=1$.
Suppose that $g^i(x)>x$ for all $i\in\Lbrack 1,k-1\Rbrack$ and that 
$g^k(x)\le x$. We write 
$$
\{g^i(x)\mid i\in\Lbrack 0,k-1\Rbrack\}=\{x_0\le x_1\le \cdots\le x_{k-1}\}.
$$
We have $x_0=x$ and $x_1\neq x$ because $x$ is not periodic. It follows that
$$
g^k(x)\le x=x_0 < x_1\le\cdots \le x_{k-1}.
$$
Let $j$ be the integer in $\Lbrack 1,k-1\Rbrack$ such that $x_1=g^j(x)$. 
By the intermediate value theorem,
$$g^{k-j}([x_0,x_1])\supset [g^k(x),g^{k-j}(x)]\supset [x_0,x_1].$$ 
Therefore, by
Lemma~\ref{lem:fixed-point},  $g$ has a periodic point
in $[x_0,x_1]$. But $[x_0,x_1]\subset J$ because 
$x_1=\min\{g^i(x)\mid i\in \Lbrack 1,k-1\Rbrack \}\le g(x)$.
This leads to a contradiction since $J$ contains no periodic point for $g=f^m$.
We deduce that $g^k(x)>x$ for all $k\ge 1$ and the induction is over.

Suppose that $f^n(y)<y$. 
The same argument as above (with reverse order) shows that
$f^{kn}(y)<y$ for all $k\ge 1$.  Hence
$$
y>f^{mn}(y)\quad\text{and}\quad x<f^{mn}(x).
$$
The map $t\mapsto f^{mn}(t)-t$ is continuous on the interval 
$\langle x,y\rangle$ (recall that $\langle x,y\rangle$ is either
$[x,y]$ or $[y,x]$ depending on the order of $x,y$). Thus, by the
intermediate value theorem, there exists a point 
$z\in\langle x,y\rangle$ such that $f^{mn}(z)=z$.
This leads to a contradiction because $\langle x,y\rangle\subset J$. 
Thus we conclude that $y<f^n(y)$ (equality is not possible because $y$ 
is not periodic).
\end{proof}

\begin{prop}\label{prop:transitivity-periodic-points}
If $f\colon I\to I$ is a transitive interval map, then the set of
periodic points is dense in $I$.
\end{prop}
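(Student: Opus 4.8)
The plan is to argue by contradiction: suppose there is a non degenerate subinterval $J\subset I$ containing no periodic point of $f$. Since $f$ is transitive, by Proposition~\ref{prop:transitive-dense-orbit} the interval $I$ has no isolated point and there is a point with dense orbit; in particular we may shrink $J$ and still assume it contains no periodic point, and we may pick a point $x\in J$ whose orbit is dense in $I$. Because the orbit of $x$ is dense, there are infinitely many times $n$ with $f^n(x)\in J$; let $n_1<n_2$ be two of them, and set $y:=f^{n_1}(x)$, so $y\in J$ and $f^{n_2-n_1}(y)\in J$ as well, with $y$ itself having dense orbit. Applying Lemma~\ref{lem:interval-without-periodic-point} to the point $y$ (taking $m=n=n_2-n_1$, say, or comparing two return times), we conclude that all the ``returns'' of points of $J$ to $J$ move strictly in one direction — either every such return satisfies $z<f^k(z)$ or every such return satisfies $z>f^k(z)$. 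Fix, say, the first case.

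Next I would exploit this monotone drift to produce a contradiction with density of the orbit. The idea is: let $x\in J$ have dense orbit and let $(m_i)$ be the increasing sequence of times with $f^{m_i}(x)\in J$. By Lemma~\ref{lem:interval-without-periodic-point} applied repeatedly, the subsequence $(f^{m_i}(x))_i$ inside $J$ is strictly increasing in $J$ (each later return lies to the right of each earlier one, since $f^{m_j-m_i}$ maps $f^{m_i}(x)$ to $f^{m_j}(x)$ and both lie in $J$, forcing $f^{m_i}(x)<f^{m_j}(x)$). A strictly increasing sequence in the bounded interval $J$ converges to some point $c\in\overline{J}$. But then the orbit of $x$ visits any neighborhood of the right endpoint region of $J$ only finitely often from below... more precisely, the orbit cannot be dense, because the returns to $J$ accumulate only at $c$ from the left, so a small interval in $J$ to the right of $c$ (if $c$ is interior) or a small interval just to the right of $\sup J$ combined with the return structure is missed — one gets a contradiction with $\omega(x,f)=I$. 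I would phrase this cleanly: the set of return points $\{f^{m_i}(x)\}$ is contained in the half-open interval $\langle \inf, c\rangle$, hence is not dense in $J$, contradicting that $\CO_f(x)$ is dense in $I\supset J$.

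The step I expect to be the main obstacle is making the ``monotone drift forces non-density'' argument airtight, in particular handling the possibility that $c$ is an endpoint of $J$ and being careful that Lemma~\ref{lem:interval-without-periodic-point} really does apply to every pair of return times (one needs $x$, $f^m(x)$, $y$, $f^n(y)$ all in the periodic-point-free interval $J$, so one should work with the subinterval of returns and track that all intermediate data stays in $J$). A cleaner route, which I would try first, is: pick $x\in J$ with dense orbit, let $u<v$ be the two smallest return times, so $f^u(x),f^v(x)\in J$; by the lemma (comparing $f^u(x)$ with $f^v(x)=f^{v-u}(f^u(x))$) these two are ordered, say $f^u(x)<f^v(x)$; now use the lemma once more with the roles swapped to show that in fact $f^v(x)$ cannot have a return to the left of $f^u(x)$ ever, and iterate to get the monotone sequence. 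Then invoke that a monotone sequence in $J$ has a one-sided limit, so the closure of the orbit of $f^u(x)$ inside $J$ omits an open subinterval of $J$ — contradicting Proposition~\ref{prop:transitive-dense-orbit}, which says $\omega(f^u(x),f)=I$. This yields the density of periodic points.
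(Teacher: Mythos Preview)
Your approach is correct and rests on the same key lemma as the paper's proof. The paper's execution is more direct: rather than showing that all returns of a dense orbit to the periodic-point-free interval $(a,b)$ form a monotone sequence and then arguing such a sequence cannot be dense, it picks orbit points that exhibit an immediate violation of Lemma~\ref{lem:interval-without-periodic-point}. Namely, with $x\in(a,b)$ of dense orbit, choose $m>0$ with $x<f^m(x)<b$, and $0<p<q$ with $a<f^q(x)<f^p(x)<x$; setting $y:=f^p(x)$ gives $y>f^{q-p}(y)$ while $x<f^m(x)$, all four points lying in $(a,b)$ --- a direct contradiction with the lemma in a single stroke. Your worry about the ``monotone drift forces non-density'' step is unfounded: a strictly monotone sequence in a non-degenerate interval is never dense, since the open interval between the first two returns (say $(x,f^{m_1}(x))$) already contains no orbit point of $x$; no analysis of the limit $c$ or of endpoint cases is needed.
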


\begin{proof}
Suppose that there exist $a,b\in I$, with $a<b$, such that $(a,b)$ contains
no periodic point. Since $f$ is transitive, there exists a point $x\in
(a,b)$ with a dense orbit
(Proposition~\ref{prop:transitive-dense-orbit}). Thus there exist integers
$m>0$ and $0<p<q$ such that $x<f^m(x)<b$ and $a<f^q(x)<f^p(x)<x$. We set
$y:=f^p(x)$. We then have
$$
a<f^{q-p}(y)<y<x<f^m(x)<b.
$$
But this is impossible by Lemma~\ref{lem:interval-without-periodic-point}
applied to $J=(a,b)$. This concludes the proof.
\end{proof}

\subsection{Transitivity, total transitivity and mixing}\label{subsec:transitivity-mixing}

The next proposition states
that, if an interval map $f$ is transitive then,
either $f$ is totally transitive, or the interval can be divided
into two subintervals on each of which $f^2$ is totally transitive.
Then Proposition~\ref{prop:total-transitivity-mixing} states that total
transitivity implies mixing.  These two results were proved by Barge
and Martin \cite{BM2,BM}.  Blokh also showed the same results a little 
earlier, but in an unpublished paper \cite{Blo6}. 
We are going to follow the ideas of the proof of Barge and Martin.
Blokh's proof, which is different, can be found  in \cite{Blo2}.  

\begin{prop}\label{prop:transitivity-total-transitivity}
Let $f\colon [a,b]\to [a,b]$ be a transitive interval map. Then one of
the following cases holds:
\begin{enumerate}
\item The map $f$ is totally transitive. 
\item There exists $c\in (a,b)$ such that $f([a,c])=[c,b]$, $f([c,b])=[a,c]$, 
and both maps $f^2|_{[a,c]}$ and $f^2|_{[c,b]}$ are
totally transitive. Moreover, $c$ is the unique fixed point of $f$.
\end{enumerate}
\end{prop}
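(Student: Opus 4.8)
The plan is to use the fixed-point structure of a transitive interval map as the organizing principle. First I would show that a transitive interval map $f\colon[a,b]\to[a,b]$ has at least one fixed point (by Lemma~\ref{lem:fixed-point}, since $f$ is onto by Lemma~\ref{lem:transitivity-semi-open}), and I would analyze when it can have more than one. The key dichotomy is whether $f$ is totally transitive or not, and I claim this is governed by whether $f^2$ is transitive. So the first real step is: assume $f^2$ is \emph{not} transitive, and extract from this failure a fixed point $c\in(a,b)$ together with the decomposition $[a,b]=[a,c]\cup[c,b]$ with $f$ swapping the two halves. The natural way to do this is to consider a point $x$ with dense $f$-orbit (Proposition~\ref{prop:transitive-dense-orbit}); if $f^2$ is not transitive, the orbit splits as $\{f^{2n}(x)\}$ accumulating on one subinterval and $\{f^{2n+1}(x)\}$ on another, and since the whole orbit is dense in $[a,b]$, these two closed sets must be subintervals covering $[a,b]$, overlapping in exactly a single point $c$, which is then seen to be fixed (its two ``sides'' are swapped, and by connectedness / the intermediate value argument it is the unique fixed point).

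Next I would verify that $f$ maps $[a,c]$ onto $[c,b]$ and $[c,b]$ onto $[a,c]$: the inclusions $f([a,c])\subset[c,b]$ etc.\ come from the orbit-splitting, and equality follows because $f$ is onto. Consequently $f^2$ maps each of $[a,c]$, $[c,b]$ onto itself, so $(\,[a,c],f^2|_{[a,c]}\,)$ and $(\,[c,b],f^2|_{[c,b]}\,)$ are genuine subsystems. Transitivity of $f^2$ on each half follows because the $f$-orbit of $x$ is dense in $[a,b]$: its even-indexed part is dense in one half and (after one more application of $f$) the odd part is dense in the other, giving a point of dense $f^2$-orbit in each subinterval; since neither subinterval has an isolated point, Proposition~\ref{prop:transitive-dense-orbit}(ii) upgrades this to transitivity of $f^2$ on each half.

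The remaining work is to promote ``$f^2$ transitive on each half'' to ``$f^2$ \emph{totally} transitive on each half'' in case~(ii), and to handle case~(i), i.e.\ to show that if $f^2$ \emph{is} transitive then $f$ is totally transitive. Both reduce to the following statement, which I would isolate as the heart of the argument: if $g$ is a transitive interval map and $g^2$ is transitive, then $g$ is totally transitive. Given this, in case~(ii) one applies it to $g=f^2|_{[a,c]}$ after checking $g^2=f^4|_{[a,c]}$ is transitive — which itself follows by re-running the dichotomy one level down, noting that $f^2$ restricted to a half cannot again split into a 2-cycle of subintervals (such a splitting would force $f$ to have period-4 behavior on intervals incompatible with $c$ being the unique fixed point and with $f$ swapping the halves). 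In case~(i), $f$ transitive and $f^2$ transitive gives total transitivity directly.

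I expect the main obstacle to be exactly this last implication: showing that transitivity of $f$ together with transitivity of $f^2$ forces transitivity of $f^n$ for all $n$. The natural approach is induction on the prime factorization of $n$: if $n=2m$, use that $f^2$ is transitive and an analogue of the dichotomy applied to $f^2$; if $n$ is odd, one must rule out that $f^n$ decomposes into a cycle of $\ell$ subintervals for some $\ell\mid n$, $\ell>1$ — and here the crucial point is that for interval maps such a cycle of subintervals $J_1\to J_2\to\cdots\to J_\ell\to J_1$ (a periodic interval of period $\ell$) forces $f$ itself to be non-transitive unless $\ell=1$, because the union $J_1\cup\cdots\cup J_\ell$ would then be a proper closed subset covering, under iteration, only a proper part — contradicting density of a point's orbit, \emph{unless} the $J_i$ already exhaust $[a,b]$, in which case their endpoints give extra fixed or periodic structure that the transitivity of $f$ (hence density of periodic points, Proposition~\ref{prop:transitivity-periodic-points}, applied carefully near the shared endpoints) forbids for odd $\ell>1$. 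Making this endpoint analysis clean — deciding exactly how consecutive subintervals in the cycle can meet and showing an odd-length cycle of full subintervals is incompatible with a single fixed point $c$ — is where the real care is needed; everything else is bookkeeping with dense orbits and the intermediate value theorem.
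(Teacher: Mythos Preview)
Your overall strategy is close to the paper's, and the fixed-point idea is exactly the right lever. But there is a genuine gap in your ``key lemma'' (\emph{$g$ transitive and $g^2$ transitive $\Rightarrow$ $g$ totally transitive}), specifically in the even case of your induction. You write: ``if $n=2m$, use that $f^2$ is transitive and an analogue of the dichotomy applied to $f^2$.'' But applying the dichotomy to $f^2$ requires knowing whether $(f^2)^2=f^4$ is transitive, which is precisely an instance of what you are trying to prove. The induction is not well-founded as stated, and an induction on the prime factorization does not untangle this: to handle $n=2^k$ you would need $f^{2^{k-1}}$ already totally transitive, and so on. Your odd-$n$ argument is in better shape (it is essentially the fixed-point argument the paper uses), but you cannot bootstrap from odd $n$ to even $n$ without something more.

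The paper avoids this by \emph{not} inducting on $n$ at all. It fixes an arbitrary $n\ge 1$, considers the sets $W_i^n:=\omega(f^i(x_0),f^n)$, and looks at the collection $\CE_n$ of connected components of their interiors. The key step is to show that the closures of elements of $\CE_n$ are cyclically permuted by $f$, so $\#\CE_n=:p_n$ is the length of that cycle, and then the fixed-point argument (a fixed point $c$ must be a common endpoint of two cyclically adjacent pieces, forcing $p_n=2$) gives $p_n\in\{1,2\}$ \emph{for every} $n$ simultaneously. A short parity argument then shows that $p_n=2$ forces $n$ even and the two pieces are always $[a,c]$ and $[c,b]$. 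The contrapositive is exactly your key lemma: if $f^2$ is transitive then $p_2=1$, hence $p_n=1$ for all $n$, hence $f$ is totally transitive. No induction is needed.

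For case (ii), your ``period-4 behavior incompatible with $c$'' sketch can be replaced by a one-line observation. Apply the already-proved dichotomy to $g:=f^2|_{[a,c]}$: if $g$ were not totally transitive, case (ii) would give a unique fixed point of $g$ lying in the open interval $(a,c)$. But $c$ is an endpoint of $[a,c]$ and $g(c)=f^2(c)=c$, contradicting that the unique fixed point lies in the interior. This is the paper's termination of the recursion; your period-4 analysis is not needed.
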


\begin{proof}
Since $f$ is transitive, there exists a point $x_0\in [a,b]$ such that
$\omega(x_0,f)=[a,b]$ 
by Proposition~\ref{prop:transitive-dense-orbit}.
We fix an integer $n\ge 1$ and we  set $W_i^n:=\omega(f^i(x_0),f^n)$
for all $i\in\Lbrack 0,n-1\Rbrack$. We have
$[a,b]=W_0^n\cup\dots\cup W_{n-1}^n$ by Lemma~\ref{lem:omega-set}(iv), which
implies that at least one of the sets $W_1^n,\ldots, W_{n-1}^n$ has a
nonempty interior by the Baire category theorem (Corollary~\ref{cor:baire}). 
Moreover, according to Lemma~\ref{lem:omega-set}(ii)-(iii),
\begin{equation}\label{eq:cycleWi}
\forall i\in\Lbrack 0,n-2\Rbrack,\ f(W_i^n)=W_{i+1}^n\quad\text{and}\quad
f(W_{n-1}^n)=W_0^n.
\end{equation}
Thus, all the sets $W_0^n,\ldots, W_{n-1}^n$
have nonempty interiors by Lemma~\ref{lem:transitivity-semi-open}(i).

Suppose that $\Int{W_i^n}\cap \Int{W_j^n}\neq\emptyset$. Since the set
$\Int{W_i^n}\cap \Int{W_j^n}$ is open and included in $W_i^n=
\omega(f^i(x_0),f^n)$, there exists  $k\ge 0$ such that 
$f^{kn+i}(x_0)$ belongs to $\Int{W_i^n}\cap \Int{W_j^n}$. Moreover, $f^n(W_j^n)= W_j^n$
(Lemma~\ref{lem:omega-set}(i)), and thus $f^{k'n+i}(x_0)\in W_j^n$ 
for all $k'\ge k$. This implies that $W_i^n\subset W_j^n$.
The same argument shows that $W_j^n\subset W_i^n$. Therefore
\begin{equation}\label{eq:WiWj}
\text{if }\Int{W_i^n}\cap \Int{W_j^n}\neq\emptyset\text{, then }W_i^n=W_j^n. 
\end{equation}
Let $\CE_n$ be the collection of all connected components of the sets
$(\Int{W_i^n})_{0\le i\le n-1}$. The elements of $\CE_n$  are 
open intervals and, by  \eqref{eq:WiWj},
two different elements of  $\CE_n$ are disjoint. For every
$C\in\CE_n$, the closed interval $f(\overline{C})$ is 
non degenerate by Lemma~\ref{lem:transitivity-semi-open}, and is contained 
in $W_i^n$ for some $i\in\Lbrack 0,n-1\Rbrack$. Thus, by connectedness, 
there exists $C'\in\CE_n$ such that
$f(\overline{C})\subset \overline{C'}$. Moreover, the orbit of $x_0$
enters infinitely many times every element of $\CE_n$, which implies that,
for all $C,C'\in\CE_n$, there exists $k\ge 1$ such
that $f^k(\overline{C})\cap \overline{C'}\neq\emptyset$, and
hence $f^k(\overline{C})\subset
\overline{C'}$. It follows that $\CE_n$ is finite and the closures of
its elements
are cyclically permuted under the action of $f$.  Thus we can write
$\CE_n=\{C_1,\ldots, C_{p_n}\}$ for some integer $p_n\ge 1$, the $C_i$'s 
satisfying 
$$
\forall i\in\Lbrack 1,p_n-1\Rbrack,\ f(\overline{C_i})\subset\overline{C_{i+1}}
\quad\text{and}\quad f(\overline{C_{p_n}})\subset\overline{C_1}.
$$
The fact that the orbit of $x_0$ is dense implies that
$C_1\cup\cdots\cup C_{p_n}$  is dense too. Since
$C_1,\ldots, C_{p_n}$ are disjoint open intervals, we deduce that
$C_1\cup\cdots\cup C_{p_n}$ is equal to $[a,b]$ deprived of finitely many
points, which are the endpoints of $C_1,\ldots, C_{p_n}$.  

If $p_n=1$, then $W_0^n=\cdots=W_{n-1}^n$, and thus
$\omega(x_0,f^n)=[a,b]$.
Therefore, if $\CE_n$ has a single element for every integer $n\ge 1$,
then $f$ is totally transitive and we are in case (i) of the proposition. 
From now on, we suppose that, for a given $n$, the
number $p_n$ of elements of $\CE_n$ is greater than $1$. We are
going to show that $p_n=2$. Let $c\in [a,b]$ be a fixed point of $f$ (such a
point exists by Lemma~\ref{lem:fixed-point}). If there exists
$C\in\CE_n$ with $c\in C$, then $f(\overline{C})=\overline{C}$. Similarly, if
$c$ is an endpoint of $[a,b]$, then there is a unique $C\in\CE_n$
such that $c\in\overline{C}$, and thus  $f(\overline{C})=\overline{C}$.
In both cases, this leads to a contradiction because $\overline{C_1},\ldots,
\overline{C_{p_n}}$ are cyclically permuted and $p_n\ge 2$.
We deduce that $c$ belongs to $(a,b)$ and is a common endpoint
of two distinct elements of $\CE_n$, say $C$ and $C'$. 
The fact that $c$ is a fixed point implies that
the only possibility for permuting cyclically $\overline{C_1},\ldots,
\overline{C_{p_n}}$ is that $p_n=2$, $\CE_n=\{C,C'\}$,
$f(\overline{C})=\overline{C'}$ and  $f(\overline{C'})=\overline{C}$.
We thus have $\CE_n=\{[a,c), (c,b]\}$ and
\begin{equation}\label{eq:J-K}
f([a,c])=[c,b],\quad f([c,b])=[a,c].
\end{equation}
This implies that $c$ is the unique fixed point of $f$.
Let $\CN:=\{i\in\Lbrack 0,n-1\Rbrack \mid C\subset W_i^n\}$ and
$\CN':=\{i\in\Lbrack 0,n-1\Rbrack \mid C'\subset W_i^n\}$. The sets
$\CN,\CN'$ are nonempty and their union is $\Lbrack 0,n-1\Rbrack$ by
definition of $\CE_n$.
We cannot have $C\cup C'\subset W_i^n$ for some $i\in\Lbrack 0,
n-1\Rbrack$; otherwise the connected set $\overline{C\cup C'}$ would be
included in $W_i^n$, which would contradict the fact that $C,C'$ are distinct
elements of $\CE_n$. This implies that $\CN,\CN'$ are disjoint.
Since $W_0^n,\ldots, W_{n-1}^n$ are cyclically 
permuted by $f$ according to \eqref{eq:cycleWi},
a set $W_i^n$ with $i\in\CN$ (resp. $i\in\CN'$)
is sent to a set $W_j^n$ with $j\in\CN'$ (resp. $j\in\CN$). This
implies that $\CN,\CN'$ have the same number of elements, and that the
integer $n$ is necessarily even.
So  $\omega(x_0,f^n)\subset \omega(x_0,f^2)$
by Lemma~\ref{lem:omega-set}(iv).
Combining this with  \eqref{eq:J-K}, we see
that $\{W_0^2,W_1^2\}=\{[a,c],[c,b]\}$.  Therefore both maps $f^2|_{[a,c]}$ and
$f^2|_{[c,b]}$ are transitive.  If $f^2|_{[a,c]}$ is not totally transitive, 
then the same argument as above, applied to the map $f^2|_{[a,c]}$, 
shows that $f^2|_{[a,c]}$ has a unique fixed point, which belongs to $(a,c)$.
But this is impossible because $c$ is already a fixed point of 
$f^2|_{[a,c]}$. We conclude that $f^2|_{[a,c]}$
is totally transitive, and so is $f^2|_{[c,b]}$ for similar reasons, and we are in case (ii) of
the proposition.
\end{proof}

\begin{prop}\label{prop:total-transitivity-mixing}
Let $f\colon I\to I$ be an interval map. If $f$ is totally transitive,
then it is topologically mixing.
\end{prop}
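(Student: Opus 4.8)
The plan is to verify the criterion of Proposition~\ref{prop:def-mixing}: writing $I=[a,b]$ for the underlying interval, it suffices to show that for every $\eps>0$ and every non-degenerate interval $J$ there is $N$ with $f^n(J)\supset[a+\eps,b-\eps]$ for all $n\ge N$. Shrinking $J$, we may assume it is a non-degenerate closed interval. Since $f$ is totally transitive it is in particular transitive, so by Lemma~\ref{lem:transitivity-semi-open} each $f^n(J)$ is a non-degenerate closed interval $[\alpha_n,\beta_n]$ and $f$ is onto; moreover, for every $N$ the set $\bigcup_{n\ge N}f^n(J)=\CO_f\bigl(f^N(J)\bigr)$ is dense in $I$ by transitivity, whence $\liminf_n\alpha_n=a$ and $\limsup_n\beta_n=b$.

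\textbf{Core lemma.} The main step is to prove
\[
\limsup_{n\to\infty}\alpha_n<\liminf_{n\to\infty}\beta_n,
\]
equivalently, that there exist a non-degenerate interval $K_0$ and an integer $N_0$ with $f^n(J)\supset K_0$ for all $n\ge N_0$. Here total transitivity is essential: this conclusion is \emph{false} for the maps of case~(ii) of Proposition~\ref{prop:transitivity-total-transitivity}, since if $J$ lies inside one of the two halves $[a,c],[c,b]$ the images $f^n(J)$ keep alternating between the two halves. To rule this out I would first record that a transitive interval map has a fixed point $c\in(a,b)$ (otherwise $f(x)-x$ would keep a constant sign on $(a,b)$, and then every orbit would converge monotonically to an endpoint of $I$, contradicting the existence of a dense orbit, Proposition~\ref{prop:transitive-dense-orbit}). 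Then I would revisit the analysis of the $\omega$-limit sets $\omega(f^i(x_0),f^n)$ from the proof of Proposition~\ref{prop:transitivity-total-transitivity}: total transitivity forces the number $p_n$ of cyclically permuted pieces to be $1$ for every $n$, so $\omega(f^i(x_0),f^n)=I$ for all $i,n$; from this one deduces that for all large $n$ the point $c$ lies in $\Int{f^n(J)}$, and, since $f(c)=c$, this persists for all further iterates. One then takes $K_0$ to be a small closed interval around $c$ contained in $f^{N_0}(J)$.

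\textbf{Inflation and conclusion.} Granting the core lemma, for $n\ge N_0$ and $0\le i\le n-N_0$ we have $f^{N_0+i}(J)\supset K_0$, hence $f^n(J)=f^{\,n-N_0-i}\bigl(f^{N_0+i}(J)\bigr)\supset f^{\,n-N_0-i}(K_0)$. Since $f^n(J)$ is an interval, it therefore contains the smallest interval $L_{n-N_0}$ containing $\bigcup_{j=0}^{n-N_0}f^j(K_0)$. The intervals $L_m$ increase with $m$, and $\bigcup_m L_m\supset\CO_f(K_0)$, which is dense in $I$ by transitivity; hence the smallest interval containing $\bigcup_m L_m$ is $I$, so for each $\eps>0$ there is $m_\eps$ with $L_{m_\eps}\supset[a+\eps,b-\eps]$. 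Thus $f^n(J)\supset[a+\eps,b-\eps]$ for all $n\ge N_0+m_\eps$, and Proposition~\ref{prop:def-mixing} yields that $f$ is topologically mixing.

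\textbf{Main obstacle.} The reduction and the inflation step are routine; all the difficulty is concentrated in the core lemma, i.e.\ in showing that the intervals $f^n(J)$ eventually share a common non-degenerate subinterval. The delicate point is excluding the alternating behaviour that genuinely occurs under mere transitivity (case~(ii) above); making precise how total transitivity --- through the structure of the sets $\omega(f^i(x_0),f^n)$ --- forces the fixed point $c$ into $\Int{f^n(J)}$ for all large $n$ is the technical heart of the argument.
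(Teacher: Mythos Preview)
Your reduction and your inflation step are both fine; the gap is exactly where you locate it, in the ``core lemma'', and the sketch you give there does not work. Saying that total transitivity forces $p_n=1$ in the proof of Proposition~\ref{prop:transitivity-total-transitivity}, hence $\omega(f^i(x_0),f^n)=I$ for all $i,n$, is just a restatement of the hypothesis that every $f^n$ is transitive; it says nothing about whether the \emph{particular} fixed point $c$ ever lands in $\Int{f^n(J)}$. There is no mechanism in your argument that forces any specific point of $I$ into the iterates of $J$: density of the orbit $\CO_f(J)$ only gives that points near $c$ are hit, not $c$ itself, and a fixed point of $f$ sitting outside $J$ has no a~priori reason to be absorbed.

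The paper closes this gap with an ingredient you do not invoke: the density of periodic points for transitive interval maps (Proposition~\ref{prop:transitivity-periodic-points}). Instead of hoping that an arbitrary fixed point $c$ enters $f^n(J)$, one picks a periodic point $x\in J$; if $k$ is its period (more precisely a common multiple of several periods), then $x$ is a fixed point of $g:=f^k$ that lies \emph{in} $J$, hence $x\in g^n(J)$ for every $n$, and $K:=\bigcup_{n\ge 0}g^n(J)$ is automatically an interval. Total transitivity then makes $g$ transitive, so $K$ is dense and contains $(a,b)$. Two further periodic points $x_1\in(a,a+\eps)$ and $x_2\in(b-\eps,b)$ (again from Proposition~\ref{prop:transitivity-periodic-points}), chosen with orbits in $(a,b)$ and periods dividing $k$, have the convex hulls $[y_i,z_i]$ of their orbits satisfying $[y_i,z_i]\subset f([y_i,z_i])$; once $[y_i,z_i]\subset f^{kN}(J)$ for some $N$, this persists for all $n\ge kN$, and connectedness yields $f^n(J)\supset[a+\eps,b-\eps]$. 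So the anchor you need is a periodic point \emph{inside} $J$, not a fixed point of $f$ located somewhere in $I$; your $\omega$-limit-set heuristic does not produce such an anchor.
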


\begin{proof}
We write $I=[a,b]$. Let $J$ be a non degenerate subinterval of $I$ 
and $\eps>0$.
According to Proposition~\ref{prop:transitivity-periodic-points},
the periodic points are dense in $I$. Thus,
there exist periodic points  $x, x_1,x_2$ with $x\in J$, $x_1\in (a,a+\eps)$ 
and $x_2\in (b-\eps,b)$. Moreover, $x_1$ and $x_2$ can be chosen in such a way
that their orbits are included in $(a,b)$ because there is at most one 
periodic orbit containing $a$ (resp. $b$). We set
$$
\forall i\in\{1,2\},\ y_i:=\min\{f^n(x_i)\mid n\ge 0\}\text{ and } 
z_i:=\max\{f^n(x_i)\mid n\ge 0\}.
$$
Then $y_1\in (a,x_1]\subset (a,a+\eps)$, $z_2\in [x_2,b)\subset (b-\eps,b)$
and $y_2,z_1\in (a,b)$. Let $k$ be a common
multiple of the periods of $x$, $y_1$ and $y_2$. We set $g:=f^k$ and
$$
K:=\bigcup_{n=0}^{+\infty}g^n(J).
$$ 
The point $x\in J$ is fixed under the action of $g$ and
thus $g^n(J)$ contains $x$ for all $n\ge 0$. This implies that 
$K$ is an interval.
Moreover $K$ is dense in $[a,b]$ because $g$ is transitive, and hence
$K\supset (a,b)$. It follows that $y_1,y_2,z_1,z_2\in K$. For
$i=1,2$, let $p_i$ and $q_i$ be non negative 
integers such that $y_i\in g^{p_i}(J)$ and $z_i\in
g^{q_i}(J)$. We set $N:=\max\{p_1,p_2,q_1,q_2\}$. Since $y_1,y_2, z_1,z_2$ 
are fixed points of $g$, they belong to $g^N(J)$ and thus, by the 
intermediate value theorem, $[y_i,z_i]\subset g^N(J)=f^{kN}(J)$ for $i=1,2$.
According to the definition of $y_i, z_i$, the interval $[y_i,z_i]$
contains the whole orbit of $x_i$. A trivial induction shows that
$[y_i,z_i]\subset f^n([y_i,z_i])$ for all $n\ge 0$.  Therefore,
$$
\forall n\ge kN,\ [y_1,z_1]\cup[y_2,z_2]\subset f^n(J).
$$
Since $y_1< a+\eps$ and $z_2>b-\eps$, the fact that $f^n(J)$ is connected
implies that $[a-\eps,b+\eps]\subset f^n(J)$ for all $n\ge kN$. We conclude that
$f$ is topologically mixing, using Proposition~\ref{prop:def-mixing}.
\end{proof}

\begin{cor}\label{cor:mixing-odd-periodic-point}
Let $f\colon I\to I$ be a transitive interval map. Then $f$ is topologically
mixing if and only if it has a periodic point of odd period greater than~$1$.
\end{cor}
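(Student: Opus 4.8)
The plan is to prove the two implications of the ``iff'' separately; throughout write $I=[a,b]$ and keep in mind the dichotomy of Proposition~\ref{prop:transitivity-total-transitivity} for a transitive interval map.

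First I would handle the easy direction: if $f$ has a periodic point $x$ of odd period $m>1$, then $f$ is topologically mixing. Apply Proposition~\ref{prop:transitivity-total-transitivity}. Either $f$ is totally transitive, and then it is topologically mixing by Proposition~\ref{prop:total-transitivity-mixing}; or there is a fixed point $c\in(a,b)$ with $f([a,c])=[c,b]$ and $f([c,b])=[a,c]$. The point is that this second alternative is incompatible with the existence of $x$: the orbit $\CO_f(x)$ is a periodic orbit of $m>1$ points, so it cannot contain $c$ (a point whose orbit is a single fixed point), hence it lies in $[a,c)\cup(c,b]$; since $f$ maps $[a,c]$ into $[c,b]$ and $[c,b]$ into $[a,c]$ while $\CO_f(x)$ avoids $c$, the orbit of $x$ strictly alternates between $[a,c)$ and $(c,b]$ at each step, which contradicts $f^m(x)=x$ with $m$ odd. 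So $f$ must be totally transitive, hence topologically mixing.

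For the converse I would start from $f$ topologically mixing and build a closed chain of intervals of \emph{odd} length whose periodic point is not a fixed point. Since $f$ is transitive it is not the identity, so $\mathrm{Fix}(f):=\{x\in I\mid f(x)=x\}$ is a proper closed subset of $I$; as $I$ has no isolated point, $I\setminus\mathrm{Fix}(f)$ is a nonempty open set not contained in $\{a,b\}$, and therefore contains a non-degenerate closed interval $J\subset(a,b)$. Pick $\eps>0$ with $J\subset[a+\eps,b-\eps]$; by Proposition~\ref{prop:def-mixing} there is $N_0$ with $f^n(J)\supset[a+\eps,b-\eps]\supset J$ for all $n\ge N_0$, and I would fix an \emph{odd} integer $N\ge N_0$. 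Then $J\to f(J)\to\cdots\to f^{N-1}(J)\to J$ is a chain of intervals of length $N$ (the interior coverings $f^i(J)\to f^{i+1}(J)$ are trivial since $f^{i+1}(J)=f(f^i(J))$, and the closing covering holds because $J\subset f^N(J)$). By Lemma~\ref{lem:chain-of-intervals}(ii) there is $x\in J$ with $f^N(x)=x$; its period $d$ divides the odd number $N$, so $d$ is odd, and $d\ge 2$ because $x\in J$ is not a fixed point. Thus $f$ has a periodic point of odd period $>1$.

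The hard part is this converse direction, and the crux is the idea of \emph{not} trying to control the period exactly: one only needs its parity, which is forced by choosing the length of the chain odd, together with the fact that it exceeds $1$, which is guaranteed by placing $J$ off $\mathrm{Fix}(f)$. The remaining points --- that $\mathrm{Fix}(f)$ is a proper closed set, that $J$ can be taken inside $(a,b)$, and that the displayed sequence is genuinely a chain of intervals in the sense of Definition~\ref{def:covering} --- are routine given Propositions~\ref{prop:transitivity-total-transitivity}, \ref{prop:total-transitivity-mixing}, \ref{prop:def-mixing} and Lemma~\ref{lem:chain-of-intervals}.
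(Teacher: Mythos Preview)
Your proof is correct and follows essentially the same approach as the paper. The paper argues the converse direction identically (choose $J\subset(a,b)$ avoiding fixed points, use mixing to get $f^n(J)\supset J$ for an odd $n$, then extract a periodic point whose period is an odd divisor of $n$ greater than $1$), except that it invokes Lemma~\ref{lem:fixed-point} directly on $f^n|_J$ rather than building the chain $J\to f(J)\to\cdots\to J$ and citing Lemma~\ref{lem:chain-of-intervals}(ii); and it justifies the existence of $J$ via ``$\mathrm{Fix}(f)$ has empty interior by mixing'' rather than your ``$\mathrm{Fix}(f)\ne I$ since $f$ is not the identity'', both of which suffice. For the other direction, the paper gives exactly your alternation argument in the swap case of Proposition~\ref{prop:transitivity-total-transitivity}.
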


\begin{proof}
We write $I=[a,b]$. Suppose first that $f$ is topologically mixing. The set 
of fixed points of $f$ is
closed, and it has an empty interior (otherwise, it would contradict the
mixing assumption). Thus we can choose a non degenerate closed
subinterval $J\subset (a,b)$ such that $J$ contains no fixed
point. Since $f$ is topologically mixing, there exists an
integer $N$ such that $f^n(J)\supset J$ for all $n\ge N$
(Proposition~\ref{prop:def-mixing}). We choose an odd integer $n\ge
N$. Applying Lemma~\ref{lem:fixed-point}, we obtain a point $x\in J$
such that $f^n(x)=x$. The period of $x$ is odd because it divides $n$, and 
it is greater than $1$ because $J$ contains no fixed point.

Suppose now that $f$ is transitive but not totally transitive. 
We are in case (ii) of
Proposition~\ref{prop:transitivity-total-transitivity}: there exists
a fixed point $c\in (a,b)$ such that $f([a,c])=[c,b]$ and $f([c,b])=[a,c]$. 
Consequently, every periodic point has an even period, except $c$. 
By refutation, a transitive map with a periodic point of odd period different 
from $1$ is totally transitive, and thus topologically mixing by
Proposition~\ref{prop:total-transitivity-mixing}.
\end{proof}

\subsection{Transitivity vs. mixing -- summary theorems}

The next two theorems sum up the results
\ref{theo:mixing-weak-mixing}, \ref{prop:def-mixing},
\ref{prop:transitivity-total-transitivity},
\ref{prop:total-transitivity-mixing} and
\ref{cor:mixing-odd-periodic-point}.  The first one is about the difference
between transitivity and mixing. The second one states several
properties equivalent to mixing.

\begin{theo}\label{theo:summary-transitivity}
Let $f\colon [a,b]\to [a,b]$ be a transitive interval map. Then one of
the following cases holds:
\begin{itemize}
\item The map $f$ is topologically mixing.
\item There exists $c\in (a,b)$ such that
$f([a,c])=[c,b]$, $f([c,b])=[a,c]$, and both maps $f^2|_{[a,c]}, f^2|_{[c,b]}$
are topologically mixing. In addition,  $c$ is the unique fixed point
of $f$.
\end{itemize}
\end{theo}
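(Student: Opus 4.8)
The plan is to observe that Theorem~\ref{theo:summary-transitivity} is simply a repackaging of Propositions~\ref{prop:transitivity-total-transitivity} and~\ref{prop:total-transitivity-mixing}, so the proof amounts to chaining them together. First I would apply Proposition~\ref{prop:transitivity-total-transitivity} to the transitive interval map $f\colon[a,b]\to[a,b]$. This yields a dichotomy: either (i) $f$ is totally transitive, or (ii) there is a point $c\in(a,b)$ with $f([a,c])=[c,b]$, $f([c,b])=[a,c]$, with $c$ the unique fixed point of $f$, and with both $f^2|_{[a,c]}$ and $f^2|_{[c,b]}$ totally transitive.

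In case (i), Proposition~\ref{prop:total-transitivity-mixing} applies directly to $f$ and gives that $f$ is topologically mixing; this is exactly the first alternative in the statement. In case (ii), I would note that since $c\in(a,b)$, the intervals $[a,c]$ and $[c,b]$ are non degenerate compact intervals, so $f^2|_{[a,c]}\colon[a,c]\to[a,c]$ and $f^2|_{[c,b]}\colon[c,b]\to[c,b]$ are genuine interval maps in the sense of the definition (one uses here that $[a,c]$ and $[c,b]$ are invariant under $f^2$, which follows from $f([a,c])=[c,b]$ and $f([c,b])=[a,c]$). Applying Proposition~\ref{prop:total-transitivity-mixing} to each of these two totally transitive interval maps shows that both $f^2|_{[a,c]}$ and $f^2|_{[c,b]}$ are topologically mixing, and the remaining conclusions ($f([a,c])=[c,b]$, $f([c,b])=[a,c]$, $c$ the unique fixed point of $f$) are carried over verbatim from Proposition~\ref{prop:transitivity-total-transitivity}. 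This is the second alternative.

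There is essentially no obstacle here: all of the real content has already been established in the two cited propositions, and the only point that needs a word of justification is the trivial one that the restrictions $f^2|_{[a,c]}$ and $f^2|_{[c,b]}$ qualify as interval maps so that Proposition~\ref{prop:total-transitivity-mixing} is applicable to them. Hence the proof is a two-line deduction.
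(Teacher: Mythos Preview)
Your proposal is correct and matches the paper's own treatment: the paper presents Theorem~\ref{theo:summary-transitivity} explicitly as a summary of Propositions~\ref{prop:transitivity-total-transitivity} and~\ref{prop:total-transitivity-mixing} (among others) and gives no separate proof, so your two-line deduction---apply the dichotomy of Proposition~\ref{prop:transitivity-total-transitivity}, then upgrade total transitivity to topological mixing via Proposition~\ref{prop:total-transitivity-mixing}---is exactly what is intended.
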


\begin{theo}\label{theo:summary-mixing}
Let $f\colon [a,b]\to [a,b]$ be an interval map. The following
properties are equivalent:
\begin{itemize}
\item $f$ is transitive and has a periodic point of odd period
different from $1$.
\item $f^2$ is transitive.
\item $f$ is totally transitive.
\item $f$ is topologically weakly mixing.
\item $f$ is topologically mixing.
\item For all $\eps>0$ and all non degenerate intervals $J$, there
exists an integer $N$ such that $f^n(J)\supset [a+\eps,b-\eps]$
for all $n\ge N$.
\end{itemize}
\end{theo}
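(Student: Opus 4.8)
The plan is to prove the six properties equivalent by arranging them into a cycle of implications, using almost exclusively the results already established in this section. Abbreviate the six conditions as (A)--(F) in the order listed. The core of the argument is the cycle
\[
\text{(E)}\Rightarrow\text{(D)}\Rightarrow\text{(C)}\Rightarrow\text{(B)}\Rightarrow\text{(C)}\Rightarrow\text{(E)},
\]
supplemented by the equivalence (E)$\Leftrightarrow$(F) and by a separate treatment of (A).

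Most of the arrows are immediate. That $f$ topologically mixing implies $f$ topologically weakly mixing, and that weak mixing implies total transitivity, is Theorem~\ref{theo:mixing-weak-mixing}; total transitivity gives in particular that $f^2$ is transitive, by definition; and total transitivity implies topological mixing by Proposition~\ref{prop:total-transitivity-mixing}. The equivalence (E)$\Leftrightarrow$(F) is precisely Proposition~\ref{prop:def-mixing}. So the only implication requiring genuine work is (B)$\Rightarrow$(C).

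For that step I would first note that transitivity of $f^2$ forces transitivity of $f$: for nonempty open $U,V$, choose $n$ with $f^{2n}(U)\cap V\neq\emptyset$. Hence Proposition~\ref{prop:transitivity-total-transitivity} applies, and it is enough to exclude its case (ii). If we were in case (ii), there would be $c\in(a,b)$ with $f([a,c])=[c,b]$ and $f([c,b])=[a,c]$, hence $f^2([a,c])=[a,c]$; picking nonempty open sets $U\subset(a,c)$ and $V\subset(c,b)$, we would get $f^{2n}(U)\subset[a,c]$ for every $n$, so $f^{2n}(U)\cap V=\emptyset$, contradicting transitivity of $f^2$. Thus we are in case (i), i.e. $f$ is totally transitive, which closes the cycle and shows that (B)--(F) are mutually equivalent.

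It remains to incorporate (A), where I would invoke Corollary~\ref{cor:mixing-odd-periodic-point}. If (E) holds then $f$ is transitive (mixing is trivially transitive), and being transitive and topologically mixing it has a periodic point of odd period greater than $1$ by that corollary, so (A) holds; conversely, (A) says $f$ is transitive with a periodic point of odd period different from $1$, which by the same corollary makes $f$ topologically mixing, i.e. (E). Hence (A)$\Leftrightarrow$(E) and the theorem follows. I do not expect a real obstacle here: the statement is essentially an assembly of Theorem~\ref{theo:mixing-weak-mixing}, Propositions~\ref{prop:def-mixing}, \ref{prop:transitivity-total-transitivity}, \ref{prop:total-transitivity-mixing} and Corollary~\ref{cor:mixing-odd-periodic-point}; the one point needing a short verification is that case (ii) of Proposition~\ref{prop:transitivity-total-transitivity} is incompatible with transitivity of $f^2$.
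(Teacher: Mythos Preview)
Your proposal is correct and is exactly in the spirit of the paper's treatment: the paper presents this theorem as a summary of Theorem~\ref{theo:mixing-weak-mixing}, Propositions~\ref{prop:def-mixing}, \ref{prop:transitivity-total-transitivity}, \ref{prop:total-transitivity-mixing} and Corollary~\ref{cor:mixing-odd-periodic-point}, and you have simply made explicit how these pieces fit together, including the short verification that case~(ii) of Proposition~\ref{prop:transitivity-total-transitivity} is incompatible with transitivity of $f^2$.
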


\begin{ex}\label{ex:transitive-not-mixing}
We give an example of a transitive, non topologically mixing interval map.
The map $S\colon [-1,1]\to [-1,1]$, represented in
Figure~\ref{fig:transitive-not-mixing}, is defined by:
$$
\left\{\begin{array}{ll}
\forall x\in [-1,-\frac12],&S(x):=2x+2,\\ 
\forall x\in [-\frac12,0],&S(x):=-2x,\\ 
\forall x\in[0,1],&S(x):=-x.
\end{array}\right.
$$
\begin{figure}[htb]
\centerline{\includegraphics{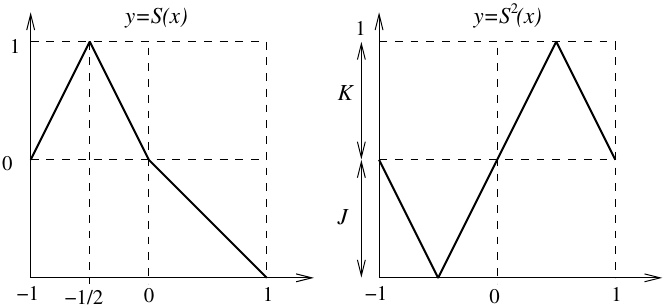}}
\caption{The map $S$ is transitive but not topologically mixing
because $S^2$ is not transitive.}
\label{fig:transitive-not-mixing}
\end{figure}

We set $J:=[-1,0]$ and $K:=[0,1]$.
We have $S(J)=K$ and $S(K)=J$, which implies that $S$ is not topologically
mixing. Since $S^2|_K$ is equal to the
tent map $T_2$ defined in Example~\ref{ex:tent-map}, the map $S^2|_K$ is 
topologically mixing and, for every non degenerate subinterval $U\subset K$, 
there
exists $n\ge 0$ such that $S^{2n}(U)=K$. The map $S^2|_J$ is similar 
to $S^2|_K$ except that its graph is upside down. Therefore, if $U$ is a
nonempty open set, then
\begin{itemize}
\item either $U\cap J$ contains a non degenerate interval, 
and there exists $n\ge 0$ such that $S^{2n}(U)\supset J$, 
\item
or $U\cap K$ contains a non degenerate interval, 
and there exists $n\ge 0$ such that $S^{2n}(U)\supset K$. 
\end{itemize}
In both cases, there exists $n\ge 0$ such that $S^n(U)\cup S^{n+1}(U)=[-1,1]$, 
which implies that $S$ is transitive.
\end{ex}
\subsection*{Remarks on graph maps}

Rotations are important examples because they
exhibit behaviors that cannot appear for interval maps.

\begin{defi}
Let $\IS=\IR/\IZ$. The \emph{rotation}\index{rotation}
\label{notation:Ralpha}
of angle $\alpha\in\IR$ is the circle map:
$$
R_\alpha\colon \begin{array}[t]{rcl}\IS&\longrightarrow& \IS\\
x&\longmapsto& x+\alpha\bmod\ 1\end{array}
$$
\end{defi}

If $\alpha=\frac pq$ with $p\in\IZ, q\in\IN$ and $\gcd(p,q)=1$, it is 
clear that all points in $\IS$ are periodic of period $q$, and $R_\alpha^q$
is the identity map.
On the contrary, if $\alpha\notin\IQ$ (in this case, $R_\alpha$ is
called an \emph{irrational rotation}\index{irrational rotation}), one can
show that $R_\alpha$ is totally transitive but not topologically
weakly mixing, and there is no periodic point.
Consequently, if one wants to generalize the results concerning 
transitive interval maps, the case of irrational rotations must be excluded.
We shall see that the results of this section extend fairly well to transitive
graph maps, irrational rotations being the only exceptions up to conjugacy.

The next result is due to Blokh \cite{Blo9} (see \cite[Theorem S, p. 506]{Blo11} for a statement in English).

\begin{theo}\label{theo:transitive-noperiodicpoint}
A transitive graph map with no periodic point is conjugate
to an irrational rotation on the circle.
\end{theo}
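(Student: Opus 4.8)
The plan is to prove that a transitive graph map $f\colon G\to G$ with no periodic point must, first, live on a circle, and second, be conjugate to an irrational rotation. For the first part I would argue by contradiction: suppose $G$ is not a circle. Then $G$ has at least one endpoint or one branching point (this is where being a genuine graph, not a circle, is used). I would show that the finite set $B$ of branching points together with the finite set $E$ of endpoints cannot be avoided by the dynamics: since $f$ is transitive, there is a point $x$ with $\omega(x,f)=G$ (the analogue of Proposition~\ref{prop:transitive-dense-orbit} for graphs, which one must either invoke or reprove — the space $G$ has no isolated point once it is non degenerate and transitive). An endpoint of $G$ has a neighborhood homeomorphic to $[0,1)$, and a short argument with the intermediate value property on that arc shows that if the orbit of a dense-orbit point comes close to the endpoint infinitely often, one is forced to produce a fixed or periodic point near it, exactly as in the proof of Proposition~\ref{prop:transitive-dense-orbit} where an isolated point was shown to be periodic. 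The cleanest route is: endpoints and branching points form a finite $f$-almost-invariant set; transitivity forces some iterate to map this finite set into itself in a way that yields a periodic point by Lemma~\ref{lem:fixed-point} applied along a suitable chain of intervals (using the graph version of Lemma~\ref{lem:chain-of-intervals}(ii) recorded after Definition~\ref{def:coveringG}). This contradiction shows $G$ must be a circle.

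Once $G=\IS$, the second part is the classical Poincaré theory. I would introduce a lift $F\colon\IR\to\IR$ of $f$, a continuous map with $F(x+1)=F(x)+1$. Because $f$ has no periodic point, $F$ has no point $x$ with $F^n(x)=x+k$ for any $n\ge1$, $k\in\IZ$; this is exactly the condition guaranteeing that the rotation number $\rho(F):=\lim_{n\to\infty}\bigl(F^n(x)-x\bigr)/n$ exists, is independent of $x$, and is irrational. (If $\rho$ were rational $p/q$, the standard argument produces a point with $F^q(x)=x+p$, hence a periodic point of $f$.) Next I would show $f$ is a homeomorphism: a transitive graph map is onto (same argument as Lemma~\ref{lem:transitivity-semi-open}(ii)), and on the circle a non-injective continuous surjection has a ``turning point'' which, combined with transitivity and no periodic points, is impossible — more directly, a transitive circle map with no periodic point cannot have a critical point, since a critical point would let us trap an arc and run the covering/fixed-point machinery. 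So $f$ is an orientation-preserving circle homeomorphism with irrational rotation number.

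Finally I would apply Poincaré's classification: an orientation-preserving circle homeomorphism with irrational rotation number $\alpha$ is semi-conjugate to $R_\alpha$, and it is conjugate to $R_\alpha$ precisely when it is transitive (equivalently, minimal, equivalently it has no wandering interval / no exceptional minimal Cantor set). Transitivity of $f$ rules out a Denjoy counterexample: the collapsing semi-conjugacy $\varphi$ would have to be constant on some arc, whose forward orbit is then a countable union of arcs that is not dense, contradicting the existence of a dense orbit. Hence $\varphi$ is a homeomorphism and $f$ is conjugate to $R_\alpha$. It remains to note $\alpha\notin\IQ$ (already established), so $R_\alpha$ is an irrational rotation.

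The main obstacle is the first step — showing $G$ has to be a circle. The circle-homeomorphism part is textbook once one is on $\IS$, but ruling out endpoints and branching points requires care: one must be sure that the graph versions of the covering lemmas (Definition~\ref{def:coveringG} and the remark following it, which is noted to keep Lemma~\ref{lem:chain-of-intervals}(ii)–(iii) valid) really do deliver periodic points from the local picture near an endpoint or branching point, and that transitivity genuinely forces the orbit of a full-$\omega$-limit point to interact with every such special point in the obstructing way. The subtlety illustrated in Figure~\ref{fig:contrex-recouvrement} — that $f(I)\supset I$ need \emph{not} give a fixed point on a tree — is exactly the trap to avoid, so the argument must be phrased in terms of honest chains of intervals in the sense of Definition~\ref{def:coveringG}, not naive covering.
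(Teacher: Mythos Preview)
The paper does not prove this theorem: it appears in a ``Remarks on graph maps'' paragraph, is attributed to Blokh, and no argument is given (the book's stated policy is to prove interval results fully and only cite graph results). So there is no in-paper proof to compare your proposal against.

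On your sketch itself: the circle half is essentially fine and standard --- no periodic point forces degree $1$ (any degree $d\ne 1$ map has a fixed point), rotation theory then gives an irrational rotation number, and transitivity upgrades Poincar\'e's semi-conjugacy to a conjugacy by ruling out Denjoy behavior. You do not actually need $f$ to be a homeomorphism if you use rotation theory for continuous degree-$1$ maps; your ``no critical point'' claim would otherwise need a genuine argument.

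The first step --- reducing to the circle --- is where your proposal has a real gap. The assertion that ``endpoints and branching points form a finite $f$-almost-invariant set'' is not true in any usable sense, and the appeal to Proposition~\ref{prop:transitive-dense-orbit} is misplaced: that argument concerns isolated points, which endpoints of a graph are not. A workable route splits into two cases. If $G$ is a tree, every continuous self-map has a fixed point (a standard retraction/Lefschetz argument), contradiction. If $G$ contains a circle but is not a circle, one must show that transitivity together with the absence of periodic points is incompatible with the extra branches; this is precisely the non-trivial content of Blokh's theorem and is not obtainable from the interval covering lemmas you cite --- the very counterexample in Figure~\ref{fig:contrex-recouvrement} that you flag shows why naive chain-of-interval arguments near a branching point do not produce the needed periodic point.
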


For graph maps, transitivity is still close to total transitivity, which is
equivalent to topological mixing unless for irrational rotations.
The next two theorems generalize
Propositions \ref{prop:transitivity-total-transitivity} and
\ref{prop:total-transitivity-mixing}; they are due to Blokh \cite{Blo9}
(see \cite{Blo14} for a statement in English).

\begin{theo}\label{theo:totallytransitivegraph-mixing}
Let $f\colon G\to G$ be a totally transitive graph map. If $f$ is not 
conjugate to an irrational rotation, it is topologically mixing.
\end{theo}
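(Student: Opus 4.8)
The plan is to mimic, in two reductions, the proof given in the interval case: first cut down to a map with a fixed point, then run the argument of Proposition~\ref{prop:total-transitivity-mixing} with the forward invariance of periodic orbits playing the role of the monotone structure of the interval.

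First, since $f$ is not conjugate to an irrational rotation, Theorem~\ref{theo:transitive-noperiodicpoint} provides a periodic point $z$; let $p$ be its period and set $g:=f^p$, so that $z$ is a fixed point of $g$. The map $g$ is still totally transitive (because $g^n=f^{pn}$ is transitive for all $n$) and is not conjugate to an irrational rotation (it has a periodic point), and it is enough to prove that $g$ is topologically mixing. Indeed, $g$, hence $f$, is onto, so for any nonempty open sets $U,V$ the sets $f^{-j}(V)$, $0\le j<p$, are nonempty and open; applying the mixing of $g$ to the pairs $(U,f^{-j}(V))$ yields $N$ such that $g^m(U)\cap f^{-j}(V)\ne\emptyset$, i.e. $f^{pm+j}(U)\cap V\ne\emptyset$, for all $j$ and all $m\ge N$, whence $f^n(U)\cap V\ne\emptyset$ for every $n\ge p(N+1)$. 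So from now on we may assume that $f$ has a fixed point.

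Next, fix a non-degenerate interval $J\subset G$ and $\eps>0$; it suffices to find $N$ such that $f^n(J)$ is $\eps$-dense for every $n\ge N$, for then topological mixing follows by applying this to an interval $J\subset U$ with $\eps$ so small that $V$ contains a ball of radius $\eps$. A transitive graph map that is not conjugate to an irrational rotation has a dense set of periodic points -- this is itself a theorem of Blokh \cite{Blo9,Blo14}, the graph analogue of Proposition~\ref{prop:transitivity-periodic-points} -- so we may choose a periodic point $x\in J$ together with periodic points $w_1,\dots,w_M$ such that $W:=\bigcup_i\CO_f(w_i)$ is $\eps/2$-dense in $G$. Let $k$ be a common multiple of the periods of $x,w_1,\dots,w_M$ and put $h:=f^k$, so that $h$ fixes $x$ and each point of $W$. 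Then $K:=\bigcup_{n\ge0}h^n(J)$ is connected (every $h^n(J)$ contains the $h$-fixed point $x$), dense (as $h$ is transitive), and, by an increasing-union argument along each edge, equal to $G$ minus a finite set $F$. Assuming $W\subset K$, every $w\in W$ lies in some $h^{n_w}(J)$, hence, being $h$-fixed, in $h^n(J)$ for all $n\ge n_w$; so $W\subset h^{N'}(J)=f^{kN'}(J)$ for a suitable $N'$. Since each periodic orbit is forward invariant we have $f(W)=W$, and therefore $f^n(J)=f^{n-kN'}\bigl(f^{kN'}(J)\bigr)\supset f^{n-kN'}(W)=W$ for all $n\ge kN'$; as $W$ is $\eps/2$-dense, $f^n(J)$ is $\eps$-dense for all $n\ge kN'$, which is what we wanted.

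The two points that are genuinely harder than in the interval case are: (a) the density of periodic points for transitive non-rotation graph maps, which is a real theorem of Blokh and not a short adaptation of Proposition~\ref{prop:transitivity-periodic-points}; and (b) the clause ``assuming $W\subset K$'', which is the heart of the matter. On an interval one only pushes the two extreme points of each periodic orbit into $f^{kN'}(J)$ and then fills in the orbit's convex hull by convexity (the interval $[y_i,z_i]$ in the proof of Proposition~\ref{prop:total-transitivity-mixing}); on a graph with cycles there is no smallest subgraph containing a finite set, so this hull argument breaks and must be replaced -- either by arranging that the finitely many orbits constituting $W$ avoid the finite exceptional set $F$ (which needs that $\bigcup_{j\ge0}f^{-j}(F)$ is meager, using that preimages of finite sets under a transitive graph map are nowhere dense), or by first establishing the structural decomposition of transitive graph maps into cyclically permuted pieces, the graph analogue of Proposition~\ref{prop:transitivity-total-transitivity}. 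I expect (b) to be the main obstacle, and it is where Blokh's argument does the real work.
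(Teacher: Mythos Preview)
The paper does not prove Theorem~\ref{theo:totallytransitivegraph-mixing}; it is stated in the ``Remarks on graph maps'' passage and attributed to Blokh \cite{Blo9,Blo14}, so there is no proof here to compare your sketch against. Your first reduction (to a map with a fixed point) is correct, and so is the claim that $F=G\setminus K$ is finite: writing $K_N=\bigcup_{n\le N}h^n(J)$, each $K_N$ is a connected subgraph containing $x$, so $K_N\cap e$ has at most two components, each containing an endpoint of $e$; since $(K_N\cap e)_N$ is increasing and $K$ is dense, $e\setminus K$ is at most one point per edge.

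The unresolved step is exactly the one you flag, and your suggested fixes do not close it. Choosing the $w_i$ with orbits avoiding $F$ is circular: $F$ depends on $h=f^k$, and $k$ depends on the periods of the $w_i$. If one tries to break the cycle by first fixing $h_0=f^p$ and choosing the $w_i$ to avoid $F_0=G\setminus\bigcup_n h_0^n(J)$, then after enlarging $k$ one only has $K=\bigcup_n f^{kn}(J)\subset K_0$, so $W\subset K_0$ does not give $W\subset K$; one can extract that each $w\in W$ lies in $f^n(J)$ along an arithmetic progression of $n$'s, but not for all large $n$, and the interval trick that repairs this in Proposition~\ref{prop:total-transitivity-mixing}---passing to the hull $[y_i,z_i]$ of a periodic orbit and using $[y_i,z_i]\subset f^m([y_i,z_i])$---has no analogue on a graph with cycles, as you yourself note. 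Your second suggestion, the structural decomposition of Theorem~\ref{theo:transitivity-total-transitivityG}, runs in the other direction (from transitive to totally transitive pieces) and does not by itself produce mixing. So your sketch correctly isolates the obstruction but does not overcome it; for a complete argument one must consult Blokh's papers.
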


\begin{theo}\label{theo:transitivity-total-transitivityG}
Let $f\colon G \to G$ be a transitive graph map. Then there exist
a cycle of graphs $(G_1,\ldots, G_p)$
such that $f^p|_{G_i}$ is totally transitive
for all $i\in\Lbrack 1,p\Rbrack$.
\end{theo}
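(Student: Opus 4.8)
Theorem \ref{theo:transitivity-total-transitivityG} asserts that any transitive graph map admits a cycle of subgraphs on which some iterate is totally transitive. The plan is to mimic the structure of the proof of Proposition \ref{prop:transitivity-total-transitivity}, replacing intervals by subgraphs and keeping careful track of the fact that, unlike the interval case where there is always a fixed point forcing $p_n\le 2$, a graph map may have no fixed point at all and the refinement can go on through arbitrarily fine cycles.

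First I would fix a point $x_0\in G$ with $\omega(x_0,f)=G$, which exists by the graph analogue of Proposition \ref{prop:transitive-dense-orbit}. For each $n\ge 1$ set $W_i^n:=\omega(f^i(x_0),f^n)$, so that $G=\bigcup_{i=0}^{n-1}W_i^n$ and $f(W_i^n)=W_{i+1\bmod n}^n$ (Lemma \ref{lem:omega-set}). By Baire at least one $W_i^n$ has nonempty interior, hence all do. The key overlap argument is exactly as before: if $\Int{W_i^n}\cap\Int{W_j^n}\ne\emptyset$ then $W_i^n=W_j^n$, using that the orbit of $x_0$ enters the open set $\Int{W_i^n}\cap\Int{W_j^n}$ and that $f^n(W_j^n)=W_j^n$. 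Then I would form the collection $\CE_n$ of connected components (in $G$) of the sets $\Int{W_i^n}$; these are pairwise disjoint open subsets of $G$ whose closures are connected subgraphs, and since $f$ carries each $W_i^n$ onto $W_{i+1}^n$ while the orbit of $x_0$ visits every component densely, the closures of the elements of $\CE_n$ are permuted transitively by $f$ — hence $\CE_n$ is finite, say $\CE_n=\{C_1,\ldots,C_{p_n}\}$ forming a cycle of subgraphs $(\overline{C_1},\ldots,\overline{C_{p_n}})$ that covers $G$ up to finitely many points, with $f(\overline{C_i})=\overline{C_{i+1\bmod p_n}}$ and $f^{p_n}(\overline{C_i})=\overline{C_i}$.

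Next I would observe the divisibility/nesting behaviour of $p_n$: if $m\mid n$ then every $W_i^n\subset W_{i\bmod m}^m$ by Lemma \ref{lem:omega-set}(iv), so each element of $\CE_n$ is contained in an element of $\CE_m$, which forces $p_m\mid p_n$. I would then pick $n$ so that $p_n$ is as large as possible among all integers and show this maximum is in fact attained — the obstruction here is exactly the point where the interval argument used the fixed point to cap $p_n$ at $2$; on a graph there is no such cap, but there is still an upper bound: the closures $\overline{C_1},\ldots,\overline{C_{p_n}}$ are pairwise disjoint non-degenerate subgraphs of $G$, and any graph can contain only finitely many pairwise disjoint non-degenerate connected subsets with pairwise disjoint closures — in fact their number is bounded in terms of the number of edges of $G$. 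Hence $P:=\sup_n p_n<\infty$ and is attained, say $p_N=P$; replacing $N$ by a multiple if necessary I may assume $N$ is a multiple of $P$ and $P\mid N$. I then set $(G_1,\ldots,G_P):=(\overline{C_1},\ldots,\overline{C_P})$ for this $N$.

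Finally I would show $f^P|_{G_i}$ is totally transitive. That $f^P|_{G_1}$ is transitive follows because $W_0^N\subset G_1$, $f^N(W_0^N)=W_0^N$ and one checks $\omega(x_0,f^P)$ fills $G_1$ (using $P\mid N$ and Lemma \ref{lem:omega-set}(iv), which gives $\omega(x_0,f^N)\subset\omega(x_0,f^P)$, together with the fact that $\CE_P$ coincides with the $P$-element cycle since $p_P\mid p_N=P$ and $p_P$ cannot exceed $P$, hence $p_P=P$ and $\{G_i\}=\CE_P$, so $\omega(x_0,f^P)=\overline{C}$ for the component $C\in\CE_P$ containing $x_0$, i.e.\ $=G_1$). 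For total transitivity I argue by contradiction using maximality of $P$: if $f^P|_{G_1}$ were not totally transitive, then $(f^P|_{G_1})^k$ is not transitive for some $k$, so applying the same construction to the transitive graph map $f^P|_{G_1}$ yields a cycle of subgraphs of $G_1$ of length $q\ge 2$ for some iterate; unwinding, this produces pairwise disjoint non-degenerate subgraphs of $G$ permuted by a suitable power of $f$ in a cycle of length $Pq>P$, contradicting $p_{Pqr}\le P$ for the appropriate $r$. The same applies to each $G_i$ by symmetry (conjugating $f^P|_{G_i}$ to $f^P|_{G_1}$ via $f^{i-1}$). I expect the genuinely delicate step to be the finiteness/maximality of $p_n$: one must argue carefully that disjoint non-degenerate subgraphs with disjoint closures are bounded in number on a fixed graph $G$, and that the cycles $\CE_n$ nest compatibly so that a supremum is attained rather than merely approached.
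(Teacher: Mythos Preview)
The paper does not prove this theorem: it appears in a ``Remarks on graph maps'' section, is attributed to Blokh, and the only argument sketched is the remark that Alsed\`a, del R\'{\i}o and Rodr\'{\i}guez \cite{ADR3} deduce it from a general splitting theorem for transitive maps on locally connected spaces together with the existence, specifically for graph maps, of a splitting of \emph{maximal} cardinality. Your overall plan --- run the $W_i^n$/$\CE_n$ machinery of Proposition~\ref{prop:transitivity-total-transitivity}, show the cycles $\CE_n$ nest, pass to a cycle of maximal length $P$, and deduce total transitivity of $f^P$ on each piece from maximality of $P$ --- is exactly that scheme, and the individual steps you list (Baire, the overlap argument for the $W_i^n$, finiteness of $\CE_n$ via the induced cyclic permutation, divisibility $p_m\mid p_n$ when $m\mid n$, and the contradiction deriving total transitivity from maximality) are sound.

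There is, however, a real gap precisely where you flag it, and your proposed resolution is wrong on both counts. First, the closures $\overline{C_1},\ldots,\overline{C_{p_n}}$ are \emph{not} pairwise disjoint in general: already in Proposition~\ref{prop:transitivity-total-transitivity}, when $p_n=2$ the two pieces $[a,c]$ and $[c,b]$ share the fixed point $c$; on a graph the $\overline{C_i}$'s meet along the boundary set $B_n=G\setminus\bigcup_iC_i$. Second, even granting pairwise disjointness, a graph can contain infinitely many pairwise disjoint non-degenerate closed connected subsets --- take the intervals $\bigl[\tfrac1{2k+1},\tfrac1{2k}\bigr]$, $k\ge1$, inside the single edge $[0,1]$ --- so no bound on $p_n$ ``in terms of the number of edges of $G$'' can come from disjointness alone. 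The finiteness of $\sup_np_n$ genuinely requires the dynamics. One route is to split on Theorem~\ref{theo:transitive-noperiodicpoint}: if $f$ has no periodic point it is conjugate to an irrational rotation, so $f^n$ is transitive on all of $G$ for every $n$ and $p_n=1$ throughout. Otherwise fix a periodic point $z$ of period $q$; for each $n$, either some $f^j(z)$ lies in an open piece $C_i$, which forces $p_n\mid q$, or the whole orbit of $z$ lies in $B_n$, in which case one bounds $p_n$ by analysing how the finitely many local branches of $G$ at $z$ are distributed among the $\overline{C_i}$'s touching $z$ (this is where the ``combinatorial data of the graph'' mentioned in the paper enters). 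Either way $\sup_np_n<\infty$, but establishing this carefully is exactly the nontrivial content that the paper defers to \cite{Blo9} and \cite{ADR3}.
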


Alsedà, del Río and Rodríguez proved that a splitting close to the
preceding theorem holds in a broader situation \cite{ADR3}. More precisely,
if $(X,f)$ is a topological dynamical system where $X$ is locally connected,
then, either $f$ is totally transitive, or there exist
$k\ge 2$ and closed subsets $X_1,\ldots, X_k$ with disjoint
interiors, whose union is $X$, such that
$f(X_i)=X_{i+1\bmod k}$ and $f^k|_{X_i}$ is transitive
for all $i\in\Lbrack 1,k\Rbrack$.
Then they showed that a graph map has a splitting of maximal
cardinality (bounded by combinatorial data of the graph), which implies
Theorem~\ref{theo:transitivity-total-transitivityG}.

\section{Accessible endpoints and mixing}

An interval map $f\colon [a,b]\to [a,b]$ is topologically
mixing if the iterates of every
non degenerate interval $J$ eventually cover ``almost all'' $[a,b]$
(to be precise, if $f^n(J)\supset [a+\eps,b-\eps]$ for all $\eps>0$ and all
large  enough $n$). When do the iterates of every non degenerate interval 
eventually cover the whole interval $[a,b]$? Blokh showed that this property 
holds if and only if  the two endpoints of $[a,b]$ are \emph{accessible}
(see Definition~\ref{def:accessible} below).
Proposition~\ref{prop:accessibility} and
Lemma~\ref{lem:accessibility} about (non) accessible points are stated in \cite{Blo6} (see \cite{Blo2} for a published paper).

\begin{defi}[locally eventually onto]\index{locally eventually onto}\index{leo}
A topological dynamical system $(X,f)$ is \emph{locally eventually onto}
(or \emph{leo}) if, for every nonempty open set $U\subset X$,
there exists an integer $N$ such that $f^n(U)=X$ for all $n\ge N$.
\end{defi}

A dynamical system with this property is trivially topologically mixing.

\begin{rem}
In the literature, the name \emph{topologically exact}\index{topologically 
exact} is synonymous to locally eventually onto.
\end{rem}

Alternative definitions of locally eventually onto appear in the 
literature. They are equivalent according to the next lemma.

\begin{lem}
Let $(X,f)$ be a topological dynamical system. The following two
properties are equivalent:
\begin{enumerate}
\item $(X,f)$ is locally eventually onto.
\item For every nonempty open set $U$, there exists 
$n\ge 0$ such that  $f^n(U)=X$.
\end{enumerate}
An interval map $f\colon I\to I$ is locally eventually onto if and only
if
\begin{itemize}
\item[iii)]
$\forall \eps>0,\ \exists M\ge 0, \forall J\text{ subinterval of $I$},\ 
|J|>\eps\Rightarrow \forall n\ge M,\ f^n(J)=I.$
\end{itemize}
\end{lem}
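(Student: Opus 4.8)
The plan is to prove the three equivalences in a cycle: (i) $\Rightarrow$ (ii) is trivial, so the work is in showing (ii) $\Rightarrow$ (i), and then establishing the equivalence with (iii) for interval maps.

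\textbf{From (ii) to (i).} Suppose that for every nonempty open set $U$ there is some $n\ge 0$ with $f^n(U)=X$; I want a uniform bound that works for all $m\ge N$. First I would remark that if $f^n(U)=X$, then $f^{n+1}(U)=f(X)=X$ as well, since $f$ is onto (which follows from the hypothesis applied to $U=X$). Hence once an open set hits everything, it continues to do so, and for a \emph{single} open set property (i) is immediate. The only real content is to get a bound $N$ that is uniform over \emph{all} nonempty open sets at once, i.e.\ to rule out that the minimal such $n$ is unbounded as $U$ shrinks. Here I would use compactness: by hypothesis, for each point $x\in X$ there is an open neighborhood $U_x$ and an integer $n_x$ with $f^{n_x}(U_x)=X$. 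The family $\{U_x\}_{x\in X}$ covers $X$; extract a finite subcover $U_{x_1},\dots,U_{x_k}$ and set $N:=\max\{n_{x_1},\dots,n_{x_k}\}$. Then, using the ``once onto, stays onto'' remark, $f^m(U_{x_j})=X$ for all $m\ge N$ and all $j$. Now given an arbitrary nonempty open $U$, pick any point of $U$; it lies in some $U_{x_j}$, but that is the wrong inclusion. Instead I would pick a point $x\in U$ and use the hypothesis directly on $U$ itself to get $f^{n_U}(U)=X$ — but then I am back to a non-uniform bound. The fix: apply the hypothesis to get $f^{n_U}(U)=X$, and then note $f^m(U)=X$ for all $m\ge n_U$; the claim of (i) is exactly this statement, with $N=n_U$ allowed to depend on $U$ — wait, that is precisely what (i) requires (``there exists $N$'' with $N$ depending on $U$). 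So in fact (ii) $\Rightarrow$ (i) needs only the ``once onto, stays onto'' observation, and the finite-subcover argument is unnecessary. I would present it cleanly: $f$ onto $\Rightarrow$ ($f^n(U)=X \Rightarrow f^{n+1}(U)=X$) $\Rightarrow$ (ii) $\Rightarrow$ (i).

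\textbf{The interval case, (i)/(ii) $\Leftrightarrow$ (iii).} For an interval map $I=[a,b]$, clearly (iii) $\Rightarrow$ (ii): any nonempty open $U$ contains a nondegenerate subinterval $J$, and shrinking if necessary we may take $|J|>\eps$ for a suitable $\eps$, whence some $f^n(J)=I$ and a fortiori $f^n(U)=I$. For (i) $\Rightarrow$ (iii), fix $\eps>0$. The collection of closed subintervals of length $\ge\eps$ is parametrized compactly (say by left endpoint in $[a,b-\eps]$), so I would use a compactness/covering argument on this parameter space. For each such interval $J$, (i) gives $M_J$ with $f^n(J)=I$ for all $n\ge M_J$; moreover if $J\subset J'$ then $f^n(J)=I$ forces $f^n(J')=I$, giving monotonicity in $J$. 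Covering $[a,b-\eps]$ by finitely many small parameter-intervals and using that a length-$\ge\eps$ interval with left endpoint in a small parameter-interval contains a fixed ``core'' interval of length slightly less than $\eps$, I can take the max of the finitely many $M$'s to get the uniform $M$ in (iii). The cleanest route: cover $[a+\tfrac\eps2\mathbb{Z}]\cap[a,b]$-style grid points, observe every length-$\ge\eps$ interval contains one of the finitely many grid intervals $[a+\tfrac{k\eps}2, a+\tfrac{(k+2)\eps}2]$ of length $\eps$, apply (i) to each of those finitely many intervals, and take $M$ to be the max.

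\textbf{Main obstacle.} The delicate point is the passage from the pointwise-in-$J$ bound (property (i), or (ii) applied to intervals) to the uniform-over-all-large-intervals bound in (iii). This genuinely needs compactness of the interval together with the monotonicity observation ``$J\subset J'$ and $f^n(J)=I$ imply $f^n(J')=I$'' — without monotonicity the finite-cover reduction fails. I would be careful to state and use this monotonicity explicitly, and to handle the boundary behavior (intervals touching $a$ or $b$) by the same grid argument, noting no special treatment is needed since we only ask $|J|>\eps$, not that $J$ be interior.
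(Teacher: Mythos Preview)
Your overall strategy matches the paper's exactly: (ii)$\Rightarrow$(i) via surjectivity of $f$ together with $f^m(U)=f^{m-n}(X)$ once $f^n(U)=X$; and (i)$\Rightarrow$(iii) via a finite grid of reference subintervals plus the monotonicity ``$J'\supset J$ and $f^n(J)=I$ imply $f^n(J')=I$''. Two details need repair.

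First, applying (ii) to $U=X$ does not show $f$ is onto, since $n=0$ satisfies $f^0(X)=X$ vacuously. Instead observe that for any \emph{proper} nonempty open $U$ the integer from (ii) must satisfy $n\ge 1$, whence $X=f^n(U)\subset f(X)$.

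Second, and more substantively, your ``cleanest route'' grid fails as stated: the intervals $[a+\tfrac{k\eps}{2},a+\tfrac{(k+2)\eps}{2}]$ have length exactly $\eps$, and an interval $J$ with $|J|$ only slightly larger than $\eps$ need not contain any of them. For instance $J=[a+\tfrac{\eps}{4},\,a+\tfrac{5\eps}{4}+\delta]$ with $0<\delta<\tfrac{\eps}{4}$ has $|J|>\eps$ but contains neither $[a,a+\eps]$ nor $[a+\tfrac{\eps}{2},a+\tfrac{3\eps}{2}]$. The paper's fix is exactly your earlier instinct about ``core intervals of length slightly less than $\eps$'': take grid intervals of length strictly less than $\eps/2$, i.e.\ choose $k$ with $(b-a)/k<\eps/2$ and set $J_i=\bigl(a+\tfrac{i(b-a)}{k},\,a+\tfrac{(i+1)(b-a)}{k}\bigr)$; then any $J$ with $|J|>\eps$ must contain some $J_i$ entirely, and taking $M=\max_i N_i$ finishes the argument.
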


\begin{proof}
The implication (i)$\Rightarrow$(ii) is trivial.
Suppose that (ii) holds and let $U$ be a nonempty open set. There
exists an integer $n$ such that $f^n(U)=X$. This implies that $f$ is onto
and thus, $\forall m\ge n$, $f^m(U)=f^{m-n}(X)=X$. Hence (ii)$\Rightarrow$(i).

\medskip
Let $f\colon I\to I$ be an interval map.
If (iii) holds, then $f$ is locally eventually onto because
every nonempty open set $U$ contains an interval $J$ with $|J|>0$.
Now we assume that $f$ is locally eventually onto.
Let $\eps>0$. We write $I=[a,b]$ and we choose an integer $k\ge 1$ such that
$\frac{b-a}{k}<\frac{\eps}{2}$. For all $i\in\Lbrack 0,k-1\Rbrack$, we set
$$
J_i:=\left(a+\frac ik (b-a),a+\frac{i+1}k (b-a)\right).
$$
For every $i\in\Lbrack 0, k-1\Rbrack$, we choose an integer $N_i$ such that
$f^n(J_i)=I$ for all $n\ge N_i$, and we set 
$M:=\max\{N_i\mid i\in\Lbrack 0, k-1\Rbrack\}$. 
If $J$ is a subinterval of $I$ with $|J|>\eps$, then $J$
contains $J_i$ for some $i\in\Lbrack 0,k-1\Rbrack$. 
Thus $f^n(J)=I$ for all $n\ge M$, that is, (iii) holds.
\end{proof}

\begin{defi}[accessible endpoint]\label{def:accessible}\index{accessible endpoint}
Let $f\colon [a,b]\to [a,b]$ be an interval map. 
The endpoint $a$ (resp. $b$) is \emph{accessible} if there exist $x\in
(a,b)$ and $n\ge 1$ such that $f^n(x)=a$ (resp. $f^n(x)=b$).
\end{defi}

\begin{prop}\label{prop:accessibility}
Let $f\colon [a,b]\to [a,b]$ be a topologically mixing interval map.
Then $f$ is locally eventually onto if and only if
both  $a$ and $b$ are accessible.

More precisely, for every $\eps>0$ and every non degenerate subinterval
$J\subset (a,b)$,
there exists $N$ such that $f^n(J)$ contains $[a, b-\eps]$
(resp. $[a+\eps,b]$) for all $n\ge N$ 
if and only if $a$ (resp. $b$) is accessible.
\end{prop}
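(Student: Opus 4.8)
The plan is to prove the refined ``more precisely'' statement, since it immediately implies the first one: $f$ is leo if and only if for every non degenerate $J$ and every $\eps>0$ some iterate $f^n(J)$ contains $[a,b-\eps]\cap[a+\eps,b]=[a+\eps,b-\eps]\cup\{\text{everything}\}$... more carefully, $f$ is leo iff for every $J$ and every $\eps$ some iterate contains $[a,b]$ itself, which happens iff it eventually contains both $[a,b-\eps]$ and $[a+\eps,b]$; combined with the refined statement this is equivalent to both $a$ and $b$ being accessible. So I focus on the claim about $a$; the claim about $b$ is symmetric (apply the $a$-case to the conjugate map $x\mapsto a+b-f(a+b-x)$, or just repeat the argument with reversed orientation).

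First, the easy direction: suppose some $N$ exists with $f^n(J)\supset[a,b-\eps]$ for all $n\ge N$ (say for one fixed small $\eps$ with $b-\eps>a$, applied to any non degenerate $J\subset(a,b)$). Then in particular $a\in f^{N}(J)$, so there is a point $x\in J\subset(a,b)$ with $f^N(x)=a$; hence $a$ is accessible. Conversely, assume $a$ is accessible: fix $y\in(a,b)$ and $m\ge1$ with $f^m(y)=a$. Let $J\subset(a,b)$ be an arbitrary non degenerate subinterval and $\eps>0$. Since $f$ is topologically mixing, by Proposition~\ref{prop:def-mixing} there is $N_0$ with $f^n(J)\supset[a+\eps',b-\eps']$ for all $n\ge N_0$, where I pick $\eps'>0$ small enough that $y\in(a+\eps',b-\eps')$ and also $\eps'<\eps$. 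Thus for every $n\ge N_0$ the interval $f^n(J)$ contains $y$. Now push forward by $f^m$: the interval $f^{n+m}(J)\supset f^m(I_n)$ where $I_n:=f^n(J)$ is an interval containing $y$; since $f^m(y)=a$, the image $f^m(I_n)$ is an interval having $a$ as an endpoint (it cannot go below $a$, as $a=\min I$). So $f^{n+m}(J)$ is an interval with left endpoint $a$ that, being also $\supset f^n(J)$... no — that inclusion need not hold. Instead: $f^{n+m}(J)\supset f^m([a+\eps',b-\eps'])$ whenever $n\ge N_0$, because $f^n(J)\supset[a+\eps',b-\eps']$. So it suffices to understand $K:=f^m([a+\eps',b-\eps'])$: it is a compact interval, it contains $a$ (since $y\in[a+\eps',b-\eps']$ and $f^m(y)=a$), and — crucially — by connectedness of $[a+\eps',b-\eps']$ and continuity, $K$ contains a neighborhood $[a,a+\delta]$ of $a$ in $I$ for some $\delta>0$, \emph{provided $K$ is non degenerate}, which it is because $f$ is mixing hence $f^m$ is non constant on any non degenerate interval (Lemma~\ref{lem:transitivity-semi-open}(i) applied to $f^m$, which is also transitive).

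So we now have: for all $n\ge N_0$, $f^{n+m}(J)\supset[a,a+\delta]$ for some fixed $\delta>0$ independent of $n$. Combined with mixing, for all $n\ge N_0$ we also have $f^{n+m}(J)\supset f^m([a+\eps',b-\eps'])\supset[a,a+\delta]$ and, taking $n$ large, $f^{n}(J)$ itself already contains $[a+\eps',b-\eps']$; the point is that $f^{n+m}(J)$ is a single interval containing both $a$ (via $[a,a+\delta]$) and the far interval $[a+\eps',b-\eps']$ — wait, I must ensure these two pieces are joined. They are: $f^{n+m}(J)$ is an interval (image of an interval), it contains $a$, and it contains every point of $[a+\eps',b-\eps']$ once $n+m$ is large enough that $f^{n+m}(J)\supset[a+\eps',b-\eps']$ (again Proposition~\ref{prop:def-mixing}, now for exponent $n+m$). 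An interval containing $a$ and containing $b-\eps'$ must contain all of $[a,b-\eps']\supset[a,b-\eps]$. Hence there is $N:=\max\{N_0,N_1\}+m$ (where $N_1$ is the threshold from Proposition~\ref{prop:def-mixing} at level $\eps'$) such that $f^n(J)\supset[a,b-\eps]$ for all $n\ge N$. For a general non degenerate $J\subset[a,b]$ not necessarily inside $(a,b)$, replace $J$ by its interior relative to... simply take any non degenerate closed subinterval $J'\subset J\cap(a,b)$ — one exists unless $J$ is a single point; then $f^n(J)\supset f^n(J')\supset[a,b-\eps]$.

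\textbf{Main obstacle.} The one genuinely delicate point is guaranteeing the \emph{uniform} $\delta>0$ with $f^m([a+\eps',b-\eps'])\supset[a,a+\delta]$: a priori $f^m$ could touch $a$ only at the single point $y$ (a ``tangential'' contact from above), in which case no such $\delta$ exists and the argument as written breaks. This is exactly the kind of gap the author thanks Roman Hric for; the fix is to not insist the contact be one-sided-nondegenerate but instead to use that $f^n(J)$ for large $n$ contains a \emph{full} neighborhood of $y$ of fixed size (mixing gives $f^n(J)\supset[a+\eps',b-\eps']\ni y$ with $y$ interior), and then that the \emph{image of a neighborhood of $y$ under $f^m$}, being a non degenerate interval containing $a=f^m(y)$, automatically contains $[a,a+\delta]$ for some $\delta>0$ depending only on $f^m$ and on the fixed size $\eps'$ of the neighborhood of $y$ — not on $n$. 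I would spell this out carefully: set $\delta:=$ the length of $f^m$ of a fixed compact neighborhood $[y-\rho,y+\rho]\subset(a+\eps',b-\eps')$ of $y$... actually $f^m([y-\rho,y+\rho])$ is a compact interval containing $a$, hence equals $[a,a+\delta']$ or $[c,a+\delta']$ type interval with left endpoint $a$ (can't drop below $a$), so it contains $[a,\delta'']$ for $\delta''>0$ its right ``reach'' above $a$, and $\delta''>0$ because $f^m$ is non constant on $[y-\rho,y+\rho]$ by transitivity of $f^m$. Everything else is routine application of Proposition~\ref{prop:def-mixing}, Lemma~\ref{lem:transitivity-semi-open}, and connectedness.
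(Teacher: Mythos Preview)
Your approach is correct and matches the paper's: fix $y\in(a,b)$ with $f^m(y)=a$; mixing gives $f^n(J)\supset[a+\eps,b-\eps]\ni y$ for all large $n$, so $f^{n+m}(J)$ is an interval containing both $a=f^m(y)$ and (by mixing at time $n+m$) the point $b-\eps$, hence contains $[a,b-\eps]$. Your ``main obstacle'' is not real: you never need any uniform $\delta$, since $a\in f^{n+m}(J)$ follows immediately from $y\in f^n(J)$ --- indeed your own paragraph just before the obstacle already completes the argument without $\delta$, and that is exactly the paper's four-line proof.
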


\begin{proof}
We show the second part of the proposition; the
first statement follows trivially.

First we suppose that $a$ is accessible. Let $x_0\in (a,b)$
and $n_0\ge 1$ be such that $f^{n_0}(x_0)=a$.
Let $\eps>0$ be such that $x_0\in [a+\eps, b-\eps]$. 
Let $J$ be a non degenerate subinterval in $[a,b]$. Since
$f$ is topologically mixing, there exists an integer $N\ge 0$ such that
$f^n(J)\supset [a+\eps,b-\eps]$ for all $n\ge N$. Since $x_0\in 
[a+\eps,b-\eps]$, the intermediate value theorem implies that 
$f^{n+n_0}(J)\supset [a,b-\eps]$ for all $n\ge N$.
Conversely, if $J$ is a subinterval containing neither $a$ nor
$b$ and such that $a\in f^n(J)$ for some integer $n\ge 1$, then
the point $a$ is accessible by definition. This shows that $a$ is
accessible if and only if, for every $\eps>0$ and every non degenerate 
subinterval $J\subset (a,b)$,
there exists $N$ such that $f^n(J)$ contains $[a, b-\eps]$ for all $n\ge N$.
The case of the endpoint $b$ is similar. 
\end{proof}

\begin{rem}\index{strong transitivity}\label{rem:stronglytransitive}
An interval map $f\colon I\to I$ is called \emph{strongly transitive} if,
for every non degenerate subinterval $J$, there exists $N\ge 0$
such that $\bigcup_{n=0}^N f^n(J)=I$. This definition is due to 
Parry \cite{Par2}. 
This notion is very close to the property of being locally eventually onto. 
Indeed, if a topologically mixing map is strongly transitive, then it is locally
eventually onto. Using  Theorem~\ref{theo:summary-transitivity}, one can
 reduce the transitive case to the mixing one and sees that 
a transitive interval map is strongly transitive if and only if
the two endpoints of $I$ are accessible. In this case, for every
non degenerate subinterval $J$, there exists an integer $n\ge 0$ such that
$f^n(J)\cup f^{n+1}(J)=I$.
\end{rem}

The next lemma specifies the behavior of a mixing map near a non
accessible endpoint. Roughly speaking, a mixing map has infinitely many
oscillations in a neighborhood of a non accessible endpoint. 

\begin{lem}\label{lem:accessibility}
Let $f\colon [a,b]\to [a,b]$ be a topologically mixing interval map.
\begin{enumerate}
\item 
If $a$ (resp. $b$) is the unique non accessible endpoint, then
it is a fixed point. 
If both $a$ and $b$ are non accessible then, either $f(a)=a$
and $f(b)=b$, or $f(a)=b$ and $f(b)=a$.
\item
If $a$ (resp. $b$) is a fixed non accessible point,
then there exists a  decreasing (resp. increasing)
sequence of fixed points $(x_n)_{n\ge 0}$ converging to $a$ (resp. $b$).  
Moreover, for all $n\ge 0$, 
$f|_{[x_{n+1},x_n]}$ is not monotone.
\end{enumerate}
\end{lem}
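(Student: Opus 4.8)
The plan is to treat the two assertions in turn, using topological mixing (via Proposition~\ref{prop:def-mixing}) as the main engine. For assertion (i), I would argue by contradiction on where $f(a)$ lands. Since $f$ is mixing, no point has infinite preimage structure that contradicts surjectivity; in particular $f$ is onto (mixing implies transitive implies onto by Lemma~\ref{lem:transitivity-semi-open}). Suppose $a$ is the unique non-accessible endpoint. If $f(a)\neq a$, then either $f(a)\in(a,b)$ or $f(a)=b$. In the first case, pick a small interval $J$ around $a$, i.e.\ $J=[a,a+\delta)$; by continuity $f(J)$ contains a non-degenerate interval around $f(a)$ sitting inside $(a,b)$, so $a\in f(J)$ forces $a=f(z)$ for some $z$ near $a$ — but then, since $f$ is onto and $a$ is attained, one checks $a$ has a preimage in $(a,b)$ unless $f^{-1}(a)=\{a\}$, and if $f^{-1}(a)=\{a\}$ we use that $f(a)\neq a$ to get a contradiction. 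The cleaner route: $a$ non-accessible means $f^{-1}(a)\subset\{a,b\}$ for every iterate, i.e.\ $a$ is attained (by surjectivity of $f^n$) only from $\{a,b\}$. So $f(a)=a$ or $f(b)=a$; if $b$ is accessible then $f(a)=a$ (otherwise trace back: $a=f(b)$ and some point of $(a,b)$ maps to $b$ in finitely many steps, hence to $a$, contradicting non-accessibility of $a$). When both endpoints are non-accessible, the same bookkeeping on $f(\{a,b\})\ni a$ and $f(\{a,b\})\ni b$ leaves exactly the two options $\{f(a)=a,f(b)=b\}$ or $\{f(a)=b,f(b)=a\}$.

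For assertion (ii), assume $a$ is a fixed non-accessible point (the case of $b$ is symmetric, and the non-fixed sub-case of (i) does not arise here by hypothesis). The key point is: for every $\delta>0$, the interval $[a,a+\delta]$ must contain a fixed point other than the ones forced so far — more precisely, a decreasing sequence of fixed points accumulating at $a$. I would show first that $f|_{[a,a+\delta]}$ cannot satisfy $f(x)>x$ for all $x\in(a,a+\delta]$: if it did, then small intervals near $a$ would be pushed monotonically to the right and, combined with $f(a)=a$ and continuity, $a$ would be a one-sided repeller; applying something like Lemma~\ref{lem:repulsive-fixed-point} we'd get $f^n([a,a+\delta])\supset[a,b]$ for some $n$, hence eventually the iterates of any interval near $a$ cover all of $[a,b]$, making $f$ locally eventually onto — but then by Proposition~\ref{prop:accessibility} both endpoints are accessible, contradicting non-accessibility of $a$. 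Symmetrically, $f(x)<x$ cannot hold throughout $(a,a+\delta]$ either, because $a=f(a)$ and $f(x)<x$ near $a$ combined with mixing forces the graph of $f$ to come back above the diagonal somewhere, and a more careful version of this gives a fixed point in $(a,a+\delta)$ directly. So in every right-neighbourhood of $a$ there is a fixed point; taking $\delta\to 0$ yields a decreasing sequence $(x_n)$ of fixed points converging to $a$.

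Finally, to see $f|_{[x_{n+1},x_n]}$ is not monotone: if $f$ were monotone on $[x_{n+1},x_n]$, then since $x_{n+1},x_n$ are both fixed points, monotonicity would force $f([x_{n+1},x_n])=[x_{n+1},x_n]$ (if increasing) or $f$ would swap the endpoints (if decreasing, impossible since both are fixed, not a $2$-cycle, unless $x_{n+1}=x_n$). Either way $[x_{n+1},x_n]$ would be an invariant interval, and an invariant non-degenerate subinterval contradicts mixing (a mixing map has no proper invariant subinterval, since $f^n(J)$ must eventually contain $[a+\eps,b-\eps]$). Hence $f|_{[x_{n+1},x_n]}$ is not monotone. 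The main obstacle I anticipate is the non-repeller argument in (ii): pinning down precisely how "$f(x)<x$ near $a$" interacts with mixing to produce a fixed point strictly inside — I would handle it by noting that mixing implies $f^k([a,a+\delta])\supset[a+\eps,b-\eps]$ for large $k$, so in particular some point of $[a,a+\delta]$ has a large image; combined with $f(a)=a$ and the intermediate value theorem applied to $x\mapsto f(x)-x$ on a suitable subinterval, we extract the desired fixed point, and then shrink $\delta$.
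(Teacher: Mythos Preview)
Your argument for (i) is correct and matches the paper's: surjectivity forces $a\in f(\{a,b\})$, and if $b$ is accessible then $f(b)=a$ would make $a$ accessible via a preimage of $b$ in $(a,b)$. The non-monotonicity clause at the end of (ii) is also fine and agrees with the paper.

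The gap is in (ii), in the case $f(x)>x$ on $(a,a+\delta]$. First, you have the difficulty backwards: the case $f(x)<x$ on $(a,a+\delta]$ is the trivial one, since together with $f(a)=a$ it gives $f([a,a+\delta])\subset[a,a+\delta)$, a proper invariant subinterval, contradicting transitivity immediately. The real work is in ruling out $f(x)>x$, and there your route through Lemma~\ref{lem:repulsive-fixed-point} and ``locally eventually onto'' does not go through. Lemma~\ref{lem:repulsive-fixed-point} needs a linear lower bound $f(x)-a\ge\lambda(x-a)$ with $\lambda>1$, which $f(x)>x$ does not supply; and even if iterates of $[a,a+\delta]$ covered $[a,b]$, that is not the same as $f$ being locally eventually onto, so Proposition~\ref{prop:accessibility} would not apply. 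More to the point, your argument for this case never uses the non-accessibility of $a$, and without it the desired conclusion is simply false: the tent map $T_2$ is mixing, fixes $0$, and satisfies $T_2(x)>x$ on $(0,\tfrac12)$, yet has no fixed point in $(0,\tfrac12)$. So ``$f(x)>x$ near $a$'' plus mixing cannot, on its own, force a nearby fixed point. The paper uses non-accessibility right here: from transitivity one gets $y\in(a,a+\eps]$ with $f(y)\ge a+\eps\ge y$; then, assuming $f(x)\ge x$ on all of $[a,y]$, one sets $z:=\min\{y,\min f([y,b])\}$ and observes that $a\notin f((a,b])$ (this is exactly non-accessibility of $a$, together with $f(b)\ne a$ from part (i)) forces $z>a$, after which $f([z,b])\subset[z,b]$ is a proper invariant subinterval, contradiction. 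Hence some $x\in(a,y]$ satisfies $f(x)<x$, and the intermediate value theorem yields a fixed point in $[x,y]\subset(a,a+\eps]$. Your fallback in the last paragraph, using $f^k([a,a+\delta])\supset[a+\eps,b-\eps]$ and then the intermediate value theorem on $x\mapsto f(x)-x$, has the same hole: it produces a point with $f(y)>y$ but gives no mechanism for a point with $f(x)<x$ in $(a,a+\delta]$, which is exactly what non-accessibility buys.
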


\begin{proof}
i) If $a$ is not accessible, then $a\notin f((a,b))$.
Since $f$ is topologically mixing, it is onto 
(Lemma~\ref{lem:transitivity-semi-open}(ii)). 
Thus, either $f(a)=a$, or $f(b)=a$. If $b$ is
accessible and $f(b)=a$, then $a$ is accessible too. Therefore, if $a$
is the only non accessible endpoint, then $f(a)=a$. Similarly, if $b$ is the 
only non accessible endpoint, then $f(b)=b$. If both $a$ and
$b$ are non accessible then, either $f(a)=a$ and $f(b)=b$, or
$f(a)=b$ and $f(b)=a$.

\medskip
ii) Assume that $a$ is not accessible and that $f(a)=a$ (the case
of $b$ is symmetric). By
definition, $a\notin f((a,b))$. According to (i), if $b$ is not accessible, 
then $f(b)=b$. If $b$ is accessible, then $f(b)\neq a$. In both cases,
$a\notin f((a,b])$.  Let $\eps\in(0, b-a)$. By transitivity,
$f([a,a+\eps])\not\subset [a,a+\eps]$. Thus there exists $y\in
(a,a+\eps]$ such that $f(y)\ge a+\eps$. In particular, $y$ satisfies
$f(y)\ge y$. Suppose that $f(x)\ge x$ for all $x\in [a,y]$. We set
$$
z:=\min\left\{y,\min (f([y,b]))\right\}.
$$
Then $z>a$, $f([z,b])=f([z,y])\cup f([y,b])$, and both $f([z,y])$ and $f([y,b])$
are included in $[z,b]$ by definition of $z$. Hence
$f([z,b])\subset [z,b]$. But this
contradicts the transitivity of $f$. We deduce that there exists $x\in
[a,y]$ such that $f(x)<x$. Thus $f([x,y])\supset
[f(x),f(y)] \supset [x,y]$, and necessarily $x\neq a$. 
By Lemma~\ref{lem:fixed-point}, there
is a fixed point in $[x,y]\subset (a,a+\eps]$.  Since 
$\eps$ can be chosen arbitrarily small, this implies
that there exists a decreasing sequence of fixed points $(x_n)_{n\ge 0}$ with
$\lim_{n\to+\infty} x_n=a$. Moreover, $f|_{[x_{n+1},x_n]}$ is not
monotone, otherwise we would have $f([x_{n+1},x_n])=[x_{n+1},x_n]$, 
which would contradict the transitivity.
\end{proof}

\begin{rem}
In Lemma~\ref{lem:accessibility}, notice that, if $a$ is a non accessible 
endpoint which is not fixed,  then $f^2(a)=a$ by (i), so statement (ii)
holds for the map $f^2$.
\end{rem}

The next result states that the kind of behavior described in 
Lemma~\ref{lem:accessibility}(ii) is impossible if
$f$ is piecewise monotone or $C^1$. The piecewise monotone case can be
found in \cite{CM} (more precisely, Coven and Mulvey proved in \cite{CM} that
a transitive piecewise monotone map is strongly transitive; see
Remark~\ref{rem:stronglytransitive} for the relation between locally
eventually onto and strong transitivity).
Recall that $f$ is piecewise monotone if the interval can be
divided into finitely many subintervals on each of which $f$ is
monotone (see page~\pageref{def:pm}).

\begin{prop}\label{prop:C1-accessible}
Let $f\colon [a,b]\to [a,b]$ be a topologically mixing interval map. If $f$ is 
piecewise monotone or $C^1$, then the two endpoints $a,b$ are accessible,
and thus $f$ is locally eventually onto.
\end{prop}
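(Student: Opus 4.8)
The plan is to argue by contradiction. Suppose, say, that the endpoint $a$ is not accessible; I want to contradict the hypothesis that $f$ is piecewise monotone or $C^1$. The crucial ingredient is Lemma~\ref{lem:accessibility}. First I would reduce to the situation where $a$ is a non-accessible \emph{fixed} point of a topologically mixing map that is still piecewise monotone (resp.\ $C^1$). By Lemma~\ref{lem:accessibility}(i), if $a$ is the unique non-accessible endpoint it is already fixed; and if $a$ is non-accessible but not fixed, then $f(a)=b$, $f(b)=a$, so $f^2(a)=a$. In the latter case I replace $f$ by $g:=f^2$, observing that $g$ is again topologically mixing (topological mixing passes to all iterates; cf.\ Theorems~\ref{theo:mixing-weak-mixing} and~\ref{theo:summary-mixing}), that $a$ is still non-accessible for $g$ (a preimage of $a$ under $g^n$ is also a preimage under $f^{2n}$), that $a$ is a fixed point of $g$, and that $g$ is piecewise monotone (resp.\ $C^1$) since a composition of piecewise monotone (resp.\ $C^1$) maps is of the same kind. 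Otherwise I take $g:=f$. In all cases, the remark following Lemma~\ref{lem:accessibility} legitimizes applying part~(ii) to $g$.

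Next I would invoke Lemma~\ref{lem:accessibility}(ii) to obtain a decreasing sequence of fixed points $(x_n)_{n\ge 0}$ of $g$ with $x_n\to a$ such that $g|_{[x_{n+1},x_n]}$ is non-monotone for every $n$; the open intervals $(x_{n+1},x_n)$ are then pairwise disjoint and accumulate at $a$. If $g$ were piecewise monotone, its set of turning points would be finite, yet each $(x_{n+1},x_n)$ must contain a turning point of $g$ — otherwise $[x_{n+1},x_n]$ would be contained in one of the finitely many intervals on which $g$ is monotone, forcing $g|_{[x_{n+1},x_n]}$ to be monotone — so $g$ would have infinitely many turning points, a contradiction. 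If instead $g$ is $C^1$, then on each $[x_{n+1},x_n]$ the continuous function $g'$ cannot keep a constant sign (otherwise $g$ would be monotone there), so the intermediate value theorem gives $d_n\in(x_{n+1},x_n)$ with $g'(d_n)=0$; since $d_n\to a$ and $g'$ is continuous, $g'(a)=0$. Hence $a$ would be an attracting fixed point: picking $\delta\in(0,b-a)$ with $|g'|\le\frac12$ on $[a,a+\delta]$, the mean value theorem gives $g([a,a+\delta])\subset[a,a+\delta]$, so the $g$-orbit of any point of the nonempty open set $(a,a+\delta)$ never meets a small nonempty open interval near $b$ — contradicting the topological mixing of $g$.

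In either case $a$, and symmetrically $b$, is accessible, and Proposition~\ref{prop:accessibility} then yields that $f$ is locally eventually onto. I expect the only slightly delicate points to be the reduction carried out in the first step — one must verify that replacing $f$ by $f^2$ preserves topological mixing, non-accessibility of $a$, the fixed-point property, and the regularity assumption — and, in the $C^1$ case, the small but essential observation that non-monotonicity of a $C^1$ map on a closed interval forces its derivative to vanish somewhere in the interior; everything else is routine once Lemma~\ref{lem:accessibility} is available.
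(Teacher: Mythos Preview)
Your proposal is correct and follows essentially the same approach as the paper: reduce via Lemma~\ref{lem:accessibility}(i) to a topologically mixing map $g$ (equal to $f$ or $f^2$) with $a$ a non-accessible fixed point, then exploit the sequence of fixed points from Lemma~\ref{lem:accessibility}(ii) to contradict the regularity hypothesis. The only cosmetic difference is the order of steps: the paper first argues that $g$ must be increasing on some $[a,c]$ (ruling out $g'(a)\le 0$ directly in the $C^1$ case, and using that $g(a)=a$ is the left endpoint in the piecewise monotone case), and \emph{then} invokes Lemma~\ref{lem:accessibility}(ii) once to contradict this; you instead invoke Lemma~\ref{lem:accessibility}(ii) first and handle the two regularity hypotheses separately afterwards, reaching the same forward-invariance contradiction in the $C^1$ case via the zeros $d_n\to a$ of $g'$.
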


\begin{proof}
Suppose that $a$ is not accessible. Then
$f^2(a)=a$ by Lemma~\ref{lem:accessibility}(i).
We set $g:=f^2$. The map $g$ is topologically mixing because $f$ is 
topologically mixing.
If $f$ is $C^1$, then $g$ is $C^1$ too. 
The case $g'(a)<0$ is impossible because $g(a)=a$.
If $g'(a)=0$, then there exists $c\in (a,b)$ such that $g(x)<x$ for all
$x\in (a,c)$, which is impossible because $g$ is transitive.
Thus $g'(a)>0$ and $g$ is increasing in a neighborhood of $a$. Similarly,
if $f$ is piecewise monotone, then $g$ is increasing in a
neighborhood of $a$. In both cases, there exists
$c\in(a,b)$ such that $g|_{[a,c]}$ is increasing. But,
according to Lemma~\ref{lem:accessibility}(ii), there exist two
distinct  points $x<y$ in $(a,c)$ such that $g|_{[x,y]}$ is not
monotone, a contradiction. The case when $b$ is not
accessible is similar. We conclude that both $a, b$ are accessible,
and thus $f$ is locally eventually onto by 
Proposition~\ref{prop:accessibility}.
\end{proof}

\begin{rem}
Proposition~\ref{prop:C1-accessible} remains valid under the assumption
that the mixing map $f$ is monotone (or $C^1$) in a neighborhood of the two 
endpoints.
\end{rem}

\begin{ex}\label{ex:non-accessible-endpoints}
We give an example of an interval map $f\colon [0,1]\to [0,1]$ that is
topologically mixing but not locally eventually onto. This example appears in
\cite{BM} to illustrate another property.

Let $(a_n)_{n\in\IZ}$ be a sequence of points in $(0,1)$ such that
$a_n<a_{n+1}$ for all $n\in\IZ$, and
$$
\lim_{n\to-\infty}a_n=0\quad\text{and}\quad \lim_{n\to+\infty}a_n=1.
$$
For all $n\in\IZ$, we set $I_n:=[a_n,a_{n+1}]$ and we define 
$f_n\colon I_n\to I_{n-1}\cup I_n \cup
I_{n+1}$ by $f_n(a_n):=a_n$, $f_n(a_{n+1})=:a_{n+1}$,
$$
f_n\left(\frac{2a_n+a_{n+1}}{3}\right):=a_{n+2},
\quad 
f_n\left(\frac{a_n+2a_{n+1}}{3}\right):=a_{n-1},
$$
and $f_n$ is linear between the points where it has already been defined.
Then we define the map $f\colon [0,1]\to [0,1]$ (see
Figure~\ref{fig:mixing-non-accessible}) by 
\begin{gather*}
f(0):=0, \quad f(1):=1,\\
\forall n\in\IZ,\ \forall x\in I_n,\ f(x):=f_n(x).
\end{gather*}
\begin{figure}[htb]
\centerline{\includegraphics{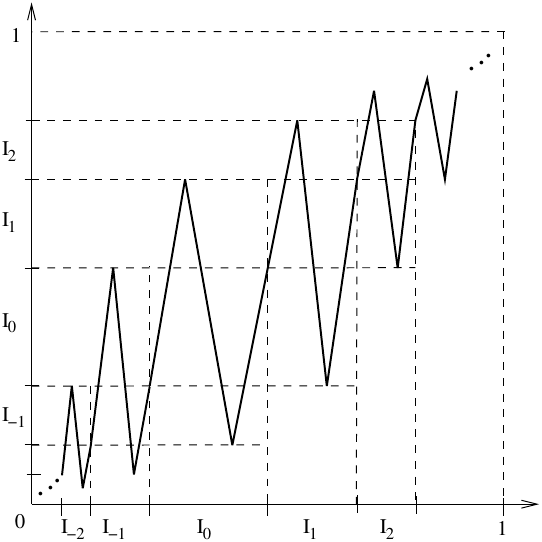}}
\caption{A topologically mixing map on $[0,1]$ whose two endpoints are not 
accessible. For every non degenerate interval $J\subset (0,1)$
and every $n\ge 0$, $f^n(J)\neq [0,1]$.}
\label{fig:mixing-non-accessible}
\end{figure}

It is easy to check that $f$ is continuous and that the points $0$ and
$1$ are not accessible. We are going to show that $f$ is topologically
mixing. Let $J$ be a non degenerate subinterval of $[0,1]$. Since $f$ is
$3$-expanding, we can apply Lemma~\ref{lem:N-critical-points}: 
there exists $n\ge 0$ such that $f^n(J)$
contains two distinct critical points. This implies that $f^{n+1}(J)$ contains
$I_k$ for some $k\in\IZ$. Moreover, it is easy to see that
$$
\forall k\in\IZ,\ \forall m\ge 0,\ f^m(I_k)\supset [a_{k-m},a_{k+m+1}].
$$
Since, for a given $k\in\IZ$, the lengths of $[0,a_{k-m}]$ and
$[a_{k+m+1},1]$ tend to $0$ when $m$ goes to infinity, we deduce that, for all
$\eps>0$, there exists $M$ such that $f^m(J)$ contains
$[\eps,1-\eps]$ for all $m\ge M$. Hence $f$ is topologically mixing.
\end{ex}

\subsection*{Remarks on graph maps}

Generalizing the results of this section to graph maps poses no difficulty
provided the notion of non accessible points is extended to points that
may not be endpoints. If $f\colon G\to G$ is a topologically mixing
graph map, a point $x\in G$ is \emph{accessible} if, for every nonempty
open set $U\subset G$, $x\in \bigcup_{n\ge 0} f^n(U)$. 
For graph maps, Lemma~\ref{lem:accessibility}(i) becomes: every
non accessible point is periodic and its orbit is included in the set
of non accessible points. We leave to the reader the ``translation'' of
the other statements. 

The map of 
Example~\ref{ex:non-accessible-endpoints} can be seen as a circle map by
gluing together the endpoints $0$ and $1$, and one gets a
topologically mixing circle map with a
fixed non accessible point (which is obviously not an endpoint).

\section{Sensitivity to initial conditions}

Roughly speaking, sensitivity to initial conditions means that there exist
arbitrarily close points with divergent trajectories. 
We are going to see that, for interval maps,  transitivity implies sensitivity 
and, conversely, sensitivity implies that an iterate of the map is
transitive on a subinterval. This shows that the notions of transitivity and
sensibility are closely related on the interval.
The first implication is quite natural in view
of the fact that transitivity is very close to mixing. The second
implication may seem unexpected.

\subsection{Definitions}
A point $x$ is $\eps$-stable if the trajectories of all points in a
neighborhood of $x$ follow the trajectory of $x$ up to $\eps$, otherwise it is
$\eps$-unstable.  The terminology ``sensitivity to initial
conditions'' was first introduced by Guckenheimer \cite{Guc} to mean
that the $\eps$-unstable points have a positive Lebesgue measure for
some $\eps>0$. We would rather follow  the definition of Devaney \cite{Dev}.

\begin{defi}[unstable point, sensitivity to initial conditions]\label{defi:unstable}
Let $(X,f)$ be a topological dynamical system and $\eps>0$.
A point $x\in X$ is \emph{$\eps$-unstable (in the
sense of Lyapunov)}\index{unstable point}\index{stable point} 
if, for every neighborhood
$U$ of $x$, there exists $y\in U$ and $n\ge 0$ such that
$d(f^n(x),f^n(y))\ge\eps$. The set of $\eps$-unstable points is
denoted by $U_{\eps}(f)$.\index{Ueps@$U_{\eps}(f)$}
\label{notation:Ueps}
A point is \emph{unstable} if it is
$\eps$-unstable for some $\eps>0$.

The map $f$ is \emph{$\eps$-sensitive to initial
conditions}\index{sensitive (to initial conditions)}\index{sensitivity} 
(or more briefly \emph{$\eps$-sensitive}) if $U_{\eps}(f)= X$.
It is \emph{sensitive to initial conditions} if it is $\eps$-sensitive for
some $\eps>0$.
\end{defi}

The next lemma states some basic properties of the sets $U_{\eps}(f)$. The last
assertion gives an equivalent definition for sensibility.

\begin{lem}\label{lem:unstable}
Let $(X,f)$ be a topological dynamical system and $\eps>0$. The
following properties hold:
\begin{enumerate}
\item $\forall n\ge 1$, $U_\eps(f^n)\subset U_\eps(f)$.
\item $\forall n\ge 1$, $\exists \delta>0$, $U_\eps(f)\subset U_\delta(f^n)$.
\item $f(U_{\eps}(f))\subset U_{\eps}(f)$.
\item $\overline{U_{\eps}(f)}\subset U_{\eps/2}(f)$.
\item If $V$ is open and $V\cap U_{\eps}(f)\neq\emptyset$, then
there exists $n\ge 0$ such that $\diam(f^n(V))\ge\eps$.
\item  $f$ is sensitive if and only if there
exists $\delta>0$ such that, for all nonempty open sets $V$, 
there exists $n\ge 0$ such that $\diam(f^n(V))\ge\delta$.
\end{enumerate}
\end{lem}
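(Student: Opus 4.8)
The plan is to establish the six assertions in order, since the later ones use the earlier ones (in particular (vi) relies on (v)). For (i), suppose $x\in U_\eps(f^n)$; then every neighborhood $U$ of $x$ contains a point $y$ with $d((f^n)^k(x),(f^n)^k(y))\ge\eps$ for some $k\ge 0$, and since $(f^n)^k=f^{nk}$ this exhibits an iterate of $f$ witnessing $\eps$-instability, so $x\in U_\eps(f)$. For (ii), the point is uniform continuity: $f,f^2,\dots,f^{n-1}$ are all uniformly continuous on the compact space $X$, so there is $\delta>0$ such that $d(u,v)<\delta$ implies $d(f^j(u),f^j(v))<\eps$ for all $j\in\Lbrack 0,n-1\Rbrack$. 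Now if $x\in U_\eps(f)$ and $U$ is a neighborhood of $x$, pick $y\in U$ with $d(f^m(x),f^m(y))\ge\eps$ for some $m$; writing $m=nq+r$ with $r\in\Lbrack 0,n-1\Rbrack$, the contrapositive of the uniform-continuity statement forces $d(f^{nq}(x),f^{nq}(y))\ge\delta$, i.e.\ $d((f^n)^q(x),(f^n)^q(y))\ge\delta$, so $x\in U_\delta(f^n)$.

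For (iii), let $y=f(x)$ with $x\in U_\eps(f)$, and let $V$ be a neighborhood of $y$. Then $f^{-1}(V)$ is a neighborhood of $x$, so it contains a point $x'$ with $d(f^k(x),f^k(x'))\ge\eps$ for some $k\ge 0$; necessarily $k\ge 1$ (as $x,x'$ both lie in $X$ and... actually $k\ge 1$ because if $k=0$ then $d(x,x')\ge\eps$ but we only know $x'\in f^{-1}(V)$, which need not be small — so instead argue: among all witnessing $k$ we may, after shrinking $f^{-1}(V)$, not conclude $k\ge1$ directly). Cleaner: set $z:=f(x')\in V$; then $d(f^{k-1}(y),f^{k-1}(z))=d(f^k(x),f^k(x'))\ge\eps$ when $k\ge 1$, and when $k=0$ we get $d(x,x')\ge\eps$, but we can avoid $k=0$ by instead taking neighborhoods $U$ of $x$ with $\mathrm{diam}(U)<\eps$ from the start, which is legitimate since $\eps$-instability is tested against arbitrarily small neighborhoods. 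So $y\in U_\eps(f)$. For (iv), if $x\in\overline{U_\eps(f)}$ and $U$ is a neighborhood of $x$, then $U$ meets $U_\eps(f)$, so $U$ is a neighborhood of some $x'\in U_\eps(f)$; pick $y\in U$ with $d(f^n(x'),f^n(y))\ge\eps$ for some $n$. By the triangle inequality, $d(f^n(x),f^n(y))+d(f^n(x),f^n(x'))\ge\eps$, and here we need $d(f^n(x),f^n(x'))$ small — which does not follow from $x'$ being merely \emph{in} $U$. The fix: also choose $x'\in U_\eps(f)$ close enough to $x$ that by uniform continuity of... no, there is no bound on $n$. The standard resolution is to observe that $U_\eps(f)$ is what matters up to the factor $2$: given $x\in\overline{U_\eps(f)}$ and a neighborhood $U$, shrink to an open $V\ni x$ with $\mathrm{diam}(V)<\eps/2$; $V$ meets $U_\eps(f)$ at some $x'$, and $V$ is a neighborhood of $x'$, so there is $y\in V$ and $n$ with $d(f^n(x'),f^n(y))\ge\eps$; but $x,x',y\in V$ and $d(f^n(x),f^n(x'))$ is still unbounded — so this still fails. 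The actual trick (and the one real subtlety here): it suffices to find, for a neighborhood $U$ of $x$, two points $u,v\in U$ and $n$ with $d(f^n(u),f^n(v))\ge\eps$; then one of them is $\eps/2$-far from $f^n(x)$, giving $x\in U_{\eps/2}(f)$. And such $u,v$ exist because $U$ is a neighborhood of $x'\in U_\eps(f)$: take $u=x'$ and $v=y\in U$ with $d(f^n(x'),f^n(y))\ge\eps$.

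For (v), suppose $V$ is open and contains a point $x\in U_\eps(f)$; then $V$ is a neighborhood of $x$, so there is $y\in V$ and $n\ge 0$ with $d(f^n(x),f^n(y))\ge\eps$, and since $f^n(x),f^n(y)\in f^n(V)$ we get $\mathrm{diam}(f^n(V))\ge\eps$. Finally (vi): if $f$ is $\eps$-sensitive then $U_\eps(f)=X$, so every nonempty open $V$ meets $U_\eps(f)$ and (v) gives the $n$ with $\mathrm{diam}(f^n(V))\ge\eps$, taking $\delta=\eps$. Conversely, if such a $\delta$ exists, then for any $x$ and any neighborhood $U$ of $x$ we have $\mathrm{diam}(f^n(U))\ge\delta$ for some $n$, so there are $u,v\in U$ with $d(f^n(u),f^n(v))\ge\delta$, whence $d(f^n(x),f^n(u))\ge\delta/2$ or $d(f^n(x),f^n(v))\ge\delta/2$; in either case $x\in U_{\delta/2}(f)$. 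Thus $U_{\delta/2}(f)=X$ and $f$ is sensitive. The main obstacle is assertion (iv): the naive triangle-inequality argument stumbles because one cannot control $d(f^n(x),f^n(x'))$, and the correct move is to recognize that the definition of $U_{\eps/2}$ only requires producing, inside each neighborhood, \emph{one} point whose $n$-th iterate is $\eps/2$-far from that of $x$, which is supplied by the pair $(x',y)$ living in the common neighborhood together with the triangle inequality applied at $f^n(x)$.
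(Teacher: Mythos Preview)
Your arguments are correct, and for parts (i), (ii), (iv), (v), (vi) they coincide with the paper's proof essentially line for line (the paper phrases (ii) as a contrapositive but uses the same uniform-continuity $\delta$; your final argument for (iv) via the triangle inequality at $f^n(x)$ applied to the pair $x',y\in U$ is exactly what the paper does).

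The one genuine difference is in (iii). You handle the $k=0$ obstruction by intersecting $f^{-1}(V)$ with a ball of diameter $<\eps$ around $x$, which forces any witnessing iterate to satisfy $k\ge 1$; this is clean and short. The paper instead first establishes the auxiliary fact that for $x\in U_\eps(f)$ and any neighborhood $V$ there are \emph{infinitely many} $n$ with $\exists\,y\in V,\ d(f^n(x),f^n(y))\ge\eps$, and then uses this to pick $n\ge 2$ in $f^{-1}(V)$. Your shortcut is more economical; the paper's route yields a slightly stronger intermediate statement, but that statement is not reused elsewhere, so nothing is lost by your approach.
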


\begin{proof}
i) Trivial.

ii) The map $f$ is uniformly continuous because $X$ is compact. Thus
\begin{equation}\label{eq:UfUfn}
\exists\delta>0,\ d(x,y)<\delta\Rightarrow \forall i\in\Lbrack 0, n-1\Rbrack,\
d(f^i(x),f^i(y))< \eps.
\end{equation}
Let $x\notin U_\delta(f^n)$. Then there exists a neighborhood $U$ of $x$
such that, for all $y\in U$, $\forall k\ge 0$, $d(f^{kn}(x),f^{kn}(y))<\delta$.
Then \eqref{eq:UfUfn} implies $d(f^{kn+i}(x),f^{kn+i}(y))<\eps$ for
all $k\ge 0$ and all $i\in\Lbrack 0, n-1\Rbrack$. We deduce that $x\notin
U_\eps(f)$. This shows that $U_\eps(f)\subset U_\delta(f^n)$.

iii) Let $x\in U_{\eps}(f)$ and let $V$ be a neighborhood of
$x$. We first show that
\begin{equation}\label{eq:unstable-many-n}
\text{there are infinitely many }n\in\IN\text{ such that }
\exists y\in V,\ d(f^n(x),f^n(y))\ge\eps.
\end{equation}
Suppose on the contrary that there exists $n_0$ such that, 
if $d(f^n(x),f^n(y))\ge\eps$ for some $y\in V$ and $n\ge 0$, then $n\le n_0$.
By the continuity of the maps $f,f^2,\ldots, f^{n_0}$, 
there exists $\delta>0$ such that
\begin{equation}\label{eq:fkxy}
\forall y\in X,\ d(x,y)<\delta\Rightarrow
\forall k\in\Lbrack 0,n_0\Rbrack,\ d(f^k(x),f^k(y))<\eps.
\end{equation} 
The set $W:=V\cap B(x,\delta)$ is a neighborhood of $x$. Let 
$y\in W$ and $n\ge 0$. If $n\le n_0$, then
$d(f^n(x),f^n(y))<\eps$ by \eqref{eq:fkxy}. If $n> n_0$, then
$d(f^n(x),f^n(y))<\eps$ according to the choice of $n_0$. This 
contradicts the fact that $x$ is $\eps$-unstable. Hence
\eqref{eq:unstable-many-n} holds.

Now we consider an open set $V$ containing $f(x)$. Since
$U:=f^{-1}(V)$ is open and contains $x$, what precedes implies that
there exist $y\in U$ and
$n\ge 2$ such that $d(f^n(x),f^n(y))\ge\eps$. We set $z:=f(y)$. Then
$z$ belongs to $V$ and $d(f^{n-1}(f(x)),f^{n-1}(z))\ge\eps$. Thus
$f(x)\in U_{\eps}(f)$.

iv) We fix $x\in \overline{U_{\eps}(f)}$. Let $V$ be an
open set containing $x$. There exists a point $y\in U_{\eps}(f)\cap V$, and thus,
by definition, there exist $z\in V$ and $n\ge 0$ such that
$d(f^n(y),f^n(z))\ge\eps$.  By the triangular inequality, we have
either $d(f^n(x),f^n(y))\ge\eps/2$, or $d(f^n(x),f^n(z))\ge\eps/2$.
We deduce that $x\in U_{\eps/2}(f)$.

v) Let $V$ be an open set such that $V\cap
U_{\eps}(f)\neq\emptyset$.
By definition, there exist $x\in V\cap U_{\eps}(f)$,  
$y\in V$ and $n\ge 0$ such that
$d(f^n(x),f^n(y))\ge\eps$, that is, $\diam(f^n(V))\ge\eps$.

vi)
First we assume that $f$ is $\eps$-sensitive, that is,
$U_\eps(f)=X$. By (v), for every nonempty open set $V$, 
there exists $n\ge 0$ such that $\diam(f^n(V))\ge\eps$.

Now we suppose that there exists $\delta>0$ such that,
for every nonempty open set $V$, there exists
$n\ge 0$ such that $\diam(f^n(V))\ge\delta$. We fix $\eps\in (0,\delta/2)$.
Let $x\in X$. Let $V$ be an open set containing $x$ and  let
$n\ge 0$ be such that $\diam(f^n(V))\ge\delta$. Thus
there exist two points $y,z\in V$ such that $d(f^n(y),f^n(z))\ge \delta\ge
2\eps$. The triangular inequality implies that,
either $(f^n(x),f^n(y))\ge \eps$, or $(f^n(x),f^n(z))\ge \eps$. Hence
$x\in U_\eps(f)$, and the map $f$ is $\eps$-sensitive.
\end{proof}

\subsection{Sensitivity and transitivity}

Barge and Martin proved that, for a transitive interval
map, every point $x$ is $\eps$-unstable for some $\eps$ depending on
$x$ \cite{BM2}.  We give a different proof, which additionally shows that the
constant of instability can be taken uniform for all points $x$.

\begin{prop}\label{prop:transitivity-sensitivity}
Let $f\colon I\to I$ be an interval map. 
\begin{itemize}
\item
If $f$ is topologically mixing, then $f$ is 
$\delta$-sensitive for all $\delta\in(0,\frac{|I|}2)$.
\item
If $f$ is transitive, then $f$ is  $\delta$-sensitive for all
$\delta\in(0,\frac{|I|}4)$.
\end{itemize}
\end{prop}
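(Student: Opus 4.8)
The plan is to prove the mixing case first and then reduce the transitive case to it. For the mixing case, fix $\delta\in(0,|I|/2)$ and write $I=[a,b]$, so $|I|=b-a>2\delta$. Pick $\eps>0$ small enough that $[a+\eps,b-\eps]$ still has length $>2\delta$; this is possible precisely because $\delta<|I|/2$. Now take any point $x\in I$ and any neighborhood $V$ of $x$ (which we may shrink to a non-degenerate subinterval $J\ni x$, using that $I$ has no isolated points — indeed a mixing interval map on a non-degenerate interval has no isolated point). By Proposition~\ref{prop:def-mixing} there is an integer $N$ with $f^n(J)\supset[a+\eps,b-\eps]$ for all $n\ge N$; in particular $\diam(f^N(J))\ge b-\eps-(a+\eps)>2\delta$. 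Hence there are points $y,z\in J$ with $|f^N(y)-f^N(z)|>2\delta$, so by the triangle inequality either $|f^N(x)-f^N(y)|\ge\delta$ or $|f^N(x)-f^N(z)|\ge\delta$. Thus $x\in U_\delta(f)$, and since $x$ was arbitrary, $f$ is $\delta$-sensitive.

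For the transitive case, fix $\delta\in(0,|I|/4)$. The idea is to apply Theorem~\ref{theo:summary-transitivity}. If $f$ is topologically mixing, we are already done by the first part (since $|I|/4<|I|/2$). Otherwise Theorem~\ref{theo:summary-transitivity} gives $c\in(a,b)$ with $f([a,c])=[c,b]$, $f([c,b])=[a,c]$, and $f^2$ topologically mixing on each of $[a,c]$ and $[c,b]$. One of these two subintervals, say $[a,c]$, has length $\ge |I|/2$, hence $>2\delta$. Apply the mixing case to the map $f^2|_{[a,c]}$ with the constant $\delta$ (valid since $\delta<|[a,c]|/2$): it is $\delta$-sensitive, i.e. $U_\delta(f^2|_{[a,c]})=[a,c]$. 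The remaining work is to transfer this to $f$ on all of $I$: by Lemma~\ref{lem:unstable}(i) applied to $f^2|_{[a,c]}$ and $f|_{[a,c]}$ we get $[a,c]\subset U_\delta(f|_{[a,c]})\subset U_\delta(f)$ (here one checks that $\eps$-unstable for the restricted map implies $\eps$-unstable for $f$, since neighborhoods in $[a,c]$ are smaller). Then Lemma~\ref{lem:unstable}(iii), $f(U_\delta(f))\subset U_\delta(f)$, gives $f([a,c])=[c,b]\subset U_\delta(f)$ as well, so $U_\delta(f)=I$ and $f$ is $\delta$-sensitive.

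The main obstacle I anticipate is the bookkeeping in the transitive case: one must be careful that a witness to instability on the subinterval — obtained from a neighborhood $J\subset[a,c]$ — really produces instability for $f$ on $I$, and that restricting to a subinterval does not shrink the available spread below $2\delta$. The key numerical point is the factor $4$: only half of $I$ is guaranteed to sit inside one of the two swapped subintervals, so we lose a factor $2$ passing from $|I|/2$ to $|I|/4$. The rest is the triangle-inequality argument already used in the proof of Lemma~\ref{lem:unstable}(vi), together with the elementary observation that a transitive (hence in particular mixing, on each half) interval map has no isolated point, so that open neighborhoods may be replaced by non-degenerate subintervals throughout.
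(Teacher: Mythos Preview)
Your proof is correct and follows essentially the same approach as the paper's: the mixing case via Proposition~\ref{prop:def-mixing} and the triangle inequality, then the transitive-not-mixing case via Theorem~\ref{theo:summary-transitivity} and Lemma~\ref{lem:unstable}(i),(iii). The only cosmetic differences are that you fix $\delta$ first and choose $\eps$ accordingly (the paper does the reverse), and you pick the longer of the two subintervals directly rather than treating both and taking the maximum. One notational quibble: writing $U_\delta(f|_{[a,c]})$ is awkward since $f$ does not map $[a,c]$ into itself, but your parenthetical remark makes clear you mean the correct thing, namely that $\delta$-instability for $f^2|_{[a,c]}$ at $x$ yields $\delta$-instability for $f$ on $I$ because neighborhoods in $[a,c]$ are also neighborhoods in $I$.
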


\begin{proof}
We write $I=[a,b]$.
First we assume that $f$ is topologically mixing.  
Let $\eps\in (0,\frac{|I|}2)$,
$x\in [a,b]$ and $U$ be a neighborhood of $x$. By
Theorem~\ref{theo:summary-mixing}, there exists $n\ge 0$ such
that $f^n(U)\supset [a+\eps,b-\eps]$.  Therefore, there exist $y,z$ in
$U$ such that $f^n(y)=a+\eps$ and $f^n(z)=b-\eps$. This implies that
$$
\max\left\{|f^n(x)-f^n(y)|, |f^n(x)-f^n(z)|\right\}\ge
\frac{b-a-2\eps}2=\frac{|I|}{2}-\eps.
$$
Consequently, $x$ is $\delta$-unstable, where $\delta:=\frac{|I|}2-\eps$. 
Since $\eps$ is arbitrary, the map $f$ is $\delta$-sensitive for every 
$\delta\in(0,\frac{|I|}2)$.

Now we suppose that $f$ is transitive but not topologically mixing. 
According to
Theorem~\ref{theo:summary-transitivity}, there exists $c\in (a,b)$
such that $f([a,c])=[c,b]$, $f([c,b])=[a,c]$ and both maps 
$f^2|_{[a,c]}$ and $f^2|_{[c,b]}$ are topologically mixing. What precedes 
implies that $f^2|_{[a,c]}$ is  $\delta$-sensitive
for all $\delta\in(0,\frac{c-a}2)$.
Therefore, according to Lemma~\ref{lem:unstable}(i)-(iii),
we have $[a,c]\subset U_\delta(f)$ and $f([a,c])=[c,b]\subset U_\delta(f)$.
Thus $f$ is $\delta$-sensitive for all $\delta\in(0,\frac{c-a}2)$.
Similarly, $f$ is $\delta$-sensitive for all $\delta\in(0,\frac{b-c}2)$. 
Finally, we conclude that $f$ is 
$\delta$-sensitive for all $\delta\in(0,\frac{|I|}4)$ because
$\max\{c-a,b-c\}\ge\frac{|I|}2$.
\end{proof}

The converse of Proposition~\ref{prop:transitivity-sensitivity} is obviously 
false. However, somewhat surprisingly, a partial converse holds: 
the instability on a subinterval implies the
existence of a transitive cycle of intervals. This result is due to
Blokh; it is stated without proof in \cite{Blo3}
(we do not know any reference for the proof).

\begin{prop}\label{prop:sensitivity-transitive-component}
Let $f$ be an interval map. Suppose that, for some
$\eps>0$, the set of $\eps$-unstable points $U_{\eps}(f)$ has
a nonempty interior. Then there exists a cycle of intervals
$(J_1,\ldots, J_p)$ such that $f|_{J_1\cup\cdots \cup J_p}$ is transitive.
Moreover, $J_1\cup\cdots \cup J_p\subset
\overline{U_{\eps}(f)}$ and there exists $i\in\Lbrack 1,p\Rbrack$ such that
$|J_i|\ge\eps$.
\end{prop}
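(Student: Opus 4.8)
The plan is: (1) repackage the hypothesis as a robust ``spreading'' property of subintervals of a single open interval sitting inside $U_\eps(f)$; (2) use a finite mesh of $I$ and a pigeonhole/Markov-type argument to find a short interval $A$ that is recurrent and on which the spreading property reproduces itself; (3) recognize the orbit closure $D:=\overline{\CO_f(A)}$ as a transitive cycle of intervals, with the required length and inclusion properties coming for free.

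\textbf{Step 1 (spreading property).} Since $U_\eps(f)$ has nonempty interior, fix a nonempty open interval $V\subseteq U_\eps(f)$ with $|V|<\eps$. Because $U_\eps(f)$ is forward invariant (Lemma~\ref{lem:unstable}(iii)) we have $\CO_f(V)\subseteq U_\eps(f)$, hence $\overline{\CO_f(V)}\subseteq\overline{U_\eps(f)}$; consequently \emph{every} interval built below will automatically lie in $\overline{U_\eps(f)}$. Every nonempty open $W\subseteq V$ meets $U_\eps(f)$, so by Lemma~\ref{lem:unstable}(v) there is $n$ with $\diam f^n(W)\ge\eps$. A uniform-continuity argument upgrades this to property (P): \emph{for every nonempty open $W\subseteq V$ and every $T\ge 0$ there are a nonempty open $W'\subseteq W$ and $n\ge T$ with $\diam f^n(\overline{W'})\ge\eps$} (otherwise a sufficiently short subinterval of $W$ would have $\diam f^i(\cdot)<\eps$ for all $i<T$, contradicting the previous sentence, since $f,\dots,f^{T-1}$ are uniformly continuous).

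\textbf{Step 2 (localization — the main obstacle).} Partition $I$ into finitely many closed intervals $A_1,\dots,A_N$ of length $<\eps/2$, so that an interval of diameter $\ge\eps$ contains some $A_j$ entirely. Using (P) together with Lemma~\ref{lem:chain-of-intervals}(i), build nested closed intervals $\overline V\supseteq\overline{W_1}\supseteq\overline{W_2}\supseteq\cdots$ and times $t_1<t_2<\cdots$ with $f^{t_k}(\overline{W_k})=A_{j_k}$ and $f^{t_k}(\End{W_k})=\End{A_{j_k}}$: at each step apply (P) to the open subinterval $W_k\subseteq V$ at a time past $t_k$ to get an image of diameter $\ge\eps$ covering some $A_{j_{k+1}}$, then pull an exact preimage out of $\overline{W_k}$ by Lemma~\ref{lem:chain-of-intervals}(i). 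By pigeonhole some index occurs infinitely often; more carefully, a standard analysis of the induced directed graph on $\{1,\dots,N\}$ yields a nonempty ``recurrent class'' which, after passing to a subsequence, we may assume all the $j_k$ lie in and which is visited cyclically; choosing for simplicity a single recurrently hit piece $A:=A_{j_\ast}$, composing two returns to $A$ shows $f^{d_i}(A)\supseteq A$ for infinitely many $d_i\to\infty$, and pulling an arbitrary open $W'\subseteq A$ back through $f^{t_{k_1}}$ into $W_{k_1}\subseteq V$ and re-applying (P) shows that $A$ \emph{inherits} property (P): every nonempty open $W'\subseteq A$ satisfies $\diam f^s(\overline{W'})\ge\eps$ for arbitrarily large $s$. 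I expect this localization — converting ``each subinterval of $V$ eventually grows somewhere'' into the self-reproducing, recurrent statement about the fixed interval $A$ — to be the hard part, because of the bookkeeping needed to keep everything inside $V$ through preimages while controlling the times and the returning indices.

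\textbf{Step 3 (the transitive cycle).} Put $D:=\overline{\CO_f(A)}=\overline{\bigcup_{n\ge 0}f^n(A)}$. Then $D$ is closed, forward invariant, and contained in $\overline{U_\eps(f)}$; it contains the nondegenerate interval $A$ and, since some open $W'\subseteq\Int A$ has $\diam f^s(\overline{W'})\ge\eps$, it contains a \emph{connected} subinterval of length $\ge\eps$. Property (P) of $A$ forbids any iterate $f^n$ from being constant on a subinterval of $A$, so $\CO_f(A)$ and hence $D$ has no isolated point. Moreover $f|_D$ has a dense orbit: for every nonempty open $\Omega\subseteq D$ some $f^n(A)$ meets $\Omega$, and pulling an open piece of $\Omega$ back into $\Int A$ and spreading it (using the inherited property (P) and the recurrence of $A$) shows $\bigcup_{m\ge 0}f^{-m}(\Omega)\cap D$ is open and dense in $D$; the Baire theorem then gives $x\in A\subseteq D$ with $\overline{\CO_f(x)}\supseteq D$, whence $\overline{\CO_f(x)}=D$ by forward invariance. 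By Proposition~\ref{prop:transitive-dense-orbit}(ii), $f|_D$ is transitive. Finally, a transitive interval-map subsystem whose carrier has nonempty interior is a finite union of nondegenerate closed intervals cyclically permuted — obtained exactly as in the proof of Proposition~\ref{prop:transitivity-total-transitivity}. Writing $D=J_1\cup\cdots\cup J_p$ for this cycle of intervals, $f|_{J_1\cup\cdots\cup J_p}$ is transitive, $J_1\cup\cdots\cup J_p=D\subseteq\overline{U_\eps(f)}$, and $|J_i|\ge\eps$ for the component containing the length-$\ge\eps$ interval found above, which is exactly the claim.
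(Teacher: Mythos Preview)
Your approach differs substantially from the paper's: the paper applies Zorn's Lemma to the family of closed invariant subsets of $\overline{U_\eps(f)}$ with nonempty interior, obtaining a \emph{minimal} such set $Z$; the spreading property forces $Z$ to have only finitely many components (cyclically permuted), and minimality then directly yields transitivity. You instead try to exhibit the transitive cycle as $D=\overline{\CO_f(A)}$ for a carefully chosen partition piece $A$. The real gap is in Step~3, not Step~2: the properties you establish for $A$ --- inherited spreading (P) and recurrence $f^{d_i}(A)\supseteq A$ --- do \emph{not} force $f|_D$ to be transitive.

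Here is an explicit failure. Let $f\colon[0,2]\to[0,2]$ be the tent map on $[0,1]$ glued to the slope-$\pm4$ tent on $[1,2]$ (so $f(x)=4x-4$ on $[1,\tfrac32]$ and $f(x)=8-4x$ on $[\tfrac32,2]$); this is continuous, $[0,1]$ is invariant with $f|_{[0,1]}$ transitive, and $f([1,2])=[0,2]$, so $f|_{[0,2]}$ is not transitive. For $\eps=0.2$ one checks $(1.3,1.4)\subset U_\eps(f)$ (the slope-$\pm4$ expansion separates nearby orbits). Take $V=(1.3,1.4)$ and partition pieces of length $0.1$. Your nested construction may legitimately set $j_k\equiv 14$, i.e.\ $A_{j_k}=[1.3,1.4]$, at every step, since $f([1.3,1.4])=[1.2,1.6]\supset[1.3,1.4]$ already has diameter $0.4\ge\eps$; this is a valid ``recurrently hit piece'' in your pigeonhole sense. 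The resulting $A=[1.3,1.4]$ enjoys both (P) and recurrence, yet $f^3(A)=[0,2]=D$ and $f|_D$ is not transitive. Your Baire sketch breaks exactly here: knowing $\diam f^s(W')\ge\eps$ tells you nothing about \emph{where} $f^s(W')$ lies, so it need not meet a prescribed $W\subset A$; and recurrence produces points of $A$ mapping \emph{into} $A$, not forward iterates of a given $W'\subset A$ hitting a given $W\subset A$. The ``recurrent class'' refinement in Step~2 is too vague to rule this out --- in the example, $\{14\}$ is recurrent for any reasonable induced graph. What your argument lacks is precisely minimality, and obtaining it is what the paper's Zorn's Lemma step accomplishes.
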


\begin{proof}
We consider the family of sets
$$
\CF:=\{Y\subset\overline{U_{\eps}(f)}\mid Y\text{ closed},\ f(Y)\subset
Y,\ \Int{Y}\neq\emptyset\}.
$$
By assumption, there exists a nonempty open interval $K\subset
U_{\eps}(f)$. Moreover,  $f^n(K)\subset
U_{\eps}(f)$ for all $n\ge 0$ by Lemma~\ref{lem:unstable}(iii). The set
$\overline{\bigcup_{n\ge 0} f^n(K)}$ is thus an element of $\CF$, and hence
$\CF\neq\emptyset$. Let $Y$ belong to $\CF$ and let $J$ be a non
degenerate interval included in $Y$. Since  $\Int{J}\cap
U_{\eps}(f)\neq\emptyset$,  there
exists $n\ge 0$ such that $|f^n(J)|\ge\eps$ by Lemma~\ref{lem:unstable}(v). 
This implies that 
\begin{equation}\label{eq:F-eps}
\text{every }Y\in \CF\text{ has
a connected component }C\text{ with }|C|\ge\eps.
\end{equation}
We endow $\CF$ with the partial order given by inclusion. We are first going
to show that every totally ordered family of elements of $\CF$ admits
a lower bound in $\CF$. Let $(Y_{\lambda})_{\lambda\in\Lambda}$ be a family of
elements of $\CF$ which is totally ordered (that is, all the elements
of $\Lambda$ are comparable and $Y_{\lambda}\subset Y_{\lambda'}$ if 
$\lambda\le \lambda'$). We set
$$
Y:=\bigcap_{\lambda\in\Lambda} Y_{\lambda}.
$$
Then $Y$ is a closed set, $f(Y)\subset Y$ and $Y\subset
\overline{U_{\eps}(f)}$.  Moreover, each $Y_{\lambda}$ has a
finite non zero number of connected components of length at least
$\eps$, and thus so has $Y$.
Therefore, $\Int{Y}\neq\emptyset$, so $Y\in\CF$ and $Y$ is a lower
bound for $(Y_{\lambda})_{\lambda\in\Lambda}$. Zorn's Lemma then implies that $\CF$ 
admits at least one minimal element, say $Z$.

We now turn to prove that $f|_Z$ is transitive. The set $Z$ has finitely
or countably many non degenerate connected components, and
at least one of them has a length greater than or equal to $\eps$
by \eqref{eq:F-eps}. Let $(I_i)_{i\ge 1}$ be the (finite or infinite)
family of all 
non degenerate connected components of $Z$, 
where $I_1,\ldots, I_k$ are the connected components
of length at least $\eps$ (for some $k\ge 1$). Let $i\ge 1$. Since
$\Int{I_i}\cap U_{\eps}(f) \neq\emptyset$,
there exists $n_i\ge 1$ such that $|f^{n_i}(I_i)|\ge\eps$ by
Lemma~\ref{lem:unstable}(v).  Therefore, there exists
$\tau_i\in\Lbrack 1,k\Rbrack$  such that $f^{n_i}(I_i) \subset
I_{\tau_i}$. Since $\Lbrack 1,k\Rbrack$ is finite, this implies that 
there exist integers
$j\in\Lbrack 1,k\Rbrack$ and $m\ge 1$ such that $f^m(I_j)\subset
I_j$. The set $Z':=\bigcup_{n=0}^m f^n(I_j)$ obviously
belongs to $\CF$. Thus $Z'=Z$ by minimality, that is, $Z$ has finitely many
connected components which are cyclically mapped under $f$.
We call $J_1,\ldots, J_p$ the connected components of $Z$, labeled 
in such a way that $f(J_i)\subset J_{i+1}$ for all $i\in\Lbrack 1,p-1\Rbrack$
and $f(J_p)\subset J_1$. By minimality of $Z$, 
these inclusions are actually equalities, that is, $(J_1,\ldots, J_p)$ is
a cycle of intervals (note that $J_1,\ldots, J_k$ are closed).
If $f|_Z$ is not transitive, there exist
two open sets $U,V$ such that
$$
U\cap Z\neq\emptyset,\ V\cap Z\neq\emptyset\quad \text{and}\quad \forall
n\ge 0,\ f^n(U\cap Z)\cap (V\cap Z)=\emptyset.
$$
Since $Z$ is the union of finitely many non degenerate intervals,
there exists a nonempty open interval $J\subset U\cap Z$. We set
$$
X:=\overline{\bigcup_{n\ge 0} f^n(J)}.
$$
Then $X$ belongs to $\CF$ and $X\subset Z$, but $X\cap V=\emptyset$, and thus
$X\neq Z$. This contradicts the fact that $Z$ is minimal.  We conclude
that $f|_Z$ is transitive.
\end{proof}

\begin{ex}
For a given $\eps>0$, the number of transitive cycles of intervals
$(J_1,\ldots, J_p)$ given by
Proposition~\ref{prop:sensitivity-transitive-component} is finite
because one of these intervals has a length at least $\eps$ and 
two different cycles have disjoint interiors by transitivity. Nevertheless,
infinitely many transitive cycles of intervals can coexist if their
constants of sensitivity tend to $0$, as illustrated in
Figure~\ref{fig:infinitely-many-transitive-cycles}.
\begin{figure}[htb]
\centerline{\includegraphics{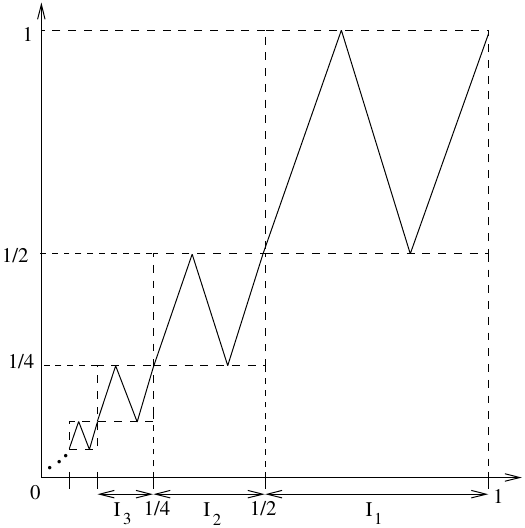}}
\caption{An interval map $f$ with infinitely many transitive subintervals
$(I_n)_{n\ge 1}$, where
$I_n:=\left[\frac{1}{2^n},\frac{1}{2^{n-1}}\right]$. 
The map $f|_{I_n}$ is equal to the map $T_3$ of
Example~\ref{ex:tent-map}, up to a rescaling.
It is easy to show that $f|_{I_n}$ is $\eps_n$-sensitive with 
$\eps_n:=\frac{1}{2^{n+1}}$ for all $n\ge 1$.}
\label{fig:infinitely-many-transitive-cycles}
\end{figure}
\end{ex}

\begin{ex}\label{ex:sensitive-not-transitive}
Even if there is $\eps>0$ such that all points are $\eps$-unstable, 
the union of all  
transitive cycles of intervals is not necessarily dense.  In order to illustrate
this fact, we are going to  build a sensitive interval map which admits a 
transitive cycle of $p+1$ intervals and no other transitive cycle of intervals.

We fix an integer $p\ge 1$ and we set 
$$
\forall i\in\Lbrack 0,2p+1\Rbrack,\ x_i:=\frac{i}{2p+1}\quad\text{and}\quad
\forall i\in\Lbrack 0,2p\Rbrack,\ J_i:=[x_i,x_{i+1}].
$$
We define the continuous map $f\colon [0,1]\to [0,1]$ by
\begin{gather*}
f(x_0):=x_3,\quad f\left(\frac{x_0+x_1}{2}\right):=x_2,\quad f(x_1):=x_3,\\
f(x_{2p-1}):=x_{2p+1},\quad f(x_{2p}):=x_0,\quad f(x_{2p+1}):=x_1,
\end{gather*}
and $f$ is linear between the points where it has already been defined (see
Figure~\ref{fig:sensitive-one-cycle}).  
\begin{figure}[htb]
\centerline{\includegraphics{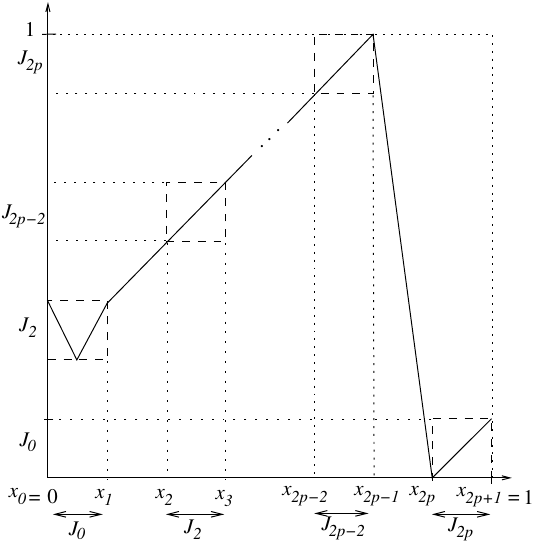}}
\caption{For this interval map, all points are $\eps$-unstable with
$\eps:= \frac{1}{8(2p+1)}$, but the system admits a single transitive
cycle of intervals $(J_0, J_2,\ldots, J_{2p-2}, J_{2p})$, where
$J_i:=\left[\frac{i}{2p+1},\frac{i+1}{2p+1}\right]$.}
\label{fig:sensitive-one-cycle}
\end{figure}
Note that $f$ is of slope $1$ on $[x_1,x_{2p-1}]$ and $f(x_i)= x_{i+2}$ 
for all $i\in\Lbrack 1,2p-1\Rbrack$.
It is trivial to see that $(J_0,J_2,\ldots, J_{2i},\ldots,J_{2p})$ is a cycle of
intervals. Moreover, $f^{p+1}|_{J_0}$ is the map $T_2$ defined
in Example~\ref{ex:tent-map} except that its graph is upside
down (and rescaled). Since $T_2$ is topologically mixing, so is 
$f^{p+1}|_{J_0}$. By Proposition~\ref{prop:transitivity-sensitivity},
$f^{p+1}|_{J_0}$ is $\delta$-sensitive for $\delta:=\frac{1}{2(2p+1)}$. 
Therefore, $C:=J_0\cup J_2\cup\cdots\cup J_{2p}$ is a transitive cycle of $p+1$
intervals and $C\subset U_{\delta}(f)$.

We now turn to show that $f$ is $\frac{\delta}4$-sensitive.  Let $x\in
[0,1]$. Suppose that there exists $n\ge 0$ such that $f^n(x)$ belongs to $C$. 
The form of $C$ implies that there exists $\eps_0>0$ such that either $f^n([x,x+\eps_0])\subset C$ or
$f^n([x-\eps_0,x])\subset C$. Let  $\eps\in(0, \eps_0]$.  The image of
a non degenerate interval is non degenerate and  $C\subset
U_{\delta}(f)$. Therefore, there exist two distinct points $y,z\in
[x-\eps,x+\eps]$ and an integer $k\ge n$ such that
$|f^k(y)-f^k(z)|\ge\delta$. By the triangular inequality, 
either $|f^k(x)-f^k(y)|\ge \frac{\delta}2$ or $|f^k(x)-f^k(z)|\ge 
\frac{\delta}2$. 
In other words:
\begin{equation}\label{eq:sensitive-not-transitive2}
\bigcup_{n\ge 0} f^{-n}(C)\subset U_{\frac{\delta}2}(f).
\end{equation}
It remains to consider the points whose orbit does not meet $C$. These points
are included in the set
\begin{eqnarray*}
X&:=&\{x\in[0,1]\mid \forall n\ge 0, f^n(x)\in J_1\cup
J_3\cup\cdots\cup J_{2p-1}\}\\ &=& \bigcap_{n\ge 0}
f^{-n}\left(\bigcup_{i=0}^{p-1} J_{2i+1}\right).
\end{eqnarray*}
We are going to show that $X$ is a Cantor set (see Section~\ref{sec:cantor}
in Appendix for the definition of a Cantor set).  Note that
\begin{equation}\label{eq:sensitive-not-transitive1}
\forall i\in\Lbrack 0,p-2\Rbrack,\ f|_{J_{2i+1}}\colon J_{2i+1}\to J_{2i+3}\text{ is a linear
homeomorphism}.
\end{equation}
This fact implies that it is enough to focus on the set $X\cap J_{2p-1}$. The map
$f|_{J_{2p-1}}\colon J_{2p-1} \to [0,1]$  is a linear homeomorphism,
and thus the set of points $x\in J_{2p-1}$ such that $f(x)\in J_1\cup
J_3\cup \cdots\cup J_{2p-1}$ is the union of $p$ disjoint closed
subintervals of equal lengths,  
which are $K_i:=J_{2p-1}\cap f^{-1}(J_{2i+1})$, $0\le
i\le p-1$.  Using \eqref{eq:sensitive-not-transitive1}, we can
see that the map $f^{p-i}|_{K_i}\colon K_i\to [0,1]$ is a linear
homeomorphism for every  $i\in\Lbrack 0,p-1\Rbrack$, and thus 
$\{x\in K_i\mid f^{p-i+1}(x)\in J_1\cup J_3\cup \cdots \cup J_{2p-1}\}$ 
is the union of $p$ disjoint closed subintervals of equal lengths. 
Applying this argument inductively, 
we can show that $X$ is a
Cantor set. In particular, the interior of $X$ is empty, and thus
$\overline{[0,1]\setminus X}=[0,1]$. According to
\eqref{eq:sensitive-not-transitive2}, $[0,1]\setminus X$ is
included in $U_{\frac{\delta}2}(f)$. Then $f$ is $\frac{\delta}4$-sensitive by
Lemma~\ref{lem:unstable}(iv).

Finally, we show that $C$ is the only transitive cycle of intervals of $f$.
Suppose on the contrary that there exists another transitive cycle of 
intervals $C'$. 
By transitivity, the interiors of
$C$ and $C'$ are disjoint, and thus there exists a non
degenerate subinterval $J\subset [0,1]$ such that $f^n(J)\cap \Int{C}=
\emptyset$ for all $n\ge 0$.  This exactly means that $J\subset X$, 
but this is impossible because $X$ is a Cantor set.
\end{ex}

\subsection*{Remarks on graph maps and general dynamical systems}

The first part of the proof of Proposition~\ref{prop:transitivity-sensitivity}
can be easily adapted to show the following result.

\begin{prop}\label{prop:mixing-sensitive}
Let $(X,f)$ be a topologically mixing dynamical system. Then
$f$ is  $\delta$-sensitive for every $\delta\in\left(0,\frac{\diam(X)}2\right)$.
\end{prop}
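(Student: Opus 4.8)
The plan is to mimic the first half of the proof of Proposition~\ref{prop:transitivity-sensitivity}, simply replacing the interval-specific tool (Theorem~\ref{theo:summary-mixing}, giving $f^n(U)\supset[a+\eps,b-\eps]$) by the general definition of topological mixing together with the observation that two points realizing nearly the full diameter of $X$ must lie far from any given point. Fix $\delta\in\left(0,\frac{\diam(X)}{2}\right)$ and choose $\eps>0$ small enough that $\diam(X)-2\eps>2\delta$; this is possible since $\diam(X)-2\delta>0$. Since $X$ is compact, the supremum defining $\diam(X)$ is attained, so there exist points $u,v\in X$ with $d(u,v)=\diam(X)$. Take open balls $U_0:=B(u,\eps)$ and $V_0:=B(v,\eps)$, which are nonempty open sets.

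Now let $x\in X$ be arbitrary and let $W$ be any neighborhood of $x$; we may assume $W$ is open and nonempty. Because $f$ is topologically mixing, there is an integer $N\ge 0$ such that for all $n\ge N$ both $f^n(W)\cap U_0\neq\emptyset$ and $f^n(W)\cap V_0\neq\emptyset$ (apply the mixing property once to the pair $(W,U_0)$ and once to the pair $(W,V_0)$, then take the larger of the two thresholds). Fix any such $n$, say $n=N$. Then there exist $y,z\in W$ with $f^N(y)\in U_0$ and $f^N(z)\in V_0$, so that
$$
d(f^N(y),f^N(z))\ge d(u,v)-d(u,f^N(y))-d(v,f^N(z))>\diam(X)-2\eps>2\delta.
$$
By the triangle inequality, $d(f^N(x),f^N(y))+d(f^N(x),f^N(z))\ge d(f^N(y),f^N(z))>2\delta$, so at least one of the two summands exceeds $\delta$. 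In either case we have produced a point of $W$ whose $N$-th iterate is at distance $>\delta$ from $f^N(x)$, which shows $x$ is $\delta$-unstable. Since $x$ and $W$ were arbitrary, $U_\delta(f)=X$, i.e.\ $f$ is $\delta$-sensitive; as $\delta$ was an arbitrary element of $\left(0,\frac{\diam(X)}{2}\right)$, the proposition follows.

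There is essentially no obstacle here: the only point requiring a little care is that $\diam(X)$ is genuinely attained (which needs compactness of $X$, already part of the definition of a topological dynamical system) and that the slack parameter $\eps$ can be chosen strictly positive, which is exactly where the hypothesis $\delta<\frac{\diam(X)}{2}$ is used. Everything else is the triangle inequality and an application of the mixing hypothesis to two pairs of open sets. One should also note the trivial degenerate case: if $X$ is a single point then $\diam(X)=0$, the interval $\left(0,\frac{\diam(X)}{2}\right)$ is empty, and the statement holds vacuously, so no separate argument is needed.
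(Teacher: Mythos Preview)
Your proof is correct and is precisely the adaptation the paper has in mind: the paper does not spell out a separate argument but simply says that the first (mixing) part of the proof of Proposition~\ref{prop:transitivity-sensitivity} can be adapted, and your argument is exactly that adaptation, replacing the interval-specific inclusion $f^n(U)\supset[a+\eps,b-\eps]$ by the choice of two diametral points $u,v$, small balls around them, and two applications of the mixing hypothesis.
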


On the other hand, the next result is proved in \cite{BBCDS} and \cite{GW}.

\begin{theo}\label{theo:transitive+periodicpoints-sensitive}
Let $(X,f)$ be a transitive dynamical system having a dense set of
periodic points. Then $f$ is sensitive to initial conditions provided $X$ is 
infinite.
\end{theo}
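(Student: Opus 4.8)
The plan is to reproduce the Banks--Brooks--Cairns--Davis--Stacey argument (the approach behind Theorem~\ref{theo:transitive+periodicpoints-sensitive}): one extracts a single sensitivity constant $\delta$ from two periodic orbits that stay a fixed distance apart, and then uses transitivity together with the density of periodic points to show that, arbitrarily near any point $x$, there are two points whose trajectories separate by more than $\delta$ at one common time.

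First I would check that the set of periodic points is infinite. It is dense in $X$, and if it were finite it would be closed, hence equal to $X$, contradicting the hypothesis that $X$ is infinite. Since two distinct periodic orbits are disjoint finite sets, I may then fix periodic points $q_1,q_2$ with $\CO_f(q_1)\cap\CO_f(q_2)=\emptyset$, and set $\delta_0:=d\big(\CO_f(q_1),\CO_f(q_2)\big)$, which is positive because the two orbits are disjoint and compact. Put $\delta:=\delta_0/8$; the claim will be that $f$ is $\delta$-sensitive, i.e. $U_\delta(f)=X$.

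Now fix $x\in X$ and a neighbourhood $U$ of $x$, which I may shrink so that $U\subset B(x,\delta)$. Choosing in each orbit a point nearest to $x$ and applying the triangle inequality gives $d(x,\CO_f(q_1))+d(x,\CO_f(q_2))\ge\delta_0=8\delta$, so at least one of them, call it $\CO_f(q)$ with $q\in\{q_1,q_2\}$, satisfies $d(x,\CO_f(q))\ge 4\delta$. By density of periodic points, pick a periodic point $p\in U$ of period $n$, and consider the set $W:=\bigcap_{i=0}^{n-1}f^{-i}\big(B(f^i(q),\delta)\big)$, which is open and nonempty since $q\in W$. By transitivity there are $y\in U$ and $k\ge 0$ with $f^k(y)\in W$. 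Choose $m\in\Lbrack 0,n-1\Rbrack$ with $k+m\equiv 0\pmod n$ and set $N:=k+m$, a multiple of $n$. Then $f^N(y)=f^m\big(f^k(y)\big)\in B\big(f^m(q),\delta\big)$, while $f^N(p)=p$.

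Finally I would close the estimates at time $N$. From $d\big(f^N(y),f^m(q)\big)<\delta$, $d\big(f^N(p),x\big)=d(p,x)<\delta$ and $d\big(x,f^m(q)\big)\ge 4\delta$, the triangle inequality gives $d\big(f^N(y),f^N(p)\big)>2\delta$, whence $\max\{\,d(f^N(x),f^N(y)),\,d(f^N(x),f^N(p))\,\}>\delta$. Since $y$ and $p$ both lie in $U$, this shows $x\in U_\delta(f)$; as $x$ was arbitrary, $U_\delta(f)=X$, so $f$ is $\delta$-sensitive. The only genuinely load-bearing points are the existence of two disjoint periodic orbits — exactly where the hypothesis that $X$ is infinite enters — and keeping the constants consistent (the factor $8$ is what makes the two triangle inequalities close simultaneously); no uniform continuity or interval-specific input is used, so the argument goes through verbatim for an arbitrary topological dynamical system, as the statement asserts.
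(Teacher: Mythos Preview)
Your proof is correct and is precisely the Banks--Brooks--Cairns--Davis--Stacey argument from \cite{BBCDS}. The paper itself does not give a proof of this theorem: it merely states the result and cites \cite{BBCDS} and \cite{GW}, so there is nothing to compare against beyond noting that your write-up matches the cited source.
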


For interval maps, Proposition~\ref{prop:transitivity-sensitivity} can be 
derived from Proposition~\ref{prop:mixing-sensitive} using the fact that, on 
the interval,  transitivity is close to mixing; or it can be seen
as a particular case of Theorem~\ref{theo:transitive+periodicpoints-sensitive}
since a transitive interval map has
a dense set of periodic points by 
Proposition~\ref{prop:transitivity-periodic-points}.

The next theorem is a straightforward consequence of a result of Blokh
\cite{Blo9} (see \cite[Theorem 1]{Blo14} for a statement in English).
Alsedà, Kolyada, Llibre and Snoha showed a related
result in a broader context: if $(X,f)$ is a transitive topological 
dynamical system and if $X$ has a disconnecting interval (that is, a
subset $I\subset X$ homeomorphic to $(0,1)$ such that, $\forall x\in I$, 
$X\setminus \{x\}$ is not connected), then the set of periodic points is dense
\cite[Theorem 1.1]{AKLS}.

\begin{theo}\label{theo:transitivegraphmap-rotation}
Let $f\colon G\to G$ be a transitive graph map. If $f$ is not conjugate to
an irrational rotation, the set of periodic points is dense.
\end{theo}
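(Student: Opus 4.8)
The plan is to reduce the statement to the topologically mixing case by means of the structure theorem for transitive graph maps, and then to establish density of periodic points for mixing graph maps.

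So assume $f$ is not conjugate to an irrational rotation. By the contrapositive of Theorem~\ref{theo:transitive-noperiodicpoint}, $f$ has a periodic point $z$. By Theorem~\ref{theo:transitivity-total-transitivityG}, fix a cycle of graphs $(G_1,\ldots,G_p)$ with $f(G_i)=G_{i+1\bmod p}$ and $g_i:=f^p|_{G_i}$ totally transitive for every $i$. First I would check that $G=G_1\cup\cdots\cup G_p$: each $G_i$ is non degenerate and hence has nonempty interior in $G$, so a point with dense orbit (which exists by Proposition~\ref{prop:transitive-dense-orbit}) eventually enters some $\Int{G_i}$ and stays in $G_1\cup\cdots\cup G_p$ from then on, whence its $\omega$-limit set -- which is all of $G$ -- lies in $G_1\cup\cdots\cup G_p$.

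Since the $G_i$ are cyclically permuted by $f$ and $z$ lies in one of them, the orbit of $z$ meets every $G_j$; a point $w$ of this orbit lying in $G_j$ is $f$-periodic, and if $m$ is its $f$-period then $(f^p)^m(w)=w$, so $w$ is a periodic point of $g_j$. Thus each $g_j$ is totally transitive and has a periodic point, so it is not conjugate to an irrational rotation (irrational rotations have no periodic points), and therefore $g_j$ is topologically mixing by Theorem~\ref{theo:totallytransitivegraph-mixing}. Since any periodic point $v$ of $g_j$ satisfies $f^{pk}(v)=v$ for some $k$ and is thus a periodic point of $f$, it now suffices to prove that a topologically mixing graph map has a dense set of periodic points: applied to each $g_j$ this gives that the union of the periodic-point sets of the $g_j$ is dense in $G_1\cup\cdots\cup G_p=G$ and consists of periodic points of $f$.

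For this core claim the plan mirrors the interval case. Given a topologically mixing graph map $g\colon H\to H$ and a nonempty open set $W$, I would choose a non degenerate closed arc $J\subset W$ lying in the interior of a single edge of $H$, together with two open sub-arcs $I^-,I^+$ of that edge flanking $J$; by mixing there is an $n$ with $g^n(\Int{J})$ meeting both $I^-$ and $I^+$, and from this one wants to extract a sub-arc $J'\subset J$ with $g^n(J')=J$ and $g^n(\End{J'})=\End{J}$, that is, with $J$ covering $J$ for $g^n$ in the sense of Definition~\ref{def:coveringG}. Then the graph version of Lemma~\ref{lem:chain-of-intervals}(ii) (valid for graph maps, as noted after Definition~\ref{def:coveringG}) produces $x\in J\subset W$ with $g^n(x)=x$. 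The extraction step is the main obstacle, and the reason this theorem is genuinely Blokh's result and not a one-line corollary of the interval argument: on a graph the image of an arc need not be an arc, and $g^n(J)$ may wrap around circles or run through branching points, so a connected piece of $g^n(J)$ can reach past both endpoints of $J$ without covering $J$. To make the extraction work one would pass to a suitable ``first-return'' sub-arc on which $g^n$ stays inside the arc $I^-\cup J\cup I^+$ and then argue, via the intermediate value theorem within that edge and keeping track of endpoints, that the covering relation really holds; carrying this out around the finitely many edges and branching points of $H$ is the technical heart of the proof.
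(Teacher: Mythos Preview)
The paper does not give its own proof of this theorem: it is stated in the ``Remarks on graph maps'' subsection and simply attributed to Blokh, with the Alsed\`a--Kolyada--Llibre--Snoha result on disconnecting intervals mentioned as a related approach. So there is no in-paper argument to compare against; what matters is whether your outline actually closes.

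Your reduction to the topologically mixing case is correct and clean. Using Theorem~\ref{theo:transitive-noperiodicpoint} to get a periodic point, Theorem~\ref{theo:transitivity-total-transitivityG} to split into a cycle of totally transitive subgraphs, pushing the periodic orbit into each piece, and then invoking Theorem~\ref{theo:totallytransitivegraph-mixing} to upgrade total transitivity to mixing --- all of this is sound, including the verification that $G=G_1\cup\cdots\cup G_p$.

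The gap is exactly where you say it is: the mixing case. Your sketch (find $n$ with $g^n(\Int J)$ meeting both flanking arcs $I^-,I^+$, then extract a sub-arc $J'\subset J$ with $g^n(J')=J$ in the sense of Definition~\ref{def:coveringG}) does not go through as written, and you correctly diagnose why: on a graph containing a cycle, a path from $I^-$ to $I^+$ need not traverse $J$ at all, so the intermediate-value extraction fails. Your proposed fix (``pass to a first-return sub-arc on which $g^n$ stays inside $I^-\cup J\cup I^+$'') is not an argument --- there is no reason such a sub-arc with the right endpoint behaviour exists. This is precisely the technical content of Blokh's proof, and you have not supplied it.

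If you are willing to keep citing Blokh, the cleanest way to finish within the paper's framework is: a topologically mixing graph map has the specification property (Theorem~\ref{theo:mixing-specificationG}), and specification trivially gives dense periodic points (take $p=1$, $m_1=n_1=0$ in the definition). This closes the argument but is no more self-contained than the paper's bare citation. The AKLS alternative handles any graph possessing a bridge edge (an edge whose interior points disconnect the graph), hence all trees, but not the circle or the figure-eight; for those you still need Blokh's work.
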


Together with Theorem~\ref{theo:transitive-noperiodicpoint},
this implies that for a transitive graph map $f$, either $f$ is
conjugate to an irrational rotation and has no periodic point, or $f$ has
a dense set of periodic points.

The previous theorem, combined with
Theorem~\ref{theo:transitive+periodicpoints-sensitive},
implies the next result.

\begin{cor}\label{cor:transitive-sensitiveG}
Let $f\colon G\to G$ be a transitive graph map. If $f$ is not conjugate to
an irrational rotation, it is sensitive to initial conditions.
\end{cor}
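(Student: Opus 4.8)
The plan is to simply chain together the two quoted theorems, so the ``proof'' is essentially a one-line deduction with one small verification to dispatch. First I would invoke Theorem~\ref{theo:transitivegraphmap-rotation}: since $f\colon G\to G$ is a transitive graph map that is \emph{not} conjugate to an irrational rotation, the set of periodic points of $f$ is dense in $G$.

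Next I would observe that $G$ is infinite. This is immediate from the definition of a non degenerate topological graph: $G\setminus V$ has a nonempty connected component homeomorphic to $(0,1)$, so $G$ contains a homeomorphic copy of an open real interval and is therefore infinite. Hence $(G,f)$ is a transitive dynamical system on an infinite compact metric space possessing a dense set of periodic points.

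Finally I would apply Theorem~\ref{theo:transitive+periodicpoints-sensitive} to the system $(G,f)$: a transitive dynamical system with a dense set of periodic points, defined on an infinite space, is sensitive to initial conditions. This yields that $f$ is sensitive to initial conditions, which is the claim.

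I do not expect any genuine obstacle here, since both ingredients are quoted as black boxes earlier in the text; the only point requiring a word of justification is the infiniteness of $G$, which is needed to meet the hypothesis of Theorem~\ref{theo:transitive+periodicpoints-sensitive}, and this follows directly from the definition of a non degenerate topological graph. One could alternatively remark that, combined with Theorem~\ref{theo:transitive-noperiodicpoint}, this shows that a transitive graph map fails to be sensitive only in the exceptional case of an irrational rotation, where every point is in fact Lyapunov stable.
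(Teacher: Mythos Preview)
Your proposal is correct and matches the paper's own argument exactly: the corollary is presented there as an immediate consequence of Theorem~\ref{theo:transitivegraphmap-rotation} combined with Theorem~\ref{theo:transitive+periodicpoints-sensitive}. Your explicit verification that $G$ is infinite is a reasonable addition, since that hypothesis is needed for Theorem~\ref{theo:transitive+periodicpoints-sensitive} and the paper leaves it implicit (it follows from the standing convention that graph maps act on non degenerate graphs).
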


It is not difficult to extend the proof of 
Proposition~\ref{prop:sensitivity-transitive-component} to graph maps,
which leads to the following result.

\begin{prop}
Let $f\colon G\to G$ be a graph map. Suppose that, for some
$\eps>0$, the set of $\eps$-unstable points $U_{\eps}(f)$ has
a nonempty interior. Then there exists a cycle of 
graphs $(G_1,\ldots, G_p)$ such that $f|_{G_1\cup\cdots \cup G_p}$ is 
transitive. Moreover, $G_1\cup\cdots \cup G_p\subset
\overline{U_{\eps}(f)}$ and there exists $i\in\Lbrack 1,p\Rbrack$ such that
$\diam(G_i)\ge\eps$.
\end{prop}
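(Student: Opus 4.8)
The plan is to imitate, almost verbatim, the proof of Proposition~\ref{prop:sensitivity-transitive-component}, with ``non degenerate interval'' replaced by ``non degenerate subgraph'' and ``length'' by ``diameter''. Note first that Lemma~\ref{lem:unstable} was stated for arbitrary topological dynamical systems, so all of it applies unchanged to $(G,f)$; we will use in particular $f(U_{\eps}(f))\subset U_{\eps}(f)$ (assertion (iii)) and the fact that every nonempty open set meeting $U_{\eps}(f)$ has an iterate of diameter at least $\eps$ (assertion (v)). As in the interval case, one works with
$$
\CF:=\{Y\subset\overline{U_{\eps}(f)}\mid Y\text{ closed},\ f(Y)\subset Y,\ \Int{Y}\neq\emptyset\}
$$
ordered by inclusion, and the goal is to produce a minimal element $Z$ of $\CF$ and show that it is a cycle of graphs on which $f$ is transitive.

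First I would check $\CF\neq\emptyset$: picking a nonempty open set $K\subset U_{\eps}(f)$ (which exists by hypothesis), Lemma~\ref{lem:unstable}(iii) gives $f^n(K)\subset U_{\eps}(f)$ for all $n$, so $\overline{\bigcup_{n\ge 0}f^n(K)}$ lies in $\CF$. Next, every $Y\in\CF$ has a connected component of diameter at least $\eps$: the open set $\Int{Y}$ contains an open arc $A$ (since the branching points and endpoints of $G$ are finitely many, so ordinary points are dense), $A$ meets $U_{\eps}(f)$ because $A\subset\overline{U_{\eps}(f)}$, so Lemma~\ref{lem:unstable}(v) yields $n$ with $\diam(f^n(A))\ge\eps$; then $f^n(\overline{A})$ is compact and connected of diameter $\ge\eps$ and is contained in $Y$, hence in a single component of $Y$. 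The one place where one-dimensionality was really exploited in the interval proof is the verification that chains in $\CF$ have lower bounds: there it sufficed that an interval of length $L$ contains at most $L/\eps$ disjoint subintervals of length $\eps$. The replacement is the elementary fact that a finite topological graph contains only finitely many pairwise disjoint subgraphs of diameter at least $\eps$ (write $G$ as a finite union of arcs, parametrize each uniformly; a disjoint family of subgraphs of diameter $\ge\eps$ has boundedly many members inside a single arc and at most $\#V$ members meeting a vertex). Granting this, for a chain $(Y_\lambda)$ one sets $Y:=\bigcap_\lambda Y_\lambda$; the ``large'' components of the $Y_\lambda$ form a cofiltered inverse system of finite nonempty sets, so one extracts a compatible decreasing family $C_\lambda$, and $\bigcap_\lambda C_\lambda$ is a connected subset of $Y$ of diameter $\ge\eps$, whence $\Int{Y}\neq\emptyset$ and $Y\in\CF$. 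Zorn's Lemma then furnishes a minimal $Z\in\CF$.

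It remains to analyse $Z$ exactly as before. Let $I_1,\ldots,I_k$ ($k\ge 1$) be the finitely many components of $Z$ of diameter $\ge\eps$. For each $i$, $\Int{I_i}$ meets $U_{\eps}(f)$, so $\diam(f^{n_i}(I_i))\ge\eps$ for some $n_i$, hence $f^{n_i}(I_i)\subset I_{\tau_i}$ for some $\tau_i\in\Lbrack 1,k\Rbrack$; iterating and using the pigeonhole principle we find $j$ and $m\ge 1$ with $f^m(I_j)\subset I_j$, and then $Z':=\bigcup_{n=0}^m f^n(I_j)\in\CF$, so $Z'=Z$ by minimality. Thus $Z$ is a finite union of subgraphs, hence has finitely many components, which — by the same minimality argument as in the interval case — are non degenerate and cyclically permuted by $f$ with $f(G_i)=G_{i+1\bmod p}$, i.e. $(G_1,\ldots,G_p)$ is a cycle of graphs; moreover $G_1\cup\cdots\cup G_p=Z\subset\overline{U_{\eps}(f)}$ and $Z$ has a component of diameter $\ge\eps$ by construction. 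Finally, if $f|_Z$ were not transitive there would be open sets $U,V$ with $U\cap Z,V\cap Z\neq\emptyset$ and $f^n(U\cap Z)\cap(V\cap Z)=\emptyset$ for all $n\ge 0$; taking a nonempty open $J\subset U\cap Z$ (possible since $Z$ is a finite union of non degenerate subgraphs), the set $\overline{\bigcup_{n\ge 0}f^n(J)}$ belongs to $\CF$, is contained in $Z$, and misses $V$, contradicting minimality. Hence $f|_Z$ is transitive.

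I expect the main obstacle to be bookkeeping rather than a new idea: concretely, establishing the ``finitely many disjoint large subgraphs'' fact, and — more delicately — the claim that the components of the minimal $Z$ are all non degenerate and that the inclusions $f(G_i)\subset G_{i+1\bmod p}$ are equalities; these are immediate for intervals but for graphs rest on the two supporting observations that a non degenerate subgraph has nonempty interior in $G$ and that the continuous image of a subgraph is again a subgraph. Everything else transcribes mechanically from the proof of Proposition~\ref{prop:sensitivity-transitive-component}.
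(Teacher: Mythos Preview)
Your proposal is correct and matches the paper's approach exactly. The paper does not give a proof of this proposition; it simply states that ``it is not difficult to extend the proof of Proposition~\ref{prop:sensitivity-transitive-component} to graph maps,'' and your write-up carries out precisely that extension, correctly flagging the points (finitely many disjoint large subgraphs, non-degeneracy of the components of the minimal $Z$, the equalities $f(G_i)=G_{i+1\bmod p}$) where the graph setting requires a word of justification beyond the interval case.
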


Since a rotation is not sensitive, Theorems 
\ref{theo:totallytransitivegraph-mixing} and
\ref{theo:transitivity-total-transitivityG} imply that, in
the previous proposition, each subgraph $G_i$ can be 
decomposed in subgraphs $(H_i)_{1\le i\le k_i}$ that are cyclically
mapped under $f^p$ and are topologically mixing for $f^{pk_i}$.
A similar result is stated in \cite[Theorem~4.4]{Kato}.

\chapter{Periodic points}\label{chap:periodic-points}

\section{Specification}

We saw that, for a transitive interval map, the set of periodic points is dense
(Proposition~\ref{prop:transitivity-periodic-points}). We are going to
see that if in addition
the map is mixing, then it satisfies the \emph{specification property},
which roughly means that one can approximate any finite collection of pieces of
trajectories by a periodic orbit provided enough time is left to pass from
a piece of trajectory to another.
This result was stated by Blokh, without proof in \cite{Blo1}; see \cite{Blo2} for the proof.

Specification is a strong property. In particular, 
a topological dynamical system $(X,f)$ with the specification property is 
topologically mixing \cite[(21.3)]{DGS}.
Therefore, specification and topological mixing are equivalent for interval 
maps.

\begin{defi}[specification\index{specification}]
Let $(X,f)$ be a topological dynamical system. 
The map $f$ has the \emph{specification property}
if the following property holds:
for all $\eps>0$, there exists an integer $N\ge 1$ such that, for all $p\ge 1$,
for all points $x_1,\ldots, x_p\in X$ and all positive integers $m_i,n_i$, 
$i=1,\ldots, p$, satisfying 
$$
m_1\le n_1<m_2\le n_2<\cdots<m_p\le n_p\quad\text{and}\quad
\forall i\in\Lbrack 2, p\Rbrack ,\ m_i-n_{i-1}\ge N,
$$
then, for all integers $q\ge N
+n_p-m_1$, there exists a point $x\in X$ such that
$$
f^q(x)=x\quad\text{and}\quad
\forall i\in\Lbrack 1,p\Rbrack,\ \forall k\in \Lbrack m_i,n_i\Rbrack,\ d(f^k(x),f^k(x_i))\le \eps.
$$
\end{defi}

We first state two lemmas.

\begin{lem}\label{lem:speci1}
Let $f\colon I\to I$ be an interval map and $0<\eps<\frac{|I|}2$. 
For all $x\in I$ and all integers $n\ge 0$, there exist closed 
subintervals $J_0,\ldots, J_n$ in $I$ such that:
\begin{itemize}
\item $\forall i\in\Lbrack 0,n-1\Rbrack$, $f(J_i)=J_{i+1}$,
\item $\forall i\in\Lbrack 0, n\Rbrack$, $f^i(x)\in J_i$ and
$J_i\subset [f^i(x)-\eps,f^i(x)+\eps]$,
\item there exists $i\in\Lbrack 0,n\Rbrack$ such that $J_i$
contains either $f^i(x)-\eps$ or $f^i(y)+\eps$.
\end{itemize}
Moreover, if $x+\eps\in I$ (resp. $x-\eps\in I$), then $J_0$ can be chosen
in such a way that $J_0$ is included in $[x,x+\eps]$ (resp. $[x-\eps,x]$).
\end{lem}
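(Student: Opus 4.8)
The plan is to build the tube around the orbit segment $x,f(x),\ldots,f^n(x)$ as the forward images of one carefully chosen initial interval. Concretely, I would set
$$
W:=\bigcap_{i=0}^{n}(f^i)^{-1}\big([f^i(x)-\eps,f^i(x)+\eps]\big),
$$
a closed subset of $I$ containing $x$ (each $f^i$ is continuous), and let $J_0$ be the connected component of $W$ containing $x$; it is a closed interval with $x\in J_0\subset[x-\eps,x+\eps]\cap I$. Then put $J_i:=f^i(J_0)$ for $0\le i\le n$. The first two items are then immediate: $f(J_i)=J_{i+1}$ by construction, each $J_i$ is a closed interval as the continuous image of a compact interval, $f^i(x)\in J_i$ since $x\in J_0$, and $J_i\subset[f^i(x)-\eps,f^i(x)+\eps]$ precisely because $J_0\subset W$.

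The real content is the third item, and this is where I expect the work to be. Writing $J_0=[a,b]$, the key observations are: first, $J_0\neq I$, since $J_0=I$ would force $I\subset[x-\eps,x+\eps]$, hence $|I|\le 2\eps$, contradicting $\eps<\frac{|I|}2$ — this is the only place the hypothesis on $\eps$ is used. Therefore at least one endpoint of $J_0$ lies strictly inside $I$, say $b<\max I$ (the case $a>\min I$ is symmetric). If $b=x+\eps$ we are done with $i=0$. Otherwise $b<x+\eps$; since $J_0$ is the connected component of $W$ through $x$ and $b$ is not the right endpoint of $I$, maximality of the component yields points $c_k\to b^{+}$ with $c_k\in I\setminus W$. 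Each $c_k$ violates $\eps$-closeness at some index $i_k\in\Lbrack 0,n\Rbrack$, and by the pigeonhole principle one index $i$ occurs for infinitely many $k$; passing to that subsequence and using continuity of $f^i$ gives $|f^i(b)-f^i(x)|\ge\eps$, while $b\in J_0\subset W$ gives the reverse inequality, so $f^i(b)=f^i(x)\pm\eps$. Since $f^i(b)\in J_i$, this is exactly the third item. The delicate point is this limiting argument pinning the tube \emph{exactly} onto the boundary level $f^i(x)\pm\eps$; everything else is bookkeeping of cases for which endpoint of $J_0$ is the ``free'' one.

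For the last sentence, I would rerun the construction with $W$ replaced by $W\cap[x,x+\eps]$ when $x+\eps\in I$ (so $[x,x+\eps]\subset I$), and symmetrically with $W\cap[x-\eps,x]$ when $x-\eps\in I$. The connected component of this set containing $x$ is automatically an interval $[x,b]$ with $b\le x+\eps$, so $J_0\subset[x,x+\eps]$, and the first two items carry over verbatim. For the third item, I would first rule out $J_0=\{x\}$: if it held, then since $\eps>0$ and $x\neq\max I$ there would be points $c_k\to x^{+}$ in $([x,x+\eps]\cap I)\setminus W$, and the same pigeonhole/continuity argument would give $|f^i(x)-f^i(x)|\ge\eps$ for some $i$, which is absurd. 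Hence $b>x$; then either $b=x+\eps$ (third item with $i=0$) or $x<b<x+\eps$, in which case $b<\max I$ and the argument of the previous paragraph, applied to the right of $b$ inside $[x,x+\eps]$, produces the required index $i$. I do not anticipate any serious obstacle beyond organizing this endpoint case analysis cleanly.
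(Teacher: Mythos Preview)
Your argument is correct, and it takes a genuinely different route from the paper's. The paper proceeds by induction on $n$: it starts with $J_0=[x,x+\eps]$ (or $[x-\eps,x]$), and at the inductive step either sets $J_n:=f(J_{n-1})$ when this stays in $[x_n-\eps,x_n+\eps]$ (so the third item is inherited from step $n-1$), or else shrinks $J_0$ to $[x,y]$ with $y$ the first point where $f^n$ hits $\{x_n-\eps,x_n+\eps\}$, redefining $J_i':=f^i([x,y])$; then $J_n'$ visibly touches the boundary. Your approach instead builds $J_0$ in one shot as the connected component through $x$ of the full preimage set $W$, and extracts the boundary contact by a maximality/pigeonhole/limit argument at an endpoint of $J_0$. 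Your method is more direct and yields a canonical (maximal) $J_0$; the paper's is more elementary and constructive, and has the small advantage that the ``moreover'' clause comes for free since $J_0$ is one-sided from the outset and only shrinks, whereas you need to rerun the construction with $W\cap[x,x+\eps]$. One minor remark: in your main argument you do not explicitly exclude $J_0=\{x\}$, but your limiting step already handles it implicitly (it would force $|f^i(x)-f^i(x)|=\eps$), so nothing is missing.
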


\begin{proof}
We fix $x\in I$ and we set $x_k:=f^k(x)$ for all $k\ge 0$. 
We show the lemma by induction on $n$.

$\bullet$ Case $n=0$: since $\eps<\frac{|I|}2$, the interval $I$
contains either $x-\eps$ or $x+\eps$. We can set 
$J_0:=[x,x+\eps]$ if $x+\eps\in I$, or
$J_0:=[x,x+\eps]$ if $x- \eps\in I$. The interval $J_0$ is
suitable.

$\bullet$ Suppose that the lemma is true at rank $n-1$, and let
$J_0,\ldots, J_{n-1}$ be the subintervals given by the lemma.
If $f(J_{n-1})\subset [x_n-\eps,x_n+\eps]$, we set $J_n:=f(I_{n-1})$ and the
intervals $(J_0,\ldots, J_n)$ are suitable.
From now on, we suppose that $f(J_{n-1})$ is not included
in $[x_n-\eps,x_n+\eps]$. Thus, by connectedness, $f(J_{n-1})$
contains either $x_n-\eps$ or $x_n+\eps$.
We may assume that $J_0\subset [x,x+\eps]$, the case
when $J_0\subset [x-\eps, x]$ being similar. According to the assumption on
$f(J_{n-1})=f^n(J_0)$, we can define
$$
y:=\min\{z\in J_0 \mid f^n(z)\in\{x_n-\eps,x_n+\eps\}\}.
$$
It follows that $f^n([x,y])$ equals either $[x_n-\eps,x_n]$ or
$[x_n,x_n+\eps]$. We set $J_0':=[x,y]$ and $J_i':=f^i(J_0')$
for all $i\in\Lbrack 1,n\Rbrack$. 
The intervals $(J_0',\ldots,J_n')$ are suitable
because $J_i'\subset J_i\subset [x_i-\eps,x_i+\eps]$ for all 
$i\in\Lbrack 0,n-1\Rbrack$
and $J_n'$ contains $x_n$ and one of the points
$x_n-\eps$, $x_n+\eps$. This ends the induction.
\end{proof}

\begin{lem}\label{lem:speci2}
Let $f\colon [a,b]\to [a,b]$ be a topologically mixing interval map 
and $0<\eps<\frac{b-a}2$. Suppose that 
the endpoint $a$ (resp. $b$) is fixed and non accessible. 
Then there exists $\delta\in (0,\eps)$ such that, for all 
$x\in [a,a+\delta]$  (resp. $x\in [b-\delta, b]$)
and all $n\ge 0$, there exist
closed subintervals $J_0,\ldots, J_n$ satisfying:
\begin{itemize}
\item $J_0\subset [a+\delta, b-\delta]$,
\item $\forall i\in\Lbrack 0,n-1\Rbrack$, $f(J_i)=J_{i+1}$,
\item  $\forall i\in\Lbrack 0,n\Rbrack$, 
$J_i\subset [f^i(x)-\eps, f^i(x)+\eps]$,
\item there exists $i\in\Lbrack 0,n\Rbrack$  such that $|J_i|\ge \frac{\eps}4$.
\end{itemize}
\end{lem}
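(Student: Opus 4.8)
The plan is to follow the scheme of the proof of Lemma~\ref{lem:speci1} — a forward construction of $J_0,J_1,\dots$ with successive shrinking of $J_0$ — but to replace the role played there by the point $x$ (which lies in every interval of the chain) by a \emph{fixed point} $c$ close to $a$. Since $a$ is a fixed non-accessible point and $f$ is mixing, Lemma~\ref{lem:accessibility}(ii) furnishes a decreasing sequence of fixed points converging to $a$, so I would fix one such $c$ with $a<c<a+\eps/4$. Non-accessibility of $a$ (together with Lemma~\ref{lem:accessibility}(i)) gives $a\notin f((a,b])$, whence $f^{-1}(\{a\})=\{a\}$ since $f(a)=a$; consequently any interval $J$ with $a\notin f(J)$ already has $a\notin J$, and this is what will keep the construction away from $a$. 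By continuity $\max f([a,a+\delta])\to a$ as $\delta\to0^+$, so I would choose $\delta>0$ with $\delta<c-a$, $\delta<\eps/4$, $a+2\delta+\eps<b$ and $\max f([a,a+\delta])<c$; then for every $x\in[a,a+\delta]$ one has $x-\eps<a<a+\delta<c<a+\eps\le x+\eps<b-\delta$.

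Given $x\in[a,a+\delta]$ and $n$, set $x_i:=f^i(x)$ and $T_i:=[x_i-\eps,x_i+\eps]$, and let $j:=\min\{i\in\Lbrack 0,n\Rbrack:x_i\ge a+\eps\}$, with $j:=n+1$ if there is no such index (note $j\ge1$). For every $i<j$ one has $T_i\cap[a,b]=[a,x_i+\eps]$, an interval which contains $[a,a+\eps]$, contains $c$, and is capped at $a$ on the left, so an $f$-image landing in $[a,b]$ can leave this tube only by exceeding $x_i+\eps$. I would build $J_0,\dots,J_{\min(j-1,n)}$ exactly as in Lemma~\ref{lem:speci1}, anchored at $c$: start from $J_0:=[a+\delta,x+\eps]\subseteq T_0\cap[a,b]$, which contains $c$ and has $|J_0|=x+\eps-a-\delta\ge\eps-\delta>\eps/4$; at step $r\to r+1$, if $f(J_r)\subseteq T_{r+1}$ put $J_{r+1}:=f(J_r)$, otherwise let $y$ be the point of $J_0$ nearest $c$ with $f^{r+1}(y)=x_{r+1}+\eps$, replace $J_0$ by the closed interval with endpoints $c$ and $y$, and reset $J_i:=f^i(J_0)$. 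As in Lemma~\ref{lem:speci1} this preserves $c\in J_0\subseteq[a+\delta,x+\eps]$ and $J_i\subseteq T_i$, and a shrink produces a $J_{r+1}$ containing both $c$ and $x_{r+1}+\eps$, so $|J_{r+1}|>x_{r+1}+\eps-c>3\eps/4$; hence at the end some $J_i$ has length $\ge\eps/4$ — $J_0$ itself while no shrink has occurred, and the last shrunk $J_{r+1}$ otherwise. If $j=n+1$ this already yields $J_0,\dots,J_n$ and we are done.

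There remains the case $j\le n$: the length requirement is already met by $J_0,\dots,J_{j-1}$, so I would handle the tail by applying Lemma~\ref{lem:speci1} to the point $x_j$, to $n-j$ and to the constant $\eps/4$ (legitimate, and $x_j-\eps/4\in[a,b]$ because $x_j\ge a+\eps$), obtaining closed intervals $J_j\subseteq[x_j-\eps/4,x_j],J_{j+1},\dots,J_n$ with $f(J_i)=J_{i+1}$ and $J_i\subseteq[x_i-\eps/4,x_i+\eps/4]\subseteq T_i$ for $j\le i\le n$; then the two pieces must be glued by pulling $J_j$ back $j$ steps through the near-$a$ tubes, arranging $f(J_{j-1})=J_j$. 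Since $x_j-\eps/4>a$ we have $a\notin J_j$, and because $f^{-1}(\{a\})=\{a\}$ every interval produced by this pull-back avoids $a$; the tubes $[a,x_i+\eps]$ for $i<j$ are wide (they contain $[a,a+\eps]$ and the fixed point $c$), and the condition $\max f([a,a+\delta])<c$ is precisely what allows the last pull-back to be steered into $[a+\delta,x+\eps]$ rather than into the forbidden set $[a,a+\delta)$. I expect the main obstacle to be exactly this last step: the careful bookkeeping needed to keep every $J_i$ inside $T_i$, to reconcile the interval $J_j$ coming from Lemma~\ref{lem:speci1} with the image $f(J_{j-1})$, and to land $J_0$ inside $[a+\delta,b-\delta]$ — all simultaneously, with the fixed point $c$ (which lies in each near-$a$ tube and in $[a+\delta,x+\eps]$) serving as the common landing point that makes these requirements compatible.
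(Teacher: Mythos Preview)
Your overall architecture—a ``near-$a$'' head followed by a tail from Lemma~\ref{lem:speci1}, then glued—is exactly the paper's. But the gluing step you flag as the obstacle is a genuine gap, and your heuristics do not close it. Pulling $J_j$ back through the tubes $[a,x_i+\eps]$ gives intervals $J'_i$ that avoid $a$, yet nothing forces $J'_0\subset[a+\delta,b-\delta]$: the condition $\max f([a,a+\delta])<c$ shows only that $J'_0\not\subset[a,a+\delta]$ \emph{provided} $J'_1$ meets $[c,\infty)$—which you have not established—and even then $J'_0$ may still overlap $(a,a+\delta)$. Pulling back instead through your head intervals $J_0,\dots,J_{j-1}$ would require $J_j\subset f(J_{j-1})$, but your anchor is $c$, not $x$, so $x_{j-1}$ need not lie in $J_{j-1}$ and $f(J_{j-1})$ need not reach $x_j$. (A minor related point: in the case $j\le n$ the length guarantee must come from the tail via Lemma~\ref{lem:speci1}, not from the head, since the head intervals get shrunk in the pull-back.)

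The paper's fix is a single clean idea that renders your elaborate head construction unnecessary: choose $c$ so that $K:=[c,a+\eps]$ is \emph{self-covering}, i.e.\ $f(K)\supset K$. Concretely, first pick $\eta>0$ with $f([a,a+\eta])\subset[a,a+\eps)$; transitivity then gives some $z\in(a+\eta,a+\eps]$ with $f(z)\ge a+\eps$; now take the fixed point $c<a+\min\{\eta,\eps/2\}$ from Lemma~\ref{lem:accessibility}(ii), so $z\in K$ and $f(K)\supset[c,a+\eps]=K$. Set $\delta:=c-a$ and let $m$ be the \emph{last} index with $x_0,\dots,x_m\in[a,c]$. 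Since $K\subset[x_i-\eps,x_i+\eps]$ for every $i\le m$, the head is simply the chain $(K,\dots,K)$, and Lemma~\ref{lem:chain-of-intervals}(i) yields $J_0,\dots,J_m\subset K$ with $J_m=K$ exactly. The $\eta$-condition forces $x_{m+1}\in(c,a+\eps)\subset K$, so Lemma~\ref{lem:speci1} (with $\eps/4$) can be started with $J'_{m+1}\subset K$; then $J'_{m+1}\subset K\subset f(K)=f(J_m)$ makes the gluing automatic, and the pulled-back $J'_0\subset K=[a+\delta,a+\eps]\subset[a+\delta,b-\delta]$ gives the location constraint for free.
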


\begin{proof} 
We prove the lemma when $a$ is a non accessible fixed point. The proof for $b$ 
is similar.
If both endpoints are fixed and non accessible, the same $\delta$ can be 
chosen for $a$ and $b$ by taking the minimum of the values 
found for $a$ and $b$ respectively.
By continuity, there exists $\eta>0$ such that
\begin{equation}\label{eq:a-eta}
\forall y\in [a,a+\eta],\ f(y)<a+\eps.
\end{equation} 
By transitivity,
$f([a,a+\eps])$ is not included in $[a,a+\eps]$; that is, there exists 
$z$ in $[a,a+\eps]$ such that $f(z)\ge a+\eps$. In fact, $z\in(a+\eta,a+\eps]$ 
by \eqref{eq:a-eta}. According to Lemma~\ref{lem:accessibility}(ii),
there exists a fixed point $c$ in the interval 
$(a,a+\min \{\eta,\frac{\eps}2\})$.
We set $\delta:=c-a\in (0,\frac{\eps}2]$ and $K:=[c,a+\eps]$. 
The interval $K$ contains both $c$ and 
$z$, and hence $f(K)\supset K$ by the intermediate value theorem.
Notice that $a+\eps<b-\delta$ because $\delta\le\frac{\eps}2<\frac{b-a}4$.

We fix $x\in [a,a+\delta]=[a,c]$ and $n\ge 0$. We set $x_k:=f^k(x)$
for all $k\ge 0$. Let $m\in\Lbrack 0,n\Rbrack$
be the greatest integer such that $x_0,\ldots,x_m\in [a,c]$. Notice
that $K\subset [x_i-\eps,x_i+\eps]$ for all $i\in\Lbrack 0,m\Rbrack$.
Applying Lemma~\ref{lem:chain-of-intervals}(i) to the chain of intervals 
$(K,\ldots,K)$ with $m+1$ times $K$, we see that there exist closed 
subintervals $J_0,\ldots, J_m$ such that
$J_m=K$ and $J_i\subset K$, $f(J_i)=J_{i+1}$
for all $i\in\Lbrack 0, m-1\Rbrack$.
If $m=n$, then the proof is over because the length of $K$ is 
$a+\eps-c>\eps/2$.
If $m<n$, then $x_{m+1}>c$ according to the choice of $m$, and
$x_{m+1}=f(x_m)<a+ \eps$  by \eqref{eq:a-eta} (recall that $c< a+\eta$).
Hence $x_{m+1}\in K$.
The interval $K$ contains either $x_{m+1}-\frac{\eps}4$
or $x_{m+1}+\frac{\eps}4$ because $|K|\ge \frac{\eps}2$. 
Applying Lemma~\ref{lem:speci1} to $x_{m+1}$, $\frac{\eps}4$ and $n-m+1$
(instead of $x$, $\eps$ and $n$ respectively), we see that there exist
closed intervals $J_{m+1}',\ldots, J_n'$ such that
\begin{itemize}
\item $J_{m+1}'\subset K$,
\item $\forall i\in\Lbrack m+1,n-1\Rbrack$, $f(J_i')=J_{i+1}$,
\item $\forall i\in\Lbrack m+1,n\Rbrack$, $x_i\in J_i'$ and
$J_i'\subset [x_i-\frac{\eps}4,x_i+\frac{\eps}4]$
\item there exists $i\in\Lbrack m+1,n\Rbrack$ such that  $J_i'$ contains
either $x_i-\frac{\eps}4$ or $x_i+\frac{\eps}4$, and thus the length of 
$J_i'$ is at least $\frac{\eps}4$.
\end{itemize}
It follows that $(J_0,\ldots, J_m=K,J_{m+1}')$ is a chain of intervals. 
Therefore, according to Lemma~\ref{lem:chain-of-intervals}(i), there exist
$J_0',\ldots, J_m'$, subintervals of $J_0,\ldots, J_m$ respectively,
such that $f(J_m')=J_{m+1}'$  and $f(J_i')=f(J_{i+1}')$
for all $i\in\Lbrack 0,m-1\Rbrack$.
The sequence $(J_0',\ldots, J_n')$ satisfies the required properties.
\end{proof}

\begin{theo}\label{theo:mixing-specification}
A topologically mixing interval map $f\colon I\to I$
has the specification property.
\end{theo}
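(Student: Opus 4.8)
The plan is to build, from the prescribed pieces of orbits, a single cyclic chain of intervals of length $q$ that shadows all the pieces within $\eps$, and to extract the periodic point from it by Lemma~\ref{lem:chain-of-intervals}(ii). Lemmas~\ref{lem:speci1} and \ref{lem:speci2} are exactly what makes this possible with a gluing constant $N$ depending only on $\eps$: each shadowing chain they produce carries one interval of a fixed length ($\eps$, resp. $\eps/4$) which, by the construction, governs every later interval of the chain (all later intervals are its forward images), and when a non-accessible endpoint is near the piece the chain is moreover forced to enter the ``interior'' of $I$.

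Fix $\eps>0$ small; it suffices to establish specification for small $\eps$. By Lemma~\ref{lem:accessibility}(i), each non-accessible endpoint is fixed (passing to $f^2$ near those endpoints if the two are exchanged), so Lemma~\ref{lem:speci2} supplies a $\delta\in(0,\eps)$ adapted to them. Then, applying Proposition~\ref{prop:def-mixing} (resp. Proposition~\ref{prop:accessibility} at accessible endpoints) to a finite family of small subintervals covering $I=[a,b]$, I obtain a closed interval $C\subsetneq I$ — containing $a$ (resp. $b$) exactly when $a$ (resp. $b$) is accessible, and pinched in from its non-accessible side(s) by some $\eps'\in(0,\delta)$ — and an integer $N_1$ such that $f^n(L)\supset C$ for every $n\ge N_1$ whenever $|L|\ge\eps/4$. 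Put $N:=N_1$.

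Now let the data of the specification property be given. For each $i\in\Lbrack 1,p\Rbrack$ I build a chain $J^i_{m_i}\to\cdots\to J^i_{n_i}$ with $J^i_k\subset\overline{B}(f^k(x_i),\eps)$ for $k\in\Lbrack m_i,n_i\Rbrack$ and $J^i_{m_i}\subset C$: if $f^{m_i}(x_i)$ is within $\delta$ of a non-accessible endpoint, apply Lemma~\ref{lem:speci2}, whose conclusion puts $J^i_{m_i}\subset[a+\delta,b-\delta]\subset C$; otherwise apply Lemma~\ref{lem:speci1}, using its last clause to place $J^i_{m_i}$ on the side of $f^{m_i}(x_i)$ that keeps it in $C$ (possible because $f^{m_i}(x_i)$ is then farther than $\delta>\eps'$ from every non-accessible endpoint). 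Let $k_i$ be the largest index in $\Lbrack m_i,n_i\Rbrack$ with $|J^i_{k_i}|\ge\eps$ (resp. $\ge\eps/4$ when Lemma~\ref{lem:speci2} was used); inspecting the inductive construction in the proof of Lemma~\ref{lem:speci1} (resp. of Lemma~\ref{lem:speci2}), no step producing such a long interval occurs after $k_i$, hence $J^i_k=f^{k-k_i}(J^i_{k_i})$ for all $k\in\Lbrack k_i,n_i\Rbrack$. Assemble a cyclic covering sequence $B_{m_1}\to B_{m_1+1}\to\cdots\to B_{m_1+q}$ with $B_{m_1+q}=B_{m_1}$ by setting $B_k:=J^i_k$ for $k\in\Lbrack m_i,n_i\Rbrack$, $B_k:=f^{k-k_i}(J^i_{k_i})$ for $k\in\Lbrack n_i+1,m_{i+1}-1\Rbrack$, and $B_{m_{i+1}}:=J^{i+1}_{m_{i+1}}$, with indices cyclic and $m_{p+1}:=m_1+q$. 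Each consecutive inclusion $B_{k+1}\subset f(B_k)$ holds: within a piece it is the covering property of the chain; at a junction $f(B_{n_i})=f(J^i_{n_i})=f^{n_i+1-k_i}(J^i_{k_i})=B_{n_i+1}$; and $B_{m_{i+1}}=J^{i+1}_{m_{i+1}}\subset C\subset f^{m_{i+1}-k_i}(J^i_{k_i})=f(B_{m_{i+1}-1})$ since $m_{i+1}-k_i\ge m_{i+1}-n_i\ge N=N_1$. The wrap-around uses the hypothesis $q\ge N+n_p-m_1$, which gives $m_1+q-k_p\ge N_1$.

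Finally, Lemma~\ref{lem:chain-of-intervals}(ii) applied to $(B_{m_1},\ldots,B_{m_1+q})$ (recall $B_{m_1+q}=B_{m_1}$) produces a point $z$ with $f^q(z)=z$ and $f^{k-m_1}(z)\in B_k$ for all $k$; then $x:=f^{q-m_1}(z)$ satisfies $f^q(x)=x$ and $f^k(x)\in B_k$ for all $k\ge m_1$, so $d(f^k(x),f^k(x_i))\le\eps$ for every $i$ and every $k\in\Lbrack m_i,n_i\Rbrack$, as required. The one genuinely delicate point is the uniformity of $N$: topological mixing bridges two intervals in a time that a priori grows without bound as they shrink, so the shadowing chains must be pre-processed (via Lemmas~\ref{lem:speci1} and \ref{lem:speci2}) so that each carries a single interval of fixed minimal length controlling its whole tail; and one must separately guarantee — this is the role of Lemma~\ref{lem:speci2} — that the entrance interval of each shadowing chain lands inside the reachable core $C$, since a tiny interval abutting a non-accessible endpoint may fail to be reachable at all.
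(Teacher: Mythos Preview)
Your proof is correct and follows essentially the same strategy as the paper's: build $\eps$-shadowing chains for each piece via Lemmas~\ref{lem:speci1}--\ref{lem:speci2} so that each carries one interval of length $\ge\eps/4$ and starts inside a fixed ``core'' interval $C$ (the paper's $I_0$), bridge consecutive chains using the uniform mixing constant $N$ coming from Proposition~\ref{prop:accessibility}, and close up the resulting cyclic chain with Lemma~\ref{lem:chain-of-intervals}(ii). Two small remarks: the relation $J^i_k=f^{k-k_i}(J^i_{k_i})$ already follows from the lemma \emph{statements} (they assert $f(J_i)=J_{i+1}$), so no inspection of their proofs is needed; and the reduction when the two non-accessible endpoints are swapped should be phrased as a global replacement of $f$ by $f^2$ (together with the observation that specification for $f^2$ implies specification for $f$), not as something done ``near those endpoints''.
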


\begin{proof}
If $f^2$ has the specification property, then so has $f$
by continuity. Moreover, if $f$ is topologically mixing, then
so is $f^2$ by Theorem~\ref{theo:summary-mixing}. Therefore, it is 
equivalent to prove the theorem
for $f$ or for $f^2$. Then, in view of 
Lemma~\ref{lem:accessibility}, we can assume that 
the non accessible endpoints (if any) are fixed, by 
considering $f^2$ instead of $f$ if necessary.

Let $0<\eps< \frac{|I|}4$. We write $I=[a,b]$. If both $a$ and $b$ are
accessible, we define $I_0:=[a,b]$. Otherwise, let $0<\delta<\eps$  be
given by Lemma~\ref{lem:speci2} and 
\begin{eqnarray*}
I_0&:=&[a+\delta,b]\quad\text{if $a$ is the only non accessible endpoint},\\
I_0&:=&[a,b-\delta]\quad\text{if $b$ is the only non accessible endpoint},\\
I_0&:=&[a+\delta,b-\delta]\quad\text{if both $a$ and $b$ are non accessible}. 
\end{eqnarray*}
We fix a positive integer $p$ such that $\frac{b-a}p<\frac{\eps}8$, 
and we define
$$
\forall k\in\Lbrack 0,p-1\Rbrack,\ A_k:=
\left(a+\frac{k(b-a)}{p},a+\frac{(k+1)(b-a)}{p}\right). 
$$
According to Proposition~\ref{prop:accessibility}, 
for every $k\in\Lbrack 0,p-1\Rbrack$, there exists an integer
$N_k$ such that $f^n(A_k) \supset I_0$ for all $n\ge N_k$.
We set $N:=\max\{N_0,\ldots,N_{p-1}\}$. 
Let $J_0,\ldots, J_k$ be intervals such that
$f(J_i)=J_{i+1}$ for all $i\in\Lbrack 0,k-1\Rbrack$. 
Then, according to the definition of $N$,
\begin{equation}\label{eq:speci1}
\exists\, i\in\Lbrack 0,k\Rbrack,\ |J_i|\ge \eps/4,
\Longrightarrow
\forall n\ge N,\ f^n(J_k)\supset I_0
\end{equation}
because the assumption $|J_i|\ge \frac{\eps}4$ implies that $A_j\subset I_i$ 
for some $j\in\Lbrack 0,p-1\Rbrack$. 

\medskip\textsc{Fact 1.}
{\it Let $x\in I$ and $n\ge 0$. There exist closed intervals $J_0,\ldots, J_n$ such that:
\begin{enumerate}
\item $J_0\subset I_0$,
\item $\forall i\in\Lbrack 0,n\Rbrack$, $J_i\subset [f^i(x)-\eps,f^i(x)+\eps]$,
\item $\forall i\in\Lbrack 0,n-1\Rbrack$, $f(J_i)=J_{i+1}$,
\item there exists $i\in \Lbrack 0,n\Rbrack$ such that $|J_i|\ge \eps/4$.
\end{enumerate}}
We split the proof of the fact depending on $x\in I_0$ or not.
If $x\in I_0$, let $J_0,\ldots, J_n$ denote the intervals given by 
Lemma~\ref{lem:speci1}. They satisfy (ii)-(iv). Moreover, $|I_0|\ge 2\eps$
by definition. This implies that
either $[x-\eps,x]\subset I_0$ or $[x,x+\eps]\subset I_0$, and thus $J_0$
can be chosen to be a subinterval of $I_0$ (still by Lemma~\ref{lem:speci1}), 
which is (i).
If $a$ is not accessible and if $x\in [a,a+\delta]$,
then Lemma~\ref{lem:speci2} gives the suitable subintervals. 
The same conclusion holds if $x\in [b-\delta,b]$ 
and if $b$ is non accessible.

\medskip\textsc{Fact 2.}
{\it Let $x_1,\ldots, x_p$ be points in $I$ and let
$m_1\le n_1<m_2\le n_2<\cdots <m_p\le n_p$ be integers satisfying 
$m_{i+1}-n_i\ge N$ for all $i\in\Lbrack 1,p-1\Rbrack$.
Then there exist closed intervals $(J_i)_{m_1\le i\le n_p}$
such that
\begin{itemize}
\item $J_{m_1}\subset I_0$,
\item $\forall i\in\Lbrack m_1,n_p-1\Rbrack$, $f(J_i)=J_{i+1}$,
\item $\forall k\in\Lbrack 1,p\Rbrack$, $\forall i\in\Lbrack m_k,n_k\Rbrack$, 
$J_i\subset [f^i(x_k)-\eps,f^i(x_k)+\eps]$,
\item $\forall n\ge N$, $f^n(J_{n_p})\supset I_0$.
\end{itemize}
}

We prove Fact~2 by induction on $p$.

$\bullet$ Case $p=1$: we apply Fact~1 to
$x:=f^{m_1}(x_1)$ and $n:=n_1-m_1$. The last condition is satisfied because of 
\eqref{eq:speci1}.

$\bullet$ Suppose that Fact~2 holds at rank $p-1$
and let $J_{m_1},\ldots, J_{n_{p-1}}$ be the intervals given by Fact~2.
We apply Fact~1 with $x:=f^{m_p}(x_p)$ and $n:=n_p-m_p$ and we
call the resulting intervals $J_{m_p}',\ldots, J_{n_p}'$. 
Then $f^n(J_{n_p}')\supset I_0$ for all $n\ge N$ by \eqref{eq:speci1}.
We set $J_i:=f^{i-n_{p-1}}(J_{n_{p-1}})$ for all 
$i\in\Lbrack n_{p-1}+1, m_p\Rbrack$. By assumption, $m_p-n_{p-1}\ge N$, and thus
$J_{m_p}=f^{m_p-n_{p-1}}(J_{n_{p-1}})\supset I_0$
by \eqref{eq:speci1}. Therefore $(J_{m_1},\ldots, J_{m_p-1}, J_{m_p}')$
is a chain of intervals because $J_{m_p}'\subset I_0$ by construction.
By Lemma~\ref{lem:chain-of-intervals}(i), 
there exist subintervals $J_i'\subset J_i$
such that $f(J_i')=f(J_{i+1}')$ for all $i\in\Lbrack m_1,m_p-1\Rbrack$.
It follows that the sequence $J_{m_1}',\ldots, J_{n_p}'$ satisfies 
Fact~2. This concludes the induction.

\medskip
It is now easy to prove that $f$ has the specification property.
Let $x_1,\ldots, x_p$ be points in $I$, let 
$m_1\le n_1<m_2\le n_2<\cdots< m_p \le n_p$ be integers satisfying 
$$
\forall i\in\Lbrack 1,p-1\Rbrack,\ m_{i+1}-n_i\ge N\text{ and }
q\ge n_p-m_1+N.
$$
Let $J_{m_1},\ldots, J_{n_p}$ be the intervals given by Fact~2. Then
$f^n(J_{n_p})$ contains $I_0$ for all $n\ge N$, so $f^q(J_{m_1})=
f^{q-n_p+m_1}(J_{n_p})\supset I_0\supset J_{m_1}$. By
Lemma~\ref{lem:fixed-point}, there exists $x\in J_{m_1}$
such that $f^q(x)=x$. We set $y:=f^{q-m_1}(x)$ in order to have 
$f^{m_1}(y)=x\in J_{m_1}$. Then $f^q(y)=y$ and
$$
\forall k\in\Lbrack 1,p\Rbrack,\ \forall i\in\Lbrack m_k,n_k\Rbrack,\ f^i(y)\in
J_i\subset [f^i(x_k)-\eps,f^i(x_k)+\eps].
$$
This is exactly the specification property.
\end{proof}

\subsection*{Remarks on graph maps}

Theorem~\ref{theo:mixing-specification} was extended to graph maps,
by Blokh  \cite{Blo9}; see \cite{Blo14} for a statement in English.

\begin{theo}\label{theo:mixing-specificationG}
A topologically mixing graph map has the specification property.
\end{theo}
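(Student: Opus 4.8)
The plan is to follow the proof of Theorem~\ref{theo:mixing-specification} step by step, replacing the two features of that argument which are special to the interval — the distinguished role of the two endpoints, and the intermediate value theorem — by their graph counterparts. Throughout, ``interval'' is read as ``non degenerate sub-arc of $G$''; covering and chains of intervals are taken in the sense of Definition~\ref{def:coveringG}, with the endpoint-marking convention for sub-arcs that happen to be circles; and the graph version of Lemma~\ref{lem:chain-of-intervals}(ii) is used in the final step to locate the periodic point.

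First I would set up the reduction. By the graph analogue of Lemma~\ref{lem:accessibility}(i) recalled in the Remarks, the set $A$ of non-accessible points of a topologically mixing graph map is finite and is a union of periodic orbits. Replacing $f$ by $f^k$, where $k$ is a common multiple of the periods of these orbits, preserves topological mixing, leaves the set of non-accessible points unchanged (to be checked, as in the interval case), and turns every point of $A$ into a fixed point; since it suffices to prove specification for this power, by uniform continuity of $f,\dots,f^{k-1}$, I may assume every point of $A$ is fixed. Then I fix $\eps>0$, choose a small neighbourhood $V$ of $A$, and set $G_0:=\overline{G\setminus V}$; this compact region plays the role of $[a+\delta,b-\delta]$.

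Next I would establish the three ingredients behind Fact~1 and Fact~2 of the proof of Theorem~\ref{theo:mixing-specification}. (a) The graph analogue of Proposition~\ref{prop:accessibility}: for every nonempty open $U\subset G$ there is $N$ with $f^n(U)\supset G_0$ for all $n\ge N$; when $A=\emptyset$ this is just ``locally eventually onto'', and in general it is proved the same way — mixing makes the iterates of $U$ meet every member of a fixed finite arc-cover of $G_0$, and because each such arc can be reached from an accessible point one upgrades ``meets'' to ``contains''. (b) The graph analogue of Lemma~\ref{lem:speci1}: for every $x$ and $n$ there is a chain of arcs $J_0\to\dots\to J_n$ with $f^i(x)\in J_i$, $\diam(J_i)\le\eps$ for all $i$, and $\diam(J_i)\ge\eps/4$ for some $i$, with $J_0$ a prescribed short arc issuing from $x$; the construction is purely local and uses only continuity of $f$ and the fact that every point of a graph has arbitrarily short sub-arcs on at least one side. (c) The graph analogue of Lemma~\ref{lem:speci2} for points near a non-accessible fixed point $p$, which requires the graph analogue of Lemma~\ref{lem:accessibility}(ii) — a sequence of fixed points accumulating at $p$ along an edge, with $f$ non-monotone on the gaps — whose proof is the interval argument run inside an edge through $p$.

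Finally I would assemble these as in the interval case: Fact~1 and Fact~2 carry over essentially word for word, the only place needing more than connectedness being the implication ``$\diam(J_i)\ge\eps/4 \Rightarrow J_i$ contains one of the fixed small pieces of $G_0$'', which holds once that finite cover is chosen to consist of short enough sub-arcs; the refinement step uses the graph version of Lemma~\ref{lem:chain-of-intervals}(i). Then, given the prescribed pieces of orbit and $q\ge N+n_p-m_1$, Fact~2 produces a chain of arcs $(\overline{J_{m_1}},\dots,\overline{J_{n_p}})$ with $f^{q-n_p+m_1}(\overline{J_{n_p}})\supset G_0\supset\overline{J_{m_1}}$, hence a $q$-step cycle of arcs based at $\overline{J_{m_1}}$, and Lemma~\ref{lem:chain-of-intervals}(ii) for graph maps yields the required $q$-periodic point shadowing each piece of orbit up to $\eps$. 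I expect the genuine obstacle to be ingredient (a): on a graph ``$f^n(U)$ is connected'' is far weaker than ``$f^n(U)$ is an interval'', so forcing the iterates of a sub-arc to eventually \emph{contain} all of $G_0$, rather than merely become dense in it, pushes one into the covering formalism of Definition~\ref{def:coveringG} and a careful accounting of how sub-arcs are transported across branch points — this is where Blokh's argument genuinely goes beyond a translation of the interval proof.
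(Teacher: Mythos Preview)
The paper does not actually prove Theorem~\ref{theo:mixing-specificationG}. It appears in the ``Remarks on graph maps'' subsection following the interval specification theorem, where it is merely stated and attributed to Blokh \cite{Blo9}, with a pointer to \cite{Blo14} for an English statement. There is therefore no in-paper proof to compare your proposal against.

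That said, your outline is a sensible attempt to lift the proof of Theorem~\ref{theo:mixing-specification} to graphs, and you have correctly identified the two places where the interval argument does not translate mechanically: the treatment of non-accessible points (now a finite periodic set rather than at most two endpoints) and the step upgrading ``$f^n(U)$ meets every arc of a finite cover'' to ``$f^n(U)$ covers $G_0$ in the sense of Definition~\ref{def:coveringG}''. Your honesty about ingredient~(a) being the real obstacle is well placed: on a graph, connectedness of $f^n(U)$ does not give you an interval, and near a branching point a sub-arc can be mapped across the branch in ways that defeat the naive shrinking-and-regrowing argument behind Lemmas~\ref{lem:speci1} and~\ref{lem:speci2}. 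As written, your sketch of~(a) (``mixing makes the iterates meet every member of a finite arc-cover, and accessibility upgrades `meets' to `contains'\,'') is not yet a proof: meeting an arc and covering it in the sense of Definition~\ref{def:coveringG} are different, and the accessible-point argument you allude to works on the interval because there are only two directions to worry about. Filling this gap is essentially the content of Blokh's argument, so if you want a self-contained proof you should consult \cite{Blo9} or \cite{Blo14} rather than expect the interval proof to carry over with only cosmetic changes.
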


\section{Periodic points and transitivity}\label{sec:periodic-points-transitivity}

We recall Proposition~\ref{prop:transitivity-periodic-points}: 
for a transitive interval map, the set of periodic points is dense.
The converse is obviously false, but one may ask the following question: if the
set of periodic points is dense, does there exist a transitive cycle of
intervals? The identity and the map $f(x)=1-x$ on $[0,1]$ give 
counter-examples. Therefore one has to consider only interval maps such that
$f^2$ is different from the identity. With this restriction, the answer is
positive.
This is a result of Blokh, which is stated without proof in \cite{Blo3}.
The same result was proved independently by Barge and Martin \cite{BM3},
but their proof relies on complicated notions.
We give a more basic proof here.

We start with a lemma. Then
Proposition~\ref{prop:dense-periodic-points-unstable} states
that, if the set of periodic points of $f$ is dense, all the points outside 
$P_2(f):=\{x\mid f^2(x)=x\}$ are unstable. This result makes a link with 
the results on sensitivity from the previous chapter and will allow us
to conclude.

\begin{lem}\label{lem:2-connected-components}
Let $f$ be an interval map such that the set of periodic points is dense. 
Then, for every non degenerated interval $J$, the set
$\bigcup_{n\ge 0}f^n(J)$ has at most two connected components.
\end{lem}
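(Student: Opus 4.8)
The plan is to work with the interval $K := \overline{\bigcup_{n\ge 0} f^n(J)}$, which is an interval by the intermediate value theorem (the images $f^n(J)$ overlap pairwise since they all contain limit points of orbits... actually more carefully: $\bigcup_{n\ge 0}f^n(J)$ need not be connected, which is exactly what we must bound). Let me restart the set-up. Write $O := \bigcup_{n\ge 0} f^n(J)$; this is a union of intervals, hence its connected components are intervals. Suppose for contradiction that $O$ has at least three connected components. I would pick three of them, say lying in the order $A < B < C$ along $\IR$, and aim to produce a periodic point in one of the gaps between them, contradicting density of periodic points — or more precisely, contradicting the fact that the gaps, being nonempty open intervals, must contain periodic points which then get hit by iterates of $J$.

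**The key mechanism.** The first step is to observe that $f$ permutes the connected components of $O$ in a controlled way: if $D$ is a connected component of $O$, then $f(D)$ is contained in a single connected component of $O$ (since $f(D)$ is connected and $f(D) \subset O$). So $f$ induces a map on the (at most countable) set of components, and since every point of $J$ lies in some component, the orbit of the component containing $J$ visits every other component. This forces the components that meet the forward orbit structure to be finite in number if we are not careful — but the real point is different. Consider a point $z$ in one of the \emph{gaps} between two consecutive components of $O$. If the periodic points are dense, there is a periodic point $p$ arbitrarily close to any such gap, indeed inside it (the gap is a nonempty open interval). But $p \notin O$ while — here is where I need the contradiction — I want to show $p$ must be in $O$ or that its presence forces two components to merge.

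**Producing the contradiction.** The hard part, and the main obstacle, is converting "three components" into a concrete periodic point that cannot exist. Here is the approach I favor: with components $A < B < C$ of $O$, let $J$ itself lie in some component, and let $n$ be such that $f^n(J)$ meets $B$; then $f^n(J) \subset B$ (as $f^n(J)$ is a connected subset of $O$). Now consider the interval $L := \langle a, c\rangle$ where $a \in A$, $c \in C$ are chosen as, say, the relevant endpoints; since $B$ lies strictly between $A$ and $C$, and since $A, B, C$ all lie in $O$, for suitable large $n$ we can arrange (using that the orbit of $J$ enters $A$, $B$, and $C$) that $f^{n}(J)$ covers $B$ while later iterates reach $A$ and $C$. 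Using Lemma~\ref{lem:chain-of-intervals}(ii) on an appropriate chain of intervals built from the orbit of $J$ passing through these three components and returning, one gets $f^N(B') \supset \langle A,C \rangle \supset B$ for a subinterval $B' \subset B$, hence a periodic point in $B$; but more to the point, the intermediate value theorem applied to the fact that some iterate of $B$ stretches across the gap between $A$ and $B$ (it must, to reach $C$) shows that gap is actually contained in $O$, contradicting that $A$ and $B$ were \emph{distinct} components.

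**Summary of the plan.** So: (1) show $f$ sends each component of $O$ into a component of $O$; (2) assume three components $A<B<C$ and use that $J$'s orbit enters all of them to build a chain of intervals forcing some $f^n$ to map a subinterval of one component across the gap to another component; (3) conclude the gap is in $O$, a contradiction. The delicate point requiring care is step (2): the iterate that carries $B$ to a component on the far side of a gap must, by connectedness of the image, sweep through the gap, but I must make sure the image really lands in $O$ at each intermediate stage — this is where density of periodic points (rather than mere transitivity) is essential, because a periodic point in the gap would be in some $f^k(J)$ only if the gap weren't a gap, so the clean contradiction is: the gap contains a periodic point $p$ of some period $q$, and since $O$ is forward-invariant and $p$'s orbit is finite, one shows $\CO_f(p)$ must intersect $O$ in a way that merges components. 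I expect assembling this last implication cleanly — rather than the individual pieces — to be the main obstacle.
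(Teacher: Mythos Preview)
Your proposal has the right cast of characters (components of $O$, the gaps between them, density of periodic points in those gaps) but the central mechanism is pointed in the wrong direction, and two essential steps are missing.

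\textbf{The direction is wrong.} You write that ``the iterate that carries $B$ to a component on the far side of a gap must, by connectedness of the image, sweep through the gap'', and your summary step~(2) says some $f^n$ maps a subinterval of a component across a gap. But this cannot happen: every component $D$ of $O$ satisfies $f(D)\subset O$ (since $O$ is forward-invariant) and $f(D)$ is connected, so $f(D)$ lies inside a \emph{single} component of $O$ and never touches a gap. The chain-of-intervals idea fails for the same reason: you only know $f(J_i)\subset J_{\sigma(i)}$, not $f(J_i)\supset J_{\sigma(i)}$, so there is no covering relation to chain. The paper's argument runs the other way: if two \emph{adjacent} components $J_i<J_{i+1}$ are sent to \emph{non-adjacent} components $J_{\sigma(i)}, J_{\sigma(i+1)}$, then by the intermediate value theorem the \emph{gap} $(\sup J_i,\inf J_{i+1})$ has image covering an intermediate component $J_k$; so a small open $V$ in the gap satisfies $f(V)\subset J_k\subset O$, whence $f^n(V)\subset O$ for all $n\ge 1$, and $V$ can contain no periodic point --- contradicting density.

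\textbf{Two missing steps.} First, you never show that $O$ has only finitely many components; this is where density of periodic points is first used (pick a periodic point $x\in J$ of period $p$; then every $f^{np+i}(J)$ contains $f^i(x)$, so there are at most $p$ components, and the induced map $\sigma$ is a cyclic permutation of $\{1,\ldots,q\}$). Second, the argument above only yields $|\sigma(i)-\sigma(i+1)|=1$ for every $i\in\Lbrack 1,q-1\Rbrack$; one still needs the short combinatorial observation that a cyclic permutation of $\{1,\ldots,q\}$ with this adjacency property forces $q\le 2$. Your three-component picture $A<B<C$ never reaches this, because you never obtain the adjacency constraint in the first place.
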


\begin{proof}
Let $J$ be a non degenerate interval. By assumption, there
exists a periodic point in $J$, say $x$. Let $p$ denote the period of $x$.
For all $i\in\Lbrack 0,p-1\Rbrack$ and all $n\ge 0$, 
$f^{np+i}(J)$ contains $f^i(x)$. Thus the set $\bigcup_{n\ge 0} f^n(J)$ is 
invariant and has at most $p$ connected components; 
we call them $J_1,\ldots, J_q$ (with $q\in\Lbrack 1,p\Rbrack$) in such a 
way that $J_1<J_2<\cdots<J_q$. By the intermediate value theorem,
for every $i\in\Lbrack 1,q\Rbrack$ there exists $\sigma(i)\in\Lbrack 1,
q\Rbrack$ such that $f(J_i)\subset J_{\sigma(i)}$. 
Let $i_0$ be the integer such that $J\subset J_{i_0}$. For every 
$i\in\Lbrack 1,q\Rbrack$, there exists $n\ge 0$
such that $f^n(J)\subset J_i$, that is, $\sigma^n(i_0)=i$. This implies
that the orbit of $i_0$ under $\sigma$ is the whole set, and hence
$\sigma$ is necessarily a cyclic permutation of $\{1,\ldots,q\}$.
We want to show that $q=1$ or $2$. From now on, we assume that $q\ge 2$. 

If there exists $i\in\Lbrack 1, q-1\Rbrack$ such that $|\sigma(i)-\sigma(i+1)|\ge 2$,
we choose an integer $k$ strictly between $\sigma(i)$ and $\sigma(i+1)$.
Let $a:=\sup J_i$ and $b:=\inf J_{i+1}$. Then 
$f(a)\in \overline{J_{\sigma(i)}}$ and
$f(b)\in \overline{J_{\sigma(i+1)}}$, which implies  that
$f((a,b))$ contains $J_k$ by  the intermediate value theorem 
and that $(a,b)$ is not empty. 
Let $V\subset (a,b)$ be a nonempty open interval such that $f(V)\subset J_k$.
It follows that
$$
\forall n\ge 1,\ f^n(V)\subset\CO_f(J_k)\subset J_1\cup\cdots\cup J_q.
$$ 
This implies that, $\forall n\ge 1$, 
$f^n(V)\cap V=\emptyset$, but this contradicts the
assumption that $V$ contains periodic points. We deduce that
$$
\forall i\in\Lbrack 1,q-1\Rbrack,\ |\sigma(i)-\sigma(i+1)|=1.
$$
If $\sigma(2)-\sigma(1)=1$, we obtain from place to place:  $\sigma(k)=
\sigma(1)+k-1$. Since $\sigma$ is a cyclic permutation of length $q\ge 2$,
we have $\sigma(1)\ge 2$, and thus $\sigma(q)\ge q+1$, which is impossible.
We deduce that $\sigma(2)-\sigma(1)=-1$ and $\sigma(q)=\sigma(1)-q+1$.
The only possibility is $\sigma(1)=q$ and
$\sigma(q)=1$ because $\sigma(q)\ge 1$. Since $\sigma$ is a cycle of length 
$q$, we must have $q=2$. This concludes the proof.
\end{proof}

\begin{prop}\label{prop:dense-periodic-points-unstable}
Let $f\colon I\to I$ be an interval map such that the set of periodic points 
is dense. For every point
$x$ such that $f^2(x)\neq x$, there exists $\eps>0$
(depending on $x$) such that $x$ is $\eps$-unstable.
\end{prop}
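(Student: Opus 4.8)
The plan is to argue by contradiction. Assume $x$ is not $\eps$-unstable for any $\eps>0$; call $x$ \emph{stable}, so that for every $\delta>0$ there is an open neighbourhood $U$ of $x$ with $d(f^n(y),f^n(x))<\delta$ for all $y\in U$ and all $n\ge0$, and, shrinking $U$, we may assume $U\subset B(x,\delta)$. First note $f(x)\neq x$, since $f(x)=x$ would give $f^2(x)=x$. Next I would prove $x\in\omega(x,f)$: by density of periodic points, the neighbourhood $U$ above contains a periodic point $z$ of some period $p\ge1$, and then $|f^{kp}(x)-x|\le|f^{kp}(x)-f^{kp}(z)|+|z-x|<2\delta$ for every $k\ge1$, so iterates of $x$ of arbitrarily large index return arbitrarily close to $x$. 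Hence also $f(x),f^2(x)\in\omega(x,f)$ by strong invariance of $\omega$-limit sets (Lemma~\ref{lem:omega-set}(i)); in particular $\omega(x,f)$ contains the two distinct points $x$ and $f^2(x)$, and if it is finite it is, by Lemma~\ref{lem:omega-finite}, a periodic orbit containing $x$, necessarily of period $\ge3$ since $f^2(x)\neq x$.

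Now set $\omega:=\omega(x,f)$ and consider the open set obtained from the convex hull of $\omega$ by deleting $\omega$; its connected components are the \emph{gaps} of $\omega$. Since $\CO_f(x)\subset\omega$ (because $x\in\omega$ and $\omega$ is invariant) and $\CO_f(x)$ is dense in $\omega$, and since $\omega$ is perfect whenever $x$ is not periodic (the orbit points being then pairwise distinct), one checks that if $\omega$ has at most one gap then $\omega$ is either a single non-degenerate closed interval $J_1$ with $f(J_1)=J_1$, or two disjoint non-degenerate closed intervals $J_1,J_2$ that $f$ interchanges. In that case, with $q\in\{1,2\}$ and $x$ in the piece $J_l$, the orbit of $x$ is dense in $J_l$ for $f^q|_{J_l}$, so $f^q|_{J_l}$ is a transitive interval map by Proposition~\ref{prop:transitive-dense-orbit}, hence $\delta$-sensitive for some $\delta>0$ by Proposition~\ref{prop:transitivity-sensitivity}; but stability of $x$ for $f$ restricts at once to stability of $x$ for $f^q|_{J_l}$, a contradiction.

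It remains to treat the case in which $\omega$ has at least two gaps --- which includes the finite case, where there are at least two gaps between the at least three orbit points. Let $g_2>0$ be the length of the second longest gap and fix $\eps$ with $0<\eps<g_2/6$; then the $3\eps$-neighbourhood $(\omega)_{3\eps}$ of $\omega$ has at least three connected components, since the two longest gaps survive the thickening. Choose a shadowing neighbourhood $U\subset B(x,\eps)$ for this $\eps$. If $x$ is periodic put $z:=x$ and $p:=$ its period; otherwise $U$ contains periodic points of arbitrarily large period (if $U\cap P_N(f)$ were dense in $U$ for some $N$ then $U\subset P_N(f)$, $P_N(f)$ being closed, forcing $x$ to be periodic), and, as $\CO_f(x)$ is dense in $\omega$, we may pick a periodic $z\in U$ whose period $p$ is large enough that $\{f^i(x):0\le i<p\}$ meets all three ``clusters'' of $\omega$ cut off by its two longest gaps. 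Take a closed non-degenerate interval $\tilde J\ni z$ with $\tilde J\subset U$. Writing $z_i:=f^i(z)$, we get $f^n(\tilde J)\subset B(f^n(z),2\eps)=B(z_{n\bmod p},2\eps)$ for all $n$; hence each $D_i:=\bigcup_{k\ge0}f^{i+kp}(\tilde J)$ $(0\le i<p)$ is connected, since every $f^{i+kp}(\tilde J)$ contains $z_i$, and $D_i\subset B(z_i,2\eps)\subset B(f^i(x),3\eps)$. Therefore $\bigcup_{n\ge0}f^n(\tilde J)=D_0\cup\cdots\cup D_{p-1}$ is contained in $\bigcup_{i<p}B(f^i(x),3\eps)$, a set with at least three connected components, each of which is met by the corresponding $z_i\in D_i$; so $\bigcup_{n\ge0}f^n(\tilde J)$ has at least three connected components, contradicting Lemma~\ref{lem:2-connected-components}.

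The main obstacle is this last case. The non-obvious point is that the interval one feeds to Lemma~\ref{lem:2-connected-components} should not be a neighbourhood of $x$ itself (which fails when $x$ is not periodic) but a small neighbourhood of a periodic point $z$ close to $x$ of sufficiently high period, chosen so that the $p$ pieces $D_0,\dots,D_{p-1}$ become distributed over at least three gap-separated regions of an $\eps$-neighbourhood of $\omega(x,f)$. Verifying that such high-period $z$ exist inside $U$ and that $\CO_f(x)$ --- hence the orbit of $z$, which shadows it --- resolves the gap structure of $\omega(x,f)$ is the technical core; the arithmetic with $\eps<g_2/6$ and with the eventual constant of instability is routine.
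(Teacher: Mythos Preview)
Your proof is correct, but it takes a genuinely different route from the paper's. The paper argues \emph{directly}: it uses Lemma~\ref{lem:2-connected-components} at the outset to reduce to the situation where, for a suitable $g\in\{f,f^2\}$, the set $\CO_g(U)$ is connected for every small neighbourhood $U$ of $x$. Setting $a=\inf\CO_g(x)$, $b=\sup\CO_g(x)$, either $\CO_g(x)$ is dense in $[a,b]$ (then $g|_{[a,b]}$ is transitive and one invokes Proposition~\ref{prop:transitivity-sensitivity}), or there is a gap $(z-\eps,z+\eps)$ in $[a,b]\setminus\CO_g(x)$, and since the connected set $\CO_g(U)\supset(a,b)$ must cover this gap, some iterate of a point of $U$ lands $\eps$-far from the corresponding iterate of $x$. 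No contradiction argument is needed, and the proof never looks at $\omega(x,f)$.

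Your argument, by contrast, proceeds by contradiction and hinges on first establishing $x\in\omega(x,f)$ via a nearby periodic point, then analysing the gap structure of $\omega(x,f)$. The ``at most one gap'' case parallels the paper's transitive case, but your ``at least two gaps'' case is new: you manufacture a non-degenerate interval $\tilde J$ whose forward orbit splits into at least three connected components, contradicting Lemma~\ref{lem:2-connected-components}. This is a nice idea, and the choice of a periodic $z$ of high period to anchor the pieces $D_i$ is the right move. The trade-off is length and delicacy: you need the auxiliary facts that $\omega(x,f)$ is perfect when $x$ is not periodic (to rule out degenerate pieces), that periodic points of arbitrarily large period exist in $U$ (your argument needs $N!$ rather than $N$, but the idea is right), and careful bookkeeping with $\eps<g_2/6$. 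The paper's approach is shorter and avoids all of this by working with $\CO_g(U)$ rather than $\omega(x,f)$.
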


\begin{proof}
If $U$ is a nonempty open interval, the set $\CO_f(U):=
\bigcup_{n\ge 0}f^n(U)$ has
one or two connected components according to 
Lemma~\ref{lem:2-connected-components}. 
We fix a point $x$ such that $f^2(x)\neq x$.
If there exists an open interval $U_0$ containing $x$ such that
$\CO_f(U_0)$ has two connected components, we call them $J_1$ and 
$J_2$ in such a way that $U_0\subset J_1$, and we set
$g:=f^2$. In this situation, we necessarily have $f(J_1)\subset J_2$ and 
$f(J_2)\subset J_1$. Moreover, for every 
nonempty open interval $U\subset U_0$, we see that
$U\subset J_1$, $f(U)\subset J_2$ and
$\CO_f(U)\subset J_1\cup J_2$, and hence $\CO_f(U)$
has two connected components too. On the other hand, if $\CO_f(U)$
is a connected set for every open interval $U$ containing $x$, 
we set $g:=f$ and $U_0:=I$. With this notation, 
for every open subinterval $U\subset U_0$ containing $x$,
the set $\CO_g(U)$ is connected. The two points
$$
a:=\inf_{n\ge 0} \CO_g(x)\quad\text{and}\quad b:=\sup_{n\ge 0}\CO_g(x)
$$
are distinct because $g(x)\neq x$ by assumption. 

First we suppose that
$\CO_g(x)$ is not dense in $[a,b]$, which means that there exist $z\in (a,b)$ 
and $\eps>0$ such that $(z-\eps,z+\eps)\subset (a,b)\setminus \CO_g(x)$.
Let $U\subset U_0$ be an open interval containing $x$.
The set $\CO_g(U)$ is connected and contains $\CO_g(x)$, and 
thus it contains $(a,b)$ too.
In particular, there exist $y\in U$ and $k\ge 0$ such that
$g^k(y)=z$, and hence
$$
|g^k(x)-g^k(y)|\ge \inf_{n\ge 0} |g^n(x)-z|\ge\eps.
$$
We deduce that the point $x$ is $\eps$-unstable.

Now we suppose that $\CO_g(x)$ is dense in $[a,b]$.
This implies that $g([a,b])=[a,b]$ and that $g|_{[a,b]}$ is transitive. 
Then, by Proposition~\ref{prop:transitivity-sensitivity},
the map $g|_{[a,b]}$ is $\eps'$-sensitive for every $\eps'\in
\left(0,\frac{b-a}4\right)$,
In particular, the point $x$ is $\eps'$-unstable.
\end{proof}

\begin{prop}\label{prop:periodic-points-transitive-cycle}
Let $f\colon I\to I$ be an interval map such that the set of periodic points 
is dense. Suppose that $f^2$ is different from the identity map. 
Then at least one of the following holds:
\begin{itemize}
\item there exists a non degenerate closed interval $J$ such that 
$f(J)=J$ and $f|_J$ is transitive,
\item there exist two disjoint non degenerate closed intervals $J_1,J_2$ 
such that $f(J_1)=J_2$, $f(J_2)=J_1$ and $f|_{J_1\cup J_2}$ is transitive.
\end{itemize}
\end{prop}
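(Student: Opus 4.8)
The plan is to combine the instability result of Proposition~\ref{prop:dense-periodic-points-unstable} with the structure theorem Proposition~\ref{prop:sensitivity-transitive-component}, and then to bound the period of the resulting cycle of intervals by~$2$ using Lemma~\ref{lem:2-connected-components}.

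\textbf{Step 1: produce a uniform constant of instability.} Since $f^2$ is not the identity map, the set $\{x\in I\mid f^2(x)\neq x\}$ is open, nonempty, and hence contains a non degenerate open interval $V$. By Proposition~\ref{prop:dense-periodic-points-unstable}, every point of $V$ is $\eps_x$-unstable for some $\eps_x>0$, so $V\subset\bigcup_{n\ge 1}U_{1/n}(f)\subset\bigcup_{n\ge 1}\overline{U_{1/n}(f)}$. Suppose $U_{1/m}(f)$ had empty interior for every $m\ge 1$. Then, by Lemma~\ref{lem:unstable}(iv), each closed set $\overline{U_{1/n}(f)}\subset U_{1/(2n)}(f)$ would also have empty interior, hence be nowhere dense, and the Baire category theorem (Corollary~\ref{cor:baire}) would force the countable union $\bigcup_{n\ge 1}\overline{U_{1/n}(f)}$ to have empty interior, contradicting the fact that it contains the nonempty open set $V$. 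Therefore there exists $\eps>0$ such that $U_\eps(f)$ has nonempty interior.

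\textbf{Step 2: get a transitive cycle and cut its period down to $\le 2$.} Applying Proposition~\ref{prop:sensitivity-transitive-component} to this $\eps$ yields a cycle of intervals $(J_1,\ldots,J_p)$, that is, pairwise disjoint non degenerate closed subintervals with $f(J_i)=J_{i+1\bmod p}$ for all $i$, such that $f|_{J_1\cup\cdots\cup J_p}$ is transitive. Since the $J_i$ are permuted cyclically, $\CO_f(J_1)=\bigcup_{n\ge 0}f^n(J_1)=J_1\cup\cdots\cup J_p$, and this set has exactly $p$ connected components because the $J_i$ are pairwise disjoint non degenerate intervals. As the periodic points of $f$ are dense, Lemma~\ref{lem:2-connected-components} applied to the non degenerate interval $J_1$ says this set has at most two connected components, so $p\in\{1,2\}$. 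If $p=1$, then $f(J_1)=J_1$ and $f|_{J_1}$ is transitive, which is the first alternative; if $p=2$, then $J_1,J_2$ are disjoint, $f(J_1)=J_2$, $f(J_2)=J_1$, and $f|_{J_1\cup J_2}$ is transitive, which is the second alternative.

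The only genuinely delicate point is Step~1, the passage from ``each point of $V$ is unstable with its own constant'' to ``$U_\eps(f)$ is somewhere dense for a single $\eps$'': this is exactly where the closure estimate of Lemma~\ref{lem:unstable}(iv) is needed, to replace the $U_{1/n}(f)$ by closed sets before invoking Baire. Everything else is a mechanical assembly of the quoted results, and the period bound in Step~2 falls out immediately once $J_1$ is fed into Lemma~\ref{lem:2-connected-components}.
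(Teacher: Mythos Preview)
Your proof is correct and follows essentially the same approach as the paper: both use Proposition~\ref{prop:dense-periodic-points-unstable} to cover a nonempty open set by the sets $\overline{U_{1/n}(f)}$, invoke Baire together with Lemma~\ref{lem:unstable}(iv) to get some $U_\eps(f)$ with nonempty interior, apply Proposition~\ref{prop:sensitivity-transitive-component} to obtain a transitive cycle of intervals, and then bound the period by~$2$ via Lemma~\ref{lem:2-connected-components}. The only cosmetic difference is that you phrase the Baire step as a proof by contradiction, whereas the paper applies Baire directly.
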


\begin{proof}
Recall that $P_2(f):=\{x\in I\mid f^2(x)=x\}$. This is a closed set and,
by assumption, the open set $I\setminus P_2(f)$ is not empty.
By Proposition~\ref{prop:dense-periodic-points-unstable},
all the points of $I\setminus P_2(f)$ are unstable, and thus
$$
I\setminus P_2(f)\subset \bigcup_{n=1}^{+\infty}U_{\frac{1}{n}}(f)\subset 
\bigcup_{n=1}^{+\infty}\overline{U_{\frac{1}{n}}(f)}.
$$
By the Baire category theorem, there exists $n\ge 1$ such that
$\overline{U_{\frac{1}{n}}(f)}$ has a nonempty interior.
It follows that $U_{\frac{1}{2n}}(f)$ has a nonempty interior too, because
$\overline{U_{\frac{1}{n}(f)}} \subset U_{\frac{1}{2n}}(f)$ by 
Lemma~\ref{lem:unstable}(iii). Then, by
Proposition~\ref{prop:sensitivity-transitive-component},
there exists a cycle of intervals $(J_1,\ldots, J_p)$
such that $f|_{J_1\cup\cdots J_p}$ is transitive. Finally, $p=1$ or $2$ by
Lemma~\ref{lem:2-connected-components}.
\end{proof}

Proposition~\ref{prop:periodic-points-transitive-cycle} makes possible a
decomposition of the interval into transitive components, as stated in 
the next theorem and illustrated in Figure~\ref{fig:transitive-components}.

\begin{theo}\label{theo:dense-periodic-points-transitivity}
Let $f\colon I\to I$ be an interval map such that the set of periodic points
is dense. Then there exists a finite (possibly empty)
or countable family of sets $\CE$ such that: 
\begin{enumerate}
\item $\forall C\in \CE$, the set $C$ is either a non degenerate closed
interval or the union of two disjoint non degenerate closed intervals,
\item $\forall C\in \CE$, $C$ is invariant and $f|_C$ is transitive,
\item the sets in $\CE$ have pairwise disjoint interiors,
\item $I\setminus \bigcup_{C\in \CE} C \subset P_2(f)$.
\end{enumerate}
\end{theo}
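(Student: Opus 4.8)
The plan is to build the family $\CE$ greedily, extracting transitive components one at a time using Proposition~\ref{prop:periodic-points-transitive-cycle}, and then check that the leftover is contained in $P_2(f)$. The key observation is that once we have extracted a transitive cycle of intervals $C$, every nondegenerate interval $J$ disjoint from the interior of $C$ still has a dense set of periodic points for the restriction to a suitable invariant subinterval, so the argument can be iterated on the ``remaining'' part of $I$.

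First I would set up the extraction procedure more carefully. Let $\CG$ denote the collection of all sets $C\subset I$ that are either a nondegenerate closed interval or a union of two disjoint nondegenerate closed intervals, such that $C$ is invariant, $f|_C$ is transitive, and (crucially) $C$ is \emph{maximal} with these properties, meaning no strictly larger set of the same type has these properties. For any point $x\in I\setminus P_2(f)$, Proposition~\ref{prop:dense-periodic-points-unstable} gives $\eps>0$ with $x\in U_\eps(f)$; a small neighborhood of $x$ then lies in $U_\eps(f)$ by Lemma~\ref{lem:unstable}, so $U_{\eps/2}(f)$ (which contains $\overline{U_\eps(f)}$ by Lemma~\ref{lem:unstable}(iv)) has nonempty interior near $x$. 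By Proposition~\ref{prop:sensitivity-transitive-component} combined with Lemma~\ref{lem:2-connected-components}, there is a transitive cycle of at most two intervals $(J_1,\ldots,J_p)$ with $J_1\cup\cdots\cup J_p\subset\overline{U_{\eps/2}(f)}$ and at least one $J_i$ of length $\ge\eps/2$. So every point of $I\setminus P_2(f)$ lies in some transitive cycle of intervals $C$ with at least one component of positive length bounded below by a quantity depending only on $x$. Enlarging $C$ to a maximal such set (which exists: an increasing union of transitive invariant intervals of a fixed type, staying transitive, is bounded in number of components and in total converges to a closed set that is still transitive by a routine limiting argument), we get an element of $\CG$ containing $x$.

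Next I would show the elements of $\CG$ have pairwise disjoint interiors and there are at most countably many of them. Two distinct maximal transitive cycles of intervals cannot have overlapping interiors: if $C,C'\in\CG$ and $\Int C\cap\Int C'\neq\emptyset$, then $C\cup C'$ is invariant (as $f(C)=C$, $f(C')=C'$), its components number at most two by Lemma~\ref{lem:2-connected-components} applied to a small interval in the overlap whose forward orbit must fill the connected component containing $C\cup C'$, and $f|_{C\cup C'}$ is transitive because a dense-orbit point of $C$ meets the overlap and hence has dense orbit in $C\cup C'$; maximality then forces $C=C'$. Disjointness of interiors plus the fact that each such $C$ has a component of length $>0$ forces $\CG$ to be countable (at most countably many pairwise-interior-disjoint subintervals of $I$). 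Setting $\CE:=\CG$, properties (i), (ii), (iii) are immediate, and (iv) holds because we showed every $x\notin P_2(f)$ lies in some $C\in\CG$, i.e.\ $I\setminus\bigcup_{C\in\CE}C\subset P_2(f)$. The possibility that $\CE=\emptyset$ occurs exactly when $P_2(f)=I$; but the hypothesis $f^2\neq\Id$ only guarantees $P_2(f)\neq I$ when... actually the theorem allows $\CE$ to be empty only if $f^2=\Id$, so I should note that under $f^2\neq\Id$ the family is nonempty, while for the stated theorem (which does not assume $f^2\neq\Id$) emptiness is permitted and corresponds to $f^2=\Id$, i.e.\ $I=P_2(f)$.

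The main obstacle I anticipate is the maximality/limiting step: showing that an arbitrary transitive cycle of intervals can be enlarged to a \emph{maximal} one within $\CG$, and that the maximal object is still a transitive cycle of at most two closed intervals. The delicate point is that an increasing union of transitive invariant sets need not a priori be invariant-closed-and-transitive, so I would instead argue via Zorn's lemma exactly as in the proof of Proposition~\ref{prop:sensitivity-transitive-component}: the union of a chain of such sets has boundedly many (one or two) components by Lemma~\ref{lem:2-connected-components}, and transitivity passes to the union because a point with dense orbit in one member has dense orbit in the union once the components stabilize; hence maximal elements exist. One must also handle the bookkeeping that distinct maximal elements genuinely have disjoint interiors rather than merely distinct closures — this follows from the amalgamation argument above. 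Everything else is routine application of the cited propositions.
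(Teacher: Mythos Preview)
There is a genuine gap in the heart of your argument. From Proposition~\ref{prop:dense-periodic-points-unstable} you correctly get that each $x\notin P_2(f)$ is $\eps$-unstable for some $\eps>0$, and from Proposition~\ref{prop:sensitivity-transitive-component} you get a transitive cycle of intervals sitting inside $\overline{U_{\eps}(f)}$. But that proposition gives you a transitive cycle \emph{somewhere} in $\overline{U_{\eps}(f)}$; it says nothing about whether the specific point $x$ belongs to it. Your sentence ``So every point of $I\setminus P_2(f)$ lies in some transitive cycle of intervals $C$'' is therefore a non sequitur, and it is precisely the content of condition~(iv) that you are trying to prove. (A secondary issue: ``a small neighborhood of $x$ then lies in $U_\eps(f)$ by Lemma~\ref{lem:unstable}'' is not a statement of that lemma; $U_\eps(f)$ is not asserted to be open there.)

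The paper closes this gap with a two-step argument that you are missing. Setting $\CE$ to be the family of \emph{all} transitive cycles of intervals (maximality is unnecessary: two distinct transitive cycles automatically have disjoint interiors, since overlapping interiors force equality via the dense-orbit argument you sketched) and $X_0:=\bigcup_{C\in\CE}C$, one first shows $I\setminus\overline{X_0}\subset P_2(f)$ by proving that $\overline{Y}:=\overline{I\setminus\overline{X_0}}$ is $f$-invariant and then, assuming some point of $Y\setminus P_2(f)$ exists, running the Baire/instability/Proposition~\ref{prop:sensitivity-transitive-component} machine \emph{inside} a suitable invariant subinterval of $\overline{Y}$ to produce a transitive cycle there --- a contradiction since such a cycle would already lie in $X_0$. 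Second, one shows $\overline{X_0}\setminus X_0\subset P_2(f)$ by a continuity argument: any such boundary point is accumulated by connected components $J_n$ of elements of $\CE$ with $|J_n|\to 0$, and since $f^2(J_n)=J_n$ for each $n$, one gets $f^2(x)=x$ in the limit. Neither of these steps is addressed in your proposal, and neither follows from the existence of \emph{some} transitive cycle near an unstable point.
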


\begin{proof}
We define
$$
\CE:=\{C\subset I\mid C\text{ cycle of intervals},\ f|_C \text{ transitive}\}.
$$
By Lemma~\ref{lem:2-connected-components}, every element in $\CE$ 
has at most two connected components, and thus it satisfies
(i) and (ii). Moreover,
\begin{equation}\label{eq:transitive-component-f2}
\text{if }J \text{ is a connected component of  }C\in\CE,
\text{ then }f^2(J)\subset J.
\end{equation}
Let $C,C'\in\CE$ and $V:=\Int{C}\cap
\Int{C'}$. If $V\neq\emptyset$, then, by transitivity,
$$
\overline{\bigcup_{n\ge 0} f^n(V)}=C=C'.
$$
Therefore, two different elements of $\CE$ have nonempty disjoint interiors,
which is (iii). This implies that $\CE$ is at most countable because
for every $C\in\CE$, $\Int{C}$ contains a rational number $r_C$, and $r_C
\ne r_{C'}$ if $C\ne C'$.
It remains to prove (iv). We set
$$
X_0:=\bigcup_{C\in\CE}C.
$$
We are going to show first that $I\setminus\overline{X_0}\subset P_2(f)$, 
and second that $\overline{X_0}\setminus X_0\subset P_2(f)$; these two
facts clearly imply (iv). We set $Y:=I\setminus\overline{X_0}$; this is an
open set. 
If $f(Y)\cap \Int{\overline{X_0}}\neq \emptyset$, then there exists
a non degenerate subinterval $J\subset Y$ such that $f(J)\subset 
\Int{\overline{X_0}}$. 
Since $f(\overline{X_0})\subset \overline{X_0}$, this implies that 
$f^n(J)\cap J=\emptyset$ for all $n\ge 1$.
But this contradicts the fact that $J$ contains periodic points.
We deduce that
$f(Y)\subset I\setminus \Int{\overline{X_0}}=\overline{Y},$
and thus
\begin{equation}\label{eq:dense-transitive-components}
f(\overline{Y})\subset \overline{Y}.
\end{equation}
Suppose that 
\begin{equation}\label{eq:YsetminusP2}
Y\setminus P_2(f)\ne\emptyset
\end{equation}
and 
let $K'$ be a connected component of 
$Y$ such that $K'\setminus P_2(f)\neq \emptyset$.
Since $Y$ is open, $K'$ is an open interval. Moreover,
there exists $n\ge 1$ such that $f^n(K')\cap K'\neq \emptyset$ because
the set of periodic points is dense by assumption.
Let $K$ be the connected component of $\overline{Y}$ containing $K'$.
Then $f^n(K)\cap K\neq\emptyset$ and
$f^n(K)$ is included in a connected component of 
$\overline{Y}$ by \eqref{eq:dense-transitive-components}. 
So $f^n(K)\subset K$. We consider the interval map
$$
g:=f^n|_K\colon K\to K.
$$
By Proposition~\ref{prop:dense-periodic-points-unstable}, all 
points in $K\setminus P_2(f)$ are unstable for $f$. Therefore, all 
points in $K\setminus P_2(f)$ are
unstable for $g$ by Lemma~\ref{lem:unstable}(i).
Thus
$$
K'\setminus P_2(f)\subset K\setminus P_2(f)\subset
\bigcup_{k=1}^{+\infty}U_{\frac{1}{k}}(g)\subset
\bigcup_{k=1}^{+\infty}\overline{U_{\frac{1}{k}}(g)},
$$
where $K'\setminus P_2(f)$ is a nonempty open set.
Then we use the same argument as in the proof of
Proposition~\ref{prop:periodic-points-transitive-cycle}: 
using the Baire category theorem, we find $k$ such that 
$\Int{\overline{U_{\frac{1}{k}}(g)}}\neq\emptyset$, so
$\Int{U_{\frac{1}{2k}}(g)}\neq \emptyset$  by Lemma~\ref{lem:unstable}(iv).
Then, according to
Proposition~\ref{prop:sensitivity-transitive-component},
there exists a transitive cycle of intervals $C\subset K$ for $g$. It is
straightforward to see that the set 
$C':=C\cup f(C)\cup\cdots\cup f^{n-1}(C)$ is a finite union of
non degenerate closed intervals and that $f|_{C'}$ is transitive.
Hence $C'\subset X_0$ and $C'\cap \Int{X_0}\ne\emptyset$ because
$\Int{C'}\ne\emptyset$. On the other hand,
$C'\subset\overline{Y}$ by 
\eqref{eq:dense-transitive-components}, and thus
$C'\subset I\setminus\Int{X_0}$, which leads to a contradiction.
Therefore, \eqref{eq:YsetminusP2} does not hold, that is,
$
Y=I\setminus \overline{X_0}\subset P_2(f).
$

Now we are going to show that $\overline{X_0}\setminus X_0\subset P_2(f)$.
We define 
$$\CA:=\{J\subset I \mid \exists C\in\CE, J\text{ is a connected component of } C\}.
$$ 
We fix $x\in \overline{X_0}\setminus X_0$. Let $(x_n)_{n\ge 0}$ be
a monotone sequence in $X_0$ converging to
$x$. For all $n\ge 0$, let $J_n\in\CA$ be such that $x_n\in J_n$.
If there exists $n_0$ such that $J_n=J_{n_0}$ for all $n\ge n_0$, 
then $\lim_{n\to +\infty} x_n$ belongs to the closed set $J_{n_0}$, and
thus $x\in X_0$, which is impossible. Thus the sequence $(J_n)_{n\ge 0}$
is not eventually constant, which implies that $\lim_{n\to +\infty}|J_n|=0$
because two distinct elements in $\CA$ have disjoint interiors.
Let $\eps>0$. By continuity, there exists $0<\alpha<\eps$ such that
$$
\forall y\in I,\ |x-y|< \alpha\Rightarrow |f^2(x)-f^2(y)|<\eps.
$$ 
We choose  $n$ such that $|x-x_n|<\alpha$ and $|J_n|<\eps$.
Then 
$f^2(x_n)\in J_n$ by \eqref{eq:transitive-component-f2}, so
$$
|x-f^2(x)|\le |x-x_n|+|x_n-f^2(x_n)|+|f^2(x_n)-f^2(x)|<3\eps.
$$
Since this is true for all $\eps>0$, we deduce that $f^2(x)=x$. In other 
words,  $\overline{X_0}\setminus X_0\subset P_2(f)$. This concludes the proof.
\end{proof}

\begin{figure}[htb]
\centerline{\includegraphics{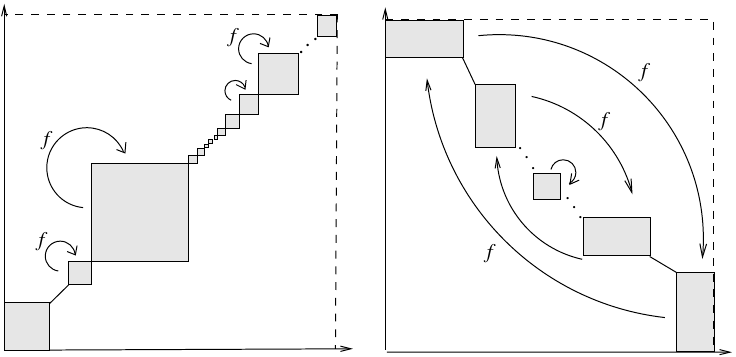}}
\caption{Decomposition into transitive components of a map $f$ when the set of
periodic points is dense: two cases. The gray areas represent
the transitive components, whereas the part of the graph of $f$ made of black 
lines  is the set $P_2(f)$.
The points where transitive
components accumulate on both sides are not unstable, neither are the
points in $\Int{P_2(f)}$.}
\label{fig:transitive-components}
\end{figure}

Theorem~\ref{theo:dense-periodic-points-transitivity} can be more
precise. Let $\CE$ be the family of transitive components given
by Theorem~\ref{theo:dense-periodic-points-transitivity} and $C\in\CE$. 
We write $I=[a,b]$. Let $J$ be a connected component of $C$
and suppose that $J$ has an endpoint $c\notin\{a,b\}$.
Theorem~\ref{theo:dense-periodic-points-transitivity} implies that
either $c$ is the endpoint of a connected component $J'$ of 
another element of $\CE$,  or $c\in P_2(f)$. In the first case,
using the facts that $f^2(J)=J$ and $f^2(J')=J'$, we deduce that
$f^2(c)=c$. Therefore, all the endpoints of the connected components, except
maybe $a$ and $b$, belong to $P_2(f)$.
In particular, if a connected component is made of two disjoint 
intervals $J, K$, then
Theorem~\ref{theo:summary-transitivity} implies that 
$f^2|_J$ and $f^2|_K$ are topologically mixing.
Now suppose that $C$ is an interval and that there exists a fixed 
point $z$ outside $C$
(in particular, such a fixed point exists when there exist other transitive 
intervals). We may assume that $z<C$, the other case being symmetric.
Then it can be shown that the decomposition given by
Theorem~\ref{theo:dense-periodic-points-transitivity} implies that
 $f([a,\min C])=[a,\min C]$. Thus $\min C$ is a fixed point
and $f|_C$ is topologically mixing by Theorem~\ref{theo:summary-transitivity}.

Figure~\ref{fig:transitive-components} illustrates what kind of decomposition
can exist when there are several transitive components.
On the left side, all transitive components are intervals and $f$ is
topologically mixing on each of them.
On the right side, there is only one transitive interval in the middle,
$f$ may or may not be topologically mixing on this interval (this middle
transitive interval may not exist), and
$f^2$ is topologically mixing on every connected component of the 
transitive components made of two intervals.

\subsection*{Remarks on graph maps}

Theorem~\ref{theo:dense-periodic-points-transitivity} was first
extended to tree maps by Roe \cite{Roe}, then to graph maps by
Yokoi \cite{Yok2}.

\begin{theo}
Let $f\colon G\to G$ be a graph map such that the set of periodic points
is dense. Then there exist a positive integer $N$ and a finite (possibly 
empty) or countable family of subgraphs $\CE$ with disjoint interiors such that
\begin{itemize}
\item $\forall H\in \CE$, the set $H$ is $f^N$-invariant and $f^N|_H$ is 
topologically mixing,
\item $G\setminus\bigcup_{H\in\CE}X\subset P_N(f)$.
\end{itemize}
If $G$ is a tree with $e$ endpoints, then one can take
$N=\gcd(2,3,\ldots, e)$.
\end{theo}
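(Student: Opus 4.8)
The plan is to follow the proof of Theorem~\ref{theo:dense-periodic-points-transitivity} closely, replacing every interval-specific ingredient by its graph-map counterpart, and to insert one genuinely new combinatorial ingredient. The first step is the graph version of Proposition~\ref{prop:dense-periodic-points-unstable}. Given a point $x$ and a small connected open neighbourhood $U_0$ of it, density of periodic points together with the finiteness of the branching set of $G$ shows that $\overline{\CO_f(U_0)}$ is a cycle of subgraphs $(G_1,\dots,G_p)$ with $x\in G_1$, the number $p$ being stable under shrinking $U_0$ (this plays the role of Lemma~\ref{lem:2-connected-components}); at this stage $p$ is merely finite. Put $g:=f^p|_{G_1}$ and let $H$ be the smallest closed connected subset of $G_1$ containing $\CO_g(x)$ (the ``connected hull'', replacing the interval $[a,b]$); then $g(H)\subset H$ and $H$ is a subgraph. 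If $\CO_g(x)$ is not dense in $H$, a gap in its closure yields, exactly as on the interval, an $\eps>0$ with $x\in U_\eps(f)$. If it is dense, then $g|_H$ is transitive by Proposition~\ref{prop:transitive-dense-orbit}; since the nonempty interior of the subgraph $H$ meets the dense periodic set of $f$, the map $g|_H$ has a periodic point, hence is not conjugate to an irrational rotation, so by Theorems~\ref{theo:transitivity-total-transitivityG} and \ref{theo:totallytransitivegraph-mixing} some iterate of $g|_H$ is topologically mixing, hence sensitive (Proposition~\ref{prop:mixing-sensitive}), so again $x\in U_\eps(f)$ for some $\eps>0$. In short, every point not fixed by a suitable ($x$-dependent) iterate of $f$ is unstable.

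Next I would run the Baire and Zorn machinery. Once a uniform exponent $N$ is available (see below), the first step reads: every $x$ with $f^N(x)\neq x$ is unstable. If $G=P_N(f)$ we are done with $\CE=\emptyset$; otherwise $G\setminus P_N(f)$ is a nonempty open set contained in $\bigcup_{m\ge1}\overline{U_{1/m}(f)}$, so by the Baire category theorem some $\overline{U_{1/m}(f)}$ has nonempty interior, hence $U_{1/2m}(f)$ does too, and the graph analogue of Proposition~\ref{prop:sensitivity-transitive-component} produces a transitive cycle of subgraphs. Applying Theorems~\ref{theo:transitive-noperiodicpoint}, \ref{theo:totallytransitivegraph-mixing} and \ref{theo:transitivity-total-transitivityG} to $f^p$ restricted to each piece of such a cycle, every transitive cycle of subgraphs decomposes into subgraphs on which some iterate of $f$ is topologically mixing (none of these restrictions is an irrational rotation, by the periodic-point argument of the first step). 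Here the combinatorial ingredient enters: the period of a transitive cycle of subgraphs of $G$, and the extra factor needed to reach mixing, are controlled by the part of the combinatorial complexity of $G$ (its first Betti number, its branching and its end points) that the cycle occupies; since distinct transitive cycles have pairwise disjoint interiors, only finitely many can have ``large'' period, so all the relevant periods divide a single integer $N=N(f)$, and for a tree with $e$ endpoints a sharper analysis of how cyclic permutations of subtrees are constrained by the $\le e$ endpoints pins $N$ down to the stated value. Since a topologically mixing map has all its powers topologically mixing, we may now set
\[
\CE:=\{H\subset G\mid H\text{ is a subgraph},\ f^N(H)=H,\ f^N|_H\text{ is topologically mixing}\},
\]
and every mixing atom produced above belongs to $\CE$.

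It remains to verify (i)--(iv) as in the interval case. If $H,H'\in\CE$ had overlapping interiors, then with $V:=\Int{H}\cap\Int{H'}$ transitivity of $f^N$ on each would force $H=\overline{\CO_{f^N}(V)}=H'$, so the members of $\CE$ have pairwise disjoint interiors; picking a point of a fixed countable dense subset of $G$ inside each $\Int H$ shows $\CE$ is at most countable. Put $X_0:=\bigcup_{H\in\CE}H$. Using $f(\overline{X_0})\subset\overline{X_0}$ and the identity $\overline{G\setminus\overline{X_0}}=G\setminus\Int{\overline{X_0}}$ one gets $f(\overline{Y})\subset\overline{Y}$ for $Y:=G\setminus\overline{X_0}$; if $Y$ contained a point outside $P_N(f)$, then on the component $K$ of $\overline{Y}$ through it — which $f^n$ maps into itself for some $n$, since $\overline{Y}$ has finitely many components and density of periodic points gives $f^n(K)\cap K\neq\emptyset$ — applying the first step to $f^n|_K$ and the graph analogue of Proposition~\ref{prop:sensitivity-transitive-component} would produce a transitive cycle of subgraphs inside $K$, hence an element of $\CE$ whose nonempty interior would lie both inside and outside $\Int{X_0}$, absurd; thus $Y\subset P_N(f)$. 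Finally a point $x\in\overline{X_0}\setminus X_0$ is a limit of points $x_n\in H_n$ with the $H_n\in\CE$ pairwise distinct, whence $\diam(H_n)\to0$ (disjoint interiors), and by continuity of $f^N$,
\[
d(x,f^N(x))\le d(x,x_n)+\diam(H_n)+d(f^N(x_n),f^N(x))\to0,
\]
so $f^N(x)=x$. Hence $G\setminus X_0\subset P_N(f)$, which is (iv).

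The main obstacle is precisely the combinatorial step: showing that a transitive cycle of subgraphs of $G$ cannot have arbitrarily large period (nor require an arbitrarily large iterate to become mixing) without occupying a correspondingly large share of the finite combinatorial complexity of $G$ — so that the possibly infinitely many transitive components of one map $f$ still admit a common exponent $N$ — and extracting the sharp value of $N$ for trees in terms of the number of endpoints. This generalizes the ``at most two components'' statement (Lemma~\ref{lem:2-connected-components}) used on the interval, but on a tree or general graph it demands a genuine analysis of how cyclic permutations of subgraphs interact with branching points and circuits. Everything else is a mechanical upgrade of the interval proof, provided Theorems~\ref{theo:transitive-noperiodicpoint}--\ref{theo:transitivity-total-transitivityG} are available to bypass the irrational-rotation exception.
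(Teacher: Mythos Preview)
The paper does not prove this theorem. It appears in the ``Remarks on graph maps'' paragraph following Theorem~\ref{theo:dense-periodic-points-transitivity}, where it is merely stated as a known extension, with attribution to Roe~\cite{Roe} for trees and Yokoi~\cite{Yok2} for general graphs; no argument is given, in keeping with the book's stated policy for these paragraphs (``We indicate some main ideas and give the references''). So there is no proof in the paper to compare your proposal against.

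As for your outline itself: the overall architecture --- unstable points outside a finite-period set, Baire to produce a transitive cycle, Blokh's structure theorems (\ref{theo:transitive-noperiodicpoint}, \ref{theo:totallytransitivegraph-mixing}, \ref{theo:transitivity-total-transitivityG}) to upgrade transitivity to mixing, then the closure argument for $\overline{X_0}\setminus X_0$ --- is indeed the natural transplant of the interval proof, and you correctly identify the one genuinely new difficulty: obtaining a \emph{uniform} exponent $N$ valid for all (possibly infinitely many) transitive components simultaneously. On the interval this is trivial because Lemma~\ref{lem:2-connected-components} forces $p\in\{1,2\}$; on a graph the bound must come from the combinatorics of $G$. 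You gesture at ``occupying combinatorial complexity'' but do not actually prove a bound, and this is precisely the substantive content of the cited papers. Note also an apparent typo in the statement as printed: for $e\ge3$ one has $\gcd(2,\dots,e)=1$, which cannot be the intended value (it would force $f=\Id$ on the complement of $\CE$); the result of Roe uses $\mathrm{lcm}(2,\dots,e)$.
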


For topological graphs that are not trees,
the integer $N$ in the preceding theorem can be arbitrarily large.
For example, the rational rotation 
$R_{\frac1n}$ gives a system in which all points are periodic of period $n$. 

\section{Sharkovsky's Theorem, Sharkovsky's order and type}\label{sec:Sharkovsky}

Sharkovsky's Theorem states that, for an interval map, the presence of a
periodic point with a given period implies the existence of other
periods determined by so-called \emph{Sharkovsky's order} \cite{Sha}.

\begin{defi}
\emph{Sharkovsky's order} is the total ordering on $\IN$ defined by:
\index{Sharkovsky's order}
\label{notation:sharkovskyorder}
$$
3\lhd 5\lhd 7\lhd 9\lhd\cdots\lhd 2\cdot 3 \lhd 2\cdot 5\lhd\cdots\lhd
2^2\cdot 3\lhd 2^2\cdot 5\lhd\cdots\lhd 2^3\lhd 2^2 \lhd 2 \lhd 1
$$
(first, all odd integers $n>1$, then $2$ times the odd integers $n>1$,
then successively $2^2\times$, $2^3\times$, \ldots, $2^k\times$ $\ldots$ the odd integers $n>1$, 
and finally
all the powers of $2$ by decreasing order).

$a \rhd b$ means $b\lhd a$. The notation $\unlhd, \unrhd$ will denote 
the order with possible equality.
\end{defi}

\begin{rem}
In Sharkovsky's order, 3 is the minimum (as above) in some papers whereas
it is the maximum in other ones (i.e. all inequalities are reversed).
The ordering above is the same as in Sharkovsky's original paper \cite{Sha},
but there is no consensus in the literature. Even in Sharkovsky's papers,
both orderings appear.
In addition, the symbol for the inequalities varies much: one can
find $\lhd$, $\prec$, $<_{sh}$, $\vdash$, $\ll$.
The spelling of ``Sharkovsky'' varies much too.
\end{rem}

\begin{theo}[Sharkovsky]\label{theo:Sharkovsky}\index{Sharkovsky's Theorem}
If an interval map $f$ has a periodic point of period
$n$, then, for all integers  $m\rhd n$, $f$ has periodic points of period $m$.
\end{theo}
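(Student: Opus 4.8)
The plan is to reduce everything to the combinatorial analysis of the directed graph associated to a periodic orbit, using the chain-of-intervals machinery (Lemma~\ref{lem:chain-of-intervals}) as the bridge between combinatorics and dynamics. First I would observe that, by Lemma~\ref{lem:chain-of-intervals}(ii), a loop $J_0\to J_1\to\cdots\to J_{k-1}\to J_0$ of non degenerate closed intervals forces a periodic point whose trajectory visits the $J_i$ in order; to get a point of \emph{exact} period $k$ one must moreover arrange that the loop is not a ``repetition'' of a shorter loop and that the $J_i$ are chosen to keep the orbit from collapsing (a standard refinement: make the visited intervals distinct or use endpoint control to exclude lower periods). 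So the entire problem becomes: given a periodic orbit $P$ of period $n$, exhibit in the directed graph $G_P$ whose vertices are the closure intervals between consecutive points of $P$, and whose arrows record the covering relation $I\cover{f}J$, a primitive cycle of length $m$ for every $m\rhd n$.

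The backbone of the combinatorial argument is the following sequence of steps, which I would carry out by considering the ``first'' (leftmost under Sharkovsky's order) period $n$ present and splitting into cases on whether $n$ is odd, twice odd, or a power of $2$. (1) \textbf{The fixed-point fact}: any interval map with a point of period $n>1$ has a fixed point (apply Lemma~\ref{lem:fixed-point} to the convex hull of the orbit, or find $I\cover{f}I$ in $G_P$). (2) \textbf{The odd case ($n=2j+1$, $n\ge 3$)}: order the orbit $x_1<\cdots<x_n$, let $p$ be the fixed point of $f$ among them (it lies strictly inside the orbit's hull), and consider the interval $I^\ast$ between the point just left of $p$ and the point just right of $p$. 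Chasing how $f$ permutes the orbit, one shows $I^\ast$ covers itself and there is a ``long path'' of length $n-1$ through distinct basic intervals returning to $I^\ast$; concatenating this long path with varying numbers of self-loops at $I^\ast$ produces primitive cycles of every length $\ge n-1$ that is $\ge n$, i.e. all of $n, n+2, n+4,\ldots$, then the even lengths $n+1,n+3,\ldots$, and in particular all $m$ with $m\rhd n$ in the odd block and everything after it. This is the classical ``Štefan cycle'' / horseshoe-of-length-$(n-1)$ argument. (3) \textbf{The $n=2^k\cdot q$ case with $q$ odd}: pass to $g=f^{2^k}$; if $q>1$, $g$ has a point of period $q$, apply the odd case to $g$ to get $g$-periodic points of all the required periods, and translate back (a period-$r$ point of $f^{2^k}$ gives an $f$-periodic point whose period divides $2^k r$ and is a multiple of $r$ — with a little care it gives exactly the periods $2^k r$, covering the $2^k$-block; one also needs the step-down lemma that period $2^k q$ implies period $2^{k+1}$, etc., to reach the pure powers of $2$). (4) \textbf{Step-down for powers of $2$}: period $2^{k}$ (or anything above it in the order) implies period $2^{k-1}, \ldots, 2, 1$; this follows by repeatedly using that a map with a period-$2m$ point has a period-$m$ point via the ``square root'' trick — write $h=f^{?}$ or, more directly, find in $G_P$ a cycle of length $m$ — combined with the observation that the $f^2$-orbit of a period-$2m$ point yields the needed lower period.

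The main obstacle, and where I would spend the most care, is step (2): proving that the directed graph of a period-$n$ orbit with $n$ odd $\ge 3$ really contains a primitive cycle $I^\ast \to I_1 \to \cdots \to I_{n-2} \to I^\ast$ through $n-1$ distinct basic intervals, and that concatenating it with self-loops at $I^\ast$ yields cycles that are genuinely \emph{primitive} of the claimed lengths (so that the resulting periodic points have the exact period, not merely a divisor). This requires a clean lemma saying that if $x$ has a dense-enough orbit in its own hull, then not all orbit points can lie on one side of the fixed point, and an inductive extraction of the long path — essentially a careful bookkeeping of which side of $p$ each $x_i$ maps to. Once this structural lemma is in hand, the passage ``primitive cycle of length $m$ in $G_P$ $\Rightarrow$ periodic point of period exactly $m$'' is exactly the refined form of Lemma~\ref{lem:chain-of-intervals}(ii) (choosing the $J_i$ as the basic intervals, whose interiors are disjoint, so that a point tracking the cycle genuinely has the right itinerary and hence the right period), and the remaining cases (3)–(4) are bootstrapped from it by passing to iterates $f^{2^k}$ and using Lemma~\ref{lem:omega-set}-type facts about how periods behave under taking powers.
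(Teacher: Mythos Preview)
Your overall architecture --- directed graph of a periodic orbit, \v{S}tefan structure for the minimal odd period, reduction to iterates $f^{2^k}$ for the general case, and Lemma~\ref{lem:chain-of-intervals} as the bridge --- is exactly the paper's approach (following Block--Guckenheimer--Misiurewicz--Young; packaged here as Lemmas~\ref{lem:fundamental-cycle}, \ref{lem:cycle-periodic-point}, \ref{lem:graph-n-minimal} and \ref{lem:period-f-fn}).

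There is, however, a concrete gap in your step~(2). Concatenating the long cycle of length $n-1$ with self-loops at $I^\ast$ yields primitive cycles only of lengths $\ge n$, but Sharkovsky's order also demands the \emph{even} periods $2,4,\ldots,n-1$, and nothing in your outline produces them (try $n=7$: you must exhibit period $6$, and neither your step~(3) nor step~(4) recovers it from periods $\ge 7$ alone). In Lemma~\ref{lem:graph-n-minimal} the \v{S}tefan graph carries, beyond the self-loop $J_1\to J_1$ and the Hamiltonian cycle $J_1\to\cdots\to J_{p-1}\to J_1$, extra arrows $J_{p-1}\to J_k$ for each odd $k\le p-2$; the short cycles $J_{p-m}\to\cdots\to J_{p-1}\to J_{p-m}$ through these arrows are precisely what supply the even periods $m<p$. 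Your structural lemma must include and justify them. Two smaller slips: the fixed point is not ``among'' the orbit points (they all have period $n>1$) but lies inside one of the basic intervals; and the transfer of periods from $f^{2^k}$ back to $f$ when $r$ is odd can land on $2^e r$ with $e<k$ rather than on $2^k r$ --- the paper's case~(iii) then has to invoke the even-$r$ case to repair this.
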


This striking result is one of
the first about the dynamics on the interval and, more generally,
one of the earliest results pointing out the existence of
``complicated'' behavior in some dynamical systems.

The original paper of Sharkovsky, in 1964, was in Russian \cite{Sha}.
The first proof in English, different from Sharkovsky's, is due
to \v{S}tefan in 1976 \cite{Ste0} (published in \cite{Ste}). 
In the meantime, Li and Yorke, unaware of the work of Sharkovsky, 
re-proved a particular case, namely that the existence of a periodic 
point of period $3$ implies that there are periodic points of all periods
\cite{LY}. This illustrates the lack of communication between Russian and
English literatures.
Later, several proofs of Sharkovsky's theorem were given 
\cite{BGMY, OO, BCop}.
The presentation we are going to give  derives from the one of
Block, Guckenheimer, Misiurewicz and Young \cite{BGMY}.
We shall first introduce the notion of a \emph{graph of a periodic orbit}
and its main properties;
then we shall prove Theorem~\ref{theo:Sharkovsky}
in Subsection~\ref{subsec:proofSharkovsky}.

\subsection{Graph of a periodic orbit}

We are going to associate a directed graph to a periodic orbit,
and show that the existence of other periodic points can be read in
this graph.

Recall that $\langle a,b\rangle$ denotes $[a,b]$ or $[b,a]$ 
depending on $a\le b$ or $b\le a$.

\begin{defi}\label{defi:graph-periodic-orbit}
Let $f$ be an interval map and $x$ a periodic point of period 
$n\ge 2$. Let $x_1<\cdots<x_n$ denote the ordered points in 
$\{x,f(x),\ldots,f^{n-1}(x)\}$ and let $I_j:=[x_j,x_{j+1}]$
for all $j\in\Lbrack 1,n-1\Rbrack$.
The \emph{graph of the
periodic orbit of $x$}\index{graph of a periodic orbit} is the directed graph
whose vertices are $I_1,\ldots, I_{n-1}$ and
$$\forall j,k\in\Lbrack 1,n-1\Rbrack, \text{ there is an arrow }
I_j\to I_k \text{ iff }I_k\subset \langle f(x_j),f(x_{j+1})\rangle.$$

In this graph, a \emph{fundamental}\index{fundamental cycle (in the graph
of a periodic orbit)}
cycle is a cycle of length $n$, say $J_0\to J_1\to\cdots\to J_{n-1}\to J_0$,
such that there exists a point $c\in \CO_f(x)$ with the property that
$f^k(c)$ is an endpoint of $J_k$ for all $k\in\Lbrack 0,n-1\Rbrack$.
\end{defi}

It is important to notice that if $I_i\to I_j$ is an arrow in the
graph of a periodic orbit, then $I_i$ covers $I_j$. Therefore, a
cycle in this graph is a chain of intervals, starting and ending with
the same interval.

\medskip
Recall that a cycle is \emph{primitive} if it is not the repetition of a 
shorter cycle.

\begin{lem}\label{lem:fundamental-cycle}
In the graph of a periodic orbit, there exists a unique fundamental cycle
(up to cyclic permutation).
In this cycle, each vertex of the graph appears at most twice and
one of them appears exactly twice. The fundamental cycle can be
decomposed into two shorter primitive cycles.
\end{lem}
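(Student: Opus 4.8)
The plan is to work directly with the periodic orbit $\CO_f(x)=\{x_1<\cdots<x_n\}$ and the permutation $\pi$ it induces, where $f(x_j)=x_{\pi(j)}$. First I would set up the right notion of "fundamental cycle" combinatorially: for each $j\in\Lbrack 1,n-1\Rbrack$ the image $\langle f(x_j),f(x_{j+1})\rangle$ is a union of consecutive basic intervals $I_k$, and among these there is a distinguished one, namely the interval $I_{k(j)}$ lying immediately to the right (say) of $f(x_j)$ if $f(x_j)<f(x_{j+1})$, or immediately to the left if $f(x_j)>f(x_{j+1})$ — in other words the basic interval having $f(x_j)$ as an endpoint and contained in $\langle f(x_j),f(x_{j+1})\rangle$. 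Defining $c_0:=x$ (or more precisely choosing a starting endpoint) and following the rule "$J_k$ is the basic interval with endpoint $f^k(c)$ on the appropriate side", one gets a well-defined sequence $J_0\to J_1\to\cdots$. The key point is that this is forced: the defining condition of a fundamental cycle says $f^k(c)$ is an endpoint of $J_k$, and since $f$ permutes $\CO_f(x)$ cyclically, once the starting point $c$ and the side are fixed the whole sequence is determined, and after $n$ steps $f^n(c)=c$ returns us to the start, so it closes up into a cycle of length $n$. This gives existence; uniqueness up to cyclic permutation follows because any fundamental cycle must, by definition, have its $k$-th vertex be a basic interval with endpoint $f^k(c)$ for a common $c\in\CO_f(x)$, and the "which side" choice is itself determined by the orientation of $\langle f(x_j),f(x_{j+1})\rangle$ at the relevant step — so there is really no freedom beyond where one starts reading the cycle.

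Next I would prove the multiplicity statement. Consider the map $k\mapsto$ (left endpoint index of $J_k$) as $k$ ranges over $\Lbrack 0,n-1\Rbrack$; equivalently track which point of $\CO_f(x)$ serves as the "anchor" endpoint $f^k(c)$ at each step. Since $c$ runs through all $n$ points of $\CO_f(x)$ as $k$ runs over $\Lbrack 0,n-1\Rbrack$, but there are only $n-1$ basic intervals, the anchors $f^0(c),\dots,f^{n-1}(c)$ are the $n$ distinct orbit points, and the assignment point $\mapsto$ (basic interval it anchors, on its prescribed side) sends $n$ distinct points to $n-1$ intervals. The two extreme orbit points $x_1$ and $x_n$ cause the collision: $x_1$ can only anchor $I_1$ (it has no interval on its left), and $x_n$ can only anchor $I_{n-1}$. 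So one should show that in fact exactly one basic interval is visited twice and all others once; the cleanest route is a counting/parity argument — the sequence of anchors, read cyclically, must alternate in a controlled way because consecutive anchors $f^k(c)$ and $f^{k+1}(c)$ are related by $f$, and a careful look shows the "doubled" vertex is the one straddled between the orbit of $x_1$-type and $x_n$-type behavior. I expect this counting step — pinning down that the overcount is exactly one, not more — to be the main obstacle, and I would handle it by induction on $n$ or by a direct analysis of how the permutation $\pi$ moves the extreme points.

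Finally, for the decomposition into two shorter primitive cycles: once we know some vertex $V$ appears exactly twice in the fundamental cycle $J_0\to\cdots\to J_{n-1}\to J_0$, say as $J_a=J_b=V$ with $0\le a<b\le n-1$, the cycle splits at $V$ into $V=J_a\to J_{a+1}\to\cdots\to J_b=V$ and $V=J_b\to J_{b+1}\to\cdots\to J_{n-1}\to J_0\to\cdots\to J_a=V$, of lengths $b-a$ and $n-(b-a)$, both positive and strictly less than $n$. Primitivity of each piece follows because every other vertex appears at most twice in the whole fundamental cycle, hence at most once in each piece (the only repeated vertex $V$ appears exactly once as the common endpoint of each sub-cycle), so neither sub-cycle can be a proper power of a shorter cycle — a cycle in which every vertex occurs at most once is automatically primitive. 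I would close by remarking, as the paragraph before the lemma notes, that each such cycle in the graph is a genuine chain of intervals, which is what makes this decomposition useful for the subsequent proof of Sharkovsky's theorem.
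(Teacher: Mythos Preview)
Your existence and uniqueness sketch is on the right track and matches the paper's approach: fix $c=x_1$ (which is an endpoint of only $I_1$, so there is no ambiguity at the start), observe that the pair (current basic interval, current anchored endpoint) determines the next pair uniquely, and note that any other fundamental cycle must at some step have anchor $x_1$, hence pass through $I_1$, hence coincide with this one up to rotation.

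The gap is in the multiplicity step. You set out to prove that \emph{exactly one} basic interval is visited twice and all others once, call this the ``main obstacle,'' and offer only a vague plan (parity, or induction on~$n$). Two problems. First, the lemma does not assert this: it says each vertex appears at most twice and \emph{some} vertex appears exactly twice. Second, the stronger statement you are aiming for is false. Take the period-$5$ pattern $x_1\mapsto x_3\mapsto x_5\mapsto x_2\mapsto x_4\mapsto x_1$: the fundamental cycle is $I_1\to I_3\to I_4\to I_1\to I_3\to I_1$, in which $I_1$ and $I_3$ each appear twice, $I_4$ once, and $I_2$ not at all. Your proposed induction would be chasing a false target.

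The argument you are missing for ``at most twice'' is a one-liner: the interval $I_k=[x_k,x_{k+1}]$ has exactly two endpoints, and each endpoint equals $f^m(x_1)$ for a \emph{unique} $m\in\Lbrack 0,n-1\Rbrack$; since $J_m=I_k$ forces $f^m(x_1)\in\End{I_k}$, there are at most two such~$m$. Then pigeonhole ($n$ positions, $n-1$ vertices) gives that some vertex appears at least twice, hence exactly twice. For primitivity, your inference ``at most twice in the whole cycle $\Rightarrow$ at most once in each piece'' is invalid (both occurrences of some $W\ne V$ could land in the same piece), and your fallback ``only $V$ is repeated'' is false by the example above. The correct argument is simpler: the cut vertex $V$ itself appears exactly once in each sub-cycle by construction, and a cycle containing some vertex exactly once cannot be a proper power of a shorter cycle.
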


\begin{proof}
We consider a periodic orbit of period $n\ge 2$ composed of the points
$x_1<\cdots<x_n$ and we set $I_j:=[x_j,x_{j+1}]$
for all $j\in\Lbrack 1,n-1\Rbrack$.
We fix $i\in\Lbrack 1,n-1\Rbrack$ and $c\in\{x_i,x_{i+1}\}$. We are going
to show by induction on $k$ that 
\begin{equation}\label{eq:Jk}
\text{\begin{minipage}{10cm}
there is a unique sequence of intervals 
$(J_k)_{k\ge 0}$, which are vertices of the graph of the
periodic orbit, such that \\ $J_0=I_i$ and 
$\forall k\ge 0$, $f^k(c)\in\End{J_k}$ and 
$J_k\to J_{k+1}$. \end{minipage}}
\end{equation}
Suppose that $J_{k-1}=[a,b]$ is
already defined. The interval $J_k$ must satisfy 
\begin{equation}\label{eq:fundamental-cycle}
J_k\subset \langle f(a),f(b)\rangle\text{ and }f^k(c)
\text{ is an endpoint of }J_k.
\end{equation}
According to the induction hypothesis for $J_{k-1}$,  the points $f^{k-1}(c)$
belong to $\{a,b\}$. Thus either $f(a)$ or $f(b)$ is equal to $f^k(c)$,
and  \eqref{eq:fundamental-cycle} determines uniquely $J_k\in 
\{I_1,\ldots, I_{n-1}\}$. This ends the induction.

From now on, let $(J_k)_{k\ge 0}$ denote the sequence defined above starting 
with $J_0:=I_1$ and $c:=x_1$. Since $f^n(x_1)=x_1$ and $x_1<x_i$
for all $i\in\Lbrack 2,n\Rbrack$, the interval $J_n$
is necessarily equal to
$J_0$. Therefore, $J_0\to J_1\to\cdots\to J_{n-1}\to J_0$ is a fundamental 
cycle. Now, we are going to prove the uniqueness of the fundamental cycle.
Let $K_0\to K_1\to\cdots\to K_n=K_0$ be a fundamental cycle and $d$ a
point of the periodic orbit
such that $f^i(d)$ is an endpoint of $K_i$ for all $i\in\Lbrack 0,n-1\Rbrack$.
Since $d\in\{x_1,\ldots, x_n\}$,
there exists $k\in\Lbrack 0,n-1\Rbrack$ such that $d=f^k(x_1)$. Thus
$f^{n-k}(d)=f^n(x_1)=x_1$
is an endpoint of $K_{n-k}$, so $K_{n-k}=J_0$. Then
\eqref{eq:Jk} implies that
$$
(K_0,K_1,\ldots,K_{n-1},K_0)=(J_k,J_{k+1},\ldots,J_{n-1},J_0,\ldots,J_k),
$$
that is, the fundamental cycle is unique up to cyclic permutation.

For every $k\in\Lbrack 1,n-1\Rbrack$, there exist two distinct integers
$i,j\in \Lbrack 0,n-1\Rbrack$ such that $I_k=[f^i(x_1),f^j(x_1)]$.
Consequently, $J_i$ and $J_j$ are the only two intervals of the fundamental
cycle that may be equal to $I_k$. This implies that
every vertex appears at most twice in the fundamental cycle.
Moreover, there are only $n-1$ vertices in the graph
and the fundamental cycle is of length $n$. Thus, one of the vertices
of the graph appears at least twice in the fundamental cycle. 
Finally, if we cut the fundamental cycle at a vertex $I_k$ appearing twice,
we obtain two cycles which are primitive because each of them
contains $I_k$ only once.
\end{proof}

The next lemma 
is originally due to \v{S}tefan~\cite{Ste}. We follow the proof of \cite{BGMY}.
This is a key tool for finding other periods when one periodic orbit
is known.

\begin{lem}\label{lem:cycle-periodic-point}
Let $f$ be an interval map and $x$ a periodic point. If
the graph $G$ of the periodic orbit of $x$ contains a primitive
cycle $J_0\to J_1\to\cdots\to J_{n-1}\to J_0$ of length $n$, then there
exists a periodic point $y$ of period $n$ such that $f^k(y)\in J_k$
for all $k\in\Lbrack 0,n-1\Rbrack$.
\end{lem}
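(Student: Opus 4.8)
The plan is to extract a periodic point from the cycle using Lemma~\ref{lem:chain-of-intervals}(ii), and then exploit primitivity to pin down its period.

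First I would record that every arrow $I_i\to I_j$ in the graph of the periodic orbit means $I_j\subset\langle f(x_i),f(x_{i+1})\rangle\subset f(I_i)$ by the intermediate value theorem, so $I_i$ covers $I_j$; hence the primitive cycle $J_0\to J_1\to\cdots\to J_{n-1}\to J_0$ is a chain of intervals with $J_0\subset J_0$. Lemma~\ref{lem:chain-of-intervals}(ii) then produces $y\in J_0$ with $f^n(y)=y$ and $f^k(y)\in J_k$ for all $k\in\Lbrack 0,n-1\Rbrack$. Indexing cyclically by $J_k:=J_{k\bmod n}$ and using $f^n(y)=y$, one gets $f^k(y)\in J_k$ for all $k\ge 0$. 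Let $p$ be the period of $y$; since $f^n(y)=y$ we have $p\mid n$, and the whole point is to exclude the case $p<n$.

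Assume $p<n$. I would first note that $y\notin\CO_f(x)$: otherwise $y$ is a point of the period-$n$ orbit of $x$ and would have period $n$. Since the orbit of $x$ is forward invariant, no $f^k(y)$ can lie in $\CO_f(x)$ either, for then $f^n(y)=y$ would. But the endpoints of each basic interval $J_k=[x_j,x_{j+1}]$ are orbit points while its relative interior contains none, so $f^k(y)\in\Int{J_k}$ for every $k\ge 0$. From $f^{k+p}(y)=f^k(y)$ we then get $f^k(y)\in\Int{J_k}\cap\Int{J_{k+p}}$; since distinct basic intervals have pairwise disjoint interiors, this forces $J_{k}=J_{k+p}$ for all $k\ge 0$.

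Hence $J_0\to J_1\to\cdots\to J_{p-1}\to J_0$ is a cycle of length $p$ (the closing arrow $J_{p-1}\to J_p=J_0$ being the $p$-th arrow of the original cycle), and the given cycle is its $(n/p)$-fold repetition with $n/p\ge 2$, contradicting primitivity. So $p=n$ and $y$ is the required periodic point, with $f^k(y)\in J_k$ for all $k\in\Lbrack 0,n-1\Rbrack$. The main obstacle is precisely the step showing $f^k(y)\in\Int{J_k}$ for all $k$: once one knows $y$ is not an orbit point this follows, and it is exactly what lets the disjointness of the basic intervals, together with primitivity, produce the contradiction.
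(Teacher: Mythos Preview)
Your argument for the case $y\notin\CO_f(x)$ is correct and matches the paper's approach. The gap is in how you dismiss the case $y\in\CO_f(x)$. You write that $y$ would then be ``a point of the period-$n$ orbit of $x$'', but nothing in the statement says that $x$ has period $n$: the period of $x$ is some integer $m$, the graph of its orbit has $m-1$ vertices, and you are given a primitive cycle of length $n$ in that graph with no assumed relation between $n$ and $m$ (indeed, the whole point of the lemma, as used in Sharkovsky's theorem, is to produce periods different from $m$). So if $y\in\CO_f(x)$, all you can say directly is that $y$ has period $m$, and $m\mid n$ since $f^n(y)=y$; you cannot conclude $m=n$ from this alone.

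The paper handles this case via Lemma~\ref{lem:fundamental-cycle}: if $y\in\CO_f(x)$ then $f^k(y)$ is an endpoint of $J_k$ for every $k$, which forces the cycle $J_0\to\cdots\to J_{n-1}\to J_0$ to be the fundamental cycle (of length $m$) or a repetition of it, and primitivity then gives $n=m=p$. Your interior/disjointness trick cannot replace this step, because when $f^k(y)$ is an endpoint it can lie in two adjacent basic intervals at once, so $f^{k+p}(y)=f^k(y)$ no longer forces $J_k=J_{k+p}$.
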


\begin{proof}
By Lemma~\ref{lem:chain-of-intervals}(ii), there exists $y$ such that $f^n(y)=
y$ and $f^k(y)\in J_k$ for all $k\in\Lbrack 0,n-1\Rbrack$. 
Let $p$ be the period of $y$, which is a divisor of $n$.
Suppose that $f^k(y)$ does not belong to $\CO_f(x)$
for all $k\in\Lbrack 0,n-1\Rbrack$.
Then $J_k$ is the unique vertex of $G$
containing $f^k(y)$ for all $k\in\Lbrack 0,n-1\Rbrack$.
This implies that $p=n$, otherwise the cycle would not be primitive.
On the contrary, suppose that there exists $k\in\Lbrack 0,n-1\Rbrack$ such that 
$f^k(y)\in\CO_f(x)$. Then $y=f^{n-k}(f^k(y))$ belongs to the orbit of $x$, and
thus $x$ is of period $p$. Moreover $f^k(y)$ is an 
endpoint of $J_k$ for all $k\in\Lbrack 0,n-1\Rbrack$, which implies that
$J_0\to J_1\to\cdots J_{n-1}\to J_0$ is equal to either the fundamental
cycle or a repetition of it. Finally, $p=n$ because this cycle is primitive.
\end{proof}

The next lemma describes the graph of a periodic orbit
whose period is the smallest odd period greater than $1$. Such a
periodic orbit  is called a \emph{\v{S}tefan cycle}\index{Stefan@\v{S}tefan cycle}, 
and so is any periodic orbit with the same graph
as in Figure~\ref{fig:graph-of-odd-periodic-point}.

\begin{lem}\label{lem:graph-n-minimal}
Let $f$ be an interval map having a periodic point of
odd period different from $1$. Let $p$ be the smallest odd period greater 
than $1$, and  $x$ a periodic point of period $p$.
Let $c$ denote the
median point of the orbit of $x$ (that is, $c\in \CO_f(x)$ and $\CO_f(x)$
contains $(p-1)/2$ points less than $c$ and $(p-1)/2$ greater than $c$). 
If $c<f(c)$, the points of
its orbit are ordered as follows:
$$
f^{p-1}(c)<f^{p-3}(c)<\cdots<f^2(c)<c<f(c)<f^3(c)<\cdots<f^{p-2}(c).
$$
If $c>f(c)$, the reverse order holds.
Moreover, the graph of this periodic orbit is the one represented in
Figure~\ref{fig:graph-of-odd-periodic-point}. 
\begin{figure}[htb]
\centerline{\includegraphics{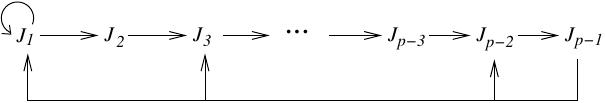}}
\caption{Graph of a periodic orbit of minimal odd period $p>1$.}
\label{fig:graph-of-odd-periodic-point}
\end{figure}
\end{lem}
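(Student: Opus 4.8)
The plan is as follows. I may assume $c<f(c)$; the case $c>f(c)$ is obtained by applying the result to the conjugate interval map $g(x):=-f(-x)$ (conjugacy by the decreasing homeomorphism $x\mapsto -x$): the orbit $-\CO_f(x)=\CO_g(-x)$ has median $-c$ with $-c<g(-c)=-f(c)$, its graph of the periodic orbit is isomorphic, as a directed graph, to that of $\CO_f(x)$, and the statement for $g$ translates into the claimed reverse order for $f$. So write $\CO_f(x)=\{x_1<\cdots<x_p\}$ with $p=2k+1$, so that $c=x_{k+1}$, and put $I_j:=[x_j,x_{j+1}]$ for $j\in\Lbrack 1,p-1\Rbrack$. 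The one consequence of the minimality of $p$ that I will use is the following \emph{engine}: by Lemma~\ref{lem:cycle-periodic-point} a primitive cycle of length $\ell$ in the graph of the periodic orbit of $x$ yields a periodic point of period $\ell$, and since $p$ is the least odd period $>1$, the graph of the periodic orbit of $x$ contains no primitive cycle whose length is odd and strictly between $1$ and $p$. (Self-loops, of length $1$, are allowed, and at least one occurs — see below.)

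I would first establish the ordering. Since $x_1$ moves right, $x_p$ moves left, and no orbit point is fixed, there is an index $j$ with $f(x_j)>x_j$ and $f(x_{j+1})<x_{j+1}$, hence $f(x_j)\ge x_{j+1}$ and $f(x_{j+1})\le x_j$; by Theorem~\ref{theo:ivt} then $f(I_j)\supset\langle f(x_j),f(x_{j+1})\rangle\supset I_j$, so $I_j\to I_j$ is a self-loop of the graph. The core of the proof is to show, using the engine, that $j=k+1$ and $f(x_{k+1})=x_{k+2}$ — i.e.\ $[c,f(c)]$ is a self-covering interval and $f(c)$ is the orbit point immediately to the right of $c$ — and then, inductively, that $f^2(c)=x_k$, $f^3(c)=x_{k+3}$, $f^4(c)=x_{k-1}$, \dots, which is exactly the asserted order
$$f^{p-1}(c)<f^{p-3}(c)<\cdots<f^2(c)<c<f(c)<f^3(c)<\cdots<f^{p-2}(c).$$
The mechanism is an outward-drift argument: one follows the intervals $A_i:=\langle f^i(c),f^{i+1}(c)\rangle$ (for which $f(A_i)\supset A_{i+1}$) and argues that, were $f^{i+1}(c)$ ever placed at a still-unused orbit point other than the one ``just outside'' $A_i$ on the prescribed side, one could splice the relevant covering relations into a chain $A_{i_0}\to A_{i_0+1}\to\cdots\to A_{i_0}$ which — once one checks that it is primitive and is not (a repetition of) the fundamental cycle, which is where the endpoint clause of Lemma~\ref{lem:cycle-periodic-point} enters — would be a primitive cycle of odd length strictly between $1$ and $p$, contradicting the engine.

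Once the ordering is known, reading off the graph is a finite computation from Definition~\ref{defi:graph-periodic-orbit}: for each $j$ the pair $x_j<x_{j+1}$ is a pair $f^a(c),f^b(c)$ with $a,b$ read off from the ordering, so $\langle f(x_j),f(x_{j+1})\rangle=\langle f^{a+1}(c),f^{b+1}(c)\rangle$ is an explicit interval $[x_r,x_s]$, contributing exactly the arrows $I_j\to I_\ell$ with $r\le\ell\le s-1$. This yields precisely the graph of Figure~\ref{fig:graph-of-odd-periodic-point}: one self-loop at $I_{k+1}$, the cycle $I_{k+1}\to I_k\to I_{k+2}\to I_{k-1}\to\cdots\to I_{p-1}\to I_1\to I_{k+1}$, and the additional arrows $I_1\to I_{k+2},\ \dots,\ I_1\to I_{p-1}$ coming from the fact that $I_1$ covers $[x_{k+1},x_p]$. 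As a consistency check, splicing the self-loop once into that cycle produces a length-$p$ cycle in which $I_{k+1}$ is the unique repeated vertex, in agreement with Lemma~\ref{lem:fundamental-cycle}.

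The routine parts are the reduction and the graph computation; the real difficulty is the drift induction of the second paragraph. The delicate point is the bookkeeping needed to turn each ``wrong placement'' of an orbit image into a genuine primitive cycle of the orbit graph, to verify that its length is odd and lies strictly between $1$ and $p$, and to exclude the possibility that the underlying chain of intervals is merely the fundamental cycle (so that a truly new odd period is produced). This is the case analysis carried out, following \v{S}tefan~\cite{Ste}, in \cite{BGMY}; making it clean rather than just correct is the crux.
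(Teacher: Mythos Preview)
Your plan shares the engine with the paper's proof but organizes the argument differently, and the reorganization introduces two real problems.

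First, the paper does not attempt to show directly that the self-loop sits at the median interval (your ``$j=k+1$''). Instead it starts from Lemma~\ref{lem:fundamental-cycle}: the fundamental cycle of the orbit splits into two primitive cycles, one of odd length, hence of length~$1$ by the engine. So the fundamental cycle reads $J_1\to J_1\to J_2\to\cdots\to J_{p-1}\to J_1$ where each $J_i$ is a $P$-interval. The paper then shows, still using the engine, that the $J_i$ are pairwise distinct and that there are no ``shortcut'' arrows $J_i\to J_k$ with $k>i+1$ or $J_i\to J_1$ for $2\le i\le p-2$. From this purely combinatorial structure of the orbit graph the spatial ordering of the $f^i(c)$ drops out, and only \emph{afterwards} does one read off that the common endpoint of $J_1$ and $J_2$ is the median. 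Your Step ``show $j=k+1$'' reverses cause and effect: that equality is a consequence of the alternating pattern, not a lemma one can establish before it.

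Second, your intervals $A_i:=\langle f^i(c),f^{i+1}(c)\rangle$ are not vertices of the orbit graph; for $i\ge1$ each $A_i$ is a union of several consecutive $P$-intervals. Consequently Lemma~\ref{lem:cycle-periodic-point}, which is the engine you invoke, does not apply to a closed $A$-chain. What a chain $A_{i_0}\to\cdots\to A_{i_0}$ gives you, via Lemma~\ref{lem:chain-of-intervals}(ii), is a periodic point whose period merely \emph{divides} the chain length; ruling out period~$1$ and forcing a genuinely new odd period then requires an argument you have not supplied, and this is precisely the work that ``primitive cycle in the orbit graph'' does automatically in the paper's route. The phrase ``once one checks that it is primitive and is not a repetition of the fundamental cycle'' presupposes that the $A_i$ live in a graph to which those notions apply; they do not.

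The clean fix is the paper's: work from the start with the $P$-intervals via the fundamental-cycle decomposition of Lemma~\ref{lem:fundamental-cycle}. Every cycle you then build lives in the orbit graph, Lemma~\ref{lem:cycle-periodic-point} applies verbatim, and the drift induction becomes the short argument that no shortcuts exist.
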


\begin{proof}
By Lemma~\ref{lem:fundamental-cycle}, the graph of the periodic
orbit of $c$ contains a fundamental cycle that can be split
into two primitive cycles. One of these primitive cycles is of 
odd length because the fundamental cycle is of odd length $p$. 
According to Lemma~\ref{lem:cycle-periodic-point} and because of the
minimality of $p$, this length is necessarily equal to $1$. 
Therefore, the fundamental cycle can
be written as $J_1\to J_1\to J_2\to\cdots\to J_{p-1}\to J_1$. Moreover
$J_i\neq J_1$ for all $i\in\Lbrack 2,p-1\Rbrack$ because each vertex appears at
most twice by Lemma~\ref{lem:fundamental-cycle}.
If $J_i=J_j$ for some $i,j$ with $2\le i<j\le p-1$, then the two cycles
$$
J_1\to J_2\to\cdots\to J_i=J_j\to J_{j+1}\to\cdots\to J_{p-1}\to J_1
$$
and
$$
J_1\to J_1\to J_2\to\cdots\to J_i=J_j\to J_{j+1}\to\cdots\to
J_{p-1}\to J_1
$$
are of respective lengths $p+i-j-1$ and $p+i-j$. 
These lengths are in $\Lbrack 1,p-1\Rbrack$, and one of them  is odd.
But then, using  Lemma~\ref{lem:cycle-periodic-point}, we get a contradiction 
with the choice of $p$. Therefore, we have $J_i\neq J_j$
for all $i,j\in\Lbrack 2,p-1\Rbrack$ with $i<j$.
which implies that $(J_1,J_2,\ldots, J_{p-1})$ is a permutation of the
$p-1$ vertices of the graph of the orbit of $x$. 
Similarly, if $J_i\to J_k$
for some $i,k\in \Lbrack 1,p-1\Rbrack$ with $k>i+1$, or if $J_i\to J_1$ for some
$i\in\Lbrack 2,p-2\Rbrack$, there exists a primitive cycle of odd length (the cycle
$J_1\to J_1$ may be added if necessary to get an odd length) 
with a length in $\Lbrack 2,p-1\Rbrack$, which leads to a contradiction again 
by Lemma~\ref{lem:cycle-periodic-point}.

Let $x_1<\cdots<x_p$ be the ordered points of $\CO_f(x)$. 
We set $I_j:=[x_j,x_{j+1}]$ for all $j\in\Lbrack 1,p-1\Rbrack$. 
Let $k\in\Lbrack 1,p-1\Rbrack$  be the integer
such that $J_1=I_k$. We have shown that the vertex $J_1$ is only directed to 
$J_1$ and $J_2$. This implies that the intervals
$J_1$ and $J_2$ have a common endpoint, and thus $J_2$ is equal to
$I_{k-1}$ or $I_{k+1}$. Since $f(x_j)\neq x_j$
for all $j\in\Lbrack 1,p\Rbrack$, it is easy to check that we are in one
of the following two cases:
\begin{itemize}
\item $J_2=I_{k-1}$, $x_{k+1}=f(x_k)$ and $x_{k-1}=f^2(x_k)$,
\item $J_2=I_{k+1}$, $x_k=f(x_{k+1})$ and $x_{k+2}=f^2(x_{k+1})$.
\end{itemize}
We assume that we are in the first case, the second one being symmetric 
and leading
to the reverse order. We encourage the reader to redraw the points
of Figure \ref{fig:proof-lem} step by step when reading the proof.
\begin{figure}[htb]
\centerline{\includegraphics{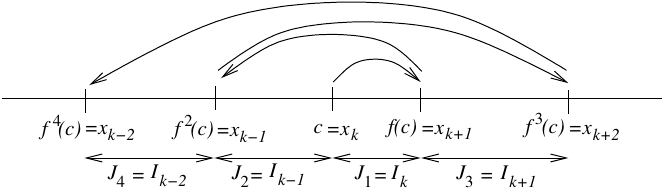}}
\caption{Position of the first iterates of $c$.}
\label{fig:proof-lem}
\end{figure}
We set $c:=x_k$. If $p=3$, then the proof is
complete. If $p>3$, then $f^3(c)>c$, otherwise there would be an arrow
$J_2\to J_1$. Hence $f^3(c)=x_i$ for some $i>k+1$. Since there is
an arrow $J_2\to J_3$ and no arrow $J_2\to J_j$ for all $j>3$, the only
possibility is that $f^3(c)=x_{k+2}$
and $J_3=[f(c),f^3(c)]=I_{k+1}$. If $f^4(c)>f^2(c)$, then necessarily
$f^4(c)>f^3(c)=x_{k+2}$. But this implies that $J_3\to J_1$, which is 
impossible, and hence $f^4(c)<f^2(c)$. Since there is
an arrow $J_3\to J_4$ and no arrow $J_3\to J_j$ for all $j>4$, the only
possibility is that $f^4(c)=x_{k-2}$ and $J_4=[f^4(c),f^2(c)]=I_{k-2}$.
We can go on in this way, and finally we find that the points are ordered 
as follows:
$$
f^{p-1}(c)<f^{p-3}(c)<\cdots<f^2(c)<c<f(c)<f^3(c)<\cdots<f^{p-2}(c).
$$
The point $c$ is the median point of the orbit, and we are in the case
$c<f(c)$. The order of these points enables us to check that the graph of
the periodic orbit is the one represented in
Figure~\ref{fig:graph-of-odd-periodic-point}.
\end{proof}

In the proof of Sharkovsky's Theorem, we shall need the next elementary~lemma.

\begin{lem}\label{lem:period-f-fn}
Let $f$ be an interval map and let $x$ be a point.
\begin{enumerate}
\item If $x$ is a periodic point of period $n$ for $f$, then $x$ is periodic
of period $n/d$ for $f^k$, where $d:=\gcd(n,k)$.
\item If $x$ is a periodic point of period $m$ for $f^k$, then there
exists $d$ a divisor of $k$ satisfying $\gcd(m,d)=1$ and 
such that $x$ is periodic of period $m\frac{k}{d}$ for $f$.
\end{enumerate}
\end{lem}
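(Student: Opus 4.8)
The plan is to reduce both parts to elementary divisibility arguments about the set of exponents $j$ for which $(f^k)^j(x)=x$. The starting point is the identity $(f^k)^j=f^{kj}$, so that, if $x$ is periodic of period $n$ for $f$, then $(f^k)^j(x)=x$ holds exactly when $kj$ is a multiple of $n$ (using the elementary fact recalled earlier that $f^r(x)=x$ iff $n\mid r$); the period of $x$ for $f^k$ is then, by definition, the least positive such $j$.

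For (i), I would write $d:=\gcd(n,k)$ and factor $n=dn'$, $k=dk'$ with $\gcd(n',k')=1$. For a positive integer $j$ one has $(f^k)^j(x)=x\iff n\mid kj\iff n'\mid k'j\iff n'\mid j$, the last step because $n'$ and $k'$ are coprime. Hence the smallest such $j$ is $n'=n/d$, which is precisely the period of $x$ for $f^k$.

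For (ii), since $x$ has period $m$ for $f^k$ we have $f^{km}(x)=x$, so $x$ is periodic for $f$; call $n$ its $f$-period. Then $n\mid km$, and part (i) tells us the $f^k$-period of $x$ equals $n/\gcd(n,k)$, which by hypothesis is $m$; thus $n=m\gcd(n,k)$. I would then put $d:=k/\gcd(n,k)$: it divides $k$, and $m\frac{k}{d}=m\gcd(n,k)=n$, so $x$ has $f$-period $m\frac{k}{d}$. (This $d$ is in fact forced, since $n=m\frac{k}{d}$.) The one point needing a short argument is $\gcd(m,d)=1$: writing $g:=\gcd(n,k)$, if a prime $p$ divided both $m$ and $d=k/g$, then $pg\mid mg=n$ and $pg\mid k$, hence $pg\mid\gcd(n,k)=g$, which is impossible. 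This coprimality check is the only slightly delicate step; everything else is routine divisibility bookkeeping, and the main care is simply to state the period of $x$ under $f^k$ purely as "least positive $j$ with $f^{kj}(x)=x$'' so that (i) can be invoked cleanly inside the proof of (ii).
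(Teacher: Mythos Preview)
Your proof is correct and follows essentially the same elementary divisibility approach as the paper. The only organizational difference is that in (ii) you invoke (i) to obtain $m=n/\gcd(n,k)$ and then define $d:=k/\gcd(n,k)$, whereas the paper defines $d$ directly by $km=dn$ (the same number) and checks $\gcd(m,d)=1$ via the period condition rather than your prime-divisor argument; both routes are equally valid.
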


\begin{proof}
i)
Let $x$ be a periodic point of period $n$ for $f$.  We set $d:=\gcd(n,k)$ and 
$k':=k/d$. We have $f^{k\frac{n}{d}}(x)=f^{k'n}(x)=x$. Let $m\ge 1$ be an
integer such that  $f^{km}(x)=x$. This implies that  $km$ is a multiple of 
$n$, say $km=pn$ for some $p\in\IN$.  Then
$m=\frac{pn}{k}=\frac{pn}{k'd}$. Since $\gcd(n,k')=1$, the quantity
$\frac{p}{k'}$ must be an integer, and thus $\frac{n}{d}$ divides $m$. We
conclude that $x$ is periodic of period $\frac{n}{d}$ for $f^k$.

ii)
Let $x$ be a periodic point of period $m$ for $f^k$, and let
$n$ be the period of $x$ for $f$. Then $n$ 
divides $km$ because $f^{km}(x)=x$. Let $d$ be the integer such that $km=dn$.
We set $p:=\gcd(m,d)$. Then $\frac{m}{p}$ and $\frac{d}{p}$ are integers
and $k\frac{m}{p}=\frac{d}{p}n$. This implies that 
$f^{k\frac{m}{p}}(x)=f^{n\frac{d}{p}}(x)=x$, and thus $p=1$ because $x$ is of period $m$ for $f^k$.
Since $n=\frac{km}{d}$ and $p=\gcd(m,d)=1$, we deduce that $d$ divides $k$.
\end{proof}

\subsection{Proof of Sharkovsky's Theorem}\label{subsec:proofSharkovsky}

\begin{proof}[Proof of Theorem~\ref{theo:Sharkovsky}]
We first deal with the existence of periodic
points of period $1$ or $2$. By Lemma~\ref{lem:fixed-point},
$f$ has a fixed point. We are going to show:
\begin{equation}\label{eq:period2}
f \text{ has a periodic point of period }p>1\!\Rightarrow
\!f \text{ has a periodic point of period }2.
\end{equation}
Let $n$ denote the least period greater than $1$ and
suppose that $n\ge 3$.
According to Lemma~\ref{lem:fundamental-cycle}, the fundamental cycle of a
periodic point of period $n$ can be split into two shorter
primitive cycles, and thus one of them is of length $m$ with $m\in\Lbrack 2,
n-1\Rbrack$.
But then Lemma~\ref{lem:cycle-periodic-point} implies that 
there exists a periodic point of period $m$, 
which contradicts the definition of $n$.
Therefore $n=2$ and \eqref{eq:period2} is proved.

Second, we show that if $f$ has a periodic point of period $p$,
\begin{equation}\label{eq:odd-Sharkovsky}
p>1,\ p \text{ odd}\Rightarrow\forall n\unrhd p,\ f \text{ has a periodic point of period }n.
\end{equation}
According to the definition of Sharkovsky's order, 
it is sufficient to prove \eqref{eq:odd-Sharkovsky} when $p$ is the smallest 
odd period greater than $1$. For such a $p$, the graph of 
a periodic point of period $p$ is given by
Lemma~\ref{lem:graph-n-minimal}. We keep the same notation
as in Figure~\ref{fig:graph-of-odd-periodic-point}.
If $n$ is even and $n\in\Lbrack 2,p-1\Rbrack$, then
$$
J_{p-n}\to J_{p-n+1}\to\cdots\to J_{p-1}\to J_{p-n}
$$
is a primitive cycle of length $n$. If $n$ is greater than $p$, 
we add $n-p$ times the cycle
$J_1\to J_1$ at the end of the fundamental cycle in order to
obtain a primitive cycle of length $n$.
Then, for all even integers $n\ge 2$ and all odd integers $n\ge p$,
there exists a periodic point of period $n$ by 
Lemma~\ref{lem:cycle-periodic-point}. This proves \eqref{eq:odd-Sharkovsky}.

We now turn to the general case. Assume that $f$ has a periodic point $x$
of period $n=2^d q$, where
$q\ge 1$ is an odd integer. We want to show that,
for all $m\neq 1$ with $m\rhd n$, $f$ has a periodic
point of period $m$. We split the proof into three cases.

i) Case $q=1$ and $m=2^e$ for some $0<e<d$. According to
Lemma~\ref{lem:period-f-fn}(i), the point $x$ is of period $2^{d-e+1}>1$ for 
$g:=f^{\frac{m}{2}}$. 
Thus $g$ has a periodic point $y$ of period $2$ 
by \eqref{eq:period2}, and $y$ is periodic of period $m=2^e$ for $f$
by Lemma~\ref{lem:period-f-fn}(ii).

ii) Case $q>1$ and $m=2^d r$ for some $r\ge 2$, $r$ even. 
By Lemma~\ref{lem:period-f-fn}(i), the point
$x$ is of period $q$ for $g:=f^{2^d}$. Since $q$ is odd and greater than $1$, 
$g$ has a periodic point
$y$ of period $r$ according to \eqref{eq:odd-Sharkovsky}. 
Then $y$ is periodic of period $m=2^d r$ for $f$ by
Lemma~\ref{lem:period-f-fn}(ii).
 
iii) Case $q>1$ and $m=2^d r$ for some $r>q$, $r$ odd. 
By Lemma~\ref{lem:period-f-fn}(i), the point $x$ is of period 
$q$ for $g:=f^{2^d}$. Since $q$ is odd and greater than~$1$, 
$g$ has a periodic point
$y$ of period $r$ by \eqref{eq:odd-Sharkovsky}. 
According to Lemma~\ref{lem:period-f-fn}(ii),
there exists an integer $e\in\Lbrack 1,d\Rbrack$ such that $y$ is of 
period $2^e r$ for $f$.
If $e=d$, then $f$ has a periodic point of period $m$. 
Otherwise, we set $r':=2^{d-e} r$. The map 
$f$ has a periodic point of period $2^e r$ with $r$ odd,
and the integer $m$ can be written as $m=2^e r'$ with $r'$ even.
Then the case (ii) above implies that $f$ has a periodic point of
period $m$.

This concludes the proof.
\end{proof}

\subsection{Interval maps of all types}

Because of the structure of Sharkovsky's order, Theorem~\ref{theo:Sharkovsky}
implies that the set of periods of an interval map is of the form either 
$\{m\in\IN\mid m\unrhd n\}$ for some $n\in\IN$ or
$\{2^k\mid k\ge 0\}$. This motivates the next definition.

\begin{defi}\index{type (for Sharkovsky's order)}
Let $n\in\IN\cup\{2^\infty\}$.
An interval map $f$ is \emph{of type $n$ (for
Sharkovsky's order)} if the periods of the periodic points of $f$
form exactly the set $\{m\in\IN\mid m\unrhd n\}$, where the notation
$\{m\in\IN\mid m\unrhd 2^\infty\}$ stands for $\{2^k\mid k\ge 0\}$.
\label{notation:2infty}
\end{defi}

Every interval map has a type. Conversely, there exist maps of all types.
This result was shown by Sharkovsky in \cite{Sha}
for integer types and in \cite{Sha2} for type $2^\infty$.
We are going to exhibit interval maps of all types. Some of these examples
will be referred to in other chapters. We first
state a lemma, which is a partial converse of 
Lemma~\ref{lem:cycle-periodic-point}.

\begin{lem}\label{lem:graph-and-type}
Let $f$ be an interval map. Let $\{x_1<\cdots< x_n\}$ be a
periodic orbit of period $n>1$, and let $G$ denote
the graph of this periodic orbit. Suppose that $f|_{[x_i,x_{i+1}]}$ is monotone
for all $i\in\Lbrack 1, n-1\Rbrack$.
If $f$ has a periodic point of period $m$ in $[x_1,x_n]$, then 
\begin{itemize}
\item either $G$ contains a primitive cycle of length $m$, 
\item or $m$ is even and $G$ contains a
primitive cycle of length $m/2$.
\end{itemize}
\end{lem}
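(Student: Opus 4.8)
The plan is to take a periodic point $y$ of period $m$ inside $[x_1,x_n]$ and split according to whether or not $y$ lies on the orbit $\CO_f(x)$. If $y\in\CO_f(x)$, then $m=n$ (every point of a periodic orbit has the same period), and the fundamental cycle $\Phi$ provided by Lemma~\ref{lem:fundamental-cycle} is already a cycle of length $n=m$ in $G$. If $\Phi$ is primitive we are in the first case. Otherwise $\Phi=\CB^k$ for a primitive cycle $\CB$ and some $k\ge 2$; since Lemma~\ref{lem:fundamental-cycle} tells us that some vertex of $G$ occurs exactly twice in $\Phi$, that vertex occurs $2/k$ times in $\CB$, which forces $k=2$. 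Hence $\CB$ is a primitive cycle of length $n/2=m/2$ and $m$ is even, which is the second case.

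From now on assume $y\notin\CO_f(x)$. Then no iterate $f^t(y)$ belongs to $\CO_f(x)$ (otherwise $y=f^{m-t}(f^t(y))$ would), so for every $t\ge 0$ the point $f^t(y)$ lies in the interior of a uniquely determined edge $I_{j(t)}$, and the sequence $(j(t))_{t\ge0}$ is $m$-periodic. Because $f$ is monotone on each $I_{j(t)}$ we have $f(I_{j(t)})=\langle f(x_{j(t)}),f(x_{j(t)+1})\rangle$, an interval whose endpoints lie in $\CO_f(x)$ and which is therefore a union of edges; it contains $f^{t+1}(y)$, an interior point of $I_{j(t+1)}$, hence it contains all of $I_{j(t+1)}$. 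Thus $I_{j(t)}\to I_{j(t+1)}$ is an arrow of $G$ for every $t$, so $\CC:=I_{j(0)}\to I_{j(1)}\to\cdots\to I_{j(m-1)}\to I_{j(0)}$ is a cycle of length $m$ in $G$. If $\CC$ is primitive we are in the first case.

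The remaining (and hardest) case is $\CC$ non-primitive: writing $\CC=\CA^k$ with $\CA$ of minimal length, $\CA$ is primitive of length $\ell=m/k$ with $k\ge 2$. Then the itinerary is $\ell$-periodic ($j(t+\ell)=j(t)$), the $k$ points $z_i:=f^{i\ell}(y)$, $0\le i<k$, are pairwise distinct (their indices are distinct modulo $m$), all lie in $I_{j(0)}$, and $f^\ell$ permutes them cyclically ($f^\ell(z_i)=z_{i+1\bmod k}$). The key observation I would use is that for each fixed $t$ the points $f^t(z_0),\dots,f^t(z_{k-1})$ are $k$ distinct points all lying in the single edge $I_{j(t)}$ (here the $\ell$-periodicity of the itinerary is essential); since $f$ is monotone there and injective on this finite set, it is \emph{strictly} monotone on it, so it maps it order-preservingly or order-reversingly onto the corresponding set inside $I_{j(t+1)}$. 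Composing these $\ell$ steps, $f^\ell$ acts on $\{z_0,\dots,z_{k-1}\}$ as an order-preserving or order-reversing bijection. An order-preserving bijection of a finite totally ordered set is the identity, contradicting $k\ge2$; an order-reversing one is an involution, so the $k$-cycle $f^\ell|_{\{z_i\}}$ has order dividing $2$, forcing $k=2$. Therefore $\ell=m/2$, so $m$ is even and $\CA$ is a primitive cycle of length $m/2$ — the second case. The main obstacle is precisely this last step: recognizing that monotonicity on the individual edges forces the first-return map $f^\ell$ to act monotonically on the $z_i$, which is what excludes every $k\ge3$.
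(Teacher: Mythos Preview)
Your proof is correct and follows the same strategy as the paper: build a cycle in $G$ from the itinerary of $y$, then use monotonicity of $f$ on each edge to force $k\le 2$ when that cycle is not primitive. The paper packages the last step slightly differently---it observes that $f^{2p}$ is non-decreasing on the interval $J:=f^{-p}(J_0)\cap J_0$, so the sequence $(f^{2kp}(y))_{k\ge 0}$ is monotone and hence eventually constant, giving $f^{2p}(y)=y$---but this is exactly your monotonicity argument transported from the finite set $\{z_0,\ldots,z_{k-1}\}$ to the ambient interval; your treatment of the case $y\in\CO_f(x)$ is in fact more explicit than the paper's one-line appeal to Lemma~\ref{lem:fundamental-cycle}.
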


\begin{proof}
Let $y\in [x_1,x_n]$ be a periodic point of period $m$. If 
$y\in\{x_1,\ldots,x_n\}$, then $n=m$ and $G$ contains  a primitive cycle of 
length $m$ by Lemma~\ref{lem:fundamental-cycle}. From now on, we
suppose that $y\notin\{x_1,\ldots,x_n\}$.
We show by induction that for all $k\ge 0$ there is a unique vertex $J_k$ 
in $G$ such that $f^k(y)\in J_k$,  and in addition $J_k\to J_{k+1}.$

\noindent$\bullet$ There is a unique vertex $J_0$ containing $y$ because
$y\in [x_1,x_n]\setminus \{x_1,\ldots,x_n\}$.

\noindent$\bullet$ 
Suppose that $J_k=[x_i,x_{i+1}]$ is already defined. Since $f$ is
monotone on $J_k$, the point $f^{k+1}(y)$ belongs to 
$f(J_k)=\langle f(x_i),f(x_{i+1})\rangle$. Since 
$\langle f(x_i),f(x_{i+1})\rangle$ is a nonempty union of vertices of $G$,
this implies that there exists a vertex $J_{k+1}$ in $G$
such that $f^{k+1}(y)\in J_{k+1}$ and $J_k\to J_{k+1}$. The vertex 
$J_{k+1}$ is unique because $f^{k+1}(y)\notin\{x_1,\ldots,x_n\}$. 
This concludes the induction.

Since $f^m(y)=y$, we have $J_m=J_0$, and thus 
$J_0\to\cdots J_{m-1}\to J_0$
is a cycle in $G$. This cycle is a multiple of a primitive cycle of
length $p$ for some $p$ dividing $m$. Therefore $J_p=J_0$ and
$f^{kp}(y)\in J_0$ for all $k\ge 0$.
Since $f^p|_{J_0}$ is monotone, the set $J:=f^{-p}(J_0)\cap J_0$ is
an interval and $f^{2p}|_J$ is non decreasing. Moreover,
$f^{2kp}(y)$ belongs to $J$ for all $k\ge 0$. If $y\le f^{2p}(y)$, a 
straightforward induction leads to:
$$
y\le f^{2p}(y)\le f^{4p}(y)\le \cdots \le f^{2m}(y).
$$
The reverse inequalities hold if $y\ge f^{2p}(y)$.
In both cases, the fact that $y=f^{2m}(y)$ implies $y=f^{2p}(y)$.
We deduce that $m$ divides $2p$. Since $m$ is a multiple of $p$,
this implies that $m=p$ or $m=2p$.
\end{proof}

\begin{ex}\label{ex:odd-type}
We fix $n\ge 1$.
We are going to build a map $f_p$ of odd type $p=2n+1>1$.
\begin{figure}[htb]
\centerline{\includegraphics{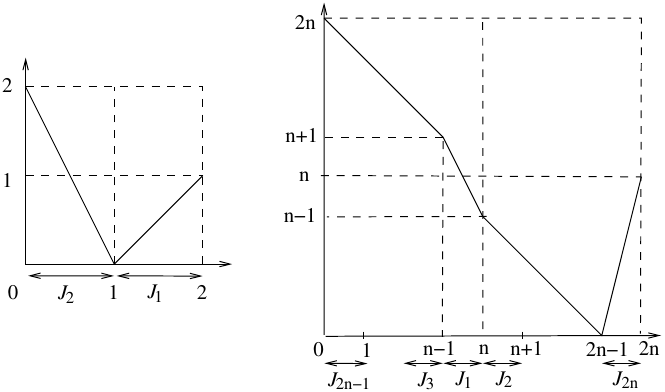}}
\caption{On the left:  an interval map of type 3. On the right: an interval
map of odd type $p=2n+1>3$.}
\label{fig:type-2n+1}
\end{figure}
The map $f_p\colon [0,2n]\to [0,2n]$, represented in Figure~\ref{fig:type-2n+1},
is defined as follows: it
is linear on $[0,n-1]$, $[n-1,n]$, $[n,2n-1]$ and $[2n-1,2n]$, and
$$
f_p(0):=2n,\ f_p(n-1):=n+1,\ f_p(n):=n-1,\ f_p(2n-1):=0,\ f_p(2n):=n.
$$
Notice that $n=1$ is a particular case because $0=n-1$ and $n=2n-1$.
This map satisfies:
\begin{gather*}
\forall k\in\Lbrack 1,n\Rbrack,\ f_p(n-k)=n+k,\\
\forall k\in\Lbrack 0,n-1\Rbrack,\ f_p(n+k)=n-k-1.
\end{gather*}
It follows that $f_p^{2k-1}(n)=n-k$ and  $f_p^{2k}(n)=n+k$
for all $k\in\Lbrack 1,n\Rbrack$.
Thus $f_p^{2n+1}(n)=n$, and the point $n$ is periodic of period $p=2n+1$.

We set $J_{2k-1}:=[n-k,n-k+1]$ and $J_{2k}:=[n+k-1,n+k]$
for all  $k\in\Lbrack 1,n\Rbrack$.
It is easy to check that the graph of the periodic point $n$ is the one given in
Lemma~\ref{lem:graph-n-minimal}, that is:

\centerline{\includegraphics{ruette-fig1}}

This graph does not contain any
primitive cycle of odd length $m\in\Lbrack 2, p-1\Rbrack$. Thus, by
Lemma~\ref{lem:graph-and-type}, $f_p$ has no periodic point of odd period 
$m\in\Lbrack 2, p-1\Rbrack$. This means that $f_p$ is of type $p$.

For further reference, we are going to show that $f_p$ is 
topologically mixing. The transitivity of $f_p$ was shown by
Block and Coven \cite{BCov}.
We are first going to show that, for every subinterval $J$ in $[0,2n]$,
\begin{equation}\label{eq:2J}
\exists i\in\Lbrack 1,2n\Rbrack,\ J\subset J_i\Rightarrow \exists k\ge 1,
\ |f_p^k(J)|\ge 2|J|.
\end{equation}
If $J\subset J_1$, then $|f_p(J)|=2|J|$ because $\slope(f_p|_{J_1})=-2$.
If there exists $i\in\Lbrack 2,2n\Rbrack$ such that
$J\subset J_i$, then $f_p^{2n-i}(J)\subset
J_{2n}$. For all $i\in\Lbrack 2,2n-1\Rbrack$, $\slope(f_p|_{J_i})=-1$ 
and $\slope(f_p|_{J_{2n}})=n$. It follows that
$|f_p^{2n-i+1}(J)|=n|f_p^{2n-i}(J)|=n|J|$.  If $n\ge 2$, then
$|f_p^{2n-i+1}(J)|\ge 2|J|$. If $n=1$, then  $f_p(J_{2n})=J_1$, and thus
$f_p^{2n-i+1}(J)\subset J_1$ and $|f_p^{2n-i+2}(J)|\ge 2|J|$.  In both
cases, there exists an integer $k\ge 1$ such that $|f_p^k(J)|\ge
2|J|$. This proves \eqref{eq:2J}.

Let $J$ be a non degenerate subinterval of $[0,2n]$. 
If, for all integers $k$, the interval $f_p^k(J)$ does not meet
$\{0,1,\ldots,2n\}$, then \eqref{eq:2J} implies that the length of
$f_p^k(J)$ grows to infinity, which is impossible. Thus there exists an
integer $k\ge 0$ such that $f_p^k(J)$ contains one of the points
$0,1,\ldots,2n$. Since $\{0,1,\ldots, 2n\}$ is a periodic orbit,
$k$ can be chosen such that $0\in
f_p^k(J)$. Moreover, $f_p^k(J)$ is a non degenerate interval according to
the definition of $f_p$.
The point $0$ is fixed for $g:=(f_p)^p$ and $g$ is of
slope $4n>1$ on $\left[0,\frac{1}{4n}\right]$. Thus, by
Lemma~\ref{lem:repulsive-fixed-point}, there exists $i\ge 0$ such that
$g^i(f^k(J))\supset  \left[0,\frac{1}{4n}\right]$, and hence 
$g^{i+1}(f^k(J))\supset [0,1]$.  Moreover, for every $i\in\Lbrack 1,p-1\Rbrack$,
$$
J_{p-2}\to J_{p-1}\to\underbrace{J_1 \to \cdots \to J_1}_{p-1-i \rm\ arrows} \to J_2 \cdots \to J_i
$$
is a path of length $p$ from $J_{p-2}=[0,1]$ to $J_i$ in the graph of
the periodic point $n$. This implies that $f_p^p([0,1])\supset \bigcup_{i=1}^{p-1}
J_i= [0,2n]$. Therefore, $f_p^{p(i+2)+k}(J)=[0,2n]$. We conclude that $f_p$
is  topologically mixing.
\end{ex}

\begin{ex}\label{ex:all-types}\index{square root of a map}
We are going to build interval maps of type $n$ for all integers
$n\in\IN$, following the construction in \cite{Ste}.
We start with the definition of the so-called \emph{square root} of a map.
If $f\colon [0,b]\to [0,b]$ is an interval map
and $\delta\in [0,b]$, the \emph{square root}
of $f$ (more precisely, one realization of the square root of $f$)
is the continuous map $g\colon [0,2b+\delta]\to [0,2b+\delta]$ defined by:
\begin{itemize}
\item $\forall x\in [0,b]$, $g(x):=f(x)+(b+\delta)$,
\item $\forall x\in [b+\delta, 2b+\delta]$, $g(x):=x-(b+\delta)$,
\item $g$ is linear on $[b,b+\delta]$.
\end{itemize} 
The map $g$ is not well defined if $\delta=0$ and $g(b)>0$.
The value chosen for $\delta$ is usually $\delta=0$ if $g(b)=0$,
and $\delta=b$ otherwise.
This construction is represented in Figure~\ref{fig:type-2n} with $\delta=b$.
This map satisfies: 
\begin{gather}
\forall x\in[0,b],\ g^2(x)=f(x),\label{eq:squareroot}\\
g([0,b])\subset [b+\delta,2b+\delta]\text{ and } g([b+\delta,2b+\delta])=[0,b].\label{eq:squareroot2}
\end{gather}
\begin{figure}[htb]
\centerline{\includegraphics{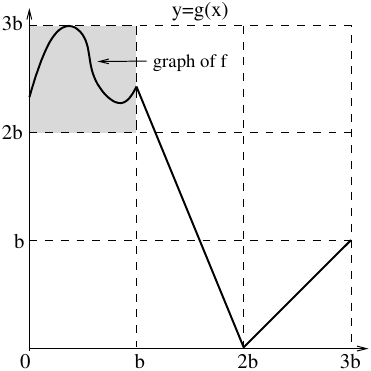}}
\caption{The map $g$ is the square root of $f$. If $f$ is of type $n$, $g$ is of type $2n$.}
\label{fig:type-2n}
\end{figure}
It is clear that $g$ has a unique fixed point $c$, and that $c\in [b,b+\delta]$.
Moreover, $\lambda:=\slope(g|_{[b,b+\delta]})<-1$.
Thus, if $x,g(x),\ldots, g^k(x)$ belong to $[b,b+\delta]$, then 
$|g^k(x)-c|\ge |\lambda|^k|x-c|$. This implies that, for all
$x\in [b,b+\delta]\setminus\{c\}$, there exists $k\ge 0$
such that $g^k(x)\in [0,b]\cup [b+\delta,2b+\delta]$. Thus, by \eqref{eq:squareroot2},
all periodic orbits of $g$, except $c$, have at least one point in $[0,b]$ and 
are of even period.
Moreover, \eqref{eq:squareroot} implies that a point 
$x\in [0,b]$  is a periodic point of period $2m$ for $g$ if and only if it is 
a periodic point of period $m$ for 
$f$. We deduce that, if $f$ is of type $n$, then $g$ is of type $2n$.

With this procedure, it is possible to build an interval map of type $n$
for every positive integer $n$. We write $n=2^dq$ with $d\ge 0$ and $q$ odd.
If $q=1$, we start with a constant map $f\colon [0,1]\to [0,1]$, 
which is of type $1$. If $q>1$, we start with the 
interval map $f_q$ of type $q$ defined in Example~\ref{ex:odd-type}.
Then we build the square root of $f$, then the square root of the
square root, etc. At step $d$, we get an interval map of type $n=2^d q$.
\end{ex}

\begin{ex}\label{ex:type-infty}
We are going to build an interval map of type $2^{\infty}$. 
We follow \cite{Del}; see also \cite{BP}. For all $n\ge 0$, we set
$$
I_n:=\left[1-\frac{1}{3^n},1-\frac{2}{3^{n+1}}\right].
$$
For every $n\ge 0$, let $f_n\colon I_n\to I_n$ be the map of type $2^n$
built in Example~\ref{ex:all-types} and rescaled
(i.e., conjugate by an increasing linear homeomorphism) to fit into $I_n$.
Then the continuous map $f\colon [0,1]\to [0,1]$, illustrated in 
Figure~\ref{fig:type-2-infty}, is defined by:
\begin{itemize}
\item $\forall x\in I_n$, $f(x):=f_n(x)$,
\item $f(1):=1$,
\item $\forall n\ge 0$, $f$ is linear on $\left[1-\frac{2}{3^{n+1}},1-\frac{1}{3^{n+1}}\right]$.
\end{itemize}
\begin{figure}[htb]
\centerline{\includegraphics{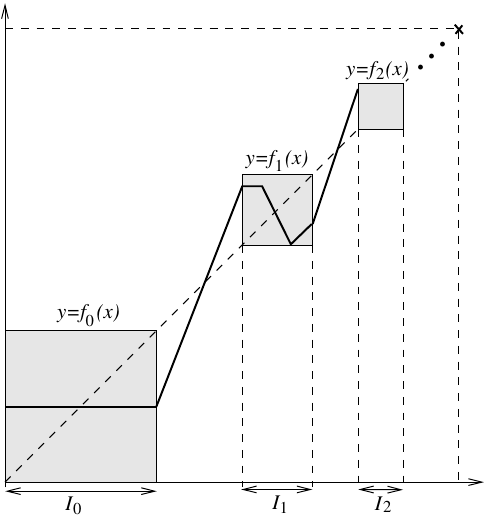}}
\caption{Each map $f_n$ is of type $2^n$, the whole map is of type 
$2^{\infty}$.}
\label{fig:type-2-infty}
\end{figure}

It is obvious that the only periodic points of $f$ in 
$\left[1-\frac{2}{3^{n+1}},1-\frac{1}{3^{n+1}}\right]$ are fixed,
and $x$ is a periodic point of period $p>1$ for $f$ if and only if there is
some $n\ge 0$ such that $x$ is a periodic point of period $p$ for $f_n$.
Therefore the type of $f$ is $2^{\infty}$.

We remark that, for all $x\in [0,1]$, the $\omega$-limit set of $x$ is a 
periodic orbit of period $2^n$ for some integer $n\ge 0$. This is not
always the case for maps of type $2^\infty$. 
In \cite{Del}, there is another example of a map of
type $2^{\infty}$ with an infinite $\omega$-limit set. 
We shall see such a map in depth in Example~\ref{ex:non-chaos-LY-htop0}.
\end{ex}

\begin{rem}\label{rem:alltypesinunimodal}
There is a completely different way of proving that all types are
realized.
It consists of the study of a one-parameter family of interval maps that
exhibits all possible types. The most famous family is the
\emph{logistic}\index{logistic family} family
$f_{\lambda}(x)=\lambda x(1-x)$, where $x\in [0,1]$ and $\lambda\in[0,4]$.
For every 
$n\in\IN\cup\{2^\infty\}$, there exists $\lambda_n\in [0,4]$ such that
$f_{\lambda_n}$ is of type $n$; the map $f_{\lambda_{2^\infty}}$ is
called the \emph{Feigenbaum map}\index{Feigenbaum map}.
More generally, every ``typical'' family of smooth unimodal maps exhibits
all possible types; an interval map $f\colon [0,1]\to [0,1]$ is
\emph{unimodal}\index{unimodal map} if $f(0)=f(1)=0$ and there is 
$c\in (0,1)$ such that $f|_{[0,c]}$ is increasing and $f|_{[c,1]}$ is
decreasing.
The proofs are non-constructive and
rely on quite sophisticate tools like the kneading theory.
The study of such families of interval maps is out of the scope of this book.
See \cite{dMvS, CE, MT2, Guc2, Jon, KH}. 

In \cite[Section 2.2]{ALM}, Alsedà, Llibre and Misiurewicz gave a short proof 
consisting of showing that the family of truncated tent maps
exhibits all types. The truncated tent maps are defined as
$g_{\lambda}(x)=\min (T_2(x),\lambda)$, where $x\in [0,1]$,
$\lambda\in[0,1]$ and $T_2$ is the tent map defined in 
Example~\ref{ex:tent-map}. The proof is non constructive, 
as for families of smooth unimodal maps, but is much simpler.
\end{rem}

\subsection*{Remarks on graph maps}

Sharkovsky's Theorem~\ref{theo:Sharkovsky} has motivated a lot of work
aimed at finding characterizations of the set of periods for more
general one-dimensional spaces.

One of the lines of generalization of Sharkovsky's Theorem consists of
characterizing the possible sets of periods of tree maps.
The first remarkable results in this line are
due to Alsedà, Llibre and Misiurewicz \cite{ALM2} and Baldwin
\cite{Bal}. In \cite{ALM2} the characterization of the
set of periods of the maps on the $3$-star with a fixed
branching point, in terms of three linear orderings, was obtained, whereas in
\cite{Bal} the characterization of the set of periods of all
dynamical systems on $n$-stars is given (an $n$-star\index{star} is a 
tree made of $n$ segments glued together
by one of their endpoints at a single point, e.g., 
$S_n:=\{z\in\IC\mid z^n \in [0,1]\}$). Further extensions
were given by Baldwin and Llibre \cite{BL} for tree maps such
that all the branching points are fixed, then by Bernhardt \cite{Bern2} for
tree maps such that all the branching points are periodic.
Finally, Alsedà, Juher and Mumbrú  overcame
the general case of tree maps \cite{AJM2, AJM, AJM3, AJM4}. 
They showed that the set of periods of a tree map is the union of finitely many 
terminal segments of the orders of Baldwin and of a finite set
(for every integer $p\ge 2$, the $p$-order of Baldwin is a partial ordering
on $\IN$,  coinciding with Sharkovsky's order for $p=2$). The precise
statement is quite complicated; we refer to \cite[Theorem 1.1]{AJM3}.

Another direction is to consider topological
graphs which are not trees, the circle being the simplest one. 
Circle maps display a new feature: the set of periods depend on the
degree of the map and, in the case of degree 1, on the rotation interval.

Consider a circle map $f\colon \IS\to \IS$, where $\IS:=\IR/\IZ$, and 
a lifting\index{lifting of a circle map} of $f$, that is,
a continuous map  $F\colon \IR\to\IR$ such that $\pi\circ F=f\circ\pi$,
where $\pi\colon \IR\to\IS$ denotes the canonical projection
($F$ is uniquely defined up to the addition of an integer). 
The \emph{degree}\index{degree of a circle map} of $f$ (or $F$)
is the integer $d\in\IZ$ such that $F(x+1)=F(x)+d$ for all $x\in \IR$.

The characterization of the sets of periods for circle maps of degree
different from $1$ is simpler than the one for the case of degree $1$.
The case of degree different from $1,-1$ and part of the degree $-1$ case
are due to Block, Guckenheimer, Misiurewicz and Young \cite{BGMY}.  See also
\cite[Section 4.7]{ALM}.

\begin{theo}\label{theo:period-degreenot1}
Let $f\colon \IS\to \IS$ be a circle map of degree $d\neq 1$.
\begin{itemize}
\item If $|d|\ge 2$, $d\neq -2$, then the set of periods of $f$ is $\IN$.
\item If $d=-2$, then the set of period of $f$ is either $\IN$ or
$\IN\setminus\{2\}$.
\item If $d\in\{0,-1\}$, then there exists $s\in\IN\cup \{2^\infty\}$ such that
the set of periods of $f$ is $\{m\in\IN\mid m \unrhd s\}$. 
\end{itemize}
Moreover, all cases are realized by some circle maps.
\end{theo}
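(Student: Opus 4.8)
The plan is to pass to a lifting $F\colon\IR\to\IR$ of $f$, so that $F(x+1)=F(x)+d$, and to treat the three cases separately, in each case pushing the problem back onto interval maps, for which Sharkovsky's Theorem~\ref{theo:Sharkovsky} and the covering machinery of Lemma~\ref{lem:chain-of-intervals} are available. Throughout one uses that the degree of $f^n$ is $d^n$, and that a periodic point of $f$ of period $n$ is the projection of a point $\tilde x$ with $F^n(\tilde x)=\tilde x+k$ for some $k\in\IZ$ and no analogous relation with a smaller positive exponent. The realization part is handled by explicit families at the end of each case.

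\emph{Case $d=0$.} Here $F$ is $1$-periodic, so it descends to $\bar F\colon\IS\to\IR$, and $K:=F(\IR)=\bar F(\IS)$ is a compact interval with $F(K)\subset K$; thus $g:=F|_K\colon K\to K$ is an interval map. I would show that $f$ and $g$ have the same set of periods. One inclusion is immediate: a periodic point of $g$ of period $m$ projects to a periodic point of $f$ of period dividing $m$, and a drop is impossible because all iterates stay inside $K=F(\IR)$, so an equality $F^i(x)=x+k$ for $x\in K$, $1\le i<m$, would force $k=0$. Conversely, if $x$ is periodic of period $m$ for $f$, a lift of $f(x)$ lies in $K$, on which $F$ agrees with $g$, so $m$ is a period of $g$. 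By Sharkovsky's Theorem the set of periods of $g$ is $\{m\in\IN\mid m\unrhd s\}$ for some $s\in\IN\cup\{2^\infty\}$, giving the third bullet for $d=0$. For realization of a prescribed $s$, take an interval map of type $s$ from Example~\ref{ex:odd-type}, \ref{ex:all-types}, or \ref{ex:type-infty}, rescale it into $[0,1/2]\subset\IS$, and extend it to a $1$-periodic $F$ with $F(\IR)=[0,1/2]$; the associated degree-$0$ circle map then has exactly the periods of $g$.

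\emph{Case $|d|\ge 2$.} The map $f$ has a fixed point, because $H(x):=F(x)-x$ satisfies $H(x+1)=H(x)+(d-1)$ with $d-1\ne 0$, hence is unbounded in both directions and meets every integer; normalise $F$ so that $F(\tilde p)=\tilde p$ for a lift $\tilde p$ of a fixed point. Then by the intermediate value theorem $F([\tilde p,\tilde p+1])\supset\langle\tilde p,\tilde p+d\rangle$, an interval of length $|d|\ge 2$ covering $|d|$ consecutive unit strips. A standard application of the covering lemma (Lemma~\ref{lem:chain-of-intervals}(i) and (iii)) produces, inside $[\tilde p,\tilde p+1]$, closed subintervals with pairwise disjoint interiors, each mapped by $F$ onto one of those strips; projecting to $\IS$ gives arcs $A_1,\dots,A_{|d|}$ with pairwise disjoint interiors and $f(A_i)=\IS$ for every $i$, i.e. a ``$2$-horseshoe''. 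Running chains $A_1\to A_1\to\cdots\to A_1\to A_2\to A_1$ of length $n$ through the graph-map version of Lemma~\ref{lem:chain-of-intervals}(ii) gives, for each $n$, a point of period dividing $n$, and the period is \emph{exactly} $n$ unless the periodic point collapses onto a point shared by the boundaries of $A_1$ and $A_2$ --- the one genuinely delicate issue, and the source of the exceptions, reflecting the caveat illustrated in Figure~\ref{fig:contrex-recouvrement}. For $|d|\ge 3$ there is enough covering multiplicity to choose $A_1,A_2$ with \emph{disjoint closures}, removing the obstruction and yielding all of $\IN$; for $d=2$ one verifies period $2$ directly (e.g. $z\mapsto z^2$ has the $2$-orbit $\{1/3,2/3\}$), while for $d=-2$ the same computation shows that a point forced through two arcs whose union is all of $\IS$ can only be the fixed point already present, explaining why period $2$ may be absent. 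For realization, the linear map $x\mapsto dx$ on $\IR/\IZ$ has $|d^n-1|$ points fixed by $f^n$; a Möbius count shows a point of period exactly $n$ exists for every $n$ when $|d|\ge 2$, $d\ne -2$, whereas for $d=-2$ the coincidence $|d^2-1|=|d-1|$ kills period $2$, giving $\IN\setminus\{2\}$; a small perturbation of $x\mapsto -2x$ near a fixed point creates a period-$2$ orbit without changing the degree, realizing $\IN$.

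\emph{Case $d=-1$.} This is the part I expect to be hardest and least routine. Now $\phi(x):=F(x)+x$ is $1$-periodic, so $f$ is linked to a degree-$0$ object, but not by a conjugacy, so the clean reduction of the $d=0$ case is unavailable. What I would do is follow Block, Guckenheimer, Misiurewicz and Young: a degree $-1$ map still has a fixed point, and one shows that the period-forcing relation for such maps coincides with Sharkovsky's order by redoing, on the circle cut appropriately at a suitable point, the ``graph of a periodic orbit'' analysis of Section~\ref{sec:Sharkovsky} --- i.e. extracting from any periodic orbit the interval-covering chains that force the Sharkovsky successors --- so that the set of periods is again a terminal segment $\{m\mid m\unrhd s\}$; realization of each $s$ is then obtained, as in the degree-$0$ case, by building a degree $-1$ circle map around a rescaled interval map of type $s$. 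The two places where the argument ceases to be routine are precisely (a) making the circle-to-interval bookkeeping rigorous when $d=-1$, and (b) controlling the boundary-collapse phenomenon in the $|d|\ge 2$ horseshoe argument carefully enough that the exceptions at $d=-2$ (and the special check at $d=2$) come out exactly as stated.
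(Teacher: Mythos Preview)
The paper does not give a proof: the theorem is stated in a ``Remarks on graph maps'' subsection with references to Block--Guckenheimer--Misiurewicz--Young and to \cite[Section 4.7]{ALM}, so there is no argument in the paper to compare against. Your outline follows the standard route. The $d=0$ reduction and the realizations are fine; one small point is that the equality of periods between $f$ and $g=F|_K$ rests on the $1$-periodicity of every $F^i$ (so $F^i(x)=x+k$ together with $F^m(x)=x$ forces $k=0$), not merely on orbits staying in $K$, since $K$ may have length $\ge 1$. For $|d|\ge 3$ your ``disjoint closures'' claim can be justified by taking preimages in $[\tilde p,\tilde p+1]$ of two \emph{non-adjacent} unit intervals in $\langle\tilde p,\tilde p+d\rangle$: a shared endpoint would map into their empty intersection.

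The genuine gap is at $|d|=2$. At $d=2$ your ``verify period $2$ directly (e.g.\ $z\mapsto z^2$\ldots)'' gives an example, not a proof that \emph{every} degree-$2$ map has period $2$; and for both $d=2$ and $d=-2$ the two arcs may share endpoints, so your horseshoe chains are vulnerable to boundary collapse for \emph{every} $n\ge 2$, not just $n=2$ --- you do not explain why periods $n\ge 3$ survive. The fix (essentially what the cited references do) is arithmetic and bypasses horseshoes: for $|d|\ge 2$ and $n\ge 1$, pick $x$ with $F^n(x)=x\pm 1$, which exists since $F^n(t)-t$ changes by $d^n-1$ over one period. If $x$ had $f$-period $i\mid n$, $i<n$, say $F^i(x)=x+j$, then $F^n(x)=x+j\,\dfrac{d^n-1}{d^i-1}$, forcing $j=\pm\dfrac{d^i-1}{d^n-1}$; one checks $0<|j|<1$ for $|d|\ge 2$ and $i<n$, except when $(d,i,n)=(-2,1,2)$, where $j=-1$. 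This forces period $n$ for all $n$ when $|d|\ge 2$, with the sole possible exception $n=2$ at $d=-2$. Your sketch for $d=-1$ is in the right direction but, as you acknowledge, that is where the substantive work lies.
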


The  characterization of the sets of periods of circle maps of degree
$1$ is due to Misiurewicz \cite{Mis5} and uses as a key tool the
rotation theory. The reader can refer to \cite{ALM} for an exposition
of rotation theory for (non invertible) circle maps of degree $1$.
The sets of periods of circle maps of degree $1$
contain the set of all denominators of all rational numbers (not necessarily
written in irreducible form) in the interior of an interval of the
real line. As a consequence, these sets of periods cannot be expressed
in terms of a finite collection of orderings.

\begin{theo}\label{theo:periods-degree1}
Let $f\colon \IS\to \IS$ be a circle map of degree $1$, and let $[a,b]$
be the rotation interval of a lifting of $f$.
Then there exist $s_a, s_b\in\IN \cup\{2^\infty\}$ such that the set of
periods of $f$ is equal to
$S(a,s_a)\cup M(a,b)\cup S(b,s_b)$,
where
\begin{itemize}
\item
$M(a,b):=\{q\in\IN\mid\exists p\in\IZ,\ \frac{p}q\in (a,b)\}$.
\item
$S(x,s):=\emptyset$ if $a\in\IR\setminus\IQ$ and $S(x,s):=\{nq\mid n\unrhd s\}$
if $a=\frac{p}q$ with $p\in\IN$, $q\in\IZ$ and $\gcd(p,q)=1$.
\end{itemize}
Moreover, all cases are realized.
\end{theo}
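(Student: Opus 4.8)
The plan is to use the rotation theory of degree-one circle maps, following Misiurewicz \cite{Mis5} (see also \cite{ALM}), since the statement is not of the combinatorial type derivable from Sharkovsky's theorem alone. Fix a lifting $F\colon\IR\to\IR$ of $f$, so $F(x+1)=F(x)+1$. For a point $x$ set $\rho_F(x):=\lim_{n\to\infty}\frac{F^n(x)-x}{n}$ when this exists, and define the rotation set of $F$ as the set of all limits $\lim_i\frac{F^{n_i}(x_i)-x_i}{n_i}$ taken along arbitrary sequences. The first ingredient is Ito's theorem: this set is a nonempty compact interval $[a,b]$, independent of the lifting up to an integer translation; this is the rotation interval in the statement. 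One also needs the elementary equivalence: $\pi(x)$ is periodic with $f^q(\pi(x))=\pi(x)$ if and only if $F^q(x)=x+p$ for some $p\in\IZ$, and then $\rho_F(x)=\frac pq$; in particular every periodic point has rational rotation number.

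Next I would establish the interior contribution $M(a,b)$. One inclusion is immediate from the displayed equivalence: if $\pi(x)$ is periodic of period $q$ with rotation number $\frac pq\in(a,b)$, then $q\in M(a,b)$; together with the fact (proved along the way) that periodic points have rotation number in $[a,b]\cap\IQ$, this pins down which periods in $M(a,b)$ come from the interior. For the existence direction the key lemma is: if $\frac pq\in(a,b)$ in any representation, then $F$ has a point $x$ with $F^q(x)=x+p$ whose image $\pi(x)$ has period exactly $q$. The argument mirrors the ``graph of a periodic orbit / chain of intervals'' machinery of Section~\ref{sec:Sharkovsky}, but carried out for the $F$-covering relation on $\IR$: pick rotation numbers $\alpha<\frac pq<\beta$ inside $(a,b)$ realized by two orbits, build from these orbits and from $F$ a finite directed graph of intervals containing a primitive cycle of length $q$, and check via Lemma~\ref{lem:chain-of-intervals} that the corresponding fixed point of $x\mapsto F^q(x)-p$ has minimal period $q$ on the circle; the twist forced by $\frac pq$ lying strictly between $\alpha$ and $\beta$ prevents the period from dropping.

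Then I would treat each endpoint, say $a$ (the case of $b$ follows by conjugating $f$ with an orientation-reversing homeomorphism of $\IS$, which maps $[a,b]$ to $[-b,-a]$). If $a\notin\IQ$, no periodic point has rotation number $a$, so by the previous step every periodic point has rotation number in the \emph{open} interval, and the contribution ``at $a$'' is empty, consistent with $S(a,s_a)=\emptyset$. If $a=\frac pq$ in lowest terms, I would realize the periodic orbits of $f$ of rotation number exactly $\frac pq$ as periodic orbits of an honest interval map: show $x\mapsto F^q(x)-p$ has a fixed point, pass to a suitable invariant interval $[u,v]$ with $F^q(u)-p=u$ and $F^q(v)-p=v$, and verify that $z\in[u,v]$ is periodic of period $m$ for $g:=(F^q-p)|_{[u,v]}$ if and only if $\pi(z)$ is periodic of period $mq$ for $f$ with rotation number $\frac pq$. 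Sharkovsky's Theorem~\ref{theo:Sharkovsky} applied to $g$ then produces $s_a\in\IN\cup\{2^\infty\}$ with set of periods $\{n\in\IN\mid n\unrhd s_a\}$, hence the rotation-number-$\frac pq$ periods of $f$ are exactly $\{nq\mid n\unrhd s_a\}=S(a,s_a)$. Combining the three contributions yields $\mathrm{Per}(f)=S(a,s_a)\cup M(a,b)\cup S(b,s_b)$. For realization, one builds piecewise linear degree-one liftings: a tent-like profile plus a constant with two adjustable plateaux realizes any prescribed rotation interval $[a,b]$, and near a rational endpoint $\frac pq$ one grafts a rescaled interval map of type $s_a$ (as in Examples~\ref{ex:odd-type}--\ref{ex:type-infty}) into the relevant $F^q$-invariant interval, mirroring the endpoint analysis, and similarly at $b$.

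The main obstacle is the rotation-theoretic input that cannot be bypassed with the tools of this chapter: Ito's theorem that the rotation set is a genuine interval, and the reduction of the rotation-number-$\frac pq$ periodic orbits to an interval map. Within the combinatorial part, the delicate point is upgrading ``$\frac pq\in(a,b)$'' to ``$q$ is a period'', i.e.\ ensuring the minimal period is exactly $q$ and not a proper divisor, which is precisely where the strict inclusion $\alpha<\frac pq<\beta$ and the corresponding twist in the interval graph must be used.
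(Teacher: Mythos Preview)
The paper does not prove this theorem: it appears in the ``Remarks on graph maps'' subsection at the end of Section~\ref{sec:Sharkovsky}, is attributed to Misiurewicz~\cite{Mis5}, and the reader is referred to~\cite{ALM} for the rotation theory; no argument is supplied. So there is no in-paper proof to compare your proposal against. Your sketch is a faithful outline of the Misiurewicz/ALM approach and is the right strategy.

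One point deserves more care. At a rational endpoint $a=\tfrac pq$ you propose to ``pass to a suitable invariant interval $[u,v]$'' for $G:=F^q-p$ and read off periods via Sharkovsky. The map $G$ has degree~$1$ on~$\IR$ (so $G-\mathrm{id}$ is $1$-periodic), and in general there is no compact $G$-invariant interval capturing \emph{all} rotation-number-$\tfrac pq$ periodic orbits. The standard fix (as in~\cite{ALM}) is to work with the monotone bounding maps $F_l,F_u$: since $a$ is the left endpoint, $\rho(F_l)=\tfrac pq$, hence $F_l^q(x)-p\le x$ for all $x$ and the set $\{x:F_l^q(x)=x+p\}$ is nonempty; one then analyzes the periodic orbits of rotation number $\tfrac pq$ via an associated interval map built from $G$ restricted between two adjacent lifts of a twist orbit, and it is this construction that yields an honest interval map to which Sharkovsky applies and gives $s_a$. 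Your outline is correct in spirit, but the phrase ``suitable invariant interval'' hides exactly this nontrivial step. The interior part and the realization part are as you describe.
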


Finding a characterization of the sets of periods of graphs maps
when the graph is neither a tree nor the circle is a big challenge and in
general it is not known what the sets of periods may look like.
Only two cases have been studied: the graph shaped  like $\sigma$
\cite{LL}, and the graph shaped like 
8~\cite{LPR}, with the restriction, in both cases, that the map
fixes the branching point.
This assumption greatly facilitates the study (similarly,
tree maps fixing all the branching points were dealt with first).

A rotation theory has been developed by Alsedà and the author 
for maps of degree $1$ on graphs containing a single loop \cite{AR}. 
It leads to results similar to, but weaker than, the ones obtained from the
rotation theory for circle maps. They
give information about the periods, but this is far from leading to
a characterization of the sets of periods, even in the simplest case of 
the graph $\sigma$ \cite{AR3}.

\section{Relations between types  and horseshoes}

If an interval is mapped across itself twice, the effect on the dynamics
is similar to Smale's horseshoe for two-dimensional homeomorphisms \cite{Sma}.
This leads to the following definition. The name \emph{horseshoe} for
interval maps was given by Misiurewicz \cite{Mis}, but the notion
was introduced much earlier by Sharkovsky under the name of
\emph{L-scheme} \cite{Sha}.

\begin{defi}[horseshoe]\index{horseshoe}\label{defi:horseshoe}
Let $f$ be an interval map. If $J_1,\ldots, J_n$ are
non degenerate closed intervals with pairwise disjoint interiors
such that  $J_1\cup\cdots \cup J_n\subset f(J_i)$
for all $i\in\Lbrack 1,n\Rbrack$, then
$(J_1,\ldots,J_n)$ is called an \emph{$n$-horseshoe}, or simply a 
\emph{horseshoe} if $n=2$. If in addition the intervals are disjoint, 
$(J_1,\ldots,J_n)$ is called a \emph{strict} $n$-horseshoe.
\end{defi}

\begin{rem}
The definition of horseshoe slightly varies in the literature.
For some authors, a horseshoe is made of disjoint closed subintervals, or
is a partition of an interval into subintervals such that
the image of every subinterval contains the whole interval (thus,
the subintervals forming the horseshoe are disjoint but not 
closed). The definition above follows \cite{ALM}. 
For Block and Coppel, an interval map with a horseshoe  is called 
\emph{turbulent}\index{turbulent} \cite{BCop2};  this terminology was
suggested by Lasota and Yorke \cite{LasY}. However, \emph{turbulent} may
refer to a point with an infinite $\omega$-limit set 
(see e.g. \cite{Del}), and so this word might be confusing. 
\end{rem}

Sometimes it will be useful to boil down to a particular form of
a horseshoe, as given by the next lemma.

\begin{lem}\label{lem:particularhorseshoe}
Let $f$ be an interval map and $(J,K)$ a horseshoe. Then there exist
points $u,v,w$ such that $f(u)=u$, $f(v)=w$, $f(w)=u$ and, either
$u<v<w$, or $u>v>w$. Note that $\langle u,v\rangle, \langle v,w\rangle$
form a horseshoe.
\end{lem}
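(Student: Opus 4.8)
The plan is to normalize the horseshoe, extract a tightly controlled subinterval by the chain‑of‑intervals construction, locate a fixed point $u$ inside it, and then ``pull $u$ back'' one step at a time through $f$.

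First I would use that the horseshoe hypothesis is symmetric in $J$ and $K$ to assume $J$ lies to the left of $K$; write $J=[a,b]$, $K=[c,d]$ with $a<b\le c<d$, and set $A:=a$, $D:=d$. Since $f(J)$ and $f(K)$ are intervals (Theorem~\ref{theo:ivt}) containing $J\cup K$, both contain $[A,D]$. A short position argument gives points $p,q\in J\cup K$ with $f(p)=A$, $f(q)=D$ and $p<q$: otherwise every preimage of $A$ would lie weakly to the right of every preimage of $D$, which is impossible because there is a preimage of $A$ in $J$ (hence $\le b\le c$) and a preimage of $D$ in $K$ (hence $\ge c$), forcing both to equal $c$ and hence $a=f(c)=d$. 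Imitating the construction in the proof of Lemma~\ref{lem:chain-of-intervals} (the case $n=1$), put $q':=\min\{z\ge p\mid f(z)=D\}$, $p':=\max\{z\le q'\mid f(z)=A\}$ and $M:=[p',q']$. Then $M\subseteq[A,D]$, $f(p')=A$, $f(q')=D$, no point of $(p',q')$ is sent to $A$ or $D$, and consequently $f(M)=[A,D]$.

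Since $f(M)=[A,D]\supseteq M$, Lemma~\ref{lem:fixed-point} gives a fixed point in $M$. In the generic case $q'<D$ I would take $u$ to be the largest fixed point of $f$ in $M$; as $f(q')-q'=D-q'>0$ there is no fixed point in $(u,q']$, so $u<q'$ and $f(x)>x$ for $x\in(u,q']$, whence $f([u,q'])\supseteq[u,D]$. The crux is to produce a preimage $w$ of $u$ with $q'<w\le D$: given such a $w$, the inclusion $w\in[u,D]\subseteq f([u,q'])$ yields $v\in[u,q']$ with $f(v)=w$, and then $u<v<w$ since $v>u$ (as $f(v)=w\ne u$) and $v\le q'<w$. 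Together with $f(u)=u$ and $f(w)=u$ this is the wanted configuration; the ``Note'' is then immediate from Theorem~\ref{theo:ivt}, since $f(\langle u,v\rangle)\supseteq\langle u,w\rangle\supseteq\langle u,v\rangle\cup\langle v,w\rangle$ and $f(\langle v,w\rangle)\supseteq\langle w,u\rangle=\langle u,w\rangle$, so $(\langle u,v\rangle,\langle v,w\rangle)$ is a horseshoe. In the remaining case $q'=D$ one has $D=d$ and $f(d)=d$, so I would set $u:=d$ and run the mirror argument, pulling $u$ back to the left through $M$ (using a preimage of $d$ in $J$, which lies left of $p'$), to get $u>v>w$.

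The step I expect to be the main obstacle is exactly producing the preimage $w$ of $u$ on the correct side of $M$. I would organize this by how $(p,q)$ can be chosen. If $p,q$ can both be taken in $J$, then $M\subseteq J$ and any preimage of $u$ in $K$ already lies beyond $q'$; symmetrically, if both can be taken in $K$ one uses the smallest fixed point of $M$ and an orientation‑reversing rescaling. In the remaining ``straddling'' case $p\in J$, $q\in K$, the failure of the previous alternative for $K$ means that in $K$ every preimage of $A$ lies to the right of every preimage of $D$, hence to the right of $q'$, which supplies precisely the preimage of $A$ in $(q',D]$ needed to make $f$ attain the value $u$ on $[q',\,\cdot\,]$. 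Applying the rescaling $x\mapsto-x$ (which swaps the two admissible conclusions $u<v<w$ and $u>v>w$) halves this bookkeeping; apart from it, every verification is a one‑line use of the intermediate value theorem.
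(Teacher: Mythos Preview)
Your argument is correct, but it is considerably more elaborate than the paper's. The paper works entirely inside $J$: it picks $a,b\in J$ with $f(a)=\min J$ and $f(b)=\max K$ (not the global extremes $A,D$), and then splits into just two cases according to whether $a<b$ or $b<a$. In the first case a fixed point $u$ sits in $[a,b]\subset J$, a preimage $w$ of $u$ is taken in $K$ (automatically to the right of $u$), and a preimage $v$ of $w$ is found in $[u,b]$; in the second case the fixed point is taken in $K$ instead and one pulls back through $[b,a]\subset J$. No minimal interval $M$, no three-way split on where $p,q$ live, and no separate boundary case $q'=D$.

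Your route---building $M$ via the chain-of-intervals trick and then casing on whether a valid $(p,q)$ can be found inside $J$, inside $K$, or only straddling---does work, and the straddling argument (using that in $K$ every preimage of $A$ must lie right of every preimage of $D$) is a nice observation. What you lose is brevity: the paper's key simplification is that asking only for $f(a)=\min J$ (rather than $\min(J\cup K)$) guarantees both anchor points lie in $J$, which collapses your three location cases into a single order dichotomy. Your approach is more ``systematic'' in that it reuses the Lemma~\ref{lem:chain-of-intervals} machinery, but the paper's ad hoc choice of targets $\min J$ and $\max K$ is the real shortcut.
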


\begin{proof}
We assume $J\le K$. Let $a,b\in J$ be such that $f(a)=\min J$ and
$f(b)=\max K$. We have $\langle a,b\rangle\subset J$ and
$f(\langle a,b\rangle)\supset J\cup K$. 

\noindent$\bullet$ \textbf{Case 1: $a<b$.} Let $u\in [a,b]$ be a fixed point
($u$ exists by Lemma~\ref{lem:fixed-point}); 
$u\ne b$ because $f(b)\notin [a,b]$, hence $u\notin K$. Since
$f(K)\supset J\supset [a,b]$, there is $w\in K$ such that $f(w)=u$
(by the intermediate value theorem). Moreover $f([u,b])\supset [u, \max K]\supset K$, 
thus there
is $v\in[u,b]$ such that $f(v)=w$. We have $u<v$ by definition 
(note that $u=v$ is impossible because $u\notin K$ and $f(v)\in K$) and
$v\le w$ because $v\in J$ and $w\in K$. Moreover $v\ne w$ because 
$f(w)=u<v\le w=f(v)$.

\noindent$\bullet$ \textbf{Case 2: $b<a$.} Let $u\in K$ be a fixed point; $u>a$ because
$f(a)\notin K$. Let $w\in [b,a]$ be such that $f(w)=u$. Since $f([w,a])
\supset [\min J, u]\supset J$, there is $v\in [w,a]$ such that
$f(v)=w$. We have $w\le v\le u$, and equalities $v=u$, $w=v$ are not
possible because $v\le a<u$ and $f(w)=u>v\ge w=f(v)$.
\end{proof}

The next lemma is a straightforward consequence of 
Lemma~\ref{lem:chain-of-intervals}(iii).

\begin{lem}\label{lem:horseshoe-fn}
Let $f$ be an interval map and $(J_1,\ldots, J_p)$
a $p$-horseshoe for $f$. Then, for all $n\ge 1$,
\begin{enumerate}
\item $\forall i,j\in\Lbrack 1,p\Rbrack$, $J_i$ covers $p^{n-1}$ times $J_j$
for $f^n$,
\item $f^n$ has a $p^n$-horseshoe.
\end{enumerate}
\end{lem}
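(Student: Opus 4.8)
The plan is to derive both assertions from Lemma~\ref{lem:chain-of-intervals}(iii) applied to suitable collections of chains of intervals built out of the horseshoe. Recall that $(J_1,\ldots,J_p)$ being a $p$-horseshoe means $J_1\cup\cdots\cup J_p\subset f(J_i)$ for every $i$, and in particular $J_i\cover{f}J_j$ for all $i,j$. So from any sequence of indices $(i_0,i_1,\ldots,i_n)$ in $\Lbrack 1,p\Rbrack$ we obtain a chain of intervals $J_{i_0}\to J_{i_1}\to\cdots\to J_{i_n}$ for $f$.

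For part (i): fix $i,j\in\Lbrack 1,p\Rbrack$. I would consider all index sequences of the form $(i, i_1, i_2,\ldots,i_{n-1}, j)$ where $(i_1,\ldots,i_{n-1})$ ranges over $\Lbrack 1,p\Rbrack^{n-1}$; there are $p^{n-1}$ of them. Each gives a chain of intervals of length $n$ starting at $J_i$ and ending at $J_j$. Two distinct such sequences differ in at least one intermediate coordinate $k\in\Lbrack 1,n-1\Rbrack$, and then the corresponding $k$-th intervals $J_{i_k}$ and $J_{i_k'}$ are distinct members of the horseshoe, hence have disjoint interiors (this is part of the definition of a $p$-horseshoe). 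Thus the hypotheses of Lemma~\ref{lem:chain-of-intervals}(iii) are met with these $p^{n-1}$ chains (all starting at $J_i$), and we obtain closed intervals $K_1,\ldots,K_{p^{n-1}}\subset J_i$ with pairwise disjoint interiors such that $f^n(K_\ell)=J_j$ for each $\ell$. By Definition~\ref{def:covering}, this says exactly that $J_i$ covers $J_j$ $p^{n-1}$ times for $f^n$.

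For part (ii): I claim $(J_1,\ldots,J_p)$ is itself a $p^n$-horseshoe for $f^n$ once each $J_i$ is subdivided appropriately; more precisely, I will exhibit $p^n$ closed subintervals with pairwise disjoint interiors, each of whose image under $f^n$ contains $J_1\cup\cdots\cup J_p$. Concretely, for a fixed $i$, apply part (i) to each pair $(i,j)$, $j\in\Lbrack 1,p\Rbrack$; this does not immediately give the $p$-fold covering of the \emph{union} by a single subinterval. The cleaner route is again Lemma~\ref{lem:chain-of-intervals}(iii): for each starting index $i\in\Lbrack 1,p\Rbrack$ and each sequence $(i_1,\ldots,i_{n-1},i_n)\in\Lbrack 1,p\Rbrack^n$, form the chain $(J_i, J_{i_1},\ldots, J_{i_{n-1}}, J_{i_n})$. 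Grouping by the final index is not what we want; instead, for the horseshoe property of $f^n$ we observe that from the $p^{n-1}$ chains from $J_i$ to a \emph{single} chosen target we already get (by part (i)) $p^{n-1}$ disjoint-interior subintervals of $J_i$, each mapped onto that target by $f^n$. But in a horseshoe we need each of our $p^n$ base intervals to have $f^n$-image containing the whole union. So I would instead take, for each $i$, the $p^{n-1}$ subintervals $K^{(i)}_1,\ldots,K^{(i)}_{p^{n-1}}$ of $J_i$ furnished by part (i) with target $j=1$; these have disjoint interiors inside $J_i$, hence across different $i$ they are automatically disjoint-interior too, giving $p\cdot p^{n-1}=p^n$ intervals in total. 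It remains to upgrade ``$f^n(K^{(i)}_\ell)\supset J_1$'' to ``$f^n(K^{(i)}_\ell)\supset J_1\cup\cdots\cup J_p$''.

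The main obstacle is precisely this last upgrade, and the fix is to run Lemma~\ref{lem:chain-of-intervals}(iii) with a smarter family of chains: fix $i$ and, for each word $w=(i_1,\ldots,i_{n-1})\in\Lbrack 1,p\Rbrack^{n-1}$, take the chain $(J_i,J_{i_1},\ldots,J_{i_{n-1}},J_i)$ of length $n$ from $J_i$ back to $J_i$ (possible since $J_{i_{n-1}}\cover{f}J_i$). Distinct words differ at some intermediate coordinate, so the disjoint-interior hypothesis of (iii) holds; we get $p^{n-1}$ closed subintervals $K^{(i)}_w\subset J_i$ with pairwise disjoint interiors and $f^n(K^{(i)}_w)=J_i$. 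Moreover, since the penultimate interval $J_{i_{n-1}}$ satisfies $f(J_{i_{n-1}})\supset J_1\cup\cdots\cup J_p$, by re-examining Fact~1 in the proof of Lemma~\ref{lem:chain-of-intervals} (where $K_{n}$ is found inside $K_{n-1}$ with $f^n(K_n)=J_n$, using $f^n(K_{n-1})=f(J_{n-1})\supset J_n$) we may enlarge the target at the last step: there is actually a subinterval $\widetilde K^{(i)}_w\subset K^{(i)}_{w,\,n-1}$ with $f^n(\widetilde K^{(i)}_w)=J_i$ but $f^{n}$ of the full $K^{(i)}_{w,n-1}$ equalling $f(J_{i_{n-1}})\supset\bigcup_k J_k$. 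Thus each $K^{(i)}_w$, as originally produced, already satisfies $f^n(K^{(i)}_w)\supset J_1\cup\cdots\cup J_p$ after we take $K^{(i)}_w$ to be the preimage component realizing the \emph{whole} $f(J_{i_{n-1}})$ rather than just $J_i$; the disjointness across words and across $i$ is unaffected. Collecting $\{K^{(i)}_w : i\in\Lbrack 1,p\Rbrack,\ w\in\Lbrack 1,p\Rbrack^{n-1}\}$ gives $p^n$ non-degenerate closed intervals with pairwise disjoint interiors, each with $f^n$-image containing $J_1\cup\cdots\cup J_p$, which is a $p^n$-horseshoe for $f^n$. I expect the only delicate point to be bookkeeping the ``last-step enlargement'' cleanly; if one prefers to avoid reopening the proof of Lemma~\ref{lem:chain-of-intervals}, an alternative is to prove (ii) by induction on $n$, using that if $(J_1,\ldots,J_p)$ is a $p$-horseshoe for $f$ and $(L_1,\ldots,L_{p^{n-1}})$ is a $p^{n-1}$-horseshoe for $f^{n-1}$ with each $L_\ell$ contained in some $J_{i(\ell)}$, then subdividing each $L_\ell$ via part (i) of this lemma (the one-step covering $J_{i(\ell)}\cover{f}$ everything) yields a $p^n$-horseshoe for $f^n$; I would present whichever of the two is shorter in final write-up.
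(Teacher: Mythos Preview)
Your proof of (i) is correct and matches the paper's intended argument.

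For (ii), your reasoning works but is needlessly roundabout. The false start with a fixed target, the ``enlargement'' by reopening the proof of Lemma~\ref{lem:chain-of-intervals}, and the inductive alternative can all be replaced by a single clean application of Lemma~\ref{lem:chain-of-intervals}(iii) to chains one step \emph{shorter}. For each word $w=(i_0,\ldots,i_{n-1})\in\Lbrack 1,p\Rbrack^{n}$, consider the chain $(J_{i_0},\ldots,J_{i_{n-1}})$ of $n$ intervals (hence $n-1$ coverings). Distinct words differ at some position $k\in\Lbrack 0,n-1\Rbrack$, where the corresponding $J$'s have disjoint interiors, so the lemma gives $p^n$ closed intervals $K_w$ with pairwise disjoint interiors, $K_w\subset J_{i_0}$, and $f^{n-1}(K_w)=J_{i_{n-1}}$. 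Then
\[
f^n(K_w)=f(J_{i_{n-1}})\supset J_1\cup\cdots\cup J_p\supset\bigcup_{w'}K_{w'},
\]
so $(K_w)_w$ is the desired $p^n$-horseshoe for $f^n$. Your enlargement trick is secretly doing exactly this---taking the penultimate interval from the inductive construction inside the lemma amounts to applying the lemma to the truncated chain---but it is cleaner to invoke the lemma as stated on the shorter chain rather than dig into its proof.
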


We shall see in Section~\ref{sec:entropy-horseshoes} that  horseshoes
are intimately related to entropy, but what interests us now is the 
relationship between horseshoes and the periods of periodic points.
We first show that a map with a horseshoe is of type $3$.
This result appears as part of a proof due to Sharkovsky 
\cite[Lemma~4]{Sha}. It was also stated by Block and Coppel \cite{BCop2}.

\begin{prop}\label{prop:turbulent-all-periods}
An interval map $f$ with a horseshoe has periodic points of all periods.
\end{prop}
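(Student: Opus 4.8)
The plan is to use Lemma~\ref{lem:particularhorseshoe} to reduce to a convenient configuration and then build, for each $n\ge 1$, an explicit chain of intervals that closes up into a primitive cycle of length $n$, so that Lemma~\ref{lem:chain-of-intervals}(ii) produces a point of the required period.

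First I would invoke Lemma~\ref{lem:particularhorseshoe} to obtain points $u,v,w$ with $f(u)=u$, $f(v)=w$, $f(w)=u$ and (up to reversing the order) $u<v<w$, so that $A:=[u,v]$ and $B:=[v,w]$ form a horseshoe: $f(A)\supset A\cup B$ and $f(B)\supset A\cup B$. (The inclusions $f(A)\supset\langle f(u),f(v)\rangle=[u,w]\supset A\cup B$ and $f(B)\supset\langle f(v),f(w)\rangle=[u,w]\supset A\cup B$ follow from the intermediate value theorem, Theorem~\ref{theo:ivt}.) In particular $A\to A$, $A\to B$, $B\to A$, $B\to B$ are all coverings in the sense of Definition~\ref{def:covering}.

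Next, for a fixed integer $n\ge 1$ I would exhibit a chain of intervals of length $n$ starting and ending at $A$. For $n=1$ the chain is simply $A\to A$, giving the fixed point $u$. For $n\ge 2$ I would use the chain
$$
A\to B\to \underbrace{A\to A\to\cdots\to A}_{n-2\text{ arrows}}\to A,
$$
i.e. $J_0=A$, $J_1=B$, and $J_i=A$ for $2\le i\le n$; every consecutive covering is one of the four listed above, so this is a legitimate chain of intervals with $J_0=J_n=A$. By Lemma~\ref{lem:chain-of-intervals}(ii) there is a point $y\in A$ with $f^n(y)=y$, $f^0(y)=y\in A=[u,v]$ and $f^1(y)\in B=[v,w]$. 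It remains to check that the period of $y$ is exactly $n$, not a proper divisor. This is where the main work lies: I would argue that $f(y)\in B$ while $f^i(y)\in A$ for $2\le i\le n-1$, and $A\cap B=\{v\}$, so if $y$ had period $d\mid n$ with $d<n$ then the orbit of $y$ would be contained in $A$, forcing $f(y)\in A\cap B=\{v\}$; then $f(y)=v$ gives $f^2(y)=w$, but $f^2(y)\in A$ and $A\cap\{w\}=\emptyset$ when $v<w$ (and $y$ would then not even return to $A$), a contradiction — unless $n=2$. For $n=2$ one checks directly: a point of period dividing $2$ that is not fixed has period $2$; the only fixed point the chain $A\to B\to A$ could produce is one with $f(y)=y\in A\cap B=\{v\}$, but $f(v)=w\ne v$, so $y$ is genuinely of period $2$. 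I would phrase this period-pinning cleanly using that $v$ and $w$ lie outside $\Int A$ on one side.

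The main obstacle, then, is precisely this last step: ruling out that $f^n(y)=y$ holds with a smaller period. The cleanest route, and the one I would write up, is to choose the chain slightly more carefully so that one of the intervals visited is disjoint (in interior) from the others and contains $y$'s image, and then apply the ``primitivity'' mechanism already packaged in Lemma~\ref{lem:cycle-periodic-point}: a horseshoe naturally carries the graph of a $\v{S}$tefan-type $3$-cycle (the map is of type $3$ being the claim), so I would instead note that $u<v<w$ with $f(u)=u$, $f(v)=w$, $f(w)=u$ is exactly a period-$3$ orbit pattern, identify its graph of the periodic orbit, verify it contains primitive cycles of every length $n\ge 1$ (for $n\ge 3$ by padding with the loop at the interval adjacent to $u$, for $n=1,2$ directly), and conclude by Lemma~\ref{lem:cycle-periodic-point}. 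This sidesteps the ad hoc period computation entirely and is the approach I expect to be cleanest.
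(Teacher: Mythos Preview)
Your first approach—the chain $A\to B\to A\to\cdots\to A$ with the period-pinning argument—is correct. If $y$ had period $d\mid n$ with $1<d<n$, then $n\ge 2d$ puts $d+1\in\{2,\dots,n-1\}$, so $f(y)=f^{d+1}(y)\in A\cap B=\{v\}$ and hence $f^2(y)=w\notin A$, contradicting $f^2(y)\in A$; the case $d=1$ forces $y=v$, impossible since $f(v)=w\ne v$. Together with the separate $n=2$ check, this route is complete.

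The ``cleanest route'' you plan to write up, however, does not work. The triple $u,v,w$ with $f(u)=u$, $f(v)=w$, $f(w)=u$ is \emph{not} a period-$3$ orbit: $u$ is a fixed point, so $\{u,v,w\}$ is only an invariant set (the forward orbit of $v$ is eventually fixed). Lemma~\ref{lem:cycle-periodic-point} applies to the graph of a genuine periodic orbit, and its proof uses precisely this—if $f^k(y)$ lands in $\CO_f(x)$ then $y$ itself lies on that periodic orbit. That mechanism fails for $\{u,v,w\}$, so you cannot invoke the lemma here. Write up your first argument instead.

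For comparison, the paper argues differently: for a strict horseshoe the disjointness of $J,K$ makes the chain $(J,K,\dots,K,J)$ yield period exactly $n$ with no further work; for a non-strict horseshoe sharing the endpoint $b$, it either manufactures a strict horseshoe (when $b$ is fixed) or exhibits a genuine period-$3$ point via the chain $(J,K,K,J)$ and then invokes Sharkovsky's Theorem~\ref{theo:Sharkovsky}. Your first approach is more uniform and avoids the appeal to Sharkovsky, at the cost of the explicit period computation.
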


\begin{proof}
Let $(J,K)$ be a horseshoe for $f$. First, we assume that $J$ and $K$ are
disjoint. Let $n\ge 1$. Applying 
Lemma~\ref{lem:chain-of-intervals}(ii) to the chain of intervals
$(I_0,\ldots, I_n)$ with $I_i:=K$ for all $i\in\Lbrack 1,n-1\Rbrack$ and
$I_0=I_n:=J$, we see there exists a periodic point $x\in J$ such that 
$f^n(x)=x$ and $f^k(x)\in K$ for all $k\in\Lbrack 1,n-1\Rbrack$. 
The fact that $J$ and $K$ are disjoint implies that the period of $x$ is 
exactly $n$.

Now we assume that $J$ and $K$ have a common endpoint.
We write $J=[a,b]$ and $K=[b,c]$
(we may suppose with no loss of generality that $J$ is on the left of $K$).
If $b$ is a fixed point, we set
$$
d:=\min\{x\ge b\mid f(x)\in\{a,c\}\}.
$$
It follows that $d>b$ and the image of $[b,d)$ contains neither $a$ nor $c$. 
Thus $f([d,c])$ contains $a$ and $c$ because $[a,c]\subset f([b,c])$,
so $[a,c]\subset f([d,c])$ by connectedness. We deduce that
$(J,[d,c])$ is a strict horseshoe. The first part of the proof implies 
that $f$ has periodic points of all periods.

Suppose now that $b$ is not a fixed point. Applying
Lemma~\ref{lem:chain-of-intervals}(ii) to the chain of intervals
$(J,K,K,J)$, we see that there exists a periodic point $x\in J$ such that 
$f^3(x)=x$, 
$f(x)\in K$ and $f^2(x)\in K$.  The period of $x$
divides $3$, and thus it is equal to $1$ or $3$. If $x$ is a fixed point, 
then $x\in J\cap K=\{b\}$, which is impossible because $b$ is not fixed. 
Thus  $x$ is of period $3$. Then $f$ has  periodic points of all
periods according to Sharkovsky's Theorem~\ref{theo:Sharkovsky}.
\end{proof}

An interval map with a periodic point of period $3$ may have no horseshoe.
Such a map will be built in
Example~\ref{ex:htop-mixing-log-lambda}.  However, if $f$ has a
periodic point of odd period greater than $1$, then $f^2$ has a
horseshoe. This result was underlying
in a paper of Block \cite{Bloc3} and was stated by 
Osikawa and Oono in \cite{OO}; see also \cite{BCop2}.

\begin{prop}\label{prop:odd-period-turbulent}
Let $f\colon I\to I$ be an interval map. If $f$ has a periodic point
of odd period greater than $1$, then there exist two intervals $J,K$ 
containing no endpoint of
$I$ and such that $(J,K)$ is a strict horseshoe for $f^2$.
\end{prop}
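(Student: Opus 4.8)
The plan is to use Sharkovsky's Theorem~\ref{theo:Sharkovsky} to reduce to the case where $p$ is the smallest odd period greater than $1$, fix a \v{S}tefan cycle $\CO=\{c,f(c),\dots,f^{p-1}(c)\}$ and apply Lemma~\ref{lem:graph-n-minimal}. Assuming $c<f(c)$ (the case $c>f(c)$ being symmetric) and writing $p=2n+1$, the points of $\CO$ are then ordered
$$
f^{p-1}(c)<f^{p-3}(c)<\dots<f^{2}(c)<c<f(c)<f^{3}(c)<\dots<f^{p-2}(c).
$$
The heart of the argument will be to produce a non-degenerate interval $W$ that \emph{covers itself twice under $f^{2}$}, i.e.\ containing two subintervals with disjoint interiors each of which covers $W$ for $f^{2}$. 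By Lemma~\ref{lem:chain-of-intervals}(i), such a $W$ yields closed intervals $A\subset W$ and $B\subset W$ with disjoint interiors and $f^{2}(A)=f^{2}(B)=W\supset A\cup B$, hence a horseshoe for $f^{2}$, after which one normalizes it.

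For $n=1$ (period $3$), write $\CO=\{u_{1}<u_{2}<u_{3}\}$; then $u_{2}=c$, $f(u_{1})=u_{2}$, $f(u_{2})=u_{3}$, $f(u_{3})=u_{1}$, and the graph of the periodic orbit has the arrows $[u_{1},u_{2}]\to[u_{2},u_{3}]$, $[u_{2},u_{3}]\to[u_{1},u_{2}]$ and $[u_{2},u_{3}]\to[u_{2},u_{3}]$. Composing, $f^{2}([u_{1},u_{2}])\supset[u_{1},u_{3}]$ and $f^{2}([u_{2},u_{3}])\supset[u_{1},u_{3}]$, so $W:=[u_{1},u_{3}]$ is covered twice under $f^{2}$ by its subintervals $[u_{1},u_{2}]$ and $[u_{2},u_{3}]$. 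For $n\ge 2$ set $g:=f^{2}$ and consider the $g$-orbit of $c$, which has period $p$ because $p$ is odd; list its points as $y_{1}<\dots<y_{p}$. Using $g(f^{j}(c))=f^{\,j+2\bmod p}(c)$ and the displayed order, one checks that $y_{n+1}=c$ and that $g$ permutes the $y_{i}$ along the ``spiral''
$$
y_{n+1}\to y_{n}\to\dots\to y_{1}\to y_{n+2}\to y_{n+3}\to\dots\to y_{p}\to y_{n+1}.
$$
Put $W:=[y_{1},y_{n+2}]$. From $g(y_{1})=y_{n+2}$ and $g(y_{2})=y_{1}$ the intermediate value theorem gives $g([y_{1},y_{2}])\supset[y_{1},y_{n+2}]=W$. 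On $[y_{2},y_{n+1}]$ the map $g$ successively takes the values $g(y_{2})=y_{1},g(y_{3})=y_{2},\dots,g(y_{n+1})=y_{n}$, so $g([y_{2},y_{n+1}])\supset[y_{1},y_{n}]$, while $g(y_{n+1})=y_{n}$ and $g(y_{n+2})=y_{n+3}$ give $g([y_{n+1},y_{n+2}])\supset[y_{n},y_{n+3}]$; since $n+3\le p$ this yields $g([y_{2},y_{n+2}])\supset[y_{1},y_{n+3}]\supset W$. Thus $[y_{1},y_{2}]$ and $[y_{2},y_{n+2}]$ are subintervals of $W$ with disjoint interiors each covering $W$ for $f^{2}$.

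It then remains to turn the horseshoe $(A,B)$ obtained this way into a \emph{strict} one with both intervals in the interior of $I$. For this one chooses a suitable non-degenerate closed interval $V\subset\Int{W}$, away from the finitely many common endpoints of $A$ and $B$ and from the endpoints of $I$, and pulls $V$ back simultaneously through the two folding branches $f^{2}|_{A}$ and $f^{2}|_{B}$; a standard fixed-point/shrinking argument — the one used in the proof of Proposition~\ref{prop:turbulent-all-periods}, together with Lemma~\ref{lem:particularhorseshoe} — produces disjoint closed intervals $J\subset A$ and $K\subset B$, containing no endpoint of $I$, with $f^{2}(J)\supset J\cup K$ and $f^{2}(K)\supset J\cup K$. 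Then $(J,K)$ is the desired strict horseshoe for $f^{2}$.

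I expect the main difficulties to be the index bookkeeping in the spiral computation for $n\ge 2$ and, above all, this last normalization step: a careless shrinking of $A$ or $B$ destroys the inclusion $f^{2}(\,\cdot\,)\supset W$, because the point $y_{2}$ shared by $[y_{1},y_{2}]$ and $[y_{2},y_{n+2}]$ is the only point of either interval whose $f^{2}$-image is the left endpoint of $W$, so the folding genuinely needs that point — one really must pull the target $V$ back through both branches at once rather than trim the original intervals.
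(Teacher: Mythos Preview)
Your spiral computation is correct: with $g=f^2$ acting on the \v{S}tefan cycle, the intervals $[y_1,y_2]$ and $[y_2,y_{n+2}]$ (or $[u_1,u_2]$, $[u_2,u_3]$ when $p=3$) each cover $W$, and the index bookkeeping checks out. The gap is entirely in the normalization step, and the tools you invoke do not close it. The strict-horseshoe construction inside the proof of Proposition~\ref{prop:turbulent-all-periods} works only when the \emph{shared} endpoint of the two intervals is fixed by the map; here that point is $y_2$ and $g(y_2)=y_1\neq y_2$, so that branch is unavailable, while the other branch of that proof appeals to Sharkovsky and produces periods, not a strict horseshoe. Lemma~\ref{lem:particularhorseshoe} merely hands you another non-strict horseshoe $(\langle u,v\rangle,\langle v,w\rangle)$ still sharing the middle point $v$, with $g(v)=w\neq v$, so nothing is gained. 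And ``pull $V$ back through both branches'' yields $J\subset A$, $K\subset B$ with $g(J)=g(K)=V$, but nothing then forces $J\cup K\subset V$ --- which is exactly the horseshoe condition you need.

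What actually closes the gap --- and incidentally removes your case split --- is a specific choice of target. Pick $d\in(y_{n+1},y_{n+2})$ with $f(d)=y_{n+1}$ (this exists for every odd $p\ge 3$ since $f([y_{n+1},y_{n+2}])\supset[y_n,y_{n+2}]\ni y_{n+1}$), so $g(d)=y_{n+2}>d$; then pick $a\in(y_1,y_2)$ close enough to $y_1$ that $g(a)>d$. Now $g([a,y_2])$ and $g([y_2,d])$ each contain $[a,d]$, while $g$ sends each of $a$, $y_2$, $d$ \emph{outside} $[a,d]$ (to points $>d$, $<a$, $>d$ respectively). The closed preimages of $[a,d]$ supplied by Lemma~\ref{lem:chain-of-intervals}(i) must therefore miss all three points $a,y_2,d$, hence lie in $(a,y_2)$ and $(y_2,d)$: disjoint, both contained in $[a,d]$, both mapped onto $[a,d]$ by $g$. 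Since $[a,d]\subset(y_1,y_{n+2})$, neither meets an endpoint of $I$. This is exactly the paper's proof, which writes down the four points $a<b<c<d$ directly; your non-strict horseshoe $(A,B)$ is a correct intermediate object, but converting it to a strict one amounts to reproducing that argument.
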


\begin{proof}
Let $p$ be the least odd integer different from $1$ such that $f$ has
a  periodic point of period $p$ and let $x$ be a periodic point of
period $p$. According to Lemma~\ref{lem:graph-n-minimal}, there
exists a point $x_0$ in the orbit of $x$ such that
the points $x_i:=f^i(x_0), 0\le i\le p-1$, are ordered as:
$$
x_{p-1}<x_{p-3}<\cdots<x_2<x_0<x_1<\cdots<x_{p-2}
$$
or in the reverse order. Suppose that the order above holds, the other case
being symmetric.

The interval $f([x_0,x_1])$ contains $[x_2,x_0]$, which implies that 
there exists $d$ in $(x_0,x_1)$ such that $f(d)=x_0$, and hence
$d<f^2(d)=x_1$. Since $f^2([x_{p-1},x_{p-3}])$ contains $[x_{p-1},x_1]$,
there exists a point $a\in (x_{p-1},x_{p-3})$ such that $f^2(a)>d$.
Then $f^2([a,x_{p-3}])\supset [x_{p-1},d]$,  and thus there exists
$b\in (a,x_{p-3})$ such that $f^2(b)<a$. Similarly, there exists
$c\in (x_{p-3},d)$ such that $f^2(c)<a$ because 
$f^2([x_{p-3},d])\supset [x_{p-1},f^2(d)]\supset [x_{p-1},d]$. Then $J:=[a,b]$ and $K:=[c,d]$ are
disjoint intervals and form a horseshoe for $f^2$.
Finally, $J$ and $K$ do not contain any endpoint of $I$ because $x_{p-1}<a$ 
and $d<x_1$.
\end{proof}

We end this section with two small results, related to horseshoes and
periodic points of odd period; both will be referred to later. 
The first one states that, if $f$ has no 
horseshoe, every orbit splits into 
two sets $U$ and $D$ with $U\le D$ such that all points in $U$ (resp. $D$) are 
going ``up'' (resp. ``down'') under the action of $f$. 
The second one is a tool to prove the existence of periodic 
points when only partial information on the location of the points is known. 

The next result, already implicit in 
a paper of Sharkovsky \cite[proof of Lemma~4]{Sha}, was proved by Li, 
Misiurewicz, Pianigiani and Yorke under a slightly weaker assumption 
\cite[Corollary 3.2]{LMPY}.

\begin{lem}\label{lem:U<D}
Let $f$ be an interval map with no horseshoe, and let $x_0$ be a point. 
Let $U(x_0):=\{x\in \CO_f(x_0) \mid f(x)\ge x\}$ and
$D(x_0):=\{x\in \CO_f(x_0)\mid f(x)\le x\}$. If these two sets are nonempty, 
then $\sup U(x_0)\le \inf D(x_0)$ and there exists a fixed point $z\in
[\sup U(x_0),\inf D(x_0)]$.
\end{lem}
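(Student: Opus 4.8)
The plan is to isolate the combinatorial heart of the statement, the \emph{splitting property}
\[
U(x_0)\le D(x_0),\qquad\text{i.e. } u\le d\ \text{ for all }u\in U(x_0),\ d\in D(x_0),
\]
and then deduce the rest cheaply. Once this is proved, every $u\in U(x_0)$ is a lower bound of $D(x_0)$, so $u\le\inf D(x_0)$ and hence $\sup U(x_0)\le\inf D(x_0)$; moreover $f(t)\ge t$ on $U(x_0)$ and $f(t)\le t$ on $D(x_0)$, so by continuity $f(\sup U(x_0))\ge\sup U(x_0)$ and $f(\inf D(x_0))\le\inf D(x_0)$, and the intermediate value theorem (Theorem~\ref{theo:ivt}) applied to $t\mapsto f(t)-t$ on $[\sup U(x_0),\inf D(x_0)]$ (a single point if the two endpoints agree) produces the required fixed point~$z$.

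To prove the splitting property I would argue by contradiction from the absence of a horseshoe. Suppose there is a \emph{bad pair}: $u\in U(x_0)$, $d\in D(x_0)$ with $d<u$. Since $u,d\in\CO_f(x_0)$, one of them is a forward iterate of the other; over all bad pairs let $n\ge 1$ be the least exponent for which $d=f^n(u)$ or $u=f^n(d)$, and fix a bad pair realizing it. Conjugating $f$ by the decreasing affine involution of $I$ (which neither creates nor destroys horseshoes and interchanges $U$ with $D$) turns the case $u=f^n(d)$ into the case $d=f^n(u)$ with the same minimal $n$, so I may assume $d=f^n(u)$. From $f(u)\ge u$ and $d<u$ one gets $f(u)>u$ and $n\ge 2$. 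Put $p:=\sup\{t\le u:f(t)=t\}$; this set is nonempty (there is a fixed point in $[d,u]$ by the intermediate value theorem) and closed, so $p$ is a fixed point, $d\le p<u$, and $f(t)>t$ for every $t\in(p,u]$.

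Writing $y_i:=f^i(u)$, the key point is to squeeze the orbit segment $y_0,\dots,y_n$ using minimality. First, no $y_i$ ($0\le i\le n$) lies in the open interval $(d,u)$: if $y_i\in(d,u)$, then according as $f(y_i)\le y_i$ or $f(y_i)>y_i$ one of $(u,y_i)$, $(y_i,d)$ is a bad pair with smaller iterate-distance. Hence $y_0,\dots,y_n$ all lie in $[u,+\infty)\cup(-\infty,d]$; since $y_0=u$ and $y_1=f(u)>u$ both exceed $d$ while $y_n=d$, the first index $m$ with $y_m\le d$ satisfies $m\ge 2$, and $y_0,\dots,y_{m-1}\ge u$. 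Second, $y_1,\dots,y_{m-1}\in D(x_0)$: if $f(y_i)\ge y_i$ for some $1\le i\le m-1$, then (as $y_i\ge u>d$) the pair $(y_i,d)$ is bad with iterate-distance $n-i<n$. Therefore $y_1\ge y_2\ge\cdots\ge y_{m-1}\ge u$, and in fact $y_{m-1}>u$, since $y_{m-1}=u$ would force $f(y_{m-1})=f(u)=y_1>u=y_{m-1}$, contradicting $y_{m-1}\in D(x_0)$.

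Now $([p,u],[u,y_{m-1}])$ is a horseshoe: these are non-degenerate closed intervals (as $p<u<y_{m-1}$) with disjoint interiors and union $[p,y_{m-1}]$, and by the intermediate value theorem
\[
f([p,u])\supseteq\langle f(p),f(u)\rangle=[p,y_1]\supseteq[p,y_{m-1}],\qquad
f([u,y_{m-1}])\supseteq\langle f(u),f(y_{m-1})\rangle=[y_m,y_1]\supseteq[p,y_{m-1}],
\]
using $y_1\ge y_{m-1}$ and $y_m\le d\le p$. This contradicts the hypothesis that $f$ has no horseshoe, so no bad pair exists, and the lemma follows. The step I expect to be the main obstacle is the careful handling of minimality: one must define the iterate-distance of a bad pair so that it behaves well even when the orbit of $x_0$ is eventually periodic, carry out the order-reversing reduction rigorously, and check in every subcase that the newly produced bad pairs are genuinely shorter; once that bookkeeping is in place, the horseshoe in the last paragraph drops out immediately.
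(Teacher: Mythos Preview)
Your argument is correct. The overall strategy matches the paper's: assume a bad pair exists, locate a nearby fixed point, track the orbit segment until it drops below that fixed point, and read off a horseshoe. The organization, however, is genuinely different. The paper takes an \emph{arbitrary} bad pair $x_n\in U$, $x_m\in D$ with $x_n>x_m$ and $m>n$, lets $y$ be the maximal fixed point in $[x_m,x_n]$, finds the last index $k$ with $x_n,\dots,x_k>y$, and then proves by induction on $i$ that $x_i<x_k$ for all $n\le i\le k$; each failure of the inductive step produces a horseshoe $([y,x_i],[x_i,x_k])$, and the terminal case $i=k$ is the contradiction. You instead take a \emph{minimal} bad pair, and minimality does the work that the paper's induction does: it forces every $y_i$ to avoid $(d,u)$ and forces $y_1,\dots,y_{m-1}\in D(x_0)$, so the monotonicity $y_1\ge\cdots\ge y_{m-1}>u$ comes for free and a single horseshoe $([p,u],[u,y_{m-1}])$ appears at the end. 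Your reflection trick to reduce the case $u=f^n(d)$ to $d=f^n(u)$ is a clean substitute for the paper's ``the other case is similar''. Both routes are short; yours trades an induction for a well-chosen extremal object, and the worry you flag about eventually periodic orbits is harmless, since in each reduction step you exhibit an explicit relation $d=f^{n-i}(y_i)$ or $y_i=f^i(u)$ with exponent strictly below $n$.
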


\begin{proof} 
We set $x_n:=f^n(x_0)$ for all $n\ge 0$. Let $n,m$ be integers such that
$x_n\in U(x_0)$ and $x_m\in D(x_0)$. We are going to show that $x_n\le x_m$.
Suppose on the contrary that $x_n> x_m$. 
We assume that $m>n$, the case $m<n$ being similar.
The point $x_n$ is not fixed because $x_m=f^{m-n}(x_n)> x_n$. Thus, 
according to the definition of $U(x_0)$ and $D(x_0)$, we have
\begin{equation}\label{eq:xmn}
f(x_m)\le x_m < x_n<f(x_n).
\end{equation}
By continuity, there exists a fixed point in $[x_m,x_n]$. Let
$y$ be the maximal fixed point in $[x_m,x_n]$. Then $y< x_n$ and, since
$f(x_n)>x_n$,
\begin{equation}\label{eq:fy}
\forall x\in (y,x_n],\ f(x)>x.
\end{equation}
By \eqref{eq:xmn}, there exists an integer $k\in\Lbrack n+1,m\Rbrack$ such 
that $x_i>y$ for all $i\in\Lbrack n,k\Rbrack$ and $x_{k+1}\le y$.
We show by induction on $i$ that $x_i< x_{k}$
for all $i\in\Lbrack n,k\Rbrack$.

\medskip
\noindent$\bullet$
Case $i=n$: since $f(x_{k})=x_{k+1}\le y<x_{k}$, the point
$x_{k}$ does not belong to $(y,x_n]$ by \eqref{eq:fy}, and thus
$x_n<x_{k}$.

\medskip
\noindent$\bullet$
Suppose that $x_i<x_{k}$ for some $i\in\Lbrack n,k-1\Rbrack$. If 
$x_{i+1}\ge x_{k}$, then $([y,x_i],
[x_i,x_{k}])$ is a horseshoe for $f$, which is a contradiction. Hence
$x_{i+1}< x_{k}$.

\medskip
For $i=k$, the induction statement is that $x_{k}<x_{k}$, which is absurd. 
Hence $x_n\le x_m$. We deduce that $\sup U(x_0)\le \inf D(x_0)$.
Moreover, the definitions of $U(x_0), D(x_0)$ imply that
$f(\sup(U(x_0)))\ge \sup U(x_0)$ and $f(\inf(D(x_0)))\le \inf D(x_0)$. Thus
there exists a fixed point $z\in [\sup U(x_0),\inf D(x_0)]$ 
by continuity.
\end{proof}

The next result was shown by Li, Misiurewicz, Pianigiani and Yorke \cite{LMPY2}.

\begin{prop}\label{prop:xn-x0-x1}
Let $f$ be an interval map and let $x$ be a point. Let $p\ge 3$ be an odd
integer and suppose that either $f^p(x)\le x<f(x)$ or $f^p(x)\ge x> f(x)$. 
Then $f$ has a periodic point of period $p$.
\end{prop}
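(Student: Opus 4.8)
The statement is an asymmetric version of the ``odd period'' implications: the hypothesis gives us only three points of the trajectory of $x$ — namely $x$, $f(x)$, and $f^p(x)$ — arranged so that $f(x)$ goes strictly one way and $f^p(x)$ goes at least as far the other way. Without loss of generality we may assume we are in the first case, $f^p(x)\le x<f(x)$, since the other case is obtained by reversing the order (replacing $f$ by its conjugate under $t\mapsto -t$, or just running the whole argument with all inequalities flipped). The plan is to set $x_k:=f^k(x)$ and split into two cases according to whether $f$ has a horseshoe or not.

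\textbf{Case 1: $f$ has a horseshoe.} Then $f$ has a periodic point of period $3$ by Proposition~\ref{prop:turbulent-all-periods}, and since $p$ is odd, Sharkovsky's Theorem~\ref{theo:Sharkovsky} (using $3\lhd p$ for every odd $p\ge 3$) gives a periodic point of period $p$. So this case is immediate.

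\textbf{Case 2: $f$ has no horseshoe.} Here I would apply Lemma~\ref{lem:U<D} to the point $x_0:=x$. Observe that $x\in U(x_0)$ (since $f(x)>x$, so $U(x_0)\ne\emptyset$), and I claim $D(x_0)\ne\emptyset$ as well: since $x_p=f^p(x)\le x<f(x)=x_1$, the trajectory does not keep increasing forever, so some iterate $x_k$ satisfies $f(x_k)\le x_k$ (if $f(x_k)>x_k$ for all $k$, then $x_0<x_1<x_2<\cdots$, contradicting $x_p\le x_0$). Thus both $U(x_0),D(x_0)$ are nonempty, and Lemma~\ref{lem:U<D} yields $\sup U(x_0)\le\inf D(x_0)$ and a fixed point $z$ in between. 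In particular $x=x_0\le z$, while $x_p\le x_0\le z$; and every point of $\CO_f(x)$ lies in $U(x_0)\cup D(x_0)$, so $\CO_f(x)$ misses the open interval $(\sup U(x_0),\inf D(x_0))$ — the orbit is ``split'' by $z$ into a left part going up and a right part going down. Now the idea is to build a chain of intervals realizing a primitive cycle of length $p$. Let $x_{i_1}<\cdots<x_{i_n}$ be the ordered points of $\CO_f(x)$, put $z$ among them if it is not already an orbit point, and consider the consecutive closed intervals they determine. The hypotheses $x_p\le x_0<x_1$ and the fixed point $z\ge x_1>x_0$ (note $z\ge\sup U(x_0)\ge x_1$? — careful: $x_1=f(x_0)$ but $x_1$ itself need not be in $U(x_0)$, so I must instead argue $z\ge x_1$ from $z\ge x_0$ together with $f(z)=z$ and continuity forcing no fixed point in $(x_0,x_1]$, since all such points go up) give a configuration ``$x_0$ below a fixed point, $f(x_0)$ above it or moving right, $f^p(x_0)$ left of $x_0$'' that, via the intermediate value theorem, produces intervals $K_0\to K_1\to\cdots\to K_{p-1}\to K_0$ forming a cycle in which the interval straddling $z$ is used exactly once, so the cycle is primitive; Lemma~\ref{lem:cycle-periodic-point} (or directly Lemma~\ref{lem:chain-of-intervals}(ii) together with a disjointness-of-intervals argument to pin down the exact period) then yields a periodic point of period exactly $p$.

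\textbf{Main obstacle.} The delicate part is Case 2: extracting, from just three labelled orbit points plus the fixed point $z$ provided by Lemma~\ref{lem:U<D}, an explicit primitive cycle of length exactly $p$ in the graph of some periodic orbit — and in particular ruling out that the natural length-$p$ closed chain one writes down is a repetition of a shorter cycle. The clean way around this is to mimic the ``\v{S}tefan cycle'' combinatorics: show that the positions $x_0,x_1,\dots$ relative to $z$ force the intervals $K_j$ to visit the $z$-straddling interval only at the return step, which makes primitivity automatic. This is exactly the mechanism in the proof of Lemma~\ref{lem:graph-n-minimal} and in Osikawa–Oono's argument, so I would follow that template closely rather than reinvent it. The bookkeeping to handle the degenerate sub-case where $z\in\CO_f(x)$ (so $x$ or one of its iterates equals $z$, contradicting that it is periodic of period $p\ge 3$ unless $z$ is itself that periodic point — impossible since $z$ is fixed) should be disposed of at the outset: if $z\in\CO_f(x)$ then $x$ is eventually fixed, contradicting $f^p(x)\le x<f(x)$ which forces $f^p(x)\ne x$ is not yet a contradiction, so one argues instead that then the whole forward orbit stabilizes at $z$, again incompatible with $x<f(x)$ and $f^p(x)<f(x)$ only if... — this needs a short separate check, but it is routine.
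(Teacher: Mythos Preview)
Your overall architecture matches the paper's: reduce to the case $f^p(x)\le x<f(x)$, dispose of the horseshoe case via Proposition~\ref{prop:turbulent-all-periods} and Sharkovsky, then in the no-horseshoe case invoke Lemma~\ref{lem:U<D} to get a fixed point $z$ separating the ``up'' and ``down'' parts of the orbit. But the heart of Case~2 is missing, and your proposed route through it would not work as stated.

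The key idea you do not identify is a \emph{parity argument on the finite segment} $x_0,x_1,\dots,x_p$. Working with the finite sets $U=\{x_n:x_{n+1}\ge x_n,\ 0\le n\le p\}$ and $D=\{x_n:x_{n+1}\le x_n,\ 0\le n\le p\}$ (not the full orbit), one shows $\max U<z<\min D$ after ruling out $z\in\{x_0,\dots,x_p\}$. Now: if the sequence $x_0,\dots,x_p$ \emph{alternated} between $U$ and $D$, then since $x_0\in U$ and $p$ is odd we would get $x_p\in D$, hence $x_p>z$, contradicting $x_p\le x_0<z$. Therefore there exists $k\in\Lbrack 0,p-1\Rbrack$ with $x_k,x_{k+1}$ both in $U$ (or both in $D$). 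This single index $k$ is what drives the whole construction.

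The chain of intervals is then $J_i:=\langle x_i,z\rangle$ for $i\ne k$ and $J_k:=[x_k,\max U]$ (in the case $x_k,x_{k+1}\in U$). All the $J_i$ with $i\ne k$ share the endpoint $z$, and $J_k$ is the one interval that does \emph{not} reach $z$. Lemma~\ref{lem:chain-of-intervals}(ii) yields $y$ with $f^p(y)=y$ and $f^i(y)\in J_i$. If the period of $y$ were $1$, then $y$ would lie in $J_i\cap J_j$ for $i,j$ on opposite sides of $z$, forcing $y=z$; but $y\in J_k$ and $\max J_k<z$, a contradiction. So the period $q$ of $y$ satisfies $1<q\mid p$, hence $q$ is odd with $3\le q\le p$, and Sharkovsky's Theorem gives a periodic point of period exactly $p$.

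Your plan to ``build a primitive cycle in the graph of some periodic orbit'' and invoke Lemma~\ref{lem:cycle-periodic-point} is a dead end here: you have no periodic orbit to form a graph from, only a finite orbit segment. The intervals $\langle x_i,z\rangle$ are not the vertices of any such graph (they all overlap at $z$), and there is no \v{S}tefan-cycle combinatorics available because nothing tells you where $x_2,\dots,x_{p-1}$ sit relative to one another. The paper's mechanism is not primitivity of a cycle but rather the single ``short'' interval $J_k$ forcing the period to exceed $1$, after which Sharkovsky does the rest. Your confusion over whether $z\ge x_1$ is a symptom: the proof needs no such inequality, since $x_1$ may lie in either $U$ or $D$.
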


\begin{proof}
We assume that $f^p(x)\le x< f(x)$, the
case with reverse inequalities being symmetric. 
We also assume that $f$ has no horseshoe, otherwise $f$ has periodic
points of all periods by Proposition~\ref{prop:turbulent-all-periods}.

We set $x_n:=f^n(x)$ for all $n\ge 0$. We define the sets 
$$U:=\{x_n\mid x_{n+1}\ge x_n,\ n\in\Lbrack 0,p\Rbrack\}\quad\text{and}\quad
D:=\{x_n\mid x_{n+1}\le x_n,\ n\in\Lbrack 0,p\Rbrack\}.$$
By assumption, $x_p\le x_0<x_1$, which implies that 
$x_0\in U$ and 
\begin{equation}\label{eq:UDj}
\text{there exists }j\in\Lbrack 1, p-1\Rbrack\text{ such that }x_{j+1}<x_j,
\end{equation}
so $x_j\in D$. Since $U$ and $D$ are not empty, 
Lemma~\ref{lem:U<D} implies that 
$$\max U\le \min D\text{ and
there exists a fixed point }z\in [\max U,\min D].
$$
If $x_i=z$ for some  $i\in\Lbrack 0,p\Rbrack$, then $x_p=z\ge \max U\ge x_0$.
Since $x_p\le x_0$, this implies $x_0=z$,
which is a contradiction because $x_0$ is not a fixed point. 
We deduce that $\max U<z<\min D$, and thus
$$
x_p\le x_0\le \max U<z<\min D\le x_j.
$$
We claim that there exists $k\in\Lbrack 0,p-1\Rbrack$ such that
$$
\text{either }x_k, x_{k+1}\in U\quad\text{or}\quad x_k, x_{k+1}\in D.
$$
Otherwise, all the points $x_i$ with even index
$i\in\Lbrack 0,p\Rbrack$ would be in $U$ (because $x_0\in U$)
and all the points $x_i$ with 
odd index $i\in\Lbrack 0,p\Rbrack$ would be in $D$, and thus
$x_0<z<x_p$ because $p$ is odd. This would
contradict the assumption that $x_p\le x_0$. Therefore the claim holds, 
which implies that
\begin{equation}\label{eq:xkk+1}
\text{either }x_k\le x_{k+1}<z \quad\text{or}\quad z<x_{k+1}<x_k.
\end{equation}
We assume that the case $x_k\le x_{k+1}<z$ holds in \eqref{eq:xkk+1}, the
other case being symmetric. We set $J_k:=[x_k,\max U]$ and
$J_i:=\langle x_i, z\rangle$ for all $i\in\Lbrack 0,p\Rbrack$ with $i\neq  k$.
Then $f(J_k)\supset [x_{k+1},\min D]\supset [x_{k+1},z]$ and 
$f(J_i)\supset \langle x_{i+1},z\rangle$ for all
$i\in\Lbrack 0,p\Rbrack$ with $i\neq  k$.
Then $(J_0,\ldots,J_p)$ is a chain of intervals. Moreover, we have 
$J_0\subset J_p$. Thus there exists $y\in J_0$ such that $f^p(y)=y$
by Lemma~\ref{lem:chain-of-intervals}(ii). Let $q$ be the period of $y$;
this is a divisor of $p$.
If $q=1$, then $y\in J_0\cap J_j=[x_0,z]\cap [z,x_j]$ (recall that $j$ is 
such that $x_j>z$ by \eqref{eq:UDj}), and hence $y=z$. 
But this is not possible because $y\in J_k=
[x_k,\max U]$, with $\max U<z$. We deduce that  $q>1$. 
Since $p$ is odd, then $q$ is odd too, and $1<q\le p$. Then
Sharkovsky's Theorem~\ref{theo:Sharkovsky} gives the conclusion. 
\end{proof}

\section{Types of transitive and mixing maps}

We saw that a mixing interval map has a periodic point of odd
period greater than $1$ (Theorem~\ref{theo:summary-mixing}).
Moreover, Example~\ref{ex:odd-type} shows that, for every odd $q>1$,
there exists a mixing map of type $q$. If an interval map $f$ is transitive 
but not mixing, then, according to Theorem~\ref{theo:summary-transitivity},
there exists a subinterval $J$ such that $f^2|_J$ is mixing, and thus
$f^2$  is of type $q$ for some odd $q>1$. Actually, $q$ is always
equal to $3$ in this case, 
which implies that $f$ is of type $6$. This result was proved by Block and 
Coven \cite{BCov}; it is also a consequence of a result of Blokh \cite{Blo8}. 
We start with a lemma, stated in \cite{BCov}.

\begin{lem}\label{lem:non-turbulent}
Let $f\colon [a,b]\to [a,b]$ be a transitive interval map. If $f$ has no
horseshoe, then it has a unique fixed point. Moreover, this fixed
point is neither $a$ nor $b$.
\end{lem}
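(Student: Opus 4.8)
The plan is to prove the two assertions separately, using transitivity together with the no-horseshoe hypothesis. First I would establish existence of a fixed point, which is immediate from Lemma~\ref{lem:fixed-point} since $f([a,b])\subset [a,b]$. For uniqueness, suppose $f$ has two fixed points $u<v$. The idea is that transitivity forces $f$ to ``move past'' these fixed points in both directions, and combining this with the intermediate value theorem produces a horseshoe, contradicting the hypothesis. Concretely, since $f$ is transitive and $[a,b]$ is non degenerate, by Proposition~\ref{prop:transitive-dense-orbit} there is a point with dense orbit; in particular there are points of $(u,v)$ whose images under $f$ are close to $a$ and points whose images are close to $b$ (using that $[a,b]=f([a,b])$ by Lemma~\ref{lem:transitivity-semi-open}(ii)). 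More precisely, transitivity implies $f((u,v))$ cannot be contained in $(u,v)$ in a way compatible with $u,v$ both being fixed — I would argue that there exist $p,q\in (u,v)$ with $f(p)\le u$ and $f(q)\ge v$. Indeed, if $f((u,v))\subset [u,v]$ then $[u,v]$ would be a non degenerate invariant subinterval whose iterates never reach a neighborhood of $a$ (when $u>a$) or a point just to the right of $v$ (when $v<b$), contradicting transitivity; a small case analysis handles $u=a$ and $v=b$.

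Once I have $p,q\in(u,v)$ with $f(p)\le u$ and $f(q)\ge v$, I would build the horseshoe. Order $p,q$: say $p<q$ (the other case is symmetric). Then on $[p,q]$ the map $f$ takes a value $\le u$ and a value $\ge v$, so $f([p,q])\supset [u,v]\supset [p,q]$. Now I split $[p,q]$ at a point where $f$ equals $u$ and a point where $f$ equals $v$: let $r:=\max\{x\le q\mid f(x)=v\}$ and then $s:=\max\{x\le r\mid f(x)=u\}$, or an analogous choice adapted to the actual positions of $p,q$ relative to $u$ and $v$. The cleaner route is: since $u,v\in[p,q]$ are both fixed and $f$ attains both $\le u$ and $\ge v$ on $[p,q]$, there are two subintervals $[s,r]$ and $[r',s']$ with disjoint interiors, each mapped by $f$ onto a superset of $[u,v]$, hence onto a superset of each of the two intervals — this is exactly a horseshoe. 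I would carefully pick the splitting points (minima/maxima of preimages of $u$ and $v$) so that the two intervals have disjoint interiors and each covers $[u,v]\supset$ both intervals; this mirrors the endpoint-splitting trick used in the proof of Lemma~\ref{lem:chain-of-intervals}, Fact~1, and in Lemma~\ref{lem:particularhorseshoe}.

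For the final clause — the fixed point is neither $a$ nor $b$ — suppose the unique fixed point is $a$. Then $f(x)>x$ for all $x\in(a,b]$ would be forced? Not quite: $f(x)\ne x$ for $x\in(a,b]$, but $f$ could still go below. The argument I would use: if $a$ is the only fixed point, consider $z:=\min\{f(x)\mid x\in[a,b]\}$. If $z=a$ then $a$ is in $f((a,b])$ (it can't only be hit at $a$ itself if... ) — actually the clean approach is via Lemma~\ref{lem:U<D}: transitivity gives an orbit meeting both $U$ and $D$ (since not all points go up, else $b$ is fixed or $f(b)>b$, impossible), so $\sup U\le\inf D$ and there is a fixed point in $[\sup U,\inf D]$; hence the fixed point is an interior point unless $U$ or $D$ is empty. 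If $D=\emptyset$ for every orbit then $f(x)\ge x$ everywhere, so $f(b)=b$, making $b$ fixed, so $b=a$, absurd; symmetrically $U=\emptyset$ is impossible. So the fixed point $z$ satisfies $z\ge\sup U$ where $U$ contains a point $\le z$ and $z\le\inf D$; to get $z\ne a$ note $f(a)\ge a$ so $a\in U$, and transitivity prevents $[a,a]$-type degeneracy: there is $x>a$ with $f(x)<x$ giving $a=\sup U < z$... I would tighten this, but the mechanism is that if $a$ were the fixed point, a neighborhood $[a,a+\varepsilon]$ would satisfy $f([a,a+\varepsilon])\subset[a,a+\varepsilon']$ structure contradicting transitivity, exactly as in the proof of Lemma~\ref{lem:accessibility}(ii).

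The main obstacle I anticipate is the bookkeeping in the horseshoe construction: choosing the two splitting subintervals of $[p,q]$ so that they have disjoint interiors while each still covers the whole of $[u,v]$ (and hence each of the two subintervals, which sit inside $[u,v]$). The positions of $p$ and $q$ relative to $u$ and $v$ generate several sub-cases, and one must use ``first/last preimage'' definitions carefully — this is precisely the kind of argument already carried out in Lemma~\ref{lem:particularhorseshoe}, so I would model the case analysis on that lemma rather than reinventing it.
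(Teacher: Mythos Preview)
Your uniqueness argument has a real gap. You claim there exist $p,q\in(u,v)$ with $f(p)\le u$ and $f(q)\ge v$, but your justification (``if $f((u,v))\subset[u,v]$ then $[u,v]$ is invariant, contradicting transitivity'') only rules out $f((u,v))\subset[u,v]$, which produces \emph{one} of $p,q$, not both. And both need not exist: if $u,v$ are adjacent (no fixed point between them) then $f-\mathrm{id}$ has constant sign on $(u,v)$, so one of your two inequalities is impossible throughout $(u,v)$. Concretely, for $T_4$ (Example~\ref{ex:tent-map}) with the interior fixed points $u=2/5,\ v=2/3$ one has $T_4(x)<x$ on $(2/5,2/3)$, so no $q\in(u,v)$ satisfies $T_4(q)\ge v$. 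There is also a slip later: you write ``since $u,v\in[p,q]$ are both fixed'', but $p,q\in(u,v)$ gives $[p,q]\subset(u,v)$, so $u,v\notin[p,q]$. And even when both $p,q$ do exist (e.g.\ for non-adjacent $u,v$), a single interval with $f([p,q])\supset[u,v]$ is not yet a horseshoe: getting two subintervals each covering $[u,v]$ requires a second fold that you have not arranged.

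The paper's construction avoids this by going \emph{outside} the fixed-point gap. From two fixed points it first deduces via Theorem~\ref{theo:summary-transitivity} that $f$ is topologically mixing, then chooses adjacent fixed points $x_1<x_2$ with, say, $f(x)>x$ on $(x_1,x_2)$. The horseshoe uses a minimal $z\ge x_2$ with $f(z)=x_1$ (its existence needs transitivity together with Lemma~\ref{lem:accessibility} to dispose of the corner case $x_1=a$), and then a point $y\in(x_1,z)$ with $f(y)=z$ (otherwise $[x_1,z]$ would be invariant with $z\notin f([x_1,z])$, which is impossible for a transitive map). The pair $([x_1,y],[y,z])$ is the horseshoe; note $z$ typically lies beyond $x_2$.

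For the endpoint clause your detour through Lemma~\ref{lem:U<D} is unnecessary. If the unique fixed point were $a$, then $f(x)-x$ is continuous and nonzero on $(a,b]$, and $f(b)<b$ forces $f(x)<x$ there; hence $b\notin f([a,b])$, contradicting Lemma~\ref{lem:transitivity-semi-open}(ii). The case of $b$ is symmetric.
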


\begin{proof}
Suppose that $f$ is transitive and has at least two
fixed points. Then Theorem~\ref{theo:summary-transitivity} implies that
$f$ is topologically mixing. The set of fixed points $P_1(f)$ has 
an empty interior by transitivity, and it is a closed set. This implies
that there exist two points $x_1<x_2$ in $P_1(f)$ such that 
$(x_1,x_2)\cap P_1(f)=\emptyset$ and thus, either
$$
\forall x\in (x_1,x_2),\ f(x)<x,
$$ 
or
\begin{equation}\label{eq:fx>x}
\forall x\in (x_1,x_2),\ f(x)>x.
\end{equation} 
We assume that \eqref{eq:fx>x} holds, the other case being symmetric.
If 
$$
\forall x\in (x_1,b],\ f(x)>x_1,
$$
then the interval $[x_1,b]$ is invariant,
which is impossible by transitivity  except if $x_1=a$. In this
case, $a$ is a non accessible endpoint because $a\notin
f((a,b])$, and thus there exists a sequence of fixed points
that tend to $a$ by Lemma~\ref{lem:accessibility}. But this contradicts
the choice of $x_1$ and $x_2$. We deduce that there exists $t\in (x_1,b]$
such that $f(t)\le x_1$. Actually, $t$ belongs to $[x_2,b]$ because of
\eqref{eq:fx>x}. Since $f(x_2)=x_2>x_1$, there exists $z\in [x_2,t]$
such that $f(z)=x_1$ by the intermediate value theorem. Thus we can define
$$
z:=\min\{x\in [x_1,b]\mid f(x)=x_1\}.
$$
Actually $z \in [x_2,b]$ because of \eqref{eq:fx>x}.
If $f(x)\neq z$ for all $x\in (x_1,z)$, then $f(x)<z$ 
for all $x\in (x_1,z)$ (because $f(x_1)=x_1<z$), and 
the minimality of $z$ implies that $f(x)>x_1$
for all $x\in (x_1,z)$. Thus
the non degenerate interval $[x_1,z]$ is 
invariant and $z\notin f([x_1,z])$, which is impossible because $f$ is
transitive. We deduce
that there exists $y\in (x_1,z)$ such that $f(y)=z$. If we set $J:=[x_1,y]$
and $K:=[y,z]$, then $(J,K)$ is a horseshoe.

If $f$ is transitive and has no horseshoe, what precedes implies that
$f$ has at most one fixed point.  Thus $f$ has a unique fixed
point according to  Lemma~\ref{lem:fixed-point}. If $a$ (resp. $b$)
is the unique fixed point of $f$, then $f(x)<x$ for all $x\in (a,b]$
(resp. $f(x)>x$ for all $x\in [a,b)$), and thus $f$ is not onto.
This is impossible because $f$ is transitive, so we conclude that
the unique fixed point of $f$ is neither $a$ nor $b$.
\end{proof}

\begin{prop}\label{prop:transitivity-type}
Let $f\colon I\to I$ be a transitive interval map. Then $f^2$ has a
horseshoe and $f$ has a periodic point of period $6$. Moreover,
\begin{itemize}
\item if $f$ is topologically mixing, then it is of type $p$ for some odd $p>1$,
\item if $f$ is transitive but not mixing, then it is of type $6$.
\end{itemize}
\end{prop}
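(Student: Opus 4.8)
The plan is to run the dichotomy of Theorem~\ref{theo:summary-transitivity}: either $f$ is topologically mixing, or there is $c\in(a,b)$ with $f([a,c])=[c,b]$, $f([c,b])=[a,c]$, with $f^2|_{[a,c]}$ and $f^2|_{[c,b]}$ topologically mixing and $c$ the unique fixed point of $f$. I treat the two cases separately.

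\emph{Mixing case.} If $f$ is topologically mixing, Corollary~\ref{cor:mixing-odd-periodic-point} gives a periodic point of odd period $q>1$, and then Proposition~\ref{prop:odd-period-turbulent} gives that $f^2$ has a (strict) horseshoe. Since $q$ is a period of $f$ and $q$ is an odd integer $>1$, the type $t$ of $f$ satisfies $t\unlhd q$; but in Sharkovsky's order the set $\{n\in\IN : n\unlhd q\}$ consists only of odd integers $>1$ (these form the initial segment $3\lhd 5\lhd\cdots$), so $t$ is an odd integer $>1$. Finally $6=2\cdot 3\rhd q$ because the block $2\cdot 3\lhd 2\cdot 5\lhd\cdots$ sits after all odd integers $>1$, so Sharkovsky's Theorem~\ref{theo:Sharkovsky} yields a periodic point of period $6$.

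\emph{Non-mixing case.} Let $c$ be as above and set $g:=f^2|_{[a,c]}$. Then $g$ is a transitive (indeed topologically mixing) interval map on $[a,c]$, and $g(c)=f^2(c)=c$, so $c$ is a fixed point of $g$ that is an endpoint of its domain. This is exactly the configuration ruled out by Lemma~\ref{lem:non-turbulent} unless $g$ has a horseshoe: were $g$ horseshoe-free it would have a unique fixed point, located strictly inside $[a,c]$, contradicting $g(c)=c$. Hence $g$ has a horseshoe, and so does $f^2$. By Proposition~\ref{prop:turbulent-all-periods} applied to $g$, the map $g$ has periodic points of all periods.

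To finish I identify the type of $f$. Since $c$ is the only fixed point of $f$ and $f$ interchanges $[a,c]$ and $[c,b]$, the orbit of any periodic point $x\neq c$ never hits $c$, hence alternates strictly between $[a,c)$ and $(c,b]$; thus $x$ has even period $2m$ and is periodic of period $m$ for $g$ (if $x\in[a,c)$) or for $f^2|_{[c,b]}$ (if $x\in(c,b]$), and conversely a period-$m$ point of $g$ is a period-$2m$ point of $f$. Therefore the set of periods of $f$ is $\{1\}\cup 2\cdot(\text{periods of }g)\cup 2\cdot(\text{periods of }f^2|_{[c,b]})$, which equals $\{1\}\cup 2\IN$ since $g$ already realizes every period. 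A direct check against the definition of Sharkovsky's order gives $\{1\}\cup 2\IN=\{m\in\IN : m\unrhd 6\}$, so $f$ is of type $6$, and in particular has a periodic point of period $6$.

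The proof is mechanical once one spots that Lemma~\ref{lem:non-turbulent} can be invoked for $g=f^2|_{[a,c]}$ with $c$ a fixed endpoint — that single observation is what collapses the non-mixing case. The only remaining care is bookkeeping: verifying the period-doubling bijection between the periodic points of $f$ other than $c$ and those of $g$ (and of $f^2|_{[c,b]}$), and matching $\{1\}\cup 2\IN$ with the terminal segment of Sharkovsky's order beginning at $6$.
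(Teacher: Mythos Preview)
Your proof is correct and follows essentially the same route as the paper: both split via Theorem~\ref{theo:summary-transitivity}, use Corollary~\ref{cor:mixing-odd-periodic-point} and Proposition~\ref{prop:odd-period-turbulent} in the mixing case, and in the non-mixing case apply Lemma~\ref{lem:non-turbulent} to $g=f^2|_{[a,c]}$ (exploiting that $c$ is a fixed endpoint of $g$) to get the horseshoe. The only difference is in the final bookkeeping for the non-mixing case: the paper argues that $f$ has no odd period $>1$ (so type $\unrhd 6$) and then shows a period-$3$ point of $f^2$ must have $f$-period $6$, whereas you compute the full period set $\{1\}\cup 2\IN$ via the period-doubling correspondence and match it directly with $\{m:m\unrhd 6\}$; both conclude type $6$.
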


\begin{proof}
If $f$ is topologically mixing, it has a periodic point of odd period $q>1$ by
Theorem~\ref{theo:summary-mixing}. 
Sharkovsky's Theorem~\ref{theo:Sharkovsky} implies that the type of $f$
is an odd integer $p$ in $\Lbrack 3,q\Rbrack$ and $f$ has a periodic point of
period  $6$. Moreover, $f^2$ has a
horseshoe by Proposition~\ref{prop:odd-period-turbulent}.

If $f$ is transitive but not topologically mixing, then  it has no periodic 
point of odd period greater than $1$ by
Theorem~\ref{theo:summary-mixing}, and thus the type of $f$ is at least 6
for Sharkovsky's order. Moreover, according to
Theorem~\ref{theo:summary-transitivity}, there exists a fixed
point $c\in I$ which is not an endpoint of $I$ and such that, if we set
$J:=[\min I,c]$ and $K:=[c,\max I]$, the subintervals $J,K$ are invariant under
$f^2$, and both maps $f^2|_J$, $f^2|_K$ are topologically mixing. Then $f^2|_J$
is transitive and has a fixed endpoint, and thus it has a horseshoe
according to Lemma~\ref{lem:non-turbulent}. Therefore  $f^2$ has a periodic
point of
period $3$ by Proposition~\ref{prop:turbulent-all-periods}. The period of
this point for $f$ cannot be an odd integer, and thus it is equal to $6$.
We conclude that the type of $f$ is $6$.
\end{proof}

\chapter{Topological entropy}\label{chap4}

\section{Definitions}\index{topological entropy}\index{entropy}

The notion of topological entropy for a dynamical system 
was introduced by Adler, Konheim and McAndrew \cite{AKM}.
Topological entropy is a conjugacy invariant. The aim of this
first section is to recall briefly the definitions and introduce the
notation used in the sequel, without entering into details.
The readers who are not
familiar with topological entropy can refer to \cite{Wal} or \cite{DGS}.

\subsection{Definition with open covers}\label{subsec:htop-covers}

Let $(X,f)$ be a topological dynamical system.
A \emph{finite cover}\index{cover} is a collection of sets $\CC=
\{C_1,\ldots,C_p\}$ such that $C_1\cup\cdots\cup C_p=X$. It is an \emph{open 
cover}\index{open cover} if in addition the sets $C_1,\ldots,C_p$ are open.
A \emph{partition}\index{partition} is a cover made of pairwise disjoint
sets.
The topological entropy is usually defined for open covers only.
Nevertheless we give the definition for any finite cover because we shall 
sometimes deal with the entropy of covers composed of intervals 
which are not open.

Let $\CC=\{C_1,\ldots,C_p\}$ and $\CD=\{D_1,\ldots, D_q\}$ be two covers.
The cover $\CC\vee\CD$
\label{notation:vee}
is defined by
$$
\CC\vee\CD:=\{C_i\cap D_j\mid i\in\Lbrack 1,p\Rbrack,\ j\in\Lbrack 1,q\Rbrack,
\ C_i\cap D_j\ne\emptyset\}.
$$
We say that $\CD$ is \emph{finer}\index{finer cover} than $\CC$, and
we write $\CC\prec \CD$,
\label{notation:finer}
if  every element of $\CD$ is included in an element of $\CC$.
Let $N(\CC)$ denote the minimal cardinality of a subcover of $\CC$, that
is,
$$
\label{notation:NU}
\index{N(U)@$N(\CU)$}
N(\CC):=\min\{n\mid \exists i_1,\ldots,i_n\in\Lbrack1,p\Rbrack\},
\ X=C_{i_1}\cup\cdots\cup C_{i_n}\}.
$$
Then, for all integers $n\ge 1$, we define
$$
\label{notation:coverUn}\label{notation:NnU}
\index{Un@$\CU^n$}\index{Nn(U,f)@$N_n(\CU,f)$}
N_n(\CC,f):=N\left(\CC\vee f^{-1}(\CC)\vee\cdots\vee f^{-(n-1)}(\CC)\right).
$$
If there is no ambiguity on the map, 
$\CC^n$ will denote 
$\CC\vee f^{-1}(\CC)\vee\cdots\vee f^{-(n-1)}(\CC)$.
Note that $N(\CC)\le \#\CC$. Moreover, if $\CP$ is a partition (not
containing the empty set), then $\CP^n$ is a partition too, 
and $N(\CP^n)=\#(\CP^n)$ for all $n\ge 1$.

\begin{lem}\label{lem:subadditive}
Let $(a_n)_{n\ge 1}$ be a sub-additive\index{sub-additive sequence} 
sequence, that is, $a_{n+k}\le a_n+a_k$ for all $n\ge 1$ and all $k\ge 1$.\index{sub-additive sequence} 
Then $\lim_{n\to+\infty}\frac{1}{n}a_n$
exists and is equal to $\inf_{n\ge 1}\frac{1}{n}a_n$.
\end{lem}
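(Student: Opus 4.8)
The plan is to prove Lemma~\ref{lem:subadditive} by the classical argument of Fekete. Set $L:=\inf_{n\ge 1}\frac{a_n}{n}$, which a priori is an element of $[-\infty,+\infty)$. Since $\frac{a_n}{n}\ge L$ for every $n$, one has $\liminf_{n\to+\infty}\frac{a_n}{n}\ge L$ for free, so the whole content of the lemma is the reverse bound $\limsup_{n\to+\infty}\frac{a_n}{n}\le L$; once this is established, the two inequalities force the limit to exist and to equal $L$.

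To obtain the $\limsup$ bound I would first fix an integer $m\ge 1$ and compare $a_n$ with $a_m$ via Euclidean division. Write $n=q m+r$ with $q=q(n)\ge 0$ and $r=r(n)\in\Lbrack 0,m-1\Rbrack$. Applying sub-additivity repeatedly gives $a_{qm}\le q\,a_m$, and one further application gives $a_n\le a_{qm}+a_r\le q\,a_m+a_r$ when $r\ge 1$ (when $r=0$ the term $a_r$ is simply absent). Since $r$ ranges over the finite set $\Lbrack 1,m-1\Rbrack$, the quantity $a_r$ is bounded above by a constant $C_m:=\max\{0,a_1,\dots,a_{m-1}\}$ depending only on $m$ (with $C_m:=0$ if $m=1$). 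Hence $a_n\le q\,a_m+C_m$ for all $n\ge 1$.

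Dividing by $n$ and letting $n\to+\infty$ is the crucial step: from $n=qm+r$ with $0\le r\le m-1$ one gets $\frac{q}{n}\to\frac1m$ and $\frac{C_m}{n}\to 0$, so
\[
\limsup_{n\to+\infty}\frac{a_n}{n}\le\limsup_{n\to+\infty}\Bigl(\frac{q}{n}\,a_m+\frac{C_m}{n}\Bigr)=\frac{a_m}{m}.
\]
As this holds for every $m\ge 1$, taking the infimum over $m$ yields $\limsup_{n\to+\infty}\frac{a_n}{n}\le L$, which completes the proof.

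I do not expect any genuine obstacle here; the only points needing a little care are purely bookkeeping: the sequence is indexed from $n=1$, so there is no $a_0$ — which is exactly why the case $r=0$ must be isolated — and the infimum $L$ may be $-\infty$, but all the inequalities above remain valid and meaningful in $[-\infty,+\infty)$, so that case needs no separate treatment.
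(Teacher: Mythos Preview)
Your proof is correct and follows essentially the same Fekete argument as the paper: fix $m$, write $n=qm+r$, use sub-additivity to get $a_n\le qa_m+a_r$, divide by $n$ and take $\limsup$, then infimize over $m$. You are in fact slightly more careful than the paper, which does not explicitly separate the case $r=0$ (where $a_0$ is undefined) nor comment on the possibility $L=-\infty$.
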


\begin{proof}
The inequality 
\begin{equation}\label{eq:subadd1}
\liminf_{n\to+\infty}\frac{1}{n}a_n\ge \inf_{n\ge 1}\frac{1}{n}a_n
\end{equation} 
is obvious. Let $k$ be a positive integer.
For every positive integer $n$, there exist integers $q,r$ such that
$n=qk+r$ and $r\in\Lbrack 0,k-1\Rbrack$. The sub-additivity implies that
$a_n\le q a_k+a_r$, and thus $\limsup_{n\to+\infty}\frac{1}{n}a_n\le
\frac{1}{k}a_k$. Therefore, 
\begin{equation}\label{eq:subadd2}
\limsup_{n\to+\infty}\frac{1}{n}a_n\le\inf_{k\ge 1}\frac{1}{k}a_k,
\end{equation} 
and the lemma follows from \eqref{eq:subadd1} and 
\eqref{eq:subadd2}.
\end{proof}

It is easy to show that, for all finite covers $\CC$, the sequence 
$\left(\frac{1}{n}\log N_n(\CC,f)\right)_{n\ge 1}$ 
is sub-additive. Thus Lemma~\ref{lem:subadditive} can be used to
define the \emph{topological entropy} of the cover $\CC$ by:
$$
\label{notation:htopU}
\index{htop(U,f)@$h_{top}(\CU,f)$}
h_{top}(\CC,f):=\lim_{n\to+\infty}\frac{\log N_n(\CC,f)}{n}
=\inf_{n\ge 1}\frac{\log N_n(\CC,f)}{n}.
$$

The next lemma follows straightforwardly from the definitions.

\begin{lem}\label{lem:C-finer-D}
Let $(X,f)$ be a topological dynamical system. If $\CC$ and $\CD$ are two 
finite covers
such that $\CC\prec \CD$, then $h_{top}(\CC,T)\le h_{top}(\CD,T)$.
\end{lem}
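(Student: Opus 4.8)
The plan is to reduce everything to the single inequality $N_n(\CC,f)\le N_n(\CD,f)$ valid for every $n\ge 1$; once this is established, dividing by $n$, taking logarithms and letting $n\to+\infty$ in the defining formula for $h_{top}$ gives the conclusion immediately. So the real content is a monotonicity statement about the functional $N(\cdot)$ and about the construction $\CC\mapsto\CC^n$ with respect to the refinement relation $\prec$.

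First I would record two elementary closure properties of $\prec$. If $\CC\prec\CD$, then $f^{-1}(\CC)\prec f^{-1}(\CD)$: indeed every element of $f^{-1}(\CD)$ has the form $f^{-1}(D)$ with $D\in\CD$, and choosing $C\in\CC$ with $D\subset C$ yields $f^{-1}(D)\subset f^{-1}(C)$. Next, if $\CC_1\prec\CD_1$ and $\CC_2\prec\CD_2$, then $\CC_1\vee\CC_2\prec\CD_1\vee\CD_2$: a nonempty element $D_1\cap D_2$ of $\CD_1\vee\CD_2$ satisfies $D_1\cap D_2\subset C_1\cap C_2$ for suitable $C_i\in\CC_i$ with $D_i\subset C_i$, and since $C_1\cap C_2\supset D_1\cap D_2\ne\emptyset$ this $C_1\cap C_2$ is genuinely an element of $\CC_1\vee\CC_2$; this is the one place where a little care is needed, because $\vee$ discards empty intersections, so I must check the containing set is itself nonempty. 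Iterating these two facts starting from $\CC\prec\CD$ gives $\CC^n\prec\CD^n$ for all $n\ge 1$.

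Then I would show that $\CC\prec\CD$ implies $N(\CC)\le N(\CD)$. Let $\{D_{j_1},\ldots,D_{j_m}\}$ be a subcover of $\CD$ of minimal cardinality $m=N(\CD)$. For each $k\in\Lbrack 1,m\Rbrack$ pick $C_{i_k}\in\CC$ with $D_{j_k}\subset C_{i_k}$. Since $\bigcup_k D_{j_k}=X$, also $\bigcup_k C_{i_k}=X$, so $\{C_{i_1},\ldots,C_{i_m}\}$ is a subcover of $\CC$ and hence $N(\CC)\le m=N(\CD)$.

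Combining the two steps, $N_n(\CC,f)=N(\CC^n)\le N(\CD^n)=N_n(\CD,f)$ for every $n\ge 1$, whence $\frac{\log N_n(\CC,f)}{n}\le\frac{\log N_n(\CD,f)}{n}$, and passing to the limit $n\to+\infty$ (the limit exists by Lemma~\ref{lem:subadditive}) yields $h_{top}(\CC,f)\le h_{top}(\CD,f)$. There is no serious obstacle here; the statement is genuinely routine, and the only subtlety worth flagging in the write-up is the nonemptiness check in the argument that $\vee$ respects $\prec$.
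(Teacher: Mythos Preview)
Your proof is correct and is exactly the standard unpacking the paper has in mind: the paper does not actually write out a proof for this lemma, stating only that it ``follows straightforwardly from the definitions.'' Your argument supplies precisely those details, and the nonemptiness check you flag in the $\vee$ step is indeed the only point requiring a moment's care.
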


The \emph{topological entropy} of a dynamical system $(X,f)$ is defined by
$$
\label{notation:htopf}
\index{htop(f)@$h_{top}(f)$}
h_{top}(f):=\sup\{h_{top}(\CU,f)\mid \CU\text{ finite open cover of } X\}.
$$
The topological entropy is a non negative number (it may be infinite). It
satisfies the following properties:

\begin{prop}\label{prop:htop-Tn}
Let $(X,f)$ be a topological dynamical system.
\begin{itemize}
\item For all integers $n\ge 1$, $h_{top}(f^n)=n h_{top}(f)$.
\item If $Y$ is an invariant subset of $X$,
then $h_{top}(f|_Y)\le h_{top}(f)$.
\item if $(Y,g)$ is a topological dynamical system that is 
conjugate to $(X,f)$, then $h_{top}(f)=h_{top}(g)$.
\end{itemize}
\end{prop}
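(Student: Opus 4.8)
All three statements are classical; the plan is to handle them in order, using Lemma~\ref{lem:subadditive} and Lemma~\ref{lem:C-finer-D}. For the first one I fix a finite open cover $\CU$ of $X$ and set
$$\CU^{[n]}:=\CU\vee f^{-1}(\CU)\vee\cdots\vee f^{-(n-1)}(\CU).$$
Unwinding the definitions, for every $k\ge 1$ the cover $\CU^{[n]}\vee(f^n)^{-1}(\CU^{[n]})\vee\cdots\vee(f^n)^{-(k-1)}(\CU^{[n]})$ is exactly $\CU\vee f^{-1}(\CU)\vee\cdots\vee f^{-(nk-1)}(\CU)$, so $N_k(\CU^{[n]},f^n)=N_{nk}(\CU,f)$; dividing by $k$ and letting $k\to\infty$ (the limit exists by Lemma~\ref{lem:subadditive}, and it equals the limit along the subsequence $(nk)_k$) gives $h_{top}(\CU^{[n]},f^n)=n\,h_{top}(\CU,f)$. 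Since each $\CU^{[n]}$ is itself a finite open cover of $X$, taking the supremum over $\CU$ already yields $h_{top}(f^n)\ge n\,h_{top}(f)$. For the reverse inequality I start from an arbitrary finite open cover $\CV$ of $X$; as $\CV\prec\CV^{[n]}$, Lemma~\ref{lem:C-finer-D} gives $h_{top}(\CV,f^n)\le h_{top}(\CV^{[n]},f^n)=n\,h_{top}(\CV,f)\le n\,h_{top}(f)$, and taking the supremum over $\CV$ finishes the first part.

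For the second statement I use that an invariant set $Y$ is by definition closed, so $X\setminus Y$ is open. Given a finite open cover $\CU=\{U_1,\ldots,U_p\}$ of $Y$, write $U_i=V_i\cap Y$ with $V_i$ open in $X$ and put $\CV:=\{V_1,\ldots,V_p,X\setminus Y\}$, a finite open cover of $X$. Because $f(Y)\subset Y$ one has $f^{-j}(X\setminus Y)\cap Y=\emptyset$ and $f^{-j}(V_i)\cap Y=(f|_Y)^{-j}(U_i)$ for all $j\ge 0$; hence every member of $\CV^{[n]}$ (formed with $f$) is either disjoint from $Y$ or, intersected with $Y$, equals a member of $\CU^{[n]}$ (formed with $f|_Y$). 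Restricting a minimal subcover of $\CV^{[n]}$ to $Y$ therefore gives a subcover of $\CU^{[n]}$ of no larger cardinality, so $N_n(\CU,f|_Y)\le N_n(\CV,f)$, whence $h_{top}(\CU,f|_Y)\le h_{top}(\CV,f)\le h_{top}(f)$; the supremum over $\CU$ gives $h_{top}(f|_Y)\le h_{top}(f)$.

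For the third statement, let $\vfi\colon X\to Y$ be a homeomorphism with $\vfi\circ f=g\circ\vfi$. Taking inverse images in this identity gives $f^{-1}\circ\vfi^{-1}=\vfi^{-1}\circ g^{-1}$, hence $f^{-j}(\vfi^{-1}(U))=\vfi^{-1}(g^{-j}(U))$; since $\vfi^{-1}$ commutes with intersections, for any finite open cover $\CU$ of $Y$ the cover $\vfi^{-1}(\CU):=\{\vfi^{-1}(U)\mid U\in\CU\}$ of $X$ satisfies $(\vfi^{-1}(\CU))^{[n]}=\vfi^{-1}(\CU^{[n]})$. As $\vfi$ is a bijection, the minimal cardinality of a subcover is unchanged under applying $\vfi^{-1}$ to a cover, so $N_n(\vfi^{-1}(\CU),f)=N_n(\CU,g)$ and thus $h_{top}(\vfi^{-1}(\CU),f)=h_{top}(\CU,g)$. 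When $\CU$ runs over all finite open covers of $Y$, $\vfi^{-1}(\CU)$ runs over all finite open covers of $X$, so taking suprema gives $h_{top}(f)=h_{top}(g)$.

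The only step that calls for genuine care is the first one, namely the combinatorial identity $N_k(\CU^{[n]},f^n)=N_{nk}(\CU,f)$ together with the passage to the limit along the subsequence $(nk)_k$, where Lemma~\ref{lem:subadditive} (the limit being an infimum, every subsequence gives the same value) and Lemma~\ref{lem:C-finer-D} do the work; the second and third parts are essentially bookkeeping with, respectively, the subspace $Y$ and the conjugating homeomorphism.
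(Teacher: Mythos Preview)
Your proof is correct in all three parts. The paper does not actually supply a proof for this proposition: it is stated without argument in Section~\ref{subsec:htop-covers} as a standard fact, with the reader referred to \cite{Wal} or \cite{DGS} for details. Your argument is precisely the classical one found in such references, carried out cleanly via the open-cover definition and using Lemmas~\ref{lem:subadditive} and~\ref{lem:C-finer-D} exactly where needed; in particular, your use of the fact that an invariant set is \emph{closed} (as defined in this paper) to build the extension cover $\CV=\{V_1,\ldots,V_p,X\setminus Y\}$ is the right way to handle the second item in this setting.
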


When dealing with entropy in the sequel, we shall often use 
that $h_{top}(f^n)=n h_{top}(f)$, without referring systematically
to Proposition~\ref{prop:htop-Tn}.

\subsection{Definition with Bowen's formula}

The topological entropy can be computed with
\emph{Bowen's formula}. The following notions were introduced in \cite{Bow}.

Let $X$ be a metric space with a distance $d$, and let $f\colon
X\to X$ be a continuous map. Let $\eps>0$ and $n\ge 1$.
The \emph{Bowen ball}\index{Bowen ball}
\label{notation:Bowenball}
\index{Bn(x,e)@$B_n(x,\eps)$}
of center $x$, radius $\eps$ and order $n$ is defined by
\begin{eqnarray*}
B_n(x,\eps)&:=&\{y\in X\mid d(f^k(x),f^k(y))\le \eps,\ k\in\Lbrack 0,n-1\Rbrack\}\\
&=&\bigcap_{i=0}^{n-1}f^{-i}(\overline{B}(f^i(x),\eps)).
\end{eqnarray*}
Let $E\subset X$.
The set $E$ is \emph{$(n,\eps)$-separated}\index{separated set ($(n,\eps)$- )} if for all distinct points $x,y$ in $E$,
there exists $k\in\Lbrack 0,n-1\Rbrack$ such that $d(f^k(x),f^k(y))>\eps$.
The maximal cardinality of an $(n,\eps)$-separated set
is denoted by $s_n(f,\eps)$.
\label{notation:snfeps}
\index{sn(f,e)@$s_n(f,\eps)$}
The set $E$ is an \emph{$(n,\eps)$-spanning set}\index{spanning set ($(n,\eps)$- )}
if $X\subset \bigcup_{x\in E} B_n(x,\eps)$. The 
minimal cardinality of an $(n,\eps)$-spanning set is  denoted by
$r_n(f,\eps)$.
\label{notation:rnfeps}
\index{rn(f,e)@$r_n(f,\eps)$}

\begin{lem}\label{lem:feps-separated}
Let $(X,f)$ be a topological dynamical system, $\eps>0$ and $n\in\IN$.
\begin{enumerate}
\item If $0<\eps'<\eps$, then
$s_n(f,\eps')\ge s_n(f,\eps)$ and $r_n(f,\eps')\ge r_n(f,\eps)$.
\item $r_n(f,\eps)\le s_n(f,\eps)\le r_n(f,\frac{\eps}2)$.
\end{enumerate}
\end{lem}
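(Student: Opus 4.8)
The statement to prove is Lemma~\ref{lem:feps-separated}, consisting of two parts about separated and spanning sets. Both parts are elementary counting/covering arguments, and the proof is routine; I will indicate the structure.

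For part (i), the plan is to observe directly from the definitions that shrinking $\eps$ can only make it \emph{easier} to be $(n,\eps)$-separated and \emph{harder} to be $(n,\eps)$-spanning. More precisely, if $0<\eps'<\eps$ and $E$ is $(n,\eps)$-separated, then for distinct $x,y\in E$ there is $k\in\Lbrack 0,n-1\Rbrack$ with $d(f^k(x),f^k(y))>\eps>\eps'$, so $E$ is also $(n,\eps')$-separated; taking $E$ of maximal cardinality $s_n(f,\eps)$ gives $s_n(f,\eps')\ge s_n(f,\eps)$. Dually, since $B_n(x,\eps')\subset B_n(x,\eps)$ (each $\overline{B}(f^i(x),\eps')\subset\overline{B}(f^i(x),\eps)$), any $(n,\eps')$-spanning set is $(n,\eps)$-spanning, so every spanning set for $\eps'$ has cardinality at least the minimum over spanning sets for $\eps$; hence $r_n(f,\eps')\ge r_n(f,\eps)$.

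For part (i) of the inequality chain in (ii), namely $r_n(f,\eps)\le s_n(f,\eps)$: take a maximal $(n,\eps)$-separated set $E$. By maximality, for every $x\in X$ the set $E\cup\{x\}$ fails to be $(n,\eps)$-separated, so there is $y\in E$ with $d(f^k(x),f^k(y))\le\eps$ for all $k\in\Lbrack 0,n-1\Rbrack$, i.e.\ $x\in B_n(y,\eps)$. Thus $E$ is $(n,\eps)$-spanning, and $r_n(f,\eps)\le\#E=s_n(f,\eps)$. For the second inequality $s_n(f,\eps)\le r_n(f,\frac{\eps}{2})$: let $F$ be an $(n,\frac{\eps}{2})$-spanning set and $E$ an $(n,\eps)$-separated set. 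Define a map $\phi\colon E\to F$ by sending each $x\in E$ to some $y\in F$ with $x\in B_n(y,\frac{\eps}{2})$. This map is injective: if $\phi(x)=\phi(x')=y$ for distinct $x,x'\in E$, then for every $k\in\Lbrack 0,n-1\Rbrack$ the triangle inequality gives $d(f^k(x),f^k(x'))\le d(f^k(x),f^k(y))+d(f^k(y),f^k(x'))\le\frac{\eps}{2}+\frac{\eps}{2}=\eps$, contradicting that $E$ is $(n,\eps)$-separated. Hence $\#E\le\#F$; taking $E$ maximal and $F$ minimal yields $s_n(f,\eps)\le r_n(f,\frac{\eps}{2})$.

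There is no real obstacle here: everything follows by unwinding the definitions of $(n,\eps)$-separated and $(n,\eps)$-spanning together with a single use of the triangle inequality. The only point requiring a moment's care is the well-definedness of the injection $\phi$ (one must \emph{choose} a spanning center for each separated point, which is legitimate since the spanning property guarantees at least one exists), and the fact that maximal separated sets exist, which holds because $X$ is compact so such sets are finite. I would simply note these in passing rather than dwell on them.
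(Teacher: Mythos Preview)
Your proof is correct and follows the same approach as the paper's: for (i) the paper simply writes ``Obvious'' while you spell out the details, and for (ii) both arguments show a maximal separated set is spanning and then construct the same injection from a separated set into a spanning set via the triangle inequality.
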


\begin{proof}
\noindent(i) Obvious.

\noindent(ii) Let $E$ be an $(n,\eps)$-separated set of maximal cardinality
$s_n(f,\eps)$. By maximality, for every $y\in X\setminus E$, $E\cup\{y\}$ is not
$(n,\eps)$-separated, that is, $y\in \bigcup_{x\in E} B_n(x,\eps)$. 
Moreover, $E$ is clearly included in $\bigcup_{x\in E} B_n(x,\eps)$.
This means that
$E$ is an $(n,\eps)$-spanning set, and so $r_n(f,\eps)\le s_n(f,\eps)$.
Let $F$ be an $(n,\frac{\eps}2)$-spanning set 
of cardinality $r_n(f,\frac{\eps}2)$. 
For every $x\in X$, there exists $y(x)\in F$ such that 
$x\in B_n(y(x),\frac{\eps}2)$. If $x_1,x_2$ are two distinct points in $E$, then
$y(x_1)\neq y(x_2)$ (otherwise this would imply that $d(f^i(x_1),f^i(x_2))<\eps$
for all $i\in\Lbrack 0,n-1\Rbrack$). Thus $\# E\le \# F$, that is,
$s_n(f,\eps)\le r_n(f,\frac{\eps}2)$.
\end{proof}

The next result is due to Bowen \cite{Bow2}; see also \cite{Rud}.

\begin{theo}[Bowen's formula]\label{theo:Bowen-formula}\index{Bowen's formula}
Let $(X,f)$ be a topological dynamical system. Then 
$$
h_{top}(f)=\lim_{\eps\to 0}\limsup_{n\to+\infty}\frac{1}{n}\log
s_n(f,\eps)=\lim_{\eps\to 0}\limsup_{n\to+\infty}\frac{1}{n}\log
r_n(f,\eps).
$$
\end{theo}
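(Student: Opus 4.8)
The plan is to introduce the two quantities
$h_s:=\lim_{\eps\to 0}\limsup_{n\to\infty}\frac1n\log s_n(f,\eps)$ and
$h_r:=\lim_{\eps\to 0}\limsup_{n\to\infty}\frac1n\log r_n(f,\eps)$, to check that these limits make sense in $[0,+\infty]$, and then to prove $h_{top}(f)=h_s=h_r$. First, by Lemma~\ref{lem:feps-separated}(i) the functions $\eps\mapsto\limsup_n\frac1n\log s_n(f,\eps)$ and $\eps\mapsto\limsup_n\frac1n\log r_n(f,\eps)$ are non-increasing on $(0,+\infty)$, hence have (possibly infinite) limits as $\eps\to 0^+$, so $h_s$ and $h_r$ are well defined. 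Next, Lemma~\ref{lem:feps-separated}(ii) gives $r_n(f,\eps)\le s_n(f,\eps)\le r_n(f,\eps/2)$; applying $\frac1n\log(\cdot)$, then $\limsup_n$, then letting $\eps\to 0$, a squeeze yields $h_s=h_r$. Denote this common value by $h$.

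For the inequality $h_{top}(f)\le h$, fix a finite open cover $\CU$ of $X$. Since $X$ is compact, $\CU$ has a Lebesgue number $\delta>0$, so every subset of $X$ of diameter $<\delta$ lies in some member of $\CU$. Fix $n\ge 1$ and let $E$ be an $(n,\delta/3)$-spanning set with $\#E=r_n(f,\delta/3)$. For $x\in E$ and $i\in\Lbrack 0,n-1\Rbrack$ the set $\overline B(f^i(x),\delta/3)$ has diameter $\le 2\delta/3<\delta$, hence lies in some member $A_i\in\CU$ (depending on $x$ and $i$); therefore the Bowen ball $B_n(x,\delta/3)=\bigcap_{i=0}^{n-1}f^{-i}(\overline B(f^i(x),\delta/3))$ is contained in $\bigcap_{i=0}^{n-1}f^{-i}(A_i)\in\CU^n$. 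As the balls $B_n(x,\delta/3)$, $x\in E$, cover $X$, the corresponding (finitely many) elements of $\CU^n$ form a subcover, whence $N_n(\CU,f)=N(\CU^n)\le\#E=r_n(f,\delta/3)$. Consequently $h_{top}(\CU,f)\le\limsup_n\frac1n\log r_n(f,\delta/3)\le h_r=h$, and taking the supremum over all finite open covers $\CU$ gives $h_{top}(f)\le h$.

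For the reverse inequality $h\le h_{top}(f)$, fix $\eps>0$ and, using compactness, choose a finite open cover $\CU$ all of whose members have diameter $<\eps$ (for instance finitely many open balls of radius $\eps/3$). If $E$ is an $(n,\eps)$-separated set, then distinct points of $E$ lie in distinct members of $\CU^n$: if $x,y$ both lay in a single element $C_{i_0}\cap f^{-1}(C_{i_1})\cap\cdots\cap f^{-(n-1)}(C_{i_{n-1}})$, then $f^k(x),f^k(y)\in C_{i_k}$, so $d(f^k(x),f^k(y))<\eps$ for all $k\in\Lbrack 0,n-1\Rbrack$, contradicting $(n,\eps)$-separation. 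Any subcover of $\CU^n$ covers $E$, and each of its members contains at most one point of $E$, so it has at least $\#E$ members; hence $s_n(f,\eps)\le N(\CU^n)=N_n(\CU,f)$. Therefore $\limsup_n\frac1n\log s_n(f,\eps)\le h_{top}(\CU,f)\le h_{top}(f)$, and letting $\eps\to 0$ gives $h\le h_{top}(f)$.

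Combining the two inequalities, $h_{top}(f)=h=h_s=h_r$, which is Bowen's formula. The one place that requires care is the middle step, matching Bowen balls with elements of the iterated cover $\CU^n$: one should invoke the Lebesgue number with a margin (closed balls of radius $\delta/3$, so that their diameters are \emph{strictly} below $\delta$), and one should phrase the estimate through the minimal subcover count $N_n(\CU,f)=N(\CU^n)$ rather than through $\#(\CU^n)$, since it is $N_n(\CU,f)$ that enters the definition of $h_{top}(\CU,f)$.
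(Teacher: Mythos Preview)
Your proof is correct and follows essentially the same approach as the paper's: both establish $h_s=h_r$ via Lemma~\ref{lem:feps-separated}, then prove the two inequalities by (a) covering $X$ with Bowen balls from a spanning set and using a Lebesgue number to fit each ball into a member of $\CU^n$, and (b) observing that distinct points of a separated set lie in distinct members of $\CU^n$ when the cover has small mesh. The only cosmetic differences are the order in which you present the two inequalities and your use of $\delta/3$ where the paper simply takes $\eps\in(0,\delta)$.
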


\begin{proof}
First, the limits
$$
\lim_{\eps\to 0}\limsup_{n\to+\infty}\frac{1}{n}\log
s_n(f,\eps)\quad\text{and}\quad
\lim_{\eps\to 0}\limsup_{n\to+\infty}\frac{1}{n}\log
r_n(f,\eps)
$$
exist by Lemma~\ref{lem:feps-separated}(i), and they are equal by
Lemma~\ref{lem:feps-separated}(ii). Let $h$ denote the value of these
limits. We are going to show that $h_{top}(f)=h$.

Let $\eps>0$ and $n\in\IN$. Let $E$ be an $(n,\eps)$-separated set of
cardinality $s_n(f,\eps)$. Let $\CU$ be a finite 
open cover such that the diameter of all elements of $\CU$ is less than 
$\eps$ (such a cover exists because $X$ is compact). Two 
distinct points in $E$ are in distinct elements of $\CU^n$, so
$s_n(f,\eps)\le N_n(\CU)$. This implies that
$$
\limsup_{n\to+\infty}\frac 1n \log s_n(f,\eps)\le h_{top}(\CU,f),
$$
and so $h\le h_{top}(f)$.

Let $\CV$ be a finite 
open cover and let $\delta>0$ be a Lebesgue number for $\CV$,
that is, for all $x\in X$, there exists $V\in\CV$ such that $B(x,\delta)\subset
V$. Let $\eps\in (0,\delta)$ and 
let $F$ be an $(n,\eps)$-spanning set of cardinality $r_n(f,\eps)$. For 
all $y\in F$ and all $k\in\Lbrack 0,n-1\Rbrack$, there exists
$V_{y,k}\in\CV$ such that $B(f^k(y),\delta)\subset V_{y,k}$.
Let $x\in X$. By definition of $F$, there exists $y\in F$ such that
$x\in B_n(y,\eps)$, hence $f^k(x)\in \overline{B}(f^k(y),\eps)\subset
B(f^k(y),\delta)$ for all $k\in\Lbrack 0,n-1\Rbrack$. Thus
$$
x\in \bigvee_{k=0}^{n-1}f^{-k}(V_{y,k}).
$$
This implies that $\CV':=\{\bigvee_{k=0}^{n-1}f^{-k}(V_{y,k})\mid y\in F\}$
is a subcover of $\CV^n$, and so $N_n(\CV)\le N(\CV')\le\#F=r_n(f,\eps)$.
This implies that 
$$
h_{top}(\CV,f)\le \limsup_{n\to+\infty}\frac 1n \log r_n(f,\eps),
$$ 
and so $h_{top}(f)\le h$. Finally, we get $h_{top}(f)=h$.
\end{proof}

\section{Entropy and horseshoes}\label{sec:entropy-horseshoes}
\subsection{Horseshoes imply positive entropy}

Recall that $(J_1,\ldots, J_p)$ is a $p$-horseshoe for the interval map 
$f$ if $J_1,\ldots, J_p$ are non degenerate closed intervals with pairwise
disjoint interiors such that $J_1\cup \cdots\cup J_p\subset f(J_i)$
for all $i\in\Lbrack 1,p\Rbrack$.

The next proposition appears under this form belatedly in the
literature (e.g., \cite[Proposition VIII.8]{BCop}). 
However it basically follows from the computations of Adler and
McAndrew in \cite{AM2}. 

\begin{prop}\label{prop:horseshoe-htop}
Let $f\colon I\to I$ be an interval map. If $f$ has a $p$-horseshoe, then
$h_{top}(f)\ge\log p$.
\end{prop}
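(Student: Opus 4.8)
I would derive the bound from Bowen's formula (Theorem~\ref{theo:Bowen-formula}) by producing exponentially many $(n,\eps)$-separated points, but only after a reduction that circumvents the one genuinely annoying feature of the hypothesis: consecutive intervals of the horseshoe are allowed to share an endpoint (they have only pairwise \emph{disjoint interiors}), which could spoil separation. The endpoint issue is exactly what the reduction, together with a rescaling $f\mapsto f^N$, is designed to handle; everything else is routine.

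\textbf{Step 1 (reduction to strict horseshoes).} Suppose $(J_1,\ldots,J_m)$ is an $m$-horseshoe. Since the $J_i$ have pairwise disjoint interiors they are linearly ordered, so we may assume $J_1\le J_2\le\cdots\le J_m$; then for indices $i<i'$ with $i'\ge i+2$ we have $\max J_i\le\min J_{i+1}<\max J_{i+1}\le\min J_{i'}$, because $J_{i+1}$ is non degenerate. Hence the subfamily $(J_i)_{i\text{ odd}}$ consists of $q:=\lceil m/2\rceil$ \emph{pairwise disjoint} non degenerate closed intervals, and each of them still satisfies $f(J_i)\supset J_1\cup\cdots\cup J_m\supset\bigcup_{j\text{ odd}}J_j$; that is, it is a strict $q$-horseshoe in the sense of Definition~\ref{defi:horseshoe}.

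\textbf{Step 2 (entropy of a strict horseshoe).} I claim that if $f$ has a strict $q$-horseshoe $(L_1,\ldots,L_q)$ then $h_{top}(f)\ge\log q$. Put $\eps:=\tfrac13\min\{d(L_i,L_j)\mid i\ne j\}$, which is positive as a finite minimum of positive numbers. Fix $n\ge1$ and an itinerary $\underline a=(a_0,\ldots,a_{n-1})\in\{1,\ldots,q\}^n$. Since $f(L_{a_k})\supset L_{a_{k+1}}$ for every $k$, the sequence $L_{a_0}\to L_{a_1}\to\cdots\to L_{a_{n-1}}$ is a chain of intervals, so by Lemma~\ref{lem:chain-of-intervals}(i) there is a point $x_{\underline a}$ with $f^k(x_{\underline a})\in L_{a_k}$ for all $k\in\Lbrack 0,n-1\Rbrack$. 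If $\underline a\ne\underline b$, choose $k$ with $a_k\ne b_k$; then $f^k(x_{\underline a})\in L_{a_k}$ and $f^k(x_{\underline b})\in L_{b_k}$ lie in two of the $L_j$'s at distance at least $3\eps$, so $d(f^k(x_{\underline a}),f^k(x_{\underline b}))>\eps$. Thus $\{x_{\underline a}\mid\underline a\in\{1,\ldots,q\}^n\}$ is $(n,\eps)$-separated, so $s_n(f,\eps)\ge q^n$ and $\limsup_{n\to+\infty}\tfrac1n\log s_n(f,\eps)\ge\log q$; since this quantity does not decrease as $\eps$ shrinks (Lemma~\ref{lem:feps-separated}(i)), Bowen's formula gives $h_{top}(f)\ge\log q$.

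\textbf{Step 3 (amplification).} For every $N\ge1$, the map $f^N$ has a $p^N$-horseshoe by Lemma~\ref{lem:horseshoe-fn}(ii); applying Steps~1 and~2 to $f^N$ yields $h_{top}(f^N)\ge\log\lceil p^N/2\rceil\ge N\log p-\log 2$. Since $h_{top}(f^N)=N\,h_{top}(f)$ by Proposition~\ref{prop:htop-Tn}, we obtain $h_{top}(f)\ge\log p-\tfrac{\log 2}{N}$ for all $N$, and letting $N\to+\infty$ gives $h_{top}(f)\ge\log p$. The main obstacle, as noted, is the possibility of shared endpoints among the $J_i$; the passage to $(J_i)_{i\text{ odd}}$ resolves it at the cost of a factor that is then recovered by the standard trick of replacing $f$ by its iterates.
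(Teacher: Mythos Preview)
Your proof is correct and follows the same overall architecture as the paper: first establish the bound for a \emph{strict} horseshoe, then pass from a general $p$-horseshoe to a strict $\lceil p^N/2\rceil$-horseshoe for $f^N$ via Lemma~\ref{lem:horseshoe-fn} and the odd-index trick, and finally recover the lost factor by letting $N\to\infty$ using $h_{top}(f^N)=N\,h_{top}(f)$.

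The only real difference lies in Step~2. The paper bounds entropy for a strict horseshoe via the open-cover definition: it builds an open cover $\CU=(U_1,\ldots,U_p,U_{p+1})$ with $J_i\subset U_i$ and $U_{p+1}=I\setminus\bigcup J_i$, and observes that the nonempty sets $J_{i_0,\ldots,i_{n-1}}$ lie in pairwise distinct elements of $\CU^n$, so $N_n(\CU,f)\ge p^n$. You instead use Bowen's formula, producing $(n,\eps)$-separated sets of cardinality $q^n$ from itineraries. Both arguments are short and standard; yours has the mild advantage of avoiding the auxiliary open set $U_{p+1}$, while the paper's avoids the appeal to monotonicity of $s_n(f,\eps)$ in $\eps$.
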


\begin{proof}
We first suppose that $f$ has a strict $p$-horseshoe, say $(J_1,\ldots,J_p)$.
There exist disjoint open sets $U_1,\ldots, U_p$ in $I$ such that
$J_i\subset U_i$ for all $i\in\Lbrack 1,p\Rbrack$.
Let $U_{p+1}:=I\setminus \bigcup_{i=1}^p J_i$. Then
$\CU:=(U_1,\ldots,U_p,U_{p+1})$ is an open cover of $I$ and
$U_{p+1}\cap J_i=\emptyset$ for all $i\in\Lbrack 1,p\Rbrack$. Let $n\ge 1$.
For all $n$-tuples $(i_0,\ldots, i_{n-1})\in \Lbrack 1,p\Rbrack^n$, we set
$$
J_{i_0,\ldots,i_{n-1}}:=\{x\in I\mid \forall k\in\Lbrack 0,n-1\Rbrack,\ f^k(x)
\in J_{i_k}\}.
$$
Since $(J_1,\ldots, J_p)$ is a $p$-horseshoe, the set 
$J_{i_0,\ldots,i_{n-1}}$ is not empty by 
Lemma~\ref{lem:chain-of-intervals}(i).
Moreover, it is contained in a unique element of $\CU^n$, namely
$$
U_{i_0}\cap f^{-1}(U_{i_1})\cap\cdots\cap f^{-(n-1)}(U_{i_{n-1}}).
$$
Thus $N_n(\CU,f)\ge p^n$ for all integers $n\ge 1$, so
$$
h_{top}(f)\ge h_{top}(\CU,f)= \lim_{n\to+\infty}\frac{N_n(\CU,f)}{n}
\ge \log p.
$$

We now turn to the general case, i.e., $f$ has a $p$-horseshoe. 
Let $n\ge 1$. According to
Lemma~\ref{lem:horseshoe-fn}, $f^n$ has a $p^n$-horseshoe. 
We number the $p^n$ intervals
of this horseshoe from left to right in $I$ and we consider only the
intervals whose number is odd. In this way, we obtain a strict $\left\lceil
\frac{p^n}{2}\right\rceil$-horseshoe.
Then, applying the first part of the proof to $f^n$, we get
$$
h_{top}(f^n)\ge \log\left(\frac{p^n}{2}\right). 
$$
By Proposition~\ref{prop:htop-Tn}, we have
$h_{top}(f)=\frac{1}{n}h_{top}(f^n)$, and thus
$h_{top}(f)\ge \log p-\frac{\log 2}{n}$. Finally,
$h_{top}(f)\ge \log p$ by taking the limit when $n$ goes to infinity.
\end{proof}

\subsection{Misiurewicz's Theorem}

Misiurewicz's Theorem states that the existence of horseshoes is necessary
to have positive entropy. This theorem was first
proved for piecewise monotone maps by Misiurewicz and Szlenk 
\cite{MS,MS2}, then Misiurewicz generalized the result 
for all continuous interval maps
\cite{Mis,Mis2}. There is no significant difference between the piecewise
monotone case and the general case.

\begin{theo}[Misiurewicz]\label{theo:Misiurewicz}\index{Misiurewicz's Theorem}
Let $f\colon I\to I$ be an interval map of positive topological entropy. 
For every $\lambda< h_{top}(f)$ and every $N$, there exist intervals 
$J_1,\ldots, J_p$ and a positive integer $n\ge N$ such that
$(J_1,\ldots, J_p)$ is a strict $p$-horseshoe for $f^n$ and $\frac{\log p}{n}
\ge \lambda$. 
\end{theo}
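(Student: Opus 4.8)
The plan is to reduce the statement to a combinatorial fact about exponential growth of the iterates, and then to extract horseshoes from that growth. First a reduction: it suffices to produce, for every $\mu<h_{top}(f)$ and every $N$, a \emph{not necessarily strict} $p$-horseshoe for some $f^n$ with $n\ge N$ and $\tfrac{\log p}{n}\ge\mu$. Indeed, Lemma~\ref{lem:horseshoe-fn}(ii) then gives a $p^2$-horseshoe for $f^{2n}$; numbering its $p^2$ intervals from left to right in $I$ and discarding every second one yields a strict $\lceil p^2/2\rceil$-horseshoe for $f^{2n}$ whose index satisfies
\[
\frac{\log\lceil p^2/2\rceil}{2n}\ \ge\ \frac{\log p}{n}-\frac{\log 2}{2n}\ \ge\ \mu-\frac{\log 2}{2n},
\]
which exceeds $\lambda$ as soon as $n$ is large. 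Since $\lambda<h_{top}(f)$ we may fix $\lambda<\mu<h_{top}(f)$, feed the reduced statement a large enough $N$, and obtain the claim. By Proposition~\ref{prop:htop-Tn}, chasing $\tfrac{\log p}{n}\to h_{top}(f)$ is exactly proving the converse of Proposition~\ref{prop:horseshoe-htop}.

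Next I would isolate the mechanism by which horseshoes arise. Fix a partition of $I$ into closed subintervals $A_1,\dots,A_M$ with pairwise disjoint interiors, and let $\Gamma$ be its covering graph: an arrow $A_i\to A_j$ iff $A_j\subset f(A_i)$. Any loop of length $n$ based at a vertex $A_{i_0}$ is a chain of intervals from $A_{i_0}$ to itself, so by Lemma~\ref{lem:chain-of-intervals}(i) it pulls back to a closed subinterval $L\subset A_{i_0}$ with $f^n(L)=A_{i_0}$; and by Lemma~\ref{lem:chain-of-intervals}(iii) two \emph{distinct} loops (which differ at some vertex, hence at intervals with disjoint interiors) pull back to subintervals of $A_{i_0}$ with disjoint interiors. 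Thus if $A_{i_0}$ lies on at least $p$ loops of length $n$, we get interior-disjoint $L_1,\dots,L_p\subset A_{i_0}$ with $f^n(L_a)\supset A_{i_0}\supset L_1\cup\dots\cup L_p$ — a $p$-horseshoe for $f^n$. The number of such loops is an entry of the $n$-th power of the adjacency matrix of $\Gamma$, so it grows exponentially at rate $\log\rho$, where $\rho>1$ is the spectral radius of the irreducible component of $\Gamma$ containing $A_{i_0}$. Hence the theorem follows once we know: for every $\delta>0$ there is a partition $\mathcal A$ (possibly together with a fixed $m$, replacing $f$ by $f^m$ and $h_{top}(f)$ by $mh_{top}(f)$) whose covering graph has an irreducible component of spectral radius greater than $e^{(h_{top}(f)-\delta)m}$.

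It remains — and this is the real obstacle — to establish that last fact: that positive topological entropy forces exponential growth of the covering (not merely orbit-following) complexity of the iterates. I would start from Bowen's formula (Theorem~\ref{theo:Bowen-formula}) to pick $\eps>0$ with $\limsup_n\tfrac1n\log s_n(f,\eps)>h_{top}(f)-\delta$, take the partition into closed intervals of length $<\eps$, and note that distinct points of an $(n,\eps)$-separated set realize distinct ``itineraries'' over this partition, so that the number of admissible itineraries grows at rate $>h_{top}(f)-\delta$. The delicate point is that an admissible itinerary only records the weak relations $A_{w_{k+1}}\cap f(A_{w_k})\neq\emptyset$, whereas a loop in $\Gamma$ needs genuine coverings $A_{w_{k+1}}\subset f(A_{w_k})$; converting one into the other requires using the intermediate value theorem to ``localize the folding'' of $f^{\,?}$ — roughly, showing that if the covering graphs of finer and finer partitions (and of $f^m$ for growing $m$) failed to grow exponentially, then, via uniform continuity of $f$, the separated-set counts $s_n(f,\eps)$ themselves would be subexponential, contradicting positive entropy. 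This is the Misiurewicz--Szlenk estimate in its form for arbitrary continuous interval maps; once it is in place, combining it with the second paragraph and then the strict-horseshoe reduction of the first paragraph completes the proof, every remaining step (pigeonholing onto a recurrent class, applying Lemma~\ref{lem:chain-of-intervals}, the bookkeeping with $m$ and $n\ge N$) being routine.
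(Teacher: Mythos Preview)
Your reduction to non-strict horseshoes is correct and matches the paper's final step, and your mechanism for extracting a $p$-horseshoe from $p$ loops of length $n$ in a covering graph (via Lemma~\ref{lem:chain-of-intervals}) is exactly right. You have also correctly located the real difficulty: the entropy is computed from \emph{intersection} itineraries (where $A_{w_{k+1}}\cap f(A_{w_k})\neq\emptyset$), while horseshoes need \emph{covering} itineraries ($A_{w_{k+1}}\subset f(A_{w_k})$), and for non-piecewise-monotone maps there is no a priori comparison between the two counts.

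However, your proposed resolution of that obstacle is not a proof. Passing to finer partitions or to $f^m$ does not help: for any fixed partition the covering graph is fixed, and there is no argument showing its spectral radius approaches $e^{h_{top}(f)}$; uniform continuity alone does not bridge the gap. What the paper actually does is different and rather sharp. It keeps a \emph{fixed} interval partition $\CP$ and counts intersection itineraries $\#(\CF^n|A)$ (these grow at rate $h_{top}(\CP,f)$). For each itinerary it builds a shadowing interval $J_{A_0\ldots A_{n-1}}$ with $f^n(J_{A_0\ldots A_{n-1}})=A_{n-1}'$; the crucial observation is the intermediate-value bound: since $f^n(J_{A_0\ldots A_{n-1}})$ is an interval, if it \emph{meets} $k$ partition elements it \emph{covers} at least $k-2$ of them. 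Summing, the number of covering continuations is at least $\#(\CF^{n+1}|A)-2\#(\CF^n|A)$. The subtraction of $2\#(\CF^n|A)$ is fatal unless one can find $n$ at which $\#(\CF^{n+1}|A)\ge 3\,\#(\CF^n|A)$; the paper secures such $n$ (infinitely many, arbitrarily large) by a small real-analysis lemma on sequences with $\limsup a_n/n>\beta>\alpha$ and bounded increments (Lemma~\ref{lem:alpha-beta-an}), after first reducing to $h_{top}(f)>\log 3$. At those times the covering count is $\ge\#(\CF^n|A)\ge e^{\lambda' n}$, and a pigeonhole on the finite set $\CF$ produces a vertex with many covering loops, hence the horseshoe. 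This ``meet $k$ $\Rightarrow$ cover $k-2$, then find a step of multiplicative growth $\ge 3$'' is the missing idea in your plan.
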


We are first going to state three technical lemmas about limits of
sequences,  then we shall prove Theorem~\ref{theo:Misiurewicz}.

\begin{lem}\label{lem:sequences-an-bn1}
Let $(a_n)_{n\ge 1}$ and $(b_n)_{n\ge 1}$ be 
two sequences of positive numbers. Then
$$
\limsup_{n\to+\infty}\frac{1}{n}\log(a_n+b_n)=
\max\left\{\limsup_{n\to+\infty}\frac{1}{n}\log a_n,
\limsup_{n\to+\infty}\frac{1}{n}\log b_n\right\}.
$$
The same result holds for finitely many sequences of positive
numbers:
$$
\limsup_{n\to+\infty}\frac{1}{n}\log(a_n^1+\cdots+a_n^k)=
\max\left\{\limsup_{n\to+\infty}\frac{1}{n}\log a_n^i\mid
i\in\Lbrack 1,k\Rbrack\right\}.
$$
\end{lem}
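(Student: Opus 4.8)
The plan is to prove the two-sequence statement first and then obtain the general case by a one-line induction on $k$. For the two-sequence case I would establish the inequality in each direction separately, both being elementary.

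For the lower bound, observe that since all terms are positive, $a_n+b_n\ge a_n$ and $a_n+b_n\ge b_n$ for every $n$, hence $\frac1n\log(a_n+b_n)\ge\frac1n\log a_n$ and $\frac1n\log(a_n+b_n)\ge\frac1n\log b_n$. Taking the $\limsup$ in each inequality gives
$$\limsup_{n\to+\infty}\frac1n\log(a_n+b_n)\ge\max\left\{\limsup_{n\to+\infty}\frac1n\log a_n,\ \limsup_{n\to+\infty}\frac1n\log b_n\right\}.$$

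For the upper bound, the key observation is $a_n+b_n\le 2\max\{a_n,b_n\}$, so that $\frac1n\log(a_n+b_n)\le\frac{\log 2}{n}+\max\{\frac1n\log a_n,\ \frac1n\log b_n\}$. Since $\frac{\log 2}{n}\to 0$, taking the $\limsup$ reduces the matter to the standard fact that $\limsup_n\max\{x_n,y_n\}=\max\{\limsup_n x_n,\ \limsup_n y_n\}$ for real sequences. I would prove this by the usual subsequence argument: $\ge$ follows at once from $\max\{x_n,y_n\}\ge x_n$ (resp. $\ge y_n$); for $\le$, choose a subsequence along which $\max\{x_n,y_n\}$ tends to $\limsup_n\max\{x_n,y_n\}$, pass to a further subsequence on which the maximum is always attained by the same one of the two sequences, say $(x_n)$, and deduce that $\limsup_n x_n$ is at least that limit. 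Combining the two bounds yields the two-sequence identity.

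For the finite case I would induct on $k\ge 2$: writing $a_n^1+\cdots+a_n^k=(a_n^1+\cdots+a_n^{k-1})+a_n^k$, apply the two-sequence case and the induction hypothesis, and use associativity of $\max$. No step here is a genuine obstacle; the only mildly non-routine point is the $\limsup$-of-$\max$ lemma, which is standard.
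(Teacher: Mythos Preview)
Your proof is correct and follows essentially the same route as the paper: identical lower bound, and for the upper bound both arguments exploit $a_n+b_n\le 2\max\{a_n,b_n\}$ so that the extra $\frac{\log 2}{n}$ vanishes in the limit. The only cosmetic difference is that the paper handles the upper bound by a direct $\varepsilon$--$n_0$ argument (for $n\ge n_0$ one has $a_n,b_n\le e^{(L+\varepsilon)n}$, hence $\frac1n\log(a_n+b_n)\le L+\varepsilon+\frac{\log 2}{n}$), whereas you isolate the auxiliary identity $\limsup_n\max\{x_n,y_n\}=\max\{\limsup_n x_n,\limsup_n y_n\}$ and prove it by subsequences; these are two standard and interchangeable ways to reach the same conclusion, and the induction to finitely many sequences is identical.
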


\begin{proof}
We show the lemma for two sequences, the general case 
follows by a straightforward induction. We set 
$$
L:=\max\left\{\limsup_{n\to+\infty}\frac{1}{n}\log a_n,\ 
\limsup_{n\to+\infty}\frac{1}{n}\log b_n\right\}.
$$
Since $a_n+b_n\ge a_n$ and $a_n+b_n\ge b_n$, it is obvious that
$$
\limsup_{n\to+\infty}\frac{1}{n}\log(a_n+b_n)\ge L.
$$
Conversely, for every $\eps>0$, there exists an integer $n_0$ such that,
for all $n\ge n_0$,
$a_n\le e^{(L+\eps)n}$ and $b_n\le e^{(L+\eps)n}$.
This implies that
$$
\forall n\ge n_0,\ \frac{1}{n}\log(a_n+b_n)\le L+\eps+\frac{\log 2}{n}.$$
To conclude, we first take the limsup when $n\to +\infty$, 
then  we let $\eps$ tend to zero.
\end{proof}

\begin{lem}\label{lem:sequences-an-bn2}
Let $(a_n)_{n\ge 1}$ and $(b_n)_{n\ge 0}$ be two sequences of real numbers.
Then
$$
\limsup_{n\to+\infty}\frac{1}{n}\log\sum_{k=1}^n\exp(a_k+b_{n-k})\le
\max\left\{\limsup_{n\to+\infty}\frac{a_n}{n},\ 
\limsup_{n\to+\infty}\frac{b_n}{n}\right\}.
$$
\end{lem}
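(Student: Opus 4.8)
The plan is to replace the sum by a very crude bound and reduce everything to an estimate on the exponents. Write $L:=\max\bigl\{\limsup_{n\to+\infty}\frac{a_n}{n},\ \limsup_{n\to+\infty}\frac{b_n}{n}\bigr\}$ for the right-hand side of the claimed inequality. If $L=+\infty$ there is nothing to prove, so I will assume $L<+\infty$. Since the sum has exactly $n$ (strictly positive) terms, $\sum_{k=1}^n\exp(a_k+b_{n-k})\le n\exp\bigl(\max_{1\le k\le n}(a_k+b_{n-k})\bigr)$, and therefore $\frac1n\log\sum_{k=1}^n\exp(a_k+b_{n-k})\le\frac{\log n}{n}+\frac1n\max_{1\le k\le n}(a_k+b_{n-k})$. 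As $\frac{\log n}{n}\to0$, it suffices to prove that $\limsup_{n\to+\infty}\frac1n\max_{1\le k\le n}(a_k+b_{n-k})\le L$.

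To do this I would fix $\varepsilon>0$ (in the degenerate case $L=-\infty$, fix instead an arbitrary real number playing the role of $L+\varepsilon$ below) and choose $n_0\ge1$ such that $a_m\le(L+\varepsilon)m$ and $b_m\le(L+\varepsilon)m$ for all $m\ge n_0$. Put $C_a:=\max\{a_j\mid 1\le j<n_0\}$, $C_b:=\max\{b_j\mid 0\le j<n_0\}$ and $C:=\max\{0,C_a,C_b\}+|L+\varepsilon|\,n_0$; these are constants independent of $n$. The core step is to show that for every $n\ge2n_0$ and every $k\in\Lbrack 1,n\Rbrack$ one has $a_k+b_{n-k}\le(L+\varepsilon)n+C$, which I would prove by splitting the range of $k$ into three pieces covering $\Lbrack 1,n\Rbrack$. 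For $n_0\le k\le n-n_0$ both indices are $\ge n_0$, and adding the two linear bounds gives $a_k+b_{n-k}\le(L+\varepsilon)k+(L+\varepsilon)(n-k)=(L+\varepsilon)n$. For $1\le k<n_0$ we have $n-k>n_0$, so $a_k\le C_a$ while $b_{n-k}\le(L+\varepsilon)(n-k)=(L+\varepsilon)n-(L+\varepsilon)k\le(L+\varepsilon)n+|L+\varepsilon|\,n_0$, and the range $n-n_0<k\le n$ (where $b_{n-k}$ is evaluated at a bounded index, including $b_0$ when $k=n$) is symmetric. Combining the three cases yields $\sum_{k=1}^n\exp(a_k+b_{n-k})\le n\exp\bigl((L+\varepsilon)n+C\bigr)$ for $n\ge2n_0$, hence $\frac1n\log\sum_{k=1}^n\exp(a_k+b_{n-k})\le\frac{\log n}{n}+(L+\varepsilon)+\frac Cn$; letting $n\to+\infty$ gives $\limsup\le L+\varepsilon$, and then $\varepsilon\to0$ finishes the proof.

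The only step that requires real care is the handling of the two boundary ranges $k<n_0$ and $n-k<n_0$, where one of the two sequences is evaluated at a bounded index and so is not controlled by the asymptotic linear bound; there one must absorb an $O(1)$ error into the constant $C$, together with a term $|L+\varepsilon|\,n_0$ that arises precisely when $L+\varepsilon<0$, since then $(L+\varepsilon)(n-k)$ can exceed $(L+\varepsilon)n$. This boundary analysis is also what legitimately deals with the index $b_0$ (the $k=n$ term), explaining the asymmetric indexing in the statement. Everything else is routine bookkeeping.
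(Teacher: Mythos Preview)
Your proof is correct and follows essentially the same approach as the paper: bound the sum by $n$ times its largest term, split the range $1\le k\le n$ into three pieces according to whether $k$ and $n-k$ are below or above the threshold $n_0$, obtain a uniform bound $a_k+b_{n-k}\le (L+\varepsilon)n+\text{const}$, and conclude by letting $n\to+\infty$ then $\varepsilon\to0$. You are in fact slightly more careful than the paper in two places: you explicitly handle the case $L+\varepsilon<0$ via the $|L+\varepsilon|\,n_0$ term in your constant $C$, and your definition of $C_b$ correctly includes the index $j=0$ so that the term $b_0$ (arising when $k=n$) is covered.
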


\begin{proof}
We set 
$$
L:=\max\left\{\limsup_{n\to+\infty}\frac{a_n}{n},\ 
\limsup_{n\to+\infty}\frac{b_n}{n}\right\}.
$$
We assume that $L<+\infty$, otherwise there is nothing to prove. 
For every $\eps>0$, there exists an integer $n_0$ such 
that
\begin{equation}\label{eq:an-bn}
\forall n\ge n_0,\quad \frac{a_n}{n}\le L+\eps
\quad\text{and}\quad\frac{b_n}{n}\le L+\eps.
\end{equation}
We set
$M:=\max\left\{0,\frac{a_n}{n},\frac{b_n}{n}\mid n\in\Lbrack 1,n_0-1\Rbrack
\right\}$. Let $n,k$ be two integers such that $n\ge 2n_0$ and 
$k\in\Lbrack 1,n\Rbrack$. Necessarily, they satisfy either
$k\ge n_0$ or $n-k\ge n_0$. We split into three cases.
\begin{itemize}
\item If $k\ge n_0$ and $n-k\ge n_0$, then by \eqref{eq:an-bn}:
$$
a_k+b_{n-k}\le k(L+\eps)+(n-k)(L+\eps)=n(L+\eps).
$$
\item If $k\ge n_0$ and $n-k<n_0$, then by \eqref{eq:an-bn} and the definition
of $M$:
$$
a_k+b_{n-k}\le k(L+\eps)+(n-k)M\le n(L+\eps)+n_0M.
$$
\item If $k<n_0$ and $n-k\ge n_0$, then by \eqref{eq:an-bn} and the definition
of $M$:
$$
a_k+b_{n-k}\le kM+(n-k)(L+\eps)\le n_0M+n(L+\eps).
$$
\end{itemize}
In the three cases, we have $a_k+b_{n-k}\le n(L+\eps)+n_0M$,
and thus
$$
\forall n\ge 2n_0,\ \frac{1}{n}\log\sum_{k=1}^n\exp(a_k+b_{n-k})\le
\frac{1}{n}\log n+L+\eps+\frac{n_0M}{n}.
$$
We first take the limsup  when $n\to+\infty$; then we let $\eps$ tend to zero, 
and we get
$$
\limsup_{n\to+\infty}\frac{1}{n}\log\sum_{k=1}^n\exp(a_k+b_{n-k})\le L,
$$
which proves the lemma.
\end{proof}

\begin{lem}\label{lem:alpha-beta-an}
Let $(a_n)_{n\ge 1}$ be a sequence of real numbers and $\alpha,\beta\in\IR$.
Suppose that there exists $C>0$ such that $a_{n+1}\le a_n+C$
for all $n\ge 1$, and that
$$
0<\alpha<\beta<\limsup_{n\to+\infty}\frac{a_n}{n}.
$$
Then, for all integers $N$, there exists $n\ge N$ such that
$a_n\ge \beta n$ and $a_{n+1}\ge a_n+\alpha$.
\end{lem}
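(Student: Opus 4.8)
The plan is to argue by contradiction. Fix $N$ and suppose, for every $n\ge N$, that the conjunction ``$a_n\ge\beta n$ and $a_{n+1}\ge a_n+\alpha$'' fails; equivalently, for every $n\ge N$ one has $a_n<\beta n$ or $a_{n+1}-a_n<\alpha$. Since $\limsup_{n\to+\infty}a_n/n>\beta$, I would fix a real number $\gamma$ with $\beta<\gamma<\limsup_{n\to+\infty}a_n/n$, so that the set $S:=\{m\in\IN\mid a_m\ge\gamma m\}$ is infinite. The whole point will be to show that under the contradiction hypothesis $S$ is bounded, which is absurd.

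Given $m\in S$ with $m>N$, the key device is to cut the sequence at the index
$$j:=\max\{\,n\le m\mid n\le N\ \text{or}\ a_n<\beta n\,\}.$$
This set contains $N$, so $j$ is well defined, and $j<m$ because $a_m\ge\gamma m>\beta m$ excludes $m$ from the set. By maximality of $j$, every index $n$ with $j<n\le m$ satisfies $n>N$ and $a_n\ge\beta n$; hence, for each such $n$ with $n\le m-1$, the contradiction hypothesis forces $a_{n+1}-a_n<\alpha$. Telescoping from $j+1$ to $m$ and using $a_{j+1}\le a_j+C$, I get
$$a_m\ \le\ a_{j+1}+\alpha\,(m-1-j)\ \le\ a_j+C+\alpha\,(m-1-j).$$

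Now I split into two cases according to whether $j$ fell on the boundary or on a genuine drop below the line $n\mapsto\beta n$. If $j>N$, then $a_j<\beta j$, so the displayed bound gives $a_m<\beta j+C+\alpha(m-j)=\alpha m+(\beta-\alpha)j+C$; since $0<\beta-\alpha$ and $j<m$, this yields $a_m<\beta m+C$, whence $(\gamma-\beta)m<C$. If $j=N$, then $a_{j+1}=a_{N+1}$ is a fixed constant, so $a_m<a_{N+1}+\alpha m$, whence $(\gamma-\alpha)m<a_{N+1}$ (and $\gamma-\alpha>\gamma-\beta>0$). In either case $m$ is bounded above by a constant depending only on $\alpha,\beta,\gamma,C,N$ and $a_{N+1}$, so $S$ is bounded, contradicting its infiniteness. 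This completes the argument.

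The step I expect to be the main obstacle is finding the right quantity to track: one must realise that it is the \emph{largest} index $j\le m$ at which the sequence sits below $\beta n$ (or hits the floor $N$) that controls everything, and then check carefully that between $j+1$ and $m$ every index lies above the line and above $N$, so that the contradiction hypothesis genuinely bounds all the increments there. Once that combinatorial bookkeeping is in place, the rest is the short computation above, in which the positivity of $\beta-\alpha$ together with the sub-linear slack coming from $C$ does all the work; the sub-additivity lemmas and other earlier machinery are not needed here.
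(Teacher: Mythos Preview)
Your proof is correct and follows essentially the same approach as the paper: both argue by contradiction and exploit that on any stretch of indices above the line $n\mapsto\beta n$, the increments are all $<\alpha$, so the sequence grows with slope at most $\alpha<\beta$, which bounds how far above $\beta n$ it can climb. The only cosmetic difference is packaging: the paper first bounds the length of such stretches by $M=(C-\alpha)/(\beta-\alpha)$ and deduces a uniform bound $a_n\le\beta n+C+\alpha M$ contradicting $\limsup a_n/n>\beta$, whereas you introduce an auxiliary level $\gamma\in(\beta,\limsup a_n/n)$ and bound each index in $\{m:a_m\ge\gamma m\}$ individually by backtracking to the last drop below $\beta n$; both routes rest on the same telescoping estimate.
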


\begin{proof}
We suppose that the lemma is false, that is, there exists $N$
such that
\begin{equation}\label{eq:alphabeta1}
\forall n\ge N,\ a_n\ge \beta n\Longrightarrow
a_{n+1}<a_n+\alpha.
\end{equation}
If $a_n\ge \beta n$ for all $n\ge N$,
then $a_{n+N}< a_N+\alpha n$ for all $n\ge 1$ by \eqref{eq:alphabeta1}, 
which implies that
$$
\limsup_{n\to+\infty}\frac{a_n}{n}\le \alpha.
$$
But this contradicts the assumption on $\alpha$.
Thus there exists an integer $n\ge N$ such that $a_{n}<\beta n$.
We set $N_0:=\min\{n\ge N\mid a_n<\beta n\}$.
Suppose that $a_p<\beta p$ for some integer $p\ge N$ and that
$r$ is a positive integer such that $a_n\ge \beta n$
for all $n\in\Lbrack p+1,p+r\Rbrack$. We are going to show that $r$ is bounded
by a constant independent from $p$. By 
\eqref{eq:alphabeta1}, we have 
$a_{p+r}\le a_{p+1}+\alpha(r-1)$. Since $a_{p+1}\le a_p+C$ and 
$a_{p}<\beta p$, we have
\begin{equation}\label{eq:ap+r-beta}
\beta(p+r)\le a_{p+r} \le \beta p +C+\alpha(r-1).
\end{equation}
This implies that $\beta r\le C+\alpha(r-1)$, and thus $r\le M$
if we set 
$$
M:=\frac{C-\alpha}{\beta-\alpha}.
$$
Notice that $M>0$ because $a_n\le a_1+(n-1)C$ for all $n\ge 1$, and hence
$\limsup_{n\to +\infty} \frac{a_n}n\le C$, which implies that
$C-\alpha>0$; moreover, $\beta-\alpha>0$ by assumption.
Let $n$ be an integer greater than $N_0+M$. We consider two cases.\\
$\bullet$ If $a_n\ge \beta n$,
what precedes implies that there exists an integer $p\in [n-M,n)$ such that 
$a_p<\beta p$
and $a_i\ge \beta i$ for all $i\in\Lbrack p+1,n\Rbrack$, 
Then, by \eqref{eq:ap+r-beta}, we have
$$
a_n\le \beta p+C+\alpha(M-1)\le \beta n+C+\alpha M.
$$
$\bullet$ If  $a_n< \beta n$,
the inequality $a_n\le \beta n+C+\alpha M$ trivially holds.

Since the inequality $a_n\le \beta n+C+\alpha M$ holds in the two cases,
we have
$$
\limsup_{n\to+\infty}\frac{a_n}{n}\le \beta.
$$
But this contradicts the assumption on $\beta$.
We conclude that the lemma is true.
\end{proof}

\begin{proof}[Proof of Theorem~\ref{theo:Misiurewicz}]
First we are going to prove the theorem under the extra assumptions that
$h_{top}(f)>\log 3$ and $\log 3<\lambda<h_{top}(f)$.  
We choose $\lambda'$ such that $\lambda<\lambda'<h_{top}(f)$.
According to the definition of topological entropy, there exists a finite open
cover $\CU$ such that $h_{top}(\CU,f)>\lambda$'. We choose a
partition $\CP$ consisting of finitely many disjoint non degenerate
intervals such that $\CP$ is finer than $\CU$. 
Then $h_{top}(\CP,f)\ge h_{top}(\CU,f)$ 
by Lemma~\ref{lem:C-finer-D}, so $h_{top}(\CP,f)>\lambda'$.
We have $N(\CP^n)=\#(\CP^n)$ because $\CP^n$ is a partition, and thus
$$
h_{top}(\CP,f)=\lim_{n\to+\infty}\frac{1}{n}\log\#\left(\CP^n\right).
$$
If $\CQ$ is a family of subsets of $I$, we define, for all $n\ge 1$ and all 
$A\in\CQ$,
\begin{gather*}
\CQ^n:=\left\{(A_0,\ldots, A_{n-1})\mid \forall i\in\Lbrack 0,n-1\Rbrack, A_i\in\CQ
\text{ and }\bigcap_{i=0}^{n-1}f^{-i}(A_i)\neq \emptyset\right\}\\
\text{and}\quad\CQ^n|A:=\{(A_0,\ldots, A_{n-1})\in\CQ^n\mid A_0=A\}.
\end{gather*}
We have $\#(\CP^n)=\sum_{A\in \CP}\#(\CP^n|A)$. Thus, by
Lemma~\ref{lem:sequences-an-bn1}, there exists $A\in\CP$
such that
\begin{equation}\label{eq:Pn|A-htop}
h_{top}(\CP,f)=\limsup_{n\to+\infty}\frac{1}{n}\log\#\left(\CP^n|A\right).
\end{equation}
Let $\CF$ be the family of $A\in\CP$ satisfying \eqref{eq:Pn|A-htop}. 
We claim that:
\begin{equation}\label{fact:Fn|A-htop}
\forall A\in\CF,\ h_{top}(\CP,f)=\limsup_{n\to+\infty}\frac{1}{n}
\log\#\left(\CF^n|A\right).
\end{equation}

\begin{proof}[Proof of \eqref{fact:Fn|A-htop}]
The inequality $\ge$ is straightforward. We are going to
prove the reverse inequality.
We fix $A\in\CF$. Let $(A_0,\ldots,A_{n-1})\in\CP^n|A$ and let $k$
be the greatest integer in $\Lbrack 1,n\Rbrack$ 
such that $A_i\in \CF$ for all $i\in\Lbrack 0,k-1\Rbrack$.
Then $(A_0,\ldots, A_{k-1})\in \CF^k|A$ and, if $k<n$,
$(A_k,\ldots, A_{n-1})\in\CP^{n-k}|B$ for some $B\in\CP\setminus\CF$.
Thus
\begin{equation}\label{eq:card-Pn|A}
\#(\CP^n|A)\le\sum_{k=1}^{n-1}\left(\#(\CF^k|A)\sum_{B\in\CP\setminus\CF}
\#(\CP^{n-k}|B)\right)+\#(\CF^n|A).
\end{equation}
We set $b_0:=0$ and
$$
\forall n\ge 1,\ a_n:=\log\#(\CF^n|A)\text{ and }
b_n:=\log \sum_{B\in\CP\setminus\CF}\#(\CP^{n}|B).
$$
Then \eqref{eq:card-Pn|A} can be rewritten as
$$
\#(\CP^n|A)\le \sum_{k=1}^n\exp(a_k+b_{n-k}).
$$
Inserting this inequality in \eqref{eq:Pn|A-htop}, we get
$$
h_{top}(\CP,f)\le \limsup_{n\to+\infty}\frac{1}{n}\log\left(
\sum_{k=1}^n\exp(a_k+b_{n-k})\right).
$$
Thus, by Lemma~\ref{lem:sequences-an-bn2},
\begin{equation}\label{eq:htop(P,f)}
h_{top}(\CP,f)\le \max\left\{\limsup_{n\to+\infty}\frac{a_n}{n},
\limsup_{n\to+\infty}\frac{b_n}{n}\right\}.
\end{equation}
According to the definition of $\CF$, we have
$$
\forall B\in\CP\setminus \CF,\quad \limsup_{n\to+\infty}\frac{1}{n}\log\#(\CP^n|B)<h_{top}(\CP,f),
$$
and thus $\limsup_{n\to+\infty}\frac{b_n}{n}<h_ {top}(\CP,f)$ by
Lemma~\ref{lem:sequences-an-bn1}. Finally, in view of \eqref{eq:htop(P,f)},
we have $h_{top}(\CP,f)\le \limsup_{n\to+\infty}\frac{a_n}{n}$. This
concludes the proof of \eqref{fact:Fn|A-htop}.
\end{proof}

Let $A_0,\ldots, A_{n-1}\in\CF$.
We set $A_0':=A_0$  and $A_i':=A_i\cap f(A'_{i-1})$
for all $i\in\Lbrack 1,n-1\Rbrack$. We claim that:
\begin{eqnarray}\label{eq:tildeA}
A'_{n-1}&=&f^{n-1}\left(\left\{x_0\mid \forall i\in\Lbrack 0,n-1\Rbrack,
f^i(x_0)\in A_i\right\}\right)\\&=&
f^{n-1}\left(\bigcap_{i=0}^{n-1}f^{-i}(A_i)\right).\nonumber
\end{eqnarray}
Indeed, 
\begin{eqnarray*}
\lefteqn{x_{n-1}\in A'_{n-1}}\\
&\Leftrightarrow& \exists x_{n-2}\in A'_{n-2},\ f(x_{n-2})=x_{n-1}\in A_{n-1},\\
&\Leftrightarrow& \exists x_{n-3}\in A'_{n-3},\ f(x_{n-3})=x_{n-2}\in A_{n-2} 
\text{ and }
f^2(x_{n-3})=f(x_{n-2})=x_{n-1},\\
&\vdots&\\
&\Leftrightarrow& \exists x_0\in A_0',\ f(x_0)=x_1\in A_1,
f^2(x_0)=x_2\in A_2,
\ldots f^{n-1}(x_0)=x_{n-1}\in A_{n-1}.
\end{eqnarray*}
Therefore, \eqref{eq:tildeA} holds, which implies, according to the definition
of $\CF^n$:
\begin{equation}\label{eq:CFn}
(A_0,\ldots, A_{n-1})\in\CF^n\Longleftrightarrow  
A'_{n-1}\neq \emptyset.
\end{equation}
If $A'_{n-1}\neq \emptyset$, then $A'_i$ is
nonempty and $A'_i\subset f(A'_{i-1})$ for all $i\in\Lbrack i,n-1\Rbrack$.
Thus, by Lemma~\ref{lem:chain-of-intervals}, for every 
$(A_0,\ldots, A_{n-1})\in\CF^n$, there exists a nonempty interval 
$J_{A_0\ldots A_{n-1}}$ such that
\begin{gather}
f^{n-1}(J_{A_0\ldots A_{n-1}})=A'_{n-1}\quad\text{and}\label{eq:J1}\\
\forall i\in\Lbrack 0,n-1\Rbrack,\  f^i(J_{A_0\ldots A_{n-1}})\subset 
A'_i\subset A_i.\label{eq:J2}
\end{gather}
Moreover, $(J_{A_0\ldots A_{n-1}})_{(A_0,\ldots, A_{n-1})
\in\CF^n}$ is a family of pairwise disjoint intervals.
If $(A_0,\ldots,A_{n-1})\in\CF^n$ and $A_n\in\CF$, then 
$f^n(J_{A_0\ldots A_{n-1}})\cap A_n =f\left(A'_{n-1}\right)
\cap A_n=A'_n$, and thus, according to \eqref{eq:CFn},
$$
f^n(J_{A_0\ldots A_{n-1}})\cap A_n\neq\emptyset\Longleftrightarrow 
(A_0,\ldots, A_n)\in\mathcal{F}^{n+1}.
$$
Therefore
\begin{equation}\label{eq:sum-c(A,B)-2}
\#(\CF^{n+1}|A)=\sum_{\doubleindice{(A_0,\ldots, A_{n-1})\in\CF^n}{A_0=A}}
\#\{B\in\CF\mid f^n(J_{A_0\ldots A_{n-1}})\cap B\neq \emptyset\}.
\end{equation}

For all $A,B\in\CF$, we set
$$
c(A,B,n):=\#\{(A_0,\ldots, A_{n-1})\in\CF^n\mid A_0=A,\ f^n(J_{A_0\ldots
A_{n-1}})\supset B\}.
$$
We shall need the following result:
\begin{equation}\label{fact:c-ABC}
\forall A,B,C\in\CF,\ \forall n,m\ge 1,\ c(A,B,n) c(B,C,m)\le c(A,C,m+n).
\end{equation}

\begin{proof}[Proof of \eqref{fact:c-ABC}]
For all $A,B\in\CF$ and all $n\ge 1$, we set 
$$
\CC(A,B,n):=\{(A_0,\ldots,A_{n-1})\in\CF^n\mid A_0=A,\ 
f^n(J_{A_0\ldots A_{n-1}})\supset B\}.
$$
Let $(A_0,\ldots,A_{n-1})\in \CC(A,B,n)$ and
$(B_0,\ldots,B_{m-1})\in \CC(B,C,m)$. We are going to show that
$(A_0,\ldots, A_{n-1}, B_0,\ldots, B_{m-1})\in \CC(A,C,n+m)$.
The set $f^n(J_{A_0\ldots A_{n-1}})$ contains $B$ by definition, and 
$J_{B_0\ldots B_{m-1}}\subset B_0=B$ by \eqref{eq:J2}.
This implies (by Lemma~\ref{lem:chain-of-intervals}(i)) that there exists a 
nonempty interval $K\subset J_{A_0\ldots A_{n-1}}$
such that $f^n(K)=J_{B_0\ldots B_{m-1}}$. Moreover, by \eqref{eq:J2},
this interval satisfies: 
$$
\forall i\in\Lbrack 0,n-1\Rbrack,\ f^i(K)\subset A_i\text{ and }\forall 
j\in\Lbrack 0,m-1\Rbrack,\ f^{n-1+j}(K)=f^j(J_{B_0\ldots B_{m-1}})
\subset B_j.
$$
Consequently,
$$
K\subset\bigcap_{i=0}^{n-1}f^{-i}(A_i)\cap\bigcap_{i=n}^{n+m-1}f^{-i}(B_{i-n}).
$$
This implies the following facts. First, 
$(A_0,\ldots, A_{n-1}, B_0,\ldots, B_{m-1})\in \CF^{n+m}$ by
\eqref{eq:CFn}+\eqref{eq:tildeA}, using the fact that $K\neq \emptyset$. 
Second, the set $f^{n+m-1}(K)$ is included in 
$f^{m+n-1}(J_{A_0\ldots A_{n-1}B_0\ldots B_{m-1}})$
by combining \eqref{eq:J1} and \eqref{eq:tildeA}.
Then, since 
$f^{n+m}(K)=f^{m}(J_{B_0\ldots B_{m-1}})$, we have $f^{n+m}(K)\supset C$
by definition of $\CC(B,C,m)$, so
$f^{n+m}(J_{A_0\ldots A_{n-1}B_0\ldots B_{m-1}})\supset C$.
We conclude that 
$(A_0,\ldots, A_{n-1}, B_0,\ldots, B_{m-1})\in \CC(A,C,n+m)$.
This clearly implies \eqref{fact:c-ABC}.
\end{proof}

We fix $A\in\CF$. We have
\begin{eqnarray*}
\lefteqn{\sum_{B\in\CF}c(A,B,n)}\\
&=&
\#\left\{((A_0,\ldots, A_{n-1}),B)\in \CF^n\times \CF \mid A_0=A, 
f^n(J_{A_0\ldots A_{n-1}})\supset B\right\}\\
&=& \sum_{\doubleindice{(A_0,\ldots, A_{n-1})\in\CF^n}{A_0= A}}
\#\{B\in\CF\mid f^n(J_{A_0\ldots A_{n-1}})\supset B\}.
\end{eqnarray*}
Consider $(A_0,\ldots, A_{n-1})\in\CF^n$.
If $f^n(J_{A_0\ldots A_{n-1}})$ meets $k$ intervals of $\CF$, then
$f^n(J_{A_0\ldots A_{n-1}})$ contains at least $k-2$ of them because $f^n(J_{A_0\ldots A_{n-1}})$ is an
interval. Therefore
$$
\sum_{B\in\CF}c(A,B,n)\ge \sum_{\doubleindice{(A_0,\ldots, A_{n-1})\in\CF^n}{A_0= A}}
\left(\#\{B\in\CF\mid f^n(J_{A_0\ldots A_{n-1}})\cap B\neq \emptyset\}-2\right).
$$
Combining this inequality with \eqref{eq:sum-c(A,B)-2},
we get:
\begin{equation}\label{eq:sum-c(A,B)-3}
\sum_{B\in\CF}c(A,B,n)\ge \#(\CF^{n+1}|A)-2\#(\CF^n|A).
\end{equation}
We set $a'_n:=\log \#(\CF^n|A)$ for all $n\ge 1$.
According to \eqref{fact:Fn|A-htop}, we have
$$
h_{top}(\CP,f)= \limsup_{n\to+\infty}\frac{a'_n}{n}.
$$
Moreover, $a'_{n+1}\le a'_n+\log \#\CF$ for all $n\ge 1$.
Therefore, we can apply Lemma~\ref{lem:alpha-beta-an} with 
$\alpha=\log 3$, $\beta=\lambda'$ and $C=\log\#\CF$, and we see that, for all integers $N$,
\begin{equation}\label{eq:Fnn+1}
\exists n\ge N,\ \#(\CF^n|A)>e^{\lambda' n}\text{ and }
\#(\CF^{n+1}|A)\ge 3\#(\CF^n|A).
\end{equation}
For an integer $n$ satisfying \eqref{eq:Fnn+1}, we inject these inequalities
in \eqref{eq:sum-c(A,B)-3} and we get
$$
\sum_{B\in\CF}c(A,B,n)\ge 3\#(\CF^n|A)-2\#(\CF^n|A)=\#(\CF^n|A)\ge
e^{\lambda' n}.
$$
Therefore
$$
\limsup_{n\to+\infty}\frac{1}{n}\log \sum_{B\in\CF}c(A,B,n)\ge \lambda'.
$$
According to Lemma~\ref{lem:sequences-an-bn1}, 
for all $A\in \CF$ there exists $B=\vfi(A)\in\CF$ such that
$$
\limsup_{n\to+\infty}\frac{1}{n}\log c(A,\vfi(A),n)\ge \lambda'>\lambda.
$$
Since $\CF$ is finite, the map $\vfi\colon\CF\to\CF$ has a periodic
point, that is, there exist $A_0\in\CF$ and $p\in\IN$ such that 
$\vfi^p(A_0)=A_0$.
Using the preceding inequality with $A=\vfi^i(A_0)$, $i=0,\ldots,
p-1$, we have
\begin{equation}\label{eq:phiA}
\forall i\in\Lbrack 0,p-1\Rbrack,\ \forall N_i\ge 1,\ \exists n_i\ge N_i,
\ c(\vfi^i(A_0),\vfi^{i+1}(A_0),n_i)\ge e^{n_i\lambda}.
\end{equation}
For every $i\in\Lbrack 0,p-1\Rbrack$, let $N_i$ be a positive integer and let $n_i\ge N_i$
be given by \eqref{eq:phiA}. We set $n:=\sum_{i=0}^{p-1} n_i$ and
$k:=c(A_0,A_0,n)$. We have
\begin{eqnarray*}
k=c(A_0,A_0,n)&\ge&
\prod_{i=0}^{p-1}c(\vfi^i(A_0),\vfi^{i+1}(A_0),n_i)
\quad\text{by }\eqref{fact:c-ABC} \\
&\ge& \prod_{i=0}^{p-1}e^{n_i\lambda}=e^{n\lambda}\quad\text{by }\eqref{eq:phiA}.
\end{eqnarray*}
According to the definition of $c(A_0,A_0,n)$, this means that there exist $k$ 
disjoint intervals $J_1,\ldots,J_k\subset A_0$ such that $f^n(J_i)\supset A_0$
for all $i\in\Lbrack 1,k\Rbrack$.
Thus $(\overline{J_1},\ldots,\overline{J_k})$ is a 
$k$-horseshoe for $f^n$, and 
$\frac{1}{n}\log k\ge \lambda$. 

This result implies the theorem in the general case in the following way.
Suppose that $h_{top}(f)>0$ and $0<\lambda<h_{top}(f)$. 
We choose $\lambda''$ such that $\lambda<\lambda''<h_{top}(f)$;
then we choose an integer $q$
such that $q\lambda''>\log 3$ and $q(\lambda''-\lambda)\ge \log 2$. 
By Proposition~\ref{prop:htop-Tn}, $h_{top}(f^q)=qh_{top}(f)>q\lambda''$.
Therefore, applying what precedes to $f^q$, we obtain that, 
for every integer $N$, there exist positive integers $n, k$ with $n\ge N$
and a $k$-horseshoe $(J_1,\ldots, J_k)$ for $f^{nq}$ such that
$\frac{1}{n}\log k\ge q\lambda''$. 
We order the intervals of this horseshoe such that $J_1\le J_2\cdots\le J_k$
and we set $k':=\lceil \frac k2 \rceil$.
Then the intervals $J_i$ with odd indices are pairwise disjoint
and form a $k'$-horseshoe for $f^{nq}$, and we have
$$
\frac{\log k'}{qn}\ge \frac{\log k-\log 2}{qn}
\ge \lambda''- \frac{\log 2}{qn}\ge \lambda.
$$
This ends the proof of the theorem.
\end{proof}

\subsection*{Remarks on graph maps}

The notion of a horseshoe can be extended to graph maps in the following way.

\begin{defi}\index{horseshoe for graph maps}
Let $f\colon G\to G$ be a graph map. Let $I$ be a closed interval of $G$
containing no branching point except maybe its endpoints,
and let $J_1,\ldots, J_n$ be
non degenerate closed subintervals of $I$ with pairwise disjoint interiors
such that $f(J_i)=I$ for all $i\in\Lbrack 1,n\Rbrack$. Then
$(J_1,\ldots,J_n)$ is called an \emph{$n$-horseshoe} for $f$. 
If in addition the intervals are disjoint, 
$(J_1,\ldots,J_n)$ is called a \emph{strict} $n$-horseshoe.
\end{defi}

Llibre and Misiurewicz proved that, with this definition, 
Proposition~\ref{prop:horseshoe-htop}
and Theorem~\ref{theo:Misiurewicz} remain valid for graph maps \cite{LM3}.

\begin{theo}\label{theo:htop-horseshoeG}
Let $f$ be a graph map.
If $f$ has an $n$-horseshoe, then $h_{top}(f)\ge \log n$. Conversely,
if $h_{top}(f)>0$, then for all $\lambda < h_{top}(f)$ and all $N\ge 1$, there exist
integers $n\ge N$, $p\ge 1$ and a strict $p$-horseshoe for $f^n$ such that
$\frac{\log p}{n}\ge \lambda$.
\end{theo}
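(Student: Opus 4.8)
The statement to prove is Theorem~\ref{theo:htop-horseshoeG}, which transfers the two pillars of the interval theory (Proposition~\ref{prop:horseshoe-htop} and Theorem~\ref{theo:Misiurewicz}, Misiurewicz's Theorem) to graph maps, using the adapted notion of horseshoe in which the intervals $J_1,\dots,J_n$ sit inside a single branching-point-free interval $I$ and satisfy $f(J_i)=I$.

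The plan is to imitate the interval proofs almost verbatim, the only subtlety being that we must keep all the combinatorial work inside a fixed interval $I\subset G$ so that the one-dimensional lemmas (Lemma~\ref{lem:chain-of-intervals}, Lemma~\ref{lem:fixed-point} via Lemma~\ref{lem:particularhorseshoe}-style arguments, and the intermediate value theorem) still apply. For the easy direction ($n$-horseshoe $\Rightarrow h_{top}(f)\ge\log n$): first reduce to a strict horseshoe by passing to $f^m$ and keeping every other interval, exactly as in the proof of Proposition~\ref{prop:horseshoe-htop} (Lemma~\ref{lem:horseshoe-fn} goes through since all $J_i$ lie in $I$ and $f(J_i)=I\supset J_1\cup\cdots\cup J_n$, so one gets $(p^n)$-horseshoes for $f^n$ and then rounds up). Then, given a strict $p$-horseshoe $(J_1,\dots,J_p)$ inside the interval $I$, choose disjoint open subsets $U_i\supset J_i$ of $G$ together with $U_{p+1}:=G\setminus\bigcup J_i$; for each word $(i_0,\dots,i_{n-1})\in\Lbrack 1,p\Rbrack^n$ the set $\{x\mid f^k(x)\in J_{i_k}\ \forall k\}$ is nonempty by the graph version of Lemma~\ref{lem:chain-of-intervals}(i)--(ii), lies in a unique element of $\CU^n$, hence $N_n(\CU,f)\ge p^n$ and $h_{top}(f)\ge h_{top}(\CU,f)\ge\log p$. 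Taking $n\to\infty$ removes the $\log 2$ loss.

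For the converse, I would not re-run the whole Misiurewicz machine but instead point out that the proof of Theorem~\ref{theo:Misiurewicz} is purely combinatorial once one has (a) a finite partition $\CP$ of $G$ into finitely many pieces each contained in a branching-point-free interval, refining a given open cover and with $h_{top}(\CP,f)>\lambda'$, and (b) the chain-of-intervals lemma for the elements of $\CP$. Part (a) is available because $G\setminus V$ has finitely many arc components, so $G$ admits arbitrarily fine partitions into (half-open) subarcs, each lying in a closed branching-point-free interval; hence any finite open cover is refined by such a $\CP$, and $h_{top}(\CP,f)\ge h_{top}(\CU,f)$ by Lemma~\ref{lem:C-finer-D}. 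Part (b): whenever $A_0',\dots,A_{n-1}'$ are nested images ($A_i'\subset f(A_{i-1}')$) of elements of $\CP$, each $A_i'$ is a subinterval of a branching-point-free arc, so Lemma~\ref{lem:chain-of-intervals}(i) applies inside $\IR$ after a chart; the crucial geometric fact ``if $f^n(J_{A_0\dots A_{n-1}})$ is an interval meeting $k$ elements of $\CF$ then it contains at least $k-2$ of them'' also survives because $f^n(J_{A_0\dots A_{n-1}})$ is an arc in $G$ and its intersection with a partition into arcs along a branching-point-free stretch, or more generally a connected subgraph, still contains all but the finitely many ``boundary'' pieces — here one uses that a connected subgraph containing at least three pieces of $\CP$ must contain the interior ones; one may have to allow a uniform constant $2e$ in place of $2$, where $e$ bounds the number of endpoints/branching points involved, which is harmless. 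With these ingredients the functions $c(A,B,n)$, the super-multiplicativity \eqref{fact:c-ABC}, Lemma~\ref{lem:alpha-beta-an}, and the pigeonhole on $\vfi\colon\CF\to\CF$ all go through unchanged, yielding $k$ disjoint subintervals $J_1,\dots,J_k$ of some $A_0\in\CF$ with $f^n(J_i)\supset A_0$ and $\tfrac1n\log k\ge\lambda$; restricting $f^n$ to the arc containing $A_0$ and shrinking each $J_i$ so that $f^n(J_i)$ equals the common interval $A_0$ (possible by Lemma~\ref{lem:chain-of-intervals}(i)) produces a genuine $k$-horseshoe in the graph sense, and the usual every-other-interval trick plus a preliminary passage to $f^q$ handles the reduction to a strict horseshoe and the case $\lambda\le\log 3$.

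The main obstacle is purely bookkeeping: verifying that each step of the intricate proof of Theorem~\ref{theo:Misiurewicz} (in particular \eqref{eq:tildeA}, \eqref{fact:c-ABC}, and the ``$k$ meets, $k-2$ contains'' estimate) remains valid when ``interval'' is replaced by ``arc in $G$'' and when the ambient pieces $A\in\CP$ are only locally arc-like. This forces two genuine adjustments — choosing $\CP$ to consist of branching-point-free pieces so that Lemma~\ref{lem:chain-of-intervals} is applicable, and replacing the absolute constant $2$ in the connectedness estimate by a constant depending only on the combinatorial data $\#V$ of $G$ — neither of which changes the limiting entropy value. I would follow Llibre and Misiurewicz \cite{LM3} for the precise form of these adjustments and state the theorem as a transcription of their result.
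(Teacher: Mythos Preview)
The paper does not give a proof of this theorem: it is stated in a ``Remarks on graph maps'' paragraph and attributed to Llibre and Misiurewicz \cite{LM3} without further argument. Your proposal goes further than the paper by sketching how the interval proofs of Proposition~\ref{prop:horseshoe-htop} and Theorem~\ref{theo:Misiurewicz} adapt, and your outline is sound --- working inside a branching-point-free arc so that Lemma~\ref{lem:chain-of-intervals} applies, and replacing the constant $2$ in the ``meets $k$, contains $k-2$'' estimate by a constant depending only on the combinatorics of $G$ --- and you correctly identify \cite{LM3} as the reference to follow for the details.
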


\section{Homoclinic points}

The notion of a homoclinic point for a diffeomorphism of a smooth manifold was
introduced by Poincaré; this is a point belonging to both the stable and
the unstable manifolds of a hyperbolic point (see, e.g., \cite{Sma}).
In \cite{Bloc3}, Block defined unstable manifolds and homoclinic points
for an interval map.

\begin{defi}
Let $f$ be an interval map. Let $z$ be a periodic point
and let $p$ denote its period. 
Let $\CV(z)$\index{V(z)@$\CV(z)$} 
\label{notation:Vz}
denote the family of neighborhoods of $z$.
The \emph{unstable manifold}\index{unstable manifold of a periodic point}
\label{notation:Wuz}
\index{Wu(z,fp)@$W^u(z,f^p)$} of $z$ is the set
$$
W^u(z,f^p):=\bigcap_{V\in\CV(z)}\bigcup_{n\ge 1}f^{np}(V).
$$
Equivalently, $x\in W^u(z,f^p)$ if and only if there exist sequences of
points $(x_k)_{k\ge 0}$ and of positive integers $(n_k)_{k\ge 0}$ such
that $\displaystyle \lim_{k\to+\infty} x_k=z$ and $\forall k\ge 0$, $f^{pn_k}(x_k)=x$.
\end{defi}

\begin{defi}
Let $f$ be an interval map. A point $x$ is
a \emph{homoclinic point}\index{homoclinic point} if there exists
a periodic point $z$ such that:
\begin{enumerate}
\item $x\neq z$,
\item $x\in W^u(z,f^p)$, where $p$ is the period of $z$,
\item $z\in \omega(x,f^p)$.
\end{enumerate}
The point $x$ is an \emph{eventually periodic homoclinic point}\index{eventually periodic homoclinic point} if it satisfies the above conditions (i) and (ii)
and if there exists a positive integer $k$ such that $f^{kp}(x)=z$ (this 
condition is trivially stronger than (iii)).
\end{defi}

\begin{rem}
The notion of a homoclinic point introduced by Block in \cite{Bloc3}
corresponds to what is called an eventually periodic homoclinic point
in the above definition. In \cite{BCop}, homoclinic points are called
\emph{homoclinic in the sense of Poincaré} to make a distinction
with the previous kind of homoclinic point, which is more restrictive.
We rather follow the terminology of \cite{KKM, Mak}.
\end{rem}

\subsection{Preliminary results about unstable manifolds}

We shall need a few results about unstable manifolds. We state them for
fixed points. In view of the definition, there is no loss of generality
since a periodic point of period $p$ for $f$ is a fixed point for $f^p$.

The next easy lemma states that an unstable manifold is
connected and invariant. The other three lemmas of the section
are more technical.

\begin{lem}\label{lem:homoclinic-basicprop}
Let $f\colon I\to I$ be an interval map and let $z$ be a fixed point.
Then
\begin{enumerate}
\item $W^u(z,f)$ is an interval containing $z$ (it may be reduced to $\{z\}$),
\item $f(W^u(z,f))\subset W^u(z,f)$.
\end{enumerate}
\end{lem}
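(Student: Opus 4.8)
The two assertions about $W^u(z,f)$ are essentially unwinding of the definition combined with the intermediate value theorem and the elementary properties of orbits of intervals already used throughout the chapter. I will treat them in order.

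\emph{Connectedness (assertion (i)).} First observe $z\in W^u(z,f)$: for every neighbourhood $V$ of $z$ we have $z=f^n(z)\in f^n(V)$ for all $n\ge 1$, since $z$ is fixed. Next I want to show $W^u(z,f)$ is an interval, i.e. convex. The plan is to fix two points $x,x'\in W^u(z,f)$ with, say, $x<x'$, pick any $y$ with $x<y<x'$, and produce $y\in W^u(z,f)$. Let $V$ be an arbitrary neighbourhood of $z$; shrinking it we may assume $V$ is an open interval around $z$. By definition there are $n,n'\ge 1$ and points $u,u'\in V$ with $f^n(u)=x$ and $f^{n'}(u')=x'$. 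Replace $n,n'$ by a common multiple $m$: since $f^m(V)$ is an interval containing $z$ (by Theorem~\ref{theo:ivt}, the image of the connected set $V$ under $f^m$ is an interval, and it contains $f^m(z)=z$), and since $f^m(u)$ lies in $f^{m-n}(\{x\})\subset f^{m-n}(V)$... here I need a little care: I should instead argue directly that $f^m(V)$ is an interval containing $z$ and that it contains both $x$ and $x'$. To get $x\in f^m(V)$ from $f^n(u)=x$, note $x=f^n(u)$ with $u\in V$, hence $f^m(u)=f^{m-n}(x)$, which need not equal $x$. The cleaner route: do not pass to a common multiple on the exponent of $V$ but rather use that $f^n(V)$ is an interval containing $z$ and $x$; similarly $f^{n'}(V)$ is an interval containing $z$ and $x'$. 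Then for the common multiple $m$ of $n$ and $n'$, the set $f^m(V)=f^{m-n}(f^n(V))$ is an interval; it contains $f^{m-n}(z)=z$, and it contains $f^{m-n}(x)$ — but again I want $x$ itself. So the right statement to prove and use is: for each neighbourhood $V$ of $z$, the set $\bigcup_{n\ge 1}f^n(V)$ is an interval containing $z$. This union is connected because each $f^n(V)$ is a connected set containing the common point $z$ (as $z=f^n(z)$ and $z\in V$), so the union of this family of pairwise-intersecting connected sets is connected, hence an interval. Therefore $\bigcup_{n\ge1}f^n(V)\supset\langle x,x'\rangle\ni y$, so $y\in\bigcup_{n\ge1}f^n(V)$ for every $V\in\CV(z)$, i.e. $y\in W^u(z,f)$. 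This shows $W^u(z,f)$ is convex, hence an interval, and it contains $z$.

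\emph{Invariance (assertion (ii)).} Let $x\in W^u(z,f)$ and let $V\in\CV(z)$ be arbitrary. I must show $f(x)\in\bigcup_{n\ge1}f^n(V)$. Since $x\in W^u(z,f)$, in particular $x\in\bigcup_{n\ge1}f^n(V)$, so $x=f^n(u)$ for some $u\in V$ and $n\ge1$; then $f(x)=f^{n+1}(u)\in f^{n+1}(V)\subset\bigcup_{m\ge1}f^m(V)$. As $V$ was arbitrary, $f(x)\in\bigcap_{V\in\CV(z)}\bigcup_{m\ge1}f^m(V)=W^u(z,f)$. Hence $f(W^u(z,f))\subset W^u(z,f)$.

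\emph{Expected difficulty.} There is no serious obstacle here; the only point requiring a moment's attention is the one I flagged above: the quantifier structure of the definition means one should work one neighbourhood $V$ at a time and exploit that $\bigcup_{n\ge1}f^n(V)$ is itself a connected set containing $z$ (using that all the $f^n(V)$ share the point $z$ because $z$ is fixed), rather than trying to juggle common multiples of exponents. Once that observation is in place, convexity follows from the connectedness of a union of pairwise-intersecting connected sets, and invariance is immediate from $f^{n+1}(V)\subset\bigcup_m f^m(V)$. I would also remark explicitly that $W^u(z,f)$ may degenerate to $\{z\}$ (e.g. for a contraction near $z$), which is why the statement only claims ``interval containing $z$'' and not non-degeneracy.
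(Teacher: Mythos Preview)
Your proof is correct and follows the same approach as the paper: for each neighbourhood $V$ of $z$, the set $\bigcup_{n\ge1}f^n(V)$ is an interval containing $z$ (union of intervals sharing the point $z$), and then $W^u(z,f)$ is the intersection of these over all $V$; part (ii) is identical. The paper phrases (i) slightly more directly by simply observing that an intersection of intervals all containing $z$ is again an interval containing $z$, rather than verifying convexity pointwise as you do, but this is a cosmetic difference.
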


\begin{proof}
For every $\eps>0$, the set $\bigcup_{n\ge 1} f^n((z-\eps,z+\eps)\cap I)$ is an 
interval containing $z$ because it is a union of intervals containing the
fixed point $z$. It follows straightforwardly from the definition that
$$
W^u(z,f)=\bigcap_{\eps>0}\bigcup_{n\ge 1}f^n((z-\eps,z+\eps)\cap I),
$$
which is an intersection of intervals containing $z$. Thus
$W^u(z,f)$ is also an interval containing $z$, which gives (i).

Let $x\in W^u(z,f)$. This means that for all $V\in\CV(z)$, there
exists $n\ge 1$ such that $x\in f^n(V)$. Thus $f(x)\in f^{n+1}(V)$.
This implies that $f(x)\in W^u(z,f)$, which proves (ii).
\end{proof}

\begin{lem}\label{lem:2fixedpoints-unstableM}
Let $f$ be an interval map. Let $z_1,z_2$ be two fixed points with $z_1<z_2$ 
and such that there is no fixed point in $(z_1,z_2)$. Then
there exists $i\in\{1,2\}$ such that $(z_1,z_2)\subset W^u(z_i,f)$.
\end{lem}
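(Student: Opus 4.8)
The plan is to reduce to a one‑sided statement via the intermediate value theorem and then push every point of $(z_1,z_2)$ into the unstable manifold of one endpoint by a monotone ``reach'' induction.

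First I would note that, since $(z_1,z_2)$ contains no fixed point, the continuous function $t\mapsto f(t)-t$ is nonzero on $(z_1,z_2)$ and therefore keeps a constant sign there. Hence exactly one of the following holds: $f(t)>t$ for all $t\in(z_1,z_2)$, or $f(t)<t$ for all $t\in(z_1,z_2)$. These two cases are mirror images of one another (reflect the interval, or repeat the argument tracking leftward from $z_2$ rather than rightward from $z_1$), so it suffices to handle the first; I will show it gives $(z_1,z_2)\subset W^u(z_1,f)$, and the second gives $(z_1,z_2)\subset W^u(z_2,f)$.

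So assume $f(t)>t$ on $(z_1,z_2)$, fix $x\in(z_1,z_2)$, and fix $\delta>0$ with $z_1+\delta<x$. I would introduce the non‑decreasing sequence $M_0:=z_1+\delta$ and $M_{n+1}:=\max f([z_1,M_n])$, which is well defined by compactness. Since $f([z_1,M_n])$ is an interval (Theorem~\ref{theo:ivt}) that contains $f(z_1)=z_1$ and whose maximum is $M_{n+1}$, a one‑line induction yields $f^n([z_1,z_1+\delta])\supset[z_1,M_n]$ for every $n\ge0$. The key point is that the reach $M_n$ cannot stall below $z_2$: if $\sup_nM_n=\ell<z_2$, then $(M_n)$ is an increasing sequence in $(z_1,z_2)$ with limit $\ell\in(z_1,z_2)$, and $f(M_n)\le M_{n+1}\le\ell$ together with continuity of $f$ forces $f(\ell)\le\ell$, contradicting $f(\ell)>\ell$. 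Hence $\sup_nM_n\ge z_2>x$, so there is $N\ge1$ (indeed $N\ge1$ because $M_0<x$) with $M_N>x$, whence $x\in[z_1,M_N]\subset f^N([z_1,z_1+\delta])$. Since every neighbourhood $V$ of $z_1$ in $I$ contains an interval $[z_1,z_1+\delta]$ with $\delta$ as small as we like and still $<x-z_1$, this shows $x$ lies in $\bigcup_{n\ge1}f^n(V)$ for every $V\in\CV(z_1)$, i.e. $x\in W^u(z_1,f)$.

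I do not expect a genuine obstacle here; the argument is elementary. The points that need a little care are: checking that the induction $f^n([z_1,z_1+\delta])\supset[z_1,M_n]$ only uses the intermediate value theorem and that $[z_1,M_n]$ stays inside $I$ (automatic, since $M_n\in f(I)\subset I$); handling the case where $z_1$ or $z_2$ is an endpoint of $I$ (harmless, as the whole argument uses only the one‑sided neighbourhoods $[z_1,z_1+\delta]$ and never leaves $I$); and ensuring the exponent $N$ produced is $\ge1$ rather than $0$, so that $x$ genuinely belongs to $\bigcup_{n\ge1}f^n(V)$ — this is exactly why $\delta$ is chosen with $z_1+\delta<x$.
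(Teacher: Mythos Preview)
Your proof is correct and follows essentially the same approach as the paper's: both reduce to the case $f(t)>t$ on $(z_1,z_2)$, introduce the same ``maximum reach'' sequence $M_{n+1}=\max f([z_1,M_n])$, and show by induction that $f^n([z_1,z_1+\delta])\supset[z_1,M_n]$. The only notable difference is how you show $M_n$ eventually exceeds the target point: the paper uses compactness to get a uniform lower bound $\delta_0=\min\{f(t)-t:t\in[x,y]\}>0$ and deduces $M_{n+1}\ge M_n+\delta_0$ while $M_n\in[x,y]$, whereas you use a limit argument (if $M_n\nearrow\ell<z_2$ then $f(\ell)\le\ell$, contradiction). Your variant is arguably slightly cleaner, but the two arguments are interchangeable and equally elementary.
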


\begin{proof}
The assumption that $(z_1,z_2)$ contains no fixed point implies that
\begin{eqnarray}
\text{either}&&\forall x\in (z_1,z_2),\ f(x)>x\label{eq:homocl-fx>x},\\
\text{or}&&\forall x\in (z_1,z_2),\ f(x)<x\label{eq:homocl-fx<x}.
\end{eqnarray}
We assume that \eqref{eq:homocl-fx>x} holds and we are going to show that
$(z_1,z_2)\subset W^u(z_1,f)$. If \eqref{eq:homocl-fx<x} holds, then by
symmetry we have $(z_1,z_2)\subset W^u(z_2,f)$.

Let $y\in (z_1,z_2)$. Let $V$ be a neighborhood of $z_1$ and
$x\in (z_1,y)\cap V$. We set
$\delta:=\min\{f(t)-t\mid t\in [x,y]\}$.
By compactness, \eqref{eq:homocl-fx>x} implies $\delta>0$. If $t\in [x,y]$, then
\begin{equation}\label{eq:zt+m}
f([z_1,t])\supset [z_1,t+\delta].
\end{equation}
We define a sequence $(b_n)_{n\ge 0}$ by
\begin{itemize}
\item $b_0:=x$,
\item if $b_n\le z_2$, $b_{n+1}:= \max f([z_1,b_n])$;
if $b_n>z_2$, the sequence is not defined for greater indices.
\end{itemize}
By \eqref{eq:homocl-fx>x}, the sequence $(b_n)_{n\ge 0}$ is increasing and
$b_n=\max f^n([z_1,x])$. According to \eqref{eq:zt+m}, if $b_n\in [x,y]$,
then $b_{n+1}\ge b_n+\delta$, so $b_{n+1}\ge b_0+(n+1)\delta$ by induction. 
Since $[x,y]$ is bounded, this implies that there
exists $n_0$ such that $b_{n_0}\ge y$. Thus 
$y\in f^{n_0}([z_1,x])\subset f^{n_0}(V)$.
This implies that $y\in W^u(z_1,f)$, and we conclude that
$(z_1,z_2)\subset W^u(z_1,f)$.
\end{proof}

\begin{lem}\label{lem:limhomoclinic=z}
Let $f\colon I\to I$ be an interval map. Let $z$ be a fixed point and
let $y$ be a point such that
$y\neq z$ and $y\in W^u(z,f)$. Then for every neighborhood $V$ of
$z$, there exist $y'\in V\cap W^u(z,f)$ and an integer $n\ge 1$ such that
$f^n(y')=y$.
\end{lem}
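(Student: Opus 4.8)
We may assume $y>z$, the case $y<z$ being symmetric. By Lemma~\ref{lem:homoclinic-basicprop}, $W^u(z,f)$ is an interval containing both $z$ and $y$, so $[z,y]\subseteq W^u(z,f)$, and $f(W^u(z,f))\subseteq W^u(z,f)$. Fix a neighborhood $V$ of $z$ and choose $\eps\in(0,y-z)$ so small that $[z,z+\eps]\subseteq V$. Any point of $[z,z+\eps]$ automatically lies in $[z,y]\subseteq W^u(z,f)$ and in $V$, so it is enough to prove
\[
y\in\bigcup_{n\ge 1}f^n([z,z+\eps]).
\]
If $z=\min I$ this is immediate: writing $U_\delta:=(z-\delta,z+\delta)\cap I=[z,z+\delta)$ for small $\delta$, the definition of the unstable manifold gives $y\in\bigcap_{\delta>0}\bigcup_{n\ge1}f^n(U_\delta)\subseteq\bigcup_{n\ge1}f^n([z,z+\eps])$. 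The case $z=\max I$ cannot occur here since $y>z$. So from now on assume $z$ is interior to $I$.

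Next I would treat the case where $z$ is an interior point of the interval $W^u(z,f)$: then there is $\rho>0$ with $U_\rho\subseteq W^u(z,f)$, and shrinking $\rho$ we may assume $U_\rho\subseteq V$. As above $y\in\bigcup_{n\ge1}f^n(U_\rho)$, so there are $n\ge1$ and $y'\in U_\rho\subseteq W^u(z,f)\cap V$ with $f^n(y')=y$; here one does not even need the reduction to $[z,z+\eps]$.

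There remains the case in which $z$ is an endpoint of $W^u(z,f)$, so $W^u(z,f)\subseteq[z,\max I]$ and no point strictly left of $z$ lies in $W^u(z,f)$. I would argue by contradiction: suppose $y\notin\bigcup_{n\ge1}f^n([z,z+\eps])$. Put $E:=\bigcup_{n\ge0}f^n([z,z+\eps])$; by invariance of $W^u(z,f)$ and $[z,z+\eps]\subseteq W^u(z,f)$ we have $E\subseteq W^u(z,f)\subseteq[z,\max I]$, while $E$ is an interval containing $z$ with $f(E)\subseteq E$, so $c:=\sup E$ satisfies $z+\eps\le c\le y$ and $f([z,c])\subseteq[z,c]$. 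When $c<y$, every $t\in(c,y]\subseteq[z,y]\subseteq W^u(z,f)$ lies outside $E$; since the right half $[z,z+\delta)$ of a small neighborhood of $z$ maps into $E$, the definition of $W^u(z,f)$ forces every approximating preimage of $t$ to come from the left of $z$. Feeding such a left preimage $w$ into the covering lemma (Lemma~\ref{lem:chain-of-intervals}) — the interval $\langle w,z\rangle$ covers $[z,y]$, hence $[z,z+\eps]$, under the appropriate iterate of $f$ — and exploiting that $E$ and $[z,y]$ remain to the right of $z$ whereas $\langle w,z\rangle$ does not, one obtains a nested-interval configuration incompatible with $y\notin\bigcup_{n\ge1}f^n([z,z+\eps])$, a contradiction. (When $c=y$ one argues directly with the $f$-invariant interval $[z,y]$.) This last case is the main obstacle: one must use the one-sidedness of $W^u(z,f)$ to exclude the scenario in which $y$ is reached only from neighborhoods of $z$ lying outside $W^u(z,f)$. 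The rest of the argument — the reduction, the symmetry, the interior case — is routine given Lemmas~\ref{lem:homoclinic-basicprop} and~\ref{lem:chain-of-intervals}.
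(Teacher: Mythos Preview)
Your reduction, the symmetry, and the easy cases ($z=\min I$ and $z$ interior to $W^u(z,f)$) are correct and mirror the paper's opening moves. The gap is in Case~3, which you rightly flag as the crux.

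There you correctly observe that every preimage of $y$ (or of any $t\in(c,y]$) near $z$ must lie strictly to the left of $z$, since the right half-neighborhood $[z,z+\eps]$ and all its iterates are trapped in your invariant interval $E\subset[z,c]$. But the step from there to a contradiction is not carried out. Knowing that $[w,z]$ covers $[z,y]$ under some $f^n$ gives no purchase: Lemma~\ref{lem:chain-of-intervals} lets you pull back subintervals, but there is no chain here that forces $y$ into an iterate of $[z,z+\eps]$, and nothing in your argument excludes the possibility that $y$ is reached \emph{only} from the left of $z$. The phrase ``nested-interval configuration incompatible with\ldots'' is a placeholder, not an argument; the sub-case $c=y$ (``argue directly with the $f$-invariant interval $[z,y]$'') is equally unfinished.

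The missing idea is to exploit a specific point \emph{left} of $z$ that lies outside $W^u(z,f)$. Since $z=\min W^u(z,f)$ and $z\ne\min I$, pick $c'<z$ close to $z$; then $c'\notin W^u(z,f)$, so there is $d\in(c',z)$ with $c'\notin f^m([d,z])$ for every $m\ge1$, bounding the iterates of $[d,z]$ from below. Combined with continuity (choose $b\in(z,y)\cap V$ so that $f([c',z])<b$), the paper proves by induction that
\[
f^m([d,z])\subset[d_m,z]\cup\bigcup_{i=0}^{m-1}f^i([z,b]),\qquad d_m:=\min f^m([d,z])>c',
\]
so $y\notin f^m([d,z])$ for any $m$ (the union on the right is essentially your set $E$ and misses $y$). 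Then the two-sided neighborhood $(d,b)$ of $z$ never reaches $y$, contradicting $y\in W^u(z,f)$. Your $E$ already captures the right-hand half of this decomposition; controlling the left half via a point outside $W^u(z,f)$ is the piece you are missing.
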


\begin{proof}
Suppose that the result is false, that is, there exists a neighborhood $V$
of $z$ such that
\begin{equation}\label{eq:ynotinVW}
\forall n\ge 1,\ y\notin f^n(V\cap W^u(z,f)).
\end{equation}
We can assume that $V$ is an interval. We also assume that $y>z$, the case
$y<z$ being symmetric.

Since $y\in W^u(z,f)$ and according to 
the definition of an unstable manifold, \eqref{eq:ynotinVW} implies:
\begin{equation}\label{eq:VcapW}
V\cap W^u(z,f)\text{ is not a neighborhood of }z.
\end{equation}
By Lemma~\ref{lem:homoclinic-basicprop}(i), $W^u(f,z)$ is an interval
containing $[z,y]$. Thus $V\cap W^u(z,f)$ is also an interval containing
$z$, and \eqref{eq:VcapW} implies:
\begin{gather}
z=\min \left(V\cap W^u(z,f)\right)=\min W^u(z,f)\label{eq:zminW}\\
\text{and}\quad z\neq \min I.\nonumber
\end{gather}
Let $b\in V\cap (z,y)$. Since $f(z)=z$ and $z\neq \min I$, there
exists a point $c$ such that
\begin{equation}\label{eq:fx<b}
c\in V,\ c<z\quad\text{and}\quad\forall x\in[c,z],\ f(x)<b.
\end{equation}
By \eqref{eq:zminW}, $c\notin W^u(z,f)$. Thus, by definition of $W^u(z,f)$,
there exists $d\in (c,z)$ such that
\begin{equation}\label{eq:cnotinW}
\forall n\ge 1,\ c\notin f^n([d,z]).
\end{equation}
We set $d_n:=\min f^n([d,z])$ for all $n\ge 0$. Then 
$d_n\le z$ and \eqref{eq:cnotinW}
implies that $d_n>c$ for all $n\ge 0$. We show by induction on $n$ that
\begin{equation}\label{eq:fndz}
\forall n\ge 0,\ f^n([d,z])\subset [d_n,z]\cup\bigcup_{i=0}^{n-1}f^i([z,b]).
\end{equation}

\noindent $\bullet$ \eqref{eq:fndz} is satisfied for $n=0$.\\
$\bullet$ Suppose that \eqref{eq:fndz} holds for $n$. Then
$f([d_n,z])\subset [d_{n+1},b)=[d_{n+1},z]\cup [z,b)$ using \eqref{eq:fx<b}
and the fact that $d_n\in (c,z]$. Then
\begin{eqnarray*}
f^{n+1}([d,z])&\!\!\subset\!\!& f([d_n,z])\cup\bigcup_{i=1}^n f^i([z,b])
\quad\text{by the induction hypothesis},\\
&\!\!\subset\!\!& [d_{n+1},z]\cup\bigcup_{i=0}^n f^i([z,b])\quad\text{by what precedes}.
\end{eqnarray*}
This is \eqref{eq:fndz} for $n+1$.
This proves that \eqref{eq:fndz} holds for all $n\ge 0$.
Moreover, according to \eqref{eq:ynotinVW},  $y\notin f^n([z,b])$ 
for any $n\ge 1$ because $[z,b]\subset V\cap W^u(z,f)$. 
By \eqref{eq:fndz}, $y\notin f^n([d,z])$ for all $n\ge 1$ (recall
that $b<y$). Thus  $y\notin f^n((d,b))$ for any $n\ge 1$.
But this contradicts the fact that $y\in W^u(z,f)$ because $(d,b)$ is a
neighborhood of $z$. This concludes the proof.
\end{proof}

\begin{lem}\label{lem:x<yinW}
Let $f$ be an interval map.  Let $z$ be a fixed point and
let $y$ be a point such that $y\in W^u(z,f)$ and $y>z$. Then there
exists $x\in W^u(z,f)$ such that $f(x)=y$ and $x<y$.
\end{lem}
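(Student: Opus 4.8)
The plan is to reduce the statement to a single application of the intermediate value theorem, after first producing a \emph{suitable} backward iterate of $y$. Since $y\in W^u(z,f)$ and $y\neq z$, Lemma~\ref{lem:limhomoclinic=z} applies: I would choose the neighbourhood $V:=I\cap(z-\eta,z+\eta)$ of $z$ with $\eta:=y-z>0$, so that every point of $V$ is strictly less than $y$, and obtain a point $y'\in V\cap W^u(z,f)$ together with an integer $n\ge 1$ such that $f^n(y')=y$; in particular $y'<y$. The reason for invoking Lemma~\ref{lem:limhomoclinic=z} rather than merely the definition of $W^u(z,f)$ is precisely that it delivers a preimage of $y$ that \emph{lies in $W^u(z,f)$}.

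Next I would walk along the trajectory $y'=f^0(y'),f^1(y'),\dots,f^n(y')=y$ and let $k$ be the least index in $\Lbrack 1,n\Rbrack$ with $f^k(y')\ge y$ (this set is nonempty since it contains $n$). By minimality of $k$ one has $f^j(y')<y$ for every $j\in\Lbrack 0,k-1\Rbrack$ (for $j=0$ this is just $y'<y$); in particular $f^{k-1}(y')<y$. By Lemma~\ref{lem:homoclinic-basicprop}(ii) the unstable manifold is forward invariant, so $f^{k-1}(y')\in W^u(z,f)$, and by Lemma~\ref{lem:homoclinic-basicprop}(i) $W^u(z,f)$ is an interval containing $z$; hence it contains $J:=\langle z,f^{k-1}(y')\rangle$, and since both endpoints of $J$ are strictly less than $y$, the whole interval $J$ lies to the left of $y$.

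Finally, since $f(z)=z<y\le f^k(y')=f\big(f^{k-1}(y')\big)$, the value $y$ belongs to $\langle f(z),f(f^{k-1}(y'))\rangle$, and the intermediate value theorem (Theorem~\ref{theo:ivt}, applied on $J$) yields $x\in J$ with $f(x)=y$. Then $x\in W^u(z,f)$ and $x<y$, which is exactly what is required. I do not expect a genuine obstacle: the only point to watch is choosing the preimage of $y$ in the right location — a preimage read off blindly from the trajectory could be $\ge y$, and a preimage produced directly from $y\in W^u(z,f)$ need not itself lie in $W^u(z,f)$ — and both difficulties disappear once $x$ is sandwiched between $z$ and $f^{k-1}(y')$ inside the interval $W^u(z,f)$.
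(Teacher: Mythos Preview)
Your proof is correct and follows essentially the same approach as the paper: both invoke Lemma~\ref{lem:limhomoclinic=z} to obtain a preimage $y'\in W^u(z,f)$ with $y'<y$ and $f^n(y')=y$, locate the first time the orbit of $y'$ reaches or exceeds $y$, and then apply the intermediate value theorem on the interval $\langle z,f^{k-1}(y')\rangle\subset W^u(z,f)$. The only cosmetic difference is that the paper phrases the crossing argument by contradiction (assuming no $x<y$ in $W^u(z,f)$ maps to $y$ and deriving that all iterates of $y'$ stay below $y$), whereas you argue directly by minimality of $k$.
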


\begin{proof}
We prove the lemma by refutation. Suppose that
\begin{equation}\label{eq:x<yfx<y}
\forall x\in W^u(z,f),\ x<y\Rightarrow f(x)<y.
\end{equation}
Then a straightforward induction, using the fact that 
$f(W^u(z,f))\subset W^u(z,f)$
(by Lemma~\ref{lem:homoclinic-basicprop}), gives:
\begin{equation}\label{eq:x<yfnx<y}
\forall x\in W^u(z,f),\ x<y\Rightarrow \forall n\ge 0, f^n(x)\in W^u(z,f)
\text{ and }f^n(x)<y.
\end{equation}
Let $V$ be a neighborhood of $z$ such that $\sup V<y$.
According to Lemma~\ref{lem:limhomoclinic=z}, there exist
$x\in W^u(z,f)\cap V$ and $n\ge 1$ such that $f^n(x)=y$. The fact that
$x\in V$ implies $x<y$. But this contradicts \eqref{eq:x<yfnx<y}. We deduce that
\eqref{eq:x<yfx<y} does not hold, that is, there exists $x_0\in W^u(z,f)$
such that $x_0<y$ and $f(x_0)\ge y$. Since $f(z)=z$, the continuity of
$f$ implies that there exists $x\in \langle z, x_0\rangle$ such that $f(x)=y$.
Then $x<y$. Moreover, $W^u(z,f)$ is an interval 
(by Lemma~\ref{lem:homoclinic-basicprop}) and it contains $z$ and
$x_0$, and so $W^u(z,f)$ contains $x$ too.
\end{proof}

\subsection{Homoclinic points and horseshoes}

In \cite{Bloc3}, Block showed that an interval map 
$f$ has an eventually periodic homoclinic point if and
only if $f$ has a periodic point whose period is not a power of $2$.
As we shall show in Theorem~\ref{theo:htop-power-of-2}, $f$ has
a periodic point  whose period is not a power of $2$ if and only if $f$ has
positive entropy, which is also equivalent to the fact that $f^n$ has a 
horseshoe for some $n$ (note that this theorem is 
posterior to \cite{Bloc3}).
We are going to show a result very close to Block's:
$f$ has  an eventually periodic homoclinic point if and only if some 
iterate of $f$ has a horseshoe. Moreover, the integer $n$ such that $f^n$ has
a horseshoe and the period of the eventually periodic homoclinic point
are related. 

The next result is a variant of \cite[Theorem~5]{Bloc3}.

\begin{prop}\label{prop:horseshoe-evperhomoclinic}
Let $f$ be an interval map having a horseshoe. Then there
exist points $x,z$ such that $x\neq z$, $f(z)=z$, $f(x)=z$ and
$x\in W^u(z,f)$. In particular, $x$ is an eventually periodic homoclinic point.
\end{prop}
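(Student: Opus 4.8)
The plan is to reduce the horseshoe to a convenient normal form and then grow the unstable manifold of a well-chosen fixed point until it contains a preimage of that fixed point. First apply Lemma~\ref{lem:particularhorseshoe} to get points $u,v,w$ with $f(u)=u$, $f(v)=w$, $f(w)=u$; assume $u<v<w$, the case $u>v>w$ being symmetric (reverse all inequalities). By the definition of a horseshoe, $f([u,v])\supset[u,w]$ and $f([v,w])\supset[u,w]$, where $[u,w]=[u,v]\cup[v,w]$. Let $z:=\max\{t\in[u,v)\mid f(t)=t\}$, which exists because the set of fixed points is closed and contains $u$, and which satisfies $z<v$ since $f(v)=w\neq v$. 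There is no fixed point in $(z,v)$, so $f-\mathrm{id}$ has constant sign there; as $x\to v^-$ we have $f(x)\to w>v>x$, so $f(x)>x$ near $v$, hence $f(x)>x$ on all of $(z,v)$, and also $f(v)=w>v$. Thus $f(x)>x$ for every $x\in(z,v]$.

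The core of the proof is the following claim: \emph{if $z$ is a fixed point and $f(x)>x$ for all $x\in(z,t]$ with $t>z$, then $[z,t]\subset W^u(z,f)$.} To prove it, fix $\eps\in(0,t-z]$, set $J_0:=[z,z+\eps]$ and $b_n:=\max f^n(J_0)$. Since $f^n(z)=z$, each $f^n(J_0)$ is an interval containing $z$, hence contains $[z,b_n]$. One has $b_1\ge f(z+\eps)>z+\eps>z$, and whenever $z<b_n<t$ one gets $b_{n+1}\ge\max f([z,b_n])\ge f(b_n)>b_n$; so $(b_n)_{n\ge1}$ is strictly increasing as long as it stays below $t$. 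It cannot stay below $t$ forever: if $b_n\nearrow L\le t$ then, letting $n\to\infty$ in $f(b_n)\le b_{n+1}$ and using continuity, $f(L)\le L$, contradicting $f(L)>L$ (as $L\in(z,t]$). Hence $b_N\ge t$ for some $N$, so $[z,t]\subset f^N(J_0)\subset\bigcup_{m\ge1}f^m((z-\eps,z+\eps)\cap I)$, and letting $\eps\to0$ gives $[z,t]\subset W^u(z,f)$. The main obstacle is exactly that there is no expanding hypothesis available, so this step must be argued through the ``a bounded increasing orbit accumulates at a fixed point'' contradiction rather than via a geometric rate.

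Applying the claim with $t=v$ yields $[z,v]\subset W^u(z,f)$. By Lemma~\ref{lem:homoclinic-basicprop}(ii) the set $W^u(z,f)$ is $f$-invariant, so it contains $f([z,v])$, which is an interval containing $f(z)=z$ and $f(v)=w$ and therefore contains $[z,w]$; hence $[z,w]\subset W^u(z,f)$, and in particular $[v,w]\subset W^u(z,f)$. Since $[v,w]$ is one piece of the horseshoe, $f([v,w])\supset[u,w]\ni z$, so there is $x\in[v,w]$ with $f(x)=z$. Then $x\in[v,w]\subset W^u(z,f)$, $f(x)=z=f(z)$, and $x\ge v>z$, so $x\neq z$; this is precisely the asserted pair $(x,z)$. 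Finally, $z$ is a fixed point (period $p=1$), $x\neq z$, $x\in W^u(z,f)=W^u(z,f^p)$, and $f^{kp}(x)=f(x)=z$ for $k=1$, so $x$ is an eventually periodic homoclinic point.
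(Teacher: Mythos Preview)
Your proof is correct and follows essentially the same route as the paper's. Both start from Lemma~\ref{lem:particularhorseshoe} and then grow the unstable manifold of a carefully chosen fixed point until it swallows a preimage of itself. The only differences are cosmetic: the paper introduces fixed points $z_1,z_2$ on both sides of the middle point and invokes Lemma~\ref{lem:2fixedpoints-unstableM} (which handles both signs of $f-\mathrm{id}$), whereas you notice that $f(v)=w>v$ already forces $f(x)>x$ on $(z,v]$, so only the single fixed point $z$ is needed and you prove the relevant growth of $W^u(z,f)$ inline. Your monotone-convergence argument for the claim is a harmless variant of the uniform-increment argument in Lemma~\ref{lem:2fixedpoints-unstableM} (and of Lemma~\ref{lem:repulsive-fixed-point}); one tiny point is that $J_0=[z,z+\eps]$ is not literally contained in $(z-\eps,z+\eps)$, but since every neighborhood of $z$ contains some $[z,z+\eps']$ this is immaterial.
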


\begin{proof}
According to Lemma~\ref{lem:particularhorseshoe}, there exist
points $a,b,c$ such that $f(a)=f(c)=a$, $f(b)=c$ and, either
$a<b<c$, or $a>b>c$. We assume that $a<b<c$, the other case being symmetric.
Then
$([a,b],[b,c])$ is a horseshoe; in particular there exist fixed points
in $[a,b]$ and in $[b,c]$. We set
$$
z_1:=\max\{x\in [a,b]\mid f(x)=x\}\quad\text{and}\quad
z_2:=\min\{x\in [b,c]\mid f(x)=x\}.
$$
There exist $x_1\in [b,c]$ and $x_2\in [a,b]$ such that $f(x_i)=z_i$ for
$i\in\{1,2\}$. 
Since $b$ is not a fixed point, we have $z_1<b<z_2$ and $x_i\neq z_i$
for $i\in\{1,2\}$. Moreover, there is no fixed point in $(z_1,z_2)$.
Therefore, according to Lemma~\ref{lem:2fixedpoints-unstableM}, there
exists $i\in\{1,2\}$ such that $(z_1,z_2)\subset W^u(z_i,f)$, and hence
$b\in W^u(z_i,f)$. By Lemma~\ref{lem:homoclinic-basicprop},
the points $c=f(b)$ and $a=f(c)$ belong to $W^u(z_i,f)$ too, and thus
$[a,c]\subset W^u(z_i,f)$ because $W^u(z_i,f)$ is an interval. 
Since $x_1,x_2\in [a,c] \subset W^u(z_i,f)$,
we conclude that the proposition holds for $z:=z_i$ and $x:=x_i$.
\end{proof}

The next proposition is \cite[Theorem~III.16]{BCop}, which is more precise
than the original result of Block \cite[Theorem~A2]{Bloc3}.

\begin{prop}\label{prop:evperhomoclinic-horseshoe}
Let $f$ be an interval map. Let $y$ be an eventually periodic
homoclinic point with respect to a fixed point (that is, there
exist a point $z$ and a positive integer $k$ such that 
$y\ne z$, $f(z)=z$, $y\in W^u(z,f)$ and $f^k(y)=z$).
Then $f^2$ has a horseshoe.
\end{prop}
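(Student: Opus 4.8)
The plan is to first normalize the data, then produce the horseshoe by a backward‑orbit construction, the delicate point being a certain ``alternating'' configuration which one removes by passing to $f^{2}$.

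First I would reduce to a convenient form. Let $k$ be minimal with $f^{k}(y)=z$ and put $w:=f^{k-1}(y)$; then $f(w)=z$, $w\neq z$ (by minimality of $k$), and $w\in W^{u}(z,f)$ since $f(W^{u}(z,f))\subset W^{u}(z,f)$ by Lemma~\ref{lem:homoclinic-basicprop}(ii). Replacing $y$ by $w$, and then, if necessary, conjugating $f$ by the linear homeomorphism $x\mapsto -x$ (which preserves the class of interval maps and the property ``$f^{2}$ has a horseshoe''), I may assume $f(z)=z$, $z<y$, $y\in W^{u}(z,f)$ and $f(y)=z$.

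Next I would build, using Lemma~\ref{lem:x<yinW} and its mirror image, a backward orbit $y=x_{0},x_{1},x_{2},\ldots$ inside $W^{u}(z,f)$ with $f(x_{i+1})=x_{i}$ for all $i$, chosen so that $x_{i+1}<x_{i}$ whenever $x_{i}>z$ and $x_{i+1}>x_{i}$ whenever $x_{i}<z$; no $x_{i}$ equals $z$, since $f(x_{i+1})=x_{i}$ and $f(z)=z$. The key step is the claim that if two consecutive points $x_{i},x_{i+1}$ lie on the same side of $z$, then $f$ has a $2$‑horseshoe, whence $f^{2}$ has a horseshoe by Lemma~\ref{lem:horseshoe-fn}. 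To see this, take the first such $i$ and assume, up to reflection, $z<x_{i+1}<x_{i}$. For $i=0$ the points $z<x_{1}<y$ together with $f(z)=z$, $f(x_{1})=y$, $f(y)=z$ make each of $[z,x_{1}]$ and $[x_{1},y]$ cover $[z,y]=[z,x_{1}]\cup[x_{1},y]$, a horseshoe. For $i\ge 1$, minimality of $i$ forces $x_{i-1}<z$, and then $f(z)=z$, $f(x_{i+1})=x_{i}$, $f(x_{i})=x_{i-1}<z$ show that $[z,x_{i+1}]$ covers $[z,x_{i}]\supset[z,x_{i+1}]\cup[x_{i+1},x_{i}]$ while $[x_{i+1},x_{i}]$ covers $[x_{i-1},x_{i}]\supset[z,x_{i}]$, again a horseshoe.

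It remains to treat the case in which the backward orbit strictly alternates sides, $x_{2j}>z$ and $x_{2j+1}<z$ for all $j$; this is the main obstacle. Here I would pass to $g:=f^{2}$: the even‑indexed points form a backward $g$‑orbit $y=x_{0},x_{2},x_{4},\ldots$ lying entirely to the right of $z$, with $g(x_{2j+2})=x_{2j}$ and $g(x_{0})=f^{2}(y)=f(z)=z=g(z)$. If $x_{2}<y$ then, exactly as in the case $i=0$ above but applied to $g$, the intervals $[z,x_{2}]$ and $[x_{2},y]$ each cover $[z,y]=[z,x_{2}]\cup[x_{2},y]$, a horseshoe for $g=f^{2}$, and we are done. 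The one genuinely hard subcase is $x_{2}>y$: then one continues the analysis of the increasing backward $g$‑orbit — either some later consecutive pair is decreasing, reducing (possibly after a further passage to $f^{4}$) to one of the previous situations, or the $g$‑orbit is bounded and strictly increasing, hence converges to a fixed point $\ell>y$ of $g$, and the horseshoe is extracted from the two $g$‑fixed points $z<\ell$ by applying Lemma~\ref{lem:2fixedpoints-unstableM} to $g$ on $(z',\ell)$, where $z'$ is the largest $g$‑fixed point below $\ell$. This last bookkeeping is where the argument is most technical and is carried out in detail in \cite{BCop}; in every case one obtains a horseshoe for $f^{2}$.
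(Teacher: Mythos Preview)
Your reduction and the ``same side'' cases are correct, as is the alternating subcase $x_2<y$. The gap is the alternating subcase $x_2>y$. In fact the ``some later pair decreases'' sub-subcase does yield an $f^2$-horseshoe directly (no passage to $f^4$ is needed): if $z<x_0<x_2<\cdots<x_{2j}$ and $z<x_{2j+2}<x_{2j}$, locate $x_{2j+2}$ among the $x_{2i}$ and check that the two adjacent subintervals form a horseshoe for $g=f^2$. But the strictly increasing sub-subcase $(x_{2j})\nearrow\ell$, $g(\ell)=\ell$, is not handled: invoking Lemma~\ref{lem:2fixedpoints-unstableM} only places an open interval in some unstable manifold of $g$, which by itself gives no covering relation, and deferring to \cite{BCop} is not a proof.

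The paper sidesteps this entirely by choosing the second backward step via Lemma~\ref{lem:limhomoclinic=z} rather than Lemma~\ref{lem:x<yinW}. After the same reduction to $z<y'$, $f(y')=z$, it sets $w:=\max\{x\le y':f(x)=y'\}$; if $w>z$ one has an $f$-horseshoe at once, so assume $w<z$. Since $f(y')=z<y'$ and $w$ is maximal, $f(x)<y'$ for all $x\in(w,y')$; if also $f(x)>w$ there then $f((w,y'))\subset(w,y')$, contradicting Lemma~\ref{lem:limhomoclinic=z} applied to $w\in W^u(z,f)$ and the neighborhood $(w,y')$ of $z$. Hence some $v\in(w,y')\setminus\{z\}$ has $f(v)=w$, and now $v<y'$ is automatic, so your problematic increasing case cannot arise: if $v<z$ then $([w,v],[v,z])$ is an $f$-horseshoe; if $v>z$ then $f^2(z)=z$, $f^2(v)=y'$, $f^2(y')=z$ make $([z,v],[v,y'])$ an $f^2$-horseshoe. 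The missing idea in your approach is precisely this use of Lemma~\ref{lem:limhomoclinic=z} to force the second preimage into $(w,y')$.
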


\begin{proof}
Let $k\ge 1$ be the minimal integer such that $f^k(y)=z$. We set 
$y':=f^{k-1}(y)$. Then $y'\in W^u(z,f)$ by 
Lemma~\ref{lem:homoclinic-basicprop}, $y'\neq z$ because of the choice of 
$k$, and $f(y')=z$. We assume $y'>z$, the case $y'<z$ being symmetric.

If there exists $x\in (z,y')$ such that $f(x)=y'$, then $([z,x], [x,y'])$
is a horseshoe for $f$, and for $f^2$ too. From now on, we assume
that:
\begin{equation}\label{eq:fxneqy}
\forall x\in (z,y'),\ f(x)\neq y'.
\end{equation}
According to Lemma~\ref{lem:x<yinW}, there exists $x\in W^u(z,f)$
such that $x<y'$ and $f(x)=y'$. We set $w:=\max\{x\le y'\mid f(x)=y'\}$.
Then $w\in W^u(z,f)$ because $W^u(z,f)$ is an interval 
by Lemma~\ref{lem:homoclinic-basicprop}. Moreover, $w<z$ by \eqref{eq:fxneqy}
(notice that $w\notin\{y',z\}$ because $f(z)=f(y')=z\neq y'$). Since
$f(y')=z<y'$, the definition of $w$ and the continuity of $f$ imply that
\begin{equation}\label{eq:xinwy}
\forall x\in (w,y'),\ f(x)<y'.
\end{equation}
Suppose that $f(x)>w$ for all $x\in (w,y')$. Combined with
\eqref{eq:xinwy}, this implies $f((w,y'))\subset (w,y')$. Thus
$$
\forall x\in (w,y'),\forall n\ge 0,\ f^n(x)\neq w.
$$
But this contradicts Lemma~\ref{lem:limhomoclinic=z} because $(w,y')$
is a neighborhood of $z$ and $w$ is in $W^u(z,f)$. We deduce that there exists 
$x\in (w,y')$ such that $f(x)\le w$. Since $f(z)=z>w$, the continuity of $f$
implies that there exists $v\in (w,y')$ such that $f(v)=w$ and $v\neq z$.
If $v\in (w,z)$, then $([w,v],[v,z])$ is a horseshoe for $f$. If 
$v\in (z,y')$, it is easy to check that $[z,v], [v,y']$ form a horseshoe
for $f^2$. This concludes the proof.
\end{proof}

\begin{rem}
Propositions \ref{prop:horseshoe-evperhomoclinic} and 
\ref{prop:evperhomoclinic-horseshoe} can be restated for some iterate of~$f$:
\begin{itemize}
\item If $f^n$ has a horseshoe, then there exists an eventually periodic
homoclinic point with respect to a periodic point whose period divides $n$.
\item If $f$ has an eventually periodic homoclinic point  with respect to 
a periodic point of period $p$, then $f^{2p}$ has a horseshoe.
\end{itemize}
\end{rem}

It seems that the next result was first stated by Block and Coppel
\cite[Proposition~VI.35]{BCop}. We give a different proof.

\begin{prop}\label{prop:homoclinic-horseshoe}
Let $f$ be an interval map having a homoclinic point. Then
there exists a positive integer $n$ such that $f^n$ has a horseshoe.
\end{prop}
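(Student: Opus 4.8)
The plan is to reduce to a fixed point and then lean on Proposition~\ref{prop:evperhomoclinic-horseshoe}. Let $z$ be the periodic point witnessing that $x$ is homoclinic and let $p$ be its period; put $g:=f^p$, so that $z$ is a fixed point of $g$, $x\in W^u(z,g)\setminus\{z\}$, and $z\in\omega(x,g)$. A horseshoe for $g^m$ is a horseshoe for the iterate $f^{pm}$ of $f$, so it suffices to produce one for some power of $g$. Using the forward invariance of $W^u(z,g)$ (Lemma~\ref{lem:homoclinic-basicprop}(ii)) and $\omega(g^N(x),g)=\omega(x,g)$ (Lemma~\ref{lem:omega-set}(ii)), I may replace $x$ by any forward iterate; together with the left--right symmetry, I may thus assume $x>z$ and that $x$ lies as close to $z$ as I wish.

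The easy case is $g^k(x)=z$ for some $k\ge1$: then $x$ is an eventually periodic homoclinic point of $g$ with respect to the fixed point $z$, and Proposition~\ref{prop:evperhomoclinic-horseshoe} applied to $g$ gives that $g^2=f^{2p}$ has a horseshoe. So assume from now on that $g^n(x)\ne z$ for every $n\ge0$; since all the points $g^n(x)$ lie in $W^u(z,g)$ and $z\in\omega(x,g)$, the forward orbit of $x$ then meets every punctured neighbourhood of $z$.

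In this remaining case I would build a horseshoe for a suitable power $g^m$, that is, two closed subintervals $J,K\subset\langle z,x\rangle$ with disjoint interiors such that both $g^m(J)$ and $g^m(K)$ contain $\langle z,x\rangle$. One covering is furnished by the unstable-manifold structure: by Lemma~\ref{lem:limhomoclinic=z} there is, arbitrarily close to $z$, a point $x'\in W^u(z,g)$ with $g^n(x')=x$ for some $n\ge1$, and since $g^n$ fixes $z$, Lemma~\ref{lem:chain-of-intervals} produces a closed subinterval of $\langle z,x'\rangle$ --- a tiny subinterval of $\langle z,x\rangle$, one of whose endpoints is $z$ --- that $g^n$ maps onto $\langle z,x\rangle$. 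The second covering is obtained by concatenating this stretching with an excursion of the orbit of $x$ that leaves and returns to a small neighbourhood of $z$ (such a return exists because $z\in\omega(x,g)$), adjusting the numbers of iterates so that both coverings take place under a common power $g^m$ and invoking Lemma~\ref{lem:chain-of-intervals} once more to realize them by closed subintervals; choosing the relevant pull-back and return points on the correct sides of $z$ keeps $J$ and $K$ interior-disjoint, so Definition~\ref{defi:horseshoe} is satisfied. An alternative that handles the configurations where the orbit of $x$ and $W^u(z,g)$ lie on the same side of $z$: strong invariance of $\omega(x,g)$ (Lemma~\ref{lem:omega-set}(i)) yields a preimage of $z$ inside $\omega(x,g)\subset\overline{W^u(z,g)}$, hence inside $W^u(z,g)$ up to its two endpoints; this is an eventually periodic homoclinic point with respect to some periodic point, and one finishes with the horseshoe statement for such points noted after Proposition~\ref{prop:evperhomoclinic-horseshoe}.

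The main obstacle is precisely this last step --- the geometry near $z$. A priori one only has one-sided information (the orbit of $x$ may approach $z$ from a single side, and $W^u(z,g)$ may fail to extend past $z$ on the other side), so one must show that in every configuration one of the two routes above goes through: either two stretchings can be arranged on interior-disjoint subintervals of $\langle z,x\rangle$, or a genuine preimage of $z$ (distinct from $z$ and lying in $W^u(z,g)$, not merely in its closure) can be located. Everything else --- the reduction, the bookkeeping with Lemma~\ref{lem:chain-of-intervals}, and passing back from $g^m$ to $f$ --- is routine.
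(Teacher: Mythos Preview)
Your reduction to $g=f^p$ and the eventually periodic case are fine, but the remaining case is not closed, and you say so yourself. Two concrete issues.

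First, the ``alternative'' route does not work in general. Strong invariance of $\omega(x,g)$ only tells you there is \emph{some} $y\in\omega(x,g)$ with $g(y)=z$; since $z$ is fixed, $y=z$ is allowed. In particular, nothing rules out $\omega(x,g)=\{z\}$ (an orbit in $W^u(z,g)$ can perfectly well converge back to $z$ without ever hitting it), and then there is no preimage of $z$ in $\omega(x,g)\setminus\{z\}$ at all. Even when such a $y\ne z$ exists, you still have to argue $y\in W^u(z,g)$ rather than merely in its closure, and you have not said how.

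Second, the ``two coverings'' route is only a picture. You produce one small subinterval of $\langle z,x\rangle$ (with endpoint $z$) that some $g^n$ maps onto $\langle z,x\rangle$; but ``concatenating with an excursion of the orbit of $x$'' does not explain where the \emph{second} subinterval with disjoint interior comes from, nor why both coverings can be realized under a \emph{common} iterate.

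The paper avoids the case split and the one-sidedness obstacle entirely. It iterates Lemma~\ref{lem:limhomoclinic=z} to produce a whole sequence $y_n\to z$ in $W^u(z,f^p)$ with $f^{pk_n}(y_n)=y_{n-1}$; infinitely many $y_n$ lie on one side of $z$, so one can pick $z<x_1:=y_{n'}<x_0:=y_n$ and pass to $g:=f^{pm}$ (with $m=\sum k_i$) so that $g(x_1)=x_0$. Since $z$ is fixed for every power, $z\in\omega(x_1,g)$, and one takes the \emph{first} $k\ge2$ with $g^k(x_1)<x_1$ to get $x_1\in[g^k(x_1),g^{k-1}(x_1)]$, hence $g^k([x_1,x_0])\supset[x_1,x_0]$. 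A point $x_2\in(z,x_1)$ with $g(x_2)=x_1$ (from $g([z,x_1])\supset[z,x_0]$) and a further $j$ with $g^{kj}(x_1)<x_2$ then give explicit coverings making $([x_2,x_1],[x_1,x_0])$ a horseshoe for $g^{1+2kj}$. The point is that choosing \emph{two} preimages $x_1<x_0$ on the same side of $z$, related by a single $g$-step, gives you the concrete configuration your sketch is missing.
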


\begin{proof}
Let $y$ be a homoclinic point with respect to the periodic point $z$ and let
$p$ be the period of $z$. Then $y\in W^u(z,f^p)$ and $z\in \omega(y,f^p)$.
An induction using Lemma~\ref{lem:limhomoclinic=z} shows that there
exist a sequence of points $(y_n)_{n\ge 0}$ and a sequence of positive
integers $(k_n)_{n\ge 1}$ such that
\begin{itemize}
\item $y_0:=y$,
\item $\forall n\ge 1$, $y_n\in W^u(z,f^p)$ and $f^{pk_n}(y_n)=y_{n-1}$,
\item $\displaystyle\lim_{n\to+\infty}y_n=z$.
\end{itemize}
This implies that $y_n\neq z$ for all $n\ge 0$ (because $y\neq z$).
We assume that there are infinitely many integers $n$ such that $y_n>z$
(otherwise, there are infinitely many integers $n$ such that $y_n<z$
and the arguments are symmetric). 
Thus there exist $n'>n\ge 0$ such that $z<y_{n'}<y_n$.
We set $x_0:=y_n$, $x_1:=y_{n'}$, $m:=\sum_{i=n+1}^{n'} k_i$ and
$g:=f^{mp}$. In this way, $z<x_1<x_0$ and $g(x_1)=x_0$.

The point $z$ belongs to $\omega(y,f^p)=\omega(x_1,f^p)$. 
Since $z$ is a fixed point for
$f^p$, Lemma~\ref{lem:omega-set} implies that $z$ belongs to
$\omega(x_1,f^{pn})$ for all
$n\ge 1$, in particular $z\in \omega(x_1,g)$. Thus there exists $j\ge 1$ such
that $g^j(x_1)<x_1$ (by choosing $g^j(x_1)$ close enough to $z$).
This implies that there exists  $k\in\Lbrack 2,j\Rbrack$ such that
$$
g^k(x_1)<x_1\quad\text{and}\quad\forall i\in\Lbrack 1,k-1\Rbrack,\ g^i(x_1)\ge x_1
$$
(notice that $k=1$ is not possible because $g(x_1)=x_0$).
We deduce that
\begin{equation}\label{eq:x1ininterval}
x_1\in [g^k(x_1), g^{k-1}(x_1)].
\end{equation}
The interval $g^{k-1}([x_1,x_0])$ contains the points $g^{k-1}(x_0)=g^k(x_1)$
and $g^{k-1}(x_1)$. Thus $g^{k-1}([x_1,x_0])$ also contains $x_1$ by
\eqref{eq:x1ininterval}. This implies that
$g^k([x_1,x_0])$ contains $g(x_1)$ and $g^k(x_1)$ with $g(x_1)=x_0$ and 
$g^k(x_1)<x_1$, so
\begin{equation}\label{eq:gkx1x0}
g^k([x_1,x_0])\supset [x_1,x_0].
\end{equation}

On the other hand, $g([z,x_1])\supset [z,x_0]\supset [z,x_1]$. Thus there
exists $x_2\in (z,x_1)$ such that $g(x_2)=x_1$, and we have
\begin{equation}\label{eq:coveringx2x1}
g([x_2,x_1])\supset [x_1,x_0].
\end{equation}
As above, since $z\in \omega(x_1,g^k)$ and $z<x_2$,
there exists $j\ge 1$ such that 
$g^{kj}(x_1)<x_2$. Then $g^{kj}([x_1,x_0])$ contains $g^{kj}(x_1)<x_2$ and
it also contains $x_0$ by \eqref{eq:gkx1x0}. Thus
\begin{equation}\label{eq:coveringx1x0}
g^{kj}([x_1,x_0])\supset [x_2,x_0]=[x_2,x_1]\cup [x_1,x_0].
\end{equation}
Let $J:=[x_2,x_1]$ and $K:=[x_1,x_0]$.
The coverings given by \eqref{eq:coveringx2x1} and \eqref{eq:coveringx1x0}
are represented in Figure~\ref{fig:horsehoe-homoclinic}.
\begin{figure}[htb]
\centerline{\includegraphics{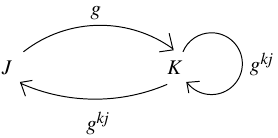}}
\caption{The coverings between the intervals $J:=[x_2,x_1]$ and
$K:=[x_1,x_0]$.}
\label{fig:horsehoe-homoclinic}
\end{figure}

If we consider the following chains of coverings:
\begin{gather*}
J\labelarrow{g}K\labelarrow{g^{kj}}K\labelarrow{g^{kj}}J,\qquad
J\labelarrow{g}K\labelarrow{g^{kj}}K\labelarrow{g^{kj}}K,\\
K\labelarrow{g^{kj}}J\labelarrow{g}K\labelarrow{g^{kj}}J,\qquad
K\labelarrow{g^{kj}}J\labelarrow{g}K\labelarrow{g^{kj}}K,
\end{gather*}
we see that $J, K$ form a horseshoe for
$g^{1+2kj}=f^{p(1+2kj)}$.
\end{proof}

According to Propositions \ref{prop:horseshoe-evperhomoclinic}
and \ref{prop:homoclinic-horseshoe}, the existence of a
homoclinic point implies that $f^n$ has a horseshoe for some $n$;
and if $f^n$ has a horseshoe, then $f$ has an eventually periodic 
homoclinic point. This leads to the following theorem.

\begin{theo}
Let $f$ be an interval map. The following are equivalent:
\begin{enumerate}
\item $h_{top}(f)>0$,
\item $f$ has an eventually periodic homoclinic point,
\item $f$ has a homoclinic point.
\end{enumerate}
\end{theo}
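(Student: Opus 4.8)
The plan is to establish the cyclic chain of implications $(i)\Rightarrow(ii)\Rightarrow(iii)\Rightarrow(i)$, using as black boxes the three propositions of this section together with Proposition~\ref{prop:horseshoe-htop}, Proposition~\ref{prop:htop-Tn} and Misiurewicz's Theorem~\ref{theo:Misiurewicz}. None of the real work needs to be redone; the proof is essentially an assembly of already-proved facts, plus a little bookkeeping about unstable manifolds under iteration.

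For $(i)\Rightarrow(ii)$: assume $h_{top}(f)>0$. By Misiurewicz's Theorem~\ref{theo:Misiurewicz} there is an integer $n\ge 1$ such that $g:=f^n$ has a (strict) horseshoe. Applying Proposition~\ref{prop:horseshoe-evperhomoclinic} to the interval map $g$ yields points $x\ne z$ with $g(z)=z$, $g(x)=z$ and $x\in W^u(z,g)$. Then $z$ is a periodic point of $f$ whose period $p$ divides $n$. To see that $x$ is an eventually periodic homoclinic point of $f$ in the sense of the definition, note first that $W^u(z,f^n)\subset W^u(z,f^p)$, since $\{mn\mid m\ge 1\}\subset\{mp\mid m\ge 1\}$ forces $\bigcup_{m\ge 1}f^{mn}(V)\subset\bigcup_{m\ge 1}f^{mp}(V)$ for every neighborhood $V$ of $z$; hence $x\in W^u(z,f^p)$. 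Second, $g(x)=z$ reads $f^{(n/p)p}(x)=z$, i.e.\ $f^{kp}(x)=z$ with $k=n/p$. Thus conditions (i) and (ii) in the definition of an eventually periodic homoclinic point hold, which is exactly the statement recorded in the Remark following Proposition~\ref{prop:evperhomoclinic-horseshoe}.

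For $(ii)\Rightarrow(iii)$: if $x$ is an eventually periodic homoclinic point with respect to a periodic point $z$ of period $p$, so that $x\ne z$, $x\in W^u(z,f^p)$ and $f^{kp}(x)=z$ for some $k\ge 1$, then $f^{(k+j)p}(x)=f^{jp}(z)=z$ for all $j\ge 0$ because $f^p(z)=z$; hence $z\in\omega(x,f^p)$ and $x$ satisfies all three defining conditions of a homoclinic point (this is the implication already flagged as ``trivially stronger'' in the definition). For $(iii)\Rightarrow(i)$: if $f$ has a homoclinic point, Proposition~\ref{prop:homoclinic-horseshoe} gives an integer $n\ge 1$ such that $f^n$ has a horseshoe, so $h_{top}(f^n)\ge\log 2$ by Proposition~\ref{prop:horseshoe-htop}, and therefore $h_{top}(f)=\tfrac1n h_{top}(f^n)\ge\tfrac{\log 2}{n}>0$ by Proposition~\ref{prop:htop-Tn}.

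I do not expect a genuine obstacle: all the substance lies in Propositions~\ref{prop:horseshoe-evperhomoclinic}, \ref{prop:homoclinic-horseshoe}, \ref{prop:horseshoe-htop} and Misiurewicz's Theorem. The only point that requires any care is the one mentioned above, namely passing from a horseshoe of $f^n$ to a homoclinic point of $f$ itself rather than merely of $f^n$, which rests on the elementary inclusion $W^u(z,f^n)\subset W^u(z,f^p)$ when $p\mid n$ and on the observation that $g(x)=z$ yields an iterate $f^{kp}(x)=z$ with $kp=n$.
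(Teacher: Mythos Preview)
Your proof is correct and follows essentially the same route as the paper: Misiurewicz's Theorem plus Proposition~\ref{prop:horseshoe-evperhomoclinic} for $(i)\Rightarrow(ii)$, the trivial implication $(ii)\Rightarrow(iii)$, and Proposition~\ref{prop:homoclinic-horseshoe} plus Proposition~\ref{prop:horseshoe-htop} for $(iii)\Rightarrow(i)$. Your explicit verification that a homoclinic point for $f^n$ yields one for $f$ (via $W^u(z,f^n)\subset W^u(z,f^p)$ when $p\mid n$) is exactly the content of the Remark following Proposition~\ref{prop:evperhomoclinic-horseshoe}, which the paper invokes implicitly.
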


\begin{proof}
The implication (ii)$\Rightarrow$(iii) is trivial. According to
Misiurewicz's Theorem~\ref{theo:Misiurewicz}, the topological entropy of $f$
is positive if and only if there exists $n\ge 1$ such that $f^n$ has
a horseshoe. Then the implications
(i)$\Rightarrow$(ii) and (iii)$\Rightarrow$(i)
follow straightforwardly from Propositions
\ref{prop:horseshoe-evperhomoclinic} and \ref{prop:homoclinic-horseshoe}
respectively.
\end{proof}

\subsection*{Remarks on graph maps}

The notions of an unstable manifold and a homoclinic point can be extended
with no change to graph maps. In view of the definition of horseshoe for
graph maps, it is natural to think that 
Proposition~\ref{prop:horseshoe-evperhomoclinic} can be generalized to
graph maps. Indeed, Makhrova proved that a tree map of positive entropy
has a homoclinic point \cite[Corollary~1.2]{Mak}; and Kočan, Kornecká-Kurková 
and Málek showed the same result for graph maps \cite[Theorem~1]{KKM}.
Recall that a graph map $f$ has positive topological entropy  if and only
if $f^n$ has a horseshoe for some $n$ by Theorem~\ref{theo:htop-horseshoeG}.

\begin{theo}\label{theo:htop-homoclinic-G}
Let $f\colon G\to G$ be a graph map of positive topological entropy.
Then $f$ has an eventually periodic homoclinic point.
\end{theo}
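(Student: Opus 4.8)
The plan is to reduce to the interval-map statement Proposition~\ref{prop:horseshoe-evperhomoclinic} by way of a horseshoe. Since $h_{top}(f)>0$, Theorem~\ref{theo:htop-horseshoeG} provides an integer $n\ge 1$ such that $g:=f^n$ has a horseshoe $(J_1,J_2)$; by the definition of a horseshoe for graph maps this horseshoe is contained in a closed interval $I\subseteq G$ having no branching point except possibly its endpoints, with $g(J_1)=g(J_2)=I\supseteq J_1\cup J_2$, and $I$ is homeomorphic to a compact real interval. It suffices to produce an eventually periodic homoclinic point of $g$ with respect to a \emph{fixed} point of $g$, i.e. points $x\neq z$ with $g(z)=z$, $g^k(x)=z$ for some $k\ge 1$ and $x\in W^u(z,g)$. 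Indeed, if $p$ denotes the period of $z$ for $f$, then $p\mid n$, so every $f^n$-iterate is an $f^p$-iterate, whence $W^u(z,f^n)\subseteq W^u(z,f^p)$ and $f^{p\cdot(nk/p)}(x)=g^k(x)=z$; thus $x$ is an eventually periodic homoclinic point for $f$. So the theorem follows once Proposition~\ref{prop:horseshoe-evperhomoclinic} is extended to graph maps.

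To obtain that extension I would replay the interval-map argument inside the arc $I$. First, adapt Lemma~\ref{lem:particularhorseshoe}: picking $a,b\in J_1$ with $g(a),g(b)$ the two endpoints of $I$, the sub-arc $\langle a,b\rangle\subseteq J_1$ satisfies $g(\langle a,b\rangle)=I$ and $g(\langle a,b\rangle)\subseteq I$, so Lemma~\ref{lem:fixed-point} and the intermediate value theorem apply to maps whose domain and image both lie in $I$; this yields points $u,v,w$ ordered monotonically along $I$ with $g(u)=u$, $g(v)=w$, $g(w)=u$, such that $\langle u,v\rangle,\langle v,w\rangle$ form a horseshoe for $g$. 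Then, exactly as in Proposition~\ref{prop:horseshoe-evperhomoclinic}, set $z_1:=\max\{t\in\langle u,v\rangle\mid g(t)=t\}$ and $z_2:=\min\{t\in\langle v,w\rangle\mid g(t)=t\}$, choose $x_1$ (resp. $x_2$) in the opposite sub-arc with $g(x_i)=z_i$, and invoke graph analogues of Lemmas~\ref{lem:homoclinic-basicprop}, \ref{lem:2fixedpoints-unstableM}, \ref{lem:limhomoclinic=z} and \ref{lem:x<yinW} to conclude that $\langle u,w\rangle\subseteq W^u(z_i,g)$ for the appropriate index $i$; since $x_i\in\langle u,w\rangle$ and $x_i\neq z_i$, the point $x_i$ is the desired eventually periodic homoclinic point with respect to $z_i$. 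Pulling this back through $g=f^n$ completes the argument.

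The hard part is that $g$ does not map $I$ — nor even $J_1\cup J_2$ — into itself: a sub-arc of $I$ not contained in $J_1\cup J_2$ may have its image leave $I$ altogether, and $g^m$ of a small neighbourhood of $z$ need not stay in $I$. Hence the ``$g(x)>x$ or $g(x)<x$'' dichotomy and the intermediate-value manipulations underlying Lemmas~\ref{lem:2fixedpoints-unstableM} and \ref{lem:limhomoclinic=z} are not literally available, and one must additionally verify that $W^u(z,g)$ is connected in $G$ (in a graph, a union of arcs through a common point is connected but need not be an arc). I expect this localization to be the main technical burden. It can be handled either by confining all the arguments to the arc $\langle u,w\rangle$ together with preimage arguments kept inside $J_1\cup J_2$, or by passing to the compact $g$-invariant Cantor set $\Lambda\subseteq J_1\cup J_2$ on which $g$ is a factor of the full $2$-shift and building the homoclinic orbit symbolically ($z$ corresponding to the sequence $1^{\infty}$ and $x$ to a sequence backward-asymptotic to it); the verification that $x\in W^u(z,g)$ then reduces to a continuity/expansion estimate near $z$, where $g$ genuinely behaves like an interval map because small arcs around $z$ lie in $J_1\subseteq I$. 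This is the content worked out by Makhrova for trees and by Kočan, Kornecká-Kurková and Málek for general graph maps.
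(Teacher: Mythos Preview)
The paper does not give a proof of this theorem: it appears in the ``Remarks on graph maps'' paragraph following the interval results on homoclinic points and is attributed to Makhrova \cite{Mak} for trees and to Ko\v{c}an, Korneck\'a-Kurkov\'a and M\'alek \cite{KKM} for general graphs, without argument. So there is no in-paper proof to compare against, and your closing sentence deferring to those references matches what the paper itself does.

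Your reduction from $f$ to $g=f^{n}$ is correct, and you have accurately located the obstruction to transporting Proposition~\ref{prop:horseshoe-evperhomoclinic} verbatim: Lemmas~\ref{lem:2fixedpoints-unstableM}--\ref{lem:x<yinW} depend on the order of $I$ and the dichotomy ``$g(x)>x$ or $g(x)<x$'', which fail once $g$ can send points of $I$ outside $I$. Of your two repairs the symbolic one is the cleaner and can be completed: with the nested intervals $J_{1^{k}}$ from the graph version of Proposition~\ref{prop:strictly-turbulent-shift}, limits in~\eqref{eq:v2-semicong3} show that $g$ permutes $\partial J_{\bar 1}$ where $J_{\bar 1}:=\bigcap_{k}J_{1^{k}}$, so after passing to $g^{2}$ if necessary one endpoint $z$ is fixed; then $x\in J_{0}$ with $g(x)=z$ exists, and preimages $x_{k}\in J_{1^{k}0}\subset J_{1^{k}}\setminus J_{1^{k+1}}$ of $x$ under $g^{k}$ accumulate on $\partial J_{\bar 1}$. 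The residual issue---which your sketch flags but does not settle---is ensuring the $x_{k}$ accumulate on $z$ rather than on the other endpoint of $J_{\bar 1}$; this requires correlating the position of $J_{1^{k}0}$ relative to $J_{1^{k+1}}$ with the action of $g$ on $\partial J_{1^{k}}$, and is the substance of the cited references.
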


The converse of Theorem~\ref{theo:htop-homoclinic-G} holds for tree maps
\cite[Corollary~1.2]{Mak} but not for graph maps \cite[Example~3]{KKM}.

\begin{theo}
Let $f\colon T\to T$ be a tree map. If $f$ has a homoclinic point, then
$h_{top}(f)>0$.
\end{theo}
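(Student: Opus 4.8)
The plan is to show that some iterate $f^N$ has a horseshoe (in the sense of the graph definition given before Theorem~\ref{theo:htop-horseshoeG}) and then conclude: by Theorem~\ref{theo:htop-horseshoeG} an $n$-horseshoe for $f^N$ with $n\ge 2$ gives $h_{top}(f^N)\ge\log n$, and by Proposition~\ref{prop:htop-Tn} $h_{top}(f)=\frac1N h_{top}(f^N)>0$. So everything reduces to building one horseshoe for one iterate.

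Let $y$ be a homoclinic point with respect to a periodic point $z$ of period $p$, and put $g:=f^p$, so $z$ is a fixed point of $g$, $y\in W^u(z,g)\setminus\{z\}$ and $z\in\omega(y,g)$. I would follow the skeleton of the proof of Proposition~\ref{prop:homoclinic-horseshoe}. The proof of Lemma~\ref{lem:homoclinic-basicprop}(ii) (that $f(W^u(z,f))\subset W^u(z,f)$) carries over verbatim to trees, and from the definition of $W^u(z,g)$ one produces a backward chain $y_0=y,\,y_1,\,y_2,\dots$ with positive integers $k_n$, $g^{k_n}(y_n)=y_{n-1}$, and $y_n\to z$; each $y_n\neq z$ since $g^{k_n}(y_n)=y_{n-1}\neq z$, and $\omega(y_n,g)=\omega(y,g)\ni z$ by Lemma~\ref{lem:omega-set}(ii). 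Because $z$ is a $g$-fixed point, the same uniform-continuity argument used in the proof of Proposition~\ref{prop:homoclinic-horseshoe} shows $z\in\omega(y_n,g^k)$ for every $k\ge 1$.

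The new ingredient for trees replaces the interval fact that two points near $z$ lie on a common side of $z$. Since $z$ is a point of a topological graph it has finite degree, hence there are only finitely many local directions at $z$; by the pigeonhole principle infinitely many $y_n$ lie in one such direction, i.e. on a fixed initial arc $\beta=[z,w]$ emanating from $z$, and along $\beta$ they converge monotonically to $z$. Pick indices $n<n'$ with $z<y_{n'}<y_n$ on $\beta$ and set $x_0:=y_n$, $x_1:=y_{n'}$, $G:=f^{p(k_{n+1}+\cdots+k_{n'})}$; then $z<x_1<x_0$ lie on the arc $\beta$, $G$ fixes $z$, $G(x_1)=x_0$, and $z\in\omega(x_1,G^k)$ for all $k$. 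The remainder of the proof of Proposition~\ref{prop:homoclinic-horseshoe} is then run with ``$[a,b]$'' read as ``the arc between $a$ and $b$'' and coverings read in the sense of Definition~\ref{def:coveringG}: the only facts used are that a continuous image of an arc is connected and Lemma~\ref{lem:chain-of-intervals}(ii)--(iii), both valid for graph maps. This yields an iterate of $f$ and two arcs $J,K$ inside $\beta$, sharing the endpoint $x_1$, that form a horseshoe for it.

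The genuine obstacle is a coordination-of-directions issue: the backward chain may tend to $z$ along one local direction while the forward $G$-orbit of $x_1$, which is needed to supply a return $G^j(x_1)\in(z,x_1)\subset\beta$ (the step ``there exists $j$ with $G^j(x_1)<x_1$'' of Proposition~\ref{prop:homoclinic-horseshoe}), may recur to $z$ along a different one. To fix this I would first apply pigeonhole to the forward $G$-orbit of $x_1$ to select a local direction $e^\ast$ visited infinitely often near $z$, and then re-select the backward chain so that it terminates on the initial arc in direction $e^\ast$; this last maneuver requires a tree analogue of Lemma~\ref{lem:limhomoclinic=z} near $z$ (whose interval proof does not transfer directly) or, alternatively, restricting the whole discussion to the $g$-invariant subtree $W^u(z,g)$ and its local directions at $z$. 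Getting this alignment right — not the horseshoe bookkeeping, which is routine once $z<x_1<x_0$ sit on one arc with the recurrence on the correct side — is the heart of the matter; with it in place, Theorem~\ref{theo:htop-horseshoeG} completes the proof.
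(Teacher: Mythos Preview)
The paper does not prove this theorem: it is stated in a ``Remarks on graph maps'' subsection and attributed to Makhrova \cite[Corollary~1.2]{Mak}, with no argument supplied in the text. So there is no in-paper proof to compare your attempt against.

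Your strategy --- adapt Proposition~\ref{prop:homoclinic-horseshoe} to trees by pigeonholing on the finitely many local directions at $z$, then invoke Theorem~\ref{theo:htop-horseshoeG} --- is the natural one, and you have correctly isolated the single genuine obstacle: on an interval, $z\in\omega(x_1,G)$ immediately yields some $G^j(x_1)$ on the arc between $z$ and $x_1$, whereas on a tree the forward recurrence to $z$ may occur along a branch different from the one carrying $x_0,x_1$. Your proposed repair is not closed, as you yourself concede: re-selecting the backward chain to land in a prescribed direction $e^\ast$ requires a tree analogue of Lemma~\ref{lem:limhomoclinic=z} (preimages of $y$ inside $W^u(z,g)$ can be found arbitrarily close to $z$ along $e^\ast$), and the interval proof of that lemma is genuinely one-dimensional. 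There is also a further subtlety you did not flag: passing from $g$ to an iterate $G=g^M$ can change the recurrence direction (applying a bounded power of $g$ to a point near $z$ keeps it near $z$ but may move it to another branch), so the pigeonhole on directions must be coordinated with residues modulo $M$, while $M$ itself depends on the pair $(x_0,x_1)$ you are trying to choose. Untangling these interlocking choices is precisely what Makhrova's proof does; your outline is pointed at the right target but, as you say, stops short of a proof.
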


\begin{prop}
There exists a circle map $f\colon \IS\to \IS$ of zero topological entropy
having an eventually periodic homoclinic point.
\end{prop}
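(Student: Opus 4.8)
The plan is to exhibit an explicit circle map doing the job; no deep machinery is needed beyond Theorem~\ref{theo:htop-horseshoeG} (the graph version of Misiurewicz's theorem) to certify zero entropy. First I would fix $\IS=\IR/\IZ$, identified with $[0,1]$ with $0$ and $1$ glued, and define $f\colon\IS\to\IS$ by $f(x):=2x$ for $x\in[0,1/2]$ and $f(x):=0$ for $x\in[1/2,1]$ (equivalently $f(x)=\min(2x,1)\bmod 1$). I would check that $f$ is well defined and continuous: the only junctions to examine are $x=1/2$, where both rules give $1\equiv 0$, and $x=0\equiv 1$, where $f(0)=0=f(1)$. A lift is $F(x)=2x$ on $[0,1/2]$, $F(x)=1$ on $[1/2,1]$, extended by $F(x+1)=F(x)+1$, so $f$ has degree $1$; and $0$ is the unique fixed point (on $[0,1/2]$ the equation $2x\equiv x$ forces $x=0$, on $[1/2,1]$ the equation $0\equiv x$ forces $x=1\equiv 0$). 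Note $f([0,1/2])=\IS$, which is the feature that will make the unstable manifold large.

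Next I would show that $w:=1/2$ is an eventually periodic homoclinic point of $f$ with respect to the fixed point $z:=0$ (so $p=1$). Here $w\neq z$, and $f(w)=f(1/2)=0=z$, so $f^{1}(w)=z$; it remains to check $w\in W^u(0,f)$. Given any $V\in\CV(0)$, choose $\eta>0$ with $(0,\eta)\subset V$. Since $f$ acts as $x\mapsto 2x$ on $[0,1/2]$, one has $f^n((0,\eta))=(0,2^n\eta)$ as long as $2^{n-1}\eta\le 1/2$; taking the first $n$ with $2^n\eta>1/2$, the set $f^n((0,\eta))$ is an arc of the form $(0,t)$ with $t>1/2$, hence contains $1/2$. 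Thus $1/2\in\bigcup_{n\ge 1}f^n(V)$ for every $V\in\CV(0)$, that is, $1/2\in W^u(0,f)$. By the definition of an eventually periodic homoclinic point, $w=1/2$ qualifies.

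Then I would prove $h_{top}(f)=0$ by showing that no iterate $f^n$ has a $p$-horseshoe with $p\ge 2$ (in the graph-map sense) and invoking Theorem~\ref{theo:htop-horseshoeG}. A straightforward induction gives $f^n(x)=2^n x$ for $x\in[0,2^{-n}]$ and $f^n(x)=0$ for $x\in[2^{-n},1]$; so $f^n$ is injective on the arc $[0,2^{-n}]$ and constant on the complementary arc, and for any arc $J$ the image $f^n(J)$ coincides with the arc $f^n(J\cap[0,2^{-n}])$ (the part of $J$ in $[2^{-n},1]$ only contributes the value $f^n(2^{-n})=0$, which is already an endpoint of that arc). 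Hence if $J_1,J_2$ are non-degenerate closed subarcs with disjoint interiors and $f^n(J_1)=f^n(J_2)=I$ for a non-degenerate arc $I$, then $J_1\cap[0,2^{-n}]$ and $J_2\cap[0,2^{-n}]$ both map onto $I$ under the injective map $f^n|_{[0,2^{-n}]}$, so they are equal and non-degenerate, contradicting that $J_1,J_2$ have disjoint interiors. So $f^n$ has no $2$-horseshoe, for any $n$, and Theorem~\ref{theo:htop-horseshoeG} forces $h_{top}(f)=0$.

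The only mildly delicate points — the ``hard part'', such as it is — are the circle bookkeeping behind the claim $1/2\in W^u(0,f)$ (making precise that the successive doubled arcs eventually reach past $1/2$) and the case analysis, especially for arcs straddling the break point $2^{-n}$ or the point $0$, showing that no horseshoe can arise; everything else is routine. As a consistency check I would remark that $f$ has degree $1$, which is compatible with Theorems~\ref{theo:period-degreenot1} and~\ref{theo:periods-degree1}: zero-entropy circle maps of degree $1$ are entirely expected, whereas maps of degree other than $1$ are much more rigid.
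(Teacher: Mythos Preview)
The paper gives no proof of its own for this proposition; it cites \cite[Example~3]{KKM} without further comment, so there is nothing to compare your approach against. Your explicit construction is sound, and the verification that $1/2$ is an eventually periodic homoclinic point with respect to the fixed point $0$ is correct.

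There is, however, a slip in the zero-entropy argument: the map $f^n$ is \emph{not} injective on $[0,2^{-n}]$ as a map into $\IS$, since $f^n(0)=0$ and $f^n(2^{-n})=2^n\cdot 2^{-n}=1\equiv 0$. So the step ``$J_1\cap[0,2^{-n}]$ and $J_2\cap[0,2^{-n}]$ both map onto $I$ under the injective map $f^n|_{[0,2^{-n}]}$, so they are equal'' does not go through as written; a related issue is that $J_i\cap[0,2^{-n}]$ need not even be a single arc if $J_i$ reaches around through both junction points $0$ and $2^{-n}$. The repair is easy and stays within your framework: for every $y\in\IS\setminus\{0\}$ the fibre $(f^n)^{-1}(y)$ consists of the single point $y/2^n\in(0,2^{-n})$. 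Hence if $J_1,J_2$ are closed arcs with disjoint interiors and $f^n(J_1)=f^n(J_2)=I$, then for every $y\in I\setminus\{0\}$ the point $y/2^n$ lies in $J_1\cap J_2$ and therefore in $\End{J_1}\cup\End{J_2}$; but $I\setminus\{0\}$ is infinite while $\End{J_1}\cup\End{J_2}$ has at most four points, a contradiction. Thus no iterate of $f$ admits a $2$-horseshoe, and Theorem~\ref{theo:htop-horseshoeG} gives $h_{top}(f)=0$ as you claimed.
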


\section[Upper bounds for Lipschitz and piecewise monotone maps]{Upper bounds for entropy of Lipschitz and~piecewise~monotone~maps}

An interval map can have an infinite topological entropy, as 
illustrated in Example~\ref{ex:htop=infty}. 
However Lipschitz (in particular $C^1$) interval maps and piecewise monotone 
interval maps have finite topological entropy.

\begin{ex}\label{ex:htop=infty}
We choose an increasing sequence $(a_n)_{n\ge 0}$ with $a_0:=0$ and
$\lim_{n\to+\infty}a_n=1$. Let $a_{-1}:=0$. 
We set $I_n:=[a_{n-1}, a_n]$ for all $n\ge 1$.
We consider a continuous map $f\colon [0,1]\to [0,1]$ which is rather similar
to the map of Example~\ref{ex:non-accessible-endpoints} but with
$2n+1$ linear pieces in $I_n$. This map is represented in 
Figure~\ref{fig:htop=infty}.
More precisely, $f$ is defined by
\begin{gather*}
\forall n\ge 0,\ f_n(a_n):=a_n,\quad f(1):=1\\
\forall n\ge 1,\ \forall i\in\Lbrack 1,2n\Rbrack,\ f\left(a_{n-1}+i\cdot \frac{a_n-a_{n-1}}{2n+1}\right):=
\left\{\begin{array}{ll}a_{n-2}&
\text { if }i\text{ odd,}\\ a_{n+1}&\text{ if }i\text{ even,}
\end{array}\right.
\end{gather*}
and $f$ is linear between these points.
\begin{figure}[htb]
\centerline{\includegraphics{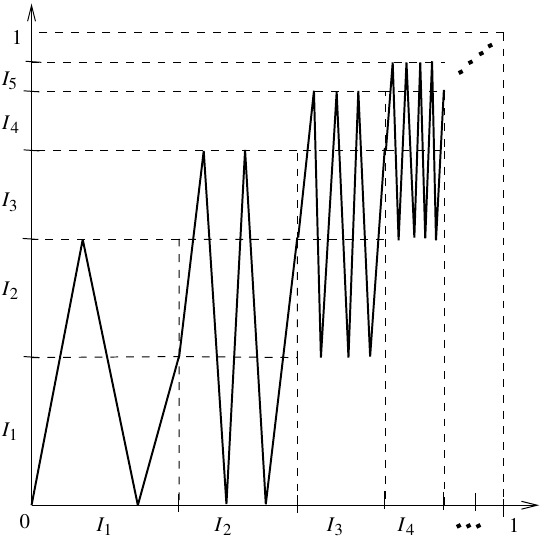}}
\caption{This map is topologically mixing and its topological 
entropy is infinite.} 
\label{fig:htop=infty}
\end{figure}

The map $f$ clearly has a $(2n+1)$-horseshoe in $I_n$ for every $n\ge 1$, 
Therefore, $h_{top}(f)=+\infty$ by Proposition~\ref{prop:horseshoe-htop}.
Moreover, the same arguments as in Example~\ref{ex:non-accessible-endpoints}
show that $f$ is topologically mixing.
\end{ex}

The next result is a particular case of \cite[Proposition~(14.20)]{DGS}, which
states that, if $(X,f)$ is a topological 
dynamical system with $X\subset \IR^d$ and  if $f$ is $\lambda$-Lipschitz 
for some $\lambda\ge 1$, then $h_{top}(f)\le d\log\lambda$.

\begin{prop}\label{prop:htop-lipschitz}
Let $f\colon I\to I$ be an interval map and $\lambda\ge 1$. If $f$ is 
$\lambda$-Lipschitz, then $h_{top}(f)\le \log\lambda$.
\end{prop}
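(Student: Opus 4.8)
The plan is to use Bowen's formula (Theorem~\ref{theo:Bowen-formula}), estimating the number $r_n(f,\eps)$ of points needed to form an $(n,\eps)$-spanning set and showing it grows no faster than $\lambda^{n}$ up to a constant. The starting observation is that since $f$ is $\lambda$-Lipschitz, a trivial induction gives that $f^k$ is $\lambda^k$-Lipschitz for every $k\ge 0$; here we use $\lambda\ge 1$, so that $\lambda^k\le\lambda^{n-1}$ for all $k\in\Lbrack 0,n-1\Rbrack$.

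Next I would fix $\eps>0$ and $n\ge 1$ and set $\delta:=\eps/\lambda^{n-1}$. Writing $I=[a,b]$, the finite set $E$ of points $a+j\delta$ for $0\le j\le \lfloor |I|/\delta\rfloor$ is $\delta$-dense in $I$ for the usual metric, and $\#E\le |I|/\delta+1$. For any $x\in I$ there is $y\in E$ with $d(x,y)\le\delta$, hence $d(f^k(x),f^k(y))\le\lambda^k\delta\le\lambda^{n-1}\delta=\eps$ for all $k\in\Lbrack 0,n-1\Rbrack$, i.e.\ $x\in B_n(y,\eps)$. Thus $E$ is an $(n,\eps)$-spanning set and
$$
r_n(f,\eps)\le \frac{|I|}{\delta}+1=\frac{|I|\lambda^{n-1}}{\eps}+1\le\left(\frac{|I|}{\eps}+1\right)\lambda^{n-1},
$$
using $\lambda^{n-1}\ge 1$.

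Finally I would take logarithms: $\frac1n\log r_n(f,\eps)\le \frac1n\log\!\left(\frac{|I|}{\eps}+1\right)+\frac{n-1}{n}\log\lambda$, so that $\limsup_{n\to+\infty}\frac1n\log r_n(f,\eps)\le\log\lambda$ for every $\eps>0$. Letting $\eps\to 0$ and invoking Bowen's formula yields $h_{top}(f)\le\log\lambda$. There is no real obstacle here; the only point requiring a little care is the direction of the inequality $\lambda^k\le\lambda^{n-1}$, which is exactly where the hypothesis $\lambda\ge 1$ is used (for $\lambda<1$ the map could still have entropy $0$, but the argument above would need $\lambda^0=1$ as the relevant bound, giving the weaker — and here irrelevant — estimate $h_{top}(f)\le 0$).
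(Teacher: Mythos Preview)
Your proof is correct and follows essentially the same approach as the paper: both use the iterated Lipschitz bound $d(f^k(x),f^k(y))\le\lambda^k d(x,y)$ together with Bowen's formula. The only cosmetic difference is that the paper bounds the maximal cardinality $s_n(f,\eps)$ of a separated set (consecutive separated points must be at least $\eps\lambda^{-n}$ apart), whereas you bound the minimal cardinality $r_n(f,\eps)$ of a spanning set by exhibiting an explicit $\eps\lambda^{-(n-1)}$-net; these are dual computations leading to the same estimate.
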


\begin{proof}
Let $\eps>0$ and $n\ge 1$. Let $E=\{x_1<x_2<\cdots<x_s\}$ be an
$(n,\eps)$-separated set of cardinality
$s:=s_n(f,\eps)$. For every $i\in\Lbrack 1,s-1\Rbrack$, there exists
$k\in\Lbrack 0,n-1\Rbrack$ such that $|f^k(x_{i+1})-f^k(x_i)|>\eps$. 
Since $f$ is $\lambda$-Lipschitz with $\lambda\ge 1$, 
$$
|f^k(x_{i+1})-f^k(x_i)|\le\lambda^k|x_{i+1}-x_i|\le \lambda^n|x_{i+1}-x_i|.
$$
Thus $x_{i+1}-x_i\ge \lambda^{-n}\eps$ and $x_s-x_1\ge (s-1)\lambda^{-n}\eps$. 
Since $x_s-x_1\le |I|$, this implies that 
$$
s \le \frac{|I|}{\eps}\lambda^n+1.
$$
Finally, $h_{top}(f)\le\log\lambda$ by Bowen's formula 
(Theorem~\ref{theo:Bowen-formula}).
\end{proof}


In \cite{MS2}, Misiurewicz and Szlenk showed that the topological entropy of a 
piecewise monotone interval map $f$ is equal to the exponential growth rate
of the minimal number $c_n$ of monotone subintervals for $f^n$.
Furthermore, $h_{top}(f)$ is less than or equal to $\frac{1}{n}\log c_n$ 
for all $n\ge 1$, which may be useful to estimate the entropy
of a given map since we may not know $c_n$ for all $n$.
We first state two lemmas before proving this result.

\begin{defi}
Let $f$ be a piecewise monotone map. A \emph{monotone cover
(resp. partition) for $f^n$} is a cover (resp. partition) $\CC$ such that, 
for all $C\in\CC$, $C$ is an interval and
$f^n|_C$ is monotone. 
\end{defi}

\begin{lem}\label{lem:interval+monotone}
Let $f$ be an interval map. 
If $\CA$ and $\CB$ are  monotone covers for $f^n$ and 
$f^k$ respectively, then $\CA\vee f^{-n}(\CB)$ is a monotone cover for 
$f^{n+k}$. In particular,
if $\CA$ is a monotone cover for $f$, then  $\CA^n$ is a monotone cover for 
$f^n$ for all $n\ge 1$.
\end{lem}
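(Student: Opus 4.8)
The plan is to reduce everything to the first assertion and then verify it directly on each element of the join. Recall that $\CA\vee f^{-n}(\CB)$ consists of the nonempty sets of the form $A\cap f^{-n}(B)$ with $A\in\CA$ and $B\in\CB$. Since $\CA$ is a cover of $I$ and $f^{-n}(\CB)$ is a cover of $f^{-n}(I)=I$ (because $f$ maps $I$ into $I$), their common refinement $\CA\vee f^{-n}(\CB)$ is again a cover of $I$; so it only remains to show that every such $C:=A\cap f^{-n}(B)$ is an interval on which $f^{n+k}$ is monotone.

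For the interval part I would show that $C$ is convex. Let $x\le z$ be points of $C$. Since $A$ is an interval, $[x,z]\subset A$, and since $f^n|_A$ is monotone, for every $y\in[x,z]$ the value $f^n(y)$ lies in $\langle f^n(x),f^n(z)\rangle$. As $B$ is an interval containing both $f^n(x)$ and $f^n(z)$, it contains this whole subinterval, hence $f^n(y)\in B$; thus $y\in A\cap f^{-n}(B)=C$. Therefore $C$ is an interval.

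For the monotonicity part, note that $f^n(C)\subset B$ by the definition of $C$, so one can write $f^{n+k}|_C=(f^k|_{f^n(C)})\circ(f^n|_C)$. Here $f^n|_C$ is the restriction of the monotone map $f^n|_A$ to the subinterval $C$, hence monotone; $f^k|_{f^n(C)}$ is the restriction of the monotone map $f^k|_B$ to $f^n(C)\subset B$, hence monotone; and a composition of monotone maps (in the non-strict sense used in this book) is again monotone. Hence $f^{n+k}|_C$ is monotone, which proves the first assertion. The ``in particular'' statement then follows by induction on $n$: the case $n=1$ is the hypothesis, and for the inductive step one observes that $\CA^{n+1}=\CA^n\vee f^{-n}(\CA)$ and applies the first assertion with $\CA^n$ (a monotone cover for $f^n$ by the induction hypothesis) in place of $\CA$ and $\CA$ (a monotone cover for $f^1$) in place of $\CB$, with $k=1$.

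I do not expect a genuine obstacle here; the argument is essentially bookkeeping together with the two elementary facts that the preimage of an interval under a monotone map on an interval is an interval and that monotonicity is stable under composition. The only point that deserves a moment of care is that ``monotone'' is meant in the non-strict sense, so the maps in play may be constant on subintervals — but the convexity argument above uses only $f^n(y)\in\langle f^n(x),f^n(z)\rangle$, which holds for non-decreasing and non-increasing maps alike, and the composition of non-decreasing/non-increasing maps is again non-decreasing or non-increasing, so no difficulty arises.
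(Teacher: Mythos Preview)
Your proof is correct and follows essentially the same approach as the paper: show that each $A\cap f^{-n}(B)$ is an interval (the paper phrases this as ``the preimage of an interval under a monotone map is an interval,'' which is exactly your convexity argument) and then observe that $f^{n+k}$ restricted to it is a composition of two monotone maps. You simply spell out a few details (that the join is a cover, the induction for $\CA^n$) that the paper leaves implicit.
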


\begin{proof} 
Let $J\in\CA$ and $K\in\CB$.
We set $g:=f^n|_J$.
Since $g$ is monotone, the set $g^{-1}(K)=J\cap f^{-n}(K)$ is
an interval. Moreover, $f^{n+k}|_{J\cap f^{-n}(K)}=f^k|_K\circ 
g|_{g^{-1}(K)}$ is monotone as a composition of two monotone maps.
This implies that $\CA\vee f^{-n}(\CB)$ is a monotone cover for 
$f^{n+k}$.
The second assertion of the lemma trivially follows from the first one.
\end{proof}

The next result is stated in \cite[Remark 1]{MS2}.

\begin{prop}\label{prop:monotone-cover}
Let $f\colon I\to I$ be a piecewise monotone interval map and $\CA$ a 
monotone cover. Then $h_{top}(f)= h_{top}(\CA,f)$.
\end{prop}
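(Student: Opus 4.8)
The plan is to show the two inequalities $h_{top}(\CA,f)\le h_{top}(f)$ and $h_{top}(f)\le h_{top}(\CA,f)$. The first is immediate from the definition of topological entropy, since $\CA$ is a finite cover by intervals and $h_{top}(f)$ is the supremum over all finite open covers; more precisely, one can refine $\CA$ slightly to an open cover $\CU$ by thickening each interval a little, noting that this changes $N_n$ by at most a bounded multiplicative factor, so $h_{top}(\CA,f)\le h_{top}(\CU,f)\le h_{top}(f)$. Alternatively, since $\CA$ is a partition (or can be taken to be one), a direct comparison with a sufficiently fine open cover works. The substantive direction is the reverse inequality.

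For $h_{top}(f)\le h_{top}(\CA,f)$, the plan is to take an arbitrary finite open cover $\CV$ and bound $h_{top}(\CV,f)$ in terms of $h_{top}(\CA,f)$. By Lemma~\ref{lem:interval+monotone}, $\CA^n$ is a monotone cover for $f^n$, so each element $J\in\CA^n$ is an interval on which $f^n$ is monotone. The key geometric observation is that if $f^n|_J$ is monotone, then for any finite open cover $\CV$ with Lebesgue number $\delta>0$, the image $f^n(J)$ is an interval, and its preimage structure is controlled: the trace of $\CV^n$ on $J$ has cardinality bounded by a constant depending only on $\#\CV$ (not on $n$). Concretely, since $f^n|_J$ is monotone, the sets $J\cap f^{-k}(V)$ for $V\in\CV$ are subintervals of $J$ for each fixed $k$, and one shows that $\CV^n$ restricted to $J$ refines into at most $C$ pieces where $C$ depends only on $\#\CV$. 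The cleanest way: monotonicity of $f^n|_J$ means $f^n$ maps $J$ homeomorphically (or at least monotonically) onto an interval, so covering $f^n(J)$ by elements of $\CV$ and pulling back shows the number of atoms of $\CV^n$ meeting $J$ is bounded. Hence $N_n(\CV,f)\le C\cdot N_n(\CA,f) = C\cdot\#(\CA^n)$, and taking logarithms, dividing by $n$, and letting $n\to\infty$ gives $h_{top}(\CV,f)\le h_{top}(\CA,f)$. Taking the supremum over $\CV$ yields $h_{top}(f)\le h_{top}(\CA,f)$.

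The main obstacle is making precise the claim that $\CV^n$ restricted to a single monotone-image interval $J\in\CA^n$ has boundedly many atoms. This is where the argument must be careful: naively $\CV^n = \CV\vee f^{-1}(\CV)\vee\cdots\vee f^{-(n-1)}(\CV)$ could have exponentially many atoms, but the point is that on $J$ the map $f^n|_J$ is monotone, which does \emph{not} by itself bound the intermediate refinements $f^{-k}(\CV)\cap J$ for $k<n$. So one actually needs the stronger structural fact that $\CA^n$ monotone for $f^n$ forces, for each $k\in\Lbrack 0,n-1\Rbrack$, $f^k|_J$ to be monotone as well (which follows since $\CA^k$ is coarser-compatible — indeed $J$ is contained in an atom of $\CA^k$ on which $f^k$ is monotone by construction of $\CA^n = \CA\vee\cdots\vee f^{-(n-1)}(\CA)$ and Lemma~\ref{lem:interval+monotone}). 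Then $f^k(J)$ is an interval for every $k$, so $f^{-k}(\CV)\cap J$ consists of at most $2\#\CV - 1$ subintervals of $J$ (the pullback by a monotone map of a cover of an interval), and intersecting over $k=0,\dots,n-1$... — here one must resist concluding a product bound. The correct accounting: work instead with $r_n(f,\eps)$ via Bowen's formula, or count monotone pieces directly. Following Misiurewicz–Szlenk, the honest route is to let $c_n$ be the minimal cardinality of a monotone partition for $f^n$, show $c_n \le \#(\CA^n)$ and $\#(\CA^n)\le c_n\cdot(\text{something subexponential})$, hence $h_{top}(\CA,f) = \lim\frac1n\log c_n$, and separately show $h_{top}(f)\le\lim\frac1n\log c_n$ by the spanning-set estimate: an $(n,\eps)$-separated set meets each monotone piece of $f^n$ in boundedly many points once $\eps$ is fixed (because $f^n$ restricted to a monotone piece is injective up to the modulus of continuity, so points far apart in some coordinate $k\le n$ are already distinguished), giving $s_n(f,\eps)\le c_n\cdot K(\eps)$ and thus $h_{top}(f)\le\lim\frac1n\log c_n = h_{top}(\CA,f)$ by Theorem~\ref{theo:Bowen-formula}. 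I expect the paper's proof to take essentially this second, cleaner route, and that is the one I would write out in detail.
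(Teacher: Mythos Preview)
Your proposal has gaps in both directions, and the direction you call ``immediate'' is actually the more delicate one. Thickening $\CA$ to an open cover $\CU$ makes $\CA$ \emph{finer} than $\CU$, so Lemma~\ref{lem:C-finer-D} gives $h_{top}(\CU,f)\le h_{top}(\CA,f)$, the wrong direction. If instead you argue that each $U\in\CU$ meets at most $C$ elements of $\CA$, then each element of $\CU^n$ meets at most $C^n$ elements of $\CA^n$, yielding only $N_n(\CA,f)\le C^n N_n(\CU,f)$ and hence $h_{top}(\CA,f)\le h_{top}(f)+\log C$, with a residual constant that does not vanish. The paper kills this constant by first passing to $\CB:=\CA^n$ and $g:=f^n$: one builds an open cover $\CU$ from the interiors of elements of $\CB$ together with small balls around their endpoints, so each $U\in\CU$ meets at most $3$ elements of $\CB$; this gives $h_{top}(\CB,g)\le h_{top}(g)+\log 3$, hence $h_{top}(\CA,f)=\frac1n h_{top}(\CB,g)\le h_{top}(f)+\frac{\log 3}{n}$, and one lets $n\to\infty$.

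For $h_{top}(f)\le h_{top}(\CA,f)$, you abandoned the correct approach one step too early. You already observed that for $A\in\CA^n$ each $f^k|_A$ is monotone, so $A\cap f^{-k}(V)$ is an interval for every $V\in\CV$ and every $k<n$. The paper does not try to bound the number of such intervals by a constant; it counts \emph{endpoints}: the atoms of $\CV^n\cap A$ are intervals whose endpoints lie in $\bigcup_{k=0}^{n-1}\bigcup_{V\in\CV}\End{A\cap f^{-k}(V)}$, a set of size at most $2n\,\#\CV$, so $\#(\CV^n\cap A)\le(4n\,\#\CV)^2$. This bound is polynomial in $n$, not constant, and polynomial is exactly what you need since $\frac1n\log(\text{poly}(n))\to 0$. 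Your Bowen-formula alternative has the same issue in disguise --- an $(n,\eps)$-separated set meets each $A\in\CA^n$ in at most about $n|I|/\eps$ points, not $K(\eps)$ points (for consecutive separated points $x_i<x_{i+1}$ in $A$ there is $k_i<n$ with $|f^{k_i}([x_i,x_{i+1}])|>\eps$, and for fixed $k$ these images are disjoint in $I$). That polynomial bound would rescue the argument, but note also that the $c_n$ route via Proposition~\ref{prop:htop-piecewise-monotone} is circular here: in the paper that proposition is deduced \emph{from} the present one.
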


\begin{proof}
Let $\CU$ be an open cover, and let $\CV$
be the open cover composed of the connected
components of the elements of $\CU$. 
We fix an integer $n\ge 1$ and an element $A\in \CA^n$.
We set $\CV^n\cap A:=\{V\cap A\mid V\in\CV^n\}$.
For all $i\in\Lbrack 0,n-1\Rbrack$, $A$ is a subinterval of an element
of $\CA^i$, and thus $f^i|_A$ is monotone because $\CA^i$ is a monotone cover 
for $f^i$ by Lemma~\ref{lem:interval+monotone}.
Thus, for every $U\in\CV$, $A\cap f^{-i}(U)$ is an interval
(note that this interval may not be open because $A$ is not assumed to be
open).  This implies that
all elements of $\CV^n\cap A$ are 
subintervals of $A$. Moreover, their endpoints are in
the set
$$
\bigcup_{i=0}^{n-1}\bigcup_{U\in\CV}\End{A\cap f^{-i}(U)}.
$$ 
Therefore the number of endpoints of the elements of $\CV^n\cap A$
is at most $2n\#\CV$. Since a nonempty interval is determined by its two
endpoints and by its type (open, close, half-open, half-close), 
we deduce that $\#(\CV^n\cap A)\le 4(2n\#\CV)^2=(4n\#\CV)^2$.
Let $\widetilde{\CV}_n$ (resp. $\widetilde{\CA}_n$) be a subcover of minimal
cardinality of $\CV^n$ (resp. $\CA^n$). We then have
$$
\#\widetilde{\CV}_n\le \sum_{A\in \widetilde{\CA}_n}\#(\widetilde{\CV}_n\cap A)
\le \#(\widetilde{\CA}_n) (4n\#\CV)^2.
$$
It follows that
$$
\log N_n(\CV,f)\le \log N_n(\CA,f)+2
\log(4n\#\CV).
$$
Dividing by $n$ and taking the limit when $n$ goes to infinity,
we get
$$h_{top}(\CV,f)\le h_{top}(\CA,f).$$
Moreover, $h_{top}(\CU,f)\le h_{top}(\CV,f)$ because $\CV \prec \CU$. 
Since what precedes is valid for all
open covers $\CU$, we deduce that $h_{top}(f)\le h_{top}(\CA,f)$.
It remains to show the reverse inequality.

We fix $n\ge 1$. Let $\CB:=\CA\vee f^{-1}\CA\vee\cdots\vee 
f^{-(n-1)}\CA$.
From now on, we work with the map $g:=f^n$ and the
iterated covers (like $\CB^k$) will be relative to $g$. 
By Lemma~\ref{lem:interval+monotone}, $\CB$ is a monotone cover for $g$.
Let $\eps>0$ be such that
$$
\eps<\min\{ |B|\mid B\in \CB, B\text{ non degenerate}\}.
$$
Let $E:=\bigcup_{B\in\CB}\End{B}$ be the set of endpoints of $\CB$.
We define the open cover
$$
\CU:=\{\Int{B}\mid B\in\CB\}\cup\{(x-\eps,x+\eps)\cap I\mid x\in E\}.
$$
For every $x\in E$, the interval
$(x-\eps,x+\eps)$ meets at most three elements of $\CB$ (one of them
may be reduced to $\{x\}$)
because of the choice of $\eps$. Thus, for all $U\in\CU$,
$\#\{B\in \CB\mid U\cap B\neq \emptyset\}\le 3$.
This implies that
$$\forall\, k\ge 1, \forall\, V\in \CU^k,\ 
\#\{B\in \CB^k\mid V\cap B\neq\emptyset\}\le 3^k.
$$
Consequently, if $\widetilde \CU_k$ is a subcover of minimal
cardinality of $\CU^k$, we have
$$
N_k(\CB,g)\le \sum_{V\in\widetilde \CU_k}\#\{B\in\CB^k\mid V\cap B
\neq \emptyset\}\le 3^k \#\, \widetilde \CU_k= 3^k N_k(\CU,g).
$$
Dividing by $nk$ and taking the limit when $k$ goes to infinity, we get
\begin{equation}\label{eq:htopB}
\frac 1n h_{top}(\CB,g)\le \frac 1n h_{top}(\CU,g)+
\frac{\log 3}{n}\le \frac 1n h_{top}(g)+
\frac{\log 3}{n}.
\end{equation}
Since $\frac 1n h_{top}(\CB,f^n)=h_{top}(\CA,f)$ and
$\frac 1n h_{top}(g)=h_{top}(f)$, we deduce from \eqref{eq:htopB} that
$h_{top}(\CA,f)\le h_{top}(f)+\frac{\log 3}{n}$.
Finally, taking the limit when $n$ goes to infinity, we conclude that
$h_{top}(\CA,f)\le h_{top}(f).
$
\end{proof}

\begin{prop}\label{prop:htop-piecewise-monotone}
Let $f$ be a piecewise monotone interval map and, for all $n\ge 1$, 
let $c_n$ be the minimal cardinality of a monotone partition for $f^n$.
Then
$$h_{top}(f)=\lim_{n\to+\infty}\frac{1}{n}\log c_n=\inf_{n\ge 1}\frac{1}{n}\log c_n.
$$
\end{prop}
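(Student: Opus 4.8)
The plan is to prove that $L:=\inf_{n\ge 1}\frac1n\log c_n$ equals both $\lim_{n\to\infty}\frac1n\log c_n$ and $h_{top}(f)$, by first checking the limit exists and then sandwiching $h_{top}(f)$ between $L$ from above and $L$ from below. The real content is already contained in Proposition~\ref{prop:monotone-cover} and Lemma~\ref{lem:interval+monotone}; what remains is bookkeeping with monotone partitions.

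First I would show that $(\log c_n)_{n\ge 1}$ is sub-additive. Given a monotone partition $\CA$ for $f^n$ with $\#\CA=c_n$ and a monotone partition $\CB$ for $f^k$ with $\#\CB=c_k$, Lemma~\ref{lem:interval+monotone} says that $\CA\vee f^{-n}(\CB)$ is a monotone cover for $f^{n+k}$; since $f^n$ is monotone on each $A\in\CA$, the set $A\cap f^{-n}(B)=(f^n|_A)^{-1}(B)$ is an interval for each $B\in\CB$, so $\CA\vee f^{-n}(\CB)$ is actually a monotone \emph{partition} for $f^{n+k}$, of cardinality at most $\#\CA\cdot\#\CB=c_nc_k$. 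Hence $c_{n+k}\le c_nc_k$, so $\log c_{n+k}\le\log c_n+\log c_k$ (in particular all $c_n$ are finite), and Lemma~\ref{lem:subadditive} gives $\lim_{n\to\infty}\frac1n\log c_n=\inf_{n\ge1}\frac1n\log c_n=L$.

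Next, for the upper bound $h_{top}(f)\le L$, I would fix $n\ge1$ and apply Proposition~\ref{prop:monotone-cover} to the piecewise monotone map $f^n$: a monotone partition $\CA_n$ for $f^n$ of cardinality $c_n$ is in particular a monotone cover for $f^n$ (as a single iterate), so $h_{top}(f^n)=h_{top}(\CA_n,f^n)$. Using $h_{top}(\CC,g)=\inf_{k\ge1}\frac1k\log N_k(\CC,g)\le\log N_1(\CC,g)=\log N(\CC)\le\log\#\CC$, this yields $h_{top}(f^n)\le\log c_n$, hence $n\,h_{top}(f)\le\log c_n$ by Proposition~\ref{prop:htop-Tn}; taking the infimum over $n$ gives $h_{top}(f)\le L$. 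For the lower bound, let $\CA$ be a monotone partition for $f$ (one exists because $f$ is piecewise monotone; see page~\pageref{def:pm}). For each $n\ge1$, the cover $\CA^n=\CA\vee f^{-1}\CA\vee\cdots\vee f^{-(n-1)}\CA$ is a partition, and it is a monotone cover for $f^n$ by Lemma~\ref{lem:interval+monotone}, so it is a monotone partition for $f^n$; therefore $c_n\le\#(\CA^n)=N_n(\CA,f)$, the last equality because $\CA^n$ is a partition. Hence $h_{top}(\CA,f)=\lim_{n\to\infty}\frac1n\log N_n(\CA,f)\ge\lim_{n\to\infty}\frac1n\log c_n=L$, and $h_{top}(f)=h_{top}(\CA,f)$ by Proposition~\ref{prop:monotone-cover}. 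Combining the two inequalities gives $h_{top}(f)=L=\lim_{n\to\infty}\frac1n\log c_n=\inf_{n\ge1}\frac1n\log c_n$.

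I do not expect any genuine obstacle here. The only points needing care are that $\CA^n$ is simultaneously a partition \emph{and} a monotone cover for $f^n$, so that $N_n(\CA,f)=\#(\CA^n)$ bounds $c_n$ from above, and the cardinality estimate $c_{n+k}\le c_nc_k$ that powers the sub-additivity of $\log c_n$; both are routine consequences of Lemma~\ref{lem:interval+monotone} and the fact (already noted in the text) that $\CP^n$ is a partition whenever $\CP$ is.
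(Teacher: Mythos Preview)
Your proof is correct and follows essentially the same approach as the paper: sub-additivity of $(\log c_n)$ via Lemma~\ref{lem:interval+monotone}, the upper bound $h_{top}(f)\le L$ via Proposition~\ref{prop:monotone-cover} applied to $f^n$, and the lower bound via the fact that $\CA^n$ is a monotone partition for $f^n$. Your lower bound argument is actually a slight streamlining of the paper's, which fixes an arbitrary $n$ and works with $g=f^n$ and iterates of a minimal monotone partition $\CA_n$ for $f^n$; your choice amounts to taking $n=1$, which suffices and avoids passing to subsequences $(c_{nk})_k$.
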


\begin{proof}
For every $n\ge 1$, let $\CA_n$ be a monotone partition for $f^n$ with 
minimal cardinality, that is, $\#\CA_n=c_n$.
By Lemma~\ref{lem:interval+monotone}, $\CA_n\vee f^{-n}(\CA_k)$ is
a monotone partition for $f^{n+k}$, and thus, by definition of $c_{n+k}$,
$$
c_{n+k}\le \#(\CA_n\vee f^{-n}(\CA_k))\le \#\CA_n\cdot \#\CA_k=c_n\cdot c_k.
$$
This means that the sequence $(\log c_n)_{n\ge 1}$ is sub-additive.
Thus, by Lemma~\ref{lem:subadditive},
$\lim_{n\to+\infty}\frac{1}{n}
\log c_n$ exists and is equal to $\inf_{n\ge 1}\frac{1}{n}\log c_n$. 
Applying Proposition~\ref{prop:monotone-cover}  to $f^n$ and $\CA_n$,
we get $h_{top}(f^n)= h_{top}(\CA_n,f^n)$. Since $h_{top}(\CA_n,f^n)
\le \log N(\CA^n)=\log \# \CA^n$ (see Section~\ref{subsec:htop-covers}),
we have $h_{top}(f^n)\le \log\# \CA_n=\log c_n$. Consequently,
$$
h_{top}(f)\le \lim_{n\to+\infty}\frac{1}{n}\log c_n.$$
It remains to show the reverse inequality.

We fix $n\ge 1$. From now on, we work with the map $g:=f^n$, and $(\CA_n)^k$
will denote the iterated partition relative to $g$. 
By Lemma~\ref{lem:interval+monotone}, 
$(\CA_n)^k$ is a monotone partition for $g^k$, so $c_{nk}\le N_k(\CA_n,g)$.
Dividing by $nk$ and taking the limit when $k$ goes to infinity, we deduce that
$$
\lim_{k\to+\infty}\frac{1}{nk}\log c_{nk}\le \frac 1n h_{top}(\CA_n,g).
$$
According to Proposition~\ref{prop:monotone-cover},
$h_{top}(\CA_n,g)=h_{top}(g)$. Thus
$$
\lim_{m\to +\infty} \frac 1 m \log c_m=\lim_{k\to+\infty}\frac{1}{nk}\log c_{nk}\le \frac 1n h_{top}(f^n)=h_{top}(f).
$$
This concludes the proof.
\end{proof}

\begin{rem}
The bounds of Propositions \ref{prop:htop-lipschitz} and 
\ref{prop:htop-piecewise-monotone} are optimal since they can be reached:
the map $T_p$ in Example~\ref{ex:tent-map} is $p$-Lipschitz and has a 
$p$-horseshoe, and thus $h_{top}(T_p)=\log p$ by  Propositions \ref{prop:horseshoe-htop} and \ref{prop:htop-lipschitz}. Moreover, it can be easily
computed that, for all $n\ge 1$, the minimal cardinality of a monotone
partition for $T_p^n$ is $c_n=p^n$. Therefore, the inequality
$h_{top}(T_p)\le \frac 1n \log c_n$ given by
Proposition~\ref{prop:htop-piecewise-monotone} 
is an equality for all $n$ in this example.
\end{rem}

\section{Graph associated to a family of intervals}

\subsection{A generalization of horseshoes}
The existence of a horseshoe implies positive entropy because an
exponential number of chains of intervals of a given length can be
made by using the intervals forming the horseshoe.  This idea can be
generalized by counting the number of chains within a family of
closed  intervals. A convenient way to determine the possible chains
of  intervals is to build a directed graph. This idea is originally
due to Bowen and Franks \cite{BF} and was improved by Block,
Guckenheimer, Misiurewicz and Young \cite{BGMY}.

\begin{defi}\index{graph associated to a family of intervals}
Let $f$ be an interval map and let $I_1,\ldots, I_p$ be
non degenerate closed intervals with disjoint interiors. The
\emph{graph associated to the intervals $I_1,\ldots, I_p$} is
the directed graph $G$ whose set of vertices is $\{I_1,\ldots, I_p\}$ and,
for all $i,j\in\Lbrack 1,p\Rbrack$, there are exactly $k$ arrows from $I_i$ 
to $I_j$ if $k$ is the maximal integer such that $I_i$ covers $k$ times $I_j$.

If $P=\{p_0<p_1<\cdots<p_n\}$ is a finite set containing at least 
two points, the \text{\emph{$P$-intervals}}\index{P-interval@$P$-interval}\index{interval ($P$- )} are 
$[p_0,p_1]$, $[p_1,p_2],\ldots, [p_{n-1}, p_n]$. The graph associated to 
the $P$-intervals is denoted by $G(f|P)$ and its adjacency matrix
by $M(f|P)$.
\label{notation:Gfp}\label{notation:Mfp}
\index{G(f,P)@$G(f\vert P)$}\index{M(f,P)@$M(f\vert P)$}
\end{defi}

\begin{rem}
If $P=\{x_1<\ldots<x_n\}$ is a periodic orbit,
the graph $G(f|P)$ contains the graph of the periodic orbit introduced
in Definition~\ref{defi:graph-periodic-orbit}. These two graphs 
coincide if $f$ is monotone on every $P$-interval.
\end{rem}

The next result follows easily from the definitions. 

\begin{prop}\label{prop:path-matrix}
Let $f$ be an interval map and let $I_1,\ldots, I_p$ be
non degenerate closed intervals with disjoint interiors. Let
$G$ be the graph associated to $I_1,\ldots, I_p$.
Then, for every $n$-tuple 
$\{i_1,\ldots, i_n\}\in\Lbrack 1,p\Rbrack^n$, $(I_{i_1},I_{i_2},\ldots, I_{i_n})$ is
a chain of intervals if and only if there is a path 
$I_{i_1}\to I_{i_2}\to\cdots\to I_{i_n}$ in $G$. 
\end{prop}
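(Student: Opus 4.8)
The plan is to unwind the two definitions involved and observe that they match term by term, so that the equivalence is essentially tautological. First I would recall that, by Definition~\ref{def:covering}, the tuple $(I_{i_1},\ldots,I_{i_n})$ is a chain of intervals precisely when $I_{i_k}$ covers $I_{i_{k+1}}$ for every $k\in\Lbrack 1,n-1\Rbrack$, that is, when $I_{i_{k+1}}\subset f(I_{i_k})$ for every such $k$. On the other side, by the definition of a path in a directed graph (Section~\ref{sec:1-directedgraphs}), a path $I_{i_1}\to I_{i_2}\to\cdots\to I_{i_n}$ in $G$ is exactly the datum of an arrow from $I_{i_k}$ to $I_{i_{k+1}}$ in $G$ for every $k\in\Lbrack 1,n-1\Rbrack$.

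Next I would make explicit the link between ``having an arrow'' and ``covering''. By the definition of the graph associated to $I_1,\ldots,I_p$, the number of arrows from $I_i$ to $I_j$ is the largest integer $m$ such that $I_i$ covers $m$ times $I_j$; in particular there is at least one arrow from $I_i$ to $I_j$ if and only if $I_i$ covers $I_j$ at least once. The one small point deserving a sentence of care is that ``$I_i$ covers $I_j$ one time'' reduces to the plain covering relation $I_j\subset f(I_i)$ of Definition~\ref{def:covering}: indeed $I_i$ is itself a closed subinterval of $I_i$, so as soon as $I_j\subset f(I_i)$ the interval $I_i$ contains one closed subinterval covering $I_j$, and conversely covering $I_j$ one or more times obviously forces $I_j\subset f(I_i)$. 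Hence there is an arrow $I_i\to I_j$ in $G$ if and only if $I_i$ covers $I_j$.

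Combining the two observations: a path $I_{i_1}\to I_{i_2}\to\cdots\to I_{i_n}$ exists in $G$ $\iff$ for every $k\in\Lbrack 1,n-1\Rbrack$ there is an arrow $I_{i_k}\to I_{i_{k+1}}$ $\iff$ for every $k\in\Lbrack 1,n-1\Rbrack$ one has $I_{i_k}$ covers $I_{i_{k+1}}$ $\iff$ $(I_{i_1},\ldots,I_{i_n})$ is a chain of intervals. The case $n=1$ is vacuous on both sides (a single vertex is both a trivial path and a trivial chain), so it requires no separate treatment. I do not expect any genuine obstacle in this argument: the whole content is the bookkeeping identification of arrows with coverings, and the only step that is not purely formal is the remark that a single covering counts as ``covering one time'', which I would state explicitly as above.
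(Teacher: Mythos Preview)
Your proposal is correct and matches the paper's approach exactly: the paper simply states that the result ``follows easily from the definitions'' and gives no further proof. You have spelled out precisely the unwinding of definitions that the paper leaves implicit, including the one non-formal point that covering once coincides with plain covering.
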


We are going to show that the topological entropy of an interval map is
greater than or equal to the logarithm of the spectral
radius of the adjacency matrix of the graph associated to a family of
intervals. We need some more 
definitions and results about matrices. 
One can refer to \cite[Chapter~1]{Sen} or \cite[\S 1.3]{Kit} for the proofs.

\begin{prop}\label{prop:eigenvalue-rootchi}
Let $M$ be a square matrix of size $n\times n$. 
A complex number $\lambda$ is an eigenvalue of
$M$ if and only if $\lambda$ is a root of the characteristic
polynomial of $M$, which is $\chi_M(X):=\det(M-X \Id)$, where $\Id$ is
the identity matrix of size $n\times n$.
\end{prop}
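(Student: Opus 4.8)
The plan is to unwind the definition of an eigenvalue and translate it into a determinant condition via the standard characterization of invertibility of a square matrix. First I would recall that, by definition, a complex number $\lambda$ is an eigenvalue of $M$ if and only if there exists a nonzero vector $v\in\IC^n$ with $Mv=\lambda v$, equivalently $(M-\lambda\Id)v=0$ (where $\Id$ is the $n\times n$ identity matrix). Hence $\lambda$ is an eigenvalue of $M$ precisely when the matrix $M-\lambda\Id$ has a nontrivial kernel.

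Next I would invoke the classical fact from linear algebra that, for an $n\times n$ matrix $A$ over $\IC$, the following are equivalent: $A$ has a nontrivial kernel; $A$ is not invertible; $\det(A)=0$. (This rests on the multiplicativity of the determinant together with the existence of the adjugate matrix, or alternatively on Gaussian elimination; I would take it as known, since it is exactly the sort of background the book defers to \cite[Chapter~1]{Sen} or \cite[\S 1.3]{Kit}.) Applying this equivalence with $A=M-\lambda\Id$ shows that $\lambda$ is an eigenvalue of $M$ if and only if $\det(M-\lambda\Id)=0$, i.e., $\chi_M(\lambda)=0$, where $\chi_M(X):=\det(M-X\Id)$ is the characteristic polynomial of $M$. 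Since $\chi_M$ is a polynomial in $X$ of degree $n$, the condition $\chi_M(\lambda)=0$ is exactly the assertion that $\lambda$ is a root of $\chi_M$, which completes the argument.

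There is essentially no obstacle here: the only step needing care is the equivalence between the singularity of $M-\lambda\Id$ and the vanishing of its determinant, and that is a standard result quoted from the references rather than reproved. Everything else is a direct rewriting of definitions.
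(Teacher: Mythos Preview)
Your argument is correct and is exactly the standard proof. Note, however, that the paper does not actually prove this proposition: it is stated without proof, with a pointer to \cite[Chapter~1]{Sen} and \cite[\S 1.3]{Kit} for the details, so there is no approach in the paper to compare against --- you have simply supplied the elementary argument that those references contain.
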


\begin{defi}[spectral radius]\index{spectral radius}
Let $M$ be a square matrix. The \emph{spectral radius of $M$} is
$$
\lambda(M):=\max\{|\lambda|\mid \lambda \text{ eigenvalue of } M \}.
$$
\end{defi}

The next results can be easily proved by using
the Jordan normal form of a square matrix.

\begin{prop}[Gelfand's formula]\label{prop:spectral-radius}
Let $M$ be a square matrix. Then $\displaystyle\lambda(M)=
\lim_{n\to+\infty}\|M^n\|^{\frac{1}{n}}$.
\end{prop}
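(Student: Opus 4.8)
The statement to prove is Gelfand's formula: for a square matrix $M$, $\lambda(M) = \lim_{n\to+\infty}\|M^n\|^{1/n}$.

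The plan is to use the Jordan normal form of $M$, as the paper itself suggests ("The next results can be easily proved by using the Jordan normal form of a square matrix"). First I would observe that since all norms on the finite-dimensional space of matrices are equivalent, it suffices to prove the formula for one convenient choice of norm; I would work with a submultiplicative matrix norm (the operator norm induced by a vector norm, or equivalently note that the norm $\|M\|=\sum|m_{ij}|$ used in the paper is submultiplicative up to a constant, which does not affect the $n$-th root limit). Submultiplicativity gives $\|M^n\|\le\|M\|^n$, but more importantly it lets one control powers cleanly.

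For the lower bound $\liminf_n\|M^n\|^{1/n}\ge\lambda(M)$: if $\lambda$ is an eigenvalue with $|\lambda|=\lambda(M)$ and $v\ne 0$ a corresponding eigenvector, then $M^n v=\lambda^n v$, so $\|M^n\|\,\|v\|\ge\|M^n v\|=|\lambda|^n\|v\|$ (using the operator norm), hence $\|M^n\|^{1/n}\ge|\lambda|=\lambda(M)$ for all $n$. For the upper bound $\limsup_n\|M^n\|^{1/n}\le\lambda(M)$: write $M=PJP^{-1}$ with $J$ in Jordan normal form, so $M^n=PJ^nP^{-1}$ and $\|M^n\|\le\|P\|\,\|P^{-1}\|\,\|J^n\|$. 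Then $J^n$ is block-diagonal with blocks $(\lambda_k I+N_k)^n$ where $N_k$ is nilpotent of index at most the block size $s$; expanding by the binomial theorem (these commute) gives $(\lambda_k I+N_k)^n=\sum_{j=0}^{s-1}\binom{n}{j}\lambda_k^{n-j}N_k^j$, whose norm is bounded by a polynomial in $n$ times $\max_k|\lambda_k|^{n}=\lambda(M)^n$ (for $n$ large, absorbing the finitely many lower-order $|\lambda_k|^{n-j}$ terms). Therefore $\|M^n\|\le C\,p(n)\,\lambda(M)^n$ for a constant $C$ and polynomial $p$, and taking $n$-th roots and letting $n\to\infty$ gives $\limsup_n\|M^n\|^{1/n}\le\lambda(M)$, since $p(n)^{1/n}\to 1$. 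Combining the two bounds yields the limit and its value.

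The main obstacle, such as it is, is purely bookkeeping: handling the case $\lambda(M)=0$ (then $M$ is nilpotent, $M^n=0$ for $n$ large, and both sides are $0$, so one should dispose of this first to avoid dividing by zero or taking logs of zero), and being careful that the binomial estimate is uniform across all Jordan blocks — one takes $s$ to be the maximal block size and $C$ large enough to dominate $\|P\|\,\|P^{-1}\|$ times the finitely many binomial coefficients and the ratios $|\lambda_k|^{-j}$ for nonzero $\lambda_k$. None of this is deep; the Jordan form does all the real work, and the equivalence of norms makes the choice of $\|\cdot\|$ irrelevant to the final statement.
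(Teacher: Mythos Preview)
Your proposal is correct and follows exactly the approach the paper indicates: the paper does not give a proof but simply remarks that the result ``can be easily proved by using the Jordan normal form of a square matrix,'' which is precisely what you do. Your handling of the lower bound via an eigenvector, the upper bound via the binomial expansion of Jordan blocks, the nilpotent case, and the norm-equivalence reduction are all standard and sound.
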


\begin{lem}\label{lem:lambdaMn}
Let $A,B$ be two square matrices of the same size. Then 
$\lambda(A^k)=\lambda(A)^k$ for all positive integers $k$.
\end{lem}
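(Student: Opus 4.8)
The plan is to prove the (only nontrivial) assertion $\lambda(A^k)=\lambda(A)^k$ by identifying the spectrum of $A^k$ precisely: the set of eigenvalues of $A^k$ is exactly $\{\zeta^k\mid \zeta\text{ is an eigenvalue of }A\}$. Once this is established, the conclusion is immediate, since $|\zeta^k|=|\zeta|^k$ and $t\mapsto t^k$ is increasing on $\IR^+$, so taking the maximum of the moduli on both sides gives $\lambda(A^k)=\lambda(A)^k$. (I note in passing that the matrix $B$ in the statement plays no role and can be ignored; it is presumably a leftover from an earlier version.)

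First I would record the easy inclusion. If $\zeta$ is an eigenvalue of $A$ with eigenvector $v\neq 0$, then $A^k v=\zeta^k v$, so $\zeta^k$ is an eigenvalue of $A^k$; hence $\lambda(A^k)\ge \lambda(A)^k$. For the reverse inclusion I would take an eigenvalue $\mu$ of $A^k$ and factor the polynomial $X^k-\mu$ over $\IC$ as $\prod_{j=1}^{k}(X-\zeta_j)$, where $\zeta_1,\ldots,\zeta_k$ are the $k$-th roots of $\mu$. Substituting the matrix $A$ (legitimate, since powers of a single matrix commute, so this polynomial identity carries over) yields $A^k-\mu\,\Id=\prod_{j=1}^{k}(A-\zeta_j\,\Id)$, and multiplicativity of the determinant gives $\det(A^k-\mu\,\Id)=\prod_{j=1}^{k}\det(A-\zeta_j\,\Id)$. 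By Proposition~\ref{prop:eigenvalue-rootchi} the left-hand side vanishes, so $\det(A-\zeta_{j_0}\,\Id)=0$ for some $j_0$, i.e.\ $\zeta_{j_0}$ is an eigenvalue of $A$ with $\zeta_{j_0}^k=\mu$. Thus $|\mu|=|\zeta_{j_0}|^k\le \lambda(A)^k$, and taking the maximum over all eigenvalues $\mu$ of $A^k$ gives $\lambda(A^k)\le \lambda(A)^k$.

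There is essentially no obstacle here; the only point requiring a line of justification is the substitution of $A$ into the factorization of $X^k-\mu$, which is valid because $A$ commutes with itself. Alternatively, and even more briefly, one can invoke Gelfand's formula (Proposition~\ref{prop:spectral-radius}): the sequence $\bigl(\|A^m\|^{1/m}\bigr)_{m\ge 1}$ converges to $\lambda(A)$, hence so does its subsequence $\bigl(\|A^{kn}\|^{1/(kn)}\bigr)_{n\ge 1}$, and therefore
$$
\lambda(A^k)=\lim_{n\to+\infty}\bigl\|(A^k)^n\bigr\|^{1/n}
=\lim_{n\to+\infty}\Bigl(\|A^{kn}\|^{1/(kn)}\Bigr)^{k}=\lambda(A)^k .
$$
I would likely present the first (Jordan/characteristic-polynomial) argument as the main proof and mention the Gelfand-formula computation as a remark, since both rely only on results already available in the text.
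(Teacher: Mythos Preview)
Your proposal is correct. The paper's proof is precisely your alternative via Gelfand's formula: it computes $\lambda(A^k)=\lim_{n\to+\infty}\|A^{kn}\|^{1/n}=\lim_{n\to+\infty}\bigl(\|A^{kn}\|^{1/(kn)}\bigr)^k=\lambda(A)^k$, using that the subsequence along multiples of $k$ has the same limit and that $t\mapsto t^k$ is continuous. Your main argument, identifying the spectrum of $A^k$ as $\{\zeta^k:\zeta\text{ eigenvalue of }A\}$ via the factorization $A^k-\mu\,\Id=\prod_j(A-\zeta_j\,\Id)$, is a genuinely different route: it is self-contained algebra relying only on Proposition~\ref{prop:eigenvalue-rootchi}, whereas the paper's proof leans on the analytic content of Gelfand's formula. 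Both are short; the spectral argument has the minor advantage of giving the exact description of the eigenvalues of $A^k$, while the Gelfand approach avoids any factorization and is the one-line proof the paper chose. Your remark that the matrix $B$ in the statement is superfluous is also correct.
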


\begin{proof}
According to Proposition~\ref{prop:spectral-radius},
$$
\lambda(A^k)=\lim_{n\to+\infty}\|A^{kn}\|^{\frac{1}{n}}
=\lim_{n\to+\infty}\left(\|A^{kn}\|^{\frac{1}{kn}}\right)^k=\lambda(A)^k;
$$
the last equality comes from the facts that 
$(kn)_{n\ge 1}$ is a subsequence of $\IN$ and the map $t\mapsto t^k$ 
is continuous.
\end{proof}

\begin{defi}
Let $A=(a_{ij})_{1\le i,j\le p}$ be a square matrix.
The matrix $A$ is \emph{non negative}\index{non negative matrix},
\label{notation:nonnegativematrix}
or equivalently $A\ge 0$,
if $a_{ij}\ge 0$ for all $i,j\in\Lbrack 1,p\Rbrack$, and 
\emph{positive}\index{positive matrix},
\label{notation:positivematrix}
or equivalently $A>0$, if
$a_{ij}> 0$ for all $i,j\in\Lbrack 1,p\Rbrack$. If $B$ is another matrix of the
same size, then $A\le B$ (resp. $A<B$) means that $B-A\ge 0$ (resp. $B-A>0$).
\label{notation:AlessBmatrix}

For all integers $n\ge 1$, let $(a_{ij}^n)_{1\le i,j\le p}$ be the 
coefficients of $A^n$. The matrix $A$ is called:
\begin{itemize}
\item \emph{irreducible}\index{irreducible matrix}
if for all $i,j\in\Lbrack 1,p\Rbrack$, there exists $n\ge 1$ such that $a_{ij}^n>0$,
\item \emph{primitive}\index{primitive matrix} if there exists 
$n\ge 1$ such that $A^n>0$.
\end{itemize}
\end{defi}

\begin{lem}\label{lem:irreducible-submatrices}
Let $A$ be a non negative square matrix. Then there exists a permutation
matrix $P$ such that $M:=P^{-1}AP$ is equal to
\begin{equation}\label{eq:irreducible-submatrices}
M=\left(\begin{array}{ccccc}
M_1 & 0   & 0   & \cdots & 0 \\
{}* & M_2 & 0   & \cdots & 0 \\
{*} & *   & M_3 & \cdots & 0 \\
\vdots & \vdots & \vdots & \ddots & \vdots \\
{*} & *   & *   & \cdots & M_k
\end{array}\right)
\end{equation}
where, for every $i\in\Lbrack 1,k\Rbrack$, $M_i$ is either an irreducible 
square matrix or is equal to the $1\times 1$ matrix $(0)$, and the $*$'s 
represent possibly non zero  submatrices.

In particular, if $G$ is a directed graph, the vertices of $G$ can be labeled
in such a way that the adjacency matrix of $G$ is of the form given by
\eqref{eq:irreducible-submatrices}.
\end{lem}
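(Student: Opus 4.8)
The plan is to read off the desired normal form from the decomposition of the associated directed graph into strongly connected components. Let $A=(a_{ij})_{1\le i,j\le p}$ be the given non negative square matrix and let $G$ be the simple directed graph on the vertex set $\Lbrack 1,p\Rbrack$ in which there is an arrow $i\to j$ precisely when $a_{ij}>0$. Since all the entries of $A$ are non negative, the coefficient $a_{ij}^n$ of $A^n$ is a sum of products $a_{i_0i_1}\cdots a_{i_{n-1}i_n}$ over sequences with $i_0=i$, $i_n=j$, and hence $a_{ij}^n>0$ if and only if there is a path of length $n$ from $i$ to $j$ in $G$. Define a relation $\equiv$ on $\Lbrack 1,p\Rbrack$ by $i\equiv j$ when either $i=j$ or there exist both a path from $i$ to $j$ and a path from $j$ to $i$ in $G$. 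One checks at once that $\equiv$ is an equivalence relation; its classes $C_1,\dots,C_k$ are the strongly connected components of $G$.

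First I would introduce a partial order on the classes: set $C_a\preceq C_b$ if $a=b$ or there is a path in $G$ from a vertex of $C_b$ to a vertex of $C_a$ (this does not depend on the chosen vertices, since within a class any two vertices are joined by paths in both directions). Transitivity is clear by concatenation of paths, and antisymmetry holds because if $a\ne b$ and there were paths both from $C_a$ to $C_b$ and from $C_b$ to $C_a$, then all vertices of $C_a\cup C_b$ would be mutually reachable and so $C_a=C_b$, a contradiction. Thus $\preceq$ is a genuine partial order on the finite set $\{C_1,\dots,C_k\}$, and after relabelling I may assume that $C_a\preceq C_b$ implies $a\le b$. Let $\pi$ be the bijection of $\Lbrack 1,p\Rbrack$ onto itself that lists first the elements of $C_1$, then those of $C_2$, and so on, let $P$ be the corresponding permutation matrix, and set $M:=P^{-1}AP$. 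Writing $M=(M_{ab})_{1\le a,b\le k}$ for the block decomposition induced by this grouping of indices, the entry of $M$ sitting in the block $M_{ab}$ and coming from vertices $u\in C_a$, $v\in C_b$ equals $a_{uv}$.

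Now I claim $M$ has the form \eqref{eq:irreducible-submatrices}. If $a\ne b$ and the block $M_{ab}$ has a nonzero entry, then $a_{uv}>0$ for some $u\in C_a$, $v\in C_b$, i.e.\ there is an arrow (a path of length $1$) from $C_a$ to $C_b$; this gives $C_b\preceq C_a$, hence $b\le a$. Therefore $M_{ab}=0$ whenever $a<b$, which is exactly the block lower-triangular shape, the stars being the blocks $M_{ab}$ with $a>b$. For a diagonal block $M_i:=M_{ii}$, regarded as a square matrix indexed by $C_i$, two cases occur. If $C_i=\{v\}$ is a single vertex with $a_{vv}=0$, then $M_i$ is the $1\times 1$ matrix $(0)$. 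Otherwise every vertex of $C_i$ lies on a cycle contained in $C_i$ (a loop if $a_{vv}>0$, or a genuine cycle if $\#C_i\ge 2$), and moreover for any $i',j'\in C_i$ there is a path from $i'$ to $j'$ all of whose vertices lie in $C_i$: when $i'\ne j'$ such a path exists by definition of the class, and any vertex $w$ on it is reachable from $i'$ and reaches $j'$, hence, using that $j'$ reaches $i'$, reaches $i'$, so $w\in C_i$; when $i'=j'$ one uses the cycle through $i'$. Consequently the corresponding coefficient of $M_i^n$ is positive for a suitable $n\ge 1$, so $M_i$ is irreducible. This proves the statement about $M$, and the final assertion about directed graphs is the special case in which $A$ is the (non negative integer) adjacency matrix of $G$.

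The argument is routine once this framework is in place; the only point requiring care is the antisymmetry of $\preceq$, which is precisely where the maximality of the strongly connected components enters, together with the easy observation that a path witnessing $i'\equiv j'$ may be taken inside the class $C_i$. I do not anticipate any substantial obstacle.
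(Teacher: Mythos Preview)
Your proof is correct and is the standard argument via strongly connected components of the associated directed graph. The paper itself does not give a proof of this lemma; it is stated among the background results on non negative matrices with the remark that one can refer to \cite[Chapter~1]{Sen} or \cite[\S~1.3]{Kit} for the proofs, so there is no in-paper proof to compare against.
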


\begin{theo}[Perron-Frobenius]\label{theo:Perron-Frobenius}
\index{Perron-Frobenius Theorem}
Let $A$ be an irreducible non negative square matrix. 
\begin{enumerate}
\item $\lambda(A)$ is a positive eigenvalue, which is a simple root of 
the characteristic polynomial of $A$. 
If in addition $A$ is primitive, then $|\mu|<\lambda(A)$ for every
 eigenvalue $\mu \neq\lambda(A)$.
\item If $B$ is another matrix of the same size such that $0\le B\le A$
and $B\neq A$, then $\lambda(B)<\lambda(A)$.
\end{enumerate}
\end{theo}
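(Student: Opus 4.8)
The plan is to give the classical variational proof. Work on the compact simplex $\Delta := \{x\in\IR^p \mid x_i\ge 0,\ \sum_i x_i = 1\}$ and, for $x\in\Delta$, set $r(x) := \max\{t\ge 0 \mid tx\le Ax\}$, which for $x\ne 0$ equals $\min\{(Ax)_i/x_i \mid x_i>0\}$. Put $\lambda_0 := \sup_{x\in\Delta} r(x)$. First I would show the supremum is attained: since $C := (\Id+A)^{p-1}$ commutes with $A$ as a polynomial in $A$, $tx\le Ax$ implies $tCx\le ACx$, so $r(Cx)\ge r(x)$; because $A$ is irreducible, $C>0$, so $Cx$ is strictly positive for every $x\in\Delta$, and $r$ is continuous on the (compact, after renormalizing) set of strictly positive vectors. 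Hence $\lambda_0$ is attained at some $x^*\in\Delta$ with $Ax^* = \lambda_0 x^*$, and $Cx^* = (1+\lambda_0)^{p-1}x^* > 0$ forces $x^*>0$; since irreducibility also prevents any row of $A$ from vanishing, $(Ax^*)_i>0$ for each $i$, so $\lambda_0>0$.

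Next I would identify $\lambda_0$ with $\lambda(A)$: for any complex eigenvalue $\mu$ with eigenvector $z$, $|\mu|\,|z_i| = |(Az)_i|\le (A|z|)_i$ gives $|\mu|\,|z|\le A|z|$, hence $|\mu|\le r(|z|/\|z\|_1)\le\lambda_0$; as $\lambda_0$ is itself an eigenvalue, $\lambda(A)=\lambda_0$. I would also record here the rigidity lemma used repeatedly: if $v\ge 0$, $v\ne 0$ and $Av\ge\lambda_0 v$, then multiplying $Av-\lambda_0 v\ge 0$ by $C>0$ would give $Aw>\lambda_0 w$ with $w=Cv>0$ unless $Av=\lambda_0 v$, so necessarily $Av=\lambda_0 v$; and then $v=Cv/(1+\lambda_0)^{p-1}>0$, and a standard minimal-ratio argument ($s:=\min_i v_i/x^*_i$, considering $v-sx^*$) shows $v$ is a positive scalar multiple of $x^*$. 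This proves $\lambda_0$ is geometrically simple. Part (ii) follows at once: if $0\le B\le A$ and $B|z|\ge|\mu|\,|z|$ with $|\mu|=\lambda(B)$, then $A|z|\ge\lambda(B)|z|$, so $\lambda(B)\le\lambda_0$; if equality held, the rigidity lemma gives $A|z|=\lambda_0|z|$ and $|z|=c\,x^*>0$, whence $(A-B)|z|=0$ with $A-B\ge 0$ forces $A=B$, contradicting $B\ne A$.

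For algebraic simplicity of $\lambda_0$ as a root of $\chi_A(X)=\det(X\Id-A)$, I would use Jacobi's formula $\chi_A'(X)=\mathrm{tr}\bigl(\mathrm{adj}(X\Id-A)\bigr)$. The $(i,i)$ entry of $\mathrm{adj}(\lambda_0\Id-A)$ is $\chi_{A_i}(\lambda_0)$, where $A_i$ is $A$ with row $i$ and column $i$ deleted; embedding $A_i$ as a principal block of a $p\times p$ matrix $\widetilde B$ with zeros elsewhere gives $0\le\widetilde B\le A$ with $\widetilde B\ne A$ (row $i$ of $A$ is not zero), so by part (ii) every eigenvalue $\mu_k$ of $A_i$ has $|\mu_k|<\lambda_0$; pairing complex-conjugate eigenvalues then gives $\chi_{A_i}(\lambda_0)=\prod_k(\lambda_0-\mu_k)>0$. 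Thus the whole diagonal of $\mathrm{adj}(\lambda_0\Id-A)$ is positive, so $\chi_A'(\lambda_0)>0$ and $\lambda_0$ is a simple root. Finally, in the primitive case, if $|\mu|=\lambda_0$ and $Az=\mu z$ then, as above, $|z|$ is a positive multiple of $x^*$; scaling so that $|z|=x^*$ and using $A^m>0$, the chain $|(A^m z)_i|\le(A^m|z|)_i=\lambda_0^m x^*_i=|\mu^m|\,|z_i|=|(A^m z)_i|$ forces equality in the triangle inequality with strictly positive weights, so all $z_j$ share a common argument; hence $z$ is a complex multiple of $x^*$ and $\mu=\lambda_0$, a contradiction.

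The main obstacle is the first step — the attainment of $\lambda_0$ and the positivity of the maximizer $x^*$ — because $r$ is only upper semicontinuous on $\Delta$; everything downstream (part (ii), geometric and algebraic simplicity, and the primitive case) is a bootstrap off the single rigidity lemma, the adjugate identity, and triangle-inequality rigidity, and involves only routine linear algebra.
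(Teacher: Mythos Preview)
Your argument is correct and is the classical Collatz--Wielandt variational proof of Perron--Frobenius. The paper, however, does not prove this theorem at all: it is stated as background and the reader is referred to Seneta's \emph{Nonnegative matrices and Markov chains} and Kitchens' \emph{Symbolic dynamics} for proofs. Your outline is essentially what one finds in those references, so in that sense you have reproduced the intended route rather than found a new one.

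Two small remarks. First, your worry about attainment is slightly overstated: $r(x)=\max\{t\ge 0: tx\le Ax\}$ is upper semicontinuous on the whole simplex (if $t x_n\le Ax_n$ and $x_n\to x$ then $tx\le Ax$), and upper semicontinuous functions on compact sets attain their suprema, so attainment is free; the $C=(\Id+A)^{p-1}$ trick is really needed for the \emph{rigidity} step (showing the maximizer is a strictly positive eigenvector), not for existence of the maximizer. Second, the paper's convention is $\chi_M(X)=\det(M-X\Id)$ rather than $\det(X\Id-M)$, so your Jacobi computation picks up a sign $(-1)^p$; this is harmless since the conclusion $\chi_A'(\lambda_0)\ne 0$ is unaffected, but it is worth aligning the conventions if you write this out.
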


\begin{cor}\label{cor:Perron-Frobenius}
Let $A$ be a non negative square matrix. Then $\lambda(A)$ is a
non negative eigenvalue and $|\mu|\le \lambda(A)$ for every
eigenvalue $\mu$; $\lambda(A)$ is called the 
\emph{maximal eigenvalue} of $A$.\index{maximal eigenvalue}
\end{cor}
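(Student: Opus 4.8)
The plan is to reduce to the irreducible case via Lemma~\ref{lem:irreducible-submatrices} and then invoke the Perron--Frobenius Theorem~\ref{theo:Perron-Frobenius} on each diagonal block. First I would pick a permutation matrix $P$ such that $M:=P^{-1}AP$ has the block lower-triangular form \eqref{eq:irreducible-submatrices}, with diagonal blocks $M_1,\ldots,M_k$, each of which is either an irreducible non negative square matrix or the $1\times 1$ matrix $(0)$. Since conjugation by $P$ does not change the characteristic polynomial, $\chi_A=\chi_M$, and because $M$ is block triangular we have $\chi_M(X)=\prod_{i=1}^k\chi_{M_i}(X)$. Hence, by Proposition~\ref{prop:eigenvalue-rootchi}, the set of eigenvalues of $A$ is exactly the union of the sets of eigenvalues of the blocks $M_1,\ldots,M_k$, and consequently $\lambda(A)=\max_{1\le i\le k}\lambda(M_i)$.

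Next I would treat each block. If $M_i=(0)$, then $\lambda(M_i)=0$, which is trivially a non negative eigenvalue of $A$. If $M_i$ is irreducible and non negative, then Theorem~\ref{theo:Perron-Frobenius}(i) says $\lambda(M_i)$ is a positive eigenvalue of $M_i$; in particular it is a non negative real eigenvalue of $A$. In either case $\lambda(M_i)$ is a non negative real number that occurs as an eigenvalue of $A$.

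Finally, choose $i_0$ with $\lambda(M_{i_0})=\max_i\lambda(M_i)=\lambda(A)$. By the previous paragraph $\lambda(M_{i_0})$ is a non negative eigenvalue of $A$, so $\lambda(A)$ itself is a non negative eigenvalue of $A$. The inequality $|\mu|\le\lambda(A)$ for every eigenvalue $\mu$ of $A$ is immediate from the definition of the spectral radius $\lambda(A)=\max\{|\mu|\mid \mu\text{ eigenvalue of }A\}$. This completes the argument; the only mildly delicate point is the reduction to block triangular form and the resulting factorization of the characteristic polynomial, but both are standard consequences of Lemma~\ref{lem:irreducible-submatrices} and the multiplicativity of determinants of block triangular matrices, so I do not anticipate a serious obstacle.
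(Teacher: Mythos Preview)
Your proposal is correct and follows essentially the same approach as the paper: reduce $A$ to block lower-triangular form via Lemma~\ref{lem:irreducible-submatrices}, observe that the eigenvalues of $A$ are the union of the eigenvalues of the diagonal blocks, and apply Theorem~\ref{theo:Perron-Frobenius}(i) to each irreducible block. You are slightly more explicit than the paper in justifying the eigenvalue decomposition via the factorization $\chi_M=\prod_i\chi_{M_i}$, but the argument is otherwise identical.
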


\begin{proof}
According to Lemma~\ref{lem:irreducible-submatrices}, there exists a permutation
matrix $P$ such that $M:=P^{-1}AP$ is of the form given by
Equation~(\ref{eq:irreducible-submatrices}). Let $i\in\Lbrack 1,k\Rbrack$.
If $M_i$ is the $1\times 1$ null matrix, then $0$ is its only
eigenvalue, that is, $\lambda(M_i)=0$. If $M_i$ is irreducible, then, by 
Theorem~\ref{theo:Perron-Frobenius}, $\lambda(M_i)$ is a positive
eigenvalue of $M_i$ and every eigenvalue $\mu$ of
$M_i$ satisfies $|\mu|\le \lambda(M_i)$. Let 
$\lambda:=\max\{\lambda(M_1),\lambda(M_2),\ldots,\lambda(M_k)\}$. 
The set of eigenvalues of $M$ is equal to the union of the sets of 
eigenvalues of the matrices $(M_i)_{1\le i\le k}$.  Moreover,
the set of eigenvalues of $A$ is equal to the set of eigenvalues of $M$.
Consequently, $\lambda(A)=\lambda$, $\lambda$ is an eigenvalue of $A$
and, if $\mu$ is an eigenvalue of $A$, then $|\mu|\le \lambda$.
\end{proof}

We are now ready to prove the result stated at the beginning of the section.
Its proof is given by Block, Guckenheimer, Misiurewicz and Young in 
\cite{BGMY} with few details.

\begin{prop}\label{prop:matrix-htop}
Let $f$ be an interval map and let $I_1,\ldots, I_p$ be
non degenerate closed intervals with disjoint interiors. Let $G$ be a
subgraph of the graph associated to $I_1,\ldots, I_p$ and
$M$ the adjacency matrix of $G$. 
Then $h_{top}(f)\ge \log \lambda(M)$.
\end{prop}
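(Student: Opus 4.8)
The plan is to reduce the statement to Proposition~\ref{prop:horseshoe-htop} by producing, for every $n$, a horseshoe for $f^n$ whose size is the number of closed cycles of length $n$ in $G$ through a well-chosen vertex. First dispose of the trivial case: if $\lambda(M)=0$ there is nothing to prove, since $h_{top}(f)\ge 0$. Otherwise, using Lemma~\ref{lem:irreducible-submatrices} I relabel the vertices of $G$ so that $M$ becomes block lower-triangular with diagonal blocks $M_1,\dots,M_k$, each irreducible or the $1\times1$ zero matrix; as in the proof of Corollary~\ref{cor:Perron-Frobenius}, $\lambda(M)=\max_r\lambda(M_r)$, so I fix $r$ with $\lambda(M_r)=\lambda(M)>0$, hence $M_r$ irreducible. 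Let $V_r$ be the corresponding set of vertices. Since $M$ is lower-triangular, there are no arrows of $G$ from $V_r$ to a block of higher index, so every cycle of $G$ based at a vertex of $V_r$ stays inside $V_r$; the induced subgraph $G_r$ on $V_r$ is itself a subgraph of the graph associated to the (pairwise-disjoint-interior) intervals labelling $V_r$, with adjacency matrix $M_r$. Thus it suffices to prove the inequality for $G_r$, i.e. I may assume $M$ is irreducible, $\lambda:=\lambda(M)\ge 1$, and I fix once and for all a vertex $I_1$.

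Next comes the geometric construction. For each ordered pair $(I_i,I_j)$ of vertices with $I_i$ covering $I_j$, the definition of the associated graph provides at least as many pairwise-disjoint-interior closed subintervals of $I_i$ each covering $I_j$ as there are arrows $I_i\to I_j$; I fix such a subinterval for each arrow. To a path $\CP=(I_{i_0}\labelarrow{a_1}I_{i_1}\labelarrow{a_2}\cdots\labelarrow{a_n}I_{i_n})$ I attach the chain $(L_0^{\CP},\dots,L_{n-1}^{\CP},I_{i_n})$, where $L_s^{\CP}\subset I_{i_s}$ is the subinterval associated with the arrow $a_{s+1}$; then $f(L_s^{\CP})\supset I_{i_{s+1}}\supset L_{s+1}^{\CP}$, so this is indeed a chain of intervals. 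The key bookkeeping step is: if $\CP\ne\CP'$ are two cycles of length $n$ based at $I_1$ and $m$ is the least index where the arrow sequences differ, then either $a_m$ and $a_m'$ end at the same vertex (so $L_{m-1}^{\CP}$ and $L_{m-1}^{\CP'}$ were chosen with disjoint interiors) or they end at distinct vertices among $I_1,\dots,I_p$ (so $L_m^{\CP}$ and $L_m^{\CP'}$, being contained in those, have disjoint interiors). Hence Lemma~\ref{lem:chain-of-intervals}(iii), applied to the family of chains indexed by the cycles of length $n$ based at $I_1$, yields non-degenerate closed intervals $(K_{\CP})_{\CP}$ with pairwise disjoint interiors, $K_{\CP}\subset I_1$, and $f^n(K_{\CP})=I_{i_n}=I_1$. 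Since $I_1\supset\bigcup_{\CP}K_{\CP}$, the family $(K_{\CP})_{\CP}$ is a $q_n$-horseshoe for $f^n$, where $q_n=(M^n)_{11}$ is the number of such cycles (Propositions~\ref{prop:Mn} and~\ref{prop:path-matrix}). Proposition~\ref{prop:horseshoe-htop} applied to $f^n$ and Proposition~\ref{prop:htop-Tn} then give $h_{top}(f)\ge\frac1n\log q_n$ for every $n$ (trivially true when $q_n\le 1$), whence $h_{top}(f)\ge\limsup_{n\to+\infty}\frac1n\log(M^n)_{11}$.

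The remaining, and I expect most delicate, step is to identify this growth rate with $\log\lambda(M)$. The route I would take is elementary: irreducibility gives, for each pair $(i,j)$, exponents $\alpha(i),\beta(j)$ with $(M^{\alpha(i)})_{1i}\ge 1$ and $(M^{\beta(j)})_{j1}\ge 1$, so $(M^n)_{ij}\le(M^{\,n+\alpha(i)+\beta(j)})_{11}$; summing over $i,j$ yields $\|M^n\|\le C\max_{0\le l\le R}(M^{\,n+l})_{11}$ with $C$ and $R$ depending only on $M$. Combined with the obvious $(M^n)_{11}\le\|M^n\|$ and Gelfand's formula (Proposition~\ref{prop:spectral-radius}, i.e. $\|M^n\|^{1/n}\to\lambda(M)$), this gives $\limsup_{n\to+\infty}\frac1n\log(M^n)_{11}=\lim_{n\to+\infty}\frac1n\log\|M^n\|=\log\lambda(M)$. (Some care is needed because $(M^n)_{11}$ may vanish for $n$ outside the period class of $I_1$, but the $\limsup$ absorbs these $-\infty$ terms, and the windowed maximum over $l\in\{0,\dots,R\}$ is exactly what compensates for the period.) Putting the two halves together, $h_{top}(f)\ge\log\lambda(M)$, which is the assertion.
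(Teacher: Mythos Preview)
Your proof is correct and follows essentially the same route as the paper's: reduce to an irreducible block, manufacture for each $n$ a horseshoe for $f^n$ out of the cycles of length $n$ based at a fixed vertex, and identify the growth rate with $\log\lambda(M)$ via irreducibility and Gelfand's formula. The paper organizes the last step slightly differently (it first singles out a pair $(i_0,j_0)$ with $\limsup\frac1n\log m^n_{i_0j_0}=\log\lambda$ using Lemma~\ref{lem:sequences-an-bn1}, then concatenates with a single path $j_0\to i_0$), but the content is the same. Your handling of multiple arrows between the same pair of vertices, by fixing a specific covering subinterval for each arrow, is in fact more explicit than the paper's, which simply invokes ``$N$ distinct chains of intervals'' and Lemma~\ref{lem:chain-of-intervals}(iii). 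One small expository slip: the sentence ``there are no arrows of $G$ from $V_r$ to a block of higher index, so every cycle based at a vertex of $V_r$ stays inside $V_r$'' inverts the relevant direction---what you actually need (and what the block lower-triangular form does give) is that there are no arrows from any $V_s$ with $s<r$ back into $V_r$, so a path that leaves $V_r$ downward can never return. The conclusion is unaffected.
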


\begin{proof}
According to Lemma~\ref{lem:irreducible-submatrices}, 
we may re-label the intervals $I_1,\ldots, I_p$
in such a way that the adjacency matrix of $G$ is of the form given in
\eqref{eq:irreducible-submatrices}.  We set  $\lambda:=\lambda(M)$.
By Corollary~\ref{cor:Perron-Frobenius}, $\lambda$ is an eigenvalue of $M$.
We assume that $\lambda>0$, otherwise there is nothing to prove.
We keep the notation of \eqref{eq:irreducible-submatrices}.
Since $\lambda$ is the maximal eigenvalue of $M$,
it is also the maximal eigenvalue of 
$M_p$ for some $p\in\Lbrack 1,k\Rbrack$.
Let $\CI$ be the finite set of indices supporting $M_p$.
For all integers $n\ge 1$, we write $(M_p)^n=(m_{ij}^n)_{i,j\in \CI}$.
Then, by Proposition~\ref{prop:spectral-radius},
$$
\lim_{n\to +\infty} \frac 1n \log \left(\sum_{i,j\in \CI} m_{ij}^n\right)=\log\lambda.
$$
Thus, by Lemma~\ref{lem:sequences-an-bn1}, 
there exist two indices $i_0,j_0\in \CI$ such that
\begin{equation}\label{eq:logaij}
\limsup_{n\to+\infty}\frac 1n \log m_{i_0j_0}^n=\log \lambda.
\end{equation}

According to Proposition~\ref{prop:Mn}, for all $i,j\in\CI$ and all $n\ge 1$,
$m_{ij}^n$ is equal to the number of paths of length $n$ from
$I_i$ to $I_j$ in the directed graph $G$.
Since $M_p$ is irreducible (notice that $M_p\neq (0)$ because $\lambda(M_p)>0$),
there exists $n_0\ge 1$
such that $m_{j_0i_0}^{n_0}>0$. Thus there exists a path $\CA$ of length $n_0$
from $I_{j_0}$ to $I_{i_0}$ in $G$. For every path $\CB$ of length $n$ from $I_{i_0}$
to $I_{j_0}$, the concatenated path $\CB\CA$ is a path of length $n+n_0$ from
$I_{i_0}$ to itself. Therefore,
the number of paths of length $n+n_0$ from $I_{i_0}$ to itself
is greater than
or equal to the number of paths of length $n$ from $I_{i_0}$ to $I_{j_0}$. 
In other words, $m_{i_0i_0}^{n+n_0}\ge m_{i_0j_0}^n$ for all $n\ge 1$. 
Combined with \eqref{eq:logaij}, this implies:
$$
\limsup_{n\to+\infty}\frac 1n \log m_{i_0i_0}^n\ge \log\lambda.
$$
We fix $\eps\in(0,\lambda)$ and a positive integer $n$ such that
\begin{equation}\label{eq:a-ii}
\frac 1n \log m_{i_0i_0}^n\ge \log (\lambda-\eps).
\end{equation}
Then $N:=m_{i_0i_0}^n$ is the number of paths of length
$n$ from $I_{i_0}$ to itself in $G$ and, by 
Proposition~\ref{prop:path-matrix}, there exist $N$ distinct chains of intervals
of the form $(I_{i_0},I_{i_1},\ldots,I_{i_n})$ with $i_0$ fixed, $i_n=i_0$ and
$i_j\in\CI$ for all $j\in\Lbrack 1,n-1\Rbrack$.
Then, according to Lemma~\ref{lem:chain-of-intervals}(iii), 
there exist $N$ closed
subintervals $K_1,\ldots, K_N$ with pairwise disjoint interiors such
that, $\forall j\in\Lbrack 1,N\Rbrack$, $K_j\subset I_{i_0}$ and $f^n(K_j)=I_{i_0}$. 
Thus $(K_1,\ldots, K_N)$ is an $N$-horseshoe for $f^n$, which implies
that $h_{top}(f^n)\ge \log N$ by Proposition~\ref{prop:horseshoe-htop}.
Then, since $h_{top}(f)=\frac{1}{n}h_{top}(f^n)$ by 
Proposition~\ref{prop:htop-Tn}
and $\frac{1}{n}\log N\ge \log(\lambda-\eps)$ by \eqref{eq:a-ii},
we deduce that $h_{top}(f)\ge \log(\lambda-\eps)$. Finally, letting $\eps$ tend
to zero, we get $h_{top}(f)\ge \log\lambda$.
\end{proof}

\begin{rem}
If $(I_1,\ldots, I_p)$ is a $p$-horseshoe, then the 
graph associated to $(I_1,\ldots, I_p)$ contains a subgraph whose
transition matrix
is $M=(m_{ij})_{1\le i,j\le p}$ with $m_{ij}=1$ for all 
$i,j\in\Lbrack 1,p\Rbrack$. Moreover, $ ^t(1,1,\ldots,1)$ is clearly an eigenvector
of $M$ for the eigenvalue $p$, so $\lambda(M)\ge p$. This shows that Proposition~\ref{prop:horseshoe-htop}
is a particular case of Proposition~\ref{prop:matrix-htop}.
\end{rem}

\subsection{The connect-the-dots map associated to a finite invariant set}

When considering the graph associated to $P$-intervals, a particularly
convenient case is when $P$ is a periodic orbit, or more generally a
finite invariant set. Knowing only the values of $f$ on $P$ is sufficient
to determine a subgraph of $G(f|P)$. This subgraph is intimately related to
the ``connect-the-dots'' map $f_P$ associated to $f$ on $P$ (for all
$x\in P$, plot the points $(x,f(x))$; then connect linearly the dots to
get the graph $y=f_P(x)$).

\begin{defi}
Let $f\colon I\to I$ be an interval map and let $P=\{p_0<p_1<\cdots< p_k\}$ be
a finite subset of $I$ with $k\ge 1$. 
The map $f$ is \emph{$P$-monotone}\index{P-monotone map@$P$-monotone map}\index{monotone ($P$- )} if $f(P)\subset P$, $I=[p_0,p_k]$ and 
$f$ is monotone on every $P$-interval $[p_0,p_1],\ldots, [p_{k-1},p_k]$.
The map $f$ is \emph{$P$-linear}\index{P-linear map@$P$-linear map}\index{linear ($P$- )} if in addition $f$ is linear on every
$P$-interval.

If $P$ is an invariant set, let $f_P\colon [p_0,p_k]\to [p_0,p_k]$ denote the 
unique $P$-linear map agreeing with $f$ on $P$. The map $f_P$ is called
the \emph{connect-the-dots}\index{connect-the-dots map}
\label{notation:ctdfP}
map associated to 
$f|_P\colon P\to P$. For short, we write $G(f_P)$ and $M(f_P)$ instead of 
$G(f_P|P)$ and $M(f_P|P)$.
\label{notation:ctdGfP}\label{notation:cdtMfP}
\end{defi}

Let $P=\{p_0<p_1<\cdots <p_k\}$ be a finite invariant set with $k\ge 1$
and, for every $i\in\Lbrack 0,k\Rbrack$, let $\sigma(i)\in\Lbrack 0,k\Rbrack$ 
be such that $f(p_i)=p_{\sigma(i)}$. 
The graph $G(f_P)$ is determined by the values of $f$ on $P$. Indeed,
for every $i\in\Lbrack 0,k-1\Rbrack$, there is an arrow $[p_i,p_{i+1}]\to [p_j,p_{j+1}]$
in $G(f_P)$ if and only if $[p_j,p_{j+1}]\subset \langle p_{\sigma(i)},
p_{\sigma(i+1)}\rangle$, i.e., either $\sigma(i)\le j<j+1\le \sigma(i+1)$
or $\sigma(i+1)\le j<j+1\le \sigma(i)$.
Notice that, if $f$ is $P$-monotone, then $G(f|P)=G(f_P)$.

\medskip
The next lemma follows trivially from the definition.

\begin{lem}\label{lem:image-Pint}
Let $f$ be an interval map and $P$ a finite invariant set. 
Suppose that
$f$ is $P$-monotone. Then, for every $P$-interval $J$, either
$f$ is constant on $J$ or $f(J)$ is a nonempty union of $P$-intervals.
\end{lem}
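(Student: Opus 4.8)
Let $f$ be an interval map and $P$ a finite invariant set. Suppose that $f$ is $P$-monotone. Then, for every $P$-interval $J$, either $f$ is constant on $J$ or $f(J)$ is a nonempty union of $P$-intervals.

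\textbf{Proof plan.} The plan is to treat each $P$-interval separately and use the $P$-monotonicity hypothesis to pin down its image exactly. Write $P=\{p_0<p_1<\cdots<p_k\}$ and fix a $P$-interval $J=[p_i,p_{i+1}]$. Since $f$ is $P$-monotone, the restriction $f|_J$ is monotone and continuous, so $f(J)$ is the closed interval between the two endpoint values: $f(J)=\langle f(p_i),f(p_{i+1})\rangle$. The inclusion $f(J)\subset\langle f(p_i),f(p_{i+1})\rangle$ is immediate from monotonicity, and the reverse inclusion is the intermediate value theorem (Theorem~\ref{theo:ivt}). Because $P$ is invariant, both $f(p_i)$ and $f(p_{i+1})$ lie in $P$; say $f(p_i)=p_a$ and $f(p_{i+1})=p_b$ with $a,b\in\Lbrack 0,k\Rbrack$.

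Now I would split into two cases according to whether $a=b$. If $a=b$, then $f(J)=\{p_a\}$ is a single point; since $f|_J$ is monotone and takes the same value at the two endpoints $p_i,p_{i+1}$, it is constant on $J$ (for a non-decreasing $f|_J$ one has $p_a=f(p_i)\le f(x)\le f(p_{i+1})=p_a$ for every $x\in J$, and symmetrically in the non-increasing case), which is the first alternative in the statement. If $a\ne b$, set $c:=\min\{a,b\}$ and $d:=\max\{a,b\}$, so that $c<d$ and $f(J)=\langle p_a,p_b\rangle=[p_c,p_d]$. This interval decomposes as $[p_c,p_d]=\bigcup_{j=c}^{d-1}[p_j,p_{j+1}]$, a nonempty union of $P$-intervals, which is the second alternative. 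This exhausts all cases and proves the lemma for each $P$-interval, hence for every $P$-interval.

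The argument is entirely routine; there is no real obstacle. The only point requiring a moment's care is the conclusion ``$f$ is constant on $J$'' in the degenerate case: one must note that a monotone function with equal values at the endpoints of $J$ is genuinely constant on all of $J$, so that here the monotonicity hypothesis (not mere continuity of $f$) is being used. Everything else follows directly from the intermediate value theorem and the invariance $f(P)\subset P$.
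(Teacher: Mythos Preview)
Your proof is correct and is exactly the routine verification the paper has in mind; the paper itself gives no proof, stating only that the lemma ``follows trivially from the definition.'' Your write-up makes explicit the two ingredients (monotonicity gives $f(J)=\langle f(p_i),f(p_{i+1})\rangle$, invariance gives that the endpoints lie in $P$) and correctly handles the degenerate case $f(p_i)=f(p_{i+1})$.
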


\begin{rem}
If $P=\{p_0<p_1<\cdots <p_k\}$, a map $f\colon I\to I$ is 
\emph{Markov}\index{Markov map} with respect to the pseudo-partition $[p_0,p_1],
\ldots, [p_{k-1},p_k]$ if $I=[p_0,p_k]$ and, for every $P$-interval $J$,
$f|_J$ is monotone and $f(J)$ is a union of
$P$-intervals. This notion is very close to $P$-monotonicity. Indeed, $f$
is Markov with respect to the $P$-intervals if and only if $f$ is 
$P$-monotone and $f(p_i)\neq f(p_{i+1})$ for all $i\in\Lbrack 0,k-1\Rbrack$
(i.e., $f$ is non constant on every $P$-interval). The main additional
property of Markov maps is that, for every point $x\in I$, there exists an
infinite path $A_0\to A_1\to A_2\to\cdots A_n\to\cdots$ in $G(f|P)$
such that $f^n(x)\in A_n$ for all $n\ge 0$. 
\end{rem}

\begin{lem}\label{lem:MfPn}
Let $f$ be an interval map, $P$ a finite invariant set and $n$ a positive 
integer.  Then $M(f^n|P)\ge M(f_P)^n$.
\end{lem}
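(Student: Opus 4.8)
The plan is to prove the inequality entry by entry, reducing it to a counting statement about paths in $G(f_P)$ and then feeding those paths into Lemma~\ref{lem:chain-of-intervals}(iii). Write $P=\{p_0<p_1<\cdots<p_k\}$ and let $I_1<\cdots<I_k$ be the $P$-intervals, so that $M(f^n|P)$, $M(f_P)$ and $M(f_P)^n$ are all $k\times k$ matrices and ``$\ge$'' is meant entrywise. The first step is to observe that $G(f_P)$ is a \emph{simple} directed graph: since $f_P$ is $P$-linear, $f_P|_{I_i}$ is monotone for every $P$-interval $I_i$, so the set of $x\in I_i$ with $f_P(x)$ in a given $P$-interval $I_j$ is a single (possibly empty) subinterval of $I_i$, and hence $I_i$ covers $I_j$ at most once for $f_P$ (if it covered it twice, two subintervals with disjoint interiors would each map onto $I_j$, forcing $I_j$ to be degenerate). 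Thus every entry of $M(f_P)$ is $0$ or $1$, and by Proposition~\ref{prop:Mn} the entry $[M(f_P)^n]_{ij}$ equals the number of paths of length $n$ from $I_i$ to $I_j$ in $G(f_P)$, each such path being determined by its vertex sequence $(I_i=J_0,J_1,\dots,J_n=I_j)$ with $J_{t+1}\subset f_P(J_t)$ for all $t$.

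The second step is to upgrade these $f_P$-chains to chains of intervals for $f$, and then to realize them inside $I_i$ with disjoint interiors. If $J_t=[p_a,p_{a+1}]$, then $f_P(J_t)=\langle f_P(p_a),f_P(p_{a+1})\rangle=\langle f(p_a),f(p_{a+1})\rangle$ since $f_P$ agrees with $f$ on $P$ and is monotone on $J_t$; by the intermediate value theorem (Theorem~\ref{theo:ivt}), $f(J_t)\supset\langle f(p_a),f(p_{a+1})\rangle=f_P(J_t)\supset J_{t+1}$, so $(J_0,\dots,J_n)$ is a chain of intervals for $f$. Now fix $i,j$, set $N:=[M(f_P)^n]_{ij}$, and let $(J_0^m,\dots,J_n^m)$, $m\in\Lbrack 1,N\Rbrack$, be the $N$ corresponding chains. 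For distinct $m,m'$ the vertex sequences are distinct (this is where simplicity of $G(f_P)$ is used), so there is $t\in\Lbrack 0,n\Rbrack$ with $J_t^m\neq J_t^{m'}$, and two distinct $P$-intervals have disjoint interiors; hence the hypotheses of Lemma~\ref{lem:chain-of-intervals}(iii) hold with $p=N$. We obtain closed intervals $K_1,\dots,K_N$ with pairwise disjoint interiors such that $f^n(K_m)=J_n^m=I_j$ and $f^t(K_m)\subset J_t^m$ for all $t$; in particular $K_m\subset J_0^m=I_i$. So $I_i$ contains $N$ closed subintervals with pairwise disjoint interiors, each covering $I_j$ for $f^n$, which by Definition~\ref{def:covering} gives $[M(f^n|P)]_{ij}\ge N=[M(f_P)^n]_{ij}$. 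As $i,j$ were arbitrary, $M(f^n|P)\ge M(f_P)^n$.

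The main obstacle — and the reason for routing the argument through $f_P$ rather than working with $G(f|P)$ directly or trying to prove the stronger-looking $M(f^n|P)\ge M(f|P)^n$ — is exactly the disjointness hypothesis of Lemma~\ref{lem:chain-of-intervals}(iii). In the possibly non-simple graph $G(f|P)$, two distinct paths of length $n$ can traverse the same sequence of $P$-intervals via different arrows, so no index $t$ with $J_t^m,J_t^{m'}$ having disjoint interiors would be available, and one could not separate the corresponding subintervals. Passing to the simple graph $G(f_P)$ removes this difficulty, at the sole cost of replacing $M(f|P)$ by the smaller matrix $M(f_P)$ — precisely the inequality that is asked for.
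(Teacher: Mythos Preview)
Your proof is correct and follows the same route as the paper's: count paths of length $n$ in $G(f_P)$ via Proposition~\ref{prop:Mn}, observe that each such path is also a chain of intervals for $f$, and conclude that $I_i$ covers $I_j$ at least $[M(f_P)^n]_{ij}$ times for $f^n$. The paper compresses this into three lines and does not spell out the two points you make explicit --- that $G(f_P)$ is simple (so distinct paths have distinct vertex sequences) and that Lemma~\ref{lem:chain-of-intervals}(iii) is what converts those distinct chains into subintervals of $I_i$ with pairwise disjoint interiors --- but these are exactly the ingredients hidden behind its phrase ``which implies''. Your final paragraph explaining why the detour through $f_P$ (rather than $G(f|P)$) is forced by the disjointness hypothesis of Lemma~\ref{lem:chain-of-intervals}(iii) is a correct and useful remark that the paper does not make.
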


\begin{proof}
Let $I_1,\ldots, I_k$ be the $P$-intervals. We write
$M(f_P)^n:=(a_{ij})_{1\le i,j\le k}$. By Proposition~\ref{prop:Mn},
$a_{ij}$ is the number of paths of length $n$ from $I_i$ to $I_j$ in
the graph $G(f_P)$. Each path is a chain of intervals for $f_P$, and also
for $f$, which implies that $I_i$ covers $I_j$ $a_{ij}$ times for $f$.
This exactly means that the $(i,j)$-coefficient of $M(f^n|P)$ is
greater than or equal to $a_{ij}$. Hence $M(f^n|P)\ge M(f_P)^n$.
\end{proof}

The next result was first stated by Coppel in \cite{Cop2}. 
It shows that the graph associated to a $P$-monotone map represents well
the dynamics from the point of view of entropy.

\begin{prop}\label{prop:htop-Markov-map}
Let $f\colon I\to I$ be an interval map and let $P$ be 
a finite invariant set. If $f$ is $P$-monotone, then 
$$
h_{top}(f)=h_{top}(f_P)=\max(0,\log \lambda(M(f_P))).
$$
\end{prop}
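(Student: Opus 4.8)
The statement to prove is Proposition~\ref{prop:htop-Markov-map}: for a $P$-monotone interval map $f$ with $P$ a finite invariant set, one has $h_{top}(f)=h_{top}(f_P)=\max(0,\log\lambda(M(f_P)))$. The plan is to prove the chain of (in)equalities $h_{top}(f)\ge\log\lambda(M(f_P))$, $h_{top}(f)=h_{top}(f_P)$, and $h_{top}(f_P)\le\max(0,\log\lambda(M(f_P)))$, which together with the trivial $h_{top}\ge 0$ give all three quantities equal.

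First I would observe that when $f$ is $P$-monotone, the graph of the periodic-orbit type construction coincides with $G(f_P)$, and more importantly $G(f_P)$ is literally a subgraph of the graph $G(f|P)$ associated to the $P$-intervals: every arrow of $G(f_P)$ reflects a genuine covering relation for $f$ on the $P$-intervals (because on each $P$-interval $J$, $f(J)=f_P(J)=\langle f(p_i),f(p_{i+1})\rangle$ is exactly the union of $P$-intervals predicted by $\sigma$, using $P$-monotonicity and the remark before Lemma~\ref{lem:image-Pint}). Hence Proposition~\ref{prop:matrix-htop}, applied with the intervals being the $P$-intervals and the subgraph being $G(f_P)$ with adjacency matrix $M(f_P)$, yields $h_{top}(f)\ge\log\lambda(M(f_P))$. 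The same argument applied to $f_P$ itself (which is also $P$-monotone, with the very same graph) gives $h_{top}(f_P)\ge\log\lambda(M(f_P))$.

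Next I would establish $h_{top}(f)=h_{top}(f_P)$ via a monotone-cover argument, using Proposition~\ref{prop:monotone-cover}: take $\CA$ to be the partition into the $P$-intervals. This $\CA$ is a monotone cover for both $f$ and $f_P$ (that is exactly what $P$-monotonicity says). By Proposition~\ref{prop:monotone-cover}, $h_{top}(f)=h_{top}(\CA,f)$ and $h_{top}(f_P)=h_{top}(\CA,f_P)$. Now $h_{top}(\CA,g)=\lim_n\frac1n\log N(\CA^n)$ and, since $\CA^n$ is a partition, $N(\CA^n)=\#\CA^n$, which counts the nonempty sets $\bigcap_{i=0}^{n-1}g^{-i}(A_i)$ with $A_i$ a $P$-interval; by Lemma~\ref{lem:chain-of-intervals}(i) applied on the graph side, a set of this form is nonempty iff $(A_0,\dots,A_{n-1})$ is a chain of intervals for $g$, i.e.\ a path in $G(g|P)$. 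Since $G(f|P)$ and $G(f_P|P)=G(f_P)$ have exactly the same arrows when $f$ is $P$-monotone (the remark before Lemma~\ref{lem:image-Pint} shows $G(f|P)=G(f_P)$ in this case), $\#\CA^n$ is the same number for $f$ and for $f_P$; therefore $h_{top}(f)=h_{top}(f_P)=\lim_n\frac1n\log\#\CA^n$.

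Finally, for the upper bound, I would compute $\#\CA^n$ directly: by Proposition~\ref{prop:path-matrix}, $\#\CA^n$ equals the total number of paths of length $n-1$ in $G(f_P)$, which is $\|M(f_P)^{n-1}\|$ by Proposition~\ref{prop:Mn}. Hence $h_{top}(f_P)=\lim_n\frac1n\log\|M(f_P)^{n-1}\|=\log\lambda(M(f_P))$ by Gelfand's formula (Proposition~\ref{prop:spectral-radius}), when $\lambda(M(f_P))>0$; and by Corollary~\ref{cor:Perron-Frobenius} $\lambda(M(f_P))\ge 0$, so in the degenerate case $\lambda(M(f_P))=0$ the norms $\|M(f_P)^{n}\|$ are bounded (in fact eventually $0$ for a nilpotent graph matrix) and $h_{top}(f_P)=0$. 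Combining: $h_{top}(f)=h_{top}(f_P)=\max(0,\log\lambda(M(f_P)))$. The main subtlety, and the step to write carefully, is the verification that for a $P$-monotone map $G(f|P)$ and $G(f_P)$ have literally the same arrow multiplicities (so that the entropies of $f$ and $f_P$ agree on the nose, not merely up to the lower bound from Proposition~\ref{prop:matrix-htop}); everything else is bookkeeping with already-established lemmas. One should also note at the start that the case $k=1$ (so $P=\{p_0<p_1\}=\End I$) is included and harmless, and handle the trivial reductions ($f(P)\subset P$, $I=[p_0,p_k]$) as given hypotheses.
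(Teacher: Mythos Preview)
Your overall strategy matches the paper's: use Proposition~\ref{prop:monotone-cover} on the cover $\CA$ by $P$-intervals for the upper bound, and Proposition~\ref{prop:matrix-htop} for the lower bound. But there is a genuine gap in your counting step.

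First, a minor point: $\CA$ is \emph{not} a partition in the paper's sense---consecutive $P$-intervals share an endpoint---so the claim $N(\CA^n)=\#\CA^n$ is unjustified. This is not fatal for an upper bound (one always has $N(\CA^n)\le\#\CA^n$), but it does invalidate your argument that $h_{top}(f)=h_{top}(f_P)$ via equality of $\#\CA^n$.

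The real problem is your assertion that a set $\bigcap_{i=0}^{n-1}g^{-i}(A_i)$ is nonempty iff $(A_0,\dots,A_{n-1})$ is a path in $G(f_P)$, hence $\#\CA^n=\|M(f_P)^{n-1}\|$. The ``only if'' direction fails whenever $f$ is constant on some $P$-interval $J$. In that case $f(J)$ is a single point of $P$, so $J$ has \emph{no outgoing arrow} in $G(f_P)$; yet $J\cap f^{-1}(A_1)\cap\cdots$ is certainly nonempty for the (one or two) $P$-intervals $A_1$ containing that point, and so on. Concretely, take $P=\{0,\tfrac13,\tfrac23,1\}$ with $f(0)=f(\tfrac13)=0$, $f(\tfrac23)=1$, $f(1)=0$: the interval $A=[0,\tfrac13]$ has no outgoing arrow, but $(A,A,\dots,A)$ gives a nonempty intersection at every level, so $\#\CA^n>\|M^{n-1}\|$ for all $n\ge 2$. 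You correctly flag that the step to write carefully is the identification of arrows, but you identify the wrong subtlety: the issue is not arrow multiplicities (which do agree for $P$-monotone $f$), it is that nonempty itineraries need not be paths.

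The paper fixes this by building an explicit subcover $\CB_n\subset\CA^n$ split into $\CB_n^+$ (paths of full length $n$ ending at a non-constant interval) and $\CB_n^-$ (paths of length $\le n-1$ terminating at a constant interval), yielding $N_n(\CA,f)\le\#\CB_n\le\sum_{k=0}^{n-1}\|M(f_P)^k\|$, which suffices for the upper bound. Your proof can be repaired along the same lines, but as written it does not handle the constant-interval case.
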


\begin{proof}
Let $\CA$ be the family of all $P$-intervals. This is a cover of $I$.
Let $\CC\subset\CA$ be the family of $P$-intervals on which $f$ is constant.
For every $n\ge 1$, we set
\begin{gather*}
\CB_n^+:=\left\{\bigcap_{i=0}^{n-1} f^{-i}(I_i)\mid I_0\to\ I_1\to\cdots
\to I_{n-1}\text{ is a path in }G(f_P),\ I_{n-1}\notin\CC\right\},\\
\CB_n^-:=\left\{\bigcap_{i=0}^k f^{-i}(I_i)\mid k\in\Lbrack 0,n-1\Rbrack,\ I_0\to\cdots
\to I_k\text{ is a path in }G(f_P),\ I_k\in\CC\right\},
\end{gather*}
and $\CB_n:=\CB_n^+\cup\CB_n^-$. We are going to show that $\CB_n$ is a subcover
of $\CA^n$.
It is clear that $\CB_n^+\subset \CA^n$, and the elements of $\CB_n^-$ 
of the form $\bigcap_{i=0}^{n-1}f^{-i}(I_i)$ are in $\CA^n$ too.
Let $J\in \CB_n^-$ with $J=\bigcap_{i=0}^k f^{-i}(I_i)$ and $k<n-1$.
By definition, $f(I_k)$ is reduced to one point $\{x\}$. Since 
$\CA$ is a cover, 
there exist $I_{k+1},\ldots, I_{n-1}\in\CA$ such that
$f^i(x)\in I_{k+i}$ for all $i\in\Lbrack 1,n-1-k\Rbrack$. We have
$J=\bigcap_{i=0}^{n-1}f^{-i}(I_i)$, so $J\in\CA^n$. This proves
that $\CB_n\subset \CA^n$.
We now show that $\CB_n$ is a cover of $I$ by induction on $n$.

\medskip
\noindent$\bullet$ $\CB_1=\CA$ is a cover.

\medskip
\noindent$\bullet$ Let $n\ge 2$.
We have $\CB_{n-1}^-\subset \CB_n$. Let $J\in \CB_{n-1}^+$ with
$J=\bigcap_{i=0}^{n-2} f^{-i}(I_i)$. By definition, $I_{n-2}\notin\CC$,
so $f(I_{n-2})$ is a nonempty union of $P$-intervals
by Lemma~\ref{lem:image-Pint}, say
$f(I_{n-2})=A_1\cup\cdots\cup A_j$ with $A_1,\ldots, A_j\in\CA$.
Then, for every $i\in\Lbrack 1,j\Rbrack$, $I_0\to\cdots I_{n-2}\to A_i$ is a path in
$G(f_P)$, $J_i:=J\cap f^{-(n-1)}(A_i)$ is an element of $\CB_n$ and 
$J=J_1\cup\cdots\cup J_j$. 
Therefore, if $\CB_{n-1}$ is a cover of $I$, then $\CB_n$ is a cover too.
This ends the induction.

\medskip
Since $\CB_n$ is a subcover of $\CA^n$, we have $N_n(\CA,f)\le \#\CB_n$.
By Proposition~\ref{prop:Mn}, $\|M(f_P)^k\|$ is the number 
of paths of length $k$ in $G(f_P)$. Thus
$$
\# \CB_n\le \sum_{k=0}^{n-1}\|M(f_P)^k\|,
$$ 
and hence
$$
\frac 1n \log N_n(\CA,f)\le \frac 1n \log \# \CB_n\le \frac 1n \log 
\left(n\max_{k\in\Lbrack 0,n-1\Rbrack} \|M(f_P)^k\|\right).
$$
If the sequence $\left(\|M(f_P)^k\|\right)_{k\ge 0}$ is bounded, then
$$
\lim_{n\to+\infty}\frac 1n \log 
\left(n\max_{k\in\Lbrack 0,n-1\Rbrack} \|M(f_P)^k\|\right)=0,$$
and thus $h_{top}(\CA,f)=0$.
Otherwise,  there exists an increasing sequence of integers $(n_i)_{i\ge 0}$
such that $\|M(f_P)^{n_i}\|=\max_{k\in\Lbrack 0,n_i\Rbrack} \|M(f_P)^k\|$
for all $i\ge 0$. This implies that
\begin{eqnarray*}
h_{top}(\CA,f)&=&\lim_{n\to+\infty}\frac 1n \log N_n(\CA,f)\\
&\le&\limsup \frac 1n \log 
\left(n\|M(f_P)^n\|\right)
=\limsup_{n\to+\infty} \frac 1n \log (\|M(f_P)^n\|)\\
&\le& \log \lambda(M(f_P))\quad
\text{by Proposition~\ref{prop:spectral-radius}.}
\end{eqnarray*}
Since $\CA$ is a monotone cover, we have $h_{top}(f)=h_{top}(\CA,f)$
by Proposition~\ref{prop:monotone-cover}, so
$h_{top}(f)\le \max(0,\log \lambda(M(f_P)))$.
Proposition~\ref{prop:matrix-htop} and the fact that $h_{top}(f)\ge 0$
imply the converse inequality 
$h_{top}(f)\ge \max(0,\log \lambda(M(f_P)))$. We conclude that 
$h_{top}(f)= \max(0,\log \lambda(M(f_P)))$.
\end{proof}

The converse of Proposition~\ref{prop:htop-Markov-map} does not hold in 
general: there exist
interval maps $f$ with a finite invariant set $P$ such that $h_{top}(f)=
h_{top}(f_P)$ although $f$ is not $P$-monotone and is not 
constant on any subinterval.
See Figure~\ref{fig:Markov-nonPmonotone} for a counter-example.
However, we shall see later that it does hold for transitive maps, 
i.e., a transitive interval map $f$ such that $h_{top}(f)=h_{top}(f_P)$ is
necessarily $P$-monotone (Proposition~\ref{prop:h=hfP}).

\begin{figure}[htb]
\centerline{\includegraphics{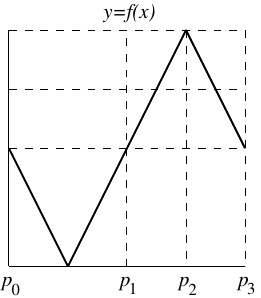}}
\caption{The set $P=\{p_0,p_1,p_2,p_3\}$ is invariant
and it is easy to show that $h_{top}(f)=h_{top}(f_P)=\log 2$ ,
but $f$ is not $P$-monotone.}
\label{fig:Markov-nonPmonotone}
\end{figure}

\medskip
For every finite invariant set $P$, $G(f_P)$ is a subgraph of $G(f|P)$,
and thus Proposition~\ref{prop:matrix-htop} implies that
$h_{top}(f)\ge h_{top}(f_P)$. The next proposition shows that the entropy of $f$
can be approached arbitrarily close in this way.
This result was first stated by Takahashi \cite{Tak2}, but 
it appears that this proof is valid only for piecewise monotone maps, as
noticed by Block and Coven, who gave a complete proof in \cite{BCov2}.
See also the extensive paper of Misiurewicz and Nitecki \cite{MN}.

\begin{prop}\label{prop:htop=suphfP}
Let $f$ be an interval map. Then
\begin{eqnarray*}
h_{top}(f)&=&\sup\{h_{top}(f_P) \mid P \text{ finite invariant set}\}\\
&=&\sup\{h_{top}(f_P) \mid P \text{ periodic orbit}\}.
\end{eqnarray*}
\end{prop}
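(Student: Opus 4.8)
The inequality ``$\ge$'' in both equalities is the easy one, and is essentially recorded just before the statement: for every finite invariant set $P$ the graph $G(f_P)$ is a subgraph of $G(f\vert P)$, so Proposition~\ref{prop:matrix-htop} gives $h_{top}(f)\ge\log\lambda(M(f_P))$; since $f_P$ is $P$-linear, hence $P$-monotone, Proposition~\ref{prop:htop-Markov-map} yields $h_{top}(f_P)=\max(0,\log\lambda(M(f_P)))\le h_{top}(f)$. As periodic orbits are finite invariant sets, the supremum over periodic orbits is $\le$ the supremum over finite invariant sets, which is $\le h_{top}(f)$. So everything reduces to
$$
h_{top}(f)\le\sup\{h_{top}(f_P)\mid P\text{ periodic orbit}\}.
$$
If $h_{top}(f)=0$ there is nothing to prove, since $f$ has a fixed point by Lemma~\ref{lem:fixed-point} and the associated $f_P$ is affine. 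So assume $h_{top}(f)>0$, fix $a$ with $0<a<h_{top}(f)$, and pick $a'$ with $a<a'<h_{top}(f)$; the goal is to produce a periodic orbit $P$ with $h_{top}(f_P)\ge a$.

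The key ingredient is a transfer inequality. Let $P$ be any finite $f$-invariant set and $n\ge1$; put $g:=f^n$. Since $f_P$ agrees with $f$ on $P$ and $P$ is $f$-invariant, $(f_P)^n$ agrees with $g$ on $P$, so the $P$-linear map associated with the interval map $(f_P)^n$ on $P$ is exactly $g_P$. Applying the elementary fact $h_{top}(\varphi)\ge h_{top}(\varphi_P)$ (valid for every interval map $\varphi$ and finite invariant set, as above) to $\varphi:=(f_P)^n$, together with $h_{top}((f_P)^n)=n\,h_{top}(f_P)$ from Proposition~\ref{prop:htop-Tn}, gives
$$
n\,h_{top}(f_P)\ \ge\ h_{top}(g_P)\ =\ \max(0,\log\lambda(M(g_P))).
$$
Now invoke Misiurewicz's Theorem~\ref{theo:Misiurewicz}: there are $n\ge1$ and a strict $p$-horseshoe $(J_1,\dots,J_p)$ for $g:=f^n$, with $J_1<\dots<J_p$, such that $\tfrac1n\log p\ge a'$. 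In view of the displayed inequality it suffices to construct, for a prescribed $\varepsilon>0$, a periodic orbit $P$ of $f$ with $\lambda(M(g_P))\ge p-\varepsilon$: choosing $\varepsilon$ so small that $\tfrac1n\log(p-\varepsilon)\ge a$ (possible since $\tfrac1n\log p\ge a'>a$) then gives $h_{top}(f_P)\ge\tfrac1n h_{top}(g_P)\ge a$.

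This last construction is the heart of the matter, and the principal obstacle. Starting from the coverings $g(J_i)\supseteq J_1\cup\dots\cup J_p$, Lemma~\ref{lem:chain-of-intervals}(i),(iii) provides, for each $i$, closed subintervals $L_{i,1},\dots,L_{i,p}\subseteq J_i$ with pairwise disjoint interiors such that $g(L_{i,j})=J_j$ and $g(\End{L_{i,j}})=\End{J_j}$. I would then pick a $g$-periodic point $z$ whose itinerary through $(J_1,\dots,J_p)$ is primitive — so, the $J_i$ being disjoint, $z$ has $g$-period exactly the length of the itinerary — and rich, realising as cyclic factors all words of a suitably large length $m=m(\varepsilon)$ over $\{1,\dots,p\}$; such a $z$ exists by Lemma~\ref{lem:chain-of-intervals}(ii). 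Setting $P:=\CO_f(z)$, a periodic orbit of $f$, the endpoint-to-endpoint property of the $L_{i,j}$ is what lets one recognise, among the $P$-intervals, a family on which $g_P$ acts like a horseshoe on nearly $p$ symbols. The genuinely hard point is the combinatorial bookkeeping: controlling precisely how the finitely many points of $P$ are distributed inside each $J_i$, and extracting from $G(g_P)$ a subgraph whose adjacency matrix has spectral radius at least $p-\varepsilon$. This is exactly the argument of Block and Coven \cite{BCov2} (see also \cite{MN}). Once it is in place, $h_{top}(f_P)\ge\tfrac1n\log(p-\varepsilon)\ge a$; since $a<h_{top}(f)$ was arbitrary and $P$ is a periodic orbit, the reverse inequality follows, and with it equality in both suprema — the one over finite invariant sets being squeezed between the supremum over periodic orbits and $h_{top}(f)$.
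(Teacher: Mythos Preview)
Your overall architecture is right and matches the paper's: the easy inequality via Propositions~\ref{prop:matrix-htop} and~\ref{prop:htop-Markov-map}, the reduction to periodic orbits, Misiurewicz's Theorem to get a strict $p$-horseshoe for $g=f^N$, and the transfer step $N\,h_{top}(f_P)\ge h_{top}(g_Q)$ (the paper phrases it via $Q=\CO_g(x)\subset P=\CO_f(x)$, noting that $(f_P)^N$ agrees with $g$ on $Q$, but this is equivalent to your inequality combined with $h_{top}(g_P)\ge h_{top}((g_P)_Q)=h_{top}(g_Q)$).

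The gap is exactly where you flag it: you don't actually construct the periodic orbit, and the route you sketch --- a ``rich'' itinerary realising all words of some large length $m$ --- is harder than necessary. With many visits to each $J_i$, the $Q$-intervals inside $J_i$ proliferate and the combinatorics of $G(g_Q)$ becomes genuinely unpleasant, which is precisely the ``bookkeeping'' you worry about.

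The paper avoids this entirely by choosing a \emph{sparse} rather than a rich itinerary. Using Lemma~\ref{lem:chain-of-intervals}(ii) on the chain
$$
I_2\to I_1\to I_3\to I_1\to\cdots\to I_{p-1}\to I_1\to I_2\to I_p\to I_3\to I_p\to\cdots\to I_{p-1}\to I_p\to I_2,
$$
one obtains a $g$-periodic point $x$ of period $4p-8$ whose $g$-orbit $Q$ hits each middle interval $I_i$ ($2\le i\le p-1$) \emph{exactly twice}: once at a point $y_i$ with $g(y_i)\in I_1$, once at a point $z_i$ with $g(z_i)\in I_p$. Then $\langle y_i,z_i\rangle$ is a single $Q$-interval, and since $g_Q$ is linear there with endpoints landing in $I_1$ and $I_p$, connectedness gives $g_Q(\langle y_i,z_i\rangle)\supset I_2\cup\cdots\cup I_{p-1}$. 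So the $(\langle y_i,z_i\rangle)_{2\le i\le p-1}$ form a $(p-2)$-horseshoe for $g_Q$ outright, giving $h_{top}(g_Q)\ge\log(p-2)$ with no $\varepsilon$ and no bookkeeping. The loss from $p$ to $p-2$ is then absorbed by taking $N$ large (Misiurewicz's Theorem allows this). This explicit two-points-per-interval trick is the one idea your sketch is missing.
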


\begin{proof}
The inequality $h_{top}(f)\ge \sup\{h_{top}(f_P)\mid P \text{ finite invariant set}\}$ follows from  Propositions \ref{prop:matrix-htop} and 
\ref{prop:htop-Markov-map}, 
and the inequality
$$\sup\{h_{top}(f_P) \mid P \text{ finite invariant set}\}
\ge\sup\{h_{top}(f_P)\mid P \text{ periodic orbit}\}$$
is trivial.
We are going to show that $h_{top}(f)\le \sup\{h_{top}(f_P) \mid P \text{ periodic orbit}\}$ when $h_{top}(f)>0$ 
(if $h_{top}(f)=0$, there is nothing to prove). Let 
$\lambda,\lambda'$ be such that $0<\lambda<\lambda'<h_{top}(f)$.
By Misiurewicz's Theorem \ref{theo:Misiurewicz},
there exist an arbitrarily large integer $N$ and a strict $p$-horseshoe for $f^N$ such
that $\frac{\log p}{N} \ge \lambda'$. We denote by $I_1< I_2<\cdots< I_p$  the 
intervals composing this horseshoe and we set $g:=f^N$. 
By applying 
Lemma~\ref{lem:chain-of-intervals}(ii) to the chain of intervals
\begin{eqnarray*}
&&I_2\to I_1\to I_3\to I_1\to\cdots \to I_{p-1}\to I_1\to\\
&&I_2\to I_p\to I_3\to I_p\to\cdots \to I_{p-1}\to I_p\to I_2,
\end{eqnarray*}
we can build 
a periodic point $x$ of period $4p-8$ for $g$ such that 
\begin{itemize}
\item for all $i=0,\ldots, p-3$,
$g^{2i}(x)$ belongs successively to $I_2, I_3,
\ldots, I_{p-1}$ and $g^{2i+1}(x)$ belongs to $I_1$; we set $y_i:=g^{2i}(x)$;
\item for all $i=p-2,\ldots, 2p-5$, $g^{2i}(x)$ belongs successively 
to $I_2, I_3, \ldots, I_{p-1}$ and $g^{2i+1}(x)$ belongs to $I_p$;
we set $z_i:=g^{2i}(x)$.
\end{itemize}

Let $Q:=\CO_g(x)$. For every $i\in\Lbrack 2,p-1\Rbrack$, $I_i$ contains only two
points of $Q$, namely $y_i,z_i$, and thus $\langle y_i, z_i\rangle$ is
a $Q$-interval. Moreover, $g(y_i)\in I_1$ and $g(z_i)\in I_p$, which implies
that $\langle g(y_i),g(z_i)\rangle$ contains $I_2\cup\cdots\cup I_{p-1}$
by connectedness. 
Therefore, the intervals $(\langle y_i,z_i\rangle)_{2\le i\le p-2}$ 
form a $(p-2)$-horseshoe for the map $g_Q$, which implies that
$h_{top}(g_Q)\ge \log (p-2)$ by Proposition~\ref{prop:horseshoe-htop}.
We set $M_Q:=M(g_Q)$.

Now we come back to the map $f$. Let $P:=\CO_f(x)$. Since $x$ is periodic 
for $g=f^N$, it is also periodic for $f$. Moreover,
$Q$ is a periodic orbit for the map $(f_P)^N$ because $f_P$ and $f$ coincide
on the set $P\supset Q$. Therefore, 
$$
h_{top}((f_P)^N)\ge \max(0,\log \lambda( M_Q))=h_{top} (g_Q)$$
by Propositions \ref{prop:matrix-htop} and
\ref{prop:htop-Markov-map}, and thus
$$
h_{top}(f_P)=\frac 1N h_{top}((f_P)^N)\ge \frac1N \log (p-2).
$$
Since $\frac1N \log p\ge \lambda'$, we have $h_{top}(f_P)\ge \lambda'-\frac 1N
\log \frac{p}{p-2}$. If $N$ is large enough, then 
$\frac1N\log 2<\lambda'-\lambda$ and $p$ can be arbitrarily large. In
particular, $\frac{p}{p-2}\le 2$ if $p\ge 4$, so
$\frac1N\log\frac{p}{p-2}\le\frac1N\log 2<\lambda'-\lambda$.
We thus have $h_{top}(f_P)\ge \lambda$. We deduce the required
result by taking $\lambda$ tending to $h_{top}(f)$.
\end{proof}

\subsection*{Remarks on graph maps}

The notion of graph associated to a family of intervals is
meaningful for graph maps provided Definition~\ref{def:coveringG} is used for 
covering, and Proposition~\ref{prop:matrix-htop} holds for graph maps with no change.

For graph maps, one can define $P$-monotone maps when the finite invariant set 
$P$ contains all the branching points of the graph 
(which requires that the orbit
of every branching point is finite), and there is
no difficulty to extend Proposition~\ref{prop:htop-Markov-map} to
$P$-monotone graph maps in this case (see, e.g., a remark in \cite{AMM}).
However, the 
\emph{connect-the-dots map} associated to a finite invariant set $P$ 
is not well defined in general (it is well defined when the space is a tree 
and $P$ contains the branching points).

\begin{defi}
Let $f\colon G\to G$ be a graph map and let $P$ be a finite
invariant set containing all the branching points and all the
endpoints of $G$. 
A \emph{$P$-basic interval} is any connected component
of $G\setminus P$.
The map $f$ is called \emph{$P$-monotone}\index{P-monotone graph map@$P$-monotone graph map}\index{monotone graph map ($P$- )}
if $f$ is monotone in restriction to every $P$-basic interval.
If $f$ is $P$-monotone, let
$M(f|P)$ denote the adjacency matrix of the graph associated to the family
of all $P$-basic intervals.
\end{defi}

\begin{prop}
Let $f\colon G\to G$ be a graph map and let $P$ be a finite
invariant set containing all the branching points and all the
endpoints of $G$. If $f$ is $P$-monotone, then
$h_{top}(f)=\max (0,\log (\lambda(M(f|P))))$.
\end{prop}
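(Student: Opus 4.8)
The plan is to adapt the proof of Proposition~\ref{prop:htop-Markov-map} to the graph setting, replacing $P$-intervals by $P$-basic intervals and using Definition~\ref{def:coveringG} for covering. The key observation is that all the essential ingredients already exist in the graph-map context: Lemma~\ref{lem:chain-of-intervals}(ii)-(iii) holds for graph maps as noted after Definition~\ref{def:coveringG}, the Perron-Frobenius machinery (Corollary~\ref{cor:Perron-Frobenius}, Proposition~\ref{prop:spectral-radius}) is purely linear-algebraic, Proposition~\ref{prop:matrix-htop} holds for graph maps with no change (as stated in the Remarks), and Proposition~\ref{prop:monotone-cover} on monotone covers should extend. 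So the proof splits into the two inequalities, exactly as in the interval case.

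For the lower bound $h_{top}(f)\ge \max(0,\log\lambda(M(f|P)))$: since $f$ is $P$-monotone, the graph $G(f|P)$ associated to the family of $P$-basic intervals is a subgraph of itself (trivially), and Proposition~\ref{prop:matrix-htop} for graph maps gives directly $h_{top}(f)\ge\log\lambda(M(f|P))$; combined with $h_{top}(f)\ge 0$ this yields the lower bound. For this I first need to check that $M(f|P)$, as defined via covering in the sense of Definition~\ref{def:coveringG}, records genuine coverings that can be chained by Lemma~\ref{lem:chain-of-intervals}; this is immediate from Proposition~\ref{prop:path-matrix}, whose graph-map analogue follows from the definitions.

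For the upper bound $h_{top}(f)\le\max(0,\log\lambda(M(f|P)))$: I would let $\CA$ be the (finite) cover of $G$ by the closures of all $P$-basic intervals. Because $f$ is $P$-monotone and $P$ contains all branching points and endpoints, $\CA$ is a monotone cover in the sense needed for a graph-map version of Proposition~\ref{prop:monotone-cover}, so $h_{top}(f)=h_{top}(\CA,f)$. Then, exactly as in the proof of Proposition~\ref{prop:htop-Markov-map}, I would construct a subcover $\CB_n$ of $\CA^n$ by following paths $I_0\to I_1\to\cdots\to I_{n-1}$ in $G(f|P)$, distinguishing the pieces on which $f$ eventually becomes constant: if $f$ is non-constant on a $P$-basic interval $J$, then $f(J)$ is a union of $P$-basic intervals (the analogue of Lemma~\ref{lem:image-Pint}), which lets one continue the path and keeps $\CB_n$ a cover by induction on $n$. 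This gives $N_n(\CA,f)\le\#\CB_n\le\sum_{k=0}^{n-1}\|M(f|P)^k\|$, and taking $\frac1n\log(\cdot)$ with Gelfand's formula (Proposition~\ref{prop:spectral-radius}) yields $h_{top}(\CA,f)\le\max(0,\log\lambda(M(f|P)))$.

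The main obstacle is the monotone-cover step: one must verify that the graph-map version of Proposition~\ref{prop:monotone-cover} (namely $h_{top}(f)=h_{top}(\CA,f)$ for a cover $\CA$ by closures of $P$-basic intervals when $f$ is $P$-monotone) actually holds. The interval proof used heavily that iterated preimages of open intervals under monotone maps are intervals with controllably few endpoints; on a graph, ``monotone on a basic interval'' still forces $f^i|_J$ to be monotone on each $P$-basic interval (by a graph analogue of Lemma~\ref{lem:interval+monotone}), and the relevant preimages are still subintervals of basic intervals, so the counting argument goes through with branching points causing no trouble precisely because $P$ already contains them. A second delicate point is the analogue of Lemma~\ref{lem:image-Pint}: here one must note that monotonicity of $f|_J$ for a graph map does force $f(J)$ to be an interval when $J$ is (this is the case $G$ being traversed without wrapping, but on a graph with loops one needs $f|_J$ genuinely monotone in the sense of the graph-map definition, which does guarantee $f(J)$ is an interval), hence a union of $P$-basic intervals. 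Once these two lemmas are in place, the rest is a routine transcription of the interval proof, with ``$P$-interval'' replaced everywhere by ``$P$-basic interval''.
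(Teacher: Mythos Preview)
The paper does not actually prove this proposition; it only asserts that Proposition~\ref{prop:htop-Markov-map} extends ``with no difficulty'' and cites \cite{AMM}. Your lower bound via the graph version of Proposition~\ref{prop:matrix-htop} is fine, but the upper-bound route through a graph analogue of Proposition~\ref{prop:monotone-cover} has a genuine gap. Take $G=\IS$, $f(x)=2x\bmod 1$, $P=\{0\}$: the set $P$ contains all branching points and endpoints (there are none), there is a single $P$-basic interval $J=\IS\setminus\{0\}$, the map $f|_J$ is monotone in the graph-map sense, and $M(f|P)=(2)$. With $\CA=\{\overline J\}=\{\IS\}$ one gets $\CA^n=\{\IS\}$ for every $n$, hence $h_{top}(\CA,f)=0\ne\log 2=h_{top}(f)$. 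So the identity $h_{top}(f)=h_{top}(\CA,f)$ fails. The cause is exactly what the paper warns about right after defining monotone for graph maps: $f(J)$ may wrap around a circle, so for $A\in\CA^n$ and an open arc $U$ the set $A\cap f^{-i}(U)$ need not be a single subinterval (here it has $2^i$ components), and the endpoint-counting bound in the proof of Proposition~\ref{prop:monotone-cover} collapses. Your assertion that monotonicity ``does guarantee $f(J)$ is an interval'' is therefore incorrect.

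The repair is to refine the cover so that the multiplicities already present in $M(f|P)$ become visible at level one. For each arrow $a\colon I_i\to I_j$ of the multigraph $G(f|P)$ choose a closed subinterval $I_a\subset\overline{I_i}$ with $f(I_a)=\overline{I_j}$ and $f(\End{I_a})=\End{I_j}$; monotonicity of $f|_{I_i}$ makes the $I_a$'s have pairwise disjoint interiors and, together with the pieces on which $f$ is constant, cover $\overline{I_i}$. With the cover $\CA'$ by the $I_a$'s each restriction $f|_{I_a}$ hits a single $\overline{I_j}$ without wrapping, so for $A\in(\CA')^n$ the maps $f^i|_A$ are one-to-one onto subintervals and preimages of arcs are single subintervals; the graph version of Proposition~\ref{prop:monotone-cover} then holds for $\CA'$. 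Your inductive construction of $\CB_n$, now indexed by arrow-paths rather than vertex-paths, gives $\#\CB_n\le\sum_{k=0}^{n-1}\|M(f|P)^k\|$, and Gelfand's formula finishes the upper bound $h_{top}(f)=h_{top}(\CA',f)\le\log\lambda(M(f|P))$.
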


In \cite{AJM5}, Alsedà, Juher and Mumbrú showed that 
an equality similar to Proposition~\ref{prop:htop=suphfP} holds for graph maps;
an inequality was previously proved by  Alsedà, Mañosas and Mumbrú
\cite{AMM}. Since connect-the-dots
maps cannot be defined, it is necessary to introduce an equivalence between
actions on a pointed graph, in order to be able to tell when a $P$-monotone map
is a ``good'' candidate to replace the connect-the-dots map. 
The next definition follows \cite{AMM}.

\begin{defi}
Let $G$ be a topological graph and let $B(G)$ denote the set of all
branching points of $G$. Let $A$ be a finite set of $G$. 
Let $G_A$ denote the graph $G$ deprived of the connected components of
$G\setminus (A\cup B(G))$ containing an endpoint of $G$. Let $r_A\colon
G\to G_A$ denote the retraction from $G$ to $G_A$ (that is,
$r_A$ is the identity on $G_A$ and, if $C$ is a connected component
of $G\setminus G_A$ and $x\in C$, then $r_A(x)$ is the unique point
in $\overline{C}\cap (A\cup B(G))$).
Let $f\colon G\to G$ and $g\colon G\to G$ be two graph maps
and assume that $A$ is both $f$-invariant and
$g$-invariant. Set $\tilde f:=r_A\circ f|_{G_A}$ and
$\tilde g:=r_A\circ g|_{G_A}$.
Then one writes $(G,A,f)\sim(G,A,g)$ if there
exists a homeomorphism $\vfi\colon G_A\to G_A$ with $\vfi(A)=A$
such that $\tilde f$ and $\vfi^{-1}\circ \tilde g\circ \vfi$ are homotopic
relative to $A$.
\end{defi}

\begin{theo}
Let $f\colon G\to G$ be a graph map, and let $B(G)$ and $E(G)$ denote 
respectively the set of
branching points and endpoints of $G$.
For every finite $f$-invariant set $A$, there exists a map 
$g_A\colon G\to G$ such that
\begin{itemize}
\item
$P:=A\cup B(G)\cup E(G)$ is $g_A$-invariant,
\item $g_A$ is $P$-monotone,
\item 
$(G,A,g_A)\sim (G,A,f)$,
\item 
$h_{top}(g_A)\le h_{top}(f)$.
\end{itemize}
Furthermore, 
$h_{top}(f)=\sup\{h_{top}(g_A)\mid A\text{ is a periodic orbit of }f\}.$
\end{theo}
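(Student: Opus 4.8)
The plan is to produce $g_A$ by ``tightening'' $f$ within the homotopy class prescribed by the relation $\sim$, to control the entropy of the result through its transition matrix on the $P$-basic intervals, and then to recover the supremum formula by repeating, with graph-map ingredients, the horseshoe argument of Proposition~\ref{prop:htop=suphfP}.

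\medskip\emph{Construction of $g_A$.} First I would set $P:=A\cup B(G)\cup E(G)$, a finite set, and work with the graph $G_A$ and the retraction $r_A\colon G\to G_A$ from the definition above. The map $\tilde f:=r_A\circ f|_{G_A}$ leaves the finite set $A$ invariant, since $f(A)=A\subset G_A$ gives $\tilde f(A)=r_A(f(A))=A$. Each connected component $J$ of $G_A\setminus P$ is an arc with both endpoints in $P$ whose interior contains no branching point of $G$, so $\tilde f|_{\overline J}$ is a path in $G_A$ with endpoints in $P$; such a path is homotopic rel endpoints to a unique reduced edge-path (possibly constant). Replacing $\tilde f$ on each $\overline J$ by a monotone parametrization of this reduced path yields a $P$-monotone map $h\colon G_A\to G_A$ that is homotopic to $\tilde f$ rel $A$. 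Finally I would define $g_A:=h\circ r_A\colon G\to G$, so that each hair of $G\setminus G_A$ is collapsed, via $r_A$, onto its attaching point of $P$. One checks that $g_A$ is continuous, $g_A(P)\subset P$ (vertices of $G_A$ are sent to vertices by $h$, and hairs are crushed onto points of $P$), $g_A$ is $P$-monotone on $G$, and $\widetilde{g_A}=r_A\circ g_A|_{G_A}=h$ is homotopic to $\tilde f$ rel $A$; hence $(G,A,g_A)\sim(G,A,f)$ with $\vfi=\Id_{G_A}$. This yields properties (i)--(iii).

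\medskip\emph{Entropy bound.} By the formula $h_{top}(g_A)=\max(0,\log\lambda(M(g_A|P)))$ for $P$-monotone graph maps stated above, it suffices to prove $h_{top}(f)\ge\log\lambda(M(g_A|P))$. For this I would check that the graph $G(g_A|P)$ is a subgraph of the graph associated, \emph{for $f$}, to the family of $P$-basic intervals, and then invoke Proposition~\ref{prop:matrix-htop} (valid for graph maps). The point is that whenever $g_A$ restricted to a $P$-basic interval $J$ covers a $P$-basic interval $K$ with multiplicity $k$ --- i.e., the reduced path $h(\overline J)$ traverses the edge $K$ at least $k$ times --- every path homotopic rel endpoints to $h(\overline J)$, in particular $r_A\circ f(\overline J)$, also traverses $K$ at least $k$ times, since a reduced edge-path realizes the minimal number of crossings of each edge in its homotopy class. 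Because $B(G)\cup E(G)\subset P$, the $P$-basic interval $K$ is a genuine subinterval of $G$ and $r_A$ is the identity near it, so $f(\overline J)$ contains $k$ subarcs with pairwise disjoint interiors each mapped onto $\overline K$; that is, $J$ covers $K$ $k$ times for $f$ in the sense of Definition~\ref{def:coveringG}. Hence $h_{top}(f)\ge\log\lambda(M(g_A|P))$, and since $h_{top}(f)\ge 0$ we obtain $h_{top}(g_A)\le h_{top}(f)$, which is (iv).

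\medskip\emph{Supremum formula.} The inequality $\sup\{h_{top}(g_A)\mid A\text{ periodic orbit}\}\le h_{top}(f)$ is immediate from (iv). For the reverse I would adapt the proof of Proposition~\ref{prop:htop=suphfP}. Given $\lambda<\lambda'<h_{top}(f)$, Misiurewicz's theorem for graph maps (Theorem~\ref{theo:htop-horseshoeG}) produces a large $N$ and a strict $p$-horseshoe $I_1<\cdots<I_p$ for $f^N$ contained in an arc of $G$, with $\frac{\log p}{N}\ge\lambda'$. As this horseshoe lives inside an honest interval, the interval-map manipulations of Proposition~\ref{prop:htop=suphfP} apply verbatim: chasing the chain $I_2\to I_1\to I_3\to I_1\to\cdots\to I_{p-1}\to I_1\to I_2\to I_p\to I_3\to I_p\to\cdots\to I_{p-1}\to I_p\to I_2$ yields a periodic point $x$ of $f^N$ whose orbit $Q$ meets each $I_i$, $2\le i\le p-1$, in exactly two points $y_i,z_i$, and the intervals $\langle y_i,z_i\rangle$ form a $(p-2)$-horseshoe for the connect-the-dots map of $f^N$ on $Q$. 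Taking $A:=\CO_f(x)$, a periodic orbit, I would estimate $h_{top}(g_A)$ from below: since $g_A$ agrees with $f$ on $A$, the set $Q$ is a periodic orbit of $(g_A)^N$, and the transition matrix of $(g_A)^N$ on the relevant family of intervals dominates that of the connect-the-dots map; by Proposition~\ref{prop:matrix-htop} and the $P$-monotone formula this gives $h_{top}(g_A)=\frac1N h_{top}((g_A)^N)\ge\frac1N\log(p-2)\ge\lambda'-\frac1N\log\frac{p}{p-2}$, which exceeds $\lambda$ once $N$ is large. Letting $\lambda\nearrow h_{top}(f)$ concludes.

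\medskip The hard part will be the entropy bound $h_{top}(g_A)\le h_{top}(f)$: one must make precise the ``minimal crossing'' property of reduced edge-paths and carry out the bookkeeping through the retraction $r_A$ --- in particular ensuring that each $P$-basic interval of $G_A$ is an honest subinterval of $G$ (which uses $B(G)\cup E(G)\subset P$) so that ``$f$ covers $K$'' is meaningful, and that the collapsed hairs create no spurious coverings. This is precisely the technical core of \cite{AMM} and \cite{AJM5}.
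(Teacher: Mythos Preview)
The paper does not prove this theorem: it is stated in a ``Remarks on graph maps'' paragraph and attributed to \cite{AMM} (the inequality $h_{top}(g_A)\le h_{top}(f)$) and \cite{AJM5} (the supremum formula), with no argument given beyond the preceding definition of the relation~$\sim$. So there is no proof in the paper to compare against; your outline is in fact a sketch of what those references do, and the overall architecture --- tighten within the homotopy class, compare transition matrices via the minimal-crossing property of reduced edge-paths, then rerun the horseshoe argument of Proposition~\ref{prop:htop=suphfP} inside an arc --- is the right one.

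There is, however, one concrete gap in your construction of $g_A$. You write that for each $P$-basic interval $J$ of $G_A$, the path $\tilde f|_{\overline J}$ has endpoints in $P$; this is what lets you reduce rel endpoints and obtain a $P$-monotone map. But $\End{J}$ may contain a branching point $b\in B(G)\setminus A$, and then $\tilde f(b)=r_A(f(b))$ need not lie in $P$ at all (take $G$ a circle with one hair, $A$ a periodic orbit on the circle avoiding the branching point $b$, and $f(b)$ a generic point of the circle). Consequently your $h$ need not send $P$ into $P$, and $g_A$ need not be $P$-monotone. The fix is to first homotope $\tilde f$ rel $A$ (not rel $P$) so that the images of the branching points land on vertices of $G_A$, and only then reduce on $P$-basic intervals; equivalently, reduce on $A$-basic intervals and then reparametrize so that preimages of vertices become vertices. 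This is exactly the kind of bookkeeping you flag in your last paragraph, but it is needed already for item~(i), not only for the entropy bound.

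A second point to watch in the supremum step: when $G$ has cycles, connectedness of $(g_A)^N(\langle y_i,z_i\rangle)$ together with $(g_A)^N(y_i)\in I_1$, $(g_A)^N(z_i)\in I_p$ does not by itself force coverings of the intermediate $\langle y_j,z_j\rangle$ in the sense of Definition~\ref{def:coveringG}; the image could detour around a loop. You need to invoke the homotopy relation (or the monotonicity of $g_A$ on $P$-basic intervals together with the positions of the points of $Q=A\cap\{y_i,z_i\}$) to ensure the covering actually occurs inside the arc. Once stated, this is routine, but it should be said.
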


\section{Entropy and periodic points}
\subsection{Equivalent condition for positive entropy}

We are going to show that 
an interval map has positive topological entropy if and only if 
it has a periodic point whose period is not a power of $2$.
This relation between entropy and periods is one of the most striking 
results in interval dynamics.
This result can be expressed in term of types for Sharkovsky's order: 
an interval map has positive entropy if and only if it is of type $n$ with
$n\lhd 2^\infty$. 
This explains why Coppel calls \emph{chaotic} an interval map having a 
periodic point whose period is not a power of $2$ \cite{Cop2, BCop},
the type $2^\infty$ being the ``frontier'' between chaos and non chaos.

This result was proved in several steps. First, Sharkovsky showed in 1965
that an interval map $f$ is of type $n\lhd 2^\infty$ if and only if
$f^k$ has a horseshoe for some $k$ \cite{Sha2};
see \cite{Sha6} for a statement in English.
The same result was re-proved by Block \cite{Bloc3}. Then
Bowen and Franks stated that the presence of a periodic point whose
period is not a period of $2$ implies positive entropy \cite{BF}. This result
relies on the observation that horseshoes imply positive entropy
(Proposition~\ref{prop:horseshoe-htop}).
Finally, the last step is due to Misiurewicz and Szlenk for piecewise 
monotone maps  \cite{MS2} and  Misiurewicz for all interval maps
\cite{Mis,Mis2}. This is a corollary of Misiurewicz's Theorem 
(Theorem~\ref{theo:Misiurewicz}), proved in the same papers, 
stating that an interval map with
positive entropy has a horseshoe for some iterate of the map.

\begin{theo}\label{theo:htop-power-of-2}
For an interval map $f$, the following assertions
are equivalent:
\begin{enumerate}
\item the topological entropy of $f$ is positive,
\item $f$ has a periodic point whose period is not a power of $2$,
\item there exists an integer $n\ge 1$ such that $f^n$ has a strict
horseshoe.
\end{enumerate}
\end{theo}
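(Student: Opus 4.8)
The plan is to prove the cycle of implications (iii)$\Rightarrow$(i)$\Rightarrow$(ii)$\Rightarrow$(iii), since each link is already available from results proved earlier in this chapter and in Chapter~\ref{chap:periodic-points}. Most of the work has in fact been done; the proof is essentially an assembly of Misiurewicz's Theorem, Sharkovsky's Theorem, and the horseshoe lemmas.

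First I would handle (iii)$\Rightarrow$(i): if $f^n$ has a strict horseshoe (indeed any $2$-horseshoe), then $h_{top}(f^n)\ge \log 2$ by Proposition~\ref{prop:horseshoe-htop}, and since $h_{top}(f^n)=n\,h_{top}(f)$ by Proposition~\ref{prop:htop-Tn}, we get $h_{top}(f)\ge \frac{\log 2}{n}>0$. Next, (i)$\Rightarrow$(iii) is precisely Misiurewicz's Theorem~\ref{theo:Misiurewicz}: taking any $\lambda\in(0,h_{top}(f))$ and any $N$, it furnishes intervals $J_1,\dots,J_p$ and $n\ge N$ so that $(J_1,\dots,J_p)$ is a strict $p$-horseshoe for $f^n$ with $\frac{\log p}{n}\ge\lambda$; in particular $p\ge 2$, so $f^n$ has a strict horseshoe. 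These two directions already give (i)$\Leftrightarrow$(iii).

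For (i)$\Rightarrow$(ii), I would argue by contraposition together with the previous equivalence: suppose every periodic point of $f$ has period a power of $2$, i.e. $f$ is of type $2^\infty$ (or of type $2^k$ for some $k$). If $h_{top}(f)>0$, then by (i)$\Rightarrow$(iii) some $f^n$ has a horseshoe, hence by Proposition~\ref{prop:turbulent-all-periods} $f$ has periodic points of all periods, in particular of period $3$, contradicting that all periods are powers of $2$. Thus $h_{top}(f)=0$. Finally, for (ii)$\Rightarrow$(i): if $f$ has a periodic point whose period $m$ is not a power of $2$, write $m=2^d q$ with $q>1$ odd. By Sharkovsky's Theorem~\ref{theo:Sharkovsky}, $f$ has a periodic point of every period $\unrhd m$ in Sharkovsky's order; since $q$ odd and $q>1$ satisfies $q\unrhd m$ only when $2^d\mid$ nothing forces it, I would instead note that $2^d q \rhd 2^d\cdot(\text{next odd})$ is not quite what I want, so more directly: $f^{2^d}$ has a periodic point of period $q$ by Lemma~\ref{lem:period-f-fn}(i), with $q$ odd and $>1$, hence by Proposition~\ref{prop:odd-period-turbulent} the map $(f^{2^d})^2=f^{2^{d+1}}$ has a (strict) horseshoe, so $h_{top}(f^{2^{d+1}})\ge\log 2$ by Proposition~\ref{prop:horseshoe-htop}, and therefore $h_{top}(f)\ge\frac{\log 2}{2^{d+1}}>0$ using Proposition~\ref{prop:htop-Tn}.

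The proof involves no real obstacle, since every ingredient is already established; the only point requiring a little care is organizing the implications so as to avoid circularity — in particular, (ii)$\Rightarrow$(i) should be proved directly via Proposition~\ref{prop:odd-period-turbulent} (odd period $>1$ implies a horseshoe for $f^2$) rather than routing back through Misiurewicz's Theorem, and the contrapositive of (i)$\Rightarrow$(ii) should invoke the already-proved (i)$\Rightarrow$(iii) plus Proposition~\ref{prop:turbulent-all-periods}. The mildly delicate bookkeeping is the reduction to $f^{2^d}$ via Lemma~\ref{lem:period-f-fn}, which lets one strip off the powers of $2$ and land on an odd period where the horseshoe lemmas apply.
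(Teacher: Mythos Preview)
Your proof is correct and uses exactly the same ingredients as the paper's proof: Proposition~\ref{prop:horseshoe-htop} for (iii)$\Rightarrow$(i), Misiurewicz's Theorem plus Proposition~\ref{prop:turbulent-all-periods} for (i)$\Rightarrow$(ii), and Lemma~\ref{lem:period-f-fn}(i) together with Proposition~\ref{prop:odd-period-turbulent} for (ii)$\Rightarrow$(iii). The paper simply runs the clean cycle (i)$\Rightarrow$(ii)$\Rightarrow$(iii)$\Rightarrow$(i); your separate proof of (i)$\Rightarrow$(iii) directly from Misiurewicz's Theorem and your extra argument (ii)$\Rightarrow$(i) are redundant but harmless, and your aborted detour through Sharkovsky's Theorem can be deleted since you land on the right argument anyway.
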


\begin{proof}
If $h_{top}(f)>0$, then, according to Misiurewicz's 
Theorem~\ref{theo:Misiurewicz}, there exists a positive 
integer $n$ such that $f^n$ has a horseshoe. Therefore $f^n$
has periodic points of all periods  by 
Proposition~\ref{prop:turbulent-all-periods}, and thus $f$ has a periodic
point whose period is not a power of $2$. This shows (i)$\Rightarrow$(ii).

If $f$ has a periodic point of period $2^dq$, where $q$ is an odd
integer greater than $1$ and $d\ge 0$, then $f^{2^d}$ has a periodic point 
of period $q$ and thus, by Proposition~\ref{prop:odd-period-turbulent},
$f^{2^{d+1}}$ has a strict horseshoe.
That is, (ii)$\Rightarrow$(iii). 

If $f^n$ has a horseshoe, then, according to 
Proposition~\ref{prop:horseshoe-htop},
$h_{top}(f)=\frac{1}{n}h_{top}(f^n)\ge \frac{\log 2}{n}>0$. Hence
(iii)$\Rightarrow$(i).
\end{proof}

\subsection{Lower bound for the entropy depending on Sharkovsky's type}

The relation between the entropy of an interval map and its type
is much more accurate than the one stated in
Theorem~\ref{theo:htop-power-of-2}, and one can give a lower bound 
for the entropy depending on the periods of the periodic points. 
First, Bowen and Franks proved that,
if $f$ has a periodic point of period $n=2^dq$, where $q>1$ is odd, then 
$h_{top}(f)>\frac{1}{n}\log 2$ \cite{BF}. Then \v{S}tefan improved this
result and showed that,
under the same assumption, $h_{top}(f)>\frac{\log\sqrt{2}}{2^d}$ \cite{Ste}.
Finally, Block, Guckenheimer, Misiurewicz and Young 
gave an optimal bound by proving that, under the same assumption,
$h_{top}(f)\ge\frac{\log\lambda_q}{2^d}$, where $\lambda_q$ is the
maximal real root of $X^q-2X^{q-2}-1$~\cite{BGMY}.
Actually, the value $\frac{\log\lambda_q}{2^d}$ already appears in 
\v{S}tefan's proof, where $\lambda_q$ is proved to be greater than $\sqrt{2}$.
Moreover,
there exist maps of type $2^d q$ whose entropy is equal to
$\frac{\log\lambda_q}{2^d}$. Examples of such maps were given without details 
in \cite{BGMY}.

We start with a lemma, which is the key point of the proof.

\begin{lem}\label{lem:Mq}
Let $f$ be an interval map having a periodic point
of odd period greater than $1$. Let $p$ be the minimal odd period 
greater than $1$, let
$G_p$ be the graph of a periodic orbit of period $p$ and $M_p$ its
adjacency matrix.
Then $M_p$ is a primitive matrix and $\lambda(M_p)$ is equal to 
the unique positive root $\lambda_p$ of $X^p-2X^{p-2}-1$.
Moreover, for all odd $p>1$,
$$\sqrt{2}<\lambda_{p+2}<\lambda_p<\sqrt{2}+\frac{1}{(\sqrt{2})^{p+1}}.$$
\end{lem}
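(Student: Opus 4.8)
The plan is to use the explicit description of the graph $G_p$ given in Lemma~\ref{lem:graph-n-minimal} (the \v{S}tefan graph, Figure~\ref{fig:graph-of-odd-periodic-point}). First I would write down the adjacency matrix $M_p$ of size $(p-1)\times(p-1)$ from that figure. Labelling the vertices as in the proof of Lemma~\ref{lem:graph-n-minimal}, one vertex (call it $J_1$) has a loop $J_1\to J_1$ and an arrow $J_1\to J_2$, each remaining vertex $J_i$ ($2\le i\le p-2$) has the single arrow $J_i\to J_{i+1}$, and the last vertex $J_{p-1}$ has arrows to all of $J_1,J_2,\ldots,J_{p-1}$ (equivalently $J_{p-1}$ covers every $P$-interval on one side of the median). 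I would then verify primitivity directly: from any vertex one reaches $J_{p-1}$ within $p-2$ steps along the ``spine'', from $J_{p-1}$ one reaches every vertex in one step, and the loop at $J_1$ kills periodicity; hence $M_p^{N}>0$ for $N$ of size roughly $2(p-1)$. (Alternatively, primitivity follows from irreducibility plus the existence of cycles of two coprime lengths, namely the loop of length $1$ at $J_1$ and the fundamental cycle of length $p$.)

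Next I would compute the characteristic polynomial. Because $M_p$ is almost a companion-type matrix — a directed cycle through $J_2,\ldots,J_{p-1}$ plus the loop at $J_1$ and the back-arrow $J_{p-1}\to J_1$ — expanding $\det(M_p - X\,\mathrm{Id})$ along the spine gives a relation of the form $\chi_{M_p}(X) = \pm\bigl(X^{p-1} - X^{p-2} - (X^{p-3}+X^{p-4}+\cdots)\bigr)$, and after multiplying through by $(X-1)$ (to telescope the geometric-type tail coming from the $J_{p-1}\to J_j$ arrows) one obtains that the relevant eigenvalue is the largest real root of $X^p - 2X^{p-2} - 1 = 0$. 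I would double-check this on the smallest case $p=3$: there $M_3 = \begin{pmatrix}1&1\\1&0\end{pmatrix}$, $\chi(X)=X^2-X-1$, whose positive root is the golden ratio, and indeed $X^3-2X-1 = (X+1)(X^2-X-1)$, so $\lambda_3=\frac{1+\sqrt5}{2}$, confirming the formula. By Perron--Frobenius (Theorem~\ref{theo:Perron-Frobenius}), since $M_p$ is primitive and nonnegative, $\lambda(M_p)$ is the unique positive root of this polynomial and is a simple root strictly dominating all other eigenvalues; I'll call it $\lambda_p$. Uniqueness of the positive root of $g_p(X):=X^p-2X^{p-2}-1$ is elementary: $g_p(X) = X^{p-2}(X^2-2) - 1$, which is negative for $0\le X\le\sqrt2$ and strictly increasing for $X\ge\sqrt2$ (derivative $pX^{p-1}-2(p-2)X^{p-3} = X^{p-3}(pX^2-2(p-2))>0$ there), so it has exactly one positive zero, and that zero lies in $(\sqrt2,\infty)$ since $g_p(\sqrt2)=-1<0$.

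Finally, the inequalities. The bound $\lambda_p>\sqrt2$ is exactly what the sign computation above gives. For the monotonicity $\lambda_{p+2}<\lambda_p$ and the upper bound, I would evaluate $g_{p+2}$ and $g_p$ at well-chosen points. Writing $x=\lambda_p$, so $x^p = 2x^{p-2}+1$, one gets $g_{p+2}(x) = x^{p+2}-2x^p-1 = x^2(2x^{p-2}+1)-2(2x^{p-2}+1)-1 = (x^2-2)(2x^{p-2}+1) + x^2 - 3$; since $x>\sqrt2$ we have $x^2-2>0$ and $2x^{p-2}+1>0$, and provided $x^2\ge 3$ is handled (for $p=3$, $x=\tfrac{1+\sqrt5}2\approx1.618$, $x^2\approx2.618<3$, so the last term is negative and one must check the whole expression is positive — it is, numerically $g_5(\lambda_3)>0$), we conclude $g_{p+2}(\lambda_p)>0$, hence $\lambda_{p+2}<\lambda_p$ by monotonicity of $g_{p+2}$ past $\sqrt2$. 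For the upper estimate $\lambda_p < \sqrt2 + 2^{-(p+1)/2}$, set $y := \sqrt2 + (\sqrt2)^{-(p+1)} = \sqrt2 + 2^{-(p+1)/2}$ and show $g_p(y)>0$, i.e.\ $y^{p-2}(y^2-2)>1$. Here $y^2-2 = 2\sqrt2\cdot 2^{-(p+1)/2} + 2^{-(p+1)} = 2^{(3-p)/2}\cdot\sqrt2\cdot\sqrt2 \cdots$ — more cleanly, $y^2 - 2 = 2\cdot 2^{1/2}\cdot 2^{-(p+1)/2}(1 + o(1)) = 2^{(2-p)/2}(2 + 2^{-(p+1)/2})$, and $y^{p-2} > (\sqrt2)^{p-2} = 2^{(p-2)/2}$, so $y^{p-2}(y^2-2) > 2^{(p-2)/2}\cdot 2^{(2-p)/2}\cdot 2 = 2 > 1$. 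That yields the bound; I would present this last chain carefully since the inequality $y^{p-2}>(\sqrt2)^{p-2}$ is strict and the leftover factor comfortably exceeds $1$.

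\textbf{Main obstacle.} The routine-but-delicate part is the determinant expansion producing the clean polynomial $X^p - 2X^{p-2} - 1$: one must keep careful track of the $J_{p-1}\to J_j$ arrows (which side of the median they point to, from Lemma~\ref{lem:graph-n-minimal}) and do the telescoping correctly, ideally by induction on $p$ or by a single cofactor expansion along the spine followed by multiplication by $(X-1)$. The inequality chain at the end is elementary once the polynomial is in hand, but the small cases ($p=3$, and checking $x^2-3<0$ there) need an explicit numerical verification to be airtight.
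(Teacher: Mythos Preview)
Your approach is the same as the paper's, and the outline is correct, but several details need fixing.

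\textbf{The adjacency matrix.} Your verbal description is wrong: $J_{p-1}$ does \emph{not} have arrows to all of $J_1,\ldots,J_{p-1}$. From the \v Stefan configuration (Lemma~\ref{lem:graph-n-minimal}), the image of $J_{p-1}$ under $f$ covers only the intervals on one side of the median point, which are exactly the odd-indexed $J_1,J_3,\ldots,J_{p-2}$. Your parenthetical ``on one side of the median'' is right; the phrase ``arrows to all of $J_1,\ldots,J_{p-1}$'' is not. (Your $M_3$ is correct, which is why your $p=3$ check works; but with the wrong matrix for general $p$ you would get the wrong polynomial.) This also means your primitivity argument via ``$J_{p-1}$ reaches every vertex in one step'' fails; use your alternative route (loop of length $1$ at $J_1$ plus the cycle of length $p-1$ through $J_1\to J_2\to\cdots\to J_{p-1}\to J_1$, with $\gcd(1,p-1)=1$), or simply exhibit a path of length $2p-2$ between any two vertices by routing through $J_1$.

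\textbf{The polynomial.} The correct telescoping factor is $(X+1)$, not $(X-1)$: one has $(X+1)\chi_{M_p}(X)=X^p-2X^{p-2}-1$. Your $p=3$ computation already shows this, since $X^3-2X-1=(X+1)(X^2-X-1)$.

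\textbf{Monotonicity.} Your algebra for $g_{p+2}(\lambda_p)$ slipped. With $x=\lambda_p$ and $x^p=2x^{p-2}+1$, you get $x^{p+2}=x^2 x^p=2x^p+x^2$, hence
\[
g_{p+2}(x)=x^{p+2}-2x^p-1=x^2-1>1>0
\]
since $x>\sqrt2$. No case distinction for $p=3$ is needed; the expression you wrote, $(x^2-2)(2x^{p-2}+1)+x^2-3$, is an arithmetic error. (Equivalently and as the paper does it: from $x^{p-2}(x^2-2)=1$ multiply by $x^2$ to get $x^p(x^2-2)=x^2>2$, so $g_{p+2}(x)=x^p(x^2-2)-1>1$.)

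\textbf{Upper bound.} Your idea is right but the bookkeeping is off by a factor. With $y=\sqrt2+(\sqrt2)^{-(p+1)}$ one has $y^2-2=(\sqrt2)^{-(p-2)}+2^{-(p+1)}$, and since $y^{p-2}>(\sqrt2)^{p-2}$,
\[
y^{p-2}(y^2-2)>(\sqrt2)^{p-2}\cdot(\sqrt2)^{-(p-2)}=1,
\]
so $g_p(y)>0$ and $\lambda_p<y$. The product exceeds $1$, not $2$.
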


\begin{proof}
According to Lemma~\ref{lem:graph-n-minimal}, the graph $G_p$ is of the form:

\medskip
\centerline{\includegraphics{ruette-fig1}}

\medskip
This gives the following adjacency matrix:
\begin{equation}\label{eq:Mq}
M_p=\left(\begin{array}{cccccc}
1       &   &       &                  && 1\\
1       & 0 &       &\text{\huge 0}&& 0\\
        & 1 &\ddots &                  && 1\\
        &   &\ddots &\ddots            && \vdots \\ 
        & \text{\huge 0}  &       &\ddots &\ddots     & 1\\
        &   &       &               & 1 & 0\\
\end{array}\right).
\end{equation}
More precisely, if $M_p=\left(m_{ij}\right)_{1\le i,j\le p-1}$, then 
\begin{itemize}
\item 
on the diagonal: $m_{11}=1$ and $\forall i\in\Lbrack 2,p-1\Rbrack, 
m_{ii}=0$,
\item
below the diagonal: $\forall i\in\Lbrack 1,p-2\Rbrack,\ m_{i+1i}=1$,
\item 
last column: $\forall i\in\Lbrack 1,p-1\Rbrack,\ m_{i p-1}=\left\{\begin{array}{ll}
1&\text{ if }i\text{ is odd,}\\ 
0&\text{ if }i\text{ is even,}\end{array}\right.$
\item
for all other indices, $m_{ij}=0$.
\end{itemize}
We write $(M_p)^n=(m_{ij}^n)_{1\le i,j\le p-1}$ for all $n\ge 1$.
Then $m_{ij}^n$ is
the number of paths of length $n$ from $J_i$ to $J_j$ in $G_p$
(Proposition~\ref{prop:Mn}). For all $i,j\in\Lbrack 1,p-1\Rbrack$, the path
$$
J_i\to J_{i+1}\to\cdots\to \underbrace{J_1\to J_1\to \cdots 
\to J_1}_{i+j-2\rm\ arrows}\to J_2\cdots \to J_j
$$
is a path from $I_i$ to $I_j$ of length $2p-2$.
Thus $(M_p)^{2p-2}>0$ and $M_p$ is primitive.

In order to find the maximal eigenvalue of $M_p$,
we compute the characteristic polynomial $\chi_p(X):=\det (M_p-X\Id)$
(see Proposition~\ref{prop:eigenvalue-rootchi}).
We develop it with respect to the first row (the coefficients left
blank are equal to zero):
$$
\chi_p(X)=(1-X)\underbrace{\left|\begin{array}{ccccc}
-X & &  && 0\\
 1 &\ddots& &   & 1 \\
 &\ddots&\ddots &   & \vdots \\
   &&\ddots & \ddots & 1\\
&       && 1      & -X\\
\end{array}\right|}_{\textstyle := Q_{p-2}(X)}
-\left|\begin{array}{cccc}
1  & -X & &\\
0  & 1 &\ddots      & \\
   && \ddots  & -X \\
& &   & 1 \\
\end{array}\right|.
$$
We get $\chi_p(X)=(1-X)Q_{p-2}(X)-1$. It remains to compute $Q_k(X)$ for all
odd $k$. If we develop twice the determinant $Q_k(X)$ with respect to the first row, we
get $Q_k(X)=X^2 Q_{k-2}(X)+X$. We have $Q_1=-X$ and an easy induction gives
$$
\text{for all odd }k\ge 3,\ Q_k(X)=-X^k+\sum_{i=0}^{\frac{k-3}2} X^{2i+1}.
$$
Therefore
$$
\text{for all odd }p\ge 3,\ \chi_p(X)=X^{p-1}-X^{p-2}-\sum_{i=0}^{p-3}(-X)^i.
$$
We set $P_p(X):=(X+1)\chi_p(X)$. A straightforward computation gives 
$$
P_p(X)=X^p-2X^{p-2}-1.$$
We do a short study of the polynomial function $x\mapsto P_p(x)$ on 
$\IR^+$.  Its differential is
$$
P_p'(x)=px^{p-1}-2(p-2)x^{p-3}=x^{p-3}(px^2-2(p-2)).
$$ 
Thus, for $x\in (0,+\infty)$, 
$P_p'(x)>0\Leftrightarrow x>x_p:=\sqrt{\frac{2(p-2)}{p}}$.
This implies that $P_p$ is decreasing on $[0,x_p]$
and increasing on $[x_p,+\infty)$, and also that $x_p<\sqrt{2}$. Moreover,
$P_p(0)=-1$, and $\lim_{x\to +\infty} P_p(x)=+\infty$. We deduce that
there exists a unique $\lambda_p>0$ such that $P_p(\lambda_p)=0$.
Since $\lambda_p$ is the maximal real 
root of $P_p$, it is also the maximal real root of $\chi_p=\frac{P_p}{X+1}$.
By Corollary~\ref{cor:Perron-Frobenius}, this implies that
$\lambda(M_p)=\lambda_p$.

Now we are going to bound $\lambda_p$.
Since $P_p(x)=x^{p-2}(x^2-2)-1$, we have $P_p(\sqrt{2})=-1$, and thus
$\lambda_p>\sqrt{2}$ (recall that $P_p$ is increasing on
$[x_p,+\infty)\supset [\sqrt{2},+\infty)$). Moreover, since
$$
\lambda_p^{p-2}(\lambda_p^2-2)=1,
$$ 
we have
$\lambda_p^p(\lambda_p^2-2)=\lambda_p^2>2$,
which implies $P_{p+2}(\lambda_p)>0$. Therefore,
$\lambda_{p+2}<\lambda_p$ for all odd $p\ge 3$.
We set $y_p:=\sqrt{2}+\frac{1}{(\sqrt{2})^{p+1}}$. We have
$$
y_p^2=2+\frac{1}{2^{p+1}}+\frac{1}{(\sqrt{2})^{p-2}}$$
and $y_p>\sqrt{2}$. Therefore
\begin{eqnarray*}
P_p(y_p)=y_p^{p-2}(y_p^2-2)-1&=&y_p^{p-2}\left(\frac{1}{2^{p+1}}+\frac{1}{(\sqrt{2})^{p-2}}\right)-1\\
&> & (\sqrt{2})^{p-2}\frac{1}{(\sqrt{2})^{p-2}}-1\\
&> 0.
\end{eqnarray*}
We deduce that $\lambda_p<y_p=\sqrt{2}+\frac{1}{(\sqrt{2})^{p+1}}$. This concludes the proof.
\end{proof}

\begin{theo}\label{theo:htop-type}
If an interval map $f$ has a periodic point of
period $2^d q$ with $d\ge 0$, $q>1$, $q$ odd, then $h_{top}(f)\ge\frac{\log 
\lambda_q}{2^d}$, where $\lambda_q$ is the unique positive root of
$X^q-2X^{q-2}-1$. 

Moreover, for all integers $d\ge 0$ and all $q>1$ with $q$ odd, there exists an 
interval map with a periodic point of period $2^dq$ and whose 
topological entropy is equal to $\frac{\log \lambda_q}{2^d}$.
\end{theo}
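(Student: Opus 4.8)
The statement has two halves: a lower bound $h_{top}(f)\ge \frac{\log\lambda_q}{2^d}$ whenever $f$ has a periodic point of period $2^dq$ with $q>1$ odd, and a sharpness claim producing a map realizing equality. The plan for the lower bound is to reduce to the odd case via $f^{2^d}$ and then exploit the graph of a \v{S}tefan cycle. Concretely: if $f$ has a periodic point $x$ of period $2^dq$, then by Lemma~\ref{lem:period-f-fn}(i) the point $x$ is periodic of period $q$ for $g:=f^{2^d}$. Since $q>1$ is odd, $g$ has a periodic point of odd period $q'$ with $1<q'\unlhd q$; replacing $q$ by the minimal odd period greater than $1$ can only decrease $\lambda_q$ (by the monotonicity $\lambda_{p+2}<\lambda_p$ proved in Lemma~\ref{lem:Mq}), so it suffices to handle the case where $q$ is the minimal odd period of $g$. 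In that case Lemma~\ref{lem:graph-n-minimal} describes the graph $G_q$ of the corresponding periodic orbit exactly, and Lemma~\ref{lem:Mq} computes that its adjacency matrix $M_q$ is primitive with spectral radius $\lambda(M_q)=\lambda_q$, the unique positive root of $X^q-2X^{q-2}-1$. Now Proposition~\ref{prop:matrix-htop}, applied to $G_q$ viewed as a subgraph of the graph associated to the $P$-intervals (where $P$ is the periodic orbit), gives $h_{top}(g)\ge\log\lambda(M_q)=\log\lambda_q$. Finally $h_{top}(f)=\frac{1}{2^d}h_{top}(f^{2^d})=\frac{1}{2^d}h_{top}(g)\ge\frac{\log\lambda_q}{2^d}$ by Proposition~\ref{prop:htop-Tn}.

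For the sharpness half, the plan is to build the extremal map for the odd case $d=0$ first, then obtain the general case by taking repeated square roots as in Example~\ref{ex:all-types}, using the fact established there that the square root of a map of type $n$ is of type $2n$ and, crucially, that it \emph{doubles the period} while \emph{halving the entropy} (since $h_{top}(g)=\frac12 h_{top}(g^2)=\frac12 h_{top}(f)$ when $g^2|_{[0,b]}=f$, by Proposition~\ref{prop:htop-Tn}). So the task reduces to exhibiting, for each odd $q>1$, an interval map $f_q$ of type $q$ with $h_{top}(f_q)=\log\lambda_q$. The natural candidate is a \v{S}tefan-cycle map that is $P$-linear on the $P$-intervals of its period-$q$ orbit, built so that its graph is exactly $G_q$ and so that $f_q$ is $P$-monotone. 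Then Proposition~\ref{prop:htop-Markov-map} gives $h_{top}(f_q)=\max(0,\log\lambda(M(f_{q,P})))=\log\lambda_q$ directly, and Lemma~\ref{lem:graph-and-type} (combined with the computation in Lemma~\ref{lem:Mq} that $G_q$ contains no primitive cycle of odd length in $\Lbrack 2,q-1\Rbrack$, nor of length $m/2$ for such even $m$) confirms that the type is exactly $q$. In fact the map $f_p$ of Example~\ref{ex:odd-type} is $P$-monotone for its period-$p$ orbit $\{0,1,\dots,2n\}$, its graph is $G_p$, and one only needs to check $h_{top}(f_p)=\log\lambda_p$, which follows from Proposition~\ref{prop:htop-Markov-map} since $M(f_{p,P})=M_p$. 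So one can simply invoke Example~\ref{ex:odd-type} for the base case.

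\textbf{Main obstacle.} The delicate point is the reduction in the lower bound when the minimal odd period of $g=f^{2^d}$ is some $q'$ strictly smaller than the given $q$: one must be sure that $\frac{\log\lambda_{q'}}{2^d}\ge\frac{\log\lambda_q}{2^d}$, i.e.\ that $\lambda_{q'}\ge\lambda_q$, which is exactly the monotonicity $\lambda_{p+2}<\lambda_p$ from Lemma~\ref{lem:Mq} iterated — so this is routine once that lemma is available. A subtler wrinkle is that $f^{2^d}$ might a priori have \emph{no} odd period greater than $1$ even though $f$ has a periodic point of period $2^dq$; but Lemma~\ref{lem:period-f-fn}(i) shows $x$ itself has odd period $q>1$ for $f^{2^d}$, so this cannot happen. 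For the sharpness half, the only thing needing care is that iterating the square-root construction $d$ times starting from $f_q$ genuinely yields type $2^dq$ and entropy $\log\lambda_q/2^d$: type is handled by the induction in Example~\ref{ex:all-types}, and the entropy halves at each step by $h_{top}(g)=\frac12 h_{top}(g^2)$ together with $g^2|_{[0,b]}=f$ and the fact (from the expanding behaviour on the middle linear piece in that construction) that all periodic orbits outside the unique fixed point meet $[0,b]$, so no extra entropy is created on $[b,b+\delta]$; one should record that $h_{top}(g^2)=h_{top}(f)$ because $g^2$ maps $[b+\delta,2b+\delta]$ onto $[0,b]$ and acts as $f$ there, carrying all the dynamics. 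I expect the write-up to be short modulo citing Lemma~\ref{lem:Mq}, Proposition~\ref{prop:matrix-htop}, Proposition~\ref{prop:htop-Markov-map}, and Example~\ref{ex:odd-type}.
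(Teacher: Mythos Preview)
Your proposal is correct and follows essentially the same route as the paper: reduce to $g=f^{2^d}$ via Lemma~\ref{lem:period-f-fn}(i), pass to the minimal odd period using the monotonicity $\lambda_{p+2}<\lambda_p$ from Lemma~\ref{lem:Mq}, apply Proposition~\ref{prop:matrix-htop} to the \v{S}tefan graph, and for sharpness take the $P$-linear map $f_q$ of Example~\ref{ex:odd-type} (entropy $\log\lambda_q$ by Proposition~\ref{prop:htop-Markov-map}) and iterate the square-root construction. One small slip: in the square-root step it is $g$, not $g^2$, that maps $[b+\delta,2b+\delta]$ onto $[0,b]$; the paper handles the entropy-halving cleanly by noting that $g$ is $Q$-monotone and computing $\lambda(M(g|Q))^2=\lambda(M(f|P))$ directly (Example~\ref{ex:htop-2-m-log-lambda}), which is tighter than your dynamical sketch.
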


\begin{proof}
First we suppose that $d=0$ and that $q$ is the minimal odd period greater than
1. Let $G_q$ be the graph of a periodic orbit of period $q$. 
According to Lemma~\ref{lem:Mq}, the spectral radius of the adjacency
matrix of $G_q$ is equal to $\lambda_q$. Thus
$h_{top}(f)\ge \log \lambda_q$ by  Proposition~\ref{prop:matrix-htop}.

Now we suppose that $x$ is a periodic point of period $2^d q$ with
$q>1$, $q$ odd. The point $x$ is periodic of period $q$ for $f^{2^d}$.
Let $p$ be the minimal odd period greater than 1 for the map 
$f^{2^d}$. What precedes
shows that $h_{top}(f^{2^d})\ge \log\lambda_p$. Since $p\le q$, 
Lemma~\ref{lem:Mq} implies that 
$\lambda_p\ge \lambda_q$, and thus
$$
h_{top}(f)=\frac{1}{2^d}h_{top}(f^{2^d})\ge \frac{\log\lambda_q}{2^d}.
$$
For the sharpness of the bound, see Examples \ref{ex:htop-mixing-log-lambda}
and \ref{ex:htop-2-m-log-lambda} below.
\end{proof}

\begin{ex}\label{ex:htop-mixing-log-lambda}
Let $n$ be a positive integer and $p:=2n+1$ (i.e., $p$ is an odd integer
greater than $1$). We consider the map 
$f_p\colon [0,2n]\to [0,2n]$ built in Example~\ref{ex:odd-type}. We already
proved that it
is topologically mixing and that its type for Sharkovsky's order is $p$.
We recall that the map $f_p$ (represented in Figure~\ref{fig:htop-lambdaq}) 
is linear between the points $0,n-1,n,2n-1, 2n$, and 
\begin{itemize}
\item $\forall k\in\Lbrack 1,n\Rbrack$, $f_p(n-k)=n+k$,
\item $\forall k\in\Lbrack 0,n-1\Rbrack$, $f_p(n+k)=n-k-1$.
\end{itemize}

\begin{figure}[htb]
\centerline{\includegraphics{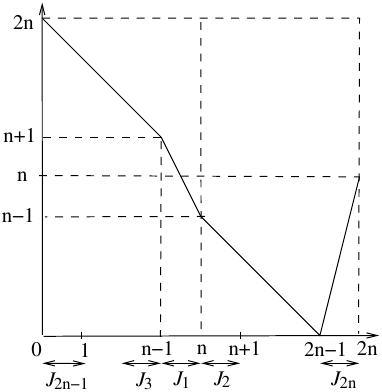}}
\caption{This interval map is of type $p=2n+1$, it is topologically
mixing and its topological entropy 
is equal to $\log\lambda_p$.} 
\label{fig:htop-lambdaq}
\end{figure}

We set $P:=\{0,1,2,\ldots, ,2n\}$ and, for all $k\in\Lbrack 1,n\Rbrack$,
$$
J_{2k-1}:=[n-k,n-k+1]\quad\text{and}\quad J_{2k}:=[n+k-1,n+k].
$$
Then $P$ is a periodic orbit of period 
$p$ and the graph associated to $P$ is: 
\begin{equation}\label{fig:graph-type-q}
\lower1em\hbox{\includegraphics{ruette-fig1}}
\end{equation}
Moreover, $f_p$ is $P$-linear and the matrix $M_p=M(f_p|P)$ is exactly the 
one given by \eqref{eq:Mq} in the proof of Lemma~\ref{lem:Mq}.
Hence $\lambda(M_p)=\lambda_p,$ where $\lambda_p$ is defined in
Theorem~\ref{theo:htop-type}. By Proposition~\ref{prop:htop-Markov-map},
$h_{top}(f_p)=\log\lambda_p$. 
This proves that the bound of Theorem~\ref{theo:htop-type} is sharp for $d=0$. 

Finally, we remark that, for $p=3$, we get a map of type $3$ with no horseshoe
because $h_{top}(f_3)<\log 2$. This shows that the converse of 
Proposition~\ref{prop:turbulent-all-periods} does not hold, as said
in Chapter~\ref{chap:periodic-points}.
\end{ex}

\begin{ex}\label{ex:htop-2-m-log-lambda}\index{square root of a map}
Our goal is to show that, for all integers $d\ge 0$ and all $p>1$ with $p$ odd,
there exists an interval map of type $2^d p$ such that its topological
entropy is equal to $\frac{\log\lambda_p}{2^d}$, where $\lambda_p$ is
defined in Theorem~\ref{theo:htop-type}. This will prove that the bound of
Theorem~\ref{theo:htop-type} is optimal.

In Example~\ref{ex:all-types}, we defined the \emph{square root} of an 
interval map and showed that the square root of a map of type $n$
is of type $2n$. We recall this construction.
If $f\colon [0,b]\to [0,b]$ is an interval map, the square root
of $f$ is the continuous map
$g\colon [0,3b]\to [0,3b]$ defined by
\begin{itemize}
\item $\forall x\in [0,b]$, $g(x):=f(x)+2b$,
\item $\forall x\in [2b,3b]$, $g(x):=x-2b$,
\item $g$ is linear on $[b,2b]$.
\end{itemize}
The graphs of $g$ and $g^2$ are represented in Figure~\ref{fig:type-2n-g-g2}.

\begin{figure}[htb]
\centerline{\includegraphics{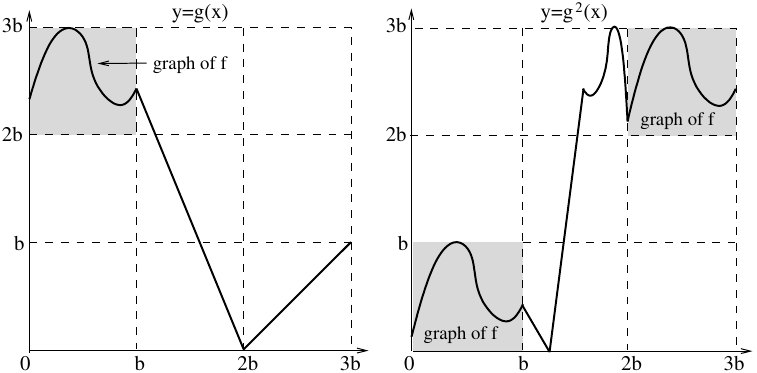}}
\caption{The left side represents the map $g$, which is the square root of $f$; the topological entropy of 
$g$ is $\frac{h_{top}(f)}2$. The right side represents the map $g^2$.} 
\label{fig:type-2n-g-g2}
\end{figure}

Suppose that $f$ is $P$-monotone with $P=\{x_0<x_1<\cdots<x_p\}$, $x_0=0$ 
and $x_p=b$. We set
$$
Q:=\{x_0,\ldots,x_p,x_0+2p,x_1+2p,\ldots,x_p+2p\}.
$$
By definition of $g$, it is obvious that $g$ is $Q$-monotone.
The matrix $A:=M(f|P)$ is of size $p\times p$. Let 
$B:=M(g|Q)$, with the convention that the $Q$-interval $[b,2b]$ corresponds 
to the last column and row. The matrix $B$ is of size $(2p+1)\times(2p+1)$
and, looking at Figure~\ref{fig:type-2n-g-g2}, it is clear that 
$B$ is of the form
$$
B=\left(\begin{array}{ccc}
0 _{p\times p}& A & 0_p\\
{\rm Id}_p & 0_{p\times p} & 0_p\\
* & ^t1_p & 1
\end{array}\right)
$$
(where $0_{p\times p}$ denotes the $p\times p$ null matrix and
$x_p$ denotes the $1\times p$ matrix with all coefficients equal to $x$) and thus
$$
B^2=\left(\begin{array}{ccc}
A & 0_{p\times p} & 0_p\\
0_{p\times p}& A& 0_p\\
* & * & 1
\end{array}\right).
$$
We deduce that $\lambda(B^2)=\lambda(A)$ provided $\lambda(A)\ge 1$. 
According to  Proposition~\ref{prop:htop-Markov-map},
$h_{top}(f)=\max(0,\log\lambda(A))$ and $h_{top}(g)=\max(0,\log\lambda(B))$.
Since $\lambda(B^2)=\lambda(B)^2$, we get
$h_{top}(g)=\frac{1}{2}h_{top}(f)$.

\medskip
We fix an odd integer $p>1$. Starting with the map $f_p$ of type $p$ and
topological entropy $\log\lambda_p$ defined in Example~\ref{ex:htop-mixing-log-lambda} and
applying inductively the square root construction, we can build an
interval map of type $2^d p$ and topological entropy $\frac{\log\lambda_p}{2^d}$
for any integer $d\ge 0$. This completes the construction.
\end{ex}

\subsection{Number of periodic points}

We have seen that the knowledge of the periods of the periodic points
gives a lower bound on the entropy. Conversely, the entropy gives
some information on the number of periodic points. The next result, 
due to Misiurewicz \cite{Mis2}, is a straightforward consequence of
Misiurewicz's Theorem~\ref{theo:Misiurewicz}. 
Recall that $P_n(f)$ is the set of points $x$ such that $f^n(x)=x$.

\begin{prop}\label{prop:htop-cardPn}
If $f$ is an interval map of positive topological entropy, then 
$$
\limsup_{n\to+\infty}\frac{1}{n}\log\# P_n(f)\ge h_{top}(f).
$$
\end{prop}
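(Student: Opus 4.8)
The plan is to deduce this directly from Misiurewicz's Theorem~\ref{theo:Misiurewicz} by translating horseshoes into lower bounds on the number of fixed points of iterates. Fix $\lambda < h_{top}(f)$. By Theorem~\ref{theo:Misiurewicz}, for every $N$ there exist intervals $J_1,\dots,J_p$ and an integer $n\ge N$ such that $(J_1,\dots,J_p)$ is a \emph{strict} $p$-horseshoe for $f^n$ and $\frac{\log p}{n}\ge\lambda$. The strictness is what I want to exploit: because the $J_i$ are pairwise disjoint closed intervals, distinct "itineraries" produce distinct periodic points.

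More precisely, here are the steps. First, for each $i\in\Lbrack 1,p\Rbrack$, apply Lemma~\ref{lem:chain-of-intervals}(ii) to the constant chain $(J_i,J_i)$ for the map $f^n$: since $f^n(J_i)\supset J_i$, there is a point $x_i\in J_i$ with $f^n(x_i)=x_i$, i.e.\ $x_i\in P_n(f^n)$. Actually I want more fixed points, so instead: for every index $i\in\Lbrack 1,p\Rbrack$ the chain $(J_i,J_1,J_i)$ or more simply the single-step chain $(J_i,J_i)$ gives one fixed point of $f^n$ inside $J_i$; the $p$ intervals being pairwise disjoint, these $p$ fixed points are distinct, so $\#P_n(f^n)\ge p$. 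But $P_n(f^n)=P_1(f^n)$ and we need $\#P_m(f)$ for some large $m$; note $P_n(f^n)=\{x\mid f^{n\cdot 1}(x)=x\}$ is not literally $P_m(f)$ unless we set $m=n$. Indeed $P_n(f^n)=\{x\mid (f^n)^1(x)=x\}=\{x\mid f^n(x)=x\}=P_n(f)$, so in fact $\#P_n(f)\ge p$ directly. Second, take logarithms and divide by $n$: $\frac{1}{n}\log\#P_n(f)\ge\frac{\log p}{n}\ge\lambda$. Third, since $N$ was arbitrary and the corresponding $n$ satisfies $n\ge N$, we obtain a sequence $n_k\to\infty$ with $\frac{1}{n_k}\log\#P_{n_k}(f)\ge\lambda$, whence $\limsup_{m\to+\infty}\frac{1}{m}\log\#P_m(f)\ge\lambda$. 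Finally, let $\lambda\uparrow h_{top}(f)$ to conclude $\limsup_{m\to+\infty}\frac{1}{m}\log\#P_m(f)\ge h_{top}(f)$.

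The only point requiring a little care — and the "main obstacle", such as it is — is making sure we genuinely get $\ge p$ \emph{distinct} periodic points of $f^n$ from the strict horseshoe, and that these are counted by $\#P_n(f)$ rather than by $\#P_m(f)$ for a differently-indexed $m$. Strictness (disjointness of the $J_i$, not merely disjoint interiors) is exactly what guarantees distinctness of the $p$ fixed points produced in distinct $J_i$'s; and the identity $P_n(f)=\{x: f^n(x)=x\}$ is precisely the set of fixed points of $f^n$, so no re-indexing is needed. One should also remark that $\#P_n(f)$ is finite only if we are lucky, but the inequality $\limsup\frac1n\log\#P_n(f)\ge h_{top}(f)$ is vacuously consistent (and in fact still correct in spirit) if some $P_n(f)$ is infinite, in which case $\log\#P_n(f)=+\infty$ and the $\limsup$ is $+\infty\ge h_{top}(f)$; so the statement holds with the usual convention $\log\#E=+\infty$ when $E$ is infinite. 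No further machinery beyond Theorem~\ref{theo:Misiurewicz} and Lemma~\ref{lem:chain-of-intervals} is needed.
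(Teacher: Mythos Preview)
Your proof is correct and essentially identical to the paper's: both invoke Misiurewicz's Theorem to produce, for each $\lambda<h_{top}(f)$ and each $N$, a strict $p$-horseshoe for $f^n$ with $n\ge N$ and $\frac{\log p}{n}\ge\lambda$, then use the disjointness of the $J_i$ to extract $p$ distinct fixed points of $f^n$ (the paper cites Lemma~\ref{lem:fixed-point}, you cite Lemma~\ref{lem:chain-of-intervals}(ii), which amounts to the same thing), giving $\#P_n(f)\ge p$ and hence the desired $\limsup$ bound. Your side remarks on the possible infiniteness of $P_n(f)$ and the indexing identity $P_n(f)=\{x:f^n(x)=x\}$ are fine but not needed for the argument.
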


\begin{proof}
Let $0<\lambda<h_{top}(f)$. According to Theorem~\ref{theo:Misiurewicz},
for all integers $N$, there exist integers $n\ge N$ and $p\ge 2$
such that $f^n$ has a strict $p$-horseshoe $(J_1,\ldots, J_p)$
and  $\frac{1}{n}\log p\ge \lambda$.
In particular, $f^n(J_i)\supset J_i$ for all $i\in\Lbrack 1,p\Rbrack$, and thus 
there exists $x\in J_i$ such that $f^n(x)=x$
by Lemma~\ref{lem:fixed-point}. Since the intervals $J_1,\ldots, J_p$
are pairwise disjoint, this implies that $\# P_n(f)\ge p$, so
$$
\limsup_{n\to+\infty}\frac{1}{n}\log\# P_n(f)\ge \lambda.
$$
Since $\lambda$ is arbitrarily close to $h_{top}(f)$, this gives the required
result.
\end{proof}

The next proposition follows
a theorem of \v{S}tefan \cite{Ste}, which strengthens a previous result
of Bowen and Franks \cite{BF}.

\begin{prop}\label{prop:type-nb-periodic-points}
Let $f\colon I\to I$ be an interval map. If $f$ has a periodic point of
period $2^d q$ with $d\ge 0$ and $q$ an odd integer greater than $1$, then
$$
\liminf_{n\to+\infty}\frac{1}{n}\log
\#\{x\in I\mid x\text{ periodic point of period } 2^d n\} 
\ge \log\lambda_q,
$$
where $\lambda_q$ is the unique positive root of $X^q-2X^{q-2}-1$.
\end{prop}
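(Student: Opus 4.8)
The plan is to reduce the statement to the case $d=0$ and then to a counting estimate for paths in the \v{S}tefan graph $G_p$, where $p$ is the minimal odd period greater than $1$ of the map $g:=f^{2^d}$. First I would observe that, by Sharkovsky's Theorem~\ref{theo:Sharkovsky} together with Lemma~\ref{lem:period-f-fn}(i), the hypothesis that $f$ has a periodic point of period $2^d q$ implies that $g=f^{2^d}$ has a periodic point of odd period greater than $1$; let $p\le q$ be the least such odd period. Then a periodic point of period $2^d n$ for $f$ is exactly a periodic point of period $n$ for $g$ (using Lemma~\ref{lem:period-f-fn} again and keeping track of how the period changes: if $n$ itself is odd and $\ge p$, or more carefully if $n$ is a multiple of a suitable odd number, one gets the cleanest correspondence, but for a lower $\liminf$ it suffices to exhibit enough periodic points of period dividing $2^d n$ and lying on distinct orbits). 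So it is enough to prove: if $g$ has minimal odd period $p>1$, then
$$
\liminf_{n\to+\infty}\frac1n\log\#\{x\mid x\text{ is periodic of period }n\text{ for }g\}\ge\log\lambda_p,
$$
and then invoke $\lambda_p\ge\lambda_q$ from Lemma~\ref{lem:Mq}.

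Next I would bring in the graph $G_p$ of a periodic orbit of $g$ of period $p$, which by Lemma~\ref{lem:graph-n-minimal} has the explicit form of Figure~\ref{fig:graph-of-odd-periodic-point}, with adjacency matrix $M_p$ as displayed in \eqref{eq:Mq}. By Lemma~\ref{lem:Mq}, $M_p$ is primitive with spectral radius $\lambda_p$. The key point is to count, for each $n$, the number of \emph{primitive} cycles of length $n$ in $G_p$ and apply Lemma~\ref{lem:cycle-periodic-point}: each primitive cycle of length $n$ yields a periodic point of $g$ of period exactly $n$, and cycles that visit different sequences of vertices (and are not reducible to the fundamental cycle) yield points on distinct orbits. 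Since $M_p$ is primitive and $\lambda_p>1$ by Lemma~\ref{lem:Mq}, the total number of closed paths of length $n$ based at a fixed vertex grows like $\lambda_p^n$ (this follows from Proposition~\ref{prop:spectral-radius} together with the Perron--Frobenius Theorem~\ref{theo:Perron-Frobenius}, which guarantees $\lambda_p$ is a simple dominant eigenvalue, so $m_{ii}^n\sim c\,\lambda_p^n$ with $c>0$). One must then subtract the non-primitive closed paths and those giving points already counted; since the number of non-primitive closed paths of length $n$ based at a vertex is at most $\sum_{k\mid n,\,k<n}m_{ii}^k = O(\lambda_p^{n/2}\cdot\tau(n))$, which is negligible compared to $\lambda_p^n$, and since each periodic orbit of period $n$ is counted at most $n$ times among based closed paths, we get at least $\bigl(c\lambda_p^n+o(\lambda_p^n)\bigr)/n$ distinct periodic orbits, hence at least that many periodic points. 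Taking $\frac1n\log$ and $\liminf$ gives $\log\lambda_p\ge\log\lambda_q$, which is the claim for $g$; translating back to $f$ multiplies indices by $2^d$ and replaces $n$ by $2^d n$ on the left, matching the statement exactly since $\frac1n\log(\cdots)$ is evaluated along $n$ and the $2^d$ only rescales the exponent consistently.

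The main obstacle I anticipate is the bookkeeping in the translation between periodic points of $f$ of period $2^d n$ and periodic points of $g=f^{2^d}$ of period $n$, because Lemma~\ref{lem:period-f-fn}(ii) only says that a period-$m$ point of $f^k$ has period $m\frac{k}{d'}$ for some divisor $d'$ of $k$ with $\gcd(m,d')=1$ — so a period-$n$ point of $g$ need not have period exactly $2^d n$ for $f$. The clean way around this is to restrict attention to those $n$ for which the correspondence is exact (for instance, taking $n$ ranging over multiples of $p$, or over integers coprime to $2$, which still suffices for the $\liminf$ since we only need a subsequence along which the growth rate is achieved — actually the $\liminf$ requires \emph{all} large $n$, so more care is needed: one should instead directly count, for each large $n$, primitive cycles of length $n$ in $G_p$ for $g$, note each gives a point of period exactly $n$ for $g$ hence a point whose $f$-period \emph{divides} $2^d n$, and observe these points lie on pairwise distinct $g$-orbits hence on pairwise distinct $f$-orbits, so they are pairwise distinct elements of $\{x:f^{2^d n}(x)=x\}$ once we note any point of $g$-period $n$ satisfies $g^n(x)=x$, i.e. $f^{2^d n}(x)=x$; the set in the proposition is ``periodic of period $2^d n$'', so one finally checks these points indeed have $f$-period exactly $2^d n$, which holds because their $g$-period is exactly $n$ and $g=f^{2^d}$, forcing the $f$-period to be a multiple of... ). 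This last subtlety — pinning down the exact $f$-period rather than just a divisor — is where I would spend the most effort, most likely by using that a point of $g$-period exactly $n$ has $f$-period $2^e n'$ with $n'\mid n$ and then arguing $n'=n$, $e=d$ from minimality considerations or by choosing the primitive cycles to also avoid the orbit of $x$ so that $f^{2^{d-1}n}(x)\ne x$.
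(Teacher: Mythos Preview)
Your approach is the paper's: reduce to $g=f^{2^d}$, take $p$ the minimal odd $g$-period, count primitive cycles of length $n$ in the \v{S}tefan graph $G_p$ via the trace of $M_p^n$ and Perron--Frobenius to get $N_n\ge\lambda_p^n(1+o(1))$, and invoke Lemma~\ref{lem:cycle-periodic-point} to obtain that many points of $g$-period exactly $n$. The concern you raise about the back-translation is justified: a point of $g$-period $n$ has $f$-period $2^e n$ for some $e\le d$, not necessarily $e=d$, and the paper's own one-line reduction (``a periodic point of period $2^d n$ for $f$ is a periodic point of period $n$ for $g$'') only gives the inclusion going the wrong way for a lower bound. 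Your tentative fixes (restrict to a subsequence, or ``choose primitive cycles avoiding the orbit of $x$'') do not close this, and you are honest in saying so.

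Here is how to close it. By Lemma~\ref{lem:period-f-fn}(ii), the $f$-period of a $g$-period-$n$ point is $2^dn/d'$ with $d'\mid 2^d$ and $\gcd(n,d')=1$; when $n$ is \emph{even} this forces $d'=1$, so every constructed point already has $f$-period exactly $2^dn$ and the bound $\#\{x:f\text{-period }2^dn\}\ge N_n$ follows for all even $n$. For odd $n>1$, first treat the case where $d$ is minimal, i.e.\ the Sharkovsky type of $f$ is $2^d q_0$ for some odd $q_0>1$: then $f$ has \emph{no} period of the form $2^e n$ with $e<d$ (any such period would precede $2^dq_0$ in Sharkovsky's order), so the decomposition $\#\{g\text{-period }n\}=\sum_{e=0}^d\#\{f\text{-period }2^en\}$ collapses to a single term and again $\#\{f\text{-period }2^dn\}=\#\{g\text{-period }n\}\ge N_n$. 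If instead the type of $f$ is $2^{d_0}q_0$ with $d_0<d$, apply the already-proved minimal case at level $d_0$ and substitute $m=2^{d-d_0}n$: since $\lambda_{p_0}>\sqrt{2}$ by Lemma~\ref{lem:Mq} and $\lambda_q\le\lambda_3<2$, one obtains $\liminf_n\tfrac1n\log\#\{f\text{-period }2^dn\}\ge 2^{d-d_0}\log\lambda_{p_0}>\log 2>\log\lambda_q$, which is even stronger than needed.
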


\begin{proof}
We first assume that $d=0$. Let $p$ be the minimal odd period greater than
$1$. We fix $P$ as a periodic orbit of period $p$ and we denote $G$ by the graph
associated to $P$. For all $n\ge 1$,
let $N_n$ be the number of primitive cycles of length $n$ in $G$.
According to Lemma~\ref{lem:cycle-periodic-point},
for every primitive cycle $I_0\to I_1\to\cdots I_n=I_0$ in $G$,
there exists a periodic point $y$ of period $n$ such that
$f^i(y)\in I_i$ for all $i\in\Lbrack 0,n-1\Rbrack$. The periodic points 
$y,y'$ corresponding
to two different primitive cycles are different, except maybe if
$y,y'$ are endpoints of one of the $P$-intervals, which implies that they
are of period $p$. Therefore,
\begin{equation}\label{eq:number-periodic-points-n}
\forall n\neq p,\ \#\{x\in I\mid x\text{ periodic point of period } n\}\ge N_n.
\end{equation}

Let $M$ be the adjacency matrix of $G$. We write 
$M^n=(m_{ij}^n)_{1\le i,j\le p-1}$ for every $n\ge 1$. 
By Proposition~\ref{prop:Mn}, the number of cycles of length $n$ in 
$G$ is equal to $\sum_{i=1}^{p-1}m_{ii}^n={\rm Tr} (M^n)$.
By Lemma~\ref{lem:Mq}, $M$ is primitive and its maximal
eigenvalue is $\lambda_p$. Let $(\lambda_p,\mu_2,\ldots,\mu_{p-1})$ be the
set of eigenvalues (with possible repetitions corresponding to the size of the
generalized eigenspaces) of $M$.
According to the Perron-Frobenius Theorem~\ref{theo:Perron-Frobenius}, 
$|\mu_i|<\lambda_p$ for all $i\in\Lbrack 2,p-1\Rbrack$.
By triangularization of $M$, the matrix $M^n$ is equivalent to
$$
\left(\begin{array}{ccccc}
\lambda_p^n & *       & * &\cdots & *   \\
0         & \mu_2^n & * &\cdots & *  \\
0         & 0 & \mu_3^n &\cdots & * \\
\vdots & \vdots & \vdots  & \ddots & \vdots  \\
0         & 0 & 0     & \cdots & \mu_{p-1}^n\\
\end{array}\right).
$$
We deduce that
\begin{equation}\label{eq:Tr-Mn}
{\rm Tr} (M^n)=\lambda_p^n+\mu_2^n+\cdots+\mu_{p-1}^n
=\lambda_p^n\left(1+\sum_{i=2}^{p-1}\left(\frac{\mu_i}{\lambda_p}\right)^n
\right).
\end{equation}
We fix $n\ne p$. If a cycle of length $n$ is not primitive, then it is a 
multiple of a primitive cycle of length $k$ for some $k$ dividing $n$ 
with $k<n$. Therefore
$$
{\rm Tr}(M^n)=N_n+\sum_{\doubleindice{k|n}{k<n}} N_k
$$
(where $k|n$ means that $k$ divides $n$).
\label{notation:divides}
Moreover, $N_k\le {\rm Tr}(M^k)$ because ${\rm Tr}(M^k)$ is the number 
of cycles of length $k$. Thus 
\begin{equation}\label{eq:Nn}
N_n\ge {\rm Tr}(M^n)-\sum_{\doubleindice{k|n}{k<n}}{\rm Tr}(M^k).
\end{equation}
Let $k$ be an integer dividing $n$ such that $k<n$. 
Necessarily, $k\le n/2$ and thus, by \eqref{eq:Tr-Mn},
${\rm Tr}(M^k)\le (p-1)\lambda_p^k\le (p-1)\lambda_p^{n/2}$.
Combining this with \eqref{eq:Nn} and \eqref{eq:Tr-Mn},
we get
\begin{eqnarray*}
N_n&\ge& \lambda_p^n\left(1+\sum_{i=2}^{p-1}
\left(\frac{\mu_i}{\lambda_p}\right)^n\right)
-(p-1)\frac{n}{2}(\lambda_p)^{n/2}\\
&\ge&\lambda_p^n\left(1+\sum_{i=2}^{p-1}
\left(\frac{\mu_i}{\lambda_p}\right)^n-(p-1)\frac{n}{2}(\lambda_p)^{-n/2}
\right).
\end{eqnarray*}
Since $\lambda_p>1$ and $|\mu_i|<\lambda_p$ for all $i\in\Lbrack 2,p-1\Rbrack$,
we have
$$
\lim_{n\to+\infty}\left(\frac{\mu_i}{\lambda_p}\right)^n=0\quad\text{and}
\quad\lim_{n\to+\infty}\frac{n}{2}(\lambda_p)^{-n/2}=0,
$$
so
$$
\liminf_{n\to+\infty}\frac{1}{n}\log N_n\ge \log\lambda_p.$$
Thus, by \eqref{eq:number-periodic-points-n}
\begin{equation}\label{eq:Nn-lambdap}
\liminf_{n\to+\infty}\frac{1}{n}\log 
\#\{x\in I\mid x\text{ periodic point of period } n\}\ge \log\lambda_p.
\end{equation}

We now suppose that $f$ has a periodic point of period $2^d q$ with $d\ge 0$
and $q>1$, $q$ odd. Then the map $g:=f^{2^d}$ has a periodic point of
period $q$ (Lemma~\ref{lem:period-f-fn}(i)).
Let $p$ be the minimal odd period greater than $1$ for $g$. Since a periodic
point of period $2^d n$ for $f$ is a periodic point of period $n$ for $g$, 
\eqref{eq:Nn-lambdap} implies that
$$
\liminf_{n\to+\infty}\frac{1}{n}\log 
\#\{x\in I\mid x\text{ periodic point of period } 2^d n \text{ for } f\}
\ge \log\lambda_p.
$$
Moreover $\lambda_p\ge \lambda_q$ by Lemma~\ref{lem:Mq}.
This concludes the proof.
\end{proof}

\subsection*{Remarks on graph maps}

Llibre and Misiurewicz showed that Proposition~\ref{prop:htop-cardPn}
is also valid for graph maps \cite{LM3}. The technique is similar.

For graph maps, there exist conditions equivalent to positive entropy
in terms of sets of periods,  but they cannot be expressed in such a 
simple dichotomy as the equivalence (i)$\Leftrightarrow$(ii) in 
Theorem~\ref{theo:htop-power-of-2}.

Optimal lower bounds on entropy are known for circle maps, in the same
vein as Theorem~\ref{theo:htop-type}.
The results for circle maps of degree different from $1$ are
mainly due to Block, Guckenheimer, Misiurewicz and Young \cite{BGMY}.
When the degree is $0$ or $-1$, one essentially has the same results
as for interval maps.
Several papers deal with entropy of circle maps of degree 1. In particular,
Ito gave an optimal lower bound on entropy when 
there exist two periods $p,q>1$ such that $\gcd(p,q)=1$ \cite{Ito2}.
The lower bound stated below in Theorem~\ref{theo:h-degree1}, which is
the most precise one, depends on the rotation interval; it is due to
Alsedà, Llibre, Mañosas and Misiurewicz \cite{ALMM}. 
The reader is advised to refer
to \cite[Section 4.7]{ALM} for an extensive exposition on circle maps.
Recall that the possible sets of periods of circle maps were given in
Theorems \ref{theo:period-degreenot1} and \ref{theo:periods-degree1}.

\begin{prop}\label{prop:h-degreed}
Let $f\colon \IS\to \IS$ be a circle map of degree $d$ with $|d|\ge 2$.
Then $f$ admits a $|d|$-horseshoe and $h_{top}(f)\ge \log |d|$.
\end{prop}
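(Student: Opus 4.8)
The idea is to lift $f$ to $\IR$ and exploit the identity $F(x+1)=F(x)+d$, which forces $F$ to sweep across an interval of length $|d|$ over a single fundamental domain; cutting that domain into $|d|$ consecutive pieces then produces the horseshoe. Let $\pi\colon\IR\to\IS$ be the canonical projection and $F\colon\IR\to\IR$ a lift of $f$, so that $f\circ\pi=\pi\circ F$ and $F(x+1)=F(x)+d$ for all $x$. I would treat the case $d\ge 2$ in detail; the case $d\le -2$ is entirely symmetric (replace $[A,A+d]$ by $[A+d,A]$ and the value $A+j$ by $A-j$ below). Set $A:=F(0)$, so that $F(1)=A+d$; by the intermediate value theorem, $F([0,1])\supseteq[A,A+d]$.

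Next I would peel off $d$ consecutive subintervals of $[0,1]$. Put $c_0:=0$ and, for $j=1,\dots,d$, let
$$c_j:=\min\{x\in[c_{j-1},1]\mid F(x)=A+j\}.$$
By induction $F(c_{j-1})=A+(j-1)$, and since $A+(j-1)<A+j\le A+d=F(1)$, the intermediate value theorem shows that the set above is nonempty (it is also closed), so $c_j$ is well defined; one gets $0=c_0<c_1<\dots<c_{d-1}<c_d\le 1$, and, by continuity, $F([c_{j-1},c_j])\supseteq[A+j-1,A+j]$ for every $j\in\Lbrack 1,d\Rbrack$. The key observation is that the lengths $c_j-c_{j-1}$ are positive and sum to $c_d\le 1$; since there are $d\ge 2$ of them, each satisfies $c_j-c_{j-1}<1$.

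Finally, set $J_j:=\pi([c_{j-1},c_j])$ for $j=1,\dots,d$. Because $[c_{j-1},c_j]$ has length $<1$, the map $\pi$ is injective on it, so each $J_j$ is a non-degenerate closed interval of $\IS$, and since the open intervals $(c_{j-1},c_j)$ are disjoint and lie where $\pi$ is injective, the arcs $J_1,\dots,J_d$ have pairwise disjoint interiors. Moreover $f(J_j)=\pi(F([c_{j-1},c_j]))\supseteq\pi([A+j-1,A+j])=\IS$, because $[A+j-1,A+j]$ has length $1$; hence $f(J_j)=\IS$ for every $j$. Thus $(J_1,\dots,J_d)$ is a $|d|$-horseshoe: for all $i,j$ the arc $J_j$ covers $J_i$ in the sense of Definition~\ref{def:coveringG}, since a suitable lift of $J_i$ lies inside $F([c_{j-1},c_j])\supseteq[A+j-1,A+j]$, so Lemma~\ref{lem:chain-of-intervals}(i) applied to $F$ furnishes a subarc of $J_j$ mapped exactly onto $J_i$ respecting endpoints. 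Consequently the graph associated to $J_1,\dots,J_d$ admits a subgraph whose adjacency matrix is the $d\times d$ matrix with all entries $1$, of spectral radius $d$, and Proposition~\ref{prop:matrix-htop} (valid for graph maps) gives $h_{top}(f)\ge\log d$; equivalently one invokes Theorem~\ref{theo:htop-horseshoeG}.

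The only genuinely delicate point is ensuring that the pieces of the horseshoe are honest intervals of $\IS$, i.e. of length $<1$, rather than the whole circle — and this is precisely what is secured by peeling $|d|\ge 2$ pieces out of one fundamental domain, so that their lengths sum to at most $1$. It is also worth noting that, unlike in the typical interval-map situation, one should not expect the $J_j$ to sit inside a common proper sub-arc onto which they all map; here each $J_j$ is sent onto the entire circle, so the entropy lower bound is read off from the associated graph rather than from a sub-arc covering.
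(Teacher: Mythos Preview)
The paper states this proposition without proof (it cites \cite{BGMY}), so there is no argument in the text to compare against; I assess your proof on its own.

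Your arcs $J_j=\pi([c_{j-1},c_j])$ are correctly built: pairwise disjoint interiors, each of length $<1$, each with $f(J_j)=\IS$. The gap is the claim that $J_j$ covers $J_i$ in the sense of Definition~\ref{def:coveringG}. A lift of $J_i$ has the form $[c_{i-1}+m,c_i+m]$ with $m\in\IZ$, and to place it inside $[A+j-1,A+j]$ you need an integer in the interval $[A+j-1-c_{i-1},\,A+j-c_i]$, whose length is $1-(c_i-c_{i-1})<1$; when $A\notin\IZ$ this can fail. Concretely, for $F(x)=2x+\tfrac{3}{10}$ one gets $A=\tfrac{3}{10}$, $c_0=0$, $c_1=\tfrac12$, $c_2=1$, and $F([0,\tfrac12])=[\tfrac{3}{10},\tfrac{13}{10}]$ contains no integer translate $[m,m+\tfrac12]$ of $[c_0,c_1]$, so $J_1$ does \emph{not} cover $J_1$. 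In fact the only arrows here are $J_1\to J_2$ and $J_2\to J_2$; the adjacency matrix has spectral radius $1$, so Proposition~\ref{prop:matrix-htop} yields nothing.

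The repair is to anchor the construction at a fixed point. Since $|d|\ge 2$, the map $x\mapsto F(x)-x$ changes by $d-1\ne 0$ over one period and hence, by the intermediate value theorem, takes an integer value at some point; thus $f$ has a fixed point $p$. Choose coordinates so that $p=\pi(0)$ and take the lift with $F(0)=0$, so that $A=0$. Now for all $i,j$ the integer $m=j-1$ gives $[c_{i-1}+j-1,\,c_i+j-1]\subseteq[j-1,j]\subseteq F([c_{j-1},c_j])$, so every $J_j$ covers every $J_i$ exactly as you intended, and Proposition~\ref{prop:matrix-htop} yields $h_{top}(f)\ge\log|d|$. The $|d|$-horseshoe is $(J_1,\dots,J_{|d|})$ with $I=\IS$ regarded as a closed interval whose two endpoints coincide at the fixed point $p$, in the spirit of the parenthetical remark following Definition~\ref{def:coveringG}; note that for $f(x)=dx$ no proper sub-arc $I\subsetneq\IS$ supports a $|d|$-horseshoe, so this reading is forced.
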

 
\begin{prop}
Let $f\colon \IS\to \IS$ be a circle map of degree $0$ or $-1$. The following
assertions are equivalent:
\begin{itemize}
\item $h_{top}(f)>0$.
\item $f$ has a periodic point whose period is not a power of $2$. 
\end{itemize}
Moreover, if a lifting of $f$ has a periodic point of period
$2^dq$ with $d\ge 0$, $q>1$, $q$ odd, then $h_{top}(f)\ge \frac{\lambda_q}{2^d}$,
where $\lambda_q$ is the unique positive root of $X^q-2X^{q-2}-1$.
\end{prop}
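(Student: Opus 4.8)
The plan is to reduce all three claims to the corresponding statements for interval maps --- Theorems~\ref{theo:htop-power-of-2} and \ref{theo:htop-type} --- supplemented by the graph-map version of Misiurewicz's theorem, Theorem~\ref{theo:htop-horseshoeG}.

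I would first dispose of the implication ``$h_{top}(f)>0\Rightarrow f$ has a periodic point whose period is not a power of $2$''. By Theorem~\ref{theo:htop-horseshoeG} positive entropy produces an integer $n\ge 1$ such that $f^n$ has a strict horseshoe $(J_1,J_2)$; since the circle has no branching point, $J_1$ and $J_2$ are disjoint closed arcs lying in a common arc $I$ with $f^n(J_i)\supset I\supset J_1\cup J_2$. Feeding the chains $J_1\to J_2\to\cdots\to J_2\to J_1$ of arbitrary length $m$ into Lemma~\ref{lem:chain-of-intervals}(ii) (which holds for graph maps) yields, exactly as in the proof of Proposition~\ref{prop:turbulent-all-periods} and using that $J_1\cap J_2=\emptyset$, a periodic point of period exactly $m$ for $f^n$; choosing $m$ to be an odd prime greater than $1$ and invoking Lemma~\ref{lem:period-f-fn}(ii), one gets a periodic point of $f$ whose period is a multiple of $m$, hence not a power of $2$.

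For the reverse implication and the quantitative bound I would reduce $f$ to an interval map, splitting on the degree. If $f$ has degree $0$, a lifting $F$ satisfies $F(x+1)=F(x)$, so $F$ is periodic and $F(\IR)=[a,b]$ is a compact interval with $F([a,b])\subset[a,b]$; thus $g:=F|_{[a,b]}$ is an interval map (we may assume $f$ non-constant, so $a<b$). Any periodic point $z$ of $F$ satisfies $z=F^p(z)\in F(\IR)=[a,b]$, so its whole orbit lies in $[a,b]$ and $z$ is periodic of the same period for $g$; hence, if a lifting of $f$ has a periodic point of period $2^dq$ with $q>1$ odd, so does $g$, and Theorem~\ref{theo:htop-type} gives $h_{top}(g)\ge\frac{\log\lambda_q}{2^d}$, while Theorem~\ref{theo:htop-power-of-2} gives $h_{top}(g)>0$ whenever $g$ (equivalently, $f$) has a period that is not a power of $2$. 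Finally $\pi|_{[a,b]}\colon[a,b]\to\IS$ is a bounded-to-one continuous semiconjugacy of $g$ onto $f$ when $b-a\ge 1$, and a conjugacy of $g$ onto the forward-invariant absorbing arc $f(\IS)$ when $b-a<1$; in either case $h_{top}(f)=h_{top}(g)$ (a finite-to-one factor does not change topological entropy). Combined with the first step, this settles the degree $0$ case completely.

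The degree $-1$ case is, in my view, the real difficulty. Now a lifting $F$ satisfies $F(x+1)=F(x)-1$ and is no longer periodic, so there is no obvious compact invariant interval and one cannot simply pass to $f^{2^d}$, whose degree becomes $+1$ once $d\ge 1$. The plan is to carry out the reduction on the circle itself, following the structural analysis of degree $-1$ circle maps in \cite{BGMY}: $f$ has a fixed point (because $x\mapsto F(x)-x$ decreases by $2$ over a period), cutting the circle there exhibits $f$ as modelled by an interval map of the same entropy, while at the combinatorial level the graph of a periodic orbit of minimal odd period $p>1$ on the circle is, as in Lemma~\ref{lem:Mq}, forced to contain the \v{S}tefan graph, so its transition matrix has spectral radius at least $\lambda_p\ge\lambda_q$ and the graph-map form of Proposition~\ref{prop:matrix-htop} gives $h_{top}(f)\ge\log\lambda_p$; the general period $2^dq$ is then reduced to the odd case via Lemma~\ref{lem:period-f-fn} and $h_{top}(f^k)=k\,h_{top}(f)$. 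Making this reduction uniform in the degree --- and matching the Sharkovsky-type description of the set of periods recorded in Theorem~\ref{theo:period-degreenot1} --- is the step I expect to require the most care.
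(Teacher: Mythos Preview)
The paper does not prove this proposition; it is stated without argument in a ``Remarks on graph maps'' section, with attribution to \cite{BGMY}. So there is nothing in the text to compare against --- only your sketch to assess. Your forward implication via Theorem~\ref{theo:htop-horseshoeG} is fine, and the degree-$0$ reduction is essentially correct: the lifting is $1$-periodic, $g:=F|_{F(\IR)}$ is an interval map carrying the same periodic orbits, and the bounded-to-one semiconjugacy $\pi|_{F(\IR)}$ transfers the entropy.

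In degree $-1$ there is a genuine gap in both routes you sketch. Cutting the circle at a fixed point $c$ of a lifting does \emph{not} in general give a continuous interval map: since $F(c)=c$ forces $F(c+1)=c-1$, any attempt to reduce $F\bmod 1$ on $[c,c+1]$ jumps at every interior point where $F$ crosses $c+\IZ$ transversally, and such crossings occur generically (e.g.\ $F(x)=-x+\sin 2\pi x$ with $c=0$). Nor can Lemma~\ref{lem:graph-n-minimal} be invoked directly on $\IS$: its proof uses the linear order of $\IR$, the cyclic order of the projected orbit need not reproduce the \v Stefan pattern, and the $P$-intervals of the lifting may have length $\ge 1$ and fail to project to arcs. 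A route that does work for the quantitative bound is the one you nearly have in hand: apply the interval theory to the connect-the-dots map $F_P$ on $[\min P,\max P]$ (which \emph{is} an honest interval map, since $F(P)=P$) to produce strict horseshoes for $F^n$ inside a single $P$-interval with $\tfrac1n\log N$ close to $\tfrac{\log\lambda_q}{2^d}$, pass to the bounded compact $F$-invariant set they generate, and then transfer the entropy to $f$ via exactly the finite-to-one Bowen argument you already used in degree~$0$. For the bare equivalence you must still manufacture a non--power-of-$2$ period for the lifting $F$ from one for $f$; when $f$ has an odd period $q>1$ this is immediate (the orbit lifts to an $F$-orbit of period $q$ or $2q$), but for even periods it is not automatic, and this is where the genuine degree-$(-1)$ analysis of \cite{BGMY} --- or an appeal to the rotation theory of $F^2$ --- has to enter.
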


\begin{theo}\label{theo:h-degree1}
Let $f\colon \IS\to \IS$ be a circle map of degree $1$, and let
$[a,b]$ be its rotation interval. The following
conditions are equivalent:
\begin{itemize}
\item $h_{top}(f)>0$.
\item There exists two integers $m,n$ with $1<n<m$ such that
$f$ has two periodic points of periods $n,m$ respectively
and $m/n$ is not an integer.
\item Either $a<b$, or there exist $p\in\IZ$ and $q\in\IN$ with $\gcd(p,q)=1$
such that $a=b=p/q$ and $f$ has a periodic point whose period
is not of the form $2^dq$, $d\ge 0$.
\end{itemize}
Moreover, if $a<b$, then $h_{top}(f)\ge \log \beta_{a,b}$, where $\beta_{a,b}$ 
is the largest root of
$$
\sum_{(p,q)\in\IZ\times \IN, \frac pq\in (a,b)} t^{-q}-\frac 12,
$$
and for all real numbers $a<b$, there exists a circle map of
degree $1$ and topological entropy $\log \beta_{a,b}$.
\end{theo}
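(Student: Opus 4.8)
The statement is really a conjunction of three types of claim: an equivalence of three conditions characterizing positive topological entropy for degree $1$ circle maps, a lower bound $h_{top}(f)\ge\log\beta_{a,b}$ when the rotation interval $[a,b]$ is non degenerate, and the sharpness of that bound. I would organize the argument along those lines, leaning heavily on rotation theory for degree $1$ circle maps (which I would cite from \cite{ALM, Mis5, ALMM}), and on the structural results already available in this text for the degenerate case.

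First, for the equivalence. The implication from the spectral condition on the rotation interval (third bullet) to positive entropy should come via the lower bound $h_{top}(f)\ge\log\beta_{a,b}$ in the case $a<b$, together with the case $a=b=p/q$: there one passes to the lift, applies Lemma~\ref{lem:period-f-fn}-type bookkeeping to reduce to the map $f^q$ acting (after projecting out the rotation) like an interval map, and invokes Theorem~\ref{theo:htop-power-of-2} to conclude that a periodic point of period not of the form $2^dq$ forces positive entropy. For the implication from two ``incommensurable'' periods $n<m$ (second bullet) to the rotation-interval condition, the key input is the fact from rotation theory that a periodic point of period $q$ with rotation number $p/q$ forces $p/q$ into $[a,b]$; having two periodic points with rotation numbers whose denominators are $n$ and $m$ and with $m/n\notin\IZ$ then either forces $a<b$ (two distinct rationals in the interval) or, if $a=b=p/q$, forces $q\mid n$ and $q\mid m$, whence $m/n$ would be an integer after all unless one of the periods is not $2^dq$ — contradiction. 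Finally, positive entropy implying the existence of such a pair of periods is essentially a repackaging of Theorem~\ref{theo:htop-power-of-2} adapted to the circle: positive entropy gives a horseshoe for some iterate, hence (Proposition~\ref{prop:turbulent-all-periods}) many periods, from which one extracts $n<m$ with $m/n\notin\IZ$.

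Second, for the quantitative lower bound when $a<b$: the strategy is the standard Markov-graph estimate, exactly the machinery of Section on ``Graph associated to a family of intervals.'' For each rational $p/q\in(a,b)$ one produces (via rotation theory) a periodic orbit of the lift with rotation number $p/q$, period $q$; suitably arranging these orbits on the circle yields a family of intervals whose associated transition matrix has as characteristic quantity precisely the root $\beta_{a,b}$ of $\sum_{p/q\in(a,b)} t^{-q}=\tfrac12$. Then Proposition~\ref{prop:matrix-htop} gives $h_{top}(f)\ge\log\lambda$ of that matrix, and a Perron--Frobenius / Gelfand's formula computation (Corollary~\ref{cor:Perron-Frobenius}, Proposition~\ref{prop:spectral-radius}) identifies the spectral radius with $\beta_{a,b}$. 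For sharpness, one writes down an explicit $P$-linear (connect-the-dots) circle map of degree $1$ realizing exactly the rotation interval $[a,b]$ and whose Markov matrix has spectral radius $\beta_{a,b}$; Proposition~\ref{prop:htop-Markov-map} then gives $h_{top}=\log\beta_{a,b}$ on the nose.

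\textbf{Main obstacle.} The genuine difficulty is not in invoking Proposition~\ref{prop:matrix-htop} but in constructing the right family of intervals: one must, for a rotation interval $(a,b)$ containing infinitely many rationals, select periodic orbits (one of period $q$ for each relevant $p/q$) that can be simultaneously realized by a single circle map of degree $1$ and whose union induces a Markov-type partition on which the transition counts are bounded below by the combinatorial data encoded in $\sum t^{-q}$. Keeping the covering relations coherent across orbits of many different periods — so that the resulting (infinite, then truncated) graph really has spectral radius approaching $\beta_{a,b}$ — is where the rotation theory of \cite{ALMM} does the heavy lifting, and I would cite \cite[Section~4.7]{ALM} rather than reprove it here. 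The sharpness construction is then a finite-combinatorial dual of the same picture, and the matrix identity $\sum_{p/q\in(a,b)}\beta_{a,b}^{-q}=\tfrac12$ falls out of writing the characteristic polynomial of that Markov matrix and factoring.
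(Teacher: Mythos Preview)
The paper does not prove this theorem. It appears in a ``Remarks on graph maps'' subsection, is attributed to Alsed\`a, Llibre, Ma\~nosas and Misiurewicz \cite{ALMM}, and the reader is referred to \cite[Section~4.7]{ALM} for the full development of rotation theory for degree~$1$ circle maps. So there is no in-text proof to compare your proposal against; your plan of ultimately citing \cite{ALMM} and \cite[Section~4.7]{ALM} for the substantive rotation-theoretic construction is exactly what the paper itself does.

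That said, a few points on your sketch. Your argument for the implication (second bullet) $\Rightarrow$ (third bullet) is correct but stated murkily: if $a=b=p/q$ with $\gcd(p,q)=1$, every periodic point has rotation number $p/q$, hence period a multiple of $q$; if moreover all periods were of the form $2^dq$, then $n=2^{d_1}q$ and $m=2^{d_2}q$ with $d_1<d_2$ would give $m/n=2^{d_2-d_1}\in\IZ$, contradicting the hypothesis. For (third bullet) $\Rightarrow$ (first bullet) in the degenerate case $a=b=p/q$, your idea of passing to $f^q$ and reducing to the interval situation is the right one, but it is not literally ``like an interval map'': one uses that a degree~$1$ lift with rotation number $p/q$ is, after subtracting the rigid rotation, conjugate on a suitable fundamental domain to an interval map, and then Theorem~\ref{theo:htop-power-of-2} applies. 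For (first bullet) $\Rightarrow$ (second bullet), invoking Theorem~\ref{theo:htop-horseshoeG} to get a horseshoe for some $f^k$ is fine, but extracting two periods $n<m$ of $f$ (not $f^k$) with $m/n\notin\IZ$ requires the bookkeeping of Lemma~\ref{lem:period-f-fn}(ii), which you should make explicit. Finally, your identification of $\beta_{a,b}$ as the spectral radius of a Markov matrix is morally right, but the series $\sum_{p/q\in(a,b)}t^{-q}$ is infinite, so one works with finite truncations and passes to the limit; this is precisely the content of \cite{ALMM}, and you are right not to attempt to reprove it.
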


The set of periods implied by the existence of a given periodic orbit 
depends on its \emph{pattern}, that is, the relative position of the points 
within the orbit, and not only on its period. 
For an interval map $f$, a periodic orbit $P$ \emph{has a division}\index{division}\index{no division} if there exists a point $y\notin P$ such that, 
$$
\forall x\in P,\ x<y\Rightarrow f(x)>y\text{ and }x>y\Rightarrow f(x)<y.
$$
A periodic orbit of
odd period $p>1$ has clearly no division; this fact is important 
(although hidden) when proving that an odd period greater than $1$ implies
a cofinite set of periods (a subset of $\IN$ is cofinite\index{cofinite set} 
if it contains all but at most finitely many integers), which is a
part of Sharkovsky's Theorem~\ref{theo:Sharkovsky}.
On the other hand, an interval map can have a periodic orbit with a 
division but a set of periods which is not cofinite (e.g., the set of
all even integers and $1$). The notion of division was extended to
tree maps by Alsedà and Ye, and led to the following
results \cite{AY2, Ye3}. Since the definition of division for tree maps is
more technical than for interval maps, we do not give it here and we refer
the interested readers to the cited papers.
See also Blokh's paper \cite{Blo10} for results about periods
and entropy of tree maps.

\begin{theo}
Let $f\colon T\to T$ be a tree map. The following assertions are equivalent:
\begin{itemize}
\item $h_{top}(f)>0$.
\item There exists $n\in\IN$ such that $f^n$ has a periodic orbit
of period greater than $1$ with no division.
\end{itemize}
Moreover, if $f$ has a periodic orbit with no division, 
then $h_{top}(f)\ge \frac{1}{e(T)}\log 2$, 
where $e(T)$ denotes the number of endpoints of $T$.
\end{theo}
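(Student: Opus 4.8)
The final statement to be proved is the dichotomy for tree maps: $h_{top}(f)>0$ iff some iterate $f^n$ has a periodic orbit of period $>1$ with no division, together with the lower bound $h_{top}(f)\ge \frac{1}{e(T)}\log 2$ whenever $f$ has a periodic orbit with no division. Since this is the ``Remarks on graph maps'' statement attributed to Als\-ed\`a--Ye and Ye, I will only sketch the architecture; the detailed combinatorics on trees is what makes it hard.

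\smallskip
The plan is to follow the interval template but replace ``horseshoe'' with the tree horseshoe from Definition following Theorem~\ref{theo:htop-horseshoeG}, and replace ``odd period / \v{S}tefan cycle'' with ``periodic orbit without division''. First I would establish the easy direction: if $f$ has a periodic orbit $P$ of period $>1$ with no division, I want to produce, on some interval $I\subset T$ containing no branching point in its interior, a $2$-horseshoe for a suitable iterate $f^N$ with $N$ controlled by $e(T)$. The idea is to look at the ``connect-the-dots'' tree map $f_P$ (which is well defined on trees once $P$ contains the branching points — one first enlarges $P$ to $P\cup B(T)$, noting this cannot destroy the no-division property in a fatal way, or rather one argues directly with the $P$-basic intervals) and at its associated graph $G(f|P)$ in the sense of Definition~\ref{def:coveringG} / the matrix $M(f|P)$. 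The no-division hypothesis is exactly what forces this graph to contain a ``loop with a return'', i.e. a subgraph whose adjacency matrix has spectral radius $\ge 2^{1/e(T)}$: intuitively, without a division the orbit must straddle some edge in a way that makes an edge-interval cover itself twice after at most $e(T)$ steps (because a tree with $e$ endpoints has a spine short enough that an orbit point forced across must come back within $e$ iterates). Then Proposition~\ref{prop:matrix-htop} (valid for graph maps) gives $h_{top}(f)\ge \frac{1}{e(T)}\log 2$, and in particular $h_{top}(f)>0$; the same argument applied to $f^n$ handles the case where only some iterate has such an orbit, using $h_{top}(f^n)=n\,h_{top}(f)$ from Proposition~\ref{prop:htop-Tn}.

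\smallskip
For the converse, suppose $h_{top}(f)>0$. By Theorem~\ref{theo:htop-horseshoeG} there is an integer $k$ such that $f^k$ has a (strict) $p$-horseshoe with $p\ge 2$ on some branching-point-free interval $I$. A horseshoe on an interval is just the interval situation, so $f^k|_I$ has periodic points of all periods by Proposition~\ref{prop:turbulent-all-periods}; pick one of period $3$ (or any period $>1$ that is not realized by a divisionless-free obstruction). The remaining point is to check that among these we can find a periodic orbit of $f^k$ of period $>1$ having no division: a period-$3$ orbit contained in a single branching-free interval, ordered as in a \v{S}tefan-type configuration inside $I$, has no division because the classical no-division argument for odd periods on the interval applies verbatim inside $I$. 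Hence $f^k$ has a periodic orbit of period $>1$ with no division, which is the desired conclusion.

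\smallskip
The main obstacle I expect is the precise combinatorial step in the easy direction: extracting from ``no division'' a concrete subgraph of $G(f|P)$ (or $G(f|_{P\cup B(T)})$) whose spectral radius is at least $2^{1/e(T)}$, with the exponent $e(T)$ sharp. This requires the right notion of ``the orbit crosses an edge and returns'', controlling how long the return trip can take in terms of the number of endpoints — essentially an adaptation of the \v{S}tefan cycle / fundamental cycle analysis (Lemmas~\ref{lem:fundamental-cycle}--\ref{lem:graph-n-minimal}) to the branching structure of a tree, where one must track on which edge of the spine the successive iterates land. A secondary subtlety is making sure that enlarging $P$ to contain the branching points does not ruin the no-division property, and that the lower bound $\frac{1}{e(T)}\log 2$ (rather than something weaker) comes out; this is where one uses that a tree with $e$ endpoints has at most $e-1$ branching ``slots'' to pass through before a forced crossing must recur. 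I would treat the interval case inside a branching-free subinterval as a black box (citing Chapter~\ref{chap:periodic-points}) and concentrate the real work on this tree-combinatorial estimate.
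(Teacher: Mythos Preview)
The paper does not prove this theorem. It appears in a ``Remarks on graph maps'' subsection where the result is merely stated and attributed to Alsed\`a--Ye \cite{AY2} and Ye \cite{Ye3}; the paper explicitly says that the definition of division for tree maps ``is more technical than for interval maps, we do not give it here and we refer the interested readers to the cited papers.'' So there is no paper proof to compare your proposal against.

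As for your sketch itself, the architecture is reasonable but there is a genuine gap in the converse direction. You argue that a period-$3$ orbit sitting inside a branching-free subinterval $I\subset T$ has no division ``because the classical no-division argument for odd periods on the interval applies verbatim inside $I$.'' This is not safe: the tree notion of division is defined with respect to the whole tree $T$, not with respect to the arc $I$. A periodic orbit that looks like a \v{S}tefan cycle inside $I$ could in principle still admit a division in the tree sense, since a separating point for the tree might lie on a branch attached to $I$ rather than on $I$ itself, or the tree definition may involve the convex hull in $T$ rather than just the order on $I$. Without the actual definition from \cite{AY2, Ye3} in hand you cannot close this step. The forward direction you correctly flag as the hard part, and your description of what needs to happen (extracting from ``no division'' a subgraph of spectral radius $\ge 2^{1/e(T)}$) is the right shape, but it remains a statement of intent rather than an argument; this is precisely the tree-combinatorial work carried out in the cited papers.
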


Finally, the next theorem holds for any graph map. It was first shown by 
Blokh by the means of spectral decomposition \cite{Blo15},
then Llibre and Misiurewicz gave a more direct proof \cite{LM3}.

\begin{theo}
Let $f$ be a graph map. The following assertions are equivalent:
\begin{itemize}
\item $h_{top}(f)>0$.
\item There exists $p\in\IN$ such that the set of periods of $f$
contains $p\IN$.
\end{itemize}
\end{theo}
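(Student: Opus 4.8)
Both implications will be reduced to tools already available for graph maps: the graph--map form of Misiurewicz's theorem (Theorem~\ref{theo:htop-horseshoeG}), the chain--of--intervals lemma (Lemma~\ref{lem:chain-of-intervals}, valid for graph maps via Definition~\ref{def:coveringG}), Proposition~\ref{prop:matrix-htop}, and the purely arithmetic Lemma~\ref{lem:period-f-fn}, which holds for any map. Denote by $\mathrm{Per}(f)$ the set of periods of $f$.

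For the implication $h_{top}(f)>0\Rightarrow\exists p,\ p\IN\subseteq\mathrm{Per}(f)$, the plan is as follows. By Theorem~\ref{theo:htop-horseshoeG} there is $n\ge1$ such that $f^n$ has a strict $q$-horseshoe with $q\ge2$ inside a closed interval of $G$ with no branching point in its interior. Keeping two of its intervals $J,K$ with $\overline J\cap\overline K=\emptyset$, one has $f^n(J)\supseteq J\cup K$ and $f^n(K)\supseteq J\cup K$. Writing $g:=f^n$ and feeding the length-$m$ chain $\overline J\to\overline K\to\cdots\to\overline K\to\overline J$ into Lemma~\ref{lem:chain-of-intervals}(ii) yields, for every $m\ge1$, a point $x$ with $g^m(x)=x$, $x\in\overline J$ and $g^j(x)\in\overline K$ for $1\le j\le m-1$; disjointness forces the $g$-period of $x$ to be exactly $m$, so $f^n$ has periodic points of all periods. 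It then remains --- and this is the crux --- to descend from periods of $f^n$ to periods of $f$ itself, so as to put a whole progression $p\IN$ into $\mathrm{Per}(f)$: this requires more than Lemma~\ref{lem:period-f-fn}, and is where the argument of Llibre and Misiurewicz does its work, building from the horseshoe of $f^n$ periodic points of $f$ whose periods fill a fixed arithmetic progression (with $p$ a concrete multiple of $n$).

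For the converse $\exists p,\ p\IN\subseteq\mathrm{Per}(f)\Rightarrow h_{top}(f)>0$: fix such a $p$ and set $g:=f^p$. If $f$ has a periodic point of period $pm$, then by Lemma~\ref{lem:period-f-fn}(i) its period for $g=f^p$ is $pm/\gcd(pm,p)=m$; as $pm$ runs over $p\IN$, the map $g$ has periodic points of all periods. Since $h_{top}(f)=\frac1p\,h_{top}(g)$ by Proposition~\ref{prop:htop-Tn}, it suffices to prove the key lemma that a graph map with periodic points of all periods has positive topological entropy. For this I would take periodic orbits $P,Q$ of $g$ with $|P|=2$ and $|Q|=3$, pass from $g$ to a $P$-monotone (``connect-the-dots'') model $g_A$ with $h_{top}(g_A)\le h_{top}(g)$ still carrying orbits of periods $2$ and $3$ (using the modelling of graph maps by actions on pointed graphs recalled at the end of this section), and verify that the transition matrix of $g_A$ has spectral radius $>1$ --- the graph analogue of the fact that on an interval a $3$-cycle already forces a loop in the associated graph; Proposition~\ref{prop:matrix-htop} then gives $h_{top}(g_A)>0$.

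The hard part is, in both directions, the same in flavour: controlling the \emph{exact} periods of the points that a horseshoe produces. In the forward direction this is the passage from ``$f^n$ has all periods'' to ``$\mathrm{Per}(f)\supseteq p\IN$,'' which does not follow formally from Lemma~\ref{lem:period-f-fn} and needs a genuinely more careful construction of periodic points for $f$. In the backward direction it is the key lemma, for which the clean route runs through the $P$-monotone modelling of graph maps and Perron--Frobenius theory, and which does not reduce to the interval case because Sharkovsky's theorem fails on graphs; this is precisely the point at which Blokh's spectral decomposition, or the horseshoe--and--period analysis of Llibre and Misiurewicz, carries the argument.
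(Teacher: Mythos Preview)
The paper does not give its own proof of this theorem: it is stated in the ``Remarks on graph maps'' following Section~4.6 and attributed to Blokh (via spectral decomposition) and Llibre--Misiurewicz (direct argument). So there is no in-paper proof to compare against; nevertheless your outline has one direction that is easier than you think and one with a genuine gap.

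For the forward direction you overestimate the difficulty of descending from periods of $f^n$ to periods of $f$. Once $f^n$ has a strict horseshoe and hence $f^n$-periodic points of every period $m\ge1$, simply take $m=n\ell$. By Lemma~\ref{lem:period-f-fn}(ii) the $f$-period of such a point is $mn/d$ with $d\mid n$ and $\gcd(m,d)=1$; but $d\mid n\mid n\ell=m$ forces $\gcd(m,d)=d$, hence $d=1$, and the $f$-period is exactly $n^2\ell$. Thus the set of periods of $f$ contains $n^2\IN$, and $p=n^2$ works --- no extra input from \cite{LM3} is needed here.

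The genuine gap is in the backward direction. Passing to $g=f^p$ with all periods is fine, but the claim that periods $2$ and $3$ alone force a monotone model with spectral radius $>1$ is false on graphs. Counterexample: on the $6$-star $S_6$ with branches $B_1,\dots,B_6$ and center $c$, let $g$ fix $c$ and permute the branches isometrically by the permutation $(1\,2)(3\,4\,5)$ (and fix $B_6$ pointwise). Then $g$ is an isometry, so $h_{top}(g)=0$, while its set of periods is exactly $\{1,2,3\}$; the associated transition graph is a permutation graph with spectral radius $1$. So having a $2$-orbit and a $3$-orbit is not enough, and the ``graph analogue of a $3$-cycle forcing a loop'' that you invoke does not exist. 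The substantive content of the theorem lies precisely here: one must use that $g$ has \emph{arbitrarily large} periods --- for instance, a periodic orbit of large prime period must place many points on a single edge, and extracting a horseshoe from that configuration is exactly what the arguments of Blokh or Llibre--Misiurewicz accomplish. Your outline correctly senses that this is the hard direction, but the proposed route through periods $2$ and $3$ cannot succeed.
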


\section{Entropy of transitive and topologically mixing maps}

A transitive interval map always has a positive entropy. Moreover, this 
entropy can be uniformly bounded from below. The lower bound of
entropy of transitive interval maps (resp. transitive interval 
maps with two fixed points) is classical, as well as the examples
realizing the minimum.  
Entropy of topologically mixing interval maps can also be bounded from below,
but the infimum is not reached.

In the next proposition, statement (i) was first stated by Blokh \cite{Blo3}
(see \cite{Blo2} for the proof) and was also proved by Block and Coven
\cite{BCov}; statement (ii) was shown by Block and Coven \cite{BCov}; 
statement (iii) follows from a result of Bobok and Kuchta  \cite{BK2}
(it can also be seen as a consequence of 
Theorem~\ref{theo:summary-mixing} and Lemma~\ref{lem:Mq}).
Notice that, in (ii), a transitive interval map with two fixed points is 
necessarily topologically mixing by Theorem~\ref{theo:summary-transitivity}.

\begin{prop}\label{prop:transitivity-lower-bound-htop}
Let $f\colon I\to I$ be an interval map.
\begin{enumerate}
\item If $f$ is transitive, then $h_{top}(f)\ge\frac{\log 2}{2}$.
\item If $f$ is transitive and has at least two fixed points, then $h_{top}(f)\ge\log 2$.
\item If $f$ is topologically mixing, then $h_{top}(f)>\frac{\log 2}{2}$.
\end{enumerate}
\end{prop}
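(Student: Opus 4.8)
The plan is to use the graph of a periodic orbit together with Sharkovsky-type information about the periods forced by (mixing) transitivity, feeding everything into Proposition~\ref{prop:matrix-htop}. For statement~(ii), suppose $f$ is transitive with at least two fixed points; then $f$ is topologically mixing by Theorem~\ref{theo:summary-transitivity}, and by Lemma~\ref{lem:non-turbulent} a transitive interval map with two fixed points cannot avoid a horseshoe: since it has more than one fixed point, the argument of Lemma~\ref{lem:non-turbulent} actually produces a horseshoe $(J,K)$ (the lemma shows ``no horseshoe $\Rightarrow$ unique fixed point''). Then Proposition~\ref{prop:horseshoe-htop} gives $h_{top}(f)\ge\log 2$, which is~(ii).

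For statement~(i), I would split according to whether $f$ is topologically mixing. If $f$ is transitive but not mixing, then by Theorem~\ref{theo:summary-transitivity} there is a fixed point $c\in(a,b)$ with $f([a,c])=[c,b]$, $f([c,b])=[a,c]$, and $f^2|_{[a,c]}$ topologically mixing with a fixed endpoint $c$; by the case~(ii) argument applied to $f^2|_{[a,c]}$ (transitive, two fixed points — one of which is the endpoint $c$, the other coming from $f^2|_{[a,c]}$ being mixing hence having fixed points in the interior), $h_{top}(f^2|_{[a,c]})\ge\log 2$, so $h_{top}(f^2)\ge h_{top}(f^2|_{[a,c]})\ge\log 2$ and $h_{top}(f)=\tfrac12 h_{top}(f^2)\ge\tfrac{\log 2}{2}$. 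If instead $f$ is mixing, then by Theorem~\ref{theo:summary-mixing} it has a periodic point of odd period $q>1$, and by Proposition~\ref{prop:odd-period-turbulent} $f^2$ has a (strict) horseshoe, so again $h_{top}(f^2)\ge\log 2$ and $h_{top}(f)\ge\tfrac{\log 2}{2}$.

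For statement~(iii), suppose $f$ is topologically mixing. Then $f$ has a periodic point of odd period $q>1$; let $p$ be the \emph{least} odd period greater than~$1$, so $3\le p\le q$. By Lemma~\ref{lem:Mq}, the adjacency matrix $M_p$ of the graph of a period-$p$ orbit has spectral radius $\lambda_p$, the unique positive root of $X^p-2X^{p-2}-1$, and $\lambda_p>\sqrt2$. Applying Proposition~\ref{prop:matrix-htop} to this graph gives $h_{top}(f)\ge\log\lambda_p>\tfrac{\log 2}{2}$, which is the strict inequality in~(iii). The only subtle point is that this bound is via the concrete number $\lambda_p$, not merely $\sqrt2$; the strict inequality $\lambda_p>\sqrt2$ is exactly what Lemma~\ref{lem:Mq} supplies, so there is no real obstacle — one just has to be careful that in the non-mixing subcase of~(i) the bound $\tfrac{\log 2}{2}$ is attained (so only ``$\ge$'' can be claimed there), whereas mixing genuinely pushes it strictly above. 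The main thing to get right is invoking Lemma~\ref{lem:non-turbulent} correctly to extract a horseshoe (rather than just the uniqueness-of-fixed-point contrapositive) in the two-fixed-point situation, since that is what powers both~(ii) and the non-mixing case of~(i).
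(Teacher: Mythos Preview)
Your arguments for (i) and (ii) match the paper's: the contrapositive of Lemma~\ref{lem:non-turbulent} supplies a horseshoe for $f$ (respectively for $f^2|_{[a,c]}$), and Proposition~\ref{prop:horseshoe-htop} converts that into the entropy bound. One wrinkle in your non-mixing subcase of~(i): you try to produce \emph{two} fixed points of $f^2|_{[a,c]}$ so as to invoke~(ii), but ``mixing forces an interior fixed point'' is not something you have on hand. The paper sidesteps this by using the other clause of Lemma~\ref{lem:non-turbulent}: since $c$ is a fixed \emph{endpoint} of $f^2|_{[a,c]}$, and a transitive map without a horseshoe has its unique fixed point in the interior, $f^2|_{[a,c]}$ must already have a horseshoe --- no fixed-point count needed.

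For (iii) your route genuinely differs. You go through Lemma~\ref{lem:Mq} and Proposition~\ref{prop:matrix-htop}: the minimal odd period $p>1$ gives $h_{top}(f)\ge\log\lambda_p>\log\sqrt2$, which is correct and is essentially the $d=0$ case of Theorem~\ref{theo:htop-type}. The paper instead argues directly from the strict horseshoe $(J,K)$ for $f^2$ supplied by Proposition~\ref{prop:odd-period-turbulent}: since the gap $L$ between $J$ and $K$ avoids the endpoints of $I$, mixing gives $f^{2n}(L)\supset J\cup L\cup K$ for some $n$, and one assembles a $(2^n{+}1)$-horseshoe for $f^{2n}$ from $L$ together with $2^n$ subintervals of $J\cup K$, yielding $h_{top}(f)\ge\tfrac{1}{2n}\log(2^n{+}1)>\tfrac{\log 2}{2}$. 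Your route is shorter given the spectral machinery already in place; the paper's is more self-contained and exhibits an explicit horseshoe witnessing the strict inequality.
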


\begin{proof}
\par\noindent
First we prove (iii). Suppose that $f$ is topologically mixing. Then  $f$ 
has a periodic point
of odd period greater than $1$ by Theorem~\ref{theo:summary-mixing}.
According to Proposition~\ref{prop:odd-period-turbulent}, there exist
two intervals $J=[a,b]$ and $K=[c,d]$ with $b<c$, $a\ne \min I$,
$d\ne \max I$ and such that $(J,K)$ is a strict horseshoe for $f^2$. 
We set $A:=[a,d]$ and $L:=[b,c]$.
By the intermediate value theorem,
$f^2(J)\supset A$ because $f^2(J)\supset J\cup K$. Similarly, 
$f^2(K)\supset A$. The map $f$ is topologically mixing and the non
degenerate closed
interval $L$ does not contains the endpoints of $I$, thus
there exists a positive integer $n$ such that $f^{2n}(L)\supset A$ 
by Theorem~\ref{theo:summary-mixing}. Applying 
Lemma~\ref{lem:chain-of-intervals}(iii) to the family of chains of intervals
$$
\{(I_0,\ldots, I_{n-1}, A)\mid \forall i\in\Lbrack 0,n-1\Rbrack, I_i\in\{J,K\}\},
$$
we see that there exist $2^{n}$ closed intervals  
$(L_i)_{1\le i\le 2^{n}}$ with pairwise disjoint
interiors such that $L_i\subset J\cup K$ and 
$f^{2n}(L_i)\supset A= J\cup L\cup K$ for all 
$i\in\Lbrack 1,2^n\Rbrack$. We deduce that 
$(L_1,L_2,\ldots,L_{2^{n}},L)$ is
a $(2^{n}+1)$-horseshoe for $f^{2n}$. Thus, by 
Proposition~\ref{prop:horseshoe-htop},
$$
h_{top}(f)=\frac{1}{2n}h_{top}(f^{2n})\ge \frac{\log (2^n+1)}{2n}
>\frac{\log 2}{2}.
$$
This is (iii).

Now we suppose that $f$ is transitive. If $f$ is topologically mixing, then it
follows from (iii) that $h_{top}(f)\ge\frac{\log 2}{2}$. If $f$ is
transitive but not topologically mixing, then, according to 
Theorem~\ref{theo:summary-transitivity}, there exists a fixed point $c$ in
the interior of $I$ such that, if we set $J:=[\min I,c]$ and $K:=[c,\max I]$,
then both maps $f^2|_J$ and $f^2|_K$ are topologically mixing. 
The point $c$ is also fixed for the map $f^2|_J$, and $c$ is not in
the interior of $J$. Therefore, $f^2|_J$ has a horseshoe
by Lemma~\ref{lem:non-turbulent}, and hence
$h_{top}(f^2)\ge\log 2$ by Proposition~\ref{prop:horseshoe-htop}. Thus  $h_{top}(f)=\frac{1}{2}h_{top}(f^2) \ge \frac{\log 2}{2}$, which gives~(i).

Finally, (ii) follows straightforwardly from  
Lemma~\ref{lem:non-turbulent} and Proposition~\ref{prop:horseshoe-htop}.
\end{proof}

The bounds given in the preceding proposition 
are sharp. In Example~\ref{ex:htop-transitive} below, 
two maps realizing respectively the equalities in
Proposition~\ref{prop:transitivity-lower-bound-htop}(i)-(ii) are exhibited.
In Example~\ref{ex:htop-mixing-log-lambda},
we saw that, for every odd integer $p>1$, there exists a
topologically mixing map $f_p$ whose entropy is equal to $\log \lambda_p$,
where $\lambda_p$ is the unique positive root of $X^p-2X^{p-2}-1$.
According to Lemma~\ref{lem:Mq}, 
$\lim_{p\to+\infty}\lambda_p=\sqrt{2}$. 
Combining this with Proposition~\ref{prop:transitivity-lower-bound-htop}(iii), 
this shows that
$$
\inf\{h_{top}(f)\mid f \text{ topologically mixing interval map}\}=
\frac{\log 2}{2}.
$$

\begin{ex}\label{ex:htop-transitive}
We are going to exhibit a transitive map $S$ of topological entropy
$\frac{\log 2}{2}$ and a transitive map $T_2$ with two fixed points
of topological entropy $\log 2$.  
We define $T_2\colon [0,1]\to [0,1]$ and $S\colon [-1,1]\to [-1,1]$ by
$$
\left\{\begin{array}{ll}
\forall x\in [0,\frac12],&T_2(x):=2x,\\   
\forall x\in [\frac12,1],&T_2(x):=2(1-x),
\end{array}\right.\qquad
\left\{\begin{array}{ll}
\forall x\in[-1,-\frac12],&S(x):=2x+2,\\ 
\forall x\in[-\frac12,0],&S(x):=-2x,\\ 
\forall x\in[0,1],&S(x):=-x.
\end{array}\right.
$$
These two maps are represented in Figure~\ref{fig:htop-transitive}.
See also Figure~\ref{fig:transitive-not-mixing} 
page~\pageref{fig:transitive-not-mixing} for the graph of $S^2$.

\begin{figure}[htb]
\centerline{\includegraphics{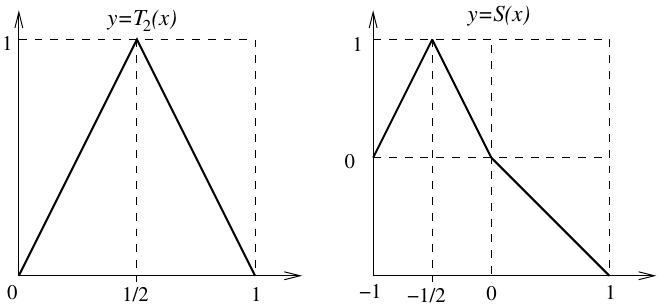}}
\caption{The map on the left (the \emph{tent map}) is transitive with two
fixed points and its topological entropy is $\log 2$. The map on the right
is transitive with a unique fixed point and its topological entropy is 
$\frac{\log 2}{2}$.} 
\label{fig:htop-transitive}
\end{figure}

It was proved in Example~\ref{ex:tent-map} that $T_2$ is topologically mixing.
Since $T_2$ is $2$-Lipschitz, its topological entropy is less than or equal to $\log 2$ by
Proposition~\ref{prop:htop-lipschitz}. Moreover, $T_2$ has two fixed
points ($0$ and $\frac{2}{3}$), and thus $h_{top}(T_2)\ge\log 2$
by Proposition~\ref{prop:transitivity-lower-bound-htop}(ii).
Consequently, $h_{top}(T_2)=\log 2$.

The map $S$ was proved to be transitive
in Example~\ref{ex:transitive-not-mixing}. Thus
$h_{top}(S)\ge \frac{\log 2}{2}$
by Proposition~\ref{prop:transitivity-lower-bound-htop}(i). 
Moreover, $S^2$ is $2$-Lipschitz, and
thus $h_{top}(S^2)\le \log 2$ by Proposition~\ref{prop:htop-lipschitz}.
We deduce that $h_{top}(S)=\frac{\log 2}{2}$.
\end{ex}

The two maps in the preceding example have a common property: 
they are $P$-linear for some finite invariant set $P$. 
Coven and Hidalgo proved that a transitive
interval map $f$ satisfying $h_{top}(f)=h_{top}(f_P)$ for some finite
$f$-invariant set $P$ is necessarily $P$-monotone \cite{CH}. 
This implies that there is little freedom for maps realizing the
bounds in Proposition~\ref{prop:transitivity-lower-bound-htop}(i)-(ii).
In particular, Bobok and Kuchta showed that there is a unique
transitive interval map of entropy $\frac{\log 2}2$, up to conjugacy 
\cite[Theorem~4.1]{BK2}.

Before proving these results, we are going to show that a transitive map $f$
satisfying $h_{top}(f)=h_{top}(f_P)$ cannot have non accessible endpoints.
We shall use the next lemma, which is
an easy corollary of the Perron-Frobenius Theorem~\ref{theo:Perron-Frobenius},
several times.

\begin{lem}\label{lem:rho-submatrix}
Let $B$ be a positive $n\times n$ matrix. Let $E$ be a nonempty subset of 
$\Lbrack 1,n\Rbrack$ and let $B'$ denote the matrix obtained from $B$ by removing
the rows and the columns with indices $i\in E$. Then $\lambda(B)>\lambda(B')$.
\end{lem}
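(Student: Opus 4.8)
The plan is to deduce this from the second part of the Perron--Frobenius Theorem~\ref{theo:Perron-Frobenius}, which says that strictly shrinking a non-negative irreducible matrix (while keeping it distinct) strictly decreases the spectral radius. The matrix $B'$ obtained by deleting the rows and columns indexed by $E$ is not literally a submatrix lying inside $B$, so I first need to embed it back into an $n\times n$ matrix of the same size as $B$. Concretely, I would define $\widetilde{B}$ to be the $n\times n$ matrix which agrees with $B'$ on the rows and columns with indices in $\Lbrack 1,n\Rbrack\setminus E$ and has all entries equal to $0$ in the rows and columns indexed by $E$. Then the nonzero eigenvalues of $\widetilde{B}$ are exactly the eigenvalues of $B'$ (the deleted coordinates only contribute extra zero eigenvalues), so $\lambda(\widetilde{B})=\lambda(B')$ since $\lambda(B')\ge 0$ by Corollary~\ref{cor:Perron-Frobenius}. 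Here I use that $E\neq \Lbrack 1,n\Rbrack$, which I should note follows because $E$ is a proper subset — wait, the hypothesis only says $E$ is nonempty; if $E=\Lbrack 1,n\Rbrack$ then $B'$ is the empty matrix and the statement is vacuous or degenerate, so I would implicitly assume $E\subsetneq \Lbrack 1,n\Rbrack$ (which is the only case where $B'$ makes sense as a genuine matrix), and point this out.

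Next I would check the hypotheses of Theorem~\ref{theo:Perron-Frobenius}(ii) applied to the pair $(\widetilde{B},B)$. Since $B>0$, certainly $B$ is irreducible (indeed primitive) and non-negative. We have $0\le \widetilde{B}\le B$ entrywise by construction, because $\widetilde{B}$ is obtained from $B$ by replacing some entries with $0$. Finally $\widetilde{B}\neq B$: because $E$ is nonempty, pick $i\in E$; then the $i$-th row of $\widetilde{B}$ is zero whereas the $i$-th row of $B$ has all entries strictly positive, so the matrices differ. Theorem~\ref{theo:Perron-Frobenius}(ii) then gives $\lambda(\widetilde{B})<\lambda(B)$, i.e.\ $\lambda(B')<\lambda(B)$, which is exactly the claim.

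There is essentially no hard part here; the only thing to be careful about is the bookkeeping identifying $\lambda(\widetilde{B})$ with $\lambda(B')$. One clean way to see it: after a permutation conjugation one may assume $E=\Lbrack m+1,n\Rbrack$ for some $m<n$, and then $\widetilde{B}$ is block lower-triangular (in fact block diagonal) of the form $\begin{pmatrix} B' & 0\\ 0 & 0\end{pmatrix}$, whose characteristic polynomial is $\chi_{B'}(X)\cdot X^{\,n-m}$ up to sign; by Proposition~\ref{prop:eigenvalue-rootchi} the eigenvalues of $\widetilde{B}$ are those of $B'$ together with $0$ with multiplicity $n-m$, so the maximal modulus is $\max(\lambda(B'),0)=\lambda(B')$, using $\lambda(B')\ge 0$ from Corollary~\ref{cor:Perron-Frobenius}. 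That conjugation is harmless because the spectral radius is conjugation-invariant. With this identification in hand the proof is just the three-line verification above.
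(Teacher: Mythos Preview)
Your proof is correct and follows essentially the same approach as the paper: embed $B'$ back into an $n\times n$ matrix by zero-padding the deleted rows and columns, observe that this padded matrix has the same spectral radius as $B'$, and then apply Perron--Frobenius~\ref{theo:Perron-Frobenius}(ii) to compare with $B$. The only cosmetic difference is that the paper first reduces to the case $E=\{1\}$ (which is harmless since $B'$ remains positive after each single deletion, so one can iterate), whereas you treat general $E$ directly via a permutation conjugation; both are fine.
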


\begin{proof}
It is sufficient to prove the lemma for $E=\{1\}$. We set
$$
A:=\left(\begin{array}{cccc}
0&0&\cdots&0\\
0&&&\\
\vdots&& B'&\\
0&&&
\end{array}\right),
$$
that is, $A$ is the
matrix obtained from $B$ by filling the first line and the first column
of $B$ with $0$'s. Then $0\le A\le B$ and $A\neq B$ because $B>0$.
Thus $\lambda(A)<\lambda(B)$ by the Perron-Frobenius 
Theorem~\ref{theo:Perron-Frobenius}(ii).
Moreover, the set of eigenvalues of $A$ is equal to the set
of eigenvalues of $B'$ union $\{0\}$, and hence $\lambda(B')=\lambda(A)<\lambda(B)$.
\end{proof}

\begin{lem}\label{lem:h>hfP}
Let $f\colon [a,b]\to [a,b]$ be a topologically mixing interval map with
a non accessible endpoint.
\begin{enumerate}
\item For every finite invariant set $P$, $h_{top}(f)> h_{top}(f_P)$.
\item If $f$ has a horseshoe, then $h_{top}(f)>\log 2$.
\end{enumerate}
\end{lem}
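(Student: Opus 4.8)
The plan is to exploit the "infinitely many oscillations near a non accessible endpoint" phenomenon described in Lemma~\ref{lem:accessibility}, combined with the Markov coding machinery of the previous section. Without loss of generality I may assume $a$ is not accessible; by Lemma~\ref{lem:accessibility}(i), either $f(a)=a$ or $f(a)=b$ (with $f(b)=a$). Replacing $f$ by $f^2$ if necessary --- which only doubles entropy and still leaves us with a topologically mixing map by Theorem~\ref{theo:summary-mixing}, and which preserves the property of having a horseshoe for part (ii), up to passing to $f^2$ --- I may assume $f(a)=a$ and that $a$ is a non accessible fixed point. Then Lemma~\ref{lem:accessibility}(ii) gives a strictly decreasing sequence of fixed points $x_0>x_1>x_2>\cdots$ converging to $a$, with $f|_{[x_{n+1},x_n]}$ non monotone for every $n$.

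For part (i): fix a finite invariant set $P$. First I would note that it suffices to prove $h_{top}(f)>h_{top}(f_Q)$ for a particular well-chosen $Q$ refining $P$, since $h_{top}(f_P)\le h_{top}(f_Q)\le h_{top}(f)$ whenever $P\subset Q$; but actually the cleaner route is to show directly that for \emph{any} finite invariant $P$, one can find a periodic orbit $R$ (disjoint in a suitable sense from the relevant structure) so that $G(f|R)$ has strictly larger spectral radius than $M(f_P)$. The key device: pick $n$ large enough that $x_n<\min(P\cap(a,b])$, i.e. the whole "oscillating tail" $[a,x_n]$ lies strictly to the left of all non-trivial points of $P$. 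Since $f|_{[x_{n+1},x_n]}$ is non monotone and $f$ is mixing, one can manufacture (via Lemma~\ref{lem:chain-of-intervals} and the intermediate value theorem, exactly as in the proof of Proposition~\ref{prop:transitivity-lower-bound-htop}) a horseshoe-like family of chains for some iterate $f^k$ whose intervals all live inside $[a,x_n]$ and whose associated graph, when adjoined to a graph realizing (almost all of) $h_{top}(f_P)$, strictly increases the spectral radius. Concretely, I would use that $f^k$ maps some subinterval of $[a,x_n]$ across the interval $[a,x_N]$ for $N$ arbitrarily large (which follows from mixing plus connectedness, since $a$ being non accessible forces the images of tiny intervals near $a$ to extend arbitrarily far up while still "coming back" near $a$), and build from this a $P$-monotone model $g$ on a finite set $Q\supset P$ with $M(g|Q)$ containing $M(f_P)$ as a principal submatrix together with an extra strongly connected piece that strictly inflates $\lambda$; then invoke Lemma~\ref{lem:rho-submatrix} and Proposition~\ref{prop:matrix-htop} to conclude $h_{top}(f)\ge\log\lambda(M(g|Q))>\log\lambda(M(f_P))=h_{top}(f_P)$, using Proposition~\ref{prop:htop-Markov-map} for the last equality.

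For part (ii): suppose $f$ has a horseshoe $(J,K)$. By Lemma~\ref{lem:particularhorseshoe} I get points $u<v<w$ (or the reverse) with $f(u)=u$, $f(v)=w$, $f(w)=u$, so $\langle u,v\rangle,\langle v,w\rangle$ form a horseshoe and in particular $h_{top}(f)\ge\log 2$ already by Proposition~\ref{prop:horseshoe-htop}. To get \emph{strict} inequality I would again bring in the non accessible endpoint: the fixed points $u$ and $a$ are distinct (since $a$ is not an endpoint of the horseshoe --- if $a\in\{u,v,w\}$ one argues directly, but generically $a<u$), so there is a non-trivial closed interval, a tail $[a,x_n]$ say, disjoint from $\langle u,w\rangle$. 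Using mixing I can find some iterate $f^m$ and a chain $[a,x_n]\to\langle u,w\rangle\to\cdots$ together with the self-covering $[a,x_n]\to[a,x_n]$ coming from the oscillation (here $f|_{[x_{n+1},x_n]}$ non monotone is exactly what gives an \emph{extra} loop beyond the two loops of the horseshoe). Assembling these into the graph associated to the family $\{\langle u,v\rangle,\langle v,w\rangle,[a,x_n]\}$ (for a suitable common iterate), the adjacency matrix has spectral radius strictly bigger than $2$: it contains the all-ones $2\times2$ block from the horseshoe plus an additional vertex with both incoming and outgoing arrows, and Lemma~\ref{lem:rho-submatrix} (or a direct Perron--Frobenius comparison, Theorem~\ref{theo:Perron-Frobenius}(ii)) yields $\lambda>2$. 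Then Proposition~\ref{prop:matrix-htop} gives $h_{top}(f)>\log 2$.

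The main obstacle I anticipate is the bookkeeping in producing the "extra loop": one must verify carefully that the oscillation near $a$, quantified by Lemma~\ref{lem:accessibility}(ii), genuinely translates into a primitive cycle in the associated graph that is not already forced by the horseshoe (or by $M(f_P)$ in part (i)) --- i.e. that the augmented matrix is \emph{strictly} larger, not merely $\ge$. This requires choosing the intervals so that their interiors are pairwise disjoint and so that the non monotone image $f([x_{n+1},x_n])$ supplies a second covering of $[a,x_n]$ by itself (a $2$-horseshoe inside the tail) under some iterate. Once that is in place, Lemma~\ref{lem:rho-submatrix} and Proposition~\ref{prop:matrix-htop} finish both parts mechanically; and one should also double-check that replacing $f$ by $f^2$ at the start does not cost the strict inequality, which is fine since $h_{top}(f^2)=2h_{top}(f)$ and $h_{top}((f^2)_P)\le h_{top}(f_P\text{ refined})$ can be controlled, or one simply proves the statement for $f^2$ and divides by $2$.
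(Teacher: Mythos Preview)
Your proposal has two genuine gaps that prevent it from going through as written.

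\textbf{The $f^2$ reduction does not work for part (i).} You claim that proving the statement for $f^2$ and dividing by $2$ suffices, but this requires $h_{top}((f^2)_P)\ge 2\,h_{top}(f_P)$, which is false in general. The connect-the-dots map $(f^2)_P$ is $P$-linear and is determined only by the values $f^2(p_i)$, whereas $M(f_P)^2$ counts paths of length $2$ in $G(f_P)$; the latter can encode strictly more coverings. For instance, take $P=\{0,1,2\}$ with $f(0)=2,\ f(1)=0,\ f(2)=1$: one computes $\lambda(M((f^2)_P))=\tfrac{1+\sqrt5}{2}$ while $\lambda(M(f_P))^2=\bigl(\tfrac{1+\sqrt5}{2}\bigr)^2$, so $h_{top}((f^2)_P)<2\,h_{top}(f_P)$. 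Hence proving $h_{top}(f^2)>h_{top}((f^2)_P)$ does not yield $h_{top}(f)>h_{top}(f_P)$. The paper avoids this reduction entirely: it handles the case $f(a)=b,\ f(b)=a$ by working with $M(f_P)^2$ in the case analysis showing $\lambda(M(f_P))=\lambda(M(f_{P\setminus E}))$, but the main inequality is proved for $f$ directly. This same case analysis also takes care of the possibility $a\in P$, which your sketch does not address.

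\textbf{In part (ii), your third vertex $[a,x_n]$ receives no incoming arrows.} You assert that the extra vertex has ``both incoming and outgoing arrows,'' but for the horseshoe intervals $\langle u,v\rangle,\langle v,w\rangle\subset(a,b)$ to cover $[a,x_n]$ under any iterate $f^m$ you would need $a\in f^m(\langle u,v\rangle)$, and this is exactly what non-accessibility of $a$ forbids. The resulting adjacency matrix is block upper triangular with the $[a,x_n]$ block contributing a single self-loop (since $f^m(x)=a$ only at $x=a$), so its spectral radius equals that of the $2\times2$ horseshoe block and you get no strict inequality. The repair is simple---use $[x_{n+1},x_n]\subset(a,b)$ instead, which \emph{can} be covered by mixing---but that is not what you wrote. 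The paper sidesteps the whole construction: it applies part (i) to the invariant set $P=\{u,v,w\}$ from Lemma~\ref{lem:particularhorseshoe}, for which $h_{top}(f_P)=\log 2$, and concludes immediately.

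For part (i) itself, the paper's mechanism is rather different from yours. It does not build extra dynamics inside the oscillating tail; instead it adjoins a single point $c:=\min f^2([p_0,b])\in(a,p_0)$ (with $c>a$ precisely because $a$ is non accessible) to form $Q=P\cup\{c\}$ (and symmetrically $d$ if $b$ is non accessible). By Proposition~\ref{prop:accessibility} there is $n$ with $f^n(J)\supset[c,d]$ for every $Q$-interval $J$, so $B:=M(f^n|Q)$ is positive; removing the row and column for $[c,p_0]$ yields $B'=M(f^n|P)\ge M(f_P)^n$, and Lemma~\ref{lem:rho-submatrix} plus Proposition~\ref{prop:matrix-htop} give $h_{top}(f)\ge\tfrac1n\log\lambda(B)>\tfrac1n\log\lambda(B')\ge\log\lambda(M(f_P))$.
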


\begin{proof}
Let $E$ denote the set of non accessible endpoints of
$f$. By Proposition~\ref{prop:accessibility}, there are four cases:
either $E=\{a\}$ and $f(a)=a$, or $E=\{b\}$ and $f(b)=b$, or 
$E=\{a,b\}$ and both $a,b$ are fixed, or $E=\{a,b\}$ and $f(a)=b, f(b)=a$.

Let $P=\{p_0<p_1<\cdots <p_k\}$ be a finite invariant set. 
By Proposition~\ref{prop:htop-Markov-map}, $h_{top}(f_P)=
\max(0,\log \lambda(M(f_P)))$. It is sufficient
to consider the case $\lambda(M(f_P))>1$ because $h_{top}(f)>0$ by 
Proposition~\ref{prop:transitivity-lower-bound-htop}.
The set $P':=P\setminus E$ is invariant too. 
First we are going to show
\begin{equation}\label{eq:PP'}
\lambda(M(f_P))=\lambda(M(f_{P'})).
\end{equation}
We split the proof into four cases.

\medskip
\noindent\textbf{Case \boldmath $P\cap E=\emptyset$.} Then $P=P'$. 

\medskip
\noindent\textbf{Case \boldmath $P\cap E=\{a\}$.} Then $f(a)=a= p_0$, and the 
matrix $M(f_{P'})$  is obtained from $M(f_P)$ by deleting the 
first row and the first column. Moreover, 
$a$ does not belong to $f([p_i,p_{i+1}])$ for any $i\in\Lbrack 1,k-1\Rbrack$,
which implies that the first column of $M(f_P)$ is $ ^t(10\cdots 0)$.
Then $\det (M(f_P)-X\Id)=(1-X)\det(M(f_{P'})-X\Id)$. Since
$\lambda(M(f_P))>1$, we have $\lambda(M(f_P))=\lambda(M(f_{P'}))$ 
by Proposition~\ref{prop:eigenvalue-rootchi}.

\medskip
\noindent\textbf{Case \boldmath $P\cap E=\{b\}$.}
This case is similar to the previous one.

\medskip
\noindent\textbf{Case \boldmath $P\cap E=\{a,b\}$.} Either
both $a$ and $b$ are fixed or $\{a,b\}$ is a periodic orbit of period $2$.
In both cases, $f^2(a)=a=p_0$ and $f^2(b)=b=p_k$.
We have $\{a,b\}\cap f([p_i,p_{i+1}])=\emptyset$ for all 
$i\in\Lbrack 1,k-2\Rbrack$. This implies
that, for all $i\in\Lbrack 1,k-2\Rbrack$, there is no path of length $2$ from
$[p_i,p_{i+1}]$ to $[a,p_1]$ or to $[p_{k-1},b]$ in $G(f_P)$, and thus
the first and last columns of $M(f_P)^2$ are respectively
$ ^t(10\cdots 0)$ and $ ^t(0\cdots 01)$ (recall
that, by Proposition~\ref{prop:Mn}, the $(i,j)$-coefficient of $M(f_P)^n$ is 
the number of paths of length
$n$ from $[p_i,p_{i+1}]$ to $[p_j,p_{j+1}]$ in $G(f_P)$).
Moreover, the matrix $M(f_{P'})^2$  is obtained from $M(f_P)^2$ by 
deleting the first and last rows and the first and last columns.
As in the second case, this implies that
$\lambda(M(f_P)^2)=\lambda(M(f_{P'})^2)$,
so $\lambda(M(f_P))=\lambda(M(f_{P'}))$ by Lemma~\ref{lem:lambdaMn}.

\medskip
Now we are going to show that $h_{top}(f)>\log \lambda(M(f_P))$. According
to \eqref{eq:PP'}, we can assume that $P\cap E=
\emptyset$ (otherwise, we replace $P$ by $P'$). Moreover, since $E\neq
\emptyset$, we can assume that $a\in E$ (the case $b\in E$ is similar).
If $b\in E$, then $f^2(b)=b$ by
Lemma~\ref{lem:accessibility}. If $b\notin E$, then there exists
$x\in (a,b)$ such that $f^n(x)=b$, which implies that $f^2(b)\ne a$,
otherwise $a$ would be accessible. In both cases, $f^2(b)\ne a$.
We set $c:=\min f^2([p_0,b])$. Then $c>a$ because $a$ is non accessible
and $f^2(b)\ne a$, and $c<p_0$ because $[p_0,b]$ is
not $f^2$-invariant (otherwise, it would contradict the fact that $f$ is
topologically mixing). If $b\in E$, we set $d:=\max f^2([a,p_k])$, and
a similar argument implies that $p_k<d<b$. If $b\notin E$, we set $d:=b$.
Let $Q:=P\cup\{c,d\}$ (this may not be an invariant set). 
Then $c=\min Q$ (because $f^2([p_0,b])\supset f^2([p_0,p_k])
\supset P$), and similarly $d=\max Q$. According to
Proposition~\ref{prop:accessibility}, there exists a positive integer $n$
such that, for all $Q$-intervals $J$, $f^n(J)\supset [c,d]$. Thus the matrix
$B:=M(f^n|Q)$ is positive. We remove from $B$ 
its first line and column (corresponding to the $Q$-interval $[c,p_0]$)
and, if $b\in E$, its last line and column (corresponding to $[p_k,d]$);
we call $B'$ the resulting matrix.
Then $B'=M(f^n|P)$. By Proposition~\ref{prop:matrix-htop},
$h_{top}(f^n)\ge \log \lambda (B)$, and by Lemma~\ref{lem:rho-submatrix}, $\lambda(B)>\lambda(B')$. Moreover, $B'\ge M(f_P)^n$ by Lemma~\ref{lem:MfPn}, 
which implies that $\lambda(B')\ge \lambda(M(f_P)^n)$ 
by the Perron-Frobenius Theorem~\ref{theo:Perron-Frobenius}.
Combining these inequalities with the fact that
$\lambda(M(f_P)^n)=(\lambda(M(f_P)))^n$ (Lemma~\ref{lem:lambdaMn}), we get
$$
h_{top}(f)=\frac 1n h_{top}(f^n)\ge \frac 1n  \log \lambda(B)>
\frac 1n \log \lambda(B')\ge \log \lambda(M(f_P)).
$$
This proves (i).

\medskip
Now we assume that $f$ has a horseshoe.
According to Lemma~\ref{lem:particularhorseshoe}, there exist
three points $u,v,w$ in $[a,b]$ such that
$f(u)=f(w)=u$, $f(v)=w$ and, either $u<v<w$, or $u>v>w$.
The set $P:=\{u,v,w\}$ is invariant and $M(f_P)$ is the
matrix of a $2$-horseshoe:
$$
M(f_P)=\left(\begin{array}{cc}1&1\\1&1\end{array}\right).
$$ 
Thus $\lambda(M(f_P))=2$ and $h_{top}(f_P)=\log \lambda(M(f_P))=\log 2$
(Proposition~\ref{prop:htop-Markov-map}). 
Since $h_{top}(f)>h_{top}(f_P)$ by (i),
we get $h_{top}(f)>\log 2$, which gives (ii).
\end{proof}

\begin{prop}\label{prop:h=hfP}
Let $f\colon [a,b]\to [a,b]$ be a transitive interval map and $P$ a finite
invariant set. If $h_{top}(f)=h_{top}(f_P)$, 
then $f$ is $P$-monotone.
\end{prop}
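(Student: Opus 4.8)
The plan is to argue by contraposition: assuming $f$ is not $P$-monotone, I will produce a finite invariant set $Q\supset P$ such that $h_{top}(f_Q) > h_{top}(f_P)$, and since $h_{top}(f)\ge h_{top}(f_Q)$ (by Propositions~\ref{prop:matrix-htop} and \ref{prop:htop-Markov-map}) this contradicts $h_{top}(f)=h_{top}(f_P)$. Since $f_P$ is defined via the relative positions of the points of $P$, the failure of $P$-monotonicity means there is a $P$-interval $[p_i,p_{i+1}]$ on which $f$ is not monotone, i.e.\ $f|_{[p_i,p_{i+1}]}$ has a turning point; thus $f([p_i,p_{i+1}])$ strictly contains $\langle f(p_i),f(p_{i+1})\rangle$, at least on one side by some amount $\delta>0$. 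The first step is therefore to extract, from this overshoot, one new point $q$ (a local extremum value of $f$ on that $P$-interval, or a preimage of such a value) witnessing that $G(f|Q)$ for $Q:=P\cup\{q\}$ has strictly more arrows than what $G(f_P)$ ``sees'' — or more precisely, that the adjacency matrix of a suitable subgraph dominates $M(f_P)$ strictly. The delicate point is that merely adding arrows need not increase the spectral radius if the extra arrows fall in a part of the graph that does not communicate with the ``heavy'' irreducible block; this is exactly where transitivity is needed.

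The key observation is that transitivity forces the relevant graph to be essentially irreducible with a positive power. Concretely: because $f$ is transitive, for \emph{every} non-degenerate subinterval $J$ there is $n$ with $f^n(J)\supset$ (almost all of) $[a,b]$ — if $f$ is topologically mixing this is Theorem~\ref{theo:summary-mixing} and Proposition~\ref{prop:accessibility}; if not, Theorem~\ref{theo:summary-transitivity} splits $[a,b]$ into two halves swapped by $f$, each mixing under $f^2$, and one handles the two halves separately (passing to $f^2$, noting $h_{top}(f^2)=2h_{top}(f)$ and similarly for $f_P$, $f_{P'}$ on the sub-intervals). So I would first reduce to the topologically mixing case. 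Then I must also dispose of the case of non-accessible endpoints: Lemma~\ref{lem:h>hfP} already tells us that if $f$ is topologically mixing with a non-accessible endpoint then $h_{top}(f)>h_{top}(f_P)$ for \emph{every} finite invariant $P$, so the hypothesis $h_{top}(f)=h_{top}(f_P)$ immediately gives a contradiction unless both endpoints of $[a,b]$ are accessible. Hence I may assume $f$ topologically mixing with both endpoints accessible.

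Now the main argument: given that $f$ is mixing with accessible endpoints, for every $\eps>0$ and every non-degenerate subinterval $J\subset(a,b)$ there is $N$ with $f^N(J)=[a,b]$ (Proposition~\ref{prop:accessibility}). I enlarge $P$ to a set $Q=P\cup\{q\}$ where $q$ records the overshoot on the non-monotone $P$-interval (say, a point such that $f^k(q)$ lands at a local extremum value $v$ of $f$, with $v$ strictly outside $\langle f(p_i),f(p_{i+1})\rangle$), and I adjoin finitely many further points so that $[a,b]$ is covered by $Q$-intervals all of which, under a common iterate $f^M$, cover the whole of $[a,b]$; then $M(f^M|Q)$ is a positive matrix, and removing the rows/columns corresponding to the new points recovers a matrix dominating $M(f^M|P)\ge M(f_P)^M$. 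By Lemma~\ref{lem:rho-submatrix} the deletion \emph{strictly} drops the spectral radius, so $\lambda(M(f^M|Q))>\lambda(M(f_P)^M)=\lambda(M(f_P))^M$, whence $h_{top}(f)\ge\frac1M\log\lambda(M(f^M|Q))>\log\lambda(M(f_P))=h_{top}(f_P)$ — the desired contradiction. The main obstacle is purely bookkeeping: making precise which new point(s) to adjoin so that (a) the overshoot genuinely shows up as extra covering strictly enlarging the transition data, and (b) after the deletion one lands back \emph{exactly} on a matrix comparable to $M(f_P)$; the strictness in Lemma~\ref{lem:rho-submatrix} and the mixing-plus-accessibility input do the rest, essentially mirroring the proof of Lemma~\ref{lem:h>hfP}(i).
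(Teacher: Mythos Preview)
Your reduction to the mixing case and the appeal to Lemma~\ref{lem:h>hfP} to rule out non-accessible endpoints are correct, and the overall shape of the argument (enlarge $P$ to an invariant $Q$, get $M(f^M|Q)$ positive, extract a strict inequality) is the right one. But the deletion step is a genuine gap, not just bookkeeping. When your new point $q$ lies in the interior of a $P$-interval $[p_i,p_{i+1}]$, that interval splits into two $Q$-intervals $[p_i,q]$ and $[q,p_{i+1}]$; deleting the row and column of one of them from $M(f^M|Q)$ does \emph{not} leave a matrix dominating $M(f^M|P)$, because the surviving sub-interval (say $[p_i,q]$) may cover strictly fewer targets under $f^M$ than the full $P$-interval $[p_i,p_{i+1}]$ does. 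So the claim ``removing the rows/columns corresponding to the new points recovers a matrix dominating $M(f^M|P)$'' fails in general, and Lemma~\ref{lem:rho-submatrix} does not close the argument. (The deletion trick \emph{does} work when the new point is an endpoint $a$ or $b$ lying outside $[\min P,\max P]$, since then the remaining $Q$-intervals are exactly the $P$-intervals; this is precisely how the paper first proves $a,b\in P$ --- a step required by the definition of $P$-monotone and omitted in your sketch.)

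For the interior case the paper avoids deletion entirely and instead compares $M(f^n|Q)$ with $M(f_Q)^n$ for the \emph{same} $Q$. The construction is: take a periodic orbit $\CO_f(p_0)$ passing near a turning point $w$ of $f|_{[p_i,p_{i+1}]}$ (density of periodic points, Proposition~\ref{prop:transitivity-periodic-points}), let $p\in\CO_f(p_0)$ be the point of the orbit adjacent to $w$ with the larger $f$-value, and choose $z$ on the other side of $w$ with $f(z)=f(p)$ and no orbit point between them. Then $Q:=P\cup\CO_f(p_0)\cup\{z\}$ is invariant, $[p,z]$ is a $Q$-interval, and since $f(p)=f(z)$ the connect-the-dots map $f_Q$ is \emph{constant} on $[p,z]$, so that row of $M(f_Q)$ --- and hence of $M(f_Q)^n$ --- is zero. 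From $P\subset Q$ and Proposition~\ref{prop:htop=suphfP} one gets $h_{top}(f_P)\le h_{top}(f_Q)\le h_{top}(f)$, so the hypothesis forces $h_{top}(f)=h_{top}(f_Q)=\log\lambda(M(f_Q))$. But $M(f^n|Q)$ is positive (mixing plus accessible endpoints) while $M(f_Q)^n$ has a zero row, so $M(f^n|Q)\ne M(f_Q)^n$ and Perron--Frobenius (Theorem~\ref{theo:Perron-Frobenius}(ii)) gives $\lambda(M(f^n|Q))>\lambda(M(f_Q))^n$, whence $h_{top}(f)>h_{top}(f_Q)$ --- the contradiction. The missing idea in your sketch is this periodic-orbit-plus-preimage construction that produces a $Q$-interval on which $f_Q$ is flat.
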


\begin{proof}
Recall that $h_{top}(f)>0$ because $f$ is transitive 
(Proposition~\ref{prop:transitivity-lower-bound-htop}), and thus
$h_{top}(f)=\log\lambda(M(f_P))$ by 
Proposition~\ref{prop:htop-Markov-map}.

The first step of the proof consists of showing that the endpoints 
$a,b$ belong to $P$. 
Suppose that $f$ is topologically mixing. If $a\notin P$, we set 
$Q:=P\cup\{a\}$. According to Lemma~\ref{lem:h>hfP}(i), $f$ has no non 
accessible endpoint and thus, by Proposition~\ref{prop:accessibility},
there exists an integer $n\ge 0$ such
that, for all $Q$-intervals $J$, $f^n(J)=[a,b]$. Thus the matrix
$B:=M(f^n|Q)$ is positive. Let $B'$ be the matrix obtained 
from $B$ by removing the first line and the first column, 
that is, $B'=M(f^n|P)$. 
By Lemma~\ref{lem:rho-submatrix},
$\lambda(B)>\lambda(B')$. Moreover, $B'\ge M(f_P)^n$ by Lemma~\ref{lem:MfPn}, 
and hence $\log\lambda(B')\ge \log\lambda(M(f_P)^n)=n\log\lambda(M(f_P))$ 
by the Perron-Frobenius Theorem~\ref{theo:Perron-Frobenius}(ii) and 
Lemma~\ref{lem:lambdaMn}. Furthermore, 
$h_{top}(f^n)\ge \log \lambda(B)$
according to Proposition~\ref{prop:matrix-htop}. 
These inequalities imply
$h_{top}(f)>\log \lambda(M(f_P))$, a contradiction. We deduce that $a\in P$.
Similarly, $b$ belongs to $P$ too. We have proved:
\begin{equation}\label{eq:endpoints-mix}
h_{top}(f)=h_{top}(f_P)\text{ and }f\text{ topologically mixing}\Rightarrow a,b\in P.
\end{equation}

Suppose now that $f$ is transitive but not topologically mixing.
According to Theorem~\ref{theo:summary-transitivity}, there exists
$c\in (a,b)$ such that 
\begin{equation}\label{eq:nonmix}
f(c)=c,\ f([a,c])=[c,b],\ f([c,b])=[a,c],\text{ and }f^2|_{[a,c]},\ f^2|_{[c,b]}
\text{ are mixing}.
\end{equation}
If $c\notin P$, we set $P':=P\cup\{c\}$ and we consider 
$p:=\max (P\cap [a,c))$ and $q:=\min (P\cap (c,b])$ 
(these points exist by \eqref{eq:nonmix} and the $f$-invariance of $P$). 
According to
\eqref{eq:nonmix}, $f(p)\ge c>p$ and $f(q)\le c<q$. This implies that
$f_{P'}$ is decreasing on $[p,c]\cup [c,q]=[p,q]$. Thus $f_{P'}$
is $P$-monotone and, according to Proposition~\ref{prop:htop-Markov-map},
$h_{top}(f_P)=h_{top}(f_{P'})$. If we prove the proposition for 
$P'$, this will imply that the proposition holds for $P$ too. Therefore, we may assume that 
$c\in P$ (otherwise we replace $P$ by $P'$).
We set $P_1:=P\cap [a,c]$, $P_2:=P\cap [c,b]$ and $g:=f^2$. Then the family of 
$P$-intervals splits into $P_1$-intervals and $P_2$-intervals.

One can show that $h_{top}(g_P)=h_{top}(g_{P_1})=h_{top}(g_{P_2})$ by
using Bowen's formula (Theorem~\ref{theo:Bowen-formula}),
the uniform continuity of $f$ and
the fact that $f$ swaps the intervals $[a,c]$ and $[c,b]$ by \eqref{eq:nonmix}.
Moreover, \eqref{eq:nonmix} implies that $f_P^2=g_P$.
Since $h_{top}(f)=h_{top}(f_P)$ by assumption,
and $h_{top}(g)=2h_{top}(f)$, we get $h_{top}(g)=h_{top}(g_P)$ and
$$
h_{top}(g)=h_{top}(g|_{[a,c]})=h_{top}(g|_{[c,b]})=h_{top}(g_{P_1})=h_{top}(g_{P_2}).
$$
Moreover, according to Proposition~\ref{prop:htop-Markov-map},
\begin{equation}\label{eq:B1B2}
\lambda(M(f_P)^2)=\lambda(M(g_{P_1}))=\lambda( M(g_{P_2})).
\end{equation} 
Applying  \eqref{eq:endpoints-mix} to $g|_{[a,c]}$ and $g|_{[c,b]}$,
we see that $a,b\in P$. 
Moreover, $f$ has no non accessible endpoint by 
Lemma~\ref{lem:h>hfP} applied to $g|_{[a,c]}$ and $g|_{[c,b]}$. 
This concludes the proof of the first step, that is:
$$
h_{top}(f)=h_{top}(f_P)\text{ and }f\text{ transitive}\Rightarrow a,b\in P.
$$

In the second step, we are going to show that $f$ is $P$-monotone.
Suppose on the contrary that there exists a $P$-interval $I$ 
such that $f|_I$ is not one-to-one, that is, there exist two points
$u<v$ in $I$ such that $f(u)=f(v)$.
Since $f$ is transitive, $f([u,v])$ is not degenerate, and thus
\begin{gather}
\text{either }\max f([u,v]) >f(u),\label{eq:max>f(u)}\\ 
\text{or }\min f([u,v]) <f(u).\nonumber
\end{gather}
We assume we are in case \eqref{eq:max>f(u)}, the other case being similar.
Let $w\in (u,v)$ be such that $f(w)=\max f([u,v])$, and let $U$ be an open
interval containing $w$ such that $f(x)>f(v)$ for all $x\in U$.

By Proposition~\ref{prop:transitivity-periodic-points}, the set of 
periodic points is dense, and thus we can choose a
periodic point $p_0\in U$ such that $w\notin\CO_f(p_0)$. Let
$p_1,p_2$ be the two points in $\CO_f(p_0)$ such that $p_1<w<p_2$
and there is no point of $\CO_f(p_0)$ between $w$ and $p_i$ for $i\in\{1,2\}$
and let $p\in \{p_1,p_2\}$ be such that $f(p)=\max\{f(p_1), f(p_2)\}$.
It is possible that either $p_1$ or $p_2$ does not exist: if
$\CO_f(p_0)>w$ (resp. $\CO_f(p_0)<w$), then there is no $p_1$ (resp. $p_2$); 
in this case
$p$ is just equal to the unique existing $p_i$. Since $p_0\in U$, at least
one of the points $p_1,p_2$ belongs to $U$, and hence $f(p)>f(v)$.
In the sequel, we assume $p=p_1$, the case
$p=p_2$ being symmetric. We define 
$$
z:=\min\{x\in [w,v]\mid f(x)=f(p)\}.
$$
This point is well defined because $f(v)<f(p)\le f(w)$, and $z>p$. Moreover, 
$\CO_f(p)\cap (w,z)=\emptyset$ according to the choice of $p$ and $z$.
The set $Q:=P\cup\CO_f(p)\cup\{z\}$ is a finite invariant set,
and $[p,z]$ is a $Q$-interval. Moreover,  $f_Q$ is constant on $[p,z]$
because $f(p)=f(z)$, and thus the row of $M(f_Q)$ corresponding to $[p,z]$ is 
$(00\cdots  0)$. Then, according to Proposition~\ref{prop:htop=suphfP},
$h_{top}(f_Q)\ge h_{top}(f_P)$ (because $P$ is $f_Q$-invariant)
and $h_{top}(f)\ge h_{top}(f_Q)$. Since $h_{top}(f)=h_{top}(f_P)$ by
assumption, we have 
\begin{equation}\label{eq:htopf-fQ}
h_{top}(f)=h_{top}(f_Q).
\end{equation}
We split into two
cases, depending on $f$ being mixing or not.

\medskip
\noindent$\bullet$ If $f$ is topologically mixing, then, by Proposition~\ref{prop:accessibility},
there exists an integer $n\ge 0$ such 
that, for all $Q$-intervals $J$, $f^n(J)=[a,b]$ (recall that we have shown that
$f$ has no non accessible endpoint). Therefore $B:=M(f^n|Q)$ is 
a positive matrix.
Moreover, $B\ge M(f_Q)^n$ (by Lemma~\ref{lem:MfPn})
and $M(f_Q)^n\neq B$ because the row of $M(f_Q)^n$
corresponding to $[p,z]$ is $(0\cdots 0)$. 
Thus $\lambda(B)> \lambda(M(f_Q)^n)$ by the
Perron-Frobenius Theorem~\ref{theo:Perron-Frobenius}(ii).

\medskip
\noindent$\bullet$ If $f$ is transitive but not topologically mixing, then we
are in the situation described in \eqref{eq:nonmix}. As in the first step, 
we can assume that $c\in P$ and we set $g:=f^2$.
By Proposition~\ref{prop:accessibility},
there exists an integer $k\ge 0$ such 
that, for all $Q$-intervals $J_1\subset [a,c]$, $g^k(J_1)=[a,c]$,
and for all $Q$-intervals $J_2\subset [c,b]$, $g^k(J_2)=[c,b]$. Therefore
the matrix $B:=M(g^k|Q)$ is of the form
$$
B=\left(\begin{array}{cc} B_1&0\\0&B_2 \end{array}\right)
\quad\text{with }B_1>0, B_2>0.
$$
Moreover, $B\ge M(g_Q)^k$ (by Lemma~\ref{lem:MfPn}) and
$$
M(g_Q)^k=\left(\begin{array}{cc} M(g_{P_1})^k& 0 \\ 0 & M(g_{P_2})^k
\end{array}\right).
$$
We deduce that $B_1 \ge M(g_{P_1})^k$ and 
$B_2\ge M(g_{P_2})^k$. Moreover, the mixing case above implies
that, if $[p,z]\subset [a,c]$
(resp. $[p,z]\subset [c,b]$), then $B_1\neq M(g_{P_1})^k$
(resp. $B_2\neq M(g_{P_2})^k$), so
$$
\lambda(B_1)>\lambda(M(g_{P_1})^k)\quad (\text{resp. }
\lambda(B_2)> \lambda(M(g_{P_2})^k))
$$
by the Perron-Frobenius Theorem~\ref{theo:Perron-Frobenius}(ii).
Combining this with \eqref{eq:B1B2}, we get
$\lambda(B)> \lambda(M(f_Q)^{2k})$.

\medskip
In both cases ($f$ topologically mixing or not), there exists $n\ge 0$ such that 
$\lambda(M(f^n|Q))> \lambda(M(f_Q)^n)$. Recall that 
$\lambda(M(f_Q)^n)=\lambda(M(f_Q))^n$. This leads to
$$
h_{top}(f)=\frac 1n h_{top}(f^n)\ge \frac 1n \log \lambda(M(f^n|Q))>
\log\lambda(M(f_Q))=h_{top}(f_Q)=h_{top}(f)
$$
(the last equality comes from \eqref{eq:htopf-fQ}). But this is a contradiction.
We conclude that $f$ is one-to-one on every $P$-interval. Since $a,b$ belong
to $P$, we deduce that $f$ is $P$-monotone.
\end{proof}

In the next proposition, the first assertion is due to Bobok and Kuchta 
\cite[Theorem~4.1]{BK2}.

\begin{prop}
Let $f\colon [a,b]\to [a,b]$ be a transitive interval map. 
\begin{itemize}
\item If $h_{top}(f)=
\frac{\log 2}{2}$, then $f$ is topologically conjugate to the map
$S$ defined in Example~\ref{ex:htop-transitive}.
\item If $f$ has at least two fixed points and $h_{top}(f)=\log 2$, then $f$ is
topologically conjugate to the tent map $T_2$ defined in
Example~\ref{ex:htop-transitive}.
\end{itemize}
\end{prop}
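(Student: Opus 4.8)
The plan is to prove both statements by a rigidity argument built on Proposition~\ref{prop:h=hfP}: if a transitive map achieves the minimal entropy, then it must be $P$-monotone for a suitable finite invariant set $P$, and being $P$-monotone with a prescribed combinatorial pattern forces the map to be conjugate to the model. First I would treat the second assertion (entropy $\log 2$, two fixed points), since it is the cleaner case. By Proposition~\ref{prop:transitivity-lower-bound-htop}(ii) and Lemma~\ref{lem:non-turbulent}, a transitive map $f$ with two fixed points is topologically mixing and has a horseshoe; applying Lemma~\ref{lem:particularhorseshoe} we obtain points $u,v,w$ with $f(u)=f(w)=u$, $f(v)=w$ and $u<v<w$ (or the reverse). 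Set $P:=\{u,v,w\}$; then $M(f_P)=\begin{pmatrix}1&1\\1&1\end{pmatrix}$, so $h_{top}(f_P)=\log 2=h_{top}(f)$. By Proposition~\ref{prop:h=hfP}, $f$ is $P$-monotone, hence $a=\min P=u$ and $b=\max P=w$ are the endpoints. So $f$ is monotone on $[u,v]$ and on $[v,w]$, with $f(u)=u$, $f(v)=w$, $f(w)=u$: this determines $f$ up to the choice of the two monotone branches. Since $f$ is transitive, each branch must be strictly monotone with the correct orientation ($f|_{[u,v]}$ increasing from $u$ to $w$, $f|_{[v,w]}$ decreasing from $w$ to $u$), so $f$ and $T_2$ have the same ``shape''; I would then build the conjugacy explicitly as in the standard kneading/linearization argument, or invoke the fact that two transitive piecewise monotone maps with the same monotonicity pattern and the same kneading data are conjugate.

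For the first assertion (entropy $\frac{\log 2}{2}$), the same scheme applies to $f^2$. By Proposition~\ref{prop:transitivity-lower-bound-htop}(i)–(iii), $h_{top}(f)=\frac{\log 2}{2}$ forces $f$ to be transitive but \emph{not} topologically mixing (mixing would give strict inequality), so by Theorem~\ref{theo:summary-transitivity} there is a unique fixed point $c\in(a,b)$ with $f([a,c])=[c,b]$, $f([c,b])=[a,c]$ and $f^2|_{[a,c]}$, $f^2|_{[c,b]}$ topologically mixing. Now $h_{top}(f^2)=2\,h_{top}(f)=\log 2$, and $f^2|_{[a,c]}$ is a transitive mixing interval map with $c$ a fixed endpoint, so by Lemma~\ref{lem:non-turbulent} it has a horseshoe; by Proposition~\ref{prop:horseshoe-htop}, $h_{top}(f^2|_{[a,c]})\geq\log 2$, and since $h_{top}(f^2|_{[a,c]})\leq h_{top}(f^2)=\log 2$ we get equality $h_{top}(f^2|_{[a,c]})=\log 2$, and similarly for $[c,b]$. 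Apply the $\log 2$ argument above (the mixing-with-two-fixed-points analysis works verbatim for $f^2|_{[a,c]}$ once we know it has a horseshoe) to conclude $f^2|_{[a,c]}$ is conjugate to the tent map up to rescaling; same for $f^2|_{[c,b]}$. Finally I would assemble a global conjugacy: choose increasing conjugacies $\varphi_1:[a,c]\to[-1,0]$ and $\varphi_2:[c,b]\to[0,1]$ that conjugate $f^2|_{[a,c]}$ and $f^2|_{[c,b]}$ to $S^2|_{[-1,0]}$ and $S^2|_{[0,1]}$ respectively, and show $\varphi_1,\varphi_2$ can be matched across $c$ so that the glued homeomorphism conjugates $f$ to $S$ — this uses that $f$ swaps the two halves, exactly as $S$ does, so $\varphi_2$ is forced to be $\varphi_2 = S\circ\varphi_1\circ f^{-1}$ on $[c,b]$ once $\varphi_1$ is chosen.

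The main obstacle I expect is the passage from ``$f$ is $P$-monotone with the right pattern'' to ``$f$ is conjugate to the model.'' Being $P$-monotone pins down the graph $G(f_P)$ and hence the qualitative shape of $f$, but the conjugacy class of a transitive piecewise monotone map is governed by its kneading invariants, and one must verify that the minimality of the entropy forces the kneading data to coincide with that of $T_2$ (resp. $S$). The cleanest route is probably to show directly that if $h_{top}(f)=h_{top}(f_P)$ \emph{and} $f$ is transitive, then $f$ is semiconjugate to $f_P$ via a monotone map collapsing at most wandering intervals, and that transitivity rules out such collapsing, so $f$ is conjugate to $f_P$; then note $f_P$ itself, with the normalization $P=\{u,v,w\}$ and the given branch orientations, is linearly conjugate to $T_2$ (resp. that $f_P$ for the $S$-pattern is linearly conjugate to $S$). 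Establishing ``$h_{top}(f)=h_{top}(f_P)$ plus transitivity implies conjugacy to $f_P$'' is the technical heart; one direction uses that the natural factor map onto the Markov model $f_P$ is injective because a nontrivial fiber would be a wandering interval contradicting density of periodic points (Proposition~\ref{prop:transitivity-periodic-points}) together with transitivity. I would cite Bobok–Kuchta \cite{BK2} and Coven–Hidalgo \cite{CH} for the delicate uniqueness statement rather than reprove it in full, while spelling out the reductions above so the reader sees how the pieces fit.
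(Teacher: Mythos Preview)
Your treatment of the $\log 2$ case matches the paper's: Lemma~\ref{lem:particularhorseshoe} gives $P=\{u,v,w\}$ with $h_{top}(f_P)=\log 2$, Proposition~\ref{prop:h=hfP} forces $P$-monotonicity and hence $\{u,w\}=\{a,b\}$, and the conjugacy is then built by an explicit coding/linearization (the paper writes this out as a limit of staircase maps; your pointer to \cite{BK2,CH} is a reasonable shortcut).

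The gap is in the $\frac{\log 2}{2}$ case, at the gluing step. Knowing $f^2$ on each half does not determine $f$: writing $h_1=f|_{[a,c]}$ and $h_2=f|_{[c,b]}$, you only control $h_2h_1$ and $h_1h_2$ up to conjugacy, and the factorization $(h_1,h_2)$ is not unique---the fold can sit in $[a,c]$ or in $[c,b]$, giving two genuinely different maps $f$ with identical $f^2$ on each half. Concretely, your formula $\varphi_2=S\circ\varphi_1\circ f^{-1}$ is well-defined only when $f(x_1)=f(x_2)\Rightarrow S(\varphi_1(x_1))=S(\varphi_1(x_2))$; since $S|_{[0,1]}$ is injective this unwinds to requiring $f|_{[c,b]}$ to be injective, and nothing in your argument decides which half of $f$ is monotone. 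The paper sidesteps this by working with $f$ itself rather than $f^2$: after the non-mixing decomposition it uses Lemma~\ref{lem:h>hfP}(ii) to find a preimage of $c$ and then produces a four-point \emph{$f$-invariant} set $Q=\{a,e,c,b\}$ (with $f(a)=c$, $f(e)=b$, $f(c)=c$, $f(b)=a$) satisfying $h_{top}(f_Q)=\frac{\log 2}{2}$. Proposition~\ref{prop:h=hfP} applied to $f$ then yields $Q$-monotonicity of $f$, which determines globally where the fold lies, and the conjugacy to $S$ follows by the same linearization as in the $\log 2$ case. If you want to salvage your route you must first establish that one of $f|_{[a,c]}$, $f|_{[c,b]}$ is a homeomorphism, and the only tool available here is precisely that step.
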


\begin{proof}
Let $f\colon [a,b]\to [a,b]$ be a transitive interval map such that 
$h_{top}(f)=\frac{\log 2}{2}$.
By Proposition~\ref{prop:transitivity-lower-bound-htop}(iii), $f$ is not
topologically mixing. Thus, by Theorem~\ref{theo:summary-transitivity}.
there exists $c\in (a,b)$ such that 
\begin{equation}\label{eq:nonmix2}
f(c)=c,\ f([a,c])=[c,b],\ f([c,b])=[a,c],\text{ and }f^2|_{[a,c]},\ f^2|_{[c,b]}
\text{ are mixing}.
\end{equation}
Moreover, $f^2|_{[a,c]}$ has a 
horseshoe by Lemma~\ref{lem:non-turbulent}. Thus $f^2|_{[a,c]}$ 
has no non accessible endpoint by Lemma~\ref{lem:h>hfP}(ii).
This implies that 
there exists $d\neq c$ such that $f(d)=c$, that is,
\begin{gather}
\text{either }d\in [a,c)\text{ and }f(d)=c,\label{eq:d<c}\\
\text{or }d\in(c,b]\text{ and }f(d)=c.\label{eq:d>c}
\end{gather}
Assume that \eqref{eq:d<c} holds, the case \eqref{eq:d>c} being symmetric.
Let $m:=\max f([d,c])$. Then $m\in [c,b]$ and $f([d,c])=[c,m]$
by \eqref{eq:nonmix2}. Suppose that 
\begin{equation}\label{eq:fm<d}
\min f([c,m])<d.
\end{equation}
Then $f([c,m])\supset [d,c]$, so $f^2([d,c])= f([c,m])\supset
[d,c]$. Thus there exists $e$ in $[d,c]$ such that $f^2(e)=d$. See 
the positions of these points in Figure~\ref{fig:false-conj-fS}.
\begin{figure}[htb]
\centerline{\includegraphics{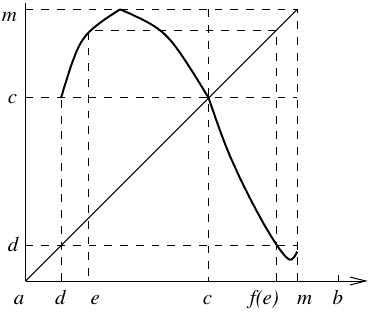}}
\caption{The positions of the various points in the case 
\eqref{eq:fm<d}.}
\label{fig:false-conj-fS}
\end{figure}

We set $P:=\{d,e,c,f(e)\}$. Then $P$ is invariant
and $h_{top}(f_P)\ge\frac{\log 2}2$ because
$([d,e],[e,c])$ is a horseshoe for $f_P^2$. Since $h_{top}(f_P)\le h_{top}(f)$
by Proposition~\ref{prop:htop=suphfP}, we have 
$h_{top}(f_P)=h_{top}(f)=\frac{\log 2}2$. Thus $f$ is $P$-monotone 
by Proposition~\ref{prop:h=hfP}, which implies that $d=a$. But this
contradicts the fact that $\min f([c,m])<d$.
We deduce that \eqref{eq:fm<d} does not hold, that is,
$\min f([c,m])\ge d$. Then 
$f([c,m])\subset [d,c]$ and $f([d,m])=f([d,c])\cup f([c,m])\subset
[c,m]\cup[d,c]=[d,m]$. Therefore
the interval $[d,m]$ is invariant,
which is possible only if $d=a$ and $m=b$ because $f$ is transitive.
Since $f$ is onto, there exists $e'\in[c,b]$ such that $f(e')=a$.
Let $e\in [a,c]$ be such that $f(e)=e'$. 
We set $Q:=\{a,e,c,e'\}$.
Then $Q$ is invariant (recall that $a=d$ and $f(d)=f(c)=c$)
and $h_{top}(f_Q)\ge\frac{\log 2}2$ because
$([a,e],[e,c])$ is a horseshoe for $f_Q^2$. 
As above, Proposition~\ref{prop:htop=suphfP} implies that
$h_{top}(f_Q)=h_{top}(f)=\frac{\log 2}2$. Thus $f$ is $Q$-monotone 
by Proposition~\ref{prop:h=hfP}. This implies that $e'=b$.
The map $f$ looks like the one represented
on the left of Figure~\ref{fig:conj-fS}. Notice that the case 
\eqref{eq:d>c} leads to the reverse figure (central symmetry) 
by exchanging the roles of $a$ and $b$.

\begin{figure}[htb]
\centerline{\includegraphics{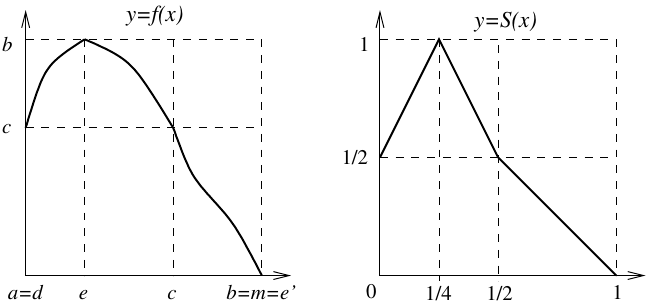}}
\caption{On the left, the map $f$ is $Q$-monotone with $Q:=\{a,e,c,b\}$. It
is conjugate to the map $S$, on the right.}
\label{fig:conj-fS}
\end{figure}

Now we consider $g\colon [a,b]\to [a,b]$ 
a transitive interval map with two fixed 
points such that $h_{top}(g)=\log 2$. Then $g$ is topologically mixing
by Theorem~\ref{theo:summary-transitivity}, and $g$ has a horseshoe
by Lemma~\ref{lem:non-turbulent}. According to 
Lemma~\ref{lem:particularhorseshoe}, there exist
points $a'<c<b'$ such that 
\begin{gather}
\text{either }g(a')=g(b')=a'\text{ and }g(c)=b',\label{eq:gT2-case1}\\
\text{or }g(a')=g(b')=b'\text{ and }g(c)=a'.\label{eq:gT2-case2}
\end{gather}
Therefore, the set $P:=\{a',c,b'\}$ is invariant, and $h_{top}(g_P)=\log
2=h_{top}(g)$. Thus $g$ is $P$-monotone by Proposition~\ref{prop:h=hfP},
which implies that $a'=a$ and $b'=b$.
The map $g$ looks like the one represented on the left of
Figure~\ref{fig:conj-gT2}
in case \eqref{eq:gT2-case1}, and the reverse figure (central symmetry) 
in case \eqref{eq:gT2-case2}.

\begin{figure}[htb]
\centerline{\includegraphics{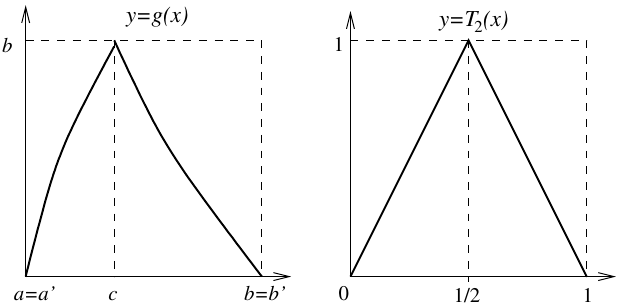}}
\caption{On the left, the map $g$
is $P$-monotone with $P:=\{a,c,b\}$. It
is conjugate to the tent map $T_2$, on the right.}
\label{fig:conj-gT2}
\end{figure}

It remains to show that $f$ and $g$ are topologically conjugate to $S$ and
$T_2$ respectively. This can be seen as a consequence of
the following general result of Parry: 
every transitive piecewise monotone interval
map is conjugate to a piecewise linear map such that the absolute value of
its slope is constant \cite{Par2}. 
This result is easier to prove for $P$-monotone maps. 
We are going to give a proof in the case of the map $g$, which is the 
simplest one. The conjugacy between $f$ and $S$ can be defined in a 
similar way.

We may assume that $g$ satisfies \eqref{eq:gT2-case1} (otherwise, 
we conjugate $g$ by $\psi\colon [a,b]\to[a,b], \psi(x):=b+a-x$
in order to get a map satisfying \eqref{eq:gT2-case1}). 
We set $J_0:=[a,c]$ and $J_1:=[c,b]$. Then, since
$g$ is $P$-monotone, $g$ is increasing
on $J_0$, decreasing on $J_1$, and $g(J_0)=g(J_1)=[a,b]$.
For all $n\ge 1$ and all $(\alpha_0,\ldots,\alpha_{n-1})\in\{0,1\}^n$, 
we define
\begin{equation}\label{eq:Jalpha}
J_{\alpha_0\ldots \alpha_{n-1}}:=\{x\in [a,b]\mid\forall i\in\Lbrack 0,n-1\Rbrack,
\ g^i(x)\in J_{\alpha_i}\}=\bigcap_{i=0}^{n-1} g^{-i}(J_{\alpha_i}).
\end{equation}
This definition implies that, for all $n\ge 2$ and
all $(\alpha_0,\ldots,\alpha_{n-1})\in\{0,1\}^n$,
\begin{equation}\label{eq:g(J-alpha-n)}
g(J_{\alpha_0\ldots \alpha_{n-1}})=J_{\alpha_1\ldots \alpha_{n-1}}.
\end{equation}

\medskip
\textsc{Fact 1.} {\it Let $n\ge 1$.
\begin{enumerate}
\item
$(J_{\alpha_0\ldots\alpha_{n-1}})_{(\alpha_0,\ldots,\alpha_{n-1})\in\{0,1\}^n}$
is a cover of $[a,b]$.
\item 
$(J_{\alpha_0\ldots\alpha_{n-1}})_{(\alpha_0,\ldots,\alpha_{n-1})\in\{0,1\}^n}$
is a family of nonempty compact intervals with pairwise disjoint interiors.
\item $g^n|_{J_{\alpha_0\ldots\alpha_{n-1}}}$ is a homeomorphism
from $J_{\alpha_0\ldots\alpha_{n-1}}$ to $[a,b]$.
\end{enumerate}}

Assertion (i) follows straightforwardly from the fact that, for all
$x\in [a,b]$ and all $i\in\Lbrack 0,n-1\Rbrack$, there is $\alpha_i\in\{0,1\}$
such that $g^i(x)\in J_{\alpha_i}$.

We prove (ii)-(iii) by induction on $n$.

$\bullet$ For $n=1$, the sets are $J_0, J_1$, and (ii)-(iii) are satisfied.

$\bullet$ Suppose that (ii)-(iii) are satisfied for $n\ge 1$.
For $i\in\{0,1\}$, the map $\widetilde{g}_i:=g|_{J_i}$ is a homeomorphism
from $J_i$ to $[a,b]$. We can write $J_{\alpha_0\ldots\alpha_n}$
as
$$
J_{\alpha_0\ldots\alpha_n}=\{x\in J_{\alpha_0}\mid 
g(x)\in J_{\alpha_1\ldots\alpha_n}\}=
\widetilde{g}_{\alpha_0}^{-1}(J_{\alpha_1\ldots\alpha_n}).
$$
This is a nonempty compact interval because 
$\widetilde{g}_{\alpha_0}^{-1}$ is continuous and 
$J_{\alpha_1\ldots\alpha_n}$ is a nonempty compact interval by the
induction hypothesis. 

Let $(\beta_0,\ldots,\beta_n)\ne (\alpha_0,\ldots,\alpha_n)$.
If $\beta_0\ne \alpha_0$, then 
$$
J_{\alpha_0\ldots\alpha_n}\subset J_{\alpha_0},\ J_{\beta_0\ldots\beta_n}\subset J_{\beta_0}
$$ 
and $\Int{J_{\alpha_0}}\cap\Int{J_{\beta_0}}=\emptyset$.
If $\beta_0=\alpha_0$, then 
$(\beta_1,\ldots,\beta_n)\ne (\alpha_1,\ldots,\alpha_n)$,
$$
J_{\alpha_0\ldots\alpha_n}=
\widetilde{g}_{\alpha_0}^{-1}(J_{\alpha_1\ldots\alpha_n}),\ 
J_{\beta_0\ldots\beta_n}=
\widetilde{g}_{\alpha_0}^{-1}(J_{\beta_1\ldots\beta_n})
$$
and these sets have disjoint interiors because $\widetilde{g}_{\alpha_0}$
is a homeomorphism and by the induction hypothesis for
$J_{\alpha_1\ldots\alpha_n}, J_{\beta_1\ldots\beta_n}$.

Finally, $g^n|_{J_{\alpha_0\ldots\alpha_n}}$ is a 
homeomorphism from $J_{\alpha_0\ldots\alpha_n}\subset
J_{\alpha_0\ldots\alpha_{n-1}}$ to its image by the induction hypothesis,
and $g^n(J_{\alpha_0\ldots\alpha_n})=J_{\alpha_n}$ by 
\eqref{eq:g(J-alpha-n)}. Thus
$$
g^{n+1}|_{J_{\alpha_0\ldots\alpha_n}}=\left(g|_{J_{\alpha_n}}\right)
\circ \left(g^n|_{J_{\alpha_0\ldots\alpha_n}}\right)
$$
is a homeomorphism from $J_{\alpha_0\ldots\alpha_n}$ to $[a,b]$.
This ends the induction and the proof of Fact 1.

\medskip
In a similar way, we define intervals $J'_{\alpha_0\ldots\alpha_n}$
for $T_2$, starting with 
$J_0':=[0,\frac12]$ and $J_1':=[\frac12,1]$; at level $n-1$,
we get the cover $\left(\left[\frac{i}{2^n},
\frac{i+1}{2^n}\right]\right)_{0\le i<2^n}$.
The idea is to build a map $\vfi\colon [a,b]\to [0,1]$
such that the image of every interval of the form 
$J_{\alpha_0\ldots\alpha_{n-1}}$ is $J'_{\alpha_0\ldots\alpha_{n-1}}$.

We set $R_{\alpha_0\ldots\alpha_{n-1}}:=J_{\alpha_0\ldots\alpha_{n-1}}\setminus
\min J_{\alpha_0\ldots\alpha_{n-1}}$ (half-open interval).
For all $n\ge 1$, we define the staircase function 
$\vfi_n\colon [a,b]\to [0,1]$ by:
\begin{itemize}
\item $\vfi_n(a)=0$, $\vfi_n(b)=1$,
\item $\forall (\alpha_0,\ldots,\alpha_{n-1})\in\{0,1\}^n$, $\vfi_n$
is constant on $R_{\alpha_0\ldots\alpha_{n-1}}$,
\item $\vfi_n$ is non decreasing and every step is of high $\frac{1}{2^n}$
\end{itemize}
(see Figure~\ref{fig:phi3}).
\begin{figure}[htb]
\centerline{\includegraphics{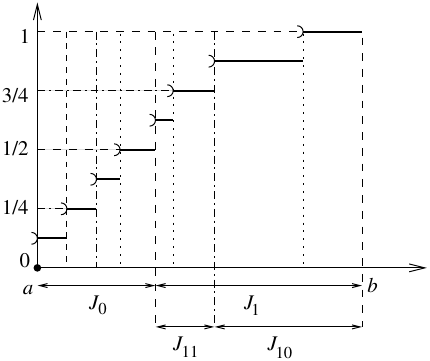}}
\caption{The map $\vfi_3$: the interval $[a,b]$ is divided into 
$2^3=8$ subintervals
$(J_{\alpha_0\alpha_1\alpha_2})_{(\alpha_0,\alpha_1,\alpha_2)\in\{0,1\}^3}$,
$\vfi_3$ is a non decreasing staircase  function and takes its values in 
$\{\frac{i}{8}\mid i\in\Lbrack 0,8\Rbrack\}$.}
\label{fig:phi3}
\end{figure}

\textsc{Fact 2.} {\it  The sequence $(\vfi_n)_{n\ge 1}$ uniformly converges 
to a map $\vfi\colon [a,b]\to [0,1]$. Moreover, $\vfi$ is 
an increasing homeomorphism.}

\medskip
First we show that $(\vfi_n)_{n\ge 0}$ is a Cauchy sequence for the
uniform distance. Let $\eps>0$ and $N\in\IN$ such that $\frac{1}{2^N}<\eps$.
Let $n>m\ge N$. 
Note that $\vfi_n(a)=\vfi_m(a)=0$.
Let $x\in (a,b]$. There exists $(\alpha_0,\ldots\alpha_{n-1})\in\{0,1\}^n$
such that $x$ belongs to $R_{\alpha_0\ldots\alpha_{n-1}}\subset 
R_{\alpha_0\ldots\alpha_{m-1}} $, and there is
$i\in\Lbrack 1, 2^m\Rbrack$ such that 
$\vfi_m$ is equal to $\frac{i}{2^m}$ on ${R_{\alpha_0\ldots\alpha_{m-1}}}$.
Moreover, $\vfi_n$ is equal to a constant 
$c\in (\frac{i-1}{2^m},\frac{i}{2^m}]$ on 
${R_{\alpha_0\ldots\alpha_{n-1}}}$ by construction. Therefore
\begin{equation}\label{eq:fin-fim}
\forall x\in[a,b],\ |\vfi_n(x)-\vfi_m(x)|\le \frac{1}{2^m}<\eps.
\end{equation}
This shows that $(\vfi_n)_{n\ge 0}$ is a Cauchy sequence, and thus
it uniformly converges to a map $\vfi\colon [a,b]\to [0,1]$.
We are going to show that $\vfi$ is continuous, onto and increasing.

In \eqref{eq:fin-fim}, we take $n\to +\infty$.
We get
\begin{equation}\label{eq:fi-fim}
\forall x\in [a,b],\ |\vfi(x)-\vfi_m(x)|<\eps.
\end{equation}
If $x\in \Int{R_{\alpha_0\ldots \alpha_{m-1}}}$, there exists a neighborhood
$U$ of $x$ such that $U\subset R_{\alpha_0\ldots \alpha_{m-1}}$, and thus
\eqref{eq:fi-fim} implies that, for all $y\in U$,
\begin{eqnarray*}
|\vfi(x)-\vfi(y)|&\le& |\vfi(x)-\vfi_m(x)|+|\vfi_m(x)-\vfi_m(y)|
+|\vfi_m(y)-\vfi(y)|\\
&<&\eps+0+\eps=2\eps.
\end{eqnarray*}
If $x=\max R_{\alpha_0\ldots \alpha_{m-1}}$ (resp $x=a$), there
exists $(\beta_0,\ldots,\beta_{m-1})\in\{0,1\}^m$ and a neighborhood $U$
of $x$ such that $U\subset R_{\alpha_0\ldots \alpha_{m-1}}\cup
R_{\beta_0\ldots\beta_{m-1}}$ 
(resp. $U\subset \{a\}\cup R_{\beta_0\ldots\beta_{m-1}}$). There exists
$i\in \Lbrack 1, 2^m\Rbrack$ such that $\vfi_m$ takes only the values
$\frac{i-1}{2^m}$ and $\frac{i}{2^m}$ on $U$. Thus
\eqref{eq:fi-fim} implies that, for all $y\in U$,
\begin{eqnarray*}
|\vfi(x)-\vfi(y)|&\le& |\vfi(x)-\vfi_m(x)|+|\vfi_m(x)-\vfi_m(y)|
+|\vfi_m(y)-\vfi(y)|\\
&<&\eps+\frac{1}{2^m}+\eps<3\eps.
\end{eqnarray*}
This proves that $\vfi$ is continuous on $[a,b]$. By definition of 
$(\vfi_n)_{n\ge 1}$, $\vfi(a)=0$ and $\vfi(b)=1$, which implies that
the map $\vfi$ is onto. Moreover, 
$\vfi$ is non decreasing because, if $x\le y$, then
$\vfi_n(x)\le \vfi_n(x)$ for all $n$, which implies 
$\vfi(x)\le \vfi(x)$. 

Now we are going to show that $\vfi$ is increasing.
For every $\bar\alpha=(\alpha_n)_{n\ge 0}\in \{0,1\}^{\IZ^+}$, we set
$$
J_{\bar\alpha}:=\bigcap_{n=1}^{+\infty} J_{\alpha_0\ldots\alpha_{n-1}}.
$$
This is a decreasing intersection of nonempty compact intervals, so
$J_{\bar\alpha}$ is a nonempty compact interval. According to the
definition of $\vfi$, the map $\vfi$ is constant on an interval $J$
if and only if $J\subset J_{\bar\alpha}$ for some
$\bar\alpha\in \{0,1\}^{\IZ^+}$. Moreover,
\eqref{eq:g(J-alpha-n)} implies that
$g^n(J_{\bar\alpha})\subset J_{\alpha_n}$ for all $n\ge 1$. On the other hand,
for every non degenerate interval $J$, there exists $n\ge 1$ such that
$g^n(J)=[a,b]$ because $g$ is topologically mixing and has no 
non accessible endpoint (Proposition~\ref{prop:accessibility}).
This implies that 
$J_{\bar\alpha}$ is degenerate, hence reduced to a single point for every
$\bar\alpha\in \{0,1\}^{\IZ^+}$. The map $\vfi$ is non decreasing and 
it is non constant on any non degenerate interval. Thus $\vfi$ is 
increasing, which implies that $\vfi$ is a homeomorphism.
This concludes the proof of Fact 2.

\medskip
\textsc{Fact 3.} {\it For all $(\alpha_0,\ldots,\alpha_{n-1})\in\{0,1\}^n$
$(n\ge 1)$,
$\vfi(J_{\alpha_0\ldots\alpha_n})=J'_{\alpha_0\ldots\alpha_n}$.}

\medskip
We fix $n\ge 1$ and $(\alpha_0,\ldots,\alpha_{n-1})\in\{0,1\}^n$.
By construction of the sequence of maps $(\vfi_m)_{m\ge 1}$,
$$
\forall m\ge n,\ \vfi_m(R_{\alpha_0\ldots\alpha_{n-1}})=
J'_{\alpha_0\ldots\alpha_n}\setminus \min J'_{\alpha_0\ldots\alpha_n}.
$$
Taking the limit when $m$ tends to infinity, we get
$$
\vfi(R_{\alpha_0\ldots\alpha_{n-1}})=
J'_{\alpha_0\ldots\alpha_n}\setminus \min J'_{\alpha_0\ldots\alpha_n}.
$$
Since $\vfi$ is continuous and increasing by Fact 2, $\vfi$
sends $\inf R_{\alpha_0\ldots\alpha_{n-1}}$ to 
$\min J'_{\alpha_0\ldots\alpha_n}$, and hence
$\vfi(J_{\alpha_0\ldots\alpha_n})=J'_{\alpha_0\ldots\alpha_n}$.

\medskip
It remains to show that $\vfi$ is a conjugacy
between $g$ and $T_2$, that is, $T_2=\vfi\circ g\circ\vfi^{-1}$.
First note that $\vfi\circ g\circ\vfi^{-1}(0)=\vfi\circ g\circ\vfi^{-1}(1)=0$,
$\vfi\circ g\circ\vfi^{-1}(\frac12)=1$ 
and $\vfi\circ g\circ\vfi^{-1}$ is increasing on $[0,\frac12]$ 
(resp. decreasing on $[\frac12,1]$) because $g(a)=g(b)=a$, $g(c)=b$ and
$g$ is increasing on $J_0=[a,c]$ (resp. decreasing on $J_1=[c,b]$).
Let $(\alpha_0,\ldots,\alpha_{n-1})\in\{0,1\}^n$.
Then $J'_{\alpha_0\ldots \alpha_{n-1}}$ is an interval of length
$\frac{1}{2^n}$. Moreover, Fact~3 and \eqref{eq:g(J-alpha-n)}
imply that 
$J'_{\alpha_0\ldots \alpha_{n-1}}=\vfi(J_{\alpha_0\ldots \alpha_{n-1}})$ and
\begin{equation}\label{eq:TA0}
\vfi\circ g\circ \vfi^{-1}(J'_{\alpha_0\ldots \alpha_{n-1}})=\vfi\circ g(J_{\alpha_0\ldots \alpha_{n-1}})
=\vfi(J_{\alpha_1\ldots \alpha_{n-1}})=J'_{\alpha_1\ldots \alpha_{n-1}}.
\end{equation}
Thus
\begin{equation}\label{eq:TA}
|\vfi\circ g\circ \vfi^{-1}(J'_{\alpha_0\ldots \alpha_{n-1}})|=
\frac{1}{2^{n-1}}=2|J'_{\alpha_0\ldots \alpha_{n-1}}|.
\end{equation}
Now we consider a point $x_0\in\End{J'_{\alpha_0\ldots \alpha_{n-1}}}$. If $\alpha_0=0$ (which 
implies that $x_0\in [0,\frac12]$), then $[0,x_0]$ is the union of
intervals of the form $J'_{0\beta_1\ldots\beta_{n-1}}$,
and the length of $[0,x_0]$ is equal to the sum of the lengths of these 
intervals. We have
$\vfi\circ g\circ\vfi^{-1}(J'_{0\beta_1\ldots\beta_{n-1}})=
J'_{\beta_1\ldots\beta_{n-1}}$ by \eqref{eq:TA0}
and $|\vfi\circ g\circ \vfi^{-1}(J_{0\beta_1\ldots\beta_{n-1}})|=
2|J_{0\beta_1\ldots\beta_{n-1}}|$ by \eqref{eq:TA}.
Moreover, the intervals
$(J'_{\beta_1\ldots\beta_{n-1}})_{(\beta_1,\ldots,\beta_{n-1})
\in\{0,1\}^{n-1}}$ have pairwise disjoint interiors. Thus
\begin{equation}\label{eq:length0x_0}
|\vfi\circ g\circ \vfi^{-1}([0,x_0])|=2|[0,x_0]|=2x_0.
\end{equation}
Since $\vfi\circ g\circ \vfi^{-1}$ fixes the point $0$ and is increasing on 
$[0,\frac12]\supset [0,x_0]$, we have $\vfi\circ g\circ \vfi^{-1}([0,x_0])
=[0,\vfi\circ g\circ \vfi^{-1}(x_0)]$. Combined with 
\eqref{eq:length0x_0}, 
this implies that 
\begin{equation}\label{eq:xinJ0}
\forall x_0\in\End{J'_{0\alpha_1\ldots \alpha_{n-1}}},\quad
\vfi\circ g\circ \vfi^{-1}(x_0)=2x_0=T_2(x_0).
\end{equation}
If $\alpha_0=1$ (which implies that $x_0\in [\frac12,1]$), one can show with
similar arguments that 
$|\vfi\circ g\circ \vfi^{-1}([x_0,1])|=2|[x_0,1]|=2(1-x_0)$, and thus
\begin{equation}\label{eq:xinJ1}
\forall x_0\in\End{J'_{1\alpha_1\ldots \alpha_{n-1}}},\quad
\vfi\circ g\circ \vfi^{-1}(x_0)=T_2(x_0).
\end{equation}
The set 
$$
\left\{\End{J'_{\alpha_0\ldots,\alpha_{n-1}}}\mid n\ge 1,(\alpha_0,\ldots,\alpha_{n-1})\in
\{0,1\}^n\right\}
=\left\{\frac{i}{2^n}\mid
n\ge 0, i\in\Lbrack 0,2^n\Rbrack\right\}
$$
is dense is $[0,1]$. Since $\vfi$ is continuous according to Fact~2,
\eqref{eq:xinJ0} and \eqref{eq:xinJ1} imply that
$$
\forall x\in [0,1],\ \vfi\circ g\circ \vfi^{-1}(x)=T_2(x),
$$
that is, $g$ and $T_2$ are conjugate by $\vfi$.
\end{proof}

\subsection*{Remarks on graph maps}

There exist results similar to 
Proposition~\ref{prop:transitivity-lower-bound-htop}
for circle and tree maps. For transitive 
circle maps, the lower bound on the entropy depends on the degree,
the interesting cases being the degrees $-1,0,1$. Indeed,
we saw that, if $f$ is a circle map of degree $d$ with $|d|\ge 2$, then
$h_{top}(f)\ge \log|d|$, regardless of whether $f$ is transitive or not 
(Proposition~\ref{prop:h-degreed}). Moreover,
for every integer $d\in\IZ\setminus\{-1,0,1\}$, 
there exist transitive circle maps of degree $d$ realizing the equality,
for example, the map $\IS\to \IS$, $x\mapsto dx\bmod 1$.
The cases of transitive circle maps of degree $0$ or $-1$ were dealt with by
Alsedà, Kolyada, Llibre and Snoha \cite{AKLS2}. Notice that
transitive circle maps of degree $0$ are very similar to 
transitive interval maps with two fixed points. In particular, 
the map $T_2$ in Example~\ref{ex:htop-transitive} can be seen as a
circle map by identifying the two endpoints of the interval.

\begin{prop}
Let $f\colon \IS\to \IS$ be a transitive circle map of degree $d$. 
\begin{itemize}
\item If $d=0$, then $h_{top}(f)\ge \log 2$.
\item If $d=-1$, then $h_{top}(f)\ge \frac{\log 3}{2}$.
\end{itemize}
Moreover, there exist transitive circle maps with the prescribed
degree realizing the equalities.
\end{prop}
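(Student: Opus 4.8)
The plan is to obtain the two lower bounds by producing horseshoes in the sense of graph maps (the notion introduced in the ``Remarks on graph maps'' following Theorem~\ref{theo:Misiurewicz}) and then applying Theorem~\ref{theo:htop-horseshoeG}, with sharpness coming from explicit piecewise linear models whose entropy is computed via the graph-map form of Proposition~\ref{prop:htop-Markov-map}. First I would record two preliminary facts valid for any transitive circle map $f$ of degree $d$. It is onto (its image is closed and contains a point with dense orbit), so if $F$ is a lifting then $F(x+1)-F(x)=d$ forces the image of $F$ restricted to a fundamental domain to have length at least $1$. Moreover, if $d\neq1$ then $f$ has a fixed point: $F(x)-x-(d-1)x$ is $1$-periodic, so $F(x)-x$ is unbounded above and below and the intermediate value theorem provides $c$ with $F(c)=c$. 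Cutting the circle at such a $c$, the restriction $g$ of $F$ to $[c,c+1]$ is a continuous map with $g(c)=c$, $g(c+1)=c+d$, whose image $[m,M]$ has length $\ge1$.

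For $d=0$ one has $g(c)=g(c+1)=c$ and $M-m\ge1$. Let $a\in[c,c+1]$ realise an extremum of $g$. Arguing as in the proof of Lemma~\ref{lem:non-turbulent}, and using that $f$ is transitive — hence not injective (a degree $0$ selfmap of the circle never is) and, by Theorem~\ref{theo:transitivegraphmap-rotation}, with a dense set of periodic points — I would locate two intervals $J_1\subset[c,a]$ and $J_2\subset[a,c+1]$ with disjoint interiors such that $g(J_i)$ has length at least $1$, hence $f(\pi(J_i))=\IS$. Then $\pi(J_1),\pi(J_2)$ have disjoint interiors, and for any proper sub-arc $I$ of $\IS$ containing $\pi(J_1)\cup\pi(J_2)$ — for instance the circle minus a small neighbourhood of $c$ — the pair $(\pi(J_1),\pi(J_2))$ is a $2$-horseshoe for $f$, so $h_{top}(f)\ge\log2$. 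The main obstacle here is exactly to ensure that $J_1$ and $J_2$ can be chosen inside a proper sub-arc, i.e.\ that they do not together sweep out all of $\IS$: this is where transitivity must be invoked to force $g$ to oscillate, in the spirit of Proposition~\ref{prop:transitivity-type}, rather than behave like a single monotone fold.

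For $d=-1$ the map $f^2$ has degree $1$ and, since $f$ fixes $c$, so does $f^2$. By Theorem~\ref{theo:transitivity-total-transitivityG} applied to $f$, either $f$ is totally transitive or the circle splits into two arcs swapped by $f$ on each of which $f^2$ is transitive; in both cases I would control how $F^2$ folds the fundamental domain $[c,c+1]$ over $\IS$ (a degree $-1$ fixed-point-preserving map carries $[c,c+1]$ across with net displacement $-1$ while returning to $c$, so $F^2$ carries it across with net displacement $+1$ after wrapping ``twice and back''). Tracking this folding should yield a strict $3$-horseshoe for $f^2$, whence $h_{top}(f)=\tfrac12h_{top}(f^2)\ge\tfrac12\log3$ by Proposition~\ref{prop:htop-Tn} and Theorem~\ref{theo:htop-horseshoeG}. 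Alternatively one could feed the rotation interval of $f^2$ into Theorem~\ref{theo:h-degree1}, but I expect the horseshoe route to be cleaner.

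Finally, for the realizations: when $d=0$, the tent map $T_2$ of Example~\ref{ex:htop-transitive}, regarded as a circle map by gluing the endpoints of $[0,1]$, has degree $0$ since $T_2(0)=T_2(1)=0$, is transitive (being topologically mixing, Example~\ref{ex:tent-map}), and has $h_{top}=\log2$ (Example~\ref{ex:htop-transitive}), so the first bound is attained. When $d=-1$, I would build a transitive $P$-linear circle map of degree $-1$, with a finite invariant set $P$ for which $f$ is $P$-monotone and $f^2$ is $P$-monotone with associated transition matrix of spectral radius $3$ (so the transition matrix of $f$ has spectral radius $\sqrt3$); the graph-map form of Proposition~\ref{prop:htop-Markov-map} then gives $h_{top}(f)=\tfrac12\log3$. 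I expect the construction, and the verification of its transitivity — the remaining non-routine point — to follow the pattern of Example~\ref{ex:htop-mixing-log-lambda} combined with the circle analogue of the square-root construction of Example~\ref{ex:all-types}.
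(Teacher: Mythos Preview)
The paper does not prove this proposition. It appears in a ``Remarks on graph maps'' paragraph following Proposition~\ref{prop:transitivity-lower-bound-htop}, with attribution to Alsed\`a, Kolyada, Llibre and Snoha \cite{AKLS2}; the only accompanying comments are that transitive degree-$0$ circle maps are ``very similar to transitive interval maps with two fixed points'' and that $T_2$, viewed as a circle map by identifying the endpoints of $[0,1]$, realises the bound $\log 2$. So there is no in-paper argument to compare against, and your realisation for $d=0$ coincides with what the paper suggests.

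Assessing your sketch on its own merits: for $d=0$ you correctly isolate the obstruction --- the candidate arcs $J_1,J_2$ may together exhaust $\IS$, so they do not sit inside a proper closed sub-arc as the graph-map horseshoe definition requires --- but you do not resolve it. Invoking Lemma~\ref{lem:non-turbulent} ``in spirit'' is not enough: that lemma lives on an interval, and the step where transitivity forces enough oscillation to shrink $J_1,J_2$ into a proper arc is precisely the content of the result. The paper's hint points toward a different reduction (produce a genuine transitive interval map with two fixed points and invoke Proposition~\ref{prop:transitivity-lower-bound-htop}(ii)), but note that the na\"\i ve lift $F|_{[c,c+1]}$ need not take values in $[c,c+1]$, so this too needs an argument. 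For $d=-1$ the gap is larger: ``tracking this folding should yield a strict $3$-horseshoe for $f^2$'' is an assertion, not a mechanism, and you give no reason why transitivity forces three full laps rather than two; the rotation-interval route via Theorem~\ref{theo:h-degree1} that you mention as an alternative would also need the rotation interval of $f^2$ to be identified. The $d=-1$ realisation is likewise only promised. These are the places where the substantive work remains.
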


Irrational rotations provide examples of degree $1$ circle maps that
are transitive and have a null entropy. This is actually the only
possibility, up to conjugacy. More generally, Blokh proved that
a transitive graph map has positive entropy except if it
is conjugate to an irrational rotation \cite{Blo14}. Recall
that a transitive graph map with no periodic point is conjugate
to an irrational rotation on the circle
(Theorem~\ref{theo:transitivegraphmap-rotation}).

\begin{theo}
Let $f\colon \IS\to \IS$ be a transitive graph map. If $f$ has periodic 
points, then $h_{top}(f)>0$.
\end{theo}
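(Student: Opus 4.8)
The plan is to reduce the statement, through the cyclic decomposition of a transitive graph map, to the case of a \emph{topologically mixing} graph map carrying a periodic point, and then to extract positive entropy from the specification property. Concretely, since $f$ is transitive, Theorem~\ref{theo:transitivity-total-transitivityG} provides a cycle of graphs $(G_1,\dots,G_p)$ with $f^p|_{G_i}$ totally transitive for every $i$; moreover these pieces can be taken with $\bigcup_i G_i=G$ (this is built into the Alsedà--del Río--Rodríguez splitting mentioned after that theorem, and can also be seen directly: a transitive graph map has a point of dense orbit by Proposition~\ref{prop:transitive-dense-orbit}, a non degenerate subgraph has nonempty interior in $G$, so the orbit eventually enters $G_1$ and from then on stays in the closed set $\bigcup_i G_i$, which density forces to be all of $G$ since $G$ has no isolated point).

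Suppose now $f$ has a periodic point $z$. Then $z\in G_i$ for some $i$, and applying a suitable iterate $f^k$ with $f^k(G_i)=G_1$ yields a point of $\CO_f(z)\cap G_1$ which is still periodic for $f$, hence periodic for $g:=f^p|_{G_1}$. So $g$ is a totally transitive graph map on the non degenerate (in particular infinite) graph $G_1$ and has a periodic point. Since an irrational rotation has no periodic points, $g$ is not conjugate to one, so Theorem~\ref{theo:totallytransitivegraph-mixing} gives that $g$ is topologically mixing, and then Theorem~\ref{theo:mixing-specificationG} gives that $g$ has the specification property. From here I would deduce $h_{top}(g)>0$ by the standard separated-set argument: fix $a,b\in G_1$ with $d(a,b)=3\delta>0$ and take the integer $N$ supplied by specification for $\eps=\delta$; for each word $w\in\{0,1\}^k$, prescribe orbit segments of length one at the times $0,N,\dots,(k-1)N$ to be $\delta$-close to $a$ or to $b$ according to the letters of $w$, and let $y_w$ be a periodic point realizing this. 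Distinct words produce $(kN,\delta)$-separated points, so $s_{kN}(g,\delta)\ge 2^k$ and Bowen's formula (Theorem~\ref{theo:Bowen-formula}) gives $h_{top}(g)\ge \frac{\log 2}{N}>0$. Finally $G_1$ is invariant under $f^p$, so Proposition~\ref{prop:htop-Tn} yields $p\,h_{top}(f)=h_{top}(f^p)\ge h_{top}(f^p|_{G_1})=h_{top}(g)>0$, whence $h_{top}(f)>0$.

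The main obstacle I expect is making the decomposition step fully rigorous: ensuring $\bigcup_i G_i=G$ (so a periodic point of $f$ cannot lie outside the pieces) and that a periodic point of $f$ really transfers to $f^p|_{G_1}$ even though $f|_{G_1}\colon G_1\to G_2$ need not be injective. The passage from specification to positive entropy is otherwise routine — it is part of the classical circle of ideas around specification already cited (\cite{DGS}) for specification $\Rightarrow$ mixing — and if one prefers to avoid it, it can be replaced by directly producing a horseshoe for some iterate of $g$ (using mixing plus the presence of a periodic point) and invoking Theorem~\ref{theo:htop-horseshoeG}.
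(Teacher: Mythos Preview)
Your argument is correct, but be aware that the paper itself does not prove this theorem: it appears in a ``Remarks on graph maps'' paragraph and is simply attributed to Blokh \cite{Blo14} (the $\IS$ in the displayed statement is evidently a slip for a general graph $G$, as the surrounding text makes clear). What you have done is assemble three other results of Blokh that the paper also only cites---Theorems~\ref{theo:transitivity-total-transitivityG}, \ref{theo:totallytransitivegraph-mixing} and \ref{theo:mixing-specificationG}---together with the standard specification-to-entropy counting. Your handling of the two obstacles you flag is fine: the covering $\bigcup_i G_i=G$ follows from your dense-orbit sketch (or directly from the Alsed\`a--del R\'{\i}o--Rodr\'{\i}guez splitting quoted right after Theorem~\ref{theo:transitivity-total-transitivityG}, which explicitly has union equal to $X$), and transferring periodicity to $g=f^p|_{G_1}$ needs only $f^p(G_1)=G_1$ and that some $f$-iterate of $z$ lands in $G_1$---injectivity is irrelevant. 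One small point in the counting step: specification shadows $g^{(i-1)N}(x_i)$, not $x_i$ itself, so you should take each $x_i$ to be a $g^{(i-1)N}$-preimage of $a$ or $b$ (possible since $f^p(G_1)=G_1$ makes $g$ onto); you likely intended this. As a shortcut, Remark~\ref{rem:specif} already records that specification implies uniformly positive entropy via \cite{Bla3}, which on a non-degenerate space forces $h_{top}(g)>0$ directly.
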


For circle maps, this is the best possible lower bound: there exist 
transitive degree $1$ circle maps with  arbitrarily small positive
topological entropy. This is a folklore result; see \cite{AKLS} for a proof.

For tree maps, a lower bound depending on the number of endpoints was
found by Alsedà, Baldwin, Llibre and Misiurewicz \cite{ABLM}.

\begin{prop}
Let $f\colon T\to T$ be a transitive tree map. Let $e(T)$ denote the number
of endpoints of $T$. Then $h_{top}(f)\ge\frac{\log 2}{e(T)}$.
\end{prop}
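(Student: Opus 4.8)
The idea is to transfer the proof of Proposition~\ref{prop:transitivity-lower-bound-htop} to trees: produce a horseshoe for an iterate $f^p$ with $p\le e(T)$, so that Proposition~\ref{prop:horseshoe-htop} gives $h_{top}(f)=\frac1p h_{top}(f^p)\ge\frac1p\log 2\ge\frac{\log 2}{e(T)}$, where $p$ is the length of the cyclic decomposition of $f$ into totally transitive pieces. First, since a tree contains no circle, a transitive tree map is not conjugate to an irrational rotation, so by Theorem~\ref{theo:transitivegraphmap-rotation} its periodic points are dense; in particular $f$ has a fixed point $z$ (every tree map has one). By the graph version of Theorem~\ref{theo:transitivity-total-transitivityG} there is a cycle of subtrees $(G_1,\dots,G_p)$, with pairwise disjoint interiors and $f(G_i)=G_{i+1\bmod p}$, such that each $f^p|_{G_i}$ is totally transitive; arguing as in the interval case (Proposition~\ref{prop:transitivity-total-transitivity}) one checks that $G_1\cup\dots\cup G_p=T$ and that $f$ maps the endpoint set $\End{G_i}$ into $\End{G_{i+1\bmod p}}$. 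Since each $G_i$ is a tree, $f^p|_{G_i}$ is again not conjugate to a rotation, hence is topologically mixing by Theorem~\ref{theo:totallytransitivegraph-mixing}.

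Next I bound $p$ by $e(T)$. If $p=1$ then $f$ itself is topologically mixing; such a map has a periodic point of odd period greater than $1$ (the tree analogue of Theorem~\ref{theo:summary-mixing}), whose orbit has no division, and the bound $h_{top}(f)\ge\frac{\log 2}{e(T)}$ valid for any tree map with a no-division periodic orbit (stated in the remarks on graph maps of the previous section) applies directly. So assume $p\ge 2$. Then $z$ cannot lie in the interior of any $G_i$, for otherwise $f$ would fix $\Int{G_i}$, impossible since $f$ genuinely cyclically permutes the pieces; hence $z\in\End{G_i}$ for some $i$. Applying $f$ and using $f(z)=z$ together with $f(\End{G_i})\subset\End{G_{i+1\bmod p}}$ gives $z\in\End{G_{i+1\bmod p}}$, so the nonempty set $\{\,i:z\in G_i\,\}$ is invariant under the $p$-cycle $i\mapsto i+1\bmod p$, hence equals $\{1,\dots,p\}$. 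Thus the $p$ subtrees $G_1,\dots,G_p$, which have pairwise disjoint interiors, all contain $z$, so $T\setminus\{z\}$ has at least $p$ connected components; since for a finite tree the number of components of $T\setminus\{z\}$ never exceeds $e(T)$, we conclude $p\le e(T)$.

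It remains to exhibit a horseshoe for $f^p$ when $p\ge 2$. On the subtree $G_i$ the map $g:=f^p|_{G_i}$ is transitive (indeed topologically mixing) and fixes the point $z\in\End{G_i}$; the tree analogue of Lemma~\ref{lem:non-turbulent} --- a transitive tree map with a fixed endpoint, or with more than one fixed point, has a horseshoe --- then yields a horseshoe for $g$, whence $h_{top}(f^p)\ge h_{top}(g)\ge\log 2$ by Proposition~\ref{prop:horseshoe-htop}, and the stated inequality follows. The main obstacle is exactly this last step, together with the structural claims used above (that the cyclic decomposition exhausts $T$ and respects endpoint sets, and that $z$ may be taken to be an endpoint of the piece containing it): on the interval these rest on the intermediate value theorem and on the analysis of $P$-monotone connect-the-dots maps, and replacing them by arguments valid at branching points is the content of Alsedà, Baldwin, Llibre and Misiurewicz. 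Sharpness of the bound, realized on suitable $e$-stars, is obtained by separate explicit examples.
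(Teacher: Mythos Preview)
The paper does not prove this proposition; it is only stated, with attribution to Alsed\`a, Baldwin, Llibre and Misiurewicz, in the remarks on graph maps closing the section on entropy of transitive maps. So there is no proof in the text to compare against.

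Your outline has the right architecture, and the bound $p\le e(T)$ is correct and nicely argued. In fact it is simpler than you make it: once $z$ is fixed and $f(G_i)=G_{i+1}$, the inclusion $z\in G_i\Rightarrow z\in G_{i+1}$ is immediate, so $z$ lies in every $G_i$ without any appeal to endpoint sets or to the (incorrect as stated) claim that ``$f$ would fix $\Int{G_i}$''. Since the $G_i$ are non-degenerate subtrees with pairwise disjoint interiors all containing $z$, one gets $G_i\cap G_j=\{z\}$ for $i\ne j$, hence $T\setminus\{z\}=\bigsqcup_i (G_i\setminus\{z\})$ has at least $p$ components, each containing an endpoint of $T$.

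The genuine gaps are the two you flag yourself. For $p\ge2$, the tree analogue of Lemma~\ref{lem:non-turbulent} --- a transitive tree map with a fixed endpoint has a horseshoe --- is precisely where the work lies; on the interval that lemma uses the linear order and the intermediate value theorem in an essential way, and carrying it through branching points is non-trivial. For $p=1$, your route via a ``tree analogue of Theorem~\ref{theo:summary-mixing}'' is shaky: the tree notion of division is more delicate than parity of the period, and a periodic orbit of odd period greater than $1$ on a tree need not be a no-division orbit in the tree sense, so invoking the no-division entropy bound from the preceding section's remarks requires its own justification. Your closing paragraph is candid about all of this; what you have written is an honest outline with the hard steps deferred to the cited paper rather than a self-contained proof --- which, in fairness, is exactly the status the proposition has in the text as well.
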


This is the best lower bound for star maps. However, this is not the case
in general. 
The next proposition, due to Alsedà, Kolyada, Llibre and Snoha \cite{AKLS},
states more specific bounds in the case of star maps.

\begin{prop}
Let $f\colon S_n\to S_n$ be a transitive map, where $S_n$ is an $n$-star, 
$n\ge 2$. Let $b$ denote the unique
branching point of $S_n$. 
\begin{itemize}
\item If $f(b)=b$, then $h_{top}(f)\ge\frac{\log 2}n$. Moreover, equality is
possible.
\item If $f(b)\neq b$, then  $h_{top}(f)\ge \frac{\log 2}2$ (it is not
known whether this is the best lower bound).
\end{itemize}
\end{prop}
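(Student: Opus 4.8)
The plan is to dispose of the case $n=2$ at once, to read the lower bound in the first bullet off the transitive tree-map estimate recalled just above, and to concentrate the work on the two genuinely new points: the sharpness in the first bullet and the improvement to $\frac{\log 2}{2}$ under the hypothesis $f(b)\ne b$ in the second.

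\emph{Reductions.} If $n=2$, then $S_2$ is a non-degenerate compact interval with $b$ an interior point. When $f(b)=b$ the map has a fixed point, and when $f(b)\ne b$ no extra information is needed; in either situation Proposition~\ref{prop:transitivity-lower-bound-htop}(i) yields $h_{top}(f)\ge\frac{\log 2}{2}=\frac{\log 2}{n}$, settling both bullets. For $n\ge 3$, the star $S_n$ is a non-degenerate topological graph that is not a circle, so $f$ has periodic points by Theorem~\ref{theo:transitive-noperiodicpoint}, and the preceding tree-map estimate of Alsedà, Baldwin, Llibre and Misiurewicz applies with $e(S_n)=n$, giving $h_{top}(f)\ge\frac{\log 2}{n}$. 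Thus only two things remain: (a) for each $n\ge 2$ an example with $f(b)=b$ and $h_{top}(f)=\frac{\log 2}{n}$, and (b) the bound $h_{top}(f)\ge\frac{\log 2}{2}$ when $n\ge 3$ and $f(b)\ne b$.

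\emph{Sharpness of the first bullet.} Here I would mimic the ``square root'' construction of Example~\ref{ex:all-types}. Write the edges as $E_0,\dots,E_{n-1}$ (each a copy of $[0,1]$ attached to $b$ at its $0$-end) and let $f$ carry $E_i$ affinely onto $E_{i+1\bmod n}$ for $i\le n-2$ and map $E_{n-1}$ onto $E_0$ by a rescaled copy of the tent map $T_2$ of Example~\ref{ex:tent-map}, arranged so that $f(b)=b$. Then $f^n|_{E_0}$ is conjugate to $T_2$, while $f^n$ is affine and carries $E_i$ onto $E_i$ for $i\ne 0$, so $h_{top}(f)=\frac1n h_{top}(f^n)=\frac{\log 2}{n}$ by Proposition~\ref{prop:htop-Tn}; transitivity is verified exactly as for the ``necklace of intervals with one mixing loop'' in Example~\ref{ex:sensitive-not-transitive}. (Using $T_p$ in place of $T_2$, and iterating the square-root step, produces many further examples.)

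\emph{The bound $\frac{\log 2}{2}$ for $n\ge 3$, and the main obstacle.} It suffices to prove $h_{top}(f^2)\ge\log 2$. The mechanism is a forced fold at $b$: since $S_n$ has $b$ as its only branching point, $f(b)$ is not a branching point, hence has a neighbourhood homeomorphic to an interval or half-interval, carrying at most two local directions; as $b$ carries $n\ge 3$ local directions, the pigeonhole principle produces two distinct edges $E_1,E_2$, small subintervals $[b,c_1]\subset E_1$ and $[b,c_2]\subset E_2$, and a non-degenerate interval $L$ with endpoint $f(b)$, contained in a single edge and avoiding $b$, such that $f([b,c_1])\supset L$ and $f([b,c_2])\supset L$ (the finitely many edges on which $f$ is locally constant at $b$ carry the constant value $f(b)$ and may be set aside). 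Using Lemma~\ref{lem:chain-of-intervals}(i) and Definition~\ref{def:coveringG}, each of $[b,c_1],[b,c_2]$ covers $L$, and since $f$ is transitive (hence onto, by Proposition~\ref{prop:transitive-dense-orbit}) the interval $L$ covers each edge $E_j$, and therefore each $[b,c_j]$, under suitable iterates of $f$. Assembling these coverings into the directed graph associated to a finite family of intervals built from $[b,c_1],[b,c_2]$ and a finite invariant set refining them, and invoking the graph-map form of Proposition~\ref{prop:matrix-htop} together with Proposition~\ref{prop:path-matrix}, one already obtains a subgraph whose adjacency matrix has spectral radius $>1$; the delicate point — and the heart of the argument in \cite{AKLS}, where I expect to spend essentially all the effort — is to push this up to spectral radius at least $\sqrt2$. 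Concretely one must ``close up'' the fold efficiently: choose the intervals at $b$ and their common image $L$ so that $L$ itself maps back across a neighbourhood of $b$ meeting $E_1$ and $E_2$, producing a two-step configuration (two closed subintervals of a single edge-arc with disjoint interiors each covering their union under $f^2$, equivalently a $2$-cycle of intervals one of whose legs is a double covering), whence $h_{top}(f^2)\ge\log 2$ by Proposition~\ref{prop:horseshoe-htop} and Proposition~\ref{prop:htop-Tn}. Obtaining exactly this bounded-length configuration — rather than merely a positive-entropy one requiring an a priori unbounded number of steps, as Theorem~\ref{theo:htop-horseshoeG} alone would give — together with the bookkeeping of the shared endpoint $b$ (using the Perron--Frobenius Theorem~\ref{theo:Perron-Frobenius} and Corollary~\ref{cor:Perron-Frobenius} on the relevant irreducible block), is the real obstacle; that the method is lossy is exactly what the statement's caveat about $\frac{\log 2}{2}$ not being known to be optimal records.
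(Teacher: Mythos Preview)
The paper does not prove this proposition: it is stated in the ``Remarks on graph maps'' paragraph, attributed to Alsed\`a, Kolyada, Llibre and Snoha \cite{AKLS}, and no argument is given. So there is no proof in the paper to compare your proposal against; the comments below concern your argument on its own merits.

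Your reduction of the first bullet to the Alsed\`a--Baldwin--Llibre--Misiurewicz tree bound (stated just before the proposition) is correct, and the $n=2$ case via Proposition~\ref{prop:transitivity-lower-bound-htop} is fine. In the sharpness example, however, your claim that ``$f^n$ is affine and carries $E_i$ onto $E_i$ for $i\ne 0$'' is wrong: every orbit of length $n$ through the edges passes once through $E_{n-1}\to E_0$, which is your tent, so $f^n|_{E_i}$ is conjugate to $T_2$ for \emph{every} $i$, not just $i=0$. This does not affect the entropy computation (each restriction contributes $\log 2$, so $h_{top}(f^n)=\log 2$ and $h_{top}(f)=\frac{\log 2}{n}$), but the sentence as written is false and should be corrected. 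The transitivity check goes through as you indicate.

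For the second bullet you have an outline, not a proof, and you say so. The pigeonhole ``fold'' at $b$ is the right starting observation, but note that your two intervals $[b,c_1]\subset E_1$ and $[b,c_2]\subset E_2$ lie in different edges and only share the branching point; they do not sit inside a single arc of $S_n$ unless $n=2$, so they are not directly usable as a horseshoe in the sense of the graph-map definition recalled after Theorem~\ref{theo:Misiurewicz}. Getting from the fold to an honest $2$-horseshoe for $f^2$ inside a single edge --- which is what $h_{top}(f^2)\ge\log 2$ requires via Theorem~\ref{theo:htop-horseshoeG} --- is precisely the content of the cited paper, and your sketch does not supply it. Your diagnosis that this is ``the real obstacle'' is accurate; as written, the second bullet remains unproven.
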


Proposition~\ref{prop:h=hfP} holds for tree maps, with only obvious changes
in its proof: if $f\colon T\to T$ is a transitive tree map such that
$h_{top}(f)=h_{top}(f_P)$, where $P$ is a finite invariant set containing 
all the branching points of $T$, then $f$ is $P$-monotone.
However its interest is limited since the assumption implies that every
branching point has a finite orbit under $f$.

\section{Uniformly positive entropy}

The following notion was introduced by Blanchard \cite{Bla3}, by analogy
with $K$-systems in ergodic theory.

\begin{defi}[uniformly positive entropy]\index{uniformly positive entropy}\index{upe}
A topological dynamical system $(X,f)$ has \emph{uniformly positive entropy
(upe)} if every open cover of $X$ by two non dense open sets has a positive 
topological entropy.
\end{defi}

A topologically mixing interval map has positive entropy by 
Proposition~\ref{prop:transitivity-lower-bound-htop}. The next theorem states that
it has the stronger property of uniformly positive entropy.

\begin{theo}\label{theo:upe}
Let $f\colon I\to I$ be an interval map. The following assertions are
equivalent:
\begin{enumerate}
\item $f$ is topologically mixing,
\item $f$ has uniformly positive entropy.
\end{enumerate}
\end{theo}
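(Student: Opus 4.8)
I would prove the two implications separately.

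\emph{Mixing $\Rightarrow$ upe.} Let $\CC=\{U,V\}$ be an arbitrary open cover of $I$ by two non-dense open sets. Since $U\cup V=I$ and both are non-dense, the sets $I\setminus\overline U$ and $I\setminus\overline V$ are nonempty open, disjoint, and $I\setminus\overline U\subset V$, $I\setminus\overline V\subset U$. I would pick disjoint non-degenerate closed intervals $J_1\subset I\setminus\overline U$ and $J_2\subset I\setminus\overline V$ not containing the endpoints of $I$, so that $J_1\cap U=\emptyset$ and $J_2\cap V=\emptyset$. By the equivalent form of mixing in Theorem~\ref{theo:summary-mixing} (or Proposition~\ref{prop:def-mixing}) there is $N$ with $f^N(J_i)\supset J_1\cup J_2$ for $i=1,2$, so $(J_1,J_2)$ is a strict $2$-horseshoe for $g:=f^N$. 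For each word $s=(s_0,\dots,s_{n-1})\in\{1,2\}^n$, Lemma~\ref{lem:chain-of-intervals}(i) applied to $J_{s_0}\to\cdots\to J_{s_{n-1}}$ gives $x_s$ with $g^k(x_s)\in J_{s_k}$ for all $k$. If $s\ne s'$ differ first at index $k_0\le n-1$, then $f^{Nk_0}(x_s)$ and $f^{Nk_0}(x_{s'})$ lie in $J_1$ and $J_2$ in some order; since $J_1$ misses $U$ and $J_2$ misses $V$, no element of $\CC^M$ (with $M:=N(n-1)+1$) contains both $x_s$ and $x_{s'}$. Hence $N_M(\CC,f)\ge 2^n$, and letting $n\to\infty$ in $\frac1M\log N_M(\CC,f)\ge \frac{n\log 2}{N(n-1)+1}$ gives, using that $h_{top}(\CC,f)$ is a genuine limit (Lemma~\ref{lem:subadditive}), $h_{top}(\CC,f)\ge(\log 2)/N>0$. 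Since $\CC$ was arbitrary, $f$ has upe.

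\emph{upe $\Rightarrow$ mixing, by contraposition.} If $f$ is not mixing then, combining Theorems~\ref{theo:summary-mixing} and~\ref{theo:summary-transitivity}, either $f$ is not transitive, or $f$ is transitive with a fixed point $c\in(a,b)$ satisfying $f([a,c])=[c,b]$ and $f([c,b])=[a,c]$. In each case I would exhibit a cover $\CC=\{U,V\}$ of $I$ by two non-dense open sets with $h_{top}(\CC,f)=0$. \textbf{If $f$ is not transitive}, take nonempty open $A,B$ with $f^n(A)\cap B=\emptyset$ for all $n\ge0$; then $Y:=\overline{\bigcup_{n\ge0}f^n(A)}$ is closed, $f$-invariant, has nonempty interior, and $Y\ne I$. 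Set $U:=I\setminus K_1$, $V:=I\setminus K_2$, where $K_1$ is a closed interval with nonempty interior inside $A$ and $K_2$ is a closed interval with nonempty interior inside the open set $I\setminus Y$; these are disjoint and $I\setminus Y\subset U$, $Y\subset V$. For any $x$, letting $\tau(x):=\min\{j\ge0:f^j(x)\in Y\}$ (allowing $\tau(x)=\infty$), one has $f^j(x)\in U$ for $j<\tau(x)$ and $f^j(x)\in V$ for $j\ge\tau(x)$ (using $f(Y)\subset Y$); hence $x$ lies in the element of $\CC^m$ whose word is $U$ before time $\min(\tau(x),m)$ and $V$ afterward, so $N_m(\CC,f)\le m+1$ and $h_{top}(\CC,f)=0$. \textbf{If $f$ is transitive with the swap}, set $K:=[a,c]$, $L:=[c,b]$, and take $K_1\subset[a,c)$, $K_2\subset(c,b]$ closed with nonempty interiors; then $U:=I\setminus K_1\supset L$ and $V:=I\setminus K_2\supset K$. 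As $f$ swaps $K$ and $L$ one gets $f^{-1}(U)\supset K$ and $f^{-1}(V)\supset L$, so the elements $D_1:=V\cap f^{-1}(U)$ and $D_2:=U\cap f^{-1}(V)$ of $\CD:=\CC\vee f^{-1}\CC$ contain $K$ and $L$ respectively; since $f^2(K)\subset K$ and $f^2(L)\subset L$, the two elements $\bigcap_{k<m}(f^2)^{-k}(D_i)$ of $\CD^m$ still cover $I$, so $N_m(\CD,f^2)\le2$ for every $m$, and therefore $h_{top}(\CC,f)=\frac12 h_{top}(\CD,f^2)=0$ (the factor $\frac12$ coming from $N_m(\CD,f^2)=N_{2m}(\CC,f)$).

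\emph{Main obstacle.} The bookkeeping with iterated covers and the verification that the constructed $U,V$ are non-dense and cover $I$ is routine. The step that needs a real idea is the swap case of the second implication: the cover must be chosen so that it records nothing beyond ``which half of $I$ contains the point'', and the non-obvious point is that, although $f$ preserves neither half, the map $f^2$ does, and the refined cover $\CC\vee f^{-1}\CC$ places each $f^2$-invariant half inside a single cover element — which is exactly what forces the entropy to be zero. By comparison the first implication is easy once one notices that mixing lets one build a horseshoe whose two legs avoid $U$ and avoid $V$ respectively.
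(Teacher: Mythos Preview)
Your proof is correct and follows essentially the same approach as the paper's. In both directions the ideas coincide: for (i)$\Rightarrow$(ii) you build a horseshoe for some $f^N$ whose two legs miss $U$ and $V$ respectively (the paper places them in $U_0\setminus\overline{U_1}$ and $U_1\setminus\overline{U_0}$, which is the same up to labeling), and for (ii)$\Rightarrow$(i) you treat the non-transitive case via a cover whose itineraries are of the form $U^kV^{m-k}$ and the swap case via a cover that only records which half-interval contains the point. The only cosmetic differences are that the paper bounds $N_n(\CU,f^k)$ directly rather than $N_M(\CC,f)$ along a subsequence, and in the swap case it shows $N_n(\CU,f)\le 2$ outright instead of passing through $\CD=\CC\vee f^{-1}\CC$ and $f^2$; your detour is equivalent but slightly longer.
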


\begin{proof}
We first assume that $f$ is topologically mixing.
Let $\CU=(U_0,U_1)$ be an open cover of $I$ such that 
$U_0,U_1$ are not dense. Since $U_0\setminus\overline{U_1}$ is a nonempty
open set, there is a non degenerate closed interval
$I_0\subset U_0\setminus\overline{U_1}$ such that $I_0$ does not
contains any endpoint of $I$. Similarly, there is a non degenerate
closed interval $I_1\subset U_1\setminus\overline{U_0}$ containing
no endpoint of $I$.
According to Theorem~\ref{theo:summary-mixing},
there exists an integer $k>0$ such that $f^k(I_0)\cap f^k(I_1)
\supset I_0\cup I_1$. We set $g:=f^k$. Let $n\ge 1$
and $(\eps_0,\ldots,\eps_{n-1})\in \{0,1\}^n$. By 
Lemma~\ref{lem:chain-of-intervals}(i),
there exists a non degenerate closed interval $J$ such that
$g^i(J)\subset J_{\eps_i}$ for all $i\in\Lbrack 0,n-1\Rbrack$. Consequently, 
for every $n$-tuple $(\eps_0,\ldots, \eps_{n-1})\in \{0,1\}^n$,
the set
$$I_{\eps_0}\cap g^{-1}(J_{\eps_1})\cap\cdots\cap 
g^{-(n-1)}(J_{\eps_{n-1}})$$
is nonempty and
$$
U_{\eps_0}\cap g^{-1}(U_{\eps_1})\cap\cdots\cap g^{-(n-1)}(U_{\eps_{n-1}})$$
is the unique element of 
$\CU\vee g^{-1}(\CU)\vee\cdots\vee g^{-(n-1)}(\CU)$ that meets (actually
contains) this set.
This implies that $N_n(\CU,g)\ge 2^n$ for all $n\ge 1$, so
$h_{top}(\CU,g)\ge \log 2$. Finally, we have
$$
h_{top}(\CU,f)=\frac{1}{k}h_{top}(\CU\vee f^{-1}(\CU)\vee\cdots\vee
f^{-(k-1)}(\CU),g)\ge \frac{1}{k}h_{top}(\CU,g)\ge \frac{\log 2}{k}>0.
$$
This proves (i)$\Rightarrow$(ii).

Now we are going to show that, if $f$ is not topologically mixing, then it 
does not have
uniformly positive entropy. This will prove (ii)$\Rightarrow$(i) by
refutation.

Suppose that $f$ is not transitive. This means that there exist two non 
degenerate closed intervals $I_0,I_1$ such that
\begin{equation}\label{eq:nontransitive}
\forall n\ge 0,\ f^{-n}(I_1)\cap I_0=\emptyset.
\end{equation}
We  set $U_i :=I\setminus I_i$ for $i\in\{0,1\}$. 
Then $\CU:=(U_0,U_1)$ is an open cover
of $I$ by two non dense sets. We see that \eqref{eq:nontransitive}
implies  
\begin{equation}\label{eq:I_0}
\forall n\ge 0,\ I_0\subset f^{-n}(U_1).
\end{equation}
Let $x\in I$ and $n\ge 0$. If $f^i(x)\notin I_0$
for any $i\in\Lbrack 0,n-1\Rbrack$, then $x\in \bigcap_{i=0}^{n-1}f^{-i}(U_0)$.
Otherwise, let $k$ be the minimal non negative integer such that
$f^k(x)\in I_0$. By \eqref{eq:I_0}, we have
$f^k(x)\in\bigcap_{i\ge 0}f^{-i}(U_1)$, and thus
$$
x\in \bigcap_{i=0}^{k-1}f^{-i}(U_0)\cap
\bigcap_{i=k}^{n-1}f^{-i}(U_1).
$$
This implies that $N_n(\CU,f)\le n+1$ for all $n\ge 0$. 
We deduce that $h_{top}(\CU,f)=0$, so $f$ does not have uniformly 
positive entropy.

Suppose now that $f$ is transitive but not topologically mixing. Then, by
Theorem~\ref{theo:summary-transitivity}, there exist
two non degenerate closed intervals $J,K$ with disjoint interiors such that 
$I=J\cup K$, $f(J)=K$ and $f(K)=J$. We  choose two non dense open
sets $U_0, U_1$ such that $J\subset U_0$ and $K\subset U_1$ and we set
$\CU:=(U_0,U_1)$. For all $n\ge 0$, $f^{2n}(J)\subset U_0$
and $f^{2n+1}(J)\subset U_1$. Similarly, for all $n\ge 0$, 
$f^{2n}(K)\subset U_1$
and $f^{2n+1}(K)\subset U_0$.
This implies that $I$ is covered by the two sets 
$$
\bigcap_{i= 0}^{+\infty} f^{-2i}(U_0)\cap\bigcap_{i\ge 0}f^{-(2i+1)}(U_1)\quad\text{and}\quad
\bigcap_{i= 0}^{+\infty} f^{-2i}(U_1)\cap\bigcap_{i\ge 0}f^{-(2i+1)}(U_0).
$$
This means that $N_n(\CU,f)\le 2$ for all $n\ge 1$. Thus,
$h_{top}(\CU,f)=0$, and $f$ does not have uniform positive entropy.
This concludes the proof.
\end{proof}

\begin{rem}\label{rem:specif}
Theorem~\ref{theo:upe} above can be seen as a consequence of results about
general dynamical systems. Indeed, it is proved in \cite{Bla3} that
a topological dynamical system with the
specification property has uniformly positive entropy, and
uniformly positive entropy implies topological weak mixing.
For interval maps, topological mixing implies the specification
property (Theorem~\ref{theo:mixing-specification}), and 
topological weak mixing is equivalent to topological mixing
(Theorem~\ref{theo:summary-mixing}). This implies that an interval map
is topologically mixing if and only if it has uniformly positive entropy.
\end{rem}

\subsection*{Remarks on graph maps}
Theorem~\ref{theo:upe} is valid for graph maps in view of 
Remark~\ref{rem:specif} and Theorem~\ref{theo:mixing-specificationG}.

\chapter{Chaos in the sense of Li-Yorke, scrambled sets}\label{chap5}

\section{Definitions}

In \cite{LY}, Li and Yorke showed that, if an interval map $f$ has a periodic
point of period 3, there exists an uncountable set $S$ such that, for
all distinct points $x,y$ in~$S$,
\begin{gather*}
\limsup_{n\to +\infty}|f^n (x)- f^n (y)|>0,\quad
\liminf_{n\to +\infty}|f^n (x)-f^n (y)|=0,\\
\text{and }\forall z\text{ periodic point},\ \limsup_{n\to +\infty}|f^n (x)-f^n (z)|>0.
\end{gather*}
They called this behavior \emph{chaotic}, without formally defining what 
chaos is. This leads to the following definitions.

\begin{defi}[Li-Yorke pair, scrambled set, Li-Yorke chaos]\index{scrambled set}
\index{Li-Yorke pair}\index{chaos!chaos in the sense of Li-Yorke}\index{chaotic|see{chaos}}
Let $(X,f)$ be a topological dynamical system, $x,y\in X$ and $\delta>0$.
The pair $(x,y)$ is a \emph{Li-Yorke pair of modulus
$\delta$} if
\begin{equation}\label{eq:LY1}
\limsup_{n\to +\infty}d(f^n (x), f^n (y))\geq \delta\quad\text{and}\quad
\liminf_{n\to +\infty}d(f^n (x), f^n (y))=0,
\end{equation}
and $(x,y)$ is a \emph{Li-Yorke pair} if 
it is a Li-Yorke pair of modulus $\delta$ for some $\delta>0$.
A set $S\subset X$ is a \emph{scrambled} (resp. \emph{$\delta$-scrambled}) set 
if, for all distinct points $x,y$ in $S$, $(x,y)$ is
a Li-Yorke pair (resp. a Li-Yorke pair of modulus $\delta$).

The topological dynamical system  $(X,f)$ is \emph{chaotic
in the sense of Li-Yorke} if there exists an uncountable scrambled set.
\end{defi}

The next proposition is straightforward (the second assertion uses the fact that
$f$ is uniformly continuous because $X$ is compact).

\begin{prop}\label{prop:scrambled-fn}
Let $(X,f)$ be a topological dynamical system, $S\subset X$ and $\delta>0$. 
\begin{itemize}
\item
If $S$ is a scrambled (resp. $\delta$-scrambled) set for $f^n$, then it is also
a scrambled (resp. $\delta$-scrambled) set for $f$.  
\item
If $S$ is a scrambled (resp. $\delta$-scrambled) set for $f$, then it is also
a scrambled (resp. $\delta'$-scrambled for some $\delta'>0$) set for $f^n$.
\end{itemize}
\end{prop}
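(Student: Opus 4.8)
The plan is to prove the two implications separately, each being a direct consequence of the definitions of Li-Yorke pair and scrambled set, together with the uniform continuity of $f$ (which holds because $X$ is compact). Throughout, fix $S\subset X$ and $\delta>0$.

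First I would handle the implication ``scrambled for $f^n$ $\Rightarrow$ scrambled for $f$''. Let $x,y\in S$ be distinct. By hypothesis $(x,y)$ is a Li-Yorke pair for $f^n$, so $\limsup_{k\to\infty}d(f^{nk}(x),f^{nk}(y))\ge\delta$ (in the $\delta$-scrambled case) or $>0$ (in the general case), and $\liminf_{k\to\infty}d(f^{nk}(x),f^{nk}(y))=0$. Since the sequence $(d(f^{nk}(x),f^{nk}(y)))_{k\ge 0}$ is a subsequence of $(d(f^m(x),f^m(y)))_{m\ge 0}$, the $\limsup$ over $m$ is at least the $\limsup$ over the subsequence, giving $\limsup_{m\to\infty}d(f^m(x),f^m(y))\ge\delta$ (resp. $>0$); and the $\liminf$ over $m$ is at most the value $0$ attained along the subsequence, hence equals $0$. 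Thus $(x,y)$ is a Li-Yorke pair of modulus $\delta$ (resp. a Li-Yorke pair) for $f$, and $S$ is scrambled (resp. $\delta$-scrambled) for $f$.

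For the converse, ``scrambled for $f$ $\Rightarrow$ scrambled for $f^n$'', the $\liminf$ condition is again immediate: if $\liminf_{m\to\infty}d(f^m(x),f^m(y))=0$, then there is a sequence $m_j\to\infty$ with $d(f^{m_j}(x),f^{m_j}(y))\to 0$; writing $m_j=nk_j+r_j$ with $r_j\in\{0,\ldots,n-1\}$ and passing to a subsequence so that $r_j\equiv r$ is constant, uniform continuity of $f^{n-r}$ gives $d(f^{n(k_j+1)}(x),f^{n(k_j+1)}(y))=d(f^{n-r}(f^{m_j}(x)),f^{n-r}(f^{m_j}(y)))\to 0$, so $\liminf_{k\to\infty}d(f^{nk}(x),f^{nk}(y))=0$. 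The $\limsup$ side is the one requiring a little care and is the main (though still routine) obstacle: from $\limsup_{m\to\infty}d(f^m(x),f^m(y))=:c>0$ pick $m_j\to\infty$ with $d(f^{m_j}(x),f^{m_j}(y))\to c$, again reduce to constant residue $r_j\equiv r$, and apply uniform continuity of $f^{n-r}$: choose $\eta>0$ so that $d(u,v)<\eta$ implies $d(f^{n-r}(u),f^{n-r}(v))<c/2$; since the forward distances along $(m_j)$ exceed $c/2$ eventually and stay bounded, one extracts from $(f^{m_j}(x),f^{m_j}(y))$ a pair of points at distance $\ge\eta'$ for some fixed $\eta'>0$ — more directly, note that the positive quantity $c$ forces the distances $d(f^{n(k_j+1)}(x),f^{n(k_j+1)}(y))$ to be bounded below by a fixed $\delta'>0$: indeed if they tended to $0$, uniform continuity of $f^{r}$ applied to $f^{n(k_j+1)-m_j}=f^{n-r}$ composed appropriately would force $d(f^{m_j}(x),f^{m_j}(y))\to 0$, contradicting $\to c>0$. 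Hence $\limsup_{k\to\infty}d(f^{nk}(x),f^{nk}(y))\ge\delta'>0$. This shows $(x,y)$ is a Li-Yorke pair for $f^n$; note that the modulus $\delta'$ may depend on the pair $(x,y)$, which is exactly why the statement only claims ``$\delta'$-scrambled for some $\delta'>0$'' in the $\delta$-scrambled case rather than preserving the same $\delta$. Since $x,y$ were arbitrary distinct points of $S$, the set $S$ is scrambled (resp. $\delta'$-scrambled with $\delta'$ depending on the pair, which still yields a scrambled set) for $f^n$, completing the proof.
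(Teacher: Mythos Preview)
Your strategy matches the paper's (which merely says the result is straightforward, the second item using uniform continuity of $f$), and the first bullet and the $\liminf$ half of the second bullet are fine. But the $\limsup$ half of the second bullet has two genuine gaps.

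First, the direction of iteration is wrong. With $m_j=nk_j+r$ you try to bound $d\bigl(f^{n(k_j+1)}(x),f^{n(k_j+1)}(y)\bigr)$ from below. Since $f^{n(k_j+1)}=f^{n-r}\circ f^{m_j}$, uniform continuity of $f^{n-r}$ only transfers \emph{smallness} forward from time $m_j$ to time $n(k_j+1)$; it gives no lower bound at the later time, and your ``if they tended to $0$'' contradiction cannot be closed (you would need to go backward through $f^{n-r}$, which is not available). The fix is to step \emph{back} to the previous multiple of $n$: with $u=f^{nk_j}(x)$ and $v=f^{nk_j}(y)$ one has $f^{m_j}(x)=f^r(u)$, $f^{m_j}(y)=f^r(v)$. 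By uniform continuity of $f^r$, pick $\delta'>0$ so that $d(u,v)<\delta'$ implies $d(f^r(u),f^r(v))<\delta/2$; since $d(f^{m_j}(x),f^{m_j}(y))>\delta/2$ eventually, you get $d(f^{nk_j}(x),f^{nk_j}(y))\ge\delta'$, hence $\limsup_{k}d(f^{nk}(x),f^{nk}(y))\ge\delta'$.

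Second, you say the modulus $\delta'$ ``may depend on the pair $(x,y)$'' and therefore only conclude that $S$ is scrambled for $f^n$. That does not prove the $\delta$-scrambled clause of the statement, which asserts a \emph{single} $\delta'>0$ valid for all pairs in $S$. In fact the $\delta'$ above is uniform: choose $\delta'>0$ so that $d(u,v)<\delta'$ implies $d(f^i(u),f^i(v))<\delta/2$ for every $i\in\{0,\dots,n-1\}$ (finitely many uniformly continuous maps). This $\delta'$ depends only on $\delta$ and $n$, not on the pair, so $S$ is $\delta'$-scrambled for $f^n$ as claimed.
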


\begin{rem}
The definition of a scrambled set is not unified in the
literature.  In particular, in the spirit of the properties exhibited 
by Li and Yorke, some people say that $S$ is a scrambled set if, for all
distinct points $x,y$ in $S$,
\begin{gather}
\limsup_{n\to +\infty}d(f^n (x), f^n (y))>0,\quad
\liminf_{n\to +\infty}d(f^n (x), f^n (y))=0,\label{eq:LY1bis}\\
\forall z\text{ periodic point},\ \limsup_{n\to +\infty}d(f^n (x),f^n (z))>0,\label{eq:LY2}
\end{gather}
and the same properties with ``$\ge\delta$'' instead of ``$>0$''
in \eqref{eq:LY1bis} and \eqref{eq:LY2}
for $\delta$-scrambled sets.
Actually, it makes no difference for chaos in the sense of Li-Yorke, nor for 
existence of an uncountable $\delta$-scrambled set for some $\delta>0$. 
More precisely,
if $S$ is a scrambled set, then all but at most one point of $S$ satisfy
\eqref{eq:LY2}, and if $S$ is an uncountable $\delta$-scrambled set, 
then there exists
an uncountable set $S'$ included in $S$ such that, for all $x\in S'$ and all
periodic points $z$, $\limsup_{n\to+\infty}d(f^n (x),f^n (z))\ge\delta/2$.
These results are consequences of Lemmas \ref{lem:approx-periodic}
and \ref{lem:JL2} below; they were first
noticed by Jiménez López \cite[p 117--118]{Jim4}, 
\cite[Proposition~1.2.2]{Jim3}.
\end{rem}

\begin{defi}
Let $(X,f)$ be a topological dynamical system. A point $x$ is 
\emph{approximately periodic}\index{approximately periodic point} 
if, for all $\eps>0$, there exists a periodic point $z$ such that 
$\displaystyle\limsup_{n\to+\infty}d(f^n(x),f^n(z))\le \eps$.
\end{defi}

\begin{lem}\label{lem:approx-periodic}
Let $(X,f)$ be a topological dynamical system and $x,x'\in X$. Suppose that
$x$ and $x'$ are approximately periodic. Then
$$
\text{either}\quad\lim_{n\to+\infty}d(f^n(x),f^n(x'))=0\quad
\text{or}\quad\liminf_{n\to+\infty}d(f^n(x),f^n(x'))>0.
$$
In particular, if $S$ is a scrambled set, then 
$S$ contains at most one approximately periodic point.
\end{lem}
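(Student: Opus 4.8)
The plan is to work in the product system $(X\times X,g)$ with $g:=f\times f$ applied to the point $w:=(x,x')$, and to translate everything into a statement about how the $g$‑orbit of $w$ sits relative to the diagonal $\Delta:=\{(y,y)\mid y\in X\}$, which is a closed $g$‑invariant set. First I would record the elementary comparison $\tfrac12 d(a,a')\le \mathrm{dist}\big((a,a'),\Delta\big)\le d(a,a')$ (using the $\max$ product metric on $X\times X$), so that $\lim_n d(f^nx,f^nx')=0$ is equivalent to $\mathrm{dist}(g^n(w),\Delta)\to0$ and $\liminf_n d(f^nx,f^nx')>0$ is equivalent to $\liminf_n\mathrm{dist}(g^n(w),\Delta)>0$. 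I would then check that $w$ is approximately periodic for $g$: if $z$ (period $p$) and $z'$ (period $p'$) witness approximate periodicity of $x$ and $x'$ at scale $\eps$, then $(z,z')$ is a $g$‑periodic point of period $\mathrm{lcm}(p,p')$ with $\limsup_n d\big(g^n(w),g^n(z,z')\big)\le\eps$.

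Set $P:=\omega(w,g)$; recall $d(g^n(w),P)\to0$ and that $P$ is closed and strongly invariant (Lemma~\ref{lem:omega-set}(i)). If $\liminf_n d(f^nx,f^nx')=0$, then $\liminf_n\mathrm{dist}(g^n(w),\Delta)=0$, hence $\mathrm{dist}(P,\Delta)=0$ and, $P$ and $\Delta$ being compact, $P\cap\Delta\neq\emptyset$. The crux is to upgrade this to $P\subset\Delta$: once $P\subset\Delta$ we get $\mathrm{dist}(g^n(w),\Delta)\le d(g^n(w),P)\to0$, i.e. $d(f^nx,f^nx')\to0$, which is the first alternative; and if instead $\liminf_n d(f^nx,f^nx')>0$ we are directly in the second alternative. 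The ``in particular'' is then immediate: a Li–Yorke pair $(x,x')$ has $\liminf_n d(f^nx,f^nx')=0$ and $\limsup_n d(f^nx,f^nx')>0$, which the dichotomy forbids when $x$ and $x'$ are both approximately periodic, so a scrambled set contains at most one approximately periodic point.

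To prove $P\cap\Delta\neq\emptyset\Rightarrow P\subset\Delta$ I would extract structure on $P$ from approximate periodicity of $w$. Picking for each $k$ a $g$‑periodic point $w_k$ with $\limsup_n d(g^n(w),g^n(w_k))\le 1/k$, one checks $d_H\big(\CO_g(w_k),P\big)\le 1/k$, so $P$ is the Hausdorff limit of the periodic orbits $\CO_g(w_k)$; moreover, since $\CO_g(w_j)$ and $\CO_g(w_k)$ both track $w$, the periodic sequence $n\mapsto d\big(g^n(w_j),g^n(w_k)\big)$ has $\limsup\le 1/j+1/k$, hence is bounded by $1/j+1/k$ \emph{for every} $n$, so the orbits shadow each other phase‑by‑phase, arbitrarily well, as $j,k\to\infty$. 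From this nested, phase‑coherent shadowing one deduces that $g|_P$ is equicontinuous (it is, up to conjugacy, an inverse limit of the cyclic permutations carried by the $\CO_g(w_k)$), and in particular that $w$ is asymptotic to a genuine orbit $(g^n p_0)_{n\ge0}$ with $p_0\in P$. Then, assuming $P\cap\Delta\neq\emptyset$: from $\liminf_n\mathrm{dist}(g^n(w),\Delta)=0$ and $d(g^n(w),g^n(p_0))\to0$ we get $\liminf_n\mathrm{dist}(g^n(p_0),\Delta)=0$, so along a subsequence $g^{n_j}(p_0)$ converges to a point of $P\cap\Delta$; applying equicontinuity of $P$ to this pair of points of $P$ shows that the whole forward orbit of $p_0$ stays within $\eta_j\to0$ of the invariant set $\Delta$, whence $\mathrm{dist}(g^n(p_0),\Delta)\to0$, so $\mathrm{dist}(g^n(w),\Delta)\to0$ and $d(f^nx,f^nx')\to0$.

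The main obstacle is exactly the structural step that $\omega(w,g)$ is equicontinuous (equivalently, that the orbit of an approximately periodic point is shadowed by the orbit of a point of an equicontinuous minimal subsystem). A single–scale triangle–inequality argument does not get there: the period $\mathrm{lcm}(p,p')$ of one approximating orbit is uncontrolled and $f$ may expand, so an approximating orbit–pair can come $\eps$–close to $\Delta$ at one phase and separate by a definite amount at another; only by combining all scales — the nested phase–coherent shadowing of the orbits $\CO_g(w_k)$ — does the rigidity forcing $P\subset\Delta$ emerge. This is the step where the argument of Jiménez López really does the work; everything else is the routine bookkeeping above.
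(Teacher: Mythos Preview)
Your dismissal of the elementary route is the gap. You write that ``a single-scale triangle-inequality argument does not get there'' because the period of the approximating pair is uncontrolled and $f$ may expand. In fact a single-scale triangle-inequality argument is exactly what works, and it is the paper's proof. The trick you are missing is to use the periodic approximants \emph{twice}, as a bridge between an arbitrary time $n$ and a fixed window of length one common period.

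Concretely: assume $\liminf_n d(f^n x,f^n x')=0$ and fix $\eps>0$. Choose periodic points $z,z'$ and $N$ with $d(f^n x,f^n z)\le\eps$ and $d(f^n x',f^n z')\le\eps$ for all $n\ge N$. Let $p$ be a common multiple of the two periods. By the liminf hypothesis and uniform continuity of $f,\dots,f^{p-1}$, there is $M\ge N$ with $d(f^{M+i}x,f^{M+i}x')\le\eps$ for every $i\in\Lbrack 0,p-1\Rbrack$. Now for any $n\ge N$ pick $i\in\Lbrack 0,p-1\Rbrack$ with $n\equiv M+i\pmod p$; then $f^n z=f^{M+i}z$ and $f^n z'=f^{M+i}z'$, and five triangle inequalities give
\[
d(f^n x,f^n x')\le d(f^n x,f^n z)+d(f^{M+i}z,f^{M+i}x)+d(f^{M+i}x,f^{M+i}x')+d(f^{M+i}x',f^{M+i}z')+d(f^n z',f^n x')\le 5\eps.
\]
Since $\eps$ was arbitrary, $d(f^n x,f^n x')\to 0$. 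No equicontinuity of $\omega(w,g)$, no Hausdorff limits, no inverse limits of cyclic groups are needed.

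Your product-system framework is fine as scaffolding, and the structural fact you aim at (that $\omega(w,g)$ is equicontinuous for an approximately periodic $w$) is true, but you do not actually prove it---you defer to ``the argument of Jim\'enez L\'opez''---and it is a substantial detour for a lemma whose content is the five-term estimate above. The point to take away is that periodicity of $z,z'$ lets you transport the single good window $[M,M+p-1]$ to every residue class modulo $p$; expansion of $f$ is irrelevant because you never iterate beyond that window.
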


\begin{proof}
Suppose that 
\begin{equation}\label{eq:liminf}
\liminf_{n\to+\infty}d(f^n(x),f^n(x'))=0.
\end{equation}
Let $\eps>0$.
By definition, there exist two periodic points $z,z'$ and an integer $N$
such that
$$
\forall n\ge N,\ d(f^n(x),f^n(z))\le\eps\quad\text{and}\quad
d(f^n(x'),f^n(z'))\le\eps.
$$
Let $p$ be a multiple of the periods of $z$ and $z'$.
By continuity and \eqref{eq:liminf}, there exists $M\ge N$ such that
$d(f^{M+i}(x),f^{M+i}(x'))\le\eps$ for all $i\in\Lbrack 0,p-1\Rbrack$.
Let $n\ge N$ and let $i\in\Lbrack 0,p-1\Rbrack$ be such that 
$n-M\equiv i\bmod p$. Since $f^n(z)=f^{M+i}(z)$ and
$f^n(z')=f^{m+i}(z')$, we have
\begin{eqnarray*}
d(f^n(x),f^n(x'))&\le& d(f^n(x),f^n(z))+d(f^{M+i}(z),f^{M+i}(x))\\
&&+
d(f^{M+i}(x),f^{M+i}(x'))\\&&+d(f^{M+i}(x'),f^{M+i}(z'))
+d(f^n(z'),f^n(x'))\\
&\le&5\eps.
\end{eqnarray*}
This implies that $\lim_{n\to+\infty}d(f^n(x),f^n(x'))=0$. This proves the first statement of the lemma, which straightforwardly
implies the second one.
\end{proof}

\begin{lem}\label{lem:JL2}
Let $(X,f)$ be a topological dynamical system, $S\subset X$ and $\delta>0$.
Suppose that
$$
\forall x,y\in S,\ x\neq y,\ \limsup_{n\to+\infty}d(f^n(x),f^n(y))\ge \delta.
$$
Then there exists a countable set $C\subset X$ such that, for all
$x\in S\setminus C$ and all periodic points $z\in X$, 
$
\limsup_{n\to+\infty}d(f^n(x),f^n(z))\ge\frac{\delta}2.
$
\end{lem}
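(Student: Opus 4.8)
The plan is to build the countable exceptional set $C$ by pairing up, for each periodic orbit (up to a countable list), those points of $S$ whose forward orbits stay within distance $\delta/2$ of that periodic orbit, and then observe that at most one point of $S$ can do so. First I would fix a countable family $\{z_k\mid k\ge 1\}$ of periodic points that is dense in the (closed) set of periodic points of $f$: such a family exists because $X$ is a compact metric space, so the set $\overline{P(f)}$ (where $P(f)$ denotes the set of all periodic points) is separable. For each $k$, let $p_k$ denote the period of $z_k$, and define
$$
S_k:=\Bigl\{x\in S\;\Bigm|\;\limsup_{n\to+\infty}d(f^n(x),f^n(z_k))<\tfrac{\delta}2\Bigr\}.
$$
The key claim is that each $S_k$ contains at most one point. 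Indeed, if $x,y\in S_k$ with $x\ne y$, then for some $\eta<\delta/2$ and all large $n$ we have $d(f^n(x),f^n(z_k))\le\eta$ and $d(f^n(y),f^n(z_k))\le\eta$, hence $d(f^n(x),f^n(y))\le 2\eta<\delta$ for all large $n$, so $\limsup_{n\to+\infty}d(f^n(x),f^n(y))\le 2\eta<\delta$, contradicting the hypothesis on $S$. Therefore $C:=\bigcup_{k\ge 1}S_k$ is countable.

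Next I would show that $C$ works, i.e. that for every $x\in S\setminus C$ and every periodic point $z$ we have $\limsup_{n\to+\infty}d(f^n(x),f^n(z))\ge\delta/2$. Suppose not: there is $x\in S\setminus C$ and a periodic point $z$ of period $p$ with $\limsup_{n\to+\infty}d(f^n(x),f^n(z))=\delta'<\delta/2$. The point to exploit is that $z\in\overline{P(f)}$, so there is some $z_k$ from our dense family with $z_k$ extremely close to $z$; but closeness of the initial points is not by itself enough, since $p_k$ need not equal $p$. To handle this I would pick $z_k$ so close to $z$ that $z_k$ and $z$ have the same period $p$ — this is possible because the periodic points of period dividing a fixed $N$ form a closed set $P_N(f)$, and one can choose the dense family so that for each $N$ it includes a dense subset of $P_N(f)$; concretely, replace the single dense family above by the union over $N\ge 1$ of countable dense subsets of each $P_N(f)$, which is still countable. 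Then choose $z_k\in P_p(f)$ with $d(f^i(z),f^i(z_k))<\delta/2-\delta'$ for all $i\in\llbracket 0,p-1\rrbracket$, using continuity of $f,f^2,\ldots,f^{p-1}$ together with $d(z,z_k)$ small. Since $f^n(z)$ and $f^n(z_k)$ only ever take the values $f^i(z)$ and $f^i(z_k)$ for $i\in\llbracket 0,p-1\rrbracket$, we get $d(f^n(z),f^n(z_k))<\delta/2-\delta'$ for all $n\ge 0$, and hence for all large $n$,
$$
d(f^n(x),f^n(z_k))\le d(f^n(x),f^n(z))+d(f^n(z),f^n(z_k))<\delta'+\bigl(\tfrac{\delta}2-\delta'\bigr)=\tfrac{\delta}2.
$$
Thus $\limsup_{n\to+\infty}d(f^n(x),f^n(z_k))\le\delta/2$; strict inequality holds because the limsup of $d(f^n(x),f^n(z))$ is $\delta'$ and we can pick a slightly larger constant below $\delta/2$. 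So $x\in S_k\subset C$, a contradiction, and the lemma follows.

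The main obstacle I anticipate is exactly the period-matching issue just described: a naive argument approximating an arbitrary periodic point $z$ by a point $z_k$ of the countable family fails because $f^n(z)$ and $f^n(z_k)$ can drift apart over time if their periods differ, so the two trajectories are only guaranteed to stay close for the first few iterates. The fix is to build the countable dense family of periodic points in a period-respecting way — taking, for every $N$, a countable dense subset of the closed set $P_N(f)$ of points fixed by $f^N$ — so that any given periodic point can be approximated by a family member of the same period; then the orbits stay uniformly close for all time by the pigeonhole/periodicity argument above. Everything else is routine: countability of a countable union of singletons, the triangle inequality, and uniform continuity of finitely many iterates on the compact space $X$.
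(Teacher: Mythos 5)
Your proof is correct, but it takes a genuinely different route from the paper's. The paper argues by contradiction: it takes $C$ to be the set of $x\in S$ admitting a periodic ``shadow'' $z_x$ with $\limsup_n d(f^n(x),f^n(z_x))<\delta/2$, assumes $C$ is uncountable, and then applies a double pigeonhole (first on a quantitative gap $\eps$, then on the period) to extract an uncountable $R\subset C$ with a uniform period $p$ and uniform gap; compactness of $X$ then produces two distinct $x,x'\in R$ whose shadows $z_x,z_{x'}$ are $\eta$-close, and the triangle inequality (through the finitely many iterates $f^0,\dots,f^{p-1}$) forces $\limsup_n d(f^n(x),f^n(x'))<\delta$, a contradiction. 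You instead construct $C$ directly: you build a countable family of periodic ``reference'' points that, for each $N$, contains a countable dense subset of the closed set $P_N(f)$, show that each reference point can absorb at most one element of $S$ (so $C$ is a countable union of singletons), and show $C$ suffices by approximating an arbitrary periodic $z$ of period $p$ by a reference point in $P_p(f)$ with all first $p$ iterates uniformly close. Both arguments hinge on the same two ingredients — the triangle inequality through a periodic shadow, and the fact that matching periods lets one control \emph{all} iterates by controlling finitely many — but where the paper handles the period-matching by pigeonhole on an uncountable set, you handle it up front by stratifying the reference family. Your approach is more constructive and avoids the contradiction scaffolding; the paper's is somewhat shorter because it does not need to build the stratified dense family. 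One small slip: in your display you write $d(f^n(x),f^n(z))<\delta'$ for large $n$, which is not what $\limsup=\delta'$ gives; the correct way to finish is to set $\alpha:=\max_{0\le i<p}d(f^i(z),f^i(z_k))<\delta/2-\delta'$ and use subadditivity of $\limsup$ to get $\limsup_n d(f^n(x),f^n(z_k))\le\delta'+\alpha<\delta/2$, which is exactly the strict inequality you need; you gesture at this fix in your last sentence, and it is the right one.
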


\begin{proof}
Let $C$ be the set of points in $S$ such that, 
for all $x\in C$, there exists a periodic point $z_x\in X$
such that
$
\limsup_{n\to+\infty}d(f^n(x),f^n(z_x))<\frac{\delta}2.
$
Suppose that $C$ is uncountable.
Since $C$ is the countable union of the sets
$$
\left\{x\in S\mid \limsup_{n\to+\infty}d(f^n(x),f^n(z_x))
\le\frac{\delta}2-\frac1n\right\},\ n\in\IN,
$$
one of these sets is uncountable. Moreover, 
the set of periods of the points $z_x$ is countable. Therefore, there
exist an uncountable subset $R\subset C$, a number $\eps>0$ and an integer 
$p\ge 1$ such that
$$
\forall x\in R,\ f^p(z_x)=z_x\text{ and }\limsup_{n\to+\infty}d(f^n(x),f^n(z_x))\le\frac{\delta}2-\eps.
$$
Since $X$ is compact, $f$ is uniformly
continuous and there exists $\eta>0$ such that
$$\forall x,y\in X,\ d(x,y)<\eta\Longrightarrow
\forall i\in\Lbrack 0, p-1\Rbrack,\ d(f^i(x),f^i(y))<\eps.
$$
Since $X$ is compact and $R$ is infinite, the family $(z_x)_{x\in R}$ has
a limit point. Thus
there exist two distinct points $x,x'$ in $R$ such that $d(z_x,z_{x'})<\eta$ 
(the case $z_x=z_{x'}$ is possible). Then
$d(f^i(z_x),f^i(z_{x'}))<\eps$ for all $i\in\Lbrack 0,p-1\Rbrack$.
We have
\begin{eqnarray*}
\forall n\ge 0,\ d(f^n(x), f^n(x'))&\le& d(f^n(x), f^n(z_x))+ d(f^n(x'), f^n(z_{x'}))\\
&&+\max_{i\in\Lbrack 0, p-1\Rbrack} d(f^i(z_x),f^i(z_{x'})),
\end{eqnarray*}
so
$$
\limsup_{n\to+\infty}d(f^n(x), f^n(x'))< (\delta/2-\eps) +(\delta/2-\eps)
+\eps<\delta.$$
This contradicts the fact that $x,x'$ are two distinct points in the
set $S$. We conclude that $C$ is countable.
\end{proof}

\section{Weakly mixing maps are Li-Yorke chaotic}

It is easy to see that every topologically weakly mixing dynamical
system $(X,f)$ has a dense $G_\delta$-set of Li-Yorke pairs. Indeed,
every $(x,y)\in X^2$ with a dense orbit is a Li-Yorke pair of modulus
$\delta:=\diam(X)$. Using results of topology (e.g., 
\cite[Theorem~22.V.1]{Kur1}), this implies that the system has
an uncountable $\diam(X)$-scrambled set (called an \emph{extremally 
scrambled set}\index{scrambled set (extremally)}\index{extremally scrambled set} 
\cite{Kan}).
This result was first stated for interval maps by Bruckner and Hu \cite{BH}.
More precisely, they showed that a topologically mixing interval
map $f\colon [0,1]\to [0,1]$ admits 
a dense uncountable scrambled set $S$ such that,
for all distinct points $x,y$ in $S$,  the sequence
$(f^n(x)-f^n(y))_{n\ge 0}$ is dense in $[-1,1]$. 
Then Iwanik proved a stronger result, valid for any topologically weakly mixing 
dynamical system, which implies the existence of
an extremally scrambled set \cite{Iwa, Iwa2}. Iwanik's results rely on
Mycielski's Theorem \cite{Myc}, that we restate under weaker hypotheses
in order not to introduce irrelevant notions. We recall that a 
\emph{perfect set}\index{perfect set} is a nonempty closed set with no 
isolated point; a perfect set is uncountable.  

\begin{theo}[Mycielski]\label{theo:Mycielski}\index{Mycielski's Theorem}
Let $X$ be a complete metric space with no isolated point. For all
integers $n\ge 1$, let $r_n$ be a positive integer and let $G_n$ be a 
dense $G_\delta$-set of $X^{r_n}$ such that 
$$
G_n\cap \{(x_1,\ldots,x_{r_n})\in X^{r_n}\mid 
\exists j,k\in\Lbrack 1, r_n\Rbrack,\  j\neq k,\ x_j=x_k\}=\emptyset.
$$
Let $(U_n)_{n\geq 1}$ be a sequence of nonempty open sets of
$X$. Then there exists a sequence of compact perfect subsets $(K_n)_{n\ge 0}$
with $K_n\subset U_n$ such that,
for all $k\ge 1$ and all distinct points $x_1,\ldots, x_{r_k}$ in 
$\bigcup_{n=1}^{+\infty} K_n$, $(x_1,\ldots,
x_{r_k})\in G_k$.
\end{theo}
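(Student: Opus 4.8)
The plan is to build, simultaneously, a Cantor scheme inside each $U_n$, to use completeness of $X$ to force the resulting sets $K_n$ to be nonempty compact perfect sets, and to arrange the scheme so that every genericity requirement coming from the $G_k$'s is eventually captured. As a first step I would reduce to open sets: since $X$, hence each $X^{r_k}$, is complete and $G_k$ is a dense $G_\delta$, write $G_k=\bigcap_{j\ge1}O_{k,j}$ with each $O_{k,j}$ open and dense in $X^{r_k}$ (here I switch the index $n$ to $k$ to match the conclusion). Call a pair $(n,s)$ with $n\ge1$ and $s\in\{0,1\}^{<\omega}$ a \emph{node}, and an $r_k$-tuple of pairwise distinct nodes a \emph{configuration for $k$}.

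Next I would construct nonempty open sets $W(n,s)\subseteq X$ with $\diam W(n,s)<2^{-|s|}$, with $\overline{W(n,\emptyset)}\subseteq U_n$, and with $\overline{W(n,s0)}\cup\overline{W(n,s1)}\subseteq W(n,s)$ and $\overline{W(n,s0)}\cap\overline{W(n,s1)}=\emptyset$; all of this is routine to arrange in any metric space since an open set contains the closed ball of some small radius about any of its points. Given such a family, set $K_n:=\bigcap_{m\ge0}\bigcup_{|s|=m}\overline{W(n,s)}$. For a branch $\beta\in\{0,1\}^{\omega}$ the sets $\overline{W(n,\beta|_m)}$ (where $\beta|_m$ is the length-$m$ initial segment) are nonempty, closed, decreasing, with diameters tending to $0$, so by completeness their intersection is a single point; the assignment $\beta\mapsto$ this point is injective because sibling closures are disjoint. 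Hence $K_n\subseteq U_n$ is closed in $X$, totally bounded (it is covered by $2^m$ sets of diameter $<2^{-m}$ for each $m$), and without isolated points, i.e. nonempty compact perfect.

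The intricate part is the bookkeeping needed to meet the genericity requirements, and this is where the main obstacle lies. For a configuration $\phi=((n_1,s_1),\dots,(n_{r_k},s_{r_k}))$ and an index $j$, say $(k,j,\phi)$ is \emph{met} if $\overline{W(n_1,s_1)}\times\cdots\times\overline{W(n_{r_k},s_{r_k})}\subseteq O_{k,j}$. The basic move is: if $V_1,\dots,V_r$ are nonempty open in $X$ and $O$ is dense open in $X^r$, then $O\cap(V_1\times\cdots\times V_r)$ is nonempty open in $X^r$, hence contains a box $V_1'\times\cdots\times V_r'$ of nonempty open sets, and after one more shrinking we may take $\overline{V_i'}\subseteq V_i$ with small diameter and $\prod_i\overline{V_i'}\subseteq O$. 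Thus, whenever the nodes of $\phi$ are \emph{current leaves} of the partially built scheme, we may replace those $W(n_i,s_i)$ by smaller sets so as to meet $(k,j,\phi)$ without affecting anything else. So I would run the construction stage by stage on a finite approximation: at each stage either subdivide the next not-yet-subdivided node (so every node eventually gets two disjoint small children, giving perfectness and ensuring each node $(n,s)$ is eventually created), or, from a dovetailing enumeration, pick the next triple $(k,j,\phi)$ whose nodes are presently leaves and shrink them to meet it. The delicate requirement is to dovetail so that, for every $k$ and $j$, the met configurations form a \textbf{bar}: every branch of the final scheme passes through (i.e.\ extends) a met configuration. This can be secured by cycling through the pairs $(k,j)$ and, on each visit to $(k,j)$, meeting $(k,j,\phi)$ for the first leaf-stage configuration not already extending a configuration met for $(k,j)$.

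Finally I would conclude: let $x_1,\dots,x_{r_k}$ be distinct points of $\bigcup_n K_n$, say $x_i\in K_{n_i}$. Distinctness forces that from some level on the nodes $(n_i,s_i)$ with $x_i\in W(n_i,s_i)$ are pairwise distinct — if $n_i=n_{i'}$ the branches of $x_i,x_{i'}$ eventually split, and if $n_i\ne n_{i'}$ the nodes differ already at level $0$ — so the $x_i$ lie inside a genuine configuration. By the bar property, for each $j$ there is a stage and a configuration $\phi$ of nodes containing the $x_i$ with $(k,j,\phi)$ met, whence $(x_1,\dots,x_{r_k})\in\prod_i\overline{W(n_i,s_i)}\subseteq O_{k,j}$; since $j$ is arbitrary, $(x_1,\dots,x_{r_k})\in\bigcap_j O_{k,j}=G_k$. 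Together with $K_n\subseteq U_n$ and the compact-perfect property established above, this gives exactly the asserted conclusion.
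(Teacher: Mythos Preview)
The paper does not prove Mycielski's theorem; it is stated with a citation to \cite{Myc} and then used as a black box. So there is nothing in the paper to compare against, and I comment on your argument directly.

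Your overall strategy --- a Cantor scheme in each $U_n$, completeness to force the $K_n$ to be compact perfect, density of the $O_{k,j}$ to shrink boxes so that products of closures land inside them --- is the standard proof and is correct in outline. Two minor points: the disjoint splitting $\overline{W(n,s0)}\cap\overline{W(n,s1)}=\emptyset$ is exactly where the hypothesis that $X$ has no isolated point is used (you need two distinct points in $W(n,s)$ to center the two children on), so it is not ``routine in any metric space'' without that hypothesis; and the concluding step, that distinct points of $\bigcup_n K_n$ eventually sit in pairwise distinct nodes, is fine.

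There is, however, a real gap in your bookkeeping for the bar property. Your recipe --- cycle through the pairs $(k,j)$ and on each visit meet a \emph{single} leaf-configuration --- does not obviously produce a bar. Between two consecutive visits to a fixed $(k,j)$ you perform root creations and subdivisions, and each of these can spawn many new unmet leaf-configurations (for $r_k\ge 2$ the number of $r_k$-tuples of leaves grows polynomially in the number of leaves), while you retire only one per visit; a naive schedule can then leave some branch-tuple forever outside every met configuration. The standard fix is both simpler and removes all delicacy: organize the construction so that at stage $m$ you (i) create the root for $K_m$ inside $U_m$, (ii) subdivide every current leaf, and then (iii) shrink the finitely many current leaves so that \emph{every} $r_{k_m}$-tuple of distinct current leaves has its product of closures contained in $O_{k_m,j_m}$, where $m\mapsto(k_m,j_m)$ is a fixed enumeration of all pairs. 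Step (iii) is a finite iteration of the ``basic move'' you already isolated, and once a configuration is met it stays met under later shrinking and subdivision. Given distinct $x_1,\ldots,x_{r_k}\in\bigcup_n K_n$ and any $j$, there is a stage $m_0$ by which all relevant roots exist and the $x_i$ occupy pairwise distinct leaves; the first $m\ge m_0$ with $(k_m,j_m)=(k,j)$ then forces $(x_1,\ldots,x_{r_k})\in O_{k,j}$.
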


\begin{theo}\label{theo:independent-sets}
Let $(X,f)$ be a topological dynamical system. 
If $(X,f)$ is topologically weakly mixing, then there 
exists a dense set $K\subset X$ which is a countable union of
perfect sets and such that, for all $n\geq 1$, for all $k\geq 1$
and all distinct points $x_1,\ldots, x_n$ in $K$, the orbit
$(f^{ik}(x_1),\ldots, f^{ik}(x_n))_{i\geq 0}$ is dense in $X^n$.
\end{theo}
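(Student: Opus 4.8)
The plan is to deduce Theorem~\ref{theo:independent-sets} from Mycielski's Theorem (Theorem~\ref{theo:Mycielski}) applied to the dynamical system $(X,f)$. First I would record that $X$ has no isolated point: since $f$ is topologically weakly mixing it is in particular transitive, so Proposition~\ref{prop:transitive-dense-orbit}(i) tells us that either $X$ is finite or it has no isolated point; if $X$ were finite, transitivity of $f$ would make $X$ a single periodic orbit, which is not weakly mixing unless $X$ is a point, a trivial case we can dispose of separately. So $X$ is a compact (hence complete) metric space with no isolated point, and Mycielski's Theorem applies.

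The heart of the argument is to produce, for each pair $(n,k)$ with $n,k\ge 1$, a dense $G_\delta$-subset $G_{n,k}$ of $X^n$ consisting of tuples whose coordinates are pairwise distinct and whose $f^k\times\cdots\times f^k$-orbit is dense in $X^n$. By Proposition~\ref{prop:weakly-mixing-product}, the product map $\underbrace{f\times\cdots\times f}_{n}$ on $X^n$ is transitive. Moreover, Theorem~\ref{theo:mixing-weak-mixing} shows that weak mixing is preserved under taking iterates, so $f^k$ is topologically weakly mixing, and hence $\underbrace{f^k\times\cdots\times f^k}_{n}$ on $X^n$ is transitive by Proposition~\ref{prop:weakly-mixing-product} again. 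A transitive system on a compact metric space with no isolated point has a dense $G_\delta$-set of points with dense orbit by Proposition~\ref{prop:transitive-dense-orbit}(i); I would apply this to $X^n$ (noting $X^n$ has no isolated point since $X$ doesn't), obtaining a dense $G_\delta$-set $D_{n,k}\subset X^n$ of points whose $f^k$-product-orbit is dense. Finally, the ``diagonal'' set $\Delta=\{(x_1,\ldots,x_n)\mid \exists j\ne l,\ x_j=x_l\}$ is closed in $X^n$ with empty interior (empty interior because $X$ has no isolated point, so off-diagonal points are dense), hence $X^n\setminus\Delta$ is a dense open set; intersecting, $G_{n,k}:=D_{n,k}\cap(X^n\setminus\Delta)$ is a dense $G_\delta$-set disjoint from $\Delta$, exactly of the form required by Mycielski's Theorem.

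Now I would enumerate the countably many pairs $(n,k)$ and relabel the corresponding data as $(r_m,G_m)_{m\ge 1}$ with $r_m=n$ and $G_m=G_{n,k}\subset X^{r_m}$, meeting the hypotheses of Theorem~\ref{theo:Mycielski}. To get density of the resulting set, I would fix a countable basis $(U_m)_{m\ge 1}$ of nonempty open sets of $X$ (which exists since $X$ is compact metric) and feed it to Mycielski's Theorem, obtaining compact perfect sets $K_m\subset U_m$ such that for every $m$ and every tuple of distinct points $x_1,\ldots,x_{r_m}$ in $K:=\bigcup_{m\ge1}K_m$ we have $(x_1,\ldots,x_{r_m})\in G_m$. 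Then $K$ is a countable union of perfect sets, and it is dense because it meets every basic open set $U_m$ (as $K_m\subset U_m$ is nonempty). Unwinding the relabeling: given $n\ge1$, $k\ge1$ and distinct $x_1,\ldots,x_n\in K$, the pair $(n,k)$ corresponds to some index $m$ with $r_m=n$ and $G_m=G_{n,k}$, so $(x_1,\ldots,x_n)\in G_{n,k}\subset D_{n,k}$, which means precisely that $\big(f^{ik}(x_1),\ldots,f^{ik}(x_n)\big)_{i\ge0}$ is dense in $X^n$.

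The main obstacle is bookkeeping rather than deep content: one must be careful that Mycielski's Theorem as stated indexes its data by a single sequence $m$, whereas here the natural index is a pair $(n,k)$ with the arity $r_m=n$ varying, so the relabeling must be done so that each pair $(n,k)$ actually occurs as some $m$ — a simple surjective enumeration of $\IN\times\IN$ suffices. A second point needing care is the verification that $X^n\setminus\Delta$ is dense and open; this rests entirely on $X$ having no isolated point, which is why establishing that at the outset is essential. Everything else (preservation of weak mixing under iteration, transitivity of products, existence of a dense $G_\delta$ of transitive points) is already available in the excerpt as Theorems~\ref{theo:mixing-weak-mixing}, Propositions~\ref{prop:weakly-mixing-product} and~\ref{prop:transitive-dense-orbit}.
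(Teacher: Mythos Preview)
Your proposal is correct and follows essentially the same route as the paper: show $(X^n,f^k\times\cdots\times f^k)$ is transitive via Theorem~\ref{theo:mixing-weak-mixing} and Proposition~\ref{prop:weakly-mixing-product}, take the dense $G_\delta$ of transitive points, and feed the resulting countable family into Mycielski's Theorem with a countable basis for density. The only cosmetic difference is that the paper skips your explicit intersection with $X^n\setminus\Delta$ by observing directly that a tuple with two equal coordinates has its orbit trapped in a proper closed diagonal, hence cannot be dense---so the set $G_n^k$ of transitive points is already disjoint from $\Delta$.
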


\begin{proof}
Let $n,k$ be positive integers. By Proposition \ref{prop:weakly-mixing-product}
and Theorem~\ref{theo:mixing-weak-mixing}, the 
system $(X^n,f^k\times\cdots\times f^k)$ is transitive.  Let
$G_n^k$ be the set of points of dense orbit in this system.
According to Proposition~\ref{prop:transitive-dense-orbit}(i),
$G_n^k$ is a dense $G_{\delta}$-set and $X$ has no isolated point
(note that $(X,f)$ cannot be weakly mixing if $X$ is finite).
Moreover, if the $n$-tuple 
$(x_1,\ldots,x_n)\in X^n$ has two equal coordinates, then
its orbit is not dense. Finally, the conclusion is given by applying
Mycielski's Theorem~\ref{theo:Mycielski} with the countable family
$(G_n^k)_{n,k\ge 1}$ and 
$(U_i)_{i\geq 0}$ a countable basis of nonempty open sets of $X$.
\end{proof}

\begin{cor}\label{cor:weak-mixing-scrambled}
Let $(X,f)$ be a topological dynamical system. 
If $(X,f)$ is topologically weakly mixing, then there 
exists a dense set $K\subset X$ which is a countable union of
perfect sets and such that, for all distinct points $x,y\in K$ and all
periodic points $z\in X$, 
\begin{gather*}
\limsup_{n\to+\infty}d(f^n(x),f^n(y))=\diam(X),\quad
\liminf_{n\to+\infty}d(f^n(x),f^n(y))=0\\
\text{and }
\limsup_{n\to+\infty}d(f^n(x),f^n(z))\geq \frac{\diam(X)}{2}.
\end{gather*}
In particular, $K$ is a $\delta$-scrambled set for $\delta:=\diam(X)$.
\end{cor}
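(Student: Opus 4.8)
The plan is to take for $K$ exactly the set produced by Theorem~\ref{theo:independent-sets} and to read off the three asserted properties from the density of the orbits that theorem controls. First I would record the standing facts. Since $(X,f)$ is topologically weakly mixing it is transitive, and $X$ is infinite (a weakly mixing system on a finite space is impossible), hence $X$ has no isolated point and $f(X)=X$: by Proposition~\ref{prop:transitive-dense-orbit} some point $w$ satisfies $\omega(w,f)=X$, and $\omega(w,f)$ is strongly invariant by Lemma~\ref{lem:omega-set}(i), so $f(X)=f(\omega(w,f))=\omega(w,f)=X$. By compactness choose $a,b\in X$ with $d(a,b)=\diam(X)$. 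Then I would invoke Theorem~\ref{theo:independent-sets} to obtain a dense $K\subset X$, a countable union of perfect sets, such that for all $k\ge 1$ and all distinct $x_1,\dots,x_n\in K$ the orbit $\bigl(f^{ik}(x_1),\dots,f^{ik}(x_n)\bigr)_{i\ge 0}$ is dense in $X^n$.

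The one elementary tool I would isolate is the following remark: if $(y_i)_{i\ge 0}$ is a sequence in a compact metric space $Y$ whose set of limit points is $L$, and $g\colon Y\to\IR$ is continuous, then $\limsup_i g(y_i)=\max_{\ell\in L}g(\ell)$ and $\liminf_i g(y_i)=\min_{\ell\in L}g(\ell)$; this is immediate from compactness of $Y$ and continuity of $g$, and I will apply it with $g$ a distance function.

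For the first two properties, fix distinct $x,y\in K$ and use Theorem~\ref{theo:independent-sets} with $n=2$, $k=1$: the orbit of $(x,y)$ under $f\times f$ is dense in $X^2$, so $\omega\bigl((x,y),f\times f\bigr)=X^2$ by Proposition~\ref{prop:transitive-dense-orbit}. Applying the remark with $Y=X^2$ and $g(u,v)=d(u,v)$ gives $\limsup_n d(f^n(x),f^n(y))=\max_{(u,v)\in X^2}d(u,v)=\diam(X)$ and $\liminf_n d(f^n(x),f^n(y))=\min_{(u,v)\in X^2}d(u,v)=0$. For the third property, fix $x\in K$ and a periodic point $z$ of period $p$. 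Using the theorem with $n=1$, $k=p$, the orbit $(f^{ip}(x))_{i\ge 0}$ is dense in $X$, hence $\omega(x,f^p)=X$, and therefore $\omega(f^j(x),f^p)=f^j(\omega(x,f^p))=f^j(X)=X$ for every $j$ by Lemma~\ref{lem:omega-set}(iii) together with $f(X)=X$. Splitting $\{n\ge 0\}$ into residue classes modulo $p$ and using $f^n(z)=f^{n\bmod p}(z)$, the remark yields $\limsup_n d(f^n(x),f^n(z))=\max_{0\le j<p}\max_{w\in X}d(w,f^j(z))\ge\max_{w\in X}d(w,z)$; and from $d(a,z)+d(b,z)\ge d(a,b)=\diam(X)$ one of $d(a,z),d(b,z)$ is at least $\diam(X)/2$, so $\max_{w\in X}d(w,z)\ge\diam(X)/2$. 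Combining the three statements shows in particular that $K$ is a $\diam(X)$-scrambled set, which completes the corollary.

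The argument is essentially bookkeeping once Theorem~\ref{theo:independent-sets} is available; the only points needing a little care are the two limit-set identifications (that the $\omega$-limit set of the joint orbit of $(x,y)$ is all of $X^2$, and that $\omega(f^j(x),f^p)=X$) and the observation that $f$ is onto so that $f^j(X)=X$. I do not anticipate any genuine obstacle beyond these routine verifications.
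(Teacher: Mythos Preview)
Your proof is correct and follows the same approach as the paper: take $K$ from Theorem~\ref{theo:independent-sets} and read off the three properties from the density of the relevant orbits. The only cosmetic differences are that the paper picks explicit convergent subsequences rather than invoking your limsup/liminf ``remark'', and for the periodic-point estimate it works directly along the subsequence $n=pk$ (so it needs neither the residue-class splitting nor the surjectivity of $f$).
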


\begin{proof}
Let $K$ be the set given by Theorem~\ref{theo:independent-sets}. 
By compactness of $X$, there exist $x_0,y_0\in X$ such that 
$d(x_0,y_0)=\diam(X)$. Let $x,y$ be two distinct points in $K$. 
Since the orbit of $(x,y)$ under $f\times f$ is dense in $X^2$,
there exist two increasing sequences of positive integers
$(i_n)_{n\geq 0}$ and $(j_n)_{n\geq 0}$ such that 
$$
\lim_{n\to+\infty}(f^{i_n}(x),f^{i_n}(y))=(x_0,y_0)\quad
\text{and}\lim_{n\to+\infty}(f^{j_n}(x),f^{j_n}(y))=(x_0,x_0).
$$
Thus
$$
\limsup_{n\to+\infty}d(f^n(x),f^n(y))=\diam(X)\quad\text{and}\quad
\liminf_{n\to+\infty}d(f^n(x),f^n(y))=0.
$$
Let $z\in X$ be a periodic point (if any) and let $p$ be its period.
By the triangular inequality,
there exists $z'\in\{x_0,y_0\}$ such that $d(z,z')\geq \frac{\diam(X)}{2}$.
Since $x$ has a dense orbit under $f^p$ by definition of $K$,
there exists an increasing sequence of positive integers
$(k_n)_{n\geq 0}$ such that $f^{pk_n}(x)$ tends to $z'$. Thus
$$
\limsup_{n\to+\infty}d(f^n(x),f^n(z))\geq 
\limsup_{n\to+\infty}d(f^{pk_n}(x),f^{pk_n}(z))=d(z,z')\geq \frac{\diam(X)}{2}.
$$
\end{proof}

\begin{rem}
A set $K$ satisfying the conclusion of Theorem~\ref{theo:independent-sets}
is called \emph{totally independent}\index{totally independent set} 
\cite{Iwa}. If a dynamical
system $(X,f)$ has such a set, then $(X\times X,f\times f)$
has a point of dense orbit, and thus $(X,f)$ is topologically weakly mixing.
Therefore, the existence of a totally independent set is equivalent to 
topological weak mixing. 
\end{rem}

The next proposition, due to Bruckner and Hu \cite{BH}, is in some sense
the converse of Corollary~\ref{cor:weak-mixing-scrambled} for
interval maps.

\begin{prop}
Let $f\colon [0,1]\to [0,1]$ be an interval map. Assume that  there
exists a dense set $S\subset [0,1]$ such that
$$
\forall x,y\in S,\ x\neq y,\
\limsup_{n\to+\infty}|f^n(x)-f^n(y)|=1.
$$
Then $f$ is topologically mixing.
\end{prop}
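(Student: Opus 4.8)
The plan is to prove that $f$ is transitive and then deduce mixing from the structure theorem for transitive interval maps. Both steps rest on one elementary observation, which I would establish first: if $J\subset[0,1]$ is any non-degenerate subinterval and $\delta\in(0,1)$, then $f^n(J)\supset[\delta,1-\delta]$ for infinitely many $n$. Indeed, since $S$ is dense, $J$ contains two distinct points $x,y\in S$; by hypothesis $\limsup_{n\to+\infty}|f^n(x)-f^n(y)|=1$, which means that for each $\delta>0$ the set of integers $n$ with $|f^n(x)-f^n(y)|>1-\delta$ is infinite. For any such $n$, the two numbers $f^n(x),f^n(y)\in[0,1]$ lie at distance greater than $1-\delta$, so the smaller one is $<\delta$ and the larger is $>1-\delta$, whence $\langle f^n(x),f^n(y)\rangle\supset[\delta,1-\delta]$. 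Applying the intermediate value theorem (Theorem~\ref{theo:ivt}) to $f^n$ and to the subinterval $\langle x,y\rangle\subset J$ gives $f^n(J)\supset\langle f^n(x),f^n(y)\rangle\supset[\delta,1-\delta]$.

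Next I would use this to get transitivity. Let $U,V$ be nonempty open subsets of $[0,1]$. Then $U$ contains a non-degenerate interval $J$, and since $V$ is open and nonempty it meets $(0,1)$, so there is $\delta\in(0,\tfrac12)$ with $V\cap[\delta,1-\delta]\neq\emptyset$. Choosing $n$ with $f^n(J)\supset[\delta,1-\delta]$ yields $f^n(U)\cap V\supset f^n(J)\cap V\neq\emptyset$. Hence $f$ is transitive.

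Now I would invoke Theorem~\ref{theo:summary-transitivity}: either $f$ is topologically mixing, in which case we are done, or there exists $c\in(0,1)$ with $f([0,c])=[c,1]$ and $f([c,1])=[0,c]$. In the second case I would derive a contradiction. For any $x\in[0,c]$ one has $f^n(x)\in[0,c]$ for all even $n$ and $f^n(x)\in[c,1]$ for all odd $n$; hence for any two points $x,y\in[0,c]$, $|f^n(x)-f^n(y)|\le c$ when $n$ is even and $\le 1-c$ when $n$ is odd, so $\limsup_{n\to+\infty}|f^n(x)-f^n(y)|\le\max(c,1-c)<1$. But $S$ is dense, so $S\cap(0,c)$ is infinite and in particular contains two distinct points $x,y$; for these the hypothesis forces $\limsup_{n\to+\infty}|f^n(x)-f^n(y)|=1$, a contradiction. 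Therefore $f$ is topologically mixing.

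There is no deep obstacle here; the two points requiring care are (i) extracting from ``$\limsup=1$'' the genuinely infinite set of times $n$ where $|f^n(x)-f^n(y)|>1-\delta$ (not merely a single such $n$), and (ii) observing that every non-degenerate subinterval — in particular $(0,c)$ — really contains two distinct points of $S$. The reason the proof is short is that the usual hard part of the mixing criterion of Proposition~\ref{prop:def-mixing}, namely upgrading ``$f^n(J)\supset[\delta,1-\delta]$ for infinitely many $n$'' to ``for all sufficiently large $n$'', is bypassed entirely: it is supplied for free by the transitive–mixing dichotomy of Theorem~\ref{theo:summary-transitivity} once transitivity is known, together with the elementary fact that the swapped-interval case is incompatible with the hypothesis.
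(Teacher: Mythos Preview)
Your proof is correct. Both you and the paper start from the same observation---that any non-degenerate interval $J$ picks up two points of $S$ and hence satisfies $f^n(J)\supset[\delta,1-\delta]$ for suitable $n$---but you finish differently. The paper notices that since $f$ is onto, $f^{n+1}(J)$ also contains a central interval, so one of $n,n+1$ is even and $f^2$ is transitive; mixing then follows from Theorem~\ref{theo:summary-mixing}. You instead establish transitivity of $f$ itself and invoke Theorem~\ref{theo:summary-transitivity}, eliminating the swap alternative by the pigeonhole bound $\max(c,1-c)<1$. The paper's route is marginally shorter; yours makes the obstruction to non-mixing more explicit. One small remark: your emphasis on obtaining \emph{infinitely many} $n$ in step~(i) is harmless but unnecessary---a single such $n$ suffices both for transitivity and for the contradiction, so that point does not in fact require special care.
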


\begin{proof}
Let $\eps>0$. The assumption implies that $f$ is onto. Thus there
exists $\delta\in(0,\eps)$ such that $f([\delta,1-\delta])\supset 
[\eps,1-\eps]$. Let $J$ be a non degenerate subinterval of $[0,1]$.
Since $S$ is dense, there exist two distinct points $x,y$ in $J\cap S$.
Let $n$ be an integer such that $|f^n(x)-f^n(y)|> 1-\delta$.
Then $f^n(J)\supset [\delta,1-\delta]$, which implies that
both $f^n(J)$ and $f^{n+1}(J)$ contain $[\eps,1-\eps]$
(recall that $\delta<\eps$). Either $n$ or
$n+1$ is even, and thus there exists $m\geq 1$ such that $f^{2m}(J)\supset
[\eps,1-\eps]$. This implies that $f^2$ is transitive, so $f$ is
topologically mixing by Theorem~\ref{theo:summary-mixing}.
\end{proof}

\section{Positive entropy maps are Li-Yorke chaotic}\label{sec:5-htop>0}

The original result of Li and Yorke (period 3 implies chaos in the sense of 
Li-Yorke \cite{LY}) was generalized in several steps.
Nathanson stated the same result for periods which are multiple of 
$3$, $5$ or $7$ \cite{Nat}. Then, simultaneously, Butler and Pianigiani 
\cite{BP} and Oono \cite{Oon} proved that an interval map
$f$ with a periodic point whose period is not a power of $2$ (i.e., the period is
$2^m q$  for some odd $q>1$) is chaotic in the sense of Li-Yorke.  
Actually, this result can be derived from Li-Yorke's result using
Sharkovsky's Theorem, but these authors were not aware of Sharkovsky's 
article. Later, Jankov{á} and Sm{í}tal proved a stronger
result: an interval map with positive entropy (or
equivalently with a periodic point whose period is not a power of
$2$, see Theorem~\ref{theo:htop-power-of-2}) 
admits a perfect $\delta$-scrambled set for some
$\delta>0$ \cite{JS}.  The proof we shall give 
follows the spirit of \cite{JS}, although it is slightly different.

\medskip
Block \cite{Bloc3} showed that, if an interval map $f$ has a strict horseshoe,
then there exists a subsystem which is semi-conjugate to a full 
shift on two letters, and the semi-conjugacy is ``almost'' a 
conjugacy.
This semi-conjugacy with a full shift,
stated in Proposition~\ref{prop:strictly-turbulent-shift} below,
is a key tool in several results.

\begin{rem}
In \cite[Theorem 9]{Moo3}, Moothathu stated that, 
if the entropy of $f$ is positive, 
there exist $n\ge 0$ and an invariant set 
on which the action of $f^{2^n}$ is conjugate to a full shift. 
Having a conjugacy rather than a semi-conjugacy would  make some
arguments easier. Unfortunately, there is something wrong in the proof;
Li, Moothathu and Oprocha built a counter-example \cite{LMO}.
\end{rem}

\begin{defi}
Let $\Sigma:=\{0,1\}^{\IZ^+}$, endowed with the product topology; this is a 
compact metric set. The \emph{shift} map $\sigma\colon
\Sigma\to \Sigma$ is defined by
$\sigma((\alpha_n)_{n\ge 0}):=(\alpha_{n+1})_{n\ge 0}$.
\label{notation:setSigma}
\index{$\alpha$ sa@$\Sigma$}
\label{notation:mapshift}
\index{$\alpha$ sb@$\sigma$}
Then $(\Sigma,\sigma)$ is a topological  dynamical system,
called the \emph{full shift}\index{full shift}\index{shift} on two letters.  
\end{defi}

\begin{lem}\label{lem:fi-omegaset}
Let $(X,f)$ and $(Y,g)$ be two topological dynamical systems, and
let $\vfi\colon X\to Y$ be a semi-conjugacy. For every $x\in X$,
$\vfi(\omega(x,f))=\omega(\vfi(x),g)$.
\end{lem}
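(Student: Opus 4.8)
The plan is to prove the two inclusions separately, using continuity of $\varphi$ for one direction and compactness of $X$ for the other. Throughout I will use the characterization that $w\in\omega(x,f)$ if and only if there is a strictly increasing sequence of integers $(n_i)_{i\ge 0}$ with $f^{n_i}(x)\to w$, together with the intertwining relation $\varphi\circ f^n=g^n\circ\varphi$ for all $n\ge 0$, which follows from $\varphi\circ f=g\circ\varphi$ by an immediate induction.

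First I would show $\varphi(\omega(x,f))\subseteq\omega(\varphi(x),g)$. Let $w\in\omega(x,f)$ and pick $(n_i)_{i\ge 0}$ increasing with $f^{n_i}(x)\to w$. Since $\varphi$ is continuous, $\varphi(f^{n_i}(x))\to\varphi(w)$; but $\varphi(f^{n_i}(x))=g^{n_i}(\varphi(x))$, so $g^{n_i}(\varphi(x))\to\varphi(w)$, which gives $\varphi(w)\in\omega(\varphi(x),g)$.

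Next I would show the reverse inclusion $\omega(\varphi(x),g)\subseteq\varphi(\omega(x,f))$, which is the place where one really needs the compactness of $X$. Let $z\in\omega(\varphi(x),g)$ and choose an increasing sequence $(n_i)_{i\ge 0}$ with $g^{n_i}(\varphi(x))\to z$, i.e.\ $\varphi(f^{n_i}(x))\to z$. By compactness of $X$, the sequence $\bigl(f^{n_i}(x)\bigr)_{i\ge 0}$ has a convergent subsequence $f^{n_{i_j}}(x)\to w$ for some $w\in X$; since $(n_{i_j})_j$ is still increasing to infinity, $w\in\omega(x,f)$. Continuity of $\varphi$ gives $\varphi(f^{n_{i_j}}(x))\to\varphi(w)$, while by the choice of $(n_i)$ the same sequence converges to $z$; by uniqueness of limits in a metric space, $\varphi(w)=z$, so $z\in\varphi(\omega(x,f))$.

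Combining the two inclusions yields $\varphi(\omega(x,f))=\omega(\varphi(x),g)$. There is no genuine obstacle here; the only point to be careful about is not to invoke surjectivity of $\varphi$ (which is not needed) and to remember that the extraction of a convergent subsequence in the second inclusion is exactly what compactness of $X$ is for — without it the reverse inclusion can fail.
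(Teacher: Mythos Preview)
Your proof is correct and follows essentially the same approach as the paper: both inclusions are proved separately, the first using continuity of $\varphi$ and the semi-conjugacy relation, the second by extracting a convergent subsequence via compactness of $X$. Your closing remarks about surjectivity being unnecessary and compactness being essential are accurate observations that the paper does not spell out.
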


\begin{proof}
Let $x\in X$ and $y:=\vfi(x)\in Y$. We are going to show that 
$\vfi(\omega(x,f))\subset \omega(y,g)$ and 
$\omega(y,g)\subset \vfi(\omega(x,f))$, which gives the equality
of the two sets.

Let $x'\in\omega(x,f)$. There exists
an increasing sequence of integers $(n_k)_{k\ge 0}$ such that
$\lim_{k\to+\infty}f^{n_k}(x)=x'$. By continuity of $\vfi$,
$\lim_{k\to+\infty}\vfi(f^{n_k}(x))=\vfi(x')$. 
Since $\vfi$ is a semi-conjugacy, $\vfi(f^{n_k}(x))=g^{n_k}(\vfi(x))=
g^{n_k}(y)$. Thus $\vfi(x')\in \omega(y,g)$. This implies that
$\vfi(\omega(x,f))\subset \omega(y,g)$.

Let $y'\in \omega(y,g)$  and let $(n_k)_{k\ge 0}$ be an increasing sequence
of integers such that $\lim_{k\to+\infty}g^{n_k}(y)=y'$. Since $X$
is compact, there exist a subsequence $(n_{k_i})_{i\ge 0}$ and a point
$x'\in X$ such that $\lim_{i\to+\infty}f^{n_{k_i}}(x)=x'$, and hence
$x'\in \omega(x,f)$. Then $\vfi(x')=y'$ because $\vfi$ is continuous.
This implies that $\omega(y,g)\subset \vfi(\omega(x,f))$.
\end{proof}

\begin{prop}\label{prop:strictly-turbulent-shift}
Let $f\colon I\to I$ be an interval map and let $(J_0, J_1)$ be a strict 
horseshoe for $f$.
There exist an invariant Cantor set $X\subset I$ and a continuous map
$\vfi\colon X\to  \Sigma:=\{0,1\}^{\IZ^+}$ such that $\vfi$
is a semi-conjugacy between $(X,f|_X)$ and $(\Sigma,\sigma)$;
the system $(X,f|_X)$ is transitive and
there exists a countable set $E\subset X$ such that $\vfi$ is one-to-one
on $X\setminus E$ and two-to-one on $E$.

Moreover, there exists a family of nonempty closed intervals 
$$
(J_{\alpha_0\ldots\alpha_{n-1}})_{n\ge 1, (\alpha_0,\ldots,\alpha_{n-1})\in\{0,1\}^n}$$
such that, for all $n\ge 1$ and all 
$(\alpha_0,\ldots,\alpha_{n-1})\in\{0,1\}^n$,
\begin{gather}
J_{\alpha_0\ldots \alpha_{n-1}}\cap J_{\beta_0\ldots \beta_{n-1}}=\emptyset\;
\text{ if }\;(\alpha_0,\ldots,\alpha_{n-1})\ne(\beta_0,\ldots,\beta_{n-1}),\label{eq:v2-semicong1}\\
J_{\alpha_0\ldots \alpha_{n-1}}\subset 
J_{\alpha_0\ldots \alpha_{n-2}}\text{ if }n\ge 2,\label{eq:v2-semicong2}\\
f(J_{\alpha_0\ldots\alpha_{n-1}})=J_{\alpha_1\ldots\alpha_{n-1}}
\text{ and }f(\End{J_{\alpha_0\ldots\alpha_{n-1}}})=
\End{J_{\alpha_1\ldots\alpha_{n-1}}}
\text{ if }n\ge 2,\label{eq:v2-semicong3}\\
\{x\in X\mid  \vfi(x)\text{ begins with
}\alpha_0\ldots\alpha_{n-1}\}=X\cap J_{\alpha_0\ldots\alpha_{n-1}},\label{eq:v2-semicong4}
\end{gather}
and, for all $(\alpha_n)_{n\ge 0}\in \Sigma\setminus\vfi(E)$,
\begin{equation}\label{eq:v2-semicong5}
\lim_{n\to+\infty}|J_{\alpha_0\ldots\alpha_{n-1}}|=0.
\end{equation}
\end{prop}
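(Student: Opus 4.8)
The plan is to build, by induction on the length of a finite word over $\{0,1\}$, a nested family of closed subintervals $(J_w)$ and then take a suitable intersection; note that since $(J_0,J_1)$ is a \emph{strict} horseshoe we have $J_0\cap J_1=\emptyset$. For a one-letter word, $J_{\alpha_0}$ is the given horseshoe interval. Assuming all words of length $n-1$ have been defined, fix $(\alpha_0,\dots,\alpha_{n-1})\in\{0,1\}^n$: the interval $J_{\alpha_0\dots\alpha_{n-2}}$ covers $J_{\alpha_1\dots\alpha_{n-1}}$ — by the horseshoe property when $n=2$, and by the relations \eqref{eq:v2-semicong2}--\eqref{eq:v2-semicong3} already established at length $n-1$ when $n\ge 3$ — so Lemma~\ref{lem:chain-of-intervals}(i), applied to this single covering, yields a closed subinterval $J_{\alpha_0\dots\alpha_{n-1}}\subset J_{\alpha_0\dots\alpha_{n-2}}$ with $f(J_{\alpha_0\dots\alpha_{n-1}})=J_{\alpha_1\dots\alpha_{n-1}}$ and $f(\End{J_{\alpha_0\dots\alpha_{n-1}}})=\End{J_{\alpha_1\dots\alpha_{n-1}}}$. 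This gives \eqref{eq:v2-semicong2} and \eqref{eq:v2-semicong3} by construction; property \eqref{eq:v2-semicong1} follows by letting $k$ be the first index at which two distinct length-$n$ words differ and pushing both intervals forward $k$ times: by \eqref{eq:v2-semicong3} the images lie in $J_{\alpha_k}$ and $J_{\beta_k}$ respectively, which are disjoint, so the two intervals are disjoint. Care has to be taken in the \emph{choice} of $J_{\alpha_0\dots\alpha_{n-1}}$ inside $J_{\alpha_0\dots\alpha_{n-2}}$: I would place the two children $J_{w0},J_{w1}$ of $J_w$ in the interior of $J_w$ with a definite gap between them (still keeping the endpoint-to-endpoint property), which is what will make the limit set a Cantor set — this is the delicate point.

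Next I would set $X_0:=\bigcap_{n\ge 1}\bigcup_{|w|=n}J_w$ and, for $\bar\alpha=(\alpha_n)_{n\ge0}\in\Sigma$, $J_{\bar\alpha}:=\bigcap_{n\ge1}J_{\alpha_0\dots\alpha_{n-1}}$, a nested intersection of nonempty compact intervals, hence a nonempty compact interval; by \eqref{eq:v2-semicong1} the $J_{\bar\alpha}$ are pairwise disjoint and their union is $X_0$. Only countably many of them are non-degenerate (disjoint non-degenerate intervals of $\IR$), so $E:=\bigcup\{\End{J_{\bar\alpha}}\mid J_{\bar\alpha}\text{ non-degenerate}\}$ is countable; put $X:=X_0\setminus\bigcup\{\Int{J_{\bar\alpha}}\mid J_{\bar\alpha}\text{ non-degenerate}\}$. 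Then $X$ is nonempty and closed, it meets $J_{\bar\alpha}$ in a single point when $J_{\bar\alpha}$ is degenerate and in exactly its two endpoints otherwise, so it is totally disconnected; passing to the limit in $f(\End{J_{\alpha_0\dots\alpha_{n-1}}})=\End{J_{\alpha_1\dots\alpha_{n-1}}}$ yields $f(X)\subset X$. The map $\vfi\colon X\to\Sigma$ sending $x$ to the $\bar\alpha$ with $x\in J_{\bar\alpha}$ is well defined; it satisfies \eqref{eq:v2-semicong4}, so preimages of cylinders are relatively clopen and $\vfi$ is continuous, it is onto by the realisability of every word (Lemma~\ref{lem:chain-of-intervals}(i)), and $\vfi\circ f=\sigma\circ\vfi$ by \eqref{eq:v2-semicong3}; thus $\vfi$ is a semi-conjugacy. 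By construction $\vfi$ is injective on $X\setminus E$ and exactly two-to-one on $E$. Finally \eqref{eq:v2-semicong5} is immediate: $\bar\alpha\notin\vfi(E)$ means $J_{\bar\alpha}$ is degenerate, i.e. $|J_{\alpha_0\dots\alpha_{n-1}}|\downarrow|J_{\bar\alpha}|=0$.

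It remains to obtain transitivity of $(X,f|_X)$ and perfectness of $X$. The one-sided full shift $(\Sigma,\sigma)$ is transitive, so it has a residual set of points of dense orbit (Proposition~\ref{prop:transitive-dense-orbit}); discarding the countably many orbits meeting $\vfi(E)$ leaves a point $\bar\alpha$ whose whole forward orbit avoids $\vfi(E)$ and is dense. Then $x_0:=\vfi^{-1}(\bar\alpha)$ is a single point, $f^n(x_0)=\vfi^{-1}(\sigma^n\bar\alpha)$, and for any word $w$ the orbit of $\bar\alpha$ visits the cylinder $[w]$, hence the orbit of $x_0$ visits $X\cap J_w=\vfi^{-1}([w])$; these sets form a basis of $X$, so $x_0$ has dense orbit. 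For perfectness, given $x\in X$ with itinerary $\bar\alpha$, for each $n$ one picks a point $y_n\in X$ in the sibling interval $J_{\alpha_0\dots\alpha_{n-1}\bar\alpha_n'}\subset J_{\alpha_0\dots\alpha_{n-1}}$; the gap between siblings built into the construction is precisely what forces $y_n\to x$ (and, when $x$ is an endpoint of a non-degenerate $J_{\bar\alpha}$, that a sibling lands on the correct side), so $X$ has no isolated point and is a Cantor set. Since $X$ then has no isolated point, the existence of a dense orbit gives transitivity of $f|_X$ by Proposition~\ref{prop:transitive-dense-orbit}.

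The hard part, where I expect the real work, is exactly the choice of the children $J_{w0},J_{w1}$ inside $J_w$ in the inductive step: they must be placed so that $X$ comes out perfect and so that $\vfi$ is \emph{exactly} two-to-one on $E$ (not merely finite-to-one), since in the absence of expansion the limit intervals $J_{\bar\alpha}$ can genuinely be non-degenerate. The remaining verifications — continuity and surjectivity of $\vfi$, the semi-conjugacy relation, \eqref{eq:v2-semicong1}--\eqref{eq:v2-semicong5}, and lifting transitivity from the shift — are then routine.
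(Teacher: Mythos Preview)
Your construction of the intervals $J_w$, the set (call it $Y$) obtained by removing the interiors of the non-degenerate $J_{\bar\alpha}$, and the semi-conjugacy $\vfi$ is correct and matches the paper. The genuine gap is in your handling of perfectness and transitivity, which you yourself flag as ``the hard part.'' Your proposed fix --- placing both children $J_{w0}, J_{w1}$ in the \emph{interior} of $J_w$ --- cannot in general be carried out while preserving the endpoint property \eqref{eq:v2-semicong3}. Concretely, take $J_0=[0,b]$ with $f$ monotone increasing on $J_0$, $f\equiv\Id$ on $[0,e]$ for some $0<e<b$, and $f(b)>\max J_1$. Then $0$ is the \emph{only} preimage of $\min J_0$ inside $J_0$, so every $J_{0^n}$ is forced to have $0$ as its left endpoint, and one computes $J_{\bar 0}=[0,e]$. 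Now $0$ is isolated in your $Y$: every $J_{\bar\beta}$ with $\bar\beta\ne\bar 0$ and $\beta_0=0$ is disjoint from $[0,e]$ and hence lies in $(e,b]$, so nothing in $Y$ accumulates on $0$. The same example breaks your transitivity argument: the sets $Y\cap J_w$ do \emph{not} form a neighborhood basis of $Y$, because for every $n$ the set $Y\cap J_{\alpha_0\ldots\alpha_{n-1}}$ contains \emph{both} endpoints of $J_{\bar\alpha}$ whenever $\bar\alpha$ extends $\alpha_0\ldots\alpha_{n-1}$ and $J_{\bar\alpha}$ is non-degenerate, so these sets cannot separate the two endpoints.

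The paper sidesteps both problems by a different route: it builds your set $Y$ but then defines $X:=\omega(x_0,f)$ for a point $x_0\in Y$ whose itinerary $\bar\alpha$ satisfies $\omega(\bar\alpha,\sigma)=\Sigma$ and $\bar\alpha\notin\vfi(E)$. By Lemma~\ref{lem:fi-omegaset}, $\vfi(X)=\omega(\bar\alpha,\sigma)=\Sigma$, so $\vfi|_X$ is still onto; since $\vfi^{-1}(\bar\alpha)=\{x_0\}$ one gets $x_0\in X$, hence $X=\omega(x_0,f)$ gives transitivity immediately (Proposition~\ref{prop:transitive-dense-orbit}(iii)); and since $X$ is infinite and transitive it has no isolated points (Proposition~\ref{prop:transitive-dense-orbit}(i)), so $X$ is a Cantor set. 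In the example above one checks that $0\notin X$: whenever $\sigma^n\bar\alpha$ begins with $0^m$ it must eventually contain a $1$, forcing $f^n(x_0)\in J_{0^m1}\subset J_{0^m}\setminus J_{0^{m+1}}$, and these sets accumulate on $e=\max J_{\bar 0}$, not on $0$. So the paper does not try to make $Y$ itself perfect; it passes to the right $\omega$-limit set inside it.
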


\begin{proof}
First, we show by induction on $n$ that
there exists a family of nonempty closed intervals satisfying
\eqref{eq:v2-semicong1}, \eqref{eq:v2-semicong2} and \eqref{eq:v2-semicong3}.

$\bullet$ 
For $n=1$, the intervals $J_0, J_1$ satisfy \eqref{eq:v2-semicong1} and there is nothing more to prove.

$\bullet$ Suppose that \eqref{eq:v2-semicong1}, \eqref{eq:v2-semicong2}, \eqref{eq:v2-semicong3} are satisfied for some
$n\ge 1$. Fix $(\alpha_0,\ldots,\alpha_n)$ in $\{0,1\}^{n+1}$.
If $n=1$, we apply Lemma~1.13(i) to the
chain of intervals $(J_{\alpha_0},J_{\alpha_1})$ and we obtain
a closed interval $J_{\alpha_0\alpha_1}$ with
$J_{\alpha_0\alpha_1}\subset J_{\alpha_0}$, $f(J_{\alpha_0\alpha_1})=
J_{\alpha_1}$ and $f(\End{J_{\alpha_0\alpha_1}})=
\End{J_{\alpha_1}}$. If $n\ge 2$, $f(J_{\alpha_0\ldots\alpha_{n-1}})=
J_{\alpha_1\ldots\alpha_{n-1}}$ and
$J_{\alpha_1\ldots\alpha_n}\subset J_{\alpha_1\ldots\alpha_{n-1}}$
by the induction hypothesis. Thus we can apply Lemma~1.13(i) to the
chain of intervals $(J_{\alpha_0\ldots\alpha_{n-1}},J_{\alpha_1\ldots \alpha_n})$ and we obtain
a closed interval $J_{\alpha_0\ldots\alpha_n}$ with
$J_{\alpha_0\ldots\alpha_n}\subset J_{\alpha_0\ldots \alpha_{n-1}}$, 
$f(J_{\alpha_0\ldots\alpha_n})=J_{\alpha_1\ldots\alpha_n}$ and
$f(\End{J_{\alpha_0\ldots\alpha_n}})=\End{J_{\alpha_1\ldots\alpha_n}}$.
In both cases, we get \eqref{eq:v2-semicong2} and \eqref{eq:v2-semicong3} for $n+1$. 

Let $(\alpha_0,\ldots,\alpha_n)$ and $(\beta_0,\ldots,\beta_n)$ be two distinct
elements of $\{0,1\}^{n+1}$. If 
$(\alpha_0,\ldots,\alpha_{n-1})\ne(\beta_0,\ldots,\beta_{n-1})$, then
$J_{\alpha_0\ldots \alpha_{n-1}}\cap J_{\beta_0\ldots \beta_{n-1}}=\emptyset$
by the induction hypothesis, which implies that 
$J_{\alpha_0\ldots \alpha_n}\cap J_{\beta_0\ldots \beta_n}=\emptyset$ because
of \eqref{eq:v2-semicong2}. 
If $(\alpha_0,\ldots,\alpha_{n-1})=(\beta_0,\ldots,\beta_{n-1})$,
then $\alpha_n\ne\beta_n$, and hence
$J_{\alpha_1\ldots \alpha_n}\cap J_{\beta_1\ldots \beta_n}=\emptyset$ by
the induction hypothesis. According to \eqref{eq:v2-semicong3}, 
$$
f(J_{\alpha_0\ldots \alpha_n})\cap f(J_{\beta_0\ldots \beta_n})=
J_{\alpha_1\ldots \alpha_n}\cap J_{\beta_1\ldots \beta_n}=\emptyset,
$$
which implies that  
$J_{\alpha_0\ldots \alpha_n}\cap J_{\beta_0\ldots \beta_n}=\emptyset$.
This proves \eqref{eq:v2-semicong1} for $n+1$ and this ends the induction.

\medskip
For every $\bar\alpha=(\alpha_n)_{n\ge 0}\in \Sigma$, we set
$$
J_{\bar\alpha}:=\bigcap_{n=1}^{+\infty} J_{\alpha_0\ldots\alpha_{n-1}}.
$$
This is a decreasing intersection of nonempty compact intervals, and thus
$J_{\bar\alpha}$ is a nonempty compact interval. 
According to \eqref{eq:v2-semicong1}, we have
\begin{equation}\label{eq:v2-alpha-beta-infty}
\forall \bar\alpha,\bar\beta\in\Sigma,\ \bar\alpha\neq\bar\beta\Longrightarrow
J_{\bar\alpha}\cap J_{\bar\beta}=\emptyset.
\end{equation}
We set
$$
Y_0:=\bigcap_{n=1}^{+\infty}\bigcup_{\doubleindice{\alpha_i\in\{0,1\}}{i\in
\Lbrack 0, n-1\Rbrack}}J_{\alpha_0\ldots\alpha_{n-1}}\quad\text{and}\quad
Y:=Y_0\setminus
\bigcup_{\bar\alpha\in\Sigma}{\rm Int}(J_{\bar\alpha}).
$$
The sets $Y_0$ and $Y$ are compact.
One can see that $Y_0=\bigcup_{\bar\alpha\in\Sigma}J_{\bar\alpha}$, and
$(J_{\bar\alpha})_{\bar\alpha\in\Sigma}$ are the connected components
of $Y_0$. This implies that $Y$ is totally disconnected and 
$Y=\bigcup_{\bar\alpha\in\Sigma}\End{J_{\bar\alpha}}$, 
which is a disjoint union 
by \eqref{eq:v2-alpha-beta-infty}. 

Recall that $\sigma((\alpha_n)_{n\ge 0})=(\alpha_{n+1})_{n\ge 0}$.
We are going to show that
\begin{equation}\label{eq:f(partialJalpha)}
\forall \bar\alpha\in\Sigma,\ 
f(\End{J_{\bar\alpha}})=\End{J_{\sigma(\bar\alpha)}},
\end{equation}
which implies that $f(Y)= Y$.

Notice that if $x\in \End{J_{\bar\alpha}}$, then 
\begin{itemize}
\item either $x=\min J_{\bar\alpha}$ and $x=\lim_{n\to +\infty} \min 
J_{\alpha_0\ldots\alpha_{n-1}}$, 
\item or $x=\max J_{\bar\alpha}$ and $x=\lim_{n\to +\infty} \max
J_{\alpha_0\ldots\alpha_{n-1}}$, 
\end{itemize}
because $J_{\bar\alpha}=\bigcap_{n=1}^{+\infty}J_{\alpha_0\ldots J_{n-1}}$
is a decreasing intersection of nonempty compact intervals.

Let $\bar\alpha\in\Sigma$ and $x\in \End{J_{\bar\alpha}}$. What precedes
shows that there exists a sequence of points $(x_n)_{n\ge 1}$ such that 
$x=\lim_{n\to+\infty} x_n$ and
$x_n\in\End{J_{\alpha_0\ldots\alpha_{n-1}}}$ for all $n\ge 1$.
Then $f(x_n)\in \End{J_{\alpha_1\ldots\alpha_{n-1}}}$ by
\eqref{eq:v2-semicong3}. Moreover, there is an increasing sequence
$(n_k)_{k\ge 0}$ such that,
either $f(x_{n_k})=\min J_{\alpha_1\ldots\alpha_{n_k-1}}$ for all $k\ge 0$,
or $f(x_{n_k})=\max J_{\alpha_1\ldots\alpha_{n_k-1}}$ for all $k\ge 0$.
Thus $\lim_{k\to+\infty} x_{n_k}$ is equal to $\min J_{\sigma(\bar\alpha)}$
or $\max J_{\sigma(\bar\alpha)}$, because 
$J_{\sigma(\bar\alpha)}=\bigcap_{n=1}^{+\infty}J_{\alpha_1\ldots J_n}$
is a decreasing intersection of nonempty compact intervals.
Since $f$ is continuous, $f(x)=\lim_{k\to+\infty}f(x_{n_k})$, and thus
$f(x)\in \End{J_{\sigma(\bar\alpha)}}$. This shows that
$f(\End{J_{\bar\alpha}})\subset \End{J_{\sigma(\bar\alpha)}}$.

Let $y\in \End{J_{\sigma(\bar\alpha)}}$. As above, there exists a
sequence of points $(y_n)_{n\ge 1}$ such that $y=\lim_{n\to+\infty} y_n$ and
$y_n\in\End{J_{\alpha_1\ldots\alpha_n}}$ for all $n\ge 1$. By
\eqref{eq:v2-semicong3}, there exists $x_n\in
\End{J_{\alpha_0\ldots\alpha_n}}$ such that $f(x_n)=y_n$ for all $n\ge 1$.
Then the same argument as above shows that there is an 
increasing sequence $(n_k)_{k\ge 0}$ such that
$\lim_{k\to+\infty} x_{n_k}=x$ with $x\in \End{J_{\bar\alpha}}$. Thus
$f(x)=\lim_{k\to+\infty} f(x_{n_k})=\lim_{k\to+\infty} y_{n_k}=y$.
This shows that
$\End{J_{\sigma(\bar\alpha)}}\subset f(\End{J_{\bar\alpha}})$, and this
ends the proof of \eqref{eq:f(partialJalpha)}.

\medskip
We define the map
$$
\vfi\colon\begin{array}[t]{ccl} Y&\longrightarrow& \Sigma\\
x&\longmapsto& \bar\alpha \text{ if }x\in J_{\bar\alpha}
\end{array}
$$
The map $\vfi$ is trivially onto according to the definition of $Y$.
Let $\CA$ be the collection of $\bar\alpha\in\Sigma$ such
that $J_{\bar\alpha}$ is a non degenerate interval. Then $\CA$
is countable, and 
$$
F:=\bigcup_{\bar\alpha \in \CA}\End{J_{\bar\alpha}}=\vfi^{-1}(\CA)
$$
is a countable subset of $Y$. 
It is clear that, if $\bar\alpha\not\in \CA$, then there is
a single point $x\in Y$ (with $J_{\bar\alpha}=\{x\}$) such that $\vfi(x)=\bar\alpha$;
and if $\bar\alpha\in \CA$, there are exactly two distinct points
$x_1,x_2\in Y$ (with $\End{J_{\bar\alpha}}=\{x_1,x_2\}$) such that
$\vfi(x_i)=\bar\alpha$. This shows that $\vfi$ is one-to-one
on $Y\setminus F$ and two-to-one on $F$. 

Let $\delta_n$ be
the minimal distance between two distinct intervals among
$$
(J_{\alpha_0\ldots\alpha_{n-1}})_{(\alpha_0,\ldots, \alpha_{n-1})
\in \{0,1\}^n}.
$$ 
Then $\delta_n>0$ because these
intervals are compact and pairwise disjoint by \eqref{eq:v2-semicong1}.
Let $x,y\in Y$. If $|x-y|<\delta_n$, then $x$ and $y$ are in the same
interval $J_{\alpha_0\ldots\alpha_{n-1}}$ for some
$(\alpha_0,\ldots,\alpha_{n-1})\in\{0,1\}^n$. This means that
both $\vfi(x)$ and $\vfi(y)$ begin with $\alpha_0\ldots\alpha_{n-1}$,
which implies that $\vfi$ is continuous. Moreover, 
\eqref{eq:f(partialJalpha)} implies that $\vfi\circ f(x)=\sigma\circ\vfi(x)$
for all $x\in Y$, that is, $\vfi$ is a semi-conjugacy.

\medskip
It is easy to see that $(\Sigma,\sigma)$ is transitive and that $\Sigma$
is uncountable. By
Proposition~\ref{prop:transitive-dense-orbit}(i),
there exists a dense $G_\delta$-set of elements $\bar\alpha\in\Sigma$ such that
$\omega(\bar\alpha,\sigma)=\Sigma$. Thus there exists 
$\bar\alpha\in\Sigma\setminus\CA$ such that $\omega(\bar\alpha,\sigma)=\Sigma$
because $\CA$ is countable. Let $x_0\in Y$ be the unique point such that
$\vfi(x_0)=\bar\alpha$ and set $X:=\omega(x_0,f)$. The set $X$ is closed and
invariant by Lemma~\ref{lem:omega-set}(i),
and $X\subset Y$. By Lemma~\ref{lem:fi-omegaset}, 
$\vfi(\omega(x_0,f))=\omega(\bar\alpha,\sigma)
=\Sigma$. Thus $\vfi|_X\colon X\to \Sigma$ is onto. This implies that
$\vfi|_X$ is a semi-conjugacy between $(X,f|_X)$ and $(\Sigma,\sigma)$.
Moreover, there exists a countable set $E\subset F$ such that
$\vfi$ is two-to-one on $E$ and one-to-one on $X\setminus E$.
Since $\vfi(X)=\Sigma$ and $\vfi^{-1}(\bar\alpha)=\{x_0\}$, the point
$x_0$ belongs to $X$, and the set $X$ is infinite. 
According to Proposition~\ref{prop:transitive-dense-orbit}, 
the fact that $X=\omega(x_0,f)$ implies that
$(X,f|_X)$ is transitive and $X$ has no isolated point.
Moreover, the set $X$ is totally disconnected because $Y$ is  
totally disconnected, and thus $X$ is a Cantor set.

By definition of $\vfi$, \eqref{eq:v2-semicong4} is satisfied.
Finally, if $\bar\alpha=
(\alpha_n)_{n\ge 0}$ does not belong to $\CA$, the fact
that $J_{\bar\alpha}$ is reduced to a single point implies that
$$
\lim_{n\to+\infty}|J_{\alpha_0\ldots\alpha_{n-1}}|=0,
$$
which is \eqref{eq:v2-semicong5}. This concludes the proof.
\end{proof}

For the following result of topology, one can refer, e.g., to
\cite[Theorem 37.I.3]{Kur1}.

\begin{theo}[Alexandrov-Hausdorff]\label{theo:perfect-set}
Let $X$ be a topological space. Every uncountable Borel set contains a
Cantor set.
\end{theo}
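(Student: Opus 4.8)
The plan is to prove the statement in two stages, under the standing assumption (implicit in the reference \cite{Kur1}) that the ambient space is Polish, i.e.\ separable and completely metrizable; this hypothesis is genuinely needed, since an uncountable discrete space is a Borel subset of itself yet contains no Cantor set. \textbf{Stage one:} every uncountable Borel set contains a nonempty perfect set. \textbf{Stage two:} every nonempty perfect set in a complete metric space contains a set homeomorphic to $\{0,1\}^{\IN}$, which is exactly what the Appendix calls a Cantor set. Stage two is elementary and I would dispose of it first; stage one is the substantive part.

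For stage two, let $P$ be a nonempty perfect subset of the complete metric space. I would construct, by recursion on word length, nonempty open sets $U_s$ indexed by the finite words $s$ over $\{0,1\}$, such that $\overline{U_{s0}}$ and $\overline{U_{s1}}$ are disjoint and contained in $U_s$, $\diam(U_s)\le 2^{-|s|}$, and $U_s\cap P\neq\emptyset$. The recursion step works precisely because $P$ has no isolated point: given $x\in U_s\cap P$ there is a second point $y\in U_s\cap P$ with $y\neq x$, and one surrounds $x$ and $y$ by sufficiently small disjoint open balls inside $U_s$. For each $\alpha\in\{0,1\}^{\IN}$, writing $\alpha|n$ for its length-$n$ initial segment, the nested intersection $\bigcap_{n\ge 0}U_{\alpha|n}=\bigcap_{n\ge 0}\overline{U_{\alpha|n}}$ is, by completeness and the shrinking diameters, a single point $h(\alpha)\in\overline{P}=P$. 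The map $h\colon\{0,1\}^{\IN}\to P$ is injective (two distinct $\alpha$ separate at some pair $\overline{U_{s0}}$, $\overline{U_{s1}}$) and continuous; since $\{0,1\}^{\IN}$ is compact, $h$ is a homeomorphism onto its image, a Cantor set contained in $P$.

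For stage one I would use that every Borel set is \emph{analytic}, i.e.\ a continuous image of the Baire space $\IN^{\IN}$, together with the perfect set theorem for analytic sets (Souslin): an analytic set is either countable or contains a nonempty perfect subset. To prove the latter, represent an uncountable analytic set $A$ by a Souslin scheme $(F_t)_{t}$ of closed sets with shrinking diameters indexed by finite sequences of natural numbers, so that $A=\bigcup_{x\in\IN^{\IN}}\bigcap_{n\ge 0}F_{x|n}$; for a finite sequence $t$ let $A_t$ be the portion of $A$ carried by the branches through $t$. Since $A=A_{\emptyset}$ is uncountable, one finds a subtree of ``large'' nodes (those with $A_t$ uncountable) with the property that above every large node there are two incomparable large extensions whose associated pieces of $A$ have disjoint closures; running a Cantor scheme exactly as in stage two along this subtree and intersecting along branches produces a Cantor set inside $A$. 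This in fact carries out stages one and two at once. The main obstacle is precisely this perfect set theorem for analytic sets: it genuinely requires the Souslin operation, and it is the only place the Borel hypothesis enters (solely via ``Borel $\subseteq$ analytic''). The tempting shortcut of inducting on the Borel hierarchy — closed sets have the property by Cantor–Bendixson, and the property passes to countable unions — fails at complementation, which is the one operation one cannot avoid while climbing the hierarchy, so the detour through analytic sets is essential.
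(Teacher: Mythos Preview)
Your proposal is correct, and you have correctly identified that the hypothesis ``$X$ is a topological space'' is too weak as stated --- one needs $X$ to be Polish (separable, completely metrizable), as you note via the discrete counterexample. In the paper's applications the ambient space is always a compact interval, so no harm is done, but the observation is a genuine correction to the statement.

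The paper itself gives no proof of this theorem: it simply cites Kuratowski \cite[Theorem~37.I.3]{Kur1}. Your two-stage argument is exactly the classical one found there and in standard descriptive set theory texts: (i) every uncountable analytic set (hence every uncountable Borel set) in a Polish space contains a nonempty perfect set, by Souslin's perfect set theorem, proved via a Cantor scheme along the ``large'' nodes of a Souslin representation; (ii) every nonempty perfect set in a complete metric space contains a copy of $\{0,1\}^{\IZ^+}$, by another Cantor scheme. Your remark that transfinite induction on the Borel hierarchy fails at the complementation step, forcing the detour through analytic sets, is also correct and is precisely the reason the Alexandrov--Hausdorff theorem is nontrivial. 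The sketch of the Souslin argument is terse but sound; the only point worth making explicit is that when splitting below a large node $s$, one uses that $f[N_s]$ is uncountable to find two condensation points of $f[N_s]$ lying in $f[N_s]$, separates them by disjoint open sets, and then uses second-countability of $\IN^{\IN}$ to extract basic open subsets $N_{t_0}, N_{t_1}$ of $N_s$ with uncountable images --- this guarantees both the branching and the separation simultaneously.
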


\begin{theo}\label{theo:htop-positive-chaos-LY}
Let $f$ be an interval map. If $h_{top}(f)>0$,
there exists a $\delta$-scrambled Cantor set for some $\delta>0$.
In particular, $f$ is chaotic in the sense of Li-Yorke.
\end{theo}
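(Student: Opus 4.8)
The plan is to transport chaos from the full shift to $X$ through the semi-conjugacy of Proposition~\ref{prop:strictly-turbulent-shift}.

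\emph{Reduction and symbolic model.} Since $h_{top}(f)>0$, Misiurewicz's Theorem~\ref{theo:Misiurewicz} gives an integer $n\ge 1$ such that $g:=f^n$ has a \emph{strict} horseshoe $(J_0,J_1)$; set $\delta:=\min\{|u-v|\mid u\in J_0,\ v\in J_1\}>0$. By Proposition~\ref{prop:scrambled-fn} a $\delta$-scrambled Cantor set for $g$ is one for $f$, so it suffices to build one for $g$. Apply Proposition~\ref{prop:strictly-turbulent-shift} to $g$ and $(J_0,J_1)$: we get a $g$-invariant Cantor set $X\subset I$, a continuous onto semi-conjugacy $\vfi\colon X\to\Sigma=\{0,1\}^{\IZ^+}$ with $(\Sigma,\sigma)$, a countable set $E\subset X$ on which $\vfi$ is two-to-one (injective off $E$), and the nested closed intervals $(J_{\alpha_0\dots\alpha_{m-1}})$ satisfying \eqref{eq:v2-semicong4} and \eqref{eq:v2-semicong5}. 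Let $\CA:=\vfi(E)$ (countable); for $\bar\alpha\notin\CA$ the fibre $\vfi^{-1}(\bar\alpha)$ is a single point, and the inverse $\psi\colon\Sigma\setminus\CA\to X$ is continuous (if $\bar\alpha^{(j)}\to\bar\alpha$, any limit point of $\psi(\bar\alpha^{(j)})$ in the compact $X$ has $\vfi$-image $\bar\alpha$, hence equals $\psi(\bar\alpha)$). Fix once and for all $\bar\gamma^{*}\in\Sigma\setminus\CA$; then $J_{\bar\gamma^{*}}=\bigcap_m J_{\gamma^{*}_0\dots\gamma^{*}_{m-1}}$ is a single point, so $\ell_m:=|J_{\gamma^{*}_0\dots\gamma^{*}_{m-1}}|\to 0$.

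\emph{Pulling back an independent set.} The full shift is topologically mixing, hence topologically weakly mixing, so Theorem~\ref{theo:independent-sets} (cf. Corollary~\ref{cor:weak-mixing-scrambled}) yields a set $K\subset\Sigma$, a countable union of perfect sets, such that for all distinct $\bar\alpha,\bar\beta\in K$ the orbit $(\sigma^i\bar\alpha,\sigma^i\bar\beta)_{i\ge 0}$ is dense in $\Sigma^2$. One of the perfect pieces $P$ of $K$ is uncountable, so $P\setminus\CA$ is an uncountable Borel subset of $\Sigma$ and, by Theorem~\ref{theo:perfect-set}, contains a Cantor set $P'\subset K\setminus\CA$. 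Put $S:=\psi(P')$. Since $\psi|_{P'}$ is a continuous injection of a compact set, $S$ is homeomorphic to $P'$ and lies in the totally disconnected set $X$, so $S$ is a Cantor set.

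\emph{Scrambling and conclusion.} Let $x=\psi(\bar\alpha)$, $y=\psi(\bar\beta)$ with $\bar\alpha\neq\bar\beta$ in $P'$. From $\vfi\circ g=\sigma\circ\vfi$ and $\vfi(x)=\bar\alpha$ we get $\vfi(g^i x)=\sigma^i\bar\alpha$, which begins with the letter $\alpha_i$; by \eqref{eq:v2-semicong4}, $g^i x\in X\cap J_{\alpha_i}$, and likewise $g^i y\in X\cap J_{\beta_i}$. Using density of $(\sigma^i\bar\alpha,\sigma^i\bar\beta)_i$ in $\Sigma^2$: (a) there are infinitely many $i$ with $\alpha_i\neq\beta_i$, for which $g^i x$ and $g^i y$ lie in the two disjoint horseshoe intervals, so $|g^i x-g^i y|\ge\delta$ and $\limsup_i|g^i x-g^i y|\ge\delta$; (b) for each $m$ there are infinitely many $i$ with $\sigma^i\bar\alpha$ and $\sigma^i\bar\beta$ both beginning with $\gamma^{*}_0\dots\gamma^{*}_{m-1}$, for which $g^i x,g^i y\in X\cap J_{\gamma^{*}_0\dots\gamma^{*}_{m-1}}$, so $|g^i x-g^i y|\le\ell_m$; hence $\liminf_i|g^i x-g^i y|=0$. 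Thus $(x,y)$ is a Li-Yorke pair of modulus $\delta$ for $g$. Therefore $S$ is a $\delta$-scrambled Cantor set for $g=f^n$, hence for $f$; being uncountable, it shows $f$ is chaotic in the sense of Li-Yorke.

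\emph{Main obstacle.} The delicate points are the lack of injectivity of $\vfi$ on $E$ — which forces one to work inside $\Sigma\setminus\CA$, to check that $\psi$ is continuous there and that $K$ still meets $\Sigma\setminus\CA$ in a Cantor set — and arranging the $\liminf=0$ estimate so that it uses intervals $J_{\gamma^{*}_0\dots\gamma^{*}_{m-1}}$ that genuinely shrink; this is why the common agreement pattern of the two symbolic sequences must be taken to be a prefix of a fixed $\bar\gamma^{*}\notin\CA$ rather than an arbitrary long block (a generic long common block, e.g. of constant symbol, need not correspond to a shrinking interval).
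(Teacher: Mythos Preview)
Your proof is correct and reaches the same conclusion, but the route differs from the paper's in one key step. Both the paper and you start identically: get a strict horseshoe for some iterate $g=f^n$, invoke Proposition~\ref{prop:strictly-turbulent-shift} to obtain the semi-conjugacy $\vfi\colon X\to\Sigma$, fix a sequence $\bar\gamma^*\notin\vfi(E)$ so that the associated cylinders shrink, and finish by pulling back to $X$ and applying Theorem~\ref{theo:perfect-set}. The difference is in how the scrambled set inside $\Sigma$ is produced. The paper writes down an explicit, elementary map $\psi\colon\Sigma\to\Sigma$ that interleaves growing prefixes of the fixed $\bar\gamma^*$ with growing prefixes of the input sequence; this forces, by hand, the simultaneous agreement (for the $\liminf$) and disagreement (for the $\limsup$) along prescribed time subsequences. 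You instead invoke Theorem~\ref{theo:independent-sets} (Iwanik, via Mycielski) on the weakly mixing shift to obtain a totally independent set, and then read off both the $\limsup$ and $\liminf$ conditions from density of the pair orbit in $\Sigma^2$. Your argument is cleaner and more conceptual since it reuses machinery already proved in the chapter; the paper's argument is more self-contained and constructive, avoiding the Baire-category black box of Mycielski's theorem. Your handling of the two genuine subtleties---continuity of the partial inverse on $\Sigma\setminus\vfi(E)$, and the need to align the common prefix with a fixed $\bar\gamma^*\notin\vfi(E)$ rather than an arbitrary block---is correct and well explained.
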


\begin{proof}
By Theorem~\ref{theo:htop-power-of-2}, 
there exists an integer $p$ such that
$f^p$ has a strict horseshoe  $(J_0,J_1)$. Let $\delta>0$ be the distance
between  $J_0$ and $J_1$ and $g:=f^p$. Let $X, E$  and $\vfi\colon X\to \Sigma$ 
be given by Proposition~\ref{prop:strictly-turbulent-shift}
for the map $g$. We fix an
element $\bar\omega=(\omega_n)_{n\ge 0}$ in $\Sigma\setminus\vfi(E)$.
We define $\psi\colon \Sigma\to \Sigma$ by
$$
\psi((\alpha_n)_{n\ge 0}):=
(\omega_0\;\alpha_0\; \omega_0\omega_1\;\alpha_0\alpha_1\ldots
\omega_0\ldots\omega_{n-1}\; \alpha_0\alpha_1\ldots\alpha_{n-1}\ldots).
$$
This map is clearly continuous and one-to-one.
For every $\bar\alpha\in \Sigma$, we choose a point 
$x_{\bar\alpha}$ in $\vfi^{-1}\circ
\psi(\bar\alpha)$ and we set $S:=\{x_{\bar\alpha}\in
X \mid  \bar\alpha\in\Sigma\}$. 
According to Proposition~\ref{prop:strictly-turbulent-shift},
the set $\vfi^{-1}\circ \psi(\bar\alpha)$ contains two points
if $\psi(\bar\alpha)\in \vfi(E)$ and is reduced to a single point
if $\psi(\bar\alpha)\notin \vfi(E)$. Thus there exists a countable set
$F\subset X$ such that  $S=(\vfi^{-1}\circ \psi(\Sigma))\setminus F$.

Let $\bar\alpha,\bar\beta$ be two
distinct elements of $\Sigma$, and let $k\ge 0$ be such that
$\alpha_k\neq\beta_k$. By definition of $\psi$, there exists an increasing
sequence of integers $(n_i)_{i\ge 0}$ such that the $n_i$-th coordinates of
$\psi(\bar\alpha)$ and $\psi(\bar\beta)$ are equal respectively to
$\alpha_k$ and $\beta_k$, and hence are distinct. Then, by
Proposition~\ref{prop:strictly-turbulent-shift},  
either
$g^{n_i}(x_{\bar\alpha})$ belongs to $J_0$ and
$g^{n_i}(x_{\bar\beta})$ belongs to $J_1$, or the converse. In
particular, $|g^{n_i}(x_{\bar\alpha})-g^{n_i}(x_{\bar\beta})|\geq
\delta$. This proves that, for all distinct points $x,x'$ in $S$,
$$
\limsup_{n\to+\infty}|g^{n}(x)-g^{n}(x')|\geq \delta.
$$
According to Proposition~\ref{prop:strictly-turbulent-shift}
and the choice of $\bar\omega$, 
$$
\lim_{n\to+\infty}\diam \{x\in X \mid \vfi(x)\text{ begins with }
\omega_0\ldots\omega_{n-1}\}=0.
$$
By definition of $\psi$, there exists an increasing
sequence of integers $(m_i)_{i\ge 0}$ such that, for every 
$\bar\alpha\in\Sigma$,
$\sigma^{m_i}(\psi(\bar\alpha))$ begins with $(\omega_0\ldots\omega_{i-1})$.
Since $\sigma^{m_i}(\psi(\bar\alpha))=\vfi(g^{m_i}(x_{\bar\alpha}))$, we get
$$
\forall\bar\alpha,\bar\beta\in\Sigma,\ \liminf_{n\to+\infty}|g^{n}(x_{\bar\alpha})-g^{n}(x_{\bar\beta})|=0.
$$
Therefore, $S$ is a $\delta$-scrambled set for $g$. Moreover, by
Theorem~\ref{theo:perfect-set}, 
there exists a Cantor set $K\subset S$
because $S=(\vfi^{-1}\circ \psi(\Sigma))\setminus F$ 
is an uncountable Borel set.
We have proved that $g=f^p$ admits a $\delta$-scrambled Cantor set,
so  $K$ is also a $\delta$-scrambled set for $f$ by
Proposition~\ref{prop:scrambled-fn}.
\end{proof}

\subsection*{Remarks on graph maps and general dynamical systems}

Proposition~\ref{prop:strictly-turbulent-shift} can be generalized to
graph maps (actually to any dynamical system having a horseshoe made of
two intervals).
Theorem~\ref{theo:htop-positive-chaos-LY} remains valid for graph maps,
and the same proof works because, according to 
Theorem~\ref{theo:htop-horseshoeG}, if a graph map $f$ has positive topological entropy,
$f^n$ has a strict horseshoe for some $n$. The proof that a graph map of
positive entropy is chaotic in the sense of Li-Yorke does not appear in
the literature, this result being a consequence of the next
theorem, which is due to Blanchard, Glasner, Kolyada and Maass \cite{BGKM}.

\begin{theo}\label{theo:general-system-htop-positive-chaos-LY}
Let $(X,f)$ be a topological dynamical system with
positive topological entropy. Then it admits a $\delta$-scrambled Cantor set
for some $\delta>0$.
\end{theo}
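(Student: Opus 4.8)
The plan is to step outside the one-dimensional setting and argue with invariant measures, following \cite{BGKM}. First I would invoke the variational principle together with the ergodic decomposition to obtain an $f$-invariant ergodic Borel probability measure $\mu$ with $h_\mu(f)>0$ (such a $\mu$ exists precisely because $h_{top}(f)>0$). Let $\pi\colon(X,\mu,f)\to(Y,\nu,g)$ be the factor onto the Pinsker factor, i.e.\ the largest zero-entropy factor of $(X,\mu,f)$; since $h_\mu(f)>0$ this is a proper factor and the extension $\pi$ has completely positive relative entropy. Consider the relatively independent self-joining $\lambda:=\mu\times_Y\mu$ on $X\times X$, which is $f\times f$-invariant. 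Two facts about this extension are the load-bearing inputs: completely positive relative entropy makes $\pi$ relatively weakly mixing, which together with ergodicity of $\mu$ forces $(X\times X,\lambda,f\times f)$ to be ergodic; and it forces the disintegration fibres $\mu_y$ of $\mu$ over $Y$ to be non-atomic for $\nu$-almost every $y$, so that $\lambda$ gives zero mass to the diagonal $\Delta:=\{(u,u):u\in X\}$.

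From here the Li-Yorke pairs come out measure-theoretically. For $\varepsilon>0$ put $V_\varepsilon:=\{(u,v):d(u,v)<\varepsilon\}$. A compactness argument shows that a non-atomic Borel measure on a compact metric space cannot be supported on an $\varepsilon$-separated (hence finite) set, so $\mu_y\times\mu_y(V_\varepsilon)>0$ for a.e.\ $y$ and therefore $\lambda(V_\varepsilon)>0$ for every $\varepsilon>0$. On the other hand, since $\lambda(\Delta)=0$ and $X\times X$ is the union of $\Delta$ with the sets $\{(u,v):d(u,v)\ge 1/m\}$, there is a fixed $\delta>0$ with $\lambda(\{(u,v):d(u,v)\ge\delta\})>0$. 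Applying Poincaré recurrence in the ergodic system $(X\times X,\lambda,f\times f)$ to each $V_{1/k}$ and to $\{(u,v):d(u,v)\ge\delta\}$ yields that $\lambda$-almost every pair $(x,y)$ satisfies $\liminf_{n}d(f^nx,f^ny)=0$ and $\limsup_{n}d(f^nx,f^ny)\ge\delta$; that is, $\lambda$-almost every pair is a Li-Yorke pair of modulus $\delta$.

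It remains to pass from ``almost every pair'' to an uncountable, indeed Cantor, $\delta'$-scrambled set for a slightly smaller $\delta'\in(0,\delta)$, and this is the delicate step. The set $P:=\{(x,y):\liminf_n d(f^nx,f^ny)=0,\ \limsup_n d(f^nx,f^ny)>\delta'\}$ is a $G_\delta$ subset of $X\times X$ disjoint from $\Delta$ (write both conditions as countable intersections of unions of open sets), and by the previous paragraph $\lambda(P)=1$. Note that $\mu$ is itself non-atomic (an ergodic invariant measure of a continuous self-map of a compact metric space that has an atom is carried by a periodic orbit, which has zero entropy), so the topological support $Z$ of $\mu$ is a compact perfect set. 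I would then use a Kuratowski–Mycielski-type extraction in the spirit of Theorem~\ref{theo:Mycielski}, applied to the $\lambda$-conull $G_\delta$ set $P$ inside $Z\times Z$, to produce a countable union of compact perfect subsets of $Z$, any two distinct points of which form a pair in $P$, hence a $\delta'$-scrambled set; Theorem~\ref{theo:perfect-set} then gives a Cantor set inside it. The main obstacles are exactly the two ergodic-theoretic facts used in the first paragraph (relative weak mixing and non-atomicity of the Pinsker fibres coming from positive relative entropy) and, in the last step, checking that $P$ is large enough in $Z\times Z$ — e.g.\ dense — for the Mycielski-type argument to apply; this is where one must exploit the ergodicity of $\lambda$ and the non-atomicity of its marginals, and it is also why one works with $\delta'<\delta$, the condition $\limsup\ge\delta$ being closed rather than $G_\delta$.
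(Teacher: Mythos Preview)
The paper does not give a proof of this theorem. It appears in the ``Remarks on graph maps and general dynamical systems'' at the end of Section~\ref{sec:5-htop>0}, stated as a result of Blanchard, Glasner, Kolyada and Maass \cite{BGKM}, with the explicit comment that the proof ``is much more difficult than the proof in the interval case''; the interval case (Theorem~\ref{theo:htop-positive-chaos-LY}) is what the paper actually proves, via horseshoes and the semi-conjugacy of Proposition~\ref{prop:strictly-turbulent-shift}. So there is no ``paper's own proof'' to compare against here.

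Your sketch does follow the ergodic-theoretic route of \cite{BGKM}. Two comments. First, the step you label ``Poincar\'e recurrence'' really uses the ergodicity of $\lambda$: recurrence only says that a.e.\ point \emph{of a set} returns to that set, whereas you need $\lambda$-a.e.\ pair to visit each fixed positive-$\lambda$-measure set infinitely often, which is a consequence of the ergodic theorem together with the ergodicity of $(X\times X,\lambda,f\times f)$ you established. Second, and more substantively, the final extraction does not go through as written: $\lambda$ is carried by $Z\times Z$ but in general does not have full support there (take $X$ to be the direct product of a Bernoulli shift with an irrational rotation; the Pinsker factor is the rotation, the fibres $\mu_y$ sit over single points of the circle, and $\lambda$ gives zero mass to any product $V\times W$ whose projections to the circle are disjoint). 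Hence the $G_\delta$ set $P$ need not be dense in $Z\times Z$, and Theorem~\ref{theo:Mycielski} cannot be applied on $Z$ directly. You are right to flag this as the delicate step; resolving it is precisely what requires the additional argument carried out in \cite{BGKM}.
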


This theorem is a remarkable 
result and answers a longstanding question. Its proof is much more difficult
than the proof in the interval case. 

\begin{rem}
In \cite{BGKM}, the main theorem
states the existence of a scrambled Cantor set, but the proof 
actually gives a $\delta$-scrambled Cantor set.
\end{rem}

\section{Zero entropy maps}\label{sec:5-htop0}

The converse of Theorem~\ref{theo:htop-positive-chaos-LY} is not true:
there exist zero entropy interval maps that are chaotic in the sense of
Li-Yorke; we shall give an example in Section~\ref{sec:chaos-LY-htop0}.  
A zero entropy map that is chaotic in the sense of
Li-Yorke is sometimes called \emph{weakly chaotic}\index{chaos!weak chaos}
(e.g. in \cite{FSS}).
In \cite{Smi}, Smítal proved that a zero entropy interval map that is
chaotic in the sense of Li-Yorke has a $\delta$-scrambled Cantor set for
some $\delta>0$, as it is the case for positive entropy interval maps.
He also gave a necessary and sufficient condition for a zero entropy
interval map to be chaotic in the sense of Li-Yorke in terms of non separable
points (condition (iv) in Theorem~\ref{theo:htop0-chaos-LY} below).
This condition looks technical, but it can be useful to show that a
map is chaotic in the sense of Li-Yorke or not; in particular, it will be
needed for the examples in Section~\ref{sec:chaos-LY-htop0}.

\begin{defi}[$f$-non separable points]
Let $f$ be an interval map and let $a_0,a_1$ be two distinct
points. The points $a_0, a_1$ are \emph{$f$-separable} if
there exist two disjoint intervals $J_0,J_1$ and two positive integers
$n_0,n_1$ such that $a_i\in J_i$, $f^{n_i}(J_i)=J_i$ and
$(f^k(J_i))_{0\leq k<n_i}$ are disjoint for $i\in\{0,1\}$. Otherwise they are
\emph{$f$-non separable}.\index{separable ($f$- )}\index{non separable ($f$-
)}
\end{defi}

\begin{theo}\label{theo:htop0-chaos-LY}
Let $f$ be an interval map of zero topological entropy. The following
properties are equivalent:
\begin{enumerate}
\item $f$ is chaotic in the sense of Li-Yorke,
\item there exists a $\delta$-scrambled Cantor set
for some $\delta>0$,
\item there exists a point $x$ that is not approximately periodic,
\item there exists an infinite $\omega$-limit set containing 
two $f$-non separable points.
\end{enumerate}
\end{theo}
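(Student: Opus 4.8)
The plan is to establish the cycle of implications (ii)$\Rightarrow$(i)$\Rightarrow$(iii)$\Rightarrow$(iv)$\Rightarrow$(ii). The first two arrows are soft. For (ii)$\Rightarrow$(i): a $\delta$-scrambled Cantor set is uncountable and scrambled, so $f$ is chaotic in the sense of Li-Yorke by definition. For (i)$\Rightarrow$(iii): if $S$ is an uncountable scrambled set, Lemma~\ref{lem:approx-periodic} shows that $S$ contains at most one approximately periodic point, so (since $S$ is uncountable) some $x\in S$ is not approximately periodic.

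For (iii)$\Rightarrow$(iv), fix $x$ not approximately periodic. First I would check that $\omega(x,f)$ is infinite: otherwise it is a periodic orbit by Lemma~\ref{lem:omega-finite}, say of period $p$, and then Lemma~\ref{lem:omega-set}(iv) forces each $\omega(f^i(x),f^p)$ to be a single fixed point of $f^p$, so $f^{np+i}(x)$ converges to a point of that orbit and $x$ is approximately periodic (even with $\eps=0$), a contradiction. Next I would use zero entropy: by Theorem~\ref{theo:htop-power-of-2}, $f$ has no periodic point whose period is not a power of $2$, so $f$ is of type $2^k$ or $2^\infty$ and the forward orbit of $x$ is governed by a refining sequence of cycles of intervals $\CC_1,\CC_2,\dots$ of periods $q_1<q_2<\cdots$ (powers of $2$): the trajectory of $x$ eventually visits the components of each $\CC_n$ cyclically, and $\omega(x,f)$ meets every component of every $\CC_n$. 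The substantive step is to show that if every pair of points of $\omega:=\omega(x,f)$ were $f$-separable, then $x$ would be approximately periodic: $f$-separability, read against the nested decompositions $\CC_n$, lets one show that for every $\eps>0$ the trajectory of $x$ is shadowed in $\limsup$ to within $\eps$ by the trajectory of a periodic point cycling through the appropriate components of some $\CC_n$ — contradicting non-approximate-periodicity. Hence $\omega$ contains two $f$-non separable points $a_0\ne a_1$.

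For (iv)$\Rightarrow$(ii), write $\omega=\omega(x_0,f)$ with $a_0,a_1\in\omega$ non-separable, and fix $\delta>0$ slightly below $|a_0-a_1|$. Non-separability combined with the structure theory forces $a_0$ and $a_1$ to lie in a common component of each $\CC_n$, so one extracts a nested sequence of periodic intervals $I_1\supseteq I_2\supseteq\cdots$ of periods $2^{n_1}<2^{n_2}<\cdots$ with $a_0,a_1\in I_k$ for all $k$. Using this together with $a_0,a_1\in\omega(x_0,f)$, I would build a Cantor scheme: for each $\bar\alpha\in\{0,1\}^{\IZ^+}$, select (via the intermediate value theorem and Lemma~\ref{lem:chain-of-intervals}) a point $y_{\bar\alpha}$ whose orbit runs along the solenoidal structure but whose behaviour at level $n$ is dictated by $\alpha_n$, so that for $\bar\alpha\ne\bar\beta$ the orbits of $y_{\bar\alpha}$ and $y_{\bar\beta}$ are driven near $a_0$, respectively near $a_1$, at a common time (giving $\limsup\ge\delta$) while simultaneously lying in the same, ever-finer piece of the structure at syndetic times (giving $\liminf=0$). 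This produces an uncountable set all of whose pairs are Li-Yorke pairs of modulus $\delta$; one then extracts a Cantor subset with Theorem~\ref{theo:perfect-set}, or runs the whole construction through Mycielski's Theorem~\ref{theo:Mycielski} from the start, and passes between $f$ and its iterates with Proposition~\ref{prop:scrambled-fn}.

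The routine parts are the cycle of easy implications and the reduction to $\omega(x,f)$ infinite. The main obstacle is the structural analysis underlying both (iii)$\Rightarrow$(iv) and (iv)$\Rightarrow$(ii): describing the solenoidal decomposition of a zero-entropy map by refining cycles of intervals of period $2^n$, characterising approximate periodicity through it, and — hardest of all — carrying out the Cantor-scheme construction in (iv)$\Rightarrow$(ii), where, unlike the positive-entropy case (Proposition~\ref{prop:strictly-turbulent-shift}), there is no horseshoe to supply exponential branching, so the uncountable family must be assembled from the infinitely many levels of the $2^\infty$-structure together with the two non-separable points, while keeping the separation modulus uniformly bounded below.
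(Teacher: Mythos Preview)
Your proposal is correct and follows essentially the same route as the paper: the easy arrows (ii)$\Rightarrow$(i) and (i)$\Rightarrow$(iii) are handled exactly as you say (via Lemma~\ref{lem:approx-periodic}), while the two substantive arrows are precisely the content of the results the paper develops beforehand --- the solenoidal decomposition by nested $2^n$-cycles of intervals (Proposition~\ref{prop:htop0-Lki}), the characterisation of $f$-separability via these intervals (Lemma~\ref{lem:f-separable-Lni}), the ``separable $\Rightarrow$ approximately periodic'' step for (iii)$\Rightarrow$(iv) (Proposition~\ref{prop:non-LY-chaotic}), and the Cantor-scheme construction for (iv)$\Rightarrow$(ii) (Proposition~\ref{prop:cantor-delta-scrambled}). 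Your identification of the hardest part --- assembling a $\delta$-scrambled Cantor set from the $2^\infty$-structure without a horseshoe --- is exactly where the paper invests its effort.
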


Before proving this theorem, we
need an important number of intermediate results. Some of them have
an interest on their own.

\medskip
The next lemma is stated in the case of 
zero entropy interval maps in \cite{Smi}. We give a different proof.

\begin{lem}\label{lem:alternating}
Let $f$ be an interval map such that $f^2$ has no
horseshoe.  Let $x_0$ be a point with an infinite orbit,
and $x_n:=f^n(x_0)$ for all $n\ge 1$. Suppose that  there exists
$k_0\geq 2$ such that either $x_{k_0}<x_0<x_1$ or
$x_{k_0}>x_0>x_1$. Then there exist a fixed point $z$ and an integer
$N$ such that
$$
\forall n\geq N,\ x_n>z\Longleftrightarrow x_{n+1}<z.
$$
\end{lem}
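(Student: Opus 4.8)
The plan is to produce the desired fixed point as the one furnished by Lemma~\ref{lem:U<D} and then to show that, past some index, consecutive iterates of $x_0$ land on opposite sides of it. First I would do the easy reductions. Conjugating $f$ by $t\mapsto -t$ we may assume $x_{k_0}<x_0<x_1$, since neither the hypothesis nor the conclusion is affected. By Lemma~\ref{lem:horseshoe-fn}(ii) a horseshoe for $f$ would give a $4$-horseshoe for $f^2$, hence (keeping two of the four intervals) a horseshoe for $f^2$; so $f$ has no horseshoe and Lemma~\ref{lem:U<D} applies to $f$. Likewise, by Proposition~\ref{prop:odd-period-turbulent}, $f$ has no periodic point of odd period $>1$, so by Proposition~\ref{prop:xn-x0-x1} the configurations $f^p(y)\le y<f(y)$ and $f^p(y)\ge y>f(y)$ never occur, for any point $y$ and any odd $p\ge 3$. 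Since $x_1>x_0$ we have $x_0\in U(x_0)$, and since $x_{k_0}<x_0$ the orbit is not strictly increasing, so $D(x_0)\ne\emptyset$; Lemma~\ref{lem:U<D} then gives a fixed point $z$ with $\sup U(x_0)\le z\le \inf D(x_0)$. As $\CO_f(x_0)$ is infinite it contains no fixed point, and $\CO_f(x_0)=U(x_0)\cup D(x_0)$, so: every orbit point $x>z$ satisfies $f(x)<x$, and every orbit point $x<z$ satisfies $f(x)>x$ (call these (A) and (B)); in particular $x_0\le\sup U(x_0)\le z$ and $x_0\ne z$, so $x_0<z$, and no $x_n$ equals $z$.

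The conclusion is equivalent to saying that, past some $N$, every maximal run of consecutive orbit points lying on one side of $z$ has length $1$ (by (A)/(B) such runs are strictly monotone, and the orbit never meets $z$). I would argue by contradiction: if this fails, then either the orbit is eventually on one side of $z$, or it crosses $z$ infinitely often while making runs of length $\ge 2$ infinitely often. In both cases I would extract a $2$-horseshoe for $f^2$, contradicting the hypothesis. The mechanism is that $f(z)=z$: for an interval with one endpoint at $z$, its $f$-image contains $z$ together with the image of the other endpoint. Thus an up-crossing $x_i<z<f(x_i)$ gives $f([x_i,z])\supset[z,f(x_i)]$, i.e. a below-$z$ interval stretched across $z$; composing with the downward motion above $z$ coming from (A), and then with either a subsequent down-crossing that lands well below $z$ (in the "long runs" case) or the monotone tail converging to a fixed point (in the "eventually one-sided" case, where that limit must be strictly below $z$ because $\liminf_n x_n\le x_{k_0}<x_0<z$), one pins down two intervals $J<z<K$, built from a handful of consecutive orbit points around the crossing, with $f^2(J)\supset J\cup K$ and $f^2(K)\supset J\cup K$. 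By Proposition~\ref{prop:turbulent-all-periods} and Sharkovsky's Theorem~\ref{theo:Sharkovsky} this already contradicts that $f^2$ has no horseshoe; alternatively the same data, via Proposition~\ref{prop:xn-x0-x1} applied to a well-chosen interval endpoint and an odd $p\ge 3$, produces a periodic orbit of odd period $>1$, again a contradiction.

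The delicate point — and the real obstacle — is this last construction: one must choose the crossing (or run) so that the landing value of $f^2$ is as far from $z$ as possible, essentially driving it toward $\liminf_n x_n$ or $\limsup_n x_n$, so that the nested inclusions $f^2(J)\supset J\cup K$, $f^2(K)\supset J\cup K$ genuinely hold, and one must keep track of the relative order of the few orbit points involved in each of the separate cases. Once the main step is granted, there is an $N$ with $x_n>z\iff x_{n+1}<z$ for all $n\ge N$, which is exactly the assertion.
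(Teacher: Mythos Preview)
Your overall plan is the paper's: obtain $z$ from Lemma~\ref{lem:U<D}, record that every orbit point below $z$ moves up and every orbit point above $z$ moves down, and then argue by contradiction that if same-side runs of length $\ge 2$ occur arbitrarily late one can build a horseshoe for $f^2$. The reductions in your first paragraph are correct.

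The gap is that you do not carry out the contradiction, and the pieces you do supply are not right. First, the claim that in the eventually one-sided case the limit lies strictly below $z$ ``because $\liminf_n x_n\le x_{k_0}$'' is unjustified and in general false: if the tail is monotone, $\liminf_n x_n$ equals the limit of the tail and has no reason to be $\le x_{k_0}$. Second, ``a handful of consecutive orbit points around the crossing'' does not suffice to produce $J<z<K$ with $f^2(J)\supset J\cup K$ and $f^2(K)\supset J\cup K$: for that one needs quantitative control on how far $f^2$ pushes the endpoints, and you have not established any. Your appeal to Proposition~\ref{prop:xn-x0-x1} only excludes odd values of $k$ in a configuration $x_{p+k}<x_p<x_{p+1}$, which is far weaker than what is required.

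What the paper supplies and your sketch lacks is an \emph{anchor}. Before assuming the conclusion fails, the paper proves (by an explicit horseshoe-for-$f^2$ argument when $k\ge 3$) that the minimal $k\ge 2$ for which some $p$ satisfies $x_{p+k}<x_p<x_{p+1}$ or the reverse is exactly $k=2$. This fixes a specific $p$ with $x_{p+2}<x_p<z<x_{p+1}$. Then, assuming the conclusion fails, the paper further chooses $p$ to minimize the gap to the next same-side run, rules out the gap being $2$, and derives a precise alternating order pattern $x_{p+2}<x_{p+4}<\cdots<z<\cdots<x_{p+3}<x_{p+1}$ together with $x_{p+2j}<x_p$; only with this detailed order information can one write down the two intervals giving the horseshoe for $f^2$. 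Without the $k=2$ step and the ensuing order analysis, the construction simply does not close.
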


\begin{proof}
All the points $(x_n)_{n\ge 0}$ are distinct by assumption.
Let 
$$U:=\{x_n\mid n\ge 0, x_{n+1}>x_n\}\quad\text{and}\quad
D:=\{x_n\mid n\ge 0, x_{n+1}<x_n\}.
$$
The map $f$ has no horseshoe (because $f^2$ has no horseshoe either), 
and thus Lemma~\ref{lem:U<D} 
applies: there exists a fixed point $z$ such that
\begin{equation}\label{eq:AzB}
U< z< D.
\end{equation}
By assumption, there exist couples of integers $(p,k)$ with
$p\ge 0$, $k\ge 2$, such that,
either $x_{p+k}<x_p<x_{p+1}$, or $x_{p+k}>x_p>x_{p+1}$. We choose $(p,k)$
satisfying this property with $k$ minimal. 
We are going to show that $k=2$. Suppose on the contrary that $k\geq 3$. 
We may assume that $x_{p+k}<x_p<x_{p+1}$, the case
with reverse inequalities being symmetric.  The minimality of $k$ 
and the fact that $k\ge 3$ imply that
$x_{p+2}>x_p$ and that we do not have $x_{p+k}<x_{p+1}<x_{p+2}$. Thus
\begin{equation}\label{eq:xp}
x_{p+k}<x_p<x_{p+2}<x_{p+1}.
\end{equation}
Then $x_p<z<x_{p+1}$ by \eqref{eq:AzB}.  Let $q\in
\Lbrack p+1,p+k-1\Rbrack$ be the integer such that
$x_n>z$  for all $n\in\Lbrack p+1, q\Rbrack$ 
and $x_{q+1}<z$.  By \eqref{eq:AzB}, $x_{q+2}>x_{q+1}$ and
$x_{n+1}<x_n$ for all $n\in\Lbrack p+1, q\Rbrack$. If
$x_{q+1}>x_{p+k}$, then  $x_{p+k}<x_{q+1}<x_{q+2}$, which
contradicts the minimality of $k$.  Therefore the points are ordered as follows:
$$
x_{q+1}\leq x_{p+k}<x_p<z<x_q<x_{q-1}<\cdots<x_{p+2}<x_{p+1}.
$$
If $q=p+1$, then $x_{p+2}\le x_{p+k}<x_p<x_{p+1}$, which contradicts the
fact that $x_p<x_{p+2}$ by \eqref{eq:xp}. Thus $q\ge p+2$. Let 
$I_0:=[x_p,x_q]$ and $I_1:=[x_q,x_{p+1}]$. Then
$f(I_0)\supset [x_{q+1},x_{p+1}]\supset I_0\cup I_1$ and
$f(I_1)\supset [x_{q+1},x_{p+2}]\supset I_0$. This implies that 
$(I_0,I_1)$ is a horseshoe for $f^2$, which is a
contradiction.  We deduce that $k=2$, that is, there exists an integer
$p$ such that
\begin{equation}\label{eq:xn-k=2}
\text{either } x_{p+2}<x_p<x_{p+1}\quad\text{or}\quad x_{p+2}>x_p>x_{p+1}.
\end{equation}

\medskip
Now we are going to show that there exists an integer $N$ such that, for all
$i\geq N$, $x_i>z\Leftrightarrow x_{i+1}<z$. Assume that the contrary holds, 
which implies the following by \eqref{eq:AzB}:
\begin{equation}\label{eq:xi<xi+1<z}
\forall n\geq 0,\ \exists\, i\geq n, \text{ either } x_{i}<x_{i+1}<z
\quad\text{or}\quad z>x_{i+1}>x_{i}.
\end{equation}
For every $n\ge 0$, let $i(n)$ be the minimal integer $i$ satisfying this 
property.
Among all integers $p$ satisfying \eqref{eq:xn-k=2}, we choose
$p$ such  that $i(p)-p$ is minimal and we set $i:=i(p)$. 
In \eqref{eq:xn-k=2}, we assume that  $x_{p+2}<x_p<x_{p+1}$, the case with 
reverse inequalities being symmetric. Then $x_p<z<x_{p+1}$ by \eqref{eq:AzB},
which implies that $i\ge p+2$.
First we suppose that $i=p+2$, that is,
$x_{p+2}<x_{p+3}<z$. If we set $J:=[x_{p+2}, x_p]$ and $K:=[x_p,z]$, then
$f(J)\supset [x_{p+3},x_{p+1}]\ni z$, so $f^2(J)\supset
[x_{p+2},z]\supset J\cup K$ and $f^2(K)\supset
[x_{p+2},z]\supset J\cup K$. Therefore $J,K$ form a horseshoe for $f^2$,
which is a contradiction. We deduce that $i\geq p+3$.

If $n$ is in $\Lbrack p+1,i\Rbrack$, then $n$ does not satisfy
\eqref{eq:xn-k=2}, otherwise $i(n)-n=i-n<i-p$, which
contradicts the choice of $p$.  Thus
\begin{equation}\label{eq:xn+1+2}
\forall n\in\Lbrack p+1,i\Rbrack,\  x_n 
\text{ is not between }x_{n+1}\text{ and }x_{n+2}.
\end{equation}
We show by
induction on $n$ that, for all $n\in\Lbrack p+2,i\Rbrack$ 
with $(n-p)$ even, we have:
\begin{equation}\label{eq:x2x4-x1}
\begin{array}{l}
x_{p+2}<x_{p+4}<x_{p+6}<\cdots<x_{n-2}<x_n\qquad\qquad\\
\qquad <z<x_{n-1}<x_{n-3}<\cdots<x_{p+3}<x_{p+1}.
\end{array}
\end{equation}

\par\noindent  $\bullet$ We have seen that $x_{p+2}<z<x_{p+1}$,
which is \eqref{eq:x2x4-x1} for $n=p+2$. 

\par\noindent $\bullet$  Suppose that \eqref{eq:x2x4-x1}
holds for some $n\in\Lbrack p+2,i-2\Rbrack$ with $n-p$ even. Then $x_{n+1}>x_n$ by
\eqref{eq:AzB}, and $x_n>x_{n+2}$ by \eqref{eq:xn+1+2}.
Moreover, $x_{n+1}>z$, otherwise we would have $x_n<x_{n+1}<z$,
which would contradict the minimality of $i-p$.
Furthermore, $x_{n-1}$ cannot be between $x_n$ and $x_{n+1}$ 
by \eqref{eq:xn+1+2}, and $x_{n-1}>z$ according to the induction hypothesis.
In summary, these inequalities give
$x_n<z<x_{n+1}<x_{n-1}$. Similarly, $x_{n+2}<x_{n+1}$ by
\eqref{eq:AzB} and $x_{n+2}<z$ by \eqref{eq:xn+1+2}+\eqref{eq:AzB};
thus $x_n<x_{n+2}<z$. This shows that \eqref{eq:x2x4-x1} holds for $n+2$,
which ends the proof of \eqref{eq:x2x4-x1}.

\medskip Now we show by induction on $n$ that $x_{n}<x_p$ for all 
$n\in\Lbrack p+2,i\Rbrack$ with  $(n-p)$ even.

\noindent $\bullet$  We know that $x_{p+2}<x_p$.

\par\noindent $\bullet$  Suppose that the statement holds for $n$
but not for $n+2$, for some $n\in\Lbrack p+2,i-2\Rbrack$ such that $(n-p)$
is even. This means that $x_n<x_p<x_{n+2}$.
Combining this with \eqref{eq:x2x4-x1}, we get
$x_{p+2}\le x_n<x_p<x_{n+2}<z$. We set  $J:=[x_n,x_p]$ and
$K:=[x_p,x_{n+2}]$.  Then $f^2(J)\supset [x_{p+2},x_{n+2}]\supset J\cup
K$ and $f^2(K)\supset [x_{p+2},x_{n+4}]$. 
\begin{itemize}
\item[--]
If $n+4\leq i$, then $x_{n+4}>x_{n+2}$ by \eqref{eq:x2x4-x1}. 
\item[--]
If $n+4=i+1$, then
$x_{i-1}<z$ by \eqref{eq:x2x4-x1} and $x_i>z$ by minimality
of $i-p$, so $z<x_{i+1}<x_i$ according to the definition of $i$. 
This implies that $x_{n+4}=x_{i+1}>z>x_{n+2}$. 
\item[--]
If $n+4=i+2$, then $x_i<x_{i+1}<z$ by
definition of $i$ and \eqref{eq:x2x4-x1}, and thus
$x_{n+4}=x_{i+2}>x_{i+1}$ by \eqref{eq:AzB} and we get $x_{n+4}>x_i=x_{n+2}$.  
\end{itemize}
In the three cases, we have $x_{n+4}>x_{n+2}$.
This implies $f^2(K)\supset J\cup K$, so
$(J,K)$ is a horseshoe for $f^2$,
which is a contradiction.  We deduce that, if $x_n<x_p$, then
$x_{n+2}<x_p$ too. This completes the induction.

\medskip We end the proof by showing that \eqref{eq:xi<xi+1<z}
is absurd.  We set  $j:=i$ if $i-p$ is even, and $j:=i-1$ if $i-p$ is odd.
Let $J':=[x_j,x_p]$ and $K':=[x_p,z]$ (recall that $x_p<z$). 
If $j=i$, then $x_i<x_{i+1}<z$ (by \eqref{eq:xi<xi+1<z} and \eqref{eq:x2x4-x1}) 
and $f(J')\supset [x_{i+1},x_{p+1}]\ni z$, so $f^2(J')\supset [x_{p+2},z]
\supset J'\cup K'$ by \eqref{eq:x2x4-x1}. If $j=i-1$, then $z<x_{i+1}<x_i$ 
(by \eqref{eq:xi<xi+1<z} and \eqref{eq:x2x4-x1} again) and
$f^2(J')\supset [x_{p+2},x_{i+1}] \supset J'\cup K'$
by \eqref{eq:x2x4-x1}. Moreover,
$f^2(K')\supset [x_{p+2},z]\supset J'\cup K'$. Therefore, $(J',K')$ is a
horseshoe for $f^2$, which is not possible.  The lemma is proved.
\end{proof}

The following result is due to Sharkovsky \cite[Corollary 3]{Sha5}.
The proof we give relies on Lemma~\ref{lem:alternating} above.

\begin{prop}\label{prop:omega-no-periodic-point}
Let $f$ be an interval map of zero topological
entropy and let $x$ be a point. If
$\omega(x,f)$ is infinite, then $\omega(x,f)$ contains no
periodic point.
\end{prop}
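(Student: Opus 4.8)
The plan is to argue by contradiction, reducing everything to the case of a \emph{fixed} point and then exploiting Lemma~\ref{lem:alternating}. So suppose $\omega(x,f)$ is infinite and contains a periodic point $z$, of period $p$, and set $g:=f^p$. By Proposition~\ref{prop:htop-Tn}, $h_{top}(g^k)=kp\,h_{top}(f)=0$ for every $k\ge1$, so by Proposition~\ref{prop:horseshoe-htop} no power of $g$ has a horseshoe; in particular, for $G=g^k$ the hypothesis ``$G^2$ has no horseshoe'' of Lemma~\ref{lem:alternating} is satisfied. Now $z$ is a fixed point of $g$, and by Lemma~\ref{lem:omega-set}(iv)--(v) we have $\omega(x,f)=\bigcup_{i=0}^{p-1}\omega(f^i(x),g)$ with each $\omega(f^i(x),g)$ infinite; choosing $j$ with $z\in\omega(f^j(x),g)$ and putting $y_0:=f^j(x)$, the set $\omega(y_0,g)$ is infinite, contains the fixed point $z$, and $y_0$ has infinite orbit under $g$.

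The heart of the argument is the following claim, proved for an arbitrary interval map $G$ with $h_{top}(G)=0$, a fixed point $\zeta$ of $G$ and a point $v$ with $\omega(v,G)$ infinite and $\zeta\in\omega(v,G)$: \emph{the orbit of $v$ eventually alternates around $\zeta$}, i.e.\ there is $N$ with $G^n(v)>\zeta\iff G^{n+1}(v)<\zeta$ for all $n\ge N$. First, $G^n(v)\ne\zeta$ for all $n$ (else the orbit would be finite). Next one checks that the orbit satisfies the hypothesis of Lemma~\ref{lem:alternating}: if not, writing $v_n=G^n(v)$, for every $m$ the relation $v_{m+1}>v_m$ would force $v_n>v_m$ for all $n>m$, and $v_{m+1}<v_m$ would force $v_n<v_m$ for all $n>m$; splitting $\IZ^+$ into $U=\{m:v_{m+1}>v_m\}$ and $D=\{m:v_{m+1}<v_m\}$ (both infinite, else the orbit is eventually monotone), the subsequence $(v_m)_{m\in U}$ is strictly increasing and $(v_m)_{m\in D}$ strictly decreasing, so each converges and $\omega(v,G)$ has at most two points --- contrary to infiniteness. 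Hence Lemma~\ref{lem:alternating}, applied to $G$ and the shifted point $G^m(v)$, produces a fixed point $w$ of $G$ and an integer $N_1$ with $v_n>w\iff v_{n+1}<w$ for all $n\ge N_1$. Finally $w=\zeta$: put $R=\{v_n:n\ge N_1,\ v_n>w\}$ and $L=\{v_n:n\ge N_1,\ v_n<w\}$; the alternation gives $G(R)\subset L$ and $G(L)\subset R$, hence $G(\overline R)\subset\overline L$ and $G(\overline L)\subset\overline R$ by continuity. Since $\zeta\in\omega(v,G)\subset\overline R\cup\overline L$, if $\zeta\in\overline R$ then $\zeta=G(\zeta)\in\overline L$ as well, so $\zeta\in\overline R\cap\overline L\subset[w,+\infty)\cap(-\infty,w]=\{w\}$; the case $\zeta\in\overline L$ is symmetric. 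Thus $w=\zeta$, proving the claim.

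To conclude, apply the claim to $(g,y_0,z)$: there is $N_1$ with $g^n(y_0)>z\iff g^{n+1}(y_0)<z$ for $n\ge N_1$; setting $u_0:=g^{N_1}(y_0)$, the orbit of $u_0$ under $g^2$ lies entirely on one side of $z$ and that of $g(u_0)$ under $g^2$ on the other side. Using a subsequence of $(g^n(y_0))$ converging to $z$, continuity of $g$, and $g(z)=z$, one checks that $z\in\omega(u_0,g^2)$ and $z\in\omega(g(u_0),g^2)$, while $\omega(u_0,g^2)\cup\omega(g(u_0),g^2)=\omega(y_0,g)$ is infinite; so at least one of $u_0,g(u_0)$ --- call it $w_0$ --- has $\omega(w_0,g^2)$ infinite, and still $z\in\omega(w_0,g^2)$. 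Apply the claim once more, now to $(g^2,w_0,z)$ (legitimate since $h_{top}(g^2)=0$ and $z$ is fixed by $g^2$): the orbit of $w_0$ under $g^2$ eventually alternates around $z$, hence has infinitely many points strictly below $z$ and infinitely many strictly above. But that orbit lies entirely on one side of $z$ --- a contradiction, which finishes the proof. The main technical obstacle is exactly the step $w=\zeta$ in the claim, i.e.\ ruling out that the alternation might occur around a fixed point different from the one in the $\omega$-limit set; the ``cycle of two closed sets'' argument $G(\overline R)\subset\overline L,\ G(\overline L)\subset\overline R$ is what forces it. A minor additional subtlety is, after passing to $g^2$, landing on the branch that carries an infinite $\omega$-limit set while keeping $z$ inside it.
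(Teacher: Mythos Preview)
Your proof is correct and follows essentially the same route as the paper's: reduce to a fixed point, use Lemma~\ref{lem:alternating} to obtain eventual alternation, identify the alternation point with the fixed point lying in the $\omega$-limit set, then pass to the square of the map and repeat to reach a contradiction. Your presentation is slightly cleaner in that you package the key step as a reusable claim (applied once to $g$ and once to $g^2$) and you reduce periodic points to fixed points at the start rather than at the end, but these are organizational differences only.
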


\begin{proof}
The point $x$ is not eventually periodic because $\omega(x,f)$
is infinite. Thus all the points $(f^n(x))_{n\ge 0}$ are distinct.
Moreover, 
$$
\omega(x,f)\cap (\min\omega(x,f),\max\omega(x,f))\neq\emptyset
$$ 
because $\omega(x,f)$ is infinite, and thus
there exists an integer $p$ such that
$\min\omega(x,f)<f^p(x)<\max\omega(x,f)$.
If $f^{p+1}(x)>f^p(x)$, then
there exists $q>p$ such that $f^q(x)$ is arbitrarily close to 
$\min\omega(x,f)$, in such a way that
$f^q(x)<f^p(x)<f^{p+1}(x)$. Similarly,  if
$f^{p+1}(x)<f^p(x)$, then there exists $q>p$ such that
$f^q(x)>f^p(x)>f^{p+1}(x)$. Since $f^2$ has no horseshoe  by
Proposition~\ref{prop:horseshoe-htop}, Lemma~\ref{lem:alternating} applies:
there exist a fixed point $z$ and an
integer $N$ such that 
$$
\forall n\ge N,\ f^n(x)<z\Longleftrightarrow f^{n+1}(x)>z.
$$
We define either $y:=f^N(x)$ or $y:=f^{N+1}(x)$ in order to have 
$y<z$, and we set 
$y_n:=f^n(y)$ for all $n\geq 0$. In this way, $\omega(y,f)=\omega(x,f)$
and
\begin{equation}\label{eq:y2i<z<y2i+1}
\forall i\ge0,\ y_{2i}<z<y_{2i+1}.
\end{equation}

First we prove that $\omega(x,f)$ contains no fixed point. Suppose on the 
contrary that there exists an increasing sequence of positive integers
$(n_i)_{i\ge 0}$ such that
$\lim_{i\to+\infty} y_{n_i}=a$ with $f(a)=a$. 
By continuity, $(y_{n_i+1})_{i\ge 0}$ tends to $a$ too.
The set $\{n_i\mid i\geq 0\}$ contains either
infinitely many odd integers or infinitely many even integers, and thus
there exists an increasing sequence $(k_i)_{i\ge 0}$ such that
$a=\lim_{i\to+\infty}y_{2k_i}=\lim_{i\to+\infty}y_{2k_i+1}$.
Then \eqref{eq:y2i<z<y2i+1} implies that $a=z$. Hence $z\in \omega(y,f)$.
Let $g:=f^2$. The map $g^2$ has no horseshoe 
(by Proposition~\ref{prop:horseshoe-htop} again), and $\omega(y,g)$ is
infinite (by Lemma~\ref{lem:omega-set}(vi)). Thus we can apply 
Lemma~\ref{lem:alternating} to $g$ and $y$: there exist a
point $z'$ and an integer $N'$ such that $g(z')=z'$ and 
\begin{equation}\label{eq:gz'}
\forall i\ge 0,\ y_{2N'+4i}<z'<y_{2N'+4i+2}<z
\end{equation}
(the last inequality follows from \eqref{eq:y2i<z<y2i+1}).
Since $z$ is in $\omega(y,g)$, there exists a sequence $(m_i)_{i\ge 0}$ 
such that $g^{N'+m_i}(y)=y_{2N'+2m_i}$ tends to
$z$. By \eqref{eq:gz'}, 
$m_i$ must be odd for all large enough $i$. By continuity,
$g^{N'+m_i+1}(y)$ converges to $z$ too, and at the same time
$g^{N'+m_i+1}(y)=y_{2N'+2m_i+2}<z'<z$, which is absurd. We deduce that
$\omega(x,f)$ contains no fixed point.
 
Let $n\geq 1$. According to Lemma~\ref{lem:omega-set}, 
$\omega(x,f)=\bigcup_{i=0}^{2^{n-1}}\omega(f^i(x),f^{2^n})$ and 
the set $\omega(f^i(x),f^n)$ is infinite for every 
$i\in\Lbrack 0, n-1\Rbrack$.  Applying the previous
result to $f^n$, we deduce that $\omega(x,f)$ contains no
periodic point of period $n$.
\end{proof}

The next proposition states that an infinite $\omega$-limit set of a zero
entropy interval map
is a \emph{solenoidal set}\index{solenoidal $\omega$-limit set}, that is, it
is included in a nested sequence of cycles of intervals of periods
tending to infinity. This is a key tool when studying zero entropy interval 
maps. This result is implicitly contained in several papers of Sharkovsky,
and stated without proof in a paper of Blokh \cite{Blo7}. A very similar
result, dealing with infinite transitive sets of zero entropy 
interval maps, was proved by 
Misiurewicz \cite{Mis7}.
The formulation we give follows Smítal's \cite{Smi}, except the property
that the intervals can be chosen to be closed, which is due
to Fedorenko, Sharkovsky and Smítal \cite{FSS}. Although the result is
mostly interesting for infinite $\omega$-limit sets, 
the proposition below also deals with
finite $\omega$-limit sets because this case will be used in the sequel.

\begin{prop}\label{prop:htop0-Lki}
Let $f$ be an interval map of zero topological 
entropy and let $x_0$ be a point. If $\omega(x_0,f)$ is a periodic orbit of 
period $2^p$ for some $p\ge 0$, set $\CI:=\Lbrack 0,p\Rbrack$.
If $\omega(x_0,f)$ is infinite, set $\CI:=\IZ^+$. 
There exists a (finite or infinite) sequence of
closed intervals $(L_k)_{k\in\CI}$ such that, for all $k\in\CI$,
\begin{enumerate}
\item $(L_k,f(L_k),\ldots, f^{2^k-1}(L_k))$ is a
cycle of intervals, that is, these intervals are pairwise disjoint and
$f^{2^k}(L_k)=L_k$,
\item $\forall i,j\in\Lbrack 0,2^k-1\Rbrack$, 
$i\neq j$, there is a 
point $z$ between $f^i(L_k)$ and $f^j(L_k)$ such that $f^{2^{k-1}}(z)=z$,
\item$ L_{k+1}\cup f^{2^k}(L_{k+1})\subset L_k$ provided $k+1\in \CI$, 
\item $\disp\omega(x_0,f)\subset \bigcup_{i=0}^{2^k-1} f^i(L_k)$,
\item
$f^i(L_k)$ is the smallest $f^{2^k}$-invariant 
interval containing $\omega(f^i(x_0),f^{2^k})$,
\item if $k+2\in \CI$, $\exists N\ge 0$, $\forall n\ge N$, 
$f^n(x_0)\in f^n(L_k)$.
\end{enumerate}
Moreover, if $\omega(x_0,f)$ is infinite, then $f$ is of type $2^\infty$ for
Sharkovsky's order.
\end{prop}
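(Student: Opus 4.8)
The plan is to build the nested sequence of cycles of intervals by induction on $k$, using the fact that $f$ has zero entropy to control the combinatorics at each step. First I would set up the base case $k=0$: take $L_0:=[\min\omega(x_0,f),\max\omega(x_0,f)]$, which is clearly $f$-invariant (since $\omega(x_0,f)$ is strongly invariant by Lemma~\ref{lem:omega-set}(i)) and is the smallest invariant interval containing $\omega(x_0,f)$, giving (i), (iv), (v) trivially; (ii), (iii), (vi) are vacuous. For the inductive step, suppose $L_k$ has been constructed with properties (i)--(v). Set $g:=f^{2^k}|_{L_k}\colon L_k\to L_k$. Since $h_{top}(f)=0$ we have $h_{top}(g)=0$, so $g$ has no horseshoe (Proposition~\ref{prop:horseshoe-htop}) and more strongly $g^2$ has no horseshoe, so Lemma~\ref{lem:U<D} and the alternating-orbit machinery (Lemma~\ref{lem:alternating}) apply to orbits of $g$ inside $L_k$. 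The key point is to show that if $\omega(f^i(x_0),g)$ is \emph{not} reduced to a single fixed point of $g$, then it must split into exactly two pieces swapped by $g$: concretely, there is a $g$-fixed point $z$ and a point $c\in\omega(f^i(x_0),g)$ with, say, $c<z<g(c)$ such that $g$ maps $\omega\cap(-\infty,z)$ into $(z,\infty)$ and vice versa (this is exactly what Lemma~\ref{lem:alternating} gives for orbits with infinite closure, after extracting a subsequence as in the proof of Proposition~\ref{prop:omega-no-periodic-point}; the argument also forces $z\notin\omega$ by Proposition~\ref{prop:omega-no-periodic-point} when $\omega$ is infinite). Then I would let $L_{k+1}$ be the smallest $g^2$-invariant closed interval containing $\omega(f^i(x_0),g^2)$ for the appropriate $i$, and check $L_{k+1}$ lies on one side of $z$ while $g(L_{k+1})=f^{2^k}(L_{k+1})$ lies on the other, giving (iii); the two pieces $L_{k+1},f^{2^k}(L_{k+1})$ together with their $f$-iterates form a cycle of intervals of period $2^{k+1}$, giving (i); the separating fixed point $z$ of $f^{2^k}$ between the relevant iterates gives (ii) at level $k+1$; and (iv),(v) follow from Lemma~\ref{lem:omega-set}(iii)--(iv) and the minimality in the definition.

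Next I would handle property (vi), which tracks the actual orbit of $x_0$ rather than just its $\omega$-limit set. The issue is that $f^n(x_0)$ need not lie in $f^n(L_k)$ for small $n$, only eventually. The cleanest route: since $\omega(x_0,f)\subset\bigcup_{i=0}^{2^{k+1}-1}f^i(L_{k+1})$ by (iv) at level $k+1$, and the sets $f^i(L_{k+1})$ for $0\le i<2^{k+1}$ are pairwise disjoint closed intervals (by (i)), there are disjoint open neighbourhoods; for large $n$ the point $f^n(x_0)$ lies in this union, hence in some $f^{i_n}(L_{k+1})$, and continuity of $f$ together with the cyclic structure $f(f^i(L_{k+1}))=f^{i+1}(L_{k+1})$ forces $i_{n+1}\equiv i_n+1\pmod{2^{k+1}}$ for all large $n$; since $f^i(L_{k+1})\subset f^i(L_k)$ for $0\le i<2^k$ after reduction mod $2^k$ (this uses (iii) and the cycle structure), we get $f^n(x_0)\in f^n(L_k)$ for all large $n$. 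This is why the statement only asks for (vi) when $k+2\in\CI$: we need level $k+1$ (and its disjointness) to pin down the orbit at level $k$.

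Finally, for the concluding sentence about type $2^\infty$: if $\omega(x_0,f)$ is infinite then $\CI=\IZ^+$, so for every $k$ there is a cycle of intervals of period $2^k$, hence by Lemma~\ref{lem:chain-of-intervals}(ii) a periodic point of period $2^k$ (the $2^k$ iterates of $L_k$ are disjoint, so the period is exactly $2^k$). Thus $f$ has periodic points of all periods $2^k$, $k\ge 0$. By Sharkovsky's Theorem~\ref{theo:Sharkovsky} the set of periods is a terminal segment for Sharkovsky's order or is $\{2^k\mid k\ge0\}$; it cannot be a terminal segment $\{m\mid m\unrhd n\}$ with $n\lhd 2^\infty$, because such a set contains some non-power-of-$2$, which by Theorem~\ref{theo:htop-power-of-2} would force $h_{top}(f)>0$, contradicting zero entropy. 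Hence the set of periods is exactly $\{2^k\mid k\ge0\}$, i.e., $f$ is of type $2^\infty$.

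The main obstacle I anticipate is the dichotomy in the inductive step — proving rigorously that an $\omega$-limit set of $g=f^{2^k}|_{L_k}$ which is not a single fixed point necessarily decomposes into precisely two $g$-swapped halves around a fixed point of $g$. This requires combining Lemma~\ref{lem:alternating} (applied to the infinite-orbit case, or to the orbit of a point whose $\omega$-limit is the relevant finite periodic set in the finite case), the no-horseshoe-for-$g^2$ hypothesis, and a careful argument that the separating fixed point is unique and that nothing of $\omega$ leaks across it — essentially re-running the internal logic of Proposition~\ref{prop:omega-no-periodic-point} relative to $g$ instead of $f$. Everything else (minimality of invariant intervals, the cyclic bookkeeping, the final Sharkovsky argument) is routine once this structural fact is in hand.
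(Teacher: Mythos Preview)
Your strategy matches the paper's (which defines $L_k:=\overline{\bigcup_{n\ge 0}f^{2^kn}(I_k)}$ with $I_k$ the convex hull of $\omega(x_0,f^{2^k})$, then proves (ii) by induction), and you correctly identify the $k=1$ separation via Lemma~\ref{lem:alternating} as the crux. Two gaps, though. The minor one: your $L_0:=[\min\omega(x_0,f),\max\omega(x_0,f)]$ need not be $f$-invariant --- strong invariance of $\omega$ only gives $f(I_0)\supset I_0$, not $f(I_0)\subset I_0$ (nothing stops $f$ from sending an interior point of $I_0$ outside it); you must pass to $\overline{\bigcup_{n\ge 0}f^n(I_0)}$.

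The real gap is in (vi). From $\omega(x_0,f)\subset\bigcup_i f^i(L_{k+1})$ you only get that $f^n(x_0)$ is eventually \emph{near} this union, not \emph{in} it; your neighbourhood-tracking yields proximity, not membership, and the gaps between the $f^i(L_{k+1})$ could swallow infinitely many iterates. This is precisely why the hypothesis reads $k+2\in\CI$ rather than $k+1\in\CI$ (contrary to your closing remark): at level $k+1$ the two pieces $L_{k+1},f^{2^k}(L_{k+1})\subset L_k$ may each touch an endpoint of $L_k$, so neither need lie in $\Int{L_k}$. At level $k+2$ there are four pairwise disjoint closed pieces inside $L_k$, so since $L_k$ has only two endpoints at least one piece lies in $\Int{L_k}$; that piece contains $\omega(f^{j\cdot 2^k}(x_0),f^{2^{k+2}})$ for some $j\in\Lbrack 0,3\Rbrack$, so along the residue class $j\cdot 2^k\pmod{2^{k+2}}$ the orbit actually enters $\Int{L_k}$, giving $N\equiv 0\pmod{2^k}$ with $f^N(x_0)\in L_k=f^N(L_k)$, after which the cycle structure locks in (vi).
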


\begin{proof} 
According to Theorem~\ref{theo:htop-power-of-2}, the period of any periodic
point is a power of $2$, so the map $f$ is of type $\unrhd 2^\infty$.
If $\omega(x_0,f)$ is infinite, the fact that $f$ is of type $2^\infty$ 
follows from the existence of the infinite sequence of intervals 
$(L_k)_{k\ge 0}$. Indeed, if $L_k$ satisfies (i),
then there exists $x\in L_k$ such that $f^{2^k}(x)=x$
(by Lemma~\ref{lem:fixed-point}), and $x$ is of period $2^k$ because
$L_k,f(L_k),\ldots, f^{2^k-1}(L_k)$ are pairwise disjoint; thus the set
of periods of $f$ contains $\{2^k\mid k\ge 0\}$.
The rest of the proof is devoted to the definition and the properties of 
$(L_k)_{k\in\CI}$.

Let $k\in\CI$. We set $g_k:=f^{2^k}$, 
\begin{gather*}
I_k:=[\min\omega(x_0,g_k),\max\omega(x_0,g)]\\
\text{and}\quad L_k:=\overline{\bigcup_{n\geq 0} (g_k)^n(I_k)}.
\end{gather*}
Trivially, $g_k(L_k)\subset L_k$. For all $n\geq 0$, the set $(g_k)^n(I_k)$ 
is an interval containing
$\omega(x_0,g_k)$ because $g_k(\omega(x_0,g_k))=\omega(x_0,g_k)$ 
by Lemma~\ref{lem:omega-set}(iii), so $(g_k)^n(I_k)\supset I_k$.
Thus $L_k$ is an interval and $g_k(L_k)=L_k$. Therefore
\begin{equation}\label{eq:smallest-invariantint}
L_k\text{ is the smallest $g_k$-invariant 
interval containing }\omega(x_0,g_k).
\end{equation}
Let $i\in\Lbrack 0,2^k-1\Rbrack$. It is clear that $f^i(L_k)$ is a 
$g_k$-invariant interval containing $f^i(\omega(x_0,g_k))=
\omega(f^i(x_0),g_k)$ (by Lemma~\ref{lem:omega-set}(iii)). 
Let $J$ be a $g_k$-invariant interval 
containing $\omega(f^i(x_0),g_k)$. Then
$f^{2^k-i}(J)\supset \omega(f^{2^k}(x_0),g_k)=\omega(x_0,g_k)$
(by Lemma~\ref{lem:omega-set}(iv)+(ii)). Thus $f^{2^k-i}(J)\supset L_k$
by \eqref{eq:smallest-invariantint}, so $g_k(J)\supset f^i(L_k)$.
This implies that, for all $i\in\Lbrack 0,2^k-1\Rbrack$,
\begin{equation}\label{eq:smallest-invariantint-i}
f^i(L_k)\text{ is the smallest $g_k$-invariant 
interval containing }\omega(f^i(x_0),g_k)
\end{equation}
which is (v).
Moreover, by Lemma~\ref{lem:omega-set}(v),
$$
\omega(x_0,f)=\bigcup_{i=0}^{2^k-1}(\omega(f^i(x_0),g_k))\subset
\bigcup_{i=0}^{2^k-1}f^i(L_k),
$$
which gives (iv). If $k+1$ belongs to $\CI$, the interval $I_{k+1}$ is included in $I_k$
because $\omega(x_0,(g_k)^2)\subset \omega(x_0,g_k)$. Thus
$$
L_{k+1}\cup g_k(L_{k+1})=\overline{\bigcup_{n\geq 0}(g_k)^{2n}(I_{k+1})}
\cup \overline{\bigcup_{n\geq 0}(g_k)^{2n+1}(I_{k+1})}
\subset \overline{\bigcup_{n\geq 0}(g_k)^n(I_{k})}=L_k,
$$
which is (iii).

We are going to prove (ii) by induction on $k$. This will show
at the same time that the intervals $(f^i(L_k))_{0\le i<2^k}$ are
pairwise disjoint, which in turn implies (i) in view of the fact that
$L_k$ is strongly invariant under $g_k$.

\medskip
$\bullet$ There is nothing to prove for $k=0$.

$\bullet$ Suppose that $k:=1$ belongs to $\CI$.
If $\#\omega(x_0,f)=2$, then $\omega(x_0,f)$ is a periodic orbit of period
$2$ and the interval $I_1$ is reduced to a single point $\{y\}$ satisfying
$f^2(y)=y$ and $f(y)\ne y$; moreover, $L_1=\{y\}$ and $f(L_1)=\{f(y)\}$. 
Then $f(\langle y,f(y)\rangle)\supset
\langle y,f(y)\rangle$, which implies that there exists a point $z\in
\langle y,f(y)\rangle$ such that $f(z)=z$, and $z$ is different from
$y,f(y)$. This proves (ii) for $k=1$ in this case.
From now on, we suppose that $\#\omega(x_0,f)\ge 3$ and we set $g:=g_1=f^2$.
We write $I_1=[a,b]$ and
$L_1=[c,d]$. We have $I_1\subset L_1$, that is, $c\le a\le b\le d$. 
Since $\omega(x_0,f)$ contains at least 3 points, 
$$
\omega(x_0,f)\cap (\min\omega(x_0,f),\max\omega(x_0,f))\neq\emptyset,
$$ 
and thus there exists $n\ge 0$ such that
$\min\omega(x_0,f)<f^n(x_0)<\max\omega(x_0,f)$.
If $f^{n+1}(x_0)>f^n(x_0)$,
there exists $j\ge 1$ such that $f^{n+j}(x_0)$ is arbitrarily close to 
$\min\omega(x_0,f)$, in such a way that
$f^{n+j}(x_0)<f^n(x_0)<f^{n+1}(x_0)$. Similarly,  if
$f^{n+1}(x_0)<f^n(x_0)$, there exists $j\ge 1$ such that
$f^{n+j}(x_0)>f^n(x_0)>f^{n+1}(x_0)$. Since $f$ has zero topological
entropy, $f$ has no horseshoe by Proposition~\ref{prop:horseshoe-htop}.
According to
Lemma~\ref{lem:alternating}, there exist a point $z$ and an integer 
$N$ such that $f(z)=z$ and
\begin{eqnarray}
\text{either }& \forall n\ge N,\ f^{2n}(x_0)<z<f^{2n+1}(x_0),  &\label{eq:altern}\\ 
\text{or }& \forall n\ge N,\ f^{2n}(x_0)>z>f^{2n+1}(x_0). &\nonumber
\end{eqnarray}
We assume that we are in case \eqref{eq:altern}, the other case being 
symmetric. This implies that $b\le z$.
We are going to show that $z>d$. Suppose on the contrary that 
\begin{equation}\label{eq:zinbd}
z\in [b,d].
\end{equation}
Since $z$ is a fixed point, we can define
$z':=\min\{x\in [b,d]\mid g(x)=x\}$.
Since $b\in\omega(x_0,g)$, we have $g(b)\in \omega(x_0,g)$; 
moreover $b$ is not a fixed point for $g$
(by Lemma~\ref{lem:omega-finite} when $\omega(x_0,g)$ is
finite,
and by Proposition~\ref{prop:omega-no-periodic-point} 
when $\omega(x_0,g)$ is infinite). Hence 
$$
g(b)<b<z'
$$ (recall that $\omega(x_0,f)\subset [a,b]$). 
Since $z'$ is the minimal fixed point for $g$  greater than $b$,
this implies that
\begin{equation}\label{eq:xinbz}
\forall x\in [b,z'),\ g(x)<x.
\end{equation}
See Figure~\ref{fig:cabzd}.
\begin{figure}[htb]
\centerline{\includegraphics{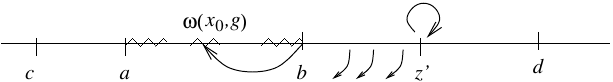}}
\caption{The points $a,b,c,d,z'$ and the set 
$\omega(x_0,g)$ (represented by a zigzag).}
\label{fig:cabzd}
\end{figure}

Let $v:=\max g([c,b])$. Then $v\ge b$ because $g(\omega(x_0,g))=\omega(x_0,g)$,
and $v\le d$ because $g(L_1)\subset L_1$. 
Suppose that $v<z'$. Then $g([b,v])\subset [c,v]$ by 
\eqref{eq:xinbz}, and $g([c,v])=g([c,b])\cup g([b,v]) \subset [c,v]\cup [c,v]=
[c,v]$. Thus $[c,v]$ is a $g$-invariant interval containing $I_1$.
But this is a contraction to \eqref{eq:smallest-invariantint} 
because $[c,v]\neq L_1$. We deduce that $\max g([c,b])\ge z'$, and thus there
exists $y\in [c,b]$ such that $g(y)=z'$. We choose $y$ maximal with this
property; $y<b$ by \eqref{eq:xinbz}.
Let $x\in (y,z')$. The maximality of $y$ and \eqref{eq:xinbz}
imply that $g(x)<z'$. If $g(x)\le y$, then
$g(x)\le y<x<z'=g(y)$, and thus $[y,x],[x,z']$ form a horseshoe for $g$,
which contradicts the fact that $h_{top}(f)=0$ by
Proposition~\ref{prop:horseshoe-htop}. Therefore,
\begin{equation}\label{eq:xinxz'}
\forall x\in (y,z'),\ y<g(x)<z'.
\end{equation}
Using \eqref{eq:xinxz'} and the fact that $g(y)=g(z')=z'>y$, we get
$$
w:=\inf g((y,z'))=\min g([y,z'])>y.
$$
Thus $[y,w]$ is mapped into $[w,z']$, and 
$[w,z']$ is $g$-invariant (see Figure~\ref{fig:ywz'}).
\begin{figure}[htb]
\centerline{\includegraphics{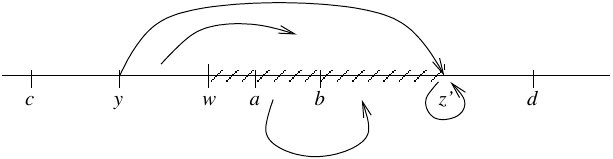}}
\caption{The interval $[y,w]$ is mapped into $[w,z']$, and 
$[w,z']$ (hatched) is $g$-invariant.}
\label{fig:ywz'}
\end{figure}
We are going to show that $w\le a$.
Recall that $y<b<z'$, so $[y,z']$ is a neighborhood of $b$. 
Since $b\in\omega(x_0,g)$, there exists $i$ such that $g^i(x_0)\in 
[y,z']=[y,w]\cup [w,z']$. Thus $g^j(x_0)\in [w,z']$ for all $j\ge i+1$,
which implies that $\omega(x_0,g)\subset [w,z']$. Since $a\in\omega(x_0,g)$,
this implies 
$$w\le a.$$
Therefore, $[w,z']$ is
a $g$-invariant interval containing $I_1=[a,b]$, but this is a contradiction 
to \eqref{eq:smallest-invariantint}  because $[w,z']$ does not contain
$L_1$. We deduce that \eqref{eq:zinbd} is false, that is,
$z>d=\max L_1$. According to \eqref{eq:altern},
we have $z\le \omega(f(x_0),f^2)$. Moreover, $z$ does not belong to 
$f(L_1)$ because $f^2(L_1)=L_1$ and $z\notin L_1$. Since $f(L_1)$ is an interval
containing $\omega(f(x_0),f^2)$, we conclude that $L_1<z< f(L_1)$.
This ends the step $k=1$.

\medskip
$\bullet$ Let $k\geq 1$ such that $k+1\in\CI$ 
and suppose that (ii) is satisfied
for $k$. Let $i,j$ be such that $0\leq i<j<2^{k+1}$. 
If $j-i\neq 2^k$, then, according
to the induction hypothesis, there exists a point $z$
between $f^i(L_k)$ and $f^j(L_k)$, satisfying $f^{2^{k-1}}(z)=z$. Thus $z$ is also between
$f^i(L_{k+1})$ and $f^j(L_{k+1})$ because $L_{k+1}\subset L_k$.
If $j=i+2^k$, then, using \eqref{eq:smallest-invariantint-i},
we can apply the case $k=1$ to the map $g_k$ and the point $f^i(x_0)$:
there exists a point  $z$ strictly between $f^i(L_{k+1})$ and 
$f^j(L_{k+1})=g_k(f^i(L_{k+1}))$ such that
$f^{2^k}(z)=z$.
This completes the induction.

\medskip
It remains to show (vi). Let $k\ge 0$ be an integer such that $k+2\in\CI$. 
Then $L_k$ contains the four
disjoint intervals $L_{k+2}, f^{2^k}(L_{k+2}), f^{2^{k+1}}(L_{k+2}),
f^{2^{k+2}}(L_{k+2})$ by (iii). One of these intervals is included in
$\Int{L_k}$. Moreover, this interval contains $\omega(f^{2^ki}(x_0),f^{2k+2})$
for some $i\in\Lbrack 0,3\Rbrack$; thus there exists an integer $N\ge 0$ 
such that $f^N(x)\in L_k$. 
Since $L_k$ is a $2^k$-periodic interval containing $\omega(x_0,f^{2^k})$,
this implies that $N$ is a multiple of $2^k$, so
$f^N(L_k)=L_k$. Therefore  $f^n(x)\in f^n(L_k)$ for all $n\ge N$. 
This concludes the proof of the proposition.
\end{proof}

\begin{lem}\label{lem:periodic-interval-power-of-2}
Let $f$ be an interval map of zero topological entropy. 
If $J$ is a
nonempty (non necessarily closed) interval such that $f^p(J)=J$ and 
$(f^i(J))_{0\leq i<p}$ are pairwise disjoint, then $p$ is a power of $2$.
\end{lem}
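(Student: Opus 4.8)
The plan is to deduce this from the characterization of zero entropy via periods, Theorem~\ref{theo:htop-power-of-2}: since $h_{top}(f)=0$, the map $f$ — and every iterate $f^k$, whose entropy is $kh_{top}(f)=0$ by Proposition~\ref{prop:htop-Tn} — has no periodic point whose period is not a power of $2$. So it suffices to extract from the hypothesis a periodic point of an appropriate iterate of $f$ whose period forces $p$ to be a power of $2$. First I would dispose of the degenerate case: if $J=\{y\}$, then the disjointness of $f^0(J),\dots,f^{p-1}(J)$ together with $f^p(y)=y$ says precisely that $y$ is periodic of period $p$, so Theorem~\ref{theo:htop-power-of-2} gives that $p$ is a power of $2$. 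Moreover, if $J$ is non-degenerate then every $f^i(J)$ ($0\le i<p$) is non-degenerate — were some $f^{i_0}(J)$ a single point $\{c\}$, iterating $f$ and using $f^p(J)=J$ would force $J$ itself to be a single point. Hence from now on I assume all the $f^i(J)$ are non-degenerate.

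Next, write $p=2^dm$ with $m$ odd, set $g:=f^{2^d}$ (so $h_{top}(g)=0$), and assume for contradiction that $m>1$, i.e. $m\ge 3$. Then $g^m(J)=f^p(J)=J$, and the $m$ intervals $g^i(J)=f^{2^di}(J)$, $0\le i<m$, form a subfamily of the pairwise disjoint family $(f^j(J))_{0\le j<p}$; hence they are pairwise disjoint and non-degenerate. Putting $J_i:=\overline{g^i(J)}$ — bounded since $J\subset I$ — continuity and compactness give $g(J_i)=\overline{g^{i+1}(J)}=J_{i+1}$, and $J_m=\overline{J}=J_0$, so $(J_0,\dots,J_m)$ is a chain of intervals with $J_0\subset J_m$. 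By Lemma~\ref{lem:chain-of-intervals}(ii) there is $x\in J_0$ with $g^m(x)=x$ and $g^i(x)\in J_i$ for $0\le i<m$. Let $q$ be the period of $x$ under $g$; then $q$ divides $m$, so $q$ is odd.

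If $q\ge 2$, then $x$ is a periodic point of $g$ of odd period $q>1$, which is not a power of $2$; this contradicts $h_{top}(g)=0$ via Theorem~\ref{theo:htop-power-of-2}, and we are done. So it remains to rule out $q=1$. In that case $g^i(x)=x$ for all $i$, hence $x\in\bigcap_{i=0}^{m-1}\overline{g^i(J)}$. Here the argument becomes geometric: a point in the closure of a non-degenerate interval is either interior to it or an endpoint of it; and $x$ cannot be interior to some $g^i(J)$, since then a whole neighbourhood of $x$ would lie in $g^i(J)$, contradicting disjointness from the other $g^j(J)$. Thus $x$ is an endpoint of each of the $m\ge 3$ pairwise disjoint non-degenerate intervals $g^i(J)$. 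A pigeonhole argument then gives two of these intervals containing points of $(x-\eps,x)$ for all small $\eps>0$ (or two containing points of $(x,x+\eps)$); since an interval with $x$ as an endpoint contains a whole one-sided punctured neighbourhood of $x$, these two intervals overlap, a contradiction. Hence $q=1$ is impossible as well, so the assumption $m>1$ fails and $p=2^d$ is a power of $2$.

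The only delicate point is this final $q=1$ subcase, i.e. making precise that at most two pairwise disjoint non-degenerate intervals can share a common boundary point; everything else is a routine application of Lemma~\ref{lem:chain-of-intervals}, Proposition~\ref{prop:htop-Tn} and Theorem~\ref{theo:htop-power-of-2}. It is important to treat the degenerate case separately at the outset, since for singletons the ``at most two intervals'' claim genuinely fails, and the degenerate case needs only the direct observation that $y$ has period exactly $p$.
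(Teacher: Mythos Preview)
Your proof is correct and follows essentially the same approach as the paper's: handle the degenerate case directly, find a periodic point in $\overline{J}$, use Theorem~\ref{theo:htop-power-of-2} to constrain its period, and finish with the geometric fact that at most two disjoint non-degenerate intervals can share a boundary point. The only cosmetic difference is that you factor out the $2$-part of $p$ at the outset (writing $p=2^d m$ with $m$ odd and passing to $g=f^{2^d}$), whereas the paper applies Lemma~\ref{lem:fixed-point} to $f^p$ on $\overline{J}$, reads off the period $2^k$ of the resulting point, and then shows $p/2^k\le 2$; both routes land on the same endpoint argument.
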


\begin{proof}
If $J$ is reduced to one point, then it is a periodic point and thus
$p$ is a power of $2$ by
Theorem~\ref{theo:htop-power-of-2}. From now on, we assume that
$J$ is non degenerate, which implies that
$f^n(J)$ is a non degenerate interval for all $n\geq 0$.
Since $f^p(\overline{J})=\overline{J}$, there exists $x\in \overline{J}$ 
such that $f^p(x)=x$ by Lemma~\ref{lem:fixed-point}.  
By Theorem~\ref{theo:htop-power-of-2}, the period of $x$
is equal to $2^k$ for some $k\ge 0$, and thus
there exists  $m\ge 1$ such that $p=m2^k$. 
If $x\in J$, then $(f^i(x))_{0\leq i<p}$ are pairwise
distinct, so $p=2^k$. 

Suppose that $x\in \End{J}$ and $m\ge 3$.
The point $x=f^{2^k}(x)$ belongs to $f^{2^k}(\overline{J})$. Since
$f^{2^k}(J)\cap J=\emptyset$, this implies that $x$ is an endpoint of 
$f^{2^k}(J)$. We also have $x=f^{2^{k+1}}(x)\in f^{2^{k+1}}(\overline{J})$,
which implies that $x\in \End{f^{2^{k+1}}(J)}$. But this
contradicts the fact that
$J,f^{2^k}(J), f^{2^{k+1}}(J)$ are pairwise disjoint non degenerate intervals. 
Therefore, if $x\in\End{J}$, then $m=1$ or $2$ and $p$ is a power of $2$.
\end{proof}

The next lemma states that two points in the same infinite $\omega$-limit set are
$f$-separable if and only if they are separable by intervals in the family
$(f^i(L_k))$ given by Proposition~\ref{prop:htop0-Lki}.

\begin{lem}\label{lem:f-separable-Lni}
Let $f$ be an interval map of zero topological entropy and
let $a_0,a_1$ be two distinct points in the same infinite $\omega$-limit
set $\omega(x_0,f)$. Let $(L_n)_{n\ge 0}$ be the intervals given by
Proposition~\ref{prop:htop0-Lki}. Then the following assertions are
equivalent:
\begin{enumerate}
\item $a_0,a_1$ are $f$-separable,
\item there exist $n\geq 1$ and  $i,j\in\Lbrack 0,2^n-1\Rbrack$
such that $i\neq j$, $a_0\in f^i(L_n)$ and $a_1\in f^j(L_n)$.
\end{enumerate}
\end{lem}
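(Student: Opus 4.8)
The plan is to prove the two implications separately; (ii)$\Rightarrow$(i) is immediate, and (i)$\Rightarrow$(ii) will be obtained by contraposition together with the solenoidal structure of $\omega(x_0,f)$ supplied by Proposition~\ref{prop:htop0-Lki}.

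First, for (ii)$\Rightarrow$(i): given $a_0\in f^i(L_n)$ and $a_1\in f^j(L_n)$ with $i\neq j$, take $J_0:=f^i(L_n)$, $J_1:=f^j(L_n)$ and $n_0:=n_1:=2^n$. By Proposition~\ref{prop:htop0-Lki}(i) the intervals $L_n,f(L_n),\dots,f^{2^n-1}(L_n)$ are pairwise disjoint with $f^{2^n}(L_n)=L_n$; hence $J_0,J_1$ are disjoint, $f^{2^n}(J_\ell)=J_\ell$, and $(f^k(J_\ell))_{0\le k<2^n}$ is a cyclic reordering of $(f^k(L_n))_{0\le k<2^n}$, so its members are pairwise disjoint, for $\ell\in\{0,1\}$. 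Thus $a_0,a_1$ are $f$-separable.

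For (i)$\Rightarrow$(ii) I would argue by contradiction, assuming (ii) fails. Since $\omega(x_0,f)$ is infinite, Proposition~\ref{prop:htop0-Lki} applies with $\CI=\IZ^+$: for each $n\ge0$ the intervals $(f^i(L_n))_{0\le i<2^n}$ are pairwise disjoint and their union contains $\omega(x_0,f)$ (parts (i) and (iv)), and combining Lemma~\ref{lem:omega-set}(iv) with Proposition~\ref{prop:htop0-Lki}(v) one gets, for $0\le i<2^n$,
\[
\omega(x_0,f)\cap f^i(L_n)=\omega\bigl(f^i(x_0),f^{2^n}\bigr).
\]
Hence there are unique $i_n,j_n\in\Lbrack 0,2^n-1\Rbrack$ with $a_0\in f^{i_n}(L_n)$ and $a_1\in f^{j_n}(L_n)$; property (ii) says precisely that $i_n\neq j_n$ for some $n\ge1$, so its failure means $i_n=j_n$ for all $n$, and then each $f^{i_n}(L_n)$ is a (non‑degenerate) interval containing both $a_0$ and $a_1$. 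Now suppose $a_0,a_1$ were $f$-separable, witnessed by disjoint intervals $J_0\ni a_0$, $J_1\ni a_1$ and periods $n_0,n_1$; by Lemma~\ref{lem:periodic-interval-power-of-2} these are $2^p$ and $2^q$, say with $p\le q$. The crux is the claim that $f^{i_p}(L_p)\subseteq\overline{J_0}$ and $f^{j_q}(L_q)\subseteq\overline{J_1}$. Granting it: since $2^p\mid2^q$ we have $f^{2^q}(J_0)=J_0$, and by the nesting in Proposition~\ref{prop:htop0-Lki}(iii) (and uniqueness of $i_p$) $f^{i_q}(L_q)\subseteq f^{i_p}(L_p)\subseteq\overline{J_0}$; moreover $a_1\in f^{i_q}(L_q)$ (failure of (ii)) and $a_1\in f^{j_q}(L_q)$, so $i_q=j_q$ and $f^{i_q}(L_q)\subseteq\overline{J_1}$ as well. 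Then $f^{i_q}(L_q)\subseteq\overline{J_0}\cap\overline{J_1}$, which is at most a single point because $J_0,J_1$ are disjoint intervals — contradicting that $f^{i_q}(L_q)$ is a non‑degenerate interval containing the two distinct points $a_0,a_1$.

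It remains to prove the claim, in the uniform form: if $J$ is a $2^r$-periodic interval with pairwise disjoint cyclic images, $a\in J\cap\omega(f^i(x_0),f^{2^r})$, then $f^i(L_r)\subseteq\overline{J}$ (apply this to $(r,J,a,i)=(p,J_0,a_0,i_p)$ and $(q,J_1,a_1,j_q)$). Writing $g:=f^{2^r}$ and letting $(y_m)_{m\ge0}$ be the $g$-orbit of $f^i(x_0)$, we have $g(J)=J$ and $a\in\omega(\{y_m\},g)=\omega(f^i(x_0),g)$; also $J$ is non‑degenerate, since otherwise $a$ would be periodic, contradicting Proposition~\ref{prop:omega-no-periodic-point}. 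If $a\in\Int(J)$, then some $y_m$ enters $J$, and $g(J)=J$ forces all later $y_{m'}$ to stay in $J$; hence $\omega(f^i(x_0),g)\subseteq\overline{J}$. Since $\overline J$ is $g$-invariant (one checks $g(\overline J)=\overline J$), the closed interval $\overline{J}\cap f^i(L_r)$ is $g$-invariant and contains $\omega(f^i(x_0),g)$, so it equals $f^i(L_r)$ by the minimality in Proposition~\ref{prop:htop0-Lki}(v), i.e. $f^i(L_r)\subseteq\overline J$. The remaining case — $a$ an endpoint of $J$, so the $g$-orbit might accumulate at $a$ from outside $J$ — is the main technical obstacle. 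Here the plan is to exploit that $\omega(f^i(x_0),g)$ is an infinite $\omega$-limit set: if infinitely many $y_m$ near $a$ lie on the $J$-side, we finish as above; otherwise infinitely many lie strictly beyond $a$, and one derives a contradiction with $h_{top}(f)=0$ from the fact that the $g$-orbit would then accumulate at an endpoint of the $g$-invariant interval $J$ from the outside while remaining asymptotically inside $f^i(L_r)$, producing a horseshoe for an iterate (using Lemma~\ref{lem:U<D} and the structure of Proposition~\ref{prop:htop0-Lki} applied to $g$). Alternatively one may first reduce to closed witnessing intervals, for which $a\in\Int(J)$ can be arranged, by a perturbation argument in the spirit of Fedorenko–Sharkovsky–Smítal \cite{FSS}.
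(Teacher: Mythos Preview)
Your (ii)$\Rightarrow$(i) is fine, and your overall route for (i)$\Rightarrow$(ii) is workable, but the endpoint case you flag is a genuine gap, and neither of your proposed fixes (producing a horseshoe, or perturbing to closed witnesses as in \cite{FSS}) is the right move. The missing observation is elementary: since $g(J)=J$ and $a\in J$, the three points $a$, $g(a)$, $g^2(a)$ all lie in $J$; they also lie in $\omega(f^i(x_0),g)$ because this set is $g$-invariant, and they are pairwise distinct because $\omega(x_0,f)$ contains no periodic point (Proposition~\ref{prop:omega-no-periodic-point}). Three distinct points in an interval force the middle one into $\Int J$, and then your interior-case argument applies verbatim to give $f^i(L_r)\subseteq\overline J$.

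The paper uses exactly this three-iterates trick, but organizes the proof more economically than your contraposition. It works directly from separability and needs only \emph{one} witnessing interval: take $J$ with $a_0\in J$, $a_1\notin J$, period $p=2^n$, and set $K:=J\cap f^i(L_n)$ where $a_0\in f^i(L_n)$. Since $a_0,f^{2^n}(a_0),f^{2^{n+1}}(a_0)$ are three distinct points of $\omega(x_0,f)$ in $K$, one lies in $\Int K$, the orbit enters $K$, and minimality (Proposition~\ref{prop:htop0-Lki}(v)) yields $\overline K=f^i(L_n)$. Now if $a_1\in f^i(L_n)$ as well, then $a_1\in\overline K\setminus J$, so $a_1$ is an endpoint of $f^i(L_n)$ not in $K$; the inclusions $f^{2^n}(K)\subset K$ and $f^{2^n}(f^i(L_n))=f^i(L_n)$ then force $f^{2^n}$ to permute the (at most two) endpoints of $f^i(L_n)$ lying outside $K$, making $a_1$ periodic --- again contradicting Proposition~\ref{prop:omega-no-periodic-point}. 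This avoids handling two periods $2^p,2^q$ and the nesting between the two towers.
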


\begin{proof}
The implication (ii) $\Rightarrow$ (i) is trivial. Suppose that
the points $a_0$ and $a_1$ are $f$-separable. By definition, there exist an interval
$J$ and an integer $p\geq 1$ such that $a_0\in J$, $a_1\notin J$,
$f^p(J)=J$ and $(f^i(J))_{0\leq i<p}$ are pairwise disjoint. 
By Lemma~\ref{lem:periodic-interval-power-of-2}, $p$ is a power of $2$,
that is, $p=2^n$ for some $n\ge 0$. Since $a_0$ is in $\omega(x_0,f)\subset
\bigcup_{i=0}^{2^n-1}f^i(L_n)$, there exists $i\in\Lbrack 0,2^n-1\Rbrack$
such that $a_0\in f^i(L_n)$. We set $K:=f^i(L_n)\cap J$. Then 
$f^{2^n}(K)\subset f^i(L_n)\cap J=K$ (recall that $f^{2^n}(L_n)=L_n$). 
Therefore, the interval $K$
contains the three points $a_0, f^{2^n}(a_0), f^{2^{n+1}}(a_0)$, 
which belong to
$\omega(x_0,f)$ (by Lemma~\ref{lem:omega-set}(iii)) and are distinct by
Proposition~\ref{prop:omega-no-periodic-point}. This implies that
$\Int{K}\cap\omega(x_0,f)\neq\emptyset$, and thus there exists an integer 
$m\ge 0$ such that $f^m(x_0)\in K$. Hence
\begin{equation}\label{eq:bigcupK}
\omega(x_0,f)\subset \bigcup_{k=0}^{2^n-1} f^k(\overline{K})\quad
\text{and}\quad \omega(f^m(x_0),f^{2^n})\subset \overline K.
\end{equation}
According to Proposition~\ref{prop:htop0-Lki}, $f^i(L_n)$ is the smallest
$f^{2^n}$-invariant interval containing $\omega(f^i(x_0),f^{2^n})$.
Therefore, \eqref{eq:bigcupK} and the fact that $\overline{K}\subset f^i(L_n)$
imply that $\overline{K}=f^i(L_n)$.
Since $a_1\in\omega(x_0,f)$, there exists $j\in\Lbrack 0,2^n-1\Rbrack$ such that 
$a_1\in f^j(L_n)$. We are going to show that $j\neq i$. Suppose on
the contrary that $j=i$, that is, $a_1\in f^i(L_n)=\overline K$. This
implies that $a_1$ is an endpoint of $f^i(L_n)$ because $a_1\notin J$
and $J\supset K$. Let $a_2$ denote the other endpoint of $f^i(L_n)$. 
There are two cases:
\begin{itemize}
\item Case 1:
$K=f^i(L_n)\setminus\{a_1\}$.
\item Case 2:
$K=f^i(L_n)\setminus\{a_1,a_2\}$.
\end{itemize}
Recall that $f^{2^n}(K)\subset K$ and $f^{2^n}(f^i(L_n))=f^i(L_n)$.
In Case 1, this implies that $f^{2^n}(a_1)=a_1$. In Case 2,
this implies that $f^{2^n}(a_1)\in\{a_1,a_2\}$ and
$f^{2^n}(a_2)\in\{a_1,a_2\}$. In both cases, $f^{2^n}(a_1)$ is a periodic
point, which is impossible by Proposition~\ref{prop:omega-no-periodic-point}
because $f^{2^n}(a_1)\in\omega(x_0,f)$. We conclude that $i\neq j$,
which proves the implication (i) $\Rightarrow$ (ii).
\end{proof}

\begin{rem}
In the previous proof, we saw that $\overline{J\cap f^i(L_n)}=f^i(L_n)$. 
Therefore, if $J$ is any periodic closed interval containing $a_0$ with 
$a_0\in\omega(x_0,f)$, then $J$ contains $f^i(L_n)$ for some 
integers $n,i$ such that $a_0\in f^i(L_n)$. 
\end{rem}

\begin{lem}\label{lem:omega-periodic-points}
Let $f$ be an interval map of zero topological entropy and 
let $x_0$ be a point such that $\omega(x_0,f)$ is infinite.
\begin{enumerate}
\item
If $J$ is an interval containing three distinct points of
$\omega(x_0,f)$, then $J$ contains a periodic point.
\item
If $U$ is an open interval such that $U\cap
\omega(x_0,f)\neq \emptyset$, then there exists an integer $n\ge 0$ such
that $f^n(U)$ contains a periodic point.
\end{enumerate}
\end{lem}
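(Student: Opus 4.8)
The two assertions are closely linked, and the plan is to derive (i) first and then use it to obtain (ii). For (i), suppose $J$ contains three distinct points $a_0<a_1<a_2$ of $\omega(x_0,f)$. By Proposition~\ref{prop:htop0-Lki}, there is a nested sequence of cycles of intervals $(L_k,f(L_k),\ldots,f^{2^k-1}(L_k))_{k\ge 0}$ with $\omega(x_0,f)\subset\bigcup_{i=0}^{2^k-1}f^i(L_k)$ for every $k$, and these $2^k$ intervals are pairwise disjoint with a periodic point between any two of them (item (ii) of that proposition). The key observation is that, because the maximal diameter of the intervals $f^i(L_k)$ must tend to $0$ as $k\to\infty$ — otherwise $\omega(x_0,f)$ would be contained in finitely many non-degenerate periodic intervals and could not ``fill'' them densely while avoiding periodic points (using Proposition~\ref{prop:omega-no-periodic-point}) — there is a level $k$ at which the three points $a_0,a_1,a_2$ no longer all lie in a single interval $f^i(L_k)$. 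Then two of them, say $a_s$ and $a_t$, lie in distinct intervals $f^i(L_k)$ and $f^j(L_k)$, and item (ii) of Proposition~\ref{prop:htop0-Lki} provides a periodic point $z$ strictly between these two intervals; since $a_s,a_t\in J$ and $J$ is an interval, $z\in\langle a_s,a_t\rangle\subset J$. This proves (i). I should double-check the claim that $\max_i|f^i(L_k)|\to 0$: if not, infinitely many of the $L_k$ are non-degenerate with length bounded below, hence (by the nesting (iii)) a fixed non-degenerate interval $L:=\bigcap_k f^{i_k}(L_k)$ along a suitable branch would be a periodic non-degenerate interval whose iterates contain $\omega$-points, and one can apply the argument recursively or invoke the structure already established; the cleanest route is probably: if $|f^{i}(L_k)|\ge\eps$ for all $k$ along some branch, then $f$ restricted to that branch acts with zero entropy and the $\omega$-limit set is infinite in it, reducing to the same situation — but in fact the simplest is to note that if no such $k$ exists, $a_0,a_1,a_2$ all lie in $f^{i_k}(L_k)$ for every $k$, forcing $|f^{i_k}(L_k)|\ge a_2-a_0>0$ for all $k$, and then the decreasing intersection $\bigcap_k f^{i_k}(L_k)$ is a non-degenerate closed interval $K$ with $f^{2^k}(K)\subset K$ for all $k$; such a $K$ would be a non-degenerate invariant interval for $g:=f^{2^k}$ for every $k$, and $\omega(x_0,f^{2^k})$ would be trapped inside it, but then $g|_K$ has a periodic point by Lemma~\ref{lem:fixed-point}, located in $K\subset J$, which already gives the conclusion. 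So (i) follows in all cases.

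For (ii), take an open interval $U$ with $U\cap\omega(x_0,f)\ne\emptyset$, say $y\in U\cap\omega(x_0,f)$. Since $\omega(x_0,f)$ is infinite and strongly invariant (Lemma~\ref{lem:omega-set}), it is a perfect set, so $U$ contains infinitely many points of $\omega(x_0,f)$; shrinking, we may assume $U$ contains at least three distinct $\omega$-points $b_0,b_1,b_2$. Now the point is that some forward image of $U$ ``spreads out'' enough to contain three $\omega$-points that straddle one of the separating periodic points of Proposition~\ref{prop:htop0-Lki}(ii). Concretely, pick $k$ large enough that $b_0,b_1,b_2$ do not all lie in one interval $f^i(L_k)$ (possible by the diameter argument above); then already $U$ itself meets two distinct intervals $f^i(L_k),f^j(L_k)$, so $U$ contains a periodic point $z$ between them and we may take $n=0$. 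If instead all three $b_\ell$ happen to lie in a single $f^i(L_k)$ for every $k$ — which by the trapping argument forces a periodic point in $U$ directly — we are again done with $n=0$. So in fact (ii) follows from (i) essentially immediately: $U$ contains three distinct $\omega$-points, hence contains a periodic point, and one may take $n=0$.

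The main obstacle I anticipate is making the ``diameters tend to zero'' step rigorous and self-contained without circular reliance on later results; the safe way is the trapping-interval dichotomy sketched above (either the three points separate at some finite level $k$, giving a periodic point via Proposition~\ref{prop:htop0-Lki}(ii), or they stay together forever, and then the decreasing intersection of the non-degenerate intervals containing them is a non-degenerate interval invariant under $f^{2^k}$ for all $k$, which by Lemma~\ref{lem:fixed-point} contains a periodic point inside $J$). Either branch lands the periodic point inside $J$ (respectively $U$), so no appeal to the $f^n$ in the statement of (ii) is actually needed — $n=0$ works — though stating it with a general $n$ does no harm. Once (i) is in place, (ii) is a one-line consequence using that an infinite $\omega$-limit set is perfect, so every open set meeting it meets it in infinitely many, in particular three, points.
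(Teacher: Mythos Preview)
Your plan has genuine gaps in both parts.

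For (i), the ``separating'' branch is correct, but your treatment of the case where $a_0,a_1,a_2$ remain together in $f^{i_k}(L_k)$ for every $k$ fails. Setting $K:=\bigcap_k f^{i_k}(L_k)$, you assert $f^{2^k}(K)\subset K$; in fact the opposite holds. The two level-$(k{+}1)$ children of $f^{i_k}(L_k)$ are $f^{i_k}(L_{k+1})$ and $f^{i_k+2^k}(L_{k+1})$, and $f^{2^k}$ swaps them, so $f^{2^k}(K)$ lies in the sibling of $f^{i_{k+1}}(L_{k+1})$ and is therefore \emph{disjoint} from $K$. Moreover, $K\subset J$ is unjustified: you only know $[a_0,a_2]\subset K\cap J$. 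The paper instead rules this case out by an orbit argument you are missing: since $a_1\in(a_0,a_2)\cap\omega(x_0,f)$, the trajectory of $x_0$ enters $(a_0,a_2)$ at two times whose difference is some $k>0$; the later iterate then lies simultaneously in $f^{i_n}(L_n)$ and $f^{i_n+k}(L_n)$, which are disjoint once $2^n>k$. (You are right to distrust $\max_i|f^i(L_k)|\to 0$; the paper explicitly flags this as unclear in the remark preceding Proposition~\ref{prop:non-LY-chaotic}.)

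For (ii), you claim $\omega(x_0,f)$ is perfect, citing Lemma~\ref{lem:omega-set}; that lemma says no such thing, and perfectness of infinite $\omega$-limit sets is not a general fact for dynamical systems (for the shift $\sigma$ on $\{0,1\}^{\IZ^+}$, the point $101001000100001\ldots$ has $\omega$-limit set $\{\bar 0\}\cup\{0^j1\bar 0:j\ge 0\}$, in which $1\bar 0$ is isolated). So you cannot conclude that $U$ already contains three $\omega$-points, and the reduction to $n=0$ is unjustified. The paper manufactures three distinct $\omega$-points differently: with $y\in U\cap\omega(x_0,f)$ and three visit times $k<k+n_1<k+n_2$ of the orbit of $x_0$ to $U$, the points $y,f^{n_1}(y),f^{n_2}(y)$ are distinct by Proposition~\ref{prop:omega-no-periodic-point}; if all three lie in the maximal periodic-point-free interval $L\supset U$, part (i) contradicts the choice of $L$, and otherwise some $f^{n_i}(U)$ contains both a point of $L$ (namely $f^{k+n_i}(x_0)\in U$) and a point outside $L$ (namely $f^{n_i}(y)$), hence a periodic point by maximality of $L$.
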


\begin{proof}
i) Let $J$ be an interval and let $z_1<z_2<z_3$ be three points in 
$J\cap \omega(x_0,f)$. Let
$(L_n)_{n\ge 0}$ be the intervals given by Proposition~\ref{prop:htop0-Lki}
for $\omega(x_0,f)$.
Suppose that, for every integer $n\geq 0$, there is $i_n\in\Lbrack 0,2^n-1\Rbrack$
such that $z_1,z_3\in f^{i_n}(L_n)$. Since $z_2\in (z_1,z_3)\cap 
\omega(x_0,f)$,
there exist two positive integers $m>k$ such that the two points
$f^{m-k}(x_0)$, $f^m(x_0)$ belong to $(z_1,z_3)$. Since $[z_1,z_3]\subset f^{i_n}(L_n)$, 
this implies that $f^m(x_0)\in f^{k+i_n}(L_n) \cap f^{i_n}(L_n)$ for all $n\geq
0$. On the other hand, $f^{k+i_n}(L_n) \cap f^{i_n}(L_n)=\emptyset$ 
if $2^n>k$ because the intervals $(f^k(L_n))_{0\le k<2^n}$ are
pairwise disjoint; we get a contradiction. We deduce that there
exist $n\ge 0$  and $i,j \in\Lbrack 0,2^n-1\Rbrack$ 
such that $i\neq j$, $z_1\in f^i(L_n)$ and
$z_3\in f^j(L_n)$.  We know by Proposition~\ref{prop:htop0-Lki}
that there exists a periodic point $z$ between $f^i(L_n)$ and
$f^j(L_n)$, so $z\in [z_1,z_3]\subset J$.

ii) Now we consider an open interval $U$ such that
$U\cap\omega(x_0,f)\neq  \emptyset$ and let $y\in U\cap
\omega(x_0,f)$. If $U$ contains a periodic point, the proof is over.
From now on, we suppose that $U$ contains no periodic point. Let
$L\supset U$ be the maximal interval containing no periodic point.
Since $U$ is open and contains $y\in\omega(x_0,f)$,
there exist integers $k\geq 0$ and  $n_2>n_1>0$ such that the points $f^k(x_0),
f^{k+n_1}(x_0)$ and  $f^{k+n_2}(x_0)$ belong to $U$. 
The points $y, f^{n_1}(y)$ and $f^{n_2}(y)$ belong to $\omega(x_0,f)$ 
(by Lemma~\ref{lem:omega-set}(iii)), and they are distinct by
Proposition~\ref{prop:omega-no-periodic-point}. If
$y,f^{n_1}(y),f^{n_2}(y)$ belong to $L$, then $L$ contains a periodic point
by (i), which contradicts
the definition of $L$. Thus there exists $i\in\{1,2\}$ such that
$f^{n_i}(y) \notin L$. The interval $f^{n_i}(U)$ contains both
$f^{k+n_i}(x_0)$ and $f^{n_i}(y)$, with
$f^{k+n_i}(x_0)\in L$ and $f^{n_i}(y)\notin L$, and thus the maximality of
the interval $L$ implies that $f^{n_i}(U)$ contains a periodic point.
\end{proof}

Proposition~\ref{prop:cantor-delta-scrambled} below was shown by Smítal
\cite{Smi} in the case $x_0=x_1$. It states that, if $a_0,a_1$
belong to the same infinite $\omega$-limit set and are
$f$-non separable, where $f$ is a zero entropy interval map, then $f$
admits a $\delta$-scrambled Cantor set with $\delta=|a_1-a_0|$. The next
lemma is the first step of the proof of this result.

\begin{lem}\label{lem:non-separable-Jm}
Let $f$ be an interval map of zero topological entropy and
let $x_0,x_1,a_0,a_1$ be four points 
such that $\omega(x_0,f)$ and $\omega(x_1,f)$ are infinite, $a_0\in
\omega(x_0,f)$ and $a_1\in\omega(x_1,f)$. Let $(L_n)_{n\ge 0}$ denote the
intervals given by Proposition~\ref{prop:htop0-Lki} for
$\omega(x_0,f)$. Suppose that, for all $n\geq 0$, 
$\omega(x_1,f)\subset \bigcup_{i=0}^{2^n-1}f^i(L_n)$ and 
there exists $i_n\in\Lbrack 0, 2^n-1\Rbrack$ 
such that both points $a_0,a_1$ belong to $J_n:=f^{i_n}(L_n)$. Let
$A_0,A_1$ be two intervals such that $a_0\in\Int{A_0}$ and
$a_1\in\Int{A_1}$. Then there exists $m\ge 0$ such that
$f^{2^m}(A_0)\cap f^{2^m}(A_1)\supset J_m$.
\end{lem}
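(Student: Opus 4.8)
The plan is to argue level by level, writing $g_n:=f^{2^n}$ and keeping the notation $J_n:=f^{i_n}(L_n)$. First I would record the consequences of the hypotheses. Since $\omega(x_0,f)=\bigcup_{i=0}^{2^n-1}\omega(f^i(x_0),g_n)$ by Lemma~\ref{lem:omega-set}(iv), with $\omega(f^i(x_0),g_n)=f^i(\omega(x_0,g_n))\subset f^i(L_n)$ by Lemma~\ref{lem:omega-set}(iii) and Proposition~\ref{prop:htop0-Lki}(v), and the intervals $f^0(L_n),\dots,f^{2^n-1}(L_n)$ pairwise disjoint, intersecting with $J_n$ gives $\omega(x_0,f)\cap J_n=\omega(f^{i_n}(x_0),g_n)$; in particular $a_0\in\omega(f^{i_n}(x_0),g_n)$. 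The same disjointness together with $a_0\in J_{n+1}\cap J_n$ forces $J_{n+1}\subset J_n$, so the two intervals $f^i(L_{n+1})$ contained in $J_n$ are $J_{n+1}$ and a sibling $J_{n+1}'$, swapped by $g_n$, with a point $z$ satisfying $g_n(z)=z$ strictly between them (Proposition~\ref{prop:htop0-Lki}(ii)). A routine but delicate bookkeeping, using Proposition~\ref{prop:htop0-Lki} applied to $\omega(x_1,f)$ as well as to $\omega(x_0,f)$ (and the hypothesis $\omega(x_1,f)\subset\bigcup_i f^i(L_n)$, which keeps the two solenoidal structures compatible at the level of chunks), produces indices $j_n$ with $a_1\in\omega(f^{j_n}(x_1),g_n)\subset J_n$. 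By Proposition~\ref{prop:htop0-Lki}(v), $J_n$ is the smallest $g_n$-invariant interval containing $\omega(f^{i_n}(x_0),g_n)$, hence $J_n=\overline{\bigcup_{k\ge0}g_n^k(I_n)}$ where $I_n:=\langle\min\omega(f^{i_n}(x_0),g_n),\max\omega(f^{i_n}(x_0),g_n)\rangle$.

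The engine of the proof is this: since $\omega(f^{i_n}(x_0),g_n)$ is strongly $g_n$-invariant (Lemma~\ref{lem:omega-set}(i)), $g_n(I_n)\supset g_n(\omega(f^{i_n}(x_0),g_n))=\omega(f^{i_n}(x_0),g_n)$, and $g_n(I_n)$ being an interval it contains the convex hull $I_n$. Thus $I_n\subset g_n(I_n)\subset g_n^2(I_n)\subset\cdots$ is an increasing chain of intervals whose union is dense in $J_n$. I would combine this with the fact that the $g_n$-orbit of $f^{i_n}(x_0)$ is eventually contained in $J_n$ (Proposition~\ref{prop:htop0-Lki}(vi)) and accumulates at $a_0\in\Int{A_0}$, so it enters $A_0\cap J_n$ at some time $K$; by Lemma~\ref{lem:omega-set}(ii) the closure of $\bigcup_{k\ge0}g_n^k(A_0\cap J_n)$ then contains $\omega(f^{i_n}(x_0),g_n)$. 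The aim is to upgrade this to $\bigcup_{k\ge0}g_n^k(A_0\cap J_n)\supset J_n$, which I would do by a chain-of-intervals argument (Lemma~\ref{lem:chain-of-intervals}) driven by the period-doubling structure: feeding the visit to $A_0\cap J_n$ through the coverings $J_{n+1}\to J_{n+1}'\to J_{n+1}$ (and the fixed point $z$ of $g_n$ between them), and using the increasing chain $g_n^k(I_n)$ to reach both endpoints of $J_n$, one shows that for $n$ large enough a fixed finite number of iterates of $g_n$ spreads $A_0\cap J_n$ over all of $J_n$.

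To extract a single application of some $f^{2^m}$, I would use that if $g_{m_0}(A_0)\supset J_{m_0}$ for some index $m_0$, then for every $m\ge m_0$ one has $f^{2^m}(A_0)=g_{m_0}^{\,2^{m-m_0}}(A_0)\supset g_{m_0}^{\,2^{m-m_0}-1}(J_{m_0})=J_{m_0}\supset J_m$, since $g_{m_0}(J_{m_0})=J_{m_0}$ and $J_{m_0}\supset J_m$. Hence it suffices to obtain $g_{m_0}(A_0)\supset J_{m_0}$ for one $m_0$ — which is exactly what the previous paragraph aims at, after rewriting the "finite number of iterates of $g_n$" as a single $g_m=g_n^{\,2^{m-n}}$ by passing to a higher level. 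Running the same construction for $A_1$ (using that $\omega(f^{j_m}(x_1),g_m)\subset J_m$ and that its forward $g_m$-images fill $J_m$) gives an index $m_1$ with $g_{m_1}(A_1)\supset J_{m_1}$; taking $m:=\max(m_0,m_1)$ and applying the bump to both $A_0$ and $A_1$ yields $f^{2^m}(A_0)\cap f^{2^m}(A_1)\supset J_m$.

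The hard part is the step $\bigcup_{k\ge0}g_n^k(A_0\cap J_n)\supset J_n$: a union of forward images of a small interval need not be connected, so one must argue, using the solenoidal structure of $\omega(x_0,f)$ (Proposition~\ref{prop:htop0-Lki}) and the absence of periodic points in infinite $\omega$-limit sets (Proposition~\ref{prop:omega-no-periodic-point}), that no hole survives and that the two endpoints of $J_n$ are actually attained. A secondary subtlety is that $a_1$ need not belong to $\omega(x_0,f)$, so proving that the forward images of $A_1$ fill the \emph{same} interval $J_m$ — rather than only the smallest $g_m$-invariant interval attached to $\omega(f^{j_m}(x_1),g_m)$, which could be strictly smaller — is precisely where the hypothesis that $a_0$ and $a_1$ lie in the common chunk $J_n$ for every $n$ is indispensable.
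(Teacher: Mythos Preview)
There is a genuine gap in the central step. You aim to show $g_{m_0}(A_0)\supset J_{m_0}$ for some $m_0$, but your argument (orbit density plus minimality of $J_n$) only gives that the \emph{closure} of $\bigcup_{k\ge0} g_n^k(A_0\cap J_n)$ is $J_n$. Passing from a union of iterates to a single iterate is where the proof breaks: the images $g_n^k(A_0\cap J_n)$ are not nested, so even if their finite union covers $J_n$, no individual $g_n^K(A_0\cap J_n)$ need cover anything large, and your proposed rewriting of $K$ iterates of $g_n$ as one iterate of $g_m=g_n^{\,2^{m-n}}$ does not bridge this. You mention the fixed point $z$ of $g_n$ separating $J_{n+1}$ from its sibling, but never show that an iterate of $A_0$ actually contains $z$ --- and without such an anchor outside the nested $\omega$-structure, there is no mechanism forcing a single image to stretch across a full level.

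The paper's proof supplies this anchor via Lemma~\ref{lem:omega-periodic-points}(ii): since $A_j$ meets the infinite set $\omega(x_j,f)$, some iterate $f^{n_j}(A_j)$ contains a periodic point $y_j$, necessarily of period a power of $2$. Taking $q$ large enough and $g:=f^{2^q}$, the point $y_j':=f^{2^q-n_j}(y_j)\in g(A_j)$ satisfies $f^{2^p}(y_j')=y_j'$ for some $p<q$, hence $y_j'\notin J_q$ (which has period exactly $2^q$). Since also $g(a_j)\in g(J_{q+1})\subset J_q$, the interval $g(A_j)$ reaches from outside $J_q$ into $g(J_{q+1})$; a two-case analysis on the order of $J_{q+1}$ versus $g(J_{q+1})$ within $J_q$ then yields $g^4(A_j)\supset J_{q+1}$, giving $m=q+2$. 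This escape to a periodic point \emph{outside} the solenoidal nest is the missing idea, and it also dissolves your secondary subtlety about $A_1$: Lemma~\ref{lem:omega-periodic-points}(ii) applies to $A_1$ via $\omega(x_1,f)$ symmetrically.
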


\begin{proof}
By Lemma~\ref{lem:omega-periodic-points}(ii), there exist $n_0$ and $n_1$
such that $f^{n_0}(A_0)$ contains a periodic point $y_0$ and
$f^{n_1}(A_1)$ contains a periodic point $y_1$. According to
Theorem~\ref{theo:htop-power-of-2}, the periods of $y_0,y_1$ are some
powers of $2$. Let $2^p$ be a common multiple of their periods and let
$q$ be such that $q>p$ and $2^q\geq\max\{n_0,n_1\}$. We fix $j\in\{0,1\}$ and 
we set $y_j':=f^{2^q-n_j}(y_j)$.  Then $f^{2^p}(y_j')=y_j'$ and 
$y_j'\in  f^{2^q}(A_j)$. Moreover, $y_j'\notin
J_q$ because $J_q=f^{i_q}(L_q)$ is a periodic interval of period
$2^q>2^p$. Suppose that 
\begin{equation}\label{eq:yj'<Jq}
y_j'< J_q,
\end{equation}
the case with the reverse inequality being symmetric.

Let $g:=f^{2^q}$. Then $g(y'_j)=y'_j$. The intervals  $J_{q+1}$
and $g(J_{q+1})$ are disjoint, $g^2(J_{q+1})=J_{q+1}$
and $J_{q+1}\cup g(J_{q+1})\subset J_q$.  Moreover,
\begin{equation}\label{eq:gAj}
\{y_j',g(a_j)\}\subset g(A_j)\quad\text{and}\quad
g(a_j)\in g(J_{q+1}).
\end{equation} 
We consider two cases.

\textbf{Case 1.} If $g(J_{q+1})> J_{q+1}$ (Figure~\ref{fig:gAj-1}),
then, by connectedness,
$g(A_j)\supset J_{q+1}$ by \eqref{eq:gAj} and \eqref{eq:yj'<Jq}. 
Thus $g^2(A_j)\supset g(J_{q+1}) \cup\{y_j'\}$ and, by
connectedness, $g^2(A_j)$ contains $J_{q+1}$.
This implies that $g^4(A_j)\supset J_{q+1}$.
\begin{figure}[hbt]
\centerline{\includegraphics{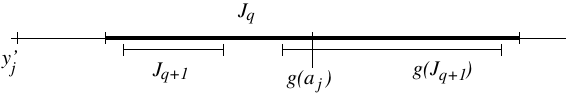}}
\caption{Relative positions in Case 1 of 
the proof of Lemma~\ref{lem:non-separable-Jm}. The interval
$g(A_j)$ contains $y'_j$ and $g(a_j)$, so $g(A_j)\supset 
J_{q+1}$.}
\label{fig:gAj-1}
\end{figure}

\textbf{Case 2.} If $g(J_{q+1})< J_{q+1}$ (Figure~\ref{fig:gAj-2}), 
then $g^2(A_j)$ contains the points $g^2(a_j)$ and $y_j'$ by \eqref{eq:gAj}.  
Since $a_j\in J_{q+1}$, the point $g^2(a_j)$ belongs to $J_{q+1}$ too.
Thus, by
connectedness, $g^2(A_j)$ contains $g(J_{q+1})$ by \eqref{eq:yj'<Jq}.
\begin{figure}[hbt]
\centerline{\includegraphics{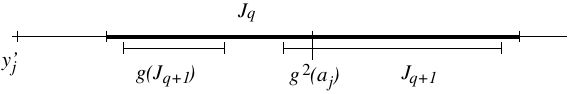}}
\caption{Relative positions in Case 2 of the proof of
Lemma~\ref{lem:non-separable-Jm}. The interval
$g^2(A_j)$ contains $y'_j$ and $g^2(a_j)$, so $g^2(A_j)\supset 
g(J_{q+1})$.}
\label{fig:gAj-2}
\end{figure}
Then $g^3(A_j)\supset J_{q+1}\cup \{y_j'\}$, so
$g^3(A_j)\supset g(J_{q+1})$ by connectedness, and finally
$g^4(A_i)\supset J_{q+1}$.

We conclude that $g^4(A_j)\supset J_{q+1}\supset
J_{q+2}$ for $j\in\{0,1\}$. This is the required result with $m:=q+2$.
\end{proof}

\begin{prop}\label{prop:cantor-delta-scrambled}
Let $f\colon I\to I$ be an interval map of zero topological entropy 
and let $x_0, x_1, a_0,a_1$ be four points
such that $\omega(x_0,f)$ and $\omega(x_1,f)$ are infinite,
$a_0\in \omega(x_0,f)$, $a_1\in \omega(x_1,f)$ and
$a_0\neq a_1$. Let $(L_n)_{n\ge 0}$ denote the intervals given by
Proposition~\ref{prop:htop0-Lki} for
$\omega(x_0,f)$. Suppose that, for all $n\geq 0$,
$\omega(x_1,f)\subset \bigcup_{i=0}^{2^n-1}f^i(L_n)$ and 
there exists $i_n\in\Lbrack 0,2^n-1\Rbrack$ 
such that both points $a_0,a_1$ belong to
$f^{i_n}(L_n)$.  Then  $f$ has a $\delta$-scrambled Cantor set
with $\delta:=|a_1-a_0|$. Moreover, if $K_0,K_1$ are disjoint closed
intervals such that $a_i\in \Int{K_i}$ for $i\in\{0,1\}$, then there
exists an increasing  sequence of integers $(n_k)_{k\ge 0}$
such that
\begin{equation}\label{eq:quasishift}
\forall (\alpha_k)_{k\ge 0}\in\{0,1\}^{\IZ^+},\ \exists\, x\in I,
\ \forall k\ge 0,\ f^{n_k}(x)\in K_{\alpha_k}.
\end{equation}
\end{prop}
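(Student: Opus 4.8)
The plan is to build, for every infinite binary sequence $\bar\alpha\in\{0,1\}^{\IZ^+}$, a point $x_{\bar\alpha}$ whose forward orbit enters a prescribed neighbourhood of $a_0$ or of $a_1$ at a fixed increasing sequence of ``checkpoint'' times $n_0<n_1<\cdots$, the choice at time $n_k$ being dictated by $\alpha_k$; once this is done \eqref{eq:quasishift} is immediate, and the $\delta$-scrambled Cantor set follows by a coding argument. The engine is a ``Cantor book''. Writing $J_m:=f^{i_m}(L_m)$ for the periodic intervals of Proposition~\ref{prop:htop0-Lki} (each one contains $a_0$ and $a_1$, is $f^{2^m}$-periodic, and $J_{m+1}\subset J_m$), I would construct inductively integers $n_0<l_0<n_1<l_1<\cdots$ and, for each word $w=w_0\cdots w_{k-1}\in\{0,1\}^k$, a closed interval $D_w$ such that $D_{w0},D_{w1}$ are disjoint closed subintervals of $D_w$, $f^{n_i}(D_w)\subset\Int{K_{w_i}}$ for all $i<k$, and $f^{l_{k-1}}(D_w)\supset J_{p_k}$ for some integer $p_k$ depending only on $k$ (the disjointness of $D_{w0},D_{w1}$ comes for free from the fact that their images at the checkpoint $n_k$ lie in the disjoint sets $\Int{K_0}$ and $\Int{K_1}$).

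For the step from level $k$ to level $k+1$ I would fix a word $w$ of length $k$: then $f^{l_{k-1}}(D_w)\supset J_{p_k}$, so it contains small closed neighbourhoods $B_0\ni a_0$, $B_1\ni a_1$ with $B_i\subset\Int{K_i}$; applying Lemma~\ref{lem:non-separable-Jm} to $(B_0,B_1)$ yields $m$ with $f^{2^m}(B_0)\cap f^{2^m}(B_1)\supset J_m$, and since $f^{2^{m'}}(B_i)\supset J_{m'}$ for every $m'\ge m$ (as $J_m$ is $f^{2^m}$-periodic and $J_{m'}\subset J_m$), I may take $m=:p_{k+1}$ uniformly over the finitely many words of length $k$. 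Choosing suitable integers $l_{k-1}<n_k<l_k$, I would then use Lemma~\ref{lem:chain-of-intervals}(i) to pull back through $D_w$ the two chains of intervals that pass into $K_0$ (resp.\ $K_1$) at time $n_k$ and then cover $J_{p_{k+1}}$ at time $l_k$, and Lemma~\ref{lem:chain-of-intervals}(iii) to keep the two resulting sub-intervals $D_{w0},D_{w1}$ with disjoint interiors. Taking nested intersections $\bigcap_k D_{\alpha_0\cdots\alpha_{k-1}}$ and a point $x_{\bar\alpha}$ in each proves \eqref{eq:quasishift}; moreover $S:=\bigcap_{k\ge1}\bigcup_{|w|=k}D_w$ is closed, its connected components are exactly these fibres (any two of which are separated by a gap), so the set $S'$ of left endpoints of the components is an uncountable Borel set and, by Theorem~\ref{theo:perfect-set}, contains a Cantor set.

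To obtain modulus exactly $\delta=|a_1-a_0|$ rather than merely some positive constant, I would run the same construction with shrinking neighbourhoods $K_{i,k}:=\overline{B}(a_i,1/k)\cap K_i$ and replace the naive indexing of itineraries by the interleaving device used in the proof of Theorem~\ref{theo:htop-positive-chaos-LY}: each entry $\alpha_k$ is made to reappear at infinitely many checkpoints, so that two distinct sequences $\bar\alpha\ne\bar\beta$ force the two orbits into $\overline{B}(a_0,1/j)$ and $\overline{B}(a_1,1/j)$ simultaneously for arbitrarily large $j$, giving $\limsup_n|f^n(x_{\bar\alpha})-f^n(x_{\bar\beta})|\ge\delta$; meanwhile a fixed ``spine'' sequence is threaded through as well, so that the two orbits also lie in a common $\overline{B}(a_\cdot,1/j)$ infinitely often, giving $\liminf_n|f^n(x_{\bar\alpha})-f^n(x_{\bar\beta})|=0$. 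Keeping $K_{i,k}\subset K_i$ at the same time preserves the statement \eqref{eq:quasishift} with the given $K_0,K_1$, while Lemmas~\ref{lem:approx-periodic}--\ref{lem:JL2} allow one to delete a countable subset if control of distances to periodic points is also wanted.

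The main obstacle is the inductive ``restart'': one must be certain that $f^{l_{k-1}}(D_w)$ really contains neighbourhoods $B_0,B_1$ in which $a_0$ and $a_1$ are \emph{interior} points, which could fail if $a_0$ or $a_1$ were an endpoint of $J_{p_k}$. This I would handle by using that $f^{l_{k-1}}(D_w)$ in fact contains a \emph{fixed} periodic interval $J_{p_0}$ that is strictly larger than $J_{p_k}$ on the relevant side (and, in the degenerate case where some $a_i$ is a persistent endpoint of all $J_m$, by restarting at a nearby genuinely interior point of the perfect set $\omega(x_i,f)$ and re-deriving the covering directly as in the proof of Lemma~\ref{lem:non-separable-Jm} via Lemma~\ref{lem:omega-periodic-points}), together with the endpoint-tracking clause $f^n(\End{K})=\End{J_n}$ of Lemma~\ref{lem:chain-of-intervals}. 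Checking that the $p_k$ and checkpoint times can be chosen uniformly over all words of a given length, and that the fibres are pairwise separated, is where the bookkeeping is heaviest.
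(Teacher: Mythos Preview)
Your architecture is essentially the paper's: build nested closed intervals $D_w$ indexed by binary words, with a sequence of checkpoint times at which $f^{n_k}$ lands in $K_{w_k}$, use Lemma~\ref{lem:non-separable-Jm} to relaunch the construction at each level, and then run the interleaving encoding of Theorem~\ref{theo:htop-positive-chaos-LY} to upgrade to a $\delta$-scrambled Cantor set with $\delta=|a_1-a_0|$. You also correctly identify the one real obstacle: to feed Lemma~\ref{lem:non-separable-Jm} at level $k+1$ you need neighbourhoods $B_0\ni a_0$ and $B_1\ni a_1$ lying in the \emph{interior} of $f^{l_{k-1}}(D_w)$, and nothing in the construction prevents $a_0$ or $a_1$ from being an endpoint of every $J_m$ (indeed this happens in Example~\ref{ex:chaos-LY-htop0}).

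Where your proposal falls short is precisely in resolving this obstacle. Your first fix, that $f^{l_{k-1}}(D_w)$ ``in fact contains a fixed periodic interval $J_{p_0}$ strictly larger than $J_{p_k}$'', does not follow from the lemma you are using: Lemma~\ref{lem:non-separable-Jm} only produces $f^{2^m}(B_i)\supset J_m$, and since $J_m$ is $f^{2^m}$-periodic, further iteration never yields a strictly larger $J_{p_0}$. Your second fix, replacing $a_i$ by a nearby interior point of $\omega(x_i,f)$, alters the pair and so degrades the modulus below $|a_1-a_0|$, which is exactly what must be avoided. The paper's solution is a different manoeuvre that you are missing: at level $k$ it does not look for neighbourhoods of $a_0,a_1$ at all. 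Instead, inside $J_{m(k)}$ it locates the four disjoint sub-intervals $(g^i(J_{m(k)+2}))_{0\le i\le 3}$ with $g=f^{2^{m(k)}}$, takes the \emph{second} one from the left (call it $J'$, necessarily contained in $\Int{J_{m(k)}}$), and pulls $a_0,a_1$ back along $g^j$ to points $a_0',a_1'\in J'$. These surrogate points lie in the interior of $J_{m(k)}$, they can be taken in $\omega(x_0,f)$ and $\omega(x_1,f)$ respectively by strong invariance of $\omega$-limit sets, and they are still in the same $f^i(L_n)$ for every $n$ (otherwise $a_0,a_1$ would be separated). Lemma~\ref{lem:non-separable-Jm} is then applied to $a_0',a_1'$ rather than to $a_0,a_1$, and the resulting covering is pushed forward by $g^j$ back to $J_{m(k+1)}$. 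This is the missing mechanism; once you incorporate it, the rest of your outline goes through exactly as in the paper.
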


\begin{proof}
For every $n\ge 0$, we set $J_n:=f^{i_n}(L_n)$. According to
Proposition~\ref{prop:htop0-Lki}, the intervals $(f^i(J_n))_{0\leq
i<2^n}$ are disjoint, $f^{2^n}(J_n)=J_n$ and
$J_{n+1}\cup f^{2^n}(J_{n+1})\subset J_n$.

First we build by induction two sequences of integers $(n(k))_{k\geq
0}$ and $(m(k))_{k\geq 0}$ and a family of closed subintervals
$\{I_{\alpha_0\ldots\alpha_k}\mid k\geq 0, \alpha_i \in\{0,1\}\}$
satisfying the following properties for all  $k\ge 0$ and
all $(\alpha_0,\ldots,\alpha_{k+1})\in\{0,1\}^{k+2}$:
\begin{enumerate}
\item 
$I_{\alpha_0\ldots \alpha_k\alpha_{k+1}}\subset I_{\alpha_0\ldots
\alpha_k}$,
\item
$I_{\alpha_0\ldots \alpha_k}\cap I_{\beta_0\ldots \beta_k}=\emptyset$
if $(\alpha_0,\ldots, \alpha_k)\neq(\beta_0,\ldots, \beta_k)$, where
$(\beta_0,\ldots, \beta_k)\in\{0,1\}^{k+1}$,
\item
$f^{n(k)}(I_{\alpha_0\ldots \alpha_k})=J_{m(k)}$,
\item
$m(k)\geq k$ and $n(k+1)-n(k)=2^{m(k+1)}$,
\item
for $i \in\{0,1\}$,
$f^{n(k)}(I_{\alpha_0\ldots \alpha_k i })\subset [a_i -\frac1k,a_i +\frac1k]$ .
\end{enumerate}

\textbf{Step \boldmath  $k=0$.} Let $\eps\in(0,\frac{\delta}2)$. 
We set $A_i:=[a_i-\eps,a_i+\eps]\cap
I$ for $i\in\{0,1\}$. According to the choice of $\eps$, the 
intervals $A_0, A_1$ are disjoint. By
Lemma~\ref{lem:non-separable-Jm}, there exists an integer $m$ such that
$f^{2^m}(A_0)\cap f^{2^m}(A_1)\supset J_m$.  Thus there exist closed
subintervals  $I_0\subset A_0$ and $I_1\subset A_1$ such that
$f^{2^m}(I_i)=J_m$ for $i\in\{0,1\}$
(Lemma~\ref{lem:chain-of-intervals}(i)). Letting $m(0)=m$ and $n(0)=2^m$, this
ends the construction at step $k=0$.

\textbf{Step \boldmath $k+1$.} Suppose that
$n(k), m(k)$ and $(I_{\alpha_0\ldots\alpha_k})_{(\alpha_0,\ldots,\alpha_k)\in \{0,1\}^{k+1}}$
are already defined. Let $\eps\in(0,\min\{\frac{1}{k+1},\frac{\delta}{2}\})$.  
We set $B_i:=[a_i-\eps,a_i+\eps]\cap I$ for $i\in\{0,1\}$.  
According to the choice of $\eps$,
the intervals $B_0, B_1$ are disjoint.

We set $g:=f^{2^{m(k)}}$.  The interval $J_{m(k)}$ contains the four
disjoint intervals $(g^i(J_{m(k)+2}))_{0\le i\le 3}$. 
We order these intervals from left to right, and we call
$J_{m(k)+2}'$ the second one. Let $j\in\Lbrack 0,3\Rbrack$ be such that
$g^j(J_{m(k)+2}')=J_{m(k)+2}$. For $i\in\{0,1\}$, let  $a_i'$ be a point
in $J_{m(k)+2}'$ such that $g^j(a_i')=a_i$. It is clear that, for all
$n\geq 1$, the points $a_0',a_1'$ are in the same interval among
$(f^k(L_n))_{0\le k\le 2^n-1}$, otherwise it would be false for $a_0,a_1$.
Moreover, the points $a_0',a_1'$ belong to $\Int{J_{m(k)}}$  
according to the choice of $J_{m(k)+2}'$.

Since $B_i$ is a neighborhood of $a_i$, the set
$g^{-j}(B_i)$ is a neighborhood of $a_i'$ for $i\in\{0,1\}$. Let $U_i$ be the 
connected component of $g^{-j}(B_i)$ containing $a_i'$. Then $U_0\cap
J_{m(k)}$ and  $U_1\cap J_{m(k)}$ are a connected neighborhood of
$a_0'$ and $a_1'$ respectively. Thus, according to 
Lemma~\ref{lem:non-separable-Jm}, there exists an integer $q\ge 0$ such that,
for $i\in\{0,1\}$,
$f^{2^q}(U_i\cap J_{m(k)})\supset f^p(L_q)$, where $p\in\Lbrack 0,2^q-1\Rbrack$ 
is the integer such that $a_0',a_1'\in f^p(L_q)$.  
Since $g^j(U_i\cap J_{m(k)})\subset B_i\cap J_{m(k)}$, we have
\begin{equation}\label{eq:BiJm}
f^{2^q}(B_i\cap J_{m(k)})\supset 
f^{2^q}(g^j(U_i\cap J_{m(k)}))\supset g^j(f^p(L_q)).
\end{equation}
Moreover, $g^j(f^p(L_q))$ contains $a_0=g^j(a_0')$ and $a_1=g^j(a_1')$, 
which implies that
\begin{equation}\label{eq:gjfpLq}
g^j(f^p(L_q))=J_q
\end{equation}
because $(f^k(J_q))_{0\le k\le 2^q-1}$ is a cycle of intervals and $J_q$
is the unique interval of this cycle containing $a_0,a_1$.
We choose $m(k+1)\geq\max\{q,k+1\}$. Then
$f^{2^{m(k+1)}-2^q}(J_q)=J_q$, and thus, by \eqref{eq:BiJm} and 
\eqref{eq:gjfpLq},
$$
f^{2^{m(k+1)}}(B_i\cap J_{m(k)})\supset 
f^{2^{m(k+1)}-2^q}(g^j(f^p(L_q)))=f^{2^{m(k+1)}-2^q}(J_q)=
J_q\supset J_{m(k+1)}.
$$
Then, for $i\in\{0,1\}$, there exists a closed subinterval
$F_i\subset B_i\cap J_{m(k)}$ such that $f^{2^{m(k+1)}}(F_i)=
J_{m(k+1)}$ (by Lemma~\ref{lem:chain-of-intervals}(i)).
Let $(\alpha_0,\ldots,\alpha_k)\in\{0,1\}^{k+1}$ and $i \in\{0,1\}$.
Since $f^{n(k)}(I_{\alpha_0\ldots\alpha_k})=J_{m(k)}$ 
by the induction hypothesis,
there exists a closed subinterval $I_{\alpha_0\ldots\alpha_k i } \subset
I_{\alpha_0\ldots\alpha_k}$ such that
$f^{n(k)}(I_{\alpha_0\ldots\alpha_k i })=F_{i }$. 
By choice of $F_i $,
this implies that $f^{n(k)+2^{m(k+1)}}(I_{\alpha_0\ldots\alpha_k i })=
J_{m(k+1)}$.  We define $n(k+1):=n(k)+2^{m(k+1)}$. It is clear that
properties (i), (iii), (iv), (v) are satisfied. The intervals
$I_{\alpha_0\ldots \alpha_k 0}$ and $I_{\alpha_0\ldots \alpha_k 1}$
are disjoint because their images under $f^{n(k+1)}$  are included
respectively in $B_0$ and $B_1$. Moreover, $I_{\alpha_0\ldots
\alpha_k\alpha_{k+1}}\cap
I_{\beta_0\ldots\beta_k\beta_{k+1}}=\emptyset$ if $(\alpha_0,\ldots,
\alpha_k)\neq  (\beta_0,\ldots,\beta_k)$ because $I_{\alpha_0\ldots
\alpha_k\alpha_{k+1}} \subset I_{\alpha_0\ldots \alpha_k}$,
$I_{\beta_0\ldots\beta_k\beta_{k+1}}\subset I_{\beta_0\ldots\beta_k}$
and these sets are disjoint by the induction hypothesis.  
This gives (ii) and the induction is over.

\medskip Now we prove the proposition.  Let $\Sigma:=\{0,1\}^{\IZ^+}$,
endowed with the product topology.
For every $\bar\alpha=(\alpha_n)_{n\ge 0} \in\Sigma$, we set
$$
I_{\bar\alpha}:=\bigcap_{n=0}^{+\infty}I_{\alpha_0\ldots\alpha_n}.
$$
By (i), this is a decreasing intersection of nonempty compact
intervals, and thus $I_{\bar\alpha}$ is a nonempty compact interval.
Moreover, $I_{\bar\alpha}\cap
I_{\bar\beta}= \emptyset$ if $\bar\alpha\neq \bar\beta$,
$\bar\alpha,\bar\beta\in\Sigma$.
We define
$$
E:=\{\bar\alpha\in\Sigma\mid I_{\bar\alpha}\text{ is not reduced to a
single  point}\}.
$$
The set $E$ is at most countable because the sets
$(I_{\bar\alpha})_{\bar\alpha\in E}$ are disjoint intervals and they
are non degenerate by definition.  We set
$$
X:=\left(\bigcap_{n=0}^{+\infty}\bigcup_{\doubleindice{\alpha_i\in\{0,1\}}{i
\in\Lbrack 0, n\Rbrack}} I_{\alpha_0\ldots \alpha_n}\right)\setminus
\bigcup_{\bar\alpha\in E} \Int{I_{\bar\alpha}}.
$$
This is a totally disconnected compact set. We define
$$
\vfi\colon\begin{array}[t]{rcl} X& \longrightarrow & \Sigma\\ x&
\longmapsto & \bar\alpha\quad \text{if } x\in I_{\bar\alpha}.
\end{array}
$$
The map $\vfi$ is well defined and is clearly onto.  Let $\delta_n$ be
the minimal distance between two distinct intervals of the form
$I_{\alpha_0\ldots\alpha_n}$. Clearly $\delta_n>0$ because these
intervals are closed and pairwise disjoint.  Let $x,y\in X$ and
$(\alpha_n)_{n\ge 0}:=\vfi(x)$,  $(\beta_n)_{n\ge 0}:=\vfi(y)$. If
$|x-y|<\delta_n$, then  necessarily $\alpha_0\ldots
\alpha_n=\beta_0\ldots\beta_n$, and thus $\vfi$ is continuous.

Let $K_0, K_1$ be two disjoint closed intervals such that
$a_i\in \Int{K_i}$ for $i\in\{0,1\}$. Then there exists a positive integer
$N$ such that $[a_i-\frac1N,a_i+\frac1N]\subset K_i$ for $i\in\{0,1\}$.
Let $\bar\alpha=(\alpha_n)_{n\ge 0}\in\Sigma$ and $x\in \vfi^{-1}(\alpha)$.
Then, according to (v), 
for every $k\ge N$, $f^{n(k)}(x)\in f^{n(k)}(I_{\alpha_0\ldots\alpha_{k+1}})
\subset K_{\alpha_{k+1}}$. This proves statement 
\eqref{eq:quasishift} in the proposition (with the the sequence 
$(n_{N+k})_{k\ge 0}$).

We define $\psi\colon \Sigma\to \Sigma$ by
$$
\psi((\alpha_n)_{n\ge 0}):=
(0\alpha_0 00\alpha_0\alpha_1\ldots
0^n\alpha_0\alpha_1\ldots\alpha_{n-1}\ldots)\quad\text{where }
0^n=\underbrace{0\cdots 0}_{n\ \rm times}.
$$
The map $\psi$ is clearly continuous and one-to-one, and thus $\psi(\Sigma)$ is 
compact and uncountable.
For every $\bar\alpha\in \Sigma$, we choose $x_{\bar\alpha}\in X$ such that
$\vfi(x_{\bar\alpha})=\psi(\bar\alpha)$ and we set $S:=\{x_{\bar\alpha}\in
X \mid  \bar\alpha\in\Sigma\}$. If $\psi(\bar\alpha)\notin E$,
there is a unique choice for $x_{\alpha}$, and if $\psi(\bar\alpha)\in E$,
there are two possible choices. Therefore, $S$ is equal to
$\vfi^{-1}(\psi(\Sigma))$  deprived of a countable set, and thus it is an
uncountable Borel set.  
By Theorem~\ref{theo:perfect-set}, there exists a Cantor set $C\subset S$.  

Let $\bar\alpha,\bar\beta$ be two distinct elements of $\Sigma$, and let
$i\ge 0$ be such that $\alpha_i\neq\beta_i$.  According to the 
definition of $\psi$, for every
$N\geq 0$, there exists $k\geq N$ such that  the $(k+1)$-th coordinate
of $\psi(\bar\alpha)$ and $\psi(\bar\beta)$ are equal respectively to
$\alpha_i$ and $\beta_i$, and thus they are distinct. Using (v), 
this implies that
either $f^{n(k)}(x_{\bar\alpha})$ belongs to $[a_0-\frac1k,a_0+\frac1k]$ and
$f^{n(k)}(x_{\bar\beta})$ belongs to $[a_1-\frac1k,a_1+\frac1k]$, or the
converse. In particular,
$|f^{n(k)}(x_{\bar\alpha})-f^{n(k)}(x_{\bar\beta})| \geq \delta-\frac2k$,
which implies that
$$
\text{for all }x,x'\in S,\ x\neq x',\ 
\limsup_{n\to+\infty}|f^{n}(x)-f^{n}(x')|\geq \delta.
$$
By definition of $\psi$, for every $N\geq 0$, there
exists $k\geq N$ such that, for all $\bar\alpha\in\Sigma$, the
$(k+1)$-th coordinate of $\psi(\bar\alpha)$ is equal to $0$. Using
(v), we have that, for all $\bar\alpha\in\Sigma$,
$f^{n(k)}(x_{\bar\alpha}) \in [a_0-\frac1k,a_0+\frac1k]$, and hence
$$
\text{for all }x,x'\in S,\ \liminf_{n\to+\infty}|f^{n}(x)-f^{n}(x')|=0.
$$
We deduce that $S$ is a $\delta$-scrambled set, and thus
$C$ is a $\delta$-scrambled Cantor set.
\end{proof}

In the next proposition, assertion (ii) is stated in \cite[Theorem~2.4]{Smi}. 
In view of Lemma~\ref{lem:approx-periodic}, it implies
that, if $f$ is a zero entropy map admitting no pair
of $f$-non separable points in the same infinite $\omega$-limit set,
then $f$ is not chaotic in the sense of Li-Yorke.

\begin{rem}
In \cite{Smi}, it is claimed without explanation that, for every $\eps>0$,
there exists $n\ge 0$ such that $\max_{i\in\Lbrack 0, 2^n-1\Rbrack} |f^i(L_n)|<\eps$,
where $(L_n)_{n\ge 0}$ are the intervals given by 
Proposition~\ref{prop:htop0-Lki}. It is not clear to us whether this property
does hold. The weaker assertion (i) below
is sufficient to prove Theorem~\ref{theo:htop0-chaos-LY}. See also
Lemma~\ref{lem:diamLcapomega} for a refinement.
\end{rem}

\begin{prop}\label{prop:non-LY-chaotic}
Let $f$ be an interval map of zero topological entropy.
\begin{enumerate}
\item If $\omega(x,f)$ is infinite and contains no $f$-non separable
points, then
$$
\lim_{n\to+\infty}\max_{i\in\Lbrack 0, 2^n-1\Rbrack} 
\diam (\omega(f^i(x),f^{2^n}))=0.
$$
\item Suppose that all pairs of distinct points in an
infinite $\omega$-limit set are $f$-separable. Then
every point $x$ is approximately periodic.
\end{enumerate}
\end{prop}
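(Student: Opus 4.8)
Both parts will follow from a careful analysis of the cycle-of-intervals structure provided by Proposition~\ref{prop:htop0-Lki}, so I would organize the proof around the sequence $(L_n)_{n\ge 0}$ attached to the infinite $\omega$-limit set, and around the separability criterion of Lemma~\ref{lem:f-separable-Lni}. Fix a point $x$ with $\omega(x,f)$ infinite; Proposition~\ref{prop:htop0-Lki} gives closed intervals $L_n$ with $(L_n,f(L_n),\dots,f^{2^n-1}(L_n))$ a cycle of intervals, $f^{2^n}(L_n)=L_n$, the nesting $L_{n+1}\cup f^{2^n}(L_{n+1})\subset L_n$, and $f^i(L_n)$ the smallest $f^{2^n}$-invariant interval containing $\omega(f^i(x),f^{2^n})$. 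In particular $\diam(\omega(f^i(x),f^{2^n}))\le |f^i(L_n)|$, so for part (i) it suffices to show that $\max_{0\le i<2^n}|f^i(L_n)|$, or at least the corresponding maximum over the $\omega$-limit pieces, tends to $0$.

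\textbf{Proof of (i).} For each $n$, pick $i_n\in\Lbrack 0,2^n-1\Rbrack$ with $\ell_n:=\diam(\omega(f^{i_n}(x),f^{2^n}))=\max_{0\le i<2^n}\diam(\omega(f^i(x),f^{2^n}))$. The nesting of the cycles shows $(\ell_n)_{n\ge 0}$ is non-increasing: indeed $\omega(f^j(x),f^{2^{n+1}})\subset \omega(f^i(x),f^{2^n})$ whenever $f^j(L_{n+1})\subset f^i(L_n)$ (using Lemma~\ref{lem:omega-set}(iv) and the nesting $L_{n+1}\cup f^{2^n}(L_{n+1})\subset L_n$). So $\ell_n\downarrow \ell\ge 0$; suppose $\ell>0$ for contradiction. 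Using compactness, choose along a subsequence two points $a_0,a_1$ in $\bigcap_n f^{i_n}(L_n)\cap\overline{\bigcup\omega(f^{i_n}(x),f^{2^n})}$ (after passing to a subsequence so that the indices $i_n$ ``converge'' in the sense that, for each fixed $m$, eventually $f^{i_n}(L_n)\subset f^{j_m}(L_m)$ for a fixed $j_m$) with $|a_0-a_1|\ge \ell>0$ and with $a_0,a_1\in\omega(x,f)$ — here one uses that each $\omega(f^{i_n}(x),f^{2^n})$ is a subset of $\omega(x,f)$ of diameter $\ell_n\to\ell$, so its closure accumulates two points at distance $\ge\ell$ lying in $\omega(x,f)$, and the nesting forces them into a common $f^{j_m}(L_m)$ for every $m$. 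By Lemma~\ref{lem:f-separable-Lni}, $a_0$ and $a_1$ are $f$-non separable, contradicting the hypothesis. Hence $\ell=0$, which is the claim. The delicate point is making the ``$i_n$ converges'' argument precise: one extracts a nested subsequence so that for each $m$ there is a fixed $j_m\in\Lbrack 0,2^m-1\Rbrack$ with $f^{i_n}(L_n)\subset f^{j_m}(L_m)$ for all large $n$, which is possible because at level $m$ there are only finitely many intervals $f^j(L_m)$; then $a_0,a_1\in f^{j_m}(L_m)$ for all $m$, exactly the configuration needed for Lemma~\ref{lem:f-separable-Lni}.

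\textbf{Proof of (ii).} Let $x\in X$ be arbitrary; we must produce, for each $\eps>0$, a periodic point $z$ with $\limsup_n d(f^n(x),f^n(z))\le\eps$. If $\omega(x,f)$ is finite it is a periodic orbit by Lemma~\ref{lem:omega-finite}, and then $x$ is eventually mapped close to that orbit and hence approximately periodic by a routine continuity/uniform-continuity argument — here one picks $z$ on the periodic orbit and uses that $d(f^n(x),\omega(x,f))\to 0$ together with continuity of $f^0,\dots,f^{p-1}$ where $p$ is the period. If $\omega(x,f)$ is infinite, apply part (i): choose $n$ with $\max_{0\le i<2^n}\diam(\omega(f^i(x),f^{2^n}))<\eps$. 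By Proposition~\ref{prop:htop0-Lki}(vi) (applied with $k=n$, provided $n+2$ is in the index set, which it is since $\omega(x,f)$ is infinite) there is $N$ with $f^m(x)\in f^m(L_n)$ for all $m\ge N$; since $f^{2^n}(L_n)=L_n$ and $(L_n,\dots,f^{2^n-1}(L_n))$ is a cycle, $f^{2^n}|_{L_n}$ maps $L_n$ to itself and has a fixed point $z_0\in L_n$ by Lemma~\ref{lem:fixed-point}, so $z:=z_0$ is periodic of period dividing $2^n$. Then $z_0\in\omega$-related to $L_n$: more precisely $z_0\in L_n$ and $\omega(f^i(x),f^{2^n})\subset f^i(L_n)$, so $f^{N+j2^n+i}(x)$ and $f^i(z_0)$ both lie eventually in... — to control the distance one replaces $z_0$ by a point of $\omega(x,f^{2^n})\cap L_n$ if needed, or argues directly: for $m\ge M$ (with $M\ge N$ large), $f^m(x)$ lies within $\eps$ of $\omega(f^m(x)\bmod{\text{cycle}},f^{2^n})\subset f^i(L_n)$, and $f^m(z_0)\in f^i(L_n)$, whence $d(f^m(x),f^m(z_0))\le |f^i(L_n)|$. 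Since $|f^i(L_n)|$ may exceed $\eps$, the clean statement is obtained by using $\diam(\omega(f^i(x),f^{2^n}))<\eps$ and choosing $z_0\in\omega(x,f^{2^n})\cap f^{i_0}(L_n)$ for a suitable $i_0$, guaranteeing $f^m(z_0)\in\omega(f^m(x),f^{2^n})$ for $m$ in the right residue class, hence within $\eps$ of $f^m(x)$. Thus $\limsup_n d(f^n(x),f^n(z_0))\le\eps$, so $x$ is approximately periodic. I expect the main obstacle to be this last bookkeeping — pinning down a periodic point $z_0$ lying in the correct piece of the cycle so that its orbit shadows that of $x$ to within $\diam(\omega(\,\cdot\,,f^{2^n}))$, rather than merely to within the (possibly large) length of $L_n$ — but it is handled by taking $z_0$ in $\omega(x,f^{2^n})$ itself, which is nonempty, $f^{2^n}$-invariant, and contained in $\bigcup_i f^i(L_n)$.
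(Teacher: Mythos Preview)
Your argument for part~(i) is essentially the paper's, though the paper streamlines it by working directly with the intervals $I_n^i:=[\min\omega(f^i(x),f^{2^n}),\max\omega(f^i(x),f^{2^n})]$. These satisfy the automatic nesting $I_{n+1}^i\cup I_{n+1}^{i+2^n}\subset I_n^i$, so one can build the index sequence $(i_n)$ with $I_{n+1}^{i_{n+1}}\subset I_n^{i_n}$ and $|I_n^{i_n}|\ge\eps$ by a direct induction rather than a diagonal extraction; then $a:=\lim\min I_n^{i_n}$ and $b:=\lim\max I_n^{i_n}$ lie in $\omega(x,f)$ and in every $f^{i_n}(L_n)$, feeding straight into Lemma~\ref{lem:f-separable-Lni}.

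Part~(ii), however, has a real gap, precisely at the ``bookkeeping'' step you flag. Neither of your two candidates for $z_0$ works. A fixed point of $f^{2^n}$ chosen merely in $L_n$ has an orbit contained in the cycle $(f^i(L_n))$, but those intervals need not be small, so you only get $|f^m(x)-f^m(z_0)|\le |f^i(L_n)|$, which is useless. Your proposed fix --- take $z_0\in\omega(x,f^{2^n})$ --- fails outright: $\omega(x,f^{2^n})$ is infinite (Lemma~\ref{lem:omega-set}(v)) and contains no periodic point at all, by Proposition~\ref{prop:omega-no-periodic-point}.

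The missing idea is to find a periodic point whose orbit threads through the \emph{small} intervals $I_m^i$ rather than the large $f^i(L_m)$. Since $f(\omega(f^i(x),f^{2^m}))=\omega(f^{i+1}(x),f^{2^m})$, one has $f(I_m^i)\supset I_m^{i+1\bmod 2^m}$, so $(I_m^0,I_m^1,\dots,I_m^{2^m-1},I_m^0)$ is a chain of intervals and Lemma~\ref{lem:chain-of-intervals}(ii) produces $z\in I_m^0$ with $f^{2^m}(z)=z$ and $f^i(z)\in I_m^i$ for every $i$. By part~(i), choose $m$ with $\max_i|I_m^i|<\eps$. For large $k$, $f^{k2^m}(x)$ is $\delta$-close to some $a_k\in\omega(x,f^{2^m})\subset I_m^0$; uniform continuity of $f,\dots,f^{2^m-1}$ then gives $|f^{k2^m+i}(x)-f^i(a_k)|\le\eps$, while $f^i(a_k),f^i(z)\in I_m^i$ gives $|f^i(a_k)-f^i(z)|<\eps$. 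Hence $|f^n(x)-f^n(z)|\le 2\eps$ for all large $n$.
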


\begin{proof}

i) Suppose that $\omega(x,f)$ is infinite and contains no $f$-non separable
points. We set $a_n^i:=\min\omega(f^i(x),f^{2^n})$,
$b_n^i:=\max\omega(f^i(x),f^{2^n})$ and $I_n^i:=[a_n^i,b_n^i]$
for all $n\geq 0$ and all $i\in\Lbrack 0,2^n-1\Rbrack$. Then
by Lemma~\ref{lem:omega-set},
$a_n^i, b_n^i\in \omega(x,f)$ and
\begin{equation}\label{eq:nested-In}
\forall i\in\Lbrack 0, 2^n-1\Rbrack,\ 
I_{n+1}^i\cup I_{n+1}^{i+2^n}\subset I_n^i.
\end{equation}
Suppose that there exists $\eps>0$ such that, for all $n\geq 0$, there
is $i\in\Lbrack 0,2^n-1\Rbrack$ with $|I_n^i|\geq\eps$. Using
\eqref{eq:nested-In}, we can build a sequence $(i_n)_{n\geq
0}$ such that
$$
\forall n\ge 0,\ i_n\in\Lbrack 0,2^n-1\Rbrack,\
I_{n+1}^{i_{n+1}}\subset I_n^{i_n}\text{ and }
|I_n^{i_n}|\geq \eps.
$$
We set $J:=\bigcap_{n\geq 1}
I_n^{i_n}$.  Since it is a decreasing intersection of compact intervals, $J$ is
a compact interval and $|J|\geq \eps$. We write $J=[a,b]$. Then
$$
a=\lim_{n\to+\infty} a_n^{i_n}\quad\text{and}\quad
b=\lim_{n\to+\infty}b_n^{i_n}.
$$
Since $\omega(x,f)$ is a closed set (by Lemma~\ref{lem:omega-set}(i)), 
the points $a,b$ belong to $\omega(x,f)$. The intervals $(L_n)_{n\ge 0}$ 
given by
Proposition~\ref{prop:htop0-Lki} are defined in such a way that
$I_n^i\subset f^i(L_n)$ for all $n\ge 0$ and all $i\in\Lbrack 0,2^n-1\Rbrack$.
Therefore $a,b\in f^{i_n}(L_n)$ for all $n\ge 0$.
Then Lemma~\ref{lem:f-separable-Lni} implies that $a,b$  are $f$-non
separable, which is a contradiction (notice that $a\neq b$ because 
$b-a\ge \eps$). We deduce that, for all
$\eps>0$, there exists $m\geq 0$ such that $|I_m^i|<\eps$
for all  $i\in\Lbrack 0,2^m-1\Rbrack$.
Combined with the fact that these intervals are nested, this gives (i).

\medskip
ii) Let $x$ be a point and $\eps>0$. We split the proof into two cases 
depending on $\omega(x,f)$ being finite or not.

First we suppose that $\omega(x,f)$ is infinite. The intervals
$I_n^i$ are defined as above.
It was shown in the proof of (i) that there exists  an integer $m$
such that $|I_m^i|<\eps$ for all $i\in\Lbrack 0,2^m-1\Rbrack$. Moreover,
there exists a point $z\in I_m^0$
such that $f^{2^m}(z)=z$ and $f^i(z)\in I_m^i$ for all 
$i\in\Lbrack 0,2^m-1\Rbrack$ (Lemma~\ref{lem:chain-of-intervals}(ii)).  Since
$f$ is uniformly continuous, there exists $\delta>0$ such that
$$
\forall y,y',\ |y-y'|\leq\delta\Rightarrow
\forall i\in\Lbrack 0,2^m-1\Rbrack,\ |f^i(y)-f^i(y')|\leq\eps.
$$
Let $N$ be a positive integer such that,
for all $k\geq N$, there exists a point $a_k$ in $\omega(x,f^{2^m})$
with $|f^{k2^m}(x)-a_k|\leq\delta$. For all $i\in\Lbrack 0,2^m-1\Rbrack$, 
the two points $f^i(z), f^i(a_k)$ belong to $I_m^i$, so
$$
|f^{k2^m+i}(x)-f^{k2^m+i}(z)| \leq
|f^{k2^m+i}(x)-f^i(a_k)|+|f^i(a_k)-f^i(z)|\leq 2\eps.
$$
We get: $\forall n\ge N2^m$, $|f^n(x)-f^n(z)|\le 2\eps$.

Now we suppose that $\omega(x,f)$ is finite. By
Lemma~\ref{lem:omega-finite}, the set $\omega(x,f)$ is a periodic
orbit.  Let $p$ be the period of this orbit and
$z:=\lim_{n\to+\infty}f^{np}(x)$; the point $z$ is periodic and
$f^p(z)=z$.  Since $f$ is continuous, 
there exists $\delta>0$ such that
$$
\forall y,\ |y-z|\leq\delta\Rightarrow
\forall i\in\Lbrack 0,p-1\Rbrack,\ |f^i(y)-f^i(z)|\leq\eps.
$$
Let $N$ be an integer such that $|f^{np}(x)-z|\leq\delta$ for all $n\geq N$.
Then
$$
\forall m\geq Np,\ |f^m(x)-f^m(z)|\leq\eps.
$$
This completes the proof of (ii).
\end{proof}

Now we are ready to give the proof of Theorem~\ref{theo:htop0-chaos-LY},
which follows from 
Propositions \ref{prop:cantor-delta-scrambled} and
\ref{prop:non-LY-chaotic}. For clarity, we recall the statement of the
theorem.

\medskip
{\sc Theorem~\ref{theo:htop0-chaos-LY}}.
Let $f$ be an interval map of zero topological entropy. The following
properties are equivalent:
\begin{enumerate}
\item $f$ is chaotic in the sense of Li-Yorke,
\item there exists a $\delta$-scrambled Cantor set
for some $\delta>0$,
\item there exists a point $x$ that is not approximately periodic,
\item there exists an infinite $\omega$-limit set containing 
two $f$-non separable points.
\end{enumerate}

\begin{proof}
If (iv) does not hold, then, according to 
Proposition~\ref{prop:non-LY-chaotic}(ii), all points 
$x$ are approximately periodic.
By refutation, we get (iii)$\Rightarrow$(iv).

Suppose that (iv) holds, that is, there exists an infinite $\omega$-limit
set $\omega(x_0,f)$ containing two $f$-non separable points $a_0,a_1$. 
Then, according to
Lemma~\ref{lem:f-separable-Lni} and
Proposition~\ref{prop:cantor-delta-scrambled} applied with
$x_1=x_0$, there exists a $\delta$-scrambled Cantor set with
$\delta:=|a_1-a_0|$,
which is (ii). Obviously, (ii)$\Rightarrow$(i).

Suppose that (iii) does not hold, that is, every point is
approximately periodic. Then Lemma~\ref{lem:approx-periodic}
implies that there is no Li-Yorke pair. By refutation,
we get (i)$\Rightarrow$(iii).
\end{proof}

\section{One Li-Yorke pair implies chaos in the sense of Li-Yorke}\label{sec:equivLY}

Kuchta and Smítal showed that, for interval maps,
the existence of one Li-Yorke pair of points is
enough to imply the existence of a  $\delta$-scrambled Cantor set \cite{KuS}. 
We give a different proof, suggested by Jiménez López, which follows
easily from Theorem~\ref{theo:htop0-chaos-LY}.

\begin{prop}\label{prop:one-LY-pair}
Let $f$ be an interval map. If
there exists one Li-Yorke pair, then 
there exists a $\delta$-scrambled Cantor for some $\delta>0$.
\end{prop}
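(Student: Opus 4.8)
The strategy is to split into two cases according to whether $h_{top}(f)>0$ or $h_{top}(f)=0$, and to reduce the zero-entropy case to Theorem~\ref{theo:htop0-chaos-LY}. If $h_{top}(f)>0$, there is nothing to do: Theorem~\ref{theo:htop-positive-chaos-LY} already gives a $\delta$-scrambled Cantor set for some $\delta>0$ (regardless of whether $f$ has a Li-Yorke pair, though of course positive entropy guarantees it has one). So the real content is the case $h_{top}(f)=0$, where we must use the hypothesis that $f$ has at least one Li-Yorke pair.

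So suppose $h_{top}(f)=0$ and let $(x,y)$ be a Li-Yorke pair: $\limsup_{n}|f^n(x)-f^n(y)|\ge\delta_0>0$ and $\liminf_n|f^n(x)-f^n(y)|=0$. The plan is to show that this forces the existence of a point that is not approximately periodic; then Theorem~\ref{theo:htop0-chaos-LY} (equivalence of (iii) and (ii)) immediately produces a $\delta$-scrambled Cantor set for some $\delta>0$. First I would argue that at least one of $x$, $y$ is not approximately periodic: indeed, by Lemma~\ref{lem:approx-periodic}, if both $x$ and $y$ were approximately periodic, then either $\lim_n|f^n(x)-f^n(y)|=0$ or $\liminf_n|f^n(x)-f^n(y)|>0$, and both of these contradict the fact that $(x,y)$ is a Li-Yorke pair (the first contradicts the $\limsup\ge\delta_0$ condition, the second contradicts the $\liminf=0$ condition). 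Hence one of the two points, say $x$, is not approximately periodic, which is exactly condition (iii) of Theorem~\ref{theo:htop0-chaos-LY}. Applying that theorem (with $f$ of zero entropy), we get a $\delta$-scrambled Cantor set for some $\delta>0$, as required.

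\textbf{Main obstacle and remarks.} The only subtlety is the case split on entropy and making sure that each branch invokes the correct already-established theorem. The positive-entropy branch is immediate from Theorem~\ref{theo:htop-positive-chaos-LY}; the zero-entropy branch rests entirely on Lemma~\ref{lem:approx-periodic} plus the hard equivalence (iii)$\Rightarrow$(ii) in Theorem~\ref{theo:htop0-chaos-LY} (which is itself where all the work of Section~\ref{sec:5-htop0} is concentrated — solenoidal structure, $f$-non separable points, and Proposition~\ref{prop:cantor-delta-scrambled}). Thus in this self-contained write-up there is essentially no obstacle: the heavy lifting has already been done, and the contribution here is the short observation that a single Li-Yorke pair rules out all points being approximately periodic. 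One should also note that the value of $\delta$ produced need not be related to $\delta_0$; we only claim existence of some $\delta>0$, which is all that is asserted. I would present the proof in exactly this order: dispose of the $h_{top}(f)>0$ case in one line, then in the $h_{top}(f)=0$ case invoke Lemma~\ref{lem:approx-periodic} to extract a non-approximately-periodic point from the Li-Yorke pair, and finish by citing Theorem~\ref{theo:htop0-chaos-LY}.
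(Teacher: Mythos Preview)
Your proposal is correct and follows essentially the same approach as the paper: split into the cases $h_{top}(f)>0$ (handled by Theorem~\ref{theo:htop-positive-chaos-LY}) and $h_{top}(f)=0$, and in the latter case use Lemma~\ref{lem:approx-periodic} to extract a non-approximately-periodic point from the Li-Yorke pair, then invoke Theorem~\ref{theo:htop0-chaos-LY}. The only cosmetic difference is that the paper cites the implication (iv)$\Rightarrow$(ii) of Theorem~\ref{theo:htop0-chaos-LY} rather than (iii)$\Rightarrow$(ii), but since all four conditions are equivalent this is immaterial.
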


\begin{proof}
If $h_{top}(f)>0$, the result follows from
Theorem~\ref{theo:htop-positive-chaos-LY}. 
We assume that $h_{top}(f)=0$. Let $(x,y)$ be a Li-Yorke pair.
By Lemma~\ref{lem:approx-periodic}, either $x$ or $y$ is not
approximately periodic. Therefore, the result is given by
the implication (iv)$\Rightarrow$(ii) in Theorem~\ref{theo:htop0-chaos-LY}.
\end{proof}

As a corollary, we get the following summary theorem. We shall see
another condition equivalent to chaos in the sense of Li-Yorke in the
next section.

\begin{theo}\label{theo:summary-chaos-LY}
Let $f$ be an interval map. The following properties are
equivalent:
\begin{enumerate}
\item there exists one Li-Yorke pair,
\item $f$ is chaotic in the sense of Li-Yorke,
\item $f$ admits a $\delta$-scrambled Cantor set for some $\delta>0$,
\item there exists a point $x$ that is not approximately periodic.
\end{enumerate}
\end{theo}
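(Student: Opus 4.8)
The plan is to assemble Theorem~\ref{theo:summary-chaos-LY} from the pieces already established, since almost all the work has been done. The four conditions (i)--(iv) split naturally according to whether the entropy is positive or zero, and the strategy is to show the cycle of implications (ii)$\Rightarrow$(iv)$\Rightarrow$(iii)$\Rightarrow$(i)$\Rightarrow$(ii), invoking the zero-entropy characterization Theorem~\ref{theo:htop0-chaos-LY} as a black box together with the positive-entropy result Theorem~\ref{theo:htop-positive-chaos-LY} and Proposition~\ref{prop:one-LY-pair}.

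First I would dispatch the trivial links. The implication (iii)$\Rightarrow$(ii) is immediate: a $\delta$-scrambled Cantor set is in particular an uncountable scrambled set, so $f$ is chaotic in the sense of Li-Yorke. Likewise (ii)$\Rightarrow$(i) is trivial, since an uncountable scrambled set contains at least two distinct points, and any pair of distinct points in a scrambled set is a Li-Yorke pair. The substantive step is (i)$\Rightarrow$(iii), which is exactly the content of Proposition~\ref{prop:one-LY-pair}: the existence of a single Li-Yorke pair produces a $\delta$-scrambled Cantor set for some $\delta>0$. This closes the loop between (i), (ii), (iii).

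It remains to weave in condition (iv). For (ii)$\Rightarrow$(iv): assume $f$ is chaotic in the sense of Li-Yorke. If $h_{top}(f)>0$, then by Theorem~\ref{theo:htop-positive-chaos-LY} there is a $\delta$-scrambled Cantor set, hence an uncountable scrambled set, hence (by Lemma~\ref{lem:approx-periodic}) that scrambled set contains more than one point that is not approximately periodic, so (iv) holds. If $h_{top}(f)=0$, then the equivalence (i)$\Leftrightarrow$(iii) in Theorem~\ref{theo:htop0-chaos-LY} gives a point $x$ that is not approximately periodic, which is (iv). For the reverse direction (iv)$\Rightarrow$(iii): suppose $x$ is not approximately periodic. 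If $h_{top}(f)>0$, Theorem~\ref{theo:htop-positive-chaos-LY} already gives a $\delta$-scrambled Cantor set, so (iii) holds regardless. If $h_{top}(f)=0$, then the implication (iii)$\Rightarrow$(ii) of Theorem~\ref{theo:htop0-chaos-LY} provides a $\delta$-scrambled Cantor set, which is (iii) here.

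There is essentially no obstacle: the theorem is a corollary, and the only care required is bookkeeping---making sure that in each implication one first checks whether $h_{top}(f)$ is positive (where Theorem~\ref{theo:htop-positive-chaos-LY} applies directly) or zero (where one appeals to the four-way equivalence of Theorem~\ref{theo:htop0-chaos-LY}), and that the labels (i)--(iv) of the present theorem are correctly matched to the labels of Theorem~\ref{theo:htop0-chaos-LY} (note they coincide: (i)$\leftrightarrow$Li-Yorke chaos, (ii)$\leftrightarrow$$\delta$-scrambled Cantor set, (iii)$\leftrightarrow$non approximately periodic point, (iv) there being an extra local condition about $\omega$-limit sets that we do not need here). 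I would write the proof as a short chain: prove (ii)$\Rightarrow$(iv)$\Rightarrow$(iii)$\Rightarrow$(ii) using the entropy dichotomy, then note (iii)$\Rightarrow$(i) trivially and (i)$\Rightarrow$(iii) by Proposition~\ref{prop:one-LY-pair}, which completes the equivalence of all four statements.
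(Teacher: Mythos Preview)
Your proposal is correct and follows essentially the same assembly-from-pieces approach as the paper. One minor simplification: for (ii)$\Rightarrow$(iv) you do not need the entropy dichotomy at all, since (ii) already hands you an uncountable scrambled set and Lemma~\ref{lem:approx-periodic} applied directly to it yields a non-approximately-periodic point; the paper does exactly this in one line, and your detour through Theorem~\ref{theo:htop-positive-chaos-LY} in the positive-entropy case is redundant (you are re-deriving an uncountable scrambled set that you already have by hypothesis).
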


\begin{proof}
The first three assertions are equivalent by Theorem~\ref{prop:one-LY-pair}.
According to Lemma~\ref{lem:approx-periodic}, we have (ii)$\Rightarrow$(iv).
If $h_{top}(f)=0$, then (iv)$\Rightarrow$(ii) by
Theorem~\ref{theo:htop0-chaos-LY}. If $h_{top}(f)>0$, then the equivalence
follows from 
Theorem~\ref{theo:general-system-htop-positive-chaos-LY}.
\end{proof}

\subsection*{Remarks on graph maps}

A key tool to generalize the results of the last two sections
to graphs is the
topological characterization of $\omega$-limit sets of graph maps, which
was given by Blokh \cite{Blo5,Blo13}; see also the more recent paper of
Hric and Málek \cite{HM} (the classification of $\omega$-limit sets in 
\cite{HM} is equivalent to the one in \cite{Blo5,Blo13}, although the 
equivalence is not straightforward and does not
seem to be explicitly proved in the literature). We rather follow
Blokh's works.

\begin{theo}\label{theo:omega-graph-htop0}
Let $f\colon G\to G$ be a graph map of zero topological entropy and $x\in G$.
If $\omega(x,f)$ is infinite, it is of one of the following kinds:

\noindent$\bullet$ 
\emph{Solenoidal}\index{solenoidal $\omega$-limit set}:
there exist a sequence of subgraphs $(G_n)_{n\ge 1}$ and an increasing
sequence of positive integers $(k_n)_{n\ge 1}$  such that 
$(f^i(G_n))_{0\le i<k_n}$ is a cycle of graphs
and, for all $n\ge 1$, $G_{n+1}\subset G_n$, $k_{n+1}$ is a multiple of
$k_n$ and $\omega(f^i(x), f^{k_n})\subset f^i(G_n)$ 
for all $i\in\Lbrack 0,k_n-1\Rbrack$ (which implies that
$\omega(x,f)\subset \bigcup_{i=0}^{k_n-1}f^i(G_n)$).

\noindent$\bullet$ 
\emph{Circumferential}\index{circumferential
$\omega$-limit set}: $\omega(x,f)$ contains no periodic point
and there exists a minimal cycle of graphs $(f^i(G'))_{0\le i<k}$ 
such that $\omega(x,f^k)\subset G'$ (which implies that
$\omega(x,f)\subset \bigcup_{i=0}^{k-1}f^i(G')$).
\end{theo}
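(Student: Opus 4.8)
The plan is to transport the proof of Proposition~\ref{prop:htop0-Lki} from intervals to graphs, the new feature being that a ``periodic subgraph'' may contain circles, and it is precisely this that forces the circumferential alternative to appear alongside the solenoidal one.

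First I would extract the structural consequences of zero entropy. By Theorem~\ref{theo:htop-horseshoeG}, $h_{top}(f)=0$ implies that $f^n$ has no horseshoe for any $n\ge 1$. From this one obtains graph analogues of Lemma~\ref{lem:U<D} and Lemma~\ref{lem:alternating}: along a single orbit, on any arc of $G$ with no branching point in its interior, the orbit cannot be folded back on itself, so after passing to a suitable iterate $f^N$ (chosen, using that $G$ has finitely many branching points, so that each branching point is fixed or lands in a combinatorially stable position) the points $f^n(x)$ behave, relative to a fixed point, in the alternating fashion of Lemma~\ref{lem:alternating} inside each basic arc. This is the part I expect to be the main obstacle: one must combine ``no horseshoe on a subinterval'' with the combinatorics of how $f$ permutes the finitely many loops and branching points of $G$, and it is exactly the possibility that an arc winds around a loop, or that $f$ cyclically permutes several loops, that prevents the interval argument from closing and that produces the second case of the theorem.

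Next I would build the canonical nested family of cyclic subgraphs. For $p\ge 1$ let $\CC_p$ be the set of cycles of graphs $(H,f(H),\dots,f^{p-1}(H))$ such that, for each $i$, $f^i(H)$ is the smallest $f^p$-invariant subgraph containing $\omega(f^i(x),f^p)$; minimality makes the choice canonical, exactly as the formula $L_k=\overline{\bigcup_n(f^{2^k})^n(I_k)}$ does on the interval. One checks $\CC_1\ne\emptyset$ (take $H=G$), and that whenever $(H,\dots,f^{p-1}(H))\in\CC_p$ has $\omega(x,f^p)$ infinite but $f^p$ not transitive on $\overline{\omega(x,f^p)}$, the no-horseshoe property lets one refine it to a member of $\CC_q$ with $q$ a strict multiple of $p$ (an analogue of Lemma~\ref{lem:periodic-interval-power-of-2} controls which periods can occur and guarantees the refined subgraphs are non-degenerate, pairwise disjoint and cyclically permuted, so the period genuinely grows). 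Either the refinement never stops, giving an infinite chain $(G_n)$ with periods $k_n\to\infty$, $G_{n+1}\subset G_n$, $k_n\mid k_{n+1}$ and $\omega(f^i(x),f^{k_n})\subset f^i(G_n)$ for all $i$ --- this is the solenoidal case, with all the listed properties read off from the construction --- or it stabilizes at some period $k$.

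In the stabilized case one has a minimal cycle of graphs $(G',\dots,f^{k-1}(G'))$ with $\omega(x,f^k)\subset G'$ and, by minimality of the family together with the failure of further refinement, $\omega(y,f^k)=\omega(x,f^k)$ for every $y\in\omega(x,f^k)$; hence $f^k$ restricted to $\overline{\omega(x,f^k)}$ is transitive (Proposition~\ref{prop:transitive-dense-orbit}). If this transitive subsystem had a periodic point, then by Theorem~\ref{theo:transitivegraphmap-rotation} it would have a dense set of periodic points, and from a periodic orbit inside $\omega(x,f^k)$ of period $>1$ one could refine the cyclic family once more (contradicting stabilization) or produce a horseshoe for an iterate (contradicting $h_{top}(f)=0$); so $\omega(x,f^k)$ contains no periodic point. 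Theorem~\ref{theo:transitive-noperiodicpoint} then identifies $f^k|_{\overline{\omega(x,f^k)}}$ with an irrational rotation on a circle, so $\overline{\omega(x,f^k)}$ can serve as the minimal subgraph $G'$ with $\omega(x,f^k)\subset G'$ and $(f^i(G'))_{0\le i<k}$ a minimal cycle of graphs --- the circumferential case. The remaining verifications (strict growth versus stabilization of the periods, the divisibility and nesting bookkeeping in the solenoidal case, and the elementary properties of the rotation) are then routine.
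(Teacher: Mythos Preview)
The paper does not prove this theorem. It is stated in the ``Remarks on graph maps'' subsection of Section~\ref{sec:equivLY} as a result due to Blokh \cite{Blo5,Blo13} (see also Hric--M\'alek \cite{HM}), with no proof given; the book only uses it as a black box to reduce the graph case to the interval case. So there is no paper proof to compare against.

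That said, your sketch has a genuine gap in the circumferential branch. You apply Theorem~\ref{theo:transitive-noperiodicpoint} to $f^k|_{\overline{\omega(x,f^k)}}$, but that theorem is about transitive \emph{graph} maps, and $\overline{\omega(x,f^k)}$ need not be a subgraph of $G$: it can be a Cantor set (think of a Denjoy-type minimal set sitting on a circle inside $G$). For the same reason, you cannot take $G':=\overline{\omega(x,f^k)}$ as your minimal subgraph. What the theorem asserts is the existence of a minimal cycle of \emph{subgraphs} containing $\omega(x,f)$; the semi-conjugacy to a rotation (Theorem~\ref{theo:circumferential}) is a separate, deeper statement about $f^k$ on that minimal subgraph $G'$, not on $\omega(x,f^k)$ itself. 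Your refinement mechanism in the solenoidal direction is also left vague: on the interval, the splitting into two halves comes from Lemma~\ref{lem:alternating} and a concrete fixed point $z$; on a graph with loops you need a different argument to show that, when no minimal cycle of subgraphs exists, the period can always be strictly increased, and the ``no-horseshoe'' hypothesis alone does not obviously deliver this. These are exactly the points where Blokh's spectral decomposition does real work.
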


Notice that a solenoidal set cannot contain periodic points, and thus,
for a zero entropy graph map, any infinite $\omega$-limit set contains
no periodic point. That is, Proposition~\ref{prop:omega-no-periodic-point}
is valid for graph maps too.

Blokh \cite{Blo9,Blo11}\nocite{Blo12,Blo13} 
showed that, in the case of a circumferential
$\omega$-limit set, $f^k|_{G'}$ is almost conjugate to an irrational
rotation, that is, semi-conjugate by a map
that collapses any connected component
of $G'\setminus \omega(x,f^k)$ to a single point.
In particular, this implies that a tree map has no circumferential set.

\begin{theo}\label{theo:circumferential}
Let $f\colon G\to G$ be a graph map and $x\in G$. Suppose that
$\omega(x,f)$ is circumferential, and let $(f^i(G'))_{0\le i<k}$ denote
the minimal cycle of graphs containing $\omega(x,f)$, with $G'\supset
\omega(x,f^k)$.
Then there exists an irrational rotation $R\colon \IS\to \IS$, 
and a semi-conjugacy $\varphi\colon G'\to \IS$
between $f^k|_{G'}$ and $R$ such that
\begin{itemize}
\item  $\varphi(\omega(x,f^k))=\IS$,
\item  $\forall y\in \IS$, $\varphi^{-1}(y)$ is connected,
\item  $\forall y\in \IS$, $\varphi^{-1}(y)\cap \omega(x,f^k)=
\End{\varphi^{-1}(y)}$.
\end{itemize}
\end{theo}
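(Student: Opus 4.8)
The plan is to realize $\IS$ together with $R$ as a quotient of $G'$ on which $f^k$ induces a \emph{transitive} graph map with \emph{no periodic point}, and then invoke Theorem~\ref{theo:transitive-noperiodicpoint}. Write $g:=f^k|_{G'}$ and $\Omega:=\omega(x,f^k)$. Since $\Omega\subset G'$ and $g(\Omega)=\Omega$ (Lemma~\ref{lem:omega-set}(i),(iii)), $\Omega=\omega(x,g)$ is a strongly $g$-invariant closed subset of $G'$, and it is infinite because $\omega(x,f)=\bigcup_{i=0}^{k-1}\omega(f^i(x),f^k)$ is infinite. The first real step is to extract the structural facts about $\Omega$ that drive the argument: that $\Omega$ is a minimal set for $g$ (using that the cycle of graphs $(f^i(G'))_{0\le i<k}$ is the \emph{minimal} one containing $\omega(x,f)$, so that a proper nonempty closed $g$-invariant subset of $\Omega$ would generate a strictly smaller such cycle), hence that $\Omega$ has no isolated point and, being circumferential, contains no periodic point (Theorem~\ref{theo:omega-graph-htop0}). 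A consequence I would record is that no point of $\Omega$ is a common boundary point of two distinct connected components of $G'\setminus\Omega$, and that the closure of no such component is or contains a loop (the latter being exactly what the third bullet of the theorem forbids).

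Next, introduce the equivalence relation $\sim$ on $G'$ that collapses each connected component of $G'\setminus\Omega$ to a point, points of $\Omega$ being equivalent only to themselves, and let $G'':=G'/\!\sim$ with quotient map $\pi$. Then I would verify, in order: (a) every class $\pi^{-1}(p)$ is either a singleton of $\Omega$ or the closure $\overline U$ of a component $U$ of $G'\setminus\Omega$, and by the structure theory this $\overline U$ is a subtree with $\overline U\cap\Omega=\End{\overline U}$; (b) $G''$ is therefore a non-degenerate topological graph and $\pi$ is continuous; (c) $g$ descends to a continuous $\bar g\colon G''\to G''$, because $g$ carries each component of $G'\setminus\Omega$ into a single class — this is where one must use $g(\Omega)=\Omega$ together with zero entropy to exclude the interior of a gap being mapped onto a point of $\Omega$; (d) $\pi$ is a semi-conjugacy between $g$ and $\bar g$ and $\pi(\Omega)=G''$ (every class meets $\Omega$ along its boundary). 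From (d) and Lemma~\ref{lem:fi-omegaset}, $\omega(\pi(x),\bar g)=\pi(\omega(x,g))=\pi(\Omega)=G''$, so $\bar g$ is transitive by Proposition~\ref{prop:transitive-dense-orbit}(iii). Finally $\bar g$ has no periodic point: a periodic class is $g$-periodic, hence either a singleton in $\Omega$ (impossible, $\Omega$ has no periodic point) or some $\overline U$; since $\overline U$ is a subtree it has the fixed point property, so $g^{m}|_{\overline U}$ would fix a point, and one derives a contradiction with $\Omega=\omega(x,g)$ and the minimality of the cycle of graphs.

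Having shown $\bar g$ is a transitive graph map with no periodic point, Theorem~\ref{theo:transitive-noperiodicpoint} furnishes a homeomorphism $h\colon G''\to\IS$ and an irrational rotation $R$ with $h\circ\bar g=R\circ h$; in particular $G''\cong\IS$. Put $\varphi:=h\circ\pi\colon G'\to\IS$. Then $\varphi$ is a semi-conjugacy between $f^k|_{G'}$ and $R$ (composition of a semi-conjugacy and a conjugacy), $\varphi(\Omega)=h(G'')=\IS$, each fibre $\varphi^{-1}(y)=\pi^{-1}(h^{-1}(y))$ is connected (a point of $\Omega$ or a closed gap), and $\varphi^{-1}(y)\cap\Omega=\End{\varphi^{-1}(y)}$ by the description of the classes in (a). This gives all three asserted properties.

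The main obstacle is the package (a)–(c): showing that the naive collapsing of the complementary components of $\Omega$ is compatible with the dynamics and produces an honest (loop-containing) graph. This requires knowing precisely how $g$ acts on the components of $G'\setminus\Omega$, that $\Omega$ touches each of them only at its endpoints, that none of them is a loop, and that none carries a periodic point giving rise to a periodic fibre — all of which is the fine structure of circumferential $\omega$-limit sets of zero-entropy graph maps, i.e. the content (and proof) of Theorem~\ref{theo:omega-graph-htop0}. Once that structure is available, everything else is formal; a possible alternative to the quotient route would be to build $\varphi$ directly from a rotation number via the rotation theory for graphs with a single loop, but the quotient approach keeps the argument closest to tools already established here.
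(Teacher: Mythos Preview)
The paper does not prove this theorem: it is stated in a ``Remarks on graph maps'' section as a result of Blokh, with references \cite{Blo9,Blo11} and no proof. The one-sentence description the paper gives of Blokh's construction --- that $f^k|_{G'}$ is ``semi-conjugate by a map that collapses any connected component of $G'\setminus \omega(x,f^k)$ to a single point'' --- is exactly your quotient approach, so your plan is aligned with the intended (original) proof.

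Your sketch correctly identifies the nontrivial part: verifying that the collapsing produces a genuine topological graph (in fact a circle), that $g$ descends, and that the induced map has no periodic point. A couple of places deserve more care than you indicate. First, the minimality of $\Omega$ does not follow directly from the minimality of the cycle of graphs $(f^i(G'))_{0\le i<k}$: a proper closed $g$-invariant subset of $\Omega$ need not produce a strictly smaller cycle of \emph{graphs}, since the relevant subgraphs are generated by taking convex hulls or orbit closures, and those may well coincide with the $f^i(G')$. Minimality of circumferential $\omega$-limit sets is part of Blokh's structure theorem and should be quoted rather than rederived from the cycle. Second, in step~(c) the obstruction is not that the interior of a gap maps onto a point of $\Omega$ (that alone is harmless for the quotient), but that a single gap might map across $\Omega$ into several gaps; ruling this out is again part of the structural analysis of circumferential sets. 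You are right that once this package is in hand, Theorem~\ref{theo:transitive-noperiodicpoint} finishes the job.
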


In \cite{RS2}, the author and Snoha studied chaos in the sense of
Li-Yorke for graph maps. We present the main ideas.
Suppose that $(x,y)$ is a Li-Yorke pair
for the graph map $f$. We showed that neither $\omega(x,f)$ nor
$\omega(y,f)$ can be circumferential \cite[proof of Theorem 3]{RS2}.
Moreover, it is easy to see that either $\omega(x,f)$ or
$\omega(y,f)$ is infinite. Therefore, if $h_{top}(f)=0$, one of these
$\omega$-limit sets is solenoidal. If $\omega(x,f)$ is solenoidal, with 
the notation of Theorem~\ref{theo:omega-graph-htop0}, then for all
large enough $n$, there exists $i\ge 0$ such that $J:=f^i(G_n)$ is
an interval (because the graph has finitely many
branching points, and thus one of the graphs $(f^i(G_n))_{0\le i<k_n}$ 
contains no branching point if $k_n$ is large enough). We have
$\omega(f^i(x),f^{k_n})\subset J$; in addition, it is possible to
show that one can choose $n,i$
such that $f^i(x),f^i(y)\in J$. Thus $(f^i(x),f^i(y))$ is a Li-Yorke pair
for $f^{k_n}|_J$, and we come down to the interval case.
On the other hand, 
Theorem~\ref{theo:general-system-htop-positive-chaos-LY} applies when
$h_{top}(f)>0$.
These ideas make it possible to show the following
result  \cite[Theorem~3]{RS2}. 

\begin{prop}\label{prop:graph-h0-LY}
Let $f\colon G\to G$ be a graph map. 
The following properties are equivalent:
\begin{enumerate}
\item there exists one Li-Yorke pair,
\item $f$ is chaotic in the sense of Li-Yorke,
\item there exists a $\delta$-scrambled Cantor set for some $\delta>0$.
\end{enumerate}
\end{prop}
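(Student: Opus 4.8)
The plan is to prove Proposition~\ref{prop:graph-h0-LY} by reducing to the interval case wherever possible and invoking the general positive-entropy result otherwise. The implications (iii)$\Rightarrow$(ii)$\Rightarrow$(i) are trivial, so the whole content is in (i)$\Rightarrow$(iii): from a single Li-Yorke pair $(x,y)$ for a graph map $f\colon G\to G$ we must produce a $\delta$-scrambled Cantor set for some $\delta>0$. The strategy splits on the topological entropy of $f$. If $h_{top}(f)>0$, then Theorem~\ref{theo:general-system-htop-positive-chaos-LY} immediately gives a $\delta$-scrambled Cantor set (no Li-Yorke hypothesis is even needed here). So the real work is the case $h_{top}(f)=0$, where we must exploit the existence of the Li-Yorke pair.

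For the zero-entropy case, first I would note that since $(x,y)$ is a Li-Yorke pair, $\limsup_{n\to\infty}d(f^n(x),f^n(y))>0$, so at least one of the trajectories of $x$, $y$ fails to converge, hence at least one of $\omega(x,f)$, $\omega(y,f)$ is infinite; say $\omega(x,f)$ is infinite. Next I would use the classification of $\omega$-limit sets of zero-entropy graph maps (Theorem~\ref{theo:omega-graph-htop0}): $\omega(x,f)$ is either solenoidal or circumferential. The circumferential case must be excluded: here one uses Theorem~\ref{theo:circumferential}, which says $f^k|_{G'}$ is semi-conjugate to an irrational rotation $R$ via a map $\varphi$ collapsing complementary components of $\omega(x,f^k)$ in $G'$; one checks (as in \cite[proof of Theorem 3]{RS2}) that this forces the trajectory of $x$ to be ``almost equicontinuous'' in the sense that no point near $\omega(x,f)$ can form a Li-Yorke pair with $x$ — more precisely, if $y'$ is any point whose trajectory enters a small neighborhood of $\omega(x,f)$, then either $d(f^n(x),f^n(y'))\to 0$ or $\liminf_n d(f^n(x),f^n(y'))>0$, so $(x,y')$ cannot be a Li-Yorke pair; combined with the fact that the pair $(x,y)$ forces $y$'s trajectory to repeatedly approach that of $x$, this yields a contradiction. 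The same argument applies to $\omega(y,f)$. Hence whichever of $\omega(x,f)$, $\omega(y,f)$ is infinite is solenoidal.

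Assume then that $\omega(x,f)$ is solenoidal, with subgraphs $(G_n)_{n\ge1}$ and periods $(k_n)_{n\ge1}$ as in Theorem~\ref{theo:omega-graph-htop0}. Since $G$ has only finitely many branching points and $k_n\to\infty$, for all large $n$ one of the graphs $f^i(G_n)$, $0\le i<k_n$, contains no branching point, hence is an interval $J$; then $f^{k_n}|_J$ is an interval map (after identifying $J$ with a compact real interval). Moreover one can choose $n$ and $i$ so that not only $\omega(f^i(x),f^{k_n})\subset J$ but also $f^i(x),f^i(y)\in J$: indeed $f^i(x)$ accumulates on $\omega(f^i(x),f^{k_n})\subset\Int J$ under iteration of $f^{k_n}$, so some forward image of $(x,y)$ (which one can absorb by replacing the pair) lands inside $J$, and since $(x,y)$ is a Li-Yorke pair the pair of corresponding images $(f^i(x),f^i(y))$ remains a Li-Yorke pair for $f^{k_n}|_J$ (using uniform continuity of $f$, as in Proposition~\ref{prop:scrambled-fn}). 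Now apply the interval result Proposition~\ref{prop:one-LY-pair} to the interval map $f^{k_n}|_J$: it has a $\delta'$-scrambled Cantor set $C\subset J$ for some $\delta'>0$. By Proposition~\ref{prop:scrambled-fn}, $C$ is also a scrambled (indeed $\delta$-scrambled for some $\delta>0$) Cantor set for $f$ itself. This closes the zero-entropy case and completes the proof.

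The main obstacle is the exclusion of circumferential $\omega$-limit sets: this is where the genuinely new graph-theoretic input is needed, and it requires carefully combining Theorem~\ref{theo:circumferential} with the definition of a Li-Yorke pair to show that an irrational-rotation-like $\omega$-limit set cannot support Li-Yorke behavior — essentially an equicontinuity argument on the rotation factor lifted back through the collapsing semi-conjugacy. The reduction from the solenoidal case to an interval subsystem is routine once one knows that long cycles of subgraphs in a graph must eventually produce branch-point-free pieces, but some care is needed to simultaneously control the locations of $f^i(x)$, $f^i(y)$ and of $\omega(f^i(x),f^{k_n})$; this is exactly the kind of bookkeeping carried out in \cite{RS2}, and it is where one should be most careful about choosing $n$ large enough before choosing $i$.
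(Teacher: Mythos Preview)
Your proposal is correct and follows essentially the same approach as the paper's outline: split on entropy, invoke Theorem~\ref{theo:general-system-htop-positive-chaos-LY} when $h_{top}(f)>0$, and in the zero-entropy case use Theorem~\ref{theo:omega-graph-htop0} to classify the infinite $\omega$-limit set, exclude the circumferential case via Theorem~\ref{theo:circumferential} (as in \cite{RS2}), and reduce the solenoidal case to an interval subsystem to apply Proposition~\ref{prop:one-LY-pair}. One small slip: ``one trajectory fails to converge'' does not directly give an infinite $\omega$-limit set (a trajectory can tend to a periodic orbit of period $\ge 2$); the correct argument is that if both $\omega(x,f)$ and $\omega(y,f)$ were finite, hence periodic orbits, then Lemma~\ref{lem:approx-periodic} would rule out $(x,y)$ being a Li-Yorke pair.
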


\begin{rem}
Contrary to what happens for graph maps,
there exist topological dynamical systems
admitting a finite (resp. countable) scrambled set but no infinite
(resp. uncountable) scrambled set \cite{BDM}.
\end{rem}

\section{Topological sequence entropy}
Any positive entropy interval map is chaotic in the sense of Li-Yorke
(Theorem~\ref{theo:htop-positive-chaos-LY}), but the converse is not true 
(see Example~\ref{ex:chaos-LY-htop0} below). We are going to see that an
interval map is chaotic in the sense of Li-Yorke if and only if it has positive
topological sequence entropy.

\subsection{Definition of sequence entropy}
The notion of topological sequence entropy was introduced by
Goodman \cite{Goo}. Its definition is analogous to the one of
topological entropy,
the difference is that one considers a subsequence of the family of all
iterates of the map.
The definition we give is analogous to Bowen's formula 
(Theorem~\ref{theo:Bowen-formula}), but topological sequence entropy can
also be defined using open covers in a similar way as topological entropy
in Section~\ref{subsec:htop-covers}.

\begin{defi}\label{defi:toposeqentropy}
Let $(X,f)$ be a topological dynamical system and let $A=(a_n)_{n\ge 0}$
be an increasing sequence of non negative integers. 
Let $\eps>0$ and $n\in\IN$. 
A set $E\subset X$ is 
\emph{$(A,n,\eps)$-separated}\index{separated set ($(A,n,\eps)$- )} if
for all distinct points $x,y$ in $E$,
there exists $k\in\Lbrack 0,n-1\Rbrack$ such that $d(f^{a_k}(x),f^{a_k}(y))>\eps$.
Let $s_n(A,f,\eps)$
\label{notation:snAfeps}
\index{snAfe@$s_n(A,f,\eps)$}
denote the maximal cardinality of an $(A,n,\eps)$-separated set.
The set $E$ is an \emph{$(A,n,\eps)$-spanning set}\index{spanning set ($(A,n,\eps)$- )}
if for all $x\in X$, there exists $y\in E$ such that
$d(f^{a_k}(x),f^{a_k}(y))\le \eps$ for all $k\in\Lbrack 0,n-1\Rbrack$.
Let $r_n(A,f,\eps)$
\label{notation:rnAfeps}
\index{rnAfe@$r_n(A,f,\eps)$}
denote the minimal cardinality of an $(A,n,\eps)$-spanning set. 

The \emph{topological sequence entropy}\index{topological sequence entropy}\index{sequence entropy}
\label{notation:hAf}
\index{hAf@$h_A(f)$}
of $f$ with respect to the sequence $A$ is
$$
h_A(f):=\lim_{\eps\to0}\limsup_{n\to+\infty}\frac 1n\log s_n(A,f,\eps)=
\lim_{\eps\to0}\limsup_{n\to+\infty}\frac 1n\log r_n(A,f,\eps).
$$
\end{defi}

\begin{rem}
As in Lemma~\ref{lem:feps-separated}, we have
\begin{itemize}
\item if $0<\eps'<\eps$, then
$s_n(A,f,\eps')\ge s_n(A,f,\eps)$ and $r_n(A,f,\eps')\ge r_n(A,f,\eps)$,
\item $r_n(A,f,\eps)\le s_n(A,f,\eps)\le r_n(A,f,\frac{\eps}2)$.
\end{itemize}
This implies that the two limits in the definition above
exist and are equal. Therefore, $h_A(f)$ is well defined.
\end{rem}

According to the definition, $h_{top}(f)=h_A(f)$ with
$A:=(n)_{n\ge 0}$.

\subsection{Li-Yorke chaos and sequence entropy}

The rest of this section will be devoted to proving the following
theorem, due to Franzová and Smítal \cite{FS}.

\begin{theo}\label{theo:LY-equiv-hA>0}
Let $f$ be an interval map. Then $f$ is chaotic in the sense of Li-Yorke
if and only if there exists an increasing sequence $A$ such that
$h_A(f)>0$.
\end{theo}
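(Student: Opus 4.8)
The plan is to prove both implications separately, using the machinery already developed in this chapter.

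\textbf{The forward direction ($h_A(f)>0$ for some $A$ implies Li-Yorke chaos).} Here I would argue by contraposition. Suppose $f$ is \emph{not} chaotic in the sense of Li-Yorke. By Theorem~\ref{theo:summary-chaos-LY}, this is equivalent to saying that every point is approximately periodic, and also implies (via Theorem~\ref{theo:htop-positive-chaos-LY}) that $h_{top}(f)=0$. The goal is then to show $h_A(f)=0$ for every increasing sequence $A$. The key quantitative input should be a uniform version of approximate periodicity on the interval: since $f$ has zero entropy and every infinite $\omega$-limit set is solenoidal (Proposition~\ref{prop:htop0-Lki}) with the ``shrinking'' property from Proposition~\ref{prop:non-LY-chaotic}(i) (no $f$-non separable points exist in any infinite $\omega$-limit set), one can cover $[a,b]$ by finitely many periodic intervals of periods that are powers of $2$, with small diameter, so that for a fixed large $\eps$, all but boundedly many orbits are trapped in a single such small periodic interval after finitely many steps. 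This forces $s_n(A,f,\eps)$ to grow at most polynomially (in fact sub-exponentially) in $n$, hence $h_A(f)=0$. I would make this precise by showing: for every $\eps>0$ there is $k=2^m$ and a cycle of intervals of period $k$ of diameter $<\eps$ covering $\omega(f)$, and the complement of its basin is a finite union of wandering-type pieces, so the number of $(A,n,\eps)$-separated orbits is bounded by a polynomial in $n$ times a constant. The main obstacle in this direction is handling the finitely many points (or orbits) that fail to enter the small periodic intervals and keeping the count uniform over all sequences $A$; this is where I expect to need Proposition~\ref{prop:non-LY-chaotic} in its uniform form and a careful bookkeeping of how a finite invariant set of ``turning points'' for the connect-the-dots approximants limits branching.

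\textbf{The reverse direction (Li-Yorke chaos implies $h_A(f)>0$ for some $A$).} If $h_{top}(f)>0$ there is nothing to do: take $A=(n)_{n\ge0}$ and $h_A(f)=h_{top}(f)>0$. So assume $h_{top}(f)=0$ and $f$ is Li-Yorke chaotic. By Theorem~\ref{theo:htop0-chaos-LY}(iv), there is an infinite $\omega$-limit set $\omega(x_0,f)$ containing two $f$-non separable points $a_0\ne a_1$, and then Proposition~\ref{prop:cantor-delta-scrambled} (applied with $x_1=x_0$) gives disjoint closed intervals $K_0\ni a_0$, $K_1\ni a_1$ in their interiors and an increasing sequence of integers $(n_k)_{k\ge0}$ such that
$$
\forall(\alpha_k)_{k\ge0}\in\{0,1\}^{\mathbb{Z}^+},\ \exists x\in I,\ \forall k\ge0,\ f^{n_k}(x)\in K_{\alpha_k}.
$$
Set $A:=(n_k)_{k\ge0}$ and let $\delta$ be the distance between $K_0$ and $K_1$. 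The displayed ``quasi-shift'' property says that for each of the $2^n$ finite words $(\alpha_0,\dots,\alpha_{n-1})$ there is a point $x$ whose itinerary along $A$ realizes that word; any two such points for distinct words are $(A,n,\delta)$-separated, since they lie in different $K_{\alpha_j}$ for some $j<n$. Hence $s_n(A,f,\delta)\ge 2^n$, so $h_A(f)\ge\log 2>0$.

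\textbf{Assembling the proof.} With the two directions in hand the theorem follows immediately. I would organize the write-up by first disposing of the positive-entropy case, then giving the reverse implication from Proposition~\ref{prop:cantor-delta-scrambled} (this is short and clean), and finally the contrapositive of the forward implication, which carries the technical weight. I expect the genuinely delicate step to be the uniform sub-exponential bound on $s_n(A,f,\eps)$ in the zero-entropy, non-chaotic case: one must convert the qualitative statement ``every point is approximately periodic'' into a statement that is uniform enough across all points and all sequences $A$ to kill sequence entropy, and the natural tool for that is the solenoidal structure of $\omega$-limit sets together with the diameter-shrinking of the nested periodic intervals $f^i(L_n)$ established in Propositions~\ref{prop:htop0-Lki} and \ref{prop:non-LY-chaotic}.
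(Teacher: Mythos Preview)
Your reverse direction is correct and matches the paper's Proposition~\ref{prop:LY-hA>0} essentially verbatim (you should invoke Lemma~\ref{lem:f-separable-Lni} to check the hypothesis of Proposition~\ref{prop:cantor-delta-scrambled} that $a_0,a_1$ lie in the same $f^{i_n}(L_n)$ for every $n$, but this is immediate from $f$-non separability).

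The forward direction has a genuine gap. You propose to use the solenoidal structure of Proposition~\ref{prop:htop0-Lki} and the diameter-shrinking of Proposition~\ref{prop:non-LY-chaotic}(i), but both of these are statements about a \emph{single} $\omega$-limit set $\omega(x_0,f)$: the nested cycles $(f^i(L_n))_{0\le i<2^n}$ depend on $x_0$, and there is no reason a priori that one family works for all points simultaneously. Your sentence ``cover $[a,b]$ by finitely many periodic intervals of periods that are powers of $2$, with small diameter'' cannot hold as stated (a fixed point would sit in a period-$1$ interval), and even restricted to $\omega(f)$ it is unclear how to extract a single uniform cover from the per-orbit decompositions. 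Acknowledging this as ``the main obstacle'' is accurate, but invoking ``Proposition~\ref{prop:non-LY-chaotic} in its uniform form'' begs the question: that uniform form is precisely what must be proved, and it does not follow from a routine compactness argument.

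The paper's route to this half (Theorem~\ref{theo:hA>0-LY}) is substantially different and goes through global properties of $\omega(f)$ rather than the per-point solenoidal structure. One first shows that $\omega(f)$ is closed (Corollary~\ref{cor:wf-closed}) and that any orbit leaves a fixed neighborhood of $\omega(f)$ at most $N$ times, with $N$ independent of the point (Corollary~\ref{cor:Usupsetwf}). Then, assuming $f$ is not Li-Yorke chaotic, one proves that $f|_{\omega(f)}$ has no unstable point (Proposition~\ref{prop:notLY-wfstable}) and that every point of $\omega(f)$ is almost periodic (Lemma~\ref{lem:noLYchaos-wfalmostperiodic}). These feed into the key Lemma~\ref{lem:approximate-xinU-yi}: there exist finitely many reference points $y_1,\ldots,y_r\in\omega(f)$ and a neighborhood $U\supset\omega(f)$ such that any orbit segment lying entirely in $U$ is $\eps$-shadowed by the orbit of some $y_i$. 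With this in hand, any orbit of length $n$ decomposes into at most $2N+1$ blocks (the at most $N$ excursions outside $U$ and the intervening stretches inside $U$), each block coded by one of finitely many symbols; the resulting polynomial bound on $r_n(A,f,2\eps)$ kills sequence entropy uniformly over all sequences $A$. The machinery of Subsection~\ref{subsec:wf} (Lemmas~\ref{lem:preliminary-hA}--\ref{lem:wf-U+}, Proposition~\ref{prop-wf}) is developed precisely to obtain Corollaries~\ref{cor:wf-closed} and~\ref{cor:Usupsetwf}, and this is where the real work of this direction lies.
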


The ``only if'' part can be easily shown by using previous results in this
chapter; this is done in Proposition~\ref{prop:LY-hA>0}. Before
proving the reverse implication, we shall need to show several
preliminary results; this will be done in Subsections \ref{subsec:wf}
and \ref{subsec:hA>0-LY}.

\subsection{Li-Yorke chaos implies positive sequence entropy}

\begin{prop}\label{prop:LY-hA>0}
If an interval map $f\colon I\to I$ is chaotic in the sense
of Li-Yorke, there exists an increasing sequence $A$ such that $h_A(f)>0$.
\end{prop}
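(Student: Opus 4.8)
The plan is to split according to the topological entropy of $f$. If $h_{top}(f)>0$, there is nothing to prove: taking $A:=(n)_{n\ge 0}$ one has $h_A(f)=h_{top}(f)>0$, as noted right after Definition~\ref{defi:toposeqentropy}. So the whole content lies in the case $h_{top}(f)=0$, which I now treat.

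Assume $h_{top}(f)=0$ and that $f$ is chaotic in the sense of Li-Yorke. By the implication (i)$\Rightarrow$(iv) of Theorem~\ref{theo:htop0-chaos-LY}, there is a point $x_0$ with $\omega(x_0,f)$ infinite, containing two $f$-non separable points $a_0,a_1$ (so $a_0\neq a_1$). Fix disjoint closed intervals $K_0,K_1$ with $a_i\in\Int{K_i}$, and let $(L_n)_{n\ge 0}$ be the intervals associated to $\omega(x_0,f)$ by Proposition~\ref{prop:htop0-Lki}. Since $a_0,a_1$ are $f$-non separable, Lemma~\ref{lem:f-separable-Lni} gives, for every $n\ge 0$, an index $i_n\in\Lbrack 0,2^n-1\Rbrack$ with $a_0,a_1\in f^{i_n}(L_n)$; moreover $\omega(x_0,f)\subset\bigcup_{i=0}^{2^n-1}f^i(L_n)$ by Proposition~\ref{prop:htop0-Lki}(iv). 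Thus the hypotheses of Proposition~\ref{prop:cantor-delta-scrambled} are met with $x_1:=x_0$, and its ``Moreover'' clause yields an increasing sequence of integers $(n_k)_{k\ge 0}$ such that for every $(\alpha_k)_{k\ge 0}\in\{0,1\}^{\IZ^+}$ there is a point $x_{\bar\alpha}\in I$ with $f^{n_k}(x_{\bar\alpha})\in K_{\alpha_k}$ for all $k\ge 0$.

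Now set $A:=(n_k)_{k\ge 0}$ and pick $\eps>0$ strictly smaller than the distance between the disjoint compact sets $K_0$ and $K_1$. Fix $n\ge 1$; for each string $s=(\alpha_0,\dots,\alpha_{n-1})\in\{0,1\}^n$, extend $s$ to an infinite sequence and choose a corresponding point $x_s$ as above. If $s\neq s'$, then $\alpha_k\neq\alpha'_k$ for some $k\in\Lbrack 0,n-1\Rbrack$, so $f^{n_k}(x_s)$ and $f^{n_k}(x_{s'})$ lie in the disjoint sets $K_{\alpha_k}$ and $K_{\alpha'_k}$, whence $d(f^{n_k}(x_s),f^{n_k}(x_{s'}))\ge d(K_0,K_1)>\eps$. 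Therefore $\{x_s\mid s\in\{0,1\}^n\}$ is an $(A,n,\eps)$-separated set of cardinality $2^n$, so $s_n(A,f,\eps)\ge 2^n$ for all $n$, giving $h_A(f)\ge\log 2>0$. This settles the case $h_{top}(f)=0$ and completes the proof.

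I do not expect a real obstacle here: the statement is essentially a repackaging of the quasi-shift structure already extracted in Proposition~\ref{prop:cantor-delta-scrambled}. The only points needing a little care are verifying that proposition's hypotheses via Lemma~\ref{lem:f-separable-Lni} and Proposition~\ref{prop:htop0-Lki}, and checking that distinct finite binary strings yield $(A,n,\eps)$-separated points — which is immediate once $\eps<d(K_0,K_1)$.
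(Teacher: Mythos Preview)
Your proof is correct and follows essentially the same route as the paper's: split on $h_{top}(f)$, and in the zero-entropy case invoke Theorem~\ref{theo:htop0-chaos-LY}(iv), Lemma~\ref{lem:f-separable-Lni}, and the ``Moreover'' clause of Proposition~\ref{prop:cantor-delta-scrambled} to get the sequence $A=(n_k)$ and $2^n$ points that are $(A,n,\eps)$-separated for $\eps<d(K_0,K_1)$. Your verification of the hypotheses of Proposition~\ref{prop:cantor-delta-scrambled} is actually a bit more explicit than the paper's own citation.
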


\begin{proof}
If $f$ has positive topological entropy, then $h_A(f)=h_{top}(f)>0$
with $A:=(n)_{n\ge 0}$. From now on, we assume that $h_{top}(f)=0$.
According to Theorem~\ref{theo:htop0-chaos-LY}, there exist two $f$-non
separable points $a_0,a_1$ belonging to the same infinite $\omega$-limit set.
Let $J_0,J_1$ be two disjoint closed intervals such that $a_i\in\Int{J_i}$
for $i\in\{0,1\}$. By Lemma~\ref{lem:f-separable-Lni} and
Proposition~\ref{prop:cantor-delta-scrambled}, there exists
an increasing sequence of positive integers $A=(n_k)_{k\ge 0}$ such that
$$
\forall \bar\alpha=(\alpha_k)_{k\ge 0}\in\{0,1\}^{\IZ^+},\
\exists\, x_{\bar\alpha}\in I,\ \forall k\ge 0,\ 
f^{n_k}(x_{\bar\alpha})\in J_{\alpha_k}.
$$
For all $n\ge 1$, we set
$$
E_n:=\{x_{(\alpha_k)_{k\ge 0}}\mid
\forall k\ge n, \alpha_k=0\text{ and } \alpha_0,\ldots,
\alpha_{n-1}\in\{0,1\}\}.
$$
Let $\delta>0$ be the distance between $J_0$ and $J_1$. Then $E_n$
is an $(A,n,\eps)$-separated set for all $\eps\in(0,\delta)$, and
$\# E=2^n$. Thus $s_n(A,f,\eps)\ge 2^n$ for all $\eps\in(0,\delta)$,
and so $h_A(f)\ge \log 2>0$.
\end{proof}

\subsection{Preliminary results on $\omega$-limit set}\label{subsec:wf}

In this subsection, we are going to show several results concerning the
$\omega$-limit set of an interval map. These results are due to Sharkovsky 
\cite{Sha9}; see also \cite[Chapter IV]{BCop} (in
English). In \cite{MSu2}, Mai and Sun generalized these results to graph maps.
For Lemma~\ref{lem:wf-U+}, we follow the ideas of Mai and Sun
\cite[Proposition~2]{MSu2}, whose  proof is simpler.
Recall that the $\omega$-limit set of a map $f\colon I\to I$ is
\[
\omega(f):=\bigcup_{x\in I}\omega(x,f).
\]

\begin{rem}
In the previous version of this book (v4 on arxiv) as well as in
\cite{Sha9} and \cite[Proposition~IV.6]{BCop}, Lemma~\ref{lem:wf-U+} was
stated without the part (i)-(ii), and its proof 
was split into two cases, the second one being
more difficult to deal with. In the previous version of this book, the
main part of the second case was isolated in Lemma~5.43, whose 
assumptions were the following ones.

\medskip
{\it
Let $f\colon I\to I$ be an interval map and $c\in I\setminus \{\max I\}$.
Let $J_0:=[c,c']$ for some $c'>c$.
Suppose that:
\begin{gather}
\forall n\ge 1,\ \forall d>c,\ [c,d]\not\subset f^n(J_0),
\label{eq:assumption3}\\
\forall n\ge 1,\ f^n(c)\notin J_0.\label{eq:assumption4}
\end{gather}
Let $\CV_c:=
\{U\text{ nonempty open subinterval}\mid U\subset J_0, \inf U=c\}$.
Suppose that
\begin{equation}\label{eq:assumption5}
\forall\, U\in \CV_c,\ \exists k\ge 1,\  U\cap f^k(U)\ne\emptyset.
\end{equation}
}

\medskip
Actually, one can show, by using the proof of Lemma~\ref{lem:wf-U+},
that these assumptions are never satisfied, and thus the second case
of the previous proofs was void.
\end{rem}

\begin{lem}\label{lem:wf-U+}
Let $f\colon I\to I$ be an interval map and $c\in I\setminus\{\max I\}$
(resp. $c\in I\setminus\{\min I\}$).
Suppose that for every nonempty open interval $U$ such that $\inf U=c$
(resp. $\sup U=c$), 
\begin{equation}\label{eq:hyp-lem-:wf-U+}
\exists k\ge 1, U\cap f^k(U)\ne\emptyset.
\end{equation}
Then $c\in \omega(f)$. Moreover, one of the following statements holds:
\begin{enumerate}
\item There exists $x>c$ (resp. $x<c$) and an increasing sequence of
positive integers $(m_k)_{k\ge 0}$ such that for all $k\ge 0$, $f^{m_k}(x)>c$
(resp. $f^{m_k}(x)<c$) and $\lim_{k\to+\infty} f^{m_k}(x)=c$.
\item $c$ is a periodic point.
\end{enumerate}
\end{lem}

\begin{proof}
We deal only with the case $c\in I\setminus\max\{I\}$ and
$\inf U=c$, the other case being symmetric. 
First we suppose
\begin{equation}\label{eq:preliminary-hA-1}
\forall\eps>0, \exists a,b\in(c,c+\eps]\text{ with }a<b,\
\exists d>c, \exists n\ge 1,f^n([a,b])=[c,d].
\end{equation}
A straightforward induction shows that there exist decreasing sequences of
points $(a_k)_{k\ge 0}$, $(b_k)_{k\ge 0}$ and a sequence of positive
integers $(n_k)_{k\ge 0}$ such that, for all $k\ge 0$,
$c<a_k<b_k$, $\lim_{k\to+\infty}b_k=c$ and 
$f^{n_k}([a_k,b_k])\supset [a_{k+1},b_{k+1}]$. Thus
we have the following coverings:
\begin{equation}
[a_0,b_0]\cover{f^{n_0}}[a_1,b_1]\cover{f^{n_1}}[a_2,b_2]
\cover{f^{n_2}}\cdots [a_k,b_k]\cover{f^{n_k}}[a_{k+1},b_{k+1}]\cdots.
\end{equation}
Using inductively Lemma~\ref{lem:chain-of-intervals}, we can build a sequence
of closed intervals $(I_k)_{k\ge 0}$ such that 
\begin{equation}
I_0:=[a_0,b_0],\ I_{k+1}\subset I_k\text{ and }\forall k\ge 0,\ 
f^{n_0+\cdots+n_k}(I_k)=[a_{k+1},b_{k+1}].
\end{equation}
Let $x$ be in $\bigcap_{k\ge 0} I_k$ (this set is nonempty because
it is a decreasing intersection of nonempty compact sets). For all $k\ge 0$,
we set $m_k:=n_0+\cdots+n_k$. Then
$f^{m_k}(x)\in[a_{k+1},b_{k+1}]$ for all $k\ge 0$. Thus
$\lim_{k\to+\infty}f^{m_k}(x)=c$, which implies 
that $c\in \omega(x,f)$. Moreover, $f^{m_k}(x)>c$ for all $k\ge 0$
and statement (i) holds.

From now on, we assume that \eqref{eq:preliminary-hA-1} does not hold.
We are going to show that $c$ is a periodic point, that is, statement (ii)
holds, which implies that $c\in \omega(c,f)$. Assume on the contrary that
$c$ is not a periodic point.
The negation of \eqref{eq:preliminary-hA-1} means that there exists
$\eps>0$ such that:
\begin{equation}\label{eq:assumption3b}
\forall a,b\in(c,c+\eps]\text{ with }a<b, \forall d>c, \forall k\ge 1,
f^k([a,b])\ne [c,d].
\end{equation}
We set $J:=[c,c+\eps]$. We first prove the following fact:
\begin{equation}\label{eq:fact44}
\forall m\ge 1,\exists \delta_m>0,\ (c,c+\delta_m)\cap f^m(J)=\emptyset.
\end{equation}
Suppose that the contrary holds, that is, there exists $m\ge 1$ such that,
for all $\delta>0$, $(c,c+\delta)\cap f^m(J)\ne\emptyset$. This
implies that $c\in\overline{f^m(J)}$, and thus $c\in f^m(J)$
because $f^m(J)$ is compact. Let $c'\in (c,c+\eps)\cap f^m(J)$.
Then $[c,c']\subset f^m(J)$ because $f^m(J)$ is connected. We choose
a sequence of points $y_n\in (c,c')$ such that $\lim_{n\to+\infty} y_n=c$.
For all $n\ge 0$, there exists $x_n\in J$ such that $f^m(x_n)=y_n$.
By taking a subsequence, we may assume that the sequence $(x_n)_{n\ge 0}$
converges to a point $x$, and $x\in J$ by compactness. Then $f^m(x)=c$
by continuity, $x\ne c$ because $c$ is not periodic by
assumption, and for all $n\ge 0$, $x\ne x_n$ because $f^m(x_n)\ne c$.
Since $\lim_{n\to+\infty}x_n=x>c$, there exists $n\ge 0$ such that $x_n>c$.
We set
\begin{eqnarray*}
x'&:=&\max\{t\in [x,x_n]\mid f^m(t)=c\}\quad\text{if } x<x_n,\\
x'&:=&\min\{t\in [x_n,x]\mid f^m(t)=c\}\quad\text{if } x>x_n.
\end{eqnarray*}
Then $f^m(\langle x,x'\rangle)\ge c$ by definition of $x'$ and continuity 
of $f$. Moreover, $f^m(\langle x,x'\rangle)$ contains
both $f^m(x')=c$ and $f^m(x_n)=y_n>c$. Thus there exists $d>c$ such that
$f^m(\langle x,x'\rangle)=[c,d]$. We set $\{a,b\}:=\{x,x'\}$ with $a<b$.
Note that $a>c$ because $x>c$ and $x_n>c$.
Then $f^m([a,b])=[c,d]$ with $a,b\in J\setminus\{c\}=(c,c+\eps]$, which contradicts \eqref{eq:assumption3b}.
This proves that the fact \eqref{eq:fact44} holds.

We set 
\begin{equation}
Y:=\bigcup_{n=1}^{\infty}f^n(J).
\end{equation}
According to the
assumption \eqref{eq:hyp-lem-:wf-U+}, for all $\eps'\in(0,\eps]$, 
there exists an
integer $k\ge 1$ such that $(c,c+\eps')\cap f^k((c,c+\eps'))\ne\emptyset$. Thus
for all $\eps'\in(0,\eps]$, $(c,c+\eps')\cap Y\ne\emptyset$, 
which implies that $c\in
\overline{Y}$. On the other hand, the fact \eqref{eq:fact44} implies that
$c\notin Y$. Thus $c\in \overline{Y}\setminus Y$. Moreover, according to the
assumption \eqref{eq:hyp-lem-:wf-U+}, 
there exists $k\ge 1$ such that $(c,c+\eps)\cap
f^k((c,c+\eps))\ne\emptyset$, and thus $J\cap f^k(J)\ne\emptyset$. This
implies that $Y$ has at most $k$ connected components and that
$\overline{Y}\setminus Y$ is a finite set. By definition,
$Y=f(Y)\cup f(J)$. Thus, since $J$ is compact, $\overline{Y}=
f(\overline{Y})\cup f(J)$. Moreover,
\begin{eqnarray}
\overline{Y}\setminus Y&=&\left(f(\overline{Y})\cup f(J)\right)\setminus Y,\\
&=&f(\overline{Y})\setminus Y\quad\text{because } f(J)\subset Y,\\
&\subset&f(\overline{Y})\setminus f(Y)\quad\text{because } f(Y)\subset Y,\\
&\subset&f(\overline{Y}\setminus Y).\label{eq:Ybar-Y}
\end{eqnarray}
Since $\overline{Y}\setminus Y$ is finite, \eqref{eq:Ybar-Y} implies that
$\overline{Y}\setminus Y= f(\overline{Y}\setminus Y)$ and all points
in $\overline{Y}\setminus Y$ are periodic. Since $c\in\overline{Y}\setminus Y$,
this contradicts the fact that $c$ is not periodic. Conclusion:
if \eqref{eq:preliminary-hA-1} does not hold, then $c$ is periodic. This
concludes the proof.
\end{proof}

The next result gives a characterization of the points in the
$\omega$-limit set. Note that its statement is not optimal since one can
replace the bound $4$ by $3$ in \eqref{eq:assumption-prop-wf}. 
Since the value of this bound has no consequence on the other results of
the book, we have chosen to give a simple proof with a non optimal bound.
To prove this result with the bound $3$, one can either use additional 
lemmas about interval maps (which
gives a longer proof) as in
\cite[Proposition V.11]{BCop}, or use Sierpiński's 
Theorem\footnote{Sierpiński's Theorem: If $(F_n)_{n\ge 0}$ is a pairwise
disjoint closed cover of the compact connected Hausdorff set $S$, then 
there exists $n\ge 0$ such that $F_n=S$. 
See e.g. \cite[Theorem 6.1.27]{Eng}.}
(which gives a short but non elementary proof) as in
\cite[Theorem~2]{MSu2}.

\begin{prop}\label{prop-wf}
Let $f\colon I\to I$ be an interval map and $c\in I$. Then $c\in \omega(f)$
if and only if
\begin{equation}\label{eq:assumption-prop-wf}
\text{for every neighborhood }U\text{ of }c,\ \exists x\in I,\
\#\{n\ge 0,\mid f^n(x)\in U\}\ge 4.
\end{equation}
\end{prop}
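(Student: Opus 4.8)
The plan is to prove both implications of Proposition~\ref{prop-wf}, the easier being the ``only if'' direction. If $c\in\omega(f)$, then $c\in\omega(x,f)$ for some $x\in I$, so for every neighborhood $U$ of $c$ there are infinitely many integers $n$ with $f^n(x)\in U$; in particular there are at least $4$ such integers, giving \eqref{eq:assumption-prop-wf}. This requires only the definition of the $\omega$-limit set.

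For the converse, I would assume \eqref{eq:assumption-prop-wf} and deduce $c\in\omega(f)$. The first observation is that \eqref{eq:assumption-prop-wf} is preserved when we shrink $U$: if $U'\subset U$ also contains $c$ in its interior and no point $x$ has $4$ visits to $U'$, we would get a contradiction after replacing $U$ by $U'$; so in fact for \emph{every} neighborhood $U$ of $c$ there is a point with at least $4$ visits (indeed, iterating, with arbitrarily many visits, though $4$ is all we need). The key geometric step is: given such a point $x$ and an open interval $U\ni c$, among the $4$ (or more) visit times $n_1<n_2<n_3<n_4$, pick two, say $n_i<n_j$, so that the interval $V:=\langle f^{n_i}(x),f^{n_j}(x)\rangle\subset \overline{U}$ is non degenerate (this is possible with $4$ visits on the real line: either three of the $f^{n_k}(x)$ are distinct and two of them lie on the same side of $c$, or one gets a non degenerate subinterval of $\overline U$ directly; a short case analysis handles the degenerate configurations, using that if all visits coincide then $f^{n_i}(x)$ is a periodic point lying in $U$, which itself forces $c$ near a periodic orbit). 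Then $V\cap f^{n_j-n_i}(V)\ni f^{n_j}(x)$, because $f^{n_j}(x)\in V$ and $f^{n_j}(x)=f^{n_j-n_i}(f^{n_i}(x))\in f^{n_j-n_i}(V)$; so $V$ is a non degenerate subinterval of $\overline U$ with $V\cap f^k(V)\neq\emptyset$ for $k:=n_j-n_i\ge 1$.

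To finish I would reduce to Lemma~\ref{lem:wf-U+}, whose hypothesis is phrased for one-sided neighborhoods. If $c=\min I$ (resp. $c=\max I$), every neighborhood of $c$ in $I$ is already one-sided, and the argument of the previous paragraph, applied to arbitrarily small $U$, produces for each such $U$ a non degenerate $V\subset\overline U$ with $V\cap f^k(V)\neq\emptyset$; shrinking $V$ slightly to an open interval with the same ``corner'' at $c$ (or noting that one can take $U$ with $c$ as an endpoint and $V$ a subinterval with $\inf V=c$ or $\sup V=c$ after a further application of the shrink-$U$ remark) puts us exactly in the situation of Lemma~\ref{lem:wf-U+}, giving $c\in\omega(f)$. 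If $c$ is interior to $I$, I would split a small neighborhood $U=(c-\eta,c+\eta)$ into its two halves: applying \eqref{eq:assumption-prop-wf} to $(c-\eta,c]$ and $(c,c+\eta)$ and the shrink-$U$ remark, at least one of the two one-sided families of neighborhoods satisfies the hypothesis of Lemma~\ref{lem:wf-U+} for all small $\eta$, again yielding $c\in\omega(f)$.

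The main obstacle is the case analysis in the geometric step: extracting from ``$4$ visits to $U$'' a genuinely non degenerate subinterval of $\overline U$ that maps over a point of itself, while correctly handling the degenerate cases where the visited points collapse (few distinct values, or all on one side of $c$). The number $4$ is presumably chosen precisely so that the pigeonhole argument on the two sides of $c$ always succeeds; I would verify that with $3$ or fewer visits one can construct counterexamples (e.g. eventually fixed behavior), which both confirms the sharpness and guides the bookkeeping. Once that step is in hand, the passage to Lemma~\ref{lem:wf-U+} is routine, as is the bookkeeping between two-sided and one-sided neighborhoods.
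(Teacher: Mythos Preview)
Your overall plan---reduce to Lemma~\ref{lem:wf-U+} by producing, for every small neighborhood $U$ of $c$, a one-sided return $U^s\cap f^k(U^s)\neq\emptyset$ with $s\in\{+,-\}$, and then fix a side that works for a cofinal family of shrinking $U$'s---is exactly the paper's route. But your ``key geometric step'' is tangled and, as stated, does not connect to the lemma. The hypothesis of Lemma~\ref{lem:wf-U+} is about open intervals with endpoint $c$, so you need $U^s\cap f^k(U^s)\neq\emptyset$; your $V=\langle f^{n_i}(x),f^{n_j}(x)\rangle$ may well straddle $c$, and ``shrinking $V$ to have a corner at $c$'' does not repair this. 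Non-degeneracy of $V$ is a red herring: what matters is that two visits lie on the \emph{same side} of $c$, not that they are distinct.

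Here is the clean version of your idea, which is essentially the paper's argument. First dispose of $c\in\omega(c,f)$. Otherwise shrink $U$ so that $f^k(c)\notin U$ for all $k\ge1$. Given four visits $x,f^p(x),f^q(x),f^r(x)\in U$ with $p<q<r$, each of the first three is $\neq c$, since if $f^m(x)=c$ with $m\in\{0,p,q\}$ then a \emph{later} visit would be an iterate of $c$ landing in $U$. Thus at least three visits lie in $U^-\cup U^+$; by pigeonhole two of them, say $f^{n_i}(x),f^{n_j}(x)$ with $n_i<n_j$, lie in the same $U^s$, and then $f^{n_j}(x)\in U^s\cap f^{n_j-n_i}(U^s)$. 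The paper phrases this step as a contradiction (assume neither side ever returns; then each passage from one visit to a later one must switch sides, and the three visits $x,f^p(x),f^q(x)$ give a parity clash), but the content is identical. Your proposed dichotomy ``three distinct values, two on the same side, or all coincide'' does not cover the cases and should be replaced by the argument above; the number~$4$ is needed precisely so that the first three visits are guaranteed $\neq c$.
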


\begin{proof}
If $c\in \omega(f)$, there exists $x$ such that
$c\in \omega(x,f)$, and we trivially have 
$\#\{n\ge 0,\mid f^n(x)\in U\}\ge 4$ for every neighborhood $U$ of $c$.

Assume that \eqref{eq:assumption-prop-wf} holds. For every set $U\subset I$, 
we define
$$
U^-:=\{x\in U\mid x<c\}\quad\text{and}\quad U^+:=\{x\in U\mid x>c\}.
$$
We assume
\begin{equation}\label{eq:prop-wf-21}
\exists\, U\text{ neighborhood of }c,\ \forall k\ge 1,\
U^-\cap f^k(U^-)=\emptyset\text{ and }U^+\cap f^k(U^+)=\emptyset.
\end{equation}
Let $U$ be such a neighborhood.
We also assume that $c\notin\omega(c,f)$ (otherwise there is nothing to prove).
In this way, we may replace $U$ by a smaller neighborhood in order to have
\begin{equation}\label{eq:prop-wf-22}
\forall k\ge 1,\ f^k(c)\notin U.
\end{equation}

By assumption~\eqref{eq:assumption-prop-wf}, there exist a point $x$
and positive integers $p<q<r$ such that $x,f^p(x),f^q(x),f^r(x)\in U$. 
By \eqref{eq:prop-wf-22}, the point $x$ is not equal to $c$ because 
$f^p(x)\in U$. Thus $x\in U^-\cup U^+$. 
We suppose $x\in U^+$, the case $x\in U^-$ being symmetric. 
Similarly, \eqref{eq:prop-wf-22} implies that $f^p(x)\ne c$ because $f^{q-p}(f^p(x))\in U$;
and $f^q(x)\ne c$ because $f^{r-q}(f^q(c))\in U$.
Since $f^p(x)\in f^p(U^+)$ and $f^p(x)\in U\setminus\{c\}$,
we have $f^p(x)\in U^-$ by \eqref{eq:prop-wf-21}.
The same argument with the points $x,f^q(x)$ (resp. 
$x':=f^p(x), f^{q-p}(x')=f^q(x)$) leads to $f^q(x)\in U^-$ (resp. 
$f^q(x)\in U^+$), which is impossible. Thus \eqref{eq:prop-wf-21} does
not hold. It is easy to see, by
considering the neighborhoods $(c-\eps_k,c+\eps_k)\cap I$, where
$(\eps_k)_{k\ge 0}$ is a decreasing sequence of positive numbers tending to 
$0$, that 
there exists $s\in\{+,-\}$ such that, for every neighborhood $U$ of 
$c$, there exists $k\ge 1$ such that $U^s\cap f^k(U^s)\ne\emptyset$. Then
$c\in\omega(f)$ according to Lemma~\ref{lem:wf-U+}.
\end{proof}

\begin{cor}\label{cor:wf-closed}
Let $f\colon I\to I$ be an interval map. The set $\omega(f)$ is compact.
\end{cor}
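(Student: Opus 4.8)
The plan is to deduce the statement directly from Proposition~\ref{prop-wf}. Since $I$ is compact, $\omega(f)$ is compact as soon as it is closed, so it suffices to prove that the complement $I\setminus\omega(f)$ is open.

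First I would fix a point $c\in I\setminus\omega(f)$. Applying the contrapositive of Proposition~\ref{prop-wf}, there is a neighborhood $U$ of $c$ such that every point $x\in I$ satisfies $\#\{n\ge 0\mid f^n(x)\in U\}\le 3$. Replacing $U$ by a smaller set if necessary, I may assume $U$ is an open set containing $c$.

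Next I would observe that this single neighborhood $U$ is contained in $I\setminus\omega(f)$. Indeed, let $c'\in U$ be arbitrary; then $U$ is an open neighborhood of $c'$ as well, and for every $x\in I$ we still have $\#\{n\ge 0\mid f^n(x)\in U\}\le 3<4$. By Proposition~\ref{prop-wf} this forces $c'\notin\omega(f)$. Hence $U\subset I\setminus\omega(f)$, so $I\setminus\omega(f)$ is open, $\omega(f)$ is a closed subset of the compact space $I$, and therefore $\omega(f)$ is compact.

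There is essentially no obstacle: the only subtlety worth spelling out is that the condition characterizing membership in $\omega(f)$ in Proposition~\ref{prop-wf} depends only on the open set $U$ (through the hitting counts of orbits), not on the distinguished point $c$, which is exactly what makes the same $U$ serve as a witness for every point of $U$.
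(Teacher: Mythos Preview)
Your proof is correct and takes essentially the same approach as the paper: both argue via Proposition~\ref{prop-wf}, exploiting that the hitting-count condition depends only on the neighborhood $U$ and not on the distinguished point. The only cosmetic difference is that the paper verifies sequential closedness (a limit of points of $\omega(f)$ lies in $\omega(f)$) while you show the complement is open; these are equivalent formulations of the same argument.
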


\begin{proof}
Let $(c_n)_{n\ge 0}$ be a sequence of points in $\omega(f)$ that converges
to some point $c$. Let $U$ be a neighborhood of $c$. There
exists $n\ge 0$ such that $U$ is a neighborhood of $c_n$. Thus, according
to Proposition~\ref{prop-wf}, there exists a point $x$ such that
$\#\{n\ge 0,\mid f^n(x)\in U\}\ge 4$. Then, by Proposition~\ref{prop-wf},
$c\in \omega(f)$. This shows that $\omega(f)$ is closed, and hence compact
because $I$ is compact.
\end{proof}

By definition, for every open set $U$ containing $\omega(f)$ and
every point $x$, all but finitely many points of the trajectory of $x$
lie in $U$. The next result states that the number of points of the
trajectory of $x$ falling outside $U$ is bounded independently of $x$.

\begin{cor}\label{cor:Usupsetwf}
Let $f\colon I\to I$ be an interval map. For every open set $U$ containing
$\omega(f)$, there exists a positive integer $N$ such that, for all
points  $x\in I$, $\#\{n\ge 0\mid f^n(x)\notin U\}\le N$.
\end{cor}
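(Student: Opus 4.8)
The statement asserts a uniform bound on the number of times the trajectory of a point can visit $I\setminus U$, given that $U$ is an open neighbourhood of $\omega(f)$. The natural approach is a compactness-plus-contradiction argument built on top of the characterization of $\omega(f)$ in Proposition~\ref{prop-wf}, precisely as Corollary~\ref{cor:wf-closed} used that proposition to show $\omega(f)$ is closed. First I would set $F:=I\setminus U$, which is a compact set disjoint from $\omega(f)$ since $U\supset\omega(f)$ and $U$ is open. The idea is: if no such $N$ existed, then for every $n$ there would be a point $x_n$ whose trajectory meets $F$ at least $n$ times, and I would extract from this a single point of $I$ that fails the finiteness condition characterizing membership in $\omega(f)$ — yet that point would have to lie in $\omega(f)\cap F$, a contradiction.

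To carry this out cleanly, I would argue locally around each point of $F$ and then use compactness of $F$. Fix $c\in F$. Since $c\notin\omega(f)$, Proposition~\ref{prop-wf} (applied in its contrapositive form) gives a neighbourhood $U_c$ of $c$ such that \emph{every} point $x\in I$ satisfies $\#\{n\ge 0\mid f^n(x)\in U_c\}\le 3$. The open sets $(U_c)_{c\in F}$ cover the compact set $F$, so finitely many of them, say $U_{c_1},\dots,U_{c_m}$, already cover $F$. Now let $x\in I$ be arbitrary. Whenever $f^n(x)\notin U$, the point $f^n(x)$ lies in $F$, hence in some $U_{c_j}$. Therefore
\[
\#\{n\ge 0\mid f^n(x)\notin U\}\le \sum_{j=1}^m \#\{n\ge 0\mid f^n(x)\in U_{c_j}\}\le 3m.
\]
Taking $N:=3m$ gives the claim, with a bound independent of $x$.

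The only genuinely delicate point is extracting from Proposition~\ref{prop-wf} the correct local statement at a point $c\notin\omega(f)$. Proposition~\ref{prop-wf} says $c\in\omega(f)$ iff \emph{for every} neighbourhood $U$ of $c$ there is a point whose trajectory meets $U$ at least four times; so $c\notin\omega(f)$ means there \emph{exists} a neighbourhood $U_c$ of $c$ such that no point of $I$ has its trajectory meeting $U_c$ four or more times — i.e.\ the visit count to $U_c$ is at most $3$ for every initial condition. This is exactly what I used above; one should be careful that the quantifier over initial points in \eqref{eq:assumption-prop-wf} is universal after negation, which it is. No further analysis of the dynamics is needed — everything reduces to Proposition~\ref{prop-wf} and the finite subcover, so I do not anticipate any serious obstacle; the main thing to get right is the bookkeeping of quantifiers in the negated characterization of $\omega(f)$.
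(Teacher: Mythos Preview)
Your proof is correct and essentially identical to the paper's: both use the contrapositive of Proposition~\ref{prop-wf} to get, for each point $c\in I\setminus U$, a neighbourhood visited at most three times by any trajectory, then extract a finite subcover of the compact set $I\setminus U$ to obtain the uniform bound $N=3m$. The paper additionally remarks that each neighbourhood can be shrunk to avoid $\omega(f)$, but this is not needed for the argument and you rightly omit it.
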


\begin{proof}
Let $U$ be an open set containing $\omega(f)$. Let $y\in I\setminus U$.
According to Proposition~\ref{prop-wf}, there exists an open set $V_y$ 
containing $y$ such that $V_y$ contains at most three points of any 
trajectory. 
Since $I\setminus U$ is compact,
there exist finitely many points $y_1,\ldots, y_p\in I\setminus U$ such that
$I\setminus U\subset V_{y_1}\cup\cdots \cup V_{y_p}$. Therefore,
the open set $V:=V_{y_1}\cup\cdots \cup V_{y_p}$ contains at most $3p$ points
of any trajectory. This gives the conclusion with $N=3p$.
\end{proof}

\subsection{Positive sequence entropy implies Li-Yorke chaos}\label{subsec:hA>0-LY}

We are going to show several preliminary results about interval maps that 
are not chaotic in the sense of Li-Yorke. Then we shall be able to show that 
such a map has zero sequence entropy for any sequence.

The next lemma is a refinement of Proposition~\ref{prop:non-LY-chaotic}(i).

\begin{lem}\label{lem:diamLcapomega}
Let $f$ be an interval map that is not chaotic in the sense of
Li-Yorke and let $x_0$ be a point. 
Suppose that $\omega(x_0,f)$ is infinite and let 
$(L_n)_{n\ge 0}$ be the intervals given by 
Proposition~\ref{prop:htop0-Lki}.
Then
$$
\lim_{n\to+\infty}
\max_{i\in\Lbrack 0,2^n-1\Rbrack}\diam(f^i(L_n)\cap \omega(f))=0.
$$
\end{lem}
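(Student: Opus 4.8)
The plan is to argue by contradiction: I will show that if the claimed limit is not $0$, then a non-vanishing limit forces a nested sequence of the periodic intervals $f^{i_n}(L_n)$ whose common intersection meets $\omega(f)$ in a set of positive diameter, and from this set I will manufacture either two $f$-non separable points in an infinite $\omega$-limit set, or a $\delta$-scrambled Cantor set directly, contradicting Theorem~\ref{theo:htop0-chaos-LY}.

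\emph{Setup and monotonicity.} Since $\omega(x_0,f)$ is infinite, $f$ is of type $2^\infty$ by Proposition~\ref{prop:htop0-Lki}, so $h_{top}(f)=0$ by Theorem~\ref{theo:htop-power-of-2}; as $f$ is not chaotic in the sense of Li-Yorke, Theorem~\ref{theo:htop0-chaos-LY} tells us that no infinite $\omega$-limit set contains a pair of $f$-non separable points. Put $\delta_n:=\max_{i\in\Lbrack 0,2^n-1\Rbrack}\diam(f^i(L_n)\cap\omega(f))$. Using Proposition~\ref{prop:htop0-Lki}(iii) twice, $f^{2^n\varepsilon}(L_{n+1})\subset L_n$ for $\varepsilon\in\{0,1\}$, hence $f^i(L_{n+1})\subset f^{i\bmod 2^n}(L_n)$ for every $i\in\Lbrack 0,2^{n+1}-1\Rbrack$; intersecting with $\omega(f)$, taking diameters, and then the maximum over $i$ gives $\delta_{n+1}\le\delta_n$. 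So $\delta_n\downarrow\delta\ge 0$, and I assume $\delta>0$ for contradiction. Set $G_n:=\{i\in\Lbrack 0,2^n-1\Rbrack\mid \diam(f^i(L_n)\cap\omega(f))\ge\delta\}$; each $G_n$ is nonempty, and the above inclusion shows that $i\bmod 2^n\in G_n$ whenever $i\in G_{n+1}$. Since the tree with levels $G_n$ and this parent map is infinite and at most $2$-branching, König's lemma yields indices $i_0,i_1,\dots$ with $i_{n+1}\equiv i_n\pmod{2^n}$ and $i_n\in G_n$; put $J_n:=f^{i_n}(L_n)$, so $J_0\supset J_1\supset\cdots$ are compact intervals with $\diam(J_n\cap\omega(f))\ge\delta$. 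Let $J:=\bigcap_nJ_n$. By Corollary~\ref{cor:wf-closed}, $\omega(f)$ is closed, so $J\cap\omega(f)=\bigcap_n(J_n\cap\omega(f))$ is a decreasing intersection of nonempty compacta; choosing $a_n,b_n\in J_n\cap\omega(f)$ with $|b_n-a_n|\ge\delta$ and passing to convergent subsequences gives $\diam(J\cap\omega(f))\ge\delta$, so $J$ is non-degenerate.

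\emph{A point of $\omega(x_0,f)$ inside $J$.} Because $i_{n+1}\equiv i_n\pmod{2^n}$, Lemma~\ref{lem:omega-set}(ii),(iv) give $\omega(f^{i_{n+1}}(x_0),f^{2^n})=\omega(f^{i_n}(x_0),f^{2^n})$ and $\omega(f^{i_{n+1}}(x_0),f^{2^{n+1}})\subset\omega(f^{i_{n+1}}(x_0),f^{2^n})$, so the sets $W_n:=\omega(f^{i_n}(x_0),f^{2^n})$ are decreasing. Each $W_n$ is closed, infinite (Lemma~\ref{lem:omega-set}(v)), and satisfies $W_n\subset\omega(x_0,f)$ (Lemma~\ref{lem:omega-set}(iv)) and $W_n\subset f^{i_n}(L_n)=J_n$ (Proposition~\ref{prop:htop0-Lki}(v)). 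Hence $W:=\bigcap_nW_n$ is a nonempty closed subset of $\omega(x_0,f)\cap J$. Now distinguish two cases. If $\#W\ge 2$, pick distinct $u,v\in W$: they lie in the infinite set $\omega(x_0,f)$ and, for every $n$, both lie in the \emph{same} interval $f^{i_n}(L_n)$, so Lemma~\ref{lem:f-separable-Lni} shows they are $f$-non separable — contradicting the conclusion of Theorem~\ref{theo:htop0-chaos-LY} recalled above.

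\emph{The case $W=\{a\}$.} Since $a\in f^{i_n}(L_n)$ for all $n$ and the $2^n$ intervals $f^j(L_n)$, $0\le j<2^n$, are pairwise disjoint and cyclically permuted by $f$, a periodic point in $f^{i_n}(L_n)$ would have period divisible by $2^n$ for every $n$; hence $a$ is not periodic. As $\diam(J\cap\omega(f))\ge\delta$ and $a\in J\cap\omega(f)$, the triangle inequality inside the interval $J$ produces $b\in J\cap\omega(f)$ with $|b-a|\ge\delta/2>0$, and $b$ too is non periodic. Write $a\in\omega(x_0,f)$ and $b\in\omega(x_1,f)$; since $b$ is non periodic and a finite $\omega$-limit set is a periodic orbit (Lemma~\ref{lem:omega-finite}), $\omega(x_1,f)$ is infinite. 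Also $a,b\in f^{i_n}(L_n)$ for every $n$. Then Proposition~\ref{prop:cantor-delta-scrambled}, applied with the four points $x_0,x_1,a,b$, produces a $|b-a|$-scrambled Cantor set — contradicting that $f$ is not chaotic in the sense of Li-Yorke — provided one verifies its remaining hypothesis: $\omega(x_1,f)\subset\bigcup_{j=0}^{2^n-1}f^j(L_n)$ for all $n$. This verification is the main obstacle. When $b\in\Int{f^{i_n}(L_n)}$ for every $n$, the trajectory of $x_1$, accumulating at $b$, must eventually enter $f^{i_n}(L_n)$, hence stay in the forward-invariant union $\bigcup_jf^j(L_n)$, giving the inclusion. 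The delicate subcase is when $b=\min f^{i_n}(L_n)$ or $b=\max f^{i_n}(L_n)$ for some (hence, by nesting, all large) $n$; here I would argue that $\omega(x_1,f)$, being an infinite $\omega$-limit set, is contained in a nested family of periodic intervals which, compared with $(f^j(L_n))_j$ via the remark following Lemma~\ref{lem:f-separable-Lni}, still meets $f^{i_n}(L_n)$ in its interior at some point of the orbit, whence the desired inclusion follows. An alternative is to choose $b$ from the start among the points of $J\cap\omega(f)$ that lie in the interior of every $f^{i_n}(L_n)$, or to pass to $f^2$ to move the offending endpoints away. Once this endpoint case is disposed of, we get $\delta=0$, which is the lemma.
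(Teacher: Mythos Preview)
Your overall strategy matches the paper's: contradict, build a nested branch $J_n=f^{i_n}(L_n)$ with $\diam(J_n\cap\omega(f))\ge\delta$, locate a point $a\in\omega(x_0,f)\cap\bigcap_nJ_n$ and a point $b\in\omega(f)\cap\bigcap_nJ_n$ far from $a$, and then feed $a,b$ into Proposition~\ref{prop:cantor-delta-scrambled}. Two comments.

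First, your case split on $\#W$ is unnecessary: since $f$ is not Li-Yorke chaotic, Theorem~\ref{theo:htop0-chaos-LY} says $\omega(x_0,f)$ has no $f$-non separable pair, so Proposition~\ref{prop:non-LY-chaotic}(i) directly gives $\diam W_n\to 0$ and hence $W=\{a\}$. This is how the paper proceeds and it shortens the argument.

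Second, and more importantly, there is a real gap where you flag the ``delicate subcase'': you do not verify the hypothesis $\omega(x_1,f)\subset\bigcup_jf^j(L_n)$ when $b$ happens to be an endpoint of $J_n$, and your suggested workarounds (pick $b$ in the interior, pass to $f^2$, compare nested cycles via the remark after Lemma~\ref{lem:f-separable-Lni}) are not justified and do not obviously succeed. The paper closes this gap with a short observation you are missing. Since $b\in J_n$ and $J_n$ is $f^{2^n}$-invariant, the three points $b,\,f^{2^n}(b),\,f^{2^{n+1}}(b)$ all lie in $J_n$; they are pairwise distinct because $b$ lies in a cycle of period $2^m$ for every $m$, so $b$ cannot be eventually periodic. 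An interval has only two endpoints, so at least one of these three points lies in $\Int{J_n}$. That point belongs to $\omega(x_1,f)$ (by strong invariance of $\omega$-limit sets), hence the trajectory of $x_1$ enters $\Int{J_n}$; from that moment on it stays in the $f$-invariant set $\bigcup_jf^j(L_n)$, giving $\omega(x_1,f)\subset\bigcup_jf^j(L_n)$. With this fix your argument is complete and coincides with the paper's.
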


\begin{proof}
Recall that the intervals $(L_n)_{n\ge 0}$ satisfy: for all $n, i\ge 0$,
$f^i(L_{n+1})$ and $f^{i+2^n}(L_{n+1})$ are included in $f^i(L_n)$,
and  $(f^i(L_n))_{0\le i<2^n}$ is the smallest 
cycle of intervals of period $2^n$ containing $\omega(x_0,f)$.
Suppose that the lemma does not hold; this implies
\begin{equation}\label{eq:no-diamLcapomega}
\exists\delta>0,\ \forall n\ge 0,\ \exists i\in\Lbrack 0,2^n-1\Rbrack,\
\diam(f^i(L_n)\cap \omega(f))\ge\delta.
\end{equation}
Using \eqref{eq:no-diamLcapomega},
one can build a sequence $(i_n)_{n\ge 0}$ such that
$$
\forall n\ge 0,\ f^{i_{n+1}}(L_{n+1})\subset f^{i_n}(L_n)\text{ and }
\diam(f^{i_n}(L_n)\cap\omega(f))\ge\delta.
$$
We set $J_n:=f^{i_n}(L_n)$. For every $n\ge 0$, let $b_n,c_n$ be two points
in $J_n\cap \omega(f)$ such that $|b_n-c_n|\ge\delta$. By compactness, there
exist two points 
$b,c\in I$ and an increasing sequence of integers $(n_k)_{k\ge 0}$ such that 
$\lim_{k\to+\infty}b_{n_k}=b$ and $\lim_{k\to+\infty}c_{n_k}=c$.
Since $\omega(f)$ is closed by Corollary~\ref{cor:wf-closed}, the points
$b,c$ belong to $\omega(f)$. Moreover, $[b-c|\ge \delta$ and $b,c$ belong
to $J_n$ for all $n\ge 0$ (because $(J_n)_{n\ge 0}$ is a decreasing sequence of
closed intervals).

According to Proposition~\ref{prop:non-LY-chaotic}(i), $\diam(J_n\cap
\omega(x_0,f))$ tends to $0$ when $n$ goes to infinity. Thus
there exists a unique point $a\in\omega(x_0,f)$ such that
$$
\bigcap_{n\ge 0}J_n\cap \omega(x_0,f)=\{a\}
$$
because this is a decreasing intersection of nonempty compact sets.
By the triangular inequality, either $|a-b|\ge \frac{\delta}2$ or
$|a-c|\ge \frac{\delta}2$. With no loss of generality, we suppose
$|a-b|\ge \frac{\delta}2$.
For every $n\ge 0$, the point $b$ is in the interval 
$J_n$, which belongs to a periodic cycle of intervals of period $2^n$.
This implies that the points $(f^k(b))_{k\ge 0}$ are all distinct.
Therefore, 
since $b\in \omega(f)$, there exists a point $x_1$ such that $b\in\omega(x_1,f)$
and $\omega(x_1,f)$ is infinite. Moreover, since $b, f^{2^n}(b)$,
$f^{2^{n+1}}(b)$ are three distinct points in the interval $J_n$, one of
them is in $\Int{J_n}$, and thus there exists $k\ge 0$ such that
$f^k(x_1)\in J_n$. The periodicity of $J_n$ implies that
$\omega(f^k(x_1),f^{2^n})\subset J_n$. Then, by Lemma~\ref{lem:omega-set}, 
we get
$$
\forall n\ge 0,\ \omega(x_1,f)\subset \bigcup_{i=0}^{2^n-1}f^i(L_n).
$$
Since $f$ is not chaotic in the sense of Li-Yorke by assumption,
we have $h_{top}(f)=0$ by Theorem~\ref{theo:htop-positive-chaos-LY}.
Then the assumptions of Proposition~\ref{prop:cantor-delta-scrambled} 
are fulfilled (with $a_0:=a, a_1:=b$), and this proposition implies 
that $f$ is chaotic in the sense of Li-Yorke,
a contradiction. This ends the proof of the lemma.
\end{proof}

The next result is due to Fedorenko, Sharkovsky and 
Smítal \cite[Theorem~2.1]{FSS}.
Recall that the notion of an unstable point is
defined in Definition~\ref{defi:unstable}.

\begin{prop}\label{prop:notLY-wfstable}
Let $f$ be an interval map that is not chaotic in the sense of
Li-Yorke. Then $f|_{\omega(f)}$ has no unstable point, that is,
$$
\forall a\in\omega(f),\; \forall \eps>0,\; \exists\delta>0,\; \forall b\in
\omega(f),\; |a-b|\le\delta\Rightarrow \forall n\ge 0,\; |f^n(a)-f^n(b)|\le\eps.
$$
\end{prop}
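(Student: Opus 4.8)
The plan is to leverage the three characterizations already established. Since $f$ is not chaotic in the sense of Li-Yorke, Theorem~\ref{theo:htop-positive-chaos-LY} forces $h_{top}(f)=0$; then by Theorem~\ref{theo:htop0-chaos-LY} every point of $I$ is approximately periodic, every infinite $\omega$-limit set has all its pairs of distinct points $f$-separable, and consequently Proposition~\ref{prop:non-LY-chaotic} and Lemma~\ref{lem:diamLcapomega} apply to every infinite $\omega$-limit set. I shall also use repeatedly that $\omega(f)$ is compact and $f$-invariant (Corollary~\ref{cor:wf-closed}, Lemma~\ref{lem:omega-set}(vi)), that every periodic point has period a power of $2$ (Theorem~\ref{theo:htop-power-of-2}), and that an infinite $\omega$-limit set contains no periodic point (Proposition~\ref{prop:omega-no-periodic-point}). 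Fix $a\in\omega(f)$ and $\eps>0$; by uniform continuity of $f$ pick $\eps'\in(0,\eps]$ with $|x-y|\le\eps'\Rightarrow|f^i(x)-f^i(y)|\le\eps$ for every $i$ in whatever bounded range is needed below. I distinguish two cases according to whether $a$ is periodic.

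Suppose first that $a$ is \emph{not} periodic. Then every $\omega$-limit set containing $a$ is infinite; fix $x_0$ with $a\in\omega(x_0,f)$ and let $(L_n)_{n\ge 0}$ be the cycles of intervals given by Proposition~\ref{prop:htop0-Lki} for $\omega(x_0,f)$. For each $n$ there is a unique index $i_n$ with $a\in f^{i_n}(L_n)$ (the intervals $f^0(L_n),\dots,f^{2^n-1}(L_n)$ being pairwise disjoint), and by Lemma~\ref{lem:diamLcapomega} we may fix $n$ with $\max_j\diam\!\big(f^j(L_n)\cap\omega(f)\big)<\eps'$. I claim there is $\delta>0$ such that every $b\in\omega(f)$ with $|a-b|<\delta$ lies in $f^{i_n}(L_n)$. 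Granting this, $\delta$ witnesses stability: if $b\in f^{i_n}(L_n)\cap\omega(f)$, then $f^m(a)$ and $f^m(b)$ both lie in $f^{(i_n+m)\bmod 2^n}(L_n)\cap\omega(f)$ for all $m\ge 0$ (the cycle being forward invariant and $\omega(f)$ invariant), so $|f^m(a)-f^m(b)|<\eps'\le\eps$ for all $m$. To prove the claim, assume it fails: there are $b_k\in\omega(f)$ with $b_k\to a$ and $b_k\notin f^{i_n}(L_n)$. As the finitely many $f^j(L_n)$ are closed and pairwise disjoint, for large $k$ even $b_k\notin\bigcup_j f^j(L_n)$; hence $a$ is an endpoint of $f^{i_n}(L_n)$ and the $b_k$ approach $a$ from the side exterior to the cycle $\bigcup_j f^j(L_n)$. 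Writing $b_k\in\omega(y_k,f)$: if $\CO_f(y_k)$ ever met $\bigcup_j f^j(L_n)$ it would be trapped in this cycle of intervals, forcing $b_k\in\bigcup_j f^j(L_n)$, a contradiction; so $\CO_f(y_k)$ avoids $\bigcup_j f^j(L_n)$. On the other hand $a\in\omega(f^{i_n}(x_0),f^{2^n})$, so the orbit of $f^{i_n}(x_0)$ under $f^{2^n}$ accumulates at $a$ from inside $f^{i_n}(L_n)$. Combining the recurrence of $b_k$ near $a$ (it is a point of $\omega(y_k,f)$, and its return orbit, crossing a fixed point, must reverse direction, cf. Lemma~\ref{lem:U<D}) with this one-sided accumulation from the inside, one builds two disjoint closed intervals straddling $a$, each covering their union under a common iterate of $f$ --- a horseshoe for an iterate of $f$ --- contradicting $h_{top}(f)=0$ by Proposition~\ref{prop:horseshoe-htop}. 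This proves the claim, hence the non-periodic case.

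Now suppose $a$ is periodic of period $p$. Arguing by contradiction, let $\eps_0>0$ and $b_k\in\omega(f)$, $b_k\to a$, $n_k\ge 0$ with $|f^{n_k}(b_k)-f^{n_k}(a)|>\eps_0$. The orbit of $a$ being finite, after passing to a subsequence $n_k\equiv j\pmod p$ for a fixed $j$; replacing $a$ by $f^j(a)$ and $b_k$ by $f^j(b_k)$ reduces us to the case where $a$ is a fixed point of $g:=f^p$, $b_k\to a$ in $\omega(f)$, and $|g^{m_k}(b_k)-a|>\eps_0$ for some $m_k$. Since $f$ is continuous, $g^m(b_k)\to g^m(a)=a$ for each fixed $m$, so $m_k\to\infty$; moreover if $b_k$ were $g$-periodic of bounded period all its $g$-iterates would tend to $a$, again a contradiction. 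Hence $b_k$ is periodic of period tending to infinity, or $\omega(b_k,f)$ is infinite. In either case one again reaches a horseshoe for an iterate of $f$: the $g$-orbit (or $g$-orbit closure) of $b_k$ contains a point at distance $>\eps_0$ from the fixed point $a$ while $b_k$ is arbitrarily close to $a$, so the tiny interval joining $b_k$ to $a$ is stretched across a macroscopic fixed interval; using Lemma~\ref{lem:U<D} (the escaping orbit must straddle a fixed point), the rigidity of the pattern of period-$2^s$ orbits of a zero-entropy map, and, when $\omega(b_k,f)$ is infinite, Lemma~\ref{lem:diamLcapomega} and Proposition~\ref{prop:htop0-Lki} applied to $g$, one closes up an honest chain of intervals as in the proof of Proposition~\ref{prop:cantor-delta-scrambled}, contradicting $h_{top}(f)=0$. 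Therefore $a$ is stable for $f|_{\omega(f)}$ in this case too, and the proposition follows.

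The delicate part --- and where I expect the real work to lie --- is the horseshoe extraction common to both cases. The hypotheses only furnish \emph{recurrence} of the escaping orbit near $a$, not return to a fixed interval, so one must convert this soft recurrence, together with the macroscopic stretching and with the fixed point that necessarily separates the pieces of a cycle of intervals (Proposition~\ref{prop:htop0-Lki}(ii), Lemma~\ref{lem:U<D}), into a genuine $2$-horseshoe for some iterate of $f$. The bookkeeping of which side of $a$ the points $b_k$ lie on, and the reduction of the periodic case via the $2$-adic rigidity of period-$2^s$ patterns, are the main technical hurdles; everything else is routine combination of the quoted results.
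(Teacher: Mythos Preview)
Your proposal has the right scaffolding but a genuine gap at exactly the point you yourself flag: the ``horseshoe extraction common to both cases'' is never carried out, and in fact the paper's proof shows that no such extraction is needed.

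In the non-periodic case, your central claim --- that every $b\in\omega(f)$ sufficiently close to $a$ already lies in $f^{i_n}(L_n)$ --- is not obviously true and is hard to prove directly. The point $a$ may well sit on the boundary of $f^{i_n}(L_n)$, and $\omega(f)$ is the union of \emph{all} $\omega$-limit sets, so nothing a priori prevents points of some other $\omega(y,f)$ from accumulating on $a$ from outside the cycle $\bigcup_j f^j(L_n)$. Your contradiction argument (trap the orbit of $y_k$ outside the cycle, then build a horseshoe from the one-sided accumulation) is a plausible programme, but the horseshoe construction is only sketched in words and the invocation of Lemma~\ref{lem:U<D} does not by itself produce the two covering intervals you need. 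The paper sidesteps this difficulty entirely with a short trick: among the four pairwise disjoint intervals $f^{i_n+i\cdot 2^n}(L_{n+2})$, $i=0,1,2,3$, all contained in $f^{i_n}(L_n)$, at least one lies in $\Int{f^{i_n}(L_n)}$; since $a$ belongs to one of them, some iterate $a'=f^{i\cdot 2^n}(a)$ lies in the interior. Stability at $a'$ is then immediate (any $b\in\omega(f)$ with $|a'-b|<\delta$ is automatically in $f^{i_n}(L_n)$, and both points travel together through the cycle with diameters bounded by Lemma~\ref{lem:diamLcapomega}), and stability at $a$ follows from Lemma~\ref{lem:unstable}(iii) since $f(U_\eps(f))\subset U_\eps(f)$.

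In the periodic case the paper again avoids any contradiction argument or horseshoe. With $g=f^p$ and $a$ fixed by $g$, take $b\in\omega(f)$ close to $a$. If $g^2(b)=b$ the conclusion is immediate. Otherwise $\omega(x_1,g)$ (where $b\in\omega(x_1,g)$) has more than two points, so Proposition~\ref{prop:htop0-Lki} applied to $g$ yields a cycle $(g^i(L_2))_{0\le i\le 3}$ of period $4$ containing $b$. By uniform continuity each $g^i(b)$ is within $\eps_1$ of $a$, so each of the four disjoint intervals meets $(a-\eps_1,a+\eps_1)$; by pigeonhole one of them is entirely contained in $(a-\eps_1,a+\eps_1)$, hence (again by uniform continuity) all four lie in $(a-\eps,a+\eps)$, and the whole $g$-orbit of $b$ stays $\eps$-close to $a$. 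No horseshoe is ever built.

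So the missing idea is not a harder horseshoe argument but an \emph{avoidance} argument: pass to an interior iterate in the first case, and use a pigeonhole on the $4$-cycle in the second.
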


\begin{proof}
According to Theorem~\ref{theo:htop-positive-chaos-LY}, $h_{top}(f)=0$.
We fix $\eps>0$ and $a\in \omega(f)$. Let $x_0$ be a point such that $a\in
\omega(x_0,f)$. We split the proof depending on $\omega(x_0,f)$ being
finite or infinite.

First we suppose that $\omega(x_0,f)$ is infinite. Let 
$(L_n)_{n\ge 0}$ be the closed intervals given by 
Proposition~\ref{prop:htop0-Lki}: for all $n, i\ge 0$,
$f^i(L_{n+1})$ and $f^{i+2^n}(L_{n+1})$ are included in $f^i(L_n)$,
and  $(f^i(L_n))_{0\le i<2^n}$ is the smallest 
cycle of intervals of period $2^n$ containing $\omega(x_0,f)$.
By Lemma~\ref{lem:diamLcapomega}, 
there exists $n\ge 0$ such that
\begin{equation}\label{eq:diamLcapomega}
\forall i\in\Lbrack 0,2^n-1\Rbrack,\ \diam(f^i(L_n)\cap \omega(f))<\eps.
\end{equation}
We set $J:=f^j(L_n)$, where $j\in\Lbrack 0,2^n-1\Rbrack$ is such that 
$a\in f^j(L_n)$. Since the four intervals $(f^{j+i2^n}(L_{n+2}))_{0\le i\le 3}$
are pairwise disjoint and included in $J$, one of them is included
in $\Int{J}$. Since $a$ is in one of these four intervals,
there exists $i\in\Lbrack 0,3\Rbrack$ such that
$a':=f^{i2^n}(a)$ belongs to $\Int{J}$. Let $\delta>0$ be such that
$(a'-\delta,a'+\delta)\subset J$. Let $b\in \omega(f)$ be such that 
$|a'-b|<\delta$.
Then $b$ belongs to $J\cap \omega(f)$, so $f^k(b)\in f^k(J)\cap \omega(f)$
for all $k\ge 0$ (the set $\omega(f)$ is invariant by
Lemma~\ref{lem:omega-set}(vi)). Using \eqref{eq:diamLcapomega}
and the fact that $f^{2^n}(J)=J$, we get
$$
\forall k\ge 0,\ |f^k(a')-f^k(b)|<\eps.
$$
We deduce that $a'$ is not $\eps$-unstable for the map $f|_{\omega(f)}$.
Consequently, $a$ is not $\eps$-unstable for the map $f|_{\omega(f)}$ 
by Lemma~\ref{lem:unstable}(iii), and this holds for any $\eps>0$.

Now, we suppose that $\omega(x_0,f)$ is finite, that is, $a$ is
a periodic point (by Lemma~\ref{lem:omega-finite}). Let $p$ denote the
period of $a$ and $g:=f^p$. In this way, $a$ is a fixed point for $g$.
We are going to prove that $a$ is not
unstable for $g|_{\omega(g)}$, which is enough to ensure that $a$ is not
unstable for $f|_{\omega(f)}$ according to 
Lemma~\ref{lem:unstable}(ii) and the fact that $\omega(f)=\omega(g)$ 
(Lemma~\ref{lem:omega-set}(vii)).
Since $g$ is uniformly continuous, there exist $\eps_1,\eps_2$ such that
$0<\eps_2<\eps_1<\eps$ and
\begin{eqnarray}
\forall x,y,\ |x-y|<\eps_1&\Rightarrow& \forall i\in\Lbrack 0,3\Rbrack,\ 
|g^i(x)-g^i(y)|<\eps,\label{eq:eps1}\\
\forall x,y,\ |x-y|<\eps_2&\Rightarrow& \forall i\in\Lbrack 0,3\Rbrack,\ 
|g^i(x)-g^i(y)|<\eps_1.\label{eq:eps2}
\end{eqnarray}
Let $b$ be in $\omega(g)\cap (a-\eps_2,a+\eps_2)$. If $g^2(b)=b$, then
\eqref{eq:eps2} implies
$$
\forall n\ge 0,\ \forall i\in\{0,1\},\ 
|g^{2n+i}(a)-g^{2n+i}(b)|=|g^i(a)-g^i(b)|<\eps_1<\eps.
$$ 
From now on, we suppose that $g^2(b)\ne b$. Let $x_1$ be a point such that
$b\in \omega(x_1,g)$; by assumption, $\omega(x_1,g)$ contains more than
$2$ points. Then,
according to Proposition~\ref{prop:htop0-Lki}, there exists a closed
interval $L_2$ such that $(g^i(L_2))_{0\le i\le 3}$ is a cycle of
intervals of period $4$ for $g$ 
and $\omega(x_1,g)\subset \bigcup_{i=0}^3 g^i(L_2)$. We can choose
$L_2$ such that $b\in L_2$. By \eqref{eq:eps2},  $|a-g^i(b)|<\eps_1$
for every $i\in\Lbrack 0,3\Rbrack$, so
$g^i(L_2)\cap (a-\eps_1,a+\eps_1)\ne\emptyset$. 
Since the four intervals $(g^i(L_2))_{0\le i\le 3}$ are
pairwise disjoint, there exists $i_0\in\Lbrack 0,3\Rbrack$ such that
$g^{i_0}(L_2)\subset (a-\eps_1,a+\eps_1)$. Then \eqref{eq:eps1} 
implies that $g^{i_0+j}(L_2)\subset (a-\eps,a+\eps)$ for all 
$j\in\Lbrack 0,3\Rbrack$. Since $L_2$
is a periodic interval of period $4$ for $g$, we get
$$
\forall n\ge 0,\ g^n(L_2)\subset (a-\eps,a+\eps).
$$
Moreover, $g^n(b)$ is in $g^n(L_2)$, so $|a-g^n(b)|<\eps$ for
all $n\ge 0$. We deduce that $a$ is not $\eps$-unstable for
$g|_{\omega(g)}$, for any $\eps>0$. This concludes the proof.
\end{proof}

The next lemma is due to Fedorenko, Sharkovsky and Smítal \cite{FSS2};
see also \cite[Theorem~3.13]{SMR2} for a statement in English.

\begin{lem}\label{lem:noLYchaos-wfalmostperiodic}
Let  $f$ be an interval map that is not chaotic in the sense of
Li-Yorke. Then every point in $\omega(f)$ is almost periodic\index{almost periodic point}, that is,
$$
\forall y\in\omega(f),\ \forall\, U\text{ neighborhood of }y,\ \exists p\ge 1,\
\forall n\ge 0,\ f^{np}(y)\in U.
$$
\end{lem}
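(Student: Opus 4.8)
The plan is to exploit the solenoidal (odometer‑like) structure of infinite $\omega$‑limit sets of zero entropy interval maps, as packaged in Proposition~\ref{prop:htop0-Lki}, together with the diameter estimate of Lemma~\ref{lem:diamLcapomega}. Since $f$ is not chaotic in the sense of Li-Yorke, Theorem~\ref{theo:htop-positive-chaos-LY} gives $h_{top}(f)=0$, so both of those results are available. Fix $y\in\omega(f)$ and choose $x_0$ with $y\in\omega(x_0,f)$. Note first that $\CO_f(y)\subset\omega(f)$, because $f(\omega(f))=\omega(f)$ by Lemma~\ref{lem:omega-set}(vi), hence $f^k(y)\in\omega(f)$ for all $k\ge 0$.

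If $\omega(x_0,f)$ is finite, then it is a periodic orbit by Lemma~\ref{lem:omega-finite}, so $y$ is a periodic point, say of period $p$; then $f^{np}(y)=y$ for all $n\ge 0$, and $y$ is trivially almost periodic, the same $p$ working for every neighborhood of $y$.

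Now suppose $\omega(x_0,f)$ is infinite. Apply Proposition~\ref{prop:htop0-Lki} to obtain closed intervals $(L_n)_{n\ge 0}$ such that, for every $n$, $(L_n,f(L_n),\ldots,f^{2^n-1}(L_n))$ is a cycle of intervals of period $2^n$ (so these intervals are pairwise disjoint and $f^{2^n}(f^i(L_n))=f^i(f^{2^n}(L_n))=f^i(L_n)$ for all $i$) and $\omega(x_0,f)\subset\bigcup_{i=0}^{2^n-1}f^i(L_n)$. Let $U$ be a neighborhood of $y$ and fix $\eps>0$ with $\overline{B}(y,\eps)\subset U$. By Lemma~\ref{lem:diamLcapomega} there is an integer $n$ with $\max_{i\in\Lbrack 0,2^n-1\Rbrack}\diam(f^i(L_n)\cap\omega(f))<\eps$. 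Since $y\in\omega(x_0,f)$ and the intervals $f^i(L_n)$, $0\le i<2^n$, are pairwise disjoint, there is a unique $i_n\in\Lbrack 0,2^n-1\Rbrack$ with $y\in f^{i_n}(L_n)$. As $f^{2^n}(f^{i_n}(L_n))=f^{i_n}(L_n)$, an immediate induction gives $f^{k2^n}(y)\in f^{i_n}(L_n)$ for every $k\ge 0$; and $f^{k2^n}(y)\in\CO_f(y)\subset\omega(f)$. Hence $f^{k2^n}(y)\in f^{i_n}(L_n)\cap\omega(f)$, a set of diameter $<\eps$ containing $y$, so $f^{k2^n}(y)\in\overline{B}(y,\eps)\subset U$ for all $k\ge 0$. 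Thus $p:=2^n$ works, and $y$ is almost periodic.

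The only genuinely delicate point is recognizing that one can demand returns along the arithmetic progression $\{k2^n:k\ge 0\}$, rather than merely along a syndetic set: this uses precisely the nested cycles of intervals of periods $2^n$ supplied by Proposition~\ref{prop:htop0-Lki}, and would fail for a general minimal system such as an irrational rotation — which, however, cannot occur as a subsystem of an interval map. All the substantial work is therefore already contained in Proposition~\ref{prop:htop0-Lki} and Lemma~\ref{lem:diamLcapomega}, and what remains is the short bookkeeping above.
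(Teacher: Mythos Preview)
Your proof is correct and follows essentially the same approach as the paper: split into the cases $\omega(x_0,f)$ finite (periodic) versus infinite, and in the infinite case use the nested $2^n$-periodic cycles of intervals to trap the orbit of $y$ along the arithmetic progression $\{k\cdot 2^n\}$ inside a set of small diameter containing $y$. The only cosmetic difference is that the paper works with the smaller intervals $I_k=[\min\omega(f^{i_k}(x_0),f^{2^k}),\max\omega(f^{i_k}(x_0),f^{2^k})]$ and invokes Proposition~\ref{prop:non-LY-chaotic}(i) directly, whereas you use the larger intervals $f^{i_n}(L_n)$ intersected with $\omega(f)$ and invoke the derived Lemma~\ref{lem:diamLcapomega}; both routes amount to the same idea.
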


\begin{proof}
Let $y$ belong to $\omega(x_0,f)$ for some point $x_0$. 
If $\omega(x_0,f)$ is finite, then
$y$ is periodic (Lemma~\ref{lem:omega-finite}) and the conclusion is
trivial with $p$ the period of $y$.
Suppose that $\omega(x_0,f)$ is infinite and let $U$ be a neighborhood of
$y$. For every $k\ge 0$, we set
$$
I_k:=\left[\min\omega(f^{i_k}(x_0),f^{2^k}),\max\omega(f^{i_k}(x_0),f^{2^k})
\right],
$$
where $i_k$ is an integer such that $y\in \omega(f^{i_k}(x_0),f^{2^k})$
(such an integer exists by  Lemma~\ref{lem:omega-set}(iv)).
According to Theorem~\ref {theo:htop0-chaos-LY} and 
Proposition~\ref{prop:non-LY-chaotic}(i),
$$
\lim_{k\to +\infty} |I_k|=0.
$$
This implies that there exists $k\ge 0$ such that $I_k\subset U$
because $y$ belongs to $I_k$ for all $k$. Moreover,
$f^{2^k}(\omega(f^{i_k}(x_0),f^{2^k}))=\omega(f^{i_k}(x_0),f^{2^k})$
(Lemma~\ref{lem:omega-set}(i)), so
$f^{n2^k}(y)\in I_k\subset U$ for all $n\ge 0$. This is the expected
result with $p:=2^k$.
\end{proof}

The next lemma will be a key tool in the proof of Theorem~\ref{theo:hA>0-LY};
it is due to Franzová and Smítal \cite{FS}.

\begin{rem}
In the proof in \cite{FS}, the fact that the open sets must satisfy 
\eqref{eq:xnotinwf-uniquei}  is omitted, although the proof does not work
without this condition.
\end{rem}

\begin{lem}\label{lem:approximate-xinU-yi}
Let  $f$ be an interval map that is not chaotic in the sense of
Li-Yorke. Let $\eps>0$. Then there exist finitely many points $y_1,\ldots, y_r$
in $\omega(f)$ and an open set $U$ containing $\omega(f)$ such that, for
every point $x$ satisfying
$$
\exists N_0,N_1\in\IZ^+, N_0\le N_1,\ \forall n\in\Lbrack N_0, N_1\Rbrack, \ f^n(x)\in U,
$$
then there exists $i\in\Lbrack 1,r\Rbrack$ such that
$$
\forall n\in\Lbrack N_0, N_1\Rbrack,\ |f^n(x)-f^n(y_i)|\le\eps.
$$
\end{lem}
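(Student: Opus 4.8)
The statement gives a finite ``$\eps$-skeleton'' for the dynamics near $\omega(f)$: any piece of trajectory that stays in a suitable neighbourhood $U$ of $\omega(f)$ can be $\eps$-shadowed, on that whole time interval, by the trajectory of one of finitely many points $y_1,\dots,y_r$ of $\omega(f)$. The plan is to combine three ingredients already established for maps that are not Li-Yorke chaotic: (a) every point of $\omega(f)$ is stable for $f|_{\omega(f)}$, i.e.\ $f|_{\omega(f)}$ is equicontinuous (Proposition~\ref{prop:notLY-wfstable}); (b) $\omega(f)$ is compact (Corollary~\ref{cor:wf-closed}); and (c) every point of $\omega(f)$ is almost periodic (Lemma~\ref{lem:noLYchaos-wfalmostperiodic}) — the last one will be used only indirectly, through the bound on excursions outside $U$ given by Corollary~\ref{cor:Usupsetwf}, which is the real consequence of the good structure of $\omega(f)$. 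The heart of the matter is a uniform-continuity/compactness argument on $\omega(f)$ upgraded to the ambient interval $I$.

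\textbf{Step 1: uniform stability on $\omega(f)$.} Fix $\eps>0$. By Proposition~\ref{prop:notLY-wfstable}, for each $a\in\omega(f)$ there is $\delta(a)>0$ such that, for all $b\in\omega(f)$ with $|a-b|\le\delta(a)$, one has $|f^n(a)-f^n(b)|\le\eps/3$ for all $n\ge 0$. Using compactness of $\omega(f)$ (Corollary~\ref{cor:wf-closed}), cover $\omega(f)$ by finitely many balls $B(y_i,\delta(y_i)/2)$, $i\in\Lbrack 1,r\Rbrack$, with $y_i\in\omega(f)$; set $\delta_0:=\tfrac12\min_i\delta(y_i)>0$. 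Then every point of $\omega(f)$ lies within $\delta(y_i)$ of some $y_i$, so the trajectories $(f^n(y_i))_{n\ge 0}$ already $\eps/3$-shadow the orbit of \emph{every} point of $\omega(f)$. This is the skeleton at the level of $\omega(f)$ itself.

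\textbf{Step 2: thicken to a neighbourhood of $\omega(f)$.} Since $f$ is uniformly continuous on the compact interval $I$, there is $\eta>0$ such that $|u-v|\le\eta$ implies $|f(u)-f(v)|\le\eps/3$; more precisely, I would choose $\eta>0$ so that $|u-v|\le\eta$ forces $|f^k(u)-f^k(v)|\le\eps/3$ for all $k\in\Lbrack 0,M\Rbrack$, where $M$ is the bound furnished in Step~3. Let $U:=\{\,u\in I\mid \exists\, a\in\omega(f),\ |u-a|<\min(\eta,\delta_0)\,\}$, an open set containing $\omega(f)$. If $x$ satisfies $f^n(x)\in U$ for all $n\in\Lbrack N_0,N_1\Rbrack$, then for each such $n$ pick $a_n\in\omega(f)$ with $|f^n(x)-a_n|<\min(\eta,\delta_0)$, and then $i$ with $|a_{N_0}-y_i|<\delta(y_i)$ (so $a_n$ and $y_i$ are $\eps/3$-close along the whole orbit). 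One must now control $|f^n(x)-f^n(y_i)|$ for $n\in\Lbrack N_0,N_1\Rbrack$. The clean way is: since $f^{N_0}(x)$ is $\eta$-close to $a_{N_0}\in\omega(f)$ and $a_{N_0}$ is $\delta(y_i)$-close to $y_i$, the orbit of $a_{N_0}$ stays $\eps/3$-close to that of $y_i$; and the orbit of $f^{N_0}(x)$ stays $\eps/3$-close to that of $a_{N_0}$ \emph{as long as} $f^n(x)$ does not wander far — which is where $U$ and the excursion bound enter.

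\textbf{Step 3: bounding the excursions and closing the loop.} The subtlety — and the step I expect to be the main obstacle — is that uniform continuity only propagates closeness for a bounded number of iterates, whereas the interval $\Lbrack N_0,N_1\Rbrack$ is arbitrarily long. Here is the fix. Apply Corollary~\ref{cor:Usupsetwf} to the open set $U$ (shrunk slightly if necessary so that its closure still meets $\omega(f)$ appropriately): there is $N$ such that every trajectory visits $I\setminus U$ at most $N$ times. But by hypothesis $f^n(x)\in U$ for \emph{all} $n\in\Lbrack N_0,N_1\Rbrack$, so on that block the orbit of $x$ never leaves $U$; meanwhile the orbit of $y_i$ lies in $\omega(f)\subset U$ always. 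Thus both orbits stay in $U$ throughout $\Lbrack N_0,N_1\Rbrack$, and within $U$ each point is $\eta$-close to $\omega(f)$. I would then run the shadowing inductively block-by-block of length $M$: on each block, uniform continuity ($\eta$-closeness $\Rightarrow$ $\eps/3$-closeness for $M$ steps) together with the fact that at the start of each block $f^n(x)$ is $\eta$-close to some $a_n\in\omega(f)$ and $a_n$ is $\eps/3$-shadowed by $y_i$ (a fixed index, since all $a_n$ lie in $\omega(f)$ whose orbits are mutually $\eps/3$-close to that of $y_i$ once $a_{N_0}$ was), yields $|f^n(x)-f^n(y_i)|\le \eps/3+\eps/3+\eps/3=\eps$ for every $n\in\Lbrack N_0,N_1\Rbrack$. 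The delicate bookkeeping is making sure the index $i$ can be chosen once and for all at time $N_0$ and that the errors do not accumulate across blocks — this is arranged precisely because the ``middle'' orbit is that of a genuine point $y_i\in\omega(f)$, stable by Proposition~\ref{prop:notLY-wfstable}, rather than a drifting pseudo-orbit, so the $\eps/3$ bound between $y_i$ and $\omega(f)$ never degrades, and only the single $\eta\to\eps/3$ step from $x$ to $\omega(f)$ is re-used on each block without compounding.
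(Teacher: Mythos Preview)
Your Steps 1 and 2 are fine and match the paper's opening. The gap is in Step 3, precisely at the claim that the index $i$ can be fixed at time $N_0$ and that ``errors do not accumulate across blocks''. At time $N_0$ you anchor $f^{N_0}(x)$ to $a_{N_0}\in\omega(f)$ with $|a_{N_0}-y_i|\le\delta(y_i)$; stability gives $|f^m(a_{N_0})-f^m(y_i)|\le\eps/3$ for all $m\ge 0$, and uniform continuity gives $|f^{N_0+m}(x)-f^m(a_{N_0})|\le\eps/3$ for $0\le m\le M$. At the start of the next block you choose a \emph{new} anchor $a_{N_0+M}\in\omega(f)$ with $|f^{N_0+M}(x)-a_{N_0+M}|<\eta$. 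To iterate you need the forward orbit of $a_{N_0+M}$ to stay $\eps/3$-close to that of $f^M(y_i)$; by stability this requires $|a_{N_0+M}-f^M(y_i)|$ to be at most the stability \emph{input} $\delta_0$. But all you can deduce is $|a_{N_0+M}-f^M(y_i)|\le \eta+2\eps/3$, and since $\delta_0$ is the input producing output $\eps/3$, one has $\delta_0\ll\eps/3$ in general. So stability cannot be re-applied, and after $k$ blocks the error is of order $k\eps$. Your parenthetical ``all $a_n$ lie in $\omega(f)$ whose orbits are mutually $\eps/3$-close to that of $y_i$ once $a_{N_0}$ was'' conflates the iterate $f^m(a_{N_0})$ with the freshly chosen anchor $a_{N_0+m}$; these are different points of $\omega(f)$ and nothing ties the latter back to $y_i$. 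Corollary~\ref{cor:Usupsetwf} is, as you yourself note, vacuous here since the orbit never leaves $U$ on $\Lbrack N_0,N_1\Rbrack$.

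What closes the loop in the paper is precisely the ingredient you set aside: almost periodicity (Lemma~\ref{lem:noLYchaos-wfalmostperiodic}), used directly rather than through Corollary~\ref{cor:Usupsetwf}. For each $z\in V_i\cap\omega(f)$ one gets a return time $p_i(z)$ and a small neighbourhood $U_i(z)\subset V_i$ with $f^{p_i(z)}(U_i(z))\subset V_i$ and $\diam(f^k(U_i(z)))\le\eps$ for $0\le k\le p_i(z)$; the open set $U$ is the union of these $U_i(z)$. Crucially, the $V_i$ are first refined so that two distinct $V_i$'s can overlap only inside $\omega(f)$, forcing any point of $U\setminus\omega(f)$ to lie in a unique $V_i$. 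The induction then shows that the successive return points $x_{j+1}=f^{p_{i_0}(z_j)}(x_j)$ all land back in $V_{i_0}$ with the \emph{same} index $i_0$, so the tracking resets exactly at each return and no error accumulates. Stability and compactness alone do not yield this recurrence; you need almost periodicity (or an equivalent return-time statement) to make the argument go through on arbitrarily long blocks.
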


\begin{proof}
According to Proposition~\ref{prop:notLY-wfstable}, for every 
$x\in\omega(f)$ there exists a connected neighborhood $W(x)$ of $x$ such that
\begin{equation}\label{eq:stableWy}
\forall z\in W(x)\cap\omega(f),\ \forall n\ge 0,\ |f^n(x)-f^n(z)|\le
\frac{\eps}2.
\end{equation}
Since $\omega(f)$ is compact by Corollary~\ref{cor:wf-closed}, there
exist finitely many distinct points $x_1,\ldots, x_s$ in $\omega(f)$ such that
$\omega(f)\subset W(x_1)\cup\cdots\cup W(x_s)$.
We would like these sets not to overlap too much, so we replace them
by smaller but more numerous sets. We define inductively
on $k\in\Lbrack 1,s\Rbrack$ a family of connected open sets 
$(W_k^j)_{1\le j\le \alpha_k}$ that are subsets of $W(x_k)$,
and points $(x_k^j)_{1\le j\le \alpha_k}$ such that 
$x_k^j\in W_k^j\cap\omega(f)$.

\textbf{Construction at step \boldmath $k\in\Lbrack 1,s\Rbrack$.}
Suppose that $(W_i^j)_{1\le j\le \alpha_i}$ and $(x_i^j)_{1\le j\le \alpha_i}$
have been defined for all $i\le k-1$ (for $k=1$, these two families are
empty).
We consider all the connected components $C$ of
\[
W(x_k)\setminus\{x_i^j\mid i\in\Lbrack 1,k-1\Rbrack,
j\in\Lbrack 1,\alpha_i\Rbrack\}
\]
such that
\[
\left(C\setminus \left(\bigcup_{i\in\Lbrack 1,k-1\Rbrack,
j\in\Lbrack 1,\alpha_i\Rbrack}W_i^j\right)\right)\cap \omega(f)\ne\emptyset.
\]
We call them $W_k^1,\ldots, W_k^{\alpha_k}$ (notice that $W(x_k)\setminus
\{x_i^j\mid i\in\Lbrack 1,k-1\Rbrack,
j\in\Lbrack 1,\alpha_i\Rbrack\}$ has finitely many connected components
because $W(x_k)$ is connected and the set $\{x_i^j\mid i\in\Lbrack 1,k-1\Rbrack,
j\in\Lbrack 1,\alpha_i\Rbrack\}$ is finite).
For every 
$j\in\Lbrack 1,\alpha_k\Rbrack$, we choose a point $x_k^j$ in
\[
\left(W_k^j\setminus \left(\bigcup_{i\in\Lbrack 1,k-1\Rbrack,
j\in\Lbrack 1,\alpha_i\Rbrack}W_i^j\right)\right)\cap \omega(f).
\]
This ends the construction at step $k$. Note that
\[
\bigcup_{i\in\Lbrack 1,k\Rbrack,j\in\Lbrack 1,\alpha_i\Rbrack}W_i^j
\cap\omega(f)=\bigcup_{i=1}^kW(x_i)\cap \omega(f).
\]

To simplify the notation, we call $V_1,\ldots, V_r$ and $y_1,\ldots, y_r$
the family of sets $(W_i^j)_{i\in\Lbrack 1,s\Rbrack,
j\in\Lbrack 1,\alpha_i\Rbrack}$ and the associated points
$(x_i^j)_{i\in\Lbrack 1,s\Rbrack,
j\in\Lbrack 1,\alpha_i\Rbrack}$, and we order them in order to have
$y_1<y_2<\cdots<y_r$.
Then $V_i$ is a connected open set containing $y_i\in\omega(f)$,
$V_i$ is included in $W(x_j)$ for some $j$, and
$\omega(f)\subset V_1\cup\cdots \cup V_r$.
Moreover, the construction above ensures that:
\begin{equation}\label{eq:WicapWj}
\forall i,j\in\Lbrack 1,r\Rbrack, i\ne j, V_i\cap V_j\subset
\langle y_i,y_j\rangle
\end{equation}
because $V_i$ (resp. $V_j$) is an interval and does not contain 
$y_j$ (resp. $y_i$). This implies that $V_i\cap V_j=\emptyset$
if $|i-j|\ge 2$ (that is, only intervals corresponding to
consecutive points may intersect).

We modify once more these sets
by an inductive construction for $i=1,\ldots, r-1$:\\
$\bullet$ if $V_i\cap V_{i+1}$ is not included in $\omega(f)$,
we choose a point $x\in (V_i\cap V_{i+1})\setminus\omega(f)$ and we
replace $V_i$ and $V_{i+1}$ by $V_i\cap (-\infty,x)$ and 
$V_{i+1}\cap (x,+\infty)$ respectively; we still call these sets $V_i$
and $V_{i+1}$;\\
$\bullet$ if $V_i\cap V_{i+1}\subset\omega(f)$, we do not change the sets
at step $i$.

At the end of this construction, we get intervals $V_1,\ldots, V_r$ that
are open set and satisfy:
\begin{gather*}
\omega(f)\subset V_1\cup\cdots V_r,\\
\forall i\in\Lbrack 1,r\Rbrack,\ y_i\in V_i\cap \omega(f),\\
\forall i,j\in\Lbrack 1,r\Rbrack,\ i\ne j,\ V_i\cap V_j\subset \omega(f).
\end{gather*}
This last condition implies:
\begin{equation}
\forall x\in \bigcup_{i=1}^r V_i,\ x\notin \omega(f)\Longrightarrow
\exists ! i\in\Lbrack 1,r\Rbrack,\ x\in V_i.\label{eq:xnotinwf-uniquei}
\end{equation}
Moreover, since $V_i\subset W(x_j)$ for some $j\in\Lbrack 1,s\Rbrack$,
the triangular inequality and \eqref{eq:stableWy} imply:
\begin{equation}\label{eq:stableVi}
\forall i\in \Lbrack 1,r\Rbrack,\ \forall y,z\in V_i\cap\omega(f),\ 
\forall n\ge 0,\ |f^n(y)-f^n(z)|\le\eps.
\end{equation}
Let $i\in\Lbrack 1,r\Rbrack$ and $z\in V_i\cap\omega(f)$. According to
Lemma~\ref{lem:noLYchaos-wfalmostperiodic}, there exists a positive
integer $p_i(z)$ such that 
\begin{equation}\label{eq:piz}
\forall n\ge 0,\ f^{np_i(z)}(z)\in V_i.
\end{equation}
We can assume that
\begin{equation}\label{eq:pimultiple}
p_i(z)\text{ is a multiple of }p_i(y_i).
\end{equation}
Since $f$ is continuous, there exists an open neighborhood $U_i(z)$ of $z$
such that:
\begin{gather}
U_i(z)\subset V_i\nonumber\\
f^{p_i(z)}(U_i(z))\subset V_i\label{eq:fpUisubsetVi}\\
\forall n\in\Lbrack 0, p_i(z)\Rbrack,\ \diam \left(f^n(U_i(z))\right)\le
\eps.\label{eq:diamfnUi}
\end{gather}
We set 
$$
U_i:=\bigcup_{z\in \omega(f)\cap V_i}U_i(z)\quad\text{and}\quad
U:=\bigcup_{i=1}^r U_i.
$$
The sets $U_i$ are open and satisfy:
\begin{equation}\label{eq:Uiwf-Viwf}
\forall i\in\Lbrack 1,r\Rbrack,\ U_i\cap\omega(f)=V_i\cap\omega(f).
\end{equation}
Indeed, the inclusion $U_i\cap\omega(f)\subset V_i\cap\omega(f)$ is trivial
because $U_i\subset V_i$. Conversely, if $z\in V_i\cap\omega(f)$, then
$z\in U_i(z)\subset U_i$, so $V_i\cap\omega(f)\subset U_i\cap\omega(f)$.

By definition, the set $U$ is open and contains $\omega(f)$. Let
$x_0\in U$ and $N\ge 0$ be such that
\begin{equation}\label{eq:assumtionx-lemBirkhoff}
\forall n\in\Lbrack 0,N\Rbrack,\ f^n(x_0)\in U.
\end{equation}
We are going to show by induction the following:

\medskip \textsc{Fact 1.}
{\it There exist integers $k\ge 0$ and $i_0\in\Lbrack 1,r\Rbrack$ and finite 
sequences of points $(z_n)_{0\le n\le k}$
and  $(x_n)_{0\le n\le k}$ such that, for all $n\in\Lbrack 0,k\Rbrack$, 
$$
z_n\in\omega(f)\cap V_{i_0},\quad x_n\in U_{i_0}(z_n),\quad x_{n+1}=f^{p_{i_0}(z_n)}(x_n).
$$
If we set $q_0:=0$ and $q_n:=p_{i_0}(z_0)+\cdots+p_{i_0}(z_{n-1})$ for all
$n\in\Lbrack 1,k+1\Rbrack$, the integer $k$ is such that $q_k\le N < q_{k+1}$.}

\medskip
$\bullet$ According to the definition of $U$, there exists ${i_0}\in
\Lbrack 1,r\Rbrack$
and $z_0\in \omega(f)\cap V_{i_0}$ such that $x_0\in U_{i_0}(z_0)$. If 
$q_1:=p_{i_0}(z_0)>N$, then the construction is over with $k:=0$.

$\bullet$ Suppose that the points $(z_n)_{0\le n\le j}$
and  $(x_n)_{0\le n\le j}$ are already defined up to some integer $j$ with
$q_{j+1}\le N$.
We set $x_{j+1}:=f^{p_{i_0}(z_j)}(x_j)$. Thus $x_{j+1}=f^{q_{j+1}}(x_0)$.
Then $x_{j+1}\in U$ by \eqref{eq:assumtionx-lemBirkhoff} because $q_{j+1}\le N$.
Since $x_j\in U_{i_0}(z_j)$, we have $x_{j+1}\in V_{i_0}$ by 
\eqref{eq:fpUisubsetVi}. If $x_{j+1}\in \omega(f)$, then $x_{j+1}\in U_{i_0}$
by \eqref{eq:Uiwf-Viwf}, and we set $z_{j+1}:=x_{j+1}$; trivially
$x_{j+1}\in U_{i_0}(z_{j+1})$. If $x_{j+1}\notin\omega(f)$, the fact that $x_{j+1}
\in U$ implies that there exists $i\in\Lbrack 1,r\Rbrack$ such that
$x_{j+1}\in U_i\subset V_i$. Necessarily, $i={i_0}$ because of
\eqref{eq:xnotinwf-uniquei}. Thus there exists $z_{j+1}\in\omega(f)\cap U_{i_0}$
such that $x_{j+1}\in U_{i_0}(z_{j+1})$. If 
$q_{j+2}:=q_{j+1}+p_{i_0}(z_{j+1})>N$,
then the construction is over with $k:=j+1$.

Since all the integers $p_{i_0}(z)$ are positive, the sequence $(q_n)$ is 
increasing, and thus the construction finishes. This ends the proof of
Fact~1.

\medskip
Let $x_0$ satisfy \eqref{eq:assumtionx-lemBirkhoff} and 
$n\in\Lbrack 0,N\Rbrack$.
We keep the notation of Fact~1. Let
$j\in\Lbrack 0,k+1\Rbrack$ be such that $q_j\le n< q_{j+1}$. We have
\begin{eqnarray}\label{eq:fnx0-fnyi}
\lefteqn{|f^n(x_0)-f^n(y_{i_0})|}\\
&&\le |f^{n-q_j}(f^{q_j}(x_0))-f^{n-q_j}(z_j)|
+|f^{n-q_j}(z_j)-f^{n-q_j}(f^{q_j}(y_{i_0}))|.\nonumber
\end{eqnarray}
Since $n-q_j<q_{j+1}-q_j=p_{i_0}(z_j)$, the fact that the points
$x_j=f^{q_j}(x_0)$ and $z_j$ belong to $U_{i_0}(z_j)$, combined with 
\eqref{eq:diamfnUi}, implies that
$$
|f^{n-q_j}(f^{q_j}(x_0))-f^{n-q_j}(z_j)|\le\eps.
$$
By \eqref{eq:piz}+\eqref{eq:pimultiple} and the $f$-invariance of $\omega(f)$,
the point
$f^{q_j}(y_{i_0})$ is in $V_{i_0}\cap\omega(f)$. Moreover, $z_j$ belongs to
$V_{i_0}\cap\omega(f)$. Therefore, \eqref{eq:stableVi} implies that
$$
|f^{n-q_j}(z_j)-f^{n-q_j}(f^{q_j}(y_{i_0}))|\le\eps.
$$
Inserting these inequalities in \eqref{eq:fnx0-fnyi}, we get
\begin{equation}\label{eq:ON}
\text{if }x_0\text{ satisfies \eqref{eq:assumtionx-lemBirkhoff}},
\exists i_0\in\Lbrack 1,r\Rbrack,
\forall n\in\Lbrack 0,N\Rbrack,\ |f^n(x_0)-f^n(y_{i_0})|\le2\eps.
\end{equation}

Now, let $x$ be a point and let $N_0\le N_1$ be integers such that
$$
\forall n\in\Lbrack N_0,N_1\Rbrack,\ f^n(x)\in U.
$$
We apply \eqref{eq:ON} to $x_0:=f^{N_0}(x)$ and $N:=N_1-N_0$:
\begin{equation}\label{eq:N0N1}
\exists i_0\in \Lbrack 1,r\Rbrack,\ \forall n\in\Lbrack 0,N\Rbrack,\ 
|f^n(x_0)-f^n(y_{i_0})|\le 2\eps.
\end{equation}
Since $f(\omega(f))=\omega(f)$, there exists $y\in \omega(f)$ such that
$f^{N_0}(y)=y_{i_0}$, and this point satisfies: 
$\forall n\in \Lbrack 0,N_1\Rbrack, f^n(y)\in\omega(f)\subset U$.
Thus we can apply \eqref{eq:ON} to $x_0:=y$ and $N:=N_1$:
\begin{equation}\label{eq:0N1}
\exists i\in \Lbrack 1,r\Rbrack,\ \forall n\in\Lbrack 0,N_1\Rbrack,\ 
|f^n(y)-f^n(y_i)|\le 2\eps.
\end{equation}
Combining \eqref{eq:N0N1} and \eqref{eq:0N1}, we get, for all
$n\in \Lbrack N_0,N_1\Rbrack$:
\begin{eqnarray*}
|f^n(x)-f^n(y_i)|&\le &
|f^n(x)-f^{n-N_0}(y_{i_0})|+|f^{n-N_0}(y_{i_0})-f^n(y_i)|\\
&=&|f^{n-N_0}(x_0)-f^{n-N_0}(y_{i_0})|+|f^n(y)-f^n(y_i)|\\
&\le& 2\eps+2\eps=4\eps,
\end{eqnarray*}
which gives the expected result.
\end{proof}

At last, we are able to show the following result, which is the ``if'' part
of Theorem~\ref{theo:LY-equiv-hA>0}. Together with 
Proposition~\ref{prop:LY-hA>0}, this finally proves 
Theorem~\ref{theo:LY-equiv-hA>0} and concludes this section.

\begin{theo}\label{theo:hA>0-LY}
Let $f\colon I\to I$ be an interval map. If 
there exists a sequence $A$ such that $h_A(f)>0$, then
$f$ is chaotic in the sense of Li-Yorke.
\end{theo}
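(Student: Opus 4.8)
The plan is to prove the contrapositive of Theorem~\ref{theo:hA>0-LY}: assuming $f$ is \emph{not} chaotic in the sense of Li-Yorke, I would show that $h_A(f)=0$ for \emph{every} increasing sequence $A=(a_k)_{k\ge 0}$ of non negative integers, which together with Proposition~\ref{prop:LY-hA>0} settles Theorem~\ref{theo:LY-equiv-hA>0}. (By Theorem~\ref{theo:htop-positive-chaos-LY} we automatically have $h_{top}(f)=0$, though the argument will not use this directly.) Fix $\eps>0$. Applying Lemma~\ref{lem:approximate-xinU-yi} with parameter $\eps/4$ produces finitely many points $y_1,\dots,y_r\in\omega(f)$ and an open set $U\supset\omega(f)$ with the stated approximation property, and then Corollary~\ref{cor:Usupsetwf} gives an integer $N$ such that every trajectory meets $I\setminus U$ at most $N$ times. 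The whole claim $h_A(f)=0$ then reduces to showing that $s_n(A,f,\eps)$ (the maximal size of an $(A,n,\eps)$-separated set, in the notation of Definition~\ref{defi:toposeqentropy}) grows at most polynomially in $n$.

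So let $E$ be an $(A,n,\eps)$-separated set and fix $x\in E$. The set $B_x:=\{m\ge 0\mid f^m(x)\notin U\}$ has at most $N$ elements, so $[0,a_{n-1}]\setminus B_x$ splits into at most $N+1$ maximal blocks of consecutive integers, and on each such block $[N_0,N_1]$ Lemma~\ref{lem:approximate-xinU-yi} attaches an index $i\in\Lbrack 1,r\Rbrack$ with $|f^m(x)-f^m(y_i)|\le\eps/4$ for all $m\in[N_0,N_1]$. To each $x\in E$ I would associate its \emph{type}: the word $(\ell_0,\dots,\ell_{n-1})$ where $\ell_k=*$ when $f^{a_k}(x)\notin U$ (which happens for at most $N$ indices $k$, the $a_k$ being distinct), and otherwise $\ell_k$ is the index attached to the block of $x$ containing $a_k$. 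Since the non-starred entries can change value only across the at most $N$ block boundaries that fall between consecutive sampled times, the number of possible types is bounded by $r^{N+1}(n+1)^{2N}$ (choose the $\le N$ starred positions, the $\le N$ value-change positions, and assign values in $\Lbrack 1,r\Rbrack$); call this bound $P(n)$, a polynomial in $n$.

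Now, two points $x,x'\in E$ of the \emph{same} type satisfy $|f^{a_k}(x)-f^{a_k}(x')|\le\eps/2<\eps$ at every non-starred index $k$, because both are $\eps/4$-close to the same point $f^{a_k}(y_{\ell_k})$; since $E$ is $(A,n,\eps)$-separated, $x$ and $x'$ must then differ by more than $\eps$ at one of the at most $N$ starred coordinates. Hence, for a fixed type, the tuples $\bigl(f^{a_k}(x)\bigr)_{\ell_k=*}$ with $x$ ranging over the points of $E$ of that type form an $\eps$-separated subset of a product of at most $N$ copies of $I$ with the sup metric, so there are at most $D:=(\lfloor |I|/\eps\rfloor+1)^N$ of them, a constant independent of $n$. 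Therefore $s_n(A,f,\eps)\le P(n)\,D$, so $\tfrac1n\log s_n(A,f,\eps)\to 0$ as $n\to\infty$; letting $\eps\to 0$ gives $h_A(f)=0$. The only delicate point is the bookkeeping of the last two paragraphs: one must verify that a bounded number of ``escapes from $U$'' multiplies the separated-set count only by a polynomial and a constant, never by an exponential — which is precisely why Corollary~\ref{cor:Usupsetwf} (a \emph{uniform} bound on the number of escapes) rather than the bare definition of $\omega(f)$ is the right tool.
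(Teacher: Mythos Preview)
Your argument is correct and follows the same strategy as the paper's proof: prove the contrapositive, invoke Lemma~\ref{lem:approximate-xinU-yi} and Corollary~\ref{cor:Usupsetwf}, code each trajectory by a word over a finite alphabet that is piecewise constant with at most $O(N)$ breakpoints, and conclude that the relevant quantity grows polynomially in $n$. The differences are cosmetic: the paper bounds $r_n(A,f,2\eps)$ by extending the alphabet to cover $I\setminus U$ with finitely many sets of diameter $\le\eps$, whereas you bound $s_n(A,f,\eps)$ by marking escapes with $*$ and then using an $\eps$-packing bound in $I^{\le N}$ for points of the same type; both bookkeepings yield the same polynomial estimate.
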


\begin{proof}
 Suppose that $f$ is not chaotic in the sense of Li-Yorke. We are going to
show that $h_A(f)=0$ for every increasing sequence of non negative
integers $A$, which proves the theorem by refutation.

Let $\eps>0$. Let $U$ and $y_1,\ldots, y_r$ be given by 
Lemma~\ref{lem:approximate-xinU-yi}. Let $(X_i)_{r+1\le i\le s}$ be pairwise
disjoint nonempty sets 
such that $\diam X_i\le\eps$ for all $i\in\Lbrack r+1,s\Rbrack$
and $X_{r+1}\cup\cdots\cup X_s\supset I\setminus U$. We choose a point 
$y_i\in X_i$ for every $i\in \Lbrack r+1,s\Rbrack$.
According to Corollary~\ref{cor:Usupsetwf}, there exists an integer 
$N\ge 0$ such that
$$
\forall x\in I,\ \#\{n\ge 0\mid f^n(x)\notin U\}\le N.
$$
Let $x$ be a point in $I$. Let $n_1<n_2<\cdots<n_M$ be the integers
such that $f^{n_i}(x)\notin U$; we have $M\le N$. We set
\begin{gather*}
\CI_{2i-1}:=\{n_i\}\quad\text{for all } 
i\in\Lbrack 1,M\Rbrack,\\
\CI_{2i}:=\Lbrack n_i+1,n_{i-1}-1\Rbrack\quad\text{for all } 
i\in\Lbrack 1,M-1\Rbrack,\\
\CI_0:=\Lbrack 0,n_1-1\Rbrack,\quad \CI_{2M}:=\{n\mid n\ge M+1\}.
\end{gather*}
The sets $(\CI_i)_{0\le i\le 2M}$ form a partition of $\IZ^+$ into intervals
of integers; the point $f^n(x)$ belongs to $U$ if and only if $n\in\CI_i$
for some $i$ even.
According to Lemma~\ref{lem:approximate-xinU-yi}, for every 
$i\in\Lbrack 0,M\Rbrack$, there exists $j_{2i}\in\Lbrack 1,r\Rbrack$ such that
$$
\forall n\in\CI_{2i},\ |f^n(x)-f^n(y_{j_{2i}})|\le\eps.
$$
For every $i\in\Lbrack 1,M\Rbrack$, let $j_{2i-1}$ be the integer in
$\Lbrack r+1,s\Rbrack$ such that $f^{n_i}(x)\in X_{j_{2i-1}}$.
We then have
$$
\forall i\in\Lbrack 0,2M\Rbrack,\  
\forall n\in\CI_{i},\ |f^n(x)-f^n(y_{j_i})|\le\eps.
$$
We associate to $x$ a sequence 
$\overline C(x)=(C_n(x))_{n\ge 0}\in\Lbrack 1,s\Rbrack^{\IZ^+}$ 
coding the trajectory of $x$ and defined by
$$
\forall i\in\Lbrack 0,2M\Rbrack,\ \forall n\in \CI_i,\ C_n(x)=j_i.
$$
Let $A=(a_n)_{n\ge 0}$ be an increasing sequence of non negative integers.
If $x,y$ satisfy $C_k(x)=C_k(y)$, then $|f^k(x)-f^k(y)|\le 2\eps$.
Thus it is easy to see that $r_n(A,f,2\eps)$ is
bounded by the number of different sequences $(C_{a_i}(x))_{0\le i<n}$ when $x$
varies in $I$. We are going to bound this number.

For a given point $x$, the sets
$\left(\{k\in\Lbrack 0,n-1\Rbrack\mid a_k\in \CI_i\}\right)_{0\le i\le 2M}$
form a partition of $\Lbrack 0,n-1\Rbrack$ into $2M+1$ intervals of integers;
some may be empty. We call $\CJ_1,\ldots, \CJ_m$ the nonempty sets among
them, with $m\le 2M+1\le 2N+1$. To determine the sets $(\CJ_i)_{1\le i\le m}$,
it is sufficient to give the positions of the first integer of each
$\CJ_i$; the number of such choices is bounded by ${n}\choose{m}$. Then
for every $i\in\Lbrack 1,m\Rbrack$, there exists $j_i\in\Lbrack 1,s\Rbrack$
such that $C_{a_k}(x)=j_i$ for all $k\in \CJ_i$;
the number of choices of $(j_i)_{1\le i\le m}$ is bounded by $s^m$.
Therefore we have
\begin{eqnarray*}
r_n(A,f,2\eps)&\le&\sum_{m=0}^{2N+1}{{n}\choose{m}} s^m
=\sum_{m=0}^{2N+1}\frac{n(n-1)\cdots (n-m+1)}{m!} s^m\\
&\le& (2N+2)n^{2N+1}s^{2N+1}.
\end{eqnarray*}
This implies that $\lim_{n\to+\infty}\frac 1n\log r_n(A,f,2\eps)=0$. We
deduce that $h_A(f)=0$ for any sequence $A$.
\end{proof}

\subsection*{Remarks on graph maps}

Theorem~\ref{theo:LY-equiv-hA>0} was generalized to circle maps
by Hric \cite{Hri2} and to some star maps by Cánovas \cite{Cano4}.

\begin{theo}
Let $f\colon\IS\to\IS$ be a circle map. Then $f$ is chaotic in the sense of 
Li-Yorke if and only if there exists an increasing sequence $A$ such that
$h_A(f)>0$.
\end{theo}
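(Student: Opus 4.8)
The plan is to prove both implications by reducing, as far as possible, to the interval case already settled in Theorem~\ref{theo:LY-equiv-hA>0}, using the classification of $\omega$-limit sets of zero entropy graph maps (Theorems~\ref{theo:omega-graph-htop0} and \ref{theo:circumferential}) and the general positive-entropy result Theorem~\ref{theo:general-system-htop-positive-chaos-LY}.

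First I would treat the implication ``chaotic in the sense of Li-Yorke $\Rightarrow$ positive sequence entropy''. If $h_{top}(f)>0$, then $h_A(f)=h_{top}(f)>0$ with $A:=(n)_{n\ge 0}$, and there is nothing more to do. So assume $h_{top}(f)=0$ and pick a Li-Yorke pair $(x,y)$. As in the discussion preceding Proposition~\ref{prop:graph-h0-LY} (see \cite{RS2}), neither $\omega(x,f)$ nor $\omega(y,f)$ can be circumferential, since on such a set $f^k$ is semi-conjugate to an equicontinuous irrational rotation in the controlled way of Theorem~\ref{theo:circumferential}, which precludes Li-Yorke pairs; and at least one of the two sets is infinite, hence solenoidal, say $\omega(x,f)$. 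From the solenoidal structure (Theorem~\ref{theo:omega-graph-htop0}), for $n$ large one has $k_n\ge 2$, so each $f^i(G_n)$ is a proper connected subset of $\IS$, i.e. an arc $J$; moreover one can choose $n$ and $i$ so that, in addition to $\omega(f^i(x),f^{k_n})\subset J=f^i(G_n)$, both $f^i(x)$ and $f^i(y)$ lie in $J$. Then $(f^i(x),f^i(y))$ is a Li-Yorke pair for the interval map $g:=f^{k_n}|_J$, so $g$ is chaotic in the sense of Li-Yorke (one pair suffices, Theorem~\ref{theo:summary-chaos-LY}). By Theorem~\ref{theo:LY-equiv-hA>0} there is an increasing sequence $B=(b_m)_{m\ge 0}$ with $h_B(g)>0$. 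Setting $A:=(k_n b_m)_{m\ge 0}$, every $(B,m,\eps)$-separated subset of $J$ for $g$ is an $(A,m,\eps)$-separated set for $f$ (because $g^{b_m}=f^{k_n b_m}|_J$), hence $h_A(f)\ge h_B(g)>0$.

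For the converse, ``positive sequence entropy $\Rightarrow$ chaotic in the sense of Li-Yorke'', I would argue by contraposition: assume $f$ is not chaotic in the sense of Li-Yorke and show $h_A(f)=0$ for every increasing sequence $A$. If $h_{top}(f)>0$, then Theorem~\ref{theo:general-system-htop-positive-chaos-LY} gives a $\delta$-scrambled Cantor set, contradicting the hypothesis; so $h_{top}(f)=0$. If $f$ is conjugate to an irrational rotation, it is conjugate to an isometry, hence equicontinuous: for each $\eps>0$ there is $\eps'>0$ with $d(f^k u,f^k v)\le\eps$ for all $k$ whenever $d(u,v)\le\eps'$, so a finite $\eps'$-net is $(A,n,\eps)$-spanning for every $n$, $r_n(A,f,\eps)$ is bounded, and $h_A(f)=0$. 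In the remaining case ($h_{top}(f)=0$ and $f$ not conjugate to an irrational rotation), I would transcribe the interval argument of Theorem~\ref{theo:hA>0-LY} to the circle. This requires the circle analogues of Corollary~\ref{cor:wf-closed} ($\omega(f)$ compact), Corollary~\ref{cor:Usupsetwf} (a bound, uniform over $x\in\IS$, on the number of iterates of $x$ falling outside a neighbourhood of $\omega(f)$), Lemma~\ref{lem:noLYchaos-wfalmostperiodic} (every point of $\omega(f)$ is almost periodic — here every infinite $\omega$-limit set is solenoidal, circumferential ones again being ruled out or else carrying equicontinuous dynamics), and Proposition~\ref{prop:notLY-wfstable} ($f|_{\omega(f)}$ has no unstable point); these yield the circle version of Lemma~\ref{lem:approximate-xinU-yi}. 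Granted that, the final counting goes through verbatim: for any increasing $A$ the number of orbit-codes $(C_{a_i}(x))_{0\le i<n}$ realised by points of $\IS$ is polynomial in $n$, so $r_n(A,f,2\eps)\le P(n)$ and $h_A(f)=0$.

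The main obstacle is the handling of circumferential $\omega$-limit sets, which have no interval counterpart: I must show (i) that such a set carries no Li-Yorke pair — so that in the first direction some $\omega$-limit set is necessarily solenoidal — and (ii) that its presence does not destroy the almost-periodicity/no-unstable-point structure of $\omega(f)$ exploited in the counting argument; both reduce to the fact, contained in Theorem~\ref{theo:circumferential}, that $f^k$ restricted there is an ``almost irrational rotation'' and hence equicontinuous on that set. A secondary technical point is the ``pushing'' step in the first direction: arranging that \emph{both} points of the Li-Yorke pair land in the \emph{same} arc $f^i(G_n)$ of a solenoidal cycle while retaining $\omega(f^i(x),f^{k_n})\subset f^i(G_n)$; this follows from the fact that the trajectories of $x$ and $y$ eventually enter $\bigcup_j f^j(G_n)$, combined with a pigeonhole over the finitely many arcs of the cycle and the nestedness of the cycles. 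Everything else is a routine adaptation of the interval arguments of Sections~\ref{sec:5-htop0} and \ref{sec:equivLY}.
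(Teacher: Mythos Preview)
The paper does not give a proof of this theorem: it appears in the ``Remarks on graph maps'' at the end of the section on topological sequence entropy, where it is simply attributed to Hric~\cite{Hri2} with no argument supplied. So there is no proof in the paper to compare your proposal against.

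Your strategy is the natural one and mirrors how the book treats other graph-map extensions (cf.\ the discussion preceding Proposition~\ref{prop:graph-h0-LY}): dispose of the positive-entropy case via Theorem~\ref{theo:general-system-htop-positive-chaos-LY}, and in the zero-entropy case use the solenoidal/circumferential dichotomy of Theorem~\ref{theo:omega-graph-htop0} to descend to an interval subsystem where Theorem~\ref{theo:LY-equiv-hA>0} applies. The forward implication is essentially complete as sketched, including the ``pushing'' step you flag. The converse is where the substance lies: you correctly list the ingredients that must be ported to the circle --- Corollaries~\ref{cor:wf-closed} and~\ref{cor:Usupsetwf}, Proposition~\ref{prop:notLY-wfstable}, Lemmas~\ref{lem:noLYchaos-wfalmostperiodic} and~\ref{lem:approximate-xinU-yi} --- but you do not verify them. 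Several of these (notably Proposition~\ref{prop-wf} and its corollaries) rely on the local order structure via the splitting of a neighbourhood of a point into a left and a right half; the circle is locally an interval, so the arguments should transfer, but this and the treatment of circumferential $\omega$-limit sets within $\omega(f)$ constitute genuine work that your sketch defers. As a roadmap your proposal is sound; as a proof it is incomplete at exactly the points you yourself identify.
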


\begin{theo}
Let $f\colon S_n\to S_n$ be a continuous map, where $n\ge 3$ and
$S_n:=\{z\in\IC\mid z^n\in [0,1]\}$. Suppose that $f(0)=0$. Then $f$ is chaotic in the sense of Li-Yorke if and only if there 
exists an increasing sequence $A$ such that $h_A(f)>0$.
\end{theo}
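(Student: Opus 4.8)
The plan is to prove the two implications separately, and in each direction to reduce to the interval results of this chapter. The hypothesis $f(0)=0$, combined with the fact that $S_n$ has a single branching point, is the lever that makes the reduction work: it keeps the branching point out of the interesting part of the dynamics. Note first the trivial sub-case of the forward implication: if $h_{top}(f)>0$ one takes $A=(n)_{n\ge 0}$, since $h_A(f)=h_{top}(f)$ for this sequence. So in that direction one may assume $h_{top}(f)=0$.

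For the implication ``chaotic in the sense of Li-Yorke $\Rightarrow$ $h_A(f)>0$ for some $A$'' with $h_{top}(f)=0$, I would follow the proof of Proposition~\ref{prop:graph-h0-LY}. At least one point $x_0$ of a Li-Yorke pair has an infinite $\omega$-limit set, which by Theorem~\ref{theo:omega-graph-htop0} and the absence of circumferential $\omega$-limit sets for tree maps (the remark after Theorem~\ref{theo:circumferential}) is solenoidal: there are subgraphs $(G_m)_{m\ge 1}$ and integers $k_m\to\infty$ with $(f^i(G_m))_{0\le i<k_m}$ a cycle of graphs, nested as in Theorem~\ref{theo:omega-graph-htop0}. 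Since $0$ is fixed it cannot belong to $G_m$ once $k_m\ge 2$ (otherwise all the $f^i(G_m)$ would share the point $0$, contradicting disjointness), and since $S_n$ has only one branching point, for $m$ large one of the graphs $f^i(G_m)$ is an arc $J$ with $\omega(f^i(x_0),f^{k_m})\subset J$ and with $f^i(x_0),f^i(y)\in J$ for a suitable Li-Yorke partner $y$. Then $(f^i(x_0),f^i(y))$ is a Li-Yorke pair of the interval map $f^{k_m}|_J$, so Theorem~\ref{theo:htop0-chaos-LY} supplies two distinct $f^{k_m}|_J$-non separable points in an infinite $\omega$-limit set; feeding these into Proposition~\ref{prop:cantor-delta-scrambled} yields disjoint closed arcs $K_0,K_1$ and an increasing sequence $A$ along which every $0$--$1$ word is realised as an itinerary with respect to $(K_0,K_1)$. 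Exactly as in the proof of Proposition~\ref{prop:LY-hA>0}, this gives $s_n(A,f,\eps)\ge 2^n$ for $\eps$ smaller than the distance between $K_0$ and $K_1$, whence $h_A(f)\ge\log 2>0$.

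For the converse, I would prove the contrapositive: if $f$ is not chaotic in the sense of Li-Yorke, then $h_A(f)=0$ for every increasing sequence $A$. By Theorem~\ref{theo:general-system-htop-positive-chaos-LY} one first gets $h_{top}(f)=0$. The strategy is then to transport, for maps of the $n$-star fixing $0$, the whole chain of lemmas of Subsection~\ref{subsec:hA>0-LY}: (i) the characterisation of $\omega(f)$ by the ``four returns'' condition (Proposition~\ref{prop-wf}), together with its consequences that $\omega(f)$ is compact (Corollary~\ref{cor:wf-closed}) and that any orbit meets the complement of a neighbourhood of $\omega(f)$ a bounded number of times (Corollary~\ref{cor:Usupsetwf}); (ii) the fact that $f|_{\omega(f)}$ has no Lyapunov-unstable point (Proposition~\ref{prop:notLY-wfstable}); (iii) almost periodicity of every point of $\omega(f)$ (Lemma~\ref{lem:noLYchaos-wfalmostperiodic}); and (iv) the shadowing statement (Lemma~\ref{lem:approximate-xinU-yi}). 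Once (i)--(iv) are in place, the polynomial counting argument at the end of the proof of Theorem~\ref{theo:hA>0-LY} is purely metric--combinatorial and applies verbatim, giving $r_n(A,f,2\eps)\le \mathrm{poly}(n)$ and hence $h_A(f)=0$. To establish (i)--(iv) I would argue locally: away from the branching point $S_n$ is locally an interval, so Lemmas~\ref{lem:preliminary-hA}--\ref{lem:wf-U+}, Proposition~\ref{prop:notLY-wfstable}, Lemma~\ref{lem:noLYchaos-wfalmostperiodic} and Lemma~\ref{lem:approximate-xinU-yi} carry over after identifying a neighbourhood with a subinterval of $\IR$; at $0$ itself there is nothing to check in (i) since $0$ is fixed and so lies in $\omega(f)$ automatically. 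In (ii)--(iii) one replaces the nested cycles of intervals of Proposition~\ref{prop:htop0-Lki} by the nested cycles of subgraphs of Theorem~\ref{theo:omega-graph-htop0}, together with the graph analogues of Proposition~\ref{prop:non-LY-chaotic}(i) and Lemma~\ref{lem:diamLcapomega} (the diameters of the pieces $f^i(G_m)\cap\omega(f)$ tend to $0$), whose proofs rest on Proposition~\ref{prop:omega-no-periodic-point} — valid for graph maps — and, once more, on the observation that a solenoidal cycle of subgraphs of period $\ge 2$ cannot contain the fixed point $0$, so those subgraphs are eventually arcs inside the branches and the interval separability theory applies to them.

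The main obstacle is not the final counting, which is soft once (i)--(iv) hold, but the faithful transfer of the structure theory of zero-entropy $\omega$-limit sets — $f$-separability of points, $\diam\to 0$ of the nested pieces, absence of unstable points on $\omega(f)$ — from the interval to the star, i.e. checking that every one of these interval arguments survives the presence of the branching point. The hypothesis $f(0)=0$ is exactly what makes this survival possible, by confining every infinite $\omega$-limit set to the arcs of the star; without it the orbit of the branching point can revisit all the branches, the $\omega$-limit structure no longer lives inside arcs, and both the separability theory and the shadowing lemma fail, which is presumably why Cánovas imposes the hypothesis and why the general tree case is harder.
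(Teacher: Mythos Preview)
The paper does not prove this theorem; it appears without proof in the ``Remarks on graph maps'' at the end of Section~5.7 as a result of C\'anovas \cite{Cano4}, so there is no argument in the paper to compare against. What follows is an assessment of your sketch on its own merits.

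Your forward direction is sound: it follows the reduction already outlined in the remarks preceding Proposition~\ref{prop:graph-h0-LY}, and once you have a Li--Yorke pair for the interval map $f^{k_m}|_J$, the proof of Proposition~\ref{prop:LY-hA>0} produces the sequence $A$.

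The reverse direction has a concrete gap that you acknowledge in general terms but underestimate: Lyapunov stability of $f|_{\omega(f)}$ at the branching point $a=0$. You dismiss item (i) at $0$ as trivial, and for (ii)--(iii) you point to solenoidal cycles of subgraphs avoiding $0$; but the place where that structure is actually used in the proof of Proposition~\ref{prop:notLY-wfstable} is the \emph{periodic} case. There, for fixed $a$ and nearby $b\in\omega(f)$ with $g^2(b)\ne b$, one takes a period-$4$ cycle of intervals each meeting $(a-\eps_1,a+\eps_1)$ and invokes the one-dimensional pigeonhole ``four disjoint intervals each meeting a short interval $\Rightarrow$ one lies inside'', after which continuity and periodicity trap the whole cycle in $(a-\eps,a+\eps)$. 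At $a=0$ on $S_n$ the $\eps_1$-ball is a small $n$-star, and four disjoint arcs sitting in four different arms can each meet it while extending far into their arms --- the pigeonhole simply fails. Your observation that the solenoidal pieces are arcs inside single arms is correct but does not force any piece into $B(0,\eps_1)$. A natural rescue is to pass to a solenoidal level of period $K>n$, so that two pieces share an arm and the interval pigeonhole applies there; but then $\eps_1,\eps_2$ must be chosen via the modulus of continuity of $f,\ldots,f^{K-1}$, while the available $K$ depends on which $\omega$-set contains $b$, and you would need a uniform control on this (or a separate argument at $0$, perhaps shadowing orbits near $0$ by the constant orbit of $0$ itself and handling the bookkeeping when they exit) to break the circularity. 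Until stability at $0$ is secured, Lemma~\ref{lem:approximate-xinU-yi} is unavailable in a neighbourhood of $0$ and the final counting argument cannot start.
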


\section{Examples of maps of type $2^\infty$, Li-Yorke chaotic or not} \label{sec:chaos-LY-htop0}

According to Theorem~\ref{theo:htop-positive-chaos-LY}, all interval
maps of positive entropy are chaotic in the sense of Li-Yorke; 
positive entropy interval maps are exactly the maps of type $2^n q$ for some 
odd $q>1$ by Theorem~\ref{theo:htop-power-of-2}.
On the other hand, an interval map of type $2^n$ for some finite $n\ge 0$ 
has no infinite $\omega$-limit set
by Proposition~\ref{prop:htop0-Lki}, and thus it is not chaotic in the
sense of  Li-Yorke according to Theorem~\ref{theo:htop0-chaos-LY}. 
What about maps of type $2^{\infty}$?
There exist maps of type
$2^{\infty}$ that are not chaotic in the sense of Li-Yorke; some have
no infinite $\omega$-limit set, such as Example~\ref{ex:type-infty},
whereas some have an infinite
$\omega$-limit set as in the example built by Smítal \cite{Smi}. 
On the other hand, there exist zero entropy interval maps that are chaotic 
in the sense of
Li-Yorke, as it was shown simultaneously by  Smítal \cite{Smi} and
Xiong \cite{Xio3}. 
We are going to give two
examples of maps of type $2^{\infty}$ with an infinite
$\omega$-limit set, the first one (Example~\ref{ex:non-chaos-LY-htop0})
is not chaotic in the sense of Li-Yorke
and the second one (Example~\ref{ex:chaos-LY-htop0})
is chaotic in the sense of Li-Yorke. 

\begin{ex}\label{ex:non-chaos-LY-htop0}
We define the map $f\colon [0,1]\to [0,1]$ by
\begin{gather*}
f(0):=2/3,\\ 
\forall n\ge 1,\ f\left(1-\frac{2}{3^n}\right):=\frac{1}{3^{n-1}}\quad
\text{and}\quad
f\left(1-\frac{1}{3^n}\right):=\frac{2}{3^{n+1}},
\end{gather*}
and $f$ is linear between two consecutive points among the values we have
just defined; see Figure~\ref{fig:non-chaos-LY-htop0}. Finally, $f(1):=0$. 
It is clear that $f$ is
continuous at $1$, and thus $f$ is continuous on $[0,1]$.

\begin{figure}[htb]
\centerline{\includegraphics{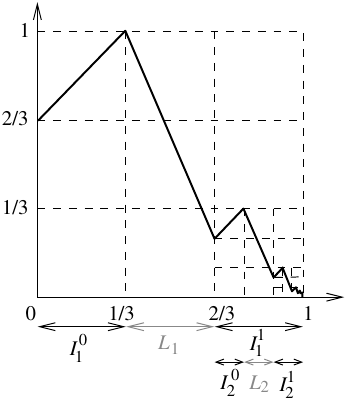}}
\caption{This map $f$ is of type $2^{\infty}$, the set $\omega(0,f)$ is
infinite but $f$ is not chaotic in the sense of Li-Yorke.}
\label{fig:non-chaos-LY-htop0}
\end{figure}

Let us give the idea of the construction. The map $f$ swaps
the intervals $[0,1/3]$ and $[2/3,1]$, and  we ``fill the
gap'' linearly on $[1/3,2/3]$ to get a continuous map 
(we shall see that the core of the
dynamics is in  $[0,1/3]\cup[2/3,1]$).
More precisely, $f$ sends $[0,1/3]$ linearly
onto $[2/3,1]$ and it sends $[2/3,1]$ to $[0,1/3]$ in such a way that
$f^2|_{[2/3,1]}$ is the same map as $f$ up to a rescaling. On the graph
of $f$, it means that, if one magnifies the square $[2/3,1]\times [0,1/3]$
(bottom right square among the 9 big squares in
Figure~\ref{fig:non-chaos-LY-htop0}), then one sees the same picture
as the graph of the whole map.

This map appears in \cite{Del}, where Delahaye proved that it is of
type $2^{\infty}$ and has an infinite $\omega$-limit set.  
We are going to show in addition that $f$ is
not chaotic in the sense of Li-Yorke. We follow Smítal's ideas \cite{Smi},
although the construction is slightly different from the one in \cite{Smi}.

We set $I_0^1:=[0,1]$ and, for all $n\geq 1$,
$$
I_n^0:=\left[1-\frac{1}{3^{n-1}},1-\frac{2}{3^n}\right],\ 
L_n:=\left[1-\frac{2}{3^n}, 1-\frac{1}{3^n}\right],\ 
I_n^1:=\left[1-\frac{1}{3^n},1\right].
$$
It is clear that
$$
\forall n\ge 1,\ I_n^0\cup L_n\cup I_n^1=I_{n-1}^1\quad\text{and}\quad
|I_n^0|=|L_n|=|I_n^1|=\frac{1}{3^n}.
$$
Moreover, one can check from the definition of $f$ that, for all $n\geq 1$,
\begin{gather}
\label{eq:non-chaos-LY-1}
f^{2^{n-1}}|_{I_n^0} \text{ is linear increasing and } 
f^{2^{n-1}}(I_n^0)=I_n^1,\\
\label{eq:non-chaos-LY-2} f^{2^{n-1}}|_{L_n} \text{ is linear decreasing and }
f^{2^{n-1}}(L_n)\supset L_n\cup I_n^1,\\
\label{eq:non-chaos-LY-3}f^{2^{n-1}}(I_n^1)=I_n^0.
\end{gather}
Then $f^{2^n}(I_n^1)=I_n^1$ by \eqref{eq:non-chaos-LY-1} and
\eqref{eq:non-chaos-LY-3}. Moreover, the intervals 
$(f^i(I_n^1))_{0\leq i<
2^n}$ are pairwise disjoint, they have the same length
$\frac{1}{3^n}$ and, if we set
$$
\forall n\ge 0,\  K_n:=\bigcup_{i=0}^{2^n-1} f^i(I_n^1)\quad\text{and}\quad
K:=\bigcap_{n\geq 0}K_n,
$$
then $K$ is the triadic Cantor set (see Example~\ref{ex:triadicCantor}
in the Appendix).

\begin{figure}[htb]
\centerline{\includegraphics{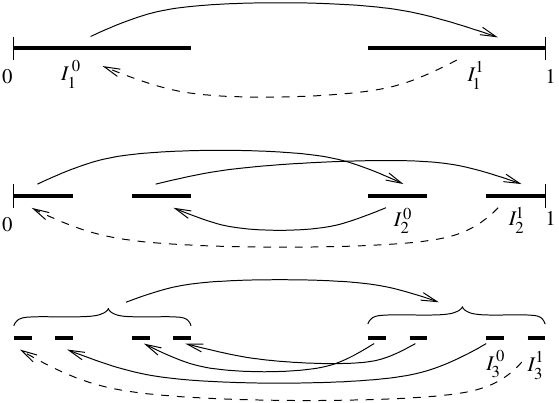}}
\caption{Orbits of $I_n^1$ for $n=1,2,3$. An arrow means that an interval is
sent onto another. The arrows are solid when
the action of the map is linear increasing, dotted otherwise.}
\label{fig:orbIn}
\end{figure}

First we are going to show that $f$ is of type $2^\infty$ and that all
but finitely many trajectories eventually fall in $\CO_f(I_n^1)$. We fix $n\ge 1$.
Since $f^{2^{n-1}}(L_n)\supset L_n$ by \eqref{eq:non-chaos-LY-2},
there exists a point $z_n\in
L_n$ such that $f^{2^{n-1}}(z_n)=z_n$ (Lemma~\ref{lem:fixed-point}).
The period of $z_n$ is exactly $2^{n-1}$ because $L_n\subset I_{n-1}^1$
and the intervals $(f^i(I_{n-1}^1))_{0\leq i<2^{n-1}}$ are pairwise disjoint.
Moreover, using \eqref{eq:non-chaos-LY-2} and the fact that
$|L_n|=|I_n^1|$, we see that $\slope(f^{2^{n-1}}|_{L_n})\le -2$. 
This implies that, if $x,
f^{2^{n-1}}(x),\ldots, f^{k2^{n-1}}(x)$ are in $L_n$, then
$|f^{(k+1)2^{n-1}}(x)-z_n|\geq 2^k|x-z_n|$. Therefore, for all $x\in L_n
\setminus\{z_n\}$, there exists $k\geq 1$ such that
$f^{k2^{n-1}}(x)\notin L_n$. 
Thus the point $f^{k2^{n-1}}(x)$ belongs to $I_n^0\cup I_n^1$ because
$L_n$ is included in $I_{n-1}^1$, which is $f^{2^{n-1}}$-invariant.
Using the fact that $f^{2^{n-1}}(I_n^0)=I_n^1$, we  see that
for all $n\ge 1$ and all $x\in I_{n-1}^1\setminus\{z_n\}$, there exists 
$k\geq 0$ such that $f^k(x)\in I_n^1$.
Starting with $I_0^1=[0,1]$, a straightforward induction shows that
$$
\forall x\in [0,1],\ \CO_f(x)\cap \{z_n\mid n\geq 1\}
=\emptyset\Longrightarrow \forall n\geq 1,\ \exists
k\geq 0,\ f^k(x)\in I_n^1.
$$
This implies that, 
for all $x\in [0,1]$, either $\omega(x,f)=\CO_f(z_n)$ for some $n\ge 1$, or
$\omega(x,f)\subset K$. Thus 
\begin{equation}\label{eq:omegasetincludedK}
\text{all infinite $\omega$-limit sets are included in }K.
\end{equation}
Moreover, $K$ contains no periodic point because,
for all $n\ge 1$, $K$ is included in $\CO_f(I_n^1)$, which is a cycle of
intervals of period $2^n$ (and thus $\CO_f(I_n^1)$ contains no periodic point of
period less than $2^n$). Therefore, the only periodic orbits
are $(\CO_f(z_n))_{n\ge 1}$, and  $f$ is of type $2^\infty$.

Now we are going to show that $\omega(0,f)=K$. 
For all $n\ge 1$, $f^{2^{n-1}}(\min I_n^0)=\min I_n^1$
by \eqref{eq:non-chaos-LY-1}. Since $\min I_n^0=\min I_{n-1}^1$ and 
$\min I_0^1=0$, we get
\begin{equation}\label{eq:orbit0}
\forall n\ge 1,\ f^{2^n-1}(0)=\min I_n^1=1-\frac{1}{3^n}.
\end{equation}
Let $x\in K$ and $\eps>0$. We fix
$n\geq 1$ such that $\frac{1}{3^n}<\eps$. By definition of $K$, there exists 
$i\in\Lbrack 0,2^n-1\Rbrack$ such that
$x\in f^i(I_n^1)$. The point $f^{2^n-1+i}(0)$ belongs to $f^i(I_n^1)$
by \eqref{eq:orbit0}, so
$
|f^{2^n-1+i}(0)-x|\leq|f^i(I_n^1)|=\frac{1}{3^n}<\eps.
$
Since $\eps$ is arbitrarily small and $n$ is arbitrarily large, 
this implies that $x\in \omega(0,f)$, that is,
$K\subset \omega(0,f)$.  We deduce that $\omega(0,f)$ is infinite,
and $\omega(0,f)=K$ by \eqref{eq:omegasetincludedK}.

Finally, we are going to show that $f$ is not chaotic in the sense of
Li-Yorke. Let $x,y$ be two distinct points in $K$ and let $n$ be a positive 
integer such that $\frac{1}{3^n}<|x-y|$. There exist 
$i,j\in\Lbrack 0,2^n-1\Rbrack$ such that $x\in f^i(I_n^1)$ and $y\in f^j(I_n^1)$. 
Because of the choice of $n$, the integers $i$ and $j$ are not equal. 
Thus $x,y$ are $f$-separable. Since $K$ contains
all infinite $\omega$-limit sets,
Theorem~\ref{theo:htop0-chaos-LY} implies that $f$ is not chaotic in
the sense of Li-Yorke.
\end{ex}

\begin{rem}
Let $\Sigma:=\{0,1\}^{\IZ^+}$ and let $A\colon \Sigma\to\Sigma$ be the map
consisting in adding $(1,0,0,\ldots)$ $\bmod\ 2$ with carrying.
More formally, 
$$
A((\alpha_n)_{n\ge 0})=(\beta_n)_{n\ge 0}\quad\text{with}\quad
\beta_n=\left\{\begin{array}{ll}
1-\alpha_n&\text{ if }\forall i\in\Lbrack 0,n-1\Rbrack, \alpha_i=1,\\
\alpha_n&\text{ otherwise}.\end{array}\right.
$$
For instance $A(0,0,1,0,0,...)=(1,0,1,0,0,...)$ and $A(1,1,0,0,...)=
(0,0,1,0,...)$.
The map $A$ is called the \emph{dyadic adding machine}\index{adding machine}.

Let $f$, $K_n$ and $K$ be as defined in 
Example~\ref{ex:non-chaos-LY-htop0}.
Let $h\colon K\to \Sigma$ be the map defined by $h(x)=(\alpha_n)_{n\ge 0}$
such that:
if $C_n$ is the connected component of $K_n$ containing $x$, then
$\alpha_n=0$ if $x$ is in the left connected component of $K_{n+1}$ 
contained in $C_n$, and $\alpha_n=1$ otherwise (recall that each connected 
component of $K_n$ contains two connected components of $K_{n+1}$).
One can show that $h$ is a homeomorphism and that it is a topological
conjugacy between $(K,f|_K)$ and $(\Sigma, A)$.

The readers interested in the dynamics of adding machines and their 
relations with interval maps can look at \cite{BK3,BK4,BKM}.
The adding machine belongs to the wider family of dynamical systems
called \emph{odometers}\index{odometer}, see e.g., the survey \cite{Dow3}.
\end{rem}

\begin{rem}
The map in Example~\ref{ex:non-chaos-LY-htop0} is
made of countably many linear pieces. The Feigenbaum map\index{Feigenbaum map} 
is another example of completely different nature,
since it is $C^{\infty}$ and unimodal.
Recall that the Feigenbaum map $f_{\lambda_{2^\infty}}$ is
a map of the logistic family $f_{\lambda}\colon [0,1]\to [0,1],
x\mapsto \lambda x(1-x)$ (see 
Remark~\ref{rem:alltypesinunimodal}). The map $f_{\lambda_{2^\infty}}$
is of type $2^{\infty}$, it has an infinite $\omega$-limit set $S$ (which
is a Cantor set) and it is not Li-Yorke chaotic. Moreover, the 
restriction of $f_{\lambda_{2^\infty}}$ to $S$ is
topologically conjugate to the dyadic adding machine.
See e.g. \cite[Theorem 11.3.11]{HK4}.
\end{rem}

\begin{ex}\label{ex:chaos-LY-htop0}
We are going to build a zero topological entropy interval map $g$
that is chaotic
in  the sense of Li-Yorke. This map will look like the map $f$ of
the previous example, except that the 
intervals $I_n^0, I_n^1$ are rescaled in such a way
that the set $K=\bigcap_{n\geq 0}\CO_g(I_n^1)$ is not a Cantor set any more,
its interior being nonempty.
This example is inspired by Smítal's \cite{Smi}.
We shall first define $g$, then prove several lemmas in order to show the
expected properties.

Let $(a_n)_{n\geq 0}$ be an increasing sequence of numbers less
than $1$ such that $a_0=0$. We define
$I_0^1:=[a_0,1]$ and, for all $n\geq 1$,
$$
I_n^0:=[a_{2n-2},a_{2n-1}],\ L_n:=[a_{2n-1},a_{2n}],\ I_n^1:=[a_{2n},1].
$$
It is clear that $I_n^0\cup L_n\cup I_n^1=I_{n-1}^1$.
We choose $(a_n)_{n\geq 0}$ such that the lengths of the intervals
$I_n^0, I_n^1$ satisfy:
\begin{gather*}
|I_n^0|=\frac{1}{3^n}|I_{n-1}^1|,\quad 
|I_n^1|=\left(1-\frac{2}{3^n}\right)|I_{n-1}^1| \quad
\text{if $n$ is odd},\\
|I_n^0|=\left(1-\frac{2}{3^n}\right)|I_{n-1}^1|,\quad
|I_n^1|=\frac{1}{3^n}|I_{n-1}^1|\quad\text{if $n$ is even}.
\end{gather*}
This implies that $|L_n|=\frac{1}{3^n}|I_{n-1}^1|$ for all $n\ge 1$.
Note that $|I_n^1|\to 0$ when $n$ goes to infinity, 
that is, $\lim_{n\to +\infty} a_n=1$; hence
$\bigcup_{n\geq 1}(I_n^0\cup L_n)=[0,1)$.

For all $n\geq 1$, let $\vfi_n\colon I_n^0\to I_n^1$ be the increasing linear
homeomorphism mapping $I_n^0$ onto $I_n^1$; its slope is
$\slope(\vfi_n)=\frac{|I_n^1|}{|I_n^0|}$. 
We define the map $g\colon
[0,1]\to [0,1]$ such that $g$ is continuous on $[0,1)$
and
\begin{eqnarray}
\label{eq:LY-htop0-f1}\forall n\ge 1,\ \forall x\in I_n^0,&& g(x):=
\vfi_1^{-1}\circ
\vfi_2^{-1}\circ\cdots\circ\vfi_{n-1}^{-1}\circ \vfi_n(x),\\ 
\label{eq:LY-htop0-f2}
\forall n\ge 1,&& g|_{L_n} \text{ is linear},\\
&&g(1):=0.\nonumber
\end{eqnarray}
Note that $g|_{I_n^0}$ is linear increasing. 
We shall show below that $g$ is continuous at~$1$.

Let us explain the underlying construction.  At step $n=1$, the
interval $I_1^0$ is sent linearly onto $I_1^1$  (hence
$g|_{I_1^0}=\vfi_1$) and we require that $g(I_1^1)\subset I_1^0$ (i.e.,
the graph of $g|_{I_1^1}$ is included in the gray
area in  Figure~\ref{fig:LY-htop0-partial-construction}). Then we do the same
kind of construction in the gray area with respect to
$I_2^0,I_2^1\subset I_1^1$: we rescale $I_2^0,I_2^1$ as
$\vfi_1^{-1}(I_2^0),\vfi_1^{-1}(I_2^1) \subset I_1^0$ (on the vertical
axis), then we send linearly $I_2^0$ onto $\vfi_1^{-1}(I_2^1)$
and we decide that $g(I_2^1)\subset \vfi^{-1}(I_2^0)$; in this
way, $g|_{I_2^0}=\vfi_1^{-1}\circ\vfi_2$ (which is
\eqref{eq:LY-htop0-f1} for $n=2$) and the graph of
$g|_{I_2^1}$ is included in the black
area in  Figure~\ref{fig:LY-htop0-partial-construction}. 
We repeat this construction on $I_2^1$ (black area), and so on. 
Finally, we fill the gaps in a linear
way (which is \eqref{eq:LY-htop0-f2}) to get the whole map, 
which is represented on the right side of
Figure~\ref{fig:LY-htop0-partial-construction}. 

\begin{figure}[htb]
\centerline{\includegraphics{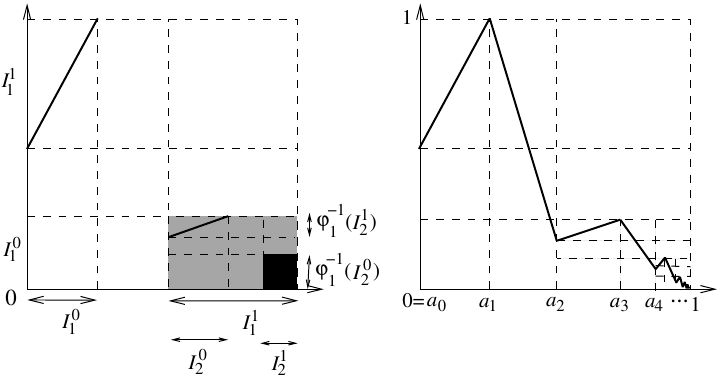}}
\caption{On the left: the first steps of the construction of $g$.  On
the right: the graph of $g$.}
\label{fig:LY-htop0-partial-construction}
\end{figure}

We introduce an auxiliary family of intervals.
We set $J_0^0:=[0,1]$ and, for all $n\geq 1$, the subintervals 
$J_n^0,J_n^1\subset J_{n-1}^0$ are defined by
$$
\min J_n^0=0,\ \max J_n^1=\max J_{n-1}^0\quad\text{and}\quad
\frac{|J_n^i|}{|J_{n-1}^0|}=\frac{|I_n^i|}{|I_{n-1}^1|}\quad\text{for }i\in
\{0,1\}.
$$
We have
$$
|J_n^0|=\prod_{i=1}^n\frac{|I_i^0|}{|I_{i-1}^1|}=\prod_{\doubleindice{k\in
\Lbrack 1, n\Rbrack}{k\ \rm even}}\left(1-\frac{2}{3^k}\right)\prod_{\doubleindice{k\in\Lbrack 1, n\Rbrack}{k\ \rm odd}}\frac{1}{3^k}.
$$
In this product, we bound by $1$ all the
factors except the last one, and we get 
\begin{equation}\label{eq:lengthJn}
|J_n^0|\le \frac{1}{3^{n-1}}.
\end{equation}

To show that $g$ is continuous at $1$, it is enough to prove that
$\max (g|_{I_n^1})\to 0$ when $n$ goes to infinity. 
For all $n\geq 1$, we have 
\begin{eqnarray*}
\vfi_n(\max I_n^0)&\!=\!&\max I_n^1=1=\min I_{n-1}^1+|I_{n-1}^1|\\
\vfi_{n-1}^{-1}\circ \vfi_n(\max I_n^0)&\!=\!&
\min I_{n-1}^0+|I_{n-1}^1|\slope (\vfi_{n-1}^{-1})\\
&\!=\!&
\min I_{n-2}^1+|I_{n-1}^1|\slope (\vfi_{n-1}^{-1})\\
\vfi_{n-2}^{-1}\circ \vfi_{n-1}^{-1}\circ\vfi_n(\max I_n^0)&\!=\!& 
\min I_{n-2}^0+|I_{n-1}^1|\slope (\vfi_{n-2}^{-1})\slope (\vfi_{n-1}^{-1})\\
\vdots\qquad &&\\
\vfi_1^{-1}\circ \vfi_2^{-1}\circ\cdots\circ 
\vfi_{n-1}^{-1}\circ\vfi_n(\max I_n^0)&\!=\!&
\disp \min I_1^0+|I_{n-1}^1|\prod_{i=1}^{n-1}\slope(\vfi_i^{-1}).
\end{eqnarray*}
We have $\max I_1^0=1$, $\slope(\vfi_i^{-1})=\frac{|I_i^0|}{|I_i^1|}$
and $|I_0^1|=1$. Thus
$$
\vfi_1^{-1}\circ \vfi_2^{-1}\circ\cdots\circ 
\vfi_{n-1}^{-1}\circ\vfi_n(\max I_n^0)
=\prod_{i=1}^{n-1}\frac{|I_i^0|}{|I_{i-1}^1|}=|J_{n-1}^0|.
$$
Consequently
\begin{equation}\label{eq:LY-htop0-max-f(In0)}
g(\max I_n^0)=|J_{n-1}^0|=\max J_{n-1}^0.
\end{equation}
According to the definition of $g$, $\max(g|_{I_{n-1}^1})=g(\max I_n^0)$,
so $\max(g|_{I_{n-1}^1})=|J_{n-1}^0|$. By \eqref{eq:lengthJn},
$\lim_{n\to+\infty}\max (g|_{I_{n-1}^1})= 0$; 
therefore $g$ is continuous at $1$. 

\medskip
The next lemma describes the action of $g$ on the intervals $(J_n^i)$,
$(I_n^i)$, and collects the properties that we shall use later.
As in Example~\ref{ex:non-chaos-LY-htop0}, the interval $I_n^1$ is periodic
of period $2^n$ and the map $g^{2^{n-1}}$ swaps $I_n^0$ and $I_n^1$;
Figure~\ref{fig:orbIn} is still valid, except that 
the intervals have different lengths.
However, we prefer to deal with $J_n^0=g(I_n^1)$ instead of $I_n^1$;
this will simplify
the proofs because $g|_{I_n^1}$
is not monotone, whereas $g^i|_{J_n^0}$ is linear for all 
$i\in\Lbrack 1,2^n-1\Rbrack$ (assertion (iii) in the lemma below).

\begin{lem}\label{lem:LY-htop0-summary-Jn0}
Let $g$ be the map defined above. Then for all
$n\geq 1$:
\begin{enumerate}
\item $g(I_n^1)=J_n^0$,
\item $g(I_n^0)=J_n^1$,
\item $g^i|_{J_n^0}$ is linear increasing for all 
$i\in\Lbrack 1, 2^n-1\Rbrack$, 
\item $g^{2^{n-1}-1}(J_n^0)=I_n^0$ and $g^{2^n-1}(J_n^0)=I_n^1$,
\item $g^i(J_n^0)\subset \bigcup_{k=1}^n I_k^0$
for all $i\in\Lbrack 0,2^n-2\Rbrack$, 
\item $\left(g^i(J_n^0)\right)_{0\leq i<2^n}$ are pairwise disjoint.
\end{enumerate}
Moreover, the previous assertions imply:
\begin{enumerate}
\addtocounter{enumi}{6}
\item $g^{2^{n-1}}(J_n^0)=J_n^1$,
\item $g^{2^n}(J_n^0)=J_n^0$,
\item $g^{2^{n-1}}|_{I_n^0}$ is linear increasing and
$g^{2^{n-1}}(I_n^0)=I_n^1$,
\item $g^{2^{n-1}}(I_n^1)=I_n^0$,
\item $(g^i(I_n^0))_{0\leq i<2^n}$ are pairwise disjoint and 
$g^{2^n}(I_n^1)=I_n^1$.
\end{enumerate}
\end{lem}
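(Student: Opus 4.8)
The plan is to derive all eleven items from two "first-level" identities — the description of $g$ on each $I_n^0$ and on each $I_n^1$ — and then obtain the iterated statements (iii)--(vi) by a single induction on $n$, with (vii)--(xi) following formally. First I would record that $g|_{I_n^0}=\varphi_1^{-1}\circ\cdots\circ\varphi_{n-1}^{-1}\circ\varphi_n$ is linear and increasing, being a composition of increasing linear maps, with slope $\frac{|I_n^1|}{|I_n^0|}\prod_{i=1}^{n-1}\frac{|I_i^0|}{|I_i^1|}$. A short telescoping computation, using $|I_0^1|=1$ and the identity $|J_{n-1}^0|=\prod_{i=1}^{n-1}\frac{|I_i^0|}{|I_{i-1}^1|}$ coming from the defining ratios of the $J$'s, shows $|g(I_n^0)|=|I_n^1|\prod_{i=1}^{n-1}\frac{|I_i^0|}{|I_i^1|}=|J_n^1|$; since $g(\max I_n^0)=|J_{n-1}^0|=\max J_{n-1}^0=\max J_n^1$ was already established, monotonicity forces $g(I_n^0)=J_n^1$, which is (ii). This uses no induction.

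For (i) I would use the exact decomposition $I_n^1=\bigcup_{k\ge n+1}(I_k^0\cup L_k)\cup\{1\}$, valid because $\min I_k^1=a_{2k}\to 1$. Then $g(I_k^0)=J_k^1\subset J_n^0$ for all $k\ge n+1$ by (ii); $g|_{L_k}$ is the linear map joining $g(\max I_k^0)=\max J_{k-1}^0$ to $g(\min I_k^1)=g(\min I_{k+1}^0)=\min J_{k+1}^1$, so $g(L_k)=[\min J_{k+1}^1,\max J_{k-1}^0]\subset J_n^0$; and $g(1)=0=\min J_n^0$. Hence $g(I_n^1)\subset J_n^0$, and since $g(I_n^1)$ is connected and contains both $0=g(1)$ and $\max J_n^0=\max J_{n+1}^1=g(\max I_{n+1}^0)$, it equals $J_n^0$.

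Next I would prove (iii)--(vi) simultaneously by induction on $n$; the base case $n=1$ is immediate from $J_1^0=I_1^0$ (its left endpoint is $0$ and its length is $|I_1^0|$) together with $g|_{I_1^0}=\varphi_1$. For the inductive step, the engine is the observation that $g^{2^n-1}|_{J_n^0}$ is, by the hypotheses (iii) and (iv), an increasing linear bijection of $J_n^0$ onto $I_n^1$, while $J_{n+1}^0$ (resp. $J_{n+1}^1$) is by construction the left (resp. right) sub-interval of $J_n^0$ of relative length $|I_{n+1}^0|/|I_n^1|$ (resp. $|I_{n+1}^1|/|I_n^1|$); such a bijection sends it to the correspondingly placed piece $I_{n+1}^0$ (resp. $I_{n+1}^1$) of $I_n^1=I_{n+1}^0\cup L_{n+1}\cup I_{n+1}^1$. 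Thus $g^{2^n-1}(J_{n+1}^0)=I_{n+1}^0$, then $g^{2^n}(J_{n+1}^0)=g(I_{n+1}^0)=J_{n+1}^1$ by (ii), and composing $g^{2^n}$ with iterates of $g$ on $J_n^0\supset J_{n+1}^0,J_{n+1}^1$ gives linearity, monotonicity and the image formulas for all exponents up to $2^{n+1}-1$; in particular $g^{2^{n+1}-1}(J_{n+1}^0)=g^{2^n-1}(J_{n+1}^1)=I_{n+1}^1$, giving (iii) and (iv) at level $n+1$. Statement (v) follows the same pattern using $g^{2^n-1}(J_{n+1}^0)=I_{n+1}^0$ for the single new exponent. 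For the disjointness (vi) at level $n+1$ I would compare two exponents that are both $<2^n$, both $\ge 2^n$, or one of each: the first two cases reduce to hypothesis (vi) via $J_{n+1}^0,J_{n+1}^1\subset J_n^0$ and $g^{2^n}(J_{n+1}^0)=J_{n+1}^1$, and the mixed case uses injectivity of $g^i|_{J_n^0}$ for $i<2^n$ together with the disjointness of $J_{n+1}^0$ and $J_{n+1}^1$ (which holds since $|L_{n+1}|>0$).

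Finally (vii)--(xi) are formal: (vii) is (iv) then (ii), (viii) is (iv) then (i); for (ix) I would write $g^{2^{n-1}}|_{I_n^0}=g^{2^{n-1}-1}\circ g|_{I_n^0}$, note $g(I_n^0)=J_n^1\subset J_{n-1}^0$ and apply (iii)/(iv) at level $n-1$ to the increasing linear bijection $g^{2^{n-1}-1}|_{J_{n-1}^0}\colon J_{n-1}^0\to I_{n-1}^1$, which carries the right piece $J_n^1$ onto $I_n^1$; (x) is (i) then (iv). For (xi), using $I_n^0=g^{2^{n-1}-1}(J_n^0)$ and $g^{2^n}(J_n^0)=J_n^0$, the family $(g^i(I_n^0))_{0\le i<2^n}$ is a cyclic reindexing of the $2^n$ pairwise disjoint intervals $(g^j(J_n^0))_{0\le j<2^n}$, hence pairwise disjoint, and $g^{2^n}(I_n^1)=g^{2^n-1}(J_n^0)=I_n^1$. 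The step I expect to be the main obstacle is not conceptual but organizational: keeping straight which $J$-intervals are sub-intervals of which, justifying that an increasing linear bijection sends the "left/right piece of relative length $\lambda$" to the analogous piece of the image, and exhausting the disjointness cases in the inductive step; the length bookkeeping itself is routine once the telescoping identity for $|J_{n-1}^0|$ is recorded.
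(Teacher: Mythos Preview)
Your proposal is correct and follows essentially the same route as the paper: (ii) by the slope/length computation, (i) from the structure of $I_n^1$, and (iii)--(v) by the same induction whose engine is that the linear increasing bijection $g^{2^n-1}\colon J_n^0\to I_n^1$ carries the left/right pieces $J_{n+1}^0,J_{n+1}^1$ to $I_{n+1}^0,I_{n+1}^1$; the derivations of (vii)--(xi) also match.

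The one place you diverge is (vi). The paper does \emph{not} put (vi) into the induction; instead it argues directly: if $g^i(J_n^0)\cap g^j(J_n^0)\ne\emptyset$ with $0\le i<j\le 2^n-1$, apply $g^{2^n-1-j}$ to get $g^{2^n-1-(j-i)}(J_n^0)\cap g^{2^n-1}(J_n^0)\ne\emptyset$, which contradicts (iv)+(v) since $g^{2^n-1}(J_n^0)=I_n^1$ while $g^{2^n-1-(j-i)}(J_n^0)\subset\bigcup_{k=1}^n I_k^0$. Your inductive approach also works, but as written your mixed case is incomplete: when $i<2^n\le j$ and you set $j'=j-2^n$, injectivity of $g^i|_{J_n^0}$ together with $J_{n+1}^0\cap J_{n+1}^1=\emptyset$ only handles the sub-case $i=j'$; for $i\ne j'$ you still need the inductive hypothesis (vi) at level $n$ applied to $g^i(J_n^0)$ and $g^{j'}(J_n^0)$. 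This is easily fixed, but the paper's two-line argument avoids the case split entirely.
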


\begin{proof}
According to \eqref{eq:LY-htop0-max-f(In0)},
\begin{equation}\label{eq:fIn0}
\max (g|_{I_n^1})=g(\max I_{n+1}^0)=|J_n^0|=\max J_n^0.
\end{equation}
Moreover, $\min (g|_{I_n^1})= g(1)=0=\min J_n^0$. 
Thus $g(I_n^1)=J_n^0$ by the intermediate value theorem; this is~(i). 

\medskip
According to the definition of $g$,
$$
|g(I_n^0)|=
|I_n^0| \slope(\vfi_n) \prod_{i=1}^{n-1}\slope(\vfi_i^{-1})
= |I_n^1|\prod_{i=1}^{n-1}\frac{|I_i^0|}{|I_i^1|}
=\frac{|I_n^1|}{|I_{n-1}^1|}\prod_{i=1}^{n-1}\frac{|I_i^0|}{|I_{i-1}^1|}
=|J_n^1|.
$$
Moreover, $g(\max I_n^0)=\max J_{n-1}^0=\max J_n^1$ by \eqref{eq:fIn0}, 
so $g(I_n^0)=J_n^1$. This is~(ii).

\medskip
We show by induction on $n$ that assertions (iii) and (iv) are satisfied.

\noindent$\bullet$
This is true for $n=1$ because $J_1^0=I_1^0$, $g(I_1^0)=I_1^1$ and
$g|_{I_1^0}=\vfi_1$ is linear increasing.

\noindent$\bullet$
Suppose that (iii) and (iv) are true for $n$. Since
$J_{n+1}^0\subset J_n^0$, the map $g^i|_{J_{n+1}^0}$ is linear increasing
for all $i\in\Lbrack 1, 2^n-1\Rbrack$, and 
$g^{2^n-1}(J_{n+1}^0)\subset I_n^1$.
Moreover, the facts that $g^{2^n-1}|_{J_n^0}$ is linear increasing 
and $g^{2^n-1}(J_n^0)=I_n^1$ imply
\begin{equation}
\min g^{2^n-1}(J_{n+1}^0)=\min g^{2^n-1}(J_n^0)=\min I_n^1=\min I_{n+1}^0,
\label{eq:iiiiv-1}
\end{equation}
and
\begin{equation}
\frac{|g^{2^n-1}(J_{n+1}^0)|}{|I_n^1|}=
\frac{|g^{2^n-1}(J_{n+1}^0)|}{|g^{2^n-1}(J_n^0)|}=
\frac{|J_{n+1}^0|}{|J_n^0|}.\label{eq:iiiiv-2}
\end{equation}
Then \eqref{eq:iiiiv-2} implies that $|g^{2^n-1}(J_{n+1}^0)|=|I_{n+1}^0|$
because $\frac{|J_{n+1}^0|}{|J_n^0|}=\frac{|I_{n+1}^0|}{|I_n^1|}$.
Combined with \eqref{eq:iiiiv-1}, we get
$g^{2^n-1}(J_{n+1}^0)=I_{n+1}^0$.
Then $g^{2^n}(J_{n+1}^0)=J_{n+1}^1$ by (ii). Since $J_{n+1}^1
\subset J_n^0$, the induction hypothesis implies that $g^i|_{J_{n+1}^1}$
is linear increasing for all $i\in\Lbrack 1,2^n-1\Rbrack$ 
and $g^{2^n-1}(J_{n+1}^1) \subset I_n^1$. Moreover,
$$
\max g^{2^n-1}(J_{n+1}^1)=\max g^{2^n-1}(J_n^0)=1=\max I_{n+1}^1,
$$
and by linearity
$$
\frac{|g^{2^n-1}(J_{n+1}^1)|}{|I_n^1|}=
\frac{|g^{2^n-1}(J_{n+1}^1)|}{|g^{2^n-1}(J_n^0)|}=
\frac{|J_{n+1}^1|}{|J_n^0|}=\frac{|I_{n+1}^1|}{|I_n^1|}.
$$
By the same argument as above, we get
$g^{2^n-1}(J_{n+1}^1)=I_{n+1}^1$. 
Since $g^{2^{n+1}-1}(J_{n+1}^0)=g^{2^n-1}(J_{n+1}^1)$, 
this shows that (iii) and (iv) hold for $n+1$.
This ends the induction and proves (iii) and (iv).

\medskip
Now we prove (v) by induction on $n$.

\noindent$\bullet$
This is true for $n=1$ because $J_1^0=I_1^0$.

\noindent$\bullet$
Suppose that (v) is true for $n$. Since $J_{n+1}^0 \subset J_n^0$, it follows
that $g^i(J_{n+1}^0)\subset \bigcup_{k=1}^n
I_k^0$ for all $i\in\Lbrack 0, 2^n-2\Rbrack$.
Moreover, $g^{2^n-1}(J_{n+1}^0)=
I_{n+1}^0$ by (iv) and $g^{2^n}(J_{n+1}^0)=g(I_{n+1}^0)=
J_{n+1}^1$ by (ii). Since $J_{n+1}^1\subset J_n^0,$ we can use
the induction hypothesis again, so
$g^{2^n+i}(J_{n+1}^0)\subset \bigcup_{k=1}^n I_k^0$
for all $i\in\Lbrack 0,2^n-2\Rbrack$.
This gives (v) for $n+1$.
This ends the induction and proves (v).

\medskip
Next we prove (vi). 
Suppose that $g^i(J_n^0)\cap g^j(J_n^0)\neq\emptyset$ for some
$i,j\in\Lbrack 0,2^n-1\Rbrack$ with $i<j$. 
Then $g^{2^n-1-j}(g^i(J_n^0))\cap g^{2^n-1-j}(g^j(J_n^0))
\neq\emptyset$. But $g^{2^n-1}(J_n^0)=I_n^1$ by (iv)
and $g^{2^n-1-(j-i)}(J_n^0)
\subset [0,\max I_n^0]$ by (v),
so these two sets are disjoint, which is a contradiction. 
We deduce that $(g^i(J_n^0))_{0\leq i<2^n}$ are pairwise disjoint.

Finally we indicate how to obtain the other assertions.
Assertions (vii) and (viii) follow respectively from (iv)+(ii) and
(iv)+(i). Assertion (ix) follows from (iii)+(iv). Assertion (x)
follows from (i)+(iv). Assertion (xi) follows from the combination of
(i), (iv) and (vi).
\end{proof}

We define
$$
\forall n\ge 0,\ K_n:=\CO_g(I_n^1)=\bigcup_{i=0}^{2^n-1}g^i(J_n^0),\quad
K:=\bigcap_{n=0}^{+\infty} K_n\quad\text{and}\quad B_K:={\rm Bd}_{\IR}(K),
$$
that is, $B_K$ is the boundary of $K$ for the topology of $\IR$ (i.e. the
points $0,1$ are not excluded).
According to Lemma~\ref{lem:LY-htop0-summary-Jn0}, $K_n$ is the disjoint union
of the intervals $(g^i(J_n^0))_{0\le i\le 2^n-1}$. The set $K$ has
a Cantor-like construction: at each step, a middle part of every
connected component of $K_n$ is removed to get $K_{n+1}$. However we shall
see that $K$ is not a Cantor set because its interior is not empty
(see Lemma~\ref{lem:LY-htop0-non-separable-K}), 
contrary to the situation in Example~\ref{ex:non-chaos-LY-htop0}.
The next lemma states that $g$ is of type $2^{\infty}$. Next
we shall show that the set $\omega(0,f)$ contains $B_K$.
Then we shall prove that 
$g$ is chaotic in the sense of Li-Yorke.

\begin{lem}\label{lem:LY-htop0-2-infty}
Let $g$ be the map defined above. Then $g$ is of type $2^{\infty}$.
\end{lem}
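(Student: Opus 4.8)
The plan is to show that the set of periods of $g$ is exactly $\{2^n\mid n\ge 0\}$, which amounts to two things: first, that $g$ has a periodic point of period $2^n$ for every $n\ge 0$; and second, that $g$ has no periodic point whose period is not a power of $2$. Both will follow from the structural information collected in Lemma~\ref{lem:LY-htop0-summary-Jn0}, in close analogy with the argument given in Example~\ref{ex:non-chaos-LY-htop0}.

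First I would establish the existence of periodic points of all periods $2^n$. Fix $n\ge 1$. By Lemma~\ref{lem:LY-htop0-summary-Jn0}(ii) we have $g(I_n^0)=J_n^1\subset J_n^0$, and by (iv) $g^{2^{n-1}-1}(J_n^0)=I_n^0$; chaining these gives $g^{2^{n-1}}(I_n^0)\supset$ (in fact equals, by (ix)) $I_n^1\supset$ nothing directly useful, so instead I would work with $L_n$. The cleaner route: by (vii)--(viii), $g^{2^n}(J_n^0)=J_n^0$, so by Lemma~\ref{lem:fixed-point} applied to $g^{2^n}|_{\overline{J_n^0}}$ there is a point $x$ with $g^{2^n}(x)=x$ and $x\in J_n^0$; since the intervals $(g^i(J_n^0))_{0\le i<2^n}$ are pairwise disjoint by (vi), the period of $x$ is exactly $2^n$. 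This handles all $n\ge 1$, and the fixed point exists by Lemma~\ref{lem:fixed-point}. (Alternatively one can use the $L_n$ argument of Example~\ref{ex:non-chaos-LY-htop0}, since $g^{2^{n-1}}(L_n)\supset L_n$ because $g^{2^{n-1}}$ swaps $I_n^0,I_n^1$ linearly and $L_n$ sits between them with $g|_{L_n}$ filling the gap; but the $J_n^0$ argument is more direct given the lemma.)

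Next I would show $g$ has no periodic point of period not a power of $2$. The key observation is that $\bigcup_{n\ge 1}(I_n^0\cup L_n)=[0,1)$, so every point of $[0,1)$ lies in some $I_n^0$ or $L_n$, and $1$ maps to $0$. I would argue that every periodic orbit is eventually trapped: for each $n$, $K_n=\CO_g(I_n^1)$ is a cycle of intervals of period $2^n$ (by (xi), $g^{2^n}(I_n^1)=I_n^1$ and $(g^i(I_n^0))_{0\le i<2^n}$, equivalently the translates of $J_n^0$, are pairwise disjoint), hence $K_n$ contains no periodic point of period less than $2^n$. So any periodic point lying in $K=\bigcap_n K_n$ would have to have period divisible by $2^n$ for all $n$, which is impossible; thus $K$ contains no periodic point. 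For a periodic point $x\notin K$, there is a largest $n$ with $x\in K_{n-1}\setminus K_n$ (taking $K_0=[0,1]$); one then checks, exactly as in Example~\ref{ex:non-chaos-LY-htop0}, that the orbit of such a point is forced (via the expansion coming from $|\slope(g|_{L_n})|\ge 2$, since $|L_n|=\frac{1}{3^n}|I_{n-1}^1|$ while $g^{2^{n-1}}(L_n)\supset L_n\cup I_n^1$ has length at least $|L_n|+|I_n^1|$) to leave $L_n$ and enter $I_n^0\cup I_n^1$, hence to land in $I_n^1$, contradicting $x\notin K_n$ — unless $x$ is one of the fixed points $z_n\in L_n$ of $g^{2^{n-1}}$, whose period is $2^{n-1}$. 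Combining, the only periods are powers of $2$. With both directions established, $g$ is of type $2^\infty$.

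The main obstacle I anticipate is the bookkeeping in the second part: one must be careful that the expansion argument on $L_n$ is stated for the correct iterate $g^{2^{n-1}}$ (not $g$ itself), that the period of the auxiliary fixed point $z_n\in L_n$ is genuinely $2^{n-1}$ and not a proper divisor (which again uses the disjointness of $(g^i(I_{n-1}^1))_{0\le i<2^{n-1}}$ from Lemma~\ref{lem:LY-htop0-summary-Jn0}(xi) applied at level $n-1$), and that the induction ``every point not meeting $\{z_m\mid m\ge 1\}$ eventually enters every $I_n^1$'' is set up with the right base case $I_0^1=[0,1]$. None of this is deep, but it is the part where a sign or an index error would break the argument; the existence half is essentially immediate from the lemma. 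I would also note explicitly that $h_{top}(g)=0$ follows from this (type $2^\infty$ implies no horseshoe-bearing iterate, hence zero entropy by Theorem~\ref{theo:htop-power-of-2}), since that is what makes the example useful in the next step.
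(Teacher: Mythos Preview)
Your proposal is correct and follows essentially the same approach as the paper, which simply refers back to the argument of Example~\ref{ex:non-chaos-LY-htop0} after verifying the four needed ingredients: $g^{2^{n-1}}|_{L_n}$ is linear decreasing with $g^{2^{n-1}}(L_n)\supset L_n\cup I_n^1$ and $|\slope(g^{2^{n-1}}|_{L_n})|\ge 2$, $g^{2^{n-1}}(I_n^0)=I_n^1$, and $(g^i(I_n^1))_{0\le i<2^n}$ is a cycle of intervals of period $2^n$. Your only variation is that for the existence of period-$2^n$ points you use Lemma~\ref{lem:LY-htop0-summary-Jn0}(vi)+(viii) on $J_n^0$ directly rather than the fixed point $z_n\in L_n$ of $g^{2^{n-1}}$; this works and is slightly cleaner, though note that the $z_n$ are needed anyway for the second half, so the paper gets both directions from the same object. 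The slip you flagged yourself (writing $\slope(g|_{L_n})$ instead of $\slope(g^{2^{n-1}}|_{L_n})$) is the only thing to fix; you should also state explicitly that $g(L_n)\subset J_{n-1}^0$ so that Lemma~\ref{lem:LY-htop0-summary-Jn0}(iii) makes $g^{2^{n-1}}|_{L_n}$ linear, which is what justifies the expansion argument.
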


\begin{proof}
The same arguments as in Example~\ref{ex:non-chaos-LY-htop0}
can be used to show that $g$ is of type $2^{\infty}$. We do not repeat the
proof, we just
check that the required conditions are satisfied:

\noindent$\bullet$
By definition of $g$, the map $g|_{L_n}$ is linear decreasing, so
$g(L_n)\subset [0,g(\max I_n^0)]$. Moreover, $g(\max I_n^0)=\max J_{n-1}^0$
by \eqref{eq:fIn0}, so $g(L_n)\subset J_{n-1}^0$.
Thus $g^{2^{n-1}}|_{L_n}$ is linear decreasing by
Lemma~\ref{lem:LY-htop0-summary-Jn0}(iii). 

\noindent$\bullet$
The map $g^{2^{n-1}}|_{I_n^0}$
is linear increasing and $g^{2^{n-1}}(I_n^0)=I_n^1$ by 
Lemma~\ref{lem:LY-htop0-summary-Jn0}(ix), 
so $g^{2^{n-1}}(\min L_n)=\max I_n^1=1$.
Moreover, $g^{2^{n-1}}(I_n^1)=I_n^0$ 
by Lemma~\ref{lem:LY-htop0-summary-Jn0}(x), so
$g^{2^{n-1}}(\max L_n)\in I_n^0$. This implies that
$g^{2^{n-1}}(L_n)\supset L_n\cup I_n^1$. Since $|L_n|\leq |I_n^1|$,
we have $\slope(g^{2^{n-1}}|_{L_n})\leq -2$. 

\noindent$\bullet$
$g^{2^{n-1}}(I_n^0)=I_n^1$ by Lemma~\ref{lem:LY-htop0-summary-Jn0}(ix).

\noindent$\bullet$
$g^{2^n}(I_n^1)=I_n^1$ and $(g^i(I_n^1))_{0\leq i<2^n}$ are
pairwise disjoint by Lemma~\ref{lem:LY-htop0-summary-Jn0}(xi).
\end{proof}

Since $\min J_n^0=0$, the orbit of $0$ obviously enters $g^i(J_n^0)$ for 
all $n\geq 0$ and all $i\in\Lbrack 0,2^n-1\Rbrack$, which implies that
$\omega(0,g)$ meets all connected components of $K$.
The next lemma states that $\omega(0,g)$ contains $B_K$; the proof relies
on the idea that the smaller interval among $J_{n+1}^0$ and $J_{n+1}^1$
contains alternatively either $\min J_n^0$ or $\max J_n^0$, when $n$
varies, so that both endpoints of a connected component of $K$ can be
approximated by small intervals of the form $g^i(J_n^0)$.

\begin{lem}\label{lem:LY-htop0-omega(0,f)}
Let $g$ and $K$ be as defined above. Then
$B_K\subset \omega(0,g)$. In particular, $\omega(0,g)$ is infinite.
\end{lem}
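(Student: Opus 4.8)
The plan is first to show that $0$ is a recurrent point, so that $\omega(0,g)=\overline{\CO_g(0)}$, and then to show that every point of $B_K$ is a limit of points of $\CO_g(0)$.

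For the recurrence, fix $i\ge 0$ and choose $n$ with $2^n>i$. By Lemma~\ref{lem:LY-htop0-summary-Jn0} the map $g^i|_{J_n^0}$ is monotone (linear increasing if $i\ge 1$, the identity if $i=0$) and $g^{2^n}(J_n^0)=J_n^0$; since $0=\min J_n^0$ this gives $g^i(0)=\min g^i(J_n^0)$ and, iterating $g^{2^n}$, $g^{i+k2^n}(0)\in g^i(J_n^0)$ for all $k\ge 0$. For $n$ ranging over the integers $>i$ the intervals $g^i(J_n^0)$ form a nested family of compact intervals whose intersection is $g^i\bigl(\bigcap_{n}J_n^0\bigr)=g^i(\{0\})=\{g^i(0)\}$ (using $\min J_n^0=0$ together with $|J_n^0|\le 3^{1-n}\to 0$ from \eqref{eq:lengthJn}). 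Hence their diameters tend to $0$, so $g^i(0)=\lim_{k\to\infty}g^{i+k2^n}(0)$ for $n$ large, proving $g^i(0)\in\omega(0,g)$. Thus $\CO_g(0)\subset\omega(0,g)$, and since $\omega(0,g)$ is closed, $\omega(0,g)=\overline{\CO_g(0)}$.

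Next I would record the combinatorial structure of $K$. By Lemma~\ref{lem:LY-htop0-summary-Jn0}, $K_n$ is the disjoint union of the $2^n$ closed intervals $g^i(J_n^0)$, $0\le i<2^n$, and since $J_{n+1}^0\subset J_n^0$, $J_{n+1}^1=g^{2^n}(J_{n+1}^0)\subset J_n^0$ and the maps $g^i|_{J_n^0}$ are monotone, each component $g^i(J_n^0)$ of $K_n$ contains exactly two components of $K_{n+1}$, namely its \emph{left child} $g^i(J_{n+1}^0)$ and its \emph{right child} $g^i(J_{n+1}^1)$, in that order. The left endpoint of every component $g^i(J_n^0)$ equals $g^i(0)\in\CO_g(0)$, and both endpoints of every component of every $K_n$ lie in $B_K$, since a gap of $K_n$ (hence of $K$) abuts them; in particular $B_K$ is infinite. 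It therefore suffices to prove $B_K\subset\omega(0,g)=\overline{\CO_g(0)}$, which will also yield that $\omega(0,g)$ is infinite.

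Finally, given $x\in B_K$ and $\eps>0$, I would follow the nested sequence of components $C_n(x)\ni x$, where $C_{n+1}(x)$ is the left or the right child of $C_n(x)$. If at some level $x$ lies within $\eps$ of $\min C_n(x)=g^{i_n}(0)$, we are done. Otherwise $x$ stays within $\eps$ of $\max C_n(x)$ for all large $n$, and looking at the first level where one passes to a right child produces a fresh left endpoint $\min(\text{right child})=g^{j}(0)\in\CO_g(0)$; one then iterates this. The key point is that any branch of this subdivision tree must eventually pass through the intervals $g^{2^{m-1}-1}(J_m^0)=I_m^0$ and $g^{2^m-1}(J_m^0)=I_m^1$ of Lemma~\ref{lem:LY-htop0-summary-Jn0}, whose lengths tend to $0$; this forces the relevant components to become small, so that the new left endpoints constructed above converge to $x$, whence $x\in\overline{\CO_g(0)}=\omega(0,g)$. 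Controlling the diameters of the components of $K_m$ surrounding a point of $B_K$ — delicate precisely because $K$ has nonempty interior, so that most components do \emph{not} shrink — is the main obstacle; everything else is bookkeeping with Lemma~\ref{lem:LY-htop0-summary-Jn0}. Once it is in place the inclusion $B_K\subset\omega(0,g)$ follows, and, $B_K$ being infinite, so is $\omega(0,g)$.
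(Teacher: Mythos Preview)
Your recurrence argument is correct and is a pleasant simplification: once $\omega(0,g)=\overline{\CO_g(0)}$, the case $x=\min C$ of a component $C$ of $K$ is immediate, since $\min C_n(x)=g^{j_n}(0)\to x$. The real difficulty, as you note, is the case $x=\max C$ with $C$ non-degenerate.

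Here your proposed resolution fails. The claim that ``any branch of this subdivision tree must eventually pass through $I_m^0=g^{2^{m-1}-1}(J_m^0)$ and $I_m^1=g^{2^m-1}(J_m^0)$'' is false. At level $n$ one has $C_n=g^{j_n}(J_n^0)$, and $C_n=I_n^0$ (resp.\ $I_n^1$) forces $j_n=2^{n-1}-1$ (resp.\ $2^n-1$); most branches never hit these indices beyond small $n$. Concretely, the branch built in Lemma~\ref{lem:LY-htop0-non-separable-K} has $j_1=1,\ j_2=1,\ j_3=5,\ j_4=5,\ j_5=21,\ldots$, so after level~2 it never equals $2^{n-1}-1$ or $2^n-1$; and indeed $|C_n|\ge|C|>0$ there, so the components do not shrink. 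Consequently your ``fresh left endpoint'' $\min(\text{right child})$ sits at distance $|C_{n+1}|$ from $\max C_n$, which need not be small, and iterating does not help.

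The paper's fix is not to force the branch components to become small, but to step \emph{off} the branch into a sibling that is small by parity. With $n$ even, the ratio $|J_{n+1}^0|/|J_n^0|=|I_{n+1}^0|/|I_n^1|=3^{-(n+1)}$ is tiny, so $g^{j_n}(J_{n+1}^0)$ is a very short interval with $\min g^{j_n}(J_{n+1}^0)=\min C_n$; any orbit point in it (e.g.\ $g^{k2^{n+1}+j_n}(0)$) is within $\eps$ of $y_n=\min C_n$. For $y_n=\max C_n$ the paper goes two levels deeper and uses $g^{j_n+2^n}(J_{n+2}^1)$, whose maximum equals $\max C_n$ and whose relative length is $3^{-(n+2)}(1-2\cdot 3^{-(n+1)})$, again tiny since $n+2$ is even; the orbit point $g^{k2^{n+2}+2^{n+1}+2^n+j_n}(0)$ lands there. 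The parity of $n$ is exactly what makes the relevant child small, and this is the idea your sketch is missing.
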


\begin{proof}
According to the definition of $K$, the connected components of $K$ are
of the form $C:=\bigcap_{n\ge 0} C_n$, where $C_n$ is a 
connected component of $K_n$ and $C_{n+1}\subset C_n$ for all $n\ge 0$. 
That is, the connected components of $K$ are
exactly the nonempty sets of the form $C:=\bigcap_{n=0}^{+\infty} g^{j_n}(J_n^0)$
with $j_n\in \Lbrack 0,2^n-1\Rbrack$, and
\begin{equation}\label{eq:CC-cantor-like}
\min C=\lim_{n\to+\infty} \min g^{j_n}(J_n^0)\quad\text{and}\quad
\max C=\lim_{n\to+\infty} \max g^{j_n}(J_n^0)
\end{equation}
because $C$ is a decreasing intersection of compact intervals.
Moreover, $B_K$ is equal to the union of the endpoints of all connected 
components of $K$. 
Let $y\in B_K$. By \eqref{eq:CC-cantor-like},
there exists a sequence of points $(y_n)_{n\geq 0}$
such that $y=\lim_{n\to+\infty}y_n$ and $y_n\in\End{g^{j_n}(J_n^0)}=
\{\min g^{j_n}(J_n^0),\max g^{j_n}(J_n^0)\}$ for all $n\ge 0$, where
$j_n\in\Lbrack 0,2^n-1\Rbrack$ is such that $y\in g^{j_n}(J_n^0)$. 
Let $\eps>0$ and $N\geq 0$.
Let $n$ be an even integer such that $\frac{1}{3^{n+1}}<\eps$ and
$|y_n-y|<\eps$, and let $k\geq 0$ be such that $k2^{n+1}\geq N$.

First we suppose that $y_n=\min g^{j_n}(J_n^0)$. The point $0$ belongs to
$J_{n+1}^0$ and,
by  Lemma~\ref{lem:LY-htop0-summary-Jn0}(viii), 
$g^{2^{n+1}}(J_{n+1}^0)=J_{n+1}^0$.
Thus $g^{k2^{n+1}+j_n}(0)$ belongs to $g^{j_n}(J_{n+1}^0)$. According to
Lemma~\ref{lem:LY-htop0-summary-Jn0}(iii), 
$y_n=\min g^{j_n}(J_{n+1}^0)$ and
$$
\frac{|g^{j_n}(J_{n+1}^0)|}{|g^{j_n}(J_n^0)|}=\frac{|J_{n+1}^0|}{|J_n^0|}=
\frac{|I_{n+1}^0|}{|I_n^0|}=\frac{1}{3^{n+1}}<\eps.
$$
Therefore $|g^{k2^{n+1}+j_n}(0)-y_n|<\eps|g^{j_n}(J_n^0)|\leq \eps$.

Secondly we suppose that $y_n=\max g^{j_n}(J_n^0)$. 
The point $g^{k2^{n+2}}(0)$ belongs to $J_{n+2}^0$ and
$g^{2^{n+1}}(J_{n+2}^0)=J_{n+2}^1$ 
by Lemma~\ref{lem:LY-htop0-summary-Jn0}(viii) and (vii) respectively, so
$$
g^{k2^{n+2}+2^{n+1}+2^n+j_n}(0)\in g^{2^n+j_n}(J_{n+2}^1).
$$
According to Lemma~\ref{lem:LY-htop0-summary-Jn0}(iii)+(vii), 
$$
\max g^{2^n+j_n}(J_{n+2}^1)=\max g^{2^n+j_n}(J_{n+1}^0)
=\max g^{j_n}(J_{n+1}^1)
=\max g^{j_n}(J_n^0)=y_n.
$$
Moreover, by Lemma~\ref{lem:LY-htop0-summary-Jn0}(vii), 
$$
g^{2^n}(J_{n+2}^1)\subset g^{2^n}(J_{n+1}^0)=J_{n+1}^1\subset J_n^0.
$$
Thus, using the linearity given by 
Lemma~\ref{lem:LY-htop0-summary-Jn0}(iii) and the definitions of
$(J_n^i)$ and $(I_n^i)$,
$$
\frac{|g^{j_n+2^n}(J_{n+2}^1)|}{|g^{j_n}(J_n^0)|}=
\frac{|g^{2^n}(J_{n+2}^1)|}{|J_n^0|}
=\frac{|g^{2^n}(J_{n+2}^1)|}{|g^{2^n}(J_{n+1}^0)|}\cdot
\frac{|J_{n+1}^1|}{|J_n^0|}
=\frac{1}{3^{n+2}}\cdot\left(1-\frac{2}{3^{n+1}}\right).
$$
Consequently, $y_n\in g^{2^n+j_n}(J_{n+2}^1)$ and
$$
|g^{k2^{n+2}+2^{n+1}+2^n+j_n}(0)-y_n|\leq |g^{j_n+2^n}(J_{n+2}^1)|<\eps.
$$

In both cases, there exists $p\geq N$ such that $|g^p(0)-y_n|<\eps$,
so $|g^p(0)-y|<2\eps$. This means that $y\in \omega(0,g)$, that is,
$B_K\subset \omega(0,g)$. 
Finally, for every $n\ge 0$, $K_n$ has $2^n$ connected components, each of 
which containing two connected components of $K_{n+1}$; thus
$K$ has an infinite number of connected components, which implies that
$B_K$ is infinite.
\end{proof}

In the proof of the next lemma, we shall first show that
$K$ contains a non degenerate connected component $C$; then we shall see
that the two endpoints of $C$ are $g$-non separable. 

\begin{lem}\label{lem:LY-htop0-non-separable-K}
Let $g$ and $K$ be as defined above. Then
$B_K$ contains two $g$-non separable points and $g$ is chaotic
in the sense of Li-Yorke.
\end{lem}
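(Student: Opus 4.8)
The plan is to establish the two endpoints of some non-degenerate connected component of $K$ are $g$-non separable, and then invoke Theorem~\ref{theo:htop0-chaos-LY}. First I would show that $K$ has nonempty interior, hence contains a non-degenerate connected component $C$. Recall that $K_n=\bigcup_{i=0}^{2^n-1}g^i(J_n^0)$ is a disjoint union, and at each step the intervals $g^i(J_{n+1}^0)$ and $g^{i+2^n}(J_{n+1}^0)$ sit inside $g^i(J_n^0)$ with one of them being the ``large'' piece (of relative length $1-\tfrac{2}{3^{n+1}}$) and the other being ``small'' (of relative length $\tfrac{1}{3^{n+1}}$), according to whether $n+1$ is odd or even; this follows from Lemma~\ref{lem:LY-htop0-summary-Jn0}(iii) together with the definitions of the lengths of $J_n^i$. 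Consequently, choosing at each step the large subinterval, one obtains a nested sequence $g^{j_n}(J_n^0)$ whose relative lengths are $1-\tfrac{2}{3^n}$ for the relevant $n$, and $\prod_{n\ge 1}(1-\tfrac{2}{3^n})>0$. Hence the intersection $C:=\bigcap_n g^{j_n}(J_n^0)$ is a non-degenerate closed interval, a connected component of $K$, with $|C|>0$. Let $a_0:=\min C$ and $a_1:=\max C$; these are distinct points of $B_K$.

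Next I would verify $a_0,a_1\in\omega(x_0,g)$ for some point $x_0$ with $\omega(x_0,g)$ infinite. By Lemma~\ref{lem:LY-htop0-omega(0,f)}, $B_K\subset\omega(0,g)$, so $a_0,a_1\in\omega(0,g)$, and $\omega(0,g)$ is infinite. Moreover $g$ has zero topological entropy since it is of type $2^\infty$ (Lemma~\ref{lem:LY-htop0-2-infty}) together with Theorem~\ref{theo:htop-power-of-2}. So I can apply Lemma~\ref{lem:f-separable-Lni}: $a_0,a_1$ are $g$-separable if and only if they lie in two distinct intervals of the form $g^i(L_n)$ (here $(L_n)$ denotes the intervals given by Proposition~\ref{prop:htop0-Lki} for $\omega(0,g)$). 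The key point is then to show this does \emph{not} happen, i.e., for every $n\ge 1$ there is some $i\in\Lbrack 0,2^n-1\Rbrack$ with $a_0,a_1\in g^i(L_n)$.

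The main obstacle is identifying the intervals $(L_n)$ from Proposition~\ref{prop:htop0-Lki} with (something comparable to) the intervals $(g^i(J_n^0))$ constructed explicitly above. The cleanest route: by Proposition~\ref{prop:htop0-Lki}(v), $g^i(L_n)$ is the smallest $g^{2^n}$-invariant interval containing $\omega(g^i(0),g^{2^n})$. Since $\omega(0,g)\subset K_n=\bigcup_{i=0}^{2^n-1}g^i(J_n^0)$ and each $g^i(J_n^0)$ is $g^{2^n}$-invariant (Lemma~\ref{lem:LY-htop0-summary-Jn0}(viii)), each $g^i(L_n)$ is contained in $g^{\tau(i)}(J_n^0)$ for a suitable $\tau(i)$; combined with the fact that $(g^i(L_n))_{0\le i<2^n}$ is a cycle of $2^n$ disjoint intervals and $(g^i(J_n^0))_{0\le i<2^n}$ likewise, one gets that $g^i(L_n)\subset g^i(J_n^0)$ after relabeling. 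Now $C=\bigcap_n g^{j_n}(J_n^0)$, and both $a_0,a_1\in C\subset g^{j_n}(J_n^0)$ for every $n$; hence $a_0$ and $a_1$ can never be separated by the disjoint intervals $(g^i(J_n^0))_i$, and a fortiori not by the smaller intervals $(g^i(L_n))_i$. Therefore $a_0,a_1$ are $g$-non separable. Since $B_K\subset\omega(0,g)$ is an infinite $\omega$-limit set containing the $g$-non separable pair $a_0,a_1$, condition (iv) of Theorem~\ref{theo:htop0-chaos-LY} holds, so $g$ is chaotic in the sense of Li-Yorke (indeed it admits a $\delta$-scrambled Cantor set with $\delta=|a_1-a_0|=|C|$, via the implication (iv)$\Rightarrow$(ii)).

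One subtlety I would be careful about: the inclusion $g^i(L_n)\subset g^i(J_n^0)$ needs the correct matching of indices, which should follow by tracking the orbit of the point $0$ (whose $\omega$-limit set is contained in every $K_n$) and using that $g^{2^n-1}(J_n^0)=I_n^1$ and $\min J_n^0=0$, so $0\in J_n^0$ and the cycle generated by $J_n^0$ is exactly $K_n$. Alternatively, rather than identifying $L_n$ precisely, it suffices to argue directly from Lemma~\ref{lem:f-separable-Lni}'s statement combined with the explicit structure: if $a_0,a_1$ were $g$-separable, Lemma~\ref{lem:f-separable-Lni} would give disjoint $g^i(L_n), g^j(L_n)$ with $a_0\in g^i(L_n)$, $a_1\in g^j(L_n)$; but the remark following Lemma~\ref{lem:f-separable-Lni} shows any periodic closed interval containing $a_0$ must contain some $g^i(L_n)$, and by Lemma~\ref{lem:periodic-interval-power-of-2} together with the explicit periodic intervals $g^k(J_m^0)$ of period $2^m$, one can route the contradiction through the fact that arbitrarily small periodic intervals $g^{j_m}(J_m^0)$ contain both $a_0$ and $a_1$, forcing them into the same $g^i(L_n)$ for every $n$.
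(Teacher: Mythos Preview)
Your proposal is correct and shares the construction of the non-degenerate connected component $C$ with the paper (choosing at each level the ``large'' subinterval of relative length $1-\tfrac{2}{3^n}$ and using $\prod_n(1-\tfrac{2}{3^n})>0$). The two proofs diverge only in how non-separability of the endpoints is established.

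You route the argument through Lemma~\ref{lem:f-separable-Lni}: you identify the abstract intervals $g^i(L_n)$ from Proposition~\ref{prop:htop0-Lki} as subsets of the concrete intervals $g^i(J_n^0)$ (using minimality, Proposition~\ref{prop:htop0-Lki}(v), and the fact that $0\in J_n^0$ with $g^{2^n}(J_n^0)=J_n^0$), and then observe that since $a_0,a_1$ always lie in the same $g^{j_n}(J_n^0)$ and each $g^{j_n}(J_n^0)$ contains exactly one interval of the cycle $(g^i(L_n))_i$, both endpoints must lie in the same $g^i(L_n)$ for every $n$. This is valid; the ``subtlety'' you flag about index matching is resolved exactly as you indicate, since $L_n\subset J_n^0$ by minimality and hence $g^i(L_n)\subset g^i(J_n^0)$ with the same index $i$.

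The paper instead argues directly from the definition of $g$-separability, bypassing Lemma~\ref{lem:f-separable-Lni} and the abstract $(L_n)$ entirely. Assuming $c_0,c_1$ are separated by disjoint periodic intervals $A_0,A_1$ with a common period multiple $k$, one picks $n$ with $2^n>k$; then $g^k(C)\cap C=\emptyset$ because $C\subset g^i(J_n^0)$ and the intervals $(g^j(J_n^0))_{0\le j<2^n}$ are pairwise disjoint. If, say, $g^k(C)<C$, then $A_1=g^k(A_1)$ contains both $c_1$ and $g^k(c_1)<c_0$, so by connectedness $c_0\in A_1$, contradicting $A_0\cap A_1=\emptyset$. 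This is shorter and more self-contained, but your approach has the merit of exercising the general machinery (Proposition~\ref{prop:htop0-Lki} and Lemma~\ref{lem:f-separable-Lni}) and showing explicitly how the concrete construction sits inside it.
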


\begin{proof}
First we define by induction a sequence of intervals $C_n:=g^{i_n}(J_n^0)$
for some $i_n\in\Lbrack 0,2^n-1\Rbrack$ such that 
$$
\forall n\ge 1,\  C_n\subset
C_{n-1}\quad\text{and}\quad |C_n|=\left(1-\frac{2}{3^n}\right)|C_{n-1}|.
$$

\noindent$\bullet$
We set $i_0:=0$ and $C_0:=J_0=[0,1]$.

\noindent$\bullet$
Suppose that $C_{n-1}=g^{i_{n-1}}(J_{n-1}^0)$ is already built.
If $n$ is even, we set $i_n:=i_{n-1}$ and $C_n:=g^{i_n}(J_n^0)$. The map 
$g^{i_{n-1}}|_{J_{n-1}^0}$ is linear increasing by 
Lemma~\ref{lem:LY-htop0-summary-Jn0}(iii) and $J_n^0\subset J_{n-1}^0$, so
$$
\frac{|C_n|}{|C_{n-1}|}=\frac{|J_n^0|}{|J_{n-1}^0|}=1-\frac{2}{3^n}.
$$
If $n$ is odd, we set $i_n:=i_{n-1}+2^{n-1}$ and $C_n:=g^{i_n}(J_n^0)$.
By Lemma~\ref{lem:LY-htop0-summary-Jn0}(iii)+(vii),
the map $g^{i_{n-1}}|_{J_{n-1}^0}$ is linear increasing and
$C_n=g^{i_{n-1}}(J_n^1)$, so
$$
\frac{|C_n|}{|C_{n-1}|}=\frac{|J_n^1|}{|J_{n-1}^0|}=1-\frac{2}{3^n}.
$$

We set $C:=\bigcap_{n\geq 0}C_n$. It is a compact interval, and
it is non degenerate because
$$
\log |C|=\log |C_0|+\sum_{n\geq 1}\log \left(1-\frac{2}{3^n}\right)
>-\infty;$$
the last inequality follows from the facts that
$\log(1+x)\sim x$ when $x\to 0$ and $\sum \frac{1}{3^n}<+\infty$.
Moreover, $C$ is a connected component of $K$, so $\End{C}\subset
B_K$. Let $c_0:=\min C$ and $c_1:=\max C$. 
By Lemma~\ref{lem:LY-htop0-omega(0,f)}, the points $c_0,c_1$ belong to
$\omega(0,g)$, which is an infinite $\omega$-limit set.
Suppose that $c_0, c_1$ are $g$-separable and let $A_0,A_1$ be two
disjoint periodic intervals containing respectively $c_0, c_1$.  Let
$k$ be a common multiple of their periods. We choose an integer $n$ such
that $2^n>k$. Then there exists $i\in\Lbrack 0,2^n-1\Rbrack$ such that 
$C\subset g^i(J_n^0)$. Since $g^i(J_n^0)\cap g^{i+k}(J_n^0)=\emptyset$ by
Lemma~\ref{lem:LY-htop0-summary-Jn0}(vi)+(viii), we have
$g^k(C)\cap C=\emptyset$.
Suppose for instance that $g^k(C)<C$. Then
$A_1=g^k(A_1)$ contains both $c_1$ and $g^k(c_1)$, and
we have $g^k(c_1)\in g^k(C)<c_0<c_1$. Thus
$c_0$ belongs to $A_1$ by connectedness, which is a contradiction. The
same conclusion holds if $g^k(C)>C$. We deduce that $c_0,c_1$ are two $g$-non
separable points in $\omega(0,g)$. 
By Theorem~\ref{theo:htop0-chaos-LY}, we conclude that
$g$ is chaotic in the sense of Li-Yorke.
\end{proof}

According to Lemmas \ref{lem:LY-htop0-2-infty}
and \ref{lem:LY-htop0-non-separable-K}, the map $g$ is
of type $2^\infty$ (and thus has zero topological entropy by
Theorem~\ref{theo:htop-power-of-2}) and 
is chaotic in the sense of Li-Yorke. These are the required properties
for the map $g$.
We show one more property for further reference.

\begin{lem}\label{lem:chaos-LY-htop0}
Let $g$ be the map defined above. Then
$g|_{\omega(0,g)}$ is transitive and sensitive to initial conditions.
\end{lem}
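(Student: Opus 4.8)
The plan is to first identify the set $X:=\omega(0,g)$, then read off transitivity from the fact that the orbit of $0$ is dense in $X$, and finally prove sensitivity by a ``spreading'' argument built around the non-degenerate component $C$ of $K$ exhibited in Lemma~\ref{lem:LY-htop0-non-separable-K}. The first step is to show $X=B_K$ and that $\overline{\CO_g(0)}=X$. Since $0\in J_m^0$ and, by Lemma~\ref{lem:LY-htop0-summary-Jn0}, $K_m=\bigcup_{i=0}^{2^m-1}g^i(J_m^0)$ is a cycle of intervals of period $2^m$, the forward orbit of $0$ stays in $K_m$ for every $m$, so $\CO_g(0)\subset\bigcap_m K_m=K$. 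Moreover, for $k<2^m$ the map $g^k|_{J_m^0}$ is increasing (Lemma~\ref{lem:LY-htop0-summary-Jn0}(iii)), so $g^k(0)=\min g^k(J_m^0)$ is the left endpoint of the connected component of $K_m$ containing it; as $m$ grows these components nest down to the connected component of $K$ containing $g^k(0)$, whence $g^k(0)\in B_K$. Thus $\CO_g(0)\subset B_K$, and since $B_K$ is closed, $\omega(0,g)\subset\overline{\CO_g(0)}\subset B_K$; combined with $B_K\subset\omega(0,g)$ (Lemma~\ref{lem:LY-htop0-omega(0,f)}) this gives $X=\omega(0,g)=B_K=\overline{\CO_g(0)}$.

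Transitivity is then immediate from Proposition~\ref{prop:transitive-dense-orbit}(iii) applied to the subsystem $(X,g|_X)$: indeed $0\in B_K\subset X$ and $\omega(0,g|_X)=\omega(0,g)=X$. Since $X$ is infinite, Proposition~\ref{prop:transitive-dense-orbit}(i) shows $X$ has no isolated point, and as $X=B_K$ contains no interval, $X$ is a Cantor set.

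For sensitivity, put $\delta:=|C|$, where $c_0=\min C$, $c_1=\max C$ are the $g$-non-separable points of Lemma~\ref{lem:LY-htop0-non-separable-K}; let $C_m$ be the connected component of $K_m$ containing $C$, so $C_m=g^{i^*}(J_m^0)$ for some $i^*\in\Lbrack 0,2^m-1\Rbrack$ and $|C_m|\ge|C|=\delta$. By Lemma~\ref{lem:unstable}(vi) applied to $(X,g|_X)$ it suffices to show that every nonempty relatively open $V\subset X$ satisfies $\diam(g^n(V))\ge\delta$ for some $n$. As $X=B_K$ is a Cantor set and the endpoints of the non-degenerate components of $K$ form a countable set, $V$ contains a point $x$ that is a degenerate component of $K$; then the components $D_m\ni x$ of $K_m$ decrease to $\{x\}$, so for $m$ large $D_m$ lies inside the open subset of $\IR$ defining $V$, i.e. $X\cap D_m\subset V$. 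Writing $D_m=g^j(J_m^0)$ and using $X\cap g^i(J_m^0)=\omega(g^i(0),g^{2^m})=g^i\bigl(\omega(0,g^{2^m})\bigr)$ together with the strong invariance of $\omega$-limit sets (Lemma~\ref{lem:omega-set}), one gets $g^{2^m-j}(V)\supset X\cap J_m^0$ and hence $g^{\,2^m-j+i^*}(V)\supset g^{i^*}(X\cap J_m^0)=X\cap C_m$. Finally $X\cap J_m^0$ contains $\min J_m^0=0$ and $\max J_m^0$ (both lie in $B_K\subset X$: the latter is a right endpoint of a nested sequence of connected components of the $K_p$, hence in $\mathrm{Bd}_\IR(K)$), so $\diam(X\cap J_m^0)=|J_m^0|$; since $g^{i^*}|_{J_m^0}$ is linear of slope $|C_m|/|J_m^0|$ (Lemma~\ref{lem:LY-htop0-summary-Jn0}(iii)), we conclude $\diam(g^n(V))\ge\diam(X\cap C_m)=|C_m|\ge\delta$ with $n=2^m-j+i^*$.

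I expect the main obstacle to be this sensitivity step, and within it the two structural facts just used: that every nonempty relatively open subset of the Cantor set $X=B_K$ meets the (uncountable) set of degenerate-component points, and that the piece $X\cap(\text{component of }K_m)$ has diameter equal to the length of that component — in particular that the pieces $C_m$ containing $C$ satisfy $\diam(X\cap C_m)=|C_m|\ge|C|$ for all $m$. This uniform lower bound, which holds precisely because the intervals $I_n^0,I_n^1$ were rescaled so that $K$ has nonempty interior, is what separates $g$ from the equicontinuous map of Example~\ref{ex:non-chaos-LY-htop0} (where the analogous pieces shrink to points) and is the genuine source of sensitivity.
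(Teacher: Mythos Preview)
Your proof is correct. Both the transitivity and sensitivity arguments go through, and your identification $\omega(0,g)=B_K$ is a genuine fact (the paper only uses the inclusion $B_K\subset\omega(0,g)$ together with $0\in B_K$). The step showing $\max J_m^0\in B_K$ is right: since $g^i|_{J_m^0}$ is linear increasing for $i<2^m$ and $\max J_{p+1}^1=\max J_p^0$, every endpoint of a connected component of $K_m$ persists as an endpoint at all deeper levels, hence lies in~$B_K$.

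Your route to sensitivity differs from the paper's. The paper argues only at the single point~$0$: given $\eps>0$, choose $n$ with $|J_n^0|<\eps$; since $c_0,c_1\in g^{i_n}(J_n^0)\cap\omega(0,g)$ and $\omega(0,g)$ is strongly invariant under the periodic interval $J_n^0$, there exist $x_0,x_1\in J_n^0\cap\omega(0,g)$ with $g^{i_n}(x_j)=c_j$; the triangular inequality then makes $0$ a $\tfrac{|c_1-c_0|}{2}$-unstable point for $g|_{\omega(0,g)}$, and sensitivity follows from Lemma~\ref{lem:unstable}(iii)+(iv) via the dense orbit of~$0$. You instead invoke Lemma~\ref{lem:unstable}(vi) and work with an arbitrary open $V$, iterating a small component $D_m\subset V$ forward to $C_m$ and using $\diam(X\cap C_m)=|C_m|\ge|C|$. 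The two arguments rest on the same structural fact (the pieces $X\cap C_m$ have diameter bounded below by $|C|$); the paper's version is shorter because it handles a single point and lets transitivity propagate the instability, while yours gives the slightly sharper constant $|C|/2$ and makes the Cantor-set structure of $\omega(0,g)$ explicit.
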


\begin{proof}
The point $\{0\}=\bigcap_{n=0}^{+\infty}J_n^0$ is in $B_K$, so
$0\in\omega(0,g)$. This implies that $0$ has a dense orbit
in $\omega(0,g)$, that is, $g|_{\omega(0,g)}$ is transitive.

We consider $(i_n)_{n\ge 0}$ and $c_0,c_1$ as in the
proof of Lemma~\ref{lem:LY-htop0-non-separable-K}. Let $\eps>0$.
Let $n\geq 0$ be such that $|J_n^0|<\eps$. Since $B_K\subset\omega(0,g)$
by Lemma~\ref{lem:LY-htop0-omega(0,f)}, the points $c_0,c_1$ belong
to $g^{i_n}(J_n^0)\cap \omega(0,g)$. 
Since $J_n^0$ is a periodic compact interval 
(Lemma~\ref{lem:LY-htop0-summary-Jn0}(vi)+(viii)) and $\omega(0,g)$ is
strongly invariant (Lemma~\ref{lem:omega-set}(i)), there exist $x_0,x_1
\in J_n^0\cap \omega(0,g)$ such that $g^{i_n}(x_0)=c_0$ and 
$g^{i_n}(x_1)=c_1$. Moreover, $g^{i_n}(0)\in J_n^0$.
Then the triangular inequality implies that, either
$|g^{i_n}(0)-g^{i_n}(x_0)|\geq \frac{|c_1-c_0|}2$, or
$|g^{i_n}(0)-g^{i_n}(x_1)|\geq \frac{|c_1-c_0|}2$. This implies that the point
$0$ is $\delta$-unstable with $\delta:=\frac{|c_1-c_0|}2$ for the map
$g|_{\omega(0,g)}$. Since $g|_{\omega(0,g)}$ is transitive,  the
map $g|_{\omega(0,g)}$ is  $\frac{\delta}{2}$-sensitive
by Lemma~\ref{lem:unstable}(iv).
\end{proof}
\end{ex}

\begin{rem}
As in Example~\ref{ex:non-chaos-LY-htop0}, the map in
Example~\ref{ex:chaos-LY-htop0} is made of countably many linear pieces.
There exist completely different examples. Indeed, there
exist $C^{\infty}$ weakly unimodal maps of zero topological entropy
and chaotic in the sense of Li-Yorke; an interval map
$f\colon [0,1]\to [0,1]$ is \emph{weakly unimodal}\index{weakly unimodal}
if $f(0)=f(1)=0$ and there is 
$c\in (0,1)$ such that $f|_{[0,c]}$ is non decreasing and $f|_{[c,1]}$ is
non increasing.
See the articles of Jim{é}nez L{ó}pez \cite{Jim6} or Misiurewicz-Smítal
\cite{MSmi}+\cite[p674]{Jim} (\cite{Jim} contains 
a correction concerning \cite{MSmi}).
\end{rem}

\chapter[Other notions related to Li-Yorke pairs]{Other notions related to 
Li-Yorke pairs:\\ generic and dense chaos, distributional chaos}\label{chap6}

\section{Generic and dense chaos}
\subsection{Definitions and general results}

Let $(X,f)$ be a topological dynamical system.
We recall that $(x,y)\in X^2$ is a \emph{Li-Yorke
pair of modulus $\delta>0$} if
$$
\limsup_{n\to+\infty}d(f^n(x),f^n(y))\ge \delta\quad\text{and}\quad
\liminf_{n\to+\infty}d(f^n(x),f^n(y))=0,
$$
and $(x,y)$ is a \emph{Li-Yorke pair} if it is a Li-Yorke pair of modulus
$\delta$ for some $\delta>0$.
Let $\LY(f,\delta)$ and $\LY(f)$ denote respectively the set of 
Li-Yorke pairs of modulus $\delta$ and the set of Li-Yorke pairs for $f$.
\index{Li-Yorke pair}
\label{notation:LY}
\index{LYf@$\LY(f,\delta)$, $\LY(f)$}

The definition of \emph{generic chaos} is due to Lasota (see
\cite{Pio}). It is somehow a two-dimensional notion since the 
Li-Yorke pairs live in $X^2$. Being inspired by this
definition, Snoha defined \emph{generic $\delta$-chaos}, \emph{dense
chaos} and \emph{dense $\delta$-chaos} \cite{Sno}.  

\begin{defi}[generic chaos, dense chaos]\index{chaos!generic
chaos}\index{chaos!generic $\delta$-chaos}\index{chaos!dense chaos}
\index{chaos!dense $\delta$-chaos}\index{generic chaos}
\index{generic $\delta$-chaos}\index{dense chaos}\index{dense $\delta$-chaos}
Let $(X,f)$ be a topological dynamical system and $\delta>0$.
\begin{itemize}
\item $f$ is \emph{generically chaotic} if $\LY(f)$ contains a dense
$G_{\delta}$-set of $X^2$.
\item $f$ is \emph{generically $\delta$-chaotic} if $\LY(f,\delta)$  
contains a dense $G_{\delta}$-set of $X^2$.
\item $f$ is \emph{densely chaotic} if $\LY(f)$ is dense in $X^2$.
\item $f$ is \emph{densely $\delta$-chaotic} if $\LY(f,\delta)$  is
dense in $X^2$.
\end{itemize}
\end{defi}

Some results hold for any dynamical system.
Trivially,  generic $\delta$-chaos implies both generic chaos and
dense $\delta$-chaos; and generic chaos (resp. dense $\delta$-chaos) 
implies dense chaos.  In \cite{Mur} Murinová showed that
generic $\delta$-chaos and dense $\delta$-chaos are equivalent 
(Proposition~\ref{prop:generic-dense-delta-chaos}). Moreover, topological 
weak mixing implies generic $\delta$-chaos for some $\delta>0$ 
(Proposition~\ref{prop:mixing-generic-chaos}); and dense $\delta$-chaos implies
sensitivity to initial conditions (Proposition~\ref{prop:dense-delta-chaos}). 
We start with a lemma; then we prove these three results.

\begin{lem}\label{lem:LY-Gdelta}
Let $(X,f)$ be a topological dynamical system
and $\delta\ge 0$. Let
\begin{gather*}
A(\delta):=\{(x,y)\in X^2\mid\limsup_{n\to+\infty} d(f^n(x),f^n(y))\ge\delta\},\\
B(\delta):=\{(x,y)\in X^2\mid\liminf_{n\to+\infty} d(f^n(x),f^n(y))\le\delta\}.
\end{gather*}
Then $A(\delta)$ and $B(\delta)$ are $G_{\delta}$-sets.
\end{lem}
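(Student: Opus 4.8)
The plan is to express both $A(\delta)$ and $B(\delta)$ as countable intersections of open sets by writing the $\limsup$ and $\liminf$ conditions in terms of the distance function $d_n(x,y) := d(f^n(x),f^n(y))$, which is continuous on $X^2$ since $f$ and $d$ are continuous.

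First I would handle $A(\delta)$. The condition $\limsup_{n\to+\infty} d_n(x,y)\ge\delta$ is equivalent to saying that for every $\eps>0$ and every $N\ge 0$, there exists $n\ge N$ with $d_n(x,y) > \delta - \eps$. So I would write
$$
A(\delta)=\bigcap_{k\ge 1}\bigcap_{N\ge 0}\bigcup_{n\ge N}\{(x,y)\in X^2\mid d_n(x,y)>\delta-\tfrac1k\}.
$$
Each set $\{(x,y)\mid d_n(x,y)>\delta-\frac1k\}$ is open by continuity of $d_n$, hence each union over $n\ge N$ is open, and $A(\delta)$ is a countable intersection of open sets, i.e. a $G_\delta$-set. (One should check the equivalence carefully: if the $\limsup$ is $\ge\delta$ then for any $k$ and $N$ there are infinitely many $n$ with $d_n>\delta-\frac1k$ in particular one with $n\ge N$; conversely if for every $k,N$ some $n\ge N$ has $d_n>\delta-\frac1k$, then $\sup_{n\ge N}d_n\ge\delta-\frac1k$ for all $k$, so $\sup_{n\ge N}d_n\ge\delta$ for all $N$, giving $\limsup\ge\delta$.)

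Next I would do $B(\delta)$ symmetrically. The condition $\liminf_{n\to+\infty} d_n(x,y)\le\delta$ is equivalent to: for every $\eps>0$ and every $N\ge 0$, there exists $n\ge N$ with $d_n(x,y) < \delta+\eps$. Thus
$$
B(\delta)=\bigcap_{k\ge 1}\bigcap_{N\ge 0}\bigcup_{n\ge N}\{(x,y)\in X^2\mid d_n(x,y)<\delta+\tfrac1k\},
$$
and again each set in the innermost collection is open, so $B(\delta)$ is a $G_\delta$-set.

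Since $\LY(f,\delta) = A(\delta)\cap B(0)$, this lemma immediately gives (as a corollary, which is presumably the intended use) that $\LY(f,\delta)$ is a $G_\delta$-set, as is $\LY(f)=\bigcup_{k\ge 1}\LY(f,1/k)$... wait, that would be an $F_{\sigma\delta}$; but the definitions of generic chaos only require $\LY(f)$ to \emph{contain} a dense $G_\delta$-set, so this causes no trouble. There is no real obstacle here: the only point requiring a little care is the interchange of quantifiers in translating $\limsup/\liminf$ inequalities into the nested intersection-union form, and making sure the inequalities are strict in the right places so that the building-block sets are genuinely open rather than merely $G_\delta$ or $F_\sigma$ themselves. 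Everything else is routine.
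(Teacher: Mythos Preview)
Your proof is correct and is essentially identical to the paper's: the paper writes the same decomposition using the notation $A_n(\eps)=\bigcup_{i\ge n}(f\times f)^{-i}(X^2\setminus\overline{\Delta}_{\eps})$ and $B_n(\eps)=\bigcup_{i\ge n}(f\times f)^{-i}(\Delta_{\eps})$, which are exactly your inner unions $\bigcup_{n\ge N}\{d_n>\delta-\tfrac1k\}$ and $\bigcup_{n\ge N}\{d_n<\delta+\tfrac1k\}$. Your side remarks about $\LY(f,\delta)$ and $\LY(f)$ are accurate as well (and indeed the paper uses the lemma precisely to conclude that $\LY(f,\delta)$ is a $G_\delta$-set).
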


\begin{proof}
Let
$$
\Delta_{\eps}:=\{(x,y)\in X^2\mid d(x,y)<\eps\}\quad\text{and}\quad
\overline{\Delta}_{\eps}=\{(x,y)\in X^2\mid d(x,y)\le\eps\}.
$$
Then $\Delta_{\eps}$ is open and $\overline{\Delta}_{\eps}$ is closed.
For every integer $n\ge 0$ and every $\eps\ge 0$, we set
$$
A_n(\eps):=\{(x,y)\in X^2\mid \exists i\ge n,\ d(f^i(x),f^i(y))>\eps\}
=\bigcup_{i\ge n}(f\times f)^{-i}(X^2\setminus \overline{\Delta}_{\eps}).
$$
Since $f\times f$ is continuous, the set $A_n(\eps)$ is open. Moreover,
$$
A(\delta)=\bigcap_{k\ge 1}\bigcap_{n\ge 0} A_n(\delta-1/k).$$
Thus $A(\delta)$ is a $G_{\delta}$-set.
Similarly, we set
$$
B_n(\eps):=\{(x,y)\in X^2\mid \exists i\ge n,\ d(f^i(x),f^i(y))<\eps\}
=\bigcup_{i\ge n}(f\times f)^{-i}(\Delta_{\eps}).
$$
The set $B_n(\eps)$ is open, and thus $B(\delta)$ is a $G_{\delta}$-set
because
$$
B(\delta)=
\bigcap_{k\ge 1}\bigcap_{n\ge 0} B_n(\delta+1/k).
$$
\end{proof}

\begin{prop}\label{prop:generic-dense-delta-chaos}
Let $(X,f)$ be a topological dynamical system and $\delta> 0$. 
Then $f$ is generically $\delta$-chaotic if and only if
it is densely $\delta$-chaotic.
\end{prop}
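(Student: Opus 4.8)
The plan is to prove the nontrivial implication: dense $\delta$-chaos implies generic $\delta$-chaos. (The converse is immediate, since a set containing a dense $G_\delta$-set is in particular dense.) So suppose $\LY(f,\delta)$ is dense in $X^2$. I want to exhibit a dense $G_\delta$-set contained in $\LY(f,\delta)$.

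First I would decompose $\LY(f,\delta)$ using the notation of Lemma~\ref{lem:LY-Gdelta}: a pair $(x,y)$ is a Li-Yorke pair of modulus $\delta$ precisely when $\limsup_n d(f^n(x),f^n(y))\ge\delta$ and $\liminf_n d(f^n(x),f^n(y))=0$, i.e. $\LY(f,\delta)=A(\delta)\cap B(0)$, where $B(0)=\bigcap_{k\ge 1}B(1/k)$. By Lemma~\ref{lem:LY-Gdelta}, $A(\delta)$ is a $G_\delta$-set and each $B(1/k)$ is a $G_\delta$-set, so $\LY(f,\delta)$ is itself a $G_\delta$-set (a countable intersection of $G_\delta$-sets). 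Thus the only remaining issue is density — but we are assuming $\LY(f,\delta)$ is dense. Hence $\LY(f,\delta)$ is a dense $G_\delta$-set, and it trivially contains itself as a dense $G_\delta$-set. Therefore $f$ is generically $\delta$-chaotic.

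So the argument reduces to the observation that $\LY(f,\delta)$ is always a $G_\delta$-set, which is exactly what Lemma~\ref{lem:LY-Gdelta} is set up to give once one notes that $\liminf_n d(f^n(x),f^n(y))=0$ is equivalent to $(x,y)\in\bigcap_{k\ge1}B(1/k)$, and that a countable intersection of $G_\delta$-sets is a $G_\delta$-set. The ``hard part'' here is essentially trivial: the only thing to be careful about is that $B(0)$ as literally written (the set where the liminf is $\le 0$) coincides with the set where the liminf equals $0$, since $d\ge0$ forces the liminf to be nonnegative; this identifies $\LY(f,\delta)=A(\delta)\cap B(0)$ cleanly, and $B(0)=\bigcap_k B(1/k)$ by definition of liminf. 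I would write this out in three or four sentences: invoke Lemma~\ref{lem:LY-Gdelta} for $A(\delta)$ and for each $B(1/k)$, intersect, conclude $\LY(f,\delta)$ is $G_\delta$, and then note that density is the hypothesis. The main (and only) subtlety worth flagging is making sure the reader sees that no extra work is needed beyond Lemma~\ref{lem:LY-Gdelta} and the hypothesis — the proposition is really a packaging statement.
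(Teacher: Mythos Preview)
Your proposal is correct and matches the paper's proof essentially verbatim: the paper simply observes that $\LY(f,\delta)=A(\delta)\cap B(0)$ and invokes Lemma~\ref{lem:LY-Gdelta} to conclude $\LY(f,\delta)$ is a $G_\delta$-set. Your additional remarks (rewriting $B(0)$ as $\bigcap_k B(1/k)$, noting that $\liminf\le 0$ forces $\liminf=0$) are harmless elaborations, but note that Lemma~\ref{lem:LY-Gdelta} already covers $B(0)$ directly, so you need not decompose it further.
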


\begin{proof}
It is sufficient to notice that $\LY(f,\delta)=A(\delta)\cap B(0)$,
and thus $\LY(f,\delta)$ is a 
$G_{\delta}$-set by Lemma~\ref{lem:LY-Gdelta}.
\end{proof}

\begin{prop}\label{prop:mixing-generic-chaos}
Let $(X,f)$ be a topological dynamical system. If $f$ is topologically
weakly mixing, then it is generically $\delta$-chaotic with
$\delta:=\diam(X)$.
\end{prop}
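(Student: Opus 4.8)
The plan is to show that the set $\LY(f,\delta)$ with $\delta=\diam(X)$ contains a dense $G_\delta$-subset of $X\times X$, which by definition gives generic $\delta$-chaos. Since $\LY(f,\delta)=A(\delta)\cap B(0)$ in the notation of Lemma~\ref{lem:LY-Gdelta}, and both $A(\delta)$ and $B(0)$ are $G_\delta$-sets by that lemma, the intersection $\LY(f,\delta)$ is a $G_\delta$-set. So it only remains to prove that $\LY(f,\delta)$ is \emph{dense} in $X\times X$; then it is itself the desired dense $G_\delta$-set.

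To prove density, the natural route is to exhibit a point $(x,y)\in X\times X$ whose orbit under $f\times f$ is dense in $X\times X$, and to observe that any such point, together with all points in its $(f\times f)$-orbit, is a Li-Yorke pair of modulus $\diam(X)$. Since $f$ is topologically weakly mixing, $f\times f$ is transitive by definition; moreover, $X$ cannot be finite (a finite system is never weakly mixing, as already noted in the proof of Theorem~\ref{theo:independent-sets}), so $X\times X$ has no isolated point, and hence Proposition~\ref{prop:transitive-dense-orbit}(i) applies to $(X\times X,f\times f)$: there is a dense $G_\delta$-set of points with dense $(f\times f)$-orbit. In particular the set of such points is dense in $X\times X$. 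I would then verify that if $(x,y)$ has dense orbit in $X\times X$ then $(x,y)\in\LY(f,\diam(X))$: by compactness choose $u,v\in X$ with $d(u,v)=\diam(X)$; density of the orbit of $(x,y)$ gives integers $n_k\to\infty$ with $(f^{n_k}(x),f^{n_k}(y))\to(u,v)$, so $\limsup_n d(f^n(x),f^n(y))\ge d(u,v)=\diam(X)\ge\delta$; and it gives integers $m_k\to\infty$ with $(f^{m_k}(x),f^{m_k}(y))$ converging to a point of the diagonal, so $\liminf_n d(f^n(x),f^n(y))=0$. Hence $(x,y)$ is a Li-Yorke pair of modulus $\delta$, which shows $\LY(f,\delta)$ contains a dense set (indeed a dense $G_\delta$-set, namely the set of points with dense orbit), completing the argument.

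Actually, since $\LY(f,\delta)$ is itself a $G_\delta$-set and we have shown it is dense, no further work is needed: $\LY(f,\delta)$ is a dense $G_\delta$-set of $X\times X$, so $f$ is generically $\delta$-chaotic. There is no serious obstacle here; the only points requiring a little care are the verification that $X$ is infinite (so that Proposition~\ref{prop:transitive-dense-orbit} is applicable to $X\times X$ and gives a dense, not merely nonempty, set of transitive points) and the elementary $\limsup$/$\liminf$ estimate from a dense orbit. Everything else is a direct combination of Lemma~\ref{lem:LY-Gdelta}, the definition of topological weak mixing, and Proposition~\ref{prop:transitive-dense-orbit}(i).
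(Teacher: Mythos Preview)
Your proof is correct and follows essentially the same route as the paper: both show that the set $G$ of transitive points for $f\times f$ is a dense $G_\delta$-set contained in $\LY(f,\delta)$, using compactness to realize $\diam(X)$ and density of the orbit to get the required $\limsup$ and $\liminf$. The only cosmetic difference is that the paper concludes directly from $G\subset\LY(f,\delta)$ (since the definition only asks that $\LY(f,\delta)$ \emph{contain} a dense $G_\delta$-set), whereas you additionally invoke Lemma~\ref{lem:LY-Gdelta} to note that $\LY(f,\delta)$ is itself $G_\delta$; this extra step is harmless but unnecessary.
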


\begin{proof}
By assumption, the system $(X\times X, f\times f)$ is transitive.  
Let $G$ be the set of pairs of dense orbits in $X^2$; it is a dense
$G_\delta$-set by Proposition~\ref{prop:transitive-dense-orbit}.
Since $X$ is compact, there exist $x_1,x_2\in X$ such that
$d(x_1,x_2)=\delta$, where $\delta=\diam(X)$. 
Then, for every $(x,y)\in G$, there exist two increasing
sequences of integers $(n_i)_{i\ge 0}$ and $(m_i)_{i\ge 0}$ such that 
$$
\lim_{i\to+\infty} (f^{n_i}(x),f^{n_i}(y))=(x_1,x_2)\quad
\text{and}\quad
\lim_{i\to+\infty}(f^{m_i}(x),f^{m_i}(y))=(x_1,x_1).
$$ 
Therefore
$$
\limsup_{n\to+\infty}d(f^n(x),f^n(y))\ge\delta\quad\text{and}\quad
\liminf_{n\to+\infty}d(f^n(x),f^n(y))=0,
$$
that is, $G\subset \LY(f,\delta)$.
\end{proof}

\begin{prop}\label{prop:dense-delta-chaos}
Let $(X,f)$ be a topological dynamical system and $\delta> 0$. 
If $f$ is densely $\delta$-chaotic, then $f$ is $\eps$-sensitive
for all $\eps\in (0,\frac{\delta}2)$.
\end{prop}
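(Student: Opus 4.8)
The plan is to reduce the claim to the diameter-expansion characterization of sensitivity used in Lemma~\ref{lem:unstable}(vi). First I would fix $\eps\in\left(0,\frac{\delta}2\right)$ and set $\delta':=2\eps$, so that $0<\delta'<\delta$.

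Next I would show that for every nonempty open set $V\subset X$ there exists an integer $n\ge 0$ with $\diam(f^n(V))\ge\delta'$. Indeed, $V\times V$ is a nonempty open subset of $X^2$, so by dense $\delta$-chaos there is a Li-Yorke pair $(x,y)\in\LY(f,\delta)$ with $x,y\in V$. Since $\limsup_{n\to+\infty}d(f^n(x),f^n(y))\ge\delta>\delta'$, there is an integer $n$ with $d(f^n(x),f^n(y))>\delta'$, and then $\diam(f^n(V))\ge d(f^n(x),f^n(y))>\delta'=2\eps$.

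Then I would conclude exactly as in the second half of the proof of Lemma~\ref{lem:unstable}(vi): given any $x\in X$ and any neighborhood $U$ of $x$, choose $n$ with $\diam(f^n(U))>2\eps$ by the previous step, pick $y,z\in U$ with $d(f^n(y),f^n(z))>2\eps$, and invoke the triangle inequality to get $d(f^n(x),f^n(y))\ge\eps$ or $d(f^n(x),f^n(z))\ge\eps$. Hence $x\in U_{\eps}(f)$, and since $x$ is arbitrary, $U_{\eps}(f)=X$, i.e., $f$ is $\eps$-sensitive.

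There is no real obstacle here; the only point to keep track of is that a $\limsup$ bounded below by $\delta$ need not be attained, which is why one must work with an arbitrary $\delta'<\delta$ instead of $\delta$ itself — and this is exactly why the conclusion is $\eps$-sensitivity for $\eps<\frac{\delta}2$ rather than also for $\eps=\frac{\delta}2$.
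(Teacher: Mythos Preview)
Your proof is correct and follows essentially the same approach as the paper's. The paper's argument is slightly more direct---it fixes $x$ and a neighborhood $U$, picks a Li-Yorke pair $(y_1,y_2)\in U\times U$ from dense $\delta$-chaos, and applies the triangle inequality immediately---whereas you route the same idea through the diameter criterion of Lemma~\ref{lem:unstable}(vi); but the core steps (find a Li-Yorke pair in $U\times U$, extract an $n$ with separation $>2\eps$, split via the triangle inequality) are identical.
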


\begin{proof}
We fix $\eps\in (0,\frac{\delta}2)$.
Let $x\in X$ and let $U$ be a neighborhood of $x$. By density
of $\LY(f,\delta)$ in $X^2$, 
there exists $(y_1,y_2)$ in $U\times U$ such that
$$
\limsup_{n\to+\infty}d(f^n(y_1),f^n(y_2))\ge\delta>2\eps.
$$  
Using the triangular inequality, we see that
there exist $n\ge 0$ and $i\in\{1,2\}$ such that 
$d(f^n(x),f^n(y_i))\ge \eps$. Thus $x$ is $\eps$-unstable.
\end{proof}

\begin{rem}
The same argument as in the proof of Proposition~\ref{prop:dense-delta-chaos}
leads to the following result: if $f$ is densely chaotic, then every point
$x$ is $\eps$-unstable, for some $\eps>0$ depending on $x$.
\end{rem}

\subsection{Preliminary results}

In this section, we state several lemmas for 
densely chaotic interval maps. They will be used to study both generic
chaos and dense chaos.

\begin{lem}\label{lem:dense-chaos}
Let $f$ be a densely chaotic interval map.
\begin{enumerate}
\item
If $J$ is a non degenerate interval, then $f^n(J)$ 
is non degenerate for all $n\ge 0$.
\item Let $J_1,\ldots,J_p$ be disjoint non degenerate intervals such
that $f(J_i)\subset J_{i+1}$ for all $i\in\Lbrack 1,p-1\Rbrack$
and $f(J_p)\subset J_1$.
Then either $p=2$ and $J_1, J_2$ have a common endpoint, or $p=1$. 
If  the intervals $(J_i)_{1\le i\le p}$ are closed, then $p=1$.
\item If $J, J'$ are non degenerate invariant intervals,
then $J\cap J'\neq\emptyset$.
\end{enumerate}
\end{lem}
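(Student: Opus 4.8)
The plan is to prove the three assertions in order, each time exploiting the density of Li-Yorke pairs to rule out configurations in which orbits of open sets stay ``frozen'' inside a finite cycle of intervals, since such orbits cannot carry Li-Yorke pairs. The common mechanism is: if an open set $U$ has the property that $\overline{\CO_f(U)}$ is a finite union of closed intervals cyclically (or invariantly) permuted, then for any $x,y\in U$ the sequence $|f^n(x)-f^n(y)|$ stays in a region forcing $\liminf > 0$ or $\limsup$ small, contradicting that $\LY(f)$ meets $U\times U$.

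For (i), suppose some non degenerate interval $J$ has $f^n(J)$ degenerate for some $n\ge 1$; take the least such $n$, so $f^{n-1}(J)$ is a non degenerate interval mapped by $f$ to a single point $\{y\}$. Then $\CO_f(f^{n-1}(J))$ is contained in $\{y\}\cup\CO_f(y)$, so for any two points $x,x'\in f^{n-1}(J)$ we have $f^k(x)=f^k(x')$ for all $k\ge 1$; pulling back, for any $z,z'\in J$ the distance $|f^k(z)-f^k(z')|$ is eventually zero, hence $(z,z')$ is not a Li-Yorke pair. Since $f^{n-1}$ maps a suitable subinterval of $J$ onto a neighborhood of $y$ by the intermediate value theorem (or more directly, $f^{n-1}(\Int J)$ has nonempty interior by induction), one finds a nonempty open box in $J\times J$ disjoint from $\LY(f)$, contradicting density.

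For (ii), let $J_1,\dots,J_p$ be disjoint non degenerate intervals with $f(J_i)\subset J_{i+1\bmod p}$. Then $C:=J_1\cup\cdots\cup J_p$ satisfies $f(C)\subset C$, and for any $x,y$ in the same $J_i$, the points $f^n(x)$ and $f^n(y)$ lie in the same $J_{i+n\bmod p}$ for all $n$; hence $|f^n(x)-f^n(y)|$ is governed by $\diam(J_{i+n\bmod p})$. If some $J_j$ is a \emph{closed} non degenerate interval, pick an open box inside $J_j\times J_j$; every pair $(x,y)$ in it has $\liminf_n|f^n(x)-f^n(y)|=0$ only if some $\diam(J_{i+n\bmod p})$ can be made small, but one can instead argue: no Li-Yorke pair $(x,y)$ can have both coordinates in the interior of $C$ unless $\overline{\CO_f(\langle x,y\rangle)}$ reconnects the pieces. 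More carefully: take $x\ne y$ in $\Int J_1$; the orbit of the interval $\langle x,y\rangle$ stays inside $C$ for all time with its $n$-th iterate inside $J_{1+n\bmod p}$, so $\limsup_n|f^n(x)-f^n(y)|\le \max_i\diam J_i$ and $\liminf_n|f^n(x)-f^n(y)|\ge \min$ over infinitely many $n$ of $|f^n(x)-f^n(y)|$; to contradict density we use that $\LY(f)$ is dense in $J_1\times J_1$, so we may choose $x,y$ with $|x-y|$ arbitrarily small yet forming a Li-Yorke pair of \emph{some} modulus $\delta>0$, and then $\limsup_n |f^n(x)-f^n(y)|\ge\delta$ forces one of the pieces to have been revisited expansively --- but expansion within a fixed $p$-cycle of intervals with disjoint interiors would, by a horseshoe-type argument (Lemma~\ref{lem:chain-of-intervals}), only be possible if two pieces share an endpoint or $p\le 2$. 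I would therefore show directly: if $p\ge 3$, or if $p=2$ with $J_1,J_2$ not sharing an endpoint, then $C$ has a subinterval $U$ with $f^n(U)\subset J_{1+n\bmod p}$ for all $n$ and $\overline{\bigcup_n f^n(U)}$ still a disjoint union; density of $\LY(f)$ in $U\times U$ then forces a point of $\CO_f(U)$ accumulating on a periodic orbit in a way contradicting the disjointness, via Lemma~\ref{lem:2-connected-components} (which bounds at two the number of connected components of $\bigcup_n f^n(J)$ when periodic points are dense — and densely chaotic maps have dense unstable, hence one shows periodic points are dense, or one argues without it). The closed case gives $p=1$ because a closed $p$-cycle with $p=2$ sharing an endpoint still disconnects orbits. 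Assertion (iii) follows quickly: if $J,J'$ are disjoint non degenerate invariant intervals, then any $x\in\Int J$, $y\in\Int J'$ have $f^n(x)\in J$, $f^n(y)\in J'$ for all $n$, so $|f^n(x)-f^n(y)|\ge\operatorname{dist}(J,J')>0$ and $\liminf\ne 0$; thus $\LY(f)$ misses the open box $\Int J\times\Int J'$, contradicting density.

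The main obstacle will be assertion (ii) in the case $p=2$: distinguishing when the two intervals may legitimately share an endpoint (which is allowed) from the forbidden configurations, and handling the non-closed versus closed dichotomy cleanly. I expect the cleanest route is to reduce (ii) to the statement that $\bigcup_{n\ge0}f^n(J)$ has at most two connected components for a densely chaotic map — which one can get from the observation (noted in the remark after Proposition~\ref{prop:dense-delta-chaos}) that dense chaos makes every point unstable, combined with Proposition~\ref{prop:sensitivity-transitive-component} and Lemma~\ref{lem:2-connected-components} — and then checking that a genuine $2$-cycle of intervals with disjoint closures would create a third transitive-type component or a disconnected orbit closure, violating that bound; the closed-interval refinement ($p=1$) then comes from the fact that closed disjoint intervals cannot ``touch'' to re-merge.
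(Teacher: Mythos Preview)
Your arguments for (i) and (iii) are essentially correct, though for (i) the paper's route is simpler: density of $\LY(f)$ in $J\times J$ gives a pair $(x,y)$ with $\limsup_n|f^n(x)-f^n(y)|>0$, so $f^n(J)$ is non degenerate for infinitely many $n$, hence for all $n$ (once degenerate, always degenerate). Your version works but the detour through pre-images is unnecessary.

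The real problem is (ii). You look at pairs $(x,y)$ in the \emph{same} piece $J_1\times J_1$, try to control the $\limsup$, and then reach for heavy tools: Lemma~\ref{lem:2-connected-components} (which needs dense periodic points, something you have not established for densely chaotic maps), horseshoe arguments, and Proposition~\ref{prop:sensitivity-transitive-component}. This path does not close: dense chaos does not immediately give dense periodic points, and your ``expansion within a fixed $p$-cycle'' argument is vague and does not lead anywhere concrete. You yourself flag the gap (``or one argues without it'').

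The key idea you are missing is the same one you correctly used in (iii): look at pairs in \emph{different} pieces and use the $\liminf$ condition. Suppose two of the intervals, say $J_i$ and $J_j$, are at positive distance $\delta$. By uniform continuity pick $\eta>0$ so that $|y-y'|<\eta$ implies $|f^k(y)-f^k(y')|<\delta$ for all $k\in\Lbrack 0,p\Rbrack$. For any $(x,y)\in J_i\times J_j$ one has $f^{kp}(x)\in J_i$ and $f^{kp}(y)\in J_j$ for all $k$, hence $|f^{kp}(x)-f^{kp}(y)|\ge\delta$; the uniform continuity bound then forces $|f^n(x)-f^n(y)|\ge\eta$ for \emph{every} $n\ge 0$. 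So $\liminf>0$ and no Li-Yorke pair lies in $J_i\times J_j$, contradicting density. Thus every pair $J_i,J_j$ is at distance zero. On the line, three pairwise disjoint non degenerate intervals cannot all be at mutual distance zero, so $p\le 2$, and if $p=2$ the two intervals share an endpoint. If the $J_i$ are closed, two disjoint closed intervals have strictly positive distance, forcing $p=1$. This is the paper's argument; it is short and uses nothing beyond uniform continuity.
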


\begin{proof}
i) Let $J$ be a non degenerate interval. By density of $\LY(f)$,
there exists $(x,y)\in J\times J$ such that 
$$
\limsup_{n\to+\infty}|f^n(x)-f^n(y)|>0.
$$ 
Thus $f^n(J)$ is non degenerate for infinitely many $n$.
Since the image of a degenerate interval is degenerate, 
$f^n(J)$ is non degenerate for all $n\ge 0$.

ii) Let $J_1,\ldots,J_p$ be disjoint non degenerate intervals such
that $f(J_i)\subset J_{i+1}$ for all $i\in\Lbrack 1,p-1\Rbrack$
and $f(J_p)\subset J_1$.
Suppose that there exist  two integers $i,j\in\Lbrack 1,p\Rbrack$ such that 
the distance $\delta$ between $J_i$ and $J_j$ is
positive. Since $f$ is uniformly continuous, there exists $\eta>0$ such 
that
$$
\forall x,y,\ |x-y|<\eta\Rightarrow \forall k\in\Lbrack 0,p\Rbrack,\  
|f^k(x)-f^k(y)|<\delta.$$  
Let $(x,y)\in J_i\times J_j$. Then $(f^{kp}(x),f^{kp}(y)) \in J_i\times J_j$
for all $k\ge 0$, and thus $|f^{kp}(x)-f^{kp}(y)|\ge \delta$. Thus 
$|f^n(x)-f^n(y)|\ge  \eta>0$ for all $n\ge 0$, 
which contradicts the fact that $I_i\times J_j$
contains Li-Yorke pairs.  Therefore, the distance between any two intervals
$J_i,J_j$ is null. If the intervals $J_1,\ldots, J_p$ are closed, this 
implies that $p=1$. Otherwise, this implies that $p=1$ or $p=2$; and, if 
$p=2$, then $J_1$ and $J_2$ have a common endpoint.

iii) Let $J, J'$ be two non degenerate invariant intervals. Since $\LY(f)$
is dense, there exist
$(x,x')$ in $J\times J'$ such that $\liminf_{n\to+\infty}|f^n(x)-f^n(x')|=0$. 
By compactness, there exist an increasing sequence of integers 
$(n_i)_{i\ge 0}$ and a point $z$ such that
$$
\lim_{i\to+\infty}f^{n_i}(x)=\lim_{i\to+\infty}f^{n_i}(x')=z.
$$
Since $J$ and $J'$ are invariant (and hence closed), the point $z$ belongs to $J\cap J'$.
\end{proof}

\begin{lem}\label{lem:nested-Jn-to-0}
Let $f$ be a densely chaotic interval map. Suppose that there
exists a sequence of non degenerate invariant intervals 
$(J_n)_{n\ge 0}$ such that $\lim_{n\to+\infty}|J_n|
=0$. Then there exists a fixed point $z$ in $\bigcap_{n\ge 0} J_n$. 
Moreover, there exists a sequence of non degenerate invariant intervals
$(J'_n)_{n\ge 0}$ such that
$\lim_{n\to+\infty}|J'_n|=0$ and, for all $n\ge 0$, $J'_{n+1}$ is included 
in the interior of $J'_n$ with respect to the induced topology on $J'_0$.
\end{lem}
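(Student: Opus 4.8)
The plan is to treat the two assertions separately, the first being short.

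\textbf{The fixed point.} Each $J_n$ is a non degenerate invariant interval, so by Lemma~\ref{lem:dense-chaos}(iii) any two of them intersect. For real intervals, pairwise intersection of a finite subfamily already forces a common point: writing $J_i=[a_i,b_i]$ one has $a_i\le b_j$ for all $i,j$, hence $\max_i a_i\le\min_i b_i$ and $[\max_i a_i,\min_i b_i]\subset\bigcap_i J_i$. So every finite intersection $\bigcap_{i\le N}J_i$ is a nonempty compact interval, and by compactness $\bigcap_{n\ge 0}J_n\ne\emptyset$; since $|J_n|\to 0$ this intersection has diameter $0$, i.e. it is a single point $z$. Invariance gives $f(z)\in f(J_n)\subset J_n$ for every $n$, so $f(z)\in\bigcap_{n\ge 0}J_n=\{z\}$, and $z$ is fixed.

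\textbf{A strictly nested subsequence.} The hard part is the ``moreover'' claim, and the obstacle is that the family $(J_n)$ is not assumed nested: two invariant intervals both containing $z$ may meet only in $z$, so one cannot simply intersect all the $J_n$ with a fixed ambient interval and trim without degenerating. Instead I would pass to a subsequence. Since $z\in\bigcap_m J_m\subset J_n$, write $J_n=[z-l_n,z+r_n]$ with $l_n,r_n\ge 0$, $l_n+r_n>0$, and $l_n,r_n\to 0$. Using that every real sequence has a monotone subsequence, first thin the index set so that $(l_n)$ becomes either identically $0$ or strictly decreasing to $0$ (a nonnegative monotone sequence tending to $0$ is, after one further thinning, of one of these two types), then, along that set, do the same to $(r_n)$. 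Non degeneracy excludes ``$l\equiv 0$ and $r\equiv 0$'', and in each remaining case the resulting subsequence $(J_{n_k})_{k}$ satisfies $J_{n_{k+1}}\subsetneq J_{n_k}$. Put $J'_k:=J_{n_k}$: this is a sequence of non degenerate invariant intervals with $|J'_k|\to 0$, and $J'_0:=J_{n_0}$ is its first term.

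\textbf{Checking the relative interior condition.} It remains to verify $J'_{k+1}\subset\mathrm{Int}_{J'_0}(J'_k)$, the interior taken in the induced topology on $J'_0$. For $k=0$ this is immediate since $\mathrm{Int}_{J'_0}(J'_0)=J'_0\supset J'_1$. For $k\ge 1$, write $J'_0=[z-l_{n_0},z+r_{n_0}]$ and $J'_k=[z-l_{n_k},z+r_{n_k}]$. On a side where the corresponding radius sequence is constantly $0$, both $J'_0$ and $J'_k$ share the endpoint $z$, which is then also an endpoint of $\mathrm{Int}_{J'_0}(J'_k)$, hence included; on a side where the radius is strictly decreasing, $l_{n_k}<l_{n_0}$ (resp. $r_{n_k}<r_{n_0}$), so the endpoint of $J'_k$ lies strictly inside $J'_0$ and $\mathrm{Int}_{J'_0}(J'_k)$ is relatively open there. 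Either way, on each side $\mathrm{Int}_{J'_0}(J'_k)$ extends up to $z-l_{n_k}$, resp. $z+r_{n_k}$, with that endpoint included exactly when the radius is $0$; since $l_{n_{k+1}}\le l_{n_k}$ and $r_{n_{k+1}}\le r_{n_k}$ with strict inequality precisely on the strictly decreasing sides, $J'_{k+1}$ fits inside, which is the claim. The whole argument hinges on choosing $J'_0$ to be the first term of the nested subsequence, so that any endpoint shared with $z$ is simultaneously an endpoint of the ambient interval; the remainder is routine bookkeeping with the two radius sequences.
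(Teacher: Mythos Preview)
Your proof is correct and, in both halves, proceeds more directly than the paper's.

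For the fixed point, the paper does a case split on whether every finite intersection $\bigcap_{n\le N}J_n$ is nonempty, and in the ``otherwise'' case runs a separate argument through the last non degenerate partial intersection. Your one-line Helly-type observation (pairwise-intersecting closed real intervals have nonempty finite intersection, via $\max a_i\le\min b_i$) bypasses this: it shows at once that all finite intersections are nonempty, so the paper's second case is in fact vacuous. For the nested subsequence, the paper first passes to a subsequence extending to one fixed side of $z$, then forms the partial intersections $K_m=\bigcap_{i\le m}J_{n_i}$ and splits again into two cases according to whether the $K_m$ eventually share an endpoint. You instead coordinatize each $J_n$ by its left and right radii $l_n,r_n$ at $z$ and thin each to be either identically zero or strictly decreasing; the resulting subsequence of the \emph{original} intervals is already correctly nested, and the relative-interior check reduces to the trichotomy on each side. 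The paper's route has the advantage that its intermediate objects $K_m$ are automatically nested from the outset, so no monotone-subsequence extraction is needed; your route avoids the auxiliary intersections and the two case splits, at the price of the side-by-side bookkeeping in the last paragraph. Both are elementary.
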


\begin{proof}
First we are going to show that
\begin{equation}\label{eq:nonempty}
\bigcap_{n=0}^{+\infty} J_n\neq \emptyset.
\end{equation}
If the interval $\bigcap_{n=0}^N J_n$ is nonempty for all $N\ge 0$, 
then $\bigcap_{n=0}^{+\infty} J_n\neq \emptyset$
by compactness.  
Otherwise, let $N$ be the greatest integer such
that $\bigcap_{n=0}^N J_n$ is non degenerate.  Then the interval
$K:=\bigcap_{n=0}^N J_n$ is closed, non degenerate and invariant. 
By Lemma~\ref{lem:dense-chaos}(iii), $J_{N+1}\cap K\neq
\emptyset$, and thus the set $J_{N+1}\cap K$ is reduced to one point $z$
according to the definition of $N$. Then, again by 
Lemma~\ref{lem:dense-chaos}(iii), $J_n\cap K\neq\emptyset$ and $J_n\cap
J_{N+1}\neq \emptyset$ for all $n\ge 0$.
Thus $z$ belongs to $J_n$ by connectedness. 
We deduce that $z\in \bigcap_{n=0}^{+\infty}J_n$. This proves \eqref{eq:nonempty}.
The set $\bigcap_{n=0}^{+\infty}J_n$  is reduced to $\{z\}$ because 
$\lim_{n\to +\infty} |J_n|= 0$. Moreover, $f(z)=z$ because,
$f(J_n)\subset J_n$ for all $n\ge 0$.

Either there exist
infinitely many integers $n$ such that $J_n\cap (z,+\infty)\neq\emptyset$, or
there exist infinitely many $n$ such that $J_n\cap (-\infty,
z)\neq\emptyset$.  Without loss of generality, we may suppose that 
the first case holds, that is,
there exists an increasing sequence $(n_i)_{i\ge 0}$
such that, $\forall i\ge 0$,  $J_{n_i}\supset [z,z+\eps_i]$ for some $\eps_i>0$.
We set $K_m:= \bigcap_{i=0}^m J_{n_i}$ for all $m\ge 0$.
Then $K_m$ is a non degenerate invariant
interval and $K_{m+1}\subset K_m$. We split the end of the proof into two
cases.

\textbf{Case 1.} There exists an increasing sequence $(m_i)_{i\ge
0}$ such that $K_{m_{i+1}}\subset \Int{K_{m_i}}$ for all $i\ge 0$.
Then we define $J'_i:=K_{m_i}$ and we get a suitable sequence of intervals.

\textbf{Case 2.} Suppose that the assumption of Case 1 is not satisfied.
Then there exists $M\ge 0$ such that $K_m\not\subset \Int{K_M}$ for all 
$m\ge M$. Since $(K_m)_{m\ge 0}$ is a sequence of nested intervals,
this implies that
\begin{equation}\label{eq:minmax}
\text{either }\forall m\ge M,\ \min K_m=\min K_M,
\quad \text{or }\forall m\ge M,\ \max K_m=\max K_M.
\end{equation} 
Since $\lim_{m\to+\infty}|K_m|= 0$, there exists 
an increasing sequence of integers $(m_i)_{i\ge 0}$ with $m_0=M$ such that
$|K_{m_{i+1}}|<|K_{m_i}|$ for all $i\ge 0$. Together with \eqref{eq:minmax},
this implies that $K_{m_{i+1}}$ is included in the interior of $K_{m_i}$
for the induced topology on $K_M$.
Then $J'_i:=K_{m_i}$ gives a suitable sequence of intervals.
\end{proof}

The next lemma will be an important tool. It 
gives a sufficient condition for  a densely
chaotic map to be generically $\delta$-chaotic for some $\delta>0$.

\begin{lem}\label{lem:dense-chaos-dense-delta-chaos}
Let $f\colon I\to I$ be a densely chaotic interval map.
Suppose that there exists $\eps>0$ such that every non degenerate invariant
interval has a length greater than or equal to $\eps$. Then
there exists $\delta>0$ such that $f$ is generically $\delta$-chaotic.
\end{lem}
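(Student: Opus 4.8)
By Proposition~\ref{prop:generic-dense-delta-chaos} it suffices to produce $\delta>0$ for which $f$ is densely $\delta$-chaotic, i.e.\ for which $\LY(f,\delta)$ meets every nonempty open rectangle $U\times V\subseteq I\times I$; I will take $\delta:=\eps/4$. So fix nonempty open $U,V$ and nondegenerate closed intervals $J\subseteq U$, $K\subseteq V$, and I must find a Li-Yorke pair of modulus $\delta$ in $U\times V$.

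The first ingredient is a structural fact: for every nondegenerate interval $J$ there is $N\ge 0$ such that $W_J:=\overline{\bigcup_{m\ge N}f^m(J)}$ is a nondegenerate invariant interval (hence $|W_J|\ge\eps$ by hypothesis) and $f^m(J)\subseteq W_J$ for $m\ge N$. To see this, let $A:=\overline{\bigcup_{m\ge 0}f^m(J)}$; it is closed and invariant, and by Lemma~\ref{lem:dense-chaos}(i) every $f^m(\overline J)$ is nondegenerate. Since a nondegenerate component of $A$ has nonempty interior, which is open in $A$ and must meet the dense set $\bigcup_m f^m(J)$, the nondegenerate components of $A$ are exactly the components $D_m$ containing $f^m(\overline J)$, and $f(D_m)\subseteq D_{m+1}$. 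If the $D_m$ were pairwise disjoint, then being disjoint subintervals of $I$ they would satisfy $\diam f^m(\overline J)\le|D_m|\to 0$, forcing every pair of points in $\Int{\overline J}$ to violate the first condition in \eqref{eq:LY1} and contradicting dense chaos; hence $(D_m)$ is eventually periodic, a period $\ge 2$ is excluded by Lemma~\ref{lem:dense-chaos}(ii), so $(D_m)$ is eventually constant equal to an invariant interval $D_N$, and one checks $\overline{\bigcup_{m\ge N}f^m(J)}=D_N=:W_J$.

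Next, every descending chain of nondegenerate invariant intervals $[a_n,b_n]$ has $b_n-a_n\ge\eps$, hence the nondegenerate invariant interval $[\lim a_n,\lim b_n]$ as a lower bound, so Zorn's Lemma yields a nondegenerate invariant interval $W^*$ minimal for inclusion. On $W^*$ the map $f|_{W^*}$ is again densely chaotic (a Li-Yorke pair lying in an invariant set is a Li-Yorke pair for the restriction) and satisfies the same length hypothesis; applying the structural fact inside $W^*$ and using minimality, every nondegenerate subinterval of $W^*$ has orbit dense in $W^*$, so $f|_{W^*}$ is transitive. By Theorem~\ref{theo:summary-transitivity} (together with Proposition~\ref{prop:mixing-generic-chaos} in the mixing case, and a routine argument in the two-halves case), $f|_{W^*}$ is generically $\delta_0$-chaotic for some $\delta_0\ge\eps/2$; moreover, being eventually topologically mixing on each of its one or two pieces, $f|_{W^*}$ has the property that every nondegenerate subinterval of $W^*$ eventually covers one fixed nondegenerate interval $B_0\subseteq W^*$ (Proposition~\ref{prop:def-mixing}).

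Finally, $W_J$ and $W_K$ are nondegenerate invariant intervals, they meet by Lemma~\ref{lem:dense-chaos}(iii), and their union contains a minimal invariant interval $W^*$ which, by minimality, lies in $W_J$ — and also in $W_K$ whenever $W_J\cap W_K$ is nondegenerate. In that (generic) case the orbits of $J$ and $K$ both enter $\Int{W^*}$, so $f^{m_1}(J)$ and $f^{m_2}(K)$ each contain a nondegenerate subinterval of $W^*$, hence by the covering property there is a single $M$ with $f^M(J)\supseteq B_0$ and $f^M(K)\supseteq B_0$; choosing a Li-Yorke pair $(b_1,b_2)\in B_0\times B_0$ of modulus $\delta_0$ for $f|_{W^*}$ and using Lemma~\ref{lem:chain-of-intervals} to pull back to $x\in J$ with $f^M(x)=b_1$ and $y\in K$ with $f^M(y)=b_2$ gives $(x,y)\in U\times V$ with $\limsup_n d(f^n x,f^n y)\ge\delta_0\ge\delta$ and $\liminf_n d(f^n x,f^n y)=0$. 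The remaining case, where $W_J$ and $W_K$ touch only at a fixed point $q$ (the "two transitive subintervals" configuration), is the one I expect to be the main obstacle: there $W_J$ and $W_K$ are intervals of length $\ge\eps$ on opposite sides of $q$, and one must instead start from a Li-Yorke pair $(x_0,y_0)\in U\times V$ furnished by dense chaos and upgrade its modulus, using that one orbit is confined to $W_J$ and the other to $W_K$, so that whenever an orbit is near the endpoint of its interval far from $q$ the two orbits are $\ge\eps-o(1)$ apart — reconciling this uniform lower bound on the $\limsup$ with the recurrence near $q$ forced by $\liminf=0$ is the delicate point. The other steps (the structural lemma, the Zorn argument, transitivity of $f|_{W^*}$, and the covering-and-pullback construction) are comparatively routine given Lemmas~\ref{lem:dense-chaos}–\ref{lem:nested-Jn-to-0} and the results of Chapters~\ref{chap2} and~\ref{chap6}.
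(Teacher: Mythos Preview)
Your approach differs substantially from the paper's. The paper argues by contradiction that some $\delta>0$ satisfies $\sup_n|f^n(J)|>\delta$ for every non-degenerate closed interval $J$: assuming the contrary for some $\delta<\eps/4$ yields an interval whose iterates stay small, and one shows that its orbit either assembles into an invariant interval of length $<\eps$ (if a period-$2$ point is captured) or drifts monotonically to a boundary fixed point (if not), each a contradiction; from this uniform expansion the density of the sets $A(\delta)$ and $B(0)$ follows by elementary covering arguments. Your route instead constructs a minimal invariant interval $W^*$ on which $f$ is transitive --- essentially building condition~(iv) of Theorem~\ref{theo:generic-chaos-etc} --- and pulls Li-Yorke pairs back from there.

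The case you flag as the main obstacle is a genuine gap, and your proposed fix --- upgrading the modulus of a given Li-Yorke pair $(x_0,y_0)$ using that its orbits are confined to opposite sides of $q$ --- does not work: the orbit of a specific point $x_0$ need not be dense in $W_J$, so nothing forces it ever to leave a neighbourhood of $q$. What does work is this: choose minimal invariant intervals $W^*_J\subseteq W_J$ and $W^*_K\subseteq W_K$; by Lemma~\ref{lem:dense-chaos}(iii) they intersect, necessarily only at the fixed point $q$, so $q$ is a fixed \emph{endpoint} of each transitive interval, and Theorem~\ref{theo:summary-transitivity} then forces $f|_{W^*_J}$ and $f|_{W^*_K}$ to be topologically mixing. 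Hence $(f\times f)|_{W^*_J\times W^*_K}$ is mixing, and a point $(x,y)$ with dense orbit in this product has $\liminf_n|f^n(x)-f^n(y)|=0$ (approaching $(q,q)$) and $\limsup_n|f^n(x)-f^n(y)|=|W^*_J|+|W^*_K|\ge 2\eps$. Since $\bigcup_m f^m(J)$ is dense in $W_J\supseteq W^*_J$ (and likewise for $K$), for some common $M$ the set $f^M(J')\times f^M(K')$ has nonempty interior in $W^*_J\times W^*_K$, and you pull back exactly as in your other case. Equivalently, your argument so completed establishes condition~(iv) of Theorem~\ref{theo:generic-chaos-etc}, and you could simply invoke (iv)$\Rightarrow$(ii) there.
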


\begin{proof}
Suppose that
\begin{equation}\label{eq:fnJ<delta}
\forall \delta>0,\: \exists\, J \text{ non degenerate closed interval
 such that } \forall n\ge 0,\: |f^n(J)|\le\delta.
\end{equation}
We are going to show that this is impossible. 
We fix $\delta\in (0,\frac{\eps}4)$. Let $J$ be a
non degenerate closed interval such that $|f^n(J)|\le\delta$ for all
$n\ge 0$. There exists a Li-Yorke pair in $J\times J$, which implies that
\begin{equation}\label{eq:limsup-fnJ}
\limsup_{n\to+\infty}|f^n(J)|>0.
\end{equation}
Thus there exist positive integers $N,p$ such that $f^N(J)\cap
f^{N+p}(J)\neq \emptyset$ (otherwise, all $(f^n(J))_{n\ge 0}$ would be
disjoint, and \eqref{eq:limsup-fnJ} could not hold because $I$ has a finite
length). Let $X:=\bigcup_{n\ge N} f^n(J)$. The set $X$ has
at most $p$ connected components, which are cyclically mapped under $f$. 
Moreover, the connected components
of $X$ are non degenerate by Lemma~\ref{lem:dense-chaos}(i). Thus, according to
Lemma~\ref{lem:dense-chaos}(ii), $X$ has either one connected component
or two connected components with a common endpoint. In both
cases, $\overline{X}$ is an interval.

Suppose that there exist a point $z$ and an integer $n_0\ge N$ such that
$f^2(z)=z$ and $z\in f^{n_0}(J)$. Then $z\in f^{n_0+2k}(J)$ for all $k\ge 0$. 
By assumption on $J$, $|f^n(J)|\le\delta$ for all integers $n\ge 0$,
which implies that
$\left|\bigcup_{k\ge 0}f^{n_0+2k}(J)\right|\le 2\delta$ and
$\left|\bigcup_{k\ge 0}f^{n_0+2k+1}(J)\right|\le 2\delta$
(these two sets are intervals containing respectively $z$ and $f(z)$).
Let $Y:=\overline{\bigcup_{n\ge n_0}f^n(J)}=f^{n_0-N}(\overline{X})$.
Then $Y$ is a non degenerate closed interval, $f(Y)\subset Y$ and
$|Y|\le 4\delta$. Moreover, $|Y|\ge\eps$ according to the hypothesis 
of the lemma,
which is a contradiction because $\delta<\eps/4$. We deduce that 
\begin{equation}\label{eq:Xf2z}
X\text{ contains no point }z\text{ such that }f^2(z)=z.
\end{equation}

Let $X_0$ be the connected component of $X$ containing $f^N(J)$. We
set $g:=f^2$. Then $g(X_0)\subset X_0$ (because $X$ has at most two
connected components) and $g|_{X_0}$ has no fixed point by \eqref{eq:Xf2z}. 
By continuity, either 
\begin{equation}\label{eq:fx<x}
\forall x\in X_0,\ g(x)<x,
\end{equation}
or 
$$\forall x\in X_0,\ g(x)>x.$$ We assume that \eqref{eq:fx<x} holds, the other 
case being symmetric. Let $a:=\inf X_0$. The fact that $g(X_0)\subset X_0$
combined with \eqref{eq:fx<x} implies that $g(a)=a$ (and $a\notin X_0$). 
Let $b:=\max f^N(J)$.
We set $b_n:=\max g^n([a,b])$ for every $n\ge 0$.
Then, for every $n\ge 0$, there exists $x_n\in [a,b]$ such that
$g^{n+1}(x_n)=b_{n+1}$. Thus, by \eqref{eq:fx<x},
$b_{n+1}=g(g^n(x_n))\le g^n(x_n)\le b_n$. Therefore,
the sequence $(b_n)_{n\ge 0}$ is non increasing, and thus
has a limit in $\overline{X_0}$. We set $b_\infty:=\lim_{n\to+\infty} b_n$.
Note that $b_{\infty}\in X_0\cup\{a\}$ because
$a\le b_{\infty}\le b$ and $b\in X_0$. We have
$$
\bigcap_{n\ge 0}g^n([a,b])=\bigcap_{n\ge 0}[a,b_n]=[a,b_\infty]
$$
and
\begin{eqnarray*}
\bigcap_{n\ge 0}g^n([a,b])&=&g(\bigcap_{n\ge 0}g^n([a,b]))\text{ because the intersection
is decreasing}\\
&=&g([a,b_\infty])\supset [a,g(b_\infty)].
\end{eqnarray*}
Hence $g([a,b_{\infty}])=[a,b_{\infty}]$. Thus there exists $x\in 
[a,b_{\infty}]$ such that $g(x)=b_{\infty}$, which implies that $g(x)\ge x$.
According to \eqref{eq:fx<x}, this is possible only if $x=a$, and hence
$b_{\infty}=g(a)=a$.
Since $|g^{n+N}(J)|\le |b_n-a|$, we have 
$\lim_{n\to+\infty} |g^{n+N}(J)|=0$.
By continuity of $f$, this implies that 
$\lim_{n\to+\infty}|f^{n}(J)|= 0$, which contradicts \eqref{eq:limsup-fnJ}. We
conclude that \eqref{eq:fnJ<delta} does not hold, that is, there
exists $\delta>0$ such that
\begin{equation}\label{eq:fnJ}
\text{for every non degenerate closed interval }J,\ \exists n\ge 0,\ 
|f^n(J)|>\delta.
\end{equation}
Let $J$ be a non degenerate closed interval. Then the closed interval
$f^n(J)$  is also non degenerate by Lemma~\ref{lem:dense-chaos}(i). Thus,
according to \eqref{eq:fnJ},
\begin{equation}\label{eq:limsup-fnJ>delta}
\limsup_{n\to+\infty}|f^n(J)|\ge\delta.
\end{equation}
We define
$$
A_k(\eta):=\{(x,y)\in I\times I\mid\ \exists i\ge k,\, |f^i(x)-f^i(y)|>\eta\}
$$
and
$$
A(\delta):=\bigcap_{n\ge 1}\bigcap_{k\ge 0} A_k(\delta-1/n)=
\{(x,y)\in I\times I\mid \limsup_{k\to+\infty} |f^k(x)-f^k(y)|\ge\delta\}.
$$
We are going to show that $A_k(\eta)$ is dense for all $\eta<\delta$ and all
$k\ge 0$.  Let $J_1, J_2$ be two non degenerate closed intervals. We
consider two cases.

\noindent$\bullet$
For some $m\ge 0$, $f^m(J_1)\subset f^m(J_2)$. By
\eqref{eq:limsup-fnJ>delta}, there exists $n\ge \max\{k,m\}$
such that $|f^n(J_1)|\ge\delta>\eta$, thus there exist $x,x'\in J_1$
such that $|f^n(x)-f^n(x')|>\eta$ and there exists $y\in J_2$ such
that $f^n(y)=f^n(x')$. Consequently $A_k(\eta)\cap (J_1\times
J_2)\neq \emptyset$.

\noindent$\bullet$
For all $m\ge 0$, $f^m(J_1)\setminus f^m(J_2)\neq\emptyset$.  By
\eqref{eq:limsup-fnJ>delta}, there exists $n\ge k$ such that
$|f^n(J_2)|\ge \delta>\eta$, thus there exist $x,x'\in J_2$ such that
$|f^n(x)-f^n(x')|>\eta$. By assumption there exists $y\in J_1$ such
that $f^n(y)\notin f^n(J_2)$. 
Since $f^n(J_2)$ is an interval containing $x,x'$ but not $y$, we have
either $|f^n(y)-f^n(x)|>\eta$ or
$|f^n(y)-f^n(x')|>\eta$. Consequently $A_k(\eta)\cap (J_1\times J_2)
\neq\emptyset$.

The sets $A_k(\eta)$ are open and dense in $I\times I$. Thus, 
according to the Baire category theorem, $A(\delta)$ is a dense 
$G_{\delta}$-set. Moreover, the set 
$$
B(0):=\{(x,y)\in I\times I\mid \liminf_{n\to+\infty}|f^n(x)-f^n(y)|=0\}
$$
is a $G_{\delta}$-set by Lemma~\ref{lem:LY-Gdelta} and it is dense
because $f$ is
densely chaotic. Therefore the set $\LY(f,\delta)=B(0)\cap A(\delta)$
is a dense $G_{\delta}$-set and $f$ is generically $\delta$-chaotic.
\end{proof}

\subsection{Generic chaos and transitivity}
Using the structure of transitive non mixing interval maps
(Theorem~\ref{theo:summary-transitivity}), one can show that
Proposition~\ref{prop:mixing-generic-chaos} implies that any
transitive interval map is generically $\delta$-chaotic for some
$\delta>0$. The converse is not true (see 
Example~\ref{ex:generic-chaos-not-transitive} below), 
yet it is partially true since a
generically $\delta$-chaotic interval map has exactly one or two
transitive  intervals, as it was shown by Snoha; he also
proved that for an interval map, the notions of generic
$\delta$-chaos, generic  chaos and dense $\delta$-chaos are
equivalent (Theorem~\ref{theo:generic-chaos-etc} below).

We shall need the Kuratowski-Ulam Theorem \cite{Oxt, Kur}.

\begin{theo}[Kuratowski-Ulam]\label{theo:Kuratowski}
Let $X,Y$ be complete metric spaces. If $G$ is a dense
$G_{\delta}$-set in $X\times Y$, then there exists a dense
$G_{\delta}$-set $A\subset X$ such that, for all $x\in A$, the set
$\{y\in Y\mid (x,y)\in G\}$ is a dense $G_{\delta}$-set.
\end{theo}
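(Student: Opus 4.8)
The plan is to reduce the statement to the standard Kuratowski--Ulam argument, using only the Baire category theorem. Write $G=\bigcap_{n\ge 1}G_n$ with each $G_n$ dense and open in $X\times Y$; this is possible because $G$ is a $G_\delta$, and density of $G$ forces each $G_n$ to be dense. For $x\in X$ and $S\subset X\times Y$ write $S_x:=\{y\in Y\mid (x,y)\in S\}$ for the vertical section over $x$. Then $(G_n)_x$ is open in $Y$ for every $x$, being the preimage of $G_n$ under the continuous map $y\mapsto(x,y)$, and $G_x=\bigcap_n(G_n)_x$ is a $G_\delta$-set in $Y$. Hence it suffices to produce a dense $G_\delta$-set $A\subset X$ such that, for every $x\in A$ and every $n$, the section $(G_n)_x$ is dense in $Y$: then for $x\in A$ the set $G_x$ is a countable intersection of dense open subsets of the complete metric space $Y$, hence a dense $G_\delta$ by the Baire category theorem.

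First I would fix a countable base $\{V_k\}_{k\ge 1}$ of nonempty open sets for $Y$, available because $Y$ is a separable metric space in the situations considered here (a compact metric space, or a finite product of such), hence second countable. For $n,k\ge 1$ set
$$
W_{n,k}:=\{x\in X\mid (G_n)_x\cap V_k\ne\emptyset\}=\pi_X\bigl(G_n\cap(X\times V_k)\bigr),
$$
where $\pi_X\colon X\times Y\to X$ is the first projection. Since $G_n\cap(X\times V_k)$ is open in $X\times Y$ and the projection of an open set onto a factor is open, $W_{n,k}$ is open in $X$. It is also dense: given a nonempty open $U\subset X$, the set $U\times V_k$ is a nonempty open subset of $X\times Y$, so it meets the dense set $G_n$, and any point $(x,y)$ in the intersection has $x\in U\cap W_{n,k}$.

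Then I would put $A:=\bigcap_{n,k\ge 1}W_{n,k}$, a countable intersection of dense open subsets of the complete metric space $X$, hence a dense $G_\delta$-set by the Baire category theorem (Corollary~\ref{cor:baire}). For $x\in A$ the section $(G_n)_x$ meets every basic open set $V_k$ and so is dense in $Y$, which puts us in the situation of the first paragraph; thus $G_x$ is a dense $G_\delta$-set in $Y$ for every $x\in A$, which is exactly the claim.

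The only genuinely delicate point is the density of the $W_{n,k}$ --- equivalently, the fact that "most" vertical sections of a dense open set are dense --- and that is exactly the elementary argument in the second paragraph; everything else is bookkeeping (sections of open sets are open, projections of open sets are open, and two applications of Baire, one in $X$ and one in $Y$). I would also remark that second countability of $Y$ is used only to replace "dense in $Y$'' by "meets each of countably many basic open sets''; for an arbitrary complete metric $Y$ one would instead invoke a countable $\pi$-base, but that refinement is not needed here.
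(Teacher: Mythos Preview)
The paper does not give its own proof of this theorem: it merely states the result and cites Oxtoby and Kuratowski, so there is nothing to compare your argument against. Your proof is the standard Kuratowski--Ulam argument and is correct; the key steps (openness of $W_{n,k}$ via the open projection, density of $W_{n,k}$ from density of $G_n$, then two applications of Baire) are all in order.

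One comment worth keeping in mind: as you note yourself, the argument uses a countable base for $Y$, i.e.\ second countability. The paper's hypothesis ``complete metric spaces'' does not literally guarantee this, and indeed the Kuratowski--Ulam theorem can fail without some countability assumption on $Y$. In the paper's only application (the proof of Theorem~\ref{theo:generic-chaos-etc}), the spaces are subintervals of $\IR$, so second countability is automatic; your remark that ``that refinement is not needed here'' is exactly right, but it would be cleaner to state the hypothesis explicitly rather than leave it implicit.
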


The next theorem is due to Snoha \cite{Sno}.

\begin{theo}\label{theo:generic-chaos-etc}
Let $f$ be an interval map.  The following properties are
equivalent:
\begin{enumerate}
\item $f$ is generically chaotic,
\item $f$ is generically $\delta$-chaotic for some $\delta>0$,
\item $f$ is densely $\delta$-chaotic for some $\delta>0$,
\item either there exists a unique non degenerate transitive 
interval, or there exist exactly two non degenerate transitive
intervals having a common endpoint; moreover, if $J$ is a non
degenerate interval, then $f(J)$ is non degenerate, and  there exist a
transitive interval $I_0$ and an integer $n\ge 0$ such that $f^n(J)\cap
\Int{I_0}\neq\emptyset$.
\end{enumerate}
Moreover, (ii) and (iii) hold with the same $\delta$.
\end{theo}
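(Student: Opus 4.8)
The plan is to prove the cycle of implications $(i)\Rightarrow(iii)\Rightarrow(iv)\Rightarrow(ii)\Rightarrow(i)$, which together with the trivial observation that $(ii)\Rightarrow(iii)\Rightarrow(i)$ always holds for any dynamical system, establishes the equivalence. The equivalence $(ii)\Leftrightarrow(iii)$ with the same $\delta$ is already given by Proposition~\ref{prop:generic-dense-delta-chaos}, so in fact the real content is the equivalence of generic chaos (no uniform modulus) with dense $\delta$-chaos and with the geometric condition $(iv)$.

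First I would dispatch the easy part. The implication $(ii)\Rightarrow(i)$ is immediate from the definitions, and $(i)\Rightarrow(iii)$ would go: if $f$ is generically chaotic then $\LY(f)$ is dense, hence $f$ is densely chaotic, and then I invoke Lemma~\ref{lem:dense-chaos-dense-delta-chaos} --- provided I can first show that under dense chaos there is a uniform lower bound $\eps$ on the length of non degenerate invariant intervals. This is where the structure of $(iv)$ really does the work, so the cleanest route is $(i)\Rightarrow(iv)\Rightarrow(iii)\Rightarrow(ii)\Rightarrow(i)$. For $(iii)\Rightarrow(ii)$ I would use Proposition~\ref{prop:generic-dense-delta-chaos} directly. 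For $(ii)\Rightarrow(i)$ nothing is needed.

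The heart of the argument is $(i)\Rightarrow(iv)$ and $(iv)\Rightarrow(iii)$ (or $(iv)\Rightarrow$ densely $\delta$-chaotic). For $(i)\Rightarrow(iv)$: assume $f$ is densely chaotic (which follows from generic chaos). Lemma~\ref{lem:dense-chaos}(i) gives that images of non degenerate intervals are non degenerate. Now I would build transitive intervals using the machinery of Chapter~\ref{chap:periodic-points}. Density of Li-Yorke pairs forces every point to be $\eps$-unstable for some $\eps$ depending on the point (the remark after Proposition~\ref{prop:dense-delta-chaos}), so $I$ is a countable union of the closures $\overline{U_{1/n}(f)}$; by Baire one of them, hence some $U_{1/(2n)}(f)$ by Lemma~\ref{lem:unstable}(iv), has nonempty interior, and Proposition~\ref{prop:sensitivity-transitive-component} produces a transitive cycle of intervals $(J_1,\ldots,J_p)$. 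By Lemma~\ref{lem:dense-chaos}(ii) applied to a cycle of \emph{closed} intervals we get $p=1$, so there is a transitive interval $I_0$ with $f(I_0)=I_0$. Uniqueness (up to at most two sharing an endpoint) follows from Lemma~\ref{lem:dense-chaos}(iii): two disjoint invariant intervals would intersect, a contradiction, and if $I_0, I_1$ are two transitive intervals with intersection a single point, that is the ``two intervals with a common endpoint'' case; one then checks a third is impossible. Finally, for an arbitrary non degenerate interval $J$, I must show some iterate meets $\Int{I_0}$: if not, then $\CO_f(J)=\bigcup_{n\ge0}f^n(J)$ avoids $\Int{I_0}$; by Lemma~\ref{lem:2-connected-components}-type reasoning (periodic points need not be dense here, so I would instead argue via Lemma~\ref{lem:dense-chaos}(ii) applied to the at-most-finitely-many non degenerate components of $\overline{\CO_f(J)}$, which form a cycle, forcing one component) one extracts an invariant interval disjoint from $\Int{I_0}$, contradicting Lemma~\ref{lem:dense-chaos}(iii) unless it degenerates --- but it cannot degenerate by Lemma~\ref{lem:dense-chaos}(i). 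This last step, controlling the orbit of an \emph{arbitrary} interval and ruling out a ``second piece'' of dynamics hiding away from $I_0$, is the main obstacle, and it is exactly where the hypothesis of dense chaos (rather than just the existence of one Li-Yorke pair) is essential.

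For $(iv)\Rightarrow(iii)$: from $(iv)$ I want the uniform bound needed by Lemma~\ref{lem:dense-chaos-dense-delta-chaos}, namely that every non degenerate invariant interval has length $\ge\eps$ for a fixed $\eps>0$. Here I would take $\eps$ to be, say, half the minimum length of the one or two transitive intervals $I_0$ (and $I_1$): any non degenerate invariant interval $K$ must intersect $I_0$ by the Lemma~\ref{lem:dense-chaos}(iii) step, and since $I_0$ is transitive its intersection with an invariant set is either small or all of $I_0$ --- more precisely, $K\cap I_0$ being invariant under $f|_{I_0}$ and $f|_{I_0}$ transitive forces $K\supset I_0$ (a proper closed invariant subinterval of a transitive interval with nonempty interior is impossible), so $|K|\ge|I_0|\ge\eps$; the degenerate intersection cases are excluded because invariant intervals are non degenerate. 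I also need $f$ densely chaotic, which is part of $(iv)$ implicitly (the transitive interval $I_0$ is generically $\delta$-chaotic by Proposition~\ref{prop:mixing-generic-chaos} combined with Theorem~\ref{theo:summary-transitivity}, and then the condition that every interval is eventually dragged into $\Int{I_0}$ spreads the Li-Yorke pairs densely over $I\times I$). Then Lemma~\ref{lem:dense-chaos-dense-delta-chaos} gives generic $\delta$-chaos, hence $(iii)$ and $(ii)$ with the same $\delta$. Throughout I would be careful that the ``one or two transitive intervals'' case is handled uniformly by passing to $f^2$ when there are two, since $f^2$ preserves each and is mixing on each.
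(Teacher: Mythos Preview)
Your $(i)\Rightarrow(iv)$ step has a genuine gap. You reduce generic chaos to dense chaos and then argue entirely from dense chaos, but dense chaos alone cannot yield $(iv)$: Example~\ref{ex:dense-chaos-not-generic} is densely chaotic yet has \emph{no} transitive subinterval. Concretely, your Baire argument needs $I=\bigcup_n\overline{U_{1/n}(f)}$; but in that example the fixed point $1$ lies inside nested invariant intervals $J_n=[a_n,1]$ of diameter $3^{-n}$, so $1\notin\overline{U_\eps(f)}$ for any $\eps>0$ and the union does not exhaust $I$. Indeed $U_\eps(f)$ has empty interior there for every $\eps$, since otherwise Proposition~\ref{prop:sensitivity-transitive-component} together with Lemma~\ref{lem:dense-chaos}(ii) would produce a transitive interval. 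The remark after Proposition~\ref{prop:dense-delta-chaos} that you invoke does not hold in the generality you use it.

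The missing ingredient is the Kuratowski--Ulam theorem, and the paper uses it in $(i)\Rightarrow(ii)$ rather than going through $(iv)$: if there were invariant intervals $J_n$ with $|J_n|\to0$, then after nesting via Lemma~\ref{lem:nested-Jn-to-0}, Kuratowski--Ulam applied to the residual sets $\LY(f)\cap(J_0\times J_n)$ yields a residual set $A\subset J_0$ of points whose orbits are eventually trapped in every $J_n$ and hence converge to the common fixed point; but $A\times A$ must meet $\LY(f)$, a contradiction. Lemma~\ref{lem:dense-chaos-dense-delta-chaos} then gives $(ii)$. The paper proves $(iii)\Rightarrow(iv)$ (where the uniform $\delta$ supplies genuine sensitivity via Proposition~\ref{prop:dense-delta-chaos}) and closes with a direct $(iv)\Rightarrow(ii)$. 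Your alternative $(iv)\Rightarrow(ii)$ via Lemma~\ref{lem:dense-chaos-dense-delta-chaos} is valid --- applying $(iv)$ with $J:=K$ shows any invariant $K$ contains some transitive $I_i$, giving the uniform lower bound --- once you first derive dense chaos from $(iv)$, which you correctly outline.
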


\begin{proof}
The implication (ii)$\Rightarrow$(i) is
trivial and the equivalence (ii)$\Leftrightarrow$(iii) with the same $\delta$
is given by Proposition~\ref{prop:generic-dense-delta-chaos}.
We are going to show the implications (i)$\Rightarrow$(ii) and
(iii)$\Rightarrow$(iv)$\Rightarrow$(ii).

\medskip (i)$\Rightarrow$(ii).\par
Assume that $f$ is generically chaotic.
Suppose that there exists a sequence of non degenerate invariant
intervals $(J_n)_{n\ge 0}$ such that
$|J_n|\to 0$ when $n$ goes to infinity. We are going to show that
this situation is impossible. According to Lemma~\ref{lem:nested-Jn-to-0},
we may assume that $J_{n+1}\subset \Int{J_n}$ for the induced
topology of $J_0$. From now on, we work in $J_0$;
notice that $f|_{J_0}\colon J_0\to J_0$ 
is a generically chaotic interval map and
$\LY(f|_{J_0})=\LY(f)\cap (J_0\times J_0)$.
By compactness, $\bigcap_{n\ge 0}J_n$ is nonempty, and
hence is reduced to a single point $\{z\}$. 

The set $\LY(f)\cap (J_0\times J_n)$ is a dense $G_{\delta}$-set in
$J_0\times J_n$. Thus, by Theorem~\ref{theo:Kuratowski}, there exists a
dense $G_{\delta}$-set $A_n$ in $J_0$ such that for all $x\in A_n$, there
exists $y\in J_n$ with $(x,y)\in \LY(f)$. According to the Baire category
theorem (see Corollary~\ref{cor:baire2}), $A:=\bigcap_{n\ge 0} A_n$ is a dense $G_{\delta}$-set in
$J_0$. Let $x\in A$ and $n\ge 0$. There exists $y\in
J_{n+1}$ such that $(x,y)\in \LY(f)$; in particular
$\liminf_{k\to+\infty} |f^k(x)-f^k(y)|=0$. Since $J_{n+1}$ is included in
$\Int{J_n}$ and the intervals $(J_n)_{n\ge 0}$ are invariant, 
this implies that there exists $p\ge 0$ such that
$f^p(x)\in \Int{J_n}$, and hence $f^k(x) \in J_n$ for all $k\ge p$. 
Since this is true for all $n\ge 0$, we have
$\lim_{k\to+\infty}f^k(x)=z$ (recall that $\bigcap_{i\ge 0} J_n=\{z\}$).
On the other hand, $A\times A$ is a dense
$G_{\delta}$-set in $J_0\times J_0$, and thus $(A\times A)\cap
\LY(f)\neq\emptyset$.  This leads to a contradiction because
$$
\forall (x,x')\in A\times A,
\lim_{k\to+\infty}f^k(x)=\lim_{n\to+\infty}f^k(x')=z,
$$
and thus $(x,x')$ is not a Li-Yorke pair.
This shows that there exists $\eps>0$ such that
$$
\text{if }J\text{ is a non degenerate invariant interval, then }|J|\ge\eps.
$$
Then Lemma~\ref{lem:dense-chaos-dense-delta-chaos} applies:
the map $f$ is generically $\delta$-chaotic for some $\delta>0$.

\medskip (iii)$\Rightarrow$(iv).\par Suppose that $f$ is densely 
$\delta$-chaotic.
According to Proposition~\ref{prop:dense-delta-chaos}, the map $f$ is
sensitive to initial conditions. By
Proposition~\ref{prop:sensitivity-transitive-component}, there exist
some non degenerate closed intervals $I_1,\ldots, I_p$ such that
$f(I_i)= I_{i+1\bmod p}$ for all $i\in\Lbrack 1,p\Rbrack$
and $f|_{I_1\cup\cdots\cup I_p}$ is
transitive; by Lemma~\ref{lem:dense-chaos}(ii), we have $p=1$, that is,
the interval $I_1$ is transitive.

Suppose that $I_2$ is another non degenerate transitive 
interval.  Then $I_1\cap I_2\neq\emptyset$ by Lemma~\ref{lem:dense-chaos}.
If $\Int{I_1\cap I_2}\neq\emptyset$, then
$I_1=I_2= \overline{\CO_f(I_1\cap I_2)}$; otherwise $I_1\cap I_2$ is
reduced to a single point. Since the ambient space is an interval,
we conclude that either there is a unique non degenerate
transitive interval or there are exactly two non degenerate
transitive intervals which have a common endpoint.

Finally consider a non degenerate interval $J$. According to
Lemma~\ref{lem:dense-chaos}(i), $f(J)$ is non degenerate.  Since
$(J\times J)\cap \LY(f,\delta)\neq\emptyset$, we have
$\limsup_{n\to+\infty} |f^n(J)|\ge\delta$, which implies that there
exist some integers $i,p>0$ such that $f^i(J)\cap
f^{i+p}(J)\neq\emptyset$. Let $X:=\overline{\bigcup_{n\ge 0}
f^{n+i}(J)}$. The set $X$ has at most $p$ connected components, which are
non degenerate closed intervals and are mapped cyclically under $f$.
Thus, by Lemma~\ref{lem:dense-chaos}(ii), $X$ is an interval. Moreover,
$f(X)\subset X$ and $f|_X$ is sensitive.  Thus, by
Proposition~\ref{prop:sensitivity-transitive-component} and
Lemma~\ref{lem:dense-chaos}(ii), there exists a non degenerate invariant
interval $K\subset X$ such that $f|_K$ is transitive. According to the
definition of $X$, there is some integer $n\ge 0$ such that
$f^n(J)\cap \Int{K}\neq\emptyset$.

\medskip (iv)$\Rightarrow$(ii).\par 
First we show the following fact.

\medskip\textsc{Fact 1.}
Suppose that the image of
a non degenerate interval is non degenerate, that there exist two
non degenerate invariant intervals $I_1,I_2$ such that $f|_{I_i}$ is
topologically mixing for $i\in\{1,2\}$ ($I_1=I_2$ is allowed) and that, 
for every non
degenerate interval $J$, there exist $n\ge 0$ and $i\in\{1,2\}$ such
that $f^n(J)\cap\Int{I_i}\neq\emptyset$. Then $f$ is
generically $\delta$-chaotic with
$\delta:=\min\{|I_1|,|I_2|\}$.

\medskip
Let $i,j\in\{1,2\}$. Both
$f|_{I_i}$ and $f|_{I_j}$ are topologically mixing, thus 
$(f\times f)|_{I_i\times I_j}$ is transitive by
Proposition~\ref{prop:weakly-mixing-product}. Let $G_{ij}$
be the set of points $(x,y)\in I_i\times I_j$ whose orbit is dense in
$I_i\times I_j$. According to
Proposition~\ref{prop:transitive-dense-orbit}, $G_{ij}$ is a
dense $G_{\delta}$-set in $I_i\times I_j$. By
Lemma~\ref{lem:dense-chaos}(iii), there exists a point $z$ in $I_1\cap I_2$.
We choose $(x_1,x_2)\in I_1\times I_2$ such that $|x_1-x_2|=\delta$.  For
every $(x,y)\in G_{ij}$, there exists a subsequence of 
$(f^n(x),f^n(y))_{n\ge 0}$ that converges to $(z,z)$ and another one that 
converges to 
$(x_1,x_2)$; thus $(x,y)$ is a Li-Yorke pair of modulus $\delta$. It
is clear that $(f\times f)^{-1}\LY(f,\delta)\subset \LY(f,\delta)$,
and thus we get
$$
G:= \bigcup_{i,j\in\{1,2\}}\bigcup_{n\ge 0}(f\times
f)^{-n}(G_{ij}) \subset \LY(f,\delta).
$$
Then $G$ is a $G_{\delta}$-set (see Propositions \ref{prop:unionGdelta}
and \ref{prop:f-1-Gdelta}). We are going to show that
$G$ is dense.

Let $U,V$ be two nonempty open intervals. By assumption, there
exist integers $N,M\ge 0$ and $i,j\in\{1,2\}$ such that $f^N(U)\cap
\Int{I_i}\neq \emptyset$ and $f^M(V)\cap
\Int{I_j}\neq\emptyset$. Let $U_0\subset U$ and $V_0\subset V$ be
nonempty open subintervals such that $f^N(U_0)\subset I_i$ and
$f^M(V_0)\subset I_j$.  If $n:=\max\{N,M\}$, then $f^n(U_0)\subset I_i$
and $f^n(V_0)\subset I_j$ because $I_1, I_2$ are invariant.  
Since the intervals
$f^n(U_0),f^n(V_0)$ are non degenerate by assumption, there exists $(x,y)\in
(f^n(U_0)\times f^n(V_0))\cap G_{ij}$; in other words, 
$$
(U_0\times V_0) \cap (f\times f)^{-n}(G_{ij})\neq\emptyset.
$$
Therefore the set
$G$ is dense and $f$ is generically $\delta$-chaotic. This proves
Fact 1.

\medskip Now we assume that (iv) holds with no additional hypothesis. If
there is a unique non degenerate transitive  interval $I_1$, then,
by Theorem~\ref{theo:summary-transitivity}, either $f|_{I_1}$ is
topologically mixing or there exist two non degenerate closed intervals 
$J,K$ such that $I_1=J\cup K$ and $f^2|_J,f^2|_K$ are topologically mixing. 
In the first case, Fact~1
gives the conclusion (taking $I_2:=I_1$). In the second
case, Fact~1 applied to $f^2$ (with $I_1:=J$ and $I_2:=K$) shows that
$f^2$ is generically
$\delta$-chaotic, and thus $f$ is generically $\delta$-chaotic too. If
there are two different non degenerate transitive intervals
$I_1, I_2$ then, by Lemma~\ref{lem:dense-chaos}(iii), $I_1\cap
I_2\neq\emptyset$, and this intersection must be reduced to a single
point because $I_1\neq I_2$; we call $z$ this common endpoint. Since
$f(z)\in I_1\cap I_2=\{z\}$, we have $f(z)=z$. Thus,
by Theorem~\ref{theo:summary-transitivity}, both maps $f|_{I_1}$ and $f|_{I_2}$
are topologically mixing. Consequently, Fact~1 applies and $f$ is generically
$\delta$-chaotic.
\end{proof}

\begin{rem}
As indicated by Snoha (see \cite{Mur}), there is a misprint in the 
statement of Theorem~1.2 in \cite{Sno}, where 
the condition ``if $J$ is a non degenerate interval, then
$f(J)$ is non degenerate'' in point~(h) (equivalent to our
Theorem~\ref{theo:generic-chaos-etc}(iv)) is omitted.
\end{rem}

\begin{rem}
In \cite{Sno}, Snoha gave several other properties equivalent to generic chaos
for an interval map $f$, in particular $f$ is generically $\delta$-chaotic
if and only if, for every two non degenerate intervals $J,J'$, one has
$$
\liminf_{n\to+\infty}d(f^n(J),f^n(J'))=0\text{ and }
\limsup_{n\to+\infty}|f^n(J)|\ge\delta.
$$
Murinová proved that this result is still true in more 
general spaces except that the equivalence does not hold with the same 
$\delta$ \cite{Mur}.
She also built a continuous map on a compact subset of $\IR^2$
which is generically chaotic but not generically $\delta$-chaotic for any
$\delta>0$.
\end{rem}

\begin{ex}\label{ex:generic-chaos-not-transitive}
Figure~\ref{fig:generic-chaos-not-transitive} represents an 
interval map $f\colon [0,a]\to [0,a]$ (for some fixed $a>1$) 
which is generically chaotic 
but not transitive.
The restriction of $f$ to $[0,1]$  is the
tent map $T_2$ (Example~\ref{ex:tent-map}).
Thus $f|_{[0,1]}$ is transitive and $f$ is not transitive. 
The interval $[1,a]$ is mapped linearly
onto $[0,1]$ (thus, for every non degenerate interval $J\subset [1,a]$,
$f(J)$ is non degenerate and $f(J)\subset [0,1]$). It is then clear
that condition (iv) of Theorem~\ref{theo:generic-chaos-etc} is
satisfied, and thus $f$ is generically chaotic.

\begin{figure}[htb]
\centerline{\includegraphics{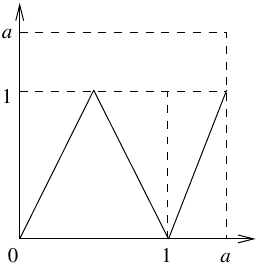}}
\caption{This map is generically chaotic but not transitive.}
\label{fig:generic-chaos-not-transitive}
\end{figure}
\end{ex}

\subsection{Dense chaos}

For interval maps, generic $\delta$-chaos, generic chaos and dense
$\delta$-chaos are equivalent and imply dense chaos; but the reverse 
implication does not hold, as shown in 
Example~\ref{ex:dense-chaos-not-generic}. Next we shall
give a result on the structure of densely, non generically chaotic interval 
maps: in this situation, there exists a decreasing sequence of invariant 
intervals, and each of them contains a horseshoe
for the second iterate of the map (Theorem~\ref{theo:structure-dense-chaos}).

\begin{ex}\label{ex:dense-chaos-not-generic}
We are going to exhibit an interval map that is densely
chaotic but has no non degenerate transitive interval, and hence is 
not generically
chaotic according to Theorem~\ref{theo:generic-chaos-etc}. By
Proposition~\ref{prop:sensitivity-transitive-component},
this map is not sensitive  either.
This example is originally due to Mizera (see \cite{Sno}).
For all $n\ge 0$, we set
$$
a_n:=1-\frac{1}{3^n},\quad b_n:=1-\frac{1}{4\cdot 3^{n-1}},\quad
c_n:=1-\frac{1}{2\cdot 3^n}\quad\text{and}\quad J_n:=[a_n,1].
$$ 
These points are ordered as follows:
$$
a_0:=0<b_0<c_0<a_1<b_1<c_1<a_2<\cdots<a_n<b_n<c_n<a_{n+1}<\cdots<1.
$$
Then we define the continuous map $f\colon [0,1]\to [0,1]$ by
\begin{gather*}
\forall n\ge 0,\ f(a_n):=a_n,\ f(b_n):=1,\ f(c_n):=a_n,\\
f(1):=1,
\end{gather*}
and $f$ is linear on the intervals
$[a_n,b_n]$, $[b_n,c_n]$ and $[c_n,a_{n+1}]$ for all $n\ge 0$; see
Figure~\ref{fig:dense-chaos-not-generic}.

\begin{figure}[htb]
\centerline{\includegraphics{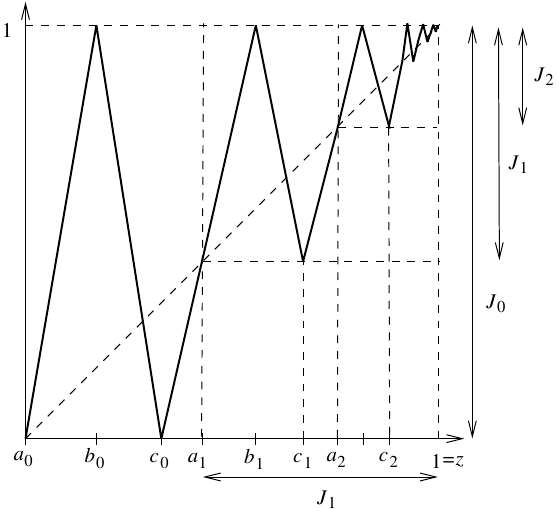}}
\caption{This map is densely chaotic but not generically chaotic.}
\label{fig:dense-chaos-not-generic}
\end{figure}

It is clear from the definition that $J_n$ is invariant
and $([a_n,b_n],[b_n,c_n])$ is a horseshoe for all $n\ge 0$.
Thus, $f|_{J_n}$ is chaotic in the sense of Li-Yorke by
Theorems \ref{theo:Misiurewicz} and \ref{theo:htop-positive-chaos-LY}. In particular, 
\begin{equation}\label{eq:LY_In}
\forall n\ge 0,\ \LY(f)\cap (J_n\times J_n)\neq \emptyset.
\end{equation}
A straightforward computation shows that the absolute value of the slope of $f$
is equal to $4$ on each linear piece. Let $J,J'$ be two non
degenerate intervals. By Lemma~\ref{lem:N-critical-points}, there
exists $n\ge 0$ such that $f^n(J)$ contains three distinct points 
in $\{a_k,b_k,c_k\mid k\ge 0\}$, which implies that
$f^{n+1}(J)\supset J_p$ for some $p\ge
0$. Similarly, there exist $m,q\ge 0$ such that $f^{m+1}(J')\supset J_q$.
Let $N:=\max\{n+1,m+1\}$ and $k:=\max\{p,q\}$. Then 
$f^N(J)\cap f^N(J')\supset J_k$ because $J_k\subset J_p\cap
J_q$ and $f(J_k)= J_k$.  It is obvious that
$(f\times f)^{-N}(\LY(f))\subset \LY(f)$. Thus \eqref{eq:LY_In} 
implies that $(J\times J')\cap \LY(f)\neq \emptyset$. 
In other words, $f$ is densely chaotic.

Let $J$ be a non degenerate invariant interval. Then, as shown 
above, there exist  $n,k\ge 0$ such that $f^n(J)\supset J_k$.
Moreover, $J_k$ strictly contains the invariant interval $J_{k+1}$. 
Thus $f|_J$ is not transitive. We conclude that $f$ has no 
non degenerate transitive interval.
\end{ex}

The next result is due to the author \cite{R8}.

\begin{theo}\label{theo:structure-dense-chaos}
Let $f$ be a densely chaotic interval map that is not generically chaotic.
Then there exists a sequence of non degenerate invariant intervals
$(J_n)_{n\ge 0}$ such that $\lim_{n\to+\infty}|J_n|=0$,
$J_{n+1}\subset J_n$ and $f^2|_{J_n}$ has a horseshoe for all $n\ge 0$.
\end{theo}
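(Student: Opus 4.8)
The plan is to reduce the statement, in two easy steps, to the assertion that a single densely chaotic interval map has a horseshoe for its square, and then to prove that assertion; the last reduction is where essentially all the work lies.

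\textbf{Step 1 (producing the tower of invariant intervals).} Since $f$ is densely chaotic but not generically chaotic, Theorem~\ref{theo:generic-chaos-etc} shows $f$ is not generically $\delta$-chaotic for any $\delta>0$, hence not densely $\delta$-chaotic for any $\delta>0$ by Proposition~\ref{prop:generic-dense-delta-chaos}. Consequently the hypothesis of Lemma~\ref{lem:dense-chaos-dense-delta-chaos} fails for every $\eps>0$, so for each $m\ge 1$ there is a non degenerate invariant interval $K_m$ with $|K_m|<1/m$. Feeding $(K_m)_{m\ge 1}$ into Lemma~\ref{lem:nested-Jn-to-0} yields a sequence of non degenerate invariant intervals $(J_n)_{n\ge 0}$ with $|J_n|\to 0$, $J_{n+1}\subset J_n$, and a common fixed point $z\in\bigcap_n J_n$. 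These will be the intervals in the theorem, so it remains to show that $f^2|_{J_n}$ has a horseshoe for every $n$.

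\textbf{Step 2 (each restriction is densely chaotic).} Fix $n$ and set $g:=f|_{J_n}$. Because $J_n$ is $f$-invariant, every Li-Yorke pair of $f$ lying in $J_n\times J_n$ is a Li-Yorke pair of $g$, so $\LY(g)=\LY(f)\cap (J_n\times J_n)$; since $J_n\times J_n$ is open in $I\times I$ and $\LY(f)$ is dense there, $\LY(g)$ is dense in $J_n\times J_n$, i.e.\ $g$ is densely chaotic. Hence it suffices to prove: \emph{a densely chaotic interval map $g\colon K\to K$ has a horseshoe for $g^2$.}

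\textbf{Step 3 (positive entropy).} First, $h_{top}(g)>0$. Indeed $g$ has a Li-Yorke pair, so it is chaotic in the sense of Li-Yorke (Theorem~\ref{theo:summary-chaos-LY}); if $h_{top}(g)=0$, then by Theorem~\ref{theo:htop0-chaos-LY} some point $x_0$ has $\omega(x_0,g)$ infinite, and Proposition~\ref{prop:htop0-Lki} (case $k=1$) provides a non degenerate closed interval $L$ with $L\cap g(L)=\emptyset$ and $g^2(L)=L$. By dense chaos there is a Li-Yorke pair $(x,y)$ in the nonempty open box $\Int{L}\times\Int{g(L)}$. Then $g^k(x)\in L$, $g^k(y)\in g(L)$ for $k$ even and vice versa for $k$ odd, so $|g^k(x)-g^k(y)|\ge\mathrm{dist}(L,g(L))>0$ for all $k$, contradicting $\liminf_k|g^k(x)-g^k(y)|=0$. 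Thus $h_{top}(g)>0$, and by Misiurewicz's Theorem~\ref{theo:Misiurewicz} some $g^N$ has a strict $p$-horseshoe with $p\ge 2$; if $N\le 2$ we are already done, because a horseshoe for $g$ gives a $4$-horseshoe for $g^2$ by Lemma~\ref{lem:horseshoe-fn}.

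\textbf{Step 4 (the horseshoe — the main obstacle).} The hard part, which I expect to be the crux, is to rule out the entropy of $g$ being ``hidden'' at a high iterate, i.e.\ to descend from a horseshoe of $g^N$ with $N\ge 3$ to one of $g^2$. The key leverage is that dense chaos drastically restricts the combinatorics of cycles of intervals: by Lemma~\ref{lem:dense-chaos}(ii) $g$ has no cycle of two or more \emph{disjoint closed} intervals, and by Lemma~\ref{lem:dense-chaos}(iii) any two non degenerate invariant intervals meet. I would use this as follows. Either argue directly: from the $g^N$-horseshoe locate, via Propositions~\ref{prop:turbulent-all-periods} and \ref{prop:odd-period-turbulent} and the location-of-periodic-points tools of Lemma~\ref{lem:U<D} and Proposition~\ref{prop:xn-x0-x1}, a periodic orbit whose pattern, together with the impossibility of disjoint $2$-cycles of intervals, forces points $u<v<w$ (or $u>v>w$) with $g^2(u)=u$, $g^2(v)=w$, $g^2(w)=u$; Lemma~\ref{lem:particularhorseshoe} then gives a horseshoe for $g^2$. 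Or argue by contradiction: if $g^2$ had no horseshoe, Lemma~\ref{lem:alternating} would force every orbit with nontrivial behaviour to eventually alternate strictly about a fixed point, and an analysis as in the proof of Proposition~\ref{prop:omega-no-periodic-point} would show the dynamics on each side of that fixed point is too tame to coexist with $h_{top}(g)>0$ and the density of Li-Yorke pairs. Either way $f^2|_{J_n}=g^2$ has a horseshoe, which completes the proof.
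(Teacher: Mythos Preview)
Your Steps 1--3 are fine, and Step 3 is a nice self-contained argument that dense chaos forces positive entropy. The gap is in Step 4, and it stems from the over-reduction in Step 2: by passing to the abstract statement ``every densely chaotic interval map has a horseshoe for its square'' you have thrown away the tower $(J_m)_{m\ge n}$ that shrinks to the fixed point $z$, and this tower is exactly what the paper uses to close the argument. Neither of your sketched approaches in Step 4 explains how to produce a $g^2$-horseshoe from positive entropy plus the absence of closed disjoint $2$-cycles alone, and indeed maps of type $2^d q$ with $d\ge 1$ need not have a horseshoe for their square; something extra must be exploited.

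The paper does \emph{not} reduce to an arbitrary densely chaotic map; it keeps the whole nest $(J_m)$ and argues directly inside $J_{n_0}$. Assume $f^2|_{J_{n_0}}$ has no horseshoe and pick $x_0\in J_{n_0}$ whose $f$-orbit does not converge along any arithmetic progression. By Lemma~\ref{lem:alternating} the orbit eventually alternates about some fixed point $c$, and since the even subsequence is not eventually monotone one finds a small closed interval $K$ around an iterate of $x_0$ on which $f^2(y)<y$, with $z\notin K$. Now comes the step that needs the tower: choose $k_0\ge n_0$ with $K<J_{k_0}$; since $K\times K$ contains a Li-Yorke pair, $L:=\overline{\bigcup_{n\ge q}f^n(K)}$ is a non degenerate invariant interval, and Lemma~\ref{lem:dense-chaos}(iii) forces $L$ to meet $J_{k_0+1}\subset\Int{J_{k_0}}$. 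Hence some iterate $f^n(K')$ of a subinterval $K'\subset K$ lies inside $J_{k_0}$, so for every $y\in K'$ and all large $m$ one has $f^2(y)<y<f^{2m}(y)$. Feeding this into Lemma~\ref{lem:U<D} (no horseshoe for $g=f^2|_{J_{n_0}}$) shows $(g^m(y))_{m\ge m_0}$ is non increasing, hence convergent; thus $K'\times K'$ has no Li-Yorke pair, contradicting dense chaos. Your second sketch in Step 4 is heading this way, but the missing ingredient is precisely the small invariant target $J_{k_0}$: without it you cannot force the orbit of $K$ to come back \emph{above} $K$ and trigger Lemma~\ref{lem:U<D}. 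Keep the tower; do not abstract it away.
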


\begin{proof}
According to
Lemma~\ref{lem:dense-chaos-dense-delta-chaos}, 
for every $\eps>0$, there exists a non degenerate invariant interval $J$
such that  $|J| <\eps$. Thus there exists a sequence of invariant
non degenerate closed subintervals $(I_n)_{n\ge 0}$ such that 
$\lim_{n\to+\infty}|I_n|= 0$.
Then, by Lemma~\ref{lem:nested-Jn-to-0}, there exists 
a sequence of invariant non
degenerate intervals $(J_n)_{n\ge 0}$ such that 
$\lim_{n\to+\infty}|J_n|=0$ and $J_{n+1}\subset \Int{J_n}$ with respect
to the induced topology on $J_0$ for all $n\ge 0$; 
moreover, there is a fixed point $z$
such that $\bigcap_{n\ge 0}J_n=\{z\}$.
From now on, we consider $J_0$ as the ambient space; in particular, 
when speaking about the interior of a set, it is with respect to $J_0$. 
We fix an integer $n_0\ge 0$.
We are going to show that $f^2|_{J_{n_0}}$ has a horseshoe.
Assume on the contrary that $f^2|_{J_{n_0}}$ has no horseshoe. 
We set
$$\CP:=\{x\in J_{n_0}\mid \exists p\ge 1,\lim_{n\to+\infty}f^{np}(x)
\text{ exists}\}.
$$
If $x,y\in \CP$, then $(x,y)$ is not a Li-Yorke pair. Thus the set 
$J_{n_0}\setminus \CP$ is not empty because the map $f|_{J_{n_0}}$
is densely chaotic.
Let $x_0\in J_{n_0}\setminus \CP$; we set $x_n:=f^n(x_0)$ for all
$n\ge 1$. Since $x_0\notin\CP$, the sequence $(x_n)_{n\ge 0}$ is not 
eventually monotone. Thus, according to Lemma~\ref{lem:alternating},
there exist a fixed point $c$ and an integer $N$ such that
$x_{N+2n}<c<x_{N+2n+1}$ for all $n\ge 0$. We assume $c\le z$, the
case $c\ge z$ being symmetric. Since $x_0\notin\CP$,
the sequence $(x_{N+2n})_{n\ge 0}$ is not eventually monotone, so
there exists $i\ge 0$ such that 
$$
x_{N+2i+2}<x_{N+2i}<c\le z.
$$
By continuity, there exists a non degenerate closed interval $K$ containing
$x_{N+2i}$ such that $z\notin K$ and
\begin{equation}\label{eq:Kygy}
\forall y\in K,\ f^2(y)<y.
\end{equation}
Since $z\in \bigcap_{k\ge 0}J_k$ and $\lim_{k\to +\infty}|J_k|=0$,
there exists $k_0\ge n_0$ such that 
\begin{equation}\label{eq:KJk}
K<J_{k_0}.
\end{equation}

The set $K\times K$ contains a Li-Yorke pair because $f$ is densely
chaotic. Thus $\limsup_{n\to+\infty} |f^n(K)|>0$ and there exist positive 
integers $p,q$ such that $f^{q+p}(K)\cap f^q(K)
\neq\emptyset$. Let $L:=\overline{\bigcup_{n\ge q}f^n(K)}$. 
The set $L$ is invariant, and the same argument as for $\overline{X}$ in
the proof of Lemma~\ref{lem:dense-chaos-dense-delta-chaos} shows that
$L$ is a non degenerate interval.
Moreover, $L\cap J_k\neq\emptyset$ for all $k\ge n_0$ by
Lemma~\ref{lem:dense-chaos}(iii). Since $J_{k_0+1}\subset
\Int{J_{k_0}}$, this implies that there exists $n\ge 0$ such that
$f^n(K)\cap \Int{J_{k_0}}\neq \emptyset$. Thus
there exists a non degenerate closed subinterval $K'\subset K$ such that
$f^n(K')\subset J_{k_0}$.
We set  $g:=f^2|_{J_{n_0}}$ and we fix $m_0\ge n/2$. For all $y\in K'$ and
all $m\ge m_0$, we have $g^m(y)\in J_{k_0}$ because $J_{k_0}$ is invariant.
Hence, by \eqref{eq:Kygy} and \eqref{eq:KJk},
\begin{equation}\label{eq:gyy}
\forall m\ge m_0,\ g(y)<y<g^m(y).
\end{equation}
This implies that there exists $j\in\Lbrack 1, m_0-1\Rbrack$ 
such that $g^j(y)<g^{j+1}(y)$. Let
\begin{gather*}
U(y):=\{y'\in \CO_g(y)\mid g(y')>y'\},\\
D(y):=\{y'\in \CO_g(y)\mid g(y')<y'\}.
\end{gather*}
We have $y\in D(y)$ by  \eqref{eq:gyy} and $g^j(y)\in U(y)$ according to the 
choice of $j$. By assumption, the map $g$ has no horseshoe. Thus, 
according to Lemma~\ref{lem:U<D},
\begin{equation}
U(y)\le D(y).\label{eq:yDU}
\end{equation}
Moreover, for all $m\ge m_0$, $y\le g^m(y)$, so
$g^m(y)\in D(y)$ by \eqref{eq:yDU} and because $y\in D(y)$. This
implies that $g^{m+1}(y)\le g^m(y)$. 
Therefore, the sequence $(g^m(y))_{m\ge m_0}$
is non increasing, and hence convergent. But this implies that
$K'\times K'$ contains no Li-Yorke pair, which contradicts the fact that
$f$ is densely chaotic. We conclude that $f^2|_{J_{n_0}}$ has a horseshoe
for every integer $n_0\ge 0$.
\end{proof}

Consider a densely, non generically chaotic interval map $f$, and 
let $(J_n)_{n\ge 0}$ be the decreasing sequence of invariant intervals
given by Theorem~\ref{theo:structure-dense-chaos}. By 
Lemma~\ref{lem:nested-Jn-to-0}, the intersection $\bigcap_{n\ge 0}J_n$ is
reduced to a fixed point $z$.
Figure~\ref{fig:dense-chaos-not-generic} is an example of a such a map
when $z$ is an endpoint of all intervals $J_n$.
Figure~\ref{fig:dense-chaos-not-generic2} illustrates what the graph
of $f$ may look like when $z$ is in $\Int{J_n}$ for all $n$: in one case, 
the left and right parts of $J_n$ are exchanged 
under $f$; in the other case, the left and right parts of $J_n$ are 
invariant (left and right parts are with respect to~$z$).
\begin{figure}[htb]
\includegraphics{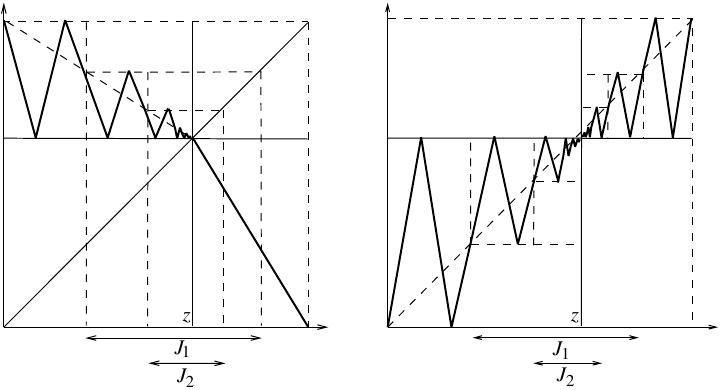}
\caption{The map $f$ on the left is the square root of the map represented in 
Figure~\ref{fig:dense-chaos-not-generic}. The map on the right is $f^2$.
Both are densely, non generically chaotic.}
\label{fig:dense-chaos-not-generic2}
\end{figure}

Using the structure of generically chaotic interval maps and
densely non generically chaotic interval maps, it is possible
to have information on the entropy and the type
of a densely chaotic interval map.
Notice also that a densely chaotic interval map is chaotic in the 
sense of Li-Yorke by Theorem~\ref{theo:summary-chaos-LY}.

\begin{cor}\label{cor:densechaos}
If $f$ is a densely chaotic interval map, then $f^2$ has a
horseshoe. Moreover, $h_{top}(f)\ge \frac{\log 2}{2}$ and $f$ is of type
$n$ for some $n\unlhd 6$ for Sharkovsky's order (i.e., $f$ has a periodic
point of period $6$).
\end{cor}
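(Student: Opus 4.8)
The plan is to split into two cases according to the dichotomy for densely chaotic interval maps established in Theorem~\ref{theo:generic-chaos-etc} (generic chaos) versus Theorem~\ref{theo:structure-dense-chaos} (dense but not generic chaos). In both cases the goal is to produce a horseshoe for $f^2$, and then to read off the entropy bound and the period bound from earlier results. First I would note that once we know $f^2$ has a horseshoe, Proposition~\ref{prop:horseshoe-htop} gives $h_{top}(f^2)\ge \log 2$, hence $h_{top}(f)=\frac12 h_{top}(f^2)\ge \frac{\log 2}{2}$ by Proposition~\ref{prop:htop-Tn}; and Proposition~\ref{prop:turbulent-all-periods} applied to $f^2$ gives that $f^2$ has a periodic point of period $3$, which forces $f$ to have a periodic point of period $3$ or $6$. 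Since $f^2$ also has a horseshoe, $f$ has positive entropy, so $f$ is of type $n\lhd 2^\infty$ by Theorem~\ref{theo:htop-power-of-2}; combining with Sharkovsky's order, having a periodic point of period $6$ (the period-$3$ case implies period $6$ by Sharkovsky's Theorem~\ref{theo:Sharkovsky}) means $f$ is of type $n\unlhd 6$.

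The case where $f$ is densely chaotic but not generically chaotic is immediate: Theorem~\ref{theo:structure-dense-chaos} directly gives a non degenerate invariant interval $J_0$ such that $f^2|_{J_0}$ has a horseshoe, and a horseshoe for $f^2|_{J_0}$ is in particular a horseshoe for $f^2$. So the work is concentrated in the generically chaotic case. Here I would use Theorem~\ref{theo:generic-chaos-etc}(iv): there is at least one non degenerate transitive interval $I_0$. If $f|_{I_0}$ is topologically mixing, then $f|_{I_0}$ has a periodic point of odd period $q>1$ by Theorem~\ref{theo:summary-mixing}, so by Proposition~\ref{prop:odd-period-turbulent} the map $(f|_{I_0})^2$, hence $f^2$, has a horseshoe. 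If $f|_{I_0}$ is transitive but not topologically mixing, then by Theorem~\ref{theo:summary-transitivity} there is $c$ in the interior of $I_0$ with $I_0=J\cup K$, $f(J)=K$, $f(K)=J$, and $f^2|_J$, $f^2|_K$ topologically mixing; in particular $f^2|_J$ is transitive with a fixed endpoint $c$, so by Lemma~\ref{lem:non-turbulent} the map $f^2|_J$ has a horseshoe, which is again a horseshoe for $f^2$. (Alternatively, one can invoke Proposition~\ref{prop:transitivity-type} applied to $f|_{I_0}$, which already states that $(f|_{I_0})^2$ has a horseshoe and $f|_{I_0}$ has a periodic point of period $6$.)

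The main obstacle, such as it is, is bookkeeping rather than a genuine difficulty: one must make sure the horseshoe produced inside a subinterval is genuinely a horseshoe for $f^2$ on the whole interval $I$ (this is immediate from the definition, since the defining inclusions $J_1\cup J_2\subset f^2(J_i)$ are unaffected by enlarging the ambient interval), and one must correctly combine the period-$6$ conclusion with Sharkovsky's order and positive entropy to conclude "type $n\unlhd 6$." In the generic case, one should also remember that a priori the statement only asserts the existence of \emph{a} transitive interval, and that in the two-transitive-interval situation of Theorem~\ref{theo:generic-chaos-etc}(iv) it suffices to pick one of them and run the above argument. Thus the whole proof reduces to: (i) invoke the structure dichotomy; (ii) in each branch extract a horseshoe for $f^2$ via Theorem~\ref{theo:summary-mixing}, Proposition~\ref{prop:odd-period-turbulent}, Lemma~\ref{lem:non-turbulent}, Theorem~\ref{theo:summary-transitivity}, and Theorem~\ref{theo:structure-dense-chaos}; (iii) apply Proposition~\ref{prop:horseshoe-htop}, Proposition~\ref{prop:turbulent-all-periods}, Sharkovsky's Theorem~\ref{theo:Sharkovsky} and Theorem~\ref{theo:htop-power-of-2} to finish.
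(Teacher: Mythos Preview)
Your proposal is correct and follows essentially the same route as the paper: split into the generically chaotic case (where the paper simply cites Proposition~\ref{prop:transitivity-type}, which you unfold but also mention as an alternative) versus the non-generically chaotic case (handled by Theorem~\ref{theo:structure-dense-chaos}), and then derive the entropy and period conclusions from Propositions~\ref{prop:horseshoe-htop} and~\ref{prop:turbulent-all-periods} together with Sharkovsky's Theorem. Your detour through Theorem~\ref{theo:htop-power-of-2} to assert positive entropy is harmless but unnecessary, since having a periodic point of period $3$ or $6$ already forces type $\unlhd 6$ by Sharkovsky's Theorem alone.
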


\begin{proof}
If $f$ is generically chaotic, then $f^2$ has a horseshoe by
Theorem~\ref{theo:generic-chaos-etc}
and Proposition~\ref{prop:transitivity-type}. Otherwise,
$f^2$ has a horseshoe by Theorem~\ref{theo:structure-dense-chaos}.
In both cases, according to Propositions \ref{prop:horseshoe-htop} and 
\ref{prop:turbulent-all-periods}, 
$h_{top}(f)\ge \frac{\log 2}{2}$ and $f^2$ has a periodic point of period
$3$. Thus $f$ has a periodic point of period $3$ or $6$, and hence the type
of $f$ is $\unlhd 6$ by Sharkovsky's Theorem~\ref{theo:Sharkovsky}.
\end{proof}

Equalities are possible in Corollary~\ref{cor:densechaos}:
the map $S$ in Example~\ref{ex:htop-transitive} is transitive, and hence
densely chaotic by Theorem~\ref{theo:generic-chaos-etc}; its entropy
is equal to $\frac{\log 2}{2}$ and, since $S$ is not topologically 
mixing, it is of
type $6$ by Proposition~\ref{prop:transitivity-type}.

Example~\ref{ex:dense-chaos-not-generic} shows that there exists
densely chaotic maps that are not generically chaotic. 
The next result
states that such a map cannot be piecewise monotone nor
$C^1$. The fact that a densely chaotic piecewise monotone map is
generically chaotic is due to Snoha \cite{Sno2}.

\begin{prop}\label{prop:dense-chaos-piecewise-monotone}
Let $f$ be a densely chaotic interval map. If $f$ is piecewise
monotone or $C^1$, then $f$ is generically chaotic.
\end{prop}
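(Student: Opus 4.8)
The plan is to argue by contraposition: suppose $f$ is densely chaotic but \emph{not} generically chaotic. By Theorem~\ref{theo:structure-dense-chaos} there is a sequence of non degenerate invariant intervals $(J_n)_{n\ge 0}$ with $J_{n+1}\subset J_n$, $|J_n|\to 0$, and such that $f^2|_{J_n}$ has a horseshoe for every $n$. Applying Lemma~\ref{lem:nested-Jn-to-0} to $(J_n)_{n\ge 0}$ (using $|J_n|\to 0$) yields a fixed point $z$ of $f$ with $\bigcap_{n\ge 0}J_n=\{z\}$; in particular $f^2(z)=z$ and $f^2(J_n)\subset J_n$ for all $n$. The basic tool I would use throughout is the elementary fact: if $(J,K)$ is a horseshoe for a map $g$ and $L$ is any interval with $J\cup K\subset L$, then $g|_L$ is not monotone. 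Indeed, relabelling so that $J\le K$ (with $J,K$ non degenerate), monotonicity of $g$ on $L$ forces $\sup g(J)\le\inf g(K)$ in the increasing case (resp.\ $\inf g(J)\ge\sup g(K)$ in the decreasing case); but $g(J)\supset J\cup K$ and $g(K)\supset J\cup K$ give $\sup g(J)\ge\sup K$ and $\inf g(K)\le\inf J$ (resp.\ $\inf g(J)\le\inf J$ and $\sup g(K)\ge\sup K$), which combined with $\sup J\le\inf K$ forces $J$ and $K$ to be degenerate, a contradiction. Since the horseshoe intervals of $f^2|_{J_n}$ are subintervals of $J_n$, this shows $f^2|_{J_n}$ is non monotone for every $n$.

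Assume first that $f$ is $C^1$. Then $g:=f^2$ is $C^1$ and $g'(z)=\big(f'(z)\big)^2\ge 0$. If $g'(z)\ne 0$, then $g'$ has constant sign on an open neighbourhood $U$ of $z$ by continuity, so $g|_U$ is strictly monotone; for $n$ large, $J_n\subset U$, contradicting that $g|_{J_n}$ is non monotone. If $g'(z)=0$, then $\sup_{J_n}|g'|\to 0$, so for $n$ large $\sup_{J_n}|g'|<1$; but if $(J,K)$ is a horseshoe for $g|_{J_n}$, then $g(J)\supset J\cup K\supset J$ gives $|g(J)|\ge|J|$ (the image being an interval by the intermediate value theorem), while the mean value theorem gives $|g(J)|\le\big(\sup_{J_n}|g'|\big)\,|J|<|J|$, a contradiction. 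Hence a densely chaotic $C^1$ interval map is generically chaotic.

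Now assume $f$ is piecewise monotone. Then $g:=f^2$ is piecewise monotone as well (the turning points of $g$ are those of $f$ together with their finitely many $f$-preimages), so $g$ has only finitely many turning points. Since $g|_{J_n}$ is non monotone, each $J_n$ contains a turning point of $g$ in its interior. As the $J_n$ are nested with $\bigcap_{n}J_n=\{z\}$ and there are only finitely many turning points, there is $n_0$ such that $z$ is the only turning point of $g$ lying in $J_{n_0}$ and $z\in\Int{J_n}$ for all $n\ge n_0$; write $J_n=[a_n,b_n]$ with $a_n<z<b_n$. Then for $n\ge n_0$ the map $g$ is monotone on $[a_n,z]$ and on $[z,b_n]$ and has a strict local extremum at $z$, so $g(z)=z$ equals $\max_{J_n}g$ or $\min_{J_n}g$; combined with $g(J_n)\subset J_n$ this forces $g(J_n)\subset[a_n,z]$ or $g(J_n)\subset[z,b_n]$, an interval on which $g$ is monotone. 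But the horseshoe intervals of $g|_{J_n}$ are contained in $g(J)\subset g(J_n)$, so they lie in an interval on which $g$ is monotone, contradicting the elementary fact above. This completes the contrapositive, hence the proposition.

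I expect the main obstacle to be the piecewise monotone case: one must observe that the turning point responsible for the non-monotonicity of $g|_{J_n}$ is eventually forced to be $z$ itself, and then extract the contradiction from the interplay between $g(z)=z$, the local-extremum structure at $z$, and the invariance $g(J_n)\subset J_n$, which pins the image of $J_n$ into the half on which $g$ is monotone. The $C^1$ case is comparatively routine once the ``horseshoe implies non monotone'' lemma is in hand.
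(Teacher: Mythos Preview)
Your proof is correct and takes a genuinely different route from the paper's. The paper does \emph{not} use the horseshoe conclusion of Theorem~\ref{theo:structure-dense-chaos}; it only uses the nested invariant intervals $(J_n)$ shrinking to a fixed point $z$, and then argues as follows: in the piecewise monotone case it finds a non degenerate $f^2$-invariant subinterval $J$ (one of $[a_k,z]$, $[z,b_k]$, or $J_k$ itself) on which $f^2$ is non decreasing, shows every $f^2$-orbit in $J$ is monotone hence convergent, and concludes that $J\times J$ contains no Li-Yorke pair, contradicting dense chaos; in the $C^1$ case with $f'(z)=0$ it uses the mean value inequality to show $f^n(x)\to z$ for all $x\in J_k$, giving the same contradiction. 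Your argument instead leverages the horseshoe in each $J_n$ and the ``horseshoe $\Rightarrow$ non monotone'' observation, which makes the $C^1$ case very short (either $g=f^2$ is strictly monotone near $z$, or $|g'|<1$ near $z$ and the mean value theorem kills $|g(J)|\ge|J|$), and in the piecewise monotone case pins the horseshoe into a half-interval where $g$ is monotone. Your route uses more of Theorem~\ref{theo:structure-dense-chaos} and avoids the orbit-convergence step; the paper's route is more self-contained in that the contradiction comes straight from the definition of dense chaos.

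Two small points to tidy up. First, the assertion ``$z\in\Int{J_n}$ for all $n\ge n_0$'' deserves one line: if $z$ were an endpoint of some $J_n$ with $n\ge n_0$, then $J_n$ would contain no interior turning point of $g$, hence $g|_{J_n}$ would be monotone, contradicting the horseshoe. Second, in your last sentence the symbol $J$ is overloaded (it denotes both a generic subinterval and one of the horseshoe intervals); it would be clearer to write ``the horseshoe intervals $J,K$ satisfy $J\cup K\subset g(J)\subset g(J_n)\subset[a_n,z]$ (or $[z,b_n]$), an interval on which $g$ is monotone''.
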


\begin{proof}
Suppose that $f$ is not generically chaotic.
According to Theorem~\ref{theo:structure-dense-chaos},
there exists a sequence of non degenerate invariant intervals
$(J_n)_{n\ge 0}$ such that $\lim_{n\to+\infty}|J_n|= 0$ and
$J_{n+1}\subset J_n$ for all $n\ge 0$. 
Moreover, $\bigcap_{n\ge 0}J_n=\{z\}$, where $z$ is
a fixed point, by Lemma~\ref{lem:nested-Jn-to-0}. We
write $J_n=[a_n,b_n]$ for all $n\ge 0$. Thus 
\begin{gather*}
(a_n)_{n\ge 0}\text{ is non decreasing},
\ (b_n)_{n\ge 0}\text{ is non increasing},\\
\lim_{n\to+\infty}a_n=\lim_{n\to+\infty}b_n=z.
\end{gather*}
First we assume that $f$ is piecewise monotone in a neighborhood of $z$.
Then there exists $k\ge 0$ such that both $f|_{[a_k,z]}$ and $f|_{[z,b_k]}$
are monotone. If $z=a_k$ or $z=b_k$, we set $J:=J_k$, and thus the map
$f^2|_J$ is non decreasing because the endpoint $z$ is fixed. 
If $a_k<z<b_k$, the fact that $J_k$ is invariant implies that
either $f|_{[a_k,z]}$
is non decreasing and $f([a_k,z])\subset [a_k,z]$, or
$f|_{[a_k,z]}$ is non increasing and $f([a_k,z])\subset [z,b_k]$;
the symmetric statement holds for $[z,b_k]$. Therefore, there exists
an interval $J$ among $[a_k,z]$ and $[z,b_k]$ such that $f^2(J)\subset J$
and $f^2|_J$ is non decreasing. In all cases, we get a non degenerate 
$f^2$-invariant interval $J$ such that $f^2|_J$ is non decreasing.
We are going to show that
\begin{equation}\label{eq:f2nx}
\forall x\in J,\text{ the sequence }
(f^{2n}(x))_{n\ge 0}\text{ converges}.
\end{equation}
Let $P_2$ be the set of fixed points of $f^2$; this is a closed set.
For all $x\in P_2$, the sequence $(f^{2n}(x))_{n\ge 0}$ 
is stationary, and hence convergent.  Suppose that $J\setminus P_2\neq
\emptyset$ and let $U$ be a connected component of $J\setminus P_2$. 
Either $\inf U\in P_2$ and hence $f^2(\inf U)=\inf(U)$, or
$\inf U=\min J$ and hence $f^2(\inf U)\ge \inf U$ because $J$ is 
$f^2$-invariant. Similarly, $\sup U$ belongs to $P_2\cup\{\max J\}$,
and $f^2(\sup U)\le \sup(U)$. This implies that
$f(\overline{U})\subset \overline{U}$ because $f^2|_J$ is non decreasing.
Moreover, the fact that $U\cap P_2=\emptyset$ implies, by continuity:
\begin{gather*}
\text{either }\forall x\in U,\ f^2(x)>x,\\
\text{or }\forall x\in U,\ f^2(x)<x.
\end{gather*}
Therefore, for every $x\in\overline{U}$, the orbit of
$x$ is included in $\overline{U}$ and $(f^n(x))_{n\ge 0}$ is either 
non decreasing or non increasing, and thus it converges. This proves
\eqref{eq:f2nx}. But this implies  that
$J\times J$ contains no Li-Yorke pair, which is a contradiction.
We conclude that $f$ is not piecewise monotone in a neighborhood of $z$.

Secondly we assume that $f$ is $C^1$. If
$f'(z)\neq 0$, then $f$ is monotone in a neighborhood of $z$
and the previous case leads to a contradiction. Thus $f'(z)=0$.
Then there exists $k\ge 0$ such that
$$
\max_{x\in J_k} |f'(x)|\le\frac 12.
$$

We recall the mean value inequality:\index{mean value inequality}
Let $\vfi\colon I\to \IR$ be a differentiable map (where $I$ is an interval)
and let $M\in\IR$ be such that $|f'(x)|\le M$ for all $x\in I$. Then for
all $x,y\in I$, $|f(y)-f(x)|\le M |y-x|$.

Since $f(J_k)\subset J_k$ and $f(z)=z$, the mean value inequality implies that
$$
\forall x\in J_k,\ \forall n\ge 0,\  
|f^n(x)-z|\le\frac{1}{2^n}|x-z|.
$$
Therefore, for all $x\in J_k$ 
the sequence $(f^n(x))_{n\ge 0}$ converges, and thus 
$J_k\times J_k$ contains no Li-Yorke pair, which is a contradiction.

Conclusion: if $f$ is piecewise monotone or $C^1$, then 
it is generically chaotic.
\end{proof}

\begin{rem}
In Proposition~\ref{prop:dense-chaos-piecewise-monotone}, we get the same
result if we only assume that $f$ is piecewise
monotone or $C^1$ in the neighborhood of every fixed point.
\end{rem}

\section{Distributional chaos}

In \cite{SS}, Schweizer and Smítal defined lower and upper distribution
functions of two points in a dynamical system, and studied them for
interval maps.

\begin{defi}
Let $(X,f)$ be a topological dynamical system and $x,y\in X$.
For all $t\in\IR$ and all $n\in\IN$, set
$$
\xi(f,x,y,n,t):=\#\{i\in\Lbrack 0,n-1\Rbrack \mid d(f^i(x),f^i(y))<t\}.
$$
The lower and upper distribution functions $F_{xy}, F^*_{xy}\colon\IR\to [0,1]$
are defined respectively by: 
\begin{eqnarray*}
&\forall t\in\IR,&F_{xy}(t)=\liminf_{n\to +\infty}\frac 1n\xi(f,x,y,n,t),\\
&&F^*_{xy}(t)=\limsup_{n\to +\infty}\frac 1n\xi(f,x,y,n,t).
\end{eqnarray*}
\end{defi}

The next properties are straightforward from the definition.

\begin{prop}\label{prop:DCobvious}
Let $(X,f)$ be a topological dynamical system and $x,y\in X$.
\begin{itemize}
\item 
$\forall t\le 0$, $F_{xy}(t)=F^*_{xy}(t)=0$ and $\forall t>\diam(X)$,
$F_{xy}(t)=F^*_{xy}(t)=1$. 
\item The maps $F_{xy}$ and $F^*_{xy}$ are
non decreasing and $F_{xy}\le F^*_{xy}$.
\end{itemize}
\end{prop}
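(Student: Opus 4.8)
The plan is to unwind the definitions of $\xi$, $F_{xy}$ and $F^*_{xy}$ directly; all four assertions reduce to elementary observations about the sequence $\left(\frac 1n\,\xi(f,x,y,n,t)\right)_{n\ge 1}$ for each fixed $t\in\IR$.

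First I would dispose of the boundary values. If $t\le 0$, then for every $i$ we have $d(f^i(x),f^i(y))\ge 0\ge t$, so the inequality $d(f^i(x),f^i(y))<t$ never holds; hence $\xi(f,x,y,n,t)=0$ for all $n$, and both the $\liminf$ and the $\limsup$ of $\frac 1n\xi(f,x,y,n,t)$ equal $0$, which gives $F_{xy}(t)=F^*_{xy}(t)=0$. If $t>\diam(X)$, then since $f^i(x),f^i(y)\in X$ we have $d(f^i(x),f^i(y))\le\diam(X)<t$ for every $i$, so $\xi(f,x,y,n,t)=n$ for all $n$, whence $\frac 1n\xi(f,x,y,n,t)=1$ and therefore $F_{xy}(t)=F^*_{xy}(t)=1$.

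For the second part, monotonicity follows by noting that if $t\le t'$ then $\{i\in\Lbrack 0,n-1\Rbrack\mid d(f^i(x),f^i(y))<t\}\subset\{i\in\Lbrack 0,n-1\Rbrack\mid d(f^i(x),f^i(y))<t'\}$, so $\xi(f,x,y,n,t)\le\xi(f,x,y,n,t')$ for every $n$; taking $\liminf$ (resp. $\limsup$) of $\frac 1n\xi$ and using that these operations preserve pointwise inequalities of sequences yields $F_{xy}(t)\le F_{xy}(t')$ and $F^*_{xy}(t)\le F^*_{xy}(t')$. Finally $F_{xy}\le F^*_{xy}$ is immediate from the inequality $\liminf_n a_n\le\limsup_n a_n$, valid for any real sequence $(a_n)$, applied with $a_n:=\frac 1n\xi(f,x,y,n,t)$ for each $t$.

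There is no genuine obstacle here; the only point deserving a word of care is the case $t>\diam(X)$, where one should observe that the bound $d(f^i(x),f^i(y))\le\diam(X)$ holds by the very definition of the diameter as a supremum over $X\times X$, independently of whether that supremum is attained, so the argument does not invoke compactness of $X$. Everything else is a mechanical translation of the definitions of the distribution functions.
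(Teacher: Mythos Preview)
Your proof is correct and matches the paper's approach: the paper simply states that these properties are ``straightforward from the definition'' and gives no further argument, and what you have written is exactly the routine unwinding of the definitions that this phrase points to.
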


The notion of distributional chaos was introduced in \cite{SS}
(although the name ``distributional chaos'' was given later). Three variants of
distributional chaos are now known in the literature (see, e.g., 
\cite{BSS2}). Distributional chaos of type 1
is considered as the original definition of distributional chaos.

\begin{defi}
Let $(X,f)$ be a topological dynamical system. Then $(X,f)$ is called
\emph{distributionally chaotic of type 1, 2, 3}\index{distributional chaos} 
respectively (for short, DC1, DC2, DC3) if the condition
(DC1), (DC2), (DC3) respectively is satisfied:
\begin{eqnarray*}
{\rm (DC1)}&&\exists x,y\in X,\ \exists \delta>0,\ \forall t\in(0,\delta),\ 
F_{xy}(t)=0\text{ and }
\forall t>0,\ F^*_{xy}(t)=1,\\
{\rm (DC2)}&&\exists x,y\in X,\ \exists \delta>0,\ \forall t\in 
(0,\delta),\ F_{xy}(t)< 1\text{ and } \forall t>0,\ F^*_{xy}(t)=1,\\
{\rm (DC3)}&&\exists x,y\in X,\ \exists\, 0<a<b,\ \forall t\in (a,b),\ F_{xy}(t)<
F^*_{xy}(t).\\
\end{eqnarray*}
\end{defi}

Notice that, if condition (DC2) holds for some $x,y$,
then $(x,y)$ is a Li-Yorke pair of modulus $\delta$. Therefore,
DC2 is a refinement of the definition of Li-Yorke pair.

It is clear that (DC1)$\Rightarrow$(DC2)$\Rightarrow$(DC3). 
In \cite{SS}, Schweizer and Smítal showed that, for interval maps,
DC1, DC2 and DC3 coincide and are equivalent to positive entropy
(Corollary~\ref{cor:DChtop} below).

We start with the case of zero entropy interval maps. 
The proofs of Lemma~\ref{lem:DC-xper} and Theorem~\ref{theo:DC-htop0} follow 
the ideas from \cite{SS}.

\begin{lem}\label{lem:DC-xper}
Let $f\colon I\to I$ be an interval map such that $h_{top}(f)=0$. For all
$x\in I$ and all $\eps>0$, there exist a periodic point $z$ 
and a positive integer $K$ such that
\begin{equation}\label{eq:DC-xz}
\forall k\ge K,\ \forall t\ge \eps,\ \frac 1k \xi(f,x,z,k,t)\ge 1-\eps.
\end{equation}
\end{lem}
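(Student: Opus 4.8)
The plan is to exploit the solenoidal structure of zero-entropy interval maps furnished by Proposition~\ref{prop:htop0-Lki}. First I would reduce \eqref{eq:DC-xz} to the case $t=\eps$: since $\xi(f,x,z,k,t)$ is non-decreasing in $t$, it suffices to produce a periodic point $z$ and an integer $K$ with $\frac1k\#\{i\in\Lbrack 0,k-1\Rbrack\mid |f^i(x)-f^i(z)|<\eps\}\ge 1-\eps$ for all $k\ge K$. Then I would split according to whether $\omega(x,f)$ is finite or infinite.

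If $\omega(x,f)$ is finite it is a periodic orbit of some period $p$ by Lemma~\ref{lem:omega-finite}, and $f^p$ fixes each of its points. Applying Lemma~\ref{lem:omega-finite} to the map $f^p$ and the point $x$, the set $\omega(x,f^p)\subset\omega(x,f)$ is a periodic orbit of $f^p$ lying inside the fixed-point set of $f^p$, hence a single point $z$ with $f^p(z)=z$, so $f^{np}(x)\to z$. By continuity of $f,\dots,f^{p-1}$ there is $\delta>0$ with $|y-z|<\delta\Rightarrow|f^i(y)-f^i(z)|<\eps$ for $i\in\Lbrack 0,p-1\Rbrack$; picking $N$ with $|f^{np}(x)-z|<\delta$ for $n\ge N$ and writing $m=np+i$ (and using $f^{np}(z)=z$) yields $|f^m(x)-f^m(z)|<\eps$ for all $m\ge Np$. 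Hence $\frac1k\xi(f,x,z,k,t)\ge 1-\frac{Np}{k}$ for every $t\ge\eps$, and any $K$ with $Np/K\le\eps$ works.

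Now suppose $\omega(x,f)$ is infinite. Apply Proposition~\ref{prop:htop0-Lki} with $\CI=\IZ^+$ to obtain closed intervals $(L_k)_{k\ge 0}$ such that $(L_k,f(L_k),\dots,f^{2^k-1}(L_k))$ is a cycle of intervals (so $f^{2^k}(L_k)=L_k$), and, by item~(vi), for each $k$ an integer $N_k$ with $f^n(x)\in f^n(L_k)$ for all $n\ge N_k$. Since $f^{2^k}(L_k)=L_k$ we have $f^n(L_k)=f^{n\bmod 2^k}(L_k)$, so for $n\ge N_k$ the point $f^n(x)$ lies in the cycle interval $f^{n\bmod 2^k}(L_k)$. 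The key observation is that the $2^k$ intervals $f^0(L_k),\dots,f^{2^k-1}(L_k)$ are pairwise disjoint subintervals of $I$, hence at most $M:=\lfloor|I|/\eps\rfloor$ of them have length $\ge\eps$. I would fix $k$ so large that $M/2^k<\eps/2$; by Lemma~\ref{lem:fixed-point} applied to $f^{2^k}|_{L_k}$ there is a periodic point $z\in L_k$ with $f^{2^k}(z)=z$, and then $f^n(z)\in f^{n\bmod 2^k}(L_k)$ for all $n$. Consequently, whenever $n\ge N_k$ and the interval $f^{n\bmod 2^k}(L_k)$ has length $<\eps$, both $f^n(x)$ and $f^n(z)$ lie in it, so $|f^n(x)-f^n(z)|<\eps$.

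What remains is a routine counting step: among $n\in\Lbrack 0,k'-1\Rbrack$, the number of those with $n\ge N_k$ whose residue $n\bmod 2^k$ is ``long'' is at most $M\lceil k'/2^k\rceil\le M(k'/2^k+1)$, so $\frac1{k'}\xi(f,x,z,k',\eps)\ge 1-\frac{N_k}{k'}-\frac{M}{2^k}-\frac{M}{k'}$; then choosing $K\ge N_k$ with $(N_k+M)/K<\eps/2$ gives $\frac1{k'}\xi(f,x,z,k',t)\ge 1-\eps$ for all $k'\ge K$ and all $t\ge\eps$. The main obstacle is not any single sharp estimate but organizing the argument so that Proposition~\ref{prop:htop0-Lki}(vi) applies cleanly — one must recognize that $f^n(L_k)$ cycles periodically through the $2^k$ pairwise disjoint cycle intervals and that the trajectory of $x$ is eventually trapped within them — after which disjointness of the cycle forces all but a vanishing density of those intervals to be short, and the conclusion follows.
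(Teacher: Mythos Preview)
Your proposal is correct and follows essentially the same approach as the paper: split according to whether $\omega(x,f)$ is finite or infinite, in the finite case take the periodic point toward which $f^{np}(x)$ converges, and in the infinite case use the solenoidal cycle of intervals from Proposition~\ref{prop:htop0-Lki}, pick a periodic point in $L_k$ via Lemma~\ref{lem:fixed-point}, and count that at most $\lfloor |I|/\eps\rfloor$ of the $2^k$ disjoint cycle intervals can have length $\ge\eps$. The only differences are cosmetic: you justify the choice of $z$ in the finite case by applying Lemma~\ref{lem:omega-finite} to $f^p$ (the paper simply asserts the limit exists), and your bookkeeping in the counting step packages the error terms slightly differently, but the estimates and the final choice of $K$ are the same.
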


\begin{proof}
Let $x\in I$ and $\eps>0$.
We split the proof depending on $\omega(x,f)$ being finite or infinite.

First we suppose that $\omega(x,f)$ is finite, that is, there exists a periodic
point $z$ of period $p$ such that $\omega(x,f)=\CO_f(z)$ 
(Lemma~\ref{lem:omega-finite}); we choose $z$ such that 
$\lim_{n\to+\infty}f^{np}(x)=z$.
Thus, by continuity, there exists an integer $N$ such that
$|f^n(x)-f^n(z)|<\eps$ for all $n\ge N$.  Then
$$
\forall n\ge N,\ \forall t\ge\eps,\
\xi(f,x,z,n,t)=\xi(f,x,z,N,t)+(n-N).
$$
This implies that $\lim_{n\to+\infty}\frac 1n\xi(f,x,z,n,t)=1$.
Therefore \eqref{eq:DC-xz} holds for some integer $K$.

Now we suppose that $\omega(x,f)$ is infinite. Let $(L_n)_{n\ge 0}$ be the
sequence of intervals given by Proposition~\ref{prop:htop0-Lki}.
We fix a positive integer $n$ that will be chosen later.
Since $f^{2^n}(L_n)=L_n$, there exists a point $z\in L_n$ such that
$f^{2^n}(z)=z$ (Lemma~\ref{lem:fixed-point}). Let $N$ be an integer such that
$f^i(x)\in f^i(L_n)$ for all $i\ge N$ (such an integer $N$ exists by
Proposition~\ref{prop:htop0-Lki}(vi)). Thus, if $i\ge N$, both points
$f^i(x), f^i(z)$ belong to $f^i(L_n)$, which is an interval of the family
$(f^j(L_n))_{0\le j<2^n}$. Let $k\ge N$; we write
$k=N+k'2^n+r$ with $k'\ge 0$ and $r\in\Lbrack 0,2^n-1\Rbrack$. 
Since $L_n$ is a periodic interval of period $2^n$, we have
\begin{eqnarray*}
\lefteqn{\#\{i\in\Lbrack N,k-1\Rbrack\mid |f^i(x)-f^i(z)|\ge \eps\}
\le\#\{i\in\Lbrack N,k-1\Rbrack\mid |f^i(L_n)|\ge\eps\}}\\
&\le&\#\{i\in\Lbrack 0,r-1\Rbrack \mid |f^{N+i}(L_n)|\ge \eps\}
+k'\#\{j\in\Lbrack 0,2^n-1\Rbrack\mid |f^j(L_n)|\ge \eps\}\\
&\le& (k'+1)\#\{j\in\Lbrack 0,2^n-1\Rbrack\mid |f^j(L_n)|\ge \eps\}.
\end{eqnarray*}
Among the intervals $(f^i(L_n))_{0\le i<2^n}$, at most $\frac{|I|}{\eps}$
have a length greater than or equal to $\eps$ because these intervals are
pairwise disjoint. Thus
$$
\#\{i\in\Lbrack N,k-1\Rbrack\mid |f^i(x)-f^i(z)|\ge \eps\}\le 
\frac{(k'+1)|I|}{\eps},
$$
and hence
\begin{eqnarray*}
\xi(f,x,z,k,\eps)&\ge& \#\{i\in\Lbrack N,k-1\Rbrack \mid |f^i(x)-f^i(z)|< \eps\}\\
&\ge& (k-N)-\frac{(k'+1)|I|}{\eps}=k-N-\frac{|I|}{\eps}-k'\frac{|I|}{\eps}.
\end{eqnarray*}
Thus we have
\begin{eqnarray*}
\frac 1k\xi(f,x,z,k,\eps)
&\ge& 1-\frac{N+\frac{|I|}{\eps}}{k}-\frac{k'}{N+k'2^n+r}\cdot\frac{|I|}{\eps}\\
&\ge& 1-\frac{N+\frac{|I|}{\eps}}{k}-\frac{|I|}{2^n\eps}.
\end{eqnarray*}
We choose $n$ such that $\frac{|I|}{2^n\eps}<\frac{\eps}{2}\Leftrightarrow
2^n>\frac{2|I|}{\eps^2}$, and we choose $K\ge N$ such that
$\frac{N+\frac{|I|}{\eps}}{K}<\frac{\eps}{2}$. Then
$$
\forall k\ge K,\ \forall t\ge \eps,\ 
\frac 1k\xi(f,x,z,k,t)\ge\frac 1k\xi(f,x,z,k,\eps)\ge 1-\eps,
$$
which concludes the proof.
\end{proof}

\begin{lem}\label{lem:DC2perpts}
Let $(X,f)$ be a topological dynamical system and let $z,z'$ be periodic points.
Then $F_{zz'}=F^*_{zz'}$. 
\end{lem}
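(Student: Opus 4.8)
The key observation is that the sequence of distances along the two orbits is eventually — in fact immediately — periodic. Let $p$ be the period of $z$ and $q$ the period of $z'$, and set $N:=\mathrm{lcm}(p,q)$ (or simply $N:=pq$). Since $f^{i+N}(z)=f^i(z)$ and $f^{i+N}(z')=f^i(z')$ for every $i\ge 0$, the function $i\mapsto d(f^i(z),f^i(z'))$ is $N$-periodic. Consequently, for any $t\in\IR$, the quantity
$$
c(t):=\#\{i\in\Lbrack 0,N-1\Rbrack \mid d(f^i(z),f^i(z'))<t\}
$$
records, over each successive block of $N$ consecutive indices, the exact number of indices $i$ for which $d(f^i(z),f^i(z'))<t$.

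The plan is then to carry out a straightforward counting/sandwich estimate. Given $n\ge 1$, write $n=kN+r$ with $k\ge 0$ and $r\in\Lbrack 0,N-1\Rbrack$. Splitting the index range $\Lbrack 0,n-1\Rbrack$ into $k$ full blocks of length $N$ plus a remainder block of length $r$, and using the $N$-periodicity, one gets
$$
k\,c(t)\ \le\ \xi(f,z,z',n,t)\ \le\ k\,c(t)+r\ \le\ k\,c(t)+N.
$$
Dividing by $n=kN+r$ yields
$$
\frac{k\,c(t)}{kN+r}\ \le\ \frac{1}{n}\,\xi(f,z,z',n,t)\ \le\ \frac{k\,c(t)+N}{kN+r}.
$$
As $n\to+\infty$ we have $k\to+\infty$ (since $r$ stays bounded), so both the left and right sides tend to $c(t)/N$. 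Hence $\lim_{n\to+\infty}\frac1n\xi(f,z,z',n,t)$ exists and equals $c(t)/N$; in particular its $\liminf$ and $\limsup$ coincide, i.e. $F_{zz'}(t)=F^*_{zz'}(t)=c(t)/N$ for every $t\in\IR$. This gives $F_{zz'}=F^*_{zz'}$.

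There is no real obstacle here: the only thing to be careful about is stating the periodicity of the distance sequence correctly (it holds verbatim since both points are periodic, with common period $N$), and keeping track that the remainder term $r$ is bounded by $N$ independently of $n$ so that it is negligible after dividing by $n$. Everything else is a routine limit computation, so I would keep the write-up to the few lines sketched above.
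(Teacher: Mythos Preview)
Your proof is correct and essentially identical to the paper's: both take a common period $N$ (the paper calls it $p$), write $n=kN+r$, and use the $N$-periodicity of the distance sequence to see that $\frac{1}{n}\xi(f,z,z',n,t)\to \frac{1}{N}\xi(f,z,z',N,t)$. The only cosmetic difference is that the paper records the exact identity $\xi(f,z,z',n,t)=k\,\xi(f,z,z',N,t)+\xi(f,z,z',r,t)$ rather than your sandwich, but the conclusion is the same.
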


\begin{proof}
Let $p\in\IN$ be a common multiple of the periods of $z$ and $z'$.
For all integers $k,i\ge 0$, we have $f^{kp+i}(z)=f^i(z)$ and
$f^{kp+i}(z')=f^i(z')$.
Let $n$ be a positive integer and $t\in\IR$. We write $n=kp+r$ with $k\ge 0$
and $r\in\Lbrack 0,p-1\Rbrack$. Then
$$
\xi(f,z,z',n,t)=k\xi(f,z,z',p,t)+\xi(f,z,z',r,t).
$$
This implies that $\lim_{n\to+\infty}\frac1n\xi(f,z,z',n,t)$ exists and is equal
to $\frac 1p\xi(f,z,z',p,t)$. Hence $F_{zz'}(t)=F^*_{zz'}(t)
=\frac 1p\xi(f,z,z',p,t)$ for all $t\in\IR$.
\end{proof}

\begin{theo}\label{theo:DC-htop0}
Let $f\colon I\to I$ be an interval map of zero topological entropy.
Then, for all points $x,y$ in $I$, $\|F_{xy}-F^*_{xy}\|_1=0$, where
$\|\cdot\|_1$ is the $L^1$ norm, that is,
$\disp\|\vfi\|_1:=\int_{-\infty}^{+\infty}|\vfi(t)|\,dt$.
\end{theo}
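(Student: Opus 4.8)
The plan is to fix two points $x,y\in I$ and a tolerance $\eps>0$, and to approximate each of $x$ and $y$ by a periodic point in the sense of Lemma~\ref{lem:DC-xper}. Concretely, by that lemma there are periodic points $z,z'$ and an integer $K$ such that, for all $k\ge K$ and all $t\ge\eps$, both $\frac1k\xi(f,x,z,k,t)\ge 1-\eps$ and $\frac1k\xi(f,y,z',k,t)\ge 1-\eps$. The first step is to transfer this closeness into a comparison between $\xi(f,x,y,k,t)$ and $\xi(f,z,z',k,t')$ for suitable $t,t'$: if at time $i$ both $|f^i(x)-f^i(z)|<\eps$ and $|f^i(y)-f^i(z')|<\eps$, then $|f^i(x)-f^i(y)|$ and $|f^i(z)-f^i(z')|$ differ by at most $2\eps$. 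The set of times $i\in\Lbrack 0,k-1\Rbrack$ where \emph{both} approximations hold has cardinality at least $k(1-2\eps)$ by a union bound. Hence, for all $k\ge K$ and all $t>0$,
$$
\xi(f,x,y,k,t+2\eps)\ge \xi(f,z,z',k,t)-2\eps k
\quad\text{and}\quad
\xi(f,z,z',k,t+2\eps)\ge \xi(f,x,y,k,t)-2\eps k,
$$
so dividing by $k$ and taking $\liminf$ / $\limsup$ gives $|F_{xy}(t+2\eps)-F_{zz'}(t)|\le 2\eps$ and the same for the starred functions, for every $t>0$ (with the obvious care near $t=0$, where all functions vanish on $(-\infty,0]$).

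The second step uses Lemma~\ref{lem:DC2perpts}: since $z,z'$ are periodic, $F_{zz'}=F^*_{zz'}$ pointwise. Combining this with the two-sided estimates from the first step yields, for every $t>\eps$ say (to stay safely away from $0$),
$$
0\le F^*_{xy}(t)-F_{xy}(t)
\le \bigl(F^*_{xy}(t)-F_{zz'}(t-2\eps)\bigr)+\bigl(F^*_{zz'}(t-2\eps)-F_{xy}(t)\bigr)
\le 2\eps+2\eps=4\eps,
$$
using that $F^*_{zz'}=F_{zz'}$ in the middle; I will have to be a little careful about whether I compare at $t$ or at $t\pm 2\eps$, but the monotonicity of all four functions (Proposition~\ref{prop:DCobvious}) lets me absorb the shifts into the error term. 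So $0\le F^*_{xy}(t)-F_{xy}(t)\le 4\eps$ for all $t>C\eps$ for some absolute constant $C$, and for $t\le 0$ the difference is $0$ while for $0<t\le C\eps$ it is at most $1$.

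The final step is to integrate. Write
$$
\|F^*_{xy}-F_{xy}\|_1=\int_0^{C\eps}\bigl(F^*_{xy}-F_{xy}\bigr)(t)\,dt
+\int_{C\eps}^{\diam(I)}\bigl(F^*_{xy}-F_{xy}\bigr)(t)\,dt
\le C\eps\cdot 1+\diam(I)\cdot 4\eps,
$$
since the integrand vanishes for $t>\diam(I)$ (Proposition~\ref{prop:DCobvious}). As $\eps>0$ is arbitrary, $\|F^*_{xy}-F_{xy}\|_1=0$, which is the claim. The main obstacle I anticipate is purely bookkeeping: keeping the $\eps$-shifts in the arguments of $F_{xy}$ and $F_{zz'}$ consistent and making sure that the region $0<t\le C\eps$, where I only have the trivial bound, contributes $O(\eps)$ to the $L^1$ norm and not more. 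There is no real analytic difficulty once Lemmas~\ref{lem:DC-xper} and~\ref{lem:DC2perpts} are in hand; the content is entirely in the union-bound estimate and the triangle inequality at each time step.
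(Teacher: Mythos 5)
Your overall plan --- approximate $x$ and $y$ by periodic points $z,z'$ via Lemma~\ref{lem:DC-xper}, compare distribution functions by a triangle inequality at each time step and a union bound, invoke $F_{zz'}=F^*_{zz'}$ from Lemma~\ref{lem:DC2perpts}, and integrate --- is the same as the paper's. There is, however, a genuine gap in your pointwise estimate, and it propagates to the integration.

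What the triangle-inequality argument actually delivers is $F^*_{xy}(t)\le F_{zz'}(t+2\eps)+2\eps$ and $F_{xy}(t)\ge F_{zz'}(t-2\eps)-2\eps$, hence $F^*_{xy}(t)-F_{xy}(t)\le F_{zz'}(t+2\eps)-F_{zz'}(t-2\eps)+4\eps$. Your displayed chain asserts $F^*_{xy}(t)-F_{zz'}(t-2\eps)\le 2\eps$, which silently replaces the argument $t+2\eps$ by $t-2\eps$; monotonicity of $F_{zz'}$ cannot justify that replacement pointwise --- it only tells you $F_{zz'}(t+2\eps)\ge F_{zz'}(t-2\eps)$, which is the wrong direction. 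Since $z,z'$ are periodic, $F_{zz'}$ is a step function (from the proof of Lemma~\ref{lem:DC2perpts}, $F_{zz'}(t)=\frac1p\xi(f,z,z',p,t)$ for a common multiple $p$ of the periods), so at any jump of $F_{zz'}$ the quantity $F_{zz'}(t+2\eps)-F_{zz'}(t-2\eps)$ is at least the jump size, which is $\ge 1/p$ and does not shrink with $\eps$. Thus the purported bound $0\le F^*_{xy}(t)-F_{xy}(t)\le 4\eps$ for all $t>C\eps$ is false, and your concluding estimate $\int_{C\eps}^{\diam(I)}(F^*_{xy}-F_{xy})\le 4\eps\,\diam(I)$ is unjustified.

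The missing idea is to handle the shift \emph{after} integrating, not before. Keep the correct pointwise bound $F^*_{xy}(t)-F_{xy}(t)\le F_{zz'}(t+2\eps)-F_{zz'}(t-2\eps)+4\eps$ on $[\eps,|I|]$, and note that by a change of variables
\begin{equation*}
\int_\eps^{|I|}\bigl(F_{zz'}(t+2\eps)-F_{zz'}(t-2\eps)\bigr)\,dt
=\int_{3\eps}^{|I|+2\eps}F_{zz'}(u)\,du-\int_{-\eps}^{|I|-2\eps}F_{zz'}(u)\,du\le 4\eps,
\end{equation*}
because $0\le F_{zz'}\le 1$ and the two intervals of integration differ by a set of total length $4\eps$. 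Together with the trivial bound $\int_0^\eps\le\eps$ this gives $\|F^*_{xy}-F_{xy}\|_1\le\eps(5+4|I|)$, and the theorem follows on letting $\eps\to 0$. This telescoping integral is the genuine analytic content hiding behind the "bookkeeping" you anticipated: the $\pm 2\eps$ shift in the argument of $F_{zz'}$ does not vanish pointwise, only in $L^1$.
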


\begin{proof}
We fix two points $x,y$ in $I$ and a positive number $\eps$.
According to Lemma~\ref{lem:DC-xper},
there exist periodic points $z,z'$ and an integer $K$ such that
\begin{equation}\label{eq:DC0-xi}
\forall k\ge K,\; \forall t\ge\eps,\; \xi(f,x,z,k,t)\ge k(1-\eps)
\text{ and }\xi(f,y,z',k,t)\ge k(1-\eps).
\end{equation}
We set 
$$
\CI_k:=\{i\in\Lbrack 0,k-1\Rbrack\mid |f^i(x)-f^i(z)|<\eps\text{ and }
|f^i(y)-f^i(z')|<\eps\}.
$$
Then \eqref{eq:DC0-xi} implies that
$\#\CI_k\ge k(1-2\eps)$ if $k\ge K$.
For all integers $i$, we have
\begin{equation}\label{eq:DCh0-xyzz'}
|f^i(x)-f^i(y)|\le |f^i(x)-f^i(z)|+|f^i(z)-f^i(z')|+|f^i(y)-f^i(z')|.
\end{equation}
If $i\in\CI_k$ and $|f^i(z)-f^i(z')|<t-2\eps$, then 
$|f^i(x)-f^i(y)|<t$ by \eqref{eq:DCh0-xyzz'}. Thus, for all $k\ge 
K$ and all $t\ge\eps$,
\begin{eqnarray*}
\xi(f,x,y,k,t)&\ge&\#\{i\in\CI_k\mid |f^i(z)-f^i(z')|<t-2\eps\}\\
&\ge& k(1-2\eps)\xi(f,z,z',k,t-2\eps).
\end{eqnarray*}
Dividing by $k$ and taking the limit inf, we get
$$
\forall t\ge\eps,\ F_{xy}(t)\ge (1-2\eps)F_{zz'}(t-2\eps)\ge F_{zz'}(t-2\eps)-2\eps.
$$
As in \eqref{eq:DCh0-xyzz'}, we have:
$$
|f^i(z)-f^i(z)|\le |f^i(x)-f^i(z)|+|f^i(x)-f^i(y)|+|f^i(y)-f^i(z')|.
$$
Similar arguments as above (with $t+2\eps$ and  $\limsup$ instead of $t$
and $\liminf$) give
$$
\forall t\ge\eps,\ F^*_{zz'}(t+2\eps)\ge F^*_{xy}(t)-2\eps.
$$
According to Lemma~\ref{lem:DC2perpts}, $F_{zz'}=F^*_{zz'}$. Thus
$$
\forall t\ge\eps,\ F_{zz'}(t-2\eps)-2\eps\le F_{xy}(t)
\le F^*_{xy}(t)\le F_{zz'}(t+2\eps)+2\eps.
$$
By Proposition~\ref{prop:DCobvious},
$\|F^*_{xy}-F_{xy}\|_1=\int^{|I|}_0 \left(F^*_{xy}(t)-F_{xy}(t)\right)dt$
and $F^*_{xy}(t)-F_{xy}(t)\le 1$ for all $t\in [0,\eps]$.
This implies that
\begin{equation}\label{eq:norm1-Fxy}
\|F_{xy}^*-F_{xy}\|_1\le \eps+\int_{\eps}^{|I|} \left(F_{zz'}(t+2\eps)
-F_{zz'}(t-2\eps)\right)dt +4\eps|I|.
\end{equation}
We set 
$$
A:=\int_{\eps}^{|I|} \left(F_{zz'}(t+2\eps)-F_{zz'}(t-2\eps)\right)\, dt.
$$ 
By Proposition~\ref{prop:DCobvious},
$F_{zz'}(t)=0$ if $t\le 0$
and $0\le F_{zz'}(t)\le 1$ for all $t$.
Thus 
\begin{eqnarray*}
A&=&
\int_{3\eps}^{|I|+2\eps}F_{zz'}(u)\,du-
\int_{-\eps}^{|I|-2\eps}F_{zz'}(u)\,du\\
&\le &
\int_{0}^{|I|}F_{zz'}(u)\,du+2\eps-\int_{0}^{|I|}F_{zz'}(u)\,du+2\eps=4\eps.
\end{eqnarray*}
Including this result in \eqref{eq:norm1-Fxy}, we have
$\|F_{xy}^*-F_{xy}\|_1\le \eps(5+4|I|)$.
Taking the limit when $\eps\to 0$, we get
$\|F_{xy}^*-F_{xy}\|_1=0$.
\end{proof}

The next theorem deals with positive entropy interval maps. The proof is
different from the one in \cite{SS};  it uses the semi-conjugacy
of a subsystem with a full shift, and the arguments are similar to the ones
in the proof of Theorem~\ref{theo:htop-positive-chaos-LY}.

\begin{theo}\label{theo:DC+htop}
Let $f\colon I\to I$ be an interval map of positive topological entropy.
Then there exist a Cantor set $K\subset I$ and a positive number
$\delta$ such that, for all distinct points $x,y$ in $K$,
$$
\forall t\in [0,\delta),\ F_{xy}(t)=0\quad\text{and}\quad
\forall t>0,\ F^*_{xy}(t)=1.
$$
\end{theo}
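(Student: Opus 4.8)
The strategy is to exploit the semi-conjugacy with the full shift that comes from a strict horseshoe. By Theorem~\ref{theo:htop-power-of-2}, since $h_{top}(f)>0$, there exists an integer $p$ such that $f^p$ has a strict horseshoe $(J_0,J_1)$; set $g:=f^p$ and let $\delta_0>0$ be the distance between $J_0$ and $J_1$. I would then invoke Proposition~\ref{prop:strictly-turbulent-shift} to obtain an invariant Cantor set $X\subset I$, a countable set $E\subset X$, and a semi-conjugacy $\vfi\colon X\to\Sigma$ between $(X,g|_X)$ and $(\Sigma,\sigma)$, together with the family of intervals $(J_{\alpha_0\ldots\alpha_{n-1}})$ satisfying \eqref{eq:v2-semicong1}--\eqref{eq:v2-semicong5}. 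The key point, exactly as in the proof of Theorem~\ref{theo:htop-positive-chaos-LY}, is that for $x\in X$ with $\vfi(x)=(\alpha_n)_{n\ge 0}$, one has $g^n(x)\in J_{\alpha_n}$ for all $n$, so whenever two points $x,y\in X$ have codes that differ in the $n$-th coordinate, $|g^n(x)-g^n(y)|\ge\delta_0$, while whenever their codes agree on a long block starting at position $n$, the points $g^n(x),g^n(y)$ lie in a common small interval $J_{\alpha_n\ldots\alpha_{n+k-1}}$ (by \eqref{eq:v2-semicong5}, shrinking as the block grows, provided we stay away from $\vfi(E)$).

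\textbf{Constructing the scrambled Cantor set with distributional control.} The difference from the Li-Yorke case is that I need \emph{statistical} control: the frequency of good approximation must tend to $1$ along a subsequence, and the frequency of $\delta_0$-separation must be bounded below along another subsequence, with this holding for \emph{every} pair of distinct points in the set. Fix $\bar\omega=(\omega_n)_{n\ge 0}\in\Sigma\setminus\vfi(E)$. I would define an injective, continuous coding map $\psi\colon\Sigma\to\Sigma$ that inserts, between the first $n$ symbols of the input and the next symbol, an increasingly long block of $\omega$'s, and moreover arranges the block lengths to grow fast enough that: (i) along the positions where the long $\omega$-blocks sit, the density of indices $i<N$ with $g^i(\cdot)$ in a prescribed small neighborhood tends to $1$ (forcing $F^*_{xy}(t)=1$ for all $t>0$), and (ii) there remain infinitely many positions $N$ at which a definite positive fraction of indices $i<N$ correspond to a coordinate where the two input codes disagree (forcing $F_{xy}(t)=0$ for $t<\delta_0$). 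Concretely: choose block lengths $\ell_k$ so that $\ell_k$ dominates the sum $\ell_1+\cdots+\ell_{k-1}+k$; then taking $N_k$ to be the position just after the $k$-th inserted block gives $\xi(g,x,y,N_k,t)/N_k\to1$ for every $t>0$, using the continuity of $f$ and \eqref{eq:v2-semicong5} to turn "agreement on a long block of the code" into "$|g^i(x)-g^i(y)|<t$" on a set of indices of density close to $1$. Simultaneously, choosing $M_k$ to be a position where a block of \emph{input} symbols (those that differ for the two chosen preimages) has just been placed gives a positive-density set of indices where separation by $\delta_0$ occurs, so $\liminf \xi(g,x,y,M_k,t)/M_k$ stays bounded away from $1$; pushing this further (balancing block lengths so the separated fraction tends to $1$) yields $F_{xy}(t)=0$ for $t\in[0,\delta_0)$. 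For each $\bar\alpha\in\Sigma$ pick $x_{\bar\alpha}\in\vfi^{-1}(\psi(\bar\alpha))$; since $\psi(\bar\alpha)\in\vfi(E)$ only for countably many $\bar\alpha$, the set $S:=\{x_{\bar\alpha}\}$ is $\vfi^{-1}(\psi(\Sigma))$ minus a countable set, hence an uncountable Borel set, and by Theorem~\ref{theo:perfect-set} it contains a Cantor set $K$. Finally I pass from $g=f^p$ to $f$: the distribution functions for $f^p$ and $f$ are comparable, and one checks directly that $F_{xy}=0$ on a (possibly smaller) interval $[0,\delta)$ and $F^*_{xy}=1$ on $(0,\infty)$ transfer from $g$ to $f$, using uniform continuity of $f$ to control the intermediate iterates $f^{np+j}$, $0\le j<p$.

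\textbf{Main obstacle.} The routine parts—invoking the horseshoe, the shift semi-conjugacy, and extracting a Cantor set via Theorem~\ref{theo:perfect-set}—carry over verbatim from Theorem~\ref{theo:htop-positive-chaos-LY}. The genuinely new and delicate work is the combinatorial design of the insertion map $\psi$ together with the choice of the two subsequences $(N_k)$ and $(M_k)$, so that the \emph{density} estimates $\xi(g,x,y,N_k,t)/N_k\to1$ and $\xi(g,x,y,M_k,t)/M_k\to1$ (for $t<\delta_0$, separated case) hold uniformly over all distinct pairs in $S$ regardless of where their codes first differ. The subtlety is that a pair $x_{\bar\alpha},x_{\bar\beta}$ whose inputs first differ at a very late coordinate $k_0$ requires the block lengths after stage $k_0$ to dominate everything accumulated before, so the growth condition on $(\ell_k)$ must be genuinely super-exponential in a way that makes \emph{both} the "close" density (from the $\omega$-blocks) and the "far" density (from the differing input symbols) approach $1$ along complementary subsequences. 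Getting these two requirements to coexist—rather than competing—is the heart of the argument, and is precisely where the distinction between DC1 and mere Li-Yorke chaos is forced to appear. The final transfer from $f^p$ to $f$ also needs a small but careful argument, since $\|F_{xy}-F^*_{xy}\|$ can a priori change under taking roots; here one uses that the separation distance and the shrinking-neighborhood property are robust under the at most $p$ intermediate continuous maps.
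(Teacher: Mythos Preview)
Your overall strategy matches the paper's: horseshoe for some $f^r$, semi-conjugacy $\vfi\colon X\to\Sigma$ from Proposition~\ref{prop:strictly-turbulent-shift}, a coding $\psi\colon\Sigma\to\Sigma$, points $x_{\bar\alpha}\in\vfi^{-1}(\psi(\bar\alpha))$, a Cantor set via Theorem~\ref{theo:perfect-set}, and passage from $g=f^r$ back to $f$ by uniform continuity (which the paper also does inline). The substantive work, as you correctly identify, is the design of $\psi$, and here there are two gaps.

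The main one concerns $F_{xy}=0$. With $\psi$ as you describe it---inserting long $\bar\omega$-blocks between successive input symbols, essentially the map from Theorem~\ref{theo:htop-positive-chaos-LY}---you cannot make the density of separated indices tend to $1$: if $\bar\alpha,\bar\beta$ differ only at coordinate $q$, each appearance of $\alpha_q$ contributes a \emph{single} separated index, so separated indices up to time $M$ number at most the input positions, and their density cannot approach $1$ without shrinking the $\omega$-blocks and losing $F^*_{xy}\equiv 1$. The paper's solution is to \emph{repeat} each input symbol with super-summable multiplicity: in stage $i$, the symbol $\alpha_j$ (for $j<i$) is written $n_{k_i+j+1}$ consecutive times, where $n_k^{-1}\sum_{i<k}n_i\to 0$; then the block $(\alpha_q)^{n_{k_i+q+1}}$ versus $(\beta_q)^{n_{k_i+q+1}}$ gives $n_{k_i+q+1}$ consecutive separated indices dominating everything before, and $\xi/M\to 0$ along that subsequence. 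The second, subtler gap concerns $F^*_{xy}=1$: at a density-$1$ set of positions $j$ you need $g^j(x)$ and $g^j(y)$ to lie in a common \emph{small} cylinder, not merely that the codes agree at position $j$. Inside an $\omega$-block the relevant cylinders are $J_{\omega_m\ldots\omega_{\ell-1}}$ for varying shifts $m$, and \eqref{eq:v2-semicong5} only controls $m=0$. The paper avoids this by using sync blocks $W_n:=0^{n-1}1$, every suffix of which is again some $W_j$, so the single fact $|J_{0^{j-1}1}|\to 0$ (proved as \eqref{eq:diamJOn1}) handles them all at once.
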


\begin{proof}
By Theorem~\ref{theo:htop-power-of-2}, 
there exists an integer $r$ such that
$f^r$ has a strict horseshoe  $(J_0,J_1)$. 
Let $X, E$, $\vfi\colon X\to \Sigma$  and 
$(J_{\alpha_0\ldots\alpha_{n-1}})_{n\ge 1, (\alpha_0,\ldots,\alpha_{n-1})\in\{0,1\}^n}$
be given by Proposition~\ref{prop:strictly-turbulent-shift}
for the map $g:=f^r$. 

We set $\gamma^n:=\underbrace{\gamma\cdots \gamma}_{n\ \rm times}$
if $\gamma\in\{0,1\}$. We first prove the
following fact:

\begin{equation}\label{eq:diamJOn1}
\lim_{n\to +\infty}|J_{0^n1}|=0.
\end{equation}

By Proposition~\ref{prop:strictly-turbulent-shift},
$\bigcap_{n=1}^{+\infty} J_{0^n}$ is a decreasing intersection of nonempty
compact intervals (this intersection may be a non degenerate interval because
$(0000\ldots)$ may be in $\vfi(E)$). 
Thus $(\min J_{0^n})_{n\ge 0}$ is a non decreasing
sequence that converges to some point $x_1$, and 
$(\max J_{0^n})_{n\ge 0}$ is a non increasing
sequence that converges to some point $x_2$. One has
$x_1,x_2\in \bigcap_{n=1}^{+\infty} J_{0^n}$ and $x_1\le x_2$.
Let $\eps>0$ and let $N\ge 1$ be such that
\begin{equation}\label{eq:|JOn|}
\forall n\ge N,\ |x_1-\min J_{0^n}|<\eps\quad\text{and}\quad 
|x_2-\max J_{0^n}|<\eps.
\end{equation}
Let $n\ge N$. The intervals $J_{0^n1}$ and $J_{0^{n+1}}$ are disjoint
and included in $J_{0^n}$, and $x_1,x_2$ belong to $J_{0^{n+1}}$. This
implies that
\begin{itemize}
\item either $J_{0^n1}<J_{0^{n+1}}$ and
$J_{0^n1}\subset [\min J_{0^n},x_1]$ (see Figure~\ref{fig:J0n1} on the left),
\item or $J_{0^n1}>J_{0^{n+1}}$ and
$J_{0^n1}\subset [x_2,\max J_{0^n}]$ (see Figure~\ref{fig:J0n1} on the right).
\end{itemize}
In both cases, $|J_{0^n1}|<\eps$ according to \eqref{eq:|JOn|}. This
proves \eqref{eq:diamJOn1}.
\begin{figure}[htb]
\centerline{\includegraphics{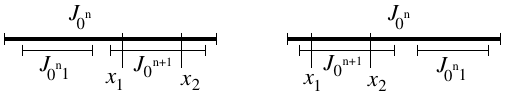}}
\caption{The two cases $J_{0^n1}<J_{0^{n+1}}$ and
$J_{0^n1}>J_{0^{n+1}}$.}
\label{fig:J0n1}
\end{figure}

Let $(n_k)_{k\ge 0}$ be a sequence of positive integers increasing fast 
enough to have
\begin{equation}\label{eq:nkveryfast}
\lim_{k\to+\infty}\frac{1}{n_k}\sum_{i=0}^{k-1}n_i=0.
\end{equation}
In particular, \eqref{eq:nkveryfast} implies that
\begin{gather}
\lim_{k\to+\infty}\frac{k}{n_k}=0\label{eq:k/nk}\\
\text{and}\quad\lim_{k\to+\infty}\frac{n_k}{\sum_{i=0}^{k}n_i}=1.\label{eq:nkfast}
\end{gather}

For all $i\ge 1$ and all $\bar\alpha=(\alpha_n)_{n\ge 0}\in\Sigma$, 
we set $W_i:=0^{i-1}1$ 
and 
$$
B_i(\bar\alpha):=W_{n_{k_i}}(\alpha_0)^{n_{k_i+1}}(\alpha_1)^{n_{k_i+2}}\ldots
(\alpha_{i-1})^{n_{k_i+i}},
$$
where $(k_i)_{i\ge 1}$ is the sequence defined by $k_1=0$ and 
$k_{i+1}=k_i+i+1$ (in this way, $B_i(\bar\alpha)$ ends with
$(\alpha_{i-1})^{n_{k_i+i}}$ and $B_{i+1}(\bar\alpha)$ begins with
$W_{n_{k_i+i+1}}$).

We define $\psi\colon \Sigma\to \Sigma$ by
\begin{eqnarray*}
\psi(\bar\alpha)&:=&
(B_1(\bar\alpha)B_2(\bar\alpha)B_3(\bar\alpha)\ldots)\\
&=&(W_{n_0}(\alpha_0)^{n_1} W_{n_2}(\alpha_0)^{n_3}(\alpha_1)^{n_4}W_{n_5}
(\alpha_0)^{n_6}(\alpha_1)^{n_7}(\alpha_2)^{n_8}\ldots)
\end{eqnarray*}
The map $\psi$ is clearly continuous.
For every $\bar\alpha\in \Sigma$, we chose a point $x_{\bar\alpha}$ in 
$\vfi^{-1}\circ \psi(\bar\alpha)$ and we set
$S:=\{x_{\bar\alpha}\in X \mid \bar \alpha \in \Sigma \}$.
According to 
Proposition~\ref{prop:strictly-turbulent-shift}, the set 
$\vfi^{-1}\circ \psi(\bar\alpha)$ contains two points if $\psi(\bar\alpha)
\in \vfi(E)$ and one point if $\psi(\bar\alpha)\notin \vfi(E)$. Thus
there exists a countable set $F\subset X$ such that
$S=\vfi^{-1}\circ \psi(\Sigma)\setminus F$.

\medskip
We fix $\bar\alpha=(\alpha_n)_{n\ge 0}$ and 
$\bar\beta=(\beta_n)_{n\ge 0}$ two distinct elements of $\Sigma$.

Let $t>0$. Since $f$ is uniformly continuous, there exists $\eps>0$
such that
$$
\forall x,y\in I,\ |x-y|<\eps\Rightarrow \forall i\in\Lbrack 0,r-1\Rbrack,\
|f^i(x)-f^i(y)|<t.
$$
According to \eqref{eq:diamJOn1}, there exists a positive integer $N$ such
that $|J_{0^{n-1}1}|<\eps$ for all $n\ge N$.
If $j\ge 0$ is such that both $\sigma^j(\psi(\bar\alpha))$
and $\sigma^j(\psi(\bar\beta))$ begin with $W_n$ with $n\ge N$, then
$|g^j(x_{\bar\alpha})-g^j(x_{\bar\beta})|<\eps$ because 
both points $g^j(x_{\bar\alpha}),g^j(x_{\bar\beta})$ belong to
$J_{0^{n-1}1}$. We set
$$
m_i:=\sum_{k=0}^{k_i}n_k.
$$
The integer $m_i$ is the length of the
sequence $B_1(\bar\alpha)\ldots B_{i-1}(\bar\alpha)W_{n_{k_i}}$.
Then, by the definition of $\psi$, for all $i$ such that $n_{k_i}>N$,
$$
\forall j\in\Lbrack N, n_{k_i}\Rbrack,\ 
|g^{m_i-j}(x_{\bar\alpha})-g^{m_i-j}(x_{\bar\beta})|<\eps.
$$
This implies that 
$\xi(g,x_{\bar\alpha},x_{\bar\beta},m_i,\eps)\ge n_{k_i}-N$
and $\xi(f,x_{\bar\alpha},x_{\bar\beta},r.m_i,t)\ge r(n_{k_i}-N)$.
According to  \eqref{eq:nkfast}, 
$\lim_{i\to+\infty}\frac{n_{k_i}-N}{m_i}=1$, and hence
$F^*_{x_{\bar\alpha}x_{\bar\beta}}(t)=1$.

\medskip
Since $\bar\alpha\ne\bar\beta$, there is an integer $q$ such that
$\alpha_q\neq\beta_q$. Let $D>0$ be the distance between  $J_0$ and $J_1$,
and let $\delta>0$  be such that
$$
\forall x,y\in I,\ |x-y|<\delta\Rightarrow \forall i\in\Lbrack 0,r-1\Rbrack,\
|f^i(x)-f^i(y)|<D.
$$
We set $p_i:=m_i+(k_i+1)+\ldots+(k_i+q)$. If 
$k_{i+1}-k_i>q+1$, then $p_i$ is the length of the sequence 
$$
B_1(\bar\alpha)\ldots B_{i-1}(\bar\alpha)W_{k_i}(\alpha_0)^{n_{k_i+1}}\ldots
(\alpha_{q-1})^{n_{k_i+q}},
$$
and 
$\sigma^{p_i}(\psi(\bar\alpha))$, $\sigma^{p_i}(\psi(\bar\beta))$
begin respectively with $(\alpha_q)^{n_{k_i+q+1}}$ and 
$(\beta_q)^{n_{k_i+q+1}}$. Then
$$
\forall j\in\Lbrack 0, n_{k_i+q+1}-1\Rbrack,\ |g^{p_i+j}(x_{\bar\alpha})-g^{p_i+j}(x_{\bar\beta})|\ge D
$$
because either $g^{p_i+j}(x_{\bar\alpha})\in J_0$ and $g^{p_i+j}(x_{\bar\beta})\in J_1$, or the converse.
This implies that
$\xi(g,x_{\bar\alpha},x_{\bar\beta},p_i+n_{k_i+q+1},D)\le p_i$
and
$\xi(f,x_{\bar\alpha},x_{\bar\beta},r(p_i+n_{k_i+q+1}),\delta)\le 
r.p_i$. One can compute that $p_i=m_i+q k_i+\frac{q(q+1)}2$. Thus
$$
\frac{p_i}{p_i+n_{k_i+q+1}}\le\frac{p_i}{n_{k_i+1}}\le
\frac{m_i+q k_i+q(q+1)/2}{n_{k_i+1}}.
$$
This last quantity tends to $0$ according to \eqref{eq:nkveryfast}
and \eqref{eq:k/nk}, and hence 
$$
\lim_{i\to+\infty}\frac{p_i}{p_i+n_{k_i+q+1}}=0.
$$
We deduce that $F_{x_{\bar\alpha}x_{\bar\beta}}(t)=0$
for all $t\in [0,\delta)$. Finally, by
Theorem~\ref{theo:perfect-set}, there exists a Cantor set $K\subset S$
because $S=\vfi^{-1}\circ \psi(\Sigma)\setminus F$ is a Borel set.
\end{proof}

\begin{cor}\label{cor:DChtop}
Let $f$ be an interval map. The following properties are
equivalent:
\begin{itemize}
\item $f$ is DC1,
\item $f$ is DC2,
\item $f$ is DC3,
\item $h_{top}(f)>0$.
\end{itemize}
\end{cor}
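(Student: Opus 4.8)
The implications (DC1)$\Rightarrow$(DC2)$\Rightarrow$(DC3) are immediate from the definitions (each condition relaxes the previous one), so the real content is the cycle between these three notions and positive topological entropy. The plan is to close the loop by proving (DC3)$\Rightarrow h_{top}(f)>0$ and $h_{top}(f)>0\Rightarrow$ (DC1). The second of these is exactly Theorem~\ref{theo:DC+htop}: it produces a Cantor set $K$ and a $\delta>0$ such that every pair of distinct points $x,y\in K$ satisfies $F_{xy}(t)=0$ for $t\in[0,\delta)$ and $F^*_{xy}(t)=1$ for all $t>0$, which is precisely condition (DC1) (witnessed by any two distinct points of $K$). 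So that implication is already in hand.

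For (DC3)$\Rightarrow h_{top}(f)>0$, I would argue by contraposition: assume $h_{top}(f)=0$ and show that $f$ is not DC3, i.e.\ for every pair $x,y$ one has $F_{xy}(t)=F^*_{xy}(t)$ for almost every $t>0$ (in fact for every $t$ outside a countable set, hence certainly no open interval $(a,b)$ on which $F_{xy}<F^*_{xy}$). This is essentially Theorem~\ref{theo:DC-htop0}, which gives $\|F_{xy}-F^*_{xy}\|_1=0$ for all $x,y\in I$ when $h_{top}(f)=0$. Since $F_{xy}\le F^*_{xy}$ pointwise (Proposition~\ref{prop:DCobvious}) and both functions are non-decreasing, the vanishing of the $L^1$-norm of the non-negative function $F^*_{xy}-F_{xy}$ forces $F_{xy}(t)=F^*_{xy}(t)$ at every point $t$ of continuity of $F^*_{xy}$, and a monotone function has at most countably many discontinuities; therefore there is no non-degenerate interval $(a,b)$ with $F_{xy}(t)<F^*_{xy}(t)$ for all $t\in(a,b)$. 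Hence (DC3) fails for $f$, completing the contrapositive.

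Assembling the pieces: (DC1)$\Rightarrow$(DC2)$\Rightarrow$(DC3) trivially; (DC3)$\Rightarrow h_{top}(f)>0$ via Theorem~\ref{theo:DC-htop0} and the monotonicity/almost-everywhere argument above; and $h_{top}(f)>0\Rightarrow$ (DC1) via Theorem~\ref{theo:DC+htop}. This yields the equivalence of all four properties. I do not anticipate a genuine obstacle here, since the two substantive theorems (Theorems~\ref{theo:DC-htop0} and~\ref{theo:DC+htop}) are already proved in the excerpt; the only mildly delicate point is the elementary measure-theoretic step that "$L^1$-distance zero between two monotone functions forces equality off a countable set," which one should state carefully enough to conclude the failure of (DC3) rather than merely almost-everywhere equality.
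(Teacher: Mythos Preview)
Your proposal is correct and follows essentially the same approach as the paper: the trivial chain (DC1)$\Rightarrow$(DC2)$\Rightarrow$(DC3), then (DC3)$\Rightarrow h_{top}(f)>0$ by contraposition via Theorem~\ref{theo:DC-htop0}, and finally $h_{top}(f)>0\Rightarrow$(DC1) via Theorem~\ref{theo:DC+htop}. The only difference is that you spell out the elementary step from $\|F^*_{xy}-F_{xy}\|_1=0$ to the failure of (DC3) (via monotonicity and almost-everywhere equality), which the paper leaves implicit.
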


\begin{proof}
It is clear than DC1$\Rightarrow$DC2$\Rightarrow$DC3.
Theorem~\ref{theo:DC-htop0} implies that, if $h_{top}(f)=0$, then
$f$ is not DC3. By refutation, we get DC3 $\Rightarrow$ $h_{top}(f)>0$.
Finally, if $h_{top}(f)>0$, then $f$ is DC1 by Theorem~\ref{theo:DC+htop}.
\end{proof}

\subsection*{Remarks on graph maps and general dynamical systems}

The results of Schweizer and Smítal on distributional chaos was generalized 
to graph maps by steps, first to circle maps \cite{Mal2,Mal3}, then 
to tree maps \cite{Cano3, CH2} and finally to general graph maps.
The next result is due to Hric and Málek \cite{HM}.

\begin{theo}
Let $f\colon G\to G$ be a graph map. The following properties are
equivalent:
\begin{itemize}
\item $f$ is DC2,
\item $h_{top}(f)>0$.
\end{itemize}
\end{theo}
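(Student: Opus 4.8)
The plan is to establish the two implications separately, in each case transporting the interval-map arguments on distributional chaos to the graph setting by means of the structure theory of $\omega$-limit sets of zero-entropy graph maps.

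For the implication $h_{top}(f)>0\Rightarrow$ DC2 I would in fact prove DC1, following the proof of Theorem~\ref{theo:DC+htop} almost verbatim. By Theorem~\ref{theo:htop-horseshoeG}, positive entropy gives an integer $n$ such that $f^n$ has a strict $p$-horseshoe with $p\ge 2$, so in particular two disjoint closed subintervals $J_0,J_1$ of a closed interval $I\subset G$ free of branching points (except possibly at the endpoints of $I$) with $f^n(J_i)=I$. All the coverings occurring in the proof of Theorem~\ref{theo:DC+htop} take place inside $I$, which is an honest real interval, so Proposition~\ref{prop:strictly-turbulent-shift} (whose proof, as noted after Theorem~\ref{theo:htop-positive-chaos-LY}, applies to any system carrying such a two-interval horseshoe) supplies the invariant Cantor set, the semiconjugacy $\varphi$ with the full shift, and the nested intervals $J_{\alpha_0\dots\alpha_{k-1}}$ with $|J_{0^k1}|\to 0$. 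The coding $\psi$ and the computations of the upper and lower distribution functions are then copied word for word, producing a Cantor set $K\subset G$ and $\delta>0$ with $F_{xy}(t)=0$ on $[0,\delta)$ and $F^*_{xy}(t)=1$ on $(0,\infty)$ for distinct $x,y\in K$; hence $f$ is DC1, and a fortiori DC2.

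For the converse I would argue by contraposition: assuming $h_{top}(f)=0$, I would prove the analogue of Theorem~\ref{theo:DC-htop0}, namely $\|F_{xy}-F^*_{xy}\|_1=0$ for all $x,y\in G$; since $F_{xy}\le F^*_{xy}$ are non-decreasing, this forces $F_{xy}=F^*_{xy}$ off a countable set, which is incompatible with (DC2) (if $F^*_{xy}\equiv 1$ on $(0,\infty)$ then so is $F_{xy}$). The skeleton is the same $\varepsilon$-approximation argument as in the proof of Theorem~\ref{theo:DC-htop0}: given $x,y$, one approximates the orbit of $x$, in the sense of the density $\tfrac1k\xi(f,x,\cdot,k,t)$, by a point $z$ sitting in a subsystem on which any two points have coinciding distribution functions, does the same for $y$ with some $z'$, and then sandwiches $F_{xy}$ and $F^*_{xy}$ between translates of $F_{zz'}=F^*_{zz'}$ by the triangle inequality. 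What changes is the production of $z$ and the identity $F_{zz'}=F^*_{zz'}$. By Theorem~\ref{theo:omega-graph-htop0}, an infinite $\omega$-limit set of $f$ is solenoidal or circumferential, while a finite one is a periodic orbit by Lemma~\ref{lem:omega-finite}. If $\omega(x,f)$ is finite or solenoidal, $z$ is taken to be a genuine periodic point: in the solenoidal case one picks, for large $n$, a periodic point of period $k_n$ inside $G_n$, uses that $f^i(x)$ eventually lies in $f^i(G_n)$ (the graph-analogue of Proposition~\ref{prop:htop0-Lki}(vi)), and uses that, $G$ having finite total length, only finitely many of the disjoint graphs $f^i(G_n)$, $0\le i<k_n$, have diameter $\ge\varepsilon$; this is the verbatim analogue of the proof of Lemma~\ref{lem:DC-xper}, and $F_{zz'}=F^*_{zz'}$ for periodic $z,z'$ is Lemma~\ref{lem:DC2perpts}. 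If $\omega(x,f)$ is circumferential there is no nearby periodic point, and one replaces the periodic orbit by the equicontinuous factor: by Theorem~\ref{theo:circumferential} there are $k$ and a semiconjugacy $\varphi\colon G'\to\IS$ from $f^k|_{G'}$ onto an irrational rotation $R$ whose fibres are intervals with endpoints in $\omega(x,f^k)$; these fibres being pairwise disjoint with $\sum_u|\varphi^{-1}(u)|\le|G'|<\infty$, every $R$-orbit meets fibres whose lengths tend to $0$, so $\tfrac1{k'}\xi(f^k,x,y,k',t)$ differs by $o(1)$ from an ergodic average $\tfrac1{k'}\sum_{i<k'}g(R^i\varphi(x),R^i\varphi(y))$ for a suitable almost-everywhere continuous $g$ on $\IS\times\IS$, and unique ergodicity of $R$ gives convergence for all but countably many $t$; hence $F_{xy}=F^*_{xy}$ off a countable set. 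Combining the cases through the triangle-inequality sandwich yields $\|F_{xy}-F^*_{xy}\|_1=0$.

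The main obstacle is the circumferential case, and there two points in particular: first, making the ergodic-average approximation rigorous — controlling the error from the collapsed fibres uniformly along the relevant orbit, and handling the countably many $t$ for which the push-forward of the unique $R$-invariant measure under $(u,v)\mapsto d_{G'}(\varphi^{-1}u,\varphi^{-1}v)$ has an atom; and second, the cross-cases in which $\omega(x,f)$ and $\omega(y,f)$ are of different types (say circumferential versus solenoidal) or nested, so that the approximating points $z,z'$ live in different subsystems and the identity $F_{zz'}=F^*_{zz'}$ must be extracted from equicontinuity/unique ergodicity of the relevant factor of the orbit closure of $(z,z')$ in $G\times G$ rather than from Lemma~\ref{lem:DC2perpts} directly. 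Everything else — the reduction $h_{top}(f)>0\Rightarrow$ DC1 via graph horseshoes and the solenoidal analogue of Lemma~\ref{lem:DC-xper} — is a routine transcription of the interval-map proofs.
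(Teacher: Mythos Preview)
The paper does not prove this theorem: it appears in a ``Remarks on graph maps'' paragraph, is attributed to Hric and M\'alek \cite{HM}, and no argument is given (the book announces in its introduction that only the interval results are proved in full). So there is no in-paper proof to compare your proposal against.

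That said, your outline is sound and is essentially the route one would expect the cited paper to take. Two remarks. First, for the forward implication the paper also quotes, immediately after this statement, Downarowicz's general theorem that positive topological entropy implies DC2 for any system; so for DC2 alone your horseshoe argument is more than needed, though it does buy DC1, which is stronger. Second, your identification of the circumferential case as the crux of the reverse implication is exactly right, and the unique-ergodicity mechanism you sketch (push the distance function down to the rotation factor via Theorem~\ref{theo:circumferential}, use that the fibres have summable lengths so their diameters tend to zero in density along any orbit, and invoke uniform convergence of ergodic averages) is the correct one. The technicalities you flag --- uniform control of fibre diameters along orbits, the atoms of the pushed-forward measure, and the mixed case where the approximating points $z,z'$ sit in subsystems of different types so that Lemma~\ref{lem:DC2perpts} must be replaced by an equicontinuity/unique-ergodicity argument on the orbit closure of $(z,z')$ --- are precisely what separates the graph case from the interval case, and they are honest work rather than a gap in your plan.
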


For general dynamical systems, Downarowicz showed the following 
implication~\cite{Dow2}.

\begin{theo}
Let $(X,f)$ be a topological dynamical system. If $f$ has positive topological
entropy, then $f$ is DC2.
\end{theo}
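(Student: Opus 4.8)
The plan is to pass to the measure-theoretic side via the variational principle. Positive topological entropy yields an ergodic $f$-invariant Borel probability measure $\mu$ with $h_\mu(f)>0$. The first, routine, step is to extract a scale: a measure of positive entropy is supported on an infinite set and its system is far from equicontinuous, so one can fix two closed balls $B_1=\overline{B}(p_1,2\delta)$ and $B_2=\overline{B}(p_2,2\delta)$ with $\mu(B_1),\mu(B_2)>0$, disjoint, and with the distance between $B_1$ and $B_2$ at least $\delta$ for a suitable $\delta>0$. This $\delta$ will be the modulus witnessing $F_{xy}(t)<1$ for $t\in(0,\delta)$, while $F^*_{xy}(t)=1$ will come out of the construction for every $t>0$ automatically.

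The heart of the matter is the combinatorial independence forced by positive entropy. Using the Kerr--Li theory of independence entropy pairs (a system has positive entropy iff it has an IE-pair $(p,q)$ with $p\neq q$; one may arrange $p\in\Int{B_1}$, $q\in\Int{B_2}$), I would produce, for suitable open neighbourhoods $U_i$ of $p_i$ inside $B_i$, a set $\Lambda\subseteq\mathbb{Z}^+$ of positive upper Banach density which is an independence set for the pair $(U_1,U_2)$: for every finite $F\subseteq\Lambda$ and every $\sigma\colon F\to\{1,2\}$ there is a point of $X$ whose itinerary at the times in $F$ follows $\sigma$. Upgrading from ``a large independence set inside every long window'' to a single $\Lambda$ of positive density is a standard patching-and-compactness argument.

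Then comes the explicit construction of a distributionally $\delta$-scrambled pair, in the same spirit as the interval-map argument of Theorem~\ref{theo:DC+htop}. Writing $\Lambda=\{\lambda_1<\lambda_2<\cdots\}$, I would pick a very rapidly increasing sequence of cut times $N_k$ and, block by block, prescribe the itineraries along $\Lambda$ of two candidate orbits so that: on most of each $[0,N_k)$ the two prescriptions agree, in fact on a proportion of $[0,N_k)$ tending to $1$, which (after choosing the relevant scales) forces $d(f^ix,f^iy)<t$ on that proportion of times for each fixed $t>0$ and hence $F^*_{xy}(t)=1$; while inside every sufficiently long window a sub-block of $\Lambda$ of fixed positive relative density is reserved on which one itinerary is steered into $U_1$ and the other into $U_2$, so that $d(f^ix,f^iy)\geq\delta$ holds on a set of times of positive upper density, giving $\limsup_n\frac1n\#\{i<n:d(f^ix,f^iy)\geq t\}>0$, i.e. $F_{xy}(t)<1$ for all $t\leq\delta$. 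Taking a limit point of the admissible candidate points in the compact space $X$ produces the pair $(x,y)$, so $f$ is DC2.

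The main obstacle is this last step: reconciling the two demands on a single pair of orbits. Independence only lets us prescribe finitely many coordinates along $\Lambda$, so (i) the density of the ``agreement set'' must be pushed to $1$ along a subsequence while the density of the ``separation set'' stays bounded below, which forces $N_k$ to grow fast enough that earlier blocks are negligible; (ii) the itineraries must be realized by genuine points, not merely by sequences, which is the Rokhlin-tower/patching bookkeeping that is the technical core of Downarowicz's proof; and (iii) at the times outside $\Lambda$, where the independence machinery is silent, closeness of $f^ix$ and $f^iy$ must be obtained from the ``agreement'' mechanism rather than from separation. Handling these three points carefully is where the real work lies; everything else is the familiar ``semi-conjugacy with a shift plus a coding construction'' pattern already seen in the positive-entropy interval case.
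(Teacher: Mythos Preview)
First, note that the paper does not prove this theorem: it is stated in the ``Remarks on graph maps and general dynamical systems'' subsection and simply attributed to Downarowicz~\cite{Dow2}. So there is no in-paper proof to compare against; what follows is an assessment of your sketch on its own terms, together with a word on how Downarowicz actually proceeds.

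Your outline correctly isolates the two requirements---$F^*_{xy}(t)=1$ for all $t>0$ and $F_{xy}(t)<1$ for $t\in(0,\delta)$---and the IE-pair machinery handles the second one cleanly: an independence set $\Lambda$ of positive upper (Banach) density for $(U_1,U_2)$ lets you steer $x$ into $U_1$ and $y$ into $U_2$ along a positive-density set of times, forcing $F_{xy}(t)<1$ for small $t$. The genuine gap is the first requirement, precisely your point~(iii). Independence only constrains the itineraries at times in $\Lambda$, and $\Lambda$ has density bounded away from~$1$. Even if you make the two prescribed $\Lambda$-itineraries agree on a proportion of $\Lambda$ tending to~$1$ along blocks, you obtain at best $F^*_{xy}(\diam U_i)\geq \overline{d}(\Lambda)$, not $=1$: at the times outside $\Lambda$ there is no mechanism in your proposal forcing $f^ix$ and $f^iy$ close, and a limit of candidate points will not create one. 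As written, your scheme yields mean Li--Yorke chaos (positive upper density of both ``close'' and ``far'' times), which is weaker than DC2.

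Downarowicz's argument is genuinely measure-theoretic rather than combinatorial-topological. One fixes an ergodic $\mu$ with $h_\mu(f)>0$ and a finite partition with positive entropy; the Shannon--McMillan--Breiman theorem controls the number of $(n,\varepsilon)$-names of $\mu$-typical points, and a counting/pigeonhole argument on names produces pairs $(x,y)$ whose $\mu$-itineraries agree on blocks of relative length tending to~$1$ (this, together with a standard refinement to small-diameter partitions, is what gives $F^*_{xy}(t)=1$ for every $t>0$), while positive entropy simultaneously forces a positive lower density of disagreements (giving $F_{xy}(t)<1$). The point is that the ``closeness on density~$1$'' does not come from an independence set at all; it comes from the concentration of $\mu$-mass on roughly $e^{nh}$ names, which makes many pairs share most of their name. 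If you want to salvage an IE-pair style argument, you would need to combine it with such an ergodic-theoretic ingredient; the independence set alone cannot supply the upper-density-$1$ proximality.
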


The equivalence of the three types of distributional chaos is
not true for general dynamical systems. On the one hand, 
Piku{\l}a showed that positive topological entropy does not imply DC1
\cite{Pik}. On the other hand, Balibrea, Smítal and
Štefánková exhibited a dynamical system which is DC3 and
distal (i.e., for all $x\ne y$, $\liminf_{n\to+\infty}d(f^n(x),f^n(y))>0$)
\cite{BSS}, and thus DC3 does not even imply the existence of Li-Yorke pairs
(recall that, on the contrary, positive entropy implies Li-Yorke chaos
according to
Theorem~\ref{theo:general-system-htop-positive-chaos-LY}). Therefore,
DC1, DC2 and DC3 are distinct notions in general.
Moreover, DC3 is not invariant by conjugacy \cite{BSS}, whereas DC1 and DC2 are.

\chapter{Chaotic subsystems}\label{chap7}

\section{Subsystems chaotic in the sense of Devaney}

In \cite{Dev}, Devaney mainly studied maps on the interval or on the
real line. Observing some chaotic behavior, he introduced 
a definition of \emph{chaos}. 
For Devaney, chaos is seen as a combination of unpredictability
(sensitivity) and regular behaviors (periodic points), transitivity
ensuring that the system is undecomposable.

\begin{defi}[chaos in the sense of Devaney]
\index{chaos!chaos in the sense of Devaney} 
A topological  dynamical system $(X,f)$ is
\emph{chaotic in the sense of Devaney} if
\begin{itemize}
\item $f$ is transitive,
\item the set of periodic points is dense in $X$,
\item $f$ is sensitive to initial conditions.
\end{itemize}
\end{defi}

For interval maps, 
transitivity is enough to imply the other two conditions, as it was
pointed out by Silverman \cite{Sil} and 
Vellekoop and Berglund \cite{VB}.  It is a
straightforward corollary of  Propositions
\ref{prop:transitivity-periodic-points} and
\ref{prop:transitivity-sensitivity}.

\begin{prop}\label{prop:Devaney=transitive}
An interval map is chaotic in the sense of Devaney if and only if it 
is transitive.
\end{prop}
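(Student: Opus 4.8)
The plan is to prove the two directions of the equivalence separately, with essentially all the work already done in Chapter~\ref{chap2}. Recall the statement: an interval map $f\colon I\to I$ is chaotic in the sense of Devaney if and only if it is transitive. The "only if" direction is immediate from the definition, since chaos in the sense of Devaney explicitly requires transitivity; nothing needs to be said beyond noting this.

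For the "if" direction, suppose $f$ is transitive. I must verify the remaining two clauses of the definition of Devaney chaos, namely that the set of periodic points is dense in $I$ and that $f$ is sensitive to initial conditions. Density of periodic points is exactly Proposition~\ref{prop:transitivity-periodic-points}, applied verbatim. For sensitivity, I invoke Proposition~\ref{prop:transitivity-sensitivity}, which states that a transitive interval map is $\delta$-sensitive for every $\delta\in(0,\frac{|I|}4)$; in particular $f$ is $\delta$-sensitive for some $\delta>0$, hence sensitive to initial conditions in the sense of Definition~\ref{defi:unstable}. Combining transitivity (the hypothesis), density of periodic points, and sensitivity gives that $f$ is chaotic in the sense of Devaney, completing the proof.

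There is essentially no obstacle here: the proposition is a corollary that merely assembles two earlier results. The only point requiring a moment's care is making sure the non-degenerate interval $I$ has $|I|>0$, so that the interval $(0,\frac{|I|}4)$ from Proposition~\ref{prop:transitivity-sensitivity} is nonempty and a genuine positive constant of sensitivity exists — but this is guaranteed by the standing convention that an interval map is defined on a non degenerate compact interval. Thus the proof is a two-line citation of Propositions~\ref{prop:transitivity-periodic-points} and~\ref{prop:transitivity-sensitivity}, exactly as suggested in the text preceding the statement.

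\begin{proof}
If $f$ is chaotic in the sense of Devaney, then it is transitive by definition. Conversely, assume that $f$ is transitive. Then the set of periodic points is dense in $I$ by Proposition~\ref{prop:transitivity-periodic-points}. Moreover, since $I$ is a non degenerate interval, we have $|I|>0$, and Proposition~\ref{prop:transitivity-sensitivity} implies that $f$ is $\delta$-sensitive for every $\delta\in(0,\frac{|I|}4)$; in particular, $f$ is sensitive to initial conditions. Thus $f$ is transitive, has a dense set of periodic points and is sensitive to initial conditions, that is, $f$ is chaotic in the sense of Devaney.
\end{proof}
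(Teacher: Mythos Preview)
Your proof is correct and follows exactly the approach indicated in the paper: the proposition is stated there as a straightforward corollary of Propositions~\ref{prop:transitivity-periodic-points} and~\ref{prop:transitivity-sensitivity}, which is precisely what you cite.
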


Devaney was actually interested in systems having a chaotic
subsystem. Shihai Li showed that, for interval maps, this is equivalent to
positive entropy \cite{Li}.

\begin{theo}\label{theo:htop-Devaney}
Let $f$ be an interval map. The following are equivalent:
\begin{enumerate}
\item $h_{top}(f)>0$,
\item there exists an invariant set $X$ such that
$(X,f|_X)$ is chaotic in the sense of Devaney,
\item there exists an infinite invariant set $X$ such that
$(X,f|_X)$ is transitive and $X$ contains a periodic point.
\end{enumerate}
\end{theo}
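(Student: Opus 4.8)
The plan is to prove the cycle of implications (ii)$\Rightarrow$(iii)$\Rightarrow$(i)$\Rightarrow$(ii). The first two are short. For (ii)$\Rightarrow$(iii): if $(X,f|_X)$ is chaotic in the sense of Devaney then $f|_X$ is transitive and has a dense set of periodic points, so in particular $X$ contains a periodic point; and $X$ must be infinite, for otherwise Proposition~\ref{prop:transitive-dense-orbit}(i) would force $f|_X$ to be a cyclic permutation of the finite set $X$, and then, every singleton being open in $X$, no point could be $\delta$-unstable for any $\delta>0$, contradicting sensitivity. For (iii)$\Rightarrow$(i): assume for contradiction that $h_{top}(f)=0$. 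Since $X$ is infinite and $f|_X$ transitive, Proposition~\ref{prop:transitive-dense-orbit}(i) gives a point $x\in X$ with $\omega(x,f|_X)=X$; because $X$ is closed and invariant in $I$, this equals $\omega(x,f)$, which is thus infinite while containing the periodic point of $X$. This contradicts Proposition~\ref{prop:omega-no-periodic-point}, valid since $h_{top}(f)=0$. Hence $h_{top}(f)>0$.

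The substantial implication is (i)$\Rightarrow$(ii). By Misiurewicz's Theorem~\ref{theo:Misiurewicz} (equivalently Theorem~\ref{theo:htop-power-of-2}) there is an integer $n\ge 1$ such that $g:=f^n$ has a strict horseshoe $(J_0,J_1)$. Applying Proposition~\ref{prop:strictly-turbulent-shift} to $g$ yields a $g$-invariant Cantor set $X=\omega(x_0,g)$ on which $g$ is transitive, a semi-conjugacy $\varphi\colon X\to\Sigma$ onto the full two-shift which is one-to-one off a countable set $E$ and two-to-one on $E$, and the accompanying nested family of closed intervals $(J_w)$ satisfying $g(J_{\alpha_0\cdots\alpha_{m-1}})=J_{\alpha_1\cdots\alpha_{m-1}}$ and $X\cap J_{\alpha_0\cdots\alpha_{m-1}}=\varphi^{-1}(\{\text{sequences beginning with }\alpha_0\cdots\alpha_{m-1}\})$.

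Next I would show that the $g$-periodic points are dense in $X$. For a periodic $\bar\alpha\in\Sigma$ of period $p$, the interval $\bigcap_k J_{\alpha_0\cdots\alpha_{kp-1}}$ is strongly invariant under $g^p$, so it contains (or has as an endpoint) a point of $X$ with itinerary $\bar\alpha$; since $\sigma^p\bar\alpha=\bar\alpha$ and $X$ is $g$-invariant, such a point is necessarily $g$-periodic. As periodic sequences are dense in $\Sigma$, and $X$ is a Cantor set in which the clopen pieces $X\cap J_w$ separate points away from the countable exceptional set while that exceptional set is meagre in $X$, one deduces density of the $g$-periodic points in $X$. Finally, to obtain an $f$-invariant set, put $X':=\omega(x_0,f)$; by Lemma~\ref{lem:omega-set} one has $X'=\bigcup_{i=0}^{n-1}f^i(X)$, so $X'$ is a closed $f$-invariant infinite set, $f|_{X'}$ is transitive by Proposition~\ref{prop:transitive-dense-orbit}(iii) (because $\omega(x_0,f)=X'$), and its periodic points — which contain $\bigcup_{i=0}^{n-1}f^i(P)$ for $P$ the dense set of $g$-periodic points of $X$ — are dense in $X'$. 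Then $f|_{X'}$ is sensitive by Theorem~\ref{theo:transitive+periodicpoints-sensitive}, so $(X',f|_{X'})$ is chaotic in the sense of Devaney.

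I expect the main obstacle to be precisely the density of the $g$-periodic points in $X$: the map $\varphi$ is genuinely only a semi-conjugacy (two-to-one on a countable set, and not upgradable to a conjugacy in general), so one must argue with care that enough periodic itineraries fall outside the exceptional set and that their preimages accumulate everywhere in $X$, using that $X$ is perfect and that the union over ``bad'' itineraries of the finite sets $X\cap J_{\bar\alpha}$ is meagre. Everything else — the reductions in (ii)$\Rightarrow$(iii) and (iii)$\Rightarrow$(i), and the passage from $g$ to $f$ via $\omega(x_0,f)=\bigcup_{i}f^i(X)$ — is routine once this density is established.
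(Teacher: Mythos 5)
Your overall plan is correct and tracks the paper's: (ii)$\Rightarrow$(iii) and (iii)$\Rightarrow$(i) are handled exactly as in the paper (via $\omega(x,f)=X$ and Proposition~\ref{prop:omega-no-periodic-point}), and for (i)$\Rightarrow$(ii) you correctly bring in Theorem~\ref{theo:htop-power-of-2}, Proposition~\ref{prop:strictly-turbulent-shift}, and the passage from $g=f^n$ to $f$ via $X':=X\cup f(X)\cup\cdots\cup f^{n-1}(X)$. Citing Theorem~\ref{theo:transitive+periodicpoints-sensitive} for sensitivity of $f|_{X'}$ is a legitimate shortcut; the paper instead proves sensitivity of $g|_X$ directly from the uniform continuity of $\vfi$ and the fact that distinct points of $X\setminus E$ have distinct itineraries.

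The gap you flag is real, and the claim ``such a point is necessarily $g$-periodic'' is where it sits. If $\bar\alpha$ is a period-$p$ sequence and $\bar\alpha\in\vfi(E)$, then $\vfi^{-1}(\bar\alpha)=\{y_1,y_2\}$ with $y_1\ne y_2$, and while $g^p$ maps $\{y_1,y_2\}$ into itself, a given $y_i$ need not be fixed by $g^p$: it may be sent to the other endpoint. So ``a point of $X$ with itinerary $\bar\alpha$'' is not automatically periodic. Your proposed repair — showing enough periodic itineraries escape $\vfi(E)$, then using meagreness — is both harder and unnecessary. The paper closes the gap by reversing the quantifiers. Fix $x_0\in X\setminus E$ and $\eps>0$; since $x_0\notin E$, the cylinders $J_{\alpha_0\ldots\alpha_{k-1}}$ around $\vfi(x_0)$ have diameter $<\eps$ for $k$ large (this is \eqref{eq:v2-semicong5}). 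Let $\bar\beta$ be the period-$k$ repetition of $\alpha_0\ldots\alpha_{k-1}$. Then $\vfi^{-1}(\bar\beta)$ is a one- or two-element subset of $X$ mapped into itself by $g^k$ (because $\vfi(g^k(y_i))=\sigma^k\bar\beta=\bar\beta$), and any self-map of a set of size at most two has a point fixed by its square, so some $y\in\vfi^{-1}(\bar\beta)$ satisfies $g^{2k}(y)=y$. This $y$ lies in $J_{\alpha_0\ldots\alpha_{k-1}}$, hence $|y-x_0|<\eps$. Since $E$ is countable and $X$ is perfect, $X\setminus E$ is dense in $X$, and density of periodic points in $X$ follows — with no need to decide which periodic itineraries lie in $\vfi(E)$. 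Adopting this argument in place of your ``one deduces'' step makes the proof complete.
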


\begin{proof}
First we suppose that $h_{top}(f)>0$.  By Theorem~\ref{theo:htop-power-of-2},
there exist two closed intervals $J_0,J_1$ and an integer
$n\ge 1$ such that $(J_0,J_1)$ is a strict  horseshoe for $f^n$. 
Let $X, E$  and $\vfi\colon X\to \Sigma$ 
be given by Proposition~\ref{prop:strictly-turbulent-shift} 
for the map $g:=f^n$.
Then $(X,g|_X)$ is transitive and $X$ is a $g$-invariant Cantor set.
We are going to show that $(X,g|_X)$ is sensitive to initial conditions
and has a dense set of periodic points.

We define the following distance on $\Sigma$: for all
$\bar\alpha=(\alpha_n)_{n\ge 0},\bar\beta=(\beta_n)_{n\ge 0}$ in $\Sigma$,
$$
d(\bar\alpha,\bar\beta):=\sum_{n=0}^{+\infty}
\frac{|\beta_n-\alpha_n|}{2^n}
$$
(see also Definition~\ref{defi:producttopology}
in the Appendix).
Since $X$ is compact, the map $\vfi$ is uniformly continuous and there
exists $\delta>0$ such that
\begin{equation}\label{eq:unifcont-sensitive}
\forall x,y\in X,\ |x-y|<\delta\Rightarrow d(\vfi(x),\vfi(y))<1.
\end{equation}

Let $x_0\in X$ and $\eps>0$. The Cantor set $X$ has no isolated point and
$\vfi$ is at most two-to-one, thus there exists $y\in X$ such that
$|x_0-y|<\eps$ and $\vfi(y)\ne\vfi(x_0)$. Let
$\bar\alpha=(\alpha_n)_{n\ge 0}:=\vfi(x_0)$ and 
$\bar\beta=(\beta_n)_{n\ge 0}:=\vfi(y)$, and
let $k\ge 0$ be an integer such that $\alpha_k\ne\beta_k$. Then
$d(\sigma^k(\bar\alpha),\sigma^k(\bar\beta))\ge 1$. Since $\vfi$ is
a semi-conjugacy, $\sigma^k(\bar\alpha)=\vfi(g^k(x_0))$ and
$\sigma^k(\bar\beta)=\vfi(g^k(y))$. According to
\eqref{eq:unifcont-sensitive}, this implies that
$|g^k(x_0)-g^k(y)|\ge\delta$. This proves that 
 $(X,g|_X)$ is $\delta$-sensitive.

\medskip
Let $x_0\in X\setminus E$ and $\eps>0$. Let $(\alpha_n)_{n\ge 0}
:= \vfi(x_0)$. Since $x_0\notin E$, there exists an integer $k$ such that
\begin{equation}\label{eq:diamJ0k-Devaney}
\diam\{x\in X\mid \vfi(x)\text{ begins with }\alpha_0\ldots\alpha_{k-1}\}
<\eps.
\end{equation}
Let $\bar\beta=(\beta_n)_{n\ge 0}\in\Sigma$ be the periodic point
such that $\beta_0\ldots\beta_{k-1}=\alpha_0\ldots\alpha_{k-1}$
and $\sigma^k(\bar\beta)=\bar\beta$ (i.e., $\bar\beta$ is the infinite
repetition of $\alpha_0\ldots\alpha_{k-1}$). 
Since $\vfi$ is onto and at most two-to-one, 
there exist two (possibly equal) points $y_1,y_2$ in $X$ such that
$\vfi^{-1}(\bar\beta)=\{y_1,y_2\}$ (one has $y_1=y_2$ if
$\bar\beta\notin\vfi(E)$). Then, for $i\in\{1,2\}$,
$\vfi(g^k(y_i))=\sigma^k(\vfi(y_i))=\sigma^k(\bar\beta)=\bar\beta$, and
$g^k(y_i)\in\vfi^{-1}(\bar\beta)=\{y_1,y_2\}$. This implies that
either $g^{2k}(y_1)=y_1$ or $g^{2k}(y_2)=y_2$.
Thus there is a periodic point among $y_1,y_2$; we call it $y$.
By \eqref{eq:diamJ0k-Devaney}, $|x_0-y|<\eps$ because $\vfi(y)=\bar\beta$. 
Thus the set of periodic points is dense in $X\setminus E$.
This implies that the set of periodic points is dense in $X$ because
$X$ is an uncountable set with no isolated point and $E$ is
countable.

We set $X':=X\cup f(X)\cup\cdots f^{n-1}(X)$. Then 
$X'$ is closed, $f$-invariant, and $(X',f|_{X'})$ 
is chaotic in the sense of Devaney. Thus (i)$\Rightarrow$(ii).

\medskip
The implication (ii)$\Rightarrow$(iii) is trivial (notice that
a sensitive system is necessarily infinite).

\medskip
Now we suppose that there exists an infinite $f$-invariant set 
$X$ such that $f|_X$ is transitive and $X$ contains
a periodic point. By Proposition~\ref{prop:transitive-dense-orbit},
$X$ has no isolated point and 
there exists $x\in X$ such that $\omega(x,f)=X$. If $h_{top}(f)=0$, then, by
Proposition~\ref{prop:omega-no-periodic-point}, 
the set $\omega(x,f)$
contains no periodic point, which contradicts the fact that
$X$ contains a periodic point. We conclude that $h_{top}(f)>0$,
that is, (iii)$\Rightarrow$(i).
\end{proof}

\subsection*{Remarks on graph maps and general dynamical systems}
The results of this section are still valid for graph maps.
The generalization of Proposition~\ref{prop:Devaney=transitive}
is given by Theorem~\ref{theo:transitivegraphmap-rotation},
Corollary~\ref{cor:transitive-sensitiveG}
and the fact that a rotation is not sensitive to initial conditions.
The proof of Theorem~\ref{theo:htop-Devaney} for graph maps is the same
since Propositions \ref{prop:strictly-turbulent-shift} and
\ref{prop:omega-no-periodic-point} remain valid for graph maps
(see ``Remarks on graph maps'' at the end of Sections
\ref {sec:5-htop>0} and \ref{sec:equivLY}).

\medskip
It was shown simultaneously in several papers that there is
a redundancy in the definition of chaos in the sense of Devaney,
sensitivity being implied by the other two conditions
\cite{BBCDS, Sil, GW}. 

\begin{theo}
Let $(X,f)$ be a topological dynamical system where $X$ is an infinite 
compact space.
Suppose that $f$ is transitive and that the set of periodic points is dense.
Then $f$ is sensitive to initial conditions, and thus $f$ is chaotic
in the sense of Devaney.
\end{theo}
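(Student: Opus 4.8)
The plan is to follow the argument of \cite{BBCDS} and exhibit a single $\delta>0$ that serves as a constant of sensitivity for \emph{every} point of $X$; in the language of Definition~\ref{defi:unstable} the goal is to prove $U_\delta(f)=X$. First I would use that $X$ is infinite together with the density of periodic points to produce two periodic points $q_1,q_2$ with disjoint orbits $\CO_f(q_1),\CO_f(q_2)$. Since both orbits are finite and nonempty, $\delta_0:=\tfrac12\, d\bigl(\CO_f(q_1),\CO_f(q_2)\bigr)>0$, and by the triangle inequality, for every $x\in X$ at least one of the two orbits lies entirely at distance $\ge\delta_0$ from $x$ (otherwise $d(\CO_f(q_1),\CO_f(q_2))<2\delta_0$). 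Set $\delta:=\delta_0/4$.

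Next, fix $x\in X$ and an arbitrary neighborhood $U$ of $x$, and replace $U$ by $U\cap B(x,\delta)$, so we may assume $U\subset B(x,\delta)$. By density of periodic points choose a periodic point $p\in U$, say of period $n$, and let $q\in\{q_1,q_2\}$ be a periodic point whose orbit satisfies $d\bigl(x,f^i(q)\bigr)\ge 4\delta$ for all $i\ge 0$. Consider the open set
$$
W:=\bigcap_{i=0}^{n} f^{-i}\bigl(B(f^i(q),\delta)\bigr),
$$
which is nonempty because $q\in W$. By transitivity there exist $y_0\in U$ and $k\ge 0$ with $f^k(y_0)\in W$. Let $nj$ be the least multiple of $n$ with $nj\ge k$; then $m:=nj-k$ satisfies $0\le m\le n-1$, so from $f^k(y_0)\in W\subseteq f^{-m}\bigl(B(f^m(q),\delta)\bigr)$ we get $f^{nj}(y_0)\in B(f^m(q),\delta)$. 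On the other hand $f^{nj}(p)=p$ since $n\mid nj$, and $p,y_0\in B(x,\delta)$, hence
$$
d\bigl(f^{nj}(x),f^{nj}(p)\bigr)+d\bigl(f^{nj}(x),f^{nj}(y_0)\bigr)\ \ge\ d\bigl(f^{nj}(p),f^{nj}(y_0)\bigr)=d\bigl(p,f^{nj}(y_0)\bigr)\ \ge\ 4\delta-\delta-\delta=2\delta,
$$
where the last inequality uses $d(x,f^m(q))\ge 4\delta$, $d(x,p)<\delta$ and $d(f^m(q),f^{nj}(y_0))<\delta$. Therefore at least one of $f^{nj}(p)$, $f^{nj}(y_0)$ is at distance $\ge\delta$ from $f^{nj}(x)$, and both $p$ and $y_0$ lie in $U$. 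This shows $x\in U_\delta(f)$; as $x$ was arbitrary, $f$ is $\delta$-sensitive, and together with the hypotheses this makes $f$ chaotic in the sense of Devaney.

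I expect the only genuine subtlety to be the bookkeeping in the second step: the neighborhood $W$ of $q$ must record the first $n+1$ iterates staying $\delta$-close to the orbit of $q$, and it is precisely this that lets one pass from the uncontrolled time $k$ furnished by transitivity to a time $nj$ that is a multiple of the period of $p$ — which is what makes $p$ and $y_0$ simultaneously comparable to $x$. One must also check that the offset $m=nj-k$ lands in $\{0,\dots,n\}$ so that $W\subseteq f^{-m}\bigl(B(f^m(q),\delta)\bigr)$, and that the three distance estimates combine with the right constants (the choice $\delta=\delta_0/4$ leaves exactly enough room). No compactness beyond what is already assumed is used, and the hypothesis that $X$ is infinite enters only to guarantee the existence of two disjoint periodic orbits.
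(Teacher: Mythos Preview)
Your proof is correct and is precisely the argument of Banks--Brooks--Cairns--Davis--Stacey \cite{BBCDS}. The paper does not supply its own proof of this theorem: both here and at Theorem~\ref{theo:transitive+periodicpoints-sensitive} it merely states the result and cites \cite{BBCDS, Sil, GW}, so there is nothing to compare against beyond the literature your proof already reproduces.

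Two minor remarks on presentation. First, you should say one word about why two periodic points with disjoint orbits exist: since $X$ is infinite and periodic points are dense, the set of periodic points is infinite, and distinct periodic orbits are automatically disjoint, so a single finite orbit cannot exhaust them. Second, in the definition of $W$ the intersection $\bigcap_{i=0}^{n}$ could equally well run to $n-1$, since you only use the index $m\le n-1$; this is harmless but worth noting. Everything else---the bookkeeping with $m=nj-k$, the chain of triangle inequalities yielding $2\delta$, and the observation that $U\cap W=\emptyset$ forces $k\ge 1$---is handled correctly.
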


\section{Topologically mixing subsystems}

Xiong showed that an interval map $f$ has an infinite mixing subsystem in 
which the set of periodic points is dense if and only if $f$ has a periodic 
point of odd period greater than $1$ \cite{Xio2}.
The ``if'' part, which is a variant of 
Proposition~\ref{prop:strictly-turbulent-shift},
 relies on the fact that $f$ has a subsystem
``almost'' conjugate to the subshift associated to the graph of a periodic
orbit of odd period $p>1$, and this graph is known when $p$ is minimal.
The ``only if'' part can be strengthened: the existence of an infinite 
subsystem on which $f^2$ is transitive is sufficient to imply that $f$ 
has a periodic point of odd period greater than $1$.

\begin{rem}
According to Xiong's terminology \cite{Xio2}, an interval map 
$f$ is called
\emph{strongly chaotic}\index{chaos!strong chaos} if there 
exists an invariant subset $X$ such that $(X,f|_X)$ is
topologically mixing, the set of periodic points is dense in $X$ and
the periods of periodic points in $X$ form an infinite set.
\end{rem}

Much can be said about subshifts associated to a directed graph, which belong
to the class of subshifts of finite type (see, e.g., \cite{Kit}).
We just give the definition; we shall not explicitly use the properties
of such systems.

\begin{defi}
Let $G$ be a directed graph and $V$ its set of vertices (recall that 
directed graphs are defined in Section~\ref{sec:1-directedgraphs}). 
Let $\Gamma(G)$ denote the set of infinite paths in $G$, that is,
$$
\Gamma(G):=\{(\alpha_n)_{n\ge 0}\in V^{\IZ^+}\mid \forall n\ge 0, \alpha_n\to\alpha_{n+1}
\text{ is an arrow in }G\}.
$$
The set $V^{\IZ^+}$ is endowed with the product topology (where $V$ has the
discrete topology) and $\Gamma(G)\subset V^{\IZ^+}$ is endowed with the 
induced topology.
The shift map $\sigma\colon \Gamma(G)\to \Gamma(G)$ is defined by
$\sigma((\alpha_n)_{n\ge 0}):=(\alpha_{n+1})_{n\ge 0}$.
Then $(\Gamma(G),\sigma)$ is a topological dynamical system, called
the \emph{subshift (or topological Markov shift) associated to the graph $G$}. 
\index{Markov shift}\index{topological Markov shift}\index{subshift associated to a graph}\index{subshift of finite type}
\end{defi}

\begin{theo}\label{theo:oddperiod-mixing-subset}
Let $f\colon I\to I$ be an interval map. Assume that $f$ has 
a periodic point of odd period greater than $1$. Then there exists
an uncountable invariant set $X$ such that 
$f|_X\colon X\to X$ is topologically mixing and the set of periodic points 
is dense in $X$. Moreover, the set of periods of periodic points in $X$
is infinite.
\end{theo}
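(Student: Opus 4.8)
The plan is to follow the lines of the proof of Proposition~\ref{prop:strictly-turbulent-shift}, replacing the full shift on two letters by the subshift of finite type attached to the graph of a \v{S}tefan cycle. First I would let $p\ge 3$ be the least odd integer for which $f$ has a periodic point of period $p$, and fix such a periodic orbit $\mathcal{O}=\{x_1<\cdots<x_p\}$, together with the intervals $I_j:=[x_j,x_{j+1}]$ for $j\in\Lbrack 1,p-1\Rbrack$ and the graph $G_p$ of this orbit (Definition~\ref{defi:graph-periodic-orbit}). By Lemma~\ref{lem:graph-n-minimal}, $G_p$ is the graph of Figure~\ref{fig:graph-of-odd-periodic-point}: it is strongly connected and carries the loop $J_1\to J_1$, so its adjacency matrix $M_p$ is primitive (this is part of Lemma~\ref{lem:Mq}). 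Consequently, by standard properties of subshifts of finite type (see \cite{Kit}, or argue from $M_p^N>0$ for $N$ large), the associated topological Markov shift $(\Gamma(G_p),\sigma)$ is uncountable, topologically mixing, has a dense set of periodic points, and has a periodic point of $\sigma$-period exactly $n$ for every $n\ge p$: lengthening the fundamental cycle of Lemma~\ref{lem:fundamental-cycle} by $n-p$ extra copies of the loop $J_1\to J_1$ gives a cycle of length $n$ that is primitive, since the vertex $J_2$ occurs in it exactly once.

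Since an arrow $I_i\to I_j$ of $G_p$ means that $I_i$ covers $I_j$ (Definition~\ref{def:covering}), I would then, exactly as in the proof of Proposition~\ref{prop:strictly-turbulent-shift}, attach to each finite path $\alpha=(\alpha_0,\dots,\alpha_{n-1})$ in $G_p$ a closed interval $I_\alpha$, built by induction on $n$ with Lemma~\ref{lem:chain-of-intervals}(i), so that $I_\alpha\subset I_{\alpha_0}$, $f^k(I_\alpha)\subset I_{\alpha_k}$ for $k\le n-1$, $f^{n-1}(I_\alpha)=I_{\alpha_{n-1}}$ with the corresponding control on endpoints, $I_{\alpha_0\cdots\alpha_n}\subset I_{\alpha_0\cdots\alpha_{n-1}}$, and so that intervals attached to distinct paths of a given length have pairwise disjoint interiors (using that $I_1,\dots,I_{p-1}$ have pairwise disjoint interiors). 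Setting $I_{\bar\alpha}:=\bigcap_nI_{\alpha_0\cdots\alpha_{n-1}}$ for $\bar\alpha\in\Gamma(G_p)$, $Y:=\bigcap_n\bigcup_{|\alpha|=n}I_\alpha$ and $Y':=Y\setminus\bigcup_{\bar\alpha}\Int{I_{\bar\alpha}}$, the verifications of Proposition~\ref{prop:strictly-turbulent-shift} show that $f(Y')\subset Y'$ (because $f$ sends the endpoints of $I_{\bar\alpha}$ onto those of $I_{\sigma(\bar\alpha)}$) and that the map $\varphi\colon Y'\to\Gamma(G_p)$, $\varphi(x)=\bar\alpha\iff x\in I_{\bar\alpha}$, is a continuous onto semi-conjugacy that is one-to-one off a countable set $E$ (the endpoints of the non-degenerate $I_{\bar\alpha}$'s) and at most two-to-one on $E$; the one difference with the strict-horseshoe case is that consecutive $I_j$'s share an endpoint, but these shared endpoints all lie in the finite orbit $\mathcal{O}$, so the discrepancy between $\varphi^{-1}$ of a cylinder and the corresponding clopen piece of $Y'$ is confined to a countable set, absorbed into $E$. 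Finally, choosing $x_0\in Y'$ with $\varphi(x_0)$ of dense $\sigma$-orbit and $I_{\varphi(x_0)}$ degenerate (possible since the $\bar\alpha$ with non-degenerate $I_{\bar\alpha}$ form a countable set), I set $X:=\omega(x_0,f)$; then, as in Proposition~\ref{prop:strictly-turbulent-shift}, $X$ is an uncountable $f$-invariant Cantor set, $(X,f|_X)$ is transitive, $X$ has no isolated point, and $\varphi|_X\colon X\to\Gamma(G_p)$ is still onto (using Lemma~\ref{lem:fi-omegaset}, Lemma~\ref{lem:omega-set} and Proposition~\ref{prop:transitive-dense-orbit}).

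It then remains to transfer the three required properties through $\varphi|_X$. Since $X$ has no isolated point and $E$ is countable, every nonempty open $U\subset X$ meets $X\setminus E$, hence contains $\varphi|_X^{-1}([\beta])$ for some finite admissible word $\beta$ (the cylinders $I_{\beta_0\cdots\beta_k}$ through a point of $X\setminus E$ shrink to it); given nonempty open $U,V$ with words $\beta,\gamma$, the mixing of $\Gamma(G_p)$ yields $N$ such that for all $m\ge N$ there is $\bar\alpha$ in the cylinder $[\beta]$ with $\sigma^m(\bar\alpha)$ in $[\gamma]$, and lifting $\bar\alpha$ through the onto map $\varphi|_X$ gives $x\in U$ with $f^m(x)\in V$ — so $f|_X$ is topologically mixing. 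For density of periodic points: given $x\in X\setminus E$ with $\varphi(x)=\bar\alpha$ and $\eps>0$, pick $k$ with $\diam I_{\alpha_0\cdots\alpha_{k-1}}<\eps$, extend the path $\alpha_0\cdots\alpha_{k-1}$ to an admissible cycle $\delta$ (using strong connectivity of $G_p$), and observe that $\varphi|_X^{-1}(\bar\delta)$ is a nonempty finite subset of $X\cap I_{\alpha_0\cdots\alpha_{k-1}}$ mapped into itself by $f^{|\delta|}$, hence contains an $f$-periodic point within $\eps$ of $x$; since $X\setminus E$ is dense in $X$, periodic points are dense in $X$. For infinitely many periods: apply this with $\delta$ the primitive cycle of length $n$ from the first step (any $n\ge p$); then $\bar\delta$ has $\sigma$-period $n$, and since $\varphi|_X$ is at most two-to-one, the resulting $f$-periodic point of $X$ has period $n$ or $2n$, so letting $n\to\infty$ the set of periods of periodic points of $f$ lying in $X$ is infinite.

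The main obstacle is the construction of $Y'$ and of the almost-conjugacy $\varphi$: organising the nested intervals $I_\alpha$ so that distinct words of the same length give essentially disjoint intervals, proving the endpoint-invariance that makes $X$ an $f$-invariant set, and controlling the exceptional set $E$ in the presence of the shared endpoints of the $I_j$'s — that is, carrying out the analogue of the (already delicate) proof of Proposition~\ref{prop:strictly-turbulent-shift} for the Markov graph $G_p$ rather than a strict two-interval horseshoe. Once $\varphi$ is available, the upgrade from transitivity to \emph{mixing} costs nothing extra: it rests only on the primitivity of $M_p$, i.e.\ on the presence of the loop $J_1\to J_1$ in $G_p$, and this is exactly where the hypothesis ``odd period greater than $1$'' enters.
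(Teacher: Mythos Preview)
Your proof is correct and follows essentially the same approach as the paper: build nested closed intervals indexed by finite paths in the \v{S}tefan graph $G_p$ via Lemma~\ref{lem:chain-of-intervals}, obtain a semi-conjugacy with the primitive subshift $(\Gamma(G_p),\sigma)$, and read off mixing and density of periodic points from the symbolic side.

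Two minor differences are worth noting. First, the paper orients the semi-conjugacy the other way, defining $\vfi\colon\Gamma(G_p)\setminus E\to I$ by $\vfi(\bar\alpha)=$ the unique point of $J_{\bar\alpha}$ when $J_{\bar\alpha}$ is degenerate, and then takes $X:=\overline{\vfi(\Gamma(G_p)\setminus E)}$; this sidesteps entirely the well-definedness issue you flag (shared endpoints of consecutive $I_j$'s), since in that direction there is nothing to check beyond ``$J_{\bar\alpha}$ is a singleton''. Your fix---absorbing the ambiguous points into the countable exceptional set---works, but the paper's orientation is cleaner. Second, the paper's argument for infinitely many periods is a one-liner: once $f|_X$ is topologically mixing and has dense periodic points, the set of periods cannot be finite, for if $N$ were a common multiple of all periods then $f^N|_X=\Id$ by density, contradicting mixing. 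Your constructive argument via primitive cycles of length $n$ (and the two-to-one bound forcing period $n$ or $2n$) is correct but unnecessary.
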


\begin{proof}
Let $p$ be the smallest odd period greater than $1$, and let $G_p$ be the 
graph of a periodic orbit of period $p$ given by 
Lemma~\ref{lem:graph-n-minimal}.
According to Proposition~\ref{prop:path-matrix},  for every $n$-tuple of
vertices $(\alpha_0,\ldots, \alpha_{n-1})$, if $\alpha_0\to \alpha_1\to\cdots
\to \alpha_{n-1}$  is a path in $G_p$, then $(\alpha_0,\alpha_1,
\ldots,\alpha_{n-1})$  is a chain of intervals for $f$. 
For every $n\ge 0$, let $\Gamma_n$ denote the set of paths of lengths $n$ in $G_p$.
We apply Lemma~\ref{lem:chain-of-intervals}(iii) to the family of chains of 
intervals $(\alpha_0,\alpha_1)_{(\alpha_0,\alpha_1)\in \Gamma_1}$, and we obtain 
non degenerate closed subintervals with disjoint interiors 
$(J_{\alpha_0\alpha_1})_{(\alpha_0,\alpha_1)\in \Gamma_1}$ such that
$J_{\alpha_0\alpha_1}\subset\alpha_0$ and $f(J_{\alpha_0\alpha_1})=\alpha_1$.
Using Lemma~\ref{lem:chain-of-intervals} inductively, we define 
non degenerate closed subintervals
$(J_{\alpha_0\ldots \alpha_n})_{(\alpha_0,\ldots,\alpha_n)\in \Gamma_n}$ 
such that, for all $(\alpha_0,\ldots,\alpha_n), (\beta_0,\ldots,\beta_n)$
in $\Gamma_n$:
\begin{gather}
J_{\alpha_0\ldots \alpha_n}\subset J_{\alpha_0\ldots \alpha_{n-1}},\label{eq:Jnn-1}\\
f(J_{\alpha_0\ldots \alpha_n})=J_{\alpha_1\ldots \alpha_n},\label{eq:fJalphan}\\
(\alpha_0,\ldots,\alpha_n)\neq (\beta_0,\ldots,\beta_n)\Longrightarrow
\Int{J_{\alpha_0\ldots\alpha_n}}\cap \Int{J_{\beta_0\ldots,\beta_n}}=\emptyset.
\label{eq:intJalphabeta}
\end{gather}
For every $\bar\alpha=(\alpha_n)_{n\ge 0}\in \Gamma(G_p)$, we set
$$
J_{\bar\alpha}:=\bigcap_{n=0}^{+\infty} J_{\alpha_0\ldots\alpha_n}.
$$
This is a decreasing intersection of nonempty compact intervals, so
$J_{\bar\alpha}$ is a nonempty compact interval too. 
Moreover, \eqref{eq:intJalphabeta} implies that
$$
\forall \bar\alpha,\bar\beta\in\Gamma(G_p),\ \bar\alpha\neq\bar\beta
\Longrightarrow \Int{J_{\bar\alpha}}\cap\Int{J_{\bar\beta}}=\emptyset.
$$
Now we are going to show that
\begin{equation}\label{eq:fJalpha}
\forall \bar\alpha\in\Gamma(G_p),\quad f(J_{\bar\alpha})=\bigcap_{n=0}^{+\infty}f(J_{\alpha_0\ldots \alpha_n}).
\end{equation}
The inclusion $\subset$ is obvious according to the definition of
$J_{\bar\alpha}$. Let $y$ be a point in $\bigcap_{n\ge 0}f(J_{\alpha_0\ldots
\alpha_n})$ and, for every $n\ge 0$, let $x_n\in J_{\alpha_0\ldots \alpha_n}$ 
be such that $f(x_n)=y$. By compactness, there exist an increasing sequence 
of positive integers $(n_i)_{i\ge 0}$ and a point $x$ such that 
$\lim_{i\to+\infty} x_{n_i}=x$. 
Moreover, $x\in \bigcap_{n=0}^{+\infty}
J_{\alpha_0\ldots \alpha_n}$ because this is a decreasing intersection of
compact sets; and $f(x)=y$ by continuity of $f$. This proves that
$\bigcap_{n=0}^{\infty}f(J_{\alpha_0\ldots \alpha_n})\subset
f(J_{\bar\alpha})$, and thus 
\eqref{eq:fJalpha} holds. Then \eqref{eq:fJalphan} and 
\eqref{eq:fJalpha} imply that
\begin{equation}\label{eq:fsigma}
\forall \bar\alpha\in\Gamma(G_p),\quad 
f(J_{\bar\alpha})=J_{\sigma(\bar\alpha)}.
\end{equation}
Let
$$
E:=\{\bar\alpha\in\Gamma(G_p)\mid J_{\bar\alpha}\text{ is not reduced to 
one point}\}.
$$
By definition, we have 
\begin{equation}\label{eq:diam0}
\forall (\alpha_n)_{n\ge 0}\in\Gamma(G_p)\setminus E,
\ \lim_{n\to+\infty}|J_{\alpha_0\ldots\alpha_n}|=0.
\end{equation}
The set $E$ is countable because the intervals 
$(J_{\bar\alpha})_{\bar\alpha\in E}$ are non degenerate and
have disjoint interiors (see Lemma~\ref{lem:open-finiteCC}). 
Moreover, \eqref{eq:fsigma} implies that
$\sigma(\Gamma(G_p)\setminus E)\subset \Gamma(G_p)\setminus E$.
We define the map 
$$
\vfi\colon \begin{array}[t]{ccl}\Gamma(G_p)\setminus E&\longrightarrow& I\\
\bar\alpha&\longmapsto& x \text{ such that } J_{\bar\alpha}=\{x\}.
\end{array}
$$
It is easy to show that this map is continuous using \eqref{eq:Jnn-1}
and \eqref{eq:diam0},
and $\vfi\circ\sigma=f\circ\vfi$
by \eqref{eq:fsigma}. Moreover, $\vfi$ is at most two-to-one. Indeed,
if $(\alpha_n)_{n\ge 0},(\beta_n)_{n\ge 0},(\gamma_n)_{n\ge 0}$ are 
three distinct elements of $\Gamma(G_p)\setminus E$, there exists 
$n\ge 0$ such that $(\alpha_0,\ldots,\alpha_n)$, $(\beta_0,\ldots,\beta_n)$,
$(\gamma_0,\ldots,\gamma_n)$ are not all three equal to the same $(n+1)$-tuple, 
and thus \eqref{eq:intJalphabeta} implies that
$J_{\alpha_0\ldots\alpha_n}\cap J_{\beta_0\ldots\beta_n}\cap
J_{\gamma_0\ldots\gamma_n}$ is empty.

\medskip
We set 
$$
X_0:=\vfi(\Gamma(G_p)\setminus E)\quad\text{and}\quad X:=\overline{X_0}.
$$
These sets satisfy $f(X_0)\subset X_0$ and $f(X)\subset X$ because
$\sigma(\Gamma(G_p)\setminus E)\subset\Gamma(G_p)\setminus E$.
Moreover, $X_0$ and 
$X$ are uncountable because $\vfi$ is at most two-to-one and 
$\Gamma(G_p)\setminus E$ is uncountable.
Looking at the graph $G_p$ described in Lemma~\ref{lem:graph-n-minimal},
we see that there exists $k\ge 0$ ($k:=2p-3$ is suitable) such that,
for all vertices $\alpha,\beta$ of $G_p$,
\begin{equation}\label{eq:pathalphatobeta}
\text{there exists a path }
(\omega^{\alpha\beta}_0,\ldots,\omega^{\alpha\beta}_k)\in \Gamma_k
\text{ such that }\omega^{\alpha\beta}_0=\alpha \text{ and }
\omega^{\alpha\beta}_k=\beta.
\end{equation}
We are going to show that
\begin{equation}\label{eq:Jalpha-Y}
\forall i\ge 0,\ \forall (\alpha_0,\ldots,\alpha_i)\in \Gamma_i,
\ f^{i+k}(J_{\alpha_0\ldots\alpha_i}\cap X)=X.
\end{equation}
We fix $(\alpha_0,\ldots,\alpha_i)$ in $\Gamma_i$. Let $\eps>0$.
Let $y\in X_0$ and 
$(\beta_n)_{n\ge 0}\in \Gamma(G_p)\setminus E$ be such that
$\vfi((\beta_n)_{n\ge 0})=y$. By \eqref{eq:diam0}, there exists $q\ge 0$
such that $|J_{\beta_0\ldots\beta_q}|<\eps$. 
We define the map $\psi\colon\Gamma(G_p)\to\Gamma(G_p)$ by
$$
\psi((\gamma_n)_{n\ge 0}):=
(\alpha_0\ldots\alpha_i\omega^{\alpha_i\beta_0}_1\ldots \omega^{\alpha_i\beta_0}_{k-1}\beta_0\ldots\beta_q\omega^{\beta_q\gamma_0}_1\ldots\omega^{\beta_q\gamma_0}_{k-1}
\gamma_0\gamma_1\ldots\gamma_n\ldots),
$$
where $\omega_0^{\alpha_i\beta_0}\ldots\omega_k^{\alpha_i\beta_0}$
(resp. $\omega_0^{\beta_q\gamma_0}\ldots\omega_k^{\beta_q\gamma_0}$)
is the path from $\alpha_i$ to $\beta_0$ (resp. from $\beta_q$ to $\gamma_0$)
defined in \eqref{eq:pathalphatobeta}.
The map $\psi$ is one-to-one. Since $\Gamma(G_p)$ is uncountable and $E$ is
countable, there exists
$\bar\gamma\in\Gamma(G_p)$ such that $\psi(\bar\gamma)\notin E$. Let 
$x:=\vfi\circ\psi(\bar\gamma)\in X_0$. Then $x\in
J_{\alpha_0\ldots\alpha_i}$ and $ f^{i+k}(x)\in J_{\beta_0\ldots
\beta_q}$, so $|f^{i+k}(x)-y|<\eps$. Since this is true for
any $\eps>0$, this implies that the set
$f^{i+k}(J_{\alpha_0\ldots\alpha_i}\cap X_0)$ is dense in $X$. By
compactness, we get $f^{i+k}(J_{\alpha_0\ldots\alpha_i}\cap X)=X$; this
is \eqref{eq:Jalpha-Y}.

\medskip
Finally we are going to 
show that $f|_X\colon X\to X$ is topologically mixing and that
the set of periodic points is dense in $X$. 
Let $U$ be an open set of $I$ such
that $U\cap X\neq \emptyset$. By denseness of $X_0$ in $X$, 
there exists $y$ in $U\cap X_0$. Let
$(\alpha_n)_{n\ge 0}\in \Gamma(G_p)\setminus E$ be such that 
$\vfi((\alpha_n)_{n\ge 0})=y$. Since 
$\lim_{n\to+\infty}|J_{\alpha_0\ldots
\alpha_n}|=0$ by \eqref{eq:diam0}, there exists an integer $q$ such that
$J_{\alpha_0\ldots \alpha_{q-1}}\subset U$. Then $f^{q+k}(U\cap X)=X$ by
\eqref{eq:Jalpha-Y}. Therefore, $f|_X$ is topologically mixing. 
Let $\bar\gamma=(\gamma_n)_{n\ge 0}$ be the periodic sequence of period 
$q$ beginning with $(\alpha_0\ldots\alpha_{q-1})$, that is,
$\gamma_n=\alpha_r$ if $n=pq+r$ with $r\in \Lbrack 0,q-1\Rbrack$.
The difficulty to find a periodic point in $U\cap X$ is that 
$\bar\gamma$ may belong to $E$ (if $\bar\gamma\notin E$,
then we have $z:=\vfi(\bar\gamma)\in X_0\cap U$ and $f^q(z)=z$). 
For every $n\ge 0$, there exists $z_n\in (J_{\gamma_0\ldots\gamma_n}\cap X_0)
\setminus J_{\bar\gamma}$. Let $(n_i)_{i\ge 0}$ be an increasing
sequence of integers such that $(z_{n_i})_{i\ge 0}$ converges, and let
$z$ denote the limit.
The point $z$ necessarily belongs to $\End{J_{\bar\gamma}}$ because
$\bigcap_{n\ge 0}J_{\gamma_0\ldots\gamma_n}=J_{\bar\gamma}$ is a 
decreasing intersection of intervals and $z_n\notin J_{\bar\gamma}$. 
Moreover, $z\in X$ because $X$ is closed.
For every $n\ge q$, $f^q(z_n)\in J_{\gamma_0\ldots\gamma_{n-q}}\setminus
J_{\bar\gamma}$ because $\sigma^q(\bar\gamma)=\bar\gamma$. Therefore
the sequence $(f^q(z_{n_i}))_{i\ge 0}$ converges to the point 
$f^q(z)$ by continuity, $f^q(z)\in X$ because $X$ is invariant and
$f^q(z)\in\End{J_{\bar\gamma}}$ for the same reason as above.
Similarly, $f^{2q}(z)\in \End{J_{\bar\gamma}}\cap X$. The three points
$\{z,f^q(z),f^{2q}(z)\}$ belong to $\End{J_{\bar\gamma}}$, and
thus two of these points are equal. 
Therefore, either
$z$ or $f^q(z)$ is a periodic point and belongs to $U\cap X$. This shows that
the set of periodic points is dense in $X$. Finally,
the facts that $f|_X$ is topologically mixing and has a dense set
of periodic points
ensure that the set of periods of periodic points in $X$ is infinite
(if the set of periods is finite and if $N$ is a common multiple of all
the periods, then $f^N|_X$ is the identity map by denseness of the set of
periodic points, and thus $f|_X$ is not mixing).
\end{proof}

\begin{theo}\label{theo:strongchaos}
Let $f$ be an interval map. The following are
equivalent:
\begin{enumerate}
\item $f$ has a periodic point of odd period greater than $1$,
\item there exists an infinite $f$-invariant 
set $X$ such that $(X,f^2|_X)$ is transitive.
\end{enumerate}
\end{theo}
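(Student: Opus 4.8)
The plan is to prove the two implications separately, relying heavily on Theorem~\ref{theo:oddperiod-mixing-subset} for one direction and on the structure theory of zero-entropy maps for the other.

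For (i)$\Rightarrow$(ii): if $f$ has a periodic point of odd period greater than $1$, then Theorem~\ref{theo:oddperiod-mixing-subset} already produces an uncountable $f$-invariant set $X$ on which $f|_X$ is topologically mixing. By Theorem~\ref{theo:mixing-weak-mixing} (applied to the subsystem $(X,f|_X)$), $f^2|_X$ is then also topologically mixing, hence in particular transitive; and $X$ is infinite since it is uncountable. So this direction is essentially immediate from the previous section. One small point to check is that $(X, f|_X)$ is indeed a topological dynamical system in the required sense, i.e.\ $X$ is compact; but the $X$ built in Theorem~\ref{theo:oddperiod-mixing-subset} is closed and invariant, hence compact.

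For (ii)$\Rightarrow$(i): suppose there is an infinite $f$-invariant set $X$ with $(X,f^2|_X)$ transitive, and suppose for contradiction that $f$ has no periodic point of odd period greater than $1$. By Sharkovsky's order, the set of periods of $f$ is then contained in $\{2^k\mid k\ge 0\}$, so $f$ has type $\unrhd 2^\infty$; in particular (Theorem~\ref{theo:htop-power-of-2}) $h_{top}(f)=0$, and hence $h_{top}(f^2)=0$ too. Now $f^2|_X$ is transitive on the infinite compact set $X$, so by Proposition~\ref{prop:transitive-dense-orbit} there is a point $x\in X$ with $\omega(x,f^2)=X$ infinite. But a transitive interval map (or a transitive subsystem with a dense orbit) always has a dense set of periodic points when restricted to a genuine subinterval --- more to the point, the obstruction is exactly that an infinite $\omega$-limit set of a zero-entropy map contains no periodic point (Proposition~\ref{prop:omega-no-periodic-point}), whereas a transitive dynamical system on an infinite space forces certain recurrence. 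The cleanest route: since $f^2|_X$ is transitive and $X=\omega(x,f^2)$ is infinite, I would run the decomposition of Proposition~\ref{prop:htop0-Lki} for $f^2$ and the point $x$, obtaining a nested sequence of cycles of intervals $(L_n)$ of periods $2^n\to\infty$ with $X\subset\bigcup_{i<2^n}(f^2)^i(L_n)$; this shows $f^2$ (hence $f$) has periodic points of period $2^k$ for all $k$, which is consistent, but it also shows that $X$ is contained in a solenoidal nest, and then I invoke Proposition~\ref{prop:omega-no-periodic-point} to say $X$ contains no $f^2$-periodic point. Transitivity of $f^2|_X$ together with the fact that $X$ is infinite then has to be contradicted: indeed $f^2|_X$ having a dense orbit and no periodic point, with $X$ a subset of $\mathbb{R}$ sitting inside a cycle of intervals, is impossible for a zero-entropy interval map --- one can localize $x$ and some iterate into a single component $J$ of some $(f^2)^{i}(L_n)$ which is an honest subinterval, and then $(f^2)^{2^n}|_J$ would be a transitive interval map (Theorem~\ref{theo:summary-transitivity} / Proposition~\ref{prop:transitivity-periodic-points}), hence has a dense set of periodic points, contradicting Proposition~\ref{prop:omega-no-periodic-point} applied to the infinite $\omega$-limit set $X\cap J$.

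The main obstacle I anticipate is making precise the last step: extracting from the abstract infinite transitive subsystem $(X,f^2|_X)$ an actual transitive interval map (or a genuine contradiction) rather than just a transitive subsystem on a Cantor-like set, where "transitive with no periodic points" is perfectly possible (adding machines!). The resolution must use that $h_{top}(f)=0$ forces $X$ into a solenoidal structure and simultaneously that $f^2|_X$ transitive gives a dense orbit $\omega(x,f^2)=X$; combining these, every infinite $\omega$-limit set of a zero-entropy interval map is non-chaotic and, crucially, cannot itself be transitive-with-periodic-points, but here we want transitivity alone. The right lemma to lean on is Proposition~\ref{prop:omega-no-periodic-point} together with the observation that a transitive system with a dense orbit $x$ satisfying $\omega(x,f^2)=X$ must have $X$ equal to its own closure of the orbit; if $X$ had isolated points it would be a single periodic orbit (Proposition~\ref{prop:transitive-dense-orbit}), contradicting "infinite". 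So $X$ is infinite with no isolated points, no periodic points, inside a nest of cycles of intervals of unbounded period --- which is consistent (this is the adding-machine picture) and does NOT immediately contradict anything. Hence the honest proof of (ii)$\Rightarrow$(i) must instead show: if $(X,f^2|_X)$ is transitive with $X$ infinite, then $f$ \emph{does} have an odd period $>1$, equivalently $h_{top}(f)>0$ by Theorem~\ref{theo:summary-mixing}-type reasoning. I would therefore argue that transitivity of $f^2|_X$ on an infinite set, via Proposition~\ref{prop:sensitivity-transitive-component} or via the horseshoe-producing Proposition~\ref{prop:strictly-turbulent-shift}, yields a horseshoe for some iterate of $f$, hence positive entropy, hence (Theorem~\ref{theo:htop-power-of-2}) a periodic point of period $2^d q$ with $q>1$ odd, and then $f^{2^d}$ has an odd period $>1$; pushing this down, Sharkovsky's theorem gives $f$ a periodic point of period $2q$ or the odd period itself --- in any case $f$ has a periodic point whose period is not a power of $2$, and a short additional argument with Proposition~\ref{prop:odd-period-turbulent} / Sharkovsky upgrades this to an odd period $>1$. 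Verifying that "infinite transitive subsystem of $f^2$" really does force a horseshoe for an iterate (and is not realizable by a zero-entropy adding-machine-type minimal set, which is transitive but \emph{not} on an infinite \emph{compact} set that can be a subsystem here --- wait, adding machines \emph{are} compact) is the delicate point, and I expect to need Proposition~\ref{prop:omega-no-periodic-point} precisely to rule out the minimal-set case: a minimal infinite set of a zero-entropy interval map is an adding machine, on which the return map is \emph{not} transitive in our sense for $f^2$ unless... this is exactly where the care is needed, and I would resolve it by showing that if $h_{top}(f)=0$ then any infinite $f^2$-invariant set carries a non-transitive subsystem because it decomposes along the solenoidal nest $(L_n)$ into at least two $(f^2)^{2^n}$-pieces that do not interact, contradicting transitivity of $f^2|_X$.
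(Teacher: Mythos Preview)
Your direction (i)$\Rightarrow$(ii) is fine and matches the paper.

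For (ii)$\Rightarrow$(i), however, there is a genuine gap. Your whole strategy is to deduce $h_{top}(f)>0$ from the existence of the transitive subsystem and then ``upgrade'' this to an odd period via Sharkovsky. But ``$f$ has a periodic point of odd period $>1$'' is \emph{strictly stronger} than ``$h_{top}(f)>0$'': the map $S$ of Example~\ref{ex:transitive-not-mixing} has type $6$, hence positive entropy (Proposition~\ref{prop:transitivity-type}), yet no periodic point of odd period greater than $1$. So even if every one of your arguments for positive entropy were made rigorous, you would still be stuck at maps of type $6,10,14,\ldots$ or $2^d q$ with $d\ge 1$, and no ``short additional argument with Proposition~\ref{prop:odd-period-turbulent} / Sharkovsky'' can close that gap. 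Your solenoidal argument via Proposition~\ref{prop:htop0-Lki} is only available when $h_{top}(f)=0$; in that case it can indeed be made to work (using that $X$ is $f$-invariant, not merely $f^2$-invariant, so $X\subset L_1$ forces $f(X)\subset L_1\cap f(L_1)=\emptyset$), but this leaves the positive-entropy, no-odd-period case completely unaddressed.

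The paper bypasses entropy entirely and argues directly. Pick $y\in X$ with $\CO_{f^2}(y)$ dense in $X$; since $X$ is $f$-invariant, $f(y)\in X$, and after replacing $y$ by an iterate one may assume $\min X<f(y)<\max X$ and (say) $f(y)<f^2(y)$. Density of $\CO_{f^2}(y)$ near $\min X$ gives $n\ge 2$ with $f^{2n}(y)<f(y)<f^2(y)$. Setting $x:=f(y)$ and $p:=2n-1$ (odd, $\ge 3$), this reads $f^p(x)<x<f(x)$, and Proposition~\ref{prop:xn-x0-x1} immediately produces a periodic point of period $p$. The key tool you are missing is precisely this proposition, which manufactures an odd period directly from an order configuration along a single orbit, without any detour through entropy or solenoids.
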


\begin{proof}
The implication (i)$\Rightarrow$(ii) is given by 
Theorem~\ref{theo:oddperiod-mixing-subset}.

We suppose that there exists an infinite $f$-invariant 
set $X$ such that $f^2|_X$ is transitive. 
Let $y\in X$ be a point whose orbit under $f^2$ is dense in $X$. 
Since $X$ is infinite, the points $(f^n(y))_{n\ge 0}$ are pairwise distinct.
We may assume that $\min X<f(y)<\max X$ (otherwise, we can replace $y$ by 
some iterate). We also assume that $f(y)<f^2(y)$, the case with reverse 
inequality being symmetric. Since $\CO_{f^2}(y)$ is dense in $X$, there
exists $n\ge 2$ such that $f^{2n}(y)\in [\min X, f(y))$. Thus we have
$f^{2n}(y)<f(y)<f^2(y)$. According to
Proposition~\ref{prop:xn-x0-x1} applied to the point $x:=f(y)$, 
there exists a periodic point of odd
period greater than $1$. That is, (ii)$\Rightarrow$(i).
\end{proof}

According to Theorem~\ref{theo:htop-power-of-2},
an interval map has positive entropy if and only if it has
a periodic point of period $2^np$ for some $n\ge 0$ and some odd $p>1$.
Therefore the next corollary follows
straightforwardly from Theorems \ref{theo:oddperiod-mixing-subset} and 
\ref{theo:strongchaos}.

\begin{cor}\label{cor:htop-mixingsubsystem}
Let $f$ be an interval map. The following are equivalent:
\begin{enumerate}
\item $h_{top}(f)>0$,
\item there exist a positive integer $n$ and an uncountable $f^n$-invariant
set $X$ such that $(X,f^n|_X)$ is topologically mixing and the set
of periodic points is dense in $X$,
\item there exist a positive integer $n$ and an $f^n$-invariant
set $X$ such that\linebreak $(X,f^n|_X)$ is topologically mixing,
\item there exist a positive integer $n$ and an infinite $f^n$-invariant
set $X$ such that $(X,f^{2n}|_X)$ is transitive.
\end{enumerate}
\end{cor}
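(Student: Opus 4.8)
The plan is to establish the cycle of implications (i)$\Rightarrow$(ii)$\Rightarrow$(iii)$\Rightarrow$(iv)$\Rightarrow$(i). The two substantial inputs are Theorem~\ref{theo:oddperiod-mixing-subset} (which produces a mixing subsystem with dense periodic points out of an odd period) and Theorem~\ref{theo:strongchaos} (which recovers an odd period from an infinite set on which the square is transitive), glued together by Theorem~\ref{theo:htop-power-of-2}, which translates ``$h_{top}(f)>0$'' into ``$f$ has a periodic point whose period is not a power of~$2$''. The implication (ii)$\Rightarrow$(iii) is immediate: one simply forgets the clause asserting denseness of the periodic points.

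For (i)$\Rightarrow$(ii): assume $h_{top}(f)>0$. By Theorem~\ref{theo:htop-power-of-2}, $f$ has a periodic point of period $2^dq$ with $d\ge 0$ and $q>1$ odd. Since $\gcd(2^dq,2^d)=2^d$, Lemma~\ref{lem:period-f-fn}(i) shows that this point is periodic of odd period $q>1$ for the interval map $g:=f^{2^d}$. Applying Theorem~\ref{theo:oddperiod-mixing-subset} to $g$ yields an uncountable $g$-invariant set $X$ such that $g|_X$ is topologically mixing, the periodic points are dense in $X$, and the set of periods of periodic points in $X$ is infinite. Taking $n:=2^d$ gives exactly (ii).

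For (iii)$\Rightarrow$(iv): let $X$ be $f^n$-invariant with $g:=f^n|_X$ topologically mixing. Then $g^2=f^{2n}|_X$ is again topologically mixing, hence transitive: if $N$ witnesses the mixing property for a pair of nonempty open sets $U,V\subset X$, then $g^{2k}(U)\cap V\ne\emptyset$ for every $k$ with $2k\ge N$. Moreover the set $X$ one cares about is infinite, a topologically mixing system on a finite set with at least two points being impossible (apply the mixing condition to the pairs $(\{x\},\{y\})$ and $(\{y\},\{x\})$ for distinct $x,y\in X$). Thus (iv) holds. For (iv)$\Rightarrow$(i): given an infinite $f^n$-invariant set $X$ with $f^{2n}|_X$ transitive, apply Theorem~\ref{theo:strongchaos} to the interval map $f^n$ and its infinite invariant set $X$, noting $f^{2n}|_X=(f^n)^2|_X$; it follows that $f^n$ has a periodic point of odd period greater than~$1$, in particular a periodic point whose period is not a power of~$2$, so $h_{top}(f^n)>0$ by Theorem~\ref{theo:htop-power-of-2}, whence $h_{top}(f)=\tfrac1n\,h_{top}(f^n)>0$ by Proposition~\ref{prop:htop-Tn}. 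This closes the cycle.

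There is no conceptual obstacle here: once Theorems~\ref{theo:oddperiod-mixing-subset} and~\ref{theo:strongchaos} are available, the argument is bookkeeping. The only points needing care are keeping straight which iterate of $f$ each theorem is applied to (Theorem~\ref{theo:oddperiod-mixing-subset} to $f^{2^d}$ in the forward direction, Theorem~\ref{theo:strongchaos} to the interval map $f^n$ in the backward direction) and routing the periods through Lemma~\ref{lem:period-f-fn} and Theorem~\ref{theo:htop-power-of-2}; in (iii)$\Rightarrow$(iv) one should also record that a genuinely mixing subsystem is infinite, so that the set furnished there indeed meets the infiniteness requirement of~(iv).
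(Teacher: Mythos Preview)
Your proof is correct and follows exactly the approach the paper indicates: the paper's own proof is a single sentence pointing to Theorems~\ref{theo:oddperiod-mixing-subset} and~\ref{theo:strongchaos} via Theorem~\ref{theo:htop-power-of-2}, and you have simply unpacked that reference in detail, routing the periods through Lemma~\ref{lem:period-f-fn} and Proposition~\ref{prop:htop-Tn} as needed. Your observation in (iii)$\Rightarrow$(iv) that a topologically mixing system on a finite set with at least two points is impossible is the right way to secure the infiniteness hypothesis of~(iv); the residual case $\#X=1$ is a harmless imprecision in the \emph{statement} of~(iii) rather than a defect in your argument.
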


\begin{rem}
A result similar to, but weaker than, the equivalence
(i)$\Leftrightarrow$(iii) in Corollary~\ref{cor:htop-mixingsubsystem} 
was stated by Osikawa and Oono in \cite{OO}:
they proved that an interval map $f$ has a periodic point
whose period is not a power of $2$ if and only if there exists a mixing 
$f^n$-invariant measure for some positive integer $n$. The proof relies on the 
construction of a set $X$ such that $f^n(X)\subset X$ and 
$(X,f^n|_X)$ is Borel conjugate to the full shift on two symbols (i.e.,
the conjugacy map is only Borel and may not be continuous); in particular,
$X$ may not be closed in this construction.
\end{rem}

\subsection*{Remarks on graph maps}
For graph maps, there is no simple relation between positive entropy and
the periods of periodic points. Moreover, an irrational rotation
is totally transitive but not topologically mixing and has zero entropy; 
thus there is no way to get a result similar
to Theorem~\ref{theo:strongchaos}. However, 
Corollary~\ref{cor:htop-mixingsubsystem} can be partially
generalized to graph maps. 
Indeed, a graph map $f$ has positive entropy if and only if $f^n$
has a horseshoe for some $n$ (Theorem~\ref{theo:htop-horseshoeG}), 
which implies that there exists an
$f^n$-invariant set $X$ such that $(X,f^n)$ is ``almost'' conjugate to the
shift $(\Sigma,\sigma)$ (Proposition~\ref{prop:strictly-turbulent-shift}). 
Moreover, the properties of $\omega$-limit
sets for zero entropy graph maps (Theorems \ref{theo:omega-graph-htop0}
and \ref{theo:circumferential})
imply that a zero entropy graph map admits no 
topologically mixing subsystem. This leads to the following result.

\begin{theo}
Let $f$ be a graph map. The following are equivalent:
\begin{enumerate}
\item $h_{top}(f)>0$,
\item there exist a positive integer $n$ and an uncountable $f^n$-invariant
set $X$ such that $(X,f^n|_X)$ is topologically mixing and the set
of periodic points is dense in $X$,
\item there exist a positive integer $n$ and an $f^n$-invariant 
set $X$ such that\linebreak $(X,f^n|_X)$ is topologically mixing.
\end{enumerate}
\end{theo}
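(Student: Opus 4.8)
The plan is to transpose the proof of the interval version, Corollary~\ref{cor:htop-mixingsubsystem}, replacing the two interval-specific inputs by their graph analogues: Misiurewicz's theorem becomes Theorem~\ref{theo:htop-horseshoeG} (positive entropy is equivalent to some iterate having a horseshoe), and the structure theory of $\omega$-limit sets of zero-entropy interval maps becomes the classification of infinite $\omega$-limit sets of zero-entropy graph maps into solenoidal and circumferential ones (Theorems~\ref{theo:omega-graph-htop0} and \ref{theo:circumferential}). The implication (ii)$\Rightarrow$(iii) is trivial; in (iii) we read, exactly as in the interval case, that $X$ is infinite (a finite topologically mixing system reduces to a point). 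So it remains to prove (i)$\Rightarrow$(ii) and the contrapositive of (iii)$\Rightarrow$(i).

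For (i)$\Rightarrow$(ii): by Theorem~\ref{theo:htop-horseshoeG} there are $n\ge1$ and $p\ge2$ such that $f^n$ has a strict $p$-horseshoe; keeping two of its intervals we obtain a strict horseshoe $(J_0,J_1)$ for $g:=f^n$, i.e.\ two disjoint non-degenerate closed subintervals of a single interval $I\subset G$ with no branching point except possibly its endpoints, such that $g(J_i)=I\supset J_0\cup J_1$ for $i=0,1$. In the sense of Definition~\ref{def:coveringG} each $J_i$ covers each $J_j$, so the directed graph attached to $\{J_0,J_1\}$ has two vertices and all four arrows, with positive adjacency matrix. I would then run the construction in the proof of Theorem~\ref{theo:oddperiod-mixing-subset} verbatim with this graph in place of $G_p$: build nested closed intervals $J_{\alpha_0\ldots\alpha_m}\subset I$ indexed by finite $\{0,1\}$-words, with $J_{\alpha_0\ldots\alpha_m}\subset J_{\alpha_0\ldots\alpha_{m-1}}$, $g(J_{\alpha_0\ldots\alpha_m})=J_{\alpha_1\ldots\alpha_m}$, $g(\End{J_{\alpha_0\ldots\alpha_m}})=\End{J_{\alpha_1\ldots\alpha_m}}$ and pairwise disjoint interiors; set $J_{\bar\alpha}=\bigcap_m J_{\alpha_0\ldots\alpha_m}$, let $E=\{\bar\alpha\mid J_{\bar\alpha}\text{ non-degenerate}\}$ (countable), let $\vfi\colon\Sigma\setminus E\to G$ be the induced at-most-two-to-one semi-conjugacy with the shift on $\Sigma=\{0,1\}^{\IZ^+}$, put $X_0=\vfi(\Sigma\setminus E)$ and $X=\overline{X_0}$. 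Then $X$ is $g$-invariant and uncountable. Positivity of the adjacency matrix gives the connectivity used in that proof (a path of length $1$ between any two vertices), hence $g^{i+1}(J_{\alpha_0\ldots\alpha_i}\cap X)=X$ for every word, and the final paragraph of the proof of Theorem~\ref{theo:oddperiod-mixing-subset} then yields that $g|_X=f^n|_X$ is topologically mixing with dense periodic points. This is (ii).

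For (iii)$\Rightarrow$(i), I would prove the contrapositive. Assume $h_{top}(f)=0$, so $h_{top}(f^n)=0$ for every $n$, and suppose $(X,f^n|_X)$ is topologically mixing with $X$ infinite; I derive a contradiction. Mixing implies weak mixing and total transitivity (Theorem~\ref{theo:mixing-weak-mixing}), and by Proposition~\ref{prop:transitive-dense-orbit} some $x$ has $\omega(x,f^n)=X$, so $X$ is an infinite $\omega$-limit set of the zero-entropy graph map $f^n$. By Theorem~\ref{theo:omega-graph-htop0} it is solenoidal or circumferential, and in either case $X$ lies inside a cycle of graphs of some period $k$, with $\omega(f^{ni}(x),f^{nk})\subset f^{ni}(G')$ for $i\in\Lbrack 0,k-1\Rbrack$ and these $k$ subgraphs pairwise disjoint. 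If $k\ge2$, Lemma~\ref{lem:omega-set}(iv) applied to $f^n$ writes $X=\bigcup_{i=0}^{k-1}\omega(f^{ni}(x),f^{nk})$ as a disjoint union of $k\ge2$ nonempty closed sets cyclically permuted by $f^n$, so $(f^n)^k|_X$ is not transitive, contradicting total transitivity; a solenoidal set always has $k\ge2$. For a circumferential set with $k=1$, Theorem~\ref{theo:circumferential} exhibits an irrational rotation $R$ as a factor of $f^n|_X$ (the semi-conjugacy maps $\omega(x,f^n)$ onto $\IS$); since topological mixing passes to factors whereas $R$ is not mixing, this again contradicts mixing of $f^n|_X$. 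Hence $h_{top}(f)=0$ rules out (iii).

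I expect the main obstacle to be checking that the interval construction behind Theorem~\ref{theo:oddperiod-mixing-subset} really transports to graph maps: the excerpt asserts only that Lemma~\ref{lem:chain-of-intervals}(ii)--(iii) survive with Definition~\ref{def:coveringG}, whereas the nested-interval construction also needs the analogue of part~(i) (for a chain $(K_0,\ldots,K_m)$, a closed $K\subset K_0$ with $f^m(K)=K_m$, $f^m(\End K)=\End{K_m}$ and $f^i(K)\subset K_i$). The clean way around this is to note that the whole horseshoe sits inside the single interval $I$, which has no branching point in its interior, and that every $J_{\alpha_0\ldots\alpha_m}$ and all its relevant iterates stay inside $I$ (since $g(J_{\alpha_0})=I$ and $g(J_{\alpha_0\alpha_1})=J_{\alpha_1}\subset I$, etc.); on $I$ the map $f^n$ behaves like an ordinary continuous interval map, so Lemma~\ref{lem:chain-of-intervals}(i) applies unchanged. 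A secondary point needing care is the circumferential case with $k=1$: one must ensure that the restriction of the semi-conjugacy of Theorem~\ref{theo:circumferential} to $\omega(x,f^n)$ genuinely presents the rotation as a factor of $f^n|_X$, which it does precisely because that theorem guarantees $\vfi(\omega(x,f^n))=\IS$.
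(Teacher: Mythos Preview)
Your proposal is correct and follows the paper's own sketch: for (i)$\Rightarrow$(ii) both routes use Theorem~\ref{theo:htop-horseshoeG} to obtain a strict horseshoe for some $f^n$ and then build a shift-like subsystem inside the horseshoe interval (the paper points to Proposition~\ref{prop:strictly-turbulent-shift}, you to the construction of Theorem~\ref{theo:oddperiod-mixing-subset}; your choice has the mild advantage of yielding mixing directly), and for (iii)$\Rightarrow$(i) both invoke the solenoidal/circumferential classification of infinite $\omega$-limit sets of zero-entropy graph maps (Theorems~\ref{theo:omega-graph-htop0} and~\ref{theo:circumferential}). Your handling of the two delicate points---that the nested-interval construction stays inside the single interval $I$ hosting the horseshoe, and that in the circumferential $k=1$ case the rotation is a genuine factor because $\vfi(\omega(x,f^n))=\IS$---is exactly what is needed.
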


\section{Transitive sensitive subsystems}\label{sec:Wiggins}

One may consider a variant of Devaney's definition 
of chaos by omitting the assumption on periodic points.
What can be said about interval maps having transitive sensitive
subsystems? 
By Theorem~\ref{theo:htop-Devaney}, a positive entropy interval map
has a transitive sensitive subsystem. The converse is not true: the map
built in Example~\ref{ex:chaos-LY-htop0} has zero entropy but has a
transitive sensitive subsystem by Lemma~\ref{lem:chaos-LY-htop0}.
We are going to show that the existence of a transitive sensitive subsystem 
implies chaos in the sense of 
Li-Yorke. The converse is not true either: a 
(rather complicated) counter-example is given 
in Subsection~\ref{subsec:LYnotWiggins}. 
It follows that, for interval maps, 
the existence of a transitive sensitive subsystem is a strictly
intermediate notion between positive entropy and chaos in the sense of
Li-Yorke. These results were shown by the author in \cite{R5}.

\begin{rem}
A topological dynamical system $(X,f)$ is sometimes called
\emph{chaotic in the sense of Auslander-Yorke}
\index{chaos!chaos in the sense of Auslander-Yorke} 
or \emph{chaotic in the sense of Ruelle and 
Takens}\index{chaos!chaos in the sense of Ruelle and Takens}
if it is transitive and sensitive to initial 
conditions \cite{AY, RT2}, and
\emph{chaotic in the sense of Wiggins}\index{chaos!chaos in the sense of 
Wiggins}  if there exists an invariant
set $Y\subset X$ such that $(Y,f|_Y)$ is transitive and sensitive to
initial conditions \cite{FD}.
\end{rem}

\subsection{Transitive sensitive subsystem implies Li-Yorke chaos}

The next result is \cite[Theorem~1.7]{R5}.

\begin{theo}\label{theo:Wiggins-implies-LY}
Let $f$ be an interval map. If $Y$ is an 
invariant set such that $f|_Y$ is transitive and sensitive 
to initial conditions, then $f$ is chaotic in the sense of Li-Yorke.
\end{theo}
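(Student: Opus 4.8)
The plan is to argue by contradiction and to reduce everything to the already-proved fact (Proposition~\ref{prop:notLY-wfstable}) that an interval map which is \emph{not} chaotic in the sense of Li-Yorke has no unstable point on its $\omega$-limit set. So assume $f$ is not chaotic in the sense of Li-Yorke; I want to show $f|_Y$ cannot be both transitive and sensitive.

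The first step is to recognize that $Y$ is itself an $\omega$-limit set of $f$. Since $f|_Y$ is transitive, Proposition~\ref{prop:transitive-dense-orbit} yields a point $x_0\in Y$ whose orbit is dense in $Y$, and for such a point $\omega(x_0,f|_Y)=Y$. Because $Y$ is closed and $f$-invariant, the whole trajectory $\CO_f(x_0)$ stays in $Y$, so $\omega(x_0,f)=\omega(x_0,f|_Y)=Y$; in particular $Y\subseteq\omega(f)$. Now Proposition~\ref{prop:notLY-wfstable} applies to $f$ and tells us that $f|_{\omega(f)}$ has no unstable point: for every $a\in\omega(f)$ and every $\eps>0$ there is $\delta>0$ such that every $b\in\omega(f)$ with $|a-b|\le\delta$ satisfies $|f^n(a)-f^n(b)|\le\eps$ for all $n\ge 0$. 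Restricting $a$ and $b$ to $Y$, which is licit since $Y\subseteq\omega(f)$, this says exactly that no point of $Y$ is $\eps$-unstable for $f|_Y$, for any $\eps>0$ (if some $a\in Y$ were $\eps$-unstable for $f|_Y$, applying the statement at $\eps/2$ with $b\in Y$ arbitrarily close to $a$ would give a contradiction). Hence $f|_Y$ is not sensitive to initial conditions, contradicting the hypothesis. Therefore $f$ is chaotic in the sense of Li-Yorke.

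An alternative, slightly longer route splits on entropy: if $h_{top}(f)>0$ the conclusion is immediate from Theorem~\ref{theo:htop-positive-chaos-LY}, and if $h_{top}(f)=0$ one uses that $Y$ is an infinite $\omega$-limit set together with Theorem~\ref{theo:htop0-chaos-LY} and Proposition~\ref{prop:non-LY-chaotic} (were $f$ not Li-Yorke chaotic, all pairs of distinct points of $Y$ would be $f$-separable, which again forces $f|_Y$ to have no unstable point). The short route above is preferable because Proposition~\ref{prop:notLY-wfstable} is stated for arbitrary interval maps and subsumes both cases. The one genuinely delicate point is the passage from ``no unstable point in $\omega(f)$'' to ``no unstable point in $Y$'': sensitivity of $f|_Y$ is a property internal to the subspace $Y$, so one must verify both that $Y\subseteq\omega(f)$ and that neighbourhoods of a point of $Y$ taken inside $Y$ still fall under the uniform-orbit-closeness statement of Proposition~\ref{prop:notLY-wfstable} --- and it is exactly the identity $Y=\omega(x_0,f)$ from the first step that makes this transfer legitimate.
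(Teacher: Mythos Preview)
Your proof is correct. The key observation that $Y=\omega(x_0,f)\subseteq\omega(f)$ together with Proposition~\ref{prop:notLY-wfstable} immediately kills sensitivity on $Y$, and your handling of the passage from stability on $\omega(f)$ to non-sensitivity of $f|_Y$ is clean.

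This is a genuinely different route from the paper's proof. The paper argues directly: it first reduces to $h_{top}(f)=0$ via Theorem~\ref{theo:htop-positive-chaos-LY}, then uses Theorem~\ref{theo:htop0-chaos-LY} to conclude that $Y$ contains no $f$-non separable pair, then invokes Proposition~\ref{prop:non-LY-chaotic}(i) to get $\max_i\diam(\omega(f^i(y_0),f^{2^n}))\to 0$, and finally uses the cycle-of-intervals structure explicitly to show that nearby points in $Y$ stay close forever. Your alternative route sketch is essentially this argument. Your main proof instead invokes Proposition~\ref{prop:notLY-wfstable}, which already packages all of that work (its proof goes through Lemma~\ref{lem:diamLcapomega} and ultimately the same machinery). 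What you gain is brevity and a cleaner conceptual statement: the obstruction to sensitivity lives on all of $\omega(f)$, not just on the particular $Y$. What the paper's approach buys is that it stays closer to the solenoidal structure of zero-entropy $\omega$-limit sets and does not require the reader to have absorbed the later, somewhat technical Proposition~\ref{prop:notLY-wfstable}.
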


\begin{proof}
We show the result by refutation.
Suppose that $f$ is not chaotic in the sense of Li-Yorke. By
Theorem~\ref{theo:htop-positive-chaos-LY}, the topological entropy of $f$
is zero.
Consider an invariant set $Y$ such that $f|_Y$ is
transitive. If $Y$ is finite, then 
$f|_Y$ is not sensitive. If $Y$ is infinite, 
there exists $y_0\in Y$ such that $\omega(y_0,f)=Y$ 
(Proposition~\ref{prop:transitive-dense-orbit}(i)). 
By Theorem~\ref{theo:htop0-chaos-LY}, $Y$ does not
contain two $f$-non separable points.
Let $\eps>0$. According to Proposition~\ref{prop:non-LY-chaotic}(i),
there exists an integer $n\ge 1$ such that
$$
\max_{i\in\Lbrack 0, 2^n-1\Rbrack} \diam (\omega(f^i(y_0),f^{2^n}))<\eps.
$$
We set $I_i:=[\min \omega(f^i(y_0),f^{2^n}), \max \omega(f^i(y_0),f^{2^n})]$
for all $i\in\Lbrack 0, 2^n-1\Rbrack$. Then $f(Y\cap I_i)=Y\cap
I_{i+1\bmod 2^n}$ because $f(\omega(f^i(y_0),f^{2^n}))=
\omega(f^{i+1\bmod 2^n}(y_0),f^{2^n})$ and 
$Y=\bigcup_{i\in\Lbrack 0, 2^n-1\Rbrack}\omega(f^i(y_0),f^{2^n})$ by
Lemma~\ref{lem:omega-set}. Moreover the intervals $(I_i)_{0\le i\le 2^n-1}$ 
are pairwise disjoint by Proposition~\ref{prop:htop0-Lki}.
Let $\delta>0$ be such that the distance between two different intervals
among $(I_i)_{0\le i\le 2^n-1}$ is greater than $\delta$. Let $x,y\in Y$
be such that $|x-y|<\delta$. Then there exists $i\in\Lbrack 0, 2^n-1\Rbrack$
such that $x,y\in I_i$ and, for all $k\ge 0$, 
$f^k(x),f^k(y)\in I_{i+k\bmod 2^n}$, so  $|f^k(x)-f^k(y)|<\eps$.
We conclude that $f|_Y$ is  not sensitive to initial conditions.
\end{proof}

\subsection{Li-Yorke chaos does not imply a transitive sensitive subsystem}
\label{subsec:LYnotWiggins}

The aim of this subsection is to exhibit an interval map 
$h\colon [0,3/2]\to [0,3/2]$ that is
chaotic in the sense of Li-Yorke but has no transitive sensitive subsystem.
This example is taken from \cite{R5}.
Let us first explain the main underlying ideas of the
construction of $h$. This map is  obtained by modifying the construction 
of the map $g$ of Example~\ref{ex:chaos-LY-htop0}. The maps $h$ and $g$
have the same construction on the set $\bigcup_{n\ge 1} I_n^0$ 
-- which is the core of the dynamics of $g$ -- but the the 
lengths of the intervals $(I_n^0)_{n\ge 1}$ are not the same and the definition
of $h$ on the intervals $(L_n)_{n\ge 1}$ is different.
For $g$, we showed that $K:=\bigcap_{n\ge 0}\bigcup_{i=0}^{2^n-1}
g^i(J_n^0)$ has a non degenerate connected component $C$ and that
the endpoints of $C$ are 
$g$-non separable. The same remains true for $h$ with
$C:=\bigcap_{n\ge 0}I_n^1=[a,1]$ (the fact that $a,1$ are $h$-non
separable will be proved in Proposition~\ref{prop:wiggins-LY-g-LY-chaotic}).
For $g$, we proved that ${\rm Bd}_{\IR}{K}\subset \omega(0,g)$, hence
$\End{C}\subset \omega(0,g)$.
For $h$,  it is not true that $\End{C}\subset \omega(0,h)$ because
the orbit of $0$ stays in $[0,a]$.
The construction of $h$ on the intervals $L_n$ allows one to approach $1$ from
outside $[0,1]$: we shall see in 
Proposition~\ref{prop:wiggins-LY-g-LY-chaotic} that $\omega(3/2,h)$
contains both $a$ and $1$, which implies chaos in the sense of Li-Yorke
because $a$ and $1$ are $h$-non separable. On the other hand,
the proof showing that $g|_{\omega(0,g)}$ 
is transitive and sensitive 
fails for $h$ because $\omega(0,h)$ does not contain $\{a,1\}$, 
and $\omega(3/2,h)$ is not transitive.
We shall see in Proposition~\ref{prop:wiggins-LY-g-not-wiggins-chaotic}
that $h$ has no transitive sensitive subsystem at all.

\medskip
Let $(a_n)_{n\ge 0}$ be an increasing sequence of numbers
less than $1$ such that $a_0=0$. We set $I_0^1:=[a_0,1]$ and
$$
\forall n\ge 1,\ I_n^0:=[a_{2n-2},a_{2n-1}],\ L_n:=[a_{2n-1},a_{2n}],\ I_n^1:=[a_{2n},1].
$$
It is clear that $I_n^0\cup L_n\cup I_n^1=I_{n-1}^1$.
We fix $(a_n)_{n\ge 0}$ such that the lengths of the intervals
satisfy
$$
\forall n\ge 1,\ |I_n^0|=|L_n|=\frac{1}{3^n}|I_{n-1}^1|\quad\text{and}\quad
|I_n^1|=\left(1-\frac{2}{3^n}\right)|I_{n-1}^1|.
$$
Let $a:=\lim_{n\to +\infty}a_n$. 
Then $\bigcup_{n\ge 1}(I_n^0\cup L_n)=[0,a)$. Moreover, $a<1$ because
$$
\log (1-a)=\sum_{n=1}^{+\infty}\log \left(1-\frac{2}{3^n}\right)>-\infty,
$$
the last inequality follows from the facts that
$\log(1+x)\sim x$ when $x\to 0$ and $\sum \frac{1}{3^n}<+\infty$.

\medskip
\textsc{Notation.}
If $I$ is an interval, let $\middle(I)$ denote its middle point
(that is, $\middle([b,c])=\frac{b+c}2$). For short, we write $\uparrow$ (resp.
$\downarrow$) for ``increasing'' (resp. ``decreasing'').
\label{notation:increasing}\label{notation:decreasing}

\medskip
For all $n\ge 1$,
let $\vfi_n\colon I_n^0\to I_n^1$ be the increasing linear homeomorphism
mapping $I_n^0$ onto $I_n^1$.
We define the map $h\colon [0,3/2]\to  [0,3/2]$ such that $h$ is
continuous on $[0,3/2]\setminus\{a\}$ and
\begin{eqnarray*}
&& h(x)=\vfi_1^{-1}\circ \vfi_2^{-2}\circ\cdots\circ
\vfi_{n-1}^{-1}\circ\vfi_n(x)\text{ for all } x\in I_n^0,\ n\ge 1,\\
&& h\text{ is linear $\uparrow$ of slope }\lambda_n\text{ on } [\min L_n,\middle(L_n)]
\quad\text{for all }n\ge 1,\\
&&h \text{ is linear $\downarrow$ on } 
[\middle(L_n),\max L_n]\quad\text{for all }n\ge 1,\\
&&h(x)=0 \quad\text{for all }x\in [a,1],\\
&&h(x)=x-1 \quad\text{for all } x\in [1,3/2],
\end{eqnarray*}
where the slopes $(\lambda_n)$ will be defined below. We shall also show below
that $h$ is continuous at $a$. The map $h$ is represented on 
Figure~\ref{fig:LY-not-wiggins}.

\begin{figure}[htb]
\centerline{\includegraphics{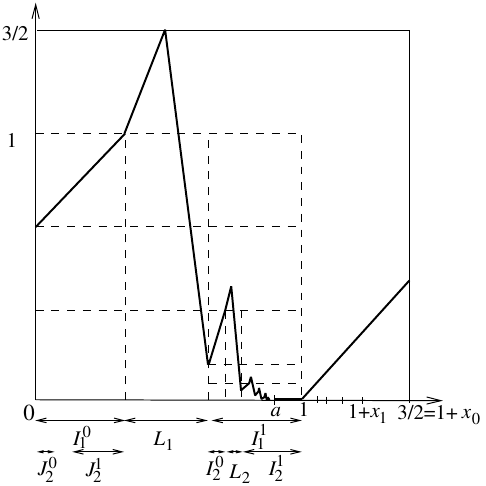}}
\caption{The graph of $h$; this map is chaotic in the sense of Li-Yorke
but has no transitive sensitive subsystem.}
\label{fig:LY-not-wiggins}
\end{figure}

We set $J_0^0:=[0,1]$ and, for all $n\ge 1$, we define $J_n^0,
J_n^1$ as subinterval of $J_{n-1}^0$ such that
$\min J_n^0=0$, $\max J_n^1=\max J_{n-1}^0$ and
$\frac{|J_n^i|}{|J_{n-1}^0|}=\frac{|I_n^i|}{|I_{n-1}^1|}\text{ for }
i\in\{0,1\}$. We also set $M_n:=[\max J_n^0,\min J_n^1]$.

Notice that, on the set $\bigcup_{n\ge 1}I_n^0$, the map $h$ is defined
similarly to the map $g$  of Example~\ref{ex:chaos-LY-htop0}
(the reader can refer to Figure~\ref{fig:LY-htop0-partial-construction} 
page~\pageref{fig:LY-htop0-partial-construction} and the explanations of the
underlying construction of $g$ on this set).
Therefore, the assertions of Lemma~\ref{lem:LY-htop0-summary-Jn0} remain valid for 
$h$, except the point (i) and its derived results (viii), (x), (xi).

\begin{lem}\label{lem:wiggins-LY-summary-Jn0}
Let $h$ be the map defined above.
Then, for all $n\ge 1$,
\begin{enumerate}
\item $h(I_n^0)=J_n^1$,
\item $h^i|_{J_n^0}$ is linear $\uparrow$ for all $i\in\Lbrack 0,2^n-1\Rbrack$,
\item $h^{2^{n-1}-1}(J_n^0)=I_n^0$ and $h^{2^n-1}(J_n^0)=I_n^1$,
\item $h^i(J_n^0)\subset \bigcup_{k=1}^nI_k^0$ for all
$i\in\Lbrack 0,2^n-2\Rbrack$,
\item $(h^i(J_n^0))_{0\le i<2^n}$ are pairwise disjoint,
\item $h^{2^{n-1}}|_{I_n^0}$ is linear $\uparrow$ and 
$h^{2^{n-1}}(I_n^0)=I_n^1$,
\item $h^{2^{n-1}-1}|_{M_n}$ is linear $\uparrow$ and
  $h^{2^{n-1}-1}(M_n)=L_n$,
\item $h(\min L_n)=\min M_{n-1}$,
\item $h^{2^{n-2}}(\min L_n)=\min L_{n-1}$.
\end{enumerate}
\end{lem}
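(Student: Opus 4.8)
<br>

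The plan is to prove Lemma~\ref{lem:wiggins-LY-summary-Jn0} by exploiting the fact that, on the set $\bigcup_{n\ge 1}I_n^0$, the map $h$ is defined by exactly the same formula as the map $g$ of Example~\ref{ex:chaos-LY-htop0}. Assertions (i)--(vi) are literally the statements of Lemma~\ref{lem:LY-htop0-summary-Jn0}(ii)--(vi) together with (ix)--(x) of that lemma -- more precisely, (i) here is (ii) there, (ii) here is (iii) there, (iii) here is (iv) there, (iv) here is (v) there, (v) here is (vi) there, and (vi) here is (ix) there. Since the proof of Lemma~\ref{lem:LY-htop0-summary-Jn0}(ii)--(vi),(ix) uses only the definition of the relevant map on $\bigcup_{n\ge 1}I_n^0$ (and the equalities $\min J_n^0=0$, $\max J_n^1=\max J_{n-1}^0$, $\frac{|J_n^i|}{|J_{n-1}^0|}=\frac{|I_n^i|}{|I_{n-1}^1|}$, which hold verbatim here), the same proof applies word for word. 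So the first part of the plan is just to invoke this: ``Assertions (i)--(vi) follow exactly as in the proof of Lemma~\ref{lem:LY-htop0-summary-Jn0}, since $h$ coincides with the map $g$ of Example~\ref{ex:chaos-LY-htop0} on $\bigcup_{n\ge 1}I_n^0$ and the definitions of the intervals $J_n^i$ are the same.'' One should double-check that Lemma~\ref{lem:LY-htop0-summary-Jn0}(i) (which fails here because $h(I_n^1)=\{0\}$ rather than $J_n^0$) is not used in establishing (ii)--(vi),(ix) there; inspecting that proof, (ii) is proved from the definition of $g$ on $I_n^0$, (iii)--(iv) by induction using only (ii) and linearity on $J_n^0$, (v)--(vi) from (iii)--(iv)--(v), and (ix) from (iii)--(iv) -- none of these invoke (i) -- so the transfer is legitimate.

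For the genuinely new assertions (vii)--(ix), I would argue directly from the definition of $h$ on the intervals $L_n$. Consider first (viii): by definition $h$ is linear increasing of slope $\lambda_n$ on $[\min L_n,\middle(L_n)]$ and $h(\min L_n)$ should be $\min M_{n-1}$; this is essentially a constraint one imposes when choosing $h$ and the slopes $\lambda_n$, so (viii) is checked against the construction (the value at $a_{2n-1}=\min L_n$ of the piece $h|_{I_n^0}=\vfi_1^{-1}\circ\cdots\circ\vfi_n$ evaluated at the left endpoint, together with continuity of $h$ at $a_{2n-1}=\max I_n^0$, forces $h(\min L_n)=h(\max I_n^0)=\max J_n^1=\min M_{n-1}$, using $\max I_n^0=\min L_n$ and assertion (i) applied at level $n$). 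Then (ix) follows by iterating: $h^{2^{n-2}}(\min L_n)=h^{2^{n-2}-1}(\min M_{n-1})$, and since $\min M_{n-1}=\max J_{n-1}^0$ and $h^{2^{n-2}-1}|_{J_{n-1}^0}$ is linear increasing with $h^{2^{n-2}-1}(J_{n-1}^0)=I_{n-1}^0$ by (iii), we get $h^{2^{n-2}-1}(\min M_{n-1})=\max I_{n-1}^0=\min L_{n-1}$. For (vii), the point is that $M_n=[\max J_n^0,\min J_n^1]$, that $h^{2^{n-1}-1}|_{J_n^0}$ is linear increasing onto $I_n^0$ by (iii), and that $h^{2^{n-1}-1}$ extends monotonically across $M_n$; one needs to know where $\min J_n^1$ is sent. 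By (iii) applied to $J_n^1\subset J_{n-1}^0$ one has $h^{2^n-1}(J_n^0)=I_n^1$, and combining with the structure $h^{2^{n-1}}(J_n^0)=J_n^1$ (which follows from (i) applied at the right moment, exactly as Lemma~\ref{lem:LY-htop0-summary-Jn0}(vii) is derived) one sees that $h^{2^{n-1}-1}(\min J_n^1)=\max L_n$; hence $h^{2^{n-1}-1}(M_n)=[\min L_n,\max L_n]=L_n$ and this map is linear increasing as a restriction of a composition of linear increasing maps. I would need to be a little careful that the slopes $\lambda_n$ are precisely the ones that make all of (vii)--(ix) and the continuity of $h$ at $a$ simultaneously consistent; this is a bookkeeping issue about the construction rather than a conceptual one.

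The main obstacle I anticipate is purely notational/organizational: making the correspondence between the indices of Lemma~\ref{lem:LY-htop0-summary-Jn0} and the present lemma completely explicit, and verifying that the auxiliary intervals $M_n$ (which do not appear in Example~\ref{ex:chaos-LY-htop0}) interact correctly with the iteration, i.e. that $h^{2^{n-1}-1}$ maps $J_n^0$, $M_n$, $J_n^1$ onto $I_n^0$, $L_n$, $I_n^1$ respectively in an order-preserving way. Once that picture is set up (ideally with a figure analogous to Figure~\ref{fig:orbIn}), everything reduces to linearity and the chain of inclusions $J_{n+1}^0\subset J_n^0$, $J_{n+1}^1\subset J_n^0$, and all computations are routine. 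In the write-up I would: (1) state that (i)--(vi) are the corresponding assertions of Lemma~\ref{lem:LY-htop0-summary-Jn0} transferred via the identity $h|_{\bigcup I_n^0}=g|_{\bigcup I_n^0}$; (2) prove (viii) from the definition of $h$ on $L_n$ plus continuity at $\max I_n^0=\min L_n$ and (i); (3) deduce (ix) by applying (iii) to $J_{n-1}^0$; (4) prove (vii) by tracking $h^{2^{n-1}-1}$ on $M_n$ using (iii) and the location of $\max L_n$; no step should require more than a few lines.
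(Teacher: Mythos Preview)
Your proposal is correct and matches the paper's proof almost exactly: the paper also refers (i)--(vi) back to Lemma~\ref{lem:LY-htop0-summary-Jn0}(ii)--(vi),(ix), proves (viii) from (i) and continuity at $\min L_n=\max I_n^0$, derives (vii) from $M_n\subset J_{n-1}^0$ together with (ii) and (iii), and obtains (ix) from (vii) and (viii) (your use of (iii) at level $n-1$ in place of (vii) is equivalent). Your one flagged concern is unfounded: none of (vii)--(ix) depend on the slopes $\lambda_n$, since they only involve the value of $h$ at $\min L_n=\max I_n^0$ (determined by $h|_{I_n^0}$) and the iterates $h^i$ on $J_{n-1}^0$; the $\lambda_n$'s only enter in the subsequent Lemma~\ref{lem:wiggins-LY-summary2}.
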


\begin{proof}
For the assertions (i) to (vi), see the proof of
Lemma~\ref{lem:LY-htop0-summary-Jn0}(ii)-(vi)+(ix).

According to (ii), the map $h^{2^{n-1}-1}|_{M_n}$ is
linear $\uparrow$ because $M_n$ is included in $J_{n-1}^0$. 
Since $M_n=[\max J_n^0,\min J_n^1]$ and
$L_n=[\max I_n^0,\min I_n^1]$, the combination of (i), (ii) and (iii) implies 
that $h^{2^{n-1}-1}(M_n)=L_n$; this is (vii).

The map $h|_{I_n^0}$ is increasing and $\min L_n=\max I_n^0$. Hence,
according to (i), we have $h(\min L_n)=\max J_n^1=\max J_{n-1}^0=
\min M_{n-1}$; this is (viii).

Finally, (ix) follows from (vii) and (viii).
\end{proof}

For all $n\ge 0$, we set $x_n:=\middle(M_{n+1})$, that is,
$x_n=\frac{3}{2}\prod_{i=1}^{n+1}\frac{1}{3^i}$.
It is a decreasing
sequence and $x_0=1/2$. Therefore $h(1+x_n)$ is well
defined and equal to $x_n$ for all $n\ge 0$.

For all $n\ge 0$, let $t_n:=\slope\left(h^{2^n-1}|_{J_n^0}\right)$;
by convention, $h^0$ is the identity map, so $t_0=1$. We fix $\lambda_1:=
\frac{2x_1}{|L_1|}$ and we define
inductively $(\lambda_n)_{n\ge 2}$ such that
\begin{equation}
\label{eq:defi-lambda-n}
\frac{|L_n|}{2} \prod_{i=1}^n \lambda_i\prod_{i=0}^{n-2}t_i=x_n.
\end{equation}
By convention, an empty product is equal to $1$,
so \eqref{eq:defi-lambda-n} is satisfied for $n=1$ too.

The slopes $(\lambda_n)_{n\ge 1}$ are such that
$h^{2^{n-1}}([\min L_n,\middle(L_n)])=[1,1+x_n]$, as 
proved  in the next lemma. This means that, under the action of $h^{2^{n-1}}$,
the image of $L_n$ falls outside of $[0,1)$ but remains close to $1$.
We also list some properties of $h$ on
the intervals $L_n$, $I_n^1$ and $[1,1+x_n]$.

\begin{lem}\label{lem:wiggins-LY-summary2}
Let $h$ be the map defined above. Then
\begin{enumerate}
\item 
$h^{2^n}|_{[1,1+x_n]}$ is linear $\uparrow$ and
$h^{2^n}([1,1+x_n])=[\min I_{n+1}^0,\middle(L_{n+1})]$ for all $n\ge 0$,
\item 
$h^{2^{n-1}}|_{[\min L_n,\middle(L_n)]}$ is linear $\uparrow$
and $h^{2^{n-1}}([\min L_n,\middle(L_n)])\!=\![1,1+x_n]$ for all $n\ge 1$,
\item
$h^{2^{n+1}}|_{[1,1+x_n]}$ is $\uparrow$ and 
$h^{2^{n+1}}([1,1+x_n])=I_{n+1}^1\cup [1,1+x_{n+1}]$ for
all $n\ge 1$,
\item
$h(I_n^1)\subset [0,\middle(M_n)]$ for all $n\ge 1$,
\item
$h^{2^n}([\min I_n^1,1+x_n])\subset [\min I_n^1,1+x_n]$ and
$h^i([\min I_n^1,1+x_n])\subset [0,1]$ for all $n\ge 1$ and all
$i\in\Lbrack 1,2^n-1\Rbrack$.
\end{enumerate}
\end{lem}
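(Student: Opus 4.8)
The five assertions are all statements about the orbit of a single interval under specific iterates of $h$, and each one follows by tracking the images of the intervals $I_n^0$, $I_n^1$, $L_n$, $M_n$, $J_n^0$, $J_n^1$ and $[1,1+x_n]$ step by step, using the explicit piecewise definition of $h$ together with Lemmas~\ref{lem:wiggins-LY-summary-Jn0} and the defining relation \eqref{eq:defi-lambda-n} for the slopes $\lambda_n$. The plan is to prove (ii) first (it is the ``base'' fact giving $h^{2^{n-1}}([\min L_n,\middle(L_n)])=[1,1+x_n]$), then use it to deduce (i), then chain (i) and (ii) to get (iii), and finally establish (iv) and (v) by routine monotonicity bookkeeping.

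First I would prove (ii). On $[\min L_n,\middle(L_n)]$ the map $h$ is linear increasing of slope $\lambda_n$, so $h([\min L_n,\middle(L_n)])=[\min M_{n-1},\,\min M_{n-1}+\tfrac{|L_n|}{2}\lambda_n]$ by Lemma~\ref{lem:wiggins-LY-summary-Jn0}(viii). Now $M_{n-1}\subset J_{n-1}^0\subset J_{n-2}^0\subset\cdots$, and by Lemma~\ref{lem:wiggins-LY-summary-Jn0}(ii)--(iii) and (vii), the iterate $h^{2^{n-1}-1}$ restricted to $M_{n-1}$ (or rather the relevant subinterval) is linear increasing; composing, one checks $h^{2^{n-1}}|_{[\min L_n,\middle(L_n)]}$ is linear increasing with slope $\lambda_n\prod_{i=0}^{n-2}t_i$ (the $t_i=\slope(h^{2^i-1}|_{J_i^0})$ accumulate along the way; one must be careful that the orbit of $\min L_n$ passes through $\min M_{n-1},\min M_{n-2},\ldots$ as in Lemma~\ref{lem:wiggins-LY-summary-Jn0}(ix) so that all intermediate pieces are the linear increasing ones). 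Using $h^{2^{n-1}}(\min L_n)=1$ (which follows from iterating Lemma~\ref{lem:wiggins-LY-summary-Jn0}(ix) down to $h^{2^0}(\min L_1)=h(\min L_1)=\min M_0=\max J_0^0=1$) and then \eqref{eq:defi-lambda-n}, the right endpoint is $1+\tfrac{|L_n|}{2}\lambda_n\prod_{i=0}^{n-2}t_i\cdot(\text{product of remaining }\lambda_i)=1+x_n$. Sorting out exactly which factors $\lambda_i$ and $t_i$ appear is the main bookkeeping task; this is where I expect the main obstacle to lie, since one must verify that the orbit of $\min L_n$ under $h$ really does realize the telescoping product defining $x_n$ in \eqref{eq:defi-lambda-n}, and this requires checking carefully that no iterate lands on one of the decreasing branches of $h$.

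Given (ii), assertion (i) follows easily: $h([1,1+x_n])=[0,x_n]$ (since $h(x)=x-1$ on $[1,3/2]$), and then one must recognize $[0,x_n]$ inside $J_n^0$ and apply Lemma~\ref{lem:wiggins-LY-summary-Jn0}(ii) to say $h^{2^n-1}|_{[0,x_n]}$ is linear increasing; composing, $h^{2^n}|_{[1,1+x_n]}$ is linear increasing, and computing the image endpoints (using $x_n=\middle(M_{n+1})$ and Lemma~\ref{lem:wiggins-LY-summary-Jn0}(iii), (vii)) gives $[\min I_{n+1}^0,\middle(L_{n+1})]$. Then (iii) is obtained by composing (i) with (ii) applied at level $n+1$: $h^{2^n}([1,1+x_n])=[\min I_{n+1}^0,\middle(L_{n+1})]$, and this interval contains $I_{n+1}^1=[\max I_{n+1}^0,1]$ (hit by $h^{2^n}|_{I_{n+1}^0}$ via Lemma~\ref{lem:wiggins-LY-summary-Jn0}(vi)) together with $[\min L_{n+1},\middle(L_{n+1})]$ which maps to $[1,1+x_{n+1}]$ by (ii); carefully one writes $h^{2^{n+1}}([1,1+x_n])=h^{2^n}([\min I_{n+1}^0,\middle(L_{n+1})])=I_{n+1}^1\cup[1,1+x_{n+1}]$, checking increasingness throughout.

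Finally, (iv) is immediate from the definition: $h(I_n^1)=h([a_{2n},1])$, and since $h\equiv 0$ on $[a,1]$ while $h$ on $[a_{2n},a]$ is covered by the pieces on $L_k$ and $I_k^0$ for $k>n$, one checks directly from the construction that $h(I_n^1)\subset[0,\middle(M_n)]$. For (v), note $[\min I_n^1,1+x_n]=I_n^1\cup[1,1+x_n]$; by (iv), $h(I_n^1)\subset[0,\middle(M_n)]\subset[0,1]$, and by (i), $h([1,1+x_n])=[0,x_n]\subset[0,1]$, so $h([\min I_n^1,1+x_n])\subset[0,1]$; iterating within $[0,1]$ using Lemma~\ref{lem:wiggins-LY-summary-Jn0} one finds $h^i([\min I_n^1,1+x_n])\subset[0,1]$ for $1\le i\le 2^n-1$, and then $h^{2^n}$ brings it back: $h^{2^n}(I_n^1)\subset I_n^1$ (from $h(I_n^1)\subset J_n^0$-type considerations and Lemma~\ref{lem:wiggins-LY-summary-Jn0}(iii)) and $h^{2^n}([1,1+x_n])\subset I_{n+1}^1\cup[1,1+x_{n+1}]\subset[\min I_n^1,1+x_n]$ by (iii); combining gives the stated invariance. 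Throughout, the only real work is the slope/telescoping computation in (ii); the remaining steps are routine composition of monotone linear pieces and checking endpoints.
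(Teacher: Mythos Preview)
Your overall architecture matches the paper's: (ii) is indeed the heart, and (i), (iii), (iv), (v) follow by the compositions you describe. Two points, one minor and one substantive.

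First, the slope you write for $h^{2^{n-1}}$ on $[\min L_n,\middle(L_n)]$ is wrong: it is $\prod_{i=1}^n\lambda_i\prod_{i=0}^{n-2}t_i$, not $\lambda_n\prod_{i=0}^{n-2}t_i$. The orbit of $\min L_n$ under $h$ visits $\min L_{n-1},\min L_{n-2},\ldots,\min L_1$ (via Lemma~\ref{lem:wiggins-LY-summary-Jn0}(ix)), and at each $\min L_k$ the map $h$ contributes a factor $\lambda_k$, not only $\lambda_n$; between $\min M_{k-1}$ and $\min L_{k-1}$ one picks up $t_{k-2}$. You seem to realize this when you add ``product of remaining $\lambda_i$'', but the formula as written is inconsistent.

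Second, and more important: what you call ``bookkeeping'' is not bookkeeping, it is the actual content of the proof of (ii). To know that $h^{2^{n-1}}$ is linear increasing on the \emph{whole} interval $[\min L_n,\middle(L_n)]$, one must check that after $2^{n-2}+\cdots+2^{k-2}$ iterates the image lies inside $[\min L_{k-1},\middle(L_{k-1})]$ and not beyond the midpoint; otherwise the decreasing branch on $[\middle(L_{k-1}),\max L_{k-1}]$ would be hit and linearity would fail. This requires the quantitative estimate
\[
\frac{|L_n|}{2}\cdot\lambda_n\cdots\lambda_k\, t_{n-2}\cdots t_{k-2}\ \le\ \frac{|L_{k-1}|}{2}
\quad\text{for each }k\in\Lbrack 2,n\Rbrack,
\]
and the paper obtains it by computing the partial slope product explicitly from \eqref{eq:defi-lambda-n} and the formulas for $|L_n|$, $|I_n^1|$, showing that $\lambda_n\cdots\lambda_k\, t_{n-2}\cdots t_{k-2}<1$. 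You have correctly located the difficulty but not indicated any mechanism for resolving it; since the $\lambda_k$'s are large (they are chosen so that the total product in \eqref{eq:defi-lambda-n} reaches $x_n$), the inequality is not automatic and does require the explicit calculation.

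A minor slip in your (iii): you write $I_{n+1}^1=[\max I_{n+1}^0,1]$, but $\max I_{n+1}^0=\min L_{n+1}\ne\min I_{n+1}^1$. Your intended decomposition $[\min I_{n+1}^0,\middle(L_{n+1})]=I_{n+1}^0\cup[\min L_{n+1},\middle(L_{n+1})]$, followed by Lemma~\ref{lem:wiggins-LY-summary-Jn0}(vi) on the first piece and (ii) on the second, is correct and is what the paper does.
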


\begin{proof}
The map $h|_{[1,1+x_n]}$ is linear $\uparrow$ and $h([1,1+x_n])=
[0,\middle(M_{n+1})]\subset J_n^0$, thus $h^{2^n}|_{[1,1+x_n]}$ is linear
$\uparrow$ by Lemma~\ref{lem:wiggins-LY-summary-Jn0}(ii). Moreover
$h^{2^n-1}(0)=\min I_{n+1}^0$ and $h^{2^n-1}(\middle(M_{n+1}))=
\middle(L_{n+1})$ by Lemma~\ref{lem:wiggins-LY-summary-Jn0}(ii)+(iii)+(vii);
this implies (i).

\medskip
Before proving (ii), we show some intermediate results.
Let $n,k$ be integers with $n\ge 2$ and $k\in\Lbrack 2,n\Rbrack$. Then
\begin{eqnarray*}
\lambda_n\ldots \lambda_k\cdot t_{n-2}\ldots t_{k-2}&=&
\frac{\disp\prod_{i=1}^n\lambda_i\prod_{i=0}^{n-2}t_i}{\disp
\prod_{i=1}^{k-1}\lambda_i\prod_{i=0}^{k-3}t_i}\\
&=&\frac{x_n}{x_{k-1}}\cdot\frac{|L_{k-1}|}{|L_n|}\quad\text{by 
\eqref{eq:defi-lambda-n}}\\
&=&\prod_{i=k+1}^{n+1}\frac{1}{3^i}
\prod_{i=k-1}^{n-1}\frac{1}{1-\frac{2}{3^i}}\cdot\frac{3^n}{3^{k-1}}\\
&=&\frac{1}{3^{n-k+1}}\prod_{i=k-1}^{n-1}\frac{1}{3^i-2}
\end{eqnarray*}
and so
\begin{equation}\label{eq:wiggins-LY-summary2:1}
\lambda_n\ldots \lambda_k\cdot t_{n-2}\ldots t_{k-2} <1.
\end{equation}
By definition, $h|_{[\min L_n, \middle(L_n)]}$ is linear $\uparrow$ and
$h(\middle(L_n))=h(\min L_n)+\lambda_n \frac{|L_n|}{2}$.
By \eqref{eq:defi-lambda-n}, 
\begin{eqnarray*}
\lambda_n \frac{|L_n|}{2}
&=&\frac{x_n}{\disp t_{n-2}\prod_{i=1}^{n-1}\lambda_i\prod_{i=0}^{n-3}t_i}\\
&=& \frac{x_n|L_{n-1}|}{2x_{n-1}t_{n-2}}\\
&=&\frac{1}{3^{n+1}}\frac{|M_{n-1}|}{2};
\end{eqnarray*}
the last equality is because
$J_{n-2}^0\supset M_{n-1}$, so $t_{n-2}=\frac{|L_{n-1}|}{|M_{n-1}|}$ by
Lemma~\ref{lem:wiggins-LY-summary-Jn0}(vii). Therefore
$$
\lambda_n \frac{|L_n|}{2}<\frac{|M_{n-1}|}{2}.
$$
Moreover, $h(\min L_n)=\min M_{n-1}$ by 
Lemma~\ref{lem:wiggins-LY-summary-Jn0}(viii), hence
\begin{equation}\label{eq:wiggins-LY-summary2:2}
h([\min L_n,\middle(L_n)])\subset [\min M_{n-1},\middle(M_{n-1})]
\quad\text{for all }n\ge 2.
\end{equation}

\medskip
\textsc{Fact 1.}
{\it For all $k\in\Lbrack 2,n\Rbrack$,
\begin{itemize}
\item the map
$h^{2^{n-2}+2^{n-3}+\cdots+2^{k-2}}$
is linear $\uparrow$ of slope $\lambda_n\ldots\lambda_k t_{n-2}\ldots t_{k-2}$
on $[\min L_n,\middle(L_n)]$ and
sends $\min L_n$ to $\min L_{k-1}$,
\item $h^i([\min L_n,\middle(L_n)])\subset [0,1]$ for all
$i\in\Lbrack 0,2^{n-2}+2^{n-3}+\cdots+2^{k-2}\Rbrack$.
\end{itemize}}

\medskip
We show this fact by induction on $k$, where $k$ decreases from $n$
to $2$.

\noindent$\bullet$ By \eqref{eq:wiggins-LY-summary2:2}
we have $h([\min L_n,\middle(L_n)])\subset M_{n-1}\subset J_{n-2}^0$, so
$h^{2^{n-2}}|_{[\min L_n,\middle(L_n)]}$ is linear $\uparrow$ of slope
$\lambda_n t_{n-2}$.
By Lemma~\ref{lem:wiggins-LY-summary-Jn0}(ix),
$h^{2^{n-2}}(\min L_n)=\min L_{n-1}$. Then
\eqref{eq:wiggins-LY-summary2:2} and 
Lemma~\ref{lem:wiggins-LY-summary-Jn0}(iii)+(iv) imply that
$h^i([\min L_n,\middle(L_n)])\subset [0,1]$
for all $i\in\Lbrack 1,2^{n-2}\Rbrack$. This is Fact 1 for $k=n$.

\noindent$\bullet$ Suppose that Fact 1 holds for some
$k\in\Lbrack 3,n\Rbrack$.
By \eqref{eq:wiggins-LY-summary2:1}, we have 
$$
\lambda_n\ldots\lambda_k\cdot t_{n-2}\ldots t_{k-2}\frac{|L_n|}{2}
\le \frac{|L_{k-1}|}{2}
$$
so that
$$
h^{2^{n-2}+2^{n-3}+\cdots+2^{k-2}}([\min L_n,\middle(L_n)])\subset
[\min L_{k-1},\middle(L_{k-1})].
$$
The map $h$ is of slope $\lambda_{k-1}$
on this interval, $h(\min L_{k-1})=\min M_{k-2}$ according to
Lemma~\ref{lem:wiggins-LY-summary-Jn0}(viii) and 
$h([\min L_{k-1},\middle(L_{k-1})])\subset
M_{k-2}$ by \eqref{eq:wiggins-LY-summary2:2}.
Since $M_{k-2}\subset J_{n-1}^0$, the map
$h^{2^{n-2}+2^{n-3}+\cdots+2^{k-2}+2^{k-3}}$ is linear $\uparrow$ of slope
$\lambda_n\ldots\lambda_{k-1}\cdot t_{n-2}\ldots t_{k-3}$ on
$[\min L_n,\middle(L_n)]$, and it sends $\min L_n$ to $\min L_{k-2}$
by Lemma~\ref{lem:wiggins-LY-summary-Jn0}(ix).
Moreover, $h^i([\min L_n,\middle(L_n)])\subset [0,1]$ for all 
$i\in\Lbrack 0,2^{n-2}+2^{n-3}+\cdots+2^{k-2}+2^{k-3}\Rbrack$ by
Lemma~\ref{lem:wiggins-LY-summary-Jn0}(iv) and the induction hypothesis.
This is Fact 1 for $k-1$. This ends the induction and proves Fact 1.

\medskip
For $k=2$, Fact 1 implies that
$h^{2^{n-2}+\cdots +2^0}=h^{2^{n-1}-1}$ is linear increasing of slope
$\prod_{i=2}^n\lambda_i\prod_{i=0}^{n-2}t_i$ on $[\min L_n,\middle(L_n)]$,
with 
\begin{gather*}
h^{2^{n-1}-1}(\min L_n)=\min L_1\\
\text{and }h^{2^{n-1}-1}
([\min L_n,\middle(L_n)])\subset [\min L_1,\middle(L_1)].
\end{gather*}
The map $h$ is of slope $\lambda_1$ on this interval, hence,
according to the definition of $(\lambda_n)$, 
(ii) holds for all $n\ge 2$; it also trivially holds for $n=1$.
Fact 1 for $k=2$ also shows that
\begin{equation}\label{eq:wiggins-LY-summary2:5}
h^i([\min L_n,\middle(L_n)])\subset [0,1]\quad\text{for all }
i\in\Lbrack 0,2^{n-1}-1\Rbrack\text{ and all } n\ge 1.
\end{equation}

Then (iii) follows from (i), (ii) and 
Lemma~\ref{lem:wiggins-LY-summary-Jn0}(vi).

\medskip
We have $I_n^1=\bigcup_{k\ge n+1}(I_k^0\cup L_k)\cup [a,1]$.
From the definition of $h$, we can see that 
$$
\max\{h(x)\mid x\in I_k^0\cup
L_k\}=h(\middle(L_k)),
$$ 
so $h(I_k^0\cup L_k)\subset [0,\middle(M_{k-1})]$ 
by \eqref{eq:wiggins-LY-summary2:2}. Hence
\begin{equation}\label{eq:hIn1}
h(I_n^1)\subset [0,\middle(M_n)]=J_n^0\cup [\min M_n,\middle(M_n)];
\end{equation}
this is (iv).

\medskip
According to Lemma~\ref{lem:wiggins-LY-summary-Jn0}(iii)+(vii), 
$$
h^{2^n-1}(J_n^0)=I_n^1\text{ and }h^{2^{n-1}-1}([\min M_n,\middle(M_n)])=[\min L_n,
\middle(L_n)],
$$
and by (ii),
$h^{2^{n-1}}([\min L_n,\middle(L_n)])=[1,1+x_n]$.
Combined with \eqref{eq:hIn1}, we get
\begin{equation}\label{eq:g2n}
h^{2^n}(I_n^1)\subset I_n^1\cup [1,1+x_n].
\end{equation}
Moreover, $h^i(J_n^0)\subset [0,1]$ for all $i\in\Lbrack 0,2^n-2\Rbrack$ 
and $h^i([\min M_n,\middle(M_n)])\subset [0,1]$ for all $i\in
\Lbrack 0,2^{n-1}-2\Rbrack$
according to Lemma~\ref{lem:wiggins-LY-summary-Jn0}(iv). In addition,
$$
h^{2^{n-1}+i-1}([\min M_n,\middle(M_n)])=h^i([\min L_n, \middle(L_n)])\subset
[0,1]
$$
for all $i\in\Lbrack 0,2^{n-1}-1\Rbrack$ by 
Lemma~\ref{lem:wiggins-LY-summary-Jn0}(vii) 
and \eqref{eq:wiggins-LY-summary2:5}.
Therefore,
\begin{equation}\label{eq:wiggins-LY-summary2:8}
h^i(I_n^1)\subset [0,1]\text{ for all }i\in\Lbrack 0,2^n-1\Rbrack. 
\end{equation}

Finally, since $h([1,1+x_n])=[0,\middle(M_{n+1})]\subset J_n^0$,
statement (i) implies that $h^{2^n}([1,1+x_n])\subset I_n^1$. Combined
with \eqref{eq:wiggins-LY-summary2:8}, \eqref{eq:g2n} and
Lemma~\ref{lem:wiggins-LY-summary-Jn0}(iv), this implies (v).
\end{proof}

Now we show that
$h$ is continuous at the point $a$ as claimed at the beginning of the section.

\begin{lem}
The map $h$ defined above is continuous.
\end{lem}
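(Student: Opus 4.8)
The map $h$ is explicitly piecewise linear on each of the intervals $I_n^0$, $L_n$, $[1,3/2]$, and constant on $[a,1]$, so continuity is automatic everywhere except possibly at the single accumulation point $a=\lim_{n\to\infty}a_n$. Since $h(a)=0$ (by the clause $h(x)=0$ for $x\in[a,1]$), the only thing to check is that $h(x)\to 0$ as $x\to a^-$, i.e.\ as $x$ ranges over $\bigcup_{n\ge 1}(I_n^0\cup L_n)$ with $x\to a$. The plan is to bound $\sup\{h(x)\mid x\in I_n^0\cup L_n\}$ and show it tends to $0$ as $n\to\infty$.

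\textbf{Key steps.} First I would note that on $I_n^0$ we have $h(x)=\vfi_1^{-1}\circ\cdots\circ\vfi_{n-1}^{-1}\circ\vfi_n(x)\in J_n^1$ by Lemma~\ref{lem:wiggins-LY-summary-Jn0}(i), and $|J_n^1|\to 0$ since $\frac{|J_n^1|}{|J_{n-1}^0|}=\frac{|I_n^1|}{|I_{n-1}^1|}$ with $\min J_n^0=0$ forces $\max J_n^1\to$ some limit while $|J_n^0|=\prod_{k=1}^n\frac{|I_k^0|}{|I_{k-1}^1|}\le 3^{-(n-1)}\to 0$ (this is exactly \eqref{eq:lengthJn}-type reasoning); hence $h(I_n^0)$ is contained in an interval of length $\to 0$, and since $J_n^1\subset J_{n-1}^0\subset\cdots\subset J_0^0=[0,1]$ with $\min J_n^0=0$, one checks $\max J_n^1\to 0$ as well because $|J_n^0|+|M_n|+|J_n^1|=|J_{n-1}^0|$ and the relative lengths are pinned down. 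Actually the cleanest route: $\max h(I_n^0)=\max J_n^1\le|J_{n-1}^0|\le 3^{-(n-2)}\to 0$. Second, on $L_n$ I would use \eqref{eq:wiggins-LY-summary2:2}, namely $h([\min L_n,\middle(L_n)])\subset[\min M_{n-1},\middle(M_{n-1})]\subset J_{n-2}^0$ for $n\ge 2$, so that $\max h([\min L_n,\middle(L_n)])\le|J_{n-2}^0|\to 0$; and on $[\middle(L_n),\max L_n]$ the map $h$ is decreasing with $h(\max L_n)=h(\min I_n^1)$... wait, one must be careful — I would instead directly observe from the construction that $h|_{L_n}$ maps $L_n$ into $[0,h(\middle(L_n))]$ (it rises linearly to $h(\middle(L_n))$ then descends), and $h(\min L_n)=\min M_{n-1}$ by Lemma~\ref{lem:wiggins-LY-summary-Jn0}(viii), with the descending branch reaching down to $h(\max L_n)$. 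The value $h(\max L_n)$ equals the starting value of $h$ on the next piece $I_n^1$'s subdivision; in any case $h(\max L_n)\ge 0$ and $h(L_n)\subset[0,\middle(M_{n-1})]\subset[0,|J_{n-2}^0|]$. Combining, for $x\in I_n^0\cup L_n$ with $n\ge 2$ we get $0\le h(x)\le|J_{n-2}^0|\le 3^{-(n-3)}$.

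\textbf{Conclusion and main obstacle.} Given $\eps>0$, choose $N$ with $3^{-(N-3)}<\eps$; then for every $x\in\bigcup_{n\ge N}(I_n^0\cup L_n)$ we have $0\le h(x)<\eps$. Since $a_{2N-2}<a$ and $[a_{2N-2},a)=\bigcup_{n\ge N}(I_n^0\cup L_n)$, this shows $|h(x)-h(a)|=|h(x)|<\eps$ for all $x\in[a_{2N-2},a)$, while $h\equiv 0$ on $[a,1]$; continuity at $a$ follows, and hence $h$ is continuous on $[0,3/2]$. The only delicate point — the ``main obstacle'' — is getting a clean uniform bound for $h$ on $L_n$: the slopes $\lambda_n$ were chosen large (so that $h^{2^{n-1}}$ sends $L_n$ out to near $1$), so $h(L_n)$ is \emph{not} small just because $|L_n|$ is small; one genuinely needs \eqref{eq:wiggins-LY-summary2:2} (or the estimate $\lambda_n\frac{|L_n|}{2}=\frac{1}{3^{n+1}}\frac{|M_{n-1}|}{2}<\frac{|M_{n-1}|}{2}$ derived in the proof of Lemma~\ref{lem:wiggins-LY-summary2}) to see that $h(L_n)\subset M_{n-1}$, which lies near $0$ (since $\min M_{n-1}=\max J_{n-1}^0\le|J_{n-1}^0|\to 0$), rather than near $1$. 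Once that localization is in hand the argument is routine.
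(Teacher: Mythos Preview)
Your proposal is correct and takes essentially the same approach as the paper: both reduce to showing $\sup_{x<a,\,x\to a}h(x)\to 0$, and both hinge on \eqref{eq:wiggins-LY-summary2:2} to control $h(L_n)$ despite the large slopes $\lambda_n$ --- exactly the obstacle you identify. The only differences are cosmetic: the paper invokes Lemma~\ref{lem:wiggins-LY-summary2}(iv) directly (which already packages the bound $h(I_n^1)\subset[0,\middle(M_n)]\subset J_{n-1}^0$) instead of splitting into $I_n^0\cup L_n$ by hand, and your bound $|J_{n-2}^0|\le 3^{-(n-3)}$ has a small arithmetic slip (for $h$ one has $|J_m^0|=\prod_{k=1}^m 3^{-k}=3^{-m(m+1)/2}$, even smaller), which of course only helps.
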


\begin{proof}
We just have to show the continuity at $a$.
It is clear from the definition that $h$ is continuous at $a^+$.
According to Lemma~\ref{lem:wiggins-LY-summary2}(iv), we have
$h(I_n^1)\subset J_{n-1}^0$. By definition, $h(a)=0$
and $a=\max I_n^1$ for all $n$.
Moreover, by definition of the intervals $(J_n^0)_{n\ge 0}$,
$$
\lim_{n\to+\infty}\max J_n^0=\lim_{n\to+\infty} |J_n^0|=0,
$$
and thus $\disp\lim_{n\to+\infty}\max h(I_n^1)=0$.
This implies that 
$h$ is continuous at $a^-$.
\end{proof}

\begin{prop}\label{prop:wiggins-LY-g-LY-chaotic}
Let $h$ be the map defined above. Then the set $\omega(1+x_0,h)$ is
infinite and contains the points $a$ and $1$, which are $h$-non separable.
Consequently, the map $h$ is chaotic in the sense of Li-Yorke.
\end{prop}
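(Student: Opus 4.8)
The strategy is to establish three facts: (1) the two points $a$ and $1$ (the endpoints of the degenerate-looking but actually non-degenerate ``core'' interval $C=[a,1]=\bigcap_{n\ge 0}I_n^1$) are $h$-non separable; (2) both $a$ and $1$ belong to $\omega(1+x_0,h)$, which forces $\omega(1+x_0,h)$ to be infinite; (3) conclude chaos in the sense of Li-Yorke by invoking Theorem~\ref{theo:htop0-chaos-LY}, using that $h$ has zero topological entropy (it is of type $2^\infty$ by the same kind of argument as in Lemma~\ref{lem:LY-htop0-2-infty}, so $h_{top}(h)=0$ by Theorem~\ref{theo:htop-power-of-2}).

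\textbf{Step 1: $a$ and $1$ are $h$-non separable.} First I would show $a\ne 1$, which is exactly the estimate $\log(1-a)=\sum_{n\ge 1}\log(1-2/3^n)>-\infty$ already recorded before the definition of $h$, so $C=[a,1]$ is a non-degenerate interval. The argument that $a$ and $1$ cannot be separated by disjoint periodic intervals mirrors Lemma~\ref{lem:LY-htop0-non-separable-K}: suppose $A_0\ni a$, $A_1\ni 1$ are disjoint periodic closed intervals with a common period multiple $k$, and choose $n$ with $2^n>k$. There is $i\in\Lbrack 0,2^n-1\Rbrack$ with $C\subset h^i(J_n^0)$ (indeed $C\subset I_n^1=h^{2^n-1}(J_n^0)$, using Lemma~\ref{lem:wiggins-LY-summary-Jn0}(iii), and $C\subset J_n^1\subset J_n^0$ via the embeddings $I_m^1\supset I_{m+1}^0\cup L_{m+1}\cup I_{m+1}^1$; I would pick the appropriate index $i$). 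Since the intervals $(h^j(J_n^0))_{0\le j<2^n}$ are pairwise disjoint and $h^{2^n}(J_n^0)=J_n^0$ (Lemma~\ref{lem:wiggins-LY-summary-Jn0}(v) together with $h(I_n^0)=J_n^1\subset J_n^0$ so $h^{2^n-1}(J_n^0)\to h^{2^n}(J_n^0)$ lands back in $J_n^0$), we get $h^k(C)\cap C=\emptyset$; then a connectedness argument identical to the one in Lemma~\ref{lem:LY-htop0-non-separable-K} shows that $A_0$ or $A_1$ would have to contain both $a$ and $1$, contradiction.

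\textbf{Step 2: $a,1\in\omega(1+x_0,h)$.} This is the genuinely new part — in the $g$-example the orbit of the base point stayed in $[0,1]$, whereas here $1+x_0=3/2$ and the excursions $[1,1+x_n]$ are the mechanism that lets the orbit approach $1$. The key computational input is Lemma~\ref{lem:wiggins-LY-summary2}: starting from $1+x_0$, one has $h(1+x_0)=x_0\in J_0^0$, and by parts (i) and (iii) the orbit visits each interval $[1,1+x_n]$ (hence a point within distance $x_n\to 0$ of $1$) and also passes through points of each $I_n^0$ and each $J_n^0$ whose left endpoints accumulate at $0$; more to the point, Lemma~\ref{lem:wiggins-LY-summary2}(i)+(iii) give $h^{2^{n+1}}([1,1+x_n])=I_{n+1}^1\cup[1,1+x_{n+1}]$, so the orbit meets $[1,1+x_{n+1}]$ for every $n$, proving $1\in\omega(1+x_0,h)$. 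To get $a\in\omega(1+x_0,h)$: the orbit of $3/2$ enters $[1,1+x_n]$, then is mapped (part (i)) to $[\min I_{n+1}^0,\middle(L_{n+1})]$, i.e.\ it lands near $\min I_{n+1}^0=a_{2n}$, and $a_{2n}\to a$; so also $a\in\omega(1+x_0,h)$. (One must track the precise indices, but the lemmas are tailored for exactly this.) Since $\omega(1+x_0,h)$ contains the two distinct points $a,1$ and, by Step~1, these are $h$-non separable inside this $\omega$-limit set, the set is in particular infinite.

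\textbf{Step 3: conclusion.} Having $h_{top}(h)=0$ (type $2^\infty$) and an infinite $\omega$-limit set $\omega(1+x_0,h)$ containing two $h$-non separable points, Theorem~\ref{theo:htop0-chaos-LY} (implication (iv)$\Rightarrow$(i)) yields that $h$ is chaotic in the sense of Li-Yorke. \textbf{The main obstacle} I anticipate is Step~2: verifying rigorously, with the correct bookkeeping of the exponents $2^{n-1}$, $2^n$, $2^{n+1}$ and of which interval among $I_n^0,I_n^1,L_n,[1,1+x_n],J_n^0,M_n$ the orbit of $3/2$ occupies at each stage, that the orbit really accumulates at both $a$ and $1$; the combinatorics of the construction is delicate, but Lemmas~\ref{lem:wiggins-LY-summary-Jn0} and~\ref{lem:wiggins-LY-summary2} were set up precisely to make this tracking mechanical. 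Step~1 is routine given Lemma~\ref{lem:LY-htop0-non-separable-K} as a template, and Step~3 is immediate.
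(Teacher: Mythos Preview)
Your Steps~2 and~3 are essentially correct and match the paper's argument. The orbit-tracking to show $1\in\omega(1+x_0,h)$ (via $h^{2^{n+1}}(1+x_n)=1+x_{n+1}$ from Lemma~\ref{lem:wiggins-LY-summary2}(iii)) and $a\in\omega(1+x_0,h)$ (via $h^{2^n}(1)=a_{2n}\to a$) is exactly what the paper does, and the appeal to Theorem~\ref{theo:htop0-chaos-LY} is the right conclusion. Your argument for infiniteness is also the right idea, though the paper states it more explicitly: a finite $\omega$-limit set is a periodic orbit, and distinct points of a periodic orbit are trivially $h$-separable via the singleton periodic intervals $\{y_0\},\{y_1\}$.

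Your Step~1, however, has a genuine gap. You try to mimic the argument of Lemma~\ref{lem:LY-htop0-non-separable-K} and invoke $h^{2^n}(J_n^0)=J_n^0$, but this is precisely one of the properties (item (viii) of Lemma~\ref{lem:LY-htop0-summary-Jn0}) that the text explicitly warns does \emph{not} carry over from $g$ to $h$. Indeed $h^{2^n-1}(J_n^0)=I_n^1$, and $h(I_n^1)\subset[0,\middle(M_n)]$ by Lemma~\ref{lem:wiggins-LY-summary2}(iv), which strictly overshoots $J_n^0$; so the cycle-of-intervals structure you need is simply not available. Your parenthetical justification also conflates $I_n^0$ with $I_n^1$ and asserts $J_n^1\subset J_n^0$, which is false (they are disjoint subintervals of $J_{n-1}^0$).

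The paper's argument for non-separability is entirely different and much shorter, exploiting a feature of $h$ that has no analogue for $g$: by construction $h$ is constant on $[a,1]$, so $h(a)=h(1)=0$. Hence for any common multiple $p$ of the periods of putative separating intervals $A_1\ni a$ and $A_2\ni 1$, one has $h^p(a)=h^p(1)\in A_1\cap A_2$, and the intervals cannot be disjoint. This one-line observation replaces the entire combinatorial machinery you were attempting to import from the $g$-example.
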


\begin{proof}
Lemma~\ref{lem:wiggins-LY-summary2}(iii) implies that
$h^{2^{n+1}}(1+x_n)=1+x_{n+1}$ for all $n\ge 0$.
Since $x_n\to 0$ when $n$ goes to infinity, this implies that
$1$ belongs to $\omega(1+x_0,h)$ (recall that $\omega(1+x_0,h)$
is closed by Lemma~\ref{lem:omega-set}(i)). 
Moreover, Lemma~\ref{lem:wiggins-LY-summary2}(i) implies that 
$h^{2^n}(1)=\min I_{n+1}^0=a_{2n}$ for all $n\ge 1$, 
so $a$ belongs to $\omega(1,h)\subset \omega(1+x_0,h)$. 

Suppose that $A_1,A_2$ are two
periodic intervals such that $a\in A_1$ and $1\in A_2$,
and let $p$ be a common multiple of their periods. Since $h(a)=h(1)=0$,
it follows that $h^p(a)=h^p(1)\in A_1\cap A_2$, so $A_1,A_2$ are not disjoint.
This means that $a$ and  $1$ are $h$-non separable. 

A finite $\omega$-limit set is a periodic orbit (Lemma~\ref{lem:omega-finite}).
Therefore, if $y_0,y_1$ are two distinct points in 
a finite $\omega$-set, the degenerate intervals $\{y_0\}$, $\{y_1\}$
are periodic and $y_0,y_1$ are $h$-separable. This implies that
$\omega(1+x_0,h)$ is infinite. 
We deduce that the map $h$ is chaotic in the sense
of Li-Yorke by Theorem~\ref{theo:htop0-chaos-LY}.
\end{proof}

The next lemma is about the location of transitive subsystems.

\begin{lem}\label{lem:wiggins-LY-transitive-set}
Let $h$ be the map defined above and let $Y$ be an invariant 
set with no isolated point such that $h|_Y$ is transitive.
Then
\begin{enumerate}
\item $Y\subset [0,a]$,
\item $\disp Y\subset \bigcup_{i=0}^{2^n-1}h^i(J_n^0)$ for all $n\ge 1$,
\item $h^i(J_n^0\cap Y)=h^i(J_n^0)\cap Y=h^{i\bmod 2^n}(J_n^0)\cap Y$ for all 
$i\ge 0$ and all $n\ge 0$.
\end{enumerate}
\end{lem}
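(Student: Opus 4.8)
The statement has three parts, and the plan is to establish them in the order (i), (ii), (iii), using the dynamical picture of the intervals $I_n^0, I_n^1, L_n, J_n^0, J_n^1, M_n$ and the estimates collected in Lemmas~\ref{lem:wiggins-LY-summary-Jn0} and \ref{lem:wiggins-LY-summary2}. Throughout, $Y$ is assumed invariant with no isolated point and $h|_Y$ transitive; by Proposition~\ref{prop:transitive-dense-orbit}(i) there is a point $y_0\in Y$ with $\omega(y_0,h)=Y$, and every point of $Y$ has dense orbit in $Y$.

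For (i), the idea is that once a point leaves $[0,a]$ it cannot come back with a dense orbit. Concretely, $h([a,1])=\{0\}$ and $h([1,3/2])=[0,1/2]$, so $h(Y)\subset [0,a]$ would follow if $Y\cap(a,3/2]$ were ``small''; but more is needed. First I would observe that $h(x)\in[0,a]$ for all $x\in[0,3/2]$ except possibly on a set whose $h$-image is forced down: indeed from the construction $h(L_n)\subset M_{n-1}\subset J_{n-1}^0\subset[0,a]$ by \eqref{eq:wiggins-LY-summary2:2}, $h(I_n^0)=J_n^1\subset[0,a]$, $h(I_n^1)\subset[0,\middle(M_n)]\subset[0,a]$ by Lemma~\ref{lem:wiggins-LY-summary2}(iv), $h([a,1])=\{0\}$, and $h([1,3/2])=[0,1/2]\subset[0,a]$ since $a$ is close to $1$ (one should check $1/2<a$, which holds because $a>1-\sum 2/3^n$ is well above $1/2$). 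Hence $h([0,3/2])\subset[0,a]$, so $h(Y)\subset[0,a]$; since $Y=h(Y)$ would require $Y\subset[0,a]$ if $Y$ is strongly invariant, but here $Y$ is merely invariant. To get $Y\subset[0,a]$ I instead use that $\omega(y_0,h)=Y$ and $h^n(y_0)\in[0,a]$ for all $n\ge 1$ (because $h(\cdot)\in[0,a]$ always), so all limit points of the trajectory lie in the closed set $[0,a]$; thus $Y=\omega(y_0,h)\subset[0,a]$.

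For (ii), I would argue that $J_n^0$ is a ``$2^n$-periodic'' interval for the core dynamics: by Lemma~\ref{lem:wiggins-LY-summary-Jn0}, $h^{2^{n-1}-1}(J_n^0)=I_n^0$, $h^{2^{n-1}}(I_n^0)=I_n^1$ by (vi), and then $h^{2^n-1}(J_n^0)=I_n^1$ by (iii); but one needs $h^{2^n}(J_n^0)=J_n^0$, which requires knowing $h(I_n^1)$ returns into $J_n^0$. From Lemma~\ref{lem:wiggins-LY-summary2}(iv), $h(I_n^1)\subset[0,\middle(M_n)]$, and one then tracks $\middle(M_n)$ and $[\min M_n,\middle(M_n)]$ under further iteration; the part of $I_n^1$ mapping into $J_n^0$ re-enters, while the part mapping into $[\min M_n,\middle(M_n)]\subset M_n$ eventually leaves $[0,a]$ (it goes through $L_n$ and then to $[1,1+x_n]$ by Lemma~\ref{lem:wiggins-LY-summary2}(ii)). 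Since $Y\subset[0,a]$ by (i), the trajectory of $y_0$ can only visit the portion of each $h^i(J_n^0)$ staying in $[0,a]$, and a pigeonhole/inductive argument over $n$ shows $\{h^i(y_0)\mid i\ge 0\}$ — hence its closure $Y$ — lies in $\bigcup_{i=0}^{2^n-1}h^i(J_n^0)$: more precisely, I would show by induction that for every $x\in Y$ there is $i\in\Lbrack 0,2^n-1\Rbrack$ with $x\in h^i(J_n^0)$, using that $I_0^1=[0,1]\supset[0,a]\supset Y$ for the base case and, at step $n\to n+1$, that $I_n^0=I_{n+1}^0\cup L_{n+1}\cup I_{n+1}^1$ while $Y\cap(L_{n+1}\cup I_{n+1}^1)$ must in fact lie in the iterates of $J_{n+1}^0$ because points in $L_{n+1}$ leave $[0,a]$ under $h^{2^n}$ and points in $I_{n+1}^1$ near $a$ get mapped into $J_{n+1}^0$.

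For (iii), the key point is that $h^{2^n}|_{J_n^0}$ behaves like a self-map: $h^i|_{J_n^0}$ is a homeomorphism onto $h^i(J_n^0)$ for $i\le 2^n-1$ by Lemma~\ref{lem:wiggins-LY-summary-Jn0}(ii), the intervals $(h^i(J_n^0))_{0\le i<2^n}$ are pairwise disjoint by (v), and combining (i) with (ii) the invariant set $Y$ decomposes as $Y=\bigsqcup_{i=0}^{2^n-1}(h^i(J_n^0)\cap Y)$ with $h(h^i(J_n^0)\cap Y)=h^{i+1\bmod 2^n}(J_n^0)\cap Y$ — here the closedness of $Y$ and the disjointness of the pieces force the image to land in exactly one piece, and transitivity forces it to be onto that piece. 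The equalities in (iii) then follow: $h^i(J_n^0\cap Y)\subset h^i(J_n^0)\cap Y$ is trivial from invariance, the reverse inclusion uses that $h|_Y$ is onto (Proposition~\ref{prop:transitive-dense-orbit} applied to the transitive system $Y$, or directly from $\omega(y_0,h)=Y$) together with injectivity of $h^i|_{J_n^0}$, and the periodicity $h^i(J_n^0)\cap Y=h^{i\bmod 2^n}(J_n^0)\cap Y$ is exactly the cyclic structure just described.

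The main obstacle I expect is step (ii), specifically the bookkeeping needed to show that the ``overflow'' parts of $I_n^1$ (the sub-intervals $L_k, I_k^0$ for $k>n$, together with $[a,1]$) contribute nothing to $Y$ beyond what is already inside the iterates of $J_{n+1}^0$: one must carefully trace the images of $[\min M_n,\middle(M_n)]$, $L_n$, and $[1,1+x_n]$ under high iterates of $h$ and verify that any trajectory staying in $[0,a]$ forever is trapped in the $J_m^0$-orbits. This is where the constants $x_n$, $\lambda_n$, $t_n$ and the precise lengths $|I_n^0|=|L_n|=3^{-n}|I_{n-1}^1|$ all come into play, and it is the least mechanical part of the argument.
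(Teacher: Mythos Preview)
Your approach to part~(i) contains a genuine error: the claim that $h([0,3/2])\subset[0,a]$ is false. Recall that for $n=1$ the formula gives $h|_{I_1^0}=\vfi_1$, so $h(\max I_1^0)=\max I_1^1=1>a$; thus already $h(I_1^0)=J_1^1=I_1^1=[a_2,1]\not\subset[0,a]$. Worse, $\min L_1=\max I_1^0$ gives $h(\min L_1)=1$, and since $h$ is linear increasing of slope $\lambda_1=\frac{2x_1}{|L_1|}$ on $[\min L_1,\middle(L_1)]$, one has $h(\middle(L_1))=1+x_1>1$. This is not an accident: the bumps on $L_n$ are precisely engineered so that $h^{2^{n-1}}([\min L_n,\middle(L_n)])=[1,1+x_n]$ (Lemma~\ref{lem:wiggins-LY-summary2}(ii)), allowing orbits to visit $(1,3/2]$ --- this is exactly what makes $\omega(3/2,h)$ contain both $a$ and $1$ and gives Li--Yorke chaos. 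Your citation of \eqref{eq:wiggins-LY-summary2:2} only covers the left half of $L_n$ for $n\ge 2$, not $L_1$. Consequently the argument ``$h^n(y_0)\in[0,a]$ for all $n\ge 1$'' collapses.

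The paper's route for~(i) is different: it exploits that every point $y$ in the dense orbit $Y'=\CO_h(y_0)$ satisfies $y\in\omega(y,h)$, and then shows directly that no point of $(a,1)\cup(1,3/2]$ has this recurrence property. For $(a,1)$ this is easy since $h$ vanishes there and $0$ is not periodic; for $(1,3/2]=\bigcup_n(1+x_{n+1},1+x_n]$ one uses Lemma~\ref{lem:wiggins-LY-summary2}(iii),(v) to see that $h^i((1+x_{n+1},1+x_n])\subset[0,1+x_{n+1}]$ for all $i\ge 2^{n+1}$, so such points never return. The point $1$ is then excluded because $Y$ has no isolated point.

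Your plan for~(ii) is in the right spirit but underestimates the bookkeeping. It is \emph{not} true that $h^{2^n}(J_n^0)=J_n^0$ for this map (the paper deliberately omits that assertion from Lemma~\ref{lem:wiggins-LY-summary-Jn0}, in contrast with Lemma~\ref{lem:LY-htop0-summary-Jn0}(viii) for $g$). The paper instead proves inductively that $Y'\cap I_n^1\ne\emptyset$, splitting $I_{n-1}^1=I_n^0\cup L_n\cup I_n^1$ and handling $L_n$ via an auxiliary point $c_n\in[\middle(L_n),\max L_n]$ with $h(c_n)=h(\min L_n)$: on $[\min L_n,c_n]$ orbits reach $[1,1+x_n]$ and are excluded by~(i), while on $[c_n,\max L_n]$ an expansion estimate forces orbits out into $I_n^0\cup[\min L_n,c_n]\cup I_n^1$. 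Then \eqref{eq:Y-Jn0} follows from Lemma~\ref{lem:wiggins-LY-summary2}(iv) together with~(i), since the part of $h(I_n^1)$ in $M_n\setminus J_n^0$ eventually escapes $[0,a]$.

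Your part~(iii) is essentially the paper's argument.
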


\begin{proof}
Since $f|_Y$ is transitive, there exists $y_0\in Y$ such that 
$\omega(y_0,h)=Y$ by Proposition~\ref{prop:transitive-dense-orbit}; 
in particular, the set $Y':=\CO_h(y_0)$
is dense in $Y$ and $y\in\omega(y,h)$ for all $y\in Y'$.
Note that $Y'$ is infinite, otherwise $Y$ would be a finite set and would
contain isolated points.

Let $n\ge 0$. By Lemma~\ref{lem:wiggins-LY-summary2}(iii),
$h^{2^{n+1}}([1,1+x_n])=I_{n+1}^1\cup [1,1+x_{n+1}]$. Thus
Lemma~\ref{lem:wiggins-LY-summary2}(v) implies
that, for all $k\in\IN$,
$h^{k2^{n+1}}([1,1+x_n])\subset I_{n+1}^1\cup [1,1+x_{n+1}]$
and $h^i([1,1+x_n])\subset [0,1]$ for all $i>2^{n+1}$ such that 
$i\notin 2^{n+1}\IN$. This implies that 
$$
h^i((1+x_{n+1},1+x_n])\subset [0,1+x_{n+1}]
\quad\text{for all }i\ge 2^{n+1}.
$$ 
Consequently, there is no $y\in (1,3/2]=\bigcup_{n\ge 0}
(1+x_{n+1},1+x_n]$ such that $y$ is in $\omega(y,h)$. So
$Y'\cap (1,3/2]=\emptyset$, and thus $Y\cap (1,3/2]=\emptyset$
because $Y'$ is dense in $Y$.

Since $h^{2^n-1}(0)=a_{2n}$ by 
Lemma~\ref{lem:wiggins-LY-summary-Jn0}(ii)+(iii),
the point $0$ is not periodic, so
$h^k(0)\notin [a,1]$ for all $k\ge 1$.
If $y\in (a,1)$, then $h(y)=0$ and $h^k(y)\notin [a,1]$ for all
$k\ge 1$, which implies that $y\notin \omega(y,h)$. Consequently,
$Y\cap (a,1)=\emptyset$. We have shown that $Y\subset [0,a]\cup\{1\}$;
in addition, $1\notin Y$ because $Y$ has no isolated point; this proves~(i).

\medskip
Let $n\ge 1$. Since $\min L_n=\max I_n^0$ and $\max L_n=\min I_{n+1}^0$,
it follows that
$h(\min L_n)=\max J_n^1$ and $h(\max L_n)=\min J_{n+1}^1$ according to
Lemma~\ref{lem:wiggins-LY-summary-Jn0}(i), and thus
$h(\max L_n)<h(\min L_n)$.
Moreover, $h|_{[\min L_n,\middle(L_n)]}$ is $\uparrow$ and 
$h|_{[\middle(L_n),\max L_n]}$ is linear $\downarrow$, so there exists
$c_n$ in $[\middle(L_n),\max L_n]$ such that $h(c_n)=h(\min L_n)$.

Since $h([c_n,\max L_n])=[\min J_{n+1}^1,\max J_n^1]$ is included
in the interval $J_{n-1}^0$,
the map $h^{2^{n-1}}|_{[c_n,\max L_n]}$ is linear $\downarrow$ by
Lemma~\ref{lem:wiggins-LY-summary-Jn0}(ii). Moreover, $M_n$ is included
in $h([c_n,\max L_n])$, so $h^{2^{n-1}}([c_n,\max L_n])$ contains $L_n$ by
Lemma~\ref{lem:wiggins-LY-summary-Jn0}(vii). Thus
there exists a point $z_n$ in the interval $[c_n,\max L_n]$ such that 
$h^{2^{n-1}}(z_n)=z_n$ (by Lemma~\ref{lem:fixed-point}) and we have
$\slope(h^{2^{n-1}}|_{[c_n,\max L_n]})\le -2$.
Then for every $x\in [c_n,\max L_n]$ with
$x\ne z_n$, there exists $k\ge 1$ such that $h^{k2^{n-1}}(x)\notin
[c_n,\max L_n]$. By Lemma~\ref{lem:wiggins-LY-summary2}(v), we have
$h^{2^{n-1}}(I_{n-1}^1\cup [1,1+x_{n-1}])\subset I_{n-1}^1\cup [1,1+x_{n-1}]$,
which implies that
\begin{eqnarray}  
\lefteqn{\forall x\in [c_n,\max L_n],\ x\ne z_n,\ \exists k\ge 1,}\label{eq:out-of-[cn,maxLn]}\\
&\qquad\qquad&
h^{k2^{n-1}}(x)\in I_n^0\cup [\min L_n, c_n]\cup I_n^1\cup [1,1+x_{n-1}].
\nonumber
\end{eqnarray}

We show by induction on $n$ that 
\begin{equation}\label{eq:Y-In1}
\forall n\ge 0,\quad Y'\cap I_n^1\ne \emptyset.
\end{equation}
This is true for $n=0$ because $Y\subset [0,1]=I_0^1$ by (i).
Suppose that there exists $y\in Y'\cap I_{n-1}^1$ for some $n\ge 1$. We write
$I_{n-1}^1=I_n^0\cup L_n\cup I_n^1$; to prove that
$Y'\cap I_n^1\ne \emptyset$, we split into four cases according to the
position of $y$.\\
$\bullet$ If $y\in I_n^1$, there is nothing to do.\\
$\bullet$ If $y\in I_n^0$, then $h^{2^{n-1}}(y)\in I_n^1$ by
Lemma~\ref{lem:wiggins-LY-summary-Jn0}(vi) and $h^{2^{n-1}}(y)\in Y'$.\\
$\bullet$ If $y\in [\min L_n,c_n]$, then $h(y)\in h([\min L_n,\middle(L_n)]$
and $h^{2^{n-1}}(y)\in [1,1+x_n]$ by Lemma~\ref{lem:wiggins-LY-summary2}(ii),
which is impossible because $Y\subset [0,a]$ by (i).\\
$\bullet$ If $y\in [c_n,\max L_n]$, then $y\ne z_n$ because $Y'$ is infinite.
In addition $h^j(y)\in [0,1]$ for all $j\ge 0$ according to (i). Then
\eqref{eq:out-of-[cn,maxLn]} states that there exists $j\ge 1$
such that $h^j(y)$ belongs to $I_n^0\cup [\min L_n,c_n]\cup I_n^1$ and one of 
the first three cases applies with $y':=h^j(y)\in Y'$.

\medskip
We have $h(I_n^1)\subset J_n^0\cup [\min M_n,\middle(M_n)]$ by
Lemma~\ref{lem:wiggins-LY-summary2}(iv), and also
$$
h^{2^n-1}([\min M_n,\middle(M_n)])=h^{2^{n-1}}([\min L_n,\middle(L_n)])
=[1,1+x_n]
$$
by Lemmas \ref{lem:wiggins-LY-summary-Jn0}(vii) and
\ref{lem:wiggins-LY-summary2}(ii). Combined with (i) and the $f$-invariance
of $Y$, this implies that
\begin{equation}\label{eq:Y-Jn0}
h(Y\cap I_n^1)\subset J_n^0.
\end{equation}
We have $Y\subset \CO_h(I_n^1)$ by \eqref{eq:Y-In1}. Combined with
\eqref{eq:Y-Jn0} and Lemma~\ref{lem:wiggins-LY-summary-Jn0}(i)+(iii),
we get
$$
Y\subset \bigcup_{i=0}^{2^n-1}h^i(J_n^0)\quad \text{for all }n\ge 1;
$$
this is (ii). 

Furthermore,
$Y\cap h^i(J_n^0)=Y\cap h^{i\bmod 2^n}(J_n^0)$ for all $i\ge 0$.
Since $h(Y)=Y$, it is clear that $h^i(J_n^0\cap Y)\subset 
h^i(J_n^0)\cap Y$ and that $h^{2^n}(h^i(J_n^0)\cap Y)\subset 
h^{2^n+i}(J_n^0)\cap Y$. Thus 
$$
h^i(J_n^0\cap Y)=h^i(J_n^0)\cap Y=h^{i\bmod 2^n}(J_n^0)\cap Y\quad 
\text{for all }i\ge 0,
$$
which is (iii).
\end{proof}

The next lemma is the key tool in the proof of 
Proposition~\ref{prop:wiggins-LY-g-not-wiggins-chaotic}. 
It relies on the knowledge of the precise location of $h^i(J_n^0)$ in
$\bigcup_{k=1}^nI_n^0$.

\begin{lem}\label{lem:slope}
Let $h$ be the map defined above. Then
$\slope\left(h^{2^n-1-k}|_{h^k(J_n^0)}\right)\ge 1$
for all $n\ge 1$ and all $k\in\Lbrack 0,2^n-1\Rbrack$.
\end{lem}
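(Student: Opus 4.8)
The plan is to reduce the statement to a single elementary inequality about lengths of the intervals $h^k(J_n^0)$ and then establish that inequality by induction on $n$, feeding on Lemma~\ref{lem:wiggins-LY-summary-Jn0}.

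First I would note that by Lemma~\ref{lem:wiggins-LY-summary-Jn0}(ii) the map $h^k|_{J_n^0}$ is linear increasing for every $k\in\Lbrack 0,2^n-1\Rbrack$, so $h^k(J_n^0)$ is a non degenerate interval and, writing $h^{2^n-1}|_{J_n^0}=\bigl(h^{2^n-1-k}|_{h^k(J_n^0)}\bigr)\circ\bigl(h^k|_{J_n^0}\bigr)$ as a composition of linear maps, $h^{2^n-1-k}|_{h^k(J_n^0)}$ is linear too. Taking slopes and using Lemma~\ref{lem:wiggins-LY-summary-Jn0}(iii) (i.e. $h^{2^n-1}(J_n^0)=I_n^1$), one gets
\[
\slope\bigl(h^{2^n-1-k}|_{h^k(J_n^0)}\bigr)
=\frac{\slope(h^{2^n-1}|_{J_n^0})}{\slope(h^k|_{J_n^0})}
=\frac{|h^{2^n-1}(J_n^0)|}{|h^k(J_n^0)|}
=\frac{|I_n^1|}{|h^k(J_n^0)|}.
\]
Hence the lemma is equivalent to the claim $P(n)$: $\;|h^k(J_n^0)|\le|I_n^1|$ for all $k\in\Lbrack 0,2^n-1\Rbrack$.

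Next I would prove $P(n)$ by induction on $n$, the base $n=0$ being trivial because $J_0^0=[0,1]=I_0^1$. For the inductive step, assuming $P(n-1)$ and fixing $k$, I split into two ranges. If $k\in\Lbrack 0,2^{n-1}-1\Rbrack$, then $J_n^0\subset J_{n-1}^0$ and, since $h^k|_{J_{n-1}^0}$ is linear increasing, $h^k(J_n^0)\subset h^k(J_{n-1}^0)$ with length ratio $\frac{|J_n^0|}{|J_{n-1}^0|}=\frac{|I_n^0|}{|I_{n-1}^1|}=\frac{1}{3^n}$; by $P(n-1)$ this gives $|h^k(J_n^0)|\le\frac{1}{3^n}|I_{n-1}^1|\le\bigl(1-\frac{2}{3^n}\bigr)|I_{n-1}^1|=|I_n^1|$, using $3^{n}\ge 3$. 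If $k\in\Lbrack 2^{n-1},2^n-1\Rbrack$, write $k=2^{n-1}+k'$; combining Lemma~\ref{lem:wiggins-LY-summary-Jn0}(i) and (iii) gives $h^{2^{n-1}}(J_n^0)=h(I_n^0)=J_n^1$, so $h^k(J_n^0)=h^{k'}(J_n^1)$, and since $J_n^1\subset J_{n-1}^0$ the length ratio is now $\frac{|J_n^1|}{|J_{n-1}^0|}=\frac{|I_n^1|}{|I_{n-1}^1|}$, so $P(n-1)$ applied to $k'$ yields $|h^k(J_n^0)|\le\frac{|I_n^1|}{|I_{n-1}^1|}\,|I_{n-1}^1|=|I_n^1|$. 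This closes the induction and hence proves the lemma.

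There is no real obstacle here; the argument is pure bookkeeping. The only point requiring a little care is identifying, in each range of $k$, exactly which subinterval of $h^k(J_{n-1}^0)$ the set $h^k(J_n^0)$ coincides with (via $J_n^0\subset J_{n-1}^0$ in the first range, via $h^{2^{n-1}}(J_n^0)=J_n^1\subset J_{n-1}^0$ in the second) together with the associated length ratio, and then checking the trivial numerical inequality $\tfrac{1}{3^n}\le 1-\tfrac{2}{3^n}$ that makes the first range work.
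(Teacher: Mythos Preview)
Your proof is correct and noticeably simpler than the paper's own argument. The paper tracks the full combinatorial itinerary of $h^k(J_n^0)$ through the intervals $I_j^0$ by encoding it in an explicitly defined infinite word $\bar\alpha$, then expresses $\slope(h^k|_{J_n^0})$ as a product $\prod_i \slope(\vfi_i)^{\eps(i,k,n)}$ with exponents in $\{0,1\}$, and finally observes that $\slope(h^{2^n-1-k}|_{h^k(J_n^0)})$ is the quotient of two such products, hence itself a product of some of the $\slope(\vfi_i)\ge 1$. You bypass all of that combinatorics by reducing the claim to the single length inequality $|h^k(J_n^0)|\le |I_n^1|$ and proving it by a short two-case induction that uses only the inclusions $J_n^0,J_n^1\subset J_{n-1}^0$, the identity $h^{2^{n-1}}(J_n^0)=J_n^1$, and the defining length ratios $|J_n^i|/|J_{n-1}^0|=|I_n^i|/|I_{n-1}^1|$. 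The paper's route yields more precise information (an exact multiplicative structure of the slopes), but since that extra information is not used elsewhere, your approach is the more economical one for this lemma.
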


\begin{proof}
A \emph{(finite) word}\index{word}\index{finite word} $B$ is an element of 
$\IN^n$ for some $n\in \IN$. If $B\in \IN^n$, the length of $B$ is 
$|B|:=n$.\index{length of a word}
\label{notation:lengthword}
If $B=b_1\ldots b_n$ and $B'=b_1'\ldots b_m'$ are two words, then
$BB'$ denotes the word obtained by concatenation, that is,\index{concatenation of words}
$$
BB':=b_1\ldots b_nb_1'\ldots b_m'\in\IN^{m+n}.
$$
An \emph{infinite word} is an element of $\IN^{\IN}$.\index{infinite word}

We define inductively a sequence of words $(B_n)_{n\ge 1}$ by:
\begin{itemize}
\item $B_1:=1$,
\item $B_n:=nB_1B_2\ldots B_{n-1}$,
\end{itemize}
and we define the infinite word $\bar\alpha=(\alpha(i))_{i\ge 1}$ by
concatenating the $B_n$'s: 
$$
\bar\alpha:=B_1B_2B_3\ldots B_n\ldots.
$$
A straightforward induction shows that
$|B_n|=2^{n-1}$; thus $|B_1|+|B_2|+\cdots+|B_k|=2^k-1$ and, in $\bar\alpha$,
the word $B_{k+1}$ starts at the index $2^k$, which gives
\begin{gather}
\label{eq:omega(2k)}
\alpha(2^k)=k+1,\\
\label{eq:omega-repeat}
\alpha(2^k+1)\ldots\alpha(2^{k+1}-1)=B_1\ldots B_k=
\alpha(1)\ldots\alpha(2^k-1).
\end{gather}

We prove by induction on $k\ge 1$ that
\begin{equation}
\label{eq:gi(Jn)-omega}
h^{i-1}(J_n^0)\subset I_{\alpha(i)}^0\quad\text{for all } n\ge k
\text{ and all } i\in\Lbrack 1,2^k-1\Rbrack.
\end{equation}

\noindent $\bullet$
Case $k=1$: $J_n^0\subset I_1^0=I_{\alpha(1)}^0$ for all $n\ge 1$.

\noindent $\bullet$
Suppose that \eqref{eq:gi(Jn)-omega} holds for $k$ and let
$n\ge k+1$. Since  $J_n^0\subset J_{k+1}^0$, 
Lemma~\ref{lem:wiggins-LY-summary-Jn0}(iii) implies that
$h^{2^k-1}(J_n^0)\subset I_{k+1}^0$, and thus
$h^{2^k}(J_n^0)\subset J_{k+1}^1\subset J_k^0$ by
Lemma~\ref{lem:wiggins-LY-summary-Jn0}(i). According to the
induction hypothesis, we have $h^{i-1}(J_k^0)\subset I_{\alpha(i)}^0$ for all
$i\in\Lbrack 1,2^k-1\Rbrack$, and \eqref{eq:omega-repeat} yields
$\alpha(i)=\alpha(2^k+i)$ for all $i\in\Lbrack 1,2^k-1\Rbrack$. Consequently,
$h^{2^k+i-1}(J_n^0)\subset I_{\alpha(2^k+i)}^0$ for all $i\in\Lbrack 1,
2^k-1\Rbrack$. Together with the induction hypothesis, this gives
\eqref{eq:gi(Jn)-omega} for $k+1$.

\medskip
Let $\mu_n:=\slope(h|_{I_n^0})$. By definition of $h$, we have
$$
\mu_n=\frac{\slope(\vfi_n)}{\prod_{i=1}^{n-1}\slope(\vfi_i)}.
$$
It is straightforward from \eqref{eq:gi(Jn)-omega} that
\begin{equation}\label{eq:slope-gk}
\forall k\in\Lbrack 2,2^n-1\Rbrack,\
\slope (h^{k-1}|_{J_n^0})=\prod_{i=1}^{k-1} \mu_{\alpha(i)}.
\end{equation}
By Lemma~\ref{lem:wiggins-LY-summary-Jn0}(ii)+(iii), 
the map $h^{2^n-1}|_{J_n^0}$
is linear and $h^{2^n-1}(J_n^0)=I_n^1$. Thus
$$
\slope(h^{2^n-1}|_{J_n^0})=\frac{|I_n^1|}{|J_n^0|}
=\prod_{i=1}^n\frac{1-\frac{2}{3^i}}{\frac{1}{3^i}}.
$$
Since $\slope(\vfi_i)=\frac{|I_i^1|}{|I_i^0|}=\frac{1-\frac{2}{3^i}}{\frac{1}{3^i}}$, we
get
\begin{equation}
\label{eq:slope-g2n-1}
\slope(h^{2^n-1}|_{J_n^0})=\prod_{i=1}^{2^n-1}\mu_{\alpha(i)}=
\prod_{i=1}^n \slope(\vfi_i).
\end{equation}

We show by induction on $n\ge 1$ that for all $k\in\Lbrack 1,2^n-1\Rbrack$
\begin{equation}\label{eq:prod-mui}
\prod_{i=1}^{k}\mu_{\alpha(i)}=\prod_{i=1}^n\slope(\vfi_i)^{\eps(i,k,n)}
\quad\text{for some }\eps(i,k,n)\in\{0,1\}.
\end{equation}

\noindent $\bullet$
$\mu_{\alpha(1)}=\mu_1=\slope(\vfi_1)$; this gives the case $n=1$.

\noindent $\bullet$
Suppose that \eqref{eq:prod-mui} holds for some $n\ge 1$. 
Since $\mu_{\alpha(2^n)}=\mu_{n+1}$ by \eqref{eq:omega(2k)}, we have
\begin{eqnarray*}
\prod_{i=1}^{2^n}\mu_{\alpha(i)}&=&\prod_{i=1}^{2^n-1}\mu_{\alpha(i)}\cdot
\mu_{n+1}\\
&=&\disp \prod_{i=1}^n\slope(\vfi_i)\cdot
\frac{\slope(\vfi_{n+1})}{\prod_{i=1}^n\slope(\vfi_i)} 
\quad\text{by \eqref{eq:slope-g2n-1}}\\
&=&\slope(\vfi_{n+1}).
\end{eqnarray*}
This is \eqref{eq:prod-mui} for $n+1$ and $k=2^n$ with 
$\eps(i,k,n+1)=0$ for all $i\in\Lbrack 1,n\Rbrack$ and $\eps(n+1,k,n+1)=1$.

Next, $\alpha(2^n+1)\ldots\alpha(2^{n+1}-1)=\alpha(1)\ldots
\alpha(2^n-1)$ by \eqref{eq:omega-repeat}; so, if
$k$ is in $\Lbrack 2^n+1,2^{n+1}-1\Rbrack$, then
\begin{eqnarray*}
\prod_{i=1}^k\mu_{\alpha(i)}&=&
\prod_{i=1}^{2^n}\mu_{\alpha(i)}\prod_{i=2^n+1}^k\mu_{\alpha(i)}
=\slope(\vfi_{n+1})\prod_{i=1}^{k-2^n}\mu_{\alpha(i)}\\
&=&\slope(\vfi_{n+1})\prod_{i=1}^n\slope(\vfi_i)^{\eps(i,k-2^n,n)}.
\end{eqnarray*}
That is, \eqref{eq:prod-mui} holds with 
$\eps(i,k,n+1)=\eps(i,k-2^n,n)$ for all $i\in\Lbrack 1,n\Rbrack$ and 
$\eps(n+1,k,n+1)=1$. This concludes the induction.

Equations \eqref{eq:slope-gk}, \eqref{eq:slope-g2n-1}  and 
\eqref{eq:prod-mui} imply that, for all
$k\in\Lbrack 1,2^n-1\Rbrack$,
\begin{equation}
\label{eq:slope-gk-bis}
\slope(h^k|_{J_n^0})=\prod_{i=1}^{k}\mu_{\alpha(i)}=
\prod_{i=1}^{n}\slope(\vfi_i)^{\eps(i,k,n)}\quad\text{for some }
\eps(i,k,n)\in\{0,1\}.
\end{equation}
Since 
$$
\slope\left(h^{2^n-1-k}|_{h^k(J_n^0)}\right)=
\frac{\slope(h^{2^n-1}|_{J_n^0})}{\slope(h^k|_{J_n^0})},
$$
\eqref{eq:slope-g2n-1} and \eqref{eq:slope-gk-bis}
imply that $\slope\left(h^{2^n-1-k}|_{h^k(J_n)}\right)$ is a
product of at most $n$ terms of the form $\slope(\vfi_i)$. This 
concludes the proof of the
lemma because $\slope(\vfi_i)\ge 1$ for all $i\ge 1$.
\end{proof}

\begin{prop}\label{prop:wiggins-LY-g-not-wiggins-chaotic}
Let $h$ be the map defined above. Then there exists no invariant 
set $Y$ such that $f|_Y$ is transitive and sensitive to
initial conditions.
\end{prop}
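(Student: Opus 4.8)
The plan is to argue by contradiction: suppose $Y\subset[0,3/2]$ is an invariant set such that $h|_Y$ is transitive and sensitive to initial conditions, and derive that $h|_Y$ cannot in fact be sensitive. Since $h|_Y$ is sensitive, $Y$ must be infinite, hence has no isolated point by Proposition~\ref{prop:transitive-dense-orbit}(i). Then Lemma~\ref{lem:wiggins-LY-transitive-set} applies: $Y\subset[0,a]$, $Y\subset\bigcup_{i=0}^{2^n-1}h^i(J_n^0)$ for every $n\ge 1$, and $h^i(J_n^0\cap Y)=h^i(J_n^0)\cap Y=h^{i\bmod 2^n}(J_n^0)\cap Y$ for all $i\ge 0$, $n\ge 0$. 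The first step is to record that the pieces $(h^i(J_n^0))_{0\le i<2^n}$ are pairwise disjoint (Lemma~\ref{lem:wiggins-LY-summary-Jn0}(v)), so for each $n$ the set $Y$ is partitioned into the $2^n$ clopen-in-$Y$ pieces $Y\cap h^i(J_n^0)$, which are cyclically permuted by $h$ (this cyclic permutation is exactly the content of the displayed identity in Lemma~\ref{lem:wiggins-LY-transitive-set}(iii)).

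The heart of the argument is a contraction estimate. Fix $\varepsilon>0$; I want to produce $\delta>0$ so that any two points of $Y$ within $\delta$ stay within $\varepsilon$ forever, contradicting sensitivity. Choose $n$ large enough that $|I_n^1|<\varepsilon$ and, more importantly, that the total length of $J_n^0$ together with its images is controlled; the key inequality is Lemma~\ref{lem:slope}, which says $\slope\bigl(h^{2^n-1-k}|_{h^k(J_n^0)}\bigr)\ge 1$ for all $k\in\Lbrack 0,2^n-1\Rbrack$. Combined with $h^{2^n-1}(J_n^0)=I_n^1$ (Lemma~\ref{lem:wiggins-LY-summary-Jn0}(iii)), this expansion-on-average forces each piece $h^k(J_n^0)$ to have length at most $|I_n^1|<\varepsilon$: indeed, if $h^k|_{J_n^0}$ has slope $s_k$ then $|h^k(J_n^0)|=s_k|J_n^0|$ and $h^{2^n-1-k}|_{h^k(J_n^0)}$ has slope $\ge 1$ so $|I_n^1|=|h^{2^n-1}(J_n^0)|\ge |h^k(J_n^0)|$, giving $|h^k(J_n^0)|\le|I_n^1|<\varepsilon$. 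Now let $\delta$ be the minimal gap between two distinct pieces among $(h^i(J_n^0))_{0\le i<2^n}$ intersected with $[0,a]$; this $\delta$ is positive by disjointness and compactness. If $x,y\in Y$ with $|x-y|<\delta$, then $x,y$ lie in the same piece $h^i(J_n^0)$, and by the cyclic permutation property $h^k(x),h^k(y)$ lie in the same piece $h^{(i+k)\bmod 2^n}(J_n^0)$ for all $k\ge 0$, so $|h^k(x)-h^k(y)|\le|h^{(i+k)\bmod 2^n}(J_n^0)|<\varepsilon$. Since $\varepsilon$ was arbitrary, $h|_Y$ is not sensitive, a contradiction.

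The main obstacle I anticipate is making sure Lemma~\ref{lem:slope} is being used correctly to bound \emph{lengths} of all the intermediate images, not just the slopes at specific points — one has to pass from the linearity of $h^j$ on $J_n^0$ (from Lemma~\ref{lem:wiggins-LY-summary-Jn0}(ii), which guarantees $h^i|_{J_n^0}$ is linear for $i\in\Lbrack 0,2^n-1\Rbrack$) to the genuine length inequality $|h^k(J_n^0)|\le|I_n^1|$, which is clean precisely because of that linearity. A secondary subtlety is that one only controls things on $[0,a]$, but Lemma~\ref{lem:wiggins-LY-transitive-set}(i) already confines $Y$ there, so the pieces $h^i(J_n^0)\cap Y$ that matter are all inside $[0,a]$ and the gap $\delta$ is legitimately positive. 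Once these two points are nailed down, the contradiction with sensitivity is immediate, and combined with Proposition~\ref{prop:wiggins-LY-g-LY-chaotic} this shows $h$ is chaotic in the sense of Li-Yorke yet has no transitive sensitive subsystem, completing the example.
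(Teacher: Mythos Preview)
Your overall architecture is exactly the paper's: reduce to the case where $Y$ has no isolated point, invoke Lemma~\ref{lem:wiggins-LY-transitive-set} to confine $Y$ to the disjoint pieces $h^i(J_n^0)$, use the cyclic permutation of those pieces, and then appeal to Lemma~\ref{lem:slope} to show the pieces are small. But there is a genuine error in the final estimate.

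You write ``Choose $n$ large enough that $|I_n^1|<\varepsilon$''. This is impossible: by construction $I_n^1=[a_{2n},1]$ and $|I_n^1|=\prod_{i=1}^n\bigl(1-\tfrac{2}{3^i}\bigr)$, which converges to $1-a>0$ as $n\to\infty$. So the intervals $I_n^1$ do \emph{not} shrink, and hence your bound $|h^k(J_n^0)|\le|I_n^1|$---while perfectly correct as an inequality---gives you nothing. This is precisely the point where the example differs from the map $g$ of Example~\ref{ex:chaos-LY-htop0}: there the lengths of the $I_n^1$ were chosen to be small on alternate parities, whereas here all the $I_n^1$ stay large (they decrease to the non-degenerate interval $[a,1]$).

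The fix, which is what the paper does, is to work with $\diam\bigl(h^k(J_n^0)\cap Y\bigr)$ rather than $|h^k(J_n^0)|$. Since $h^{2^n-1-k}$ is linear increasing with slope $\ge 1$ on $h^k(J_n^0)$ and sends it onto $I_n^1$, and since $Y$ is invariant, two points of $Y\cap h^k(J_n^0)$ are mapped to two points of $Y\cap I_n^1$ at least as far apart. But $Y\subset[0,a]$ by Lemma~\ref{lem:wiggins-LY-transitive-set}(i), so $Y\cap I_n^1\subset[a_{2n},a]$, whose diameter $a-a_{2n}$ \emph{does} tend to zero. Thus $\max_k\diam\bigl(h^k(J_n^0)\cap Y\bigr)\le a-a_{2n}\to 0$, and your argument goes through with this modification. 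The crucial ingredient you underused is the containment $Y\subset[0,a]$: it is not just needed to make the gap $\delta$ positive, it is what makes the pieces \emph{of $Y$} small even though the ambient intervals $h^k(J_n^0)$ are not.
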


\begin{proof}
Let $Y$ be an invariant set such that $h|_Y$ is
transitive. If $Y$ has an isolated point, it is easy to see that $f|_Y$
is not sensitive to initial conditions.
We assume that $Y$ has no isolated point. 

The sets $\left(h^i(J_n^0\cap Y)\right)_{0\le i\le 2^n-1}$ are closed and,
by Lemma~\ref{lem:wiggins-LY-summary-Jn0}(v),
they are pairwise disjoint; let $\delta_n>0$
be the minimal distance between two of these sets.
If $x,x'\in Y$ and $|x-x'|<\delta_n$, then there is $i\in\Lbrack 0, 2^n-1\Rbrack$ such
that $x,x'\in h^i(J_n^0)$ and 
$h^k(x),h^k(x')\in h^{i+k\bmod{2^n}}(J_n^0)$ for all $k\ge 0$ by
Lemma~\ref{lem:wiggins-LY-transitive-set}(ii)+(iii). We set 
$$
\delta_n:=\max\{\diam (h^i(J_n^0)\cap Y)\mid i\in\Lbrack 0,2^n-1\Rbrack\}.
$$
Lemma~\ref{lem:slope} implies that $\diam (h^k(J_n^0)\cap Y)
\le \diam(h^{2^n-1}(J_n^0)\cap Y)$ for all integers $k$ in $\Lbrack 0,2^n-1\Rbrack$.
By Lemma~\ref{lem:wiggins-LY-summary-Jn0}(iii),
$ h^{2^n-1}(J_n^0)=I_n^1$; and by
Lemma~\ref{lem:wiggins-LY-transitive-set}(i),
$I_n^1\cap Y\subset [a_{2n},a]$. Thus $\delta_n\le \diam(I_n^1\cap Y)\le
a-a_{2n}$. This implies that 
$$
\lim_{n\to+\infty}\delta_n=0.
$$ 
This shows that $h|_Y$ is not sensitive to initial conditions. 
\end{proof}

Propositions \ref{prop:wiggins-LY-g-LY-chaotic} and
\ref{prop:wiggins-LY-g-not-wiggins-chaotic} show that the map $h$ is chaotic
in the sense of Li-Yorke but has no transitive sensitive subsystem.
At last this example is completed.

\chapter{Appendix: Some background in topology}\label{appendix}

The aim of this appendix is to recall succinctly some 
definitions and results in topology.
For details, one can refer to \cite{Bou3-f, Bou3-1, Kur1, Mun, Oxt}
(and also \cite{Rud2} for topological notions related to analysis).

\section{Complement of a set, product of sets}

\begin{defi}[complement of a set]\index{complement of a set}
\label{notation:complement}
Let $X$ be a set and $Y\subset X$. The \emph{complement} of $Y$ in $X$ is
$X\setminus Y:=\{x\in X\mid x\notin Y\}$.
\end{defi}

\begin{lem}\label{lem:compl-cup-cap}
Let $X$ be a set and $A,B\subset X$.
\begin{itemize}
\item $X\setminus (A\cup B)=(X\setminus A)\cap (X\setminus B)$,
\item $X\setminus (A\cap B)=(X\setminus A)\cup (X\setminus B)$.
\end{itemize}
These two properties remain valid for a countable union or intersection.
\end{lem}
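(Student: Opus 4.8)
The plan is to prove each of the two identities by the standard double-inclusion argument carried out at the level of elements, reducing everything to the elementary logical equivalences $\lnot(P\lor Q)\Leftrightarrow(\lnot P)\land(\lnot Q)$ and $\lnot(P\land Q)\Leftrightarrow(\lnot P)\lor(\lnot Q)$.

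For the first identity I would fix $x\in X$ and chase the membership conditions: $x\in X\setminus(A\cup B)$ means $x\notin A\cup B$, i.e. it is not the case that ($x\in A$ or $x\in B$), i.e. ($x\notin A$) and ($x\notin B$), i.e. ($x\in X\setminus A$) and ($x\in X\setminus B$), i.e. $x\in(X\setminus A)\cap(X\setminus B)$. Since all the intermediate statements are equivalent, the two sets have the same elements, hence are equal. The second identity is proved by the dual computation: $x\in X\setminus(A\cap B)$ means it is not the case that ($x\in A$ and $x\in B$), i.e. ($x\notin A$) or ($x\notin B$), i.e. $x\in(X\setminus A)\cup(X\setminus B)$.

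For the last sentence of the lemma I would observe that the same element-chasing works verbatim for a family $(A_i)_{i\in I}$ with $I$ countable (indeed arbitrary): $x\notin\bigcup_{i\in I}A_i$ is equivalent to ``for all $i\in I$, $x\notin A_i$'', which gives $X\setminus\bigcup_{i\in I}A_i=\bigcap_{i\in I}(X\setminus A_i)$; and $x\notin\bigcap_{i\in I}A_i$ is equivalent to ``there exists $i\in I$ with $x\notin A_i$'', which gives $X\setminus\bigcap_{i\in I}A_i=\bigcup_{i\in I}(X\setminus A_i)$. Here the equivalences used are just the fact that the negation of a universal statement is an existential one and vice versa.

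There is no real obstacle here: the statement is a reformulation of basic propositional and predicate logic, so the only thing to be careful about is writing the chain of equivalences cleanly and not conflating ``and'' with ``or'' when passing a negation through. I would present the whole thing in a few lines, essentially as the two equivalence chains above plus the one remark about arbitrary index sets.
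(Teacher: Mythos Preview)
Your proposal is correct; this is the standard element-chasing argument via the logical De Morgan laws, and it works for arbitrary index sets just as you note. The paper itself states this lemma in the appendix without proof (it is listed among background facts the reader is assumed to know), so there is nothing further to compare.
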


\begin{defi}[product of sets]\index{product of sets}\index{Cartesian product of sets}
\label{notation:XtimesY}\label{notation:productXn}
Let $X_1,X_2$ be two sets. The (Cartesian) product of $X_1$ and $X_2$
is the set
$
X_1\times X_2:=\{(x_1,x_2)\mid x_1\in X_1,x_2\in X_2\}.
$
One can define similarly the product $X_1\times X_2\times\cdots \times X_n$.
When $X_1=X_2=\cdots=X_n=X$, let
$X^n$ denote $\underbrace{X\times\cdots\times X}_{n\rm\ times}$.

The set $X^{\IZ^+}$ is the countable product of copies of $X$, that is,
$$
X^{\IZ^+}:=\{(x_n)_{n\ge 0}\mid \forall n\in\IZ^+, x_n\in X\}.
$$
\end{defi}

\section{Definitions in topology}

\subsection{Distance, limit}

\begin{defi}[distance, metric space]\index{distance}\index{metric space}\index{triangular inequality}\index{d(x,y)@$d(x,y)$}
\label{notation:dxy}
Let $X$ be a set. A \emph{distance} on $X$ is a map $d\colon X\times X\to [0,+\infty)$ such that, for all $x,y,z\in X$:
\begin{itemize}
\item $d(x,y)=d(y,x)$,
\item $d(x,y)=0\Leftrightarrow x=y$,
\item $d(x,z)\le d(x,y)+d(y,z)$ \emph{(triangular inequality)}.
\end{itemize}
The set $X$ endowed with a distance is called a \emph{metric space}.
\end{defi}

The distance will be denoted by $d$ in any metric space,
except when several distances are involved.

\begin{ex}
In $\IR$, the usual distance is given by $d(x,y):=|y-x|$.

In $\IR^n$ ($n\ge 2$), there are several usual distances.
If $x=(x_1,\ldots, x_n)$ and $y=(y_1,\ldots, y_n)$ are
elements of $\IR^n$,
\begin{gather*}
d_{\infty}(x,y):=\max \{|y_i-x_i| \mid i\in\Lbrack 1,n\Rbrack\},\\
d_1(x,y):=\sum_{i=1}^n |y_i-x_i|,\\
d_2(x,y):=\sqrt{\sum_{i=1}^n (y_i-x_i)^2}\quad\text{(Euclidean distance)}.
\end{gather*}
$d_{\infty}, d_1$ and $d_2$ are three distances in $\IR^n$. They are
said to be \emph{equivalent}\index{equivalent distances} because,
for all $i,j\in\{1,2,\infty\}$, there exist positive real numbers $m,M$ 
such that
$$
\forall x,y\in\IR^n,\ m d_i(x,y)\le d_j(x,y)\le M d_i(x,y).
$$
\end{ex}

\begin{defi}[limit]\index{limit in a metric space}
Let $X$ be a metric space. A sequence $(x_n)_{n\ge 0}$ of points of $X$
converges (or tends) to $x\in X$ if $\disp\lim_{n\to+\infty}
d(x_n,x)=0$, that is,
$$
\forall \eps>0,\ \exists N\in\IN,\ \forall n\ge N,\ d(x_n,x)<\eps.
$$
Then $x$ is called the limit of  $(x_n)_{n\ge 0}$, and 
one writes $\disp\lim_{n\to+\infty}x_n=x$.
\end{defi}

\subsection{Open and closed sets, topology; limit point of a set}

\begin{defi}[open and closed balls]
\label{notation:balls}
\index{B(x,r)@$B(x,r),\overline{B}(x,r)$}\index{open ball}\index{closed ball}
Let $X$ be a metric space. If $x\in X$ and $r>0$, the open ball of center
$x$ and radius $r$ is $B(x,r):=\{y\in X\mid d(x,y)<r\}$, and the closed ball of center
$x$ and radius $r$ is $\overline{B}(x,r):=\{y\in X\mid d(x,y)\le r\}$.
\end{defi}

\begin{defi}[open and closed sets]\index{open set}\index{closed set}\index{topology}
Let $X$ be a metric space and $Y\subset X$. The set $Y$ is \emph{open}
if
$$
\forall x\in Y,\ \exists r>0,\ B(x,r)\subset Y.
$$
The set $Y$ is \emph{closed} if $X\setminus Y$ is open.


The family of all open sets of $X$ defines the \emph{topology} of $X$.
\end{defi}

\begin{ex}
In $\IR^n$, the three distances $d_{\infty}, d_1, d_2$ define the same
topology, that is, the same open and closed sets. The notion of
convergence of a sequence of points is also the same for these
three distances.
\end{ex}

\begin{defi}[discrete topology]\index{discrete topology}
Let $E$ be a set endowed with the distance:
$$
\forall x,y\in E,\ d(x,y):=\left\{\begin{array}{l}
1\text{ if }x\ne y,\\ 
0\text{ if }x=0.
\end{array}\right.
$$
The topology corresponding to this distance is called the \emph{discrete
topology}.
This topology is the usual topology for finite or countable sets
(e.g. $\{0,1\}$ or $\IZ$).
For the discrete topology, every singleton $\{x\}$ is both open and closed. 
\end{defi}

\begin{prop}
Let $X$ be a metric space.
\begin{itemize}
\item Any (finite or not) union of open sets is open. 
\item Any finite intersection of open sets is open.
\item Any (finite or not) intersection of closed
sets is closed. 
\item Any finite union of closed sets is closed.
\end{itemize}
\end{prop}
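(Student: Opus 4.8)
The plan is to prove the two statements about open sets directly from the definition of an open set, and then to deduce the two statements about closed sets by passing to complements via De~Morgan's laws (Lemma~\ref{lem:compl-cup-cap}).

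First I would treat the union. Let $(U_i)_{i\in I}$ be an arbitrary family of open subsets of $X$ and set $U:=\bigcup_{i\in I}U_i$. Given $x\in U$, there is an index $i$ with $x\in U_i$; since $U_i$ is open there is $r>0$ with $B(x,r)\subset U_i$, hence $B(x,r)\subset U$. As $x\in U$ was arbitrary, $U$ is open. Next, for a finite intersection, let $U_1,\dots,U_n$ be open and set $V:=U_1\cap\cdots\cap U_n$ (the case $V=\emptyset$ being trivial). Given $x\in V$, for each $k\in\Lbrack 1,n\Rbrack$ openness of $U_k$ yields $r_k>0$ with $B(x,r_k)\subset U_k$. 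Put $r:=\min\{r_1,\dots,r_n\}$, which is positive precisely because the index set is finite; then $B(x,r)\subset B(x,r_k)\subset U_k$ for every $k$, so $B(x,r)\subset V$, and $V$ is open.

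Finally, the closed-set statements follow by complementation. For an arbitrary family $(F_i)_{i\in I}$ of closed sets, Lemma~\ref{lem:compl-cup-cap} gives $X\setminus\bigcap_{i\in I}F_i=\bigcup_{i\in I}(X\setminus F_i)$, a union of open sets, hence open by the first part; therefore $\bigcap_{i\in I}F_i$ is closed. Likewise, for finitely many closed sets $F_1,\dots,F_n$ one has $X\setminus(F_1\cup\cdots\cup F_n)=(X\setminus F_1)\cap\cdots\cap(X\setminus F_n)$, a finite intersection of open sets, hence open by the second part; so $F_1\cup\cdots\cup F_n$ is closed.

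There is no real obstacle here: everything is a direct unwinding of the definition of an open set. The only point I would emphasize in the write-up is the role of finiteness in the intersection of open sets (equivalently, in the union of closed sets), since it is exactly the passage to $\min\{r_1,\dots,r_n\}>0$ that breaks down for infinite families.
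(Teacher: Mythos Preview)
Your proof is correct and entirely standard. The paper states this proposition without proof, since it appears in the appendix of background topology results that are merely recalled for reference; your argument is exactly the classical one and would serve perfectly well if a proof were required.
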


\begin{defi}[limit point of a set]\index{limit point of a set}
Let $X$ be a metric space and $Y\subset X$. A point $x\in X$ is
a \emph{limit point} of $Y$ if for every $r>0$, $B(x,r)\cap Y\ne \emptyset$.
Equivalently, $x$ is a limit point of $Y$ if there exists a
sequence of points of $Y$ that converges to~$x$.
\end{defi}

\begin{prop}
Let $X$ be a metric space and $Y\subset X$. The following assertions
are equivalent:
\begin{itemize}
\item the set $Y$ is closed,
\item all limit points of $Y$ belong to $Y$,
\item for every sequence $(y_n)_{n\ge 0}$ of points of $Y$, if there
exists $x\in X$ such that $\disp\lim_{n\to +\infty} y_n=x$,
then $x\in Y$.
\end{itemize}
\end{prop}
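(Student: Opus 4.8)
The plan is to prove the equivalence by a short cycle, after first noting that the second and third conditions coincide almost by definition. Write (a) for ``$Y$ is closed'', (b) for ``every limit point of $Y$ belongs to $Y$'', and (c) for the sequential statement. The paper has already recorded that $x$ is a limit point of $Y$ if and only if there is a sequence of points of $Y$ converging to $x$; hence (b) literally says that whenever a sequence in $Y$ converges to some $x\in X$ one has $x\in Y$, which is exactly (c). So (b) $\Leftrightarrow$ (c) requires nothing beyond unwinding the definition, and it remains to prove (a) $\Leftrightarrow$ (b).

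For (a) $\Rightarrow$ (b), assume $Y$ is closed, so $X\setminus Y$ is open, and let $x$ be a limit point of $Y$. If $x$ were not in $Y$ it would lie in the open set $X\setminus Y$, so there would be $r>0$ with $B(x,r)\subset X\setminus Y$, i.e.\ $B(x,r)\cap Y=\emptyset$, contradicting the definition of a limit point. Therefore $x\in Y$.

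For (b) $\Rightarrow$ (a), assume every limit point of $Y$ lies in $Y$ and show $X\setminus Y$ is open. Take $x\in X\setminus Y$. Since $x\notin Y$, the contrapositive of the hypothesis says $x$ is not a limit point of $Y$, so there is $r>0$ with $B(x,r)\cap Y=\emptyset$, that is $B(x,r)\subset X\setminus Y$. As $x$ was arbitrary, $X\setminus Y$ is open, so $Y$ is closed.

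I do not expect any genuine obstacle; the argument is entirely formal. The only things to watch are to invoke the definition of ``open set'' in the correct direction in each of the two implications, and to exploit the already-stated equivalent description of limit points so that the sequential condition (c) comes for free, with no separate subsequence-extraction argument needed.
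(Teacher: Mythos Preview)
Your proof is correct and entirely standard. Note that the paper does not actually prove this proposition: it appears in the appendix, whose stated purpose is only to ``recall succinctly some definitions and results in topology'' with references to standard textbooks for the proofs, so there is no paper proof to compare against.
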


\subsection{Neighborhoods; interior, closure and boundary of a set}

\begin{defi}[neighborhood]\index{neighborhood}
Let $X$ be a metric space and $x\in X$. A \emph{neighborhood} of $x$ is
a set $U$ containing an open set $V$ such that $x\in V$. Equivalently,
$U$ is a neighborhood of $x$ if there exists $r>0$ such that
$B(x,r)\subset U$.
\end{defi}

\begin{defi}[interior, closure, boundary of a set]
\label{notation:interior}\label{notation:closure}\label{notation:boundary}
\index{closure of a set}\index{Int@$\Int{Y}$}\index{interior of a set}
\index{Bd(Y)@$\Bd{Y}$}\index{boundary of a set}
Let $X$ be a metric space and $Y\subset X$. 
\begin{itemize}
\item The \emph{interior} of $Y$,
denoted by $\Int{Y}$, is the set of points $x$ such that there exists
a neighborhood of $x$ included in $Y$.
It is the largest open set contained in $Y$.
\item
The \emph{closure} of $Y$, denoted by $\overline{Y}$, is the set of points
$x$ such that every neighborhood of $x$ meets $Y$.
It is the smallest closed set containing $Y$. Equivalently, 
$\overline{Y}$ is the set of all limit points of $Y$.
\item
The \emph{boundary} of $Y$ is $\Bd{Y}:=\overline{Y}\setminus \Int{Y}$.
\end{itemize}
\end{defi}

\begin{prop}
Let $X$ be a metric space and $A,B\subset X$ such that $A\subset B$.
Then $\Int{A}\subset \Int{B}$ and $\overline{A}\subset\overline{B}$.
\end{prop}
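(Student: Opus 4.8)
The plan is to give two short, independent arguments, one for the interior inclusion and one for the closure inclusion, each using the characterizations of $\Int{\cdot}$ and $\overline{\cdot}$ recalled just above the statement. Both directions are immediate consequences of the extremal descriptions ($\Int{Y}$ is the largest open set contained in $Y$; $\overline{Y}$ is the smallest closed set containing $Y$), so the ``proof'' is really just spelling out monotonicity of these two operations.

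First I would handle the interior. Since $A\subset B$, any open set contained in $A$ is in particular an open set contained in $B$. In particular $\Int{A}$, being an open set with $\Int{A}\subset A\subset B$, is an open subset of $B$; as $\Int{B}$ is the largest such set, we conclude $\Int{A}\subset\Int{B}$. Alternatively, and perhaps more transparently, I would argue pointwise with neighborhoods: if $x\in\Int{A}$, there is a neighborhood $U$ of $x$ with $U\subset A$; then $U\subset B$, so $x\in\Int{B}$ by definition of the interior.

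Next I would handle the closure by the dual argument. We have $A\subset B\subset\overline{B}$, and $\overline{B}$ is closed; since $\overline{A}$ is the smallest closed set containing $A$, it follows that $\overline{A}\subset\overline{B}$. The pointwise variant: if $x\in\overline{A}$, every neighborhood of $x$ meets $A$, hence meets $B$, so $x\in\overline{B}$. Either formulation can be given in one or two lines.

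There is no real obstacle here; the only thing to be careful about is to invoke only the definitions/facts already available in the excerpt (the extremal characterizations of interior and closure, or equivalently the neighborhood description of the interior and the ``every neighborhood meets $Y$'' description of the closure), rather than anything proved later. I would therefore present the extremal-set version as the primary proof, since it is the shortest and uses exactly the statements displayed immediately before, and optionally remark that the neighborhood version gives the same conclusion.
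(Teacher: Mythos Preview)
Your proposal is correct; both the extremal-set argument and the pointwise neighborhood argument are standard and complete. Note that the paper states this proposition in the appendix without proof (it is listed among elementary background facts in topology), so there is no proof in the paper to compare against.
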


\begin{prop}\label{prop:complement}
Let $X$ be a metric space and $Y\subset X$.
\begin{itemize}
\item $X\setminus \overline{Y}=\Int{X\setminus Y}$,
\item $X\setminus \Int{Y}=\overline{X\setminus Y}$.
\end{itemize}
\end{prop}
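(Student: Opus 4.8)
The plan is to prove the first identity directly from the definitions and then obtain the second as a formal consequence by complementation.

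First I would prove $X\setminus\overline{Y}=\Int{X\setminus Y}$ by a chain of equivalences. Fix $x\in X$. By Definition (interior, closure, boundary of a set), $x\notin\overline{Y}$ means that $x$ is not a limit point of $Y$, i.e.\ there exists a neighborhood $U$ of $x$ with $U\cap Y=\emptyset$; equivalently $U\subset X\setminus Y$; equivalently (again by the same definition) $x\in\Int{X\setminus Y}$. Reading this chain off, $x\in X\setminus\overline{Y}\iff x\in\Int{X\setminus Y}$, which gives the first equality. Here I would be slightly careful to invoke the characterization of $\overline{Y}$ as ``every neighborhood of $x$ meets $Y$'' rather than the ``smallest closed set'' description, since it is the one that feeds the equivalence cleanly.

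Next I would deduce the second identity. Apply the first identity to the set $Z:=X\setminus Y$ in place of $Y$. Since $X\setminus Z=X\setminus(X\setminus Y)=Y$, we get
$$
X\setminus\overline{X\setminus Y}=\Int{X\setminus(X\setminus Y)}=\Int{Y}.
$$
Taking complements in $X$ on both sides and using that $A\mapsto X\setminus A$ is an involution, this yields $\overline{X\setminus Y}=X\setminus\Int{Y}$, which is the second equality.

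I do not expect any real obstacle here; the only point needing attention is keeping the double complement $X\setminus(X\setminus Y)=Y$ straight when substituting, and making sure the substitution step is legitimate (it is, since the first identity was proved for an arbitrary subset of $X$). Alternatively, the second identity can be proved by exactly the same direct chain of equivalences as the first, using that $x\in\overline{X\setminus Y}$ iff every neighborhood of $x$ meets $X\setminus Y$ iff no neighborhood of $x$ is contained in $Y$ iff $x\notin\Int{Y}$; I would mention this as a remark but present the complementation argument as the main proof to avoid repetition.
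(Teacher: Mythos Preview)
Your proof is correct. The paper states this proposition in the appendix as a background result without proof, so there is nothing to compare against; your argument via the neighborhood characterizations of closure and interior, followed by substitution and complementation for the second identity, is the standard route and is carried out cleanly.
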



\subsection{Basis of open sets}

\begin{defi}[basis of open sets]\index{basis of open sets}\index{basis of the topology}
Let $X$ be a metric space. A \emph{basis of open sets} of $X$ is a family
$\CB$ of nonempty open sets of $X$ such that every open set
can be written as a (finite or not) union of elements of $\CB$. It
is also called a \emph{basis of the topology} of $X$.
\end{defi}

\begin{ex} ~

\noindent$\bullet$
In a metric space $X$, the open balls form a basis of open sets.

\noindent$\bullet$
In $\IR$, the family $\{(a,b)\mid a,b\in\IQ, a<b\}$ is a countable basis of
open sets.

\noindent$\bullet$
In a set $E$ endowed with the discrete topology, the family $(\{x\})_{x\in E}$
is a basis of open sets.
\end{ex}

\subsection{Distance between two sets, diameter}

\begin{defi}[distance between two sets]\index{distance between two sets}\index{d(x,y)@$d(A,B)$}
\label{notation:dAB}
Let $X$ be a metric space and $A,B\subset X$. The \emph{distance}
between the sets $A$ and $B$ is
$$
d(A,B):=\inf\{d(a,b)\mid a\in A, b\in B\}.
$$
\end{defi}

\begin{defi}[diameter, bounded set]
\label{notation:diam}
\index{diam(Y)@$\diam(Y)$}\index{diameter of a set}\index{bounded set}
Let $X$ be a metric space and $Y$ a nonempty subset of $X$. 
The \emph{diameter} of $Y$ is
$\diam(Y):=\sup\{d(x,y)\mid x,y\in Y\}$. 

The set $Y$ is \emph{bounded} if
there exist $x\in X$ and $r>0$ such that $Y\subset \overline{B}(x,r)$.
Equivalently, $Y$ is bounded if $\diam(Y)<+\infty$.
\end{defi}

\subsection{Dense sets, $G_{\delta}$-sets}

\begin{defi}[dense set]\index{dense set}
Let $X$ be a metric space. A set $Y\subset X$ is \emph{dense} in $X$ if
$\overline{Y}=X$. Equivalently, $Y$ is dense in $X$ if
$$
\forall x\in X,\ \forall \eps>0,\ \exists y\in Y,\ d(x,y)\le \eps.
$$
\end{defi}

\begin{defi}[$G_\delta$-set]\index{Gdelta set@$G_\delta$-set}
A \emph{$G_\delta$-set} is a countable intersection of open sets.
\end{defi}

\begin{prop}\label{prop:unionGdelta}
A countable union of $G_\delta$-sets is a $G_\delta$-set.
\end{prop}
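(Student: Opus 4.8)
The plan is to handle the set-theoretic content head on. Write each $G_\delta$-set as $A_k=\bigcap_{n\ge 1}U_{k,n}$ with all $U_{k,n}$ open. For a \emph{finite} subfamily $A_1,\dots ,A_K$ the distributive law for unions over intersections gives
\[
\bigcup_{k=1}^{K}\bigcap_{n\ge 1}U_{k,n}\;=\;\bigcap_{(n_1,\dots ,n_K)\in\IN^{K}}\;\bigcup_{k=1}^{K}U_{k,n_k},
\]
and since $\IN^{K}$ is countable and each inner set $\bigcup_{k=1}^{K}U_{k,n_k}$ is open (a finite union of open sets), the right-hand side presents $\bigcup_{k=1}^{K}A_k$ as a countable intersection of open sets, i.e.\ as a $G_\delta$-set. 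So the first step is simply to record this identity; it relies only on distributivity (of the kind in Lemma~\ref{lem:compl-cup-cap}) together with stability of open sets under finite unions, and it disposes of the statement for every \emph{finite} union of $G_\delta$-sets.

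The main obstacle is the passage from finite to countably infinite unions, and here the difficulty is genuine rather than cosmetic: for $\bigcup_{k\ge 1}A_k$ the same identity makes the outer intersection range over $\IN^{\IN}$, which is uncountable, so it no longer exhibits the union as a $G_\delta$-set — and in fact no argument can, since the assertion, as it stands, fails for arbitrary countable families. The standard witness is $\IQ\subset\IR$: each singleton $\{q\}$ is closed, hence $G_\delta$ in a metric space (for instance $\{q\}=\bigcap_{n\ge 1}B(q,1/n)$), so $\IQ=\bigcup_{q\in\IQ}\{q\}$ is a countable union of $G_\delta$-sets; but $\IQ$ is not a $G_\delta$-set of $\IR$, by the Baire category theorem (otherwise one could intersect its defining dense open sets with the dense open sets $\IR\setminus\{q\}$, $q\in\IQ$, and obtain a point of $\IQ\cap(\IR\setminus\IQ)$).

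My recommendation is therefore to prove and state the proposition for a \emph{finite} union of $G_\delta$-sets: this is exactly what the displayed identity delivers, and it is the usual companion of the (trivial) fact that a countable intersection of $G_\delta$-sets is a $G_\delta$-set. At the single place in the book where a genuinely countable union of $G_\delta$-sets is invoked, namely inside the proof of Theorem~\ref{theo:generic-chaos-etc}, the property actually needed is only that the union \emph{contains a dense} $G_\delta$-set, and that follows from the Baire-type density argument already present in that proof; so restricting Proposition~\ref{prop:unionGdelta} to finite unions leaves every stated theorem intact.
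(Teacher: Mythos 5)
You are right, and this is a genuine error in the paper, not a gap in your argument. The proposition as stated is false, and your counterexample is the standard one: in $\IR$, each singleton $\{q\}=\bigcap_{n\ge 1}(q-1/n,\,q+1/n)$ is a $G_\delta$-set, so $\IQ=\bigcup_{q\in\IQ}\{q\}$ is a countable union of $G_\delta$-sets, yet $\IQ$ is not a $G_\delta$-set of $\IR$ by the Baire category theorem (if $\IQ=\bigcap_{n}U_n$ with each $U_n$ open, then each $U_n$ is dense, and intersecting with the dense open sets $\IR\setminus\{q\}$, $q\in\IQ$, would yield an empty set that Baire forces to be dense). The paper states the proposition without proof in the appendix, so there is no argument to compare yours against; the distributive identity you display is correct and immediately proves the \emph{finite}-union case, and the fact that it forces the outer intersection to range over the uncountable set $\IN^{\IN}$ in the countable case is precisely the obstruction, not a defect of the method.

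Your diagnosis of the one place the proposition is invoked, inside the proof of Theorem~\ref{theo:generic-chaos-etc}, is also accurate, and the repair is even slightly simpler than you propose. Once the set $G:=\bigcup_{i,j\in\{1,2\}}\bigcup_{n\ge 0}(f\times f)^{-n}(G_{ij})$ is shown to be dense and the inclusion $G\subset\LY(f,\delta)$ is established, one can bypass any claim about $G$ being a $G_\delta$-set altogether: the inclusion forces $\LY(f,\delta)$ to be dense, and $\LY(f,\delta)$ is already known to be a $G_\delta$-set by Lemma~\ref{lem:LY-Gdelta}, so $\LY(f,\delta)$ itself is the required dense $G_\delta$-set and generic $\delta$-chaos follows. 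Thus either restricting Proposition~\ref{prop:unionGdelta} to finite unions, or deleting it and rewording that one sentence in the proof of Theorem~\ref{theo:generic-chaos-etc}, leaves every stated result of the book intact.
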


\subsection{Borel sets}

\begin{defi}[$\sigma$-algebra]
A $\sigma$-algebra\index{$\alpha$ sc@$\sigma$-algebra}
of a set $X$ is a collection $\CA$ of subsets of $X$
such that:
\begin{itemize}
\item $\emptyset \in\CA$,
\item if $A\in \CA$, then $X\setminus A\in\CA$,
\item if $A_n\in\CA$ for all $n\ge 0$, then 
$$
\bigcup_{n\ge 0}A_n\in\CA\quad\text{and}\bigcap_{n\ge 0}A_n\in\CA.
$$
\end{itemize}
\end{defi}

\begin{defi}[Borel set]
Let $X$ be a metric space. A \emph{Borel set}
\index{Borel set}
is any subset of $X$ that can be
formed from open sets (or, equivalently, from closed sets) through the
operations of countable unions, countable intersection and complement.
Equivalently, the family of all Borel sets is the smallest $\sigma$-algebra
containing all open sets of $X$.
\end{defi}

\section{Topology derived from the topology on $X$}

\begin{defi}[induced topology]\index{induced topology}
Let $X$ be a metric space and  $Y\subset X$. The 
restriction of the distance $d$ to $Y\times Y$ is a distance on $Y$, and
the topology given by this distance is called the \emph{induced topology} 
on $Y$. Equivalently, a set $A\subset Y$ is open (resp. closed) for the
induced topology on $Y$ if there exists an open (resp. closed) set 
$A'\subset X$ such that $A=A'\cap Y$.
\end{defi}

\begin{defi}[product topology]\index{product topology}\label{defi:producttopology}
Let $X_1,X_2$ be two metric spaces. The \emph{product topology} on
$X_1\times X_2$ is generated by the basis of open sets of the form
$U_1\times U_2$, where $U_i$ is a nonempty open set of $X_i$ for $i\in\{1,2\}$.

If the distances in $X_1, X_2$ are respectively $d_1, d_2$,
one can define a distance $d_{\infty}$ on $X_1\times X_2$ by
$$
d_{\infty}((x_1,x_2), (y_1,y_2))=\max(d_1(x_1,y_1),d_2(x_2,y_2)),
$$
and the product topology on $X_1\times X_2$ is 
the topology given by this distance.

One can define similarly the product topology on $X_1\times X_2\times\cdots
\times X_n$.
\end{defi}

\begin{defi}[product topology on $X^{\IZ^+}$]\index{product topology}
Let $X$ be a metric space. The \emph{product topology} on
$X^{\IZ^+}$ is generated by the basis of open sets of the form
$$
U_0\times U_1\times\cdots \times U_{k-1}\times X^{n\ge k}:=
\{(x_n)_{n\ge 0}\in X^{\IZ^+}\mid \forall n\in\Lbrack 0, k-1\Rbrack, x_n\in U_n\},
$$
where $U_0,\ldots, U_k$ are nonempty open sets of $X$ and $k\in\IN$.
If $d_X$ denotes the distance on $X$, one can define a distance $d$
on $X^{\IZ^+}$ by
$$
d((x_n)_{n\ge 0}, (y_n)_{n\ge 0}):=\sum_{n=0}^{+\infty}
\frac{d_X(x_n,y_n)}{2^n}
$$
(if $\diam(X)=+\infty$, one should replace $d_X(x_n,y_n)$ by
$\min(d_X(x_n,y_n),1)$).

The product topology on $X^{\IZ^+}$ is the topology given by this distance.
\end{defi}

\begin{ex}
Let $E:=\{0,1\}$ endowed with the discrete topology. The set
$\{0,1\}^{\IZ^+}$ is the set of all infinite sequences of $0$ and $1$.
This is a metric space. The family
$$
\{(x_n)_{n\ge 0}\in\{0,1\}^{\IZ^+}\mid \forall n\in\Lbrack 0,k-1\Rbrack, x_n=a_n\},
\text{ where } k\in\IN, a_0,\ldots,a_{k-1}\in\{0,1\},
$$
is a countable basis of open sets of $\{0,1\}^{\IZ^+}$.
\end{ex}

\section{Connectedness, intervals}

\begin{defi}[connected set]\index{connected set}\index{disconnected set}
Let $X$ be a metric space. A set $Y\subset X$ is \emph{disconnected}
if there exist disjoint open sets $U,V\subset X$ such that $Y\subset U\cup V$,
$Y\cap U\ne\emptyset$ and $Y\cap V\ne\emptyset$.
Otherwise $Y$ is called \emph{connected}.
\end{defi}

\begin{prop}\label{prop:union-connected}
Let $X$ be a metric space and let $(C_i)_{i\in \CI}$ be a (finite or infinite) 
family of connected sets in $X$.
If there exists a point $x$ such that $x\in C_i$ for all
$i\in \CI$, then  $\bigcup_{i\in \CI} C_i$ is a connected set.
\end{prop}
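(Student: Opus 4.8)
The plan is to argue by contradiction, using the defining property of disconnectedness directly. Suppose $C:=\bigcup_{i\in\CI}C_i$ were disconnected. Then by definition there exist disjoint open sets $U,V\subset X$ with $C\subset U\cup V$, $C\cap U\neq\emptyset$ and $C\cap V\neq\emptyset$. The distinguished point $x$ lies in $C$, hence in $U\cup V$; without loss of generality assume $x\in U$.

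Next I would locate an index witnessing the nontrivial part of the splitting. Since $C\cap V\neq\emptyset$ and $C=\bigcup_{i\in\CI}C_i$, there is some $i_0\in\CI$ with $C_{i_0}\cap V\neq\emptyset$. On the other hand $x\in C_{i_0}$ by hypothesis and $x\in U$, so $C_{i_0}\cap U\neq\emptyset$ as well. Moreover $C_{i_0}\subset C\subset U\cup V$, and $U,V$ are disjoint open sets. These are exactly the conditions showing that $C_{i_0}$ is disconnected, contradicting the assumption that each $C_i$ is connected. Hence $C$ is connected.

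There is essentially no obstacle here: the whole argument is a one-line unwinding of the definition of connectedness (Definition of connected set in the Appendix), and it works uniformly whether $\CI$ is finite or infinite, since at no point is finiteness used. The only mild care needed is to phrase ``without loss of generality $x\in U$'' correctly — the two cases $x\in U$ and $x\in V$ are symmetric under swapping the roles of $U$ and $V$ — and to make sure the open sets $U,V$ furnished for $C$ are reused verbatim as the open sets disconnecting $C_{i_0}$, which is legitimate because disconnectedness of $C_{i_0}$ only requires the ambient sets $U,V$ to be open in $X$ and to satisfy the three intersection/covering conditions relative to $C_{i_0}$.
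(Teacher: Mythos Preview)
Your proof is correct and is the standard argument. The paper does not actually prove this proposition: it appears in the Appendix, which merely recalls background results in topology without proof, so there is nothing to compare against beyond noting that your contradiction argument via the definition of disconnectedness is exactly the expected one.
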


\begin{defi}[connected component]\index{connected component}
Let $X$ be metric space, $Y\subset X$ and $y\in Y$. 
The \emph{connected component} of $y$ in $Y$ is the largest (for inclusion) 
connected set $C$ containing $y$ such that $C\subset Y$. 
The connected components of two
points are either equal or disjoint. The connected components
of all points of $Y$ are called the \emph{connected components of $Y$}.
\end{defi}

\begin{defi}[interval]\index{interval}\index{real interval}\index{subinterval}
A \emph{(real) interval} $I$ is a subset of $\IR$ of one of the following forms:
\begin{itemize}
\item $I=[a,b]=\{x\in\IR\mid a\le x\le b\}$ with $a,b\in\IR$, $a\le b$ 
(if $a=b$, then $I=\{a\}$),
\item $I=(a,b)=\{x\in\IR\mid a<x<b\}$ with $a\in\IR\cup\{-\infty\},b\in\IR\cup\{+\infty\}$, $a\le b$ (if $a=-\infty$ and $b=+\infty$, then $I=\IR$; if $a=b$, then $I=\emptyset$),
\item $I=[a,b)=\{x\in\IR\mid a\le x< b\}$ with $a\in\IR,b\in\IR\cup\{+\infty\}$, $a<b$,
\item $I=(a,b]=\{x\in \IR\mid a<x\le b\}$ with $a\in\IR\cup\{-\infty\},b\in\IR$, $a<b$.
\end{itemize}
If $I$ is an interval, a \emph{subinterval} of $I$ is an interval included
in $I$.
\end{defi}

\begin{theo}
When $X$ is a real interval, the connected sets in $X$
are exactly the subintervals of $X$. In particular, the connected sets
in $\IR$ are exactly the intervals.
\end{theo}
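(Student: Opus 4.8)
The plan is to prove the two inclusions separately, using throughout the elementary characterization of real intervals as the \emph{convex} subsets of $\IR$: a set $J\subset\IR$ is an interval if and only if, for all $a,b\in J$ with $a\le b$, one has $[a,b]\subset J$. (This is checked directly against the list of shapes in the definition of an interval, and it is used in both directions.) Since $X$ is itself a real interval, a subset $Y\subset X$ is a subinterval of $X$ precisely when it is convex; and, by definition of the induced topology, the open sets of $X$ are exactly the sets $U'\cap X$ with $U'$ open in $\IR$. It then suffices to prove the statement for $X$, since $\IR=(-\infty,+\infty)$ is itself a real interval, which gives the ``in particular'' part.

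First I would show that every subinterval $I$ of $X$ is connected. Suppose not: there are disjoint open sets $U,V\subset X$ with $I\subset U\cup V$, $I\cap U\neq\emptyset$ and $I\cap V\neq\emptyset$. Pick $a\in I\cap U$ and $b\in I\cap V$; relabelling if necessary, assume $a<b$. By convexity $[a,b]\subset I\subset U\cup V$. Put $c:=\sup\{x\in[a,b]\mid x\in U\}$; then $c\in[a,b]$, so $c\in U$ or $c\in V$, and these are disjoint. If $c\in U$, then $c\neq b$ (as $b\in V$), so $c<b$; since $U$ is open in $X$ and $[c,b]\subset X$, some $[c,c+\eps]$ with $c+\eps\le b$ lies in $U$, contradicting the definition of the supremum. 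If $c\in V$, then $c\neq a$ (as $a\in U$), so $c>a$; since $V$ is open there is $\eps>0$ with $(c-\eps,c]\cap X\subset V$, so no point of $(c-\eps,c]$ lies in $U$, again contradicting the definition of $c$ (points of $[a,b]\cap U$ arbitrarily close to $c$ would have to exist). Either way we reach a contradiction, so $I$ is connected.

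Next I would show that every connected subset $Y$ of $X$ is a subinterval of $X$, i.e.\ is convex. Suppose not: there exist $a,b\in Y$ with $a<b$ and a point $c\in(a,b)$ with $c\notin Y$. Note $c\in X$ since $a,b\in X$ and $X$ is an interval. Set $U:=(-\infty,c)\cap X$ and $V:=(c,+\infty)\cap X$; these are open in $X$, disjoint, and $Y\subset U\cup V$ because $c\notin Y$, while $a\in Y\cap U$ and $b\in Y\cap V$. Hence $Y$ is disconnected, a contradiction. Together the two parts show that the connected subsets of $X$ are exactly the subintervals of $X$.

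The main obstacle is the supremum argument in the first part: this is the only place where the order-completeness of $\IR$ is genuinely used, and one must be careful with the induced topology on $X$ (so that ``$U$ open in $X$'' yields neighborhoods lying inside $X$, not merely inside $\IR$) and with the two boundary cases $c\in U$ and $c\in V$. The reverse inclusion is essentially immediate once intervals have been identified with the convex subsets of $\IR$.
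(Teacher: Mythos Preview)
Your proof is correct and is the standard argument for this classical result. Note that the paper itself does not supply a proof of this theorem: it appears in the appendix as a background fact from general topology, stated without proof. Your two-step argument (convexity implies connectedness via a supremum argument, and non-convexity yields an explicit disconnection) is exactly the textbook approach one would expect here, so there is nothing to compare against.
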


\begin{prop}
Let $(I_n)_{n\ge 0}$ be a (finite or infinite) sequence of intervals in
$\IR$. Then 
\begin{itemize}
\item $\bigcap_{n\ge 0} I_n$ is an interval (maybe empty).
\item If there exists a point $x$ such that $x\in I_n$ for all
$n\ge 0$, then  $\bigcup_{n\ge 0} I_n$ is an interval containing $x$
(this is a particular case of Proposition~\ref{prop:union-connected}).
\end{itemize}
\end{prop}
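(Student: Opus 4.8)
The plan is to reduce both assertions to the elementary observation that a subset of $\IR$ is an interval if and only if it is \emph{convex}, i.e.\ whenever it contains two points $a\le b$ it contains the whole segment $[a,b]$. First I would record this characterization: any set of one of the four listed forms is visibly convex; conversely, if $Y\subset\IR$ is nonempty and convex, then either $Y$ is a single point, or, putting $a:=\inf Y\in\IR\cup\{-\infty\}$ and $b:=\sup Y\in\IR\cup\{+\infty\}$, convexity forces $(a,b)\subset Y$, while trivially $Y$ lies between $a$ and $b$; inspecting whether $a$ and $b$ belong to $Y$ shows that $Y$ is one of the four forms. (Alternatively, this follows from the theorem recalled just above, that the connected subsets of $\IR$ are exactly the intervals, together with the fact that the connected subsets of $\IR$ are precisely the convex ones.)

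For the first assertion, suppose $a,b\in\bigcap_{n\ge 0}I_n$ with $a\le b$. For each $n$ the set $I_n$ is an interval, hence convex, and contains $a$ and $b$, so $[a,b]\subset I_n$; intersecting over all $n$ gives $[a,b]\subset\bigcap_{n\ge 0}I_n$. Thus $\bigcap_{n\ge 0}I_n$ is convex, hence an interval (possibly empty or reduced to a point). I would emphasize that one cannot shortcut this by "an intersection of connected sets is connected", which is false in general; the genuine content is that intervals of $\IR$ are convex.

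For the second assertion I would simply invoke Proposition~\ref{prop:union-connected}: each $I_n$ is connected, being an interval, and they all contain the common point $x$, so $\bigcup_{n\ge 0}I_n$ is connected, hence an interval, and it obviously contains $x$. A direct convexity argument is equally short: given $a\le b$ in the union, choose $i,j$ with $a\in I_i$ and $b\in I_j$; since $x\in I_i\cap I_j$, in each of the cases $x<a$, $a\le x\le b$, $b<x$, convexity of $I_i$ and $I_j$ gives $[a,b]\subset I_i$, $[a,b]\subset I_i\cup I_j$, or $[a,b]\subset I_j$ respectively, so $[a,b]\subset\bigcup_{n\ge 0}I_n$. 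There is no real obstacle here: the only point requiring a (trivial) argument is the convexity characterization of intervals of $\IR$ underlying both parts; everything else is routine.
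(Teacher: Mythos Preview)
Your argument is correct. The paper does not actually supply a proof of this proposition: it sits in the appendix of background topology facts and is stated without proof, the second item being explicitly flagged as a particular case of Proposition~\ref{prop:union-connected}. Your convexity characterization is the standard way to handle the first item, and both of your arguments for the second item (invoking Proposition~\ref{prop:union-connected} and the direct convexity check) are fine.
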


\section{Compactness}

\subsection{Definition and equivalent conditions}

\begin{defi}[open cover]\index{open cover}
An \emph{open cover} of a metric space $X$ is a family of open sets
$(U_i)_{i\in \CI}$ such that $X=\bigcup_{i\in \CI} U_i$.
\end{defi}

Notice that the set of indices is arbitrary in the previous definition.
For example, if $r>0$, $(B(x,r))_{x\in X}$ is an open cover
of $X$.

\begin{defi}[compact set]\index{compact set}
A metric space $X$ is \emph{compact} if every open cover $(U_i)_{i\in \CI}$
of $X$ admits a finite subcover, that is, there is a finite
set of indices $J\subset \CI$ such that $X=\bigcup_{i\in J}U_i$.

A subset $Y\subset X$ is \emph{compact} if $Y$ is compact for the induced
topology on $Y$.
\end{defi}

\begin{defi}[subsequence, limit point of a sequence]\index{subsequence}\index{limit point of a sequence}
Let $X$ be a metric space and $(x_n)_{n\ge 0}$ a sequence of points of
$X$. A \emph{subsequence} of $(x_n)_{n\ge 0}$ is a sequence
of the form $(x_{n_i})_{i\ge 0}$, where $(n_i)_{i\ge 0}$ is an increasing
sequence of non negative numbers.
A point $x\in X$ is a \emph{limit point} of $(x_n)_{n\ge 0}$
if there exist a subsequence of $(x_n)_{n\ge 0}$ that converges to $x$.
\end{defi}

\begin{theo}[Bolzano-Weierstrass theorem]\index{Bolzano-Weierstrass theorem}
A metric space $X$ is compact if and only if every sequence
$(x_n)_{n\ge 0}$ of points of $X$ admits a limit point.
\end{theo}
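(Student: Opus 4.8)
The plan is to prove the two implications separately; the forward direction is a short argument, while the converse requires introducing the notions of total boundedness and of a Lebesgue number for an open cover.

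First I would prove that compactness implies the existence of a limit point for every sequence. Arguing by contradiction, suppose $(x_n)_{n\ge 0}$ has no limit point. Then for each $x\in X$ there is a radius $r_x>0$ such that the ball $B(x,r_x)$ contains $x_n$ for only finitely many indices $n$, since otherwise one could extract a subsequence converging to $x$. The family $\bigl(B(x,r_x)\bigr)_{x\in X}$ is an open cover of $X$; by compactness it admits a finite subcover $B(y_1,r_{y_1}),\ldots,B(y_k,r_{y_k})$. Since each of these balls contains only finitely many terms of the sequence and their union is $X$, only finitely many indices $n$ can occur, contradicting the fact that $n$ ranges over all of $\IZ^+$.

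For the converse I would first establish two auxiliary facts. \emph{Total boundedness:} for every $\eps>0$ there are finitely many points $z_1,\ldots,z_k$ with $X=\bigcup_{i=1}^k B(z_i,\eps)$. If this failed for some $\eps$, one could inductively pick points $x_0,x_1,\ldots$ with $d(x_n,x_m)\ge\eps$ for all $n\neq m$; any subsequence of such a sequence fails to be Cauchy, hence does not converge, so the sequence has no limit point. \emph{Lebesgue number:} for every open cover $\mathcal U$ of $X$ there is $\delta>0$ such that every ball $B(x,\delta)$ is contained in some member of $\mathcal U$. If not, for each $n\ge 1$ there is a point $x_n$ with $B(x_n,1/n)$ not contained in any element of $\mathcal U$; a limit point $x$ of $(x_n)$, reached along a subsequence $(x_{n_i})$, lies in some $U\in\mathcal U$, and choosing $r>0$ with $B(x,r)\subset U$ and then $i$ large enough that $d(x_{n_i},x)<r/2$ and $1/n_i<r/2$ gives $B(x_{n_i},1/n_i)\subset B(x,r)\subset U$, a contradiction.

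With these in hand, compactness follows quickly: given an open cover $\mathcal U$, let $\delta>0$ be a Lebesgue number and let $B(z_1,\delta),\ldots,B(z_k,\delta)$ be a finite cover of $X$ by $\delta$-balls obtained from total boundedness. Each $B(z_i,\delta)$ is contained in some $U_i\in\mathcal U$, and then $\{U_1,\ldots,U_k\}$ is the required finite subcover. The main obstacle is this converse direction, and within it the Lebesgue number lemma, which is the only place where the hypothesis is used in an essential, non-obvious way; the total boundedness step and the final assembly are routine once that lemma is available.
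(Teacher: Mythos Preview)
Your proof is correct and follows the standard route for establishing the equivalence of compactness and sequential compactness in metric spaces. However, the paper does not actually prove this theorem: it appears in the appendix as a background result from general topology, stated without proof and with a reference to standard textbooks (Bourbaki, Kuratowski, Munkres, Oxtoby). So there is no proof in the paper to compare against; your argument simply supplies what the paper deliberately omits.
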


\begin{theo}\label{theo:closedsubsetcompact}
Let $X$ be a compact metric space. A subset $Y\subset X$ is compact if
and only if $Y$ is closed in $X$.
\end{theo}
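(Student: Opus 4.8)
The statement to prove is Theorem~\ref{theo:closedsubsetcompact}: if $X$ is a compact metric space, then $Y \subset X$ is compact if and only if $Y$ is closed in $X$.

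The plan is to prove both implications separately, using the Bolzano-Weierstrass characterization of compactness that has just been established. This is the natural tool since we are working in metric spaces, where sequential compactness and compactness coincide.

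First I would prove that a compact subset is closed. Suppose $Y \subset X$ is compact; I want to show $Y$ contains all its limit points. Let $x \in X$ be a limit point of $Y$. Then there is a sequence $(y_n)_{n \ge 0}$ of points of $Y$ converging to $x$. By the Bolzano-Weierstrass theorem applied to the compact set $Y$, the sequence $(y_n)$ has a limit point in $Y$; that is, some subsequence $(y_{n_i})$ converges to a point $y \in Y$. But $(y_{n_i})$ is a subsequence of $(y_n)$, which converges to $x$, so by uniqueness of limits in a metric space $y = x$, hence $x \in Y$. Therefore $Y$ is closed. (Alternatively, one could argue directly with open covers, but the sequential argument is shortest given what is available.)

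Next I would prove that a closed subset of a compact space is compact, again via Bolzano-Weierstrass. Suppose $Y$ is closed in $X$, and let $(y_n)_{n \ge 0}$ be any sequence of points of $Y$; since $Y \subset X$ this is also a sequence in $X$. Because $X$ is compact, by Bolzano-Weierstrass $(y_n)$ has a limit point $x \in X$, i.e. some subsequence $(y_{n_i})$ converges to $x$. Since each $y_{n_i} \in Y$ and $Y$ is closed, the limit $x$ belongs to $Y$. Thus every sequence in $Y$ has a limit point in $Y$, so $Y$ is compact by Bolzano-Weierstrass. Combining the two implications gives the equivalence.

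I do not anticipate a real obstacle here; the only subtlety is to make sure the notion of "limit point of a sequence" versus "limit point of a set" is handled cleanly, and to invoke that convergence of a subsequence to $x$ together with a known limit forces equality — all of which are immediate from the metric space definitions recalled just above. The proof is short and self-contained once Bolzano-Weierstrass is in hand.
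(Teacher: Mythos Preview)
Your proof is correct and is exactly the standard argument via the Bolzano--Weierstrass characterization of compactness, which the appendix has just stated. Note, however, that the paper does not actually give a proof of this theorem: it appears in the appendix as a background result stated without proof (the appendix explicitly refers the reader to standard topology references for details). So there is nothing to compare against; your argument would serve perfectly well as the omitted justification.
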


\begin{prop}
A set $X\subset \IR^n$ ($n\in\IN$) is compact if and only if $X$ is
closed and bounded for the distance $d_{\infty}$ (or
equivalently for $d_1$ or $d_2$).
\end{prop}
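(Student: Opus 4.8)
The plan is to prove the Heine--Borel characterization of compact subsets of $\IR^n$, using the sequential (Bolzano--Weierstrass) description of compactness together with Theorem~\ref{theo:closedsubsetcompact}. Since the distances $d_1,d_2,d_\infty$ are equivalent, they define the same topology (hence the same compact sets, compactness being a purely topological notion) and the same bounded sets up to a multiplicative constant; so it suffices to argue with $d_\infty$, and I would record this reduction at the outset.

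For the direct implication, suppose $X$ is compact. Boundedness: the family $(B(0,k))_{k\in\IN}$ is an open cover of $X$ (indeed of all of $\IR^n$), so by compactness there is a finite subcover, and since these balls are nested, $X\subset B(0,K)$ for the largest index $K$; thus $X$ is bounded. Closedness: if $x$ is a limit point of $X$, pick a sequence $(x_k)_{k\ge 0}$ in $X$ with $x_k\to x$; by the Bolzano--Weierstrass theorem applied to the compact space $X$, this sequence has a limit point $y\in X$, but any limit point of a convergent sequence equals its limit, so $x=y\in X$. Hence every limit point of $X$ lies in $X$, i.e. $X$ is closed.

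For the converse, suppose $X$ is closed and bounded for $d_\infty$; then $X$ is contained in a closed ball $\overline{B}(x_0,r)$ for $d_\infty$, which is exactly a cube $\prod_{i=1}^{n}[a_i,b_i]$. I would first show that every sequence $(x^{(k)})_{k\ge 0}$ in this cube has a subsequence converging in the cube: each coordinate sequence $(x^{(k)}_i)_k$ is a bounded sequence of reals, so one extracts, one coordinate after another (there are only $n$ of them), nested subsequences along which each coordinate converges; the resulting subsequence converges in $\IR^n$ to a point whose coordinates are these coordinate limits, and that point lies in the cube since each $[a_i,b_i]$ is closed. Because $X$ is closed, any such subsequential limit of a sequence taken in $X$ still lies in $X$; so every sequence in $X$ has a limit point in $X$, and $X$ is compact by the Bolzano--Weierstrass theorem. (Alternatively, once the cube is known to be compact, one may simply invoke Theorem~\ref{theo:closedsubsetcompact}, as $X$ is a closed subset of the compact cube.)

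The main obstacle is the one-dimensional input used above: every bounded sequence of real numbers has a convergent subsequence (equivalently, that a closed bounded interval $[a,b]$ is compact). This is not among the results quoted in the excerpt and must be supplied separately. The standard route is a bisection argument — repeatedly halving the interval that contains infinitely many terms of the sequence, producing nested intervals of shrinking length whose unique common point is a subsequential limit — or, more slickly, using that every real sequence has a monotone subsequence together with the fact that a bounded monotone sequence converges (completeness of $\IR$). Everything else is routine bookkeeping with the equivalence of the three norms and the definitions of closed, bounded, and compact.
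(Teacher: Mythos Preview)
The paper does not actually prove this proposition: it appears in the appendix, which explicitly ``recall[s] succinctly some definitions and results in topology'' and refers the reader to standard references for details. So there is no proof in the paper to compare against.

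Your argument is a correct and standard proof of the Heine--Borel theorem. The reduction to $d_\infty$ via equivalence of the three metrics is appropriate; the forward direction (compact $\Rightarrow$ closed and bounded) is handled cleanly via the nested-ball cover and sequential compactness; and the converse via diagonal extraction of coordinate subsequences inside a closed cube, followed by Theorem~\ref{theo:closedsubsetcompact}, is exactly the classical route. You are also right to flag that the one-dimensional Bolzano--Weierstrass (every bounded real sequence has a convergent subsequence) is the genuine analytic input, and your two suggested proofs of it (bisection, or monotone subsequence plus completeness) are both standard and correct.
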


\subsection{product, intersection of compact sets}

\begin{theo}\label{theo:product-of-compacts}
Let $X_1,X_2$ be compact metric spaces. Then $X_1\times X_2$ is a compact
metric space.
\end{theo}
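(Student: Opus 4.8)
The plan is to prove compactness of $X_1\times X_2$ via the sequential characterization, i.e.\ the Bolzano--Weierstrass theorem quoted above: it suffices to show that every sequence in $X_1\times X_2$ has a limit point. Recall that $X_1\times X_2$ carries the metric $d_\infty\big((x_1,x_2),(y_1,y_2)\big)=\max\big(d_1(x_1,y_1),d_2(x_2,y_2)\big)$, where $d_1,d_2$ are the distances on $X_1,X_2$; so $X_1\times X_2$ is indeed a metric space, and it only remains to establish sequential compactness.

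First I would record the elementary fact that a sequence $\big((a_n,b_n)\big)_{n\ge 0}$ converges to $(a,b)$ in $(X_1\times X_2,d_\infty)$ if and only if $a_n\to a$ in $X_1$ and $b_n\to b$ in $X_2$; this is immediate from the definition of $d_\infty$, since $d_1(a_n,a)\le d_\infty\big((a_n,b_n),(a,b)\big)$, $d_2(b_n,b)\le d_\infty\big((a_n,b_n),(a,b)\big)$, and $d_\infty\big((a_n,b_n),(a,b)\big)=\max\big(d_1(a_n,a),d_2(b_n,b)\big)$. Then, given an arbitrary sequence $\big((x_n,y_n)\big)_{n\ge 0}$ in $X_1\times X_2$, I would apply the Bolzano--Weierstrass theorem to the compact space $X_1$ and the sequence $(x_n)_{n\ge 0}$: there is an increasing sequence of indices $(n_k)_{k\ge 0}$ and a point $x\in X_1$ with $x_{n_k}\to x$. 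Next I would apply the Bolzano--Weierstrass theorem to the compact space $X_2$ and the sequence $(y_{n_k})_{k\ge 0}$: there is a subsequence $(n_{k_j})_{j\ge 0}$ of $(n_k)_{k\ge 0}$ and a point $y\in X_2$ with $y_{n_{k_j}}\to y$. Since $(x_{n_{k_j}})_{j\ge 0}$ is a subsequence of $(x_{n_k})_{k\ge 0}$, it still converges to $x$. Hence $(x_{n_{k_j}},y_{n_{k_j}})\to(x,y)$ in $X_1\times X_2$ by the coordinatewise criterion, so $(x,y)$ is a limit point of $\big((x_n,y_n)\big)_{n\ge 0}$.

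Therefore every sequence in $X_1\times X_2$ admits a limit point, and by the Bolzano--Weierstrass theorem the metric space $X_1\times X_2$ is compact. The only point requiring care is the bookkeeping of the nested subsequences --- ensuring that the final index sequence $(n_{k_j})_j$ is simultaneously a subsequence of the first extraction (so that the first coordinate still converges) and produced by the second extraction (so that the second coordinate converges) --- but for two factors this is routine, so I do not expect any genuine obstacle. (For completeness one could remark that the general statement for finitely many factors then follows by an obvious induction, $X_1\times\cdots\times X_n\cong (X_1\times\cdots\times X_{n-1})\times X_n$, though the theorem as stated only asks for two factors.)
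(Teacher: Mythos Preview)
Your proof is correct and entirely standard: the two-step extraction via Bolzano--Weierstrass is the usual argument for sequential compactness of a finite product of compact metric spaces. Note, however, that the paper does not actually prove this theorem --- it appears in the appendix, which merely collects background facts from topology without proof --- so there is no argument in the paper to compare yours against.
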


\begin{prop}\label{prop:cap-compact}
Let $X$ be a metric space.
Let $(Y_n)_{n\ge 0}$ be a sequence of nonempty compact subsets of $X$ such that
$Y_{n+1}\subset Y_n$ for all $n\ge 0$. Then
$\bigcap_{n=0}^{+\infty} Y_n$ is a nonempty compact set.
If in addition $\lim_{n\to +\infty} \diam(Y_n)=0$, then 
$\bigcap_{n=0}^{+\infty} Y_n$ is a singleton (i.e., it contains a single point).
\end{prop}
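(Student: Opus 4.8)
The plan is to establish nonemptiness via an extraction argument using the Bolzano--Weierstrass theorem, then to obtain compactness from the observation that the intersection is a closed subset of the compact space $Y_0$, and finally to treat the singleton case by a one-line diameter estimate. Throughout I would work inside $Y_0$ equipped with the induced topology, so as to apply Theorem~\ref{theo:closedsubsetcompact} cleanly: since each $Y_n$ is a compact subset of the compact metric space $Y_0$, it is closed in $Y_0$; this is the reason for fixing $Y_0$ as the ambient space rather than $X$, because the version of Theorem~\ref{theo:closedsubsetcompact} available in the excerpt is phrased for subsets of a compact space.

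First I would prove that $\bigcap_{n\ge 0}Y_n\neq\emptyset$. For each $n\ge 0$ choose a point $x_n\in Y_n$, which is possible since $Y_n\neq\emptyset$. The sequence $(x_n)_{n\ge 0}$ lies entirely in $Y_0$, which is compact, so by the Bolzano--Weierstrass theorem it has a limit point $x\in Y_0$; let $(x_{n_i})_{i\ge 0}$ be a subsequence with $x_{n_i}\to x$. Fix $m\ge 0$. For all sufficiently large $i$ we have $n_i\ge m$, hence $x_{n_i}\in Y_{n_i}\subset Y_m$ by the nesting hypothesis. Thus all but finitely many terms of the convergent subsequence $(x_{n_i})_{i\ge 0}$ lie in $Y_m$, and since $Y_m$ is closed in $Y_0$, its limit $x$ belongs to $Y_m$. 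As $m$ was arbitrary, $x\in\bigcap_{n\ge 0}Y_n$, so the intersection is nonempty.

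Next, the set $\bigcap_{n\ge 0}Y_n$ is an intersection of sets closed in $Y_0$, hence is itself closed in $Y_0$; being a closed subset of the compact space $Y_0$, it is compact by Theorem~\ref{theo:closedsubsetcompact}. Finally, assume in addition that $\diam(Y_n)\to 0$, and let $x,y\in\bigcap_{n\ge 0}Y_n$. Then $d(x,y)\le\diam(Y_n)$ for every $n$, and letting $n\to+\infty$ yields $d(x,y)=0$, i.e.\ $x=y$; hence the intersection is a singleton. I do not anticipate any genuine obstacle in this proof; the only point that needs a little care is keeping all the ``closed'' assertions relative to $Y_0$, so that the stated form of Theorem~\ref{theo:closedsubsetcompact} applies verbatim.
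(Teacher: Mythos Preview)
Your proof is correct. Note, however, that the paper states this proposition in its appendix as a standard background result without proof (the appendix explicitly says it recalls definitions and results in topology, referring to standard textbooks for details), so there is no original proof to compare against; your argument is a perfectly standard and clean way to establish the result.
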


\subsection{Cauchy sequence, complete space}

\begin{defi}[Cauchy sequence]\index{Cauchy sequence}
Let $X$ be a metric space and $(x_n)_{n\ge 0}$ be a sequence of points
of $X$. Then $(x_n)_{n\ge 0}$ is a \emph{Cauchy sequence} if
$$
\forall \eps>0,\ \exists N\ge 0,\ \forall n>m\ge N,\ d(x_n,x_m)<\eps.
$$
\end{defi}

\begin{prop}
Let $X$ be a metric space. If $(x_n)_{n\ge 0}$ is a sequence of points
of $X$ that converges, then it is a Cauchy sequence.
\end{prop}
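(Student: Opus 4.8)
The statement to prove is the last one in the Appendix: every convergent sequence in a metric space is a Cauchy sequence. The plan is to unwind the two definitions and apply the triangular inequality, which is the only tool needed.

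First I would fix a sequence $(x_n)_{n\ge 0}$ in $X$ together with its limit $x$, so that $\lim_{n\to+\infty} d(x_n,x) = 0$, i.e. for every $\eps>0$ there is an index $N$ with $d(x_n,x)<\eps/2$ for all $n\ge N$. The choice of $\eps/2$ rather than $\eps$ is the one small piece of foresight: it leaves exactly enough room for the triangular inequality to close. Then, given two indices $n > m \ge N$, I would write $d(x_n,x_m) \le d(x_n,x) + d(x,x_m)$ using the triangular inequality from Definition~\ref{notation:dxy} (symmetry of $d$ lets me read $d(x,x_m)$ as $d(x_m,x)$), and bound each term by $\eps/2$, obtaining $d(x_n,x_m)<\eps$. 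This is precisely the defining condition for $(x_n)_{n\ge 0}$ to be a Cauchy sequence.

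There is no real obstacle here; the only thing to be careful about is matching the quantifier structure in the definition of a Cauchy sequence ($\forall\eps>0,\ \exists N,\ \forall n>m\ge N$) — so I would present the argument by first producing $N$ from $\eps$ and only afterwards introducing arbitrary $n>m\ge N$, rather than the other way around. Nothing else is needed; no compactness, completeness, or topological input beyond the axioms of a distance.
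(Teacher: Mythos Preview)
Your proof is correct and is the standard $\eps/2$ triangular-inequality argument. The paper actually states this proposition without proof in the Appendix, which is explicitly meant only to recall background results, so there is nothing to compare against; your argument is exactly what one would expect here.
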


\begin{defi}[complete space]\index{complete space}
A metric space $X$ is \emph{complete} if every Cauchy sequence converges.
\end{defi}

\begin{prop}
A compact metric space is complete.
\end{prop}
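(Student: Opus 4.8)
The statement to prove is that a compact metric space $X$ is complete, i.e., every Cauchy sequence in $X$ converges. The plan is to combine the Bolzano--Weierstrass characterization of compactness (stated earlier in the appendix) with a standard $\eps/2$ argument: a Cauchy sequence that has a convergent subsequence must itself converge to the same limit.

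First I would take an arbitrary Cauchy sequence $(x_n)_{n\ge 0}$ in $X$. Since $X$ is compact, the Bolzano--Weierstrass theorem guarantees that $(x_n)_{n\ge 0}$ admits a limit point $x\in X$; that is, there is an increasing sequence of indices $(n_i)_{i\ge 0}$ such that $x_{n_i}\to x$ as $i\to+\infty$. The candidate limit of the whole sequence is this point $x$.

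Next I would fix $\eps>0$ and use the Cauchy property to choose $N\ge 0$ with $d(x_n,x_m)<\eps/2$ for all $n,m\ge N$. Since $x_{n_i}\to x$ and $(n_i)$ is increasing, I can pick $i$ large enough that $n_i\ge N$ and $d(x_{n_i},x)<\eps/2$. Then for every $n\ge N$ the triangular inequality gives
$$
d(x_n,x)\le d(x_n,x_{n_i})+d(x_{n_i},x)<\frac{\eps}{2}+\frac{\eps}{2}=\eps.
$$
As $\eps>0$ was arbitrary, this shows $\lim_{n\to+\infty}x_n=x$, so $(x_n)_{n\ge 0}$ converges in $X$. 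Since the Cauchy sequence was arbitrary, $X$ is complete.

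There is no real obstacle here: the only nontrivial input is the Bolzano--Weierstrass theorem, which is already available in the appendix, and the rest is the routine observation that a Cauchy sequence with a convergent subsequence converges. The one point worth stating carefully is that $(n_i)$ being strictly increasing lets us find an index $n_i$ that is simultaneously $\ge N$ and close to $x$; this is what makes the triangular inequality estimate go through.
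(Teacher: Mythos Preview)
Your proof is correct. The paper itself does not provide a proof of this proposition: it appears in the appendix, which only recalls background results in topology without proving them. Your argument via the Bolzano--Weierstrass theorem and the standard $\eps/2$ triangle inequality estimate is exactly the expected proof and uses only results already stated earlier in the appendix.
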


\subsection{Countable basis of open sets}

\begin{prop}\index{countable basis of open sets}
A compact metric space admits a countable basis of open sets, that is, there 
exists a family $(U_n)_{n\in\IN}$ of nonempty open sets of $X$ such that, 
for every open set
$U\subset X$, there exists $\CI\subset \IN$ such that $U=\bigcup_{n\in \CI}U_n$.
The sets $U_n$ can be chosen to be open balls.
\end{prop}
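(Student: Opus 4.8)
The plan is to exploit compactness at every scale $1/n$ to extract a countable family of open balls, and then to check directly that this family is a basis of the topology. First I would fix, for each integer $n\ge 1$, the open cover $\{B(x,1/n)\mid x\in X\}$ of $X$; by compactness it admits a finite subcover, say $B(x^n_1,1/n),\ldots,B(x^n_{k_n},1/n)$. Collecting all these balls over all $n\ge 1$ gives a family $\CB$ of nonempty open balls, and $\CB$ is countable, being a countable union of finite sets. After re-indexing, $\CB=(U_n)_{n\in\IN}$, so it remains only to prove that every open set is a union of elements of $\CB$.

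Next I would verify that $\CB$ is a basis of open sets. Let $U\subset X$ be open and let $y\in U$. By definition of an open set there is $r>0$ with $B(y,r)\subset U$. Choose an integer $n$ with $2/n<r$. Since $B(x^n_1,1/n),\ldots,B(x^n_{k_n},1/n)$ cover $X$, there is an index $j$ with $y\in B(x^n_j,1/n)$. For any $z\in B(x^n_j,1/n)$ the triangular inequality gives $d(z,y)\le d(z,x^n_j)+d(x^n_j,y)<1/n+1/n=2/n<r$, hence $B(x^n_j,1/n)\subset B(y,r)\subset U$. Thus every point $y\in U$ lies in some element of $\CB$ that is contained in $U$, which means $U=\bigcup\{B\in\CB\mid B\subset U\}$, i.e. $U$ is a union of elements of $\CB$.

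There is essentially no obstacle here; the only point requiring a little care is the choice of scale $2/n<r$, since a single radius $1/n$ would not suffice — one needs the ball centered at $x^n_j$ to fit inside $B(y,r)$, which forces using both inequalities $d(z,x^n_j)<1/n$ and $d(x^n_j,y)<1/n$. The assertion that the sets $U_n$ may be taken to be open balls is automatic from the construction. As a byproduct this argument also shows that a compact metric space is separable, the set $\{x^n_j\mid n\ge 1,\ 1\le j\le k_n\}$ being a countable dense subset of $X$.
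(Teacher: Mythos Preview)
Your proof is correct and is the standard argument. Note, however, that the paper does not actually prove this proposition: it appears in the Appendix as a background fact stated without proof, with references to standard topology texts for details. Your argument (extracting finite $1/n$-covers by compactness and using the triangular inequality with scale $2/n<r$) is exactly the usual proof one finds in those references.
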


\subsection{Lebesgue number}

\begin{prop}[Lebesgue's number Lemma]\index{Lebesgue number}
Let $X$ be a compact metric space and $(U_i)_{i\in \CI}$ an open cover of
$X$. There exists $\delta>0$ such that
$$
\forall x\in X,\ \exists i\in \CI,\ B(x,\delta)\subset U_i.
$$
Such a number $\delta$ is called a \emph{Lebesgue number} of this cover.
\end{prop}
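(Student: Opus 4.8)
The plan is to use the defining property of compactness (every open cover has a finite subcover) together with the triangular inequality; the alternative route via the Bolzano--Weierstrass theorem also works and I will keep it in reserve. The key point is to produce, for each point, a ball of twice the eventual radius sitting inside some $U_i$, then compress infinitely many radii down to a single positive number using finiteness.

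Concretely, first I would fix, for each $x\in X$, an index $i(x)\in\CI$ with $x\in U_{i(x)}$; since $U_{i(x)}$ is open, there is $r(x)>0$ with $B(x,2r(x))\subset U_{i(x)}$. The family $\bigl(B(x,r(x))\bigr)_{x\in X}$ is then an open cover of $X$, so by compactness there exist finitely many points $x_1,\dots,x_n$ with $X=\bigcup_{k=1}^n B(x_k,r(x_k))$. I set $\delta:=\min\{r(x_1),\dots,r(x_n)\}$, which is positive because it is a minimum of finitely many positive numbers.

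Next I would verify that $\delta$ works. Let $y\in X$ be arbitrary. There is $k\in\Lbrack 1,n\Rbrack$ with $y\in B(x_k,r(x_k))$, i.e.\ $d(y,x_k)<r(x_k)$. For any $z\in B(y,\delta)$ the triangular inequality gives $d(z,x_k)\le d(z,y)+d(y,x_k)<\delta+r(x_k)\le 2r(x_k)$, so $B(y,\delta)\subset B(x_k,2r(x_k))\subset U_{i(x_k)}$. Thus every $\delta$-ball is contained in some member of the cover, which is exactly the assertion.

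I do not expect a genuine obstacle here: the only things to be careful about are that the $2r(x)$-ball (not merely the $r(x)$-ball) is the one guaranteed to lie inside $U_{i(x)}$, so that after the factor-of-two loss in the triangular inequality we still land inside $U_{i(x_k)}$, and that passing to a \emph{finite} subcover is essential to keep $\delta>0$. Should one prefer a contradiction argument, one would assume that for every $n\ge 1$ there is $x_n$ with $B(x_n,1/n)$ contained in no $U_i$, extract via Bolzano--Weierstrass a subsequence $x_{n_k}\to x$, pick $U_i\ni x$ with $B(x,\eps)\subset U_i$, and derive a contradiction for $k$ large enough that $d(x_{n_k},x)<\eps/2$ and $1/n_k<\eps/2$; this is equally short and uses only results stated earlier in the appendix.
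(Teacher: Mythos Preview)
Your proof is correct; both the direct compactness argument you give in detail and the Bolzano--Weierstrass alternative you sketch are standard and valid. Note, however, that the paper does not actually supply a proof of this proposition: it appears in the appendix among background results from general topology that are stated without proof, so there is no approach in the paper to compare yours against.
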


\subsection{Baire category theorem}

\begin{theo}[Baire category theorem]\label{theo:baire}\index{Baire category theorem}
Let $X$ be a complete metric space and $(U_n)_{n\ge 0}$ a sequence of
dense open sets. Then $\bigcap_{n\ge 0} U_n$ is a dense
$G_\delta$-set.
\end{theo}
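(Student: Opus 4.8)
The set $\bigcap_{n\ge 0}U_n$ is a $G_\delta$-set by definition, so the only thing to prove is that it is dense in $X$. Equivalently, I must show that for every nonempty open set $W\subset X$ one has $W\cap\bigcap_{n\ge 0}U_n\neq\emptyset$. The plan is to construct, by induction on $n$, a nested sequence of closed balls $\overline{B}(x_n,r_n)$ with radii tending to $0$, each contained in the previous ball and in $U_n$, and then to extract the desired point as the limit of the centers using completeness of $X$.

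First I would start the induction. Since $U_0$ is dense and open and $W$ is nonempty open, the set $W\cap U_0$ is nonempty and open, so it contains an open ball; shrinking if necessary, choose $x_0\in W\cap U_0$ and $r_0\in(0,1]$ with $\overline{B}(x_0,r_0)\subset W\cap U_0$. For the inductive step, suppose $x_n$ and $r_n>0$ with $\overline{B}(x_n,r_n)\subset U_n$ are already chosen. The ball $B(x_n,r_n)$ is nonempty and open, and $U_{n+1}$ is dense and open, so $B(x_n,r_n)\cap U_{n+1}$ is nonempty and open; hence it contains a closed ball, and I may choose $x_{n+1}$ and $r_{n+1}$ with $0<r_{n+1}\le \tfrac{r_n}{2}$ and $\overline{B}(x_{n+1},r_{n+1})\subset B(x_n,r_n)\cap U_{n+1}$. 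This forces $r_n\le 2^{-n}$, so $r_n\to 0$, and by construction $\overline{B}(x_{n+1},r_{n+1})\subset\overline{B}(x_n,r_n)$ for every $n$.

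Next I would check that $(x_n)_{n\ge 0}$ is a Cauchy sequence: for $m>n$ one has $x_m\in\overline{B}(x_n,r_n)$, so $d(x_n,x_m)\le r_n\le 2^{-n}$, which tends to $0$. Since $X$ is complete, $(x_n)$ converges to some $x\in X$. Fix $n$; for every $m\ge n$ the point $x_m$ lies in the closed set $\overline{B}(x_n,r_n)$, hence so does the limit $x$. Therefore $x\in\overline{B}(x_n,r_n)\subset U_n$ for all $n\ge 1$, and also $x\in\overline{B}(x_0,r_0)\subset W\cap U_0$; thus $x\in W\cap\bigcap_{n\ge 0}U_n$, proving density.

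\textbf{Main obstacle.} There is no deep difficulty here: the proof is a standard nested-ball argument. The point requiring the most care is the bookkeeping in the inductive step — namely, that at each stage one can fit a \emph{closed} ball $\overline{B}(x_{n+1},r_{n+1})$ inside the \emph{open} set $B(x_n,r_n)\cap U_{n+1}$ while simultaneously forcing the radii to shrink geometrically (so that the center sequence is Cauchy). This is where one uses that every point of an open set has an open ball around it, together with the freedom to further halve the radius.
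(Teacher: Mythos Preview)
Your proof is correct; this is the standard nested-closed-ball argument for the Baire category theorem. Note, however, that the paper does not actually prove this theorem: it is stated without proof in the appendix as a background result from topology (with references to standard textbooks), so there is no proof in the paper to compare against. Your argument would serve perfectly well as the missing proof.
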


\begin{cor}\label{cor:baire}
Let $X$ be a nonempty complete metric space and $(F_n)_{n\ge 0}$ a sequence of 
closed sets such that $X=\bigcup_{n\ge 0}F_n$. Then there exists an integer 
$n\ge 0$ such that $\Int{F_n}\ne\emptyset$.
\end{cor}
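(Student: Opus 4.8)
The plan is to argue by contradiction using the Baire category theorem (Theorem~\ref{theo:baire}) applied to the complements of the $F_n$. Suppose that $\Int{F_n}=\emptyset$ for every $n\ge 0$, and set $U_n:=X\setminus F_n$. Since each $F_n$ is closed, each $U_n$ is open. I would then check that each $U_n$ is dense: by Proposition~\ref{prop:complement}, $\overline{U_n}=\overline{X\setminus F_n}=X\setminus\Int{F_n}=X\setminus\emptyset=X$, so indeed $\overline{U_n}=X$.

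Next I would apply Theorem~\ref{theo:baire} to the sequence $(U_n)_{n\ge 0}$ of dense open sets: the intersection $\bigcap_{n\ge 0}U_n$ is a dense $G_\delta$-set. In particular it is nonempty, because $X$ itself is nonempty and a dense subset of a nonempty space cannot be empty (a point of $X$ is a limit point of any dense set, hence every dense set meets every nonempty open set, in particular $X$).

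On the other hand, by (the countable version of) Lemma~\ref{lem:compl-cup-cap},
$$
\bigcap_{n\ge 0}U_n=\bigcap_{n\ge 0}(X\setminus F_n)=X\setminus\bigcup_{n\ge 0}F_n=X\setminus X=\emptyset,
$$
using the hypothesis $X=\bigcup_{n\ge 0}F_n$. This contradicts the nonemptiness obtained from Baire's theorem, so the assumption $\Int{F_n}=\emptyset$ for all $n$ is untenable; hence there exists $n\ge 0$ with $\Int{F_n}\ne\emptyset$. There is no real obstacle here: the only mild point of care is invoking the countable version of De Morgan's law and noting that a dense subset of a nonempty space is nonempty, both of which are routine from the definitions recalled in the appendix.
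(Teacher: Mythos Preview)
Your proof is correct and follows essentially the same approach as the paper's: both argue by contradiction, set $U_n:=X\setminus F_n$, use Proposition~\ref{prop:complement} to see each $U_n$ is dense open, apply Theorem~\ref{theo:baire}, and derive a contradiction via Lemma~\ref{lem:compl-cup-cap} from $\bigcup_{n\ge 0}F_n=X$. The only cosmetic difference is the order in which the two computations (denseness of $U_n$ versus emptiness of $\bigcap U_n$) are presented.
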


\begin{proof}
Suppose on the contrary that $\Int{F_n}=\emptyset$ for all $n\ge 0$.
We set $U_n:=X\setminus F_n$. By Lemma~\ref{lem:compl-cup-cap},
$$
X\setminus \bigcap_{n\ge 0}U_n=\bigcup_{n\ge 0} (X\setminus U_n)=
\bigcup_{n\ge 0} F_n=X,
$$
which implies that $\bigcap_{n\ge 0}U_n$ is empty.
On the other hand, $\overline{U_n}=X\setminus \Int{F_n}=X$ by
Proposition~\ref{prop:complement}, and thus
$U_n$ is a dense open set for every $n\ge 0$. Therefore
$\bigcap_{n\ge 0} U_n$ is dense according to
the Baire category Theorem~\ref{theo:baire}, which is a contradiction.
We conclude that there exists $n\ge 0$ such that $\Int{F_n}\ne\emptyset$.
\end{proof}

\begin{cor}\label{cor:baire2}
Let $X$ be a complete metric space and $(G_n)_{n\ge 0}$ a sequence of
dense $G_\delta$-sets. Then $\bigcap_{n\ge 0} G_n$ is a dense
$G_\delta$-set.
\end{cor}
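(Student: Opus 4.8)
The plan is to reduce the statement to a direct application of the Baire category Theorem~\ref{theo:baire}. First I would unravel the definition of a $G_\delta$-set: for each $n\ge 0$ write $G_n=\bigcap_{k\ge 0}U_{n,k}$, where every $U_{n,k}$ is open in $X$. Since $G_n\subset U_{n,k}$ and since a set containing a dense set is itself dense (using that $A\subset B$ implies $\overline{A}\subset\overline{B}$), each $U_{n,k}$ is a dense open set. Thus the family $(U_{n,k})_{n,k\ge 0}$ is a family of dense open sets indexed by $\IN\times\IN$, which is countable.

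Next I would re-index this family as $(V_m)_{m\ge 0}$ using a bijection $\IN\to\IN\times\IN$, so that $(V_m)_{m\ge 0}$ is a sequence of dense open sets with
$$
\bigcap_{m\ge 0}V_m=\bigcap_{n\ge 0}\bigcap_{k\ge 0}U_{n,k}=\bigcap_{n\ge 0}G_n.
$$
Applying the Baire category Theorem~\ref{theo:baire} to the sequence $(V_m)_{m\ge 0}$ in the complete metric space $X$ yields that $\bigcap_{m\ge 0}V_m$ is a dense $G_\delta$-set, which is exactly $\bigcap_{n\ge 0}G_n$. (Strictly speaking, once we know it equals a countable intersection of open sets it is automatically a $G_\delta$-set; the content of Baire's theorem is the denseness.)

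There is essentially no serious obstacle here; the only points requiring a word of care are the bookkeeping that a countable union of countable families is countable, so that the combined family $(U_{n,k})$ can legitimately be fed into Theorem~\ref{theo:baire}, and the observation that each $U_{n,k}$ inherits denseness from $G_n$. Both are routine. I would therefore expect the proof to be short, roughly three or four lines, with the Baire category theorem doing all the real work.
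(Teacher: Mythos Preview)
Your proposal is correct and follows essentially the same approach as the paper: decompose each $G_n$ as a countable intersection of open sets $U_{n,k}$, observe that each $U_{n,k}$ is dense because it contains the dense set $G_n$, and then apply the Baire category Theorem~\ref{theo:baire} to the countable family $(U_{n,k})_{n,k\ge 0}$. The paper's proof is indeed just three or four lines, exactly as you anticipated.
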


\begin{proof}
For every $n\ge 0$, one can write $G_n=\bigcap_{k\ge 0}U_n^k$, where 
$U_n^k$ is an open set. Since $G_n$ is dense and $G_n\subset U_n^k$ for
all $k\ge 0$, the sets $(U_n^k)_{n,k\ge 0}$ are dense open sets.
The Baire category Theorem \ref{theo:baire} states
that $\bigcap_{n,k\ge 0}U_n^k$ is a dense $G_\delta$-set. 
Finally, we have $\bigcap_{n\ge 0} G_n=\bigcap_{n,k\ge 0}U_n^k$.
\end{proof}

\section{Cantor set}\label{sec:cantor}

\subsection{Definitions}

\begin{defi}[isolated point]\index{isolated point}
Let $X$ be a metric space and $Y\subset X$. A point $y\in Y$ is an
\emph{isolated point} in $Y$ if $B(y,r)\cap Y=\{y\}$ for some $r>0$.
If there is no such point in $Y$, one says that $Y$ has no isolated point.
\end{defi}


\begin{defi}
Let $X$ be a metric space and $Y\subset X$. The set $Y$ is said to be
\emph{totally disconnected} if for every $y\in Y$, the connected component
of $y$ in $Y$ is reduced to $\{y\}$.
\end{defi}

\begin{defi}[Cantor set]\index{Cantor set}
Let $X$ be a metric space. The set $X$ is a \emph{Cantor
set} if it is nonempty, compact, totally disconnected and has no isolated
point.
\end{defi}

\begin{prop}
Let $X$ be a complete metric space and $Y\subset X$. If $Y$ is
nonempty, closed and has no isolated point, then $Y$ is uncountable.
In particular, a Cantor set is uncountable.
\end{prop}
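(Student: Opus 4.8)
The plan is to argue by contradiction, reducing the statement to the Baire category theorem (Corollary~\ref{cor:baire}) applied to $Y$ itself. First I would observe that $Y$, equipped with the induced metric, is a complete metric space: this is the only slightly non-routine point, but it follows immediately from the sequential characterization of closed sets recalled in the appendix, since any Cauchy sequence in $Y$ is Cauchy in $X$, hence converges to some $x\in X$ by completeness of $X$, and $x\in Y$ because $Y$ is closed. Moreover $Y$ is nonempty by hypothesis.

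Next, suppose for contradiction that $Y$ is countable, so that we may write $Y=\{y_n\mid n\in\IN\}$. For each $n$, the singleton $F_n:=\{y_n\}$ is a closed subset of the metric space $Y$, and $Y=\bigcup_{n\in\IN}F_n$. By Corollary~\ref{cor:baire} applied to the nonempty complete metric space $Y$, there exists $n$ such that $F_n$ has nonempty interior in $Y$. This means there is $r>0$ with $B(y_n,r)\cap Y=\{y_n\}$, i.e.\ $y_n$ is an isolated point of $Y$ — contradicting the hypothesis that $Y$ has no isolated point. Hence $Y$ is uncountable.

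For the final clause, a Cantor set $X$ is by definition nonempty and compact, hence complete, and it has no isolated point; applying the first part with the closed subset taken to be $X$ itself shows that $X$ is uncountable. I do not expect any serious obstacle here; the only thing that needs care is spelling out that a closed subset of a complete space is complete, since that exact statement is not among the quoted appendix results, though it is immediate. (An alternative, self-contained route avoiding Baire would be a direct nested-ball construction: choose $x_k\in Y$ and radii $r_k\to 0$ with $\overline{B}(x_{k+1},r_{k+1})\subset B(x_k,r_k)$ and $y_k\notin \overline{B}(x_k,r_k)$, using that every ball meeting $Y$ contains infinitely many points of $Y$; then $(x_k)$ is Cauchy, its limit lies in $Y$ but equals no $y_k$. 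I would keep the Baire argument as the main proof, as it is shorter.)
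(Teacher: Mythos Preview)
Your proof is correct. Note that the paper states this proposition in the appendix as a background fact without proof, so there is no author's argument to compare against; your Baire-category approach via Corollary~\ref{cor:baire} is the standard one and fits cleanly with the tools already recalled in the appendix.
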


\subsection{Examples of Cantor sets}

\begin{ex}
The set $\{0,1\}^{\IZ^+}$, endowed with the product topology given
by the discrete topology on $\{0,1\}$, is a Cantor set.
\end{ex}

\begin{ex}[triadic Cantor set]\label{ex:triadicCantor}
We are going to build by induction on $n$ a family of intervals
$(I_{\alpha_0\ldots\alpha_{n-1}})_
{(\alpha_0,\ldots,\alpha_{n-1})\in\{0,1\}^n}$ in $[0,1]$
such that, for all $n\in\IN$,
\begin{itemize}
\item $(I_{\alpha_0\ldots\alpha_{n-1}})_
{(\alpha_0\ldots\alpha_{n-1})\in\{0,1\}^n}$ are pairwise disjoint closed
intervals,
\item $|I_{\alpha_0\ldots\alpha_{n-1}}|=\frac{1}{3^n}$
for all $\alpha_0,\ldots,\alpha_{n-1}\in\{0,1\}$,
\item $I_{\alpha_0\ldots\alpha_{n-1}\alpha_n}\subset
I_{\alpha_0\ldots\alpha_{n-1}}$ for all $\alpha_0,\ldots,\alpha_n\in\{0,1\}$.
\end{itemize}
See Figure~\ref{fig:triadic-cantor} for the first steps of the construction.

\medskip
\noindent$\bullet$ At step $0$, we start with $I=[0,1]$.

\noindent$\bullet$ At step $1$, we cut $I$ in three equal parts and we remove
the open interval $(\frac13,\frac23)$ in the middle. There remain
two intervals $I_0:=[0,\frac13]$ and $I_1:=[\frac23,1]$.

$\bullet$ At step $n\ge 1$, we cut every interval
$I_{\alpha_0\ldots\alpha_{n-1}}$ into three equal parts and we remove
the open third in the middle. If $I_{\alpha_0\ldots\alpha_{n-1}}=[a,b]$,
there remain two intervals
$I_{\alpha_0\ldots\alpha_{n-1}0}:=[a,\frac{2a+b}3]$ and
$I_{\alpha_0\ldots\alpha_{n-1}1}:=[\frac{a+2b}3,b]$.
Trivially, these two intervals are disjoint, they are included in
$I_{\alpha_0\ldots\alpha_{n-1}}$, and their length is equal
to $\frac13|I_{\alpha_0\ldots\alpha_{n-1}}|$, and so their length is
$\frac{1}{3^n}$ by the induction hypothesis. 
Moreover, the sets
$(I_{\alpha_0\ldots\alpha_n})_{(\alpha_0,\ldots,\alpha_n)\in\{0,1\}^{n+1}}$
are pairwise disjoint by construction.

\begin{figure}[htb]
\centerline{\includegraphics{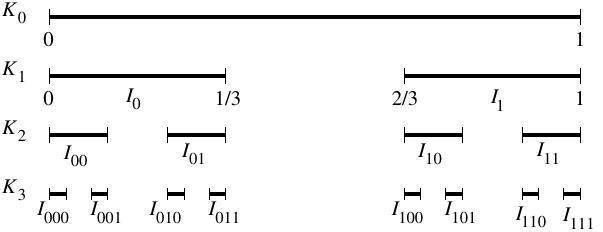}}
\caption{The first steps of the construction of the triadic Cantor set.}
\label{fig:triadic-cantor}
\end{figure}

\medskip
Let $K_0:=[0,1]$ and, for all $n\ge 1$,
let $K_n$ be the union of the intervals $I_{\alpha_0\ldots\alpha_{n-1}}$
for all $(\alpha_0,\ldots,\alpha_{n-1})\in\{0,1\}^n$. Then $K_n$ is a
compact set with $2^n$ connected components of length $\frac{1}{3^n}$ and
$K_{n+1}\subset K_n$. Let
$K:=\bigcap_{n\ge 0} K_n$. Then $K$ is a nonempty compact set
by Proposition~\ref{prop:cap-compact}. One can show that
$K$ is a Cantor set. 
The set $K$ is called the \emph{triadic Cantor set}.\index{triadic Cantor set}
\end{ex}

\begin{ex}
On can construct other sets in a similar way as in
Example~\ref{ex:triadicCantor},  by varying the size and/or the number
of the gaps. More precisely,

\noindent$\bullet$ At step 0, we start with a non degenerate compact
interval $K_0$.

\noindent$\bullet$ At step $n\ge 1$, for every connected component
$C$ of $K_{n-1}$, we choose $p$ disjoint non degenerate closed
subintervals of $C$ (with $p=p(C)\ge 2$) 
such that one contains $\min C$ and another one contains
$\max C$. We call $K_n$ the union of all these intervals.

Finally, $K:=\bigcap_{n\ge 0}K_n$ is a nonempty compact set with
uncountably many connected components and no isolated point.
All the Cantor sets included in $\IR$ can be obtained by this construction.
But notice that the sets obtained in this way are not all Cantor sets.
Indeed, let $\ell_n:=\max\{ |C|\mid C\text{ connected component of }K_n\}$;
then $K$ is a Cantor set if and only if $\lim_{n\to+\infty}\ell_n=0$,
otherwise $K$ has a non degenerate connected component (e.g., the set $K$
built in Example~\ref{ex:chaos-LY-htop0} is not a Cantor set;
see Lemma~\ref{lem:LY-htop0-non-separable-K}).

A classical family of Cantor sets in $\IR$ is obtained by fixing
a ratio $r\in(0,1/2)$ and constructing the intervals such that
all connected components of $K_n$ have the same length $\ell_n$
with $\ell_n=r\ell_{n-1}$ (e.g., $r=\frac{1}{2p+1}$ in 
Example~\ref{ex:sensitive-not-transitive}).
\end{ex}

\begin{prop}
Let $\IR$ be the ambient space and $X$ a Cantor set included in $\IR$.
Then $\Int{X}=\emptyset$.
\end{prop}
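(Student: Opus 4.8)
The plan is to argue by contradiction, exploiting the fact that a Cantor set is totally disconnected while every nonempty open subset of $\IR$ contains a nondegenerate interval, which is connected.

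First I would suppose that $\Int{X}\neq\emptyset$ and pick a point $x\in\Int{X}$. By definition of the interior (with respect to the ambient space $\IR$), there exists $r>0$ such that the open ball $B(x,r)=(x-r,x+r)$ is contained in $X$. Since $r>0$, the interval $(x-r,x+r)$ is nondegenerate, hence it is a connected subset of $\IR$ containing at least two points, and it is contained in $X$.

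Next I would invoke the characterization of connected components: the connected component of $x$ in $X$ is the largest connected subset of $X$ containing $x$, so it contains $(x-r,x+r)$ and is therefore not reduced to $\{x\}$. This contradicts the hypothesis that $X$ is totally disconnected (which is part of the definition of a Cantor set). Hence $\Int{X}=\emptyset$.

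There is essentially no obstacle here: the only points requiring care are to make sure the interior is taken in $\IR$ (so that a neighborhood of $x$ genuinely contains an interval) and to recall that a nondegenerate interval is connected, which is exactly the statement that the connected sets of $\IR$ are the intervals. No prior result beyond the definitions of Cantor set, totally disconnected, connected component, and interior is needed.
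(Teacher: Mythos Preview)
Your proof is correct and follows essentially the same approach as the paper: both argue that a nonempty open subset of $\IR$ contains a nondegenerate interval, which is incompatible with $X$ being totally disconnected. The paper's version is simply more terse, stating the two facts and concluding directly, while you spell out the contradiction explicitly.
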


\begin{proof}
Every nonempty open set in $\IR$ contains a nonempty open interval. On 
the other hand, every connected component of $X$ is reduced to
a single point. This implies that $\Int{X}=\emptyset$.
\end{proof}


\begin{theo}
Every Cantor set $K$ is homeomorphic to $\{0,1\}^{\IZ^+}$, that is,
there exists a homeomorphism $\vfi\colon X\to \{0,1\}^{\IZ^+}$
(see Definition~\ref{defi:homeo} below for the definition of
homeomorphism).
\end{theo}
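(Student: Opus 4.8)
The plan is to realize $K$ as the inverse limit of a doubling sequence of finite clopen partitions with mesh tending to zero, carrying out intrinsically the same kind of construction used for the triadic Cantor set in Example~\ref{ex:triadicCantor}. Throughout, a set is called \emph{clopen} if it is simultaneously closed and open.

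First I would establish the key topological input: in a compact, totally disconnected metric space $K$, the clopen sets form a basis of the topology. Concretely, given $x\in K$ and $\varepsilon>0$, there is a clopen set $U$ with $x\in U\subset B(x,\varepsilon)$. The standard route is to show that the quasi-component of $x$ (the intersection of all clopen sets containing $x$) coincides with its connected component, which by total disconnectedness is $\{x\}$; then, since $K\setminus B(x,\varepsilon)$ is compact and is covered by the open sets $K\setminus C$ as $C$ ranges over clopen sets containing $x$, one extracts finitely many clopen sets $C_1,\dots,C_m\ni x$ with $C_1\cap\cdots\cap C_m\subset B(x,\varepsilon)$, and $U:=C_1\cap\cdots\cap C_m$ works. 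This is the step I expect to be the main obstacle, as it is essentially the only place where a genuinely nontrivial argument (rather than bookkeeping) is required.

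Granting this, I would next produce, for each $n\ge 1$, a partition of $K$ into exactly $2^n$ nonempty clopen sets of diameter $<1/n$, refining the previous one. By the basis property and compactness, $K$ is covered by finitely many clopen sets of diameter $<1/n$; replacing $U_k$ by $U_k\setminus(U_1\cup\cdots\cup U_{k-1})$, which is again clopen, yields a finite clopen partition of small mesh. Since $K$ has no isolated point, every nonempty clopen set $V$ splits into two nonempty clopen pieces: pick distinct $a,b\in V$ and a clopen set $W$ with $a\in W\subset V$ and $b\notin W$, then use $W$ and $V\setminus W$. Iterating this splitting, one can both refine against the previous partition and pad the number of pieces up to a power of $2$. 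Organizing the choices along a binary tree gives clopen sets $K_{\alpha_0\cdots\alpha_{n-1}}$ indexed by $\{0,1\}^n$ that are pairwise disjoint for fixed $n$, satisfy $K_{\alpha_0\cdots\alpha_{n-1}\alpha_n}\subset K_{\alpha_0\cdots\alpha_{n-1}}$, have union $K$, and have diameters tending to $0$ as $n\to\infty$.

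Finally I would define $\varphi\colon K\to\{0,1\}^{\IZ^+}$ by sending $x$ to the unique sequence $(\alpha_n)_{n\ge 0}$ with $x\in K_{\alpha_0\cdots\alpha_{n-1}}$ for all $n$; this sequence exists and is unique because at each level the $2^n$ sets partition $K$. For any $\bar\alpha\in\{0,1\}^{\IZ^+}$ the sets $K_{\alpha_0\cdots\alpha_{n-1}}$ form a decreasing sequence of nonempty compact sets with diameter $\to 0$, hence their intersection is a single point, which gives surjectivity of $\varphi$; disjointness at each level gives injectivity. Continuity is immediate: if $x,y$ lie in the same level-$n$ set then $\varphi(x)$ and $\varphi(y)$ agree on their first $n$ coordinates, so $\varphi$ is (uniformly) continuous for the product metric. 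Since $K$ is compact and $\{0,1\}^{\IZ^+}$ is a metric space, the continuous bijection $\varphi$ is automatically a homeomorphism: $\varphi$ maps closed (hence compact) subsets of $K$ to compact (hence closed) subsets of $\{0,1\}^{\IZ^+}$, so $\varphi^{-1}$ is continuous. Combined with the earlier example that $\{0,1\}^{\IZ^+}$ is itself a Cantor set, this shows that all Cantor sets are mutually homeomorphic.
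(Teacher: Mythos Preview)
The paper does not actually prove this theorem; it appears in the appendix as a background fact from general topology, stated without proof. Your argument is the standard one (essentially Brouwer's characterization of the Cantor set) and is correct.

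The only place where your sketch is terse is the construction of the binary-tree partitions with mesh tending to zero: splitting each piece into exactly two at every step does not by itself force diameters to shrink. Your ``padding'' remark is what saves this, and it is worth making explicit. At each stage, refine the current partition $\mathcal{P}_{n}$ by a clopen partition of mesh $<1/k$ (from compactness and the clopen basis); within each piece $P\in\mathcal{P}_n$ this gives some number $m_P\ge 1$ of sub-pieces, and repeated binary splitting (available because there are no isolated points) raises every $m_P$ to a common power $2^j$. This inserts $j$ tree levels at once and guarantees that at level $n+j$ every piece has diameter $<1/k$. With this understood, the remainder of your argument---bijectivity via nested nonempty compact sets of vanishing diameter, continuity from the fact that the level-$n$ cells are open, and the automatic homeomorphism criterion for a continuous bijection from a compact space to a metric space---is clean and complete.
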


\section{Continuous maps}

\subsection{Definitions}

\begin{defi}[image, preimage of a set]\index{image of a set under a map}\index{preimage of a set under a map}\index{inverse image of a set under a map}
Let  $f\colon X\to Y$ be a map. The \emph{image} of a set $A\subset X$
under $f$ is $f(A):=\{f(x)\mid x\in A\}$. The \emph{preimage} 
(or \emph{inverse image}) of a set $B\subset Y$ under $f$
is $f^{-1}(B):=\{x\in X\mid f(x)\in B\}$.
\end{defi}

\begin{defi}[continuity]\index{continuous map}
Let $X,Y$ be metric spaces endowed with the distances $d_X,d_Y$ respectively. 
A map $f\colon X\to Y$ is \emph{continuous}
if one of the following equivalent assertions is satisfied:
\begin{enumerate}
\item 
$\disp
\forall x\in X, \forall \eps>0, \exists \delta>0,\ \forall x'\in X,
d_X(x,x')<\delta\Rightarrow d_Y(f(x),f(x'))<\eps,
$
\item for all open sets $U\subset Y$, $f^{-1}(U)$ is open,
\item for all closed sets $F\subset Y$, $f^{-1}(F)$ is closed,
\item for all $x\in X$ and all sequences $(x_n)_{n\ge 0}$ of points of $X$
converging to $x$, $\disp\lim_{n\to+\infty}f(x_n)=f(x)$.
\end{enumerate}
\end{defi}

\begin{rem}
Let $X,Y$ be metric spaces and $f\colon X\to Y$ a continuous map.
If $X'\subset X$, then $f|_{X'}\colon X'\to Y$ (restriction of $f$ to $X'$)
is also a continuous map.
\end{rem}

\begin{prop}
Let $X,Y,Z$ be metric spaces. If $f\colon X\to Y$ and $g\colon Y\to Z$
are continuous maps, then $g\circ f\colon X\to Z$ is a continuous map.
In particular, if $f\colon X\to X$ is a continuous map, then
$f^n\colon X\to X$ is a continuous map for every $n\in\IN$,
where $\disp
f^n:=\underbrace{f\circ f\circ\cdots\circ f}_{n \text{ \scriptsize
times}}.
$
\end{prop}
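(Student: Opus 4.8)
The plan is to prove the composition statement first, then deduce the iterate statement by a trivial induction. For the composition, I would work with the open-set characterization of continuity (item (2) in the definition of continuity recalled just above the statement): a map is continuous precisely when the preimage of every open set is open. The key observation is the set-theoretic identity $(g\circ f)^{-1}(W)=f^{-1}\bigl(g^{-1}(W)\bigr)$ for any $W\subset Z$, which holds because $x\in(g\circ f)^{-1}(W)$ iff $g(f(x))\in W$ iff $f(x)\in g^{-1}(W)$ iff $x\in f^{-1}(g^{-1}(W))$.

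Given this, the argument runs as follows: let $W\subset Z$ be open. Since $g$ is continuous, $g^{-1}(W)$ is open in $Y$. Since $f$ is continuous, $f^{-1}\bigl(g^{-1}(W)\bigr)$ is open in $X$. By the identity above, $(g\circ f)^{-1}(W)$ is open in $X$. As $W$ was an arbitrary open set of $Z$, the map $g\circ f$ is continuous. (Alternatively one could argue with the sequential characterization (item (4)): if $x_n\to x$ in $X$ then $f(x_n)\to f(x)$ in $Y$ by continuity of $f$, and then $g(f(x_n))\to g(f(x))$ in $Z$ by continuity of $g$; either route is fine.)

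For the second assertion, I would induct on $n\ge 1$. The base case $n=1$ is the hypothesis that $f$ is continuous (and $n=0$, if one wishes to include it, is the identity map, which is trivially continuous). For the inductive step, assume $f^n\colon X\to X$ is continuous; then $f^{n+1}=f\circ f^n$ is a composition of two continuous maps $X\to X$, hence continuous by the first part of the statement. This closes the induction and completes the proof. There is no real obstacle here; the only point requiring the smallest amount of care is the preimage identity $(g\circ f)^{-1}(W)=f^{-1}(g^{-1}(W))$, which is purely formal, so the proof is short and routine.
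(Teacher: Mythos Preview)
Your proof is correct and entirely standard. Note, however, that the paper does not actually prove this proposition: it sits in the appendix, whose stated purpose is ``to recall succinctly some definitions and results in topology'' with references for details, and this particular statement is given without proof. So there is no paper proof to compare against; your argument via the open-set characterization and the identity $(g\circ f)^{-1}(W)=f^{-1}(g^{-1}(W))$, followed by induction for the iterates, is exactly the routine verification one would expect.
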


\begin{defi}[one-to-one and onto map, bijection]\index{one-to-one map}\index{injective map}\index{onto map}\index{surjective map}\index{bijection, bijective map}
Let $X,Y$ be metric spaces and let $f\colon X\to Y$ be a map.
\begin{itemize}
\item $f$ is \emph{one-to-one} (or \emph{injective}) if, for all $x,x'\in X$,
$x\ne x'\Rightarrow f(x)\ne f(x')$.
\item $f$ is \emph{onto} (or \emph{surjective}) if $f(X)=Y$.
\item $f$ is a \emph{bijection} (or a \emph{bijective map}) 
if it is one-to-one and onto. In this case, the \emph{inverse map} of $f$
is the map $f^{-1}\colon Y\to X$ satisfying $f(x)=y\Leftrightarrow x=f^{-1}(y)$.
\end{itemize}
\end{defi}

\begin{prop}
Let $f\colon I\to J$ be a continuous onto map, 
where $I,J$ are two nonempty real intervals.
Then $f$ is a bijection if and only if
\begin{itemize}
\item either $f$ is increasing, that is, $\forall x,y\in I$,
$x<y\Rightarrow f(x)<f(y)$,
\item or $f$ is decreasing, that is, $\forall x,y\in I$,
$x<y\Rightarrow f(x)>f(y)$.
\end{itemize}
\end{prop}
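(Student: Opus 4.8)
The plan is to prove the two implications separately, the reverse one being immediate and the forward one containing all the content.

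For the ``if'' direction: if $f$ is increasing (resp. decreasing) on $I$, then $f$ is injective, since two distinct points $x\ne y$ of $I$ satisfy $x<y$ or $y<x$, and strict monotonicity forces $f(x)\ne f(y)$ in either case. Combined with the hypothesis that $f$ is onto, this makes $f$ a bijection, and nothing more is needed.

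For the ``only if'' direction, assume $f\colon I\to J$ is a continuous bijection; I want to show it is strictly monotone. If $I$ is a single point the conclusion holds vacuously, so I may assume $I$ is non-degenerate. The key step, which I expect to be the main obstacle, is the claim: for all $a<b<c$ in $I$, the value $f(b)$ lies strictly between $f(a)$ and $f(c)$. I would prove it by contradiction. By injectivity $f(a)\ne f(c)$, and by symmetry (replacing $f$ by $-f$ if necessary) I may assume $f(a)<f(c)$. If $f(b)<f(a)$, then $f(b)<f(a)<f(c)$, so applying the intermediate value theorem (Theorem~\ref{theo:ivt}) to $f$ on $[b,c]$ yields $d\in(b,c)$ with $f(d)=f(a)$; but $a<b<d$ contradicts injectivity. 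If $f(b)>f(c)$, then $f(a)<f(c)<f(b)$, so the intermediate value theorem applied to $f$ on $[a,b]$ yields $d\in(a,b)$ with $f(d)=f(c)$; but $d<b<c$ again contradicts injectivity. Here it is essential that $I$, being an interval, contains $[a,b]$ and $[b,c]$.

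From this claim I would extract a consistency statement: for $a<b<c$ in $I$, the claim forces either $f(a)<f(b)<f(c)$ or $f(a)>f(b)>f(c)$, so the three pairs $(a,b)$, $(b,c)$, $(a,c)$ all carry the same orientation (``$f$ increases'' or ``$f$ decreases'' on the pair). Finally, fix any two points $p<q$ of $I$; by symmetry assume $f(p)<f(q)$. Given an arbitrary pair $x<y$ in $I$, list the distinct points among $\{p,q,x,y\}$ in increasing order $t_1<\dots<t_k$ ($k\le 4$), and chain the consistency statement over the consecutive triples $t_i<t_{i+1}<t_{i+2}$ to conclude that all pairs $(t_i,t_j)$ with $i<j$ have the same orientation; since $(p,q)$ is positively oriented, so is $(x,y)$, i.e. $f(x)<f(y)$. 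Hence $f$ is increasing. The case $f(p)>f(q)$ is symmetric and gives that $f$ is decreasing. The only fiddly points are selecting the correct subinterval and target value in the intermediate value theorem argument so that the resulting preimage collides with one of $a,b,c$, and the minor bookkeeping in the chaining step when $\{p,q,x,y\}$ has coincidences or interleaves; both are routine once set up.
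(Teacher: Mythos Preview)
Your argument is correct and follows the standard textbook route: the intermediate value theorem forces $f(b)$ to lie strictly between $f(a)$ and $f(c)$ whenever $a<b<c$, and then a short chaining argument propagates the orientation of one fixed pair to all pairs. The bookkeeping in the chaining step is fine (when $k=2$ the pairs coincide; when $k=3$ the claim applies directly; when $k=4$ the two overlapping triples share the pair $(t_2,t_3)$, forcing a common orientation).

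There is nothing to compare against: the paper states this proposition in the appendix as a background fact in topology and does not supply a proof. Your write-up is exactly the argument one would expect, and it goes through without gaps.
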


\begin{defi}[homeomorphism]\label{defi:homeo}\index{homeomorphism}
Let $X,Y$ be two metric spaces. A map $f\colon X\to Y$ is a \emph{homeomorphism}
if $f$ is continuous, bijective and $f^{-1}$ is continuous.
\end{defi}

\begin{defi}[uniform continuity]\index{uniformly continuous map}
Let $X,Y$ be metric spaces endowed with the distances $d_X,d_Y$. 
A map $f\colon X\to Y$ is \emph{uniformly continuous} if
$$
\forall \eps>0,\exists \delta>0, \forall x,x'\in X,
d_X(x,x')<\delta\Rightarrow d_Y(f(x),f(x'))<\eps.
$$
\end{defi}

\subsection{Inverse image of an intersection}

\begin{prop}\label{prop:f-1-cap}
Let $X,Y$ be metric spaces and let $f\colon X\to Y$ be a map.
If $(Y_n)_{n\ge 0}$ is a family of subsets of $Y$, then
$f^{-1}(\bigcap_{n\ge 0}Y_n)=\bigcap_{n\ge 0}f^{-1}(Y_n)$.
\end{prop}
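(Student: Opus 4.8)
The plan is to prove the set equality by a direct element-chase, establishing the two inclusions simultaneously through a chain of equivalences. Since no topology or continuity is actually involved here — the statement is purely set-theoretic and holds for an arbitrary map $f$ between arbitrary sets — the only tools needed are the definition of preimage and the definition of intersection.

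First I would fix an arbitrary point $x \in X$ and unwind the membership condition for the left-hand side: by definition of preimage, $x \in f^{-1}\bigl(\bigcap_{n\ge 0} Y_n\bigr)$ if and only if $f(x) \in \bigcap_{n\ge 0} Y_n$. Next I would unwind the membership condition for the intersection: $f(x) \in \bigcap_{n\ge 0} Y_n$ if and only if $f(x) \in Y_n$ for every $n \ge 0$. Then, applying the definition of preimage once more in the other direction, $f(x) \in Y_n$ if and only if $x \in f^{-1}(Y_n)$, so the previous condition is equivalent to $x \in f^{-1}(Y_n)$ for every $n \ge 0$. Finally, by the definition of intersection again, this last condition says exactly $x \in \bigcap_{n\ge 0} f^{-1}(Y_n)$. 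Chaining these equivalences gives $x \in f^{-1}\bigl(\bigcap_{n\ge 0} Y_n\bigr) \iff x \in \bigcap_{n\ge 0} f^{-1}(Y_n)$, and since $x$ was arbitrary, the two sets coincide.

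There is no real obstacle in this argument; the only thing to be careful about is to keep the logical direction of each equivalence clean so that the double inclusion is genuinely obtained (rather than just one inclusion), but since every step above is an "if and only if" this is automatic. I would also remark — though it is not needed for the statement as phrased — that the analogous identity for unions, $f^{-1}\bigl(\bigcup_{n\ge 0} Y_n\bigr) = \bigcup_{n\ge 0} f^{-1}(Y_n)$, follows by the same element-chase with "there exists" in place of "for every", and that both identities hold for families indexed by an arbitrary (not necessarily countable) set.
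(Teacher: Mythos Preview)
Your proof is correct and follows essentially the same element-chase as the paper's own proof, unwinding the definitions of preimage and intersection through a chain of equivalences. The paper's argument is just a slightly terser version of yours.
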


\begin{proof}
A point $x$ belongs to $f^{-1}(\bigcap_{n\ge 0}Y_n)$ if and only if
$f(x)\in\bigcap_{n\ge 0}Y_n$, that is, $f(x)\in Y_n$ for all $n\ge 0$.
Since $f(x)\in Y_n\Leftrightarrow x\in f^{-1}(Y_n)$, we get
$$
x\in f^{-1}(\bigcap_{n\ge 0}Y_n)\Longleftrightarrow
x\in \bigcap_{n\ge 0}f^{-1}(Y_n).
$$
\end{proof}

\begin{prop}\label{prop:f-1-Gdelta}
Let $X,Y$ be metric spaces and let $f\colon X\to Y$ be a continuous map.
If $G\subset Y$ is a $G_\delta$-set, then $f^{-1}(G)$ is a $G_\delta$-set
in $X$.
\end{prop}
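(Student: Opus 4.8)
The plan is to unwind the definition of a $G_\delta$-set and use the fact that continuous maps pull back open sets to open sets, together with the compatibility of preimages with intersections.

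First I would write $G$ as a countable intersection of open sets of $Y$, say $G=\bigcap_{n\ge 0}U_n$ with each $U_n$ open; this is exactly the definition of a $G_\delta$-set. Next I would apply Proposition~\ref{prop:f-1-cap} to the family $(U_n)_{n\ge 0}$ to get
$$
f^{-1}(G)=f^{-1}\Bigl(\bigcap_{n\ge 0}U_n\Bigr)=\bigcap_{n\ge 0}f^{-1}(U_n).
$$
Then, since $f$ is continuous, each set $f^{-1}(U_n)$ is open in $X$ by the characterization of continuity in terms of preimages of open sets. Therefore $f^{-1}(G)$ is a countable intersection of open sets of $X$, which is precisely a $G_\delta$-set, and the proof is complete.

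There is no real obstacle here: the statement is an immediate consequence of the definitions and of Proposition~\ref{prop:f-1-cap}, so the only thing to be careful about is citing the right auxiliary result and making sure the indexing of the countable family is handled uniformly (e.g. over $n\ge 0$). I would keep the write-up to three short sentences.
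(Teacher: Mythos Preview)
Your proof is correct and essentially identical to the paper's: write $G=\bigcap_{n\ge 0}U_n$ with each $U_n$ open, apply Proposition~\ref{prop:f-1-cap} to get $f^{-1}(G)=\bigcap_{n\ge 0}f^{-1}(U_n)$, and note that each $f^{-1}(U_n)$ is open by continuity.
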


\begin{proof}
One can write $G=\bigcap_{n\ge 0} U_n$, where $U_n$ is an open set of
$Y$ for every $n\ge 0$. Then $f^{-1}(U_n)$ is an open set of $X$ because
$f$ is continuous, and $f^{-1}(G)=\bigcap_{n\ge 0}f^{-1}(U_n)$ by
Proposition~\ref{prop:f-1-cap}. Therefore, $f^{-1}(G)$ is a $G_\delta$-set.
\end{proof}

\subsection{Continuity and denseness}

\begin{theo}
Let $X, Y$ be metric spaces and $D$ a dense subset of $X$.
Let $f\colon X\to Y, g\colon X\to Y$ be two continuous maps.
If $f(x)=g(x)$ for all $x\in D$, then $f(x)=g(x)$ for all $x\in X$.
\end{theo}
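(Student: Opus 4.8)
The plan is to fix an arbitrary point $x\in X$ and approximate it by a sequence in $D$, then push the equality $f=g$ on $D$ through the limit using continuity. The only subtlety is that one must know limits are unique in a metric space, which follows from the distance axioms; everything else is a direct appeal to the characterization of continuity via sequences (assertion (iv) in the definition of continuity) and to the sequential description of denseness recalled in the excerpt.

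\textbf{Key steps.} First I would let $x\in X$ be arbitrary. Since $D$ is dense in $X$, by the sequential characterization of dense sets there exists a sequence $(x_n)_{n\ge 0}$ of points of $D$ with $\lim_{n\to+\infty}x_n=x$. Second, because $f$ is continuous, the characterization of continuity by sequences gives $\lim_{n\to+\infty}f(x_n)=f(x)$, and likewise, because $g$ is continuous, $\lim_{n\to+\infty}g(x_n)=g(x)$. Third, since $x_n\in D$ for every $n$, the hypothesis gives $f(x_n)=g(x_n)$ for all $n$, so $(f(x_n))_{n\ge 0}$ and $(g(x_n))_{n\ge 0}$ are the same sequence in $Y$; hence this single sequence converges both to $f(x)$ and to $g(x)$. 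Fourth, by uniqueness of the limit of a convergent sequence in the metric space $Y$ (if $y,y'$ are both limits, then $d_Y(y,y')\le d_Y(y,f(x_n))+d_Y(f(x_n),y')\to 0$, so $d_Y(y,y')=0$ and $y=y'$), we conclude $f(x)=g(x)$. Since $x$ was arbitrary, $f=g$ on $X$.

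\textbf{Alternative and main obstacle.} An equally short route is to consider the set $E:=\{x\in X\mid f(x)=g(x)\}$; this is closed (it is the preimage of the diagonal of $Y\times Y$ under the continuous map $x\mapsto(f(x),g(x))$, or one checks directly that its complement is open) and it contains the dense set $D$, hence $E=\overline{E}\supset\overline{D}=X$. There is essentially no hard part here: the proof is a routine diagonal-of-the-argument argument, and the only point requiring a word of justification is the uniqueness of limits in a metric space, which I would include in one line as above.
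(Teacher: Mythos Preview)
Your proof is correct. Note, however, that the paper does not actually prove this theorem: it appears in the appendix as a background result from general topology and is stated without proof, like most of the results in that chapter. Both of your approaches---the sequential one and the closed-set argument via $E=\{x\in X\mid f(x)=g(x)\}$---are standard and entirely valid.
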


\subsection{Continuity and connectedness}

\begin{theo}\label{theo:f(connected)}
Let $X, Y$ be metric spaces and $f\colon X\to Y$ a continuous map.
If $C\subset X$ is a connected set, then $f(C)$ is connected.
\end{theo}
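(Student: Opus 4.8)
The plan is to argue by contradiction, pulling back a separation of $f(C)$ along $f$. First I would suppose that $f(C)$ is disconnected. By the definition of a disconnected set (applied to $f(C)\subset Y$), there exist disjoint open sets $U,V\subset Y$ such that $f(C)\subset U\cup V$, $f(C)\cap U\neq\emptyset$, and $f(C)\cap V\neq\emptyset$.

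Next I would transport this separation back to $X$. Since $f$ is continuous, the sets $f^{-1}(U)$ and $f^{-1}(V)$ are open in $X$. They are disjoint, because $U\cap V=\emptyset$ forces $f^{-1}(U)\cap f^{-1}(V)=f^{-1}(U\cap V)=\emptyset$ by Proposition~\ref{prop:f-1-cap}. Moreover, from $f(C)\subset U\cup V$ one gets $C\subset f^{-1}(U\cup V)=f^{-1}(U)\cup f^{-1}(V)$. Finally, choosing a point $y\in f(C)\cap U$ and writing $y=f(x)$ with $x\in C$ shows $x\in C\cap f^{-1}(U)$, so that intersection is nonempty; symmetrically $C\cap f^{-1}(V)\neq\emptyset$.

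These four properties say precisely that the open sets $f^{-1}(U)$ and $f^{-1}(V)$ disconnect $C$, contradicting the hypothesis that $C$ is connected. Hence $f(C)$ must be connected.

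There is essentially no real obstacle here; the statement is a routine consequence of continuity. The only point that needs a little care is to match the exact definition of ``disconnected'' used in the appendix, namely separation by open sets of the ambient space rather than of the subspace, so that the preimages $f^{-1}(U),f^{-1}(V)$ are directly open in $X$ and no passage to the subspace topology on $f(C)$ is required.
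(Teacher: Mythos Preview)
Your proof is correct and is the standard argument. The paper states this theorem in the appendix as a background result without proof, so there is nothing to compare against; your contradiction via pulling back a separation of $f(C)$ along $f$ is exactly the expected proof and matches the definition of disconnectedness used there.
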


The \emph{intermediate value theorem} is a corollary of 
Theorem~\ref{theo:f(connected)} for real maps. See Theorem~\ref{theo:ivt}
in Chapter~\ref{chap1}.

\subsection{Continuity and compactness}

\begin{theo}\label{theo:f(compact)}
Let $X, Y$ be metric spaces and $f\colon X\to Y$ a continuous map.
If $K\subset X$ is a compact set, then $f(K)$ is compact.
\end{theo}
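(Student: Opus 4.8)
The plan is to use the open-cover characterization of compactness directly, transporting covers back and forth through $f$. First I would fix an arbitrary open cover of $f(K)$. Since an open cover of $f(K)$ in the induced topology consists of sets of the form $V_i\cap f(K)$ with $V_i$ open in $Y$, it is equivalent (and more convenient) to work with a family $(V_i)_{i\in\mathcal I}$ of open subsets of $Y$ whose union contains $f(K)$; I would record this equivalence in one sentence so the argument is self-contained.

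Next I would pull the cover back: because $f$ is continuous, each $f^{-1}(V_i)$ is open in $X$. I would then check that $\big(f^{-1}(V_i)\big)_{i\in\mathcal I}$ covers $K$: if $x\in K$, then $f(x)\in f(K)\subset\bigcup_{i\in\mathcal I}V_i$, so $f(x)\in V_i$ for some $i$, i.e.\ $x\in f^{-1}(V_i)$. Applying compactness of $K$ gives a finite set $J\subset\mathcal I$ with $K\subset\bigcup_{i\in J}f^{-1}(V_i)$. Finally I would push forward: for $y\in f(K)$ write $y=f(x)$ with $x\in K$; then $x\in f^{-1}(V_i)$ for some $i\in J$, so $y=f(x)\in V_i$. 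Hence $f(K)\subset\bigcup_{i\in J}V_i$, and $\{V_i\cap f(K)\mid i\in J\}$ is a finite subcover. This proves $f(K)$ is compact.

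There is no real obstacle here; the one point requiring a word of care is the translation between ``open cover of $f(K)$ in the induced topology'' and ``finitely many open sets of $Y$ whose union meets $f(K)$ appropriately'', which is immediate from the definition of the induced topology. Alternatively one could invoke the Bolzano--Weierstrass characterization: every sequence in $f(K)$ is of the form $(f(x_n))_{n\ge0}$ with $x_n\in K$, extract a convergent subsequence $x_{n_k}\to x\in K$, and use continuity (criterion (iv) for continuous maps) to get $f(x_{n_k})\to f(x)\in f(K)$; but the open-cover proof is shorter and avoids any appeal to metrizability beyond what is already assumed.
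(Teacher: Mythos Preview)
Your proof is correct and entirely standard; there is nothing to add. Note, however, that the paper does not actually prove this theorem: it appears in the appendix, whose stated purpose is ``to recall succinctly some definitions and results in topology'', and Theorem~\ref{theo:f(compact)} is listed there without proof as a background fact. So there is no proof in the paper to compare against; your open-cover argument (or the sequential alternative you mention) is exactly what one would expect.
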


\begin{prop}\label{prop:f(clA)}
Let $X, Y$ be metric spaces with $X$ compact, $f\colon X\to Y$ a continuous map
and $A\subset X$. Then $f(\overline{A})=\overline{f(A)}$.
\end{prop}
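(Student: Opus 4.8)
The plan is to prove the two inclusions $f(\overline A)\subseteq\overline{f(A)}$ and $\overline{f(A)}\subseteq f(\overline A)$ separately. The first uses only the continuity of $f$ together with the characterization of the closure by limit points and does not require compactness; the second uses compactness to see that $f(\overline A)$ is itself a closed subset of $Y$.

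For $f(\overline A)\subseteq\overline{f(A)}$, I would take $y\in f(\overline A)$, write $y=f(x)$ with $x\in\overline A$, and choose a sequence $(x_n)_{n\ge 0}$ of points of $A$ with $\lim_{n\to+\infty}x_n=x$. Since $f$ is continuous, $\lim_{n\to+\infty}f(x_n)=f(x)=y$, and each $f(x_n)$ lies in $f(A)$, so $y$ is a limit point of $f(A)$, i.e.\ $y\in\overline{f(A)}$.

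For the reverse inclusion, the key observation is that $f(\overline A)$ is closed in $Y$. Indeed, $\overline A$ is closed in the compact space $X$, hence compact by Theorem~\ref{theo:closedsubsetcompact}; since $f$ is continuous, $f(\overline A)$ is compact by Theorem~\ref{theo:f(compact)}; and a compact subset $K$ of a metric space is closed, which I would check directly: if $y\in\overline K$, take a sequence $(y_n)_{n\ge 0}$ in $K$ with $\lim_{n\to+\infty}y_n=y$; by the Bolzano--Weierstrass theorem $(y_n)_{n\ge 0}$ has a limit point in $K$, and the only limit point of a convergent sequence is its limit, so $y\in K$. Now $f(\overline A)$ is a closed set containing $f(A)$ (because $A\subseteq\overline A$), and $\overline{f(A)}$ is by definition the smallest closed set containing $f(A)$, so $\overline{f(A)}\subseteq f(\overline A)$. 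Combining the two inclusions yields $f(\overline A)=\overline{f(A)}$.

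The only mildly delicate point — the ``main obstacle'', such as it is — is that the appendix records the equivalence between compactness and closedness only for subsets of a compact ambient space (Theorem~\ref{theo:closedsubsetcompact}), whereas here $Y$ need not be compact; hence the short sequential argument above, resting on the Bolzano--Weierstrass theorem, is what is needed to conclude that the compact set $f(\overline A)$ is closed in $Y$. Everything else is a routine application of the definition of closure and of Theorem~\ref{theo:f(compact)}.
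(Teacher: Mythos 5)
Your proof is correct and follows essentially the same two-inclusion argument as the paper: continuity plus sequential characterization of closure gives $f(\overline{A})\subseteq\overline{f(A)}$, and compactness of $f(\overline{A})$ gives the reverse. You have, in fact, noticed and filled a small gap that the paper leaves implicit: the paper deduces $\overline{f(A)}\subset f(\overline{A})$ from compactness of $f(\overline{A})$ without observing that its Theorem~\ref{theo:closedsubsetcompact} only asserts the compact-closed equivalence inside a compact ambient space, whereas here $Y$ need not be compact; your Bolzano--Weierstrass argument is exactly what is needed to close that step.
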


\begin{proof}
The set $\overline{A}$ is compact by Theorem~\ref{theo:closedsubsetcompact},
and thus $f(\overline{A})$ is compact by Theorem~\ref{theo:f(compact)}.
Trivially, $f(A)\subset f(\overline{A})$, which implies that
$\overline{f(A)}\subset f(\overline{A})$.

Let $x$ be a point in $\overline{A}$. Then there exists $(x_n)_{n\ge 0}$
a sequence of points of $A$ that converges to $x$. Since
$f$ is continuous, $\lim_{n\to+\infty}f(x_n)=f(x)$. This
implies that $f(x)\in \overline{f(A)}$, and hence
$f(\overline{A})\subset \overline{f(A)}$. We conclude that
$f(\overline{A})=\overline{f(A)}$.
\end{proof}

\begin{prop}
Let $X, Y$ be metric spaces and $f\colon X\to Y$ a continuous bijection.
If $X$ is compact, then $f$ is a homeomorphism.
\end{prop}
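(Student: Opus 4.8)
The plan is to show that the inverse map $f^{-1}\colon Y\to X$ is continuous; since by hypothesis $f$ is already a continuous bijection, this is exactly what is needed to conclude that $f$ is a homeomorphism. To verify continuity of $f^{-1}$ I would use the characterization of continuity by closed sets: a map is continuous if and only if the preimage of every closed set is closed. Because $f$ is a bijection, for any subset $A\subset X$ one has $(f^{-1})^{-1}(A)=f(A)$, so it suffices to prove that $f$ is a \emph{closed map}, i.e.\ that $f(F)$ is closed in $Y$ whenever $F$ is closed in $X$.

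First I would note that $Y$ is itself a compact metric space: since $f$ is onto, $Y=f(X)$, and $X$ is compact, so $Y$ is compact by Theorem~\ref{theo:f(compact)}. Now let $F\subset X$ be closed. Applying Theorem~\ref{theo:closedsubsetcompact} in the compact space $X$, the set $F$ is compact; by Theorem~\ref{theo:f(compact)}, its continuous image $f(F)$ is compact; and applying Theorem~\ref{theo:closedsubsetcompact} once more, this time in the compact space $Y$, the compact subset $f(F)\subset Y$ is closed in $Y$.

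Combining these observations: for every closed $F\subset X$, the set $(f^{-1})^{-1}(F)=f(F)$ is closed in $Y$, hence $f^{-1}$ is continuous and $f$ is a homeomorphism. There is no real obstacle in this argument; the only point deserving a moment's attention is that one must first upgrade $Y$ to a compact space before invoking Theorem~\ref{theo:closedsubsetcompact} to pass from ``compact subset'' to ``closed subset'', since that theorem as stated in the appendix presupposes a compact ambient space.
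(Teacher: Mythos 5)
Your proof is correct. The paper states this proposition without proof in its appendix of background results, so there is no "paper's proof" to compare against; but your argument — establishing that $f$ is a closed map via the chain $F$ closed $\Rightarrow$ $F$ compact $\Rightarrow$ $f(F)$ compact $\Rightarrow$ $f(F)$ closed, and then using $(f^{-1})^{-1}(F)=f(F)$ — is the standard proof. Your observation that you must first verify $Y$ is compact before invoking Theorem~\ref{theo:closedsubsetcompact} in the form stated in the appendix is exactly the right kind of care (even though compact subsets of any metric space are in fact closed, the appendix's version presupposes a compact ambient space).
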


\begin{prop}
Let $X, Y$ be metric spaces and $f\colon X\to Y$ a continuous map. If
$X$ is compact, then $f$ is uniformly continuous.
\end{prop}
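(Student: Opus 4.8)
The plan is to prove the classical Heine--Cantor theorem by a contradiction argument resting on sequential compactness (the Bolzano--Weierstrass theorem), with a remark on an alternative route through Lebesgue's number lemma.

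First I would suppose, for contradiction, that $f$ is not uniformly continuous. Negating the definition of uniform continuity, there is an $\eps>0$ such that for every $\delta>0$ one can find points $x,x'\in X$ with $d_X(x,x')<\delta$ but $d_Y(f(x),f(x'))\ge\eps$. Applying this with $\delta=1/n$ for each $n\in\IN$ produces two sequences $(x_n)_{n\ge 1}$ and $(x'_n)_{n\ge 1}$ in $X$ with $d_X(x_n,x'_n)<1/n$ and $d_Y(f(x_n),f(x'_n))\ge\eps$ for all $n$.

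Next I would invoke compactness of $X$. By the Bolzano--Weierstrass theorem, $(x_n)_{n\ge 1}$ admits a limit point, so there is a subsequence $(x_{n_k})_{k\ge 0}$ converging to some $x\in X$. Since $d_X(x_{n_k},x'_{n_k})<1/n_k\to 0$, the triangular inequality shows that $(x'_{n_k})_{k\ge 0}$ also converges to $x$. As $f$ is continuous, $\lim_{k\to+\infty}f(x_{n_k})=f(x)$ and $\lim_{k\to+\infty}f(x'_{n_k})=f(x)$, whence $d_Y(f(x_{n_k}),f(x'_{n_k}))\to 0$ by the triangular inequality again. This contradicts $d_Y(f(x_{n_k}),f(x'_{n_k}))\ge\eps$, completing the proof.

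Alternatively, one can argue directly with open covers: given $\eps>0$, for each $x\in X$ continuity provides $\delta_x>0$ with $d_Y(f(x),f(x'))<\eps/2$ whenever $d_X(x,x')<\delta_x$; the balls $(B(x,\delta_x/2))_{x\in X}$ cover the compact space $X$, so Lebesgue's number lemma yields $\delta>0$ such that each $B(z,\delta)$ lies in some $B(x,\delta_x/2)$, and then $d_X(z,z')<\delta$ puts $z,z'$ in a common $B(x,\delta_x)$, forcing $d_Y(f(z),f(z'))<\eps$. There is no genuine obstacle in either version; the only points demanding a bit of care are writing the negation of the definition of uniform continuity correctly and checking that $(x'_{n_k})$ converges to the same limit as $(x_{n_k})$. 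Everything else is an immediate application of the Bolzano--Weierstrass theorem (or of Lebesgue's number lemma) together with the continuity of $f$.
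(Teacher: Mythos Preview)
Your proof is correct. Note, however, that the paper does not actually supply a proof of this proposition: it sits in the appendix, which ``recall[s] succinctly some definitions and results in topology'' without proving them, so there is no argument in the paper to compare your approach against. Both of your routes (sequential compactness via Bolzano--Weierstrass, and the open-cover argument via Lebesgue's number lemma) are standard and valid proofs of the Heine--Cantor theorem.
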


\begin{theo}\label{theo:max-on-compact}
Let $f\colon X\to \IR$ be a continuous map, where $X$ is a compact metric
space. Then $f$ admits a maximum and a minimum, that is,
\begin{gather*}
\exists x_M\in X,\ f(x_M)=\sup\{f(x)\mid x\in X\},\\
\exists x_m\in X,\ f(x_m)=\inf\{f(x)\mid x\in X\}.
\end{gather*}
\end{theo}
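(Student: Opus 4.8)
The plan is to reduce the statement to the elementary fact that a nonempty compact subset of $\IR$ contains its supremum and its infimum, using the results on continuity and compactness already established in the appendix. First I would invoke Theorem~\ref{theo:f(compact)}: since $f$ is continuous and $X$ is compact, the image $f(X)$ is a compact subset of $\IR$. By the characterization of compact subsets of $\IR$ (closed and bounded), $f(X)$ is in particular bounded, so $M:=\sup\{f(x)\mid x\in X\}$ and $m:=\inf\{f(x)\mid x\in X\}$ are finite real numbers.

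Next I would argue that $M\in f(X)$. By definition of the supremum, there is a sequence $(y_n)_{n\ge 0}$ of points of $f(X)$ with $y_n\to M$; thus $M$ is a limit point of $f(X)$, hence $M\in\overline{f(X)}$. Since $f(X)$ is compact, it is closed by Theorem~\ref{theo:closedsubsetcompact}, so $\overline{f(X)}=f(X)$ and therefore $M\in f(X)$. This means there exists $x_M\in X$ with $f(x_M)=M$, which is exactly $f(x_M)=\sup\{f(x)\mid x\in X\}$. The same argument applied to $m$ (or to the continuous map $-f$) produces $x_m\in X$ with $f(x_m)=m$.

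An alternative route that bypasses the image is to use the Bolzano--Weierstrass characterization of compactness directly: one picks a sequence $(x_n)_{n\ge 0}$ in $X$ with $f(x_n)\to\sup\{f(x)\mid x\in X\}$ (a priori a value in $\IR\cup\{+\infty\}$), extracts a convergent subsequence $x_{n_k}\to x_M\in X$, and uses continuity of $f$ to get $f(x_{n_k})\to f(x_M)$, whence the supremum equals the real number $f(x_M)$; the infimum is handled symmetrically. Either way the proof is short and uses no delicate ingredient; the only point requiring a moment's care is to secure boundedness (finiteness of $M$ and $m$) \emph{before} asserting that the value is attained, and this is handled automatically once one knows that $f(X)$ is compact.
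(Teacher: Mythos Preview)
Your proof is correct and entirely standard. Note that the paper does not actually supply a proof of this theorem: it appears in the appendix as a background result from topology, stated without proof (the appendix's stated purpose is to ``recall succinctly some definitions and results in topology''). Both routes you describe---pushing compactness to the image via Theorem~\ref{theo:f(compact)}, or extracting a convergent subsequence via Bolzano--Weierstrass---are the usual textbook arguments, and either would serve perfectly well here.
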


\begin{cor}
Let $X$ be a metric space and $A,B\subset X$. If $A,B$ are compact, then
$d(A,B)$ and $\diam(A)$ are reached, that is,
\begin{itemize}
\item there exist $a\in A,b\in B$ such that $d(A,B)=d(a,b)$,
\item there exist $a,a'\in A$ such that $\diam(A)=d(a,a')$.
\end{itemize}
\end{cor}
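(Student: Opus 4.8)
The plan is to derive both assertions from two results already established: that a continuous real-valued function on a compact metric space attains its maximum and minimum (Theorem~\ref{theo:max-on-compact}), and that a finite product of compact metric spaces is compact (Theorem~\ref{theo:product-of-compacts}). Throughout I tacitly assume $A$ and $B$ nonempty, since otherwise $d(A,B)$ and $\diam(A)$ are not defined.

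First I would record that the distance map $d\colon X\times X\to\IR$ is continuous when $X\times X$ is equipped with the product topology. This is immediate from the triangle inequality: for all $x,x',y,y'\in X$ one has $|d(x,y)-d(x',y')|\le d(x,x')+d(y,y')$, so any sequence $(x_n,y_n)$ converging to $(x,y)$ in $X\times X$ satisfies $d(x_n,y_n)\to d(x,y)$, and by the sequential characterization of continuity in metric spaces $d$ is continuous. Consequently the restriction of $d$ to any subset of $X\times X$, with the induced topology, is also continuous.

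For the first assertion, I would note that $A\times B$ is a compact metric space by Theorem~\ref{theo:product-of-compacts}, and $d|_{A\times B}$ is continuous by the previous paragraph. Theorem~\ref{theo:max-on-compact} applied to this restriction yields a point $(a,b)\in A\times B$ at which $d$ attains its infimum over $A\times B$; that is, $d(a,b)=\inf\{d(a',b')\mid a'\in A,\ b'\in B\}=d(A,B)$, which is exactly what is claimed. For the second assertion the argument is identical with $B$ replaced by $A$: $A\times A$ is compact by Theorem~\ref{theo:product-of-compacts}, $d|_{A\times A}$ is continuous, and Theorem~\ref{theo:max-on-compact} gives $(a,a')\in A\times A$ with $d(a,a')=\sup\{d(x,y)\mid x,y\in A\}=\diam(A)$.

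There is no genuine obstacle here; this is a routine corollary. The only point needing a word of justification — which the excerpt does not state explicitly — is the continuity of $d$ on the product space, handled above via the triangle inequality.
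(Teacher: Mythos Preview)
Your proof is correct and follows exactly the same approach as the paper: show that $d\colon X\times X\to\IR$ is continuous, apply Theorem~\ref{theo:product-of-compacts} to get that $A\times B$ (resp.\ $A\times A$) is compact, and then invoke Theorem~\ref{theo:max-on-compact}. The only difference is that you spell out the continuity of $d$ via the triangle inequality, whereas the paper simply asserts it.
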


\begin{proof}
Let $f\colon X\times X\to\IR$ defined by $f(x,y):=d(x,y)$.
One can easily show that $f$ is continuous. 

The set $A\times B$ is compact by Theorem~\ref{theo:product-of-compacts}.
Thus $f|_{A\times B}$ admits a minimum by 
Theorem~\ref{theo:max-on-compact}, that is, there
exists a couple of points $(a,b)$ in $A\times B$ such that 
$d(a,b)=\inf\{d(x,y)\mid (x,y)\in A\times B\}$; the last
expression is the definition of $d(A,B)$.

Similarly, the set $A\times A$ is compact, and thus $f|_{A\times A}$
admits a maximum, that is, there exist $(a,a')\in A\times A$ such
that $d(a,a')=\sup\{d(x,y)\mid (x,y)\in A\times A\}$; the last
expression is the definition of $\diam(A)$.
\end{proof}

\subsection{Uniform convergence of a sequence of real maps}

\begin{defi}\index{uniform distance}\index{uniform convergence of a sequence of maps}
Let $\CF$ be the space of all maps $f\colon [0,1]\to [0,1]$.
The \emph{uniform distance} on $\CF$ is defined by
$d_{\infty}(f,g):=\sup\{|g(x)-f(x)|\mid x\in [0,1]\}$, where $f,g\in\CF$.

Let $(f_n)_{n\ge 0}$ be a sequence of maps of $\CF$. Then $(f_n)_{n\ge 0}$
\emph{uniformly converges} to $f\in\CF$ if it converges to $f$ for the distance
$d_{\infty}$, that is,
$$
\forall \eps>0, \exists N\ge 0, \forall n\ge N, 
\forall x\in [0,1],\ |f_n(x)-f(x)|<\eps.
$$
\end{defi}

\begin{theo}
The space $\CF$ endowed with the distance $d_{\infty}$ defined above
is a complete space.
\end{theo}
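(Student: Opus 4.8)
The plan is to prove completeness directly from the definition: take an arbitrary Cauchy sequence $(f_n)_{n\ge 0}$ in $(\CF,d_\infty)$ and produce a limit $f\in\CF$. The first step is to fix a point $x\in[0,1]$ and observe that, since $|f_n(x)-f_m(x)|\le d_\infty(f_n,f_m)$, the sequence $(f_n(x))_{n\ge 0}$ is a Cauchy sequence of real numbers lying in $[0,1]$. The interval $[0,1]$ is a closed bounded subset of $\IR$, hence compact, hence complete, so $(f_n(x))_{n\ge 0}$ converges to some real number; moreover its limit lies in $[0,1]$ because $[0,1]$ is closed and contains all the terms. Call this limit $f(x)$. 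Doing this for every $x\in[0,1]$ defines a map $f\colon[0,1]\to[0,1]$, i.e. an element of $\CF$; this is the candidate limit (it is the pointwise limit of $(f_n)$).

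The second step is to upgrade pointwise convergence to uniform convergence, i.e. to show $d_\infty(f_n,f)\to 0$. Given $\eps>0$, use the Cauchy property to choose $N$ with $d_\infty(f_n,f_m)<\eps/2$ for all $n,m\ge N$; then for every $x\in[0,1]$ and all $n,m\ge N$ one has $|f_n(x)-f_m(x)|<\eps/2$. Now fix $n\ge N$ and $x\in[0,1]$ and let $m\to+\infty$: since $f_m(x)\to f(x)$, passing to the limit in the inequality gives $|f_n(x)-f(x)|\le\eps/2<\eps$. As this holds for every $x\in[0,1]$, we get $d_\infty(f_n,f)=\sup_{x\in[0,1]}|f_n(x)-f(x)|\le\eps/2<\eps$ for all $n\ge N$. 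Hence $f_n\to f$ in $(\CF,d_\infty)$, and since $f\in\CF$, every Cauchy sequence converges, so $(\CF,d_\infty)$ is complete.

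There is no real obstacle here; the only points that deserve care are (a) justifying that the pointwise limit exists and stays inside $[0,1]$ — which is exactly where completeness and closedness of $[0,1]$ enter, so one should cite that $[0,1]$, being compact, is complete — and (b) the standard ``fix $n$, let $m\to\infty$'' manoeuvre, which requires keeping the quantifiers in the right order (first extract $N$ uniformly from the Cauchy condition, only afterwards pass to the limit in $m$). Note also that no continuity of the $f_n$ is used or needed, since $\CF$ consists of all maps $[0,1]\to[0,1]$; the argument is purely about the uniform metric and completeness of the target interval.
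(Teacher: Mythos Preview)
Your proof is correct and is the standard argument for completeness of a function space under the uniform metric. The paper states this theorem in the appendix as background material without proof, so there is nothing to compare against; your argument would serve as a suitable proof.
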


\section{Zorn's Lemma}

A \emph{partially ordered} set is a set endowed with a binary relation
that indicates that, for certain pairs of elements, one of the 
elements precedes the other. Such a relation is called a \emph{partial order} 
to reflect the fact that not every pair of elements need be related, contrary
to a \emph{total order}.
The formal definitions are given below.

\begin{defi}[partial and total order]\index{partial order/partially ordered set}\index{total order/totally ordered set}
A \emph{partial order} on the set $E$ is a binary relation $\le$ such that,
for all $a,b,c\in E$,
\begin{itemize}
\item $a\le a$,
\item if $a\le b$ and $b\le a$, then $a=b$,
\item if $a\le b$ and $b\le c$, then $a\le c$.
\end{itemize}
Such a set $E$ is called \emph{partially ordered}.
If $a\le b$ or $b\le a$, the elements $a,b$ are said to be \emph{comparable}.

A \emph{total order} on $E$ is a partial order such that all pairs of
elements are comparable.
Such a set $E$ is called \emph{totally ordered}.
\end{defi}

\begin{defi}[lower and upper bound]\index{lower bound}\index{upper bound}
Let $E$ be a partially ordered set and $F\subset E$. 
An element $b\in E$ is a \emph{lower bound} (resp. \emph{upper bound})
of $F$ if $x\ge b$ (resp. $x\le b$) for all $x\in F$.
\end{defi}

\begin{defi}[minimal and maximal element]\index{minimal element}\index{maximal element}
Let $E$ be a partially ordered set. A \emph{minimal} (resp. \emph{maximal})
element of $E$ is an element $m\in E$ that is not 
greater (resp. smaller) than any other element in $E$, that is, if
$m\ge x$ (resp. $m\le x$) for some $x\in E$, then $m=x$.
\end{defi}

\begin{theo}[Zorn's Lemma]\index{Zorn's Lemma}
Let $E$ be a nonempty partially ordered set.
Suppose that every nonempty family of elements of $E$ that is totally 
ordered has a lower (resp. upper) bound in $E$. Then $E$ contains at 
least one minimal (resp. maximal) element.
\end{theo}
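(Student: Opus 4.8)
The honest answer is that Zorn's Lemma is logically equivalent to the Axiom of Choice (and to the well-ordering theorem), so in an appendix that merely \emph{recalls} background one may simply adopt it as an axiom and proceed. If a proof is wanted, the standard route is to derive it from the Axiom of Choice, and that is the plan I would follow. It suffices to prove the ``maximal element'' version: the ``minimal element'' version is obtained by applying it to $E$ equipped with the reversed order $\ge$, under which a lower bound becomes an upper bound and a minimal element becomes a maximal one.

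So suppose, for contradiction, that the nonempty partially ordered set $E$ satisfies the stated upper-bound hypothesis but has no maximal element. The first step is to note that then every chain (totally ordered subset) $C\subset E$ has a \emph{strict} upper bound in $E$: by hypothesis $C$ has some upper bound $b\in E$, and since $b$ is not maximal there is $b'\in E$ with $b'\ge b$ and $b'\ne b$, and this $b'$ is a strict upper bound of $C$ with $b'\notin C$. Let $\mathcal{C}$ be the set of all chains of $E$; by the Axiom of Choice, fix a function $g$ that assigns to each $C\in\mathcal{C}$ a strict upper bound $g(C)\notin C$. The second step is to build, by transfinite recursion on the ordinals, a map $\alpha\mapsto x_\alpha$ into $E$ by setting $x_\alpha:=g(\{x_\beta:\beta<\alpha\})$, checking inductively at each stage that $\{x_\beta:\beta<\alpha\}$ is a chain (it is an increasing, well-ordered family, since each $x_\gamma$ is a strict upper bound of all earlier terms) and that $x_\alpha$ differs from every $x_\beta$ with $\beta<\alpha$; this makes $\alpha\mapsto x_\alpha$ injective.

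The main obstacle is making this recursion legitimate and harvesting the contradiction. The cleanest way is to invoke Hartogs' theorem, which furnishes an ordinal $\kappa$ that admits no injection into $E$; the construction above injects \emph{every} ordinal, in particular $\kappa$, into $E$, which is absurd. An alternative that sidesteps explicit transfinite recursion is the Bourbaki--Witt fixed-point theorem applied to the chain-complete poset of chains of $E$ (ordered by inclusion) together with the inflationary map $C\mapsto C\cup\{g(C)\}$: a fixed point of an inflationary map would be a chain $C$ with $g(C)\in C$, contradicting $g(C)\notin C$. Either way the contradiction shows that $E$ must contain a maximal element, which is the assertion of the theorem.
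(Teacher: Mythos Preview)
Your instinct is exactly right: the paper does not prove this theorem at all. Immediately after the statement it simply remarks that Zorn's Lemma is equivalent to the axiom of choice, refers to a set-theory text, and moves on to how it is applied (with $E$ a family of subsets ordered by inclusion). So your opening sentence already matches the paper's treatment.

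The additional proof sketch you supply (reduce to the maximal case by reversing the order; assume no maximal element, so every chain has a strict upper bound; use Choice to pick one, build a strictly increasing transfinite sequence, and contradict Hartogs' theorem) is the standard derivation from AC and is correct. It goes well beyond what the paper does, which is nothing more than a citation. Your alternative via Bourbaki--Witt on the poset of chains is also fine, though note that Bourbaki--Witt itself is usually proved by a similar transfinite or tower argument, so it is more a repackaging than a genuinely independent route.
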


Zorn's Lemma is equivalent to the axiom of choice; it is a result of set 
theory (see e.g. \cite{Kur1}). 
However it can be used in topology by considering the partial
order given by the inclusion: the set $E$ is a family of
subsets of some space $X$, and $A\le B$ if $A\subset B$, where 
$A,B\in E$.


\backmatter
\bibliographystyle{plain}


\printindex


\renewcommand{\indexname}{Notation}

\begin{theindex}

  \item $\Lbrack n,m\Rbrack$: interval of integers, \pageref{notation:intervalintegers}
  \item $\# E$: cardinality of a set, \pageref{notation:cardE}
  \item $f\vert _Y$: restriction of a map, \pageref{notation:restriction}, \pageref{notation:restrictionbis}
  \item $\vert J\vert$: length of an interval, \pageref{notation:lengthJ}
  \item $\langle a,b\rangle$: interval $[a,b]$ or $[b,a]$, 
		\pageref{notation:convexhull}
  \item $X<Y$, $X\le Y$: inequalities between subsets of $\IR$, 
		\pageref{notation:XlessY}
  \item $\to$ (e.g. $J\to K$): covering of intervals, \pageref{notation:coveringintervals}
  \item $\to$ (e.g. $u\to v$): arrow in a directed graph, \pageref{notation:arrow}
  \item $\Vert \cdot\Vert $: norm of a matrix, \pageref{notation:normmatrix}
  \item $\lhd$, $\rhd$, $\unlhd$, $\unrhd$: Sharkovsky's order, 
		\pageref{notation:sharkovskyorder}
  \item $2^\infty$: a type for Sharkovsky's order, \pageref{notation:2infty}
  \item $\vee$: refinement of covers, \pageref{notation:vee}
  \item $\prec$, $\succ$: $\CC\prec \CD$ if $\CD$ is finer that $\CC$, where $\CC, \CD$ are covers, \pageref{notation:finer}
  \item $\CU^n:=\CU\vee f^{-1}(\CU)\vee\cdots\vee f^{-(n-1)}(\CU)$, \pageref{notation:coverUn}
  \item $A\ge 0$: non negative matrix, \pageref{notation:nonnegativematrix}
  \item $A\le B$ $\Leftrightarrow B-A\ge 0$ (where $A,B$ are matrices),	\pageref{notation:AlessBmatrix}
  \item $A> 0$: positive matrix, \pageref{notation:positivematrix}
  \item $A<B$ $\Leftrightarrow B-A>0$ (where $A,B$ are matrices), \pageref{notation:AlessBmatrix}
  \item $f_P$: connect-the-dots map associated to $f\vert _P$, \pageref{notation:ctdfP}
  \item $k\vert n$: $k$ divides $n$, \pageref{notation:divides}
  \item $\uparrow$: increasing, \pageref{notation:increasing}
  \item $\downarrow$: decreasing, \pageref{notation:decreasing}
  \item $\vert B\vert$: length of a word, \pageref{notation:lengthword}

  \indexspace

  \item $\Sigma:=\{0,1\}^{\IZ^+}$, \pageref{notation:setSigma}
  \item $\sigma$: shift map on $\Sigma$, \pageref{notation:mapshift}
  \item $\omega(x,f)$: $\omega$-limit set of a point, \pageref{notation:omegax}
  \item $\omega(f)$: $\omega$-limit set of a map, \pageref{notation:omegaf}

  \indexspace

  \item $B_n(x,\eps)$: Bowen ball, \pageref{notation:Bowenball}
  \item $G(f\vert P)$: graph associated to $P$-intervals, \pageref{notation:Gfp}
  \item $G(f_P):=G(f_P\vert P)$, \pageref{notation:ctdGfP}
  \item $h_{top}(\CU,f)$: topological entropy of a cover, \pageref{notation:htopU}
  \item $h_{top}(f)$: topological entropy of a map, \pageref{notation:htopf}
  \item $h_A(f)$: topological sequence entropy of a map with respect to a sequence, \pageref{notation:hAf}
  \item $\LY(f,\delta)$, $\LY(f)$: set of Li-Yorke pairs, \pageref{notation:LY}
  \item $M(f\vert P)$: adjacency matrix of $G(f\vert P)$,  \pageref{notation:Mfp}
  \item $M(f_P):=M(f_P\vert P)$, \pageref{notation:cdtMfP}
  \item $\bmod\ n$: integer modulo $n$, \pageref{notation:integermodn}
  \item $N(\CU)$: minimal cardinality of a subcover of $\CU$, \pageref{notation:NU}
  \item $N_n(\CU,f):=N: \CU^n)$, \pageref{notation:NnU}
  \item $\CO_f(x), \CO_f(E)$: orbit of a point/set, \pageref{notation:orbit}
  \item $P_n(f)$: set of points s.t. $f^n(x)=x$, \pageref{notation:Pn}
  \item $R_{\alpha}$: rotation of angle $\alpha$, \pageref{notation:Ralpha}
  \item $r_n(f,\eps)$: minimal cardinality of an $(n,\eps)$-spanning set, \pageref{notation:rnfeps}
  \item $r_n(A,f,\eps)$: minimal cardinality of an $(A,n,\eps)$-spanning set, \pageref{notation:rnAfeps}
  \item $\slope(f)$: slope of a linear map, \pageref{notation:slope}
  \item $s_n(f,\eps)$: maximal cardinality of an $(n,\eps)$-separated set, \pageref{notation:snfeps}
  \item $s_n(A,f,\eps)$: maximal cardinality of an $(A,n,\eps)$-separated set, \pageref{notation:snAfeps}
  \item $U_{\eps}(f)$: set of $\eps$-unstable points, \pageref{notation:Ueps}
  \item $\CV(z)$: family of neighborhoods of $z$, \pageref{notation:Vz}
  \item $W^u(z,f^p)$: unstable manifold of $z$, \pageref{notation:Wuz}

  \indexspace

\item \textbf{Notation of topology}
  \item $\overline{Y}$: closure of a set, \pageref{notation:closure1}, \pageref{notation:closure}
  \item $\Int{Y}$: interior of a set, \pageref{notation:interior1}, \pageref{notation:interior}
  \item $\Bd{Y}$: boundary of a set, \pageref{notation:boundary1}, \pageref{notation:boundary}
  \item $\End{J}$: endpoints of an interval/graph, \pageref{notation:endpointsinterval}, \pageref{notation:endpointsgraph}
  \item $X\setminus Y$: complement of $Y$ in $X$, \pageref{notation:complement}
  \item $X_1\times X_2$: product of sets, \pageref{notation:XtimesY}
  \item $X^n:=X\times\cdots\times X$, \pageref{notation:productXn}

  \item $d(x,y)$: distance between 2 points, \pageref{notation:distance1}, \pageref{notation:dxy}
  \item $d(A,B)$: distance between 2 sets, \pageref{notation:dAB}
  \item $B(x,r),\overline{B}(x,r)$: open/closed ball of center $x$ and radius $r$, \pageref{notation:ball1}, \pageref{notation:balls}
  \item $\diam(Y)$: diameter of a set, \pageref{notation:diam1}, \pageref{notation:diam}

  \indexspace

\item \textbf{Sets of numbers}

  \item $\IC$: set of complex numbers, \pageref{notation:IC}
  \item $\IN$: set of natural integers, \pageref{notation:IN}
  \item $\IQ$: set of rational numbers, \pageref{notation:IQ}
  \item $\IR$: set of real numbers, \pageref{notation:IR}
  \item $\IZ$: set of integers, \pageref{notation:IZ}

\end{theindex}


\end{document}